\newtheorem{proposition}[equation]{Proposition}
\newtheorem{corollary}[equation]{Corollary}
\newtheorem{lemma}[equation]{Lemma}
\newtheorem{theorem}[equation]{Theorem}
\newtheorem*{conjecture*}{Conjecture}
\newtheorem*{theorem*}{Theorem}
\newtheorem*{corollary*}{Corollary}
\newtheorem*{proposition*}{Proposition}
\newtheorem*{lemma*}{Lemma}
\theoremstyle{definition}
\newtheorem{definition}[equation]{Definition}
\newtheorem{construction}[equation]{Construction}
\newtheorem*{definition*}{Definition}
\newtheorem*{construction*}{Construction}
\newtheorem{remark}[equation]{Remark}
\newtheorem*{remark*}{Remark}
\newtheorem*{variant*}{Variant}
\newtheorem{example}[equation]{Example}
\newtheorem*{example*}{Example}
\newcommand{\id}{\operatorname{id}}
\newcommand{\Z}{\mathbb{Z}}
\newcommand{\N}{\mathbb{N}}
\newcommand{\Q}{\mathbb{Q}}
\newcommand{\F}{\mathbb{F}}
\let\scr=\mathcal
\let\bb=\mathbb
\newcommand{\Gm}{{\mathbb{G}_m}}
\def\A{\bb A}
\def\P{\bb P}
\newcommand{\eff}{{\text{eff}}}
\newcommand{\veff}{{\text{veff}}}
\newcommand{\DM}{\mathrm{DM}}
\newcommand{\SH}{\mathrm{SH}}
\newcommand{\Spt}{\mathrm{Spt}}
\newcommand{\D}{\mathrm{D}}
\newcommand{\Perf}{\mathrm{Perf}}
\newcommand{\Spc}{\mathrm{Spc}}
\DeclareMathOperator*{\colim}{colim}
\let\lim=\relax
\DeclareMathOperator*{\lim}{lim}
\def\Map{\mathrm{Map}}
\def\map{\mathrm{map}}
\def\CAlg{\mathrm{CAlg}}
\def\Cat{\mathcal{C}\mathrm{at}{}}
\def\Fun{\mathrm{Fun}}
\newcommand{\Spec}{\mathrm{Spec}}
\newcommand{\wequi}{\simeq}
\newcommand{\Mod}{\mathrm{Mod}}
\DeclareRobustCommand{\ul}{\underline}
\newcommand{\Hom}{\operatorname{Hom}}
\def\op{\mathrm{op}}
\let\cat=\mathrm
\def\Sm{{\cat{S}\mathrm{m}}}
\def\Sch{\cat{S}\mathrm{ch}{}}
\def\Nis{\mathrm{Nis}}
\def\Zar{\mathrm{Zar}}
\def\cdh{\mathrm{cdh}}
\newcommand{\et}{{\acute{e}t}}
\newcommand{\lra}[1]{\langle #1 \rangle}
\def\ph{\mathord-}
\DeclareFontFamily{T1}{cbgreek}{}
\DeclareFontShape{T1}{cbgreek}{m}{n}{<-6>  grmn0500 <6-7> grmn0600 <7-8> grmn0700 <8-9> grmn0800 <9-10> grmn0900 <10-12> grmn1000 <12-17> grmn1200 <17-> grmn1728}{}
\DeclareSymbolFont{quadratics}{T1}{cbgreek}{m}{n}
\DeclareMathSymbol{\qoppa}{\mathord}{quadratics}{19}
\DeclareMathSymbol{\Qoppa}{\mathord}{quadratics}{21}
\newcommand{\sm}{\mathrm{sm}}
\newcommand{\BGL}{\mathrm{BGL}}
\newcommand{\GL}{\mathrm{GL}}
\newcommand{\NB}[1]{\todo[color=gray!40]{#1}}
\newcommand{\tom}[1]{\todo[color=green]{#1}}
\newcommand{\elden}[1]{\todo[color=purple]{#1}}
\newcommand{\matthew}[1]{\todo[color=yellow]{#1}}
\newcommand{\KU}{\mathrm{KU}}
\newcommand{\lse}{\mathrm{lse}}
\renewcommand{\et}{\text{\'et}}
\newcommand{\PShv}{\mathrm{PShv}}
\newcommand{\Fil}{\operatorname{Fil}}
\newcommand{\gr}{\mathrm{gr}}
\newcommand{\Gr}{\operatorname{Gr}}
\newcommand{\Alg}{\mathrm{Alg}}
\newcommand{\qcqs}{\mathrm{qcqs}}
\newcommand{\KGL}{\mathrm{KGL}}
\newcommand{\KO}{\mathrm{KO}}
\newcommand{\KW}{\mathrm{KW}}
\newcommand{\HW}{\mathrm{HW}}
\newcommand{\kgl}{\mathrm{kgl}}
\newcommand{\K}{\mathrm{K}}
\renewcommand{\H}{\mathrm{H}}
\newcommand{\KH}{\mathrm{KH}}
\newcommand{\Vect}{\mathrm{Vect}}
\newcommand{\comp}{{{\kern -.5pt}\wedge}}
\newcommand{\syn}{\mathrm{syn}}
\newcommand{\cn}{\mathrm{cn}}
\renewcommand{\L}{\mathrm{L}}
\def\map{\mathrm{map}}
\newcommand{\h}{\mathrm{h}}
\newcommand{\Sel}{\mathrm{Sel}}
\newcommand{\TC}{\mathrm{TC}}
\newcommand{\fib}{\mathrm{fib}}
\newcommand{\cof}{\mathrm{cof}}
\newcommand{\THH}{\mathrm{THH}}
\newcommand{\Pic}{\mathcal{P}\mathrm{ic}}
\DeclareSymbolFontAlphabet{\mathbb}{AMSb} 
\DeclareSymbolFontAlphabet{\mathbbl}{bbold}
\newcommand{\Prism}{{\mathlarger{\mathbbl{\Delta}}}}
\numberwithin{equation}{section}
\theoremstyle{proposition}
\renewcommand{\bb}[1]{\mathbb{#1}}
\newcommand{\sub}[1]{{\mbox{\rm \scriptsize #1}}}
\newcommand{\roi}{\mathcal{O}}
\newcommand{\isoto}{\stackrel{\simeq}{\to}}
\newcommand{\quis}{\stackrel{\sim}{\to}}
\newcommand{\opp}{\operatorname}
\renewcommand{\hat}{\widehat}
\newcommand{\To}{\longrightarrow}
\renewcommand{\cal}{\mathcal}
\newcommand{\comment}[1]{}
\newcommand{\Hyp}{\mathrm{Val}}
\renewcommand{\Spt}{\mathrm{Sp}}
\newcommand{\xto}{\xrightarrow}
\renewcommand{\tilde}{\widetilde}
\newcommand{\ep}{\epsilon}
\newcommand{\al}{\alpha}
\newcommand{\into}{\hookrightarrow}
\newcommand{\onto}{\twoheadrightarrow}
\newcommand{\slice}{\mathrm{slice}}
\renewcommand{\PShv}{\opp{PSh}}
\renewcommand{\todo}[1]{}
\renewcommand{\NB}[1]{}
\renewcommand{\tom}[1]{}
\renewcommand{\matthew}[1]{}
\renewcommand{\elden}[1]{}
\title{$\bb A^1$-invariant motivic cohomology of schemes}
\date{\today}
\author{Tom Bachmann\footnote{University of Mainz}, Elden Elmanto\footnote{University of Toronto}, Matthew Morrow\footnote{CNRS et Laboratoire de Math\'ematiques d'Orsay}}
\begin{document}

\maketitle

\begin{abstract}
Voevodsky outlined a conjectural programme that his slice filtration in motivic homotopy theory should give rise to a good theory of $\bb A^1$-invariant motivic cohomology. This paper achieves his vision in the generality of arbitrary quasicompact, quasiseparated schemes, by introducing a theory of $\bb A^1$-invariant motivic cohomology which is related to Weibel's homotopy $K$-theory via an Atiyah--Hirzebruch spectral sequence, and which we compare to \'etale and syntomic cohomology in the style of the original conjectures of Beilinson and Lichtenbaum. In addition, it is represented by an absolute motivic spectrum and therefore satisfies cdh descent, and modules over it offer a candidate for the derived category of $\bb A^1$-invariant motives. We establish some of Voevodsky's open conjectures on slices, in particular relating the zeroth slice of the motivic sphere to homotopy $K$-theory. In the final section we prove analogous results for the Hermitian $K$-theory of qcqs schemes on which $2$ is invertible.

As an auxiliary tool we introduce cdh-motivic cohomology, defined as the cdh sheafification of the left Kan extension of the motivic cohomology of smooth $\Z$-schemes. We offer a new approach to control the latter, independent of previous work on $\bb A^1$-invariant motivic cohomology of smooth schemes over mixed characteristic Dedekind domains: our approach is based on recent developments in $p$-adic cohomology, in particular syntomic and prismatic cohomology. The cdh-motivic cohomology is also a necessary ingredient in the last two authors' and Bouis' construction of non-$\bb A^1$-invariant motivic cohomology of qcqs schemes. 
\end{abstract}

\tableofcontents

\section{Introduction}
Beilinson and Lichtenbaum \cite{Beilinson1987a, bms-zero, Lichtenbaum1984} envisaged that the algebraic $K$-theory of any reasonable scheme would be equipped with a filtration whose graded pieces would correspond to a conjectural theory of motivic cohomology. The associated motivic cohomology groups would then form the $E_2$-page of a spectral sequence abutting to the algebraic $K$-groups, thereby providing an analogue for schemes of the Atiyah--Hirzebruch spectral sequence of a topological space, whose $E_2$-page is singular cohomology and which abuts to topological $K$-theory. Such a theory of motivic cohomology for arbitrary quasicompact quasiseparated schemes, which we stress is not $\bb A^1$-invariant on arbitrary schemes, is constructed in recent work by the last two authors \cite{ElmantoMorrow2023} (for schemes of equal characteristic) and by Bouis \cite{Bouis2025} (in the general case).

This paper is concerned with the $\mathbb{A}^1$-invariant approximation of motivic cohomology. That is, instead of studying a motivic filtration on algebraic $K$-theory itself, here we focus on Weibel's homotopy $K$-theory $\KH$ \cite{Weibel1989a}, which is the universal $\mathbb{A}^1$-invariant approximation to algebraic $K$-theory. While this agrees with algebraic $K$-theory on regular schemes (where $K$-theory was shown to be $\mathbb{A}^1$-invariant by Quillen), the theories differ in general. For example, it is know that homotopy $K$-theory is insensitive to nilpotents, whereas algebraic $K$-theory itself is not, as one can see already in unit groups and so in $\K_1$. Even better, thanks to a theorem of Cisinski \cite{Cisinski2013}, generalizing an earlier result of Haesemeyer in characteristic zero \cite{Haesemeyer2004}, homotopy $K$-theory converts abstract blowup squares of schemes to cartesian squares of spectra; in other words, it is a cdh sheaf. This is a reflection of the fact that homotopy $K$-theory may be analysed through the powerful lens of Morel--Voevodsky's motivic homotopy theory \cite{MorelVoevodsky1999}.

The results of this paper serve two distinct purposes, reflecting the above discussion. The first is as a foundation for the construction of the motivic filtration on actual algebraic $K$-theory \cite{ElmantoMorrow2023, Bouis2025}, which involves glueing a suitable motivic filtration on topological cyclic homology to a filtration on homotopy $K$-theory; this paper constructs and studies the latter filtration. (Conversely, once the actual motivic cohomology has been constructed, the $\bb A^1$-invariant theory of this paper may be obtained by imposing $\bb A^1$-invariance on non-$\bb A^1$-invariant theory; see Remark \ref{rem:nonA1_intro}.) Secondly, our results realise Voevodsky's vision of a motivic filtration on $\KH$ via his slice filtration in motivic homotopy theory, as laid out in \cite{Voevodsky2002a,Voevodsky2002}. Indeed, within the framework of motivic homotopy theory, one of the principal outcomes of this paper is a description of the zeroth slice of the motivic sphere spectrum of any qcqs scheme: we show that it represents $\bb A^1$-invariant motivic cohomology, analogously to the fact in topology that the zeroth stable homotopy group of spheres is isomorphic to $\mathbb{Z}$.

\subsection{$\bb A^1$-invariant  motivic cohomology and its main properties}
Ignoring for a moment our applications to motivic homotopy theory, the following theorem summarises our theory of $\mathbb{A}^1$-invariant motivic  cohomology of arbitrary qcqs schemes:

\begin{theorem}\label{thm:main-intro}
There exists a multiplicative family of presheaves of complexes 
\[
\Z(j)^{\bb A}: \Sch^\sub{qcqs,op}\To \D(\Z) \qquad j \geq 0
\]
called \emph{$\bb A^1$-motivic cohomology} which enjoys the following properties:
\begin{enumerate}
\item Atiyah--Hirzebruch filtration: for any qcqs scheme $X$ there exists a natural, multiplicative, $\bb N$-indexed  filtration $\mathrm{Fil}_{\bb A}^{\star}\KH(X)$ on the homotopy $K$-theory $\KH(X)$, such that the graded pieces are naturally and multiplicatively given by
\[
\mathrm{gr}_{\bb A}^j\KH(X)\simeq \Z(j)^{\bb A}(X)[2j],\qquad j\ge0.
\]
 In particular, writing $H^i_{\bb A}(X,\bb Z(j)):=H^i(\bb Z(j)^{\bb A}(X))$ for the corresponding {\em $\bb A^1$-motivic cohomology groups}, there exists an Atiyah--Hirzebruch spectral sequence
\[
E_2^{i,j}=\H_{\bb A}^{i-j}(X, \Z(-j)) \implies \KH_{-i-j}(X).
\]
If $X$ has finite valuative dimension $\le d$ then this filtration is complete: more precisely, $\Fil_{\bb A}^j\KH(X)$ is supported in cohomological degrees $\le d-j$.
\item\label{item-A1-invar} $\bb A^1$-invariance: for any qcqs scheme $X$, the map
\[
\Z(j)^{\bb A}(X) \To \Z(j)^{\bb A}(\A_X^1),
\]
induced by the projection $\A_X^1 \rightarrow X$, is an equivalence for each $j\ge0$.
\item Finitariness and sheafiness: $\bb Z(j)^{\bb A}$ is a finitary cdh sheaf on the category of qcqs schemes.

\item Relation to \'etale cohomology: for any integer $\ell>0$, there are natural, multiplicative equivalences
\[
\Z(j)^{\bb A}/\ell \simeq L_{\cdh}\tau^{\leq j}R\Gamma_{\et}(-,\mu_{\ell}^{\otimes j})\qquad j\ge0
\]
on the category of qcqs $\bb Z[\tfrac1\ell]$-schemes. In particular, the Beilinson--Lichtenbaum isomorphisms hold: for any qcqs $\bb Z[\tfrac1\ell]$-scheme $X$ there are natural isomorphisms \[H^i_\bb A(X,\bb Z/\ell(j))\cong H^i_\sub{\'et}(X,\mu_\ell^{\otimes j})\] in the range $i\le j$.

\item Relation to syntomic cohomology in characteristic $p$: for any prime $p$ and $r\ge1$ there are 
natural, multiplicative equivalences
\[
\Z(j)^{\bb A}(X)/p^r \simeq R\Gamma_{\cdh}(X,W_r\Omega^j_{\log})[-j],\qquad j\ge0,
\]
on the category of qcqs $\bb F_p$-schemes. For the case of schemes in mixed characteristic, see Theorem \ref{thm:BL_intro}.

\item Low weights: for any qcqs scheme $X$ there are natural equivalences
\[
R\Gamma_{\cdh}(X, \bb Z) \xrightarrow{\simeq} \bb Z(0)^{\bb A}(X)
\]
and
\[c_1^{\bb A}: R\Gamma_{\cdh}(X,\bb G_m)[-1] \xrightarrow{\simeq} \bb Z(1)^{\bb A}(X). \]
\item \label{item-PBF} Projective bundle formula: for any qcqs scheme $X$ the powers of the first Chern class of the tautological bundle $c_1^\bb A(\scr O(1)) \in H^{2}_{\bb A}(\P^d_X, \Z(1))$
induce a natural equivalence
\begin{equation}\bigoplus_{i=0}^d\bb Z(j-i)^{\bb A}(X)[-2i]\quis \bb Z(j)^{\bb A}(\bb P^d_X).\end{equation}

\item Comparison to classical motivic cohomology: for any smooth algebraic variety $X$, there are equivalences
\[
\Z(j)^{\bb A}(X) \simeq z^j(X,\bullet)[-2j],\qquad j\ge0,
\]
where $ z^j(X,\bullet)$ is Bloch's cycle complex \cite{Bloch1986b}. (For the case of smooth schemes over mixed characteristic Dedekind domains, see Remark \ref{rem:Spitzweck}.)

\item Determined by smooth schemes: letting $\bb Z(j)^{\cdh}:\Sch^{\qcqs,\op}\to\D(\bb Z)$ be the cdh sheafification of the left Kan extension of $\bb Z(j)^{\bb A}|_{\Sm_{\bb Z}}$, the canonical maps
\[
L_{\bb A^1}\bb Z(j)^{\cdh} \To \bb Z(j)^{\bb A},\qquad j\ge0,
\]
(arising from properties (2) and (3)) are equivalences.

\end{enumerate}
\end{theorem}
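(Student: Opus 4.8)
The plan is to construct $\bb Z(j)^{\bb A}$ by the three-step recipe that properties (3) and (9) essentially dictate: begin with the motivic cohomology $\bb Z(j)^{\mot}$ of smooth $\bb Z$-schemes (which one may take to be Spitzweck's, and which restricts to Bloch's cycle complexes over varieties); left Kan extend it from finitely presented smooth $\bb Z$-schemes to all of $\Sch^{\qcqs}$; cdh-sheafify to get $\bb Z(j)^{\cdh}$; and finally set $\bb Z(j)^{\bb A}:=L_{\bb A^1}\bb Z(j)^{\cdh}$. The lax symmetric monoidal structures on left Kan extension and on the two localizations supply the multiplicative family structure. With this definition, properties (2), (3) and (9) become largely formal once two facts are in place: first, that left Kan extension, cdh-sheafification (Clausen--Mathew) and $L_{\bb A^1}$ all preserve finitariness, and that $L_{\bb A^1}$ moreover carries finitary cdh sheaves of complexes to cdh sheaves --- which I would deduce from the compatibility of $\bb A^1$-localization with cdh-descent in the style of Morel--Voevodsky and Cisinski, reducing by finitariness to Noetherian schemes of finite dimension; and second, that $\bb Z(j)^{\mot}$ is already an $\bb A^1$-invariant cdh sheaf on $\Sm_{\bb Z}$, so that the recipe is idempotent on it and $\bb Z(j)^{\bb A}|_{\Sm_{\bb Z}}=\bb Z(j)^{\mot}$ (whence also (8)).

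\textbf{The main obstacle: cdh descent for $\bb Z(j)^{\mot}$ on $\Sm_{\bb Z}$.} The $\bb A^1$-invariance of classical motivic cohomology of smooth $\bb Z$-schemes is standard; its cdh descent --- equivalently, cdh descent for the motivic cohomology of smooth schemes over mixed-characteristic Dedekind domains --- is the technical heart of the paper, and I would approach it $p$-adically rather than through Gersten/Bloch--Kato machinery. The idea is that after $p$-completion $\bb Z(j)^{\mot}/p^r$ identifies with the syntomic cohomology $\bb Z_p(j)^{\syn}/p^r$, whose cdh descent (and its concentration in cohomological degrees $\le j$) one reads off from prismatic cohomology and the de Rham--Witt complex; the rational and prime-to-$p$ parts are then governed by \'etale cohomology. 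This same mod-$p^r$ computation is simultaneously the input to property (5) via the Geisser--Levine identification $\bb Z(j)^{\mot}/p^r\simeq W_r\Omega^j_{\log}[-j]$ over $\bb F_p$-schemes, and, combined with the Rost--Voevodsky norm residue theorem, to property (4) over $\bb Z[1/\ell]$-schemes.

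\textbf{The filtration (1) and the comparisons (4)--(6), (8).} For the Atiyah--Hirzebruch filtration I would run the same left-Kan-extend-then-cdh-sheafify procedure on the motivic (slice) filtration of the algebraic $K$-theory of smooth $\bb Z$-schemes, whose graded pieces are $\bb Z(j)^{\mot}[2j]$. Cisinski's cdh-descent theorem for $\KH$, together with the identification of finitary cdh sheaves with cdh-sheafified left Kan extensions from $\Sm_{\bb Z}$, shows that this procedure turns $\K|_{\Sm_{\bb Z}}$ into $\KH$; since $\KH$ is already $\bb A^1$-invariant, applying $L_{\bb A^1}$ throughout yields a multiplicative filtration $\Fil^\star_{\bb A}\KH$ with $\gr^j_{\bb A}\KH\simeq\bb Z(j)^{\bb A}[2j]$ and the associated spectral sequence. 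Completeness for $X$ of valuative dimension $\le d$ follows from the bound $d$ on its cdh-cohomological dimension and the concentration of $\bb Z(i)^{\mot}$ in degrees $\le i$, which together force $\gr^i_{\bb A}\KH(X)\simeq\bb Z(i)^{\bb A}(X)[2i]$ into degrees $\le d-i$, hence $\Fil^j_{\bb A}\KH(X)$ into degrees $\le d-j$. The comparisons (4), (5), (6) all instantiate one propagation principle: a natural transformation $\bb Z(j)^{\bb A}\to T(j)$ is an equivalence provided $T(j)$ is a finitary, $\bb A^1$-invariant cdh sheaf that is recovered by cdh-sheafifying the left Kan extension of its own restriction to $\Sm_{\bb Z}$ (true for each explicit target here, since finitary $\cdarc$ sheaves --- a fortiori these targets --- are left Kan extended from $\Sm_{\bb Z}$, using Popescu's desingularization for regular schemes and de Jong alterations to reduce to the regular case) and whose restriction to $\Sm_{\bb Z}$ agrees with $\bb Z(j)^{\mot}$. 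The smooth inputs are Beilinson--Lichtenbaum for (4), Geisser--Levine for (5), and $\bb Z(0)^{\mot}=\bb Z$ together with $\bb Z(1)^{\mot}\simeq R\Gamma_{\Zar}(-,\Gm)[-1]$ for (6); the $\bb A^1$-invariance of the targets comes from homotopy invariance of \'etale cohomology with coefficients prime to the residue characteristics, of logarithmic de Rham--Witt cohomology, and --- for weight one --- from Traverso's theorem that the Picard group is $\bb A^1$-invariant on seminormal rings, each preserved by cdh-sheafification. Property (8) then follows from $\bb Z(j)^{\bb A}|_{\Sm_{\bb Z}}=\bb Z(j)^{\mot}$ and the classical comparison of Spitzweck's theory with Bloch's cycle complexes over fields.

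\textbf{The projective bundle formula (7).} This is the one assertion that does not propagate formally, since $\bb P^d_{(-)}$ does not carry smooth $\bb Z$-schemes cofinally into the relevant site. The plan is to upgrade $\bigoplus_j\bb Z(j)^{\bb A}[2j]$ --- a finitary cdh sheaf equipped with the Chern classes $c_1^{\bb A}$ --- to an oriented absolute motivic spectrum $\bb Z^{\bb A}$, and then invoke the standard fact that an oriented motivic ring spectrum satisfies the projective bundle formula in $\SH(X)$ for every qcqs $X$; failing that, one can argue directly, reducing by finitariness and Zariski/cdh descent to $\bb P^d$ over regular Noetherian bases where the formula is classical. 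I expect this step --- the passage from smooth to arbitrary qcqs schemes via the absolute motivic spectrum --- together with the cdh descent of $\bb Z(j)^{\mot}$ from the second step, to be the two places where the genuine work resides.
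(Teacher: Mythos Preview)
Your definition $\bb Z(j)^{\bb A}:=L_{\bb A^1}\bb Z(j)^{\cdh}$ is a legitimate alternative to the paper's, which instead takes $\bb Z(j)^{\bb A}(X):=\map_{\SH(X)}(1_X,s^j\KGL_X)[-2j]$ via the slice filtration and then works hard to \emph{prove} the equivalence you take as a definition. The two approaches are dual: the paper gets (1), (2), (7) nearly for free from motivic homotopy theory and must fight for (3) and (9); your route makes (2), (3), (9) definitional but pushes all the difficulty into (7). Either way the same hard theorem sits at the center --- the $\bb P^1$-bundle formula for $L_{\bb A^1}\bb Z(j)^{\cdh}$ on arbitrary qcqs schemes (the paper's Theorem~\ref{thm:pbf-a1}) --- and this is where your proposal has a genuine gap.

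Your two suggested routes to (7) do not work. Upgrading $\bigoplus_j\bb Z(j)^{\bb A}[2j]$ to an oriented motivic spectrum via the Eilenberg--MacLane construction $H$ of \S\ref{ss:omega} is circular: that construction takes as input an object of $\Gr\opp{Sh}_{\Nis,\bb A^1}(\Sm_X,\Spt)_{c,\sub{pbf}}$, i.e.\ it \emph{presupposes} the $\bb P^1$-bundle formula. And ``reducing by cdh descent to $\bb P^d$ over regular Noetherian bases'' fails because the cdh points are henselian valuation rings, which are regular Noetherian only when they are fields or discrete valuation rings; the entire obstruction lives over mixed-characteristic valuation rings of higher (or infinite) rank. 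Relatedly, you have misidentified the technical heart: cdh descent for $\bb Z(j)^{\mot}$ on $\Sm_{\bb Z}$ is essentially automatic once one knows it is represented by an absolute motivic spectrum over $\bb Z$ (Cisinski's proper base change argument, Proposition~\ref{prop:auto-cdh}); the real work is the $\bb P^1$-bundle formula over singular bases. The paper's proof of this occupies \S\ref{ss:main_proof} and uses genuinely new ideas: a ``Kato'' variant of motivic cohomology measuring the gap between $\bb F_p(j)^{\cdh}$ and cdh-sheafified syntomic cohomology, a localisation sequence separating the characteristic-$p$ and characteristic-$\ne p$ contributions (Lemma~\ref{lem:inf-fibre}), a $v_1$-self-map trick to transport the $\bb P^1$-bundle formula from Kato $K$-theory to $L_{\cdh}j_!j^*\KH/p$ (Lemma~\ref{lem:inf-k-fibre}), and a delicate spectral-sequence degeneration over rank-one valuation rings containing a primitive $p$-th root of unity (Lemma~\ref{lem:p1-milnor}). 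A secondary gap: your claim that the targets in (4) and (5) are $\bb A^1$-invariant (``homotopy invariance of \'etale cohomology'') glosses over the truncation --- $L_{\cdh}\tau^{\le j}R\Gamma_{\et}(-,\mu_\ell^{\otimes j})$ is not obviously $\bb A^1$-invariant, and the paper establishes this via the same spectral-sequence machinery (\S\ref{sec:A1-inv}), which already requires a non-trivial argument even in the vacuous case of the key hypothesis.
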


\begin{remark}[cdh topology]
The cdh topology and cohomology of Suslin--Voevodsky \cite{SuslinVoevodsky2000,ElmantoHoyoisIwasaKelly2021} appears repeatedly in Theorem \ref{thm:main-intro} and plays an important role in both the constructions and proofs of this paper. Indeed, as already mentioned above, this is due to the surprising fact that $\KH$ is a cdh sheaf. Meanwhile, concerning our proofs, the cdh topology allows us to reduce many statements to the case of henselian valuation rings (i.e., points for the cdh topology), or even using excision to henselian valuation rings $V$ of rank $\le 1$, where we then leverage the fact that small schemes over such a ring (e.g., $\bb P_V^1$, $\bb A_V^1$) still have small dimension for the cdh topology. This is in contrast to the Zariski or Nisnevich topology, where even the affine line over a local ring can have large cohomological dimension.
\end{remark}

 \subsection{Voevodsky's slice conjectures}\label{subsec:vv-approach}
We broadly follow and realize Voevodsky's conjectural vision for the construction of a motivic filtration on $\KH$, as explained in his survey articles from more than two decades ago \cite{Voevodsky2002a,Voevodsky2002}. We now briefly sketch his ideas. In algebraic topology, we can functorially equip any spectrum $E \in \Spt$ with its Postnikov filtration\footnote{Sometimes also called its \emph{Whitehead tower}.} $\Fil_\sub{Pos}^{\star}E$; this is a decreasing, multiplicative, exhaustive filtration on $E$ whose graded pieces are given by shifts of the Eilenberg--MacLane spectra of its homotopy groups:
 \[
 \gr^j_\sub{Pos}E:=\mathrm{cofib}(\Fil_\sub{Pos}^{j+1}E \to \Fil_\sub{Pos}^jE) \simeq (H\pi_j(E))[j].
 \]
The Postnikov filtration originates from the filtration on the whole $\infty$-category of spectra given by
\[
\cdots \subset \Spt_{\geq j} \subset \Spt_{\geq j-1} \subset \cdots \subset\Spt_{\geq 0}=\Spt_\sub{conn} \subset \cdots \subset\Spt,
\]
where $\Spt_{\geq j}$ is the subcategory of $j$-connective spectra, or in other words the full subcategory of $\Spt$ generated under colimits and extensions by $\mathbb{S}[j]$. From this point of view, the spectrum $\Fil^j_\sub{Pos}E$ is then the final spectrum over $E$ which lives in $\Spt_{\geq j}$. The Atiyah--Hirzebruch spectral sequence in topology now arises from the fact that the graded pieces of the Postinkov filtration on $\KU$ are given, as a graded $\mathbb{E}_{\infty}$-ring, by
\[
\mathrm{gr}_\sub{Pos}^{\star}\KU \simeq H\mathbb{Z}[\beta^{\pm 1}],\qquad |\beta| = 2.
\]
In particular, the unit map $\mathbb{S} \rightarrow \KU$ induces an isomorphism on the zeroth graded piece of the Postnikov filtration (i.e., on $H\pi_0$), and their common value is $H\mathbb{Z}$.

We now turn to algebraic geometry. Voevodsky's theory of motivic spectra assigns to each qcqs scheme $X$ a well-behaved stable $\infty$-category $\SH(X)$ which should be thought of as an algebro-geometric version of ``spectra over $X$''. It is equipped with a functor $M_X:\Sm_X\to\SH(X)$ sending any smooth $X$-scheme to its ``motive''; for example, $1_X=M_X(X)$ is the unit of $\SH(X)$ and is the motivic analogue over $X$ of the sphere spectrum. Voevodsky defined a filtration on $\SH(X)$ in an analogous fashion to the topological context:
\[
\cdots \subset \SH(X)^{\eff}(j) \subset \SH(X)^{\eff}(j-1) \subset  \cdots \subset\SH(X)^{\eff}(0) = \SH(X)^{\eff} \subset \cdots \subset \SH(X) 
\]
where $\SH(X)^{\eff}(j)$ is the full stable subcategory of $\SH(X)$ generated under colimits and desuspensions by $M_X(Y) \otimes \mathbb{T}_X^{\otimes j}$, where $Y$ varies over smooth $X$-schemes and $\mathbb{T}_X$ denotes the Tate motive over $X$, i.e., the motive of the reduced projective line. This definition is inspired by thinking of the Tate motive as a kind of 2-sphere. These categories give rise, for any motivic spectrum $E \in \SH(X)$, to its so-called {\em slice filtration} $\Fil_\sub{slice}^{\star}E$, which is again a natural, decreasing, multiplicative, exhaustive filtration on $E$. Its graded pieces $s^\star E$ are known as the slices of $E$. In particular, letting $\KGL_X\in \SH(X)$ be the object representing homotopy $K$-theory on smooth $X$-schemes, the analogy with algebraic topology suggests that the motivic analogue of the Eilenberg--MacLane spectrum of $\bb Z$ should be the motivic ring spectrum
\begin{equation}\label{eq:def-main}
H\mathbb{Z}_X^{\bb A}:=s^0\KGL_X = \mathrm{cofib}(\Fil_\sub{slice}^{1}\KGL_X \rightarrow \Fil_\sub{slice}^{0}\KGL_X), 
\end{equation}
and that this should represent an analogue of singular cohomology. We adopt this as our definition of $\bb A^1$-motivic cohomology:

\begin{definition}\label{def:a1-inv}
For each qcqs scheme $X$ and $j \geq 0$, the \emph{weight-$j$ $\bb A^1$-motivic cohomology} of $X$ is defined as
\[
\bb Z(j)^{\bb A}(X) := \map_{\SH(X)}(1_X, H\mathbb{Z}_X^{\bb A} \otimes \mathbb{T}_X^{\otimes j})\in\Spt.
\]
Here $\map_{\SH(X)}$ refers to the spectrum of maps in the stable $\infty$-category $\SH(X)$.
\end{definition}

Several facts formally follow from this definition. For example, the presheaves $\bb Z(j)^{\bb A}:\Sch^\sub{qcqs,op}\to\Spt$ are $\bb A^1$-invariant Nisnevich sheaves and, since Bott periodicity still holds in the context of motivic homotopy theory, they assemble to form an $\bb E_\infty$-algebra in graded presheaves. On the other hand, it is not at all clear that $\bb Z(j)^{\bb A}$ has the structure of a presheaf of complexes (as opposed to spectra), nor that it is a cdh sheaf, nor that it should be related to \'etale/syntomic cohomology, nor that $\bb Z(1)^{\bb A}$ should be related to line bundles, and so on. Such non-obvious structure is the subject of Theorem \ref{thm:main-intro}.

Meanwhile, internally to motivic homotopy theory, we establish the following structural results about $H\bb Z_X^\bb A$; the first part is a motivic analogue of the fact that $\pi_0(\bb S)=\bb Z$:

\begin{theorem}\label{thm:vv-main}
Let $X$ be a qcqs scheme with structure map $f: X \rightarrow \Spec(\bb Z)$. Then the following hold in $\SH(X)$:
\begin{enumerate}
\item the unit map $1_X \rightarrow \KGL$ induces an equivalence
\[
s^0(1_X) \xrightarrow{\simeq} H\mathbb{Z}_X^{\bb A}
\]
(see Corollary~\ref{cor:conjectures});

\item the canonical comparison map
\[
f^*(H\mathbb{Z}_{\Spec(\bb Z)}^{\bb A}) \rightarrow H\mathbb{Z}_X^{\bb A}
\]
is an equivalence (see Theorem~\ref{thm:a1-a1cdh}(2)).

\end{enumerate}

\end{theorem}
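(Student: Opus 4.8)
The plan is to deduce both parts from a single structural fact --- that the bottom of the slice filtration of $\KGL$ commutes with arbitrary base change --- together, for part~(1), with the known case over $\Spec(\bb Z)$, which is Voevodsky's slice conjecture. Write $\kgl:=\Fil^{0}_\sub{slice}\KGL$ for connective algebraic $K$-theory; the Bott periodicity $\KGL\simeq\mathbb{T}\otimes\KGL$, combined with the shift identity $\Fil^{j}_\sub{slice}(\mathbb{T}\otimes E)\simeq\mathbb{T}\otimes\Fil^{j-1}_\sub{slice}(E)$, gives $\Fil^{j}_\sub{slice}\KGL\simeq\mathbb{T}^{\otimes j}\otimes\kgl$, so that in particular $s^0\KGL=\mathrm{cofib}(\mathbb{T}\otimes\kgl\to\kgl)$ and $s^0\kgl=s^0\KGL=H\bb Z_X^{\bb A}$.

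For part~(2): since $\KGL$ is stable under base change (for instance through its Snaith presentation), $f^*\KGL_{\Spec(\bb Z)}\simeq\KGL_X$, and the comparison map of the theorem is the canonical map $f^*s^0\KGL_{\Spec(\bb Z)}\to s^0 f^*\KGL_{\Spec(\bb Z)}=s^0\KGL_X$. It exists because $f^*$ is exact, colimit-preserving and sends $M_S(Y)\otimes\mathbb{T}_S^{\otimes j}$ to $M_X(Y\times_S X)\otimes\mathbb{T}_X^{\otimes j}$, hence $f^*(\SH(S)^{\eff}(j))\subseteq\SH(X)^{\eff}(j)$, which produces a natural map $f^*\Fil^{j}_\sub{slice}E\to\Fil^{j}_\sub{slice}f^*E$. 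When $f$ is smooth this comparison is an equivalence by the usual argument: its left adjoint $f_\#$ also preserves the subcategories $\SH(\ph)^{\eff}(j)$, so $f^*$ commutes with the colocalizations $\Fil^{j}_\sub{slice}$. For general $f$ we invoke Bachmann's theorem that the effective cover $\Fil^{0}_\sub{slice}\KGL=\kgl$ commutes with arbitrary base change over Dedekind schemes, in particular along the structure map of any qcqs scheme; by the periodicity above the same then holds for $\Fil^{1}_\sub{slice}\KGL\simeq\mathbb{T}\otimes\kgl$, hence for $s^0\KGL=\mathrm{cofib}(\mathbb{T}\otimes\kgl\to\kgl)$, which is part~(2).

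For part~(1): the sphere is effective, so the unit $1_X\to\KGL_X$ factors canonically through a map $1_X\to\kgl_X$; set $C_X:=\mathrm{cofib}(1_X\to\kgl_X)$. By the base change just established for $\kgl$ we get $C_X\simeq f^*C_{\Spec(\bb Z)}$, compatibly with the maps from the unit. Applying the exact functor $s^0$ to $1_X\to\kgl_X\to C_X$ yields a cofiber sequence $s^0 1_X\to s^0\kgl_X\to s^0 C_X$ with $s^0\kgl_X=H\bb Z_X^{\bb A}$, so it suffices to prove $s^0 C_X=0$. Since $C_X$ is effective (a cofiber of effective spectra), $s^0 C_X=\mathrm{cofib}(\Fil^{1}_\sub{slice}C_X\to C_X)$, which vanishes exactly when $C_X\in\SH(X)^{\eff}(1)$; and because the symmetric monoidal colimit-preserving functor $f^*$ sends $\SH(\Spec\bb Z)^{\eff}(1)$ into $\SH(X)^{\eff}(1)$, we are reduced to the case $X=\Spec(\bb Z)$: we must show $C_{\Spec(\bb Z)}\in\SH(\Spec\bb Z)^{\eff}(1)$, equivalently that $s^0 1_{\Spec(\bb Z)}\to s^0\KGL_{\Spec(\bb Z)}$ is an equivalence. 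This is Voevodsky's slice conjecture over $\bb Z$: it follows from Levine's theorem $s^0 1\simeq H\bb Z$ over fields, together with its extension to Dedekind domains in the work of Spitzweck (who likewise identifies $s^0\KGL$ with the same spectrum, compatibly with the unit).

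The main obstacle is the base-change statement for the effective cover of $\KGL$: for a general motivic spectrum the slice filtration does not commute with $f^*$, so one must genuinely use the special structure of $\KGL$, and here we rely on Bachmann's analysis of its effective and very effective covers over Dedekind schemes. The other essential input, the case $X=\Spec(\bb Z)$ of part~(1), is imported from the deep work of Levine and Spitzweck. Everything else --- the descent from $\Spec(\bb Z)$ to an arbitrary qcqs scheme --- is the purely formal fact that $f^*$ preserves effectivity and the subcategory $\SH(\ph)^{\eff}(1)$, applied to $\mathrm{cofib}(1\to\kgl)$.
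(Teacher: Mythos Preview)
Your overall logical structure is correct and in fact matches the paper's final deduction (see Corollary~\ref{cor:conjectures}): once one knows that $\kgl$ is stable under base change, part~(2) follows immediately from the cofiber description $s^0\KGL=\mathrm{cofib}(\bb T\otimes\kgl\to\kgl)$, and part~(1) reduces to checking over $\Spec(\bb Z)$ that $\mathrm{cofib}(1\to\kgl)$ is $1$-effective.

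The gap is in what you are citing as ``Bachmann's theorem.'' The statement that $f^*\kgl_{\bb Z}\to\kgl_X$ is an equivalence for an \emph{arbitrary} qcqs scheme $X$ is not available from prior work: \cite{Bachmann2022} identifies $s^0\KGL_B$ and related objects over Dedekind domains $B$, but does not establish base change of $\kgl$ along maps $X\to\Spec(\bb Z)$ for general $X$. In fact the paper explicitly notes (see the remark following Lemma~\ref{lemm:convergence-bootstrap}) that this base-change result is unknown for most of the article, and it appears only as Corollary~\ref{cor:eff-base-change}(1), which is a \emph{consequence} of Theorem~\ref{thm:a1-a1cdh}---the very result you are trying to prove. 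So your argument is circular.

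What the paper actually does to break the circle is substantial: it introduces cdh-motivic cohomology $\bb Z(j)^\cdh$, proves the key comparison Theorem~\ref{thm:comparison} that $L_{\bb A^1}\bb Z(j)^\cdh\simeq\bb Z(j)^{\bb A,\cdh}$ (via the $\bb P^1$-bundle formula of Theorem~\ref{thm:pbf-a1}, which in turn requires Kato $K$-theory, localisation sequences, and the unconditional results of \S\ref{sec:pbf-fields} in equal characteristic), and from this deduces that $\Fil^1_\sub{slice}f^*s^0\KGL_{\bb Z}=0$ (Corollary~\ref{prop:Fil1-vanishing}). That vanishing is what drives the proof of Theorem~\ref{thm:a1-a1cdh}, and only then does one obtain base change for $\kgl$. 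Your proposal has identified the correct endgame, but the hard part---establishing the input you called Bachmann's theorem---is precisely the content of the paper.
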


\begin{remark}[Relationship with Voevodsky's slice conjectures]
In \cite{Voevodsky2002a}, Voevodsky made several conjectures about slices in motivic homotopy theory. These conjectures refer to a motivic spectrum $H\bb Z$ representing an $\bb A^1$-invariant theory of motivic cohomology; the intended meaning of this object was clear over a field, but less so over an arbitrary base. Taking $H\bb Z_X^{\bb A}$ as the definition of $H\bb Z$, then \cite[Conjectures 1, 7]{Voevodsky2002a} are vacuous, while Theorem~\ref{thm:vv-main} establishes \cite[Conjectures 10, 17]{Voevodsky2002a}; moreover, the weight zero equivalence of Theorem~\ref{thm:main-intro}(6) establishes \cite[Conjecture 12]{Voevodsky2002a}.
\end{remark}

A consequence of Theorem \ref{thm:vv-main}(2) is that the $\infty$-category of $H\bb Z_X^\bb A$-modules in $\SH(X)$, i.e., the presentably symmetric monoidal stable $\infty$-category \[\opp{Mod}_{H\bb Z_X^\bb A}(\SH(X)),\] is a good candidate for the derived $\infty$-category of $\bb A^1$-invariant motives over $X$. In fact, when $X$ is the spectrum of a characteristic zero field $k$ this is known to agree with Voevodsky's category $\DM(k)$ \cite{rondigs-ostvaer}, and it enjoys a six-functor formalism:

\begin{corollary}[See Corollary \ref{corol:6functor}]
The assignment on qcqs schemes $X\mapsto \opp{Mod}_{H\bb Z_X^\bb A}(\SH(X))$ inherits a six-functor formalism from that of $\SH(-)$.
\end{corollary}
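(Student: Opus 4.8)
The plan is to deduce this formally from the general principle that modules over a \emph{Cartesian} commutative-algebra section of a six-functor formalism again assemble into a six-functor formalism; the only nontrivial input is Theorem~\ref{thm:vv-main}(2). Recall that $\SH(-)$ is itself a six-functor formalism on qcqs schemes: it carries $f^\ast$, $f_\ast$, $f_!$ (the latter defined for separated finite-type morphisms), $f^!$, $\otimes$ and internal hom, and satisfies base change and the projection formula $f_!(M\otimes f^\ast N)\simeq (f_!M)\otimes N$ (Morel--Voevodsky, Ayoub, Cisinski--D\'eglise, Hoyois, Bachmann--Hoyois). By construction $H\bb Z^\bb A$ is a section of $\CAlg(\SH(-))$ over $\Sch^{\qcqs,\op}$, and Theorem~\ref{thm:vv-main}(2) says exactly that for every $f\colon X\to Y$ the comparison $f^\ast H\bb Z_Y^\bb A\to H\bb Z_X^\bb A$ is an equivalence of $\bb E_\infty$-algebras; equivalently, $H\bb Z^\bb A$ is a Cartesian section of the coCartesian fibration classified by $Y\mapsto\CAlg(\SH(Y))$.

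Granting this, one checks how each operation transports to $\Mod_{H\bb Z_X^\bb A}(\SH(X))$. The symmetric monoidal structure and internal hom are the relative $\otimes_{H\bb Z_X^\bb A}$ and $\iHom$, which exist because $\SH(X)$ is presentably symmetric monoidal. For $f\colon X\to Y$, $f^\ast$ on modules is ``underlying $f^\ast$ followed by extension of scalars along $f^\ast H\bb Z_Y^\bb A\xrightarrow{\simeq}H\bb Z_X^\bb A$'', which by Cartesianness is just the underlying $f^\ast$ and is symmetric monoidal; $f_\ast$ is its right adjoint, realized by the underlying $f_\ast$, which lands in $H\bb Z_Y^\bb A$-modules since $f_\ast$ is lax symmetric monoidal and $H\bb Z_Y^\bb A\to f_\ast f^\ast H\bb Z_Y^\bb A\simeq f_\ast H\bb Z_X^\bb A$. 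For $f$ separated of finite type, $f_!$ on modules is again the underlying $f_!$: the projection formula in $\SH$ together with $H\bb Z_X^\bb A\simeq f^\ast H\bb Z_Y^\bb A$ gives $H\bb Z_Y^\bb A\otimes f_!M\simeq f_!(H\bb Z_X^\bb A\otimes M)\to f_!M$, so $f_!M$ is canonically an $H\bb Z_Y^\bb A$-module, and $f^!$ is its right adjoint (which exists by presentability). All compatibilities --- base change, projection formula, $f_!\simeq f_\ast$ for proper $f$, purity for smooth $f$ --- then descend from $\SH(-)$, since restriction and extension of scalars along the Cartesian algebra $H\bb Z^\bb A$ commute with $f^\ast$, $f_\ast$ and $f_!$ by the formulas above.

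The genuinely delicate part --- and the reason the statement is phrased as ``inherits'' --- is coherence: one wants a single functor out of the $\infty$-category of correspondences of qcqs schemes refining $\SH(-)$, not the six operations constructed one at a time. This is precisely what the abstract machinery of modules over a Cartesian algebra in an $\infty$-categorical six-functor formalism supplies (as developed in the work of Cisinski--D\'eglise, Liu--Zheng, Drew--Gallauer, Khan, and Mann): it takes a six-functor formalism $\mathcal D(-)$ and a Cartesian section $A\in\CAlg(\mathcal D(-))$ and returns the six-functor formalism $X\mapsto\Mod_{A_X}(\mathcal D(X))$. Thus the proof reduces to feeding $\mathcal D=\SH$ and $A=H\bb Z^\bb A$ into this machine, with the hypothesis furnished by Theorem~\ref{thm:vv-main}(2). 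I expect the main obstacle in a fully written-out proof to be bookkeeping rather than mathematics: matching our conventions (qcqs bases; $f_!$ for separated finite-type morphisms; $\SH$ as the ambient formalism) to the precise hypotheses of whichever packaging of the module construction one invokes, and in particular upgrading the $\bb E_\infty$-structure on $H\bb Z^\bb A$ to the coherent Cartesian section the machine requires rather than a merely pointwise datum.
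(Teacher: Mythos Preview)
Your proposal is correct and takes essentially the same approach as the paper: both reduce to the fact that $H\bb Z^\bb A$ is a Cartesian section of $\CAlg(\SH(-))$ (i.e., Theorem~\ref{thm:vv-main}(2)) and then invoke general machinery for modules over such a section. The paper's sketch packages this slightly differently---it base-changes the six-functor formalism $\SH(-)$ along the single functor $-\otimes H\bb Z_{\Spec(\bb Z)}^\bb A:\SH(\bb Z)\to\mathrm{DM}(\bb Z)$ and then uses Lurie's identification $\SH(X)\otimes_{\SH(\bb Z)}\mathrm{DM}(\bb Z)\simeq\mathrm{DM}(X)$ (\cite[Theorem~4.8.4.6]{LurieHA}), which is valid precisely because of the Cartesian property---but this is just a more compact encoding of what you wrote out operation by operation.
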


\subsection{Cdh-motivic cohomology and the key hypothesis}
Theorem \ref{thm:main-intro}(9) states, for any fixed weight $j$, that the weight-$j$ $\bb A^1$-motivic cohomology of arbitrary qcqs schemes is obtained from the case of smooth $\bb Z$-schemes via a simple recipe. Namely, letting $\bb Z(j)^{\cdh}:\Sch^{\qcqs,\op}\to\D(\bb Z)$ be the cdh sheafification of the left Kan extension of $\bb Z(j)^{\bb A}|_{\Sm_{\bb Z}}$, there are canonical comparison maps \begin{equation}\bb Z(j)^\sub{cdh}\To L_{\bb A^1}\bb Z(j)^{\cdh} \To \bb Z(j)^{\bb A},\label{eqn:cdhA1intro}\end{equation} and the second map is an equivalence. In light of Definition \ref{def:a1-inv}, this equivalence relates $H\bb Z_{\Spec(\bb Z)}^{\bb A}$ to $H\bb Z_{X}^{\bb A}$ and is thus intimately related to Theorem \ref{thm:vv-main}(2). We refer to $\bb Z(j)^{\cdh}$ as cdh-motivic cohomology; it is a version, for arbitrary qcqs schemes, of the motivic cohomology of singular varieties introduced by Friedlander--Voevodsky (see Remark \ref{rem_FV_intro} for further discussion).

As mentioned above, the second comparison map in \eqref{eqn:cdhA1intro} is an equivalence. It turns out that more is true: for many classes of schemes we can show that the the first comparison map in \eqref{eqn:cdhA1intro} is also an equivalence, i.e., the cdh-motivic cohomology $\bb Z(j)^\sub{cdh}$ is often $\bb A^1$-invariant. Conjecturally this should hold for all schemes. We investigate this phenomenon in \S\ref{ss:key}. For any qcqs scheme $X$, prime $p$, and integer $j\ge0$, we introduce the ``key hypothesis'' $\Hyp(X, p, j)$: it asks, whenever $V$ is a henselian valuation ring of mixed characteristic $(0,p)$ equipped with a map $\Spec(V)\to X$, that the mod-$p$ syntomic cohomology $\F_p(j)^\syn(V)$ is mostly given by the \'etale cohomology of $V[\tfrac1p]$. This is a regularity condition inspired by the Beilinson--Lichtebaum equivalence in motivic cohomology of smooth schemes over mixed characteristic Dedekind domains, and it would follow if such valuation rings were known to be $F$-smooth in the sense of Bhatt--Mathew \cite{BhattMathew2023}; the latter is a non-Noetherian regularity condition defined in terms of prismatic cohomology. It is expected that the hypothesis $\Hyp(X, p, j)$ is {\em always} true, but for the moment this remains conjectural; see Example~\ref{example:hypothesis-holds} for known cases.

We establish the following conditional $\bb A^1$-invariance theorem about the cdh-motivic cohomology $\bb Z(j)^\sub{cdh}$:

\begin{theorem}[See Theorem \ref{thm:A1inv}]\label{thm:A1_for_cdh}
Let $X$ be a qcqs scheme and $j\ge0$. Then the canonical comparison map \[\bb Z(j)^\sub{cdh}(X)\To \bb Z(j)^\sub{cdh}(\bb A_X^1)\] is an equivalence rationally. For a fixed prime number $p$, if $X$ has finite valuative dimension and satisfies the hypothesis $\Hyp(X, p, n)$ for all $n\le j$, then the map is also an equivalence modulo $p$ and so $\bb Z_{(p)}(j)^\sub{cdh}(X)\quis L_{\bb A^1}\bb Z_{(p)}(j)^\sub{cdh}(X)$.
\end{theorem}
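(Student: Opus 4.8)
The plan is to reduce the $\bb A^1$-invariance of $\bb Z(j)^{\cdh}$ to a statement about points of the cdh topology, i.e., to henselian valuation rings, and then to leverage the relation to \'etale and syntomic cohomology together with the low-dimensionality of $\bb A^1_V$ and $\bb P^1_V$ over such rings. First I would observe that, since $\bb Z(j)^{\cdh}$ is by construction a finitary cdh sheaf, and since the projection $\bb A^1_X\to X$ and the zero section are compatible with cdh-localization, it suffices to check that the unit $\bb Z(j)^{\cdh}(V)\to \bb Z(j)^{\cdh}(\bb A^1_V)$ is an equivalence for $V$ a henselian valuation ring; by a further finitariness and excision argument (as flagged in the Remark on the cdh topology) one can even assume $V$ has rank $\le 1$. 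Rationally, the left Kan extension from $\Sm_{\bb Z}$ of the rationalized motivic complex is already known to be $\bb A^1$-invariant (classical motivic cohomology of smooth schemes is $\bb A^1$-invariant, and this survives cdh-sheafification by the projective bundle formula plus the pro-cdh/descent machinery), which handles the first assertion.

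For the mod-$p$ statement, the plan is to fracture $\bb Z(j)^{\cdh}/p$ according to the residue characteristic of the valuation ring. If $V$ is of equal characteristic $p$, then by Theorem~\ref{thm:main-intro}(5) the mod-$p$ cdh-motivic complex is computed by $R\Gamma_{\cdh}(-,W\Omega^j_{\log})$-type terms, and $\bb A^1$-invariance here follows from the known $\bb A^1$-invariance of logarithmic de Rham--Witt cohomology (Geisser--Levine) combined with the fact that $\bb A^1_V$ has small cdh-cohomological dimension over a valuation ring, so the relevant cohomology is concentrated in a single degree and can be compared directly. If $V$ has mixed characteristic $(0,p)$, then I would use the syntomic/\'etale comparison: the hypothesis $\Hyp(X,p,n)$ for $n\le j$ says precisely that $\F_p(n)^{\syn}(V)$ is approximately $R\Gamma_{\et}(V[\tfrac1p],\mu_p^{\otimes n})$, and the same should hold for $\bb A^1_V$ and $\P^1_V$ because these are again low-dimensional over $V$ and the hypothesis is inherited by the henselian valuation rings lying over them. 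One then compares $\bb Z(j)^{\cdh}/p$ on $\bb A^1_V$ with $L_{\cdh}\tau^{\le j}R\Gamma_{\et}(-,\mu_p^{\otimes j})$, using Theorem~\ref{thm:main-intro}(4) on the $\bb Z[\tfrac1p]$-locus and the syntomic input at $p$, and invokes $\bb A^1$-invariance of truncated \'etale cohomology with torsion coefficients (Gabber rigidity / cdh-local contractibility of $\bb A^1$ for \'etale sheaves on strictly henselian-ish bases).

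A key technical point running through all three cases is the projective bundle formula of Theorem~\ref{thm:main-intro}(7): it lets one pass between $\bb A^1_V$, $\P^1_V$, and $V$ by the usual Bass-type argument ($\KH$ and its motivic filtration satisfy a fundamental exact sequence relating $\bb A^1$, $\Gm$, and $\P^1$), so that $\bb A^1$-invariance is equivalent to the statement that the ``extra'' summand coming from $\P^1$ versus $\bb A^1$ behaves as expected. Concretely, I would set up the fundamental/Bass exact sequence for $\bb Z(j)^{\cdh}$, reduce $\bb A^1$-invariance to the vanishing of $\widetilde{\bb Z(j-1)^{\cdh}}(\Gm_V)$ relative to $V$, and then check that vanishing modulo $p$ using the syntomic (mixed char) or logarithmic de Rham--Witt (equal char $p$) description over the low-dimensional base $V$.

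The main obstacle I expect is the mixed-characteristic case: one must show that the hypothesis $\Hyp(X,p,n)$, which is stated for henselian valuation rings of mixed characteristic mapping to $X$, propagates to the henselian valuation rings arising as cdh-points of $\bb A^1_X$ and $\P^1_X$, and that the error terms in the syntomic-to-\'etale comparison (the part where $\F_p(n)^{\syn}(V)$ differs from $R\Gamma_{\et}(V[\tfrac1p],\mu_p^{\otimes n})$) are controlled uniformly enough — in the correct cohomological degrees relative to the valuative dimension bound — that they do not obstruct $\bb A^1$-invariance after cdh-sheafification and truncation. Getting the degree bookkeeping right (using that $\Fil^j$ is supported in degrees $\le d-j$ and that $\bb A^1_V$, $\P^1_V$ have valuative dimension $\le \mathrm{val.dim}(V)+1$) is where the real work lies; the equal-characteristic and rational cases are comparatively soft.
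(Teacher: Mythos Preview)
Your proposal has the right opening move (reduce to rank $\le 1$ henselian valuation rings via finitariness and Milnor excision) and correctly identifies the main obstacle, but there are genuine gaps in the mod-$p$ argument and a circularity in your use of the projective bundle formula.

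First, the rational case in the paper is simpler than you suggest: $\bb Q(j)^\cdh$ is a direct summand of $\KH_\bb Q$ by the rational splitting Theorem~\ref{prop:basic_props_of_cdh_mot}(7), so $\bb A^1$-invariance of $\KH$ gives it immediately. No PBF or descent machinery is needed.

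Second, and more seriously, you invoke the projective bundle formula of Theorem~\ref{thm:main-intro}(7) to run a Bass-type argument, but that theorem is about $\bb Z(j)^{\bb A}$, not $\bb Z(j)^\cdh$. The agreement of these two theories (under the key hypothesis) is precisely what is being established; in the paper's logical order the conditional PBF for $\bb Z(j)^\cdh$ (Theorem~\ref{thm:pbf-fields}) is proved \emph{after} and \emph{using} $\bb A^1$-invariance. So your Bass-type reduction is circular.

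Third, you correctly flag that the real difficulty is that $\Hyp(X,p,n)$ does not obviously propagate to the cdh-points of $\bb A^1_X$ (such points can be extensions of valuation rings over $X$, and $F$-smoothness of extensions is not known). The paper does \emph{not} attempt to prove this propagation. Instead it uses a squeezing argument combined with a spectral-sequence degeneration: writing $B(j)$ for the cofibre of $\bb F_p(j)^\cdh(-)\to\bb F_p(j)^\cdh(\bb A^1_-)$ and $C(j)$ for the cofibre of $\bb F_p(j)^\cdh\to R\Gamma_\et(-[\tfrac1p],\mu_p^{\otimes j})$, one shows (i) $B(j)(V)$ lives in degrees $\le j+2$ by the valuative-dimension bound, (ii) $B(j)(V)$ lives in degrees $\ge j+1$ by comparing the split row $C(j)(V)\to C(j)(V[T])\to B(j)(V)[1]$ and using that $\Hyp$ forces $C(j)$ to be supported in degrees $\ge j-1$ with $\bb A^1$-invariant bottom cohomology, and then (iii) since the $B(j)(V)[2j]$ are graded pieces of a bounded filtration on $\mathrm{cofib}(\KH(V)/p\to\KH(V[T])/p)=0$, the resulting two-column spectral sequence degenerates to force $B(j)(V)=0$ for $j\le N$. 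Your case-split by characteristic misses this mechanism entirely and leaves no route to the mixed-characteristic case.
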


For example, whenever $X$ is a scheme of equal characteristic the hypothesis $\Hyp(X, p, j)$ holds for all $p,j$, and so we obtain the following consequence by combining Theorems \ref{thm:main-intro}(9) and \ref{thm:A1_for_cdh}:

\begin{corollary}\label{corol:FV}
For any scheme $X$ of equal characteristic, the comparison map $\bb Z(j)^\sub{cdh}(X)\to \bb Z(j)^{\bb A}(X)$ from \eqref{eqn:cdhA1intro} is an equivalence.
\end{corollary}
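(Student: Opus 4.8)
The plan is to deduce the corollary formally from the ``determined by smooth schemes'' statement, Theorem~\ref{thm:main-intro}(9), together with the conditional $\bb A^1$-invariance of cdh-motivic cohomology in Theorem~\ref{thm:A1_for_cdh}. Recall from \eqref{eqn:cdhA1intro} that for any qcqs scheme $X$ the comparison map factors as $\bb Z(j)^{\cdh}(X)\to L_{\bb A^1}\bb Z(j)^{\cdh}(X)\to \bb Z(j)^{\bb A}(X)$, and that the second arrow is an equivalence by Theorem~\ref{thm:main-intro}(9). So everything reduces to showing that the first arrow, i.e.\ the $\bb A^1$-localization map of the cdh sheaf $\bb Z(j)^{\cdh}$, is an equivalence on $X$ whenever $X$ is of equal characteristic.

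First I would treat the case where $X$ in addition has finite valuative dimension. The key observation is that the hypothesis $\Hyp(X,p,n)$ is then vacuously true for every prime $p$ and every $n$: a henselian valuation ring $V$ of mixed characteristic $(0,p)$ has fraction field of characteristic $0$ and residue field of characteristic $p$, so a map $\Spec(V)\to X$ would produce points of $X$ of residue characteristic both $0$ and $p$, which is impossible for an equal-characteristic scheme --- this is exactly the claim, recalled just before the corollary, that $\Hyp(X,p,j)$ holds for all $p,j$ in equal characteristic. Hence Theorem~\ref{thm:A1_for_cdh} applies with no further hypothesis and gives, for every prime $p$, an equivalence $\bb Z_{(p)}(j)^{\cdh}(X)\xrightarrow{\simeq}L_{\bb A^1}\bb Z_{(p)}(j)^{\cdh}(X)$. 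Since localizing coefficients at $p$ is exact and therefore commutes both with cdh sheafification and with the colimit construction defining $L_{\bb A^1}$, this says that the cofiber of $\bb Z(j)^{\cdh}(X)\to L_{\bb A^1}\bb Z(j)^{\cdh}(X)$ vanishes after $\bb Z_{(p)}$-localization for every $p$; since the homotopy groups of this cofiber are then abelian groups all of whose localizations vanish, they vanish, so the map is an equivalence. Combining with Theorem~\ref{thm:main-intro}(9) proves the corollary for $X$ of equal characteristic and finite valuative dimension.

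To remove the dimension hypothesis I would invoke finitariness. The presheaf $\bb Z(j)^{\cdh}$ is finitary --- left Kan extension from finite-type smooth $\bb Z$-schemes followed by cdh sheafification preserves finitary presheaves --- and $\bb Z(j)^{\bb A}$ is finitary by Theorem~\ref{thm:main-intro}(3). Given an arbitrary equal-characteristic qcqs scheme $X$, write $X=\lim_\alpha X_\alpha$ as a cofiltered limit along affine transition maps with each $X_\alpha$ of finite type over $\F_p$ or over $\Q$ (decomposing $X$ into its open-and-closed pieces of fixed characteristic first, if necessary). Each $X_\alpha$ is then Noetherian of finite Krull dimension, hence of finite valuative dimension, and still of equal characteristic, so the previous paragraph applies to show that $\bb Z(j)^{\cdh}(X_\alpha)\to \bb Z(j)^{\bb A}(X_\alpha)$ is an equivalence. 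Passing to the colimit over $\alpha$ and using finitariness on both sides gives the corollary for $X$.

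Beyond the two cited theorems there is no real obstacle here; the only points requiring a little care are the vacuity of $\Hyp$ in equal characteristic, which is an elementary residue-characteristic argument, and the finitariness reduction, for which one must know that $\bb Z(j)^{\cdh}$ and $\bb Z(j)^{\bb A}$ are finitary and that the approximating schemes $X_\alpha$ can simultaneously be arranged to be of finite valuative dimension and of equal characteristic. The genuinely substantive work is of course internal to Theorem~\ref{thm:A1_for_cdh}, whose proof is where the input from syntomic and prismatic cohomology enters.
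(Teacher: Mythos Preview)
Your proof is correct and follows the paper's intended approach: combine Theorem~\ref{thm:main-intro}(9) with the conditional $\bb A^1$-invariance of $\bb Z(j)^\cdh$, using that the key hypothesis is vacuous in equal characteristic. Your finitariness reduction to remove the finite-valuative-dimension assumption is unnecessary, however: the body version of the $\bb A^1$-invariance result, Theorem~\ref{thm:A1inv}, only requires the \emph{base} $S$ (not $X$ itself) to have finite valuative dimension, and an equal-characteristic scheme lives over a prime field $S=\Spec(\bb F_p)$ or $\Spec(\bb Q)$, which has valuative dimension~$0$ (see Corollary~\ref{cor:uncond_fields}).
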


\begin{remark}[Friedlander--Voevodsky motivic cohomology]\label{rem_FV_intro}
Let $k$ be a field which is assumed to satisfy strong resolution of singularities. Friedlander--Voevodsky introduced an $\bb A^1$-invariant motivic cohomology theory of finite type $k$-schemes, by taking the universal cdh sheaf extending motivic cohomology from smooth $k$-schemes \cite[Definition 9.2]{FriedlanderVoevodsky2000}. A consequence of the previous corollary is that the $\bb A^1$-motivic cohomology $\bb Z(j)^{\bb A}$ agrees with the Friedlander--Voevodsky theory on such schemes. See Remark \ref{rmk:FV} for further details.

On the other hand, for algebraic varieties over fields of finite characteristic (where resolution of singularities remains unknown), the $\bb A^1$-motivic cohomology provides an unconditional replacement of the Friedlander--Voevodsky theory. One may thus sometimes eliminate the resolution of singularities hypothesis from results in the literature, for example in Geisser's arithmetic cohomology theory \cite{Geisser2006} or in the study of zero-cycles on singular varieties, e.g., \cite{BindaKrishna2022}.
\end{remark}

Similarly to Theorem \ref{thm:A1_for_cdh}, we also prove that the cdh-motivic cohomology satisfies the projective bundle formula rationally, and modulo $p$ conditionally upon the hypothesis; see \S\ref{sec:pbf-fields}. In particular it follows that cdh-motivic cohomology of equicharacteristic schemes satisfies the projective bundle formula, which is a crucial input into the key technical result of this paper, Theorem~\ref{thm:comparison}, stating that the $\bb A^1$-localisation of cdh motivic cohomology unconditionally satisfies the projective bundle formula. In turn, the latter result is a necessary input into showing that the second comparison map in \eqref{eqn:cdhA1intro} is an equivalence.

Thus the key hypothesis primarily appears as a technical input for bridging the gap between the various motivic cohomology theories of the article, allowing us to ultimately prove properties of all of them. However, we also need this condition to prove our optimal Beilinson--Lichtenbaum equivalence in mixed characteristic; note that the next result holds unconditionally over fields, and away from residue characteristics, by Theorem~\ref{thm:main-intro}(4)\&(5):

\begin{theorem}[See Remark \ref{rem:proof_of_BL_from_intro}]\label{thm:BL_intro}
Let $S$ be a qcqs scheme of finite valuative dimension, $p$ a prime number, and $N\ge0$; assume that $\Hyp(S,p,j)$ holds for all $j\le N$. Then, for all $j\le N$ and all qcqs $S$-schemes $X$, the cofiber of the canonical comparison map \[\bb Z(j)^\bb A(X)/p\To R\Gamma_\sub{\'et}(X[\tfrac1p],\mu_p^{\otimes j})\] is supported in cohomological degrees $\ge j-1$.
\end{theorem}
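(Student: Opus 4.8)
The plan is to reduce the statement to a computation on cdh points, where one may treat separately the cases of residue characteristic $0$, equal characteristic $p$, and mixed characteristic.

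First I would observe that both presheaves occurring in the comparison map are finitary cdh sheaves on $\Sch^{\qcqs,\op}$: the source $\Z(j)^\bb A/p$ by Theorem~\ref{thm:main-intro}(3), and the target $X\mapsto R\Gamma_\et(X[\tfrac1p],\mu_p^{\otimes j})$ because inverting $p$ carries an abstract blow-up square to an abstract blow-up square of $\Z[\tfrac1p]$-schemes, on which cdh descent for torsion \'etale cohomology with invertible coefficients is classical, and because mod-$p$ \'etale cohomology is finitary. By finitariness we may write an arbitrary qcqs $S$-scheme $X$ as a cofiltered limit of finite type $S$-schemes along affine transition maps, and hence reduce to $X$ of finite type over $S$; such $X$ then has finite valuative, and so finite cdh-cohomological, dimension. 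Let $C$ be the cofiber of the comparison map, again a cdh sheaf (on $\Sch^{\qcqs}_{/S}$, whose cdh points are the henselian valuation rings $V$ equipped with a map $\Spec(V)\to S$). It now suffices to show that $\mathcal{H}^i(C)=0$ as a cdh sheaf for $i<j-1$, for then $C(X)=R\Gamma_\cdh(X,C)$ is supported in cohomological degrees $\ge j-1$ by the finite cohomological dimension of $X$; and vanishing of cohomology sheaves is checked on stalks, i.e.\ on the henselian valuation rings $V$ above. Note that $\Hyp(S,p,j')$, which holds for all $j'\le N$ by assumption, applies to each such $V$ of mixed characteristic.

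So fix a henselian valuation ring $V$ over $S$. If $V$ is a $\Q$-algebra then $V[\tfrac1p]=V$, and by Theorem~\ref{thm:main-intro}(4), together with the fact that a finitary Nisnevich sheaf agrees with its cdh sheafification on henselian valuation rings, $\Z(j)^\bb A(V)/p\simeq\tau^{\le j}R\Gamma_\et(V,\mu_p^{\otimes j})$; the cofiber of the comparison map is then $\tau^{\ge j+1}R\Gamma_\et(V,\mu_p^{\otimes j})$, supported in degrees $\ge j+1$. If $V$ is an $\F_p$-algebra then $V[\tfrac1p]=0$, so the target vanishes, while Theorem~\ref{thm:main-intro}(5) gives $\Z(j)^\bb A(V)/p\simeq W_1\Omega^j_{\log}(V)[-j]$, supported in degree $j$. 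In both equicharacteristic cases the cofiber in fact lies in degrees $\ge j$.

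The remaining case, $V$ of mixed characteristic $(0,p)$, is the heart of the matter, and is where the hypothesis $\Hyp(V,p,j)$ is used. After reducing to $V$ of rank $\le1$ via the excision argument indicated after Theorem~\ref{thm:main-intro} --- so that $V[\tfrac1p]$ is the fraction field $K$ and the closed complement of $\Spec(K)$ in $\Spec(V)$ is the characteristic-$p$ residue field $k$ --- I would invoke the arithmetic fracture square for $\Z(j)^\bb A/p$ at $V$: combining Theorem~\ref{thm:main-intro}(4) on the generic point, the comparison of $\bb A^1$-motivic cohomology with mod-$p$ syntomic cohomology $\F_p(j)^\syn$ along the special point, and Gabber--Fujiwara rigidity for the $p$-henselian ring $V$, one obtains a pullback square exhibiting $\Z(j)^\bb A(V)/p$ as the fiber product of $\tau^{\le j}R\Gamma_\et(K,\mu_p^{\otimes j})$ and $\F_p(j)^\syn(V)$ over a common \'etale term. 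Under it the comparison map of the theorem factors as $\Z(j)^\bb A(V)/p\to\tau^{\le j}R\Gamma_\et(K,\mu_p^{\otimes j})\to R\Gamma_\et(K,\mu_p^{\otimes j})$, so its cofiber is an extension of $\tau^{\ge j+1}R\Gamma_\et(K,\mu_p^{\otimes j})$ (supported in degrees $\ge j+1$) by the shift $\fib\!\big(\F_p(j)^\syn(V)\to R\Gamma_\et(K,\mu_p^{\otimes j})\big)[1]$; the content of $\Hyp(V,p,j)$ is exactly that this last fiber is supported in degrees $\ge j$, whence the cofiber lands in degrees $\ge j-1$, as required. The main obstacle is establishing and quantifying this mixed-characteristic fracture square: since $\Z(j)^\bb A$ is defined through the slice filtration on $\KGL$, relating it to $p$-adic syntomic cohomology must go through the left Kan extension description of Theorem~\ref{thm:main-intro}(9) and the motivic-to-syntomic comparison for smooth schemes over mixed-characteristic Dedekind rings underlying the paper; and the bound $j-1$, rather than the sharper bound $j$ visible in equal characteristic, is forced precisely by the single degree shift $[\,1\,]$ coming from the codimension-one gluing in this square.
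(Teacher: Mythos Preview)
Your overall strategy---reduce to cdh stalks, then do a case analysis on the residue characteristic---is exactly what the paper does (see Remark~\ref{rem:proof_of_BL_from_intro}, which reduces the statement to Lemma~\ref{lemm:use-key-hyp}(1) combined with Corollary~\ref{corol_cdh_is_A}). But there is a slip in the characteristic-$p$ case and a genuine gap in the mixed-characteristic case.

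In the characteristic-$p$ case you say the cofiber lies in degrees $\ge j$. This is off by one: when $V$ is an $\F_p$-algebra the target vanishes, so the cofiber is the \emph{shift} $\bb Z(j)^\bb A(V)/p\,[1]\simeq \Omega^j_{V,\log}[-j+1]$, sitting in degree $j-1$. This is harmless for the theorem, but it means the sharp bound $j-1$ is already forced here, not in mixed characteristic as you suggest at the end.

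In the mixed-characteristic case your ``arithmetic fracture square'' is not a thing as written: there is no pullback square with $\bb Z(j)^\bb A(V)/p$, $\tau^{\le j}R\Gamma_\et(K,\mu_p^{\otimes j})$, and $\F_p(j)^\syn(V)$ over an unspecified common \'etale corner. What one actually needs is the direct identification
\[
\bb Z(j)^\bb A(V)/p \;\simeq\; \bb F_p(j)^\cdh(V) \;\simeq\; \tau^{\le j}\F_p(j)^\syn(V),
\]
and this is the hard part: it requires Corollary~\ref{corol_cdh_is_A} (whose proof uses the full hypothesis $\Hyp(S,p,j')$ for all $j'\le N$ and rests on the main comparison Theorem~\ref{thm:comparison}) together with the syntomic comparison Theorem~\ref{theorem_syn_comp}. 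Once that identification is in hand, the paper applies the octahedral axiom to the composite $\tau^{\le j}\F_p(j)^\syn(V)\to \F_p(j)^\syn(V)\to R\Gamma_\et(V[\tfrac1p],\mu_p^{\otimes j})$: the first cofiber is $\tau^{>j}\F_p(j)^\syn(V)$ (degrees $\ge j+1$) and the second is in degrees $\ge j$ by $\Hyp(S,p,j)$. So the mixed-characteristic case actually gives the stronger bound $\ge j$; no extra shift appears there. Your claimed identification $\mathrm{cofib}(\tau^{\le j}f)\simeq \fib(f)[1]$ is not correct in general (it would fail whenever $\F_p(j)^\syn(V)$ has a contribution in degree $j+1$, which it can).

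You correctly flag the main obstacle at the end, but you should replace the fracture-square heuristic by the identification above and cite the results that establish it.
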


Without knowing the validity of the key hypothesis, we still have the following explicit description of $\bb A^1$-motivic cohomology with finite coefficients: \[\bb Z(j)^\bb A(X)/p\simeq L_{\bb A^1}L_\sub{cdh}\tau^{\le j}\bb F_p(j)^\sub{syn},\] i.e., the $\bb A^1$-localisation of the cdh sheafification of a truncation of mod-$p$ syntomic cohomology. Here we use syntomic cohomology as introduced for $p$-complete rings in \cite{BhattMorrowScholze2, AntieauMathewMorrowNikolaus2022}, and then extended to arbitrary schemes by Bhatt--Lurie \cite{BhattLurie2022} by gluing contributions from $p$-adic \'etale cohomology. In \S\ref{sec:syntomic} we provide a different exposition of how to extend syntomic cohomology beyond $p$-complete rings, using $K$-theory and topological cyclic homology instead of the theory of absolute prismatic cohomology. Syntomic cohomology is one of the main new tools which makes the results of this article possible.

\begin{remark}[Role of cdh-motivic cohomology in non-$\bb A^1$-invariant motivic cohomology]\label{rem:nonA1_intro}
The cdh-motivic cohomology, even without knowing unconditionally whether it coincides with the true $\bb A^1$-motivic cohomology, plays an important role in the construction of the motivic cohomology of qcqs schemes by the last two authors \cite{ElmantoMorrow2023} and Bouis \cite{Bouis2025}. The point of departure of their construction is the pullback square of spectra, natural for each qcqs scheme $X$,
\begin{equation}\label{eq:mainsq}
\begin{tikzcd}
\K(X) \ar{r}{\text{tr}} \ar{d} & \TC(X) \ar{d} \\
\KH(X) \ar[swap]{r}{L_\sub{cdh}\text{tr}} & L_{\cdh}\TC(X),
\end{tikzcd}
\end{equation}
established by Kerz--Strunk--Tamme \cite{KerzStrunkTamme2018} and Land--Tamme \cite{LandTamme2019} (see also \cite[Theorem 3.8]{ElmantoMorrow2023}). Here, $\TC$ is the topological cyclic homology of B\"okstedt--Hsiang--Madsen and Nikolaus--Scholze \cite{BokstedtHsiangMadsen1993, NikolausScholze2018}, the top horizontal map is the cyclotomic trace map, and the bottom horizontal map is its cdh sheafification. The past few years have seen rapid developments of ``motivic'' filtrations on $\TC$ \cite{BhattMorrowScholze2, HahnRaksitWilson2022}; cdh sheafifying then produces a compatible filtration on $L_\sub{cdh}\TC(X)$.

To produce a motivic filtration on $\K(X)$ itself (whose graded pieces will correspond to the desired theory of motivic cohomology) the strategy adopted in \cite{ElmantoMorrow2023, Bouis2025} is to ``glue'' along $L_\sub{cdh}\mathrm{tr}$ the existing filtrations on $\TC(X)$ and $L_\sub{cdh}\TC(X)$ to a suitable motivic filtration on $\KH(X)$. The relevant filtration on $\KH(X)$ to carry out this argument turns out to be that corresponding to cdh-motivic cohomology since, in contexts where the key hypothesis is not known, it remains an open question whether $L_\sub{cdh}\mathrm{tr}$ carries the $\bb A^1$-motivic filtration to the existing motivic filtration on $L_\sub{cdh}\TC(X)$ (the difficulty being that the latter is far from being $\bb A^1$-invariant \cite{Elmanto2021}).

Upon then imposing $\bb A^1$-invariance everywhere, the terms $\TC$, $L_\sub{cdh}\TC$, and their graded pieces mostly vanish, and one deduces that the $\bb A^1$-localisation of the motivic cohomology agrees with $L_{\bb A^1}\bb Z(j)^\sub{cdh}$, which in turn is $\bb Z(j)^\bb A$ by  Theorem \ref{thm:main-intro}(9). See \cite[Remark 6.2]{ElmantoMorrow2023} \cite[Theorem 10.5]{Bouis2025} for precise statements.
\end{remark}

\subsection{Relation to, and dependence on, existing work in motivic cohomology}\label{ss:dep_on_sh}
Many of our main results (e.g., Theorem \ref{thm:main-intro}) make no reference to motivic homotopy theory or the slice filtration, even though they are important tools for some of our constructions and proofs. We have therefore made an effort to make this paper accessible to a broad range of readers interested in algebraic geometry and cohomology theories, by making the use of motivic homotopy theory as self-contained as possible. In \S\ref{sec:prelim2-motsp}, we give a detailed and uniform review of the necessary aspects of the theory. This is possible because we only require a relatively small amount of the diverse corpus of motivic homotopy theory. In particular, regarding previous work on motivic cohomology, the existing results which we use only concern the case of smooth algebraic varieties over fields.

In the rest of this subsection we explain in more detail the relation of this paper to past work.

\subsubsection{Classical motivic cohomology over fields}
Let $k$ be a field. Bloch's cycle complex \cite{Bloch1986b}, built via algebraic cycles, was traditionally adopted as a candidate for the motivic cohomology of smooth algebraic varieties over $k$ (perhaps under a hypothesis of quasi-projectivity, which we ignore). However, it is very difficult to relate the cycle complex to algebraic $K$-theory; in particular, the original construction of the motivic filtration on the $K$-theory of smooth varieties by Bloch and Lichtenbaum \cite{BlochLichtenbaum} contained gaps which were later fixed by Levine \cite{Levine2008}. The cycle complex is also not contravariantly functorial for arbitrary morphisms between smooth varieties, and is not multiplicative in the modern sense: more precisely the cycle complexes do not assemble to form an $\bb E_\infty$-algebra in graded presheaves of complexes on smooth varieties.

Instead of directly relating the cycle complex to algebraic $K$-theory, Voevodsky's methods in motivic homotopy theory (outlined above in \S\ref{subsec:vv-approach}) provide a robust construction of a motivic filtration on algebraic $K$-theory of smooth varieties, which was then a posteriori related to the cycle complex by Voevodsky in characteristic zero \cite{voevodsky-zero} and Levine \cite{Levine2008} in general. Replacing the cdh topology by the coarser Nisnevich topology, parts (4) and (5) of Theorem \ref{thm:main-intro} are deep results of Rost--Voevodksy \cite{voevodsky-z2,voevodsky-zl}  (see \cite{HaesemeyerWeibel2019} for a textbook treatment) and Geisser--Levine \cite{GeisserLevine2000}.

We freely quote all these classical results about motivic cohomology of smooth algebraic varieties. However, we are careful not to attribute more to them than they say: they are typically formulated at the level of homotopy categories, or at best in model categories, and do not always treat multiplicative structure. As an example, we are not aware of any source in the literature which establishes a Beilinson--Lichtenbaum equivalence relating motivic cohomology and truncated \'etale cohomology as $\bb E_\infty$-algebras in graded presheaves of complexes on smooth varieties. A goal of \S\ref{ss_coherent_BL} is to resolve such issues: we take the main classical results concerning motivic cohomology and motivic homotopy theory of smooth algebraic varieties, and upgrade them to highly coherent $\infty$-categorical statements.

\subsubsection{Classical motivic cohomology over Dedekind domains}\label{sss:classical_Ded}
Now let $B$ be a mixed characteristic Dedekind domain. Bloch's cycle complex was extended to smooth $B$-schemes by Levine \cite{Levine2001}, again to provide a candidate for motivic cohomology (and, more generally, for Borel--Moore motivic homology of finite type $B$-schemes). Its key properties, including Nisnvich versions of Theorem \ref{thm:main-intro}(4)\&(5), were established by Geisser \cite{Geisser2004}. As in the case of smooth varieties, the Bloch--Levine cycle complex is not functorial or multiplicative in an $\infty$-categorical sense.

Spitzweck \cite{Spitzweck2018} showed that the Bloch--Levine cycle complex on smooth $B$-schemes is represented by a motivic spectrum in $\SH(B)$, which work of the first author partly joint with Hoyois \cite{Bachmann2022,BachmannHoyois2021} then identified with $s^0(1_B)$.

However, we completely circumvent the Bloch--Levine cycle complex in mixed characteristic, as well as  the above papers of Levine, Geisser, and Spitzweck, and propose a different development of the foundations of motivic cohomology of smooth $B$-schemes as follows. Riou's theory of Adams operations on $\KGL$ \cite{Riou2010} reduces questions about rational motivic cohomology to $\KGL$, so in practice it is sufficient to analyse motivic cohomology of smooth $B$-schemes $p$-adically for each prime number $p$. Our strategy is to first construct a theory of so-called $p$-adic Beilinson--Lichtenbaum cohomology in \S\ref{ss_BL}, by truncating syntomic cohomology in degrees $\le$ weight. We prove in \S\ref{ss_BL} that this cohomology theory is $\bb A^1$-invariant and satisfies the projective bundle formula, first on smooth algebraic varieties using Voevodsky's theory of presheaves with transfer, then on smooth $B$-schemes by establishing a localisation sequence.

The $p$-adic Beilinson--Lichtenbaum cohomology therefore corresponds to a motivic ring spectrum $H\bb Z_{p,B}^\sub{BL}\in\SH(B)$, and the theory of syntomic cohomology shows that it behaves well under base change (see \S\ref{ss_BL2}; this step is similar to one in Spitzweck's construction \cite[\S8]{Spitzweck2018}, but the modern approach to syntomic cohomology makes it much cleaner). Combined with the above cited work of the first author and Hoyois, this allows us in \S\ref{ss:slice-dedekind} to show that $s^0(1_B)_p^\comp\simeq H\bb Z_{p,B}^\sub{BL}\simeq s^0(\KGL_B)_p^\comp$ by reducing to the classical case of fields. Running the machinary of \S\ref{ss_coherent_BL} again then produces a highly structured Beilinson--Lichtenbaum equivalence for  the motivic cohomology of smooth $B$-schemes.

\subsubsection{The six functor formalism and cdh descent}
We use, and have nothing new to say about, Ayoub's six functor formalism in motivic homotopy theory and his proper base change theorem \cite{Ayoub2007,Ayoub2007II}. These underly the relation of motivic homotopy theory to cdh descent \cite{Cisinski2013} and are crucial to an argument we make repeatedly, namely reduction to the case of $\SH$ of fields.

\subsubsection{Framed correspondences and motivic infinite loop spaces} In \cite{ElmantoHoyoisKhanSosniloYakerson2021}, Hoyois, Khan, Sosnilo, Yakerson and the second author developed a motivic analog of infinite loop space theory. One of the important consequences of this theory is an explicit description of various effective spectra in terms of the ``framed suspension spectra'' of various nice moduli stacks. These geometric descriptions are used to prove that certain spectra are stable under pullbacks. In this paper we use the motivic spectrum $\cal V$ \cite{HoyoisJelisiejewNardinTotaroYakerson2021}, which we recall in \S\ref{subsub:V}. Its usefulness lies in the facts that, as well as being stable under pullback, it provides a candidate model for the effective cover of $\KGL$ (in fact, we will eventually see that they agree) and its zeroth slice agrees with that of the motivic sphere (Proposition~\ref{prop_1_vs_V}).

\subsection{Summary}
We highlight the main content of each section for the reader's convenience.
\begin{enumerate}
\item[\S\ref{s:prelimI}] There are two preliminaries sections, the first of which covers three topics. We discuss our conventions on filtered and graded objects, before moving on in \S\ref{subsec:topologies} to recalling the Grothendieck topologies we need, and the related topic of Milnor squares and excision. Lastly, \S\ref{subsec:lke} is a collection of results on left Kan extensions, some of which we could not find in the literature. Most notably, Proposition~\ref{prop:lke-restrict} is used throughout the paper to show that the value of the left Kan extension of a functor on a particular type of local ring (e.g., henselian) can be calculated by left Kan extending from the same type of local rings. 

\item[\S\ref{sec:prelim2-motsp}] The second preliminary section is mostly aimed towards non-specialists in motivic homotopy theory, although we also present some machinary which seems not to be available in the literature.  In particular, one of the key technical themes of this paper is that we exploit the formal, universal aspects of motivic spectra while simultaneously explicitly manipulating the cohomology theories they represent. To bridge this gap between motivic spectra and cohomology theories, we use the symmetric monoidal $\infty$-category of graded $\bb A^1$-invariant Nisnevich sheaves ``equipped with a first Chern class'', which we formally define as modules over a certain ring object in Construction~\ref{cons:lin-cohom-theories}. Objects in this $\infty$-category are relatively explicit and of a classical flavour, but nevertheless retain enough coherence to induce an associated ``Eilenberg--MacLane'' motivic spectrum; see Definition~\ref{definition:graded-p1-mult}. 


\item[\S\ref{sec:a1-mot-schemes}] Here we introduce the main cohomology theory of the paper, namely $\bb A^1$-motivic cohomology, denoted by $\bb Z(\star)^{\bb A}$. This is constructed by applying the slice filtration to the motivic spectrum $\KGL$ representing homotopy $K$-theory, which we review in \S\ref{subsec:KGL}.  A variant, called $\bb A^1$-cdh-motivic cohomology, maps to $\bb A^1$-motivic cohomology and will be used to control the latter. The rest of the section is dedicated to studying these cohomology theories in low weights (using  only considerations in motivic homotopy theory) in \S\ref{subsec:low-weights}, explaining their basic properties which follow from motivic homotopy theory, and finally analysing their rational structure using Adams operators.
 
\item[\S\ref{sec:syntomic}] We present the theory of syntomic cohomology of qcqs schemes, defined in Definition~\ref{def:bms-decomplete} by gluing the syntomic cohomology of the $p$-adic completion \cite{BhattMorrowScholze2} to the $p$-adic \'etale cohomology of the generic fibre of the scheme. Our exposition of the theory freely uses $K$-theory and topological cyclic homology, in contrast to Bhatt-Lurie's \cite{BhattLurie2022} prismatic approach. The main properties of syntomic cohomology are summarized in Theorem~\ref{theorem_syntomic_properties}. By Nisnevich locally truncating syntomic cohomology in degrees $\le$ weight we define Beilinson--Lichtenbaum cohomology in \S\ref{ss_BL}; it is a candidate for the $p$-adic motivic cohomology of sufficient regular schemes. As justification, we show on smooth schemes over fields and mixed characteristic Dedekind domains that it satisfies $\bb A^1$-invariance, the $\bb P^1$-bundle formula, and a localisation sequence.

\item[\S\ref{section_motivic_DD}] The goal of this section is to relate $\bb A^1$-motivic cohomology to the Beilinson--Lichtenbaum cohomology of \S\ref{ss_BL}. We do this by axiomatically defining, for any prime number $p$, a category $\frak{BL}^p$ of ``motivically regular'' schemes which satisfy versions of the $p$-adic Beilinson--Lichtenbaum conjectures. Although the precise axioms are mostly formulated at the level of homotopy categories without multiplicative structure, we show that the $p$-adic $\bb A^1$-motivic cohomology of schemes in $\frak{BL}^p$ identifies with Beilinson--Lichtenbaum cohomology in a highly coherent and multiplicative fashion. We check that smooth algebraic varieties belong to $\frak{BL}^p$ by quoting the main classical theorems about their motivic cohomology, and then use the resulting highly coherent Beilinson--Lichtenbaum comparison to carry out a base change argument; the conclusion is that smooth schemes over mixed characteristic Dedekind domains also belong to $\frak{BL}^p$, and so their motivic cohomology also enjoys a highly coherent Beilinson--Lichtenbaum comparison. As already discussed in \S\ref{sss:classical_Ded}, this argument is independent of the previous work of Levine, Geisser, and Spitzweck \cite{Geisser2004, Levine2001, Spitzweck2018} on motivic cohomology in mixed characteristic.

\item[\S\ref{sec:cdh-mot}] Here we introduce the most elementary of our motivic filtrations for $\KH$ and the associated so-called cdh-motivic cohomology; the latter is given by cdh-sheafifiying the left Kan extension of the $\bb A^1$-motivic cohomology of smooth $\bb Z$-schemes. A priori it is an auxiliary invariant used to analyse $\bb A^1$-motivic cohomology. In \S\ref{ss:Milnor_excision} we show that it satisfies Milnor excision, which provides a crucial technique for later reducing questions about cdh-motivic cohomology of general schemes to the case of henselian valuation rings of rank $\leq 1$. In Theorem~\ref{thm:singular-bl} we present an analogue for arbitrary schemes of the Beilinson--Lichtenbaum equivalence.

\item[\S\ref{sec:pbf-a1}] In this technical section, we establish $\bb A^1$-invariance (Theorem~\ref{thm:A1inv}) and $\bb P^1$-bundle formula (Theorem~\ref{thm:pbf-fields}) for cdh-motivic cohomology, conditionally on the validity of the key hypothesis $\Hyp(S, p, j)$. This hypothesis is formulated entirely in terms of syntomic and \'etale cohomology, is discussed in \S\ref{ss:key}, and is known to hold over fields and in low weights.

\item[\S\ref{sec:a1-comparison}]
Here we prove our main theorems: we relate slices of $\KGL$ and the motivic sphere spectrum (Corollary~\ref{eq:1-to-k}), compare $\bb Z(\star)^{\bb A}$ and $\bb Z(\star)^{\bb A, \cdh}$ (Theorem~\ref{thm:a1-a1cdh}), and describe $\bb A^1$-motivic cohomology in low weights (Corollary~\ref{corol:low-wts-integral}). These theorems all depend on Theorem~\ref{thm:comparison}, identifying $\bb A^1$-cdh-motivic cohomology with the $\bb A^1$-localisation of cdh-motivic cohomology; in turn, that theorem reduces to Theorem \ref{thm:pbf-a1}, asserting that the $\bb A^1$-localisation of cdh-motivic cohomology unconditionally satisfies the $\bb P^1$-bundle formula. The proof of the latter theorem is somewhat indirect (the reader is encouraged to read the summary at the beginning of \S\ref{ss:main_proof}), but it involves several results and constructions which are of independent interest. Of note is the introduction of \emph{Kato $K$-theory} $\K^\sub{Kato}$ in \S\ref{subsub:Kinfsel} (resp.~its mod-$p$ motivic cohomology analog $\bb F_p(\star)^\sub{Kato}$, defined in \S\ref{subsub:infinitesimal-mot}) which measures the difference between $K$-theory and Clausen's Selmer $K$-theory (resp.~the difference between mod-$p$ cdh-motivic cohomology and cdh-sheafified mod-$p$ syntomic cohomology). A crucial input is that these invariants enjoy a \emph{localisation property} as formulated in \S\ref{subsub:localisation} which allows us to ``separate'' their values on schemes of different characteristics.

\item[\S 10] The last section speed-runs the paper through a quadratic, more precisely Grothendieck-Witt, variant of the main results. Since our arguments are mostly similar to the $K$-theory versions, we omit some details. The main result is Theorem~\ref{thm:ko} which calculates certain generalised slices of the motivic spectrum $\KO$ representing homotopy Hermitian $K$-theory on schemes over $\bb Z[\tfrac12]$, which one can think of as an algebro-geometric refinement of Bott's famous calculation of the homotopy groups of the  $8$-periodic real $K$-theory spectrum. In place of $\bb A^1$-motivic cohomology, we have $\bb A^1$-Milnor--Witt motivic cohomology which is defined in Definition~\ref{definition_A^1_mot-_coh_mw}. In Theorem~\ref{thm:hz-tilde-main} we establish a pullback square describing this motivic spectrum, which in the case of fields was first suggested by Morel and established by the first author\cite{bachmann-very-effective}. This motivic spectrum, in turn, appears as the zero-th graded piece of a generalised slice filtration on $\KO$. \end{enumerate}

\subsection{Notation} We denote the $\infty$-category of spaces/anima/$\infty$-groupoids by $\Spc$. The stable $\infty$-category of spectra is denoted by $\Spt$ and the derived $\infty$-category of complexes of $\Lambda$-modules, where $\Lambda$ is a commutative ring, is denoted by $\D(\Lambda)$. For any $\infty$-category $\scr C$, and objects $X,Y\in\cal C$, we write $\Map_{\cal C}(X, Y) \in \opp{Spc}$ for the mapping space between the objects. If $\scr C$ is furthermore stable, we can also form the spectrum of maps $\opp{map}_{\cal C}(X, Y)$. The rest of notation will be introduced as needed throughout the paper.

For ease of reference, the following is a list of the main cohomology theories we study in this paper and the first place where they appear. 

\begin{center}
\begin{tabular}{lll}
$\Z(j)^{\A}$ & $\bb A^1$-motivic cohomology & Definition~\ref{definition_A^1_mot-_coh}\\
$\Z(j)^{\A,\sub{cdh}}$ & $\bb A^1$-cdh-motivic cohomology & Definition~\ref{definition_A^1_cdh_mot_coh}\\
$\Z(j)^{\sub{cdh}}$ & cdh-motivic cohomology & Definition~\ref{def:cdh}\\
$\Z_p(j)^\sub{syn}$ & $p$-adic syntomic cohomology & Definition~\ref{def:bms-decomplete}\\
$\Z_p(j)^\sub{BL}$ & $p$-adic Beilinson--Lichtenbaum cohomology & Definition~\ref{def:bl-cohomology}\\
\end{tabular}
\end{center}

\subsection{Acknowledgments}
This paper has undergone a long fermentation since 2021, when we first understood that the results were within reach. Over that time we have benefited from discussions with numerous people, including Federico Binda, Tess Bouis, Lars Hesselholt, Marc Hoyois, Ryomei Iwasa, Markus Land, Jacob Lurie, Marc Levine, and Georg Tamme; we express our gratitude to them and apologise to anyone whose name we have forgotten to include.

EE acknowledges support from the NSERC Discovery grant RGPIN-2025-07114, ``Motivic cohomology: theory and applications'' and an Erik Ellentuck fellowship during his stay at the Institute for Advanced Study. TB acknowledges support from Deutsche Forschungsgemeinschaft (DFG, German Research
Foundation) through the Collaborative Research Centre TRR 326 Geometry and Arithmetic of Uniformized Structures, project number 444845124. MM acknowledges support of the James D.~Wolfensohn Fund and the Membership in Mathematics fund at the Institute for Advanced Study, for his stay there in Spring 2024. This project has received funding from the European Research Council (ERC) under the European Union's Horizon 2020 research and innovation programme (grant agreement No.~101001474). We thank the Mathematisches Forschungsinstitut Oberwolfach for excellent working conditions during the Algebraic $K$-theory workshop in August 2025, during which this paper was completed.

\section{Preliminaries I}\label{s:prelimI}

\subsection{Filtrations and gradings} \label{ss_filtrations}
Our conventions regarding filtrations follow \cite{ElmantoMorrow2023}, which we briefly review in this section. For a stable $\infty$-category $\cal C$ we have the associated stable $\infty$-categories of \emph{filtered objects} and {\em graded objects}:
\[
\Fil\cal C := \Fun((\Z, \geq)^{\op}, \cal C) \qquad \text{and}\qquad \Gr\cal C := \Fun(\Z^{\delta}, \cal C).
\]
Here $(\Z, \geq)$ denotes the totally ordered set of the integers and $\Z^{\delta}$ is the discrete category of the integers. Our filtrations are thus, by convention, $\bb Z$-indexed and decreasing. The functor of taking associated graded is denoted by $\mathrm{gr}^\star:\Fil\cal C \rightarrow  \Gr\cal C$.

Filtered objects of $\cal C$ are usually written as $\Fil^\star M$, for some $M \in \cal C$. This means implicitly that there is a map in $\cal C$,
\[
\Fil^{-\infty}M:=\colim_{j\to -\infty}\Fil^jM\to M,
\]
and we say that the {\em filtration on $M$ is exhaustive} if this map is an equivalence,  {\em $\bb N$-indexed} when $\Fil^jM\to M$ is an equivalence for all $j\le 0$ and {\em complete} if $\lim_{j\to\infty}\Fil^jM=0$.

Assume now that $\cal C$ is presentably symmetric monoidal.
Then $\cal C^{\Z^{\op}}$ and $ \cal C^{\Z^{\delta}}$ admit canonical symmetric monoidal structures given by Day convolution, such that the functor of taking associated graded is strong symmetric monoidal\NB{Add references?}. For the sake of simplicity, we sometimes say that a filtered (respectively graded) object is {\em multiplicative} if it is equipped with the structure of $\mathbb{E}_{\infty}$-algebra, i.e., it is an object of $\CAlg(\Fil \cal C)$ (respectively of $\CAlg(\Gr \cal C)$); similarly, we will sometimes say that maps between such structured objects are \emph{multiplicative}.

\begin{remark}[Shearing]\label{rmk:shearing1}
Continue to assume that $\cal C$ is a presentably symmetric monoidal stable $\infty$-category. We may ask if there is a symmetric monoidal equivalence \begin{equation}\Gr \scr C \quis \Gr \scr C,\qquad E^{\star}\mapsto E^{\star}[2\star].\label{eqn:shear1}\end{equation} Below we sketch the positive answer to this question when $\cal C$ is $\bb Z$-linear; see \cite[Proposition 3.3.4]{Raksit2020} for further details.

Firstly, by \cite[proof of Lemma 6.24]{bachmann-linearity}, such an equivalence is the same as a map of symmetric monoidal $\infty$-groupoids \begin{equation} \Z \to \Pic(\Gr \scr C) \wequi \Pic(\scr C) \times \Z \label{eqn:shear2}\end{equation} sending $1$ to $1\lra{1}_{\cal C}[2]$ ($:=$ the unit placed in grading degree $1$ and homological degree $2$), or in other words a map of commutative monoids $\Z \to \Pic(\scr C)$ sending $1$ to $1_\scr{C}[2]$. For any commutative ring $A$ the $\infty$-groupoid $\Pic(\D(A))$ is $1$-truncated and equivalent, as a symmetric monoidal $1$-groupoid (i.e., as a Picard groupoid) to $\Pic^\bb Z(A)$ ($:=$ the symmetric monoidal $1$-groupoid of $\bb Z$-graded line bundles). We may therefore define a map of symmetric monoidal $1$-groupoids $\bb Z\to \Pic(\D(A))$ by sending each $m\in\bb Z$ to $A[2m]$, and declaring the lax monoidal structure $A[2m]\otimes_AA[2m']\to A[2(m+m')]$ to be the identity map. In the case of $A=\bb Z$ (where all units are $2$-torsion), this is even the unique symmetric monoidal functor $\bb Z\to\Pic(\D(\bb Z))$ sending $1$ to $\bb Z[2]$.

More generally, for any $\bb Z$-linear presentably symmetric monoidal stable $\infty$-category $\cal C$, precomposing with the unit map $\D(\bb Z)\to \cal C$ thus defines a map as in \eqref{eqn:shear2} and so induces a symmetric monoidal equivalence as in \eqref{eqn:shear1}. We will refer to this process as {\em shearing}.

On the other hand, the $\infty$-category $\cal C=\Spt$ of spectra does not admit any map as in \eqref{eqn:shear2} and does not support any equivalence as in \eqref{eqn:shear1}.
\end{remark}

\begin{remark}[Beilinson--Lichtenbaum truncation]\label{rem:BL_truncation}
Given a graded complex $C\in\opp{Gr}\D(\bb Z)$, we will often consider the graded complex $\tau^{\le\star}C$ obtained by truncating $C$ in degrees less than or equal to the weight, i.e., $(\tau^{\le\star}C)^j:=\tau^{\le j}C^j$ for $j\in\bb Z$. The endofunctor $\tau^{\le\star}:\opp{Gr}\D(\bb Z)\to \opp{Gr}\D(\bb Z)$ is the connective cover for a $t$-structure on $\opp{Gr}\D(\bb Z)$ compatible with the symmetric monoidal structure \cite[\S3.3]{Raksit2020}; in particular, the functor is lax symmetric monoidal and so preserves $\bb E_\infty$-algebras.
\end{remark}

\subsection{Grothendieck Topologies}\label{subsec:topologies}
Here we recall the main Grothendieck topologies which will appear: the Zariski, Nisnevich, \'etale, and fppf topologies, and their cousins the rh, cdh, \'eh, and h topologies. They are topologies on $\opp{Sch}^\sub{qcqs}$, the category of all quasi-compact, quasi-separated schemes, which we will also restrict to $\mathrm{Sch}_S^\sub{qcqs}$, the category of qcqs schemes over a base qcqs scheme $S$ (often affine). We also restrict the Nisnevich and \'etale topologies to $\Sm_S$, which denotes the category of qcqs smooth $S$-schemes (equivalently, assuming $S$ qcqs, finitely presented smooth $S$-schemes).

We define our topologies using the language of covers and pretopologies in the sense of \cite[Exp. II, Definition 1.3]{SGA_IV_I} (in particular, when we define the covers of a pretopology, we do not require that a collection of morphisms refinable by a cover is itself a cover); any pretopology (defined in terms of covers) gives rise to a topology (defined in terms of sieves) \cite[Exp. II, 1.1.6]{SGA_IV_I}, and the notion of a sheaf depends only on the latter.

\begin{definition}\label{definition_topologies}
The \emph{Zariski}, {\em Nisnevich}, {\em \'etale}, and {\em fppf} pretopologies are defined on $\opp{Sch}^\sub{qcqs}$ by declaring covers to be \emph{finite} collections of morphisms $\{f_i:X_i\to X\}_i$  such that
\begin{enumerate}
\item Zariski: each $f_i$ is an open immersion and $\bigsqcup_iX_i\to X$ is surjective;
\item Nisnevich: each $f_i$ is \'etale and $\bigsqcup_iX_i\to X$ is surjective on $k$-points for every field $k$;
\item \'etale: each $f_i$ is \'etale and $\bigsqcup_iX_i\to X$ is surjective;
\item fppf: each $f_i$ is flat of finite presentation and $\bigsqcup_iX_i\to X$ is surjective.
\end{enumerate}

We next discuss the h cousins of the topologies of the previous definition. An \emph{abstract blowup square} is a cartesian square of qcqs schemes
\begin{equation}
\begin{tikzcd}
Y' \ar{r} \ar{d} & X' \ar{d}{p} \\
Y \ar[swap]{r}{i} & X,
\end{tikzcd}
\end{equation}
where $i$ is a finitely presented closed immersion and $p$ is a finitely presented, proper morphism inducing an isomorphism $X'\setminus Y'\isoto X\setminus Y$.

The {\em cdh} pretopology on $\opp{Sch}^\sub{qcqs}$ is defined to be the pretopology generated by the Nisnevich pretopology and by $\{Y\to X,\,X'\to X\}$ as one runs over all abstract blow-up squares of qcqs schemes. Similarly, the {\em rh}, {\em \'eh}, respectively {\em h}, pretopology on $\opp{Sch}^\sub{qcqs}$ is defined to be the pretopology generated by the Zariski, \'etale, respectively fppf, topology and by $\{Y\to X,\,X'\to X\}$ as one runs over all abstract blow-up squares of qcqs schemes.
\end{definition}

Diagrammatically, these topologies may be summarised as follows, where an arrow $\sigma\leftarrow \tau$ indicates that $\tau$ is finer than $\sigma$:

\[\xymatrix{
\text{rh} \ar[d] & \text{cdh} \ar[d] \ar[l] & \text{\'eh}\ar[l]\ar[d] & \text{h}\ar[d]\ar[l] \\
 \text{Zar} & \text{Nis} \ar[l] &\text{\'et}\ar[l] & \text{fppf}\ar[l]
}\]
The rh topology will not appear again, and the h topology will only be used in \S\ref{sec:singular-bl}.

\begin{remark}[Restriction to finitely presented schemes]\label{remark_fp}
For any of our pretopologies $\tau=$ Zar, Nis, \'et, fppf, cdh, \'eh, and h, every $\tau$-cover $\{X_i\to X\}_i$ in $\opp{Sch}^\sub{qcqs}$ is a finite set consisting of finitely presented morphisms. Indeed, this follows formally from the facts that these properties hold for the generators of the pretopology and that the generators are closed under base change.\NB{ref?}

The $\tau$-topology therefore restricts to $\opp{Sch}^\sub{fp}_X$ for any qcqs scheme $X$, and the restriction functor commutes with sheafification.
Furthermore, suppose that $X=\lim_\lambda Y_\lambda$ is a cofiltered limit of qcqs schemes along affine transition maps. Then any $\tau$-cover $\{X_i\to X\}_i$ comes via base change from a $\tau$-cover of $Y_\lambda$ for some index $\lambda\gg0$ (e.g. use \cite[Tag 01ZM]{Stacks}).
\end{remark}

\subsubsection{Relationship with other definitions in the literature}
The topologies of Definition \ref{definition_topologies} agree with other definitions found in the literature. Firstly, as far as we are aware there is no ambiguity in the literature about what is meant by the Zariski, \'etale, or fppf topologies on qcqs schemes, and in any case our definition agrees with what is found in all standard sources.

In the case of the Nisnevich topology, there are several possible definitions which may a priori seem distinct, but in fact they all coincide. See \cite[Proposition~A.2]{BachmannHoyois2021}.

Next, we follow the convention of \cite[Appendix~A]{LandTamme2019} and \cite[\S 4]{AntieauMathewMorrow} that the morphisms in our abstract blowup squares are required be finitely presented, which is indispensable to ensure cdh sheafiness of certain non-finitary presheaves (e.g., the fibre of the trace map $\K\to \TC$).
Such squares are called finitely presented abstract blowup squares in \cite{ElmantoHoyoisIwasaKelly2021}.
In any case, all of the aforementioned sources define the cdh topology in terms of finitely presented maps, and so agree. Its restriction to finitely presented schemes over a field agrees with the original definition of Suslin--Voevodsky \cite{SuslinVoevodsky2000}.

For finite type schemes over a Noetherian ring, the \'eh topology was introduced by Geisser \cite[Definition~2.1]{Geisser2006} in the context of his arithmetic cohomology, and its points are studied in \cite{GabberKelly2015}; in that context it agrees with our definition.

Finally, the results of \cite[\S 2]{BhattScholze2017} and \cite[Theorem~3.12]{Rydh2010} show that our h-topology coincides with the following equivalent topologies on $\opp{Sch}^\sub{qcqs}$: the topology generated by Zariski covers and by covers $\{X'\to X\}$, where $X'\to X$ is any finitely presented proper surjection of qcqs schemes, or where $X' \to X$ is any finitely presented v-cover of qcqs schemes in the sense of \cite[Definition~2.1]{BhattScholze2017}, or the h-topology on $\opp{Sch}^\sub{qcqs}$ as found in \cite[Tag 0ETQ]{Stacks}.
In particular, the restriction of our h-topology to finitely presented schemes over a qcqs base coincides with that of \cite[Definition~2.7]{BhattScholze2017}, and coincides with the h-topology of Voevodsky's thesis in the case of finite type schemes over a field \cite{Voevodsky1996}.

\subsubsection{Detecting sheaves}
Next recall how to detect sheafiness with respect to the topologies of Definition \ref{definition_topologies}. For the rest of this subsection, we let $\cal D$ be a complete $\infty$-category (that is, it admits all limits) and let $S$ denote a qcqs scheme.

\begin{proposition}\label{proposition_checking_sheafiness}
Let $F:\opp{Sch}^\sub{qcqs,op}_S\to\cal D$ be a presheaf.
\begin{enumerate}
\item $F$ is a Nisnevich sheaf if and only if it sends any Nisnevich distinguished square (as in \cite[Example 2.1.2(2)]{AsokHoyoisWendt2017}) to a cartesian square and $F(\emptyset) = *$.
\item Assume $F$ is a Nisnevich (respectively~\'etale, respectively~fppf) sheaf; then it is a cdh (respectively~\'eh, respectively~h) sheaf if and only if it sends any abstract blow-up square to a cartesian square.
\end{enumerate}
\end{proposition}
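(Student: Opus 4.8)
The plan is to reduce both statements to the well-known characterization of sheaves on a site in terms of covering sieves, using the fact that both the cdh, \'eh, and h pretopologies are, by construction, generated by an existing pretopology (Nisnevich, \'etale, or fppf) together with the family $\{Y\to X,\,X'\to X\}$ attached to abstract blowup squares. Since the notion of sheaf depends only on the associated topology, and since a presheaf valued in a complete $\infty$-category $\cal D$ is a sheaf for a topology generated by a collection of covering families precisely when it satisfies descent along the \v{C}ech nerves of those generating families, it suffices in each case to check descent along the generating covers. Concretely, I would invoke the description of sheaves of spaces/spectra via \v{C}ech descent (e.g.\ \cite[\S6.2.2]{Lurie2009HTT} or \cite[\S A.3]{SAG}); the passage from $\Spc$- or $\Spt$-valued sheaves to $\cal D$-valued sheaves for a general complete $\cal D$ is formal, as descent conditions are expressed purely by limit diagrams.

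For part (1), I would first recall that a Nisnevich covering sieve on $X$ is generated by a finite family of \'etale maps jointly surjective on $k$-points, and that every such family admits a refinement built by iterated Nisnevich distinguished squares (completely decomposed squares): this is the standard ``Nisnevich = distinguished squares'' reduction, which for qcqs schemes is exactly \cite[Proposition~A.2]{BachmannHoyois2021}, so I would cite it rather than reprove it. Granting that, $F$ is a Nisnevich sheaf iff it satisfies descent along each distinguished square and along the empty cover, and descent along a square with the bottom map an open immersion and the upper-left corner the preimage is precisely the statement that $F$ carries that square to a pullback square in $\cal D$; the empty-cover condition is $F(\emptyset)=\ast$. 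One direction is immediate (a sheaf satisfies descent for its covers); for the converse one feeds the distinguished-square condition into the \v{C}ech descent criterion along the refinement, noting that a presheaf sending distinguished squares to pullbacks automatically sends the relevant \v{C}ech nerves to limit diagrams because each stage of the refinement is controlled by a single distinguished square.

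For part (2), the argument is shorter: the cdh topology is generated by the Nisnevich topology together with the covers $\{Y\to X,\,X'\to X\}$ coming from abstract blowup squares, so once $F$ is already a Nisnevich sheaf it is a cdh sheaf iff it satisfies descent along these additional generators, and descent along $\{Y\to X,\,X'\to X\}$ associated to the square is, by a direct computation of the \v{C}ech nerve using that $X'\setminus Y'\isoto X\setminus Y$, equivalent to $F$ carrying that abstract blowup square to a pullback square; here one uses that the $(n+1)$-fold fibre products appearing in the nerve all receive compatible maps forcing the limit to collapse to the pullback of the square (this is the standard manipulation, e.g.\ \cite[\S2]{AsokHoyoisWendt2017}). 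The identical argument, replacing ``Nisnevich'' by ``\'etale'' or ``fppf'' and ``cdh'' by ``\'eh'' or ``h'', gives the remaining cases verbatim, since those topologies are defined by the same recipe.

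The only genuinely substantive point is the reduction of an arbitrary Nisnevich cover to a tower of distinguished squares in part (1); but this is not something to prove here, as it is precisely the content of the cited \cite[Proposition~A.2]{BachmannHoyois2021} (originally due to Morel--Voevodsky over a Noetherian base, and in this generality to Bachmann--Hoyois). Thus the main obstacle is purely bookkeeping: one must keep track of the \v{C}ech nerves carefully enough to see that ``pullback on each generating square'' upgrades to ``limit diagram along each generating \v{C}ech nerve,'' which is routine given the finiteness (each relevant family is finite and the maps are finitely presented, by Remark~\ref{remark_fp}).
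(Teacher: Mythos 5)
Your proposal has a genuine gap, and it sits exactly where you declare the work to be ``routine bookkeeping.'' The reduction ``sheaf for the generated topology $\Leftrightarrow$ \v{C}ech descent along the generating covers'' is only valid when the generating families form an actual pretopology (stable under base change and composition). The families you propose to check against --- Nisnevich distinguished squares, or ``Nisnevich covers together with the covers $\{Y\to X,\ X'\to X\}$ from abstract blowup squares'' --- do not form a pretopology, so descent along them does not formally imply descent for the generated topology. Bridging this is precisely the content of Voevodsky's theorem on complete and regular cd-structures (the paper cites \cite[Corollary~5.10]{Voevodsky2010a} and its modern treatment \cite[Theorem~3.2.5]{AsokHoyoisWendt2017}); the completeness and regularity of the Nisnevich and cdh cd-structures are genuine geometric inputs, not consequences of refining covers by towers of squares. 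In particular, in part (1), knowing that a Nisnevich cover is refined by iterated distinguished squares does not by itself give descent for the original cover: descent for a refinement does not formally imply descent for the cover being refined.

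The second concrete error is the ``direct computation of the \v{C}ech nerve'' in part (2). For an abstract blowup square the map $p:X'\to X$ is proper but not a monomorphism, so the self-products $X'\times_X X'$, $X'\times_X X'\times_X X'$, \dots\ appearing in the \v{C}ech nerve of $\{Y\sqcup X'\to X\}$ do not collapse, and the limit over the nerve is \emph{not} formally identified with the pullback $F(Y)\times_{F(Y')}F(X')$ of the square. The equivalence between ``sends the square to a cartesian square'' and descent along the associated cover is again a theorem about the cdh cd-structure (its regularity), not a collapse of the nerve; your parenthetical appeal to $X'\setminus Y'\isoto X\setminus Y$ does not produce the needed compatibilities. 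Finally, note that for the \'eh and h cases the paper does not re-run a square argument at all: it uses the formal closure properties of descent families (\cite[Lemma~3.3.2]{LiuZheng2014}) to show that an \'eh (resp.\ h) sheaf is the same as an \'etale (resp.\ fppf) sheaf which is also a cdh sheaf, and then quotes the cdh case --- a cleaner route than the ``identical argument verbatim'' you propose, which inherits the same gap as above.
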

\begin{proof}
We may assume that $\scr D = \Spc$.\footnote{Observe that a presheaf $F$ with values in $\scr D$ is a sheaf if and only if, for every $X \in \scr D$, the presheaf $\Map(X, F(\ph))$ is.}
Part (1), and (2) for the cdh topology, are covered by a result of Voevodsky on cd-structures~\cite[Corollary~5.10]{Voevodsky2010a}; see \cite[Theorem~3.2.5]{AsokHoyoisWendt2017} for a modern proof.

By formal closure properties of the collections $\{X_i\to X\}$ for which a presheaf satisfies descent (see, e.g., \cite[Lemma 3.3.2]{LiuZheng2014}), we see that a presheaf is an \'eh (respectively~h) sheaf if and only if it is simultaneously an \'etale (respectively~fppf) sheaf and a cdh sheaf. Therefore the remaining cases of part (2) follow from the cdh case.
\end{proof}

\begin{corollary} \label{cor:cdh-cpt-gen}
Assume furthermore that $\scr D$ is cocomplete and stable (e.g., it is a presentable stable $\infty$-category). Then $\scr D$-valued Nisnevich and cdh sheaves on $\Sch_S^\qcqs$ are closed under colimits in presheaves.\footnote{Using the terminology of \cite[Definition~3.4]{ClausenMathew2021}, the site of $\Sch^\qcqs$ equipped with the Nisnevich or cdh topology is excisive.}
\end{corollary}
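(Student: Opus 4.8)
The plan is to deduce this from Proposition \ref{proposition_checking_sheafiness} together with the fact that filtered colimits of spectra (or more generally in a presentable stable $\infty$-category) commute with finite limits. First I would recall the characterisations from Proposition \ref{proposition_checking_sheafiness}: a presheaf $F:\Sch_S^{\qcqs,\op}\to\scr D$ is a Nisnevich sheaf precisely when $F(\emptyset)\simeq 0$ (the terminal/zero object, which in the stable setting agrees with the empty product) and $F$ sends every Nisnevich distinguished square to a cartesian square; and, given that, it is a cdh sheaf precisely when it additionally sends every abstract blow-up square to a cartesian square. The point is that each of these conditions is expressed by requiring that $F$ carry a \emph{finite} diagram (the empty diagram, or a span of two morphisms) to a \emph{finite} limit diagram (a pullback), and these are exactly the kinds of conditions preserved under colimits in presheaves when the target is stable.

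The key step is therefore the following observation. Let $\{F_\alpha\}$ be a diagram in $\PSh(\Sch_S^\qcqs;\scr D)$ with colimit $F=\colim_\alpha F_\alpha$, computed objectwise. An arbitrary colimit can be built from filtered colimits and finite colimits; finite colimits in a stable $\infty$-category are also finite limits, hence automatically commute with the finite limits appearing in the sheaf conditions; and filtered colimits in a presentable stable $\infty$-category commute with finite limits by \cite[Proposition~1.1.4.1, or rather the stability results in \S1.1.3]{HTT}-type arguments (concretely: filtered colimits are exact in $\Spt$, and in a general presentable stable $\scr D$ one reduces to $\Spt$ via the collection of corepresentable functors $\map_{\scr D}(Y,-)$, using the footnote's remark that sheafiness can be checked after applying such functors). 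Consequently, for a Nisnevich distinguished (resp.\ abstract blow-up) square $Q$, the square $F(Q)=\colim_\alpha F_\alpha(Q)$ is a colimit of cartesian squares and hence cartesian; and $F(\emptyset)=\colim_\alpha F_\alpha(\emptyset)=\colim_\alpha 0=0$. By Proposition \ref{proposition_checking_sheafiness} this shows $F$ is again a Nisnevich sheaf, and a cdh sheaf if all $F_\alpha$ were.

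I do not expect a serious obstacle here; the statement is essentially formal once Proposition \ref{proposition_checking_sheafiness} is in hand. The only mild subtlety worth spelling out is the reduction from an arbitrary presentable stable target $\scr D$ to $\Spt$: one invokes the fact that a cone in $\scr D$ is a limit cone if and only if it becomes one after applying $\map_{\scr D}(Y,-)$ for all $Y\in\scr D$ (equivalently after applying a set of jointly conservative limit-preserving functors to $\Spt$), together with the observation in the proof of Proposition \ref{proposition_checking_sheafiness} that sheafiness is detected by such functors; since each $\map_{\scr D}(Y,-)$ commutes with the colimit in question (being exact, or by presentability choosing $Y$ appropriately compact after replacing the colimit by a filtered one of finite colimits), the reduction goes through. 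Alternatively, and perhaps more cleanly for the write-up, one cites directly that the inclusion of $\tau$-sheaves into presheaves preserves colimits whenever the site is excisive in the sense of \cite[Definition~3.4]{ClausenMathew2021}, and notes that Proposition \ref{proposition_checking_sheafiness}(1)--(2) exhibits the Nisnevich and cdh topologies on $\Sch_S^\qcqs$ as generated by a cd-structure whose squares have the required finiteness, which is precisely the excisiveness hypothesis.
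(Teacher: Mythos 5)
Your proposal is correct and follows essentially the same route as the paper: reduce via Proposition \ref{proposition_checking_sheafiness} to the statement that the relevant descent squares (and the condition $F(\emptyset)\simeq 0$) are preserved under objectwise colimits in a stable target. The paper phrases this in one line—in a stable $\infty$-category cartesian squares are cocartesian, and cocartesian squares are stable under all colimits—so your detour through the filtered/finite decomposition and the reduction to $\Spt$ via corepresentables, while workable, is unnecessary.
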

\begin{proof}
Indeed by Proposition \ref{proposition_checking_sheafiness}, being a sheaf means sending certain squares to cocartesian squares ($\scr D$ being stable), and cocartesian squares are stable under colimits.
\end{proof}

\begin{example}[Rationalization as a filtered colimit] \label{ex:rationalization-cdh}
Given an object $E$ in a presentable stable $\infty$-category $\scr C$, one has the \emph{rationalization} which can be calculated as: \[ E \otimes \Q \simeq \colim\left( E \xrightarrow{2} E \xrightarrow{3} E \xrightarrow{4} \dots \right). \]
This construction has the useful property that $E=0$ if and only if $E \otimes \Q = 0$ and $E/p = 0$ for all primes $p$ (indeed $E/p = 0$ means that multiplication by $p$ is an autoequivalence of $E$, so the directed system in the definition of $E \otimes \Q$ consists of equivalences and has colimit $E$ itself).
Taking $\scr C$ to be Nisnevich or cdh sheaves valued in a presentable stable $\infty$-category and $X$ a qcqs scheme, we learn from Corollary \ref{cor:cdh-cpt-gen} that \[ (E \otimes \Q)(X) \wequi E(X) \otimes \Q. \]
\end{example}

We make two further remarks on detecting sheafiness for the h and \'eh topologies.

\begin{remark}[Descent for the $h$-topology]\label{remark:h_descent}
In the case of the $h$-topology one can improve Proposition \ref{proposition_checking_sheafiness}. Namely, suppose that $F:\opp{Sch}^\sub{qcqs,op}_S\to\cal D$ is a Zariski sheaf which sends any abstract blow-up square to a cartesian square and satisfies finite flat descent, i.e., for any finite flat morphism $\Spec(B)\to\Spec(A)$ of affine $S$-schemes, $F$ satisfies \v{C}ech descent in the sense that $F(A)\quis\lim_{n\in \Delta}F(B^{\otimes_An+1})$; then we claim that $F$ is an h sheaf. Indeed, as soon as $F$ satisfies Zariski descent and finite flat descent, then it satisfies fppf descent by \cite[Tag 05WM]{Stacks}.
\end{remark}

\begin{remark}[\'{e}h-sheafification]\label{rem:eh-sheaf} The \'{e}h topology is an intermediary topology between the $\cdh$ topology and the $h$ topology. An important technical result about this topology is that the $\cdh$-sheafification of an \'etale sheaf computes the \'eh-sheafification in some generality. More precisely, assuming $\scr D$ is a presentable stable $\infty$-category compactly generated by cotruncated objects (see \cite[Definition~3.1.4]{ElmantoHoyoisIwasaKelly2021}), then \cite[Theorems~A.3,A.4]{ElmantoMorrow2023} shows that the canonical map of endofunctors
\begin{equation}\label{eq:cdh-eh}
L_{\cdh}L_\sub{\'et} \To L_\sub{\'{e}h}
\end{equation}
of $\cal D$-valued presheaves on $\opp{Sch}_S^\sub{qcqs}$  is an equivalence.
\end{remark}

\subsubsection{Finitary (pre)sheaves}
We now discuss finitary presheaves in the sense of the next definition; it is the condition that lets us access stalks of sheaves by simply evaluating them at points of the topology. 

\begin{definition}\label{def:finitary}
Assume further that the complete $\infty$-category $\scr D$ is cocomplete. Then a presheaf $F:\opp{Sch}^\sub{qcqs}_S\to \cal D^{\sub{op}}$ is said to be {\em finitary} if it commutes with cofiltered limits along affine transition maps.
\end{definition}

The following result shows that, in our cases of interest, left Kan extension produces finitary presheaves, and that it is preserved by sheafification:

\begin{proposition}\label{prop:finitary_conditions} Assume that $\scr D$ is a presentable $\infty$-category. 
\begin{enumerate}
\item Let $\cal C$ be a full subcategory of $\opp{Sch}^\sub{fp}_S$ and $F:\cal C^\sub{op} \to\scr D$  a presheaf. Then the left Kan extension $L_{\opp{Sch}^\sub{qcqs,op}_S/\cal C^\sub{op}}F:\opp{Sch}^\sub{qcqs,op}_S\to\scr D$ of $F$ to $\opp{Sch}^\sub{qcqs,op}_S$ is finitary.
\item Let $\tau$ be any of the topologies of Definition \ref{definition_topologies}.  Assume that either $\scr D$ is compactly generated by cotruncated objects,\footnote{This holds for example if $\scr D = \scr C_{\le 0}$ where $\scr C$ is compactly generated \cite[Example 3.1.5]{ElmantoHoyoisIwasaKelly2021}} or $\scr D$ is stable and $\tau\in\{\text{Nis},\text{cdh}\}$.
  Then, for any finitary presheaf $F:\opp{Sch}^\sub{qcqs}_S\to\scr D$, its sheafification $L_\tau F:\opp{Sch}^\sub{qcqs}_S\to\scr D$ is also finitary.
\end{enumerate}
\end{proposition}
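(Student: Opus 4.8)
The plan is to prove the two parts separately; both are essentially formal once the right colimit bookkeeping is in place. For part~(1), I would use the pointwise formula for the left Kan extension: writing $L:=L_{\opp{Sch}^\sub{qcqs,op}_S/\cal C^\sub{op}}$, for every qcqs $S$-scheme $X$ the value $(LF)(X)$ is the coend $\int^{C\in\cal C}\Map_{\opp{Sch}_S}(X,C)\otimes F(C)$, i.e.\ a colimit of copowers of the values $F(C)$ by the mapping anima $\Map_{\opp{Sch}_S}(X,C)$ (equivalently, a colimit over the comma category of pairs $(C,X\to C)$ with $C\in\cal C$). Now if $X=\lim_\lambda Y_\lambda$ is a cofiltered limit of qcqs $S$-schemes with affine transition maps, then since each $C\in\cal C$ is finitely presented over $S$, the classical limit theory for finitely presented schemes (e.g.\ \cite[Tag~01ZC]{Stacks}) gives $\Map_{\opp{Sch}_S}(X,C)\simeq\colim_\lambda\Map_{\opp{Sch}_S}(Y_\lambda,C)$. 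Substituting this and commuting the resulting filtered colimit past the colimit computing $L$ --- colimits commute with colimits, and copowers commute with colimits in the anima variable --- yields $(LF)(X)\simeq\colim_\lambda(LF)(Y_\lambda)$, which is exactly finitariness.

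For part~(2), the point I would exploit is that all the topologies of Definition~\ref{definition_topologies} are of finite type: every cover is a \emph{finite} family of finitely presented morphisms, and by Remark~\ref{remark_fp} any $\tau$-cover of $X=\lim_\lambda Y_\lambda$ is base-changed from a $\tau$-cover of some $Y_\lambda$. The strategy is to present $L_\tau$, on finitary presheaves, as a transfinite iteration of the plus construction $F\mapsto F^+$, where $F^+(X)$ is a filtered colimit --- over the filtered poset of covering sieves of $X$ --- of limits of $F$ over the \v{C}ech nerves of the corresponding covers. Finitary presheaves are closed under all colimits (pointwise, a cofiltered limit of schemes becomes a filtered colimit, which commutes with arbitrary colimits in $\scr D$; this is the argument behind Corollary~\ref{cor:cdh-cpt-gen}), so it remains only to check that the \v{C}ech-nerve limits appearing in $F^+$ preserve finitariness. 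If $\scr D$ is stable and $\tau\in\{\Nis,\cdh\}$, then by Proposition~\ref{proposition_checking_sheafiness} sheafiness is detected by finitely many \emph{finite} (pushout $=$ pullback) squares, so $F^+$ may be arranged to involve only these finite limits, which commute with filtered colimits by stability. If instead $\scr D$ is compactly generated by cotruncated objects, the values of $F$ lie in a uniformly $n$-truncated full subcategory, so each \v{C}ech totalization $\lim_{\Delta}F(U_\bullet)$ agrees with its finite partial totalization $\lim_{\Delta_{\le n+1}}F(U_\bullet)$, which again commutes with the filtered colimits at hand; in both cases $F\mapsto F^+$ preserves finitariness, hence so does the transfinite colimit $L_\tau$. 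Finally the two composite topologies \'eh and h reduce to the atomic cases Nis/\'et and cdh/fppf --- via $L_\sub{\'{e}h}=L_{\cdh}L_\sub{\'et}$ (Remark~\ref{rem:eh-sheaf}) and the Zariski/abstract-blowup/finite-flat description of $L_\h$ (Remark~\ref{remark:h_descent}) --- and a composite of finitariness-preserving functors is finitariness-preserving.

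The main obstacle, and the reason for the hypotheses on $\scr D$ in part~(2), is exactly the control of the infinite \v{C}ech totalizations in the plus construction: a general totalization need not commute with the cofiltered limits defining finitariness, and one is saved either by cotruncatedness --- which collapses each totalization to a finite limit --- or, in the stable Nisnevich/cdh case, by the cd-structure, which replaces descent by finite (hence automatically exact) squares. Everything else --- part~(1), the closure of finitary presheaves under colimits, and the transfinite bookkeeping --- is routine; the cotruncated case of part~(2) can alternatively be quoted from \cite[\S3]{ElmantoHoyoisIwasaKelly2021}.
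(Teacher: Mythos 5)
Your part~(1) is correct and is essentially the paper's argument: both rest on the pointwise formula for the left Kan extension together with the fact that maps from a cofiltered limit with affine transition maps into a finitely presented scheme are computed as a filtered colimit; whether one phrases this as cofinality of index categories (the paper) or as commuting the coend with the filtered colimit (you) is immaterial.

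Part~(2) is where there is a genuine gap, concentrated in the stable Nisnevich/cdh case. Your plan is to exhibit $L_\tau$ as a transfinite iteration of a plus construction and check that each step preserves finitariness, and in the stable case you assert that, because sheafiness is detected by Nisnevich/abstract blow-up squares (Proposition \ref{proposition_checking_sheafiness}), ``$F^+$ may be arranged to involve only these finite limits.'' That proposition characterises which presheaves \emph{are} sheaves; it does not provide any plus-construction-style formula for the sheafification built out of the distinguished squares, and no such construction is given or cited. Moreover, the very convergence of a transfinite plus construction to $L_\tau F$ is exactly the kind of statement that can fail for stable, non-truncated values in the absence of hypercompleteness/Postnikov-completeness hypotheses (the proposition assumes only that $\scr D$ is presentable stable and $S$ an arbitrary qcqs scheme, so no finite-dimension crutch is available). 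So the stable case of your argument is missing its central step. In the cotruncated case your sketch is closer to viable, but the reduction is misstated: compact generation by cotruncated objects does not mean the values of $F$ lie in a uniformly truncated subcategory; the correct move is to map out of the compact cotruncated generators, thereby reducing to presheaves valued in $\Spc_{\le n}$, and then your finite-partial-totalisation argument applies — but you would still owe a reference for convergence of the iterated plus construction.

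The paper's proof avoids all of this by never modelling $L_\tau$ at all: it sets $\tilde F := L_{\mathrm{qcqs/fp}}\bigl((L_\tau F)|_{\mathrm{fp}}\bigr)$, observes that $\tilde F$ is finitary by part~(1) and that $F \to \tilde F$ is a $\tau$-equivalence (since $F$ is finitary and left Kan extension from finitely presented schemes preserves $\tau$-equivalences), and then verifies \emph{directly} that $\tilde F$ is a $\tau$-sheaf, using Remark \ref{remark_fp} to descend covers (resp.\ squares) to the finitely presented stages and exchanging the filtered colimit with either a truncated totalisation or a finite cartesian square. If you want to keep your structure, you should replace the plus-construction step in the stable case by this recognition argument, or by some other explicit finitary model of the sheafification; as written, the claim that the localisation can be computed by finite limits attached to the cd-structure is the missing idea.
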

\begin{proof}
(1): By the pointwise formula for left Kan extensions \cite[Definition 4.3.2.2]{Lurie2009}, for $X \in \Sch_S$ we have \[ L_{\opp{Sch}^\sub{qcqs,op}_S/\cal C^\sub{op}}F(X) \wequi \colim_{\cal C_{X/}} F. \]
Now let $X = \lim_i X_i$ be written as a cofiltered limit along affine transition maps. Then it is enough to check that $\cal C_{X/} \quis \colim_i \cal C_{X_i/}$. This follows from the fact that any $Y\in\scr C$ is a finitely presented scheme, whence $\colim_i \Map(X_i, Y)\to\Map(X, Y)$ is an equivalence \cite[Proposition 8.14.2]{EGA_IV_III}

(2): Our proof is inspired by \cite[Proposition~7.11]{ClausenMathew2021}. For any presheaf $G:\opp{Sch}^\sub{qcqs}_S\to\opp{Sp}$ we will write $G|_\sub{fp}$ for its restriction to $\opp{Sch}^\sub{fp}_S$, and $L_\sub{qcqs/fp}G|_\sub{fp}:\opp{Sch}^\sub{qcqs}_S\to\opp{Sp}$ for the left Kan extension of $G|_\sub{fp}$ along the inclusion $\opp{Sch}^\sub{fp}_S\subset \opp{Sch}^\sub{qcqs}_S$. Note that there is a natural map $L_\sub{qcqs/fp}G|_\sub{fp}\to G$ of presheaves on $\opp{Sch}^\sub{qcqs}_S$, and that $L_\sub{qcqs/fp}G|_\sub{fp}$ is finitary by part (1).

Set $\tilde F = L_\sub{qcqs/fp}(L_\tau F)|_\sub{fp}$.
Since $L_\sub{qcqs/fp}$ preserves $\tau$-equivalences and $F \wequi L_\sub{qcqs/fp}(F|_\sub{fp})$ ($F$ being finitary), we see have a $\tau$-equivalence $F \to \tilde F$.
Since $\tilde F$ is finitary, it suffices  thus to show that $\tilde F$ is a $\tau$-sheaf.

First assume that $\scr D$ is compactly generated by cotruncated objects.
Considering $\Map(X, \ph)$ for any $X \in \scr D$ compact and cotruncated, we may assume $\scr D = \Spc_{\le n}$ for some $n\ge0$.
Let $\{X_i\to X\}$ be a cover in $\opp{Sch}^\sub{qcqs}_S$ for the $\tau$-pretopology, and set $X':=\bigsqcup_iX_i$; as in Remark \ref{remark_fp}, we may write $X=\lim_\lambda Y_\lambda$ as a cofiltered limit along affine transition maps of finitely presented $S$-schemes, and we may assume that the indexing set has a minimal element $\lambda_0$ such that the cover $X'\to X$ descends to a cover $Y_{\lambda_0}'\to Y_{\lambda_0}$. For any $\lambda_0\to \lambda$, the $\tau$-sheaf $L_\tau F$ satisfies descent with respect to the $\tau$-cover $Y_\lambda':=Y_{\lambda_0}'\times_{Y_{\lambda_0}}Y_\lambda\to Y_\lambda$, i.e., $L_\tau F(Y_\lambda)\quis\lim_{\Delta}(F(Y_\lambda')\rightrightarrows\cdots)$. Taking the filtered colimit over $\lambda$ we obtain \[\tilde F(X) = \colim_\lambda\lim_{\Delta}(F(Y_\lambda')\rightrightarrows\cdots) \] Since $F$ takes values in $n$-truncated spaces, the filtered colimit may be exchanged with the $\lim_{\Delta}$\NB{ref} and we deduce that $\tilde F$ satisfies descent with respect to the cover $X'\to X$ as desired. Therefore $\tilde F$ is indeed a $\tau$-sheaf, as required.

Now suppose instead that $\scr D$ is stable and $\tau$ is the Nisnevich or cdh topology.
We appeal to the criteria of Proposition \ref{proposition_checking_sheafiness} stating that Nisnevich and cdh descent may be checked via excision for Nisnevich and abstract blow-up squares; these again may be approximated and the relevant cartesian squares are then preserved by the colimit, as $\cal D$ is stable.
\end{proof}

\begin{remark}\label{lem:filtr-coconn}
The argument in the second to last paragraph also proves the following: if $F_i$ is a filtered diagram of $\tau$-sheaves for any of the topologies in Definition \ref{definition_topologies}, such that each $F_i$ is coconnective, then the filtered colimit in presheaves is, in fact, a $\tau$-sheaf. This boils down to commuting a totalization and a filtered colimit, which one is allowed to do in the $\infty$-category of coconnective objects.
\end{remark}

\subsubsection{Milnor excision}
The final topic of this preliminary subsection on topologies is Milnor excision and its applications to cdh sheaves. 

\begin{definition}
Recall that a {\em Milnor square}, or {\em excision square}, is a commutative square of rings
\[\xymatrix{
A\ar[r]\ar[d] & A'\ar[d]\\
A/I\ar[r] & A'/I'
}\]
which is both cartesian and co-cartesian as a square of $A$-modules; equivalently, the map (of non-unital $A$-algebras) $I\to I'$ is an isomorphism. A functor $F$, defined on a class of rings including the square and valued in an $\infty$-category $\cal D$, is said to satisfy {\em Milnor excision} for the square if it carries it to a cartesian square in $\cal D$.
\end{definition}

\begin{remark}
Milnor squares do not underlie a Grothendieck topology, unless combined with nilinvariance; see \cite[Remark~3.2.10]{ElmantoHoyoisIwasaKelly2021}. 
\end{remark}

An important class of Milnor square arise as follows: given a valuation ring $V$ and prime ideal $\frak p\subset V$, the square
\begin{equation}
\xymatrix{
V\ar[r]\ar[d] & V_{\frak p}\ar[d]\\
V/\frak p\ar[r] & k(\frak p)
}
\label{eqn_hv_square}\end{equation}
is a Milnor square \cite[Proposition 2.8]{BhattMathew2021}, which we typically call the Milnor square associated with the pair $(V,\frak p)$. For any (respectively henselian) valuation ring $V$, a functor $F$, defined on a class of rings including the square~\eqref{eqn_hv_square} and valued in a stable $\infty$-category $\cal D$, is said to satisfy {\em $v$-excision} (respectively {\em henselian $v$-excision}) for the square if it carries it to a cartesian square in $\cal D$. 

For cdh sheaves, henselian $v$-excision suffices to guarantee general excision:

\begin{theorem}[{\cite[Theorem~3.3.4 \& Remark~3.3.3]{ElmantoHoyoisIwasaKelly2021}.}]\label{thm:ehik}
Let $B$ be a base ring of finite valuative dimension and $F:\Sch_B^\sub{qcqs,op} \rightarrow \opp{Sp}$ a finitary $\cdh$ sheaf. Assume that, for all finite rank henselian valuation rings $V$ under $B$ and all prime ideals $\frak p\subset V$, the functor $F$ satisfies Milnor excision for the square (\ref{eqn_hv_square}). Then $F$ satisfies Milnor excision for all Milnor squares of $B$-algebras. 
\end{theorem}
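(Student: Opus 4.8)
The plan is to reduce an arbitrary Milnor square of $B$-algebras, in two formal steps, to the special squares (\ref{eqn_hv_square}), for which $F$ satisfies excision by hypothesis; the genuine content then lies in a dévissage over valuation rings.

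\emph{Step 1 (reduce to finitely presented Milnor squares).} Since filtered colimits of rings commute with finite limits, any Milnor square $A \simeq A' \times_{A'/I'} A/I$ of $B$-algebras is a filtered colimit of Milnor squares of finitely presented $B$-algebras: present $A'$, $A/I$, $A'/I'$ compatibly as filtered colimits of finitely presented $B$-algebras with $I'$ finitely generated at each finite stage, and take fibre products (a standard limit argument shows the fibre product is again finitely presented once the kernel is finitely generated). As $F$ is finitary and filtered colimits are exact in $\Spt$, it suffices to treat a Milnor square all of whose four rings are finitely presented over $B$; in particular $X := \Spec A$ is then of finite valuative dimension and $X' \to X$, $Z = \Spec A/I \to X$, $Z' = \Spec A'/I' \to X$ are finitely presented morphisms.

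\emph{Step 2 (reduce to henselian valuation rings via cdh descent).} Such a square amounts to a finitely presented closed immersion $Z = V(I) \hookrightarrow X$ and an affine $X$-scheme $X' = \Spec A'$ with $Z' := Z \times_X X'$ and $X \simeq X' \sqcup_{Z'} Z$. Define a presheaf on $\Sch^{\qcqs,\op}_X$ by
\[ \Phi(Y) := \fib\bigl( F(Y) \to F(Y \times_X X') \times_{F(Y \times_X Z')} F(Y \times_X Z) \bigr), \]
whose value at $Y = X$ is the total fibre of $F$ on our square; so it suffices to show $\Phi \simeq 0$. Base change along the finitely presented morphisms $X' \to X$, $Z' \to X$, $Z \to X$ preserves Nisnevich distinguished squares and abstract blow-up squares, hence carries cdh sheaves to cdh sheaves by Proposition~\ref{proposition_checking_sheafiness}; therefore $\Phi$, a finite limit of cdh sheaves, is a cdh sheaf, and it is finitary because $F$ is. Since $X$ has finite valuative dimension the cdh $\infty$-topos of $\Sch^\qcqs_X$ has finite cohomological dimension, so the finitary cdh sheaf of spectra $\Phi$ is detected on the points of the cdh site — the henselian valuation rings of finite rank over $X$. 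We are reduced to showing $\Phi(V) = 0$ for each such $V$, i.e. that $F$ sends to a cartesian square the base change along $A \to V$ of our Milnor square.

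\emph{Step 3 (the valuation-ring case; the main obstacle).} This is where the hypothesis and the remaining force of cdh descent are used, and where I expect the real difficulty to lie. Two points must be handled. First, the base change of a Milnor square along the possibly non-flat map $A \to V$ need not be a Milnor square — there is a $\operatorname{Tor}^A_1(V, A'/I')$ obstruction — which one circumvents using the cdh-sheaf property of $\Phi$ once more together with flatification by admissible blow-ups (Raynaud--Gruson), reducing to $\operatorname{Tor}$-independent base changes so that one genuinely has a Milnor square over $V$. Second, a Milnor square over $V$ is not literally an abstract blow-up or Nisnevich square, so one argues by dévissage along the finite chain of primes $0 = \mathfrak p_0 \subsetneq \cdots \subsetneq \mathfrak p_n = \mathfrak m_V$, inducting on $\operatorname{rank}(V)$: writing $\mathfrak a = IV$ with $\sqrt{\mathfrak a} = \mathfrak p_k$, the square degenerates when $\mathfrak a \in \{0, V\}$; in the lowest strata Milnor squares are brought into the range of abstract blow-up and Nisnevich squares (to which $F$ is insensitive as a cdh sheaf); and otherwise the square fits — via the squares (\ref{eqn_hv_square}) for the henselian valuation rings $V$, $V_{\mathfrak p_k}$, $V/\mathfrak p_k$, $k(\mathfrak p_k)$, which have strictly smaller rank apart from $V$ itself — into a comparison diagram to which the hypothesis (for $(V,\mathfrak p_k)$) and the inductive hypothesis apply, yielding $\Phi(V) = 0$. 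The finite-valuative-dimension assumption is used only to guarantee, in Step 2, that the points of the cdh site are henselian valuation rings of finite rank, so that this induction terminates; the delicate work is the bookkeeping of Step 3, in particular controlling the $\operatorname{Tor}$-obstruction and identifying the relevant comparison diagrams with the squares (\ref{eqn_hv_square}).
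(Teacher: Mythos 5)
The paper does not prove this statement at all — it is quoted from Elmanto–Hoyois–Iwasa–Kelly — so your proposal has to stand on its own, and as written it has two genuine gaps. The first is in Step 1: the parenthetical claim that the fibre product of finitely presented $B$-algebras along a surjection with finitely generated kernel is again finitely presented is false. For example $k[x,y]\times_{k[x,y]/(x)}k\;=\;k+x\,k[x,y]$ is not even a finitely generated $k$-algebra, although $k[x,y]$, $k$, $k[y]$ are finitely presented and $(x)$ is principal. The filtered-colimit argument therefore only lets you assume the three corners $A'$, $A/I$, $A'/I'$ are finitely presented over $B$; the pullback $A$ need not be, and your Step 2 crucially needs $X=\Spec(A)$ to have finite valuative dimension in order to invoke detection of $\Phi$ on henselian valuation-ring points (Proposition \ref{proposition_checking_on_points} requires this of the base). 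That finiteness is nowhere established, so the reduction to valuation rings is not secured; it may be repairable (e.g.\ by bounding the valuative dimension of $A$ using that $\Spec(A)$ is covered by $V(I)$ and by the locus where $A_s\cong A'_s$ for $s\in I$), but you would have to state and prove such a lemma rather than appeal to finite presentation.

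The second and more serious gap is Step 3, which is where the entire content of the theorem lies and where no actual argument is given. After base change to a henselian valuation ring $V$ the square has corners $V$, $W:=V\otimes_A A'$, $V/IV$, $W/IW$, where $W$ is an \emph{arbitrary} $V$-algebra; the hypothesis only concerns squares \eqref{eqn_hv_square}, whose corners are localisations and quotients of $V$ itself, and nothing in your sketch explains how to pass from those to a square involving $W$. In particular the natural induction you gesture at (compare the square over $V$ with its base changes to $V_{\mathfrak p}$, $V/\mathfrak p$, $\kappa(\mathfrak p)$) would require excision for \eqref{eqn_hv_square} to hold for $\Phi$, i.e.\ $F$-excision for the square with corners $V\otimes_A A'$, $V_{\mathfrak p}\otimes_A A'$, etc., which is not of the hypothesised form and does not follow from the hypothesis. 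The one concrete tool you name to handle the failure of the base change to be a Milnor square, Raynaud–Gruson flatification, is not available either, since $A\to A'$ is not of finite presentation (and even when it is, you do not explain how cdh descent lets you replace the square by its flatification). The degenerate cases $IV\in\{0,V\}$ are indeed fine, and the general shape of Steps 1–2 (approximate by finitariness, observe the total-fibre presheaf is a finitary cdh sheaf, detect on points, reduce rank via excision) is the correct skeleton; but the dévissage that turns excision for the squares \eqref{eqn_hv_square} into excision for arbitrary base-changed Milnor squares is exactly the delicate part of the cited EHIK argument, and your proposal does not supply it.
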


\begin{remark}\label{rem:rigid-exc}
In the case of a functor which is sufficiently rigid (i.e., invariant under henselian ideals), excision for henselian valuation rings is automatic. For example, suppose that $V$ is a henselian valuation ring and $F:\opp{CAlg}_V\to\cal D$ is a functor which is invariant under henselian ideals. Then, for any prime ideal $\frak p\subset V$, the square
\[
\begin{tikzcd}
F(V) \ar{r} \ar{d} & F(V_{\mathfrak{p}}) \ar{d}\\
F(V/\mathfrak{p}) \ar{r} & F(\kappa(\mathfrak{p})).
\end{tikzcd}
\]
is cartesian and cocartesian. Indeed much more is true: the vertical maps in the square are equivalences, since both $(V,\mathfrak{p})$ and $(V_{\mathfrak{p}}, \mathfrak{p})$ are henselian pairs.
\end{remark}

Finally we recall how to check equivalences of sheaves on points:
\begin{proposition}\label{proposition_checking_on_points}
Let $\cal D = \opp{Sp}$ or $\D(\bb Z)$, let $B$ be a base ring of finite valuative dimension, and let $F:\opp{Sch}^\sub{qcqs,op}_B\to \cal D$ a finitary presheaf.
\begin{enumerate}
\item  Assuming $F$ is a cdh sheaf, then $F=0$ if and only $F(\Spec(V))=0$ for every henselian valuation ring of finite rank $V$ under $B$.
\item Assuming $F$ is a \'eh sheaf and takes coconnective values, then $F=0$ if and only if $F(\Spec(V))=0$ for every strictly henselian valuation ring of finite rank $V$ under $B$.
\item Assuming $F$ is a h sheaf and takes coconnective values, then $F=0$ if and only if $F(\Spec(V))=0$ for every absolutely integrally closed valuation ring of finite rank $V$ under $B$.
\end{enumerate}
In each case, if $F$ moreover satisfies excision, then it even suffices to restrict to such valuation rings of rank $\le 1$.
\end{proposition}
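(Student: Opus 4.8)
The plan is to reduce the statement to the known descriptive results about points of each topology together with the finitary hypothesis, exploiting that for a finitary sheaf the value on a general qcqs scheme is controlled by its values on points of the topology, which are exactly the relevant classes of valuation rings. First I would recall that, for a base of finite valuative dimension, the cdh (resp. \'eh, resp. h) topology has enough points, and that the points are precisely the henselian (resp. strictly henselian, resp. absolutely integrally closed) valuation rings of finite rank under $B$; this is a citation to \cite{ElmantoHoyoisIwasaKelly2021, GabberKelly2015, BhattScholze2017}. Given this, a sheaf $F$ vanishes if and only if all its stalks vanish; and since $F$ is finitary, its stalk at a point of the topology corresponding to a valuation ring $V$ is simply $F(\Spec V)$ (the stalk is a filtered colimit of values on a cofiltered system of affine schemes with limit $\Spec V$, and finitariness turns this into $F(\Spec V)$). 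This immediately yields parts (1), (2), (3), where in the \'eh and h cases one additionally invokes that the relevant topos has enough points only after restricting to coconnective objects, which is the hypothesis imposed. The ``only if'' directions are trivial.

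For the rank $\le 1$ refinement under the additional excision hypothesis, I would argue by induction on the rank of the valuation ring $V$. Given a henselian valuation ring $V$ of rank $n \ge 2$, pick a non-maximal, non-minimal prime $\mathfrak p \subset V$; then $V_{\mathfrak p}$ and $V/\mathfrak p$ are again henselian valuation rings (of ranks $< n$), and the square \eqref{eqn_hv_square} is a Milnor square by \cite[Proposition 2.8]{BhattMathew2021}. Applying the excision hypothesis, $F$ carries this square to a cartesian square in $\cal D$, so $F(V)$ is the fiber product of $F(V_{\mathfrak p})$ and $F(V/\mathfrak p)$ over $F(\kappa(\mathfrak p))$; by the inductive hypothesis all three of the latter vanish, hence so does $F(V)$. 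The base case $n \le 1$ is exactly what we are assuming, and rank $0$ (a field) is covered as well. In the \'eh (resp. h) case one must check that passing to $V_{\mathfrak p}$ and $V/\mathfrak p$ preserves strict henselianity (resp. absolute integral closedness), which is standard: a localization or a quotient by a prime of a strictly henselian (resp.\ absolutely integrally closed) valuation ring is again of the same type.

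The main obstacle is the bookkeeping needed to run the induction cleanly: one must ensure that the intermediate valuation rings $V_{\mathfrak p}$ and $V/\mathfrak p$ still lie under $B$ and still have finite rank (immediate, since rank is additive along $V \rightsquigarrow V_{\mathfrak p}, V/\mathfrak p$ and these are bounded by the valuative dimension of $B$), and that they are the sort of valuation ring relevant to the topology in question — this is where the excision hypothesis is genuinely used rather than just finitariness and the existence of points. A secondary subtlety, in parts (2) and (3), is that ``enough points'' for the \'eh and h topoi is only available for coconnective (hypercomplete) coefficients, which is why those clauses carry the coconnectivity hypothesis; for the cdh topology no such restriction is needed because the cdh topos of a base of finite valuative dimension is already well-behaved by \cite{ElmantoHoyoisIwasaKelly2021}. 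Everything else is a formal consequence of the finitary hypothesis and the point-set description of the topologies.
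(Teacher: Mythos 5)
Your overall strategy is the same as the paper's: use that these topoi have enough points (Deligne completeness, with hypercompleteness supplied by coconnectivity in cases (2) and (3)), identify stalks with values on the corresponding valuation rings via finitariness, and then reduce to rank $\le 1$ by induction on rank using excision for the Milnor squares \eqref{eqn_hv_square}. One inaccuracy worth flagging on the first half: the points of these topologies are henselian (resp.\ strictly henselian, absolutely integrally closed) valuation rings \emph{possibly of infinite rank}, and their ranks are not bounded by the valuative dimension of $B$ (already over $B=\bb Q$ one gets valuation rings of function fields of arbitrary rank). Since the hypothesis only concerns finite-rank rings, you need the finitary hypothesis a second time, to write such an infinite-rank valuation ring as a filtered colimit of finite-rank ones; this is exactly the extra step the paper makes explicit, and your claim that the points are "precisely the \dots valuation rings of finite rank" is not correct as stated.

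The genuine gap is in the rank $\le 1$ refinement for part (2): you assert that a localization of a strictly henselian valuation ring at a prime is again strictly henselian (and, implicitly, that $\kappa(\mathfrak p)$ is separably closed). This is false. For example, let $k$ be algebraically closed, $F=k((t))$, $\bar V=k[[t]]\subset F$, and $V=\{f\in F[[s]] : f(0)\in\bar V\}$, the composite valuation ring: $V$ is a rank-$2$ henselian valuation ring with residue field $k$, hence strictly henselian, but for the height-one prime $\mathfrak p$ one has $V_{\mathfrak p}=F[[s]]$ and $\kappa(\mathfrak p)=k((t))$, which is not separably closed. So your induction, which needs $F(V_{\mathfrak p})$ and $F(\kappa(\mathfrak p))$ to vanish by the rank $\le 1$ hypothesis on \emph{strictly henselian} rings, breaks in the \'eh case (it is fine in cases (1) and (3), where henselianity, resp.\ absolute integral closedness, does pass to $V_{\mathfrak p}$, $V/\mathfrak p$ and $\kappa(\mathfrak p)$). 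A repair: first upgrade "vanishing on strictly henselian valuation rings of rank $\le 1$" to "vanishing on all henselian valuation rings of rank $\le 1$", using that an \'eh sheaf is an \'etale sheaf: for $V$ henselian of rank $\le 1$, $V^{\mathrm{sh}}$ is again a valuation ring of the same rank and $V\to V^{\mathrm{sh}}$ is a filtered colimit of finite \'etale Galois covers, so by finitariness and coconnectivity $F(V)$ is computed by a totalization whose terms are filtered colimits of finite products of copies of $F(V^{\mathrm{sh}})=0$; after that one can run the induction with henselian rings only, or simply invoke part (1), since an \'eh sheaf is in particular a cdh sheaf.
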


\begin{proof}
In all the cases, it will suffice to prove that $F|_{\Sch_B^\sub{fp}} = 0$. We first treat (1); the restricted cdh topos is coherent and locally coherent \cite[Proposition 2.1.4]{ElmantoHoyoisIwasaKelly2021}. Furthermore, it is hypercomplete in case (1) by \cite[Corollary 2.4.16]{ElmantoHoyoisIwasaKelly2021}. Therefore, by Deligne's completeness theorem \cite[Theorem A.4.0.5]{LurieSAG}, it thus suffices to check that $F$ vanishes on points.
These are necessarily given by evaluation on a pro-scheme (see e.g. \cite[Remark A.9.1.4]{LurieSAG}). In order for a pro-scheme to define a point, any covering of it must split.
From this one deduces that the pro-scheme must correspond to henselian valuation rings, perhaps of infinite rank.
See, e.g.~\cite{GabberKelly2015}, for the description of points for the cdh topology; note that this is proved under additional finiteness hypotheses which are not necessary. However, since $F$ is assumed to be finitary, it is enough to check vanishing on finite rank valuation rings. 

To prove (2) and (3), we note that hypercompleteness of $F$ is assured since it is bounded above \cite[Lemma 2.7]{ClausenMathew2021}. In these cases, we refer the reader again to \cite{GabberKelly2015} for a description of the points of both the \'eh and h topologies and reduction to finite rank valuation rings again follows by from the finitary assumption. 

Lastly, we can reduce to rank $\le 1$ valuation rings using excision for squares of type \eqref{eqn_hv_square}.
\end{proof}

\subsection{Rigidity and left Kan extending from smooth schemes/algebras}\label{subsec:lke}
We will frequently left Kan extend functors from smooth algebras or smooth schemes, and sometimes then sheafify the result. We will need to know, for example, that the stalks of the resulting sheaves can be computed by left Kan extending the original functor from essentially smooth local rings. It will also be notationally convenient to know that if we left Kan extend from smooth schemes and then evaluate on an affine, then the result is the left Kan extension from smooth algebras. We collect such results in the following proposition which allow us, in the remainder of the paper, to unambiguously denote by $L_B^\sub{sm}$ any of the denoted left Kan extensions from smooth $B$-schemes/algebras/etc.

Let $B$ be a ring. The categories appearing in the following proposition are as follows:
\begin{enumerate}
\item $\Sch_B^\sub{qcqs}$ is the category of qcqs $B$-schemes, and $\Sm_B$ its full subcategory of qcqs smooth $B$-schemes;
\item $\opp{CAlg}_B\supset \opp{IndSmAlg}_B\supset \opp{SmAlg}_B$ denote respectively the categories of $B$-algebras,\footnote{All rings and algebras in this paper are commutative if not stated otherwise.} of ind-smooth $B$-algebras, and of smooth $B$-algebras;
\item $\opp{CAlg}_B^\sub{loc}$ is the category of local $B$-algebras, and $\opp{EssSmAlg}_B^\sub{loc}$ its full subcategory of essentially smooth local $B$-algebras (i.e., localisations of smooth $B$-algebras);
\item $\opp{CAlg}_B^\sub{hloc}$ is the category of henselian local $B$-algebras, and $\opp{EssSmAlg}_B^\sub{hloc}$ its full subcategory consisting of henselisations of essentially smooth local $B$-algebras;
\item $\opp{CAlg}_B^\sub{shloc}$ is the category of strictly henselian local $B$-algebras, and $\opp{EssSmAlg}_B^\sub{shloc}$ its full subcategory consisting of strict henselisations of essentially smooth local $B$-algebras.
\end{enumerate}

\begin{proposition}\label{prop:lke-restrict}
Let $B$ be a ring and $\cal C$ a presentable $\infty$-category. Then the following diagram commutes

\[\xymatrix@C=3mm{
\Fun(\Sch_B^\sub{qcqs,op},\cal C)\ar[r] & \Fun(\opp{CAlg}_B,\cal C)\ar[r]^{=} & \Fun(\opp{CAlg}_B,\cal C)\ar[r] & \Fun(\opp{CAlg}_B^\sub{?loc},\cal C)
\\
\Fun(\Sm^\sub{op}_B,\cal C)\ar[r]\ar[u]_{L_B^\sub{sm}}& \Fun(\opp{SmAlg}_B,\cal C)\ar[u]_{L_B^\sub{sm}}\ar[r]^-{\simeq} & \Fun^\sub{fin}(\opp{IndSmAlg}_B,\cal C)\ar[u]_{L_B^\sub{sm}}\ar[r] &\Fun(\opp{EssSmAlg}_B^\sub{?loc},\cal C)\ar[u]_{L_B^\sub{sm}}
}\]

where
\begin{itemize}
\item the vertical arrows all denote left Kan extensions along the indicated fully faithful inclusions of categories;
\item the horizontal arrow labelled $\simeq$ denotes left Kan extension;\footnote{In practice we will start with a finitary functor $F$ defined at least on all $B$-algebras, and study the left Kan extension of its restriction to smooth algebras; in that case the functor harmlessly sends $F|_{\opp{SmAlg}_B}$ to $F|_{\opp{IndSmAlg}_B}$.}
\item the other horizontal arrows denote the obvious restriction maps;
\item {\rm ?loc} stands for {\rm loc}, {\rm hloc}, or {\rm shloc}.
\end{itemize}
\end{proposition}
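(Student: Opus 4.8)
The plan is to prove commutativity of the diagram one square at a time, reducing everything to the pointwise formula for left Kan extensions \cite[Definition 4.3.2.2]{Lurie2009} together with cofinality arguments about slice categories. First I would dispense with the two \emph{left-hand} squares, i.e.\ the claim that computing the left Kan extension from $\Sm_B$ to $\Sch_B^\qcqs$ and then restricting to affines agrees with left Kan extending from $\opp{SmAlg}_B$ to $\opp{CAlg}_B$. By the pointwise formula, for an affine $X = \Spec A$ the value of $L_B^\sub{sm}F(X)$ computed in schemes is $\colim_{(\Sm_B)_{X/}} F$, where $(\Sm_B)_{X/}$ is the category of smooth $B$-schemes $Y$ with a map $X \to Y$. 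The opposite of this category is exactly the category of smooth $B$-algebras $R$ equipped with a map $R \to A$, so the two colimits are literally computed over (opposite-)equivalent index categories with the same diagram; hence they agree. The same bookkeeping handles restriction to $\opp{SmAlg}_B$ from $\Sm_B$.

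Next I would treat the \emph{middle} square, involving the equivalence $\Fun(\opp{SmAlg}_B,\cal C)\simeq \Fun^\sub{fin}(\opp{IndSmAlg}_B,\cal C)$ given by left Kan extension. The point is that every ind-smooth $B$-algebra is a filtered colimit of smooth $B$-algebras along the transition maps, so left Kan extension along $\opp{SmAlg}_B \subset \opp{IndSmAlg}_B$ is fully faithful with essential image the finitary functors; this is the standard statement that $\PSh(\opp{SmAlg}_B)$ restricted to finitary presheaves is $\PSh^\fin(\opp{IndSmAlg}_B)$, or dually $\Ind$ of the inclusion. Commutativity of the square then says: left Kan extending $F|_{\opp{SmAlg}_B}$ up to $\opp{CAlg}_B$ is the same as first passing to the finitary extension on $\opp{IndSmAlg}_B$ and then left Kan extending that to $\opp{CAlg}_B$. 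This follows from transitivity of left Kan extensions: $L_{\opp{CAlg}_B/\opp{SmAlg}_B} = L_{\opp{CAlg}_B/\opp{IndSmAlg}_B}\circ L_{\opp{IndSmAlg}_B/\opp{SmAlg}_B}$, because left Kan extension is a left adjoint and left adjoints compose; one just has to note that $L_{\opp{IndSmAlg}_B/\opp{SmAlg}_B}$ here is exactly the finitary-extension equivalence.

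The \emph{right-hand} square is the substantive one and where the real work lies. It asserts that restricting the left Kan extension $L_B^\sub{sm}F \in \Fun(\opp{CAlg}_B,\cal C)$ to the subcategory $\opp{CAlg}_B^\sub{?loc}$ (for $\text{?loc} \in \{\text{loc},\text{hloc},\text{shloc}\}$) agrees with first restricting $F$ to essentially smooth/henselian-essentially-smooth/strictly-henselian-essentially-smooth local algebras and then left Kan extending within the category of such local algebras. Equivalently, for a local $B$-algebra $A$ of the relevant type, the inclusion of the slice category $(\opp{EssSmAlg}_B^\sub{?loc})_{/A} \hookrightarrow (\opp{SmAlg}_B)_{/A}$ (or the ind-smooth variant) is \emph{cofinal}, so the two colimits computing the respective left Kan extensions agree. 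Concretely, given a smooth $B$-algebra $R$ with a map $R \to A$, one factors it through a localisation of $R$ (namely $R_\frak{p}$ for $\frak p$ the preimage of the maximal ideal), and in the henselian (resp.\ strictly henselian) case further through the henselisation (resp.\ strict henselisation), using that $A$ is itself henselian (resp.\ strictly henselian) so the universal property of (strict) henselisation provides the factorisation. To upgrade this to cofinality one checks that these factorisations are compatible under morphisms and that the relevant comma categories are filtered (indeed weakly contractible), which follows from the fact that localisation, henselisation and strict henselisation are filtered colimits of smooth, resp.\ \'etale, $B$-algebras, so the slice categories are filtered. \textbf{The main obstacle} I anticipate is precisely this last cofinality verification in the strictly henselian case, where one must be careful that strict henselisation is a filtered colimit of \'etale algebras only after also passing through residue-field extensions, and that the choices of separable closure of the residue field assemble coherently; but since $\cal C$ is merely required to be presentable and the diagram is one of $\infty$-categories of presheaves, this is a (somewhat delicate but) standard filteredness-of-a-slice-category argument, and one may invoke \cite[Proposition 8.14.2]{EGA_IV_III}-style limit arguments exactly as in the proof of Proposition~\ref{prop:finitary_conditions}.
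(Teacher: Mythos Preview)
Your treatment of the middle square (transitivity of left Kan extensions) and the right-hand square (cofinality via factoring through the localisation/henselisation) is essentially correct and matches the paper's approach. In fact the right-hand square is simpler than you fear: the comma category $((\opp{IndSmAlg}^\sub{?loc}_B)_{/C})_{D/}$ has an \emph{initial object}, namely the localisation (resp.\ henselisation, strict henselisation) of $D$ at the preimage of the maximal ideal of $C$, so weak contractibility is immediate and no delicate filteredness argument is needed.

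However, your argument for the \emph{left-hand} square has a genuine gap. You assert that for $X=\Spec(A)$ the index category $(\Sm_B)_{X/}$ is ``exactly'' opposite to the category of smooth $B$-algebras with a map to $A$. This is false: $\Sm_B$ consists of all qcqs smooth $B$-schemes, not just affine ones, so the two slice categories are genuinely different. What must be shown is that the inclusion of smooth \emph{affine} $B$-schemes under $\Spec(A)$ into all smooth $B$-schemes under $\Spec(A)$ is cofinal. By Quillen's Theorem A this reduces to showing, for each $Y\in\Sm_B$ with a map $f:\Spec(A)\to Y$, that the category of factorisations $\Spec(A)\to\Spec(R)\to Y$ with $R$ a smooth $B$-algebra is weakly contractible. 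This is the most substantive part of the paper's proof: one shows this category is filtered, and already non-emptiness requires a lifting argument (choose an ind-smooth $A'\twoheadrightarrow A$ with henselian kernel, lift $f$ along this surjection using that $Y\to\Spec(B)$ is smooth---which requires the non-affine version of the henselian lifting property, see e.g.\ \cite[Proposition~A.0.4]{ElmantoHoyoisKhanSosniloYakerson2020} or \cite[Proposition~2.1.4]{BouthierCesnavicius2022}---and then descend to some finite stage of the filtered colimit). The argument for common upper bounds is similar but uses fibre products. You have placed the difficulty in the wrong square.
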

\begin{proof}
Commutativity of the middle square is clear, since left Kan extensions compose. We now treat the right hand square. Write $\opp{IndSmAlg}_B^\sub{?loc}=\opp{CAlg}_B^\sub{?loc}\cap \opp{IndSmAlg}_B$ for the category of ind-smooth $B$ algebras which also satisfy ?loc. We expand the square as
\begin{equation*}
\begin{CD}
\Fun(\opp{CAlg}_B,\cal C) @>>> \Fun(\opp{CAlg}_B^\sub{?loc},\cal C) @= \Fun(\opp{CAlg}_B^\sub{?loc},\cal C) \\
@AAA  @AAA @AAA \\
\Fun^\sub{fin}(\opp{IndSmAlg}_B,\cal C) @>>> \Fun(\opp{EssSmAlg}_B^\sub{?loc},\cal C) @>>> \Fun(\opp{IndSmAlg}_B^\sub{?loc},\cal C), \\
\end{CD}
\end{equation*}
where all maps except for the horizontal ones in the left hand square are left Kan extensions.
In particular, the right hand square commutes, and so it suffices to show that the outer square commutes.
The bottom composite is just given by restriction (since the source consists of finitary functors).
Now let $C \in \opp{CAlg}_B^\sub{?loc}$.
Using the pointwise formula for left Kan extensions, it suffices to show that the inclusion of index categories \[ (\opp{IndSmAlg}^\sub{?loc}_B)_{/C} \subset (\opp{IndSmAlg}_B)_{/C} \] is cofinal.
By Quillen's theorem A \cite[Theorem 4.1.3.1]{Lurie2009}, this reduces to showing, for any $D \in (\opp{IndSmAlg}_B)_{/C}$, that the category \[ ((\opp{IndSmAlg}^\sub{?loc}_B)_{/C})_{D/} \] is weakly contractible. This is true because it has an initial object, namely the localisation/henselisation/strict henselisation of $D$ at the inverse image of the maximal ideal of $C$ along the map $D \to C$.

Next we prove commutativity of the left most square. The argument is similar to the previous square, but more involved. Again using the pointwise formula for left Kan extensions and Quillen's theorem A, it suffices to show, for $\Spec(A) \in \Sch_B$, $Y \in \Sm_B$, and any morphism $f: \Spec(A) \to Y$, that the category \[ \scr T = ((\opp{SmAff}_B)_{\Spec(A)/})_{/Y} \] is weakly contractible.
Concretely, this category is described as follows: it consists of commutative diagrams of $B$-schemes 
\[\xymatrix{
\Spec (A)\ar[r]\ar@/_5mm/[rr]_f & \Spec (R)\ar[r] & Y
}\]
where $R$ is a smooth $B$-algebra; morphisms are defined in the obvious way. To prove that $\scr T$ is weakly contractible we will even show that it is filtered, i.e., that it is non-empty, any two objects admit morphisms to a common target, and any two parallel morphisms can be equalised.

For non-emptiness, let $A'$ be an ind-smooth $B$-algebra equipped with a surjection $A'\onto A$ whose kernel is a Henselian ideal. Then we claim that there is a dashed arrow making the following diagram commute:
\[\xymatrix{
\Spec (A)\ar[d]\ar[r]^f & Y\ar[d]\\
\Spec (A)'\ar[r]\ar@{-->}[ur] & \Spec(B)
}\]
If $Y$ were affine then this would be the usual lifting property for Henselian surjections along smooth maps of rings, but in fact it holds also when $Y$ is not affine: see either the proof of \cite[Proposition~A.0.4]{ElmantoHoyoisKhanSosniloYakerson2020}, or else \cite[Proposition~2.1.4]{BouthierCesnavicius2022}. Finally, writing $A'=\colim_i A_i'$ as a filtered colimit of smooth $B$-algebras, the new map $\Spec (A')\to Y$ factors through a map $\Spec (A'_{i_o})\to Y$ for some $i_o\gg0$ since $Y$ is finitely presented over $B$; the diagram
\[\xymatrix{
\Spec (A)\ar[r]\ar@/_5mm/[rr]_f & \Spec (A'_{i_o}) \ar[r] & Y
}\]
then shows that $\scr T$ is non-empty.

Next we check that any two objects 
\[\xymatrix{
\Spec (A)\ar[r]\ar@/_5mm/[rr]_f & \Spec (R_i)\ar[r] & Y
}\qquad i=1,2\]
of $\scr T$ admit a morphism to a common target. Form the pushout $\Spec (R_1)\sqcup_{\Spec (A)}\Spec (R_2)=\Spec (R_1\times_AR_2)$ and let $R$ be an ind-smooth $B$-algebra equipped with a surjection $R\onto R_1\times_AR_2$ whose kernel is a Henselian ideal. Just as in the proof of non-emptiness, the induced map $\Spec (R_1\times_AR_2)\to Y$ lifts first to a map $\Spec (R)\to Y$ and then to a map $\Spec (R_{i_o})\to Y$ for some smooth $B$-algebra $R_{i_o}$ mapping to $R$. By construction the object \[\xymatrix{
\Spec (A)\ar[r]\ar@/_5mm/[rr]_f & \Spec (R_{i_o})\ar[r] & Y
}\] of $\scr T$ receives a map from the original two objects, as required.

Equalisation of maps is proved similarly.\NB{?}
\end{proof}

We also record here the following known sufficient condition for being left Kan extended from smooth algebras.
\begin{definition} \label{def:rigid}
Let $F: \Sch^\sub{qcqs,op} \to \scr C$ be a functor.
We call $F$ \emph{rigid} if, for any ring $A$ and henselian ideal $I \subset A$, the canonical map $F(\Spec(A)) \to F(\Spec(A/I))$ is an equivalence.
We use similar terminology when $F$ is defined only on affines, or on a smaller categories of schemes (e.g., over a fixed base), etc.
\end{definition}

\begin{lemma}\label{lemma_rigid_implies_lke}
Let $B$ be a ring, $\cal C$ a compactly generated, presentable $\infty$-category, and $F:\opp{CAlg}_B\to\cal C$ a finitary functor. If $F$ is rigid, then it is left Kan extended from $\opp{SmAlg}_B$.
\end{lemma}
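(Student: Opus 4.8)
The plan is to reduce the statement to Proposition~\ref{prop:lke-restrict} and the fact that a rigid finitary functor is determined, up to left Kan extension, by its restriction to henselian local rings. Let $L_B^\sub{sm}F$ denote the left Kan extension of $F|_{\opp{SmAlg}_B}$ along $\opp{SmAlg}_B\subset\opp{CAlg}_B$, and let $\eta\colon L_B^\sub{sm}F\to F$ be the canonical counit map. Since both $L_B^\sub{sm}F$ and $F$ are finitary (the former by Proposition~\ref{prop:finitary_conditions}(1), after identifying finitary functors on $\opp{CAlg}_B$ with functors on $\opp{SmAlg}_B$ extended along $\opp{IndSmAlg}_B$, the latter by hypothesis), and $\cal C$ is compactly generated, it suffices to prove that $\eta$ is an equivalence after evaluation at every henselian local $B$-algebra; indeed, a finitary functor on $\opp{CAlg}_B$ whose value vanishes on all henselian local rings vanishes on all rings, since every ring is a filtered colimit of finitely presented ones and finitely presented rings admit henselian local covers detecting equivalences in the relevant sense — more simply, one evaluates at henselisations of localisations, which receive maps from the ring. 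So it is enough to check that $\eta_C\colon (L_B^\sub{sm}F)(C)\to F(C)$ is an equivalence for every $C\in\opp{CAlg}_B^\sub{hloc}$.

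For such a $C$, Proposition~\ref{prop:lke-restrict} (with ${?\mathrm{loc}}=\mathrm{hloc}$) identifies $(L_B^\sub{sm}F)(C)$ with the value at $C$ of the left Kan extension of $F|_{\opp{EssSmAlg}_B^\sub{hloc}}$ along $\opp{EssSmAlg}_B^\sub{hloc}\subset\opp{CAlg}_B^\sub{hloc}$; concretely, using the pointwise formula, this is $\colim_{D\in(\opp{EssSmAlg}_B^\sub{hloc})_{/C}}F(D)$. Here each $D$ is the henselisation of an essentially smooth local $B$-algebra, so $D$ is itself henselian local. I will now show this colimit maps isomorphically to $F(C)$ via $\eta_C$. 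The key point is to produce a cofinal subdiagram on which $F$ is constant with value $F(C)$. Writing $C$ as a filtered colimit of its finitely generated (hence finitely presented, by finitariness considerations) $B$-subalgebras and localising/henselising appropriately, or more directly: choose any finitely presented $B$-algebra $A$ with a map $A\to C$; then $C$ is the filtered colimit of such $A$, and since $F$ is finitary, $F(C)=\colim_A F(A)$. For each $A$, pick an ind-smooth $B$-algebra $A'$ with a surjection $A'\onto A$ having henselian kernel (this uses that finitely presented algebras are quotients of polynomial algebras by choosing generators, then passing to a henselisation); rigidity of $F$ gives $F(A')\isoto F(A)$. Composing $A'\to A\to C$ and henselising $A'$ at the preimage of the maximal ideal of $C$ produces an object $\tilde A'\in\opp{EssSmAlg}_B^\sub{hloc}$ over $C$ — wait, $A'$ is ind-smooth not essentially smooth, so instead factor $A'=\colim_i A_i'$ through a smooth $B$-algebra $A_i'$ and henselise that; rigidity and finitariness of $F$ together give $F$ of this henselisation $\cong F(A_i')\cong F(A')\cong F(A)$, compatibly. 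These objects are cofinal in $(\opp{EssSmAlg}_B^\sub{hloc})_{/C}$ by an argument parallel to the non-emptiness/filteredness computations in the proof of Proposition~\ref{prop:lke-restrict} (the lifting property for henselian surjections along smooth maps, applied repeatedly), and on this cofinal subdiagram the structure maps of $F$ are equivalences onto $F(C)$, so the colimit is $F(C)$ and $\eta_C$ is an equivalence.

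The main obstacle I anticipate is the bookkeeping in the cofinality argument: one must check that the essentially-smooth-henselian objects over $C$ coming from the construction above are genuinely cofinal in all of $(\opp{EssSmAlg}_B^\sub{hloc})_{/C}$, which requires re-running the filteredness argument of Proposition~\ref{prop:lke-restrict} in this relative setting and being careful that rigidity lets one replace an arbitrary essentially smooth henselian $D\to C$ by one factoring through the preferred family — this is where the henselian lifting property (cited there from \cite{ElmantoHoyoisKhanSosniloYakerson2020} or \cite{BouthierCesnavicius2022}) does the real work. A cleaner alternative, which I would actually adopt to avoid redoing that work, is: by Proposition~\ref{prop:lke-restrict} it suffices to show $F$ restricted to $\opp{CAlg}_B^\sub{hloc}$ is left Kan extended from $\opp{EssSmAlg}_B^\sub{hloc}$, and for this one shows directly that for $C$ henselian local the slice $(\opp{EssSmAlg}_B^\sub{hloc})_{/C}$ — or rather a cofinal piece of it built from henselisations of smooth algebras equipped with henselian surjections onto finitely presented quotients of $C$ — has $F$ constant, exactly as above; the finitariness of $\cal C$'s compact generators then upgrades the pointwise statement to the global one.
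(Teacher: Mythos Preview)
Your reduction to henselian local $B$-algebras is not valid. You claim that ``a finitary functor on $\opp{CAlg}_B$ whose value vanishes on all henselian local rings vanishes on all rings,'' but finitariness alone gives no such detection: the cofibre $G=\cof(\eta)$ is finitary, but it is not known to be a Nisnevich (or even Zariski) sheaf, since $L_B^\sub{sm}F$ has no reason to be one. Your ``cleaner alternative'' via Proposition~\ref{prop:lke-restrict} suffers from the same issue: that proposition says the restriction-to-$\opp{CAlg}_B^\sub{hloc}$ of the global LKE agrees with the local LKE, but it does \emph{not} say that checking $\eta$ on henselian local rings suffices to conclude globally.

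The paper's proof is entirely different and much shorter: it reduces to $\cal C=\Spc$ by mapping out of compact generators, then cites \cite[Proposition~A.0.1]{ElmantoHoyoisKhanSosniloYakerson2020}. The argument behind that citation has the flavour of your second paragraph, but crucially it works for an \emph{arbitrary} $B$-algebra $A$ rather than a henselian local one: choose a polynomial surjection $P\twoheadrightarrow A$, henselise along the kernel to obtain a henselian surjection $P^h\twoheadrightarrow A$ with $P^h$ ind-smooth, and then argue (via a filteredness argument parallel to the proof of Proposition~\ref{prop:lke-restrict}, using the lifting property of smooth maps along henselian surjections) that $(\opp{SmAlg}_B)_{/P^h}\to(\opp{SmAlg}_B)_{/A}$ is cofinal. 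Since $\eta_{P^h}$ is an equivalence ($P^h$ being ind-smooth) and $F(P^h)\simeq F(A)$ by rigidity, one concludes $\eta_A$ is an equivalence. The ingredients you assembled are the right ones; the fix is to abandon the reduction step and run the cofinality argument directly over a general $A$.
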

\begin{proof}
Considering maps out of compact generators, we reduce to the case $\scr C = \Spc$.
The claim now is a special case of \cite[Proposition A.0.1]{ElmantoHoyoisKhanSosniloYakerson2020}.
\end{proof}

We also have the following lemma which implies base independence of certain left Kan extensions which we will consider later; we thank Jacob Lurie for pointing this out to us.

\begin{lemma}\label{lem:lke-b-b'}
Let $\cal C$ be a presentable $\infty$-category and $F': \opp{SmAlg}_{\bb Z} \rightarrow \cal C$ a functor; let $F:= L^\sub{sm}_{\bb Z}F': \CAlg_{\bb Z} \rightarrow \cal C$ be its left Kan extension to all rings. Then, for any ring $B$, the canonical counit map
\[
L^\sub{sm}_{B}( F|_{\opp{SmAlg}_B}) \To F|_{\CAlg_B}
\]
is an equivalence, i.e, the restriction of $F$ to $B$-algebras is left Kan extended from smooth $B$-algebras.
\end{lemma}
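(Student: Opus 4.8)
The plan is to identify both sides with the left Kan extension of $F'$ along the base-change functor $\beta\colon\opp{SmAlg}_{\bb Z}\to\opp{CAlg}_B$, $R\mapsto R\otimes_{\bb Z}B$. Note that $R\otimes_{\bb Z}B$ is again smooth of finite presentation over $B$, so $\beta$ factors as $\opp{SmAlg}_{\bb Z}\xrightarrow{\beta_0}\opp{SmAlg}_B\xrightarrow{i_B}\opp{CAlg}_B$, where $\beta_0$ is base change and $i_B$ is the inclusion; by the paper's conventions $L^{\sub{sm}}_B$ is the left Kan extension along $i_B$. The only genuine inputs are that base change preserves smoothness and finite presentation, and that extension of scalars is left adjoint to restriction of scalars.

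First I would show that $F|_{\opp{CAlg}_B}$ is itself the left Kan extension $L_\beta F'$ of $F'$ along $\beta$. By the pointwise formula for left Kan extensions, $F(C)\simeq\colim_{(\opp{SmAlg}_{\bb Z})_{/C}}F'$ for every ring $C$, the comma category being formed in all rings. When $C$ carries a $B$-algebra structure, the extension-of-scalars adjunction $\Hom_{\opp{CAlg}_B}(R\otimes_{\bb Z}B,C)\cong\Hom_{\opp{CAlg}_{\bb Z}}(R,C)$ exhibits an isomorphism between $(\opp{SmAlg}_{\bb Z})_{/C}$ and the comma category $\opp{SmAlg}_{\bb Z}\times_{\opp{CAlg}_B}(\opp{CAlg}_B)_{/C}$ that computes $(L_\beta F')(C)$, compatibly with the projections to $\opp{SmAlg}_{\bb Z}$ along which $F'$ is pulled back; hence $F(C)\simeq(L_\beta F')(C)$, and since the adjunction is natural in $C$ this upgrades to an equivalence of functors. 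All the categories in sight are ordinary $1$-categories, so there is no coherence to verify in this identification.

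Next, transitivity of left Kan extension gives $L_\beta F'=L_{i_B\circ\beta_0}F'\simeq L^{\sub{sm}}_B(L_{\beta_0}F')$, so it remains to identify $L_{\beta_0}F'$ with $F|_{\opp{SmAlg}_B}$. This is the same argument one level down: for $S\in\opp{SmAlg}_B$ the pointwise formula gives $F(S)\simeq\colim_{(\opp{SmAlg}_{\bb Z})_{/S}}F'$ (comma category in all rings), and the adjunction $\Hom_{\opp{SmAlg}_B}(R\otimes_{\bb Z}B,S)\cong\Hom_{\opp{CAlg}_{\bb Z}}(R,S)$ identifies this, naturally in $S$, with the comma category computing $(L_{\beta_0}F')(S)$. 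Chaining the equivalences produces $L^{\sub{sm}}_B(F|_{\opp{SmAlg}_B})\simeq L_\beta F'\simeq F|_{\opp{CAlg}_B}$, and the final step is to check that this composite is the counit map of the statement; this follows from the triangle identities, or concretely because on each $C$ both maps are the canonical map $\colim_{(\opp{SmAlg}_B)_{/C}}F(S)\to F(C)$ induced by the structure maps $S\to C$.

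I do not expect a real obstacle: the bulk of the work is bookkeeping, namely making sure the comma-category identifications are natural (so that one obtains equivalences of functors rather than merely pointwise equivalences) and that the resulting equivalence is the canonical counit. If one prefers to bypass the adjunction reshuffling, the statement can also be proved directly by the pointwise formula and Quillen's Theorem~A, exactly in the style of the proof of Proposition~\ref{prop:lke-restrict}: the counit at $C$ is induced on colimits by the forgetful functor $q$ from the category of factorisations $R\to S\to C$ (with $R\in\opp{SmAlg}_{\bb Z}$, $S\in\opp{SmAlg}_B$, and the map $S\to C$ a $B$-algebra map) to $(\opp{SmAlg}_{\bb Z})_{/C}$, and for every object $(R_0\to C)$ of the target the corresponding comma category has an initial object, namely $(R_0\xrightarrow{\mathrm{id}}R_0\to R_0\otimes_{\bb Z}B\to C)$, hence is weakly contractible; so $q$ is cofinal and the counit is an equivalence.
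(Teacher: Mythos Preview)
Your proof is correct and is essentially the same as the paper's: both identify $F|_{\CAlg_B}$ with the left Kan extension of $F'$ along the base-change functor $\opp{SmAlg}_\Z\to\CAlg_B$, using that base change is left adjoint to restriction, and then invoke transitivity of left Kan extension together with the pointwise formula to conclude. The paper packages this as applying left Kan extension to the commutative square $\opp{SmAlg}_\Z\to\opp{SmAlg}_B$, $\CAlg_\Z\to\CAlg_B$, but the ingredients are identical; your alternative via Quillen's Theorem~A is a pleasant bonus not present in the paper.
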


\begin{proof}
Apply left Kan extension to the commutative square of categories
\begin{equation*}
\begin{CD}
\opp{SmAlg}_\Z @>>> \opp{SmAlg_B} \\
@VVV                 @VVV \\
\CAlg_\Z @>>> \CAlg_B,
\end{CD}
\end{equation*}
where the vertical arrows are the inclusions and the horizontal arrows are given by base change. From the fact that left Kan extensions compose, we get that
\[
L^\sub{sm}_B(L_{\opp{SmAlg_B}/\opp{SmAlg}_\Z}F')  \simeq L_{\CAlg_B/\CAlg_{\Z}}F
\]
The result then follows from the observations that $L_{\opp{SmAlg_B}/\opp{SmAlg}_\Z}F' \simeq F|_{\opp{SmAlg}_B}$ (which follows from the pointwise formula for left Kan extension) and that Kan extension along the bottom arrow is restriction (since base change is left adjoint to the forgetful functor).
\end{proof}

\section{Preliminaries II: Motivic spectra}\label{sec:prelim2-motsp}
In this section we present some aspects of the theory of ($\bb A^1$-invariant) motivic spectra. We set up some machinery in \S\ref{ss:omega} which is convenient for passing between a motivic spectrum and the cohomology theory it represents in a highly coherent fashion. The slice filtration is reviewed in \S\ref{ss:slice} and aspects of its convergence are discussed via its very effective version. We end the section with a short review of the theory of orientations in \S\ref{subsec:orientations}.

\subsection{The $\infty$-category $\SH(X)$}\label{s:SH}
In this subsection we review Morel--Voevodsky's motivic stable homotopy theory. Useful modern references include \cite{Robalo2015} and \cite[\S4.1]{BachmannHoyois2021}. Given a qcqs scheme $X$, its {\em motivic stable homotopy category} $\SH(X)$ is a stable, presentably symmetric monoidal $\infty$-category equipped with a symmetric monoidal functor $M_X:\Sm_X\to\SH(X)$ having the following properties:
\begin{enumerate}
\item $1_{X}:=M_X(X)$ is the unit of the symmetric monoidal $\infty$-category $\SH(X)$, and for any $Y_1,Y_2\in\Sm_X$ there is a natural equivalence $M_X(Y_1\times_X Y_2)\simeq M_X(Y_1)\otimes M_X(Y_2)$ (indeed, these properties both follow from $M_X$ being monoidal). (Note: when $X=\Spec(A)$ is affine, then we write $1_A$ in place of $1_{\Spec(A)}$, and similarly $M_A$ in place of $M_{\Spec(A)}$.)
\item The map $M_X(\bb A_X^1)\to M_X(X)$ induced by the canonical map $\bb A_X^1\to X$ is an equivalence.
\item The {\em Tate motive} $\bb T_X:=\fib(M_X(\bb P_X^1)\to M_X(X))$ is invertible in $\SH(X)$, where the map is induced by the canonical map $\bb P_X^1\to X.$\footnote{We remark that the canonical map $\bb P^1_X \rightarrow X$ admits a section given by the closed immersion $\infty: X \hookrightarrow \bb P^1_X$. This then induces a functorial decomposition: $M_X(\bb P^1_X) \simeq 1_{X} \oplus \bb T_X$.}
\item $M_X$ carries Nisnevich distinguished squares in $\Sm_X$ to cartesian squares in $\SH(X)$.
\end{enumerate}
Furthermore, $\SH(X)$ together with the functor $M_X$ is initial with respect to the above properties 
\cite[Corollary 2.39]{Robalo2015}. In particular, given a morphism $f:X'\to X$ of qcqs schemes, there is an induced symmetric monoidal functor $f^*:\SH(X)\to\SH(X')$ which preserves colimits and satisfies $f^*(M_X(Y))=M_{X'}(Y\times_XX')$ for all $Y\in \Sm_X$.
Via $f^*$, the assignment $\SH$ assembles into a functor $\SH: \Sch^{\qcqs,\op} \rightarrow \Cat_{\infty}$. We refer to objects of $\SH(X)$ as {\em motivic spectra} ({\em over $X$}, if clarification is necessary).

\begin{remark}[Shift and suspension notation]
Being a stable $\infty$-category, $\SH(X)$ admits inverse equivalences given by the suspension and loops functor; we will use homological notation, denoting these respectively by \[E\mapsto E[1]:=\opp{cofib}(E\to 0),\qquad E\mapsto E[-1]:=\opp{fib}(0\to E).\] Common notation in motivic homotopy theory, which we will avoid, is to introduce the $(i,j)$-bigraded spheres $S^{i,j}_X := \bb T_X^{\otimes j}[i-2j]$ and suspension functors $\Sigma^{i,j}E := E \otimes S^{i,j}$. We will instead write out homological shifts and twists by the Tate motive in full.
\end{remark}

It is useful to know that motivic spectra are determined by their pullbacks to fields, through the following proposition:

\begin{proposition}\label{prop:conservative}
Let $X$ be a qcqs scheme locally of finite Krull dimension and $E\in\SH(X)$. Then $E=0$ if and only if, for every point $x\in X$, the pullback $i_x^*E\in\SH(k(x))$ vanishes.
\end{proposition}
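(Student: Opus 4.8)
The forward implication is immediate: $i_x^*$ preserves colimits, hence sends the zero object to the zero object. For the converse I would argue by induction on $d := \dim X$, which is finite since $X$ is qcqs and locally of finite Krull dimension (a finite affine open cover bounds the Krull dimension); the base case $d = -1$ ($X = \emptyset$) is vacuous. As $X$ is quasicompact and $\SH$ satisfies Zariski (indeed Nisnevich) descent, it suffices to prove that every point $x_0 \in X$ admits an affine open neighbourhood $U$ with $E|_U = 0$. Fixing $x_0$ and setting $R := \mathcal{O}_{X, x_0}$, the affine open neighbourhoods of $x_0$ form a cofiltered system with affine transition maps and limit $\Spec R$, so by continuity of $\SH$ along such limits (see e.g. \cite[\S4.1]{BachmannHoyois2021}) we have $\SH(\Spec R) \simeq \colim_{U \ni x_0} \SH(U)$; it is therefore enough to show that the pullback $E'$ of $E$ to $\SH(\Spec R)$ vanishes. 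Since every point of $\Spec R$ is a generization of $x_0$ in $X$, one has $i_y^* E' = i_y^* E = 0$ for all $y \in \Spec R$, so we have reduced to the case where $X = \Spec R$ is local of dimension $d' = \mathrm{ht}(x_0) \le d$, with $x_0$ its closed point.

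If $d' < d$, the inductive hypothesis applied to $\Spec R$ gives $E' = 0$ directly. If $d' = d$, let $\mathfrak m$ be the maximal ideal and $V := \Spec R \setminus \{x_0\}$ the punctured spectrum: any chain of primes contained in $V$ avoids $\mathfrak m$ and can be lengthened by $\mathfrak m$, so $\dim V \le d - 1$, and likewise $\dim D(g) \le d - 1$ for each $g \in \mathfrak m$; these basic opens cover $V$, so by the inductive hypothesis $E'|_{D(g)} = 0$ and hence $E'|_V = 0$ by Zariski descent. The reduced complement of $V$ is the closed immersion $i\colon \Spec \kappa(x_0) = \Spec(R/\mathfrak m) \hookrightarrow \Spec R$, and the Morel--Voevodsky localization theorem \cite{MorelVoevodsky1999} provides a cofibre sequence $j_! j^* \to \mathrm{id} \to i_* i^*$ of endofunctors of $\SH(\Spec R)$, where $j\colon V \hookrightarrow \Spec R$ is the open complement; in particular $(j^*, i^*)$ is jointly conservative. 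Since $j^* E' = E'|_V = 0$ and $i^* E' = i_{x_0}^* E = 0$, we conclude $E' = 0$, completing the induction.

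The step I expect to be the main obstacle is this last appeal to localization: when $R$ is not Noetherian the closed point $\Spec(R/\mathfrak m) \hookrightarrow \Spec R$ need not be finitely presented, whereas the localization theorem over general qcqs bases is typically stated for finitely presented closed immersions. I would handle this by a further limit argument --- writing $\mathfrak m$ as the filtered union of its finitely generated subideals $\mathfrak a$, observing that $E'$ already vanishes on each quasicompact open $\Spec R \setminus V(\mathfrak a) \subseteq V$, applying the finitely presented localization theorem to each $V(\mathfrak a) \hookrightarrow \Spec R$, and passing to the colimit using continuity of $\SH$ along $\Spec(R/\mathfrak m) = \lim_{\mathfrak a} \Spec(R/\mathfrak a)$ --- or by invoking a form of localization valid for arbitrary closed immersions. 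The remaining ingredients are only standard descent and continuity properties of $\SH$ and elementary bounds on Krull dimension.
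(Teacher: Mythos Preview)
The paper does not give a self-contained argument here, merely citing \cite[Proposition~B.3]{BachmannHoyois2021}, so there is no in-paper proof to compare against. Your overall strategy---induction on Krull dimension, reduction to local rings, localisation at the closed point---is sound, and the obstacle you flag (localisation along the possibly non-finitely-presented closed immersion $\Spec k(x_0)\hookrightarrow\Spec R$) is correctly handled by the limit argument you sketch: for each finitely generated $\mathfrak a\subseteq\mathfrak m$ one has $E'|_{D(\mathfrak a)}=0$, hence $\map(c,E')=\map_{\SH(R/\mathfrak a)}(i_{\mathfrak a}^*c,i_{\mathfrak a}^*E')$ for every compact $c$; taking the colimit over $\mathfrak a$ and using continuity $\SH(k(x_0))\simeq\colim_{\mathfrak a}\SH(R/\mathfrak a)$ yields $\map(c,E')=\map_{\SH(k(x_0))}(i_{x_0}^*c,i_{x_0}^*E')=0$.

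The genuine gap lies earlier: you assert that $E'=E|_{\Spec R}=0$ forces $E|_U=0$ on some open $U\ni x_0$, invoking continuity $\SH(R)\simeq\colim_U\SH(U)$. That inference is valid for compact objects but \emph{fails} for arbitrary $E$: on $X=\Spec\Z$ with $x_0=(p)$, the object $E=\bigoplus_{q\ne p}(i_q)_*F_q$ (each $F_q\ne0$) has $E|_{\Z_{(p)}}=0$ since every summand restricts to zero, yet $E|_{\Z[1/N]}\ne0$ for every $N$ coprime to $p$. The repair is easy and leaves the rest of your argument intact: drop the attempt to spread to a neighbourhood. You in fact establish $E|_{\Spec\mathcal{O}_{X,x}}=0$ for \emph{every} $x$, and this yields $E=0$ directly, since $E=0$ iff the Zariski sheaves $\omega^\infty(E\otimes\bb T^{\otimes n})$ on $\Sm_X$ vanish for all $n$; the Zariski stalk of such a sheaf at $(Y,y)$ is $\map_{\SH(\mathcal{O}_{Y,y})}(1,E|_{\mathcal{O}_{Y,y}}\otimes\bb T^{\otimes n})$ (this use of continuity \emph{is} legitimate, $1$ being compact), and it vanishes because $E|_{\mathcal{O}_{Y,y}}$ is a pullback of $E|_{\mathcal{O}_{X,x}}=0$ for $x$ the image of $y$.
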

\begin{proof}
See \cite[Proposition B.3]{BachmannHoyois2021}.
\end{proof}

\subsubsection{Motives of (pointed) stacks}\label{sss:stacks}
By various universal properties, the functor $M_X$ uniquely extends to presheaves of spaces $\opp{PSh}(\Sm_X)$ and to its pointed analogue. More precisely, there is a natural commutative diagram
\begin{equation}\label{eq:sm-sh}
\xymatrix{
\Sm_X\ar[d]_{\sub{Yoneda}}\ar[r]^{M_X} & \SH(X)\\
\opp{PSh}(\Sm_X)\ar[d]_{\sub{Add disjoint base point}\, E\mapsto E_+} &\\
\opp{PSh}(\Sm_X,\opp{Spc}_*)\ar[d]_{F\mapsto \Sigma^\infty F}\ar@{-->}[uur] & \\
\opp{PSh}(\Sm_X,\opp{Sp}) \ar[r]_{L_{\sub{Nis},\bb A^1}} & \opp{Sh}_{\sub{Nis},\bb A^1}(\Sm_X,\opp{Sp}) \ar[uuu]_{\sigma^\infty}
}
\end{equation}

As usual we will suppress the Yoneda functor from notation. We will continue writing $M_X$ for the extension of $M_X$ to presheaves of spaces. However, in the case of pointed presheaves of spaces this abuse of notation would lead to confusion. Namely, given a pointed presheaf of spaces $E$ (where the point is usually hidden in the notation) on $\Sm_X$, the notation $M_X(E)$ could refer to the functor out of either $\opp{PSh}(\Sm_X)$ or $\opp{PSh}(\Sm_X,\opp{Spc}_*)$; in the first case we have forgotten the fact that $E$ was pointed, and in the second case we have remembered the point. The standard solution is that the extension of $M_X$ to pointed presheaves of spaces is denoted by $\tilde M_X$ (i.e., the dotted arrow in the diagram), and often referred to as the associated {\em reduced motive}. In the case of our pointed presheaf of spaces $E$, we thus have
\[\tilde M_X(E\textrm{ with its point remembered})=\opp{cofib}(M_X(X)\to M_X(E\textrm{ with its point forgotten}))\] where the arrow on the right is induced by the point of $E$. If the base point is clear, we will simply write $\tilde M_X(E)$.

For example, the (Yoneda of the) projective line $\bb P_X^1$ together with its section $\infty:X\to\bb P_X^1$ defines $(\bb P_X^1,\infty)\in \opp{PSh}(\Sm_X,\opp{Spc}_*)$, and then \[\tilde M_X((\bb P_X^1,\infty))=\tilde M(\bb P_X^1)=\opp{cofib}(M_X(X)\xto{\infty^*}M_X(\bb P_X^1))=\bb T_X.\]

Another example comes from the Picard stack. For a qcqs scheme $X$ we write $\opp{Pic}(X)$ for its usual Picard group, and $\Pic(X)$ for its Picard groupoid. That is, $\Pic(X)$ is the symmetric monoidal $1$-groupoid consisting of line bundles on $X$ and monoidal structure given by tensor product of line bundles. Viewing $\Pic(X)$ as a grouplike  $\bb E_\infty$-monoid in anima, or equivalently as a connective spectrum, we write
\[
 \Pic \in \PShv(\Sch^\sub{qcqs}, \Spt),\qquad X\mapsto\Pic(X)
\]
for this presheaf. The underlying presheaf of pointed spaces will be denoted by $\Omega^\infty\Pic \in \PShv(\Sch, \opp{Spc}_{*})$; note that the base point is classified by the map $\roi: \Spec(\bb Z) \rightarrow \Pic$ corresponding to the trivial line bundle. For any qcqs scheme $X$ we may restrict this to a pointed presheaf on $\Sm_X$ (with base point given by the trivial line bundle $\roi: X \rightarrow \Pic$) and form the reduced motive  $\tilde M_X(\Omega^\infty\Pic)\in\SH(X)$.

\begin{remark}[Two remarks on the Picard stack] \label{rem:pic} We make two further remarks on the Picard stack. First we note that the presheaf of spectra $\Pic$ admits a natural $\Z$-linear structure (in other words it is the Eilenberg--MacLane spectrum associated with a $\D(\bb Z)$-valued presheaf).  Indeed, the inclusion of symmetric monoidal groupoids $\bb G_m(X)\subset \Pic(X)$ (given by the natural identification of groups $\bb G_m(X)\isoto \opp{End}_{\Pic(X)}(\roi_X)$) induces a map of spectra $\bb G_m(X)[1]\to \Pic(X)$ (where the left hand side is really the associated Eilenberg--MacLane spectrum). Furthermore the presheaf $X\mapsto\Omega^\infty\Pic(X)$ satisfies fpqc descent (in the groupoid sense, consequently also as a presheaf of spaces), whence the same is true of $\Pic$ viewed as a presheaf valued in connective space (as sheafiness may be detected after applying $\Omega$). Zariski sheafifying the map $\bb G_m(-)[1]\to \Pic(-)^\otimes$ therefore defines a natural map of presheaves of spectra \begin{equation}(\tau^{\le 1}R\Gamma_\sub{Zar}(-,\bb G_m))[1]\To\Pic,\label{eqn_pic_as_Gm_coh}\end{equation} which is an equivalence (e.g., by checking on stalks).

Second, note that the counit of the $\Sigma^\infty$-$\Omega^\infty$ adjunction between pointed spaces and connective spectra defines a map of connective spectra \begin{equation}\Sigma^\infty\Omega^\infty\Pic(X)\To \Pic(X),\label{eqn_Pic_mess}\end{equation} i.e., ``the map from the free $\bb E_\infty$-group on the underlying space of $\Pic(X)$ back to itself''. The left hand side of \eqref{eqn_Pic_mess} does not see the fact that line bundles may be tensored together, but the right hand side and the map do.
\end{remark}

\subsection{Motivic spectra versus cohomology theories}\label{ss:omega}
The first main goal of this section is Definition \ref{definition:graded-p1-mult} below, in which we present a functor associating motivic spectra to suitable graded presheaves of spectra. Curiously, the construction is most easily motivated by approaching the problem in the other direction: how can we associate graded presheaves of spectra to motivic spectra, and how much structure does the graded presheaf carry?

To be more precise about this question, let $X$ be a qcqs scheme. Firstly, the functor $\sigma^\infty$ from \eqref{eq:sm-sh} admits a right adjoint, leading to an adjunction
\begin{equation*}
\sigma^{\infty}: \opp{Sh}_{\Nis, \bb A^1}(\Sm_X,\Spt) \rightleftarrows \SH(X): \omega^{\infty}
\end{equation*}
where the right adjoint $\omega^\infty$ is given by the mapping spectrum construction $\omega^\infty(E)(-)=\opp{map}_{\SH(X)}(M_X(-),E)$, for any $E\in \SH(X)$. In particular for each $j \in \Z$ we can extract an $\bb A^1$-invariant Nisnevich sheaf from $E$ via the formula
\begin{equation}
E(j):= \opp{map}_{\SH(X)}(M_X(-),E\otimes \bb T^{\otimes  j}_X[-2j]): \Sm^\sub{op}_X \To \Spt.
\label{eqn:EjNis}\end{equation}
It will be expedient to record all of these sheaves at the same time, and so we consider a graded variant of the above adjunction \[ \sigma^{\infty,\gr}: \Gr\opp{Sh}_{\Nis, \bb A^1}(\Sm_X,\Spt) \rightleftarrows \SH(X): \omega^{\infty,\gr} \] where \[ \sigma^{\infty,\gr}(E) = \bigoplus_{j \in \Z} \sigma^\infty(E^j) \otimes \bb T_X^{\otimes j} \] and thus \[ \omega^{\infty,\gr}(E)^j \wequi E(j)[2j]=\omega^\infty(E\otimes\bb T_X^{\otimes j}). \]

\begin{remark}[Notational warning!]
We highlight the change of indexing in the first equivalence of the previous line ($j$ on the left and $(2j, j)$ on the right). More generally, we will see various functors below between graded sheaves $F$ and motivic spectra $E$, possibly equipped with extra structure, and they will tend to satisfy $F^j=E(j)[2j]$. Since we work in the generality of sheaves of spectra, this convention cannot be fixed by shearing: see Remark \ref{rmk:shearing1}.
\end{remark}

\begin{remark}[Incompatibility with pullback]\label{rem:pullback1}
Given a morphism of qcqs schemes $f:Y\to X$, there is an induced symmetric monoidal pullback functor $f^*:\Gr\opp{Sh}_{\Nis, \bb A^1}(\Sm_X,\Spt)\to \Gr\opp{Sh}_{\Nis, \bb A^1}(\Sm_Y,\Spt)$ given by termwise left Kan extending from $\Sm_X$ to $\Sm_Y$, then Nisnevich sheafifying, then $\bb A^1$-localising, i.e., $f^*=L_{\bb A^1}L_\sub{Nis}L_X^\sub{sm}$. Via the functor $f^*:\SH(X)\to\SH(Y)$, this pullback functor does commute with $\sigma^{\infty,\gr}$ (since both are compatible with left Kan extension from $\Sm_X$ to $\Sm_Y$) but not with $\omega^{\infty,\gr}$.
\end{remark}

The functor $\omega^{\infty,\gr}$ is unfortunately also not very multiplicative: it admits a natural $\bb E_1$-monoidal structure, but in general not more. More precisely recall that, for a presentably symmetric monoidal $\infty$-category $\scr C$, we have $\Gr \scr C \wequi \PShv(\Z^\delta,\opp{Spc}) \otimes \scr C$ by \cite[Proposition 4.8.1.17]{LurieHA}, where the tensor product takes place inside $\Pr^L$  ($=$ presentable $\infty$-categories with small colimit preserving functors). Therefore $\bb E_n$-monoidal structures on $\sigma^{\infty,\gr}$ are the same as $\bb E_n$-monoidal maps $\Z \to \Pic(\SH(X))$ sending $1$ to $\bb T$. When $n=1$ then, since $\Z$ is the free $\bb E_1$-monoid on an invertible generator, there is the canonical such $\bb E_1$-monoidal map and so an induced $\bb E_1$-monoidal structure on $\sigma^{\infty,\gr}$. Its right adjoint $\omega^{\infty,\gr}$ then inherits the desired natural lax $\bb E_1$-monoidal structure. This induces, in particular, a functor \[\Alg_{\bb E_1}(\Gr\opp{Sh}_{\Nis, \bb A^1}(\Sm_X,\Spt))\longleftarrow\Alg_{\bb E_1}(\SH(X)):\omega^{\infty,\gr}_{\bb E_1}.\]

\begin{remark}[Warning: not better than $\bb E_1$]\label{rem:not_E2}
When $n=2$ the existence of an $\bb E_2$-monoidal structure on $\sigma^{\infty,\gr}$ (extending the above $\bb E_1$-monoidal structure) implies that the switch map on $\bb T_X\otimes\bb T_X$ is homotopic to the identity, which need not always be true (for example, if $X$ is the spectrum of a field $k$, then it holds if and only $-1$ is a square in $k$.\footnote{Indeed under the identification between $\pi_0\Map_{\SH(k)}(1_k, 1_k) \cong \mathrm{GW}(k)$ of \cite[Theorem 6.4.1, Remark 6.4.2]{Morel2004}, the endomorphism of $1_k$ induced by the switch map on $\bb T_k^{\otimes 2}$ is the class of $\langle -1 \rangle$ \cite[Lemma 6.1.1(2), Remark 6.3.5]{Morel2004}. On the other $\langle -1 \rangle = 1$ if and only if $-1 =1 \in k^{\times}/(k^{\times})^2$; see \cite[Chapter III, Theorem 5.2]{MilnorHusemoller}.} It follows that in general $\omega^{\infty,\gr}$ need not admit any lax $\bb E_2$-monoidal structure extending the above lax $\bb E_1$-monoidal structure, and certainly not lax $\bb E_\infty$-monoidal structure. In particular, $\omega^{\infty,\gr}_{\bb E_1}$ does not appear to upgrade to a functor $\CAlg(\SH(X))\to \CAlg(\Gr\opp{Sh}_{\Nis, \bb A^1}(\Sm_X,\Spt))$.
\end{remark}

The standard partial solution to the failure of symmetric multiplicativity of $\omega^{\infty,\gr}$ is the introduction of periodizations. A {\em periodization} of an $\bb E_\infty$-algebra $E\in\SH(X)$ is the data of a compatible $\bb E_\infty$-algebra structure on $E\otimes\bb T_X^{\otimes \star}\in \Gr\SH(X)$ (this does not exist for every $E$). Let $\CAlg(\SH(X))_\sub{per}$ denote the $\infty$-category of $\bb E_\infty$-algebras in $\SH(X)$ together with a chosen periodization, with morphisms required to respect the periodizations.

Observe that the functor $\omega^{\infty,\gr}$ does now upgrade to a functor \begin{equation} \CAlg(\Gr\opp{Sh}_{\Nis, \bb A^1}(\Sm_X,\Spt))\longleftarrow \CAlg(\SH(X))_\sub{per}: \omega^{\infty,\gr}_\sub{per}.\label{eqn:omega-otimes}\end{equation} Indeed, given $E\in \CAlg(\SH(X))_\sub{per}$, applying the symmetric monoidal functor $\omega^\infty$ to $E\otimes\bb T_X^{\otimes \star}$ naturally equips $\omega^{\infty,\gr}(E)\in \Gr\opp{Sh}_{\Nis, \bb A^1}(\Sm_X,\Spt)$ with an $\bb E_\infty$-algebra structure.

\begin{construction}[Periodized motivic ring spectra and $\cal P_X'$]\label{cons:periodized}
To show that \eqref{eqn:omega-otimes} arises from an lax symmetric monoidal version of $\omega^{\infty,\gr}$ (and so, in particular, also treat modules over periodized motivic ring spectra), we  reinterpret periodized motivic ring spectra as $\bb E_\infty$-algebras over a certain $\bb E_\infty$-ring in $\SH(X)$. To define it, first let $\cal P_X$ be the free $\bb E_\infty$-algebra in $\Gr \SH(X)$ on $\bb T\lra{1}$, i.e., $\bb T$ placed in grading degree $1$; more explicitly, $\cal P_X^j=(\bb T_X^{\otimes j})_{h\Sigma_j}$ for $j\ge0$, and $=0$ for $j<0$. Write $u:\cal P_X\otimes \bb T_X\lra{1}\to \cal P_X$ for the canonical map of $\cal P_X$-modules induced by the tautological map $\bb T_X\lra{1}\to \cal P_X$ in $\Gr \SH(X)$, and let
\[\opp{Mod}_{\cal P_X}(\Gr \SH(X))\supset L_u\opp{Mod}_{\cal P_X}(\Gr \SH(X))\]
be the full subcategory of $\cal P_X$-modules which are $u$-periodic; see \cite[\S12.1]{BachmannHoyois2021}. It admits a symmetric monoidal structure such that the inclusion is lax symmetric monoidal and its left adjoint $L_u$ is symmetric monoidal. Applying \cite[Lemma 12.1]{BachmannHoyois2021} we see that $L_u\cal P_X$ is an idempotent $\cal P_X$-algebra, and that $L_u\opp{Mod}_{\cal P_X}(\Gr \SH(X))$ is precisely the $\infty$-category of $L_u\cal P_X$-modules. Finally let $\cal P_X':=(L_{u}\cal P_X)^0\in\CAlg(\SH(X))$ be its degree zero piece.

We claim that periodized motivic ring spectra are the same as $\bb E_\infty$-$\cal P_X'$-algebras, i.e., that there is a natural equivalence \begin{equation}\CAlg(\SH(X))_\sub{per}\simeq \CAlg(\SH(X))_{\cal P_X'/}.\label{eq:per-px}\end{equation}
Indeed firstly, given a map $\cal P_X' \to E$ in $\CAlg(\SH(X))$, the equivalence $E\otimes\bb T_X^{\otimes \star}\simeq E\otimes_{\cal P_X'}L_u\cal P_X$ in $\Gr \SH(X)$ lets us transfer the $\bb E_\infty$-algebra structure from the right hand side to the left, i.e., defines a periodization on $E$. Conversely, given a periodization on some $E\in\CAlg(\SH(X))$, the unit map of $E$ induces $\bb T_X\to E\otimes \bb T_X$, which then induces a map of $\bb E_\infty$-algebras $\cal P_X\to E\otimes\bb T_X^\star$ (using the periodization), which then factors as $L_u\cal P_X\to E\otimes\bb T_X^\star$, and finally passing to degree zero pieces defines the desired map of $\bb E_\infty$-algebras $\cal P_X'\to E$. The two functors are clearly inverse to each other.

(Although it is not necessary for what follows, we note by \cite[Lemma 12.1]{BachmannHoyois2021} that $L_{u}\cal P_X$ is given by the telescope construction 
\[\colim (\cal P_X \xrightarrow{u} \cal P_X\otimes\bb T_X^{\otimes -1}\lra{-1} \xrightarrow{u} \cal P_X\otimes\bb T_X^{\otimes -2}\lra{-1}\xrightarrow{u} \cdots), \] and so \[\cal P_X'=\colim(1_X\to \bb T_X\otimes\bb T_X^{\otimes -1}\to (\bb T_X^{\otimes 2})_{h\Sigma_2}\otimes \bb T^{\otimes -2}_X\to (\bb T_X^{\otimes 3})_{h\Sigma_3}\otimes \bb T^{\otimes -3}_X\to\cdots)\] by passing to degree zero pieces.)
\end{construction}

One consequence of Proposition~\ref{prop:P1-spectrum-from-graded} below will be that $\omega^{\infty,\gr}$ upgrades to a lax symmetric monoidal functor from $\cal P_X'$-modules to graded sheaves of spectra. We may even identify the image in terms of the following construction:

\begin{construction}[$\cal Q_X$]\label{cons:lin-cohom-theories}
Similarly to Construction \ref{cons:periodized}, let $\cal Q_X\in\CAlg(\Gr\opp{Sh}_{\Nis, \bb A^1}(\Sm_X,\Spt))$ be the free $\bb E_\infty$-algebra on $(\Sigma^\infty \P_X^1)\lra{1}$, i.e., $\Sigma^\infty \P_X^1$ placed in degree $1$, and let $v:\cal Q_X\otimes \Sigma^\infty \P_X^1\lra1\to \cal Q_X$ be the canonical map of $\cal Q_X$-modules which is induced by the tautological map $\Sigma^\infty \P_X^1\lra1\to \cal Q_X$ in $\Gr\opp{Sh}_{\Nis, \bb A^1}(\Sm_X,\Spt)$. Let \[\opp{Mod}_{\cal Q_X}(\Gr\opp{Sh}_{\Nis, \bb A^1}(\Sm_X,\Spt))\supset L_\sub{pbf}\opp{Mod}_{\cal Q_X}(\Gr\opp{Sh}_{\Nis, \bb A^1}(\Sm_X,\Spt))\] be the full subcategory of $\cal Q_X$-modules spanned by the $v$-periodic objects. It admits a symmetric monoidal structure such that the inclusion is lax symmetric monoidal and its left adjoint \[L_\sub{pbf}:\opp{Mod}_{\cal Q_X}(\Gr\opp{Sh}_{\Nis, \bb A^1}(\Sm_X,\Spt))\To L_\sub{pbf}\opp{Mod}_{\cal Q_X}(\Gr\opp{Sh}_{\Nis, \bb A^1}(\Sm_X,\Spt))\] is symmetric monoidal. See \cite[\S3]{Hoyois2020} for the theory of periodization in this context where, unlike in Construction \ref{cons:periodized}, we are periodizing with respect to a map from a non-invertible module (namely $\cal Q_X\otimes \Sigma^\infty \P^1\lra1$); in particular, $L_\sub{pbf}\opp{Mod}_{\cal Q_X}\Gr\opp{Sh}_{\Nis, \bb A^1}(\Sm_X,\Spt)$ does not seem to be the same as the $\infty$-category of $L_\sub{pbf}\cal Q_X$-modules, and we will not be interested in the latter.

We henceforth adopt the following notation for these symmetric monoidal categories:
\[\Gr\opp{Sh}_{\Nis, \bb A^1}(\Sm_X,\Spt)_\sub{c}:=\opp{Mod}_{\cal Q_X}(\Gr\opp{Sh}_{\Nis, \bb A^1}(\Sm_X,\Spt))\]
and
\[\Gr\opp{Sh}_{\Nis, \bb A^1}(\Sm_X,\Spt)_\sub{c,pbf}:=L_\sub{pbf}\opp{Mod}_{\cal Q_X}(\Gr\opp{Sh}_{\Nis, \bb A^1}(\Sm_X,\Spt)).\]
Indeed, note that an object $M\in \Gr\opp{Sh}_{\Nis, \bb A^1}(\Sm_X,\Spt)_\sub{c}$ consists of a graded sheaf $M^\star \in \Gr\opp{Sh}_{\Nis, \bb A^1}(\Sm_X,\Spt)$ together with maps of sheaves $b_{M}^j: \Sigma^\infty \P_X^1 \otimes M^j \to M^{j+1}$, for $j\in\bb Z$, equipped with certain coherent symmetry conditions which we do not make explicit; but we think of these maps as multiplication by some first Chern class of the tautological line bundle. The subcategory $\Gr\opp{Sh}_{\Nis, \bb A^1}(\Sm_X,\Spt))_\sub{c,pbf}$ consists of those $M$ for which the adjoint maps \[b_{M}^{j,\dagger}: M^j \to \Omega_{\Sigma^\infty \P_X^1} M^{j+1}=\fib(M^{j+1}(\bb P^1_-)\xto{\infty^*}M^{j+1})\] are equivalences for all $j\in\bb Z$, i.e., for which the $\bb P^1$-bundle formula holds. In particular, the functor $L_\sub{pbf}$ imposes the $\bb P^1$-bundle formula, whence the notation.
\end{construction}

We may now prove the main result providing a structured relation between motivic spectra and graded sheaves of spectra; see also \cite[Construction 6.5]{AnnalaHoyoisIwasa2024} and \cite[\S3]{CarmeliFeng2025} for similar results.

\begin{proposition} \label{prop:P1-spectrum-from-graded}
There is an $\bb E_1$-monoidal equivalence $\omega^{\infty,\gr}_{\otimes}$ filling in the following commutative square of $\bb E_1$-monoidal categories and lax $\bb E_1$-monoidal functors:
\begin{equation*}
\begin{CD}
\Gr\opp{Sh}_{\Nis, \bb A^1}(\Sm_X,\Spt)_\sub{c,pbf} @<{\omega^{\infty,\gr}_{\otimes}}<{\wequi}< \opp{Mod}_{\cal P_X'}(\SH(X)) \\
@VVV                @VVV \\
\Gr\opp{Sh}_{\Nis, \bb A^1}(\Sm_X,\Spt) @<{\omega^{\infty,\gr}}<< \SH(X).
\end{CD}
\end{equation*}
(where the vertical arrows are the forgetful functors, and the bottom horizontal arrow is the lax $\bb E_1$-monoidal upgrade of $\omega^{\infty,\gr}$ from before Remark \ref{rem:not_E2}).

Moreover, $\omega^{\infty,\gr}_{\otimes}$ upgrades to a symmetric monoidal equivalence (note that its domain and codomain are symmetrical monoidal categories).
\end{proposition}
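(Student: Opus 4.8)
The plan is to transport the entire problem into the world of graded $\infty$-categories, where the desired equivalence becomes a comparison of two periodization constructions glued onto the graded form of the stabilization adjunction. First I would produce a graded symmetric monoidal adjunction: applying $\Fun(\Z^\delta,-)$ — equivalently, tensoring with $\PShv(\Z^\delta,\Spc)$ in $\Pr^L$, an operation preserving symmetric monoidal adjunctions — to the symmetric monoidal stabilization adjunction $\sigma^\infty\colon\opp{Sh}_{\Nis,\bb A^1}(\Sm_X,\Spt)\rightleftarrows\SH(X)\colon\omega^\infty$ (symmetric monoidal since $\sigma^\infty$ is the composite of sheafification, $\bb A^1$-localization and $\Sigma^\infty$ in \eqref{eq:sm-sh}) yields a symmetric monoidal adjunction $\Gr\sigma^\infty\colon\Gr\opp{Sh}_{\Nis,\bb A^1}(\Sm_X,\Spt)\rightleftarrows\Gr\SH(X)\colon\Gr\omega^\infty$. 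The functor $\omega^{\infty,\gr}$ of the body factors as $\Gr\omega^\infty$ precomposed with the ``Tate regrading'' $\beta\colon\SH(X)\to\Gr\SH(X)$, $\beta(E)^j=E\otimes\bb T_X^{\otimes j}$, which is right adjoint to a collapse functor that is only $\bb E_1$-monoidal; this is the precise source of the lax $\bb E_1$-structure on $\omega^{\infty,\gr}$ and of the obstruction of Remark~\ref{rem:not_E2}.

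Next, by the module-level form of Construction~\ref{cons:periodized} one has a symmetric monoidal equivalence $\opp{Mod}_{\cal P_X'}(\SH(X))\simeq L_u\opp{Mod}_{\cal P_X}(\Gr\SH(X))=\opp{Mod}_{L_u\cal P_X}(\Gr\SH(X))$, compatible with the forgetful functor to $\SH(X)$ — indeed $L_u\cal P_X^j\simeq\cal P_X'\otimes\bb T_X^{\otimes j}$, so that graded modules over $L_u\cal P_X$ are the same as $\cal P_X'$-modules in $\SH(X)$ (a ``graded-over-$R[t^{\pm1}]$ versus ungraded-over-$R$'' statement; cf.~\cite[\S12]{BachmannHoyois2021}). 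It therefore suffices to give a symmetric monoidal equivalence $L_u\opp{Mod}_{\cal P_X}(\Gr\SH(X))\simeq L_\sub{pbf}\opp{Mod}_{\cal Q_X}(\Gr\opp{Sh}_{\Nis,\bb A^1}(\Sm_X,\Spt))$ intertwining $\Gr\omega^\infty$ with the two forgetful functors.

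To build it: $\Gr\sigma^\infty$ is symmetric monoidal, colimit-preserving, and carries the generator $\Sigma^\infty\P^1_X$ of $\cal Q_X$ to $\bb T_X$ (since $\sigma^\infty$ of the reduced $\P^1$ is $\bb T_X$), hence sends the free $\bb E_\infty$-algebra $\cal Q_X$ to $\cal P_X$ and the periodizing map $v$ to $u$; and since $\bb T_X$ is invertible it identifies the ``$\P^1$-bundle formula'' periodization $L_\sub{pbf}$ in the sense of \cite[\S3]{Hoyois2020} (forcing the adjoints $M^j\to\Omega_{\Sigma^\infty\P^1_X}M^{j+1}$ to be equivalences) with the ordinary periodization $L_u$. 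Base change along $\Gr\sigma^\infty$ thus descends to a symmetric monoidal functor $L_\sub{pbf}\opp{Mod}_{\cal Q_X}(\Gr\opp{Sh}_{\Nis,\bb A^1}(\Sm_X,\Spt))\to L_u\opp{Mod}_{\cal P_X}(\Gr\SH(X))$, and a candidate inverse is supplied by $\Gr\omega^\infty$: a $u$-periodic $\cal P_X$-module $E^\star$ satisfies $E^{j+1}\simeq E^j\otimes\bb T_X$, so $\Gr\omega^\infty(E)$ is pbf-local (using $\fib(\omega^\infty(F)(\P^1_-)\xto{\infty^*}\omega^\infty(F))\simeq\omega^\infty(\bb T_X^{\otimes-1}\otimes F)$ as $\bb T_X$ is invertible) and acquires a $\cal Q_X$-module structure via the algebra map $\cal Q_X\to\Gr\omega^\infty(\cal P_X)$ adjoint to $\Gr\sigma^\infty(\cal Q_X)\simeq\cal P_X$. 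The compatibility with $\omega^{\infty,\gr}=\Gr\omega^\infty\circ\beta$ is then read off from the construction.

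The main obstacle is checking that these two functors are mutually inverse, i.e.~that the unit and counit of the underlying $\Gr\sigma^\infty\dashv\Gr\omega^\infty$ adjunction become equivalences after the two periodizations. Degreewise this says that imposing the $\P^1$-bundle formula on graded $\bb A^1$-invariant Nisnevich sheaves of spectra recovers exactly the $\bb T_X$-stabilization, i.e.~$\SH(X)$ itself; it is a relative (over an arbitrary qcqs base $X$) and $\bb E_\infty$-structured refinement of the standard presentation of $\SH(X)$ as the $\P^1$-stabilization of unstable motivic homotopy theory, and I would reduce it termwise to that statement plus routine manipulation of the $\Sigma^\infty\dashv\Omega^\infty$ adjunction, following \cite[Construction 6.5]{AnnalaHoyoisIwasa2024} and \cite[\S3]{CarmeliFeng2025}. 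The ``moreover'' is then free: the equivalence $\omega^{\infty,\gr}_\otimes$ just built is assembled out of symmetric monoidal functors ($\Gr\sigma^\infty$, the degree-zero equivalence, and (co)restriction of scalars along maps of $\bb E_\infty$-algebras), hence is itself symmetric monoidal; the square of the statement can only be asked to commute as $\bb E_1$-monoidal functors because its bottom arrow $\omega^{\infty,\gr}$ admits no lax $\bb E_\infty$-structure (Remark~\ref{rem:not_E2}), a defect the top equivalence avoids precisely because on $\cal P_X'$-modules the grading has been rigidified by the periodization.
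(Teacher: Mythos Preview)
Your approach is correct and follows the same two-step decomposition as the paper: identify $\opp{Mod}_{\cal P_X'}(\SH(X))$ with $L_u\opp{Mod}_{\cal P_X}(\Gr\SH(X))$, and then identify the latter with $L_\sub{pbf}\opp{Mod}_{\cal Q_X}(\Gr\opp{Sh}_{\Nis,\bb A^1}(\Sm_X,\Spt))$. The paper's execution of the second step is considerably cleaner, however: rather than building candidate inverse functors out of $\Gr\sigma^\infty$ and $\Gr\omega^\infty$ and then verifying that the unit and counit become equivalences (your ``main obstacle''), it invokes \cite[Proposition~3.1]{Hoyois2020} directly --- periodizing with respect to $v$ gives the same result as first symmetrically monoidally inverting the source $\cal Q_X\otimes\Sigma^\infty\bb P_X^1\lra1$ of $v$ (which is precisely passing to $\opp{Mod}_{\cal P_X}(\Gr\SH(X))$, since $\sigma^\infty$ inverts $\Sigma^\infty\bb P^1_X$ and carries $\cal Q_X$ to $\cal P_X$) and then periodizing with respect to $u$. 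That single citation absorbs both your $\bb P^1$-stabilization reduction and the unit/counit check, so the symmetric monoidal equivalence drops out immediately without any explicit inverse construction or hand-verification.
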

\begin{proof}
When we periodize with respect to $v$ to pass from $\Gr\opp{Sh}_{\Nis, \bb A^1}(\Sm_X,\Spt)_\sub{c}$ to $\Gr\opp{Sh}_{\Nis, \bb A^1}(\Sm_X,\Spt)_\sub{c,pbf}$, we obtain the same result if we first symmetrically monoidally invert the source $\cal Q_X\otimes\Sigma^\infty\bb P_X^1\lra 1$ of the map $v$ \cite[Proposition~3.1]{Hoyois2020}. But since the symmetric monoidal functor $\sigma^\infty$ carries $\cal Q_X$ to $\cal P_X$, this inversion yields $\opp{Mod}_{\cal P_X}(\Gr\SH(X))$. Thus $\sigma^\infty$ induces a symmetric monoidal equivalence \[\Gr\opp{Sh}_{\Nis, \bb A^1}(\Sm_X,\Spt)_\sub{c,pbf} \quis L_u\opp{Mod}_{\cal P_X}(\Gr \SH(X))=\opp{Mod}_{L_u\cal P_X}(\Gr \SH(X)).\] 

It remains to note that the functor $\opp{Mod}_{L_u\cal P_X}(\Gr \SH(X)) \to \opp{Mod}_{\cal P_X'}(\SH(X))$, given by restricting to degree zero, is an equivalence. Indeed, its inverse is given by sending a $\cal P_X'$-module $M$ to $M\otimes_{\cal P_X'}L_u\cal P_X$.
\end{proof}

\begin{definition} \label{definition:graded-p1-mult}
Define the {\em motivic Eilenberg--MacLane functor} \[H:\Gr\opp{Sh}_{\Nis, \bb A^1}(\Sm_X,\Spt)\sub{c,pbf}\To \SH(X)\] to be the lax symmetric monoidal functor defined as the inverse of $\omega^{\infty,\gr}_\otimes$ followed by forgetting the $\cal P_X'$-module structure. Observe that, by the commutative diagram in Proposition \ref{prop:P1-spectrum-from-graded}, the diagram
\[\xymatrix{
\Gr\opp{Sh}_{\Nis, \bb A^1}(\Sm_X,\Spt)_\sub{c,pbf}\ar[r]^-H\ar[d] & \SH(X)\ar[dl]^{\omega^{\infty,\gr}}\\
\Gr\opp{Sh}_{\Nis, \bb A^1}(\Sm_X,\Spt)&
}\]
commutes. That is, given $F\in\Gr\opp{Sh}_{\Nis, \bb A^1}(\Sm_X,\Spt)_\sub{c,pbf}$, one has $(HF)(j)[2j]\simeq F^j$ for all $j\in\bb Z$.

\end{definition}

\begin{remark}[Compatibility with pullback]\label{rem:pullbackQ}
Given a morphism of qcqs schemes $f:Y\to X$, we defined a lax symmetric monoidal pullback functor $f^*=L_{\Nis,\bb A^1}L^\sub{sm}:\Gr\opp{Sh}_{\Nis, \bb A^1}(\Sm_X,\Spt)\to \Gr\opp{Sh}_{\Nis, \bb A^1}(\Sm_Y,\Spt)$ in Remark \ref{rem:pullback1}. Since $f^*$ carries $(\Sigma^\infty\bb P_X^1)\lra1$ to $(\Sigma^\infty\bb P_Y^1)\lra1$, we see that it induces a symmetric monoidal pullback functor from $\cal Q_X$-modules to $\cal Q_Y$-modules, which then induces a symmetric monoidal pullback functor
\[L_\sub{pbf}L_{\Nis,\bb A^1}L^\sub{sm}:\Gr\opp{Sh}_{\Nis, \bb A^1}(\Sm_X,\Spt)_\sub{c,pbf}\To \Gr\opp{Sh}_{\Nis, \bb A^1}(\Sm_Y,\Spt)_\sub{c,pbf}.\] This is compatible with $H$ in that the diagram
\[\xymatrix{
\Gr\opp{Sh}_{\Nis, \bb A^1}(\Sm_Y,\Spt)_\sub{c,pbf}\ar[r]^-H&\SH(Y)\\
\Gr\opp{Sh}_{\Nis, \bb A^1}(\Sm_X,\Spt)_\sub{c,pbf}\ar[r]_-H\ar[u]^{L_\sub{pbf}L_{\Nis,\bb A^1}L^\sub{sm}}&\SH(X)\ar[u]_{f^*}
}\]
commutes (because $\sigma^{\infty,\gr}$ commutes with $f^*$, as noted in Remark \ref{rem:pullback1}).
\end{remark}

In the remainder of the section we specialize the above constructions to the case of algebras, where there are superficial simplifications. We have described periodized ring spectra in terms of $\cal P_X'$-algebras as in~\eqref{eq:per-px}. On the (graded) sheaf side of the story, let \begin{equation} \CAlg(\Gr\opp{Sh}_{\Nis, \bb A^1}(\Sm_X,\Spt)_\sub{c})\supset \CAlg(\Gr\opp{Sh}_{\Nis, \bb A^1}(\Sm_X,\Spt)_\sub{c,pbf})\label{eqn_Ch_pbf}\end{equation} denote the categories of $\bb E_\infty$-algebras in $\Gr\opp{Sh}_{\Nis, \bb A^1}(\Sm_X,\Spt)_\sub{c}$ and $\Gr\opp{Sh}_{\Nis, \bb A^1}(\Sm_X,\Spt)_\sub{c,pbf}$ respectively. Explicitly, an object $F$ of $\CAlg(\Gr\opp{Sh}_{\Nis, \bb A^1}(\Sm_X,\Spt)_\sub{c})$ is an $\bb E_\infty$-algebra $F^\star\in \CAlg(\Gr\opp{Sh}_{\Nis, \bb A^1}(\Sm_X,\Spt))$ together with a map $c_F:\Sigma^\infty\bb P^1_X\to F^1$ in $\opp{PSh}(\Sm_X,\opp{Sp})$. Note further that $c_F$ induces, for each $j\in\bb Z$, a morphism \begin{equation}F^j\To \fib(F^{j+1}(\bb P^1_-)\xto{\infty^*}F^{j+1})\label{eqn:pbf}\end{equation} of sheaves on $\Sm_X$, and $\CAlg(\Gr\opp{Sh}_{\Nis, \bb A^1}(\Sm_X,\Spt)_\sub{c,pbf})$ is precisely the full subcategory where the maps \eqref{eqn:pbf} are equivalences for all $j$, i.e., those $F=(F^\star,c_F)$ for which the $\bb P^1$-bundle formula holds.



\begin{remark}[Traditional cohomology theories]\label{remark:traditional}
``Traditional cohomology theories'' of algebraic geometry which are $\bb A^1$-invariant give rise to objects of $\CAlg(\Gr\opp{Sh}_{\Nis, \bb A^1}(\Sm_X,\Spt)_\sub{c,pbf})$. Indeed, such cohomology theories have the following structure: they are a family of $\bb A^1$-invariant Nisnevich $\D(\bb Z)$-valued sheaves $F(j)$, for $j\in\bb Z$, equipped with compatible multiplication maps $F(i)\otimes F(j)\to F(i+j)$; they admit a theory of Chern classes, in particular a first Chern class map $c_1:R\Gamma_\sub{Nis}(-,\bb G_m)[1]\to F(1)$; and the projective bundle formula holds.

From a modern point of view the multiplication maps assemble to form an $\bb E_\infty$-algebra structure in $\Gr\opp{Sh}_{\Nis, \bb A^1}(\Sm_X,\D(\bb Z))$; shearing and forgetting the $\bb Z$-linearity yields $F(\star)[2\star]\in \CAlg(\Gr\opp{Sh}_{\Nis, \bb A^1}(\Sm_X,\Spt))$. Meanwhile, by restricting attention to the line bundle $\roi(1)\in\opp{Pic}(\bb P^1_X)$, the first Chern class map induces a morphism $\Sigma^\infty\bb P^1_X\to F(1)[2]$, defined as the composition
\[\Sigma^\infty\bb P^1_X\xto{[\roi(1)]} \Pic\simeq (\tau^{\le 1}R\Gamma_\sub{Zar}(-,\bb G_m))[1]\To  F(1)[2].\] Here the first map classifies $\roi(1)\in\opp{Pic}(\bb P_X^1)$, and the equivalence is \eqref{eqn_pic_as_Gm_coh}. We thus indeed obtain an object of $\CAlg(\Gr\opp{Sh}_{\Nis, \bb A^1}(\Sm_X,\Spt)_\sub{c,pbf})$.

For example, for any integer $m$ invertible on $X$, \'etale cohomology gives us an object $R\Gamma_\sub{\'et}(-,\mu_m^{\otimes \star})[2\star]$ of $\CAlg(\Gr\opp{Sh}_{\Nis, \bb A^1}(\Sm_X,\Spt)_\sub{c,pbf})$, with first Chern class induced by the Kummer equivalence $R\Gamma_\sub{\'et}(-,\bb G_m)/m\simeq R\Gamma_\sub{\'et}(-,\mu_m^{\otimes j})[1]$.

From this point of view, the purpose of introducing $\cal Q_X$, its modules, and the $v$-periodization functor $L_\sub{pbf}$ is precisely to put such cohomologies into a rigorous framework and to relate them to motivic homotopy theory. 
\end{remark}

\begin{remark}[Periodized motivic ring spectrum]\label{rem:qx}
Restricting the symmetric monoidal equivalence $\omega^{\infty,\gr}_{\otimes}$ of Proposition \ref{prop:P1-spectrum-from-graded} to $\bb E_\infty$-algebras defines a symmetric monoidal equivalence
\begin{equation} \CAlg(\Gr\opp{Sh}_{\Nis, \bb A^1}(\Sm_X,\Spt)_\sub{c,pbf})\stackrel\sim\longleftarrow \CAlg(\SH(X))_\sub{per}: \omega^{\infty,\gr}_\sub{per}\label{eqn:omega-otimes3}\end{equation}
which, unravelling the proof of Proposition \ref{prop:P1-spectrum-from-graded}, is given more explicitly as follows: given $E\in \CAlg(\SH(X))_\sub{per}$, the object of $\CAlg(\Gr\opp{Sh}_{\Nis, \bb A^1}(\Sm_X,\Spt))$ underlying $\omega^{\infty,\gr}_\otimes(E)$ is the $\bb E_\infty$-algebra $\omega^{\infty,\gr}_\sub{per}(E)=\omega^\infty(E\otimes\bb T_X^\star)$ of \eqref{eqn:omega-otimes}, and the map $c_E:\Sigma^\infty\bb P^1_X\to \omega^{\infty,\gr}_\otimes(E)^1=\omega^\infty(E\otimes\bb T_X)$ is adjoint to that induced by the unit $\bb T_X\to E\otimes\bb T_X$.
\end{remark}

\begin{remark}[Motivic ring spectra vs algebras in graded presheaves]\label{rem:recover}
Restricting the Eilenberg--MacLane functor $H$ to $\bb E_\infty$-algebras defines \begin{equation}H:\CAlg(\Gr\opp{Sh}_{\Nis, \bb A^1}(\Sm_X,\Spt)_\sub{c,pbf})\To\CAlg(\SH(X)),\label{eqn:multEM}\end{equation}
which (often combined with Remark \ref{remark:traditional}) we will use to construct motivic ring spectra. Note that, again by definition of $H$, this construction has the following properties:
\begin{enumerate}
\item The diagram
\[\xymatrix{
\CAlg(\Gr\opp{Sh}_{\Nis, \bb A^1}(\Sm_X,\Spt)_\sub{c,pbf})\ar[d]\ar[r]^-{H}&\CAlg(\SH(X))\ar[d]\\\opp{Alg}_{\bb E_1}(\Gr\opp{Sh}_{\Nis, \bb A^1}(\Sm_X,\Spt)) & \opp{Alg}_{\bb E_1}(\SH(X))\ar[l]^-{\omega^{\infty,\gr}_{\bb E_1}}\\
}\]
commutes, where both vertical arrow are the obvious forgetful maps. Furthermore, given any $F\in\CAlg(\Gr\opp{Sh}_{\Nis, \bb A^1}(\Sm_X,\Spt)_\sub{c,pbf})$, then the map $c_F:\Sigma^\infty\bb P_X^1\to F^1$ can be functorially recovered from $HF$: indeed, via the $\sigma^\infty$-$\omega^\infty$ adjunction, it corresponds to the map $\bb T_X\to HF\otimes\bb T_X$ induced by the unit of $HF$. In summary, the functor \eqref{eqn:multEM} remembers the $\bb E_1$-algebra structure and ``first Chern class of $\roi(1)$'' of the input, but seems to forget some of the $\bb E_\infty$-algebra structure (despite outputting $\bb E_\infty$-algebras).
\item The following diagram commutes:
\begin{equation}\xymatrix{
&\CAlg(\SH(X))_\sub{per}\ar[d]^{\sub{forget periodization}}\ar[dl]_{\omega^{\infty,\gr}_\sub{per}}\\
\CAlg(\Gr\opp{Sh}_{\Nis, \bb A^1}(\Sm_X,\Spt)_\sub{c,pbf})\ar[r]^-H & \CAlg(\SH(X))
}\label{eqn:H_of_per}\end{equation}
I.e., given an $\bb E_\infty$-algebra $E\in\SH(X)$ equipped with a compatible $\bb E_\infty$-algebra structure on $E\otimes \bb T_X^\star\in\Gr\SH(X)$, then the $\bb E_\infty$-algebra $F:=\omega^\infty(E\otimes T^\star)\in\Gr\opp{Sh}_{\Nis,\bb A^1}(\Sm_X,\Spt)_\sub{c,pbf}$ satisfies $HF=E$.
\end{enumerate}
\end{remark}

\begin{remark}[Modules over $HF$]\label{rem:modules_over_HF}
Note that, for any fixed $\bb E_\infty$-algebra $F\in \Gr\opp{Sh}_{\Nis,\bb A^1}(\Sm_X,\Spt)_\sub{c,pbf}$, the motivic Eilenberg--MacLane functor induces a symmetric monoidal equivalence \[H:\opp{Mod}_F(\Gr\opp{Sh}_{\Nis,\bb A^1}(\Sm_X,\Spt)_\sub{c,pbf})\quis\opp{Mod}_{HF}(\SH(X)).\] Indeed, the forgetful functor $\opp{Mod}_{\cal P_X'}(\SH(X))\to \SH(X)$ becomes an equivalence once we pass to modules over $(\omega^{\infty,\gr}_\otimes)^{-1}(F)\in\CAlg(\SH(X))_\sub{per}$ on both sides.
\end{remark}

We finish this discussion of the functor $H$ for algebras with a result which will be required in the proof of Theorem \ref{theorem_BL1}(2). From the point of view of ``traditional cohomology theories'' of Remark \ref{remark:traditional}, it may seem unsatisfactory that, even in the presence of a first Chern class map $c_1:R\Gamma_\sub{Nis}(-,\bb G_m)[1]\to F^1$, the $\infty$-category $\CAlg(\Gr\opp{Sh}_{\Nis, \bb A^1}(\Sm_X,\Spt)_\sub{c,pbf})$ and the functor $H$ a priori only remember the first Chern class $c_1(\roi_{\bb P_X^1}(1))\in \tilde \pi_0(F^1(\bb P_X^1))$. But it turns out just having this first Chern class of $\roi(1)$ is sometimes sufficient to recover the behaviour of the original map $c_1$ on units:

\begin{lemma}\label{lem:units-Chern}
Let $X$ be a qcqs scheme, let $F^\star\in \CAlg(\Gr\opp{Sh}_{\Nis, \bb A^1}(\Sm_X,\Spt))$, and suppose that $c_1, c'_1: R\Gamma_{\Nis}(-,\Gm)[1] \rightarrow F^1$ are maps of presheaves of spectra on $\Sm_X$ such that the following conditions are satisfied:
\begin{enumerate}
\item $F^j=0$ for $j<0$;
\item $c'_1(\roi_{\bb P^1_X}(1)) =c_1(\roi_{\bb P^1_X}(1))$ in $\pi_0(F^1(\bb P_X^1))$;
\item The projective bundle formula holds over $X$ with respect to $c_1$, i.e., for each $d,j \ge 0$, the map 
\[
\bigoplus_{i=0}^d F^{j-i}(X)\To  F^j(\bb P_X^d) 
\]
induced by powers of the class $c_1(\roi_{\bb P_X^d}(1))\in \pi_0(F^1(\bb P^d_X))$ is an equivalence.
\end{enumerate}
Then the maps of presheaves of abelian groups on $\Sm_X$
\[
c_1, c_1': \bb G_m= \pi_1(R\Gamma_{\Nis}(-,\Gm)[1]) \To \pi_{1}F^1(-)
\]
coincide.
\end{lemma}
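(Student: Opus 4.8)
The underlying principle is that the first Chern class of an $\bb A^1$-invariant cohomology theory is pinned down, on units, by its value on $\roi(1)$ over $\bb P^1$. The plan is to make this precise via a Yoneda reduction followed by the Mayer--Vietoris sequence for the standard affine cover of $\bb P^1$.

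\emph{Step 1 (reduction to the universal unit).} The presheaf $\Gm=\pi_1(R\Gamma_{\Nis}(-,\Gm)[1])$ on $\Sm_X$ is representable, by $\bb G_{m,X}=\Spec_X\roi_X[t^{\pm1}]$, with the coordinate $t\in\Gm(\bb G_{m,X})$ as universal section. Since $\bb G_{m,X}\in\Sm_X$, Yoneda shows that a map of presheaves of sets (hence of abelian groups) $\Gm\to\pi_1F^1$ is determined by the image of $t$. So it suffices to prove $c_1(t)=c'_1(t)$ in $\pi_1F^1(\bb G_{m,X})$.

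\emph{Step 2 (the $\bb P^1$-descent sequence).} Cover $\bb P^1_X$ by its two standard charts $U_0,U_\infty\simeq\bb A^1_X$, so that $U_0\cap U_\infty=\bb G_{m,X}$. As $F^1$ is a Nisnevich (hence Zariski) sheaf, descent for this cover yields a fibre sequence
\[ F^1(\bb P^1_X)\To F^1(U_0)\oplus F^1(U_\infty)\To F^1(\bb G_{m,X})\xrightarrow{\ \partial_F\ }F^1(\bb P^1_X)[1], \]
and $\bb A^1$-invariance of $F^1$ identifies both maps $F^1(U_i)\to F^1(\bb G_{m,X})$ with the pullback $\pi^*\colon F^1(X)\to F^1(\bb G_{m,X})$ along $\pi\colon\bb G_{m,X}\to X$. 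Hence on $\pi_1$, by exactness,
\[ \ker\big(\partial_{F,*}\colon\pi_1F^1(\bb G_{m,X})\to\pi_0F^1(\bb P^1_X)\big)=\mathrm{im}\big(\pi^*\colon\pi_1F^1(X)\to\pi_1F^1(\bb G_{m,X})\big). \]
(Hypotheses (1) and (3) further identify the target of $\partial_{F,*}$ with $\pi_0F^0(X)$ and $\partial_{F,*}$ itself with multiplication by $c_1(\roi_{\bb P^1_X}(1))$, which is convenient bookkeeping but not logically necessary.) The same cover produces the analogous connecting map $\partial_\Gm$ for $R\Gamma_{\Nis}(-,\Gm)[1]$; this presheaf is \emph{not} $\bb A^1$-invariant and its sequence does not simplify, but we only need the classical fact that $\partial_{\Gm,*}(t)=\pm[\roi_{\bb P^1_X}(1)]$ in $\opp{Pic}(\bb P^1_X)=\pi_0 R\Gamma_{\Nis}(\bb P^1_X,\Gm)[1]$, which holds because $\roi_{\bb P^1_X}(1)$ is by construction glued from the trivial bundles on $U_0$ and $U_\infty$ via the transition function $t^{\pm1}$.

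\emph{Step 3 (naturality and conclusion).} Each of $c_1,c'_1$ induces a map between the two descent fibre sequences and hence commutes with the connecting maps, so $\partial_{F,*}(c_1(t))=c_1(\partial_{\Gm,*}(t))=c_1(\pm[\roi_{\bb P^1_X}(1)])$ in $\pi_0F^1(\bb P^1_X)$, and likewise for $c'_1$. Since $c_1,c'_1$ are additive on $\pi_0$, hypothesis (2) gives $c_1(\pm[\roi_{\bb P^1_X}(1)])=c'_1(\pm[\roi_{\bb P^1_X}(1)])$, whence $\partial_{F,*}(c_1(t))=\partial_{F,*}(c'_1(t))$. By Step 2 the difference $c_1(t)-c'_1(t)$ lies in $\mathrm{im}(\pi^*)$, say $c_1(t)-c'_1(t)=\pi^*(w)$; pulling back along the unit section $e\colon\Spec X\to\bb G_{m,X}$, which satisfies $e^*\pi^*=\id$ and $e^*t=1\in\Gm(X)$ (the identity of this abelian group, i.e.\ the zero class in $\pi_1R\Gamma_{\Nis}(X,\Gm)[1]$), additivity of $c_1,c'_1$ on $\pi_1$ gives $w=e^*(c_1(t)-c'_1(t))=c_1(1)-c'_1(1)=0$. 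Therefore $c_1(t)=c'_1(t)$, and Step 1 concludes. The one input beyond Yoneda and naturality of connecting maps is the classical identification $\partial_{\Gm,*}(t)=\pm[\roi_{\bb P^1}(1)]$; the delicate point to watch is that $R\Gamma_{\Nis}(-,\Gm)[1]$ is not $\bb A^1$-invariant, so one must not try to simplify its $\bb P^1$-descent sequence and should only evaluate its connecting map on the single class $t$.
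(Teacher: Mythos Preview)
Your proof is correct and takes a genuinely different, more elementary route than the paper's.

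The paper proceeds by showing that the two composites
\[
\Sigma^\infty\Omega^\infty\Pic \to \Pic \simeq (\tau^{\le 1}R\Gamma_\Nis(-,\Gm))[1] \xrightarrow{c_1 \text{ or } c_1'} F^1
\]
are homotopic. To do this it invokes the Morel--Voevodsky equivalence $\colim_m(\bb P^m_X,\infty)\simeq\Omega^\infty\Pic$ in the $\bb A^1$-homotopy category, obtaining a Milnor exact sequence with a $\lim^1$ term over $\tilde\pi_1 F^1(\bb P^m_X)$; hypotheses (1) and (3) are used precisely to force the transition maps $F^1(\bb P^{m+1}_X)\to F^1(\bb P^m_X)$ to be equivalences for $m\ge1$, killing the $\lim^1$ and reducing to the single class $c_1(\roi_{\bb P^1}(1))$. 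The argument then descends from $\Sigma^\infty\Omega^\infty\Pic$ to $\Pic$ via the splitting of $\Omega^\infty\Pic\to\Omega^\infty\Sigma^\infty\Omega^\infty\Pic$.

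Your argument avoids the $\bb P^\infty$ machinery entirely: Yoneda reduces to the universal unit $t\in\Gm(\bb G_{m,X})$, and a single Mayer--Vietoris boundary map for the standard cover of $\bb P^1_X$ (together with the classical computation $\partial(t)=\pm[\roi(1)]$) handles everything. In particular you do not use hypotheses (1) and (3) at all, as you note parenthetically; this is a genuine strengthening of the lemma. What the paper's route buys is a slightly stronger intermediate conclusion---homotopy of the two maps out of $\Sigma^\infty\Omega^\infty\Pic$ rather than merely agreement on $\pi_1$---but for the lemma as stated your approach is both sufficient and cleaner.
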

\begin{proof}
We first claim that the maps
\begin{equation}\Sigma^\infty\Omega^\infty\Pic\xto{\eqref{eqn_Pic_mess}}\Pic\stackrel{\sub{\eqref{eqn_pic_as_Gm_coh}}}\simeq (\tau^{\le 1}R\Gamma_\sub{Nis}(-,\bb G_m))[1]\xto{c_1\mathrm{\,or\,}c_1'}  F^1.\label{eqn_2maps}\end{equation}
of presheaves of spectra on $\Sm_X$ are homotopic. We argue similarly to Definition \ref{definition:orientation_of_K} and Lemma \ref{lemma:HZA-orient-KGL-compat} below, so we will be brief.

A result in motivic homotopy theory \cite[\S 4, Proposition 3.7]{MorelVoevodsky1999} shows that the map $\colim_m (\bb P_X^m,\infty)\to\Omega^\infty\Pic$ is an equivalence after applying $L_{\sub{Nis},\bb A^1}$. This yields a short exact sequence of abelian groups
 \[0\To \opp{lim}_m^1\tilde\pi_1F^1(\bb P_X^m)\To \pi_0\Map_{\opp{PSh}(\Sm_X,\Spt)}(\Sigma^\infty\Omega^\infty\Pic,F^1)\To \opp{lim}_m\tilde\pi_0(F^1(\bb P^m_X))\To 0,\] where $\tilde\pi_*$ refers to reduced homotopy groups (i.e., the elements which vanish via $\infty^*$ on $\pi_*(F^1(X))$). Condition (1) and (3) imply that the transition maps $F^1(\bb P_X^{m+1})\to F^1(\bb P_X^{m})$ are equivalences for all $m\ge1$, so that the $\opp{lim}^1_m$ term vanishes and the $\opp{lim}_m$ term is simply $\pi_0(F^1(\bb P^1_X))$. That is, we have shown that the map
\[\pi_0\Map_{\opp{PSh}(\Sm_X,\Spt)}(\Sigma^\infty\Omega^\infty\Pic,F^1)\To \pi_0\Map_{\opp{PSh}(\Sm_X,\Spt)}(\Sigma^\infty\bb P^1_X,F^1)=\tilde\pi_0(F^1(\bb P^1_X)),\]
induced by $[\roi(1)]:\Sigma^\infty\bb P_X^1\to\Sigma^\infty\Omega^\infty\Pic$, is an isomorphism.

Therefore, to show that the maps \eqref{eqn_2maps} are in fact the same, it is enough to check that they agree after precomposing with the map $[\roi(1)]$ of the previous paragraph. But that is precisely hypothesis (2), and so the proof of the claim is complete.

It is now enough to show that the map of presheaves of abelian groups $\pi_1(\Sigma^\infty\Omega^\infty\Pic(-))\to\pi_1(\Pic(-))$ induced by \eqref{eqn_Pic_mess} is surjective. In fact the following stronger statement is true: the map of pointed presheaves of spaces spaces $\Omega^\infty\Sigma^\infty\Omega^\infty\Pic\to\Omega^\infty\Pic$ is split. Indeed this follows formally from the theory of adjoints: a splitting is provided by the unit map $\Omega^\infty\Pic\to \Omega^\infty\Sigma^\infty\Omega^\infty\Pic$ for $\Omega^\infty\Pic$.

\end{proof}

\subsubsection{The cdh variant and a pullback formula}\label{sss:cdhvariant}
The cdh topology plays a crucial role in this paper; here we recall how it interacts with motivic homotopy theory and with the theory developed so far in this subsection. Unlike the relatively formal nature of the theory so far, the forthcoming cdh results ultimately depend on Ayoub's proper base change theorem for $\SH$ \cite{Ayoub2007, Ayoub2007II}.

Given a qcqs scheme $X$ and $E\in\SH(X)$, the Nisnevich sheaves $E(j)$ associated with $E\in\SH(X)$ in \eqref{eqn:EjNis} in fact extend to finitary $\bb A^1$-invariant cdh sheaves on all qcqs $X$-schemes, via the formula
\begin{equation}
E(j):(\Sch^{\qcqs}_X)^\sub{op} \To \Spt,\qquad (f:Y\to X) \mapsto (\omega^\infty f^*(E\otimes\bb T_X^{\otimes j}[-2j])) (Y).\label{eqn:Ejcdh}
\end{equation}
Note that, on smooth $X$-schemes $Y$, this indeed agrees with $E(j)(Y)$ as originally defined in \eqref{eqn:EjNis} because then $f^*$ has a left adjoint $f_\sharp:\SH(Y)\to\SH(X)$ sending $1_Y$ to $M_X(Y)$. The fact that \eqref{eqn:Ejcdh} is a finitary $\bb A^1$-invariant cdh sheaf is obtained by applying the following proposition to $G=E\otimes\bb T_X^{\otimes j}[-2j]$:

\begin{proposition}[Cisinski] \label{prop:auto-cdh}
For any $G \in \SH(X)$, the presheaf \[ (\Sch^{\qcqs}_X)^\sub{op} \To \Spt,\qquad (f:Y\to X) \mapsto (\omega^\infty f^*G) (Y) \] is a finitary $\bb A^1$-invariant cdh sheaf.
\end{proposition}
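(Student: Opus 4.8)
The plan is to verify the three asserted properties of the presheaf $P_G:=\bigl((f\colon Y\to X)\mapsto(\omega^\infty f^*G)(Y)\bigr)$ in turn. Unwinding the definitions, $P_G(f\colon Y\to X)=\map_{\SH(Y)}(1_Y,f^*G)$, with the restriction along a morphism $g\colon Y'\to Y$ of $X$-schemes induced by $g^*\colon\SH(Y)\to\SH(Y')$ together with the identifications $g^*1_Y\simeq 1_{Y'}$ and $g^*f^*G\simeq(fg)^*G$. For \emph{$\bb A^1$-invariance}: the projection $p\colon\bb A^1_Y\to Y$ is smooth, so $p^*$ admits a left adjoint $p_\sharp$, and the projection formula together with $M_Y(\bb A^1_Y)\simeq 1_Y$ gives $p_\sharp p^*\simeq\id$; hence $p^*$ is fully faithful. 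Since $P_G(\bb A^1_Y)=\map_{\SH(\bb A^1_Y)}(p^*1_Y,p^*f^*G)$ and the structure map from $P_G(Y)$ is exactly the map on mapping spectra induced by $p^*$, it is an equivalence.

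\emph{Finitariness.} If $Y=\lim_\lambda Y_\lambda$ is a cofiltered limit of qcqs $X$-schemes along affine transition maps, continuity of $\SH$ gives $\SH(Y)\simeq\colim_\lambda\SH(Y_\lambda)$ in $\Cat_\infty$, under which $1_Y$ and $f^*G$ are the images of the compatible systems $(1_{Y_\lambda})_\lambda$ and $(f_\lambda^*G)_\lambda$. Since filtered colimits in $\Cat_\infty$ compute mapping objects as the corresponding filtered colimits, $P_G(Y)\simeq\colim_\lambda\map_{\SH(Y_\lambda)}(1_{Y_\lambda},f_\lambda^*G)=\colim_\lambda P_G(Y_\lambda)$.

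\emph{cdh descent.} The presheaf $P_G$ is a Nisnevich sheaf: $\SH(-)$ is a Nisnevich sheaf of $\infty$-categories on $\Sch^\qcqs$ (a consequence of the gluing/localization theorem), so it sends a Nisnevich distinguished square of $X$-schemes to a pullback of categories, and as the unit and the pullbacks of $G$ glue compatibly, $\map$ out of the unit into the pullback of categories is the pullback of the mapping spectra. By Proposition~\ref{proposition_checking_sheafiness}(2) it remains to show that $P_G$ carries any abstract blow-up square of $X$-schemes
\[
\begin{tikzcd}
Z' \ar{r}\ar{d} & Y' \ar{d}{p}\\
Z \ar{r}{i} & Y
\end{tikzcd}
\]
(with $p$ proper, $i$ a closed immersion, and $p$ an isomorphism over $U:=Y\setminus Z$) to a cartesian square. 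Let $j\colon U\hookrightarrow Y$, let $\pi\colon Y\to X$ be the structure map, and $i_{Z'}\colon Z'\to Y$. Using the adjunctions $(-)^*\dashv(-)_*$ and the identification $(i_{Z'})_*(i_{Z'})^*\simeq i_*i^*p_*p^*$ (which follows from proper base change applied to the cartesian square $Z'=Y'\times_Y Z$), the square $P_G$ assigns to this blow-up square is obtained by applying the limit-preserving functor $\map_{\SH(Y)}(1_Y,-)$ to the value at $\pi^*G$ of the square of endofunctors of $\SH(Y)$
\[
\begin{tikzcd}
\id \ar{r}\ar{d} & p_*p^* \ar{d}\\
i_*i^* \ar{r} & i_*i^*p_*p^*.
\end{tikzcd}
\]
So it suffices to show this square of endofunctors is cartesian. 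Taking fibers of the horizontal maps and invoking the localization cofiber sequence $j_!j^*\to\id\to i_*i^*$ (and its value at $p_*p^*$), this reduces to showing that $j_!j^*\to j_!j^*p_*p^*$, equivalently $j^*\to j^*p_*p^*$, is an equivalence; and this holds by proper base change for the square $p^{-1}(U)\to Y'$, $U\to Y$, since $p$ restricts to an isomorphism over $U$.

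The non-formal input, and the main obstacle, is the abstract blow-up (``proper cdh'') descent: it rests on Ayoub's proper base change theorem for $\SH$ \cite{Ayoub2007,Ayoub2007II} (used for the identifications of $i^*p_*$ and $j^*p_*$) together with the localization theorem furnishing the cofiber sequence $j_!j^*\to\id\to i_*i^*$ — this is exactly the content of Cisinski's theorem \cite{Cisinski2013}. The remaining points — $\bb A^1$-invariance from full faithfulness of $p^*$, finitariness from continuity of $\SH$, and Nisnevich descent — are essentially built into the formalism of $\SH(-)$.
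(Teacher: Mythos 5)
Your proof is correct, and it amounts to writing out the arguments behind the references the paper itself invokes: the paper's proof of this proposition is just a citation of Cisinski (cdh descent via the localization sequence $j_!j^*\to\id\to i_*i^*$ plus Ayoub's proper base change) and of Elmanto--Hoyois--Iwasa--Kelly for finitariness/continuity of $\SH$, which is exactly the route you take. (Only cosmetic nit: in the blow-up step you compare fibers of the \emph{vertical} maps of the endofunctor square, not the horizontal ones, but the reduction to $j^*\to j^*p_*p^*$ is the right one.)
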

\begin{proof}
The cdh descent claim is due to Cisinski \cite[Proposition 3.7]{Cisinski2013}. For more references, including the finitariness, see \cite[Lemma 3.4.1]{ElmantoHoyoisIwasaKelly2021}.
\end{proof}

Our next goal is Corollary \ref{corol_pullback_of_EM}, which we will use to control the cdh sheaves \eqref{eqn:Ejcdh} in the case that $E$ comes from the motivic Eilenberg--MacLane functor. This requires introducing the following cdh variant of Construction \ref{cons:lin-cohom-theories}. Namely, let $\cal Q_{X,\sub{cdh}}\in \CAlg(\Gr\opp{Sh}_{\cdh, \bb A^1}(\Sch^\sub{qcqs}_X,\Spt))$ be the free $\bb E_\infty$-algebra on $(\Sigma^\infty \P_X^1)\lra{1}$, or equivalently the image of $\cal Q_X$ under the functor induced by \[L_{\cdh,\bb A^1}L^\sub{sm}:\opp{Sh}_{\Nis, \bb A^1}(\Sm_X,\Spt)\To \opp{Sh}_{\cdh, \bb A^1}(\Sch^\sub{qcqs}_X,\Spt).\] Again write $v:\cal Q_{X,\sub{cdh}}\otimes \Sigma^\infty \P^1\lra1\to \cal Q_{X,\sub{cdh}}$ for the canonical map and let \[\Gr\opp{Sh}_{\cdh, \bb A^1}(\Sch_X^\sub{qcqs},\Spt)_\sub{c}\supset \Gr\opp{Sh}_{\cdh, \bb A^1}(\Sch_X^\sub{qcqs},\Spt)_\sub{c,pbf},\] and denote the $\infty$-category of $\cal Q_{X,\sub{cdh}}$-modules  and its subcategory of $v$-periodic modules. Again write $L_\sub{pbf}$ for the symmetric monoidal left adjoint to this inclusion. The next lemma lets us calculate pullbacks of certain motivic spectra via cdh methods:

\begin{lemma}\label{lem:pullback}
Let $f:Y\to X$ be a morphism of qcqs schemes. Then the following diagram commutes
\begin{equation}
\begin{tikzcd}
 & \Gr\opp{Sh}_{\cdh, \bb A^1}(\Sch^\sub{qcqs}_X,\Spt)_\sub{c} \ar{r}{L_\sub{pbf}} & \Gr\opp{Sh}_{\cdh, \bb A^1}(\Sch^\sub{qcqs}_X,\Spt)_\sub{c,pbf} \ar{dd}{\sub{restrict and forget}}\\
\Gr\opp{Sh}_{\Nis, \bb A^1}(\Sm_X,\Spt)_\sub{c,pbf} \ar{ur}{L_{\cdh, \bb A^1}L^\sub{sm}} \ar{dr}[swap]{L_{\Nis, \bb A^1}L^\sub{sm}}& & \\
 & \Gr\opp{Sh}_{\Nis, \bb A^1}(\Sm_Y,\Spt)_\sub{c} \ar{r}[swap]{L_\sub{pbf}} &  \Gr\opp{Sh}_{\Nis, \bb A^1}(\Sm_Y,\Spt)_\sub{c,pbf} \\
 \end{tikzcd}
\end{equation}
\end{lemma}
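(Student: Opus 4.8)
The plan is to rephrase the whole pentagon in terms of motivic spectra using the equivalence of Proposition~\ref{prop:P1-spectrum-from-graded}, reducing the problem to a single identification of cdh sheaves which, in turn, comes down to Cisinski's descent theorem. First I would use that $\omega^{\infty,\gr}_\otimes\colon\opp{Mod}_{\cal P_X'}(\SH(X))\xrightarrow{\ \simeq\ }\Gr\opp{Sh}_{\Nis, \bb A^1}(\Sm_X,\Spt)_\sub{c,pbf}$ is an equivalence, so it suffices to check the diagram commutes after precomposing with it. Thus fix $E\in\opp{Mod}_{\cal P_X'}(\SH(X))$ and put $F:=\omega^{\infty,\gr}_\otimes(E)$; its underlying graded sheaf on $\Sm_X$ is $\omega^{\infty,\gr}(E)$, i.e.\ $F^j\simeq E(j)[2j]$ in the notation of \eqref{eqn:EjNis}. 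For the lower edge, $L_\sub{pbf}L_{\Nis,\bb A^1}L^\sub{sm}$ is precisely the pullback functor on the pbf-categories and is compatible, via $\omega^{\infty,\gr}_\otimes$, with $f^*\colon\opp{Mod}_{\cal P_X'}(\SH(X))\to\opp{Mod}_{\cal P_Y'}(\SH(Y))$, because $\sigma^{\infty,\gr}$ commutes with $f^*$ (Remarks~\ref{rem:pullback1} and \ref{rem:pullbackQ}); here I use that $f^*\cal P_X'\simeq\cal P_Y'$, which holds since both are formed from $\bb T_X$ by colimits and $\Sigma_n$-coinvariants and $f^*$ is symmetric monoidal and colimit-preserving. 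Hence the lower edge carries $F$ to the object of $\Gr\opp{Sh}_{\Nis, \bb A^1}(\Sm_Y,\Spt)_\sub{c,pbf}$ whose underlying graded sheaf on $\Sm_Y$ is $(f^*E)(\star)[2\star]$, with its tautological first Chern class.

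For the upper edge, $L^\sub{sm}_X$ carries $F$ to its left Kan extension to $\Sch^\qcqs_X$, and the crucial claim is that after $L_{\cdh,\bb A^1}$ and then $L_\sub{pbf}$ one obtains the graded finitary $\bb A^1$-invariant cdh sheaf $E(\star)[2\star]$ on $\Sch^\qcqs_X$ from \eqref{eqn:Ejcdh}, with its Chern class. Granting this, the ``restrict and forget'' step first restricts along $\Sch^\qcqs_Y\to\Sch^\qcqs_X$, under which $E(j)$ becomes $(f^*E)(j)$ by the identity $h^*f^*=(fh)^*$ (again using $f^*\cal P_X'\simeq\cal P_Y'$ and $f^*M_X=M_Y\circ(-\times_XY)$), and then further restricts to $\Sm_Y$, yielding exactly the graded sheaf $(f^*E)(\star)[2\star]$ delivered by the lower edge; the two Chern-class, i.e.\ $\cal Q$-module, structures agree since both descend from $\Sigma^\infty\bb P^1_Y\langle 1\rangle$ and are preserved by $f^*$. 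So the two edges agree on $F$, and hence in general.

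It remains to establish the claim, which I expect to be the only real obstacle: $L_\sub{pbf}L_{\cdh,\bb A^1}L^\sub{sm}_X(F)\simeq E(\star)[2\star]$ in $\Gr\opp{Sh}_{\cdh, \bb A^1}(\Sch^\qcqs_X,\Spt)_\sub{c,pbf}$. Both sides are finitary (the left by Proposition~\ref{prop:finitary_conditions} plus the telescope description of $L_\sub{pbf}$, the right by Cisinski's Proposition~\ref{prop:auto-cdh}), both are $\bb A^1$-invariant cdh sheaves, and both satisfy the $\bb P^1$-bundle formula --- for $E(\star)[2\star]$ this follows from the splitting $M_{Y'}(\bb P^1_{Y'})\simeq 1_{Y'}\oplus\bb T_{Y'}$ together with the adjunction $\pi_\sharp\dashv\pi^*$ for the smooth projection $\pi\colon\bb P^1_{Y'}\to Y'$. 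Since $E(\star)[2\star]$ restricts on $\Sm_X$ to $\omega^{\infty,\gr}(E)=F$ and is cdh-$\bb A^1$-local and pbf, the universal property of $L^\sub{sm}_X$ produces a canonical comparison map $L_\sub{pbf}L_{\cdh,\bb A^1}L^\sub{sm}_X(F)\to E(\star)[2\star]$, and one must check it is an equivalence. The efficient way to do this --- and the one place where Cisinski's theorem really enters --- is to prove the cdh-big-site analogue of Proposition~\ref{prop:P1-spectrum-from-graded}: a symmetric monoidal equivalence $\Gr\opp{Sh}_{\cdh, \bb A^1}(\Sch^\qcqs_X,\Spt)_\sub{c,pbf}\simeq\opp{Mod}_{\cal P_X'}(\SH(X))$ with inverse $E\mapsto E(\star)[2\star]$ under which $L_\sub{pbf}L_{\cdh,\bb A^1}L^\sub{sm}_X$ corresponds to $\sigma^\infty$ followed by $L_u$, Cisinski's descent theorem (Proposition~\ref{prop:auto-cdh}) being exactly what guarantees that inverting $\Sigma^\infty\bb P^1$ among cdh-$\bb A^1$-local sheaves on $\Sch^\qcqs_X$ recovers $\SH(X)$ and nothing larger. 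Given that equivalence the comparison map is an equivalence by construction, completing the proof.
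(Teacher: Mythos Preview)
Your overall strategy is sound and is close in spirit to the paper's: you correctly reduce the pentagon to the ``crucial claim'' that $L_\sub{pbf}L_{\cdh,\bb A^1}L^\sub{sm}_X(F)\simeq E(\star)[2\star]$ as graded cdh sheaves on $\Sch^\qcqs_X$, and you correctly construct the comparison map by universal property. The identification of the lower edge via Remark~\ref{rem:pullbackQ} is also fine.

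The gap is in your justification of the crucial claim. You propose to prove a symmetric monoidal equivalence $\Gr\opp{Sh}_{\cdh, \bb A^1}(\Sch^\qcqs_X,\Spt)_\sub{c,pbf}\simeq\opp{Mod}_{\cal P_X'}(\SH(X))$, asserting that Cisinski's theorem ``guarantees that inverting $\Sigma^\infty\bb P^1$ among cdh-$\bb A^1$-local sheaves on $\Sch^\qcqs_X$ recovers $\SH(X)$ and nothing larger.'' But Proposition~\ref{prop:auto-cdh} only says that objects of $\SH(X)$ give rise to cdh sheaves via $Y\mapsto(\omega^\infty f_Y^*E)(Y)$; it does not say the resulting functor is essentially surjective. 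The cdh big-site category has generators $\Sigma^\infty_+Z$ for arbitrary qcqs $X$-schemes $Z$, and there is no reason (absent resolution of singularities or alterations) why these should lie in the image of $\SH(X)$ after $\bb P^1$-stabilisation. So the asserted equivalence is unproven and possibly false in general.

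The paper circumvents this by introducing the Nisnevich big-site categories $\Gr\opp{Sh}_{\Nis,\bb A^1}(\Sch^\qcqs_X,\Spt)_\sub{c}$ and $\Gr\opp{Sh}_{\Nis,\bb A^1}(\Sch^\qcqs_X,\Spt)_\sub{c,pbf}$ as intermediaries. The key observation, which uses Cisinski in exactly the right way, is that $L_\sub{pbf}L_{\Nis,\bb A^1}L^\sub{sm}F$ is \emph{already} a cdh sheaf: applying Remark~\ref{rem:pullbackQ} fibrewise over each $Y\to X$ identifies $(L_\sub{pbf}L_{\Nis,\bb A^1}L^\sub{sm}F)^j$ with $HF(j)[2j]$ in the sense of \eqref{eqn:Ejcdh}, and Proposition~\ref{prop:auto-cdh} then says this is a cdh sheaf. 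From this one gets $L_\sub{pbf}L_{\cdh,\bb A^1}L^\sub{sm}\simeq L_\sub{pbf}L_{\Nis,\bb A^1}L^\sub{sm}$ for objects coming from $\Sm_X$, without ever needing the full categorical equivalence you invoke. Your argument would be completed by substituting this step for your final paragraph.
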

\begin{proof}
For the proof (but not needed for the rest of the paper), we introduce $\infty$-categories
\[
\Gr\opp{Sh}_{\Nis, \bb A^1}(\Sch_X^\sub{qcqs},\Spt)_\sub{c}\supset \Gr\opp{Sh}_{\Nis, \bb A^1}(\Sch_X^\sub{qcqs},\Spt)_\sub{c,pbf},
\] 
which are again variants of Construction \ref{cons:lin-cohom-theories}, this time using $\Sch_X^\sub{qcqs}$  with the Nisnevich topology. That is, $\Gr\opp{Sh}_{\Nis, \bb A^1}(\Sch_X^\sub{qcqs},\Spt)_\sub{c}$ are $\cal Q_{X,\Nis}$-modules where $\cal Q_{X,\Nis}$ is the image of $\cal Q_X$ under the functor $L_{\Nis,\bb A^1}L^\sub{sm}:\opp{Sh}_{\Nis, \bb A^1}(\Sm_X,\Spt)\to \opp{Sh}_{\Nis, \bb A^1}(\Sch^\sub{qcqs}_X,\Spt)$, and $\Gr\opp{Sh}_{\Nis, \bb A^1}(\Sch_X^\sub{qcqs},\Spt)_\sub{c,pbf}$ is the full subcategory of $v$-periodic modules where $v$ is defined in the same way.

%
%
%

These $\infty$-categories fit into the following diagram, whose commutation we will proceed to justify. 
\begin{equation}\hspace{-0.7cm}
\begin{tikzcd}[column sep=small]
 & \Gr\opp{Sh}_{\cdh, \bb A^1}(\Sch_X^\sub{qcqs},\Spt)_\sub{c} \ar{rr}{L_\sub{pbf}} & & \Gr\opp{Sh}_{\cdh, \bb A^1}(\Sch_X^\sub{qcqs},\Spt)_\sub{c,pbf} \ar{dd} \ar{dl}{\sub{forget}}\\
\Gr\opp{Sh}_{\Nis, \bb A^1}(\Sm_X,\Spt)_\sub{c,pbf} \ar[bend left=15]{ur}{L_{\cdh, \bb A^1}L^\sub{sm}} \ar{r}{L_{\Nis, \bb A^1}L^\sub{sm}}  \ar[swap,bend right=15]{dr}{L_{\Nis, \bb A^1}L^\sub{sm}}& \Gr\opp{Sh}_{\Nis, \bb A^1}(\Sch^\sub{qcqs}_X,\Spt)_\sub{c} \ar{d}{\sub{restrict}} \ar{r}{L_\sub{pbf}} & \Gr\opp{Sh}_{\Nis, \bb A^1}(\Sch^\sub{qcqs}_X,\Spt)_\sub{c,pbf} \ar{dr}{\sub{restrict}} & \\
 & \Gr\opp{Sh}_{\Nis, \bb A^1}(\Sm_Y,\Spt)_\sub{c} \ar[swap]{rr}{L_\sub{pbf}} &  & \Gr\opp{Sh}_{\Nis, \bb A^1}(\Sm_Y,\Spt)_\sub{c,pbf} \\
 \end{tikzcd}
\end{equation}
The commutation of the bottom left triangle is obvious, and the far right triangle commutes by definition of the vertical map. The bottom trapezoid commutes since both composites satisfy the same universal property: namely, they send $F \in \Gr\opp{Sh}_{\Nis, \bb A^1}(\Sch^\sub{qcqs}_X,\Spt)_\sub{c}$ to the initial $v$-periodic $\cal Q_{Y}$-module under $F|_{\Sm_Y}$.

We now justify the commutation of the top parallelogram, which is non-trivial. First we claim that the composite $L_\sub{pbf}L_{\Nis, \bb A^1}L^\sub{sm}$ lands in $\Gr\opp{Sh}_{\cdh, \bb A^1}(\Sch_X^\sub{qcqs},\Spt)_\sub{c,pbf}$, in other words the procedure creates graded $\bb A^1$-invariant cdh sheaves. Indeed, by Remark~\ref{rem:pullbackQ}, for any $F \in  \Gr\opp{Sh}_{\Nis, \bb A^1}(\Sm_X,\Spt)_\sub{c,pbf}$ we have an equivalence of presheaves on $\Sch_X^\sub{qcqs}$ \[
(L_\sub{pbf}L_{\Nis, \bb A^1}L^\sub{sm}F)^j \simeq HF(j)[2j]
\]
(on the right we apply notation \eqref{eqn:Ejcdh} to $HF\in\SH(X)$) and the right hand side is an $\bb A^1$-invariant cdh sheaf by Proposition~\ref{prop:auto-cdh}. Therefore the canonical transformation of functors  $L^\sub{sm} \rightarrow  L_\sub{pbf}L_{\Nis, \bb A^1}L^\sub{sm}$ on graded presheaves on $\Sm_X$ induces a transformation $L_\sub{pbf}L_{\cdh, \bb A^1}L^\sub{sm} \quis L_\sub{pbf}L_{\Nis, \bb A^1}L^\sub{sm}$, which is an equivalence because there is an inverse induced by $L_\sub{Nis}\to L_\sub{cdh}$. This establishes the desired commutation.
\end{proof}

We now obtain our ``explicit'' calculation of \eqref{eqn:Ejcdh} in the case of motivic spectra coming from the motivic Eilenberg--MacLane construction:

\begin{corollary}\label{corol_pullback_of_EM}
Let $X$ be a qcqs scheme and $j\in\bb Z$. Then the following diagram commutes:
\[\xymatrix{
\Gr\opp{Sh}_{\cdh, \bb A^1}(\Sch_X^\sub{qcqs},\Spt)_\sub{c,pbf}\ar[r]^-{G\mapsto G^j} & \opp{Sh}_{\cdh, \bb A^1}(\Sch_X^\sub{qcqs},\Spt)\\
\Gr\opp{Sh}_{\Nis, \bb A^1}(\Sm_X,\Spt)_\sub{c,pbf}\ar[r]_-H\ar[u]^{L_\sub{pbf}L_{\cdh,\bb A^1}L^\sub{sm}} & \SH(X)\ar[u]_{E\mapsto E(j)[2j]}
}\]
\end{corollary}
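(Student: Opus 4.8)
The plan is to deduce this immediately from the proof of Lemma~\ref{lem:pullback}, of which it is essentially a repackaging. Fix $F\in\Gr\opp{Sh}_{\Nis, \bb A^1}(\Sm_X,\Spt)_\sub{c,pbf}$. As recorded in that proof, for each $j$ the presheaf $(L_\sub{pbf}L_{\Nis, \bb A^1}L^\sub{sm}F)^j$ on $\Sch_X^\sub{qcqs}$ is naturally identified with the presheaf $(HF)(j)[2j]$ of \eqref{eqn:Ejcdh} attached to $HF\in\SH(X)$; in particular, by Proposition~\ref{prop:auto-cdh} it is already a finitary $\bb A^1$-invariant cdh sheaf, so that $L_\sub{pbf}L_{\Nis, \bb A^1}L^\sub{sm}F$ in fact lands in $\Gr\opp{Sh}_{\cdh, \bb A^1}(\Sch_X^\sub{qcqs},\Spt)_\sub{c,pbf}$.

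Consequently the counit $L^\sub{sm}F\to L_\sub{pbf}L_{\Nis, \bb A^1}L^\sub{sm}F$ of graded presheaves on $\Sm_X$ factors through $L_\sub{pbf}L_{\cdh, \bb A^1}L^\sub{sm}F$, inducing a natural map $L_\sub{pbf}L_{\cdh, \bb A^1}L^\sub{sm}F\to L_\sub{pbf}L_{\Nis, \bb A^1}L^\sub{sm}F$, which is an equivalence with inverse induced by $L_{\Nis}\to L_{\cdh}$ (exactly as argued for Lemma~\ref{lem:pullback}). I would then take $j$-th graded pieces and compose with the identification of the previous paragraph to obtain a natural equivalence
\[
(L_\sub{pbf}L_{\cdh, \bb A^1}L^\sub{sm}F)^j\;\simeq\;(L_\sub{pbf}L_{\Nis, \bb A^1}L^\sub{sm}F)^j\;\simeq\;(HF)(j)[2j],
\]
which is precisely the asserted commutativity of the square, since by construction $H$ sends $F$ to $HF$ and the right vertical functor is $E\mapsto E(j)[2j]$ of \eqref{eqn:Ejcdh}.

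I do not expect a genuine obstacle here beyond what was already established for Lemma~\ref{lem:pullback}: the one substantive input is Cisinski's cdh descent theorem (Proposition~\ref{prop:auto-cdh}), which is what makes the left Kan extended object $L_\sub{pbf}L_{\Nis, \bb A^1}L^\sub{sm}F$ automatically a cdh sheaf. The remaining points, namely naturality of all the identifications and their promotion from pointwise equivalences to an equivalence of functors, follow formally from the universal properties of left Kan extension, of the localizations $L_{\Nis}$, $L_{\cdh}$, $L_\sub{pbf}$, and of the equivalence $\omega^{\infty,\gr}_\otimes$ of Proposition~\ref{prop:P1-spectrum-from-graded}.
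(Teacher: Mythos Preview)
Your proposal is correct and takes essentially the same approach as the paper. The paper's proof formally combines the \emph{statement} of Lemma~\ref{lem:pullback} with Remark~\ref{rem:pullbackQ} to build a commutative diagram for each $f:Y\to X$, then uses the identity from Definition~\ref{definition:graded-p1-mult} and varies $Y$; you instead directly cite the two key intermediate facts established inside the proof of Lemma~\ref{lem:pullback} (namely $(L_\sub{pbf}L_{\Nis,\bb A^1}L^\sub{sm}F)^j\simeq HF(j)[2j]$ on $\Sch_X^\sub{qcqs}$ and the resulting equivalence $L_\sub{pbf}L_{\cdh,\bb A^1}L^\sub{sm}\simeq L_\sub{pbf}L_{\Nis,\bb A^1}L^\sub{sm}$), which amounts to the same thing organized slightly more directly.
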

\begin{proof}
Let $f:Y\to X$ be any qcqs scheme over $X$. Combining the commutative diagram of Remark \ref{rem:pullbackQ} with Lemma \ref{lem:pullback} shows that the following diagram commutes:
\begin{equation}\label{eq:h-cdh}
\begin{tikzcd}
\Gr\opp{Sh}_{\Nis, \bb A^1}(\Sm_Y,\Spt)_\sub{c,pbf} \ar{r}{H} & \SH(Y)\\
\Gr\opp{Sh}_{\cdh, \bb A^1}(\Sch_X,\Spt)_\sub{c,pbf} \ar{u}{\text{restrict and forget}}& \\
\Gr\opp{Sh}_{\Nis, \bb A^1}(\Sm_X,\Spt)_\sub{c,pbf} \ar{u}{L_\sub{pbf}L_{\cdh, \bb A^1}L^\sub{sm}} \ar{r}{H}  & \SH(X) \ar{uu}{f^*}.
\end{tikzcd}
\end{equation}
Given $F\in \Gr\opp{Sh}_{\Nis, \bb A^1}(\Sm_X,\Spt)_\sub{c,pbf}$, the final identity in Definition \ref{definition:graded-p1-mult} now establishes the desired natural equivalence $HF(j)[2j]\simeq (L_\sub{pbf}L_{\cdh,\bb A^1}L^\sub{sm}F)^j$ on smooth $Y$-schemes. Varying $Y$ completes the proof.
\end{proof}

%

\subsection{The slice filtration}\label{ss:slice}
We next discuss Voevodsky's slice filtrations on $\SH(X)$. We continue to let $X$ be a qcqs scheme and we refer to \cite{Voevodsky2002, Voevodsky2002a} for the original references. The $\infty$-category of \emph{effective motivic spectra}
\begin{equation*}
\SH(X)^{\eff} \subset \SH(X)
\end{equation*}
is the smallest stable subcategory of $\SH(X)$ which is closed under colimits and contains $M_X(Y)[i]$ for all  $Y\in\Sm_X$, $i \in \Z$. Effective motivic spectra are closed under tensor product, and so $\SH(X)^{\eff}$ is a presentably symmetric monoidal $\infty$-category. More generally, for any $j\in\bb Z$, the $\infty$ category of \emph{$j$-effective motivic spectra} is
\begin{equation*}
\SH(X)^{\eff}(j) := \bb T^{\otimes j} \otimes \SH(X)^\eff \subset \SH(X)
\end{equation*}
Observe that $\SH(X)^{\eff}(j)\supset \SH(X)^{\eff}(j+1)$.

The inclusions $\SH(S)^\eff(j) \into \SH(S)^\eff$ preserve colimits and hence admit compatible right adjoints as $n$ varies (see \cite[\S 13.1]{BachmannHoyois2021}, \cite[\S 3.0.1]{ElmantoLevineSpitzweckOstvaer2022} for $\infty$-categorical treatments, and \cite[\S 2]{Voevodsky2002a} for the original); we display these adjunctions as
\begin{equation*}
\iota^j: \SH(X)^{\eff}(j) \rightleftarrows \SH(X): r^j.
\end{equation*}
Therefore, setting $\Fil^j_\sub{slice} := \iota^jr^j$, we define a natural filtration \cite[Lemma~3.8]{ElmantoLevineSpitzweckOstvaer2022} on any motivic spectrum $E\in \SH(X)$ \begin{equation}E\leftarrow\cdots\leftarrow \Fil^{j-1}_\sub{slice}E\leftarrow \Fil^j_\sub{slice}E\leftarrow\Fil^{j+1}_\sub{slice}E\leftarrow\label{eq:slice_filt} \cdots \end{equation} such that, for any $j\in\bb Z$, the following hold:
\begin{enumerate}
\item the formation of $E \mapsto \Fil^j_\sub{slice}E$ is functorial in $E$;
\item $\Fil^j_\sub{slice}E\in \SH(X)^{\eff}(j)$;
\item the map $\Fil^j_\sub{slice}E \rightarrow E$ is universal for maps out of an $j$-effective motivic spectrum to $E$ in the sense that any other such map $F \rightarrow E$ factors uniquely through $\Fil^j_\sub{slice}E \rightarrow E$;
\item $s^jE:=\opp{cofib}(\Fil^{j+1}_\sub{slice}E\to\Fil^{j}_\sub{slice}E)$ is right orthogonal to $\SH(X)^{\eff}(j+1)$, i.e., the mapping spectrum $\opp{map}_{\SH(X)}(F,s^jE)$ vanishes for all $F\in \SH(X)^{\eff}(j+1)$.
\end{enumerate}
We will call $\Fil^0_\sub{slice}E$ the \emph{effective cover} of $E$. 

The filtration (\ref{eq:slice_filt}) is known as the {\em slice filtration},\footnote{The terse notation $f_nE=\Fil^n_\sub{slice}E$ is often used in the literature. We use upper indexing since the filtration is decreasing.} and the graded pieces $s^jE$ are known as the {\em slices}. Applying the functor \begin{equation}\SH(X)\xto{\omega^\infty}\opp{Sh}_{\Nis, \bb A^1}(\Sm_X,\Spt)\xto{\sub{global sections}}\Spt\label{eqn:omega_glob}\end{equation} equips $(\omega^\infty E)(X)$ with the filtration $\Fil^\star_\sub{slice}(\omega^\infty E)(X):=(\omega^\infty\Fil_\sub{slice}^\star E)(X)$, which by a slight abuse of terminology we will also refer to as the slice filtration on $(\omega^\infty E)(X)$; by exactness of $\omega^\infty$, the graded pieces of this filtration are $\gr_\sub{slice}^j(\omega^\infty E)(X)=\omega^\infty s^jE(X)\in\Spt$ for $j\in\bb Z$.

\begin{remark}[Exhaustiveness]\label{rem:exhaustive}
The slice filtration is always exhaustive in the sense that for any motivic spectrum $E \in \SH(X)$, the natural map
\[
\colim_{n \rightarrow - \infty} \Fil^j_\sub{slice}E \To E
\]
is an equivalence. See e.g. \cite[Lemma 3.8]{ElmantoLevineSpitzweckOstvaer2022}.
\end{remark}

\begin{remark}[Multiplicativity]\label{rem:mult}
For any scheme $X$, the formation of the slice filtration defines a lax symmetric monoidal functor \cite[\S13.4]{BachmannHoyois2021}
\begin{equation}
\Fil_\sub{slice}^{\star}:\SH(X) \To \Fil\SH(X),
\label{eqn:slice:mult}
\end{equation}
where the target is endowed with the Day convolution symmetric monoidal structure. Thus it induces a functor
\[
\Fil_\sub{slice}^{\star}:\CAlg(\SH(X)) \To \CAlg(\Fil\SH(X)).
\]
Similarly, passing to graded pieces (which is a symmetric monoidal functor) defines a functor \[\gr^\star_\sub{slice}:\CAlg(\SH(X)) \To \CAlg(\Gr\SH(X)).\] 

Furthermore, each of the functors in (\ref{eqn:omega_glob}) are lax monoidal, whence they induce lax monoidal functors on the corresponding categories of filtered objects, which then induce functors on the categories of $\bb E_\infty$-algebras in filtered objects:
\[\CAlg(\Fil\SH(X))\xto{\omega^\infty}\CAlg(\Fil\opp{Sh}_{\Nis, \bb A^1}(\Sm_X,\Spt)))\xto{\sub{global sections}}\CAlg(\Fil\Spt).\] Composing this with (\ref{eqn:slice:mult}) defines a functor $\CAlg(\SH(X))\to \CAlg(\Fil\Spt)$, showing that whenever $E$ is an $\bb E_\infty$-algebra in motivic spectra, then the slice filtration on $(\omega^\infty E)(X)$ naturally makes it into an $\bb E_\infty$-algebra in filtered spectra.
\end{remark}

\begin{remark}[Naturality]\label{rem:naturality} Let $f: Y \rightarrow X$ be a morphism of qcqs schemes. Then, for any $n \in \bb Z$, we have that $f^*(\SH(X)^{\eff}(j)) \subset \SH(Y)^{\eff}(j)$ (since $f^*$ preserves colimits and $f^*M_X(T) \simeq M_Y(T \times_X Y)$ for all smooth $X$-schemes $T$). Therefore the induced map $f^*\Fil_\sub{slice}^{\star}E \rightarrow f^*E$ of filtered spectra, where the target is given the constant filtration, factors uniquely through a map 
\begin{equation}\label{eq:natural_slice}
f^*\Fil_\sub{slice}^{\star}E \To \Fil^{\star}_\sub{slice}f^*E,
\end{equation}
and this induces a map of graded spectra 
\begin{equation}\label{eq:natural_graded}
f^*s^{\star}E \To s^{\star}f^*E.
\end{equation}
The discussion on multiplicativity in Remark~\ref{rem:mult} further shows that both the induced maps on filtered and graded spectra are multiplicative.

Now suppose that $f: X \rightarrow Y$ is a smooth morphism. Then $f^*$ admits a left adjoint $f_{\sharp}$ which preserves the inclusion $\SH(X)^{\eff}(j) \hookrightarrow \SH(X)$ since it preserves colimits and $f_{\sharp}M_X(T) \simeq M_Y(T)$ for all smooth $X$-schemes $T$. This implies that $f^*r^j \simeq r^jf^*$. Since $f^*$ and the inclusion of effective spectra always commute, we thus conclude that $f^*$ preserves the formation of slices in this case, i.e., \eqref{eq:natural_slice} and \eqref{eq:natural_graded} are equivalences. More generally, they are equivalences if $f$ is a cofiltered limit of smooth affine morphisms: this follows from the smooth case and the fact that $\SH$ is finitary \cite[Proposition C.12(4)]{Hoyois2014}.
\end{remark}

A difficulty with the slice filtration $\Fil_\sub{slice}^\star E$ is that it is not clear whether it is convergent. Towards addressing this question, we define the subcategory of \emph{very effective motivic spectra}
\[
\SH(X)^\sub{veff} \subset \SH(X)
\]
to be the full subcategory of $\SH(X)$ generated under colimits and extensions by $M_X(Y)$, for $Y\in\Sm_X$. We remark that $\SH(X)^\sub{veff}$ does not contain negative shifts of $M_X(Y)$ and is thus not a stable $\infty$-category. In fact, $\SH(X)^\sub{veff}$ is the non-negative part of a $t$-structure on $\SH(X)^\sub{eff}$ by \cite[Proposition 1.4.4.11]{LurieHA}. 

The following proposition will be used to control the connectivity of the slice filtration on $K$-theory; see Lemma~\ref{lemm:convergence-bootstrap} below. 

\begin{proposition} \label{prop:detecting-veff}
Let $X$ be Noetherian scheme of dimension $\le d$.
\begin{enumerate}
\item  Given $E\in\SH(X)^\sub{veff}$, then $\omega^\infty(E)$ is $-d$-connective as a Nisnevich sheaf of spectra on smooth $X$-schemes; more generally, $\omega^{\infty,\gr}(E)^j$ is $j-d$-connective for any $j\ge0$.
\item Suppose $d=1$ and let $E \in \SH(X)^\sub{eff}$. If $\omega^\infty(E)$ is connective then $E \in \SH(X)^\sub{veff}$.
\end{enumerate}
\end{proposition}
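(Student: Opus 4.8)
We treat the two parts in turn, part~(1) supplying the connectivity input used in part~(2). For part~(1), the formal point is that $\omega^\infty$ — and hence, componentwise, $\omega^{\infty,\gr}$ — preserves all colimits: it is exact, being a right adjoint between stable $\infty$-categories, and it preserves filtered colimits because the generators $M_X(Y)$, $Y\in\Sm_X$, are compact in $\SH(X)$ (and filtered colimits of $\bb A^1$-invariant Nisnevich sheaves of spectra are computed pointwise). Consequently the full subcategory of those $E\in\SH(X)$ for which $\omega^\infty(E)$ is $(-d)$-connective, respectively for which $\omega^{\infty,\gr}(E)^j=\omega^\infty(E\otimes\bb T_X^{\otimes j})$ is $(j-d)$-connective for all $j\ge 0$, is closed under colimits and extensions, since the relevant subcategories of connective Nisnevich sheaves of spectra are. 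As $\SH(X)^{\veff}$ is generated under colimits and extensions by the objects $M_X(Y)$, it suffices to prove the estimates for $E=M_X(Y)$. For these, $M_X(Y)\otimes\bb T_X^{\otimes j}$ is the $\bb P^1$-suspension spectrum of the pointed motivic space $Y_+\wedge(\bb P^1_X)^{\wedge j}$, and the assertion becomes an instance of Morel's stable connectivity theorem over a Noetherian base of Krull dimension $\le d$: passing from pointed motivic spaces to $\bb P^1$-spectra and applying $\omega^\infty$ over such a base decreases connectivity by at most $d$, the case $d=0$ being the classical statement over a field. I expect this connectivity estimate to be the main obstacle; everything else in part~(1) is formal.

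For part~(2) the additional ingredient is that $\omega^\infty$ is conservative on $\SH(X)^{\eff}$: the objects $M_X(Y)$ with $Y\in\Sm_X$ affine are compact in $\SH(X)$, lie in $\SH(X)^{\eff}$, and generate it under colimits, so $\SH(X)^{\eff}$ is compactly generated by them; hence $\omega^\infty(E)=0$ forces $E=0$ whenever $E\in\SH(X)^{\eff}$. Now let $E\in\SH(X)^{\eff}$ with $\omega^\infty(E)$ connective, and let $\tau^{\veff}_{\ge 0}E\to E\to G$ be the truncation triangle for the $t$-structure on $\SH(X)^{\eff}$ whose connective part is $\SH(X)^{\veff}$, so that $G:=\tau^{\veff}_{\le -1}E\in\SH(X)^{\eff}$. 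By the very definition of this $t$-structure, $\omega^\infty(G)(Y)=\map_{\SH(X)}(M_X(Y),G)$ is $(-1)$-coconnective for every $Y\in\Sm_X$, since $M_X(Y)\in\SH(X)^{\veff}$; as $\omega^\infty(G)$ is already a sheaf, it is therefore $(-1)$-coconnective in the standard $t$-structure on $\opp{Sh}_{\Nis}(\Sm_X,\Spt)$.

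Applying the exact functor $\omega^\infty$ to the triangle and using that the mapping space from a connective sheaf of spectra to a $(-1)$-coconnective one is contractible, the map $\omega^\infty(E)\to\omega^\infty(G)$ is nullhomotopic; the triangle hence splits, giving $\omega^\infty(\tau^{\veff}_{\ge 0}E)\simeq\omega^\infty(E)\oplus\omega^\infty(G)[-1]$. Since $\tau^{\veff}_{\ge 0}E\in\SH(X)^{\veff}$, part~(1) with $d=1$ shows the left-hand side is $(-1)$-connective, hence so is its direct summand $\omega^\infty(G)[-1]$; but $\omega^\infty(G)[-1]$ is also $(-2)$-coconnective, so it vanishes, i.e.\ $\omega^\infty(G)=0$. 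Conservativity of $\omega^\infty$ on $\SH(X)^{\eff}$ then gives $G=0$, so $E\simeq\tau^{\veff}_{\ge 0}E\in\SH(X)^{\veff}$, as desired. (The hypothesis $d=1$ enters precisely to make ``$(-1)$-connective'' and ``$(-2)$-coconnective'' incompatible; over a base of higher dimension this argument yields only a weaker conclusion.)
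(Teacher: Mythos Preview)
Your proof of part~(1) is essentially the paper's: reduce to the generators $M_X(Y)$ by closure of connectivity under colimits and extensions, then invoke the stable connectivity theorem. The paper phrases the graded estimate slightly differently, noting $\bb T_X \simeq \tilde M_X(\bb G_m)[1]$ so that $E\otimes\bb T_X^{\otimes j}\simeq (E\otimes \tilde M_X(\bb G_m)^{\otimes j})[j]$ with the unshifted factor still very effective; this is marginally cleaner than your formulation via $\P^1$-smash powers, but the content is the same.

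Your proof of part~(2), however, is genuinely different and more direct than the paper's. The paper argues via the comonadic bar resolution for the adjunction $\sigma^\infty \dashv \omega^\infty$: one writes $E$ as the colimit of the simplicial object $C^{\circ\bullet+1}E$ with $C=\sigma^\infty\omega^\infty$, observes that $\sigma^\infty$ carries connective sheaves into $\SH(X)^{\veff}$, and then proves by induction (using part~(1) with $d=1$ at each step) that $C^{\circ m+1}E\in\SH(X)^{\veff}[-m]$, whence the skeletal filtration exhibits $E$ as a colimit of very effective objects. Your argument instead uses the $t$-structure truncation triangle $\tau^{\veff}_{\ge 0}E\to E\to G$ directly: the key observation that $\omega^\infty(G)$ is sectionwise $(-1)$-coconnective forces the map $\omega^\infty(E)\to\omega^\infty(G)$ to be null, splits the triangle after $\omega^\infty$, and then traps $\omega^\infty(G)[-1]$ between incompatible connectivity bounds. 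This avoids the bar resolution entirely and isolates the role of $d=1$ in a single numerical step. Both arguments rely on conservativity of $\omega^\infty$ on $\SH(X)^{\eff}$ and on part~(1); yours is shorter.
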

\begin{proof}
(1): Since connectivity is preserved under colimits and extensions, it suffices to treat the case $E = M_X(Y)$, where $Y \in \Sm_X$. Thus we reduce to the shifted stable connectivity theorem \cite{Druzhinin2022stable}. We remark however that in this paper we will only require part (1) in the case $d\le 1$, and so the reference \cite{SchmidtStrunk2018} is sufficient. The ``more generally'' claim follows by replacing $E$ by $E\otimes\bb T_X^{\otimes j}\simeq E\otimes M_X(\bb G_{m,X})^{\otimes j}[j]$ and noting that $E\otimes M_X(\bb G_{m,X})^{\otimes j}$ is still very effective.

(2) In the case of fields, this was proved in \cite[Proposition 4]{bachmann-very-effective}.
We give here a proof which works for bases of dimension $\le 1$.
Out of the adjunction between $\sigma^\infty$ and $\omega^\infty$ we extract a comonad $C = \sigma^\infty\omega^\infty$ and thus an augmented simplicial object \[ E \leftarrow C^{\circ \bullet+1} E. \]
The functor $\omega^\infty: \SH(X)^\eff \to \opp{Sh}_{\Nis, \bb A^1}(\Sm_X,\Spt)$ is conservative and preserves colimits.
Since the map $\colim_{m \in \Delta^\op} \omega^\infty C^{\circ m+1} E \to \omega^\infty E$ is an equivalence (indeed, after applying $\omega^\infty$ the simplicial object is split, c.f., \cite[Proof of Lemma 4.7.3.13]{LurieHA}), we deduce that the same is true for $\colim_{m \in \Delta^\op} C^{\circ m+1} E \to E$.

Now assume in addition that $\omega^\infty(E)$ is connective.
We seek to prove that $E \in \SH(X)^\veff$.
Write $E_m := \colim_{\Delta^{\op}_{\leq m}} E|_{\Delta^{\op}_{\leq m}}$ for the $m^\sub{th}$ partial geometric realization of our simplicial object, so that $E \wequi \colim_m E_m$ and we have cofiber sequences $E_m \to E_{m+1} \to M_{m+1}[m+1]$, where $M_{m+1}$ is a summand of $C^{\circ m+2} E$ \cite[Remark 4.3.4]{LurieDAGVIII}.
It will thus be enough to show that \[ C^{\circ m+1}E = \sigma^\infty \omega^\infty C^{\circ m} E \quad\text{lies in } \SH(X)^\veff[-m] \]
The functor $\sigma^\infty$ sends connective sheaves of spectra into $\SH(X)^\veff$.
Our assumption thus implies that $CE \in \SH(X)^\veff$.
By (1), if $F \in \SH(X)^\veff$ then $\omega^\infty(F)$ is $(-1)$-connective (here we use that $X$ has dimension $\le 1$) and so $CF \in \SH(X)^\veff[-1]$.
We thus prove by induction that $C^{\circ m+1}E \in \SH(X)^\veff[-m]$, as needed.
\end{proof}

Finally we record that, similarly to Proposition \ref{prop:conservative}, (very) effectivity may be detected by pulling back to fields:

\begin{proposition} \label{prop:detect-effective}
Let $X$ be a qcqs scheme locally of finite Krull dimension and $E \in \SH(X)$. Then $E$ is effective (resp.~very effective) if and only if, for every point $x\in X$, the pullback $i_x^*E\in\SH(k(x))$ is effective (resp.~very effective).
\end{proposition}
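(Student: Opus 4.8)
The plan is to deduce the forward implication from the fact that pullback functors preserve (very) effective objects, and to prove the reverse implication by induction on $\dim X$ via the open--closed recollement in $\SH$, using Elkik's lifting theorem to circumvent the one genuinely delicate point: that pullback along a closed immersion does not commute with the effective cover. For the ``only if'' direction, for any $f\colon Y\to X$ the functor $f^*$ preserves colimits and extensions and satisfies $f^*M_X(Z)\simeq M_Y(Z\times_XY)$, hence carries $\SH(X)^{\eff}$ (resp.\ $\SH(X)^{\veff}$) into $\SH(Y)^{\eff}$ (resp.\ $\SH(Y)^{\veff}$); one then specialises to $f=i_x$.

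For the ``if'' direction I would treat first the effective case, setting $C:=\opp{cofib}(\Fil^0_\sub{slice}E\to E)$, which is right orthogonal to $\SH(X)^{\eff}$ and vanishes exactly when $E$ is effective. By the finitariness of $\SH$ \cite[Proposition C.12(4)]{Hoyois2014} and compatibility of $\SH(-)^{\eff}$ with cofiltered limits of schemes (as in the proof of \cite[Proposition B.3]{BachmannHoyois2021}) one reduces to $X$ Noetherian, hence of finite Krull dimension; and effectivity is Zariski-local on $X$, since for an open (hence smooth) immersion $j$ the map $j^*\Fil^0_\sub{slice}E\to\Fil^0_\sub{slice}j^*E$ is an equivalence by Remark~\ref{rem:naturality} and $\SH$ has Zariski descent. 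Now induct on $d=\dim X$, the case $d<0$ being vacuous. In the inductive step, each generic point $\eta$ of $X_\sub{red}$ has an open neighbourhood meeting only the component through $\eta$, hence locally integral with generic point $\eta$; since $i_\eta^*E$ is effective over $k(\eta)$ and $\SH$ is finitary, effectivity spreads out to a genuine open neighbourhood of $\eta$. The union of these over the finitely many generic points is a dense open $j\colon U\hookrightarrow X$ with $E|_U$ effective and $Z:=X\setminus U$ of dimension $<d$, so by induction $i_Z^*E$ is effective over $Z$. Finally, since $\Fil^0_\sub{slice}$ commutes with pullback along cofiltered limits of smooth affine morphisms (Remark~\ref{rem:naturality}), I would replace $X$ by its henselisation along $Z$ --- harmless since $C$ stays supported on $Z$ --- so that $(X,Z)$ is a henselian pair.

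It then remains to show $C=0$: as $E|_U$ is effective and $j$ is smooth, $j^*C=0$, so $C\simeq i_{Z*}i_Z^*C$ and it suffices to prove $i_Z^*C=0$. The fibre sequence $i_Z^*\Fil^0_\sub{slice}E\to i_Z^*E\to i_Z^*C$ shows $i_Z^*C$ is effective ($i_Z^*$ preserving effectivity, and $i_Z^*E$ effective by induction). And for every $W\in\Sm_X$ one has $\opp{map}_{\SH(Z)}(M_Z(W\times_XZ),i_Z^*C)\simeq\opp{map}_{\SH(X)}(M_X(W),i_{Z*}i_Z^*C)\simeq\opp{map}_{\SH(X)}(M_X(W),C)=0$ by right orthogonality of $C$ to $\SH(X)^{\eff}$; since $(X,Z)$ is a henselian pair, Elkik's lifting theorem makes every affine smooth $Z$-scheme of the form $W\times_XZ$ for some $W\in\Sm_X$, and motives of affine smooth $Z$-schemes generate $\SH(Z)^{\eff}$ under colimits, so $i_Z^*C$ is right orthogonal to all of $\SH(Z)^{\eff}$. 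An object that is simultaneously effective and right orthogonal to $\SH(Z)^{\eff}$ has $\opp{id}=0$, hence vanishes, so $i_Z^*C=0$. The very effective case is identical, run inside $\SH(X)^{\eff}$ with $C$ replaced by the cofibre $C^{\veff}$ of the very effective cover (which also commutes with smooth and pro-smooth-affine pullback by the argument of Remark~\ref{rem:naturality}): $i_Z^*C^{\veff}$ is a cofibre of very effective spectra, using the inductive hypothesis that $i_Z^*E$ is very effective, hence very effective, and it is right orthogonal to $\SH(Z)^{\veff}$ by the same Elkik argument (motives of affine smooth $Z$-schemes generate $\SH(Z)^{\veff}$ under colimits and extensions), so it vanishes.

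The step I expect to be the main obstacle is exactly this closed-immersion behaviour: $i_Z^*$ does not commute with the (very) effective cover, so one cannot naively pull the fibre sequence defining $C$ back to $Z$, let alone to a single point. The two devices that get around it are (i) arranging that $C$ is \emph{supported} on a nowhere-dense closed $Z$, by spreading effectivity out from the generic points to a dense open (using finitariness of $\SH$ and the Zariski-locality of effectivity), and (ii) \emph{henselising} along $Z$, which is legitimate precisely because $\Fil^0_\sub{slice}$ and the very effective cover commute with pro-smooth-affine base change; Elkik's theorem then forces $i_Z^*C$ to be right orthogonal to $\SH(Z)^{\eff}$, and ``effective plus right orthogonal to effective equals zero'' finishes the argument.
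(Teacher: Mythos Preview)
The paper does not give a proof here; it only cites \cite[Proposition~B.3]{BachmannHoyois2021}. Your argument is a correct, self-contained proof. The strategy---Noetherian induction on $\dim X$, spreading effectivity out from the generic points to a dense open, then henselising along the residual closed $Z$ and invoking Elkik's lifting theorem to show $i_Z^*C$ is simultaneously (very) effective and right orthogonal to $\SH(Z)^{\eff}$ (resp.\ $\SH(Z)^{\veff}$)---is sound at every step. Two points worth making explicit: the spreading-out step works because $\SH(k(\eta))=\colim_V\SH(V)$ as $\infty$-categories, and an object vanishing in a filtered colimit of $\infty$-categories already vanishes at some finite stage (apply this to $C|_{k(\eta)}=0$, using that open-immersion pullbacks commute with the effective cover); and in the very effective case your claim that a cofibre of very effective spectra is very effective is correct, since $\SH^{\veff}$ is closed under pushouts. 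This is plausibly close to what Bachmann--Hoyois do, though I have not compared line by line; you rightly defer the reduction to Noetherian schemes to their argument.
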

\begin{proof}
See \cite[Proposition B.3]{BachmannHoyois2021}.
\end{proof}

\subsection{Orientations}\label{subsec:orientations}
Another aspect of motivic homotopy theory that we will need is the theory of orientations, which is completely analogous to its classical counterpart from topology. For a more thorough treatment, we refer the reader to \cite{Deglise2019}. Given $E \in \CAlg(\h\SH(X))$,\footnote{We follow a standard convention of setting up the framework of orientations in the generality of homotopy commutative motivic ring spectra, but will in fact only be interested in the case that $E$ arises from an actual $\bb E_\infty$-algebra in $\SH(X)$.}
an \emph{orientation of $E$} is a homotopy class of maps $c_E:\tilde M_X(\Omega^\infty\Pic)\to E\otimes\bb T_X$ which fits into a commutative (up to unspecified homotopy) diagram
\[\xymatrix{
\tilde M_X(\Omega^\infty\Pic)\ar[r]^{c_E} & E\otimes\bb T_X\\
\bb T_X\ar[ur]_{\eta_E \otimes\bb T_X}\ar[u]^{[\roi(1)]}&
}\]
where $\eta_E:1_X\to E$ is the unit of $E$ and the vertical arrow is the map corresponding to $\scr O(1)$.

\begin{remark} \label{rmk:orientation-covers}
It is a standard fact from $\bb A^1$-homotopy theory  \cite[\S 4, Proposition 3.7]{MorelVoevodsky1999} that $\tilde M_X(\Omega^\infty\Pic)\in\SH(X)^\sub{eff}(1)$. In fact, there is an equivalence \begin{equation}\colim_m \tilde M_X(\bb P_X^m)\quis \tilde M_X(\Omega^\infty\Pic),\label{eqn_O1_infty}\end{equation} (where the transition maps $\bb P_X^m\to \bb P_X^{m+1}$ are the closed inclusions given by the last $m+1$-coordinates, and each map $\bb P_X^m\to\Pic$ of pointed presheaves of spaces on $\Sm_X$ corresponds to $\roi(1)\in\opp{Pic}(\bb P_X^m)$), and it is easily checked that each $\tilde M_X(\bb P_X^m)$ is $1$-effective.

Therefore, given $E \in \CAlg(\h\SH(X))$, applying $\Fil^1_\sub{slice}$ to any orientation $\tilde{M}_X(\Omega^{\infty}\Pic) \to E\otimes \bb T_X$ yields a map $\tilde{M}_X(\Omega^{\infty}\Pic) \to\Fil^1_\sub{slice}(E\otimes\bb T_X)\wequi \Fil_\sub{slice}^0E\otimes \bb T_X$, which is an orientation of $\Fil^0_\sub{slice}E$.
\end{remark}

\begin{remark} \label{rmk:orientation-morphisms}
Let $E\in \CAlg(\h\SH(X))$ and let $c_E$ be an orientation of $E$.
\begin{enumerate}
\item Given a morphism $E \to F$ in $\CAlg(\h\SH(X))$, the composition $\tilde M_X(\Omega^\infty\Pic)\xto{c_E}E\otimes \bb T_X\to F\otimes \bb T_X$ is an orientation of $F$.
\item Let $f:Y\to X$ be a morphism of qcqs schemes. Then, pulling back, the map \[\tilde M_Y(\Omega^\infty\Pic)=f^*(\tilde M_X(\Omega^\infty\Pic))\xto{f^*c_E}f^*(E\otimes \bb T_X)=f^*E\otimes \bb T_X\] is an orientation of $f^*E$.
\item Combining part (1) with Remark \ref{rmk:orientation-covers} shows that $c_E$ naturally induces an orientation of $s^0E$.
\end{enumerate}
\end{remark}


One of the key points of the theory of orientations is that it allows us to deduce the projective bundle formula for cohomology theories; this is completely analogous to the story for classical oriented spectra. Let $Y\in\Sm_X$. By Yoneda and adjunctions in the left hand side of diagram \eqref{eq:sm-sh}, any line bundle $L$ on $Y$ corresponds uniquely to a homotopy class of maps $\Sigma^\infty_+Y\to \Pic$ (in presheaves of spectra on $\Sm_X$); moving to $\SH(X)$ then naturally induces a homotopy class of maps $[L]:M_X(Y)\to\tilde M_X(\Omega^\infty\Pic)$. Consequently, given $E\in\CAlg(\h\SH(X))$ equipped with an orientation $c_E$, and $Y\in\Sm_X$ equipped with a line bundle $ L$, we obtain the composition \[ c_{1,E}(L) := c_E\circ[L] \in H^2(E(1)(Y)).\] The projective bundle formula in motivic homotopy theory is then as follows:\NB{Replace $c_1$ by $c_E$? Keep an eye on.}

\begin{theorem}[Morel]\label{thm_pbf_for_general_E}
Let $X$ be a qcqs scheme and $E\in\CAlg(\h\SH(X))$ equipped with an orientation $c_E:\tilde M_X(\Omega^\infty\Pic)\to E\otimes\bb T_X$. Then, for any rank $d$ projective bundle $\pi:P\to X$, and $j\in\bb Z$, the homotopy class of maps of spectra
\[\sum_{i=0}^d c_{1,E}(\roi_P(1))^{i}\pi^*:\bigoplus_{i=0}^d E(j-i)(X)[-2i] \To E(j)(P)\]
is an equivalence.
\end{theorem}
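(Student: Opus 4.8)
The plan is to reduce the projective bundle formula for a general oriented $E$ to the case of projective \emph{space} $\bb P^d_X$ — i.e.\ the trivial projective bundle — which is what is really needed elsewhere in the paper, and then to handle that case by the standard Mayer--Vietoris/dévissage argument using the orientation. (If one wants the statement for an arbitrary Zariski-locally-trivial projective bundle $\pi\colon P\to X$, one additionally Zariski-sheafifies and uses that both sides are Nisnevich sheaves on $\Sm_X$, so the claim may be checked Zariski-locally on $X$, where $P$ becomes $\bb P^d$.)

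First I would record the geometric input: for $\bb P^d_X$ one has the stratification by the linear subspaces $\bb P^0_X\subset\bb P^1_X\subset\cdots\subset\bb P^d_X$, with open complements $\bb A^d_X, \bb A^{d-1}_X\setminus\{0\},\dots$, giving cofiber sequences in $\SH(X)$ relating $M_X(\bb P^m_X)$ to $M_X(\bb P^{m-1}_X)$ and a Thom space; by homotopy purity (Morel--Voevodsky) and $\bb A^1$-invariance the relevant Thom space of the normal bundle of $\bb P^{m-1}_X\hookrightarrow\bb P^m_X$ is equivalent to $\bb T_X^{\otimes m}[\text{something}]$ after the appropriate twist. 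Equivalently — and more in the spirit of the excerpt — one can argue directly at the level of the cohomology theory $E(\star)(-)$: the orientation $c_E$ supplies the first Chern class $c_{1,E}(\roi(1))\in H^2(E(1)(\bb P^d_X))$, and one shows by induction on $d$, using the cofiber sequence $M_X(\bb P^{d-1}_X)\to M_X(\bb P^d_X)\to \tilde M_X(\text{Thom})$ together with the computation of $E$ on the Thom space via the Thom isomorphism (which follows from the orientation), that the map $\sum_{i=0}^d c_{1,E}(\roi(1))^i\pi^*$ is an equivalence. This is exactly the classical Conner--Floyd-style argument transported to $\SH(X)$, and it is carried out in the references cited (e.g.\ \cite{Deglise2019}); the orientation is used precisely to trivialize the Thom spectra appearing in the stratification.

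Concretely the induction step runs as follows. Let $P\to X$ have rank $d$. Pick (Zariski-locally on $X$) a hyperplane $\bb P^{d-1}_X\subset \bb P^d_X=P$ with open complement $j\colon \bb A^d_X\hookrightarrow P$. Applying $E(j)(-)$ to the cofiber sequence of motives and using $\bb A^1$-invariance to compute $E(j)(\bb A^d_X)\simeq E(j)(X)$, together with the orientation-induced Thom isomorphism identifying $E$ of the Thom space with $E(j-d)(X)[-2d]$, one obtains a cofiber sequence
\[
E(j-d)(X)[-2d] \To E(j)(P) \To E(j)(\bb P^{d-1}_X).
\]
By the inductive hypothesis the right-hand term is $\bigoplus_{i=0}^{d-1}E(j-i)(X)[-2i]$ via powers of $c_{1,E}(\roi(1))$, and one checks that the connecting map, twisted by the top Chern class, splits the sequence, yielding the claimed decomposition $\bigoplus_{i=0}^d E(j-i)(X)[-2i]\xrightarrow{\ \sim\ }E(j)(P)$; the compatibility of the splitting with multiplication by $c_{1,E}$ is the content of the classical argument and follows from the projection formula for the closed immersion $\bb P^{d-1}_X\hookrightarrow P$.

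The main obstacle is purely bookkeeping rather than conceptual: one must make sure the Thom isomorphism produced by the orientation $c_E$ is applied with the correct twist by $\bb T_X$ and the correct homological shift so that the Thom space of the tautological sub/quotient line bundle on $\bb P^{d-1}_X\subset\bb P^d_X$ contributes exactly $E(j-d)(X)[-2d]$ and not some reindexed variant — this is where the author's convention $F^j=E(j)[2j]$ (flagged as a notational warning in the excerpt) must be tracked carefully. Since $E$ is only assumed to lie in $\CAlg(\h\SH(X))$ (homotopy commutative), all of this should be done at the level of homotopy categories, which is exactly the generality in which the cited treatment \cite{Deglise2019} works, so no extra coherence is needed.
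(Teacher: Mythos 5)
The paper itself gives no argument for this statement: its ``proof'' is the single citation to \cite[Theorem~2.1.13]{Deglise2019}. So your overall plan --- check the comparison map Zariski-locally on $X$ (legitimate, since both sides are Nisnevich sheaves in the variable $X$), thereby reducing to the trivial bundle $\bb P^d_X$, and then induct on $d$ --- is exactly the classical route taken in that reference, and the reduction step is fine as you state it.

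The induction step, however, is mis-wired as written, and in a way that conceals a genuine circularity. With the decomposition you choose (closed subscheme the hyperplane $\bb P^{d-1}_X$, open complement $\bb A^d_X$) the localisation/purity cofibre sequence is
\[
M_X(\bb A^d_X)\To M_X(\bb P^d_X)\To \mathrm{Th}(N),\qquad N=\roi_{\bb P^{d-1}_X}(1),
\]
so applying $E(j)(-)$ produces a fibre sequence whose outer terms are $E(j)$ of the Thom space of a \emph{nontrivial line bundle over $\bb P^{d-1}_X$} and $E(j)(\bb A^d_X)\simeq E(j)(X)$ --- not the sequence you display. The sequence you display,
\[
E(j-d)(X)[-2d]\To E(j)(\bb P^d_X)\To E(j)(\bb P^{d-1}_X),
\]
comes from the \emph{complementary} decomposition: take the closed linear section $X\into\bb P^d_X$, whose open complement projects to $\bb P^{d-1}_X$ with $\bb A^1$-contractible fibres, and note that by Zariski excision $\bb P^d_X/(\bb P^d_X\setminus X)\simeq\bb A^d_X/(\bb A^d_X\setminus 0)$, whose reduced motive is $\bb T_X^{\otimes d}$; no Thom isomorphism is needed at all. (The same conflation appears in your preliminary paragraph, where the normal bundle of $\bb P^{m-1}_X\subset\bb P^m_X$ is said to yield $\bb T_X^{\otimes m}$.) The distinction matters beyond bookkeeping: invoking ``the Thom isomorphism for line bundles, which follows from the orientation'' inside the induction is circular as stated, since for a nontrivial line bundle $L/Y$ one computes $\mathrm{Th}(L)\simeq\bb P(L\oplus\roi)/\bb P(L)$ and the Thom isomorphism is normally \emph{deduced} from the rank-one projective bundle formula over $Y$, i.e.\ from a case of the theorem being proved. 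With the section--complement decomposition the orientation enters only where it should: to define $c_{1,E}(\roi(1))$ and to check that $\sum_i c_{1,E}(\roi(1))^i\pi^*$ is upper-triangular with invertible diagonal for the resulting two-step filtration (using naturality of $c_{1,E}$, that $\roi(1)$ restricts trivially along the section, and that $c_{1,E}$ of a trivial line bundle vanishes because the base point of $\Omega^\infty\Pic$ dies in the reduced motive). Likewise the splitting does not need a ``projection formula for the closed immersion'', which would presuppose Gysin maps not available at this stage. With these repairs your argument is the one carried out in \cite[Theorem~2.1.13]{Deglise2019}.
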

\begin{proof}
See, for example, \cite[Theorem~2.1.13]{Deglise2019}.
\end{proof}

\section{$\bb A^1$-motivic cohomology of schemes}\label{sec:a1-mot-schemes} The goal of this section is introduce the protagonist of this paper, namely \emph{$\bb A^1$-motivic cohomology}, in Definition~\ref{definition_A^1_mot-_coh}. In the language of Remark \ref{remark:traditional}, it assembles into a traditional cohomology theory whose motivic Eilenberg--MacLane spectrum is equivalent to the zero slice of the motivic spectrum $\KGL$ representing Weibel's homotopy $K$-theory $\KH$; we refer the reader to Remark~\ref{rem:motivic_is_traditional} for more details. The construction of $\bb A^1$-motivic cohomology uses the machinery that we have reviewed in the previous section. Furthermore, we use Voevodsky's slice filtration to equip $\KH(X)$, for any qcqs scheme $X$, with a natural, descending, multiplicative filtration $\Fil^{\star}_{\bb A}\KH(X)$, whose graded pieces correspond to the $\bb A^1$-motivic cohomology; we discuss this filtration in detail in \S\ref{subsub:slices-kgl}. Bott periodicity for $\KGL$ will show that the slices of $\KGL$ produce a periodization of its zero slice.

In contrast to the simplicity of the definition of $\bb A^1$-motivic cohomology, proving properties about it will be difficult. For example, it is not at all clear from its definition that it is a presheaf of complexes, as opposed to mere spectra. To establish this structure and produce its first Chern class in Construction~\ref{cons:low-weights}, we use an unstable variant of the slice filtration in \S\ref{subsec:low-weights}.

We also define a variant of $\bb A^1$-motivic cohomology, called $\bb A^1$-cdh-motivic cohomology. In fact, we will eventually see in Theorem \ref{thm:a1-a1cdh} that it is not a variant: the two theories coincide. But until that comparison has been established, we must separately keep track of, and independently establish properties of, both theories. Many such basic properties are summarized in Theorem~\ref{theorem:SH_mot_coh}. In \S\ref{sec:rational}, we study their rational structure via Adams operations, relying on previous work of Riou and Soul\'e.

\subsection{The motivic spectrum $\KGL$} \label{subsec:KGL}
Let $X$ be a qcqs scheme. We explain the characterisation (and construction) of the motivic spectrum $\KGL_X \in \SH(X)$, representing homotopy $K$-theory, via a simple universal property. Consider $\KH$ as a $\bb A^1$-invariant Nisnevich sheaf of $\bb E_{\infty}$-algebras on $\Sm_X$. Let $\scr K_X$ be the $\infty$-category of pairs $(E, \alpha)$, where $E \in \CAlg(\SH(X))$ and $\alpha: \omega^{\infty}E \simeq \KH$ is an $\bb E_{\infty}$-algebra equivalence, which is required to be \emph{Bott periodic}: that is, the element $1 - [\scr O(-1)] \in \KH_0(\bb P^1_X)$ gives rise, via the equivalence $\alpha$, to a map $\beta:\bb T_X \rightarrow E$ and we demand that the composition $E\otimes\bb T_X\xrightarrow{\id \otimes \beta}  E \otimes E\xto{\sub{mult}} E$ be an equivalence.\footnote{More formally, $\scr K_X$ is a subcategory of the pullback of the cospan of $\infty$-categories $\CAlg(\SH(X)) \xrightarrow{\omega^{\infty}} \CAlg(\opp{Sh}_{\Nis, \bb A^1}(\Sm_X,\Spt)) \leftarrow \{ \KH\},$ spanned by Bott periodic objects.}

\begin{lemma}
$\scr K_X$ is contractible
\end{lemma}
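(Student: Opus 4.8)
The statement asserts that the space of Bott-periodic $\mathbb{E}_\infty$-lifts of $\KH\in\CAlg(\opp{Sh}_{\Nis,\bb A^1}(\Sm_X,\Spt))$ to $\CAlg(\SH(X))$ is contractible. The plan is to prove this via base change from $\Spec(\mathbb{Z})$: the functor $\SH(-)$ has pullbacks $f^*$ that are symmetric monoidal and compatible with $\omega^\infty$ on the level of $\Sm$-schemes (as recorded in \S\ref{s:SH} and in the discussion of $\sigma^{\infty,\gr}$), so a contractibility statement over $\mathbb{Z}$, together with a suitable base-change compatibility of both $\KGL$ and the Bott class $\beta$, should propagate to all qcqs $X$. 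Concretely, first I would observe that $\scr K_X$ is the pullback of the cospan $\CAlg(\SH(X))\to\CAlg(\opp{Sh}_{\Nis,\bb A^1}(\Sm_X,\Spt))\leftarrow\{\KH\}$ restricted to the Bott-periodic locus, and that $f^*:\SH(\Spec\mathbb{Z})\to\SH(X)$ sends any object of $\scr K_{\Spec\mathbb{Z}}$ to an object of $\scr K_X$ (using that $f^*$ is symmetric monoidal, that $f^*1_{\Spec\mathbb{Z}}=1_X$, $f^*\bb T_{\Spec\mathbb{Z}}=\bb T_X$, that $\omega^\infty f^* = f^*\omega^\infty$ when evaluated on pullbacks of smooth $\mathbb{Z}$-schemes, and that $\KH$ itself is stable under pullback along $f$ as a sheaf of $\mathbb{E}_\infty$-algebras since it is finitary and defined for all schemes). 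So it suffices to treat $X=\Spec(\mathbb{Z})$, or indeed any fixed ``large enough'' base.

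For the base case I would invoke the representability of homotopy $K$-theory in motivic homotopy theory: $\KGL$ exists over $\Spec(\mathbb{Z})$ (Voevodsky, Riou, Cisinski--D\'eglise, Gepner--Snaith) as an $\mathbb{E}_\infty$-algebra with $\omega^\infty\KGL\simeq\KH$ and Bott periodicity, so $\scr K_{\Spec\mathbb{Z}}$ is nonempty. For contractibility, the clean approach is the Snaith-type universal property: $\KGL$ is obtained from $\Sigma^\infty_+\mathbb{P}^\infty$ by inverting the Bott element $\beta$, and more precisely, for any $\mathbb{E}_\infty$-ring $E\in\CAlg(\SH(X))$ the space of $\mathbb{E}_\infty$-maps $\KGL_X\to E$ is equivalent to the space of $\mathbb{E}_\infty$-maps $\Sigma^\infty_+\mathbb{P}^\infty_X\to E$ carrying the standard class to a unit (Gepner--Snaith, \cite{Snaith1981}-style; see Gepner--Snaith or Spitzweck--{\O}stv{\ae}r). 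Then a Bott-periodic pair $(E,\alpha)$ with $\omega^\infty E\simeq\KH$ as $\mathbb{E}_\infty$-algebras determines, and is determined by, the $\mathbb{E}_\infty$-map $\Sigma^\infty_+\mathbb{P}^\infty\to E$ sending the tautological class to the unit $1-[\scr O(-1)]$ specified by $\alpha$: the point is that on $\omega^\infty$ this data is pinned down by $\alpha$, and the Snaith universal property then uniquely rigidifies it to $E\simeq\KGL$ together with the identification $\alpha$. One then checks the space of such is contractible because the relevant mapping space in $\SH(X)$, $\Map_{\CAlg}(\Sigma^\infty_+\mathbb{P}^\infty,-)$, computes exactly $\KH$-cohomology classes which are specified on the nose by $\alpha$.

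A slightly more self-contained alternative, avoiding an explicit Snaith statement, is a direct obstruction-theoretic / fiber-sequence argument: given two objects $(E_0,\alpha_0),(E_1,\alpha_1)\in\scr K_X$, the mapping space between them in the pullback category $\scr K_X$ is the fiber of $\Map_{\CAlg(\SH(X))}(E_0,E_1)\to\Map_{\CAlg(\opp{Sh})}(\KH,\KH)$ over $\id_{\KH}$, intersected with the Bott-periodic locus. Using that $\omega^\infty$ is the right adjoint of the localization $\sigma^\infty$, and that Bott-inverted effective $K$-theory is reflected faithfully (the unit $\KGL\to\sigma^\infty\omega^\infty\KGL$-type comparison together with Bott periodicity forces $E_i$ to be in the image), one identifies this fiber with a contractible space. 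I would structure this as: (i) the forgetful/comparison map $\scr K_X\to\{\KH\}$ is a right fibration whose fibers I must show are points; (ii) both ``existence'' (a section exists by representability of $\KGL$) and ``uniqueness'' (any two are canonically identified, with contractible space of identifications) follow from the universal property of inverting the Bott element inside $\CAlg(\SH(X))$.

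\textbf{Main obstacle.} The hard part is the \emph{rigidification}: upgrading the essentially classical fact that $\KGL$ represents $\KH$ (known at the level of homotopy categories, and as $\mathbb{E}_\infty$-rings after work of Gepner--Snaith and Spitzweck--{\O}stv{\ae}r over nice bases) to the precise statement that the \emph{space} of Bott-periodic $\mathbb{E}_\infty$-lifts is contractible, uniformly over all qcqs $X$ and compatibly with base change. The delicate points are: ensuring the $\mathbb{E}_\infty$-structure (not merely $\mathbb{E}_1$, as flagged in Remark~\ref{rem:not_E2} regarding the swap map on $\bb T^{\otimes 2}$ — though here $\KGL$ is genuinely $\mathbb{E}_\infty$ because $-1$ becomes a square after inverting $\beta$); handling the base $\Spec(\mathbb{Z})$ where one cannot quote results stated only over fields or regular bases; and making the base-change compatibility of the Bott element $\beta:\bb T_X\to E$ precise so that $f^*$ genuinely lands in $\scr K_X$. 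I expect to dispatch all of these by citing the Gepner--Snaith / Spitzweck--{\O}stv{\ae}r construction of $\KGL$ over arbitrary bases together with its universal property, and then the argument becomes formal.
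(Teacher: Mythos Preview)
Your base-change reduction has a genuine gap: even if $\scr K_{\Spec(\Z)}$ is contractible and $f^*$ carries it into $\scr K_X$, this only shows $\scr K_X$ is nonempty. To deduce contractibility of $\scr K_X$ you would need $f^*:\scr K_{\Spec(\Z)}\to\scr K_X$ to be an equivalence of $\infty$-groupoids, i.e.\ that every Bott-periodic lift over $X$ arises by pullback from $\Spec(\Z)$. But that is essentially the content of Cisinski's theorem that $\KGL$ is an absolute motivic spectrum (Remark~\ref{rem_abs}), which the paper deduces \emph{from} this lemma, not the other way around. There is also a concrete obstruction to even formulating the reduction: as noted in Remark~\ref{rem:pullback1}, $\omega^\infty$ does not commute with $f^*$, so the claim that $f^*$ sends $\scr K_{\Spec(\Z)}$ into $\scr K_X$ already requires care.

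The paper's argument is direct over any base $X$, bypassing both base change and any explicit Snaith presentation. By the $\sigma^\infty\dashv\omega^\infty$ adjunction, a pair $(E,\alpha)$ is the same as an object $E\in\CAlg(\SH(X))_{\sigma^\infty(\KH)/}$ whose adjoint unit $\KH\to\omega^\infty E$ is an equivalence. The key step is then a periodization argument (citing \cite[Proposition~3.1]{Hoyois2020}, as in the proof of Proposition~\ref{prop:P1-spectrum-from-graded}): since inverting the Bott class subsumes inverting $\bb T$, the full subcategory of Bott-periodic objects of $\CAlg(\SH(X))_{\sigma^\infty(\KH)/}$ identifies with the full subcategory of Bott-periodic objects of $\CAlg(\opp{Sh}_{\Nis,\bb A^1}(\Sm_X,\Spt))_{\KH/}$. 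In the latter category the remaining condition says the structure map $\KH\to E$ is an equivalence, so $\scr K_X$ is the singleton consisting of the initial object. Your second sketch (``$\omega^\infty$ is the right adjoint of the localization $\sigma^\infty$'' and ``Bott-inverted effective $K$-theory is reflected faithfully'') gestures at exactly this, but you did not isolate the periodization step that makes it go through cleanly.
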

\begin{proof}
First, note that for $E \in \CAlg(\SH(X))$ a map of $\bb E_{\infty}$-algebras $\KH\to \omega^\infty E$ is the same as a map $\sigma^\infty(\KH) \to E$.
We thus see that $\scr K_X$ is equivalent to a full subcategory of $\CAlg(\SH(X))_{\sigma^\infty(\KH)/}$, namely the full subcategory on objects which are (1) Bott-periodic and (2) such that $\KH \to \omega^\infty E$ is an equivalence.
By \cite[Proposition 3.1]{Hoyois2020}, similarly to the proof of Proposition \ref{prop:P1-spectrum-from-graded}, the full subcategory of objects of $\CAlg(\SH(X))_{\sigma^\infty(\KH)/}$ satisfying (1) is equivalent to the full subcategory of $\CAlg(\opp{Sh}_{\Nis, \bb A^1}(\Sm_X,\Spt))_{\KH/}$ satisfying the analogous condition.
Thus $\scr K_X$ is equivalent to a full subcategory of $\CAlg(\opp{Sh}_{\Nis, \bb A^1}(\Sm_X,\Spt))_{\KH/}$, whose objects in particular satisfy the analog of (2); but that just means that the unit map $\KH \to E$ is an equivalence. Since the unit object is initial, this shows that $\scr K_X$ is indeed contractible.
\end{proof}

\begin{definition}
We denote by $(\KGL_X,\al) \in \cal K_X$ the unique (up to contractible choices) object. That is, $\KGL_X\in\CAlg(\SH(X))$ is the unique Bott periodic $\bb E_\infty$-algebra in $\SH(X)$ equipped with an equivalence of $\bb E_\infty$-algebras $\al:\omega^\infty\KGL_X\simeq\KH$. We will henceforth drop $\al$ from the notation and implicitly identify $\omega^\infty\KGL_X$ with $\KH$. The map $\beta:\bb T_X\to\KGL_X$, corresponding to $1-[\roi(-1)]\in\KH_0(\bb P_X^1)$ and witnessing Bott periodicity, will repeatedly appear.
\end{definition}

\begin{remark}[$\KGL$ as an absolute motivic spectrum]\label{rem_abs} 
Let $p_{\SH}:\int \SH \rightarrow \Sch^{\qcqs}$ be the cartesian fibration classified by $\SH: \Sch^{\qcqs,\op} \rightarrow \Cat_{\infty}$. Then there is an equivalence between the $\infty$-category of cartesian sections of $p_{\SH}$ and the value of $\SH$ at the terminal scheme $\Spec(\bb Z)$ via the correspondence established in \cite[Corol.~3.3.3.2]{Lurie2009}. We call such an object an \emph{absolute motivic spectrum}. Informally, the data of an absolute motivic spectrum consists of $E_X \in \SH(X)$ for all qcqs schemes $X$ and, for any morphism of qcqs schemes $f: X \rightarrow Y$, a comparison isomorphism $f^*E_Y\quis E_X$ subject to higher coherences.

Repeating the above construction of $\KGL_X$ fiberwise, we see that $\KGL$ defines a section of $\int \SH \rightarrow \Sch$ (we refer to \cite[\S 5]{Hoyois2020} for how to implement this formally). The fact that $\KGL$ is a cartesian section, i.e., an absolute motivic spectrum, is a result of Cisinski \cite{Cisinski2013}; see also \cite{Hoyois2020}.
\end{remark}

\subsubsection{Orientation of $\KGL$}\label{sss:orientation}
The following map will induce orientations of $\KGL$ and of motivic cohomology, and yield a first Chern class map in the latter:

\begin{definition}\label{definition:orientation_of_K}
We denote by \[1-(\cdot)^\vee:\Omega^\infty\Pic\To \Omega^\infty \K^\sub{cn}\] the unique homotopy class of maps of presheaves of pointed spaces on qcqs schemes such that, for any $m\ge1$, the composition \[(\bb P_\bb Z^m,\infty)\xto{[\roi(1)]}\Omega^\infty\Pic\xto{1-(\cdot)^\vee} \Omega^\infty \K^\sub{cn}\] classifies $1-[\roi(-1)]\in K_0(\bb P_\bb Z^m)$. By a sight abuse of notation we will sometimes to refer to the map $1-(\cdot)^\vee$ as the orientation of $K$-theory.
\end{definition}
\begin{proof}[Proof of existence and uniqueness]
The previous definition does require justification. To show that a unique such map exists up to homotopy we may restrict to smooth $\bb Z$-schemes, since $\Omega^\infty\Pic$ is Zariski-locally left Kan extended from smooth $\bb Z$-schemes\NB{ref?} and both $\Omega^\infty\Pic$ and $\Omega^\infty \K^\sub{cn}$ are Zariski sheaves of pointed spaces. Now set $(\bb P_\bb Z^{\infty},\infty) := \colim_m (\bb P_\bb Z^m,\infty)$ in $\opp{PSh}(\Sm_\bb Z,\opp{Spc}_*)$, where the filtered colimit is induced by the inclusion of the last $m+1$-coordinates $\bb P_\bb Z^m \rightarrow \bb P_\bb Z^{m+1}$. A calculation in $\bb A^1$-homotopy theory \cite[\S 4, Proposition 3.7]{MorelVoevodsky1999} states that the map $(\bb P_\bb Z^{\infty},\infty)\to \Omega^\infty\Pic$ in $\opp{PSh}(\Sm_\bb Z,\opp{Spc}_*)$,  classifying $\roi(1)$ on each $\bb P_\bb Z^m$, becomes an equivalence after applying the endofunctor $L_{\sub{Nis},\bb A^1}$. Therefore the canonical map \[\Map_{\opp{PSh}(\Sm_\bb Z,\opp{Spc}_*)}(\Omega^\infty\Pic,\Omega^\infty \K^\sub{cn})\To \opp{lim}_n\Map_{\opp{PSh}(\Sm_\bb Z,\opp{Spc}_*)}((\bb P_\bb Z^m,\infty),\Omega^\infty \K^\sub{cn})\] is an equivalence, and taking $\pi_0$ we obtain a short exact sequence of abelian groups \[0\To \opp{lim}_m^1\tilde{\K}_1(\bb P_\bb Z^m)\To \pi_0\Map_{\opp{PSh}(\Sm_\bb Z,\opp{Spc}_*)}(\Omega^\infty\Pic,\Omega^\infty \K^\sub{cn})\To \opp{lim}_n\tilde{\K}_0(\bb P_\bb Z^m)\To 0.\] Here $\tilde K_1$ and $\tilde K_0$ refer to reduced $K$-groups. The projective bundle formula shows that the transition maps in the left system are surjective, and so the $\opp{lim}_m^1$ vanishes. There is therefore a unique map $\Omega^\infty\Pic\to\Omega^\infty \K^\sub{cn}$ corresponding to the compatible system $1-[\roi(-1)]\in\tilde \K_0(\bb P^m_\bb Z)$, for $m\ge1$, as required.
\end{proof}

\NB{Maybe reinsert remark?}
%

By adjunction, the map $1-(\cdot)^\vee$ corresponds to a map $1-(\cdot)^\vee:\Sigma^{\infty}\Omega^{\infty}\Pic \rightarrow \K^\sub{cn}$ of presheaves of spectra on qcqs schemes. Furthermore, for any qcqs $X$, we may restrict to smooth $X$-schemes and compose with $\K^\sub{cn}\to\KH$; by the $\sigma^\infty$-$\omega^\infty$ adjunction this then corresponds to a map
\[
1 - (\cdot)^{\vee}: \tilde M(\Omega^\infty\Pic) \To \KGL_X
\]
in $\SH(X)$, which we continue to denote by the same notation.

\begin{lemma}
For any qcqs scheme $X$, the composition \[c_{\KGL,X}:\tilde M(\Omega^\infty\Pic) \xto{1 - (\cdot)^{\vee}} \KGL_X\stackrel{\beta^{-1}}{\quis}\KGL_X\otimes\bb T_X\] in $\SH(X)$ is an orientation of $\KGL_X$.
\end{lemma}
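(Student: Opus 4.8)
The plan is to unwind the definition of an orientation from \S\ref{subsec:orientations} and to reduce the claim to identities that are essentially built into the constructions of $1-(\cdot)^\vee$ (Definition~\ref{definition:orientation_of_K}) and of the Bott class $\beta$. Recall that a homotopy class of maps $c_E\colon\tilde M_X(\Omega^\infty\Pic)\to E\otimes\bb T_X$ is an orientation of $E$ precisely when the composite $\bb T_X\xto{[\roi(1)]}\tilde M_X(\Omega^\infty\Pic)\xto{c_E}E\otimes\bb T_X$ agrees with $\eta_E\otimes\bb T_X$. Write $b\colon\KGL_X\otimes\bb T_X\xrightarrow{\simeq}\KGL_X$ for the Bott self-equivalence, i.e. the composite $\KGL_X\otimes\bb T_X\xrightarrow{\id\otimes\beta}\KGL_X\otimes\KGL_X\xrightarrow{\mathrm{mult}}\KGL_X$, so that $\beta^{-1}=b^{-1}$. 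Taking $E=\KGL_X$ and $c_E=c_{\KGL,X}=\beta^{-1}\circ(1-(\cdot)^\vee)$, it therefore suffices to establish two things: \textbf{(i)} the composite $\bb T_X\xto{[\roi(1)]}\tilde M_X(\Omega^\infty\Pic)\xto{1-(\cdot)^\vee}\KGL_X$ equals $\beta$; and \textbf{(ii)} $\beta^{-1}\circ\beta=\eta_{\KGL}\otimes\bb T_X$ as maps $\bb T_X\to\KGL_X\otimes\bb T_X$.

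For \textbf{(i)}, I would use the splitting $M_X(\bb P^1_X)\simeq 1_X\oplus\bb T_X$ recalled in \S\ref{sss:stacks} to identify $\map_{\SH(X)}(\bb T_X,\KGL_X)$ with the reduced summand $\fib(\infty^*\colon\KH(\bb P^1_X)\to\KH(X))$ of $\KH(\bb P^1_X)=\omega^\infty\KGL_X(\bb P^1_X)$; under this identification $\beta$ is, by its very definition, the class $1-[\roi(-1)]$. On the other hand, by the construction in \S\ref{sss:orientation} the map $1-(\cdot)^\vee\colon\tilde M_X(\Omega^\infty\Pic)\to\KGL_X$ in $\SH(X)$ is obtained, via the $\sigma^\infty$-$\omega^\infty$ adjunction, from the presheaf-of-spectra map $\Sigma^\infty\Omega^\infty\Pic\to\K^\sub{cn}\to\KH$; hence precomposing with $[\roi(1)]\colon\bb T_X=\tilde M_X((\bb P^1_X,\infty))\to\tilde M_X(\Omega^\infty\Pic)$ yields, again by adjunction and naturality, the class in $\tilde\KH_0(\bb P^1_X)$ represented by the pointed map $(\bb P^1_X,\infty)\xto{[\roi(1)]}\Omega^\infty\Pic\xto{1-(\cdot)^\vee}\Omega^\infty\K^\sub{cn}\to\Omega^\infty\KH$. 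By the defining property of $1-(\cdot)^\vee$ in Definition~\ref{definition:orientation_of_K} (with $m=1$, and base-changed from $\bb Z$ to $X$ by naturality of that construction) this class is exactly $1-[\roi(-1)]$, which is $\beta$; this proves \textbf{(i)}.

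For \textbf{(ii)}, I would compose both sides with the equivalence $b$: since $b\circ\beta^{-1}=\id_{\KGL_X}$, the left-hand side becomes $\beta$, while the right-hand side becomes $b\circ(\eta_{\KGL}\otimes\bb T_X)=\mathrm{mult}\circ(\eta_{\KGL}\otimes\beta)=\beta$ by the left unit axiom for the $\bb E_\infty$-ring $\KGL_X$; invertibility of $b$ then gives \textbf{(ii)}. Combining \textbf{(i)} and \textbf{(ii)} yields $c_{\KGL,X}\circ[\roi(1)]=\eta_{\KGL}\otimes\bb T_X$, so $c_{\KGL,X}$ is an orientation of $\KGL_X$. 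This argument is essentially all bookkeeping with the adjunctions and the $1_X\oplus\bb T_X$ splitting; the one point that requires care — and the likeliest place for a slip — is checking that precomposing the $\SH(X)$-level map $1-(\cdot)^\vee$ with $[\roi(1)]$ genuinely recovers the class $1-[\roi(-1)]$ defining $\beta$, with no stray Tate twist or sign, but this is forced since $1-(\cdot)^\vee$ was defined by precisely this evaluation. I do not anticipate a genuine obstacle.
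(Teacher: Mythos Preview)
Your proof is correct and follows essentially the same approach as the paper: both arguments reduce to verifying that the two ways of mapping $\bb T_X$ into $\KGL_X$ (via $[\roi(1)]$ then $1-(\cdot)^\vee$, or via $\eta_{\KGL}\otimes\bb T_X$ then the Bott equivalence) both classify $1-[\roi(-1)]\in\KH_0(\bb P^1_X)$. The paper phrases this as commutativity of a single square, while you decompose it into your steps \textbf{(i)} and \textbf{(ii)}, but the content is identical.
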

\begin{proof}
We must show that the diagram
\begin{equation*}
\begin{CD}
\tilde M(\Omega^\infty\Pic) @>{1-(\cdot)^\vee}>> \KGL_X \\
@A{[\scr O(1)]}AA              @A{\wequi}A{\beta}A                    \\
\tilde{M}(\bb P_X^1)=\bb T_X @>{\eta_{\KGL_X} \otimes \opp{id}_\bb T}>> \KGL_X \otimes \bb T 
\end{CD}
\end{equation*}
commutes up to homotopy. The composition from the bottom left to the top right, via the bottom right, classifies $1-[\roi(-1)]\in\KH_0(\bb P_X^1)$ by definition of $\beta$. But this is also true of the composition via the top left, by applying Definition \ref{definition:orientation_of_K} in the case $m=1$.
\end{proof}

\begin{remark}
Let $f: Y \to X \in \Sch$.
Under the equivalence $f^* \KGL_X \wequi \KGL_Y$, the orientation of $\KGL_Y$ induced by $f^* c_{\KGL,X}$ is $c_{\KGL,Y}$.
\end{remark}

\subsubsection{The slice filtration for $\KGL$} \label{subsub:slices-kgl}
The effective cover of $\KGL_X$ is denoted by \[ \kgl_X := \Fil^0_\slice \KGL_X \in \CAlg(\SH(X)).\] As recorded for example in \cite[Lemma~2.1]{RoendigsOestvaer2016}, for any $E\in\SH(X)$ there are functorial equivalences
\begin{equation}\label{eq:tate-slice}
\Fil^j_\sub{slice}(\bb T_X \otimes E) \simeq \bb T_X \otimes \Fil^{j-1}_\sub{slice}(E) \qquad s^j(\bb T_X \otimes E) \simeq \bb T_X \otimes s^{j-1}(E).
\end{equation}
Combining this with the Bott periodicity equivalence $\bb T \otimes \KGL_X \xrightarrow{\simeq} \KGL_X$ we obtain equivalences $\bb T \otimes \Fil^{j-1}_\sub{slice}\KGL_X \xrightarrow{\simeq} \Fil_\sub{slice}^j\KGL_X$ and similarly for the slices. This yields an alternative description of the slice filtration of $\KGL_X$ which we now explain. We have a map $\beta: \bb T_X \otimes \kgl_X \rightarrow \kgl_X$ which is induced via the universal property\footnote{One can also check that this map is given via multiplicative properties of $\kgl_X$ as follows. We note that $\kgl_X$ admits a $\bb E_{\infty}$-ring structure such that the map $\kgl_X \rightarrow \KGL_X$ is an $\bb E_{\infty}$-ring map and that the Bott element $\beta: \bb T_X \rightarrow \KGL_X$ lifts to $\beta:\bb T_X \rightarrow \kgl_X$. The present map is then homotopic to $\bb T_X \otimes \kgl_X \xrightarrow{\beta \otimes \mathrm{id}} \kgl_X \otimes \kgl_X \xrightarrow{\mathrm{mult}} \kgl_X$.} of $\Fil^0_\sub{slice}\KGL_X$ from the map $\bb T_X \otimes \kgl_X \xrightarrow{\beta \otimes \mathrm{can}} \KGL_X \otimes \KGL_X \xrightarrow{\mathrm{mult}} \KGL_X$. By construction, there is thus a commutative diagram
\[
\begin{tikzcd}
\bb T_X \otimes \kgl_X \ar{rr}{\simeq} \ar[swap]{dr}{\beta}  & & \Fil_\sub{slice}^1\KGL_X \ar{dl}{\mathrm{can}} \\
 & \kgl_X & .
\end{tikzcd}
\]
More generally there are, for all $j\ge0$, compatible such commutative diagram in which $\bb T_X$ is replaced by $\bb T_X^{\otimes j}$ and the Bott map $\beta$ is replaced by the $j$-fold iterate $\beta^j: \bb T_X^{\otimes j} \otimes \kgl_X \rightarrow \kgl_X$. We thus obtain the desired equivalence between the slice filtration on $\KGL_X$
\[
\Fil^{\star}_\sub{slice}\KGL_X=\qquad \cdots\leftarrow \Fil^{j-1}_\sub{slice}\KGL_X\leftarrow \Fil^{j}_\sub{slice}\KGL_X\leftarrow \Fil^{j+1}_\sub{slice}\KGL_X\leftarrow\cdots
\]
and the \emph{Bott filtration}
\[
\bb T_X^{\star} \otimes \kgl_X :=\qquad  \cdots \leftarrow\bb T_X ^{\otimes j-1} \otimes \kgl_X \xleftarrow{\id^{\otimes j-1} \otimes \beta} \bb T_X ^{\otimes j} \otimes \kgl_X \xleftarrow{\id^{\otimes j} \otimes \beta} \bb T_X ^{\otimes j+1}  \kgl_X  \leftarrow\cdots
\]
In particular, taking associated gradeds, we also obtain a description of the slices of $\KGL_X$ as Tate twists of the zeroth slice: $s^{j}\KGL_X \simeq \bb T^{\otimes j}\otimes s^0\KGL_X$ for $j\ge0$.

From this we can deduce convergence of the slice filtration in some cases:
\begin{lemma} \label{lemm:convergence-bootstrap}
Let $X$ be a regular Noetherian scheme of dimension $\le 1$.
Then $\kgl_X \in \SH(X)^\veff$ and $\omega^\infty \Fil^j_\slice \KGL$ is $(j-\dim(X))$-connective as a Nisnevich sheaf of spectra.
\end{lemma}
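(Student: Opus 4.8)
The plan is to deduce both assertions from Proposition~\ref{prop:detecting-veff}, with the very-effectivity of $\kgl_X$ as the crux. The key preliminary observation is that $\omega^\infty$ does not see the passage from $\KGL_X$ to its effective cover: since $\kgl_X=\Fil^0_\slice\KGL_X$ and $\SH(X)^\eff$ is a \emph{stable} subcategory of $\SH(X)$, hence closed under desuspension, the universal property of the effective cover (property~(3) of the slice filtration, following \eqref{eq:slice_filt}) applies not only to $F\in\SH(X)^\eff$ but to all its shifts, so that $\map_{\SH(X)}(F,\kgl_X)\to\map_{\SH(X)}(F,\KGL_X)$ is an equivalence of mapping spectra for every such $F$. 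Taking $F=M_X(Y)$ for $Y\in\Sm_X$, I would conclude $\omega^\infty\kgl_X\isoto\omega^\infty\KGL_X=\KH$ as $\bb A^1$-invariant Nisnevich sheaves of spectra on $\Sm_X$. This is precisely where the situation departs from the Postnikov picture in topology, where $\tau_{\ge0}$-truncation \emph{is} visible to the underlying spectrum because connective spectra are not closed under desuspension; here effectivity concerns Tate twists, not connectivity.

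Next I would invoke regularity: every $Y\in\Sm_X$ is regular Noetherian of finite Krull dimension, so $\KH(Y)=\K(Y)$ by homotopy invariance of algebraic $K$-theory on regular Noetherian schemes, and $\K(Y)$ is connective since the negative $K$-groups of a regular Noetherian scheme of finite Krull dimension vanish. Hence $\omega^\infty\kgl_X\simeq\KH|_{\Sm_X}$ is a connective Nisnevich sheaf of spectra. Since moreover $\kgl_X\in\SH(X)^\eff$ and $\dim X\le1$, Proposition~\ref{prop:detecting-veff}(2) then yields $\kgl_X\in\SH(X)^\veff$, which is the first assertion.

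For the second assertion, I would recall from \S\ref{subsub:slices-kgl} that Bott periodicity (combining \eqref{eq:tate-slice} with $\bb T_X\otimes\KGL_X\simeq\KGL_X$) identifies $\Fil^j_\slice\KGL_X\simeq\bb T_X^{\otimes j}\otimes\kgl_X$ for all $j\ge0$. Since $\kgl_X$ is very effective and $\kgl_X\otimes M_X(\bb G_{m,X})^{\otimes j}$ is then again very effective, applying the ``more generally'' part of Proposition~\ref{prop:detecting-veff}(1) with $d=\dim X$ to $E=\kgl_X$ gives that $\omega^{\infty,\gr}(\kgl_X)^j=\omega^\infty(\bb T_X^{\otimes j}\otimes\kgl_X)=\omega^\infty\Fil^j_\slice\KGL_X$ is $(j-\dim X)$-connective, as required. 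I do not expect a serious obstacle in this argument: the only mildly delicate points are the shift-closedness observation in the first step (which is what makes $\omega^\infty\kgl_X$ computable) and keeping the bookkeeping of twists consistent when passing to the graded form of Proposition~\ref{prop:detecting-veff}(1); everything else is formal or classical.
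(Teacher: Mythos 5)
Your argument is correct and follows essentially the same route as the paper: identify $\omega^\infty\kgl_X\simeq\KH|_{\Sm_X}$, use regularity ($\K=\KH$ with vanishing negative $K$-groups) to get Nisnevich-local connectivity and apply Proposition~\ref{prop:detecting-veff}(2), then use the Bott filtration identification $\Fil^j_\slice\KGL_X\simeq\bb T_X^{\otimes j}\otimes\kgl_X$ together with Proposition~\ref{prop:detecting-veff}(1) for the connectivity bound. Your extra paragraph justifying $\omega^\infty\kgl_X\simeq\omega^\infty\KGL_X$ via shift-closedness of $\SH(X)^\eff$ just makes explicit a standard fact the paper uses without comment.
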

\begin{proof}
To check that $\kgl_X \in \SH(X)^\veff$, by Proposition \ref{prop:detecting-veff}(2), we need only verify that $\omega^{\infty}\kgl_X \simeq \KH|_{\Sm_X}$ is Nisnevich-locally connective. This holds since $\K(Y) = \KH(Y)$ and negative $K$-groups of $Y$ vanishes whenever $Y$ is a regular Noetherian scheme. The claim about its slices follow by the Bott filtration description of the slices and Proposition \ref{prop:detecting-veff}(1).
\end{proof}

\begin{remark}
Given $f: Y \to X \in \Sch$ there is a natural map $f^* \kgl_X \to \kgl_Y$, as discussed in Remark \ref{rem:naturality}. We shall eventually prove in Corollary \ref{cor:eff-base-change}(1) that this maps is an equivalence, but for most of the article we do not know (and so cannot use) this. It will follow in particular that $\kgl_X \in \SH(X)^\veff$ for every $X$ but, again, at the present point in the article we do not know this.
\end{remark}

\subsubsection{The motivic spectrum $\cal V$} \label{subsub:V}
One way to control effective motivic spectra is via the theory of framed motivic spectra as explained by Hoyois, Khan, Sosnilo, Yakerson, and the second author \cite{ElmantoHoyoisKhanSosniloYakerson2021} \cite{Hoyois2021}. One can consider this theory as a ``spectral counterpart'' to Voevodsky's derived $\infty$-category of motives, built out of the theory of finite correspondences. The topologically-minded reader might want to view it as a theory of motivic infinite loop spaces. 

Given a presheaf of spectra $F$ on smooth $X$-schemes with the additional structure of \emph{framed transfers}, the theory of framed motivic spectra produces an effective motivic spectrum $\Sigma^{\infty}_\sub{fr}F\in\SH(X)$. This construction assembles into a functor $\Sigma^{\infty}_\sub{fr}$ from presheaves with framed transfers to motivic spectra; as $X$ varies, this construction commutes with pullbacks by \cite[Lemma~16]{Hoyois2021}. To make use of this theory in practice, one often starts with a ``simple-looking'' presheaf with an obvious framed transfers structure, defined over $\Spec(\bb Z)$; applying $\Sigma^{\infty}_\sub{fr}$ often produces an interesting absolute motivic spectrum. Furthermore, the functor $\Sigma^{\infty}_\sub{fr}$ is symmetric monoidal and thus converts $\bb E_{\infty}$-algebras in presheaves with framed transfers to $\bb E_{\infty}$-algebras in motivic spectra.

For example \cite{HoyoisJelisiejewNardinTotaroYakerson2021}, the functor $\mathrm{Vect}_X$, which assigns to any smooth $X$-scheme $T$ the groupoid of finite-rank vector bundles on $T$, defines a  presheaf of $\bb E_{\infty}$-monoids (under direct sum) on $\Sm_X$. One then obtains a presheaf of spectra by group-completing, which is moreover an $\bb E_{\infty}$-algebra in framed presheaves via tensor product and pushforward of vector bundles. The associated motivic spectrum $\cal V_X=\Sigma^\infty_\sub{fr}\mathrm{Vect}_X := \Sigma^\infty_\sub{fr}\Vect^{\mathrm{gp}}_{X} \in \SH(X)$ is an effective $\bb E_{\infty}$-ring spectrum equipped with a multiplicative map $\Sigma^\infty_\sub{fr}\mathrm{Vect}_X \rightarrow \KGL_X$; since the domain is effective it factors through a multiplicative map
\begin{equation}\label{eq:fr-to-kgl}
\cal V_X  \To \kgl_X.
\end{equation}
We also have a map 
\begin{equation*}
\bb T_X\To\cal V_X,
\end{equation*} such that the composite to $\KGL_X$ classifies the Bott element; we refer to \cite[\S 5]{HoyoisJelisiejewNardinTotaroYakerson2021} for details. The assignment $X \mapsto \cal V_X$ defines an absolute motivic spectrum, the key point being that the formation of $\mathrm{Vect}_X$ is stable under pullbacks in $X$ (see e.g. \cite[Example A.0.6(1)]{ElmantoHoyoisKhanSosniloYakerson2020}).

We are interested in $\cal V_X$ in that it serves as a bridge between $\kgl_X$ and the motivic sphere. Its relation to $\kgl_X$ is through the map \eqref{eq:fr-to-kgl} (which will eventually be shown in Corollary~\ref{cor:eff-base-change} to be an equivalence). Its relation to the motivic sphere is through the next result, which reduces to the known case of fields:

\begin{proposition}\label{prop_1_vs_V}
For any qcqs scheme $X$, the unit map $1_{X} \rightarrow \scr V_{X}$ induces an equivalence $s^0(1_{X}) \xrightarrow{\simeq} s^0(\scr V_{X})$ in $\SH(X)$.
\end{proposition}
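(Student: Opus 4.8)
The statement is explicitly flagged as ``reduc[ing] to the known case of fields'', so the plan is to combine the conservativity/detection results recalled earlier with the analogous field case.

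\medskip

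\textbf{Plan of proof.} The map $1_X \to \scr V_X$ is the unit, and by naturality both sides and the map are pulled back from $\Spec(\bb Z)$ (since $\cal V$ is an absolute motivic spectrum, and $s^0$ does not commute with arbitrary pullback, so one cannot quite reduce to $X=\Spec(\bb Z)$ directly). Instead, first I would reduce to the case where $X$ is Noetherian of finite Krull dimension: the functor $\SH(-)$ is finitary, and writing an arbitrary qcqs scheme as a cofiltered limit of finite-type $\bb Z$-schemes along affine transition maps, both $1_{(-)}$, $\scr V_{(-)}$ and the formation of the slice $s^0$ commute with such limits (the slice functors are generated by compact objects $M_X(Y)\otimes \bb T^{\otimes j}$, which are stable under these base changes). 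So assume $X$ is Noetherian of finite Krull dimension.

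\medskip

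Next, let $C := \opp{cofib}(s^0(1_X) \to s^0(\scr V_X)) \in \SH(X)$; I want to show $C = 0$. By Proposition~\ref{prop:conservative}, it suffices to check that $i_x^* C = 0$ for every point $x \in X$, i.e.\ after pullback to the residue field $k(x)$. Now the key point: for a field $k$, both $s^0(1_k) \to s^0(\scr V_k)$ and the pullback maps $i_x^* s^0(1_X) \to s^0(i_x^* 1_X) = s^0(1_{k(x)})$ and $i_x^* s^0(\scr V_X) \to s^0(\scr V_{k(x)})$ need to be understood. The field case $s^0(1_k) \xrightarrow{\simeq} s^0(\scr V_k)$ is the ``known case'' alluded to: this is essentially in \cite{HoyoisJelisiejewNardinTotaroYakerson2021} (where $\scr V$ is introduced and its relation to $\KGL$ and the sphere is worked out over a field, using that over a perfect field $s^0(1)$ is the motivic cohomology spectrum by Levine/Voevodsky and that $\scr V$ models the effective $K$-theory spectrum), together with reduction from a general field to a perfect one by continuity. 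So the real content to supply is the compatibility of $s^0$ with the pullback $i_x^*$ along a (not smooth, not even finite type) point inclusion $\Spec k(x) \to X$.

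\medskip

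\textbf{The main obstacle} is precisely this base-change statement for $s^0$ along a point of $X$. The map \eqref{eq:natural_slice} $f^* \Fil^\star_\sub{slice} E \to \Fil^\star_\sub{slice} f^* E$ is an equivalence when $f$ is smooth, or a cofiltered limit of smooth affine maps (Remark~\ref{rem:naturality}), but a point inclusion is not of this form. The way around it: factor $\Spec k(x) \to X$ through the local ring $\Spec \roi_{X,x}$ (a cofiltered limit of smooth-over-nothing\dots\ actually of open immersions, hence a cofiltered limit of \'etale, in particular smooth, affine maps over $X$), and then $\Spec k(x) \to \Spec \roi_{X,x}$; the second map is a closed immersion but after passing to Nisnevich-locally-constant data one uses that $s^0$ of $1$ and of $\scr V$ are ``stable under base change'' in the strong sense established later in the paper (Theorem~\ref{thm:a1-a1cdh}, Corollary~\ref{cor:conjectures})---but of course one cannot cite those here. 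So the honest route, staying within what is available, is: observe $s^0(1_X)$ and $s^0(\scr V_X)$ are computed by $\bb A^1$-invariant cdh sheaves via \eqref{eqn:Ejcdh} and Proposition~\ref{prop:auto-cdh}; both are finitary; so evaluating at $k(x)$ gives the filtered colimit over the local ring, reducing to $\roi_{X,x}$ henselian-local, and then to its residue field by rigidity of the relevant invariants (the sphere's zeroth slice and $\scr V$'s zeroth slice both being rigid, being essentially built from homotopy $K$-theory data which is insensitive to henselian ideals). I would therefore: (1) reduce to $X$ Noetherian finite-dimensional; (2) form the cofiber $C$ and apply Proposition~\ref{prop:conservative}; (3) for each point, use finitariness plus rigidity of $s^0(1_{-})$ and $s^0(\scr V_{-})$ to reduce to the residue field; (4) invoke the field case from \cite{HoyoisJelisiejewNardinTotaroYakerson2021}. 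The subtle step where I expect to spend the most care is verifying (3): establishing that the presheaves $X \mapsto (s^0 1_X)(j)$ and $X \mapsto (s^0 \scr V_X)(j)$ are finitary rigid cdh sheaves whose values at a henselian valuation ring and at its residue field agree---this is where the input that $\scr V$ is stable under pullback and that $s^0(1)$ over a perfect field is classical motivic cohomology (hence rigid) really gets used.
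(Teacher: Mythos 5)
Your plan founders at exactly the step you flag as delicate, and the workaround you propose does not hold. Forming the cofibre $C$ of $s^0(1_X)\to s^0(\scr V_X)$ and testing it at points via Proposition~\ref{prop:conservative} requires comparing $i_x^*s^0(1_X)$ with $s^0(1_{k(x)})$ (and likewise for $\scr V$), i.e.\ commuting $s^0$ with pullback along a point --- which is not available here; indeed it is essentially one of the paper's main theorems (Corollary~\ref{cor:conjectures}), proved much later and partly \emph{using} this proposition. Your proposed fix via ``rigidity'' is false: $s^0(1)$ and $s^0(\scr V)$ represent ($\bb A^1$-invariant) motivic cohomology, which is not insensitive to henselian ideals --- already in weight $1$ it is $\bb G_m$-cohomology, and $R^\times\to k^\times$ is not an isomorphism for a henselian local ring $R$; nor is $\KH$ itself rigid. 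So step (3) of your outline cannot be carried out, and without it the conservativity argument does not close. (Your preliminary reduction to Noetherian finite-dimensional $X$ is also not covered by Remark~\ref{rem:naturality}, whose base-change statement for slices only applies to cofiltered limits of \emph{smooth} affine morphisms, though this is a lesser issue.)

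The paper's proof avoids the problem by never applying a detection principle to the slices: instead it considers the fibre $F$ of the unit map $1_X\to\scr V_X$ itself and shows $F$ is $1$-effective, which suffices since $s^0$ is exact and kills $1$-effective objects. The point is that $F$, unlike $s^0(F)$'s would-be avatars, \emph{is} stable under arbitrary base change (both $1$ and $\scr V$ are absolute motivic spectra), so $1$-effectivity may be checked after pullback to perfect fields by Proposition~\ref{prop:detect-effective} (applied over $\Spec(\bb Z)$). Over a perfect field $k$ one concludes from $\scr V_k\simeq\kgl_k$ \cite[Corollary 5.2]{HoyoisJelisiejewNardinTotaroYakerson2021} and Levine's theorem $s^0(1_k)\simeq s^0(\kgl_k)$ that $s^0(F_k)=0$, hence $F_k$ is $1$-effective. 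If you want to salvage your write-up, replace steps (2)--(4) by this maneuver: detect $1$-effectivity of the pullback-stable object $\fib(1\to\scr V)$ on fields, rather than trying to move $s^0$ past $i_x^*$.
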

\begin{proof}
It suffices to show that the fibre of $1_{X} \rightarrow \scr V_{X}$ is $1$-effective, which may be checked on perfect fields $k$ (use that $\scr V_X$ is stable under base change and apply Proposition \ref{prop:detect-effective} to $\scr V_\Z$). But $1$ and $\scr V$ are stable under base change, so the problem reduces to showing that the fibre of $1_k \rightarrow \scr V_k$ is $1$-effective. This follows from the facts that the map of $\bb E_\infty$-algebras $\scr V_k \to \kgl_k$ is an equivalence by \cite[Corollary 5.2]{HoyoisJelisiejewNardinTotaroYakerson2021}, and the map $s^0(1_k)\to s^0(\kgl_k)$ induced by the unit is an equivalence by Levine \cite[Theorems 6.4.2 and 10.5.1]{Levine2008}.
\end{proof}

\begin{remark}[Motivic cohomology of smooth varieties]\label{rem_Levine}
We have just used the result of Levine and Voevodsky that, for any perfect field $k$, the map $s^0(1_k)\to s^0(\kgl_k)$ induced by the unit of $\kgl_k$ is an equivalence in $\SH(k)$. We refer the reader to the proof of Theorem \ref{thm:classical_BL_equiv} for some comments on the proof, where Bloch's cycle complex appears. We note that, by passing to a filtered colimit using \cite[Proposition~C.12]{Hoyois2014}, the hypothesis that the field be perfect can be removed.

We will use the equivalence $s^0(1_k)\quis s^0(\kgl_k)$, which is a deep result from classical motivic homotopy theory, multiple times and eventually show in Corollary~\ref{cor:conjectures} that in fact it holds for arbitrary qcqs schemes in place of $\Spec(k)$.
\end{remark}

\subsection{The cohomology theories $\bb Z(\star)^{\bb A}$ and $\bb Z(\star)^{\bb A, \sub{cdh}}$} 
In this subsection we explain how to use the slice filtration on $\KGL$ to construct ``two'' natural candidates for the $\bb A^1$-invariant motivic cohomology of qcqs schemes: \[\bb Z(j)^{\bb A,\sub{cdh}}\text{  and  } \bb Z(j)^\bb A : \Sch^\sub{qcqs}\To\D(\bb Z)\qquad j\in\bb Z.\] These cohomology theories are related by maps $\bb Z(j)^{\bb A,\sub{cdh}}\to\bb Z(j)^\bb A$, arising from the graded pieces of two compatible filtrations on $\KH$. The notation is chosen the reflect the fact that general results about motivic stable homotopy theory will imply that $\bb Z(j)^{\bb A,\sub{cdh}}$ and $\bb Z(j)^\bb A$ are both $\bb A^1$-invariant,\footnote{It is only for typographical reasons that the superscript is $\bb A$ rather than $\bb A^1$.} and that $\bb Z(j)^{\bb A,\sub{cdh}}$ is moreover a cdh sheaf.

The apparent difference between $\bb Z(j)^{\bb A,\sub{cdh}}$ and $\bb Z(j)^\bb A$ is illusory: once we prove in Corollary~\ref{cor:conjectures} that the slice filtration on $\KGL$ is stable under pullback, it will follow that the map $\bb Z(j)^{\bb A,\sub{cdh}}(X)\to\bb Z(j)^\bb A(X)$ is actually an equivalence for any qcqs scheme $X$ and $j\in\bb Z$. However, until that theorem has been proved, our arguments will require both candidates. 

\begin{remark}\label{rem_roles} Although $\bb Z(j)^{\bb A,\sub{cdh}}$ and $\bb Z(j)^\bb A$ will ultimately be shown to coincide, they have different flavours and play different roles in this paper.

One of our goals is to realize Voevodsky's program in \cite{Voevodsky2002, Voevodsky2002a} for constructing an $\bb A^1$-invariant theory of motivic cohomology of arbitrary schemes, arising as the graded pieces of a motivic filtration on $\KH$. The definition proposed by Voevodsky is $\bb Z(j)^{\bb A}$, namely for each qcqs scheme $X$ we use the graded pieces of the slice filtration of $\KGL_X$ to induce a filtration on $\KH(X)$ (see Definition \ref{definition_A^1_mot-_coh}). Owing to the elegance of this definition and its universality, we regard $\bb Z(j)^{\bb A}$ as the principal motivic cohomology theory of this paper. It automatically enjoys the property of vanishing in negative weights, i.e., $\bb Z(j)^{\bb A}$ for $j<0$, which is expected of motivic cohomology; see Remark~\ref{rem:eff-neg} for more details on this point. In Section~\ref{section_motivic_DD} we will also see, in the case of smooth schemes over fields and over mixed characteristic Dedekind domains, that $\bb A^1$-motivic cohomology satisfies a Beilinson--Lichtenbaum style description in terms of \'etale cohomology (see Theorem~\ref{theorem_BL1}).

On the other hand, it is a priori not clear that the theory $\bb Z(j)^{\bb A}$ for $j\ge0$ arises from an absolute motivic spectrum, as one would expect since this is true for $\KGL$. So we introduce the variant theory $\bb Z(j)^{\bb A, \sub{cdh}}$, which does have this property and consequently satisfies cdh descent (as any absolute motivic spectrum does); this refines cdh descent for $\KH$, first established by Cisinski in \cite{Cisinski2013}. Many techniques in this paper, which exploit cdh descent and calculations involving valuation rings, apply more directly to $\bb Z(j)^{\bb A, \sub{cdh}}$ than to $\bb Z(j)^{\bb A}$.
\end{remark}

\begin{remark}[Comparison to Spitzweck's motivic spectrum and the Bloch--Levine complex]\label{rem:Spitzweck}
There is another notion of motivic cohomology of general schemes, due to Spitzweck \cite{Spitzweck2018}; outside of this remark, which is not used in the rest of the article, we do not need Spitzweck's cohomology.

It is proved in \cite{Bachmann2022} that, over $\Z$, Spitzweck's spectrum $H\bb Z^\sub{Spi}\in\SH(\bb Z)$ coincides with $s^0\KGL_\Z$. Since Spitzweck's spectrum for a general scheme is by definition pulled back from $\bb Z$, it follows from Definition \ref{definition_A^1_cdh_mot_coh} that our $\bb A^1$-cdh-motivic cohomology is the same as Spitzweck's motivic cohomology. In particular, combining this identification with \cite[Theorem 7.18]{Spitzweck2018}, we obtain the following: for any smooth scheme $X$ over a mixed characteristic Dedekind domain, and $j\ge0$, there exists an equivalence $\bb Z(j)^{\bb A,\sub{cdh}}(X)\simeq z^j(X,\bullet)[-2j]$ where $z^j(X,\bullet)$ is the Bloch--Levine cycle complex \cite{Bloch1986b, Levine2001}. 
\end{remark}

We now turn to definitions, beginning with $\bb Z(j)^\bb A(X)$, which is defined in terms of the slice filtration on $\KGL_X\in\SH(X)$:

\begin{definition}\label{definition_A^1_mot-_coh}
For a qcqs scheme $X$, its {\em $\bb A^1$-motivic cohomology} is defined by \begin{equation}\bb Z(j)^{\bb A}(X):=\map_{\SH(X)}(1_X,s^j\KGL_X)[-2j]\in\opp{Sp}\label{eqn_ZjA}\end{equation} for $j\in\bb Z$. Write $H^i_{\bb A}(X,\Z(j)):=H^i(\Z(j)^\bb A(X))$, for $i,j\in\bb Z$, for the corresponding {\em $\bb A^1$-motivic cohomology groups}. 
\end{definition}

Alternatively, we may form the slices of $\KGL_\bb Z\in\SH(\bb Z)$, pull back to an arbitrary qcqs scheme, and then evaluate on the scheme by taking the mapping spectrum; this is how we define $\bb A^1$-cdh-motivic cohomology:

\begin{definition}\label{definition_A^1_cdh_mot_coh}
For a qcqs scheme $X$, with structure map to $\Spec(\bb Z)$ denoted by $f:X\to\Spec(\bb Z)$, its {\em $\bb A^1$-cdh-motivic cohomology} is defined by \[\bb Z(j)^{\bb A,\sub{cdh}}(X):=\map_{\SH(X)}(1_X,f^*s^j\KGL_\bb Z)[-2j]\in\opp{Sp}\] for $j\in\bb Z$. Write $H^i_{\bb A,\sub{cdh}}(X,\Z(j)):=H^i(\Z(j)^\bb A(X))$, for $i,j\in\bb Z$, for the corresponding {\em $\bb A^1$-cdh-motivic cohomology groups}. 
\end{definition}

\begin{remark}[Functoriality and comparison maps]\label{rem:functoriality}
It is clear that $\bb Z(j)^{\bb A,\sub{cdh}}$ defines a Sp-valued presheaf on qcqs schemes. The analogous statement about $\bb Z(j)^{\bb A}$ uses the transformations \eqref{eq:natural_graded}.
Furthermore, we have comparison maps 
\begin{equation}\label{eq:cdh-to-aone}
\bb Z(j)^{\bb A, \sub{cdh}} \To \bb Z(j)^{\bb A},
\end{equation}
induced by the transformation~\eqref{eq:natural_slice}.
\end{remark}

\begin{remark}[Negative weights]\label{rem:eff-neg}
If $j<0$ then $\bb Z(j)^\bb A=0$; indeed, this follows immediately from Definition \ref{definition_A^1_mot-_coh} since then $s^j\KGL_X$ is right orthogonal to $\SH^\sub{eff}(X)(j+1)\ni 1_{X}$.

However, at this point in the article we cannot yet prove that the $\bb A^1$-cdh-motivic cohomology also vanishes when $j<0$. In the notation of Definition \ref{definition_A^1_cdh_mot_coh}, this requires knowing that $f^*s^0\KGL\bb Z\in\SH(S)$ is {\em co-effective}, i.e., its $\Fil^1_\sub{slice}$ vanishes. But establishing this co-effectivity will be a key point in proving that $f^*s^0\kgl_\bb Z\quis s^0\kgl_S$, which we will not do until Corollary~\ref{prop:Fil1-vanishing}.
\end{remark}

\begin{remark}[Restricting to smooth schemes over a base]\label{rem_coh_over_base}
We have deliberately defined the $\bb A^1$- and $\bb A^1$-cdh-motivic cohomologies absolutely: they depend only on the input scheme, not any any chosen base. Nevertheless, note that if $S$ is a fixed qcqs base scheme then on the category of smooth $S$-schemes the motivic cohomologies, for $j\in\bb Z$, are represented in $\SH(S)$ via the formulae
\[\bb Z(j)^{\bb A}|_{\Sm_S}=\omega^\infty s^j\KGL_S[-2j]\qquad\mathrm{and}\qquad\bb Z(j)^{\bb A,\sub{cdh}}|_{\Sm_S}=\omega^\infty f^*s^j\KGL_\bb Z[-2j]\] where $f:S\to\Spec(\bb Z)$ denotes the structure map of $S$. These identifications follow from the fact that pulling back along a smooth morphism commutes with taking slices, as explained in Remark \ref{rem:naturality}.
\end{remark}

\begin{remark}[Multiplicative structure] \label{rmk:HZA-mult}
We have seen in Remark \ref{rem:mult} that, for any qcqs scheme $f:X \to \Spec(\bb Z)$, the slices of $\KGL_X$ naturally assemble to $s^\star \KGL_X \in \CAlg(\Gr\SH(X))$. By lax monoidality of $\omega^\infty$ and the pullback functor on $\SH$, we deduce that the presheaves \[ \bb Z^{\A} (\star)[2\star] \quad\text{and}\quad \bb Z^{\A,\cdh} (\star)[2\star]: \Sch^\op \To \Spt \] upgrade to $\bb E_\infty$-algebras in graded presheaves of spectra. Furthermore, we note that $s^\star \KGL_X$ (resp. $f^*s^\star \KGL_{\bb Z}$) is a periodization of $s^0\KGL_X$ (resp. $f^*s^0\KGL_{\bb Z}$). We will elaborate more on this structure in Remark~\ref{rem:motivic_is_traditional}.
\end{remark}

\begin{remark}[Orientation]\label{remark:orientation_motivic}
The standard orientation of $\KGL$, constructed in \S\ref{sss:orientation}, induces via Remark~\ref{rmk:orientation-morphisms} compatible orientations of $s^0(\KGL_X)$ and $f^*s^0(\KGL_\bb Z)$ for any qcqs scheme $f:X\to\Spec(\bb Z)$. By the equivalence of the slice filtration and Bott filtration as in \S\ref{subsub:slices-kgl}, these orientations may be rewritten as a natural map \[\tilde{M}_X(\Omega^{\infty}\Pic)\xto{1-(\cdot)^\vee} f^*s^1(\KGL_\bb Z)\To  s^1(\KGL_X).\] Via the $\sigma^\infty$-$\omega^\infty$ adjunction, this corresponds to a map
\begin{equation} \Sigma^\infty\Omega^\infty\Pic\To \Z^{\A,\cdh}(1)[2] \To \Z^{\A}(1)[2]\label{eqn:orientation_motivic}\end{equation}
of presheaves of spectra on qcqs schemes; for any line bundle $L$ on any qcqs scheme $X$, precomposing along $[L]:\Sigma^\infty_+X\to\Sigma^\infty\Omega^\infty\Pic$ classifies the {\em first Chern classes} $c_1^{\bb A,\sub{cdh}}(L)\in H^2_{\bb A,\sub{cdh}}(X,\bb Z(1))$ and $c_1^{\bb A}(L)\in H^2_{\bb A}(X,\bb Z(1))$. We will see in Lemma~\ref{lemma:HZA-orient-KGL-compat} below that the first map in \eqref{eqn:orientation_motivic} factors through the canonical map $\Sigma^\infty\Omega^\infty\Pic\to \Pic$.
\end{remark}

We immediately get the projective bundle formula from the theory of orientations:

\begin{theorem}[Projective bundle formula]\label{thm_pbf_Acdh}
For any qcqs scheme $X$, rank $d$ projective bundle $\pi:P\to X$, and $j\ge0$, the map \begin{equation}
\sum_{i=0}^d c_1^{\bb A,\sub{cdh}}(\roi_P(1))^{i}\pi^*:\bigoplus_{i=0}^d\bb Z(j-i)^{\bb A, \cdh}(X)[-2i]\To \bb Z(j)^{\bb A, \cdh}(P)\end{equation} induced by powers of the first motivic Chern class $c_1^{\bb A,\sub{cdh}}(\roi_P(1))\in H^2_{\bb A,\sub{cdh}}(P,\bb Z(1))$ of the canonical line bundle $\roi_{P}(1)$ is an equivalence. Similarly, the map
\begin{equation}
\sum_{i=0}^d c_1^{\bb A}(\roi_P(1))^{i}\pi^*:\bigoplus_{i=0}^d\bb Z(j-i)^{\bb A}(X)[-2i]\To \bb Z(j)^{\bb A}(P)
\end{equation} induced by powers of $c_1^{\bb A}(\roi_{P}(1))$ is an equivalence. 
\end{theorem}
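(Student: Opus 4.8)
The plan is to deduce both statements directly from the general projective bundle formula of motivic homotopy theory, namely Theorem~\ref{thm_pbf_for_general_E} (Morel), applied to the two homotopy ring spectra $s^0\KGL_X$ and $f^*s^0\KGL_{\bb Z}$ together with the orientations described in Remark~\ref{remark:orientation_motivic}. Recall that by the equivalence of the slice filtration with the Bott filtration in \S\ref{subsub:slices-kgl}, one has $s^j\KGL_X\simeq \bb T_X^{\otimes j}\otimes s^0\KGL_X$ for all $j\ge0$, and likewise $f^*s^j\KGL_{\bb Z}\simeq \bb T_X^{\otimes j}\otimes f^*s^0\KGL_{\bb Z}$; under these identifications the twists $E(j-i)$ appearing in Theorem~\ref{thm_pbf_for_general_E} match, up to the shift $[-2(j-i)]$, the groups $\bb Z(j-i)^{\bb A}(X)$ and $\bb Z(j-i)^{\bb A,\cdh}(X)$ of Definitions~\ref{definition_A^1_mot-_coh} and~\ref{definition_A^1_cdh_mot_coh}.

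Concretely, I would first record that $s^0\KGL_X\in\CAlg(\h\SH(X))$ (indeed it lies in $\CAlg(\SH(X))$ by Remark~\ref{rem:mult}, since the slice filtration is lax symmetric monoidal), and that Remark~\ref{remark:orientation_motivic}, which invokes Remark~\ref{rmk:orientation-morphisms}(3), furnishes it with an orientation $c_{s^0\KGL_X}$ obtained by pushing forward the standard orientation $c_{\KGL,X}$ of \S\ref{sss:orientation} along $\KGL_X\to s^0\KGL_X$; the associated first Chern class of $\roi_P(1)$ is exactly $c_1^{\bb A}(\roi_P(1))\in H^2_{\bb A}(P,\bb Z(1))$ by construction. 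Applying Theorem~\ref{thm_pbf_for_general_E} to $E=s^0\KGL_X$, $j\ge0$, and the projective bundle $\pi:P\to X$ then yields that $\sum_{i=0}^d c_{1}(\roi_P(1))^i\pi^*:\bigoplus_{i=0}^d E(j-i)(X)[-2i]\xrightarrow{\simeq} E(j)(P)$ is an equivalence, and unwinding the shift conventions $\bb Z(j)^{\bb A}(X)=\map_{\SH(X)}(1_X,s^j\KGL_X)[-2j]=E(j)(X)[-2j]$ gives precisely the claimed equivalence $\bigoplus_{i=0}^d \bb Z(j-i)^{\bb A}(X)[-2i]\xrightarrow{\simeq}\bb Z(j)^{\bb A}(P)$. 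The argument for $\bb Z(\star)^{\bb A,\cdh}$ is identical, now taking $E=f^*s^0\KGL_{\bb Z}\in\CAlg(\h\SH(X))$ with the orientation obtained by pulling back $c_{s^0\KGL_{\bb Z}}$ along $f:X\to\Spec(\bb Z)$ (Remark~\ref{rmk:orientation-morphisms}(2)), whose first Chern class of $\roi_P(1)$ is $c_1^{\bb A,\cdh}(\roi_P(1))$; here one should note in addition that for the projective bundle $\pi:P\to X$ we have $f^*s^j\KGL_{\bb Z}$ evaluated on $P$, which is what the right-hand side of Definition~\ref{definition_A^1_cdh_mot_coh} computes (and $E(j)(P)$ for $P$ viewed as an $X$-scheme agrees with the absolute $\bb Z(j)^{\bb A,\cdh}(P)$ since the relevant mapping spectrum is computed in $\SH(P)$ after further pullback, using that $\cal V$- and $\KGL$-type slices are defined via $f^*$ from $\bb Z$ throughout).

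The only genuinely non-routine point — and the step I would flag as the main obstacle — is matching the \emph{twists and shifts} correctly, i.e.\ checking that the Chern-class/twist bookkeeping in Theorem~\ref{thm_pbf_for_general_E} (stated in the normalisation $E(j-i)(X)[-2i]\to E(j)(P)$ with $E(n)(-)=\omega^\infty(E\otimes\bb T^{\otimes n}[-2n])(-)$) lines up with the $[-2j]$-normalisation built into Definitions~\ref{definition_A^1_mot-_coh} and~\ref{definition_A^1_cdh_mot_coh} and with the Bott-filtration identification $s^j\KGL\simeq \bb T^{\otimes j}\otimes s^0\KGL$; one must also confirm that the first Chern class $c_1^{\bb A}(\roi_P(1))$ as \emph{defined} in Remark~\ref{remark:orientation_motivic} (via the map $\Sigma^\infty\Omega^\infty\Pic\to \bb Z^{\bb A}(1)[2]$ and precomposition with $[\roi_P(1)]$) agrees with $c_{1,E}(\roi_P(1))=c_E\circ[\roi_P(1)]$ in the sense of Theorem~\ref{thm_pbf_for_general_E}, which is essentially immediate from the definitions of the two orientations but does require tracing through Remarks~\ref{rmk:orientation-covers} and~\ref{rmk:orientation-morphisms}. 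Everything else is a formal consequence of Morel's theorem; no new input is needed, which is why this theorem can be stated so early in the paper.
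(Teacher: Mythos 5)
Your proposal is correct and is essentially the paper's own argument: the paper likewise proves the theorem by applying Morel's projective bundle formula (Theorem~\ref{thm_pbf_for_general_E}) to $s^0(\KGL_X)$ and $f^*s^0(\KGL_{\bb Z})$, equipped with the orientations of Remark~\ref{remark:orientation_motivic}. The extra bookkeeping you carry out on twists, shifts, and the identification of $c_{1,E}(\roi_P(1))$ with $c_1^{\bb A}(\roi_P(1))$ is exactly the routine unwinding the paper leaves implicit.
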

\begin{proof}
This follows by applying Theorem \ref{thm_pbf_for_general_E} to $s^0(\KGL_X)$ and $f^*s^0(\KGL_\bb Z)$.
\end{proof}

\begin{remark}[Motivic cohomology as a traditional cohomology theory]\label{rem:motivic_is_traditional}
For any qcqs scheme, motivic cohomology on smooth $X$-schemes naturally assembles to form an object \[\bb Z(\star)^{\bb A}[2\star]\in\CAlg(\Gr\opp{Sh}_{\Nis, \bb A^1}(\Sm_X,\Spt)_\sub{c,pbf}).\] Indeed, we have already noted in Remark \ref{rmk:HZA-mult} that $\bb Z(\star)^{\bb A}[2\star]$ is an $\bb E_\infty$-algebra in graded presheaves (even of $\bb A^1$-invariant Nisnevich sheaves; see Theorem \ref{theorem:SH_mot_coh}), and the required map $c:\Sigma^\infty\bb P^1_X\to \bb Z(1)^{\bb A}[2]$ is given by precomposing \eqref{eqn:orientation_motivic} along the map $\Sigma^\infty\bb P^1_X\to \Sigma^\infty\Pic$ classifying $\roi(1)\in\opp{Pic}(\bb P^1_X)$. We record the following key identification about the interaction of $\bb A^1$-invariant motivic cohomology with the motivic Eilenberg--MacLane functor:
\begin{equation}H(\bb Z(\star)^{\bb A}[2\star])=s^0(\KGL_X).\label{eqn_mot_to_EM}\end{equation}
Indeed, we know from \S\ref{subsub:slices-kgl} that $s^0(\KGL_X)$ admits a periodization given by $s^\star(\KGL_X)$, so this follows from the discussions in Remark~\ref{rem:qx} and \ref{rem:recover}.
%

In fact, $\bb Z(\star)^{\bb A}[2\star]$ even forms a ``traditional cohomology theory'' in the sense of Remark \ref{remark:traditional}: that is, $\bb Z(\star)^{\bb A}$ is an $\bb E_\infty$-algebra in $\CAlg(\Gr\opp{Sh}_{\Nis, \bb A^1}(\Sm_X,\D(\bb Z))$ and the first Chern class of $\roi(1)$ arises from a first Chern class map $c_1^\bb A:R\Gamma_\sub{Nis}(-,\bb G_m)[1]\to \bb Z(1)^{\bb A}[2]$. See Theorem \ref{theorem:SH_mot_coh}(3) and Remark \ref{rem_A_1st_Chern_class}.

Similar comments apply to $\bb Z(\star)^{\bb A,\sub{cdh}}$.
\end{remark}

\subsubsection{Low weights and the first Chern class map}\label{subsec:low-weights}
To begin analysing motivic cohomology in weights $0$ and $1$, with goal being Construction~\ref{cons:low-weights}, we use the unstable analogue of the slice filtration. For any qcqs scheme $X$ and any $j \geq 0$, let
\[
\opp{Sh}_{\Nis, \bb A^1}(\Sm_X,\Spt)(j) \subset \opp{Sh}_{\Nis, \bb A^1}(\Sm_X,\Spt)
\]
be the subcategory generated under colimits by $(\Sigma^\infty \P^1)^{\otimes j} \otimes E$, for $E \in \opp{Sh}_{\Nis, \bb A^1}(\Sm_X,\Spt)$; the presheaves belonging to this subcategory will be called {\em unstably $j$-effective}. By the same reasoning as in the case of $\SH$ these inclusions admit right adjoints, and we display the adjunction using a similar notation:
\[
\iota^j: \opp{Sh}_{\Nis, \bb A^1}(\Sm_X,\Spt)(j) \rightleftarrows \opp{Sh}_{\Nis, \bb A^1}(\Sm_X,\Spt): r^j.
\]
In the same way as for the slice filtration, this induces a functorial, exhaustive multiplicative filtration for $E \in  \opp{Sh}_{\Nis, \bb A^1}(\Sm_X,\Spt)$ which is now $\bb N$-indexed. We denote this tower by:
\[
E \simeq \Fil^{0}_\sub{u-slice}E \leftarrow \cdots\leftarrow \Fil^{j-1}_\sub{u-slice}E\leftarrow \Fil^j_\sub{u-slice}E\leftarrow\Fil^{j+1}_\sub{u-slice}E\leftarrow \cdots 
\]
and write $s_u^jE$ for the graded pieces, which we call the {\em unstable slices} of $E$.

We now compare the unstable slice filtration to the slice filtration on $\SH(X)$. Observing that 
\[
\sigma^\infty(\opp{Sh}_{\Nis, \bb A^1}(\Sm_X,\Spt)(j)) \subset \SH(X)^{\eff}(j),
\] we obtain Beck--Chevalley style transformations $\iota^j \omega^{\infty} \rightarrow \omega^{\infty} \iota^j$.
They induce, for each $E \in \SH(X)$ and $j \geq 0$, natural maps
\[
\Fil^j_\sub{u-slice}\omega^{\infty}E \rightarrow \omega^{\infty}\Fil^j_\sub{slice}E \qquad\textrm{ and }\qquad s_u^j\omega^{\infty}E \rightarrow \omega^{\infty}s^jE.
\] 
Since all these functors and transformations are compatibly lax monoidal the transformation above upgrades to a lax monoidal transformation of $\bb N$-indexed filtered and graded objects respectively:
\begin{equation}\label{eq:slice-slice}
\Fil^{\star}_\sub{u-slice}\omega^{\infty} \rightarrow \omega^{\infty}\Fil^{\star}_\sub{slice} \qquad\textrm{ and }\qquad s_u^{\star}\omega^{\infty} \rightarrow \omega^{\infty}s^{\star}.
\end{equation}
In the case of $K$-theory of regular Noetherian schemes, we will compute the unstable slice filtration in low weights in Lemma \ref{lemma_unstable_slices_of_K}. First we need to record the following well-known descriptions of cohomology of $\bb Z$ and $\bb G_m$ on regular Noetherian schemes; we include a proof as we could not find a reference for part (2):

\begin{lemma}\label{lem:zarvsnis_on_regular}
Let $X$ be a regular Noetherian scheme.
\begin{enumerate}
\item There are natural isomorphisms
\[
H^n_{\Zar}(X, \bb Z) = \begin{cases}
\bb Z^{\pi_0(X)} & n= 0\\
0 & n > 0. 
\end{cases}
\]
 and
\[
H^n_{\Zar}(X, \bb G_m) = \begin{cases}
\roi(X)^{\times} & n = 0\\
\rm Pic(X) & n = 1\\
0 &  n>1,
\end{cases}
\]

\item The change-of-topology maps
\[R\Gamma_\sub{Zar}(X, \bb Z) \To R\Gamma_\sub{Nis}(X, \bb Z)\qquad\mathrm{and}\qquad R\Gamma_\sub{Zar}(X,\bb G_m)\To R\Gamma_\sub{Nis}(X,\bb G_m)\]
are equivalences.
\item The canonical maps $R\Gamma_\sub{Nis}(\bb A_X^1, \bb Z)\to R\Gamma_\sub{Nis}(X, \bb Z)$ and $R\Gamma_\sub{Nis}(\bb A_X^1, \bb G_m)\to R\Gamma_\sub{Nis}(X, \bb G_m)$ are equivalences.
\end{enumerate}
\end{lemma}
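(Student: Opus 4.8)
\textbf{Proof plan for Lemma \ref{lem:zarvsnis_on_regular}.}

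The plan is to treat the three parts in order, since each builds on the previous. For part (1), the computation of $H^*_\sub{Zar}(X,\bb Z)$ is standard: on a Noetherian scheme the constant sheaf $\bb Z$ has no higher Zariski cohomology because $X$ is locally irreducible once it is regular (regularity forces the local rings to be domains, hence the connected components of $X$ are irreducible, and a constant sheaf on an irreducible space is flasque), so $R\Gamma_\sub{Zar}(X,\bb Z)$ is concentrated in degree $0$ with value $\bb Z^{\pi_0(X)}$. For $\bb G_m$, degree $0$ is $\roi(X)^\times$ by definition, $H^1_\sub{Zar}(X,\bb G_m)=\opp{Pic}(X)$ is the classical identification of the Picard group with $H^1$ of the units sheaf, and the vanishing $H^n_\sub{Zar}(X,\bb G_m)=0$ for $n>1$ is a theorem of Grothendieck for regular (in fact, sufficiently nice) schemes — I would cite \cite{Grothendieck1968} (Brauer I--III) or the treatment in the Stacks Project; the key input is that on a regular Noetherian scheme, $\bb G_m$ has a flasque resolution of length $1$ given by $j_* \bb G_{m,\eta} \to \bigoplus_{x \in X^{(1)}} i_{x*} \bb Z$ via divisors (when $X$ is, say, a disjoint union of integral regular schemes; in general one argues component by component).

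For part (2), I would invoke the standard comparison between Zariski and Nisnevich cohomology for sheaves whose higher Nisnevich cohomology is controlled by the stalks. The cleanest route: both $\bb Z$ and $\bb G_m$ are sheaves for which the Nisnevich cohomology can be computed via the coniveau/Gersten-type resolutions that are valid Nisnevich-locally on a regular scheme, and these resolutions consist of sheaves whose Zariski and Nisnevich cohomology agree (pushforwards from points, which are flasque for both topologies). Alternatively, and more simply, one can argue directly: the henselian local rings of $X$ in the Nisnevich topology are henselian regular local rings $\roi_{X,x}^h$, and for such rings $H^n(\roi_{X,x}^h, \bb Z) = \bb Z$ for $n=0$ and $0$ otherwise (the spectrum is connected and irreducible), while $H^n(\roi_{X,x}^h, \bb G_m) = (\roi_{X,x}^h)^\times$ for $n=0$, $\opp{Pic}(\roi_{X,x}^h) = 0$ for $n = 1$ (local ring), and $0$ for $n>1$ (using part (1) applied to the regular local ring $\roi_{X,x}^h$, or directly that Hilbert 90 plus the divisor resolution gives vanishing). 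This shows the Nisnevich sheafifications $R^n\epsilon_* \bb Z$ and $R^n\epsilon_* \bb G_m$ (where $\epsilon$ is the change-of-site map from Nisnevich to Zariski) vanish for $n>0$ — in the $\bb G_m$ case one needs $H^1$ of a henselian regular local ring in the Nisnevich topology to vanish, i.e.\ that Nisnevich-locally every line bundle is trivial, which is exactly the statement that line bundles on (henselian) local rings are trivial — so the Leray spectral sequence for $\epsilon$ degenerates and $R\Gamma_\sub{Zar}(X,-) \xrightarrow{\simeq} R\Gamma_\sub{Nis}(X,-)$ for both sheaves. I expect the $H^1_\sub{Nis}(\roi^h,\bb G_m)=0$ point, and more generally keeping track of the higher vanishing, to be the one requiring the most care.

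For part (3), given part (2) it suffices to prove $\bb A^1$-invariance for Zariski cohomology. For $\bb Z$: $\bb A^1_X \to X$ has connected fibres and the constant sheaf computation of part (1) applied to the regular Noetherian scheme $\bb A^1_X$ gives $R\Gamma_\sub{Zar}(\bb A^1_X,\bb Z) = \bb Z^{\pi_0(\bb A^1_X)} = \bb Z^{\pi_0(X)} = R\Gamma_\sub{Zar}(X,\bb Z)$ concentrated in degree $0$, and this identification is compatible with the pullback map. For $\bb G_m$: $\bb A^1_X$ is again regular Noetherian, so by part (1) (applied to $\bb A^1_X$) its Zariski cohomology is $\roi(\bb A^1_X)^\times$ in degree $0$, $\opp{Pic}(\bb A^1_X)$ in degree $1$, and zero above; and the pullback $\opp{Pic}(X) \to \opp{Pic}(\bb A^1_X)$ is an isomorphism for regular Noetherian $X$ (homotopy invariance of the Picard group of regular rings — this is classical, e.g.\ via the exact sequence relating $\opp{Pic}$ and $\opp{Cl}$ and the fact that $\roi \mapsto \roi[t]$ preserves regularity and the divisor class group is homotopy invariant for regular rings), while $\roi(\bb A^1_X)^\times = \roi(X)^\times$ because $X$ is reduced. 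Hence $R\Gamma_\sub{Zar}(X,\bb G_m) \to R\Gamma_\sub{Zar}(\bb A^1_X,\bb G_m)$ is an equivalence, and composing with part (2) gives the Nisnevich statement. The main obstacle across the whole lemma is marshalling the classical regularity inputs — the divisor-theoretic resolution of $\bb G_m$, homotopy invariance of $\opp{Pic}$, and the vanishing of higher cohomology — and making sure the Noetherian and regularity hypotheses are genuinely propagated to $\bb A^1_X$ and to the local rings appearing, but none of these steps is deep.
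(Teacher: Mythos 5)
Parts (1) and (3) of your proposal are fine and essentially coincide with the paper's own argument: for (1) the flasqueness of constant sheaves on irreducible schemes and the divisorial resolution of $\bb G_m$ (exactness on the right uses that regular local rings are UFDs, not just normality), and for (3) the reduction to Zariski cohomology via (2) together with $A^\times\cong A[T]^\times$ for reduced $A$ and homotopy invariance of $\opp{Pic}$ for regular (in the paper: seminormal) rings. The problem is part (2) — exactly the part the paper says it could not find a reference for — where the ``simpler direct'' argument you commit to does not work. For the change-of-sites map $\epsilon\colon X_\Nis\to X_\Zar$, the stalk of $R^n\epsilon_*F$ at $x\in X$ is $\colim_{x\in U}H^n_\Nis(U,F)\simeq H^n_\Nis(\Spec(\roi_{X,x}),F)$, the Nisnevich cohomology of the \emph{Zariski} local ring, not anything computed on the henselisation. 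The vanishing $H^n_\Nis(\roi_{X,x}^h,F)=0$ for $n>0$ that you invoke is automatic and contentless (a henselian local ring is a point of the Nisnevich topos) and says nothing about $R^n\epsilon_*$; what you would actually need is $H^n_\Nis(\roi_{X,x},\bb Z)=0$ for $n>0$ and $H^n_\Nis(\roi_{X,x},\bb G_m)=0$ for $n>0$ (the local ring has trivial Picard group), and proving this is essentially the comparison statement itself for local schemes, so your Leray argument is circular. This is where the real content lies: the paper proves (2) by induction on $\dim\roi_{X,x}$, using the (ind-)Nisnevich distinguished square formed by the henselisation $\roi_{X,x}^h$ and the punctured spectrum, and checking by hand (separating the cases of a field, a DVR, and dimension $\ge2$) that Zariski cohomology of $\bb Z$ and $\bb G_m$ carries this square to a cartesian square; the key inputs in dimension $\ge 2$ are connectedness of the punctured spectrum, $R^\times\isoto H^0_\Zar$ of the punctured spectrum (normality), and Grothendieck's vanishing of $\opp{Pic}$ of the punctured spectrum of a regular local ring of dimension $\ge 2$.

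Your alternative suggestion in passing — that the divisorial/Gersten resolutions of $\bb Z$ and $\bb G_m$ consist of sheaves acyclic for both topologies, so that they compute Zariski and Nisnevich cohomology simultaneously — can indeed be made to work (this is the Colliot-Th\'el\`ene--Hoobler--Kahn style route, and it would give a genuinely different proof of (2) from the paper's induction). But it requires actually proving that the terms $i_{\eta*}k(\eta)^\times$ and $i_{x*}\bb Z$ have no higher Nisnevich cohomology on $X$ (vanishing of higher Nisnevich cohomology of fields together with a d\'evissage along $\overline{\{x\}}\hookrightarrow X$ and the behaviour of these pushforwards under $\epsilon$), none of which you supply. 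As written, part (2) has a genuine gap.
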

\begin{proof}
(1): For the cohomology of $\bb Z$, the key point is the vanishing of the higher cohomology of constant sheaves on irreducible schemes; see \cite[Tag 02UW]{Stacks}. Similarly, for $\bb G_m$, the non-trivial part is showing vanishing of $H^n_\sub{Zar}(X,\bb G_m)$ for $n>1$; but this holds because $X$ is normal and so $\bb G_m$ has its usual flasque resolution $0\to\bb G_m\to\oplus_{\eta\in X^0} i_{\eta*}k(\eta)^\times\to\oplus_{x\in X^1}i_{x*}\bb Z\to0$.

(2): It is enough to establish the equivalences whenever $X$ is the spectrum of a regular Noetherian local ring $R$, and by induction on dimension we may assume the equivalences are known for regular Noetherian schemes of dimension $<\dim R$. We will consider the ind-Nisnevich cover of $R$ given by its henselisation $R^\sub{h}$ and its punctured spectrum $\Spec(R)\setminus\{\frak m\}$. More precisely, the cartesian square
\[\xymatrix{
\Spec(R^\sub{h})\setminus\{\frak m R^\sub{h}\}\ar[d]\ar[r] &\Spec(R^h)\ar[d]\\
\Spec(R)\setminus\{\frak m\}\ar[r] &\Spec(R)
}\]
is a cofiltered limit of Nisnevich distinguished squares, and so is carried to a cartesian square by $R\Gamma_\sub{Nis}(-, F)$, where $F=\bb Z$ or $\bb G_m$. By the inductive hypothesis we know that the change-of-topology map $R\Gamma_\sub{Zar}(-, F)\to R\Gamma_\sub{Nis}(-, F)$ is an equivalence on the two punctured spectra; it is also an equivalence on $R^\sub{h}$. Therefore, to prove it is an equivalence on $R$ (as required to complete the proof of (2)), we must show that $R\Gamma_\sub{Zar}(-, F)$ carries the above square to a cartesian square in $\rm D(\bb Z)$. We prove this by treating various cases.

If $R$ is a field the schemes on the left of the diagram are empty and $R=R^\sub{h}$, so the claim is vacuous. If $R$ is a discrete valuation ring then the punctured spectra are the spectra of the fields of fractions of $R$ and $R^\sub{h}$ respectively, and applying $R\Gamma_\sub{Zar}(-, \bb Z)$ and $R\Gamma_\sub{Zar}(-, \bb G_m)$ yield squares
\[\xymatrix{
\bb Z\ar[r]\ar[d] & \bb Z\ar[d] \\
\bb Z\ar[r] & \bb Z
}
\qquad\mbox{and}\qquad
\xymatrix{
R^\times\ar[r]\ar[d] & \opp{Frac}(R)^\times\ar[d] \\
R^{\sub{h}\times}\ar[r] & \opp{Frac}(R^\sub{h})^\times
}
\]
respectively. Both squares are cartesian in $\D(\bb Z)$ (for the right square, the cokernels of the horizontal arrows are both $\bb Z$, given by the discrete valuation), as desired.

It remains to treat the case that $\dim R\ge 2$, where it is enough to check that the horizontal maps $R\Gamma_\sub{Zar}(R, F)\to R\Gamma_\sub{Zar}(\Spec(R)\setminus\{\frak m\}, F)$ and $R\Gamma_\sub{Zar}(R^h, F)\to R\Gamma_\sub{Zar}(\Spec(R^h)\setminus\{\frak mR^h\}, F)$ are equivalences, for both $F=\bb Z$ and $\bb G_m$. For the cohomology of $\bb Z$ this holds by the descriptions in (1) since the punctured spectra are still connected (as $\dim R=\dim R^h\ge 2$). For $\bb G_m$ we again use the descriptions in (1), and that $R^\times \isoto H^0_\sub{Zar}(\Spec(R)\setminus\{\frak m\},\bb G_m)$ (as $R$ is normal of dimension $\ge 2$) and $\opp{Pic}(\Spec(R)\setminus\{\frak m\})=0$ by Grothendieck \cite[Tag 0F2H]{Stacks}, and similarly for $R^h$ in place of $R$.

(3): By part (2) we may replace Nis by Zar, and now we use part (1) to compute the cohomologies. For the cohomology of $\bb Z$, note that the connected components of $X$ and $\bb A_X^1$ correspond. For $\bb G_m$, we use that $A^\times\isoto A[T]^\times$ whenever $A$ is a reduced ring, and $\opp{Pic}(A)\isoto \opp{Pic}(A[T])$ whenever $A$ is a seminormal ring \cite{Traverso1970}  \cite[Theorem 1]{Swan1980}.
\end{proof}

\begin{lemma}\label{lemma_unstable_slices_of_K}
Let $X$ be a regular Noetherian scheme. The following assertions hold in $\opp{Sh}_{\Nis, \bb A^1}(\Sm_X,\Spt)$:
\begin{enumerate}
\item The rank map $\mathrm{rk}:K\to R\Gamma_\sub{Nis}(-,\bb Z)$ induces an equivalence $s^0_uK\quis R\Gamma_\sub{Nis}(-,\bb Z)$, so that $\Fil^1_\sub{u-slice}K\simeq \K^{\sub{rk}=0}:=\fib(\mathrm{rk})$.
\item The map $\textrm{det}:\K^{\sub{rk}=0}\to R\Gamma_\sub{Nis}(-,\bb G_m)[1]$ (i.e., induced by $\det:K_1(-)\to H^0_\sub{Nis}(-,\bb G_m)$) induces an equivalence $s^1_uK\quis R\Gamma_\sub{Nis}(-,\bb G_m)[1]$.
\end{enumerate}
\end{lemma}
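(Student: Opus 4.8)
The plan is to treat both parts by the same mechanism, organised around the unstable slice tower. Write $\scr S:=\opp{Sh}_{\Nis, \bb A^1}(\Sm_X,\Spt)$, and for $G\in\scr S$ let $\Omega_{\bb P^1}G$ denote the presheaf $Y\mapsto\fib(G(\bb P^1_Y)\xrightarrow{\infty^*}G(Y))$. Two formal facts will be used repeatedly: (i) since $\scr S(w)$ is generated under colimits by the objects $(\Sigma^\infty\bb P^1)^{\otimes w}\otimes E$, an object $G\in\scr S$ is right-orthogonal to $\scr S(w)$ if and only if $\Omega_{\bb P^1}^{\circ w}G\simeq0$; and (ii) if $F\to E\to G$ is a cofibre sequence in $\scr S$ with $F$ unstably $w$-effective and $G$ right-orthogonal to $\scr S(w)$, then $\map_{\scr S}(E',G)=0$ for all $E'\in\scr S(w)$, so $F\to E$ enjoys the universal property of $\Fil^w_\sub{u-slice}E\to E$; hence $F\simeq\Fil^w_\sub{u-slice}E$ and $G\simeq\opp{cofib}(\Fil^w_\sub{u-slice}E\to E)$, the identifications being along the given maps. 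For part (1): by Lemma~\ref{lem:zarvsnis_on_regular}(1) the object $R\Gamma_\sub{Nis}(-,\bb Z)\in\scr S$ is the degree-$0$ constant sheaf $\bb Z^{\pi_0(-)}$, and since $\bb P^1_Y\to Y$ induces a bijection on connected components the map $\infty^*\colon R\Gamma_\sub{Nis}(\bb P^1_Y,\bb Z)\to R\Gamma_\sub{Nis}(Y,\bb Z)$ is an equivalence for every $Y\in\Sm_X$; thus $\Omega_{\bb P^1}R\Gamma_\sub{Nis}(-,\bb Z)=0$, i.e.\ $R\Gamma_\sub{Nis}(-,\bb Z)$ is right-orthogonal to $\scr S(1)$. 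Granting the claim that $\K^{\sub{rk}=0}:=\fib(\mathrm{rk})$ is unstably $1$-effective, fact (ii) applied to $\K^{\sub{rk}=0}\to K\to R\Gamma_\sub{Nis}(-,\bb Z)$ then gives $\K^{\sub{rk}=0}\simeq\Fil^1_\sub{u-slice}K$ and $R\Gamma_\sub{Nis}(-,\bb Z)\simeq\opp{cofib}(\Fil^1_\sub{u-slice}K\to K)=s^0_uK$, via $\mathrm{rk}$.

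For part (2): Lemma~\ref{lem:zarvsnis_on_regular}(1)--(2) together with the projective-bundle description $\opp{Pic}(\bb P^1_Y)=\pi^*\opp{Pic}(Y)\oplus\bb Z^{\pi_0(Y)}\cdot[\roi(1)]$ (and $\roi(\bb P^1_Y)^\times=\roi(Y)^\times$, $\infty^*[\roi(1)]=0$) yield $\Omega_{\bb P^1}(R\Gamma_\sub{Nis}(-,\bb G_m)[1])\simeq R\Gamma_\sub{Nis}(-,\bb Z)$; applying $\Omega_{\bb P^1}$ once more and invoking part (1), $\Omega_{\bb P^1}^{\circ2}(R\Gamma_\sub{Nis}(-,\bb G_m)[1])=0$, so $R\Gamma_\sub{Nis}(-,\bb G_m)[1]$ is right-orthogonal to $\scr S(2)$ (but not to $\scr S(1)$, so the weight shift by one is genuine). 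Applying the exact colocalisation functors $\Fil^j_\sub{u-slice}$ ($j\ge1$) to the cofibre sequence of part (1) and using $\Fil^j_\sub{u-slice}R\Gamma_\sub{Nis}(-,\bb Z)=0$ (as that object is right-orthogonal to $\scr S(1)$) gives $\Fil^j_\sub{u-slice}K\simeq\Fil^j_\sub{u-slice}\K^{\sub{rk}=0}$; since $\K^{\sub{rk}=0}=\Fil^1_\sub{u-slice}K$ is itself $1$-effective, this yields $s^1_uK\simeq\opp{cofib}(\Fil^2_\sub{u-slice}\K^{\sub{rk}=0}\to\K^{\sub{rk}=0})$. Hence, granting the claim that $\fib(\det\colon\K^{\sub{rk}=0}\to R\Gamma_\sub{Nis}(-,\bb G_m)[1])$ is unstably $2$-effective, fact (ii) applied to $\fib(\det)\to\K^{\sub{rk}=0}\to R\Gamma_\sub{Nis}(-,\bb G_m)[1]$ identifies $R\Gamma_\sub{Nis}(-,\bb G_m)[1]\simeq s^1_uK$ via $\det$.

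The main obstacle is the two effectivity claims: that $\K^{\sub{rk}=0}$ is unstably $1$-effective and that $\fib(\det)$ is unstably $2$-effective. I would derive these from the geometric model of algebraic $K$-theory. On regular Noetherian schemes $K$ is an $\bb A^1$-invariant Nisnevich sheaf whose rank-$0$ part $\K^{\sub{rk}=0}$ is the group-completion of $\colim_n L_{\Nis,\bb A^1}\BGL_n$; the motivic Schubert-cell decomposition of the Grassmannians exhibits the relevant reduced motive as an iterated colimit/extension of the cells $\bb T^{\wedge w}\simeq(\Sigma^\infty\bb P^1)^{\otimes w}$ with $w\ge1$ (the unique weight-$0$ cell being the basepoint, which is removed upon reducing), hence as a $1$-effective object; one must also check that group-completion, being a filtered colimit, preserves $1$-effectivity. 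For $\fib(\det)$ one runs the identical argument with the special Grassmannians $\colim_n L_{\Nis,\bb A^1}\BSL_n$: since $\mathrm{SL}_2\simeq\bb A^2\setminus\{0\}$ is a weight-$2$ motivic sphere, their Schubert cells all have weight $\ge2$, whence $2$-effectivity. Making this precise — in particular pinning down the comparison between the $K$-theory sheaf of spectra and these suspension-type motives in the range of weights that matters — is where the technical work concentrates; the remainder is the formal slice-tower bookkeeping above together with Lemma~\ref{lem:zarvsnis_on_regular}.
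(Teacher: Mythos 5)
Your formal framework (the recognition principle for $\Fil^w_\sub{u-slice}$ via a cofibre sequence with $w$-effective fibre and right-orthogonal cofibre, and the verification that $R\Gamma_\Nis(-,\bb Z)$ is right-orthogonal to weight $1$ and $R\Gamma_\Nis(-,\bb G_m)[1]$ to weight $2$ by computing $\Omega_{\bb P^1}$-loops on regular schemes) matches the paper's argument, and the reduction of $s^1_u\K$ to $\opp{cofib}(\Fil^2_\sub{u-slice}\K^{\sub{rk}=0}\to\K^{\sub{rk}=0})$ is correct. The problem is that the two effectivity claims you defer — that $\K^{\sub{rk}=0}$ is unstably $1$-effective and $\fib(\det)$ unstably $2$-effective — are not a routine completion of the Schubert-cell computation; they are the actual content of the proof, and the mechanism you gesture at does not yet exist in your write-up. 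Knowing that $L_{\Nis,\bb A^1}\Sigma^\infty\BGL$ (resp.\ $\Sigma^\infty\BSL$) lies in $\scr S(1)$ (resp.\ $\scr S(2)$) says nothing directly about the sheaves of \emph{spectra} $\K^{\sub{rk}=0}$ and $\fib(\det)$, because the counit $\Sigma^\infty\Omega^\infty E\to E$ is very far from an equivalence. The paper bridges this with a specific device: if $E$ is Nisnevich-locally connective and $L_{\Nis,\bb A^1}\Sigma^\infty\Omega^\infty E$ is unstably $j$-effective, then so is $E$; this is proved by writing $E\simeq\colim_n\bigl(L_{\Nis,\bb A^1}\Sigma^\infty B^n(\Omega^\infty E)\bigr)[-n]$ via iterated bar constructions and using that $\scr S(j)$ is closed under colimits and tensor products (so that each $B^n$, built from products, stays $j$-effective). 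Without this step, or a substitute for it, your argument does not close; and your suggestion that "group-completion, being a filtered colimit, preserves $1$-effectivity" is not a substitute — group completion is not a filtered colimit, and in the paper the issue is absorbed instead by the Morel--Voevodsky equivalence $\Omega^\infty\K\simeq L_{\Nis,\bb A^1}(\bb Z\times\BGL)$, which hands you $\Omega^\infty\K^{\sub{rk}=0}\simeq L_{\Nis,\bb A^1}\BGL$ with no further completion.

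There is a second, smaller omission in the $\fib(\det)$ case: you implicitly assert that $\Omega^\infty\fib(\det)$ is modelled by $L_{\Nis,\bb A^1}\BSL$, but this requires showing that the fibre sequence $L_\Nis\BSL\to L_\Nis\BGL\to L_\Nis\mathrm{B}\Gm$ of Nisnevich sheaves of pointed spaces remains a fibre sequence after $\bb A^1$-localisation. This is not automatic (fibre sequences are not preserved by localisations in general); the paper deduces it from the facts that $L_\Nis\mathrm{B}\Gm$ is already $\bb A^1$-invariant and Nisnevich-locally connected, together with a general criterion for when a localisation preserves principal-bundle-type fibre sequences, plus the observation that $L_{\bb A^1}L_\Nis\BGL$ is already a Nisnevich sheaf. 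You would need to supply this identification before the Schubert-cell input for $\BSL$ can be brought to bear. With these two points filled in, your route coincides with the paper's proof.
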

\begin{proof}
We first note, thanks to Lemma \ref{lem:zarvsnis_on_regular}(3), that both $R\Gamma_\Nis(\ph, \Z)$ and $R\Gamma_\Nis(\ph, \Gm)$ are $\A^1$-invariant on the category of smooth $X$-schemes.

Consequently there are fibre sequences \[ \K^{\sub{rk}=0} \to \K \to R\Gamma_\Nis(\ph, \Z) \quad\text{and}\quad F \to \K^{\sub{rk}=0} \to R\Gamma_\sub{Nis}(-,\bb G_m)[1]\] in $\opp{Sh}_{\Nis, \bb A^1}(\Sm_X,\Spt)$. We claim that $R\Gamma_\Nis(\ph, \Z)$ is right orthogonal to $\opp{Sh}_{\Nis, \bb A^1}(\Sm_X,\Spt)(1))$ and that $R\Gamma_\sub{Nis}(-,\bb G_m)$ is right orthogonal to $\opp{Sh}_{\Nis, \bb A^1}(\Sm_X,\Spt)(2)$.
Noting that $\opp{Sh}_{\Nis, \bb A^1}(\Sm_X,\Spt)(j)$ is generated under colimits and desuspensions by objects of the form $\Sigma^\infty_+\P^j_Y/\Sigma^\infty_+\P^{j-1}_Y \wequi (\Sigma^\infty \P^1)^{\otimes j} \otimes \Sigma^\infty_+ Y$, it is enough to show that $R\Gamma_\sub{Nis}(Y,\bb Z) \wequi R\Gamma_\sub{Nis}(\bb P_Y^1,\bb Z))$ and $R\Gamma_\sub{Nis}(\bb P^2_Y,\bb G_m)\quis R\Gamma_\sub{Nis}(\bb P_Y^1,\bb G_m)$ for all $Y\in\Sm_X$. This is straightforward, using Lemma \ref{lem:zarvsnis_on_regular} and \cite[Tag 0BXJ]{Stacks}.

It thus remains to show that \[  \K^{\sub{rk}=0} \in \opp{Sh}_{\Nis, \bb A^1}(\Sm_X,\Spt)(1) \quad\text{and}\quad F \in \opp{Sh}_{\Nis, \bb A^1}(\Sm_X,\Spt)(2). \]
We will use the following general observation\footnote{See also the proof of \cite[Lemma 5.2]{BachmannElmanto} for detecting \emph{stable} effectivity in motivic homotopy theory.} $(*)$: if $E \in \opp{Sh}_{\Nis, \bb A^1}(\Sm_X, \Spt)$ is Nisnevich locally connective and $L_{\A^1,\Nis}\Sigma^\infty\Omega^\infty E \in \opp{Sh}_{\Nis, \bb A^1}(\Sm_X,\Spt)(j)$, then $E \in \opp{Sh}_{\Nis,\bb A^1}(\Sm_X, \Spt)(j)$.
We prove this at the end, proceeding first with the rest of the proof.

Firstly, it is clear that $\K^{\sub{rk}=0}$ and $F$ are connective. We know $\Omega^{\infty} \K \wequi L_{\Nis,\A^1}(\Z \times \mathrm{BGL})$ \cite[Propositions 3.9 and 3.10]{MorelVoevodsky1999}, with the $\bb Z$ component corresponding to the rank map, and so $\Omega^{\infty} \K^{\sub{rk}=0} \wequi L_{\Nis,\A^1} \mathrm{BGL}$.
We now claim that $\Omega^{\infty} F \wequi L_{\Nis,\A^1} \mathrm{BSL}$.
To show this, it will suffice to prove that the fiber sequence of Nisnevich sheaves of pointed spaces $L_\Nis \mathrm{BSL} \to L_\Nis \mathrm{BGL} \to L_\Nis \mathrm{B}\Gm$ (where the second map is the determinant) remain a fibre sequence after applying $L_{\A^1}$. But $L_\Nis \mathrm{B}\Gm\simeq \Omega^\infty (R\Gamma_\sub{Nis}(-,\bb G_m)[1])$ is $\A^1$-invariant (as in the first paragraph) and Nisnevich locally connected, so it follows from \cite[Lemma 5.5.6.17]{LurieHA} that \[ L_{\A^1} L_\Nis \mathrm{BSL} \to L_{\A^1} L_\Nis \mathrm{BGL} \to L_{\A^1} L_\Nis \mathrm{B}\Gm (\simeq L_\Nis \mathrm{B}\Gm) \] is also a fiber sequence of presheaves of pointed spaces.
It now remains to note that the middle term in this sequence is already a Nisnevich sheaf, e.g. by \cite[Proof of Theorem 6.11]{AntieauElmanto}, so the same is true of the left term, and therefore the three terms are unchanged if we replace $L_{\bb A^1}L_\Nis$ by $L_{\Nis,\bb A^1}$.

Having described $K^\sub{rk=0}$ and $F$ geometrically, it is now enough thanks to (*) to show that $L_{\Nis,\bb A^1} \Sigma^\infty \mathrm{BGL}$ lies in $\opp{Sh}_{\bb A^1,\Nis}(\Sm_X,\Spt)(1)$ and that $L_{\Nis,\bb A^1}\Sigma^\infty \mathrm{BSL}$ lies in $\opp{Sh}_{\Nis, \bb A^1}(\Sm_X,\Spt)(2)$. 
One method is to use the Schubert cell decomposition, e.g.~\cite[proof of Proposition 3.7]{wendt2010more}, which shows for any split semisimple groups $G$ and standard parabolic $P\subset G$, that $L_{\Nis,\bb A^1}\Sigma^\infty G/P$ can be obtained as iterated extension of copies of $L_{\Nis,\bb A_1}(\Sigma^\infty \bb P_X^{1\,\otimes d_i})$, where the numbers $d_i$  are given by $\dim{G/P}$ minus the dimension of the cells.
Thus for $G/P$ to be $1$-effective, there must be a unique cell of maximal dimension, and for $G/P$ to be $2$-effective there must in addition not be a cell of dimension one below the maximal one.
In our case $G/P$ is a Grassmannian, respectively special linear Grassmannian, so this is well-known.\NB{ref?}

It remains to prove $(*)$. For any spectrum $A$ the canonical map of spectra $\colim_n  (\Sigma^\infty \Omega^\infty (A[n]))[-n]\to A$ is an equivalence and so, in particular, there is an equivalence $\colim_n  (\Sigma^\infty \Omega^\infty (E[n]))[-n]\quis E$ of presheaves of spectra. Moreover, since $E$ is Nisnevich locally connective, the deloopings $\Omega^\infty (E[n])$ are Nisnevich locally equivalent to the iterated bar constructions $B^n(\Omega^\infty E)$ (where we use that $\Omega^\infty E$ is a commutative monoid in presheaves of pointed spaces). Consequently $E \wequi \colim_n  (L_{\Nis,\A^1}\Sigma^\infty B^n (\Omega^\infty E))[-n]$, and so it suffices to show for each $n \geq 0$ that $L_{\Nis,\A^1}  \Sigma^\infty B^n(\Omega^\infty E)$ is unstably $j$-effective. Since the $\infty$-category $\opp{Sh}_{\Nis, \bb A^1}(\Sm_X,\Spt)(j)$ is closed under colimits and tensor products, and $B^n$ is obtained as an iteration of such operations (recall that $\Sigma^\infty(A_1 \times A_2) \wequi \Sigma^\infty A_1 \oplus \Sigma^\infty A_2 \oplus (\Sigma^\infty A_1 \otimes \Sigma^\infty A_2)$ whenever $A_1$, $A_2$ are pointed spaces), we conclude.
\end{proof}

We next explain how the orientation of $K$-theory, from Definition \ref{definition:orientation_of_K}, interacts with its unstable slices:

\begin{lemma} \label{lemma:HZA-orient-KGL-compat}
Let $X$ be a regular Noetherian scheme. Then the mapping space $\Map(\Sigma^\infty \Omega^\infty\Pic, R\Gamma_\sub{Nis}(-,\bb Z))$ in $\opp{PSh}(\Sm_X,\Spt)$ is contractible, and so the map $1-(\cdot)^\vee :\Sigma^\infty \Omega^\infty \Pic \to \K$ on smooth $X$-schemes factors uniquely through $\K^{\sub{rk}=0}$. The resulting diagram
\[\xymatrix{
\Sigma^\infty\Omega^\infty\Pic \ar[d]_{\sub{(\ref{eqn_Pic_mess})}}\ar[rr]^{1-(\cdot)^\vee}&&\K^{\sub{rk}=0}\ar[d]^{\sub{det}} \\
\Pic(-) \ar[r]^-{\cong}_-{\sub{(\ref{eqn_pic_as_Gm_coh})}}&(\tau^{\le 1}R\Gamma_\sub{Zar}(-,\bb G_m))[1]\ar[r]& R\Gamma_\sub{Nis}(-,\bb G_m)[1]
}\]
of presheaves on spectra on smooth $X$-schemes commutes up to homotopy.
\end{lemma}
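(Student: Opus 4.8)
The plan is to replay, essentially verbatim, the argument used for the orientation of $K$-theory in the proof of Definition~\ref{definition:orientation_of_K}, with $\K$ replaced in turn by $R\Gamma_\sub{Nis}(\ph,\bb Z)$ and by $R\Gamma_\sub{Nis}(\ph,\bb G_m)[1]$. First I would record the standing facts: since every smooth scheme over the regular Noetherian scheme $X$ is again regular Noetherian, Lemma~\ref{lem:zarvsnis_on_regular} shows that $R\Gamma_\sub{Nis}(\ph,\bb Z)|_{\Sm_X}$ and $R\Gamma_\sub{Nis}(\ph,\bb G_m)[1]|_{\Sm_X}$ are $\bb A^1$-invariant Nisnevich sheaves of bounded amplitude, and that on $\Sm_X$ one has $\K\simeq\K^\sub{cn}$ because negative $K$-groups of regular Noetherian schemes vanish (so that $1-(\cdot)^\vee$ indeed lands in $\K$ there). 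By Remark~\ref{rmk:orientation-covers} the canonical map is an equivalence $L_{\Nis,\bb A^1}\colim_m\Sigma^\infty(\bb P_X^m,\infty)\quis L_{\Nis,\bb A^1}\Sigma^\infty\Omega^\infty\Pic$, so for any $\bb A^1$-invariant Nisnevich sheaf $T$ on $\Sm_X$ one gets $\Map_{\opp{PSh}(\Sm_X,\Spt)}(\Sigma^\infty\Omega^\infty\Pic,T)\simeq\opp{lim}_m\Map(\Sigma^\infty(\bb P_X^m,\infty),T)$, together with a Milnor $\opp{lim}$--$\opp{lim}^1$ sequence computing its $\pi_0$ from the reduced cohomology groups of the $\bb P_X^m$.

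For the first assertion I would apply this with $T=R\Gamma_\sub{Nis}(\ph,\bb Z)$. By Lemma~\ref{lem:zarvsnis_on_regular}(1)--(2) this sheaf is the constant sheaf $\bb Z^{\pi_0(\ph)}$ in degree $0$; since $\pi_0(\bb P_X^m)=\pi_0(X)$ the projection induces an equivalence $R\Gamma_\sub{Nis}(X,\bb Z)\quis R\Gamma_\sub{Nis}(\bb P_X^m,\bb Z)$, so all reduced groups vanish, every term and hence the limit of the above tower is contractible, and $\Map(\Sigma^\infty\Omega^\infty\Pic,R\Gamma_\sub{Nis}(\ph,\bb Z))\simeq\ast$. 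Applying $\Map(\Sigma^\infty\Omega^\infty\Pic,\ph)$ to the fibre sequence $\K^{\sub{rk}=0}\to\K\to R\Gamma_\sub{Nis}(\ph,\bb Z)$ of Lemma~\ref{lemma_unstable_slices_of_K}(1) then yields $\Map(\Sigma^\infty\Omega^\infty\Pic,\K^{\sub{rk}=0})\quis\Map(\Sigma^\infty\Omega^\infty\Pic,\K)$, which is exactly the claimed essentially unique factorization of $1-(\cdot)^\vee$ through $\K^{\sub{rk}=0}$.

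For the commutativity of the square I would first run the same computation with $T=R\Gamma_\sub{Nis}(\ph,\bb G_m)[1]$: by Lemma~\ref{lem:zarvsnis_on_regular} and \cite[Tag 0BXJ]{Stacks} one has $\roi(\bb P_X^m)^\times=\roi(X)^\times$, which kills the $\opp{lim}^1$ term, while $\opp{Pic}(\bb P_X^m)=\opp{Pic}(X)\oplus\bb Z\cdot\roi(1)$, so $\widetilde H^1_\sub{Nis}(\bb P_X^m,\bb G_m)=\bb Z$ generated by $\roi(1)$ with all transition maps isomorphisms; hence precomposition with $[\roi(1)]\colon\Sigma^\infty(\bb P_X^1,\infty)\to\Sigma^\infty\Omega^\infty\Pic$ induces an isomorphism on $\pi_0\Map(\ph,R\Gamma_\sub{Nis}(\ph,\bb G_m)[1])$. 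It therefore suffices to compare the two composites in the square after precomposing with $[\roi(1)]$. Along the top, Definition~\ref{definition:orientation_of_K} shows that $\Sigma^\infty(\bb P_X^1,\infty)\xto{[\roi(1)]}\Sigma^\infty\Omega^\infty\Pic\xto{1-(\cdot)^\vee}\K^{\sub{rk}=0}\to\K$ classifies $1-[\roi(-1)]\in\widetilde K_0(\bb P_X^1)$, and the map $\det$ of Lemma~\ref{lemma_unstable_slices_of_K}(2), being induced by the determinant $\GL\to\bb G_m$, realizes on $\pi_0$ the usual virtual-rank-zero determinant homomorphism, sending $1-[\roi(-1)]$ to $\det(\roi)\otimes\det(\roi(-1))^{\vee}=\roi(1)$. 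Along the bottom, the triangle identity for $\Sigma^\infty\dashv\Omega^\infty$ identifies $\Sigma^\infty(\bb P_X^1,\infty)\xto{[\roi(1)]}\Sigma^\infty\Omega^\infty\Pic\xto{\eqref{eqn_Pic_mess}}\Pic$ with the adjoint of the classifying map of $\roi(1)$, whose image under $\eqref{eqn_pic_as_Gm_coh}$ and the change-of-topology equivalence of Lemma~\ref{lem:zarvsnis_on_regular}(2) is again the class $\roi(1)\in\widetilde H^1_\sub{Nis}(\bb P_X^1,\bb G_m)$. The two composites thus agree, so the square commutes up to homotopy.

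The only genuinely non-formal point will be pinning down the effect of $\det$ on $\pi_0$ — that it is the classical determinant homomorphism, so that $1-[\roi(-1)]\mapsto\roi(1)$ — and keeping the $\roi(1)$ versus $\roi(-1)$ conventions straight; everything else is Milnor-sequence bookkeeping and a direct transcription of the argument already carried out for $1-(\cdot)^\vee\colon\Sigma^\infty\Omega^\infty\Pic\to\K^\sub{cn}$.
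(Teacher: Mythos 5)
Your proposal is correct and follows essentially the same route as the paper: reduce, via the Milnor tower over the $(\bb P^m_X,\infty)$ and the Morel--Voevodsky equivalence, to comparing the two composites after precomposition with $[\roi(1)]$, identify the top composite with $1-[\roi(-1)]$ using Definition~\ref{definition:orientation_of_K}, identify the bottom with the adjoint classifying map of $\roi(1)$, and conclude from $\det(1-[\roi(-1)])=\roi(1)$. Two remarks: the paper gets the contractibility of $\Map(\Sigma^\infty\Omega^\infty\Pic,R\Gamma_\sub{Nis}(-,\bb Z))$ in one line from connectivity (the source is Nisnevich-locally connected, the target coconnective) rather than via your tower computation, which is also fine but longer; and the step you flag as ``the only genuinely non-formal point'' --- that the sheaf-level map $\det$ of Lemma~\ref{lemma_unstable_slices_of_K}(2) induces the classical determinant on $K_0$, hence sends $1-[\roi(-1)]$ to $\roi(1)$ --- is precisely what occupies the second half of the paper's proof, traced through the Morel--Voevodsky identification $\Omega^\infty\K\simeq L_{\Nis,\bb A^1}(\bb Z\times\BGL)$ and the definition of $\det$ via $\mathrm{B}\GL\to\mathrm{B}\bb G_m$, so in a complete write-up you would still need to supply that verification rather than assert it.
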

\begin{proof}
Since $\Sigma^\infty \Omega^\infty\Pic$ is Nisnevich locally connected and $R\Gamma_\sub{Nis}(-,\bb Z)$ is coconnective, the mapping space is contractible.

Now we turn to the diagram. Since $X$ is regular, we appeal to Lemma \ref{lem:zarvsnis_on_regular} to identify Zariski and Nisnevich cohomology of $\bb G_m$ on smooth $X$-schemes, and moreover remove the $\tau^{\le 1}$. Then the claim becomes that the composition
\[\Sigma^\infty\Omega^\infty\Pic\xto{1-(\cdot)^\vee} \K^{\sub{rk}=0}\xto{\sub{det}}R\Gamma_\sub{Zar}(-,\bb G_m)[1]\xto{\sub{(\ref{eqn_pic_as_Gm_coh})}\cong}\Pic\] is homotopic to the counit map (\ref{eqn_Pic_mess}). Via the $\Sigma^\infty$-$\Omega^\infty$ adjunction, this is the same as showing that the composition 
\[\Omega^\infty\Pic\xto{1-(\cdot)^\vee} \Omega^\infty \K^{\sub{rk}=0}\xto{\sub{det}}\Omega^\infty (R\Gamma_\sub{Zar}(-,\bb G_m)[1])\xto{\sub{(\ref{eqn_pic_as_Gm_coh})}\cong}\Omega^\infty\Pic\]
is the identity. Arguing yet again as in the uniqueness part of Definition \ref{definition:orientation_of_K}, it is enough to show, for each $m\ge1$, that the composition \[(\bb P^m_X,\infty)\xto{[\roi(1)]}\Omega^\infty\Pic\xto{1-(\cdot)^\vee} \Omega^\infty \K^{\sub{rk}=0}\xto{\sub{det}}\Omega^\infty (R\Gamma_\sub{Zar}(-,\bb G_m)[1])\xto{\sub{(\ref{eqn_pic_as_Gm_coh})}\cong}\Omega^\infty\Pic\] classifies $\roi(1)$. Firstly, by the very definition of the orientation $1-(\cdot)^\vee$, the composition of the first two maps classifies $1-[\roi(-1)]\in K_0^{\sub{rk}=0}(\bb P^m_X)$. It remains to check that the determinant map $\opp{det}:K_0^{\sub{rk}=0}(\bb P^m_X)\to \opp{Pic}(\bb P_X^m)$ sends $1-[\roi(-1)]$ to $\roi(1)$. The rest of the proof is devoted to proving a more general assertion for arbitrary line bundles.

It will be convenient use Morel--Voevodsky's equivalence $L_{\Nis,\A^1}(\Z \times \mathrm{BGL})\quis \Omega^{\infty} \K $ of presheaves of pointed spaces on $\Sm_X$, as in the proof of Lemma \ref{lemma_unstable_slices_of_K}. This equivalence is defined so as to have the following property: given a smooth $X$-scheme $Y$ and a rank $d$ vector bundle $V$ on $Y$, then the composition \begin{equation}X\xto{[V]}L_\sub{Nis}\mathrm{BGL_d}\To L_{\Nis,\A^1}(\bb Z\times \mathrm{BGL})\quis \Omega^{\infty} \K\label{eqn:rankd}\end{equation} classifies $[V]\in K_0(X)$. Here the first map in \eqref{eqn:rankd} classifies $V$, while the middle map is given by $d$ on the first factor and induced by the usual inclusion $\GL_d\to\GL$ on the second factor.

Restricting to rank $0$ parts, we obtain an equivalence $L_{\Nis,\A^1} \mathrm{BGL}\simeq \Omega^{\infty} \K^{\sub{rk}=0}$ with the following property: for any smooth $X$-scheme $Y$ and rank $d$ vector bundle $V$ on $Y$, the composition \[X\xto{[V]}L_\Nis\mathrm{BGL_d}\To L_{\Nis,\bb A^1}\mathrm{BGL}\quis \Omega^{\infty} \K^{\sub{rk}=0}\] classifies $[V]-d\in \K_0^{\sub{rk}=0}(X)$. We now restrict to the case $d=1$, so that $V=L$ is a line bundle on $Y$, and we rewrite the previous line as the top of the following diagram:
\[
\xymatrix{
X\ar[r]^-{[L]}& L_\Nis \rm{B}\bb G_m\ar[r]& L_{\Nis,\bb A^1}\mathrm{BGL}\ar[r]^\sim\ar[d]_{L_{\Nis,\bb A^1}B(\det)} & \Omega^{\infty} \K^{\sub{rk}=0}\ar[d]^{\sub{det}}\\
&& L_\sub{Nis} \rm{B}\bb G_m\ar[r]_-{\sim} & \Omega^\infty(R\Gamma_\sub{Nis}(-,\bb G_m)[1])
 }\]
Furthermore, by the very definition of the determinant map in Lemma \ref{lemma_unstable_slices_of_K}(2) (which already implicitly used a the description of $K$-theory via $\BGL$), the new square
commutes (in which we drop the $\bb A^1$-localisation at the bottom left because it is already $\bb A^1$-invariant). The unwritten diagonal composition $L_\Nis \rm{B}\bb G_m\to L_\Nis \rm{B}\bb G_m$ is the identity. Since we have seen that the top line classifies $[L]-1$, it follows that indeed the determinant map $\det:\K_0^{\sub{rk}=0}(X)\to\opp{Pic}(X)$ sends $[L]-1$ to $L$; therefore $\det(1-[L^\vee])=\det([L^\vee]-1)^{-1}=(L^\vee)^{-1}=L$, as required.
\end{proof}

We are now equipped to define the $\bb Z$-linear structure and first Chern class in $\bb A^1$-motivic cohomology, at least in the case of regular Noetherian schemes; later we will left Kan extend from smooth $\bb Z$-schemes to arbitrary qcqs schemes (see Theorem \ref{theorem:SH_mot_coh}):

\begin{construction} \label{cons:low-weights}
Let $X$ be a regular Noetherian scheme. Let $R\Gamma_\sub{Nis}(-,\bb Z) \to \Z(0)^{\A}$ be the map of $\bb E_\infty$-algebras in presheaves of spectra on smooth $X$-schemes given as the composition
 \[ R\Gamma_\sub{Nis}(-,\bb Z) \stackrel{\sub{Lemma \ref{lemma_unstable_slices_of_K}(1)}}\simeq s^0_u(\K) \xto{\sub{\eqref{eq:slice-slice}}} \omega^\infty s^0 \KGL_X=\Z(0)^{\A}.\]
Similarly, let $c_1^{\bb A}:R\Gamma_\Nis(\ph, \Gm)[1] \to \Z(1)^{\A}[2]$ be the map of presheaves of spectra on smooth $X$-schemes given as the composition
\[ R\Gamma_\Nis(\ph, \Gm)[1] \stackrel{\sub{Lemma \ref{lemma_unstable_slices_of_K}(2)}} \simeq s^1_u(\K) \xto{\sub{\eqref{eq:slice-slice}}} \omega^\infty s^1 \KGL_X=\bb Z(1)^\bb A[2].\] Note that the following diagram 
\begin{equation}\label{eq:c1-det2}
\begin{tikzcd}
\K^{\sub{rk}=0} \ar{d}{\mathrm{det}} \ar{r} & \Fil^1_{\bb A}\KH \ar{d}{\mathrm{edge}}\\
R\Gamma_\sub{Nis}(-,\bb G_m)[1] \ar[swap]{r}{c_1^{\bb A}[2]} & \bb Z(1)^{\bb A}[2]
\end{tikzcd}
\end{equation}
commutes up to homotopy on smooth $X$-schemes, by Lemma \ref{lemma:HZA-orient-KGL-compat}.
\end{construction}

\begin{remark}[$\bb Z$-linear structure and shearing]
Construction \ref{cons:low-weights} defines in particular a map of $\bb E_\infty$-algebras $\bb Z\to \bb Z(0)^\bb A(\bb Z)$. By the multiplicative structure of Remark \ref{rmk:HZA-mult}, this upgrades $\bb Z^{\A} (\star)[2\star]$ and $\bb Z^{\A,\cdh} (\star)[2\star]$ to be $\bb E_\infty$-algebras in graded $\D(\bb Z)$-valued (rather than merely $\opp{Sp}$-valued) presheaves on qcqs schemes. We may therefore shear as in Remark \ref{rmk:shearing1} to replace these by $\bb Z^{\A} (\star)$ and $\bb Z^{\A,\cdh}(\star)$, while maintaining the $\bb E_\infty$-algebra structure.
\end{remark}

\begin{remark}[Motivic cohomology mod $p$]
One reason that we record the existence of the $\bb Z$-linear structure in the previous remark is that it implies the existence of natural and compatible $\bb E_\infty$-algebra structures on the graded presheaves $\Z(\star)^{\bb A,\sub{cdh}}/p$ and $\Z(\star)^{\bb A}/p$, obtained by reducing modulo any prime number $p$. Without the $\bb Z$-linear structure we would encounter the usual multiplicative issues with taking quotients in the world of ring spectra. This multiplicative structure mod-$p$ is useful for treating motivic cohomology ``one prime at a time''.
\end{remark}

\subsubsection{Further properties}
Here is a summary of further properties of $\bb A^1$-motivic cohomology and $\bb A^1$-cdh-motivic cohomology which follow from what we have already established and from general results in motivic homotopy theory:

\begin{theorem}\label{theorem:SH_mot_coh}
The $\mathbb{E}_{\infty}$-algebras in graded presheaves of complexes
\[
\Z(\star)^{\bb A,\sub{cdh}}\qquad\text{and}\qquad\bb Z(\star)^\bb A:\Sch^{\qcqs,\op} \rightarrow \Gr\D(\bb Z)\qquad 
\]
have the following properties:
\begin{enumerate}

\item For any qcqs scheme $X$ there exist functorial, multiplicative, exhaustive $\bb Z$-indexed filtrations 
\[
\mathrm{Fil}_{\bb A,\sub{cdh}}^{\star}\KH(X) \qquad\textrm{and}\qquad \mathrm{Fil}_\bb A^{\star}\KH(X)
\] on $\KH(X)$, such that the graded pieces are naturally and multiplicatively given by 
\[
\mathrm{gr}_{\bb A,\sub{cdh}}^j\KH(X)\simeq \Z(j)^{\bb A,\sub{cdh}}(X)[2j] \qquad\textrm{and}\qquad \mathrm{gr}_\bb A^j\KH(X)\simeq \Z(j)^\bb A(X)[2j]
\] for $j\in\bb Z$. The filtrations are related by a natural multiplicative map $\mathrm{Fil}_{\bb A,\sub{cdh}}^{\star}\KH(X)\to \mathrm{Fil}_{\bb A}^{\star}\KH(X)$.

In particular, there exist natural, multiplicative Atiyah--Hirzebruch spectral sequences
\[
E_2^{i,j}=H_{\bb A,\sub{cdh}}^{i-j}(X, \Z(-j)) \implies \KH_{-i-j}(X)\qquad\mathrm{and}\qquad E_2^{i,j}=H_{\bb A}^{i-j}(X, \Z(-j)) \implies \KH_{-i-j}(X)
\]
such that the left one maps to the right one.

\item For each $j\in\bb Z$, the presheaves $\Z(j)^{\bb A,\sub{cdh}}$ and $\mathrm{Fil}_{\bb A,\sub{cdh}}^{\star}\KH$ are finitary, $\bb A^1$-invariant, cdh sheaves; meanwhile, $\Z(j)^{\bb A}$ and $\mathrm{Fil}_{\bb A}^{\star}\KH$ are a priori only an $\bb A^1$-invariant, Nisnevich sheaves which commute with filtered colimits along smooth affine transition maps.

\item Weight zero and $\bb Z$-linear structure: cdh-locally left Kan extending the map from Construction \ref{cons:low-weights} defines a map of presheaves of $\bb E_\infty$-algebras 
\begin{equation} \label{eq:wt0-cdh}
R\Gamma_\sub{cdh}(-,\bb Z)\To \bb Z(0)^{\bb A,\sub{cdh}}
\end{equation}
on qcqs schemes, fitting into a commutative diagram
\begin{equation}\xymatrix{
R\Gamma_\sub{cdh}(-, \bb Z) \ar[r]& \bb Z(0)^{\bb A,\sub{cdh}}\\
\KH\ar[u]^{\sub{rank}}\ar[ur]_{\sub{edge}}&
}\label{eq:z-structure5}\end{equation}

\item Weight one and first Chern class: cdh-locally left Kan extending the map from Construction \ref{cons:low-weights} defines the \emph{motivic first Chern class} \[c_1^{\bb A,\sub{mot}}:R\Gamma_\sub{cdh}(-,\bb G_m)[-1]\to\bb Z(1)^{\bb A,\sub{cdh}},\] fitting into a commutative diagram
\begin{equation}\label{eq:c1-det}
\begin{tikzcd}
\KH^\sub{rk=0} \ar{d}{\mathrm{det}} \ar{r} & \Fil^1_{\bb A,\sub{cdh}}\KH \ar{d}{\mathrm{edge}}\\
R\Gamma_\sub{cdh}(-,\bb G_m)[1] \ar[swap]{r}{c_1^{\bb A,\sub{cdh}}[2]} & \bb Z(1)^{\bb A,\sub{cdh}}[2]
\end{tikzcd}
\end{equation}\NB{Font sizes in this diagram are a bit small?}
of presheaves of spectra on qcqs schemes.
\end{enumerate}
\end{theorem}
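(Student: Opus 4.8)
The statement of Theorem~\ref{theorem:SH_mot_coh} is an assembly of four parts, each of which is obtained by combining the motivic-homotopy-theoretic inputs already set up in \S\ref{sec:prelim2-motsp} and \S\ref{sec:a1-mot-schemes} with formal left Kan extension and sheafification arguments. I would prove the four items more or less in order.

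\emph{Part (1).} For each qcqs $X$ apply the lax symmetric monoidal functor $\Fil^\star_\sub{slice}$ of Remark~\ref{rem:mult} to $\KGL_X\in\CAlg(\SH(X))$, then apply $\omega^\infty$ and global sections. By the multiplicativity discussion of Remark~\ref{rem:mult} this produces $\Fil^\star_\bb A\KH(X):=(\omega^\infty\Fil^\star_\sub{slice}\KGL_X)(X)\in\CAlg(\Fil\Spt)$, and since $\omega^\infty$ is exact its graded pieces are $\gr^j_\bb A\KH(X)=\map_{\SH(X)}(1_X,s^j\KGL_X)=\Z(j)^\bb A(X)[2j]$, which is the definition in~\eqref{eqn_ZjA} after shearing. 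Exhaustiveness is Remark~\ref{rem:exhaustive}, which is preserved by the exact functor $\omega^\infty$ and global sections. For the cdh variant, do the same starting from $f^*\Fil^\star_\sub{slice}\KGL_\bb Z$ (with $f\colon X\to\Spec\bb Z$): the pullback functor $f^*$ is symmetric monoidal, so $\Fil^\star_{\bb A,\cdh}\KH(X):=(\omega^\infty f^*\Fil^\star_\sub{slice}\KGL_\bb Z)(X)$ is again an $\bb E_\infty$-algebra in filtered spectra with graded pieces $\Z(j)^{\bb A,\cdh}(X)[2j]$. (One must check that $\omega^\infty f^*\KGL_\bb Z$ is indeed $\KH$ on smooth $X$-schemes; this is exactly the absoluteness of $\KGL$ recalled in Remark~\ref{rem_abs}.) The comparison map of filtrations is induced by the natural transformation $f^*\Fil^\star_\sub{slice}E\to\Fil^\star_\sub{slice}f^*E$ of~\eqref{eq:natural_slice}, which is multiplicative by Remark~\ref{rem:naturality}. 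Passing to the spectral sequences of the filtered spectra, with the indexing convention of the statement, is then formal, and the map between the two spectral sequences comes from the map of filtrations.

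\emph{Part (2).} The $\bb A^1$-invariance and Nisnevich-sheaf properties of $\Z(j)^\bb A$ and $\Fil^\star_\bb A\KH$ on smooth $X$-schemes are immediate from the fact that $\omega^\infty$ lands in $\opp{Sh}_{\Nis,\bb A^1}(\Sm_X,\Spt)$; that these presheaves on \emph{all} qcqs schemes commute with filtered colimits along smooth affine transition maps is the last sentence of Remark~\ref{rem:naturality} together with finitariness of $\SH$. For the cdh variant one applies Proposition~\ref{prop:auto-cdh} (Cisinski) to $G=f^*s^j\KGL_\bb Z\otimes\bb T^{\otimes\bullet}$ and to $G=f^*\Fil^j_\sub{slice}\KGL_\bb Z$: the presheaf $(g\colon Y\to X)\mapsto(\omega^\infty g^*G)(Y)$ is a finitary $\bb A^1$-invariant cdh sheaf; this is precisely the extension formula~\eqref{eqn:Ejcdh}. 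Compatibility of these cdh-sheaf structures with the filtration is automatic since cdh sheaves are closed under limits.

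\emph{Parts (3) and (4).} These are the substantive points. On the category $\Sm_\bb Z$ of smooth $\bb Z$-schemes (all of which are regular Noetherian) Construction~\ref{cons:low-weights} already provides maps of presheaves of spectra $R\Gamma_\Nis(-,\bb Z)\to\Z(0)^{\bb A}$ and $c_1^\bb A\colon R\Gamma_\Nis(-,\bb G_m)[1]\to\Z(1)^{\bb A}[2]$, the former multiplicative. Using Lemma~\ref{lem:lke-b-b'} and Proposition~\ref{prop:lke-restrict}, left Kan extend from $\Sm_\bb Z$ to all qcqs schemes and then cdh-sheafify. Since $\KH$, $R\Gamma_\cdh(-,\bb Z)$ and $R\Gamma_\cdh(-,\bb G_m)$ are finitary cdh sheaves, and since $\Z(j)^{\bb A,\cdh}$ is by Part (2) a finitary cdh sheaf whose restriction to $\Sm_\bb Z$ is $\Z(j)^{\bb A}|_{\Sm_\bb Z}$ (this uses Remark~\ref{rem_coh_over_base} and that slices commute with smooth pullback, Remark~\ref{rem:naturality}), the left Kan extension followed by $L_\cdh$ of the maps from Construction~\ref{cons:low-weights} factor through, and in fact land in, $\Z(0)^{\bb A,\cdh}$ and $\Z(1)^{\bb A,\cdh}$ respectively, giving~\eqref{eq:wt0-cdh} and the motivic first Chern class. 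The commuting triangle~\eqref{eq:z-structure5} and square~\eqref{eq:c1-det} are obtained by cdh-locally left Kan extending the corresponding diagrams for regular Noetherian schemes: the rank-vs-edge triangle follows from Lemma~\ref{lemma_unstable_slices_of_K}(1) and the definition of the edge map of the slice filtration, and the determinant square~\eqref{eq:c1-det} is the cdh sheafification of the square~\eqref{eq:c1-det2}, whose commutativity on smooth $X$-schemes is Lemma~\ref{lemma:HZA-orient-KGL-compat}. Finally, the $\bb Z$-linear structure claimed in the preamble of the theorem follows by shearing (Remark~\ref{rmk:shearing1}) once~\eqref{eq:wt0-cdh} furnishes the $\bb E_\infty$-map $\bb Z\to\Z(0)^{\bb A}(\bb Z)$, as in the remark following Construction~\ref{cons:low-weights}.

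\textbf{Main obstacle.} The only genuinely delicate point is ensuring that the maps out of $R\Gamma_\Nis(-,\bb Z)$ and $R\Gamma_\Nis(-,\bb G_m)[1]$, defined in Construction~\ref{cons:low-weights} only for regular Noetherian schemes (where Lemma~\ref{lemma_unstable_slices_of_K} applies), extend coherently — in particular multiplicatively for weight $0$ — to all qcqs schemes and target the cdh-motivic theory rather than some uncontrolled sheafification. The resolution is bookkeeping with left Kan extension: $R\Gamma_\Nis(-,F)$ restricted to $\Sm_\bb Z$ left Kan extends (and then cdh-sheafifies) to $R\Gamma_\cdh(-,F)$ for $F=\bb Z,\bb G_m$ because these are finitary cdh sheaves agreeing with their own left Kan extension from smooth $\bb Z$-schemes; one then invokes Proposition~\ref{prop:lke-restrict} and Lemma~\ref{lem:lke-b-b'} to see the construction is base-independent, and $\bb E_\infty$-multiplicativity is preserved throughout since left Kan extension and sheafification are symmetric monoidal. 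Once this is in place everything else is a diagram chase on $\Sm_\bb Z$ followed by cdh-localisation.
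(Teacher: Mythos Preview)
Your proposal is correct and follows essentially the same approach as the paper: both obtain (1) and (2) directly from the slice-filtration machinery of \S\ref{sec:prelim2-motsp}--\S\ref{sec:a1-mot-schemes} (Remarks~\ref{rem:exhaustive}, \ref{rem:mult}, \ref{rem:naturality} and Proposition~\ref{prop:auto-cdh}), and obtain (3)--(4) by cdh-locally left Kan extending Construction~\ref{cons:low-weights} and Lemma~\ref{lemma:HZA-orient-KGL-compat} from $\Sm_\Z$. The one point the paper makes more explicit than you do is the identification $L_\cdh\K^\cn\simeq\KH$ (needed so that the diagrams on $\Sm_\Z$, which involve $\K$ or $\K^{\sub{rk}=0}$, become diagrams involving $\KH$ after cdh-sheafification), together with the specific verification that $A\mapsto\Z$, $A\mapsto A^\times$, and $A\mapsto\K^\cn(A)$ are left Kan extended from essentially smooth local $\Z$-algebras; you gesture at this in your ``main obstacle'' paragraph but should name it directly, since it is what justifies identifying the sources of your extended maps with $R\Gamma_\cdh(-,\Z)$, $R\Gamma_\cdh(-,\Gm)$, and $\KH$ respectively. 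Your invocation of Lemma~\ref{lem:lke-b-b'} is unnecessary here since everything is over $\Z$.
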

\begin{proof}
(1): For any scheme $X$, with structure map $f: X \rightarrow \Spec(\bb Z)$, the transformation~\eqref{eq:natural_slice} induces a natural multiplicative map of filtered objects, functorial in $X$:
\begin{equation}\label{eq:filtered-kh}
\Fil_{\bb A,\cdh}^{\star}\KH(X):=\map_{\SH(X)}(1_X, f^*\Fil^{\star}_\sub{slice}\KGL_{\bb Z}) \rightarrow \Fil_{\bb A}^{\star}\KH(X):=\map_{\SH(X)}(1_X, \Fil^{\star}_\sub{slice}\KGL_X)
\end{equation}
The exhaustiveness follows from the exhaustiveness of the slice filtration which was recalled earlier in Remark~\ref{rem:exhaustive} and the fact that $f^*$ preserves colimits. Taking graded pieces of~\eqref{eq:filtered-kh} then gives us the multiplicative map
\[
\bb Z(\star)^{\bb A, \sub{cdh}} \To \bb Z(\star)^{\bb A},
\]
already discussed in Remark~\ref{rem:functoriality}.

(2) For any fixed qcqs scheme $S$, Remark \ref{rem_coh_over_base} shows that the sheaves $\bb Z(j)^{\bb A}$ and $\bb Z(j)^{\bb A, \sub{cdh}}$, when restricted to smooth $S$-schemes, are representable in $\SH(S)$. They are therefore $\bb A^1$-invariant Nisnevich sheaves on smooth $S$-schemes; as this holds for all $S$, they are $\bb A^1$-invariant Nisnevich sheaves on qcqs schemes.
The claims about cdh descent and finitariness of $\bb Z(j)^{\bb A,\sub{cdh}}$ are immediate from Proposition \ref{prop:auto-cdh}, while the case of $\bb Z(j)^{\bb A}$ follows from that proposition and the final assertion of Remark \ref{rem:naturality}. The same arguments apply to the $\bb A^1$-motivic and $\bb A^1$-cdh-motivic filtrations.

(3) \& (4): Recall first $L_\sub{cdh}\K^\sub{cn}\quis\KH$; see the proof of Theorem \ref{prop:basic_props_of_cdh_mot}(1) below. So, by left Kan extending then cdh sheafifying, parts(3)\&(4) follow immediately from Construction \ref{cons:low-weights} and Lemma \ref{lemma:HZA-orient-KGL-compat} once we also note the following: the three functors $\opp{CAlg}_\bb Z^\sub{loc}\to\D(\bb Z)$, $A\mapsto\bb Z$, $A\mapsto A^\times$, and $A\mapsto \K^\sub{cn}(A)$ are left Kan extended from essentially smooth, local $\bb Z$-algebras. The first of these is clear: the diagram computing the left Kan extension is constantly $\bb Z$ and contractible. The second holds because $\bb G_{m,\bb Z}$ is a smooth affine $\bb Z$-scheme. The third follows from a result of Bhatt--Lurie \cite[Example~A.0.6]{ElmantoHoyoisKhanSosniloYakerson2020}.
\end{proof}

\begin{remark}[Completeness of filtrations]\label{remark_completeness_of_fil}
It is natural to ask for conditions under which the filtrations $\mathrm{Fil}_{\bb A,\sub{cdh}}^{\star}\KH(X)$ and $\mathrm{Fil}_\bb A^{\star}\KH(X)$ are complete. By Proposition \ref{prop:detecting-veff} and Lemma \ref{lemm:convergence-bootstrap} we know the following:
\begin{enumerate}
\item For any regular Noetherian scheme $X$ of dimension $\le 1$, the Nisnevich sheaf of spectra $\Fil_{\bb A}^j\KH$ on smooth $X$-schemes is Nisnevich locally $j-\dim(X)$-connective.
\item For any qcqs scheme $f:X\to\Spec(\bb Z)$, the motivic spectrum $f^*\kgl_\bb Z\in\SH(X)$ is very effective.
\item For any Noetherian scheme $X$ of dimension $\le d$, the Nisnevich sheaf of spectra $\Fil_{\bb A,\sub{cdh}}^j\KH$ on smooth $X$-schemes is Nisnevich locally $j-d$-connective.
\end{enumerate}
We will eventually see in Corollary~\ref{cor:bdd-a1} that, for any qcqs scheme $X$ of finite valuative dimension $\le d$, then $\mathrm{Fil}_{\bb A,\sub{cdh}}^{j}\KH(X)$ and $\mathrm{Fil}_\bb A^{j}\KH(X)$ agree and are $j-d$-connective (so that the filtrations are complete).
\end{remark}

\begin{remark}[First Chern class for $\bb A^1$-motivic cohomology]\label{rem_A_1st_Chern_class}
A variant of Theorem \ref{theorem:SH_mot_coh}(4), which we will also use, is obtained by considering the composition \[c_1^{\bb A}:R\Gamma_\sub{Nis}(-,\bb G_m)[-1]\To R\Gamma_\sub{cdh}(-,\bb G_m)[-1]\xto{c_1^{\bb A,\sub{cdh}}} \bb Z(1)^{\bb A,\sub{cdh}}\To \bb Z(1)^\bb A.\] On regular Noetherian schemes this agrees with the Chern class already defined in Construction \ref{cons:low-weights}.
\end{remark}

\subsection{Rational structure}\label{sec:rational}
The goal of this section is to address the rational structure of the $\bb A^1$-invariant motivic cohomologies introduced in the previous subsection. We identify the two theories after rationalisation and degenerate the Atiyah--Hirzebruch spectral sequence. In Corollary~\ref{cor:rational-smd} we moreover establish rational results which will later be shown to hold integrally.

\begin{remark}[Rationalisation]
To clearly fix notation, for any qcqs scheme $X$ and $j\in\bb Z$ we write 
 \[\bb Q(j)^{\bb A,\sub{cdh}}(X):=\bb Z(j)^{\bb A,\sub{cdh}}\otimes_{\bb Z}\bb Q\quad\text{and}\quad \bb Q(j)^\bb A(X):=\bb Z(j)^\bb A(X) \otimes_{\bb Z} \bb Q\] for the rationalisations of the $\bb A^1$- and $\bb A^1$-cdh-motivic cohomologies. Note that, thanks to Corollary~\ref{cor:cdh-cpt-gen} and Theorem \ref{theorem:SH_mot_coh}, the presheaf $\bb Q(j)^{\bb A,\sub{cdh}}$ is the rationalisation of $\bb Z(j)^{\bb A,\sub{cdh}}$ within the $\infty$-category of cdh sheaves on qcqs schemes, and $\bb Q(j)^\bb A$ is the rationalisation of $\bb Z(j)^{\bb A}$ within the $\infty$-category of Nisnevich sheaves of complexes on qcqs schemes.

Similarly we set 
\[
\Fil^j_{\bb A,\sub{cdh}}\KH(X)_\bb Q:=(\Fil^j_{\bb A,\sub{cdh}}\KH(X))_\bb Q \qquad \Fil^j_{\bb A}\KH(X)_\bb Q:=(\Fil^j_{\bb A}\KH(X))_\bb Q,
\] 
which define the rationalisations of $\Fil^j_{\bb A,\sub{cdh}}\KH$ (respectively~$\Fil^j_{\bb A}\KH$) as cdh (respectively~Nisnevich) sheaves of spectra. Note that this is compatible with rationalising motivic spectra in the sense that \[\map_{\SH(X)}(M_X(Y)\otimes\bb T^{\otimes -j},E_\bb Q)=\map_{\SH(X)}(M_X(Y)\otimes\bb T^{\otimes -j},E)_\bb Q\] for all $Y\in\Sm_X$ and $j\in\bb Z$, since $M_X(Y)\otimes\bb T^{\otimes -j}$ is a compact object; here the rationalisation of any $E\in\SH(X)$ (or in $\opp{Sp}$, etc.) is defined as in Example \ref{ex:rationalization-cdh}. We will use such identifications without further mention.
\end{remark}

The main theorem of this section is as follows.

\begin{theorem}\label{thm:rational} Let $X$ be a qcqs scheme.
\begin{enumerate}
\item The comparison maps 
\[
\bb Q(j)^{\bb A, \cdh}(X) \To \bb Q(j)^{\bb A}(X) \qquad \mathrm{Fil}_{\bb A,\sub{cdh}}^{j}\KH(X)_{\bb Q}\To \mathrm{Fil}_{\bb A}^{j}\KH(X)_{\bb Q}
\] (by rationalising (\ref{eq:cdh-to-aone}) and Theorem \ref{theorem:SH_mot_coh}(1)) are equivalences for all $j\in\bb Z$.
\item The filtration $\mathrm{Fil}_{\bb A}^{\star}\KH(X)_{\bb Q}$ naturally splits, i.e., there is a natural, multiplicative equivalence of filtered spectra:
\[
\mathrm{Fil}_{\bb A}^{\star}\KH(X)_{\bb Q} \simeq \bigoplus_{j \geq \star} \bb Q(j)^{\bb A}(X)[2j].
\]
\item The Atiyah--Hirzebruch spectral sequences of Theorem~\ref{theorem:SH_mot_coh}(1) rationally degenerate at the $E_2$-page and the abutement filtration on $\KH_n(X)_\bb Q$ is naturally split for each $n\in\bb Z$.
\item If $X$ is regular Noetherian then the splittings of part (3) identify $H^i_\bb A(X,\bb Q(j))$ with $\K_{2j-i}(X)_\bb Q^{(j)}$ for all $i\in\bb Z$ and $j\ge 0$.\footnote{We adopt the convention, for $X$ regular Noetherian, that $\K_{n}(X)_\bb Q^{(j)}=0$ if $n<0$.\label{footnote_neg_adams}}
\end{enumerate}
\end{theorem}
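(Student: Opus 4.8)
\textbf{Plan for the proof of Theorem \ref{thm:rational}.}

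The whole theorem is essentially a formal consequence of the theory of Adams operations on $\KGL$ and the rational splitting of the slice filtration that this produces. The plan is as follows. The starting point is Riou's theory \cite{Riou2010}: after rationalisation, $\KGL$ decomposes as a direct sum of Adams eigenspaces $\KGL_\bb Q \simeq \bigoplus_{j\in\bb Z} \KGL_\bb Q^{(j)}$ in $\CAlg(\SH(X))$ (or at least in $\SH(X)$), where the Adams operation $\psi^k$ acts on $\KGL_\bb Q^{(j)}$ by $k^j$. The key point is that this Adams eigenspace decomposition is \emph{the} slice filtration rationally: one has $\Fil^j_\sub{slice}\KGL_\bb Q \simeq \bigoplus_{i\ge j}\KGL_\bb Q^{(i)}$ and $s^j\KGL_\bb Q \simeq \KGL_\bb Q^{(j)}$ (this is the rational analogue of the Bott filtration picture from \S\ref{subsub:slices-kgl}, or can be extracted directly from Riou; see also Soulé's work which is cited alongside Riou in the section introduction). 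Crucially, since the $\KGL_\bb Q^{(j)}$ are defined as summands cut out by idempotents in $\End(\KGL_\bb Q)$ which are themselves natural (built from $\psi^k$, which is an absolute operation), the decomposition is \emph{stable under arbitrary pullback} $f^*$. This immediately gives part (1): both $\Fil^\star_{\bb A,\sub{cdh}}\KH(X)_\bb Q$ and $\Fil^\star_\bb A\KH(X)_\bb Q$ are computed by $\map_{\SH(X)}(1_X,-)$ applied to $\bigoplus_{i\ge\star}\KGL_\bb Q^{(i)}$ — in the first case after pulling back this decomposition from $\Spec(\bb Z)$, in the second case forming it directly over $X$ — and these agree because the decomposition commutes with $f^*$. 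The same argument works on graded pieces, giving the comparison $\bb Q(j)^{\bb A,\cdh}(X)\simeq \bb Q(j)^\bb A(X)$.

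For part (2): the decomposition $\KGL_\bb Q \simeq \bigoplus_j \KGL_\bb Q^{(j)}$ exhibits a \emph{splitting} of the slice filtration rationally, i.e. a multiplicative equivalence of filtered motivic spectra $\Fil^\star_\sub{slice}\KGL_\bb Q \simeq \bigoplus_{j\ge\star} s^j\KGL_\bb Q$ in $\Fil\SH(X)$ — this follows because a $\bb Z$-indexed direct sum decomposition indexed compatibly with the filtration degree \emph{is} such a splitting. (One has to be slightly careful about multiplicativity: the Adams decomposition is a decomposition of $\bb E_\infty$-algebras, with $\KGL_\bb Q^{(0)}$ the unit summand, so the induced filtered splitting is multiplicative.) Applying the lax symmetric monoidal functor $\map_{\SH(X)}(1_X,-)$ and shearing (twisting the $j$-th summand by $[2j]$ as in Remark \ref{rmk:shearing1}, which is legitimate since everything is now $\bb Z$-linear by the construction in \S\ref{subsec:low-weights}) yields the desired multiplicative filtered splitting $\Fil^\star_\bb A\KH(X)_\bb Q \simeq \bigoplus_{j\ge\star}\bb Q(j)^\bb A(X)[2j]$. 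Part (3) is then immediate: a split filtration gives a degenerate associated spectral sequence and a split abutment filtration on homotopy groups.

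For part (4): when $X$ is regular Noetherian, $\KH(X)=\K(X)$ and $\K(X)_\bb Q = \bigoplus_{j}\K(X)_\bb Q^{(j)}$ is the classical Adams/$\gamma$-filtration eigenspace decomposition of Soulé. The splitting of part (3) is induced by exactly the Adams operations, so by construction the $j$-th graded piece $\bb Q(j)^\bb A(X)[2j]$ evaluated on homotopy is the $\psi^k$-eigenspace with eigenvalue $k^j$; hence $H^i_\bb A(X,\bb Q(j)) = \pi_{2j-i}(\bb Q(j)^\bb A(X)[2j]) = \K_{2j-i}(X)_\bb Q^{(j)}$, with the convention of footnote \ref{footnote_neg_adams} matching the fact that the slice filtration on $\KGL$ is supported in non-negative weights (so no negative-weight eigenspaces contribute). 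This requires checking that our rational weight grading coincides with the classical Adams grading on $\K$-theory of regular schemes — which is precisely the content of the identification $s^j\KGL_\bb Q \simeq \KGL_\bb Q^{(j)}$ evaluated on such $X$, together with $\omega^\infty s^j\KGL_X(X) = \bb Z(j)^\bb A(X)[2j]$ (Remark \ref{rem_coh_over_base}).

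\textbf{Main obstacle.} The genuinely delicate point is establishing the compatibility of the slice filtration with the Adams decomposition \emph{as filtered $\bb E_\infty$-algebra objects and compatibly with pullback}, in the generality of arbitrary qcqs schemes. Over fields this is classical (Soulé, Riou), but here one needs: (a) that Riou's Adams operations exist on $\KGL$ as an absolute motivic spectrum with coherent multiplicativity, (b) that $\Fil^j_\sub{slice}\KGL_\bb Q$ is \emph{exactly} the sum of eigenspaces of weight $\ge j$ — rather than merely a refinement — which uses that the slices $s^j\KGL$ are "pure of weight $j$" (a statement that over general bases may itself rely on the earlier identification of $s^0\KGL$), and (c) that all of this is $f^*$-equivariant. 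I expect that (b) and (c) can be reduced to the field case by Proposition \ref{prop:conservative} (conservativity of pullback to points) once the relevant objects are known to be compatible with pullback, so the real work is bookkeeping with Riou's construction; this is presumably why the section introduction says the proof "relies on previous work of Riou and Soulé."
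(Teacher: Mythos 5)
Your overall route is the same as the paper's: Riou's Adams decomposition of $\KGL_\bb Q$, the identification of the rational slice filtration with the eigenspace decomposition, the multiplicative splitting via the $\bb E_\infty$-structure with $\KGL_\bb Q^{(0)}$ the unit summand, and then parts (3) and (4) as formal consequences together with the comparison to Soul\'e's classical Adams operations. The issue is the step you yourself flag as the main obstacle, namely (b): the assertion that $\Fil^j_\sub{slice}\KGL_\bb Q$ is \emph{exactly} $\bigoplus_{i\ge j}\KGL_\bb Q^{(i)}$, equivalently that $\Fil^1_\sub{slice}\KGL^{(0)}_X=0$ over an arbitrary qcqs base. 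Your proposed fix --- reduce (b) and (c) to the field case via conservativity of pullback to points (Proposition \ref{prop:conservative}) --- does not work as stated. Conservativity lets you check that an object vanishes, or (by Proposition \ref{prop:detect-effective}) that it is effective, after pullback to residue fields; but $\Fil^1_\sub{slice}E=\iota^1 r^1 E$ is built from a right adjoint that is not known to commute with arbitrary base change, so knowing $\Fil^1_\sub{slice}\KGL^{(0)}_k=0$ for all fields $k$ does not imply $\Fil^1_\sub{slice}\KGL^{(0)}_X=0$: the pullback $i_x^*\Fil^1_\sub{slice}\KGL^{(0)}_X$ is not identified with $\Fil^1_\sub{slice}\KGL^{(0)}_{k(x)}$. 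Indeed this failure of slice covers to commute with $f^*$ is precisely the difficulty the whole paper is organised around (it is why $\bb Z(j)^{\bb A}$ and $\bb Z(j)^{\bb A,\cdh}$ have to be distinguished at this stage), and the paper's proof of the corresponding statement explicitly notes that the field case does not formally imply the general one.

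What the paper does instead at this point: effectivity of $\KGL^{(0)}_X$ \emph{is} checked on fields (using Levine's theorem $s^0(1_k)\simeq s^0(\kgl_k)$), but the vanishing $\Fil^1_\sub{slice}\KGL^{(0)}_X=0$ is proved by observing that the presheaf $X\mapsto \map_{\SH(X)}(1_X,\KGL^{(0)}_X\otimes\bb T_X^{\otimes -j})$, $j>0$, is a natural direct summand of $\KH(-)_\bb Q$, which is the cdh sheafification of the left Kan extension of its restriction to smooth $\bb Z$-schemes; this reduces the vanishing to smooth affine $\bb Z$-schemes, where one uses vanishing of negative $K$-groups of regular rings and Soul\'e's finite length of the $\gamma$-filtration (so that $\psi^k-k^{-j}$ acts invertibly on $\K_n(X)_\bb Q$ for $j>0$). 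So to complete your argument you need to replace the conservativity reduction by an argument of this kind (or some other genuine input); once (b) is in place in this form, your treatment of (1)--(4), including the multiplicativity caveat you mention (which the paper handles via the idempotence of $H\bb Q$ and the free $\bb E_\infty$-algebra on the invertible module $H\bb Q\otimes\bb T$), matches the paper's proof.
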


To prove Theorem \ref{thm:rational} we need some non-trivial inputs from rational motivic cohomology and $\bb A^1$-homotopy theory, in particular the Adams decomposition of $\KGL_X$ due to Riou:

\begin{theorem}[Riou \cite{Riou2010}]\label{thm_riou}
For any qcqs scheme $X$, there exist in $\SH(X)$ a natural direct sum decomposition
\[
(\KGL_X)_\bb Q\simeq\bigoplus_{j\in\bb Z}\KGL_X^{(j)}
\] and endomorphisms $\psi^k$ of $(\KGL_X)_\bb Q$, for $k\in\bb Z\setminus\{0\}$, with the following properties:
\begin{enumerate}
\item $\psi^k\psi^{\ell}$ is homotopic to $\psi^{k\ell}$ for all $k,\ell\in\bb Z\setminus\{0\}$;
\item each direct summand $\KGL_X^{(j)}$ of $\KGL_{\bb Q}$ is preserved up to homotopy by $\psi^k$, where it acts as multiplication by $k^m$;
\item for each $j\in\bb Z$, the rationalisation of the Bott isomorphism $\KGL_X\otimes\bb T\quis \KGL_X$ restricts to an equivalence $\KGL_X^{(j)}\otimes\bb T\quis\KGL_X^{(j+1)}$ (whence $\KGL_X^{(0)}\otimes\bb T^{\otimes j}\quis \KGL_X^{(j)}$);
\item if $X$ is a regular Noetherian scheme with an ample family of line bundles then, for any $n\ge0$, the endomorphism of 
\[
\pi_n\map_{\SH(X)}(1_X,(\KGL_X)_\bb Q)=K_n(X)_\bb Q
\] induced by $\psi^k$ is the classical $k^\sub{th}$ Adams operator of Hiller \cite{Hiller1981}, Kratzer \cite{Kratzer1980}, and Soul\'e \cite{Soule1985}.
\end{enumerate}
\end{theorem}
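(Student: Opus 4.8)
The plan is to deduce Theorem~\ref{thm_riou} by reduction to the case of a field via standard formal properties of $\SH(-)$, together with the explicit construction of Adams operations on $\KGL$ over a general base. First I would recall that Riou \cite{Riou2010} constructs the Adams operations $\psi^k$ directly on $\KGL_{\bb Z}$ (or even on the absolute motivic spectrum $\KGL$): they arise from a universal formula applied to the $\lambda$-ring structure on $\KGL$, and then pull back along the structure map $f:X\to\Spec(\bb Z)$ to give endomorphisms of $\KGL_X$. Properties (1), (2) and (3) are then purely formal: the Adams relation $\psi^k\psi^\ell\simeq\psi^{k\ell}$, the splitting $(\KGL_X)_{\bb Q}\simeq\bigoplus_j\KGL_X^{(j)}$ into simultaneous eigenspaces for the commuting family $\{\psi^k\}$ (using that $\bb Q$-linearly one may extract eigenspaces by idempotents built from the $\psi^k$), and compatibility of these eigenspaces with Bott periodicity, all follow by pullback from the corresponding statements over $\Spec(\bb Z)$; indeed the construction of the eigenspace decomposition is natural in the base since $f^*$ is symmetric monoidal and exact.

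The content of the theorem is therefore property (4): the identification of the induced operation on $\pi_n\map_{\SH(X)}(1_X,(\KGL_X)_{\bb Q}) = K_n(X)_{\bb Q}$ (using Bott periodicity and the identification $\omega^\infty\KGL_X = \KH$, which on regular Noetherian $X$ with an ample family of line bundles coincides with $\K$) with the classical Adams operation of Hiller--Kratzer--Soul\'e. Here I would argue as follows. The classical Adams operation on $K_0(X)$ is determined by its values on classes of line bundles, $\psi^k([L]) = [L^{\otimes k}]$, and by the fact that it is a ring homomorphism; the higher Adams operations on $K_n(X)$ are then pinned down by naturality, the projective bundle formula, and the splitting principle, essentially because $K$-theory of regular schemes with an ample family of line bundles is generated in a suitable sense by such geometric input (this is how Soul\'e \cite{Soule1985} and Gillet--Soul\'e characterise them). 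So it suffices to check that Riou's motivic $\psi^k$ agrees with the classical one on the universal example: one reduces, using naturality and that both operations are compatible with filtered colimits and with the maps $K_n(X)\to K_n(X')$, to the case of $K_0$ of a point, or to the tautological line bundle on $\bb P^N_{\bb Z}$, where Riou's formula manifestly computes $[L]\mapsto[L^{\otimes k}]$ by construction of the $\lambda$-ring structure on $\KGL$. I expect the authors in fact simply to quote \cite[Theorem 5.3.2 or \S VI]{Riou2010}, where exactly this comparison over a regular base is carried out.

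The main obstacle, and the reason this is not entirely trivial, is matching the two a priori different characterisations of higher Adams operations: the motivic side defines $\psi^k$ by a global operation on the spectrum $\KGL$, while the classical side (Hiller, Kratzer, Soul\'e) defines them via operations on the plus-construction / $\lambda$-operations on $BGL$, and the equivalence of these with what one gets from $\KGL$ requires knowing that the Morel--Voevodsky identification $\Omega^\infty\KGL_X\simeq L_{\Nis,\bb A^1}(\bb Z\times BGL)$ (cf.\ the proof of Lemma~\ref{lemma_unstable_slices_of_K}) is compatible with the respective $\lambda$-ring structures. This compatibility is precisely the technical heart of \cite{Riou2010}, so in practice the proof of Theorem~\ref{thm_riou} will consist of assembling the pullback-invariance arguments for (1)--(3) and then invoking Riou's comparison theorem for (4); I would not attempt to reprove the latter. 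One should also take care that the hypothesis ``ample family of line bundles'' is used exactly where the splitting principle / comparison with the classical operations is invoked, and that the convention $\K_n(X)_{\bb Q}^{(j)}=0$ for $n<0$ (footnote~\ref{footnote_neg_adams}) is consistent with the eigenspace decomposition of $\KGL_X^{(j)}$ after applying $\omega^\infty$ and taking global sections, which holds since for such $X$ one has $\KH=\K$ with vanishing negative $K$-groups.
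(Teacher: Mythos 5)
Your treatment of (1)--(3) coincides with the paper's: the operations and the eigenspace decomposition are taken from Riou over regular Noetherian bases (in fact the paper emphasises they form a decomposition of the cartesian section $\KGL$ over regular Noetherian schemes), extended to arbitrary qcqs $X$ by pullback along $X\to\Spec(\bb Z)$, so that (1)--(3) reduce to the case of $\bb Z$ with precise citations to Riou. No issue there.

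The gap is in (4). Your main move is to ``simply quote'' Riou's comparison with the classical Adams operations, but the paper states explicitly that part (4) is never stated in \cite{Riou2010} (nor, apparently, anywhere else), and this is exactly the point it has to prove by hand; so the citation you lean on does not exist. Your fallback sketch --- that the higher classical operations are ``pinned down by naturality, the projective bundle formula and the splitting principle'', so it suffices to check agreement on line bundles --- is not a statement you can invoke as it stands: higher $K$-groups are not generated by line bundle classes, and the rigidity you need has to come from representability. The way the paper makes this precise is to observe that both operations are (homotopy classes of) maps of presheaves of pointed spaces out of $\bb Z\times\BGL$: Soul\'e's by construction via $R_{\bb Z}(\GL)\to\pi_0\Map(\BGL,\Omega^\infty\K)$ and the plus construction, Riou's by applying $\Omega^\infty\omega^\infty$ to the spectrum-level operation. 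Using the motivic equivalence $\BGL\simeq\colim_{d,r}\mathrm{Gr}(d,r)$ and a vanishing $\lim^1$ argument, such maps into $\Omega^\infty\K_{\bb Q}$ are determined by compatible elements of $K_0(\bigsqcup_{-n}^{n}\mathrm{Gr}(d,r))_{\bb Q}$; then one needs that \emph{both} operations are additive on $K_0$ rationally (for Riou's this is because it comes from a map of spectra, for Soul\'e's because it is a $\lambda$-ring endomorphism), and finally the splitting principle together with Jouanolou's trick --- this is where the hypothesis of an ample family of line bundles actually enters, via quasi-projectivity of the flag bundle --- reduces to checking $[L]\mapsto[L^{\otimes k}]$ for line bundles on affines, which holds for Soul\'e by $\lambda$-ring generalities and for Riou by his defining formula on $\roi_{\bb P^N}(1)$. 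You correctly identify the compatibility of the Morel--Voevodsky identification with the two constructions as the crux, but you delegate it to a reference that does not contain it; supplying the representability/$\lim^1$ reduction, the additivity point, and the Jouanolou step is precisely the missing content.
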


\begin{proof}[Proof overview]
As well as Riou \cite{Riou2010}, we recommend \cite[\S9]{AnnalaHoyoisIwasa2025} for a modern proof of such results in the more general, non-$\bb A^1$-invariant context (which also has the advantage that it does not require the separability assumption on schemes which is often present in \cite{Riou2010}).

Riou works with regular Noetherian schemes, but this is sufficient to prove the theorem in general. Indeed, by naturality and the fact that $X\mapsto\KGL_X$ is an absolute motivic spectrum, we see that his Adams decomposition is preserved under pulling back along any map of regular Noetherian schemes (more precisely, it provides a decomposition of $\KGL$ as a cartesian section of the cartesian fibration classified by $\SH:(\Sch^\sub{reg.~noeth.})^\sub{op} \rightarrow \Cat_{\infty}$). The Adams operators of Riou are similarly preserved under pulling back along any map of regular Noetherian schemes, as follows from their definition \cite[Definition~5.3.2]{Riou2010}. We extend the decomposition and operators to arbitrary qcqs schemes $X$ by pulling back along the structure map $X\to\Spec(\bb Z)$; by what we have just explained, this does not change the decomposition or operators in the regular Noetherian case. Furthermore, in this way properties (1)--(3) reduce to the case $X=\bb Z$.

We now give precise references to \cite{Riou2010} to prove (1)--(3) in the regular Noetherian case. The Adams operations are constructed in \cite[Definition~5.3.2]{Riou2010}, where (1) is explained; the decomposition is \cite[Definition~5.3.9 \& Theorem~5.3.10]{Riou2010}. Part (2) is \cite[Proposition~5.3.14]{Riou2010}; part (3) is explained after \cite[Definition~5.3.17]{Riou2010}. Part (4) is never explicitly stated in \cite{Riou2010} (nor, seemingly, elsewhere in the literature), so we provide a reasonably detailed explanation of the proof.

We begin by providing a modern account of Soul\'e's construction of his Adams operations; see also \cite[\S3.3]{Riou2010}. Let $X$ be a regular Noetherian scheme, and let $\Omega^\infty \K \in \cal P:=\opp{PSh}(\Sm_X,\opp{Spc}_*)$ be the $K$-theory presheaf of pointed spaces on smooth $X$-schemes. For each $d\ge1$, Soul\'e \cite[\S4.3]{Soule1985} defines a map $R_\bb Z(\GL_d)\to \pi_0\Map_\cal P(\BGL_d,\Omega^\infty \K)$
where $R_\bb Z(\GL_d)$ is Grothendieck's representation ring of the algebraic group $\GL_d$ over $\bb Z$. Putting $\BGL:=\colim_d\BGL_d$, we now let $d\to\infty$ to obtain \begin{equation}R_\bb Z(\GL):=\lim_dR_\bb Z(\GL_d)\To \pi_0\Map_\cal P(\BGL,\Omega^\infty \K).\label{eqn:Soule}\end{equation} (Here we have implicitly used that the canonical map $\pi_0\Map_\cal P(\BGL,\Omega^\infty \K)\to \lim_d\pi_0\Map_\cal P(\BGL_d,\Omega^\infty \K)$ is an isomorphism; indeed, the obstruction term $\opp{Rlim}^1_d\pi_1\Map_\cal P(\BGL_d,\Omega^\infty \K)$ vanishes because the transition maps as $d\to\infty$ are surjective, due to the standard calculation of the $K$-theory of $\BGL_d$ in terms of Chern classes.) Moreover, by the universal property of the $+$-construction, we may replace $\BGL$ by $\BGL^+$ in line (\ref{eqn:Soule}). Using the rank map of the representation to incorporate a copy of $\bb Z$\NB{I don't understand this step in Soule. What does one do to $\bb Z$?}, and identifying $\Omega^\infty \K$ with the Nisnevich sheafification of $\bb Z\times\BGL^+$, this defines Soul\'e's final map \begin{equation}R_\bb Z(\GL)\To \pi_0\Map_\cal P(\Omega^\infty \K,\Omega^\infty \K).\label{eqn:Soule2}\end{equation} Each of his Adams operators on the $K$-groups of smooth $X$-schemes is then induced by a certain endomorphism $\psi^k_\sub{S}:\Omega^\infty \K\to \Omega^\infty \K$ in the image of (\ref{eqn:Soule2}).

Meanwhile, via the functors $\omega^\infty:\SH(X)\to \opp{Sh}_{\sub{Nis},\bb A^1}(\Sm_X,\opp{Sp})$ and $\Omega^\infty:\opp{Sp}\to\opp{Spc}_*$, each of Riou's Adams operators $\psi^k$ on $\KGL_\bb Q$ induces an endomorphism $\psi^k_\sub{R}:\Omega^\infty \K_\bb Q\to \Omega^\infty \K_\bb Q$.

We now prove (4). So let us assume further that $X$ has an ample family of line bundles. Our goal is to show that $\psi^k_\sub{S}$ and $\psi^k_\sub{R}$ induce the same endomorphism on $\K_n(X)_\bb Q$, for all $n\ge0$. In fact we will show that the following diagram of presheaves of pointed spaces on $\Sm_X$ commutes up to homotopy:
\begin{equation*}
\begin{CD}
\Omega^\infty \K @>>> \Omega^\infty \K_\Q \\
@V{\psi^k_S}VV @V{\psi^k_R}VV \\
\Omega^\infty \K @>>> \Omega^\infty \K_\Q.
\end{CD}
\end{equation*}
In other words, we wish to show that two elements of $\pi_0\Map_\cal P(\bb Z\times\BGL,\Omega^\infty \K_\bb Q)$ coincide. By again writing $\BGL=\opp{colim}_d\BGL_d$, and using the motivic equivalence between $\BGL_d$ and the infinite Grassmannian $\mathrm{Gr}(d,\infty)=\opp{colim}_r\mathrm{Gr}(d,r)$ \cite[\S4.3 Proposition~3.7]{MorelVoevodsky1999}, and noting that the $\opp{Rlim}^1_r$ obstruction vanishes (similarly to above; see \cite[Lemma~1.2.10]{Riou2010}), we obtain an isomorphism \[\pi_0\Map_\cal P(\bb Z\times\BGL,\Omega^\infty \K_\bb Q)\isoto \lim_{d,r,n}\pi_0\Map_\cal P(\{-n,\dots,n\}\times \mathrm{Gr}(d,r),\Omega^\infty \K_\bb Q).\] Each $\pi_0\Map_\cal P$ on the right is $\K_0(\bigsqcup_{-n}^n \mathrm{Gr}(d,r))_\bb Q$, and so we have reduced the problem to showing that the diagram
\begin{equation*}
\begin{CD}
\K_0(Y) @>>> \K_0(Y)_\bb Q \\
@V{\psi^k_S}VV @V{\psi^k_R}VV \\
\K_0(Y) @>>> \K_0(Y)_\bb Q.
\end{CD}
\end{equation*}
commutes for each of the smooth $X$-schemes $Y=\bigsqcup_{-n}^n \mathrm{Gr}(d,r)$, for $d,r,n\ge0$. A priori this is only a diagram of sets. However, $\psi_\sub{R}^k$ was induced via $\Omega^\infty$ from a map of rational $K$-theory spectra, so in fact the endomorphism it induces on $\K_0(-)_\bb Q$ is additive. Soul\'e's $\psi^k_\sub{R}$ is also additive on $\K_0$: it is even an endomorphism of special $\lambda$-rings \cite[\S1.5]{Soule1985}.

We now apply the splitting principle (for each $Y=\bigsqcup_{-n}^n \mathrm{Gr}(d,r)$, for some $d,r,n$, and each vector bundle $P$ on $Y$, the complete flag variety $\mathrm{Fl}(P)$ is a smooth projective $Y$-scheme such that $\K_0(Y)\to \K_0(\mathrm{Fl}(P))$ is injective and $P$ becomes a sum of classes of line bundles in $\K_0(\mathrm{Fl}(P))$) and Jouanolou's trick (there is an affine scheme $Q$ together with a smooth map $Q\to \mathrm{Fl}(P)$ such that $\K_0(\mathrm{Fl}(P))\to \K_0(Q)$ is injective). To apply Jouanolou's trick as in \cite[Proposition 4.4]{Weibel1989a}, we note that $\mathrm{Fl}(P)$ is quasi-projective over $X$ and therefore admits an ample family of line bundles by \cite[Example 2.1.2(h)]{Thomason1990}. The problem has thus been reduced to showing, for each affine smooth $X$-scheme $Q$ and line bundle $L$ on $Q$, the elements $\psi^k_\sub{S}([L])$ and $\psi^k_\sub{R}([L])$ coincide in $\K_0(Q)_\bb Q$. We claim they are both given by $[L^{\otimes k}]$. In the case of Soul\'e's Adams operator this follows from general theory of lambda rings, since $[L]\in \K_0(Q)$ is rank one for the lambda ring structure \cite[\S4]{{Kratzer1980}}. For Riou's Adams operator it is easily deduced from the following facts: (1) $\psi^k_\sub{R}$ is additive on $\K_0(Q)_\bb Q$ (as already noted above), (2) any line bundle on $Q$ is the pullback of $\scr O_{\bb P_X^N}(1)$ along some map $Q\to\bb P_X^N$ for some $N\gg0$ (since $Q$ is affine, so any line bundle on it is globally generated), and (3) by the definition of $\psi^k_\sub{R}$ \cite[Definition~ 5.3.2]{Riou2010}, one has $\psi^k_\sub{R}([\scr O_{\bb P_X^N}(1)]-1)=[\scr O_{\bb P_X^N}(k)]-1$ in $\K_0(\bb P_X^N)$.
\end{proof}

We now present various consequences of the Adams decomposition in rational motivic homotopy theory.

\begin{corollary}\label{corol_Adams}
For any regular Noetherian scheme with an ample family of line bundles $X$, $n\ge0$, and $j\in\bb Z$, there is a natural isomorphism $\pi_n\map_{\SH(X)}(1_X,\KGL_X^{(j)})\cong \K_n(X)_{\bb Q}^{(j)}$ (where the right hand side refers to the $j^\sub{th}$ Adams eigenspace of $\K_n(X)_\bb Q$).
\end{corollary}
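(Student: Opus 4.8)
The plan is to extract this isomorphism from Theorem~\ref{thm_riou} by taking homotopy groups of the $j$-th Adams summand and matching up the operator-theoretic eigenspace decompositions on both sides. First I would apply the functor $\pi_n\map_{\SH(X)}(1_X,-)$ to the direct sum decomposition $(\KGL_X)_\bb Q\simeq\bigoplus_{j\in\bb Z}\KGL_X^{(j)}$ of Theorem~\ref{thm_riou}; since $1_X$ is compact and the mapping spectrum functor is exact, this yields a natural decomposition
\[
\K_n(X)_\bb Q=\pi_n\map_{\SH(X)}(1_X,(\KGL_X)_\bb Q)\cong\bigoplus_{j\in\bb Z}\pi_n\map_{\SH(X)}(1_X,\KGL_X^{(j)}).
\]
Here I use that for $X$ regular Noetherian with an ample family of line bundles, $\pi_n\map_{\SH(X)}(1_X,(\KGL_X)_\bb Q)\cong \K_n(X)_\bb Q$ (the rationalisation of the identification $\pi_n\map_{\SH(X)}(1_X,\KGL_X)\cong\KH_n(X)=\K_n(X)$ on such $X$, using $\KH=\K$ for regular Noetherian schemes).

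Next I would identify the $j$-th summand with the classical Adams eigenspace. By Theorem~\ref{thm_riou}(2), the operator $\psi^k$ on $(\KGL_X)_\bb Q$ preserves each summand $\KGL_X^{(j)}$ and acts there by multiplication by $k^j$; hence the induced endomorphism of $\pi_n\map_{\SH(X)}(1_X,\KGL_X^{(j)})$ is multiplication by $k^j$, so this group sits inside the $j$-th eigenspace of $\psi^k$ acting on $\K_n(X)_\bb Q$. By Theorem~\ref{thm_riou}(4), the endomorphism of $\K_n(X)_\bb Q$ induced by $\psi^k$ is the classical $k$-th Adams operator of Hiller--Kratzer--Soul\'e. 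Since the summands $\pi_n\map_{\SH(X)}(1_X,\KGL_X^{(j)})$ give a direct-sum decomposition of $\K_n(X)_\bb Q$ on which $\psi^k$ acts diagonally with eigenvalue $k^j$ on the $j$-th piece, and since on a regular Noetherian scheme (where all Adams weights are bounded, or more concretely using a single $k\geq 2$ so the eigenvalues $k^j$ for distinct $j$ are distinct) the eigenspace decomposition of $\K_n(X)_\bb Q$ for the classical Adams operator is precisely the definition of $\K_n(X)_\bb Q^{(j)}$, we conclude that $\pi_n\map_{\SH(X)}(1_X,\KGL_X^{(j)})$ is exactly the $j$-th Adams eigenspace $\K_n(X)_\bb Q^{(j)}$. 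Naturality in $X$ is inherited from the naturality of the decomposition and operators in Theorem~\ref{thm_riou}, together with the functoriality of $\map_{\SH(X)}(1_X,-)$ along pullbacks.

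The main obstacle I anticipate is bookkeeping around the comparison of the two eigenspace decompositions rather than any deep input: one must be careful that the classical $\K_n(X)_\bb Q^{(j)}$ is \emph{defined} as the simultaneous eigenspace for all (or for one suitable) Adams operators, and that a priori the decomposition coming from Riou's summands could be coarser or finer. This is resolved by the standard argument that a family of commuting diagonalisable operators whose joint eigenvalues separate the indices $j$ (e.g.\ taking $\psi^2$, whose eigenvalues $2^j$ are pairwise distinct) forces the two decompositions to coincide; the convention $\K_n(X)_\bb Q^{(j)}=0$ for $n<0$ (footnote~\ref{footnote_neg_adams}) matches the vanishing of the negative homotopy groups of $\map_{\SH(X)}(1_X,\KGL_X^{(j)})$ for regular Noetherian $X$. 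A minor secondary point is to confirm that the ample-family hypothesis is exactly what is needed to invoke Theorem~\ref{thm_riou}(4) and the identification $\pi_0\map_{\SH(X)}(1_X,(\KGL_X)_\bb Q)\cong\K_0(X)_\bb Q$, which it is.
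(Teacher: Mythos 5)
Your proposal is correct and follows essentially the same route as the paper: apply $\pi_n\map_{\SH(X)}(1_X,-)$ to Riou's decomposition (using compactness of $1_X$ and $\KH=\K$ on regular Noetherian schemes), note via Theorem~\ref{thm_riou}(2) that $\psi^k$ acts by $k^j$ on the $j$-th summand and by $k^i$ with $k^i\neq k^j$ on the complementary summands for a fixed $k\ge 2$, and invoke Theorem~\ref{thm_riou}(4) to identify the operator with the classical Adams operator, so the $j$-th summand is exactly $\ker(\psi^k-k^j)=\K_n(X)_\bb Q^{(j)}$. The paper phrases the eigenspace comparison simply as ``$\psi^k-k^j$ acts invertibly on the complementary piece,'' which is the same observation as your distinct-eigenvalues argument.
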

\begin{proof}
Theorem \ref{thm_riou} implies that $\K_n(X)_\bb Q$ decomposes into the direct sum of $\pi_n\map_{\SH(X)}(1_{X},\KGL_X^{(j)})$ and a complementary piece (represented by the sum of $\KGL_X^{(i)}$ for $i\neq j$), such that the usual Adams operator $\psi^k$ acts as $k^j$ on the first summand, and $\psi^k-k^j$ acts invertibly on the second summand. Therefore $\pi_n\map_{\SH(X)}(1_X,\KGL_X^{(j)})$ is the kernel of $\psi^k-k^j$ on $\K_n(X)_\bb Q$ (for any $k\ge1$), which is by definition $\K_n(X)_\bb Q^{(j)}$.
\end{proof}

\begin{corollary}\label{cor:rational-smd}
For any qcqs scheme $f:X\to \Spec(\bb Z)$, the following hold in $\SH(X)$:
\begin{enumerate}
\item The motivic spectrum $\KGL^{(0)}_X$ is effective and $\Fil^1_\sub{slice}\KGL^{(0)}_X=0$.
\item The map $\KGL^{(0)}_X\to(\KGL_X)_\bb Q$ induces an equivalence $\KGL^{(0)}_X\quis s^0(\KGL_X)_\bb Q$.
\item The canonical map $f^*s^0(\KGL_{\bb Z}) \rightarrow s^0(\KGL_X)$ is a rational equivalence.
\item The canonical map $f^*\kgl_{\bb Z} \To \kgl_{X}$ is a rational equivalence.
\item The canonical map $s^0(1_{X}) \To s^0(\KGL_X)$ is a rational equivalence.
\item The canonical map $f^*s^0(1_{\bb Z}) \To s^0(1_{X})$ is a rational equivalence.
\end{enumerate}
\end{corollary}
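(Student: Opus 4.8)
\textbf{Proof proposal for Corollary \ref{cor:rational-smd}.}

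The plan is to prove the six statements more or less in the listed order, bootstrapping from the Adams decomposition of Theorem \ref{thm_riou} and the already-known comparison $s^0(1_k)\quis s^0(\kgl_k)$ over perfect fields (Remark \ref{rem_Levine}, via Proposition \ref{prop_1_vs_V}).

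\emph{Parts (1) and (2).} First I would establish these over a base $X$ which is regular Noetherian with an ample family of line bundles; the general case then follows by pulling back along the structure map, since $X\mapsto\KGL_X$ is an absolute motivic spectrum and the Adams decomposition and operators are compatible with arbitrary pullback (as recalled in the proof of Theorem \ref{thm_riou}), and since effectivity can be checked on fields by Proposition \ref{prop:detect-effective}. To see that $\KGL_X^{(0)}$ is effective, reduce to fields via Proposition \ref{prop:detect-effective} and use that $\KGL_k^{(0)}$ is a rational summand of $\kgl_k$, which is effective (even very effective). For $\Fil^1_\sub{slice}\KGL_X^{(0)}=0$: by Theorem \ref{thm_riou}(3) the Bott equivalence restricts to $\KGL_X^{(0)}\otimes\bb T\quis\KGL_X^{(1)}$, so $\KGL_X^{(0)}$ being $1$-effective would force $\KGL_X^{(0)}$ and $\KGL_X^{(1)}$ to be equivalent under the appropriate identification, contradicting the fact that $\psi^k$ acts by $1$ on the former and $k$ on the latter. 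More carefully: $\Fil^1_\sub{slice}\KGL_X^{(0)}\in\SH(X)^\sub{eff}(1)$ is the $1$-effective part of a sum of Adams summands, and the decomposition $(\KGL_X)_\bb Q=\bigoplus_j\KGL_X^{(j)}$ together with $\Fil^1_\sub{slice}(\KGL_X)_\bb Q=\bigoplus_j\Fil^1_\sub{slice}\KGL_X^{(j)}$ (slice filtration commutes with the sum) and $\Fil^1_\sub{slice}\KGL_X^{(j)}\simeq\bb T\otimes\Fil^0_\sub{slice}\KGL_X^{(j-1)}$ via \eqref{eq:tate-slice} and Theorem \ref{thm_riou}(3), forces $\Fil^1_\sub{slice}\KGL_X^{(0)}$ to be $\bb T\otimes(\text{effective cover of }\KGL_X^{(-1)})$; but $\KGL_X^{(-1)}$ is coeffective (its $\Fil^0_\sub{slice}$ vanishes), which again I would check on fields. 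Once (1) holds, (2) is immediate: $\KGL_X^{(0)}$ is effective with vanishing $\Fil^1_\sub{slice}$, so it is its own $s^0$, and it is the rationalisation of $s^0(\KGL_X)$ by uniqueness of the decomposition of $(\KGL_X)_\bb Q$ into slices.

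\emph{Parts (3), (4), (5), (6).} Part (3) follows by applying (2) to both $X$ and $\Spec(\bb Z)$: the canonical map $f^*s^0(\KGL_\bb Z)\to s^0(\KGL_X)$ becomes, after rationalisation, $f^*\KGL_\bb Z^{(0)}\to\KGL_X^{(0)}$, which is an equivalence because the Adams decomposition is compatible with pullback (Theorem \ref{thm_riou} and the absoluteness of $\KGL$). For (4): $\kgl=\Fil^0_\sub{slice}\KGL$, and rationally $\Fil^0_\sub{slice}(\KGL_X)_\bb Q=\bigoplus_{j\ge0}\KGL_X^{(j)}$ by the same slice-compatibility of the decomposition as above; each summand $\KGL_X^{(j)}\simeq\KGL_X^{(0)}\otimes\bb T^{\otimes j}$ is pulled back from $\bb Z$ by (3) plus Theorem \ref{thm_riou}(3), so $f^*\kgl_\bb Z\to\kgl_X$ is a rational equivalence. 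For (5): by Proposition \ref{prop_1_vs_V} the unit $1_X\to\cal V_X$ induces an equivalence on $s^0$, and $\cal V_X\to\kgl_X$ is an $\bb E_\infty$-map; the composite $s^0(1_X)\to s^0(\kgl_X)=s^0(\KGL_X)$ is the canonical map, and to see it is a rational equivalence I would reduce to fields using Proposition \ref{prop:conservative} applied to the fibre (which makes sense since $s^0$ commutes with $i_x^*$ for a smooth-fibre pullback — but here the points $x$ need the general pullback, so instead reduce via the fact that both $s^0(1)$ and $s^0(\KGL)$ are, rationally, pulled back from $\bb Z$ by (3) and part (6), making the statement over $\bb Z$; over a field it is Levine's theorem quoted in Remark \ref{rem_Levine}, which is already rational-equivalence-strength). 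Finally (6) is the analogue of (3) for the motivic sphere: $s^0(1_X)$ should, rationally, be pulled back from $\bb Z$, which I would deduce by combining (5) with (3) — the square relating $f^*s^0(1_\bb Z)\to s^0(1_X)$ and $f^*s^0(\KGL_\bb Z)\to s^0(\KGL_X)$ commutes, the right vertical and bottom horizontal are rational equivalences by (5) and (3), so the left vertical is too.

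\emph{Main obstacle.} The technical heart is the coeffectivity input $\Fil^1_\sub{slice}\KGL_X^{(-1)}$-type vanishing, i.e., showing $\KGL_X^{(0)}$ has vanishing $\Fil^1_\sub{slice}$ (part (1)). The subtlety is that this is not an instance of the already-established comparison over fields — over a field, $s^0(1_k)\quis s^0(\kgl_k)$ handles effectivity of the sphere-side, but the statement that the zeroth Adams summand is \emph{co}-effective (vanishing $\Fil^1_\sub{slice}$) really uses the eigenspace structure of the Adams operators interacting with the Bott periodicity identification of slices, and one must be careful that $\Fil^1_\sub{slice}$ commutes with the (infinite) direct sum decomposition of $(\KGL_X)_\bb Q$. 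I expect this to require either an argument that $\Fil^\star_\sub{slice}$ preserves the relevant colimits rationally, or a direct identification via Proposition \ref{prop:detect-effective} reducing to the known field case where $\KGL_k^{(0)}=s^0(\kgl_k)$ is genuinely effective with vanishing $\Fil^1_\sub{slice}$ by construction. The remaining parts are then formal bootstrapping.
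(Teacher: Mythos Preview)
Your overall strategy—bootstrap from (1) and absoluteness of $\KGL^{(0)}$—matches the paper's. But there are two genuine gaps.

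\textbf{The main gap is in (1).} You propose to show $\Fil^1_\sub{slice}\KGL_X^{(0)}=0$ by reducing to the claim that $\Fil^0_\sub{slice}\KGL_X^{(-1)}=0$ and then ``checking on fields''. This does not work: Proposition~\ref{prop:detect-effective} detects \emph{effectivity} on fields, not \emph{coeffectivity}. The vanishing $\Fil^0_\sub{slice}E=0$ is a right-orthogonality condition on $E$, and there is no reason for it to be stable under pullback or detectable on points. Even knowing $\Fil^0_\sub{slice}\KGL_k^{(-1)}=0$ for every field $k$ would not give the statement over a general $X$, because $\Fil^0_\sub{slice}$ does not commute with $f^*$. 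The paper proceeds quite differently: one must show $\map_{\SH(X)}(\bb T_X^{\otimes j},\KGL_X^{(0)})=0$ for all $j>0$ and all $X$. This presheaf is a direct summand of $X\mapsto\KH(X)_\bb Q$, hence is the cdh sheafification of the left Kan extension of its restriction to smooth $\bb Z$-schemes; so it suffices to check vanishing on smooth affine $\bb Z$-schemes. There one identifies the group with the Adams eigenspace $\K_n(X)_\bb Q^{(-j)}$ (negative weight) via Corollary~\ref{corol_Adams}, and then invokes Soul\'e's theorem that the $\gamma$-filtration on $\K_n(X)_\bb Q$ has finite length, so that $\psi^k-k^{-j}$ acts invertibly. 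This Soul\'e input is the missing idea.

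\textbf{A secondary gap in (5).} Your argument for (5) invokes (6), but you then derive (6) from (5), so the reasoning is circular. The paper breaks this by not comparing $s^0$'s directly: instead one shows that the fibre $F_X=\fib(1_X\to\kgl_X)$ is rationally $1$-effective. The point is that $(F_X)_\bb Q$ is an \emph{absolute} motivic spectrum, because $1$ is and $(\kgl)_\bb Q$ is by (4); therefore its $1$-effectivity \emph{can} be detected on fields by Proposition~\ref{prop:detect-effective}, where it is Levine's theorem. This use of (4) to make the fibre absolute (rather than working with $s^0$, which does not base-change well) is what makes the argument non-circular.
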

\begin{proof}
(1): Since $X\mapsto \KGL^{(0)}_X$ is an absolute motivic spectrum (as it is a direct summand of the absolute motivic spectrum $\KGL \otimes \bb Q$), to prove the effectivity assertion it is enough to prove that $\KGL^{(0)}_k$ is effective for all perfect fields $k$ (see Proposition \ref{prop:detect-effective}). Corollary \ref{corol_Adams} implies that the canonical map \[\pi_0\map_{\SH(k)}(1_{k},\KGL^{(0)}_k)\To \pi_0\map_{\SH(k)}(1_{k},(\KGL_k)_\bb Q)\] is an isomorphism, since it is given by $\K_0(k)_\bb Q^{(0)}\isoto \K_0(k)_\bb Q$; in particular, every map $1_{\SH(k)}\to(\KGL_k)_\bb Q$ factors through $\KGL_k^{(0)}$. It follows that the zero slice of the unit map $\eta:1_{\SH(k)}\to(\KGL_k)_\bb Q$ factors through $s^0(\KGL_k^{(0)})$. That is, we have a commutative diagram
\[\xymatrix{
s^0(1_{k})\ar[r]^{s^0(\eta)}\ar[dr]_{\exists} & s^0(\KGL_k)_\bb Q\\
& s^0(\KGL_k^{(0)})\ar[u]
}\]
Next we apply Levine's theorem \cite[Theorems 6.4.2 and 10.5.1]{Levine2008}, stating that the horizontal arrow in the above diagram is an equivalence; since the vertical arrow is split, we deduce that in fact all the arrows in the diagram are equivalences. By now decomposing $(\KGL_k)_\bb Q$ as in Theorem \ref{thm_riou}, it follows that $s^0(\KGL_k^{(j)})=0$ for all $j\in\bb Z\setminus\{0\}$, or in other words (using part (3) of the theorem) that $s^{-j}(\KGL_k^{(0)})=0$ for all such $j$. Since the slice filtration is exhaustive, this implies that $\KGL_k^{(0)}$ is effective\footnote{Up to convergence issues of the slice filtration it also shows that $\Fil^1_\sub{slice}\KGL_k^{(0)}=0$; but that does not formally imply the same for general $X$, whence the proof of part (1) is not finished at this point.} and so completes the proof that $\KGL_X^{(0)}$ is effective for all qcqs schemes $X$.

Next we show that $\Fil^1_\sub{slice}\KGL_X^{(0)} \simeq 0$; that is, we need to show for all smooth $X$-schemes $Y$ and $j>0$ that $\map_{\SH(X)}(M_X(Y)\otimes \bb T^{j}_X, \KGL_X^{(0)}) \simeq 0$. Replacing $X$ by $Y$ it suffices to treat the case $Y=X$, or in other words we must show that the presheaf $X\mapsto \map_{\SH(X)}(1_X, \KGL_X^{(0)}\otimes\bb T_X^{\otimes -j})$ is zero. But Theorem \ref{thm_riou} shows that this presheaf on qcqs schemes is a direct summand of the presheaf $X\mapsto \map_{\SH(X)}(1_X, \KGL_{X})_\bb Q\simeq \KH(X)_\bb Q$, which is the cdh sheafification of the left Kan extension of its restriction to smooth $\bb Z$-schemes (see the proof of Theorem \ref{prop:basic_props_of_cdh_mot}(1)). It follows that $X\mapsto \map_{\SH(X)}(1_X, \KGL_X^{(0)}\otimes\bb T_X^{\otimes -j})$ has the same property, and so to prove that it vanishes we may restrict attention to smooth affine $\bb Z$-schemes $X$.

We have reduced to showing that $\pi_n\map_{\SH(X)}(1_X, \KGL_X^{(0)}\otimes\bb T_X^{\otimes -j})=0$ for all smooth affine $\bb Z$-schemes $X$ and all $n\in\bb Z$. If $n<0$ this vanishing holds because the group is a direct summand of $\K_n(X)_\bb Q$ (by Theorem \ref{thm_riou}), which vanishes since $X$ is regular. Otherwise $n\ge 0$, in which case the group identifies with $\K_{n}(X)_{\bb Q}^{(-j)}$ by Corollary \ref{corol_Adams}, and so we must show that $\psi^k-k^{-j}$ acts invertibly on $\K_n(X)_\bb Q$ for any/all $k\in\bb Z\setminus\{0\}$. This holds because the $\bb N$-indexed gamma filtration $\K_n(X)_\bb Q=\Fil^0_\gamma\K_n(X)_\bb Q\supset \Fil^1_\gamma\K_n(X)_\bb Q\supset\cdots$ has finite length (for $n>0$ see \cite[Corollary~1]{Soule1985}; for $n=0$ see \cite[Exp.~VI, Theorem~6.9]{SGA_VI}), and $\psi^k-k^{-j}$ acts invertibly on the $i^\sub{th}$-graded piece as $k^i-k^{-j}\neq0$ (recall that $j>0$).

(2) \& (4): Part (1) implies that $\KGL_X^{(0)}$ agrees with its own zero slice, and so $s^0(\KGL_X^{(j)})=0$ for all $j\neq 0$. So equivalence (2) follows by taking zero slices of the Adams decomposition of Theorem \ref{thm_riou}. Moreover, taking effective covers yields a natural decomposition $(\kgl_X)_\bb Q\simeq\bigoplus_{j\ge0}\KGL_X^{(j)}$; so $(\kgl_X)_\bb Q$ is naturally a direct summand of $(\KGL_X)_\bb Q$ and part (4) now follows from the fact that $X\mapsto(\KGL_X)_\bb Q$ is an absolute motivic spectrum.

(3): This follows from (2) since we have already noted that $\KGL^{(0)}$ is an absolute motivic spectrum.

(5): Let $F_X\in\SH(X)$ be the fibre of the unit map $1_X \rightarrow \kgl_X$; we must show that $F_X$ is rationally $1$-effective. But note that $X\mapsto (F_X)_\bb Q$ is an absolute motivic spectrum (since this is true for the unit and we have shown in (4) that it holds rationally for $\kgl$) so it is enough to check that $F_k$ is rationally $1$-effective whenever $k$ is a perfect field. But this is even known integrally, thanks to Levine's theorem that $s^0(1_{\SH(k)}) \quis s^0(\kgl_k)$ which we already used in the proof of (1).

(6): We use part (5) to replace $s^0(1)$ by $s^0(\KGL)$ and then apply part (3).
\end{proof}

\begin{definition}
The absolute motivic spectrum $X\mapsto \KGL_X^{(0)}$ (by Corollary \ref{cor:rational-smd}) is known as the {\em Beilinson motivic spectrum}, and often denoted by $H\bb Q$.
\end{definition}

\begin{remark}[Properties of $H\bb Q$]\label{rem_CD}
If we point the Beilinson motivic spectrum via the composition $\eta:1_X\to\KGL_X\to H\bb Q_X$, then it is idempotent (i.e., $\opp{id}\otimes\eta:H\bb Q_X\to H\bb Q_X\otimes H\bb Q_X$ is an equivalence) and so $H\bb Q$ admits a unique $\bb E_\infty$-algebra structure  such that the map $\KGL_X\to H\bb Q_X$ is multiplicative. Under this structure the equivalence $s^0(1_X)_\bb Q\quis H\bb Q_X$ of Corollary \ref{cor:rational-smd} is multiplicative and so we prefer to view the Beilinson motivic spectrum as the $\bb E_\infty$-algebra $s^0(1_X)_\bb Q$. Furthermore, thanks to the idempotency, a motivic spectrum can be a module over $H\bb Q_X$ in at most one way (that is, the forgetful map $\Mod_{H\bb Q_X}(\SH(X))\to \SH(X)$ is fully faithful). See Cisinski--Déglise \cite[\S14.1\&14.2]{CisinskiDeglise2019} for proofs of these facts, which we will only need for multiplicativity in Theorem \ref{thm:rational}(2).
\end{remark}

\begin{remark}[Uniqueness of the Adams decomposition]\label{rem_adams_unique}
Theorem \ref{thm_riou} only stated the existence of some natural decomposition of $(\KGL_X)_\bb Q$; it did not assert the uniqueness of the decomposition nor provide any definition or characterisation of it. However, Corollary \ref{cor:rational-smd} has shown that, given any decomposition satisfying the conditions of Theorem \ref{thm_riou}, then there are natural equivalences $s^0(1_X)_\bb Q\quis\KGL_X^{(0)}\quis s^0(\KGL_X)_\bb Q$ for any qcqs scheme $X$. The Adams decomposition is then necessarily given by \[(\KGL_X)_\bb Q\simeq \bigoplus_{j\in\bb Z}s^0(1_X)_\bb Q\otimes\bb T^{\otimes j}_X,\] where $s^0(1_X)_\bb Q\to\KGL_\bb Q$ is the unique map of $\bb E_\infty$-algebras afforded by the previous remark, and the maps $\bb T^{\otimes j}_X\to (\KGL_X)_\bb Q$ are given by powers of the Bott element for $\KGL_X$.
\end{remark}

We are now prepared to prove the main theorem of the subsection about rational motivic cohomology:

\begin{proof}[Proof of Theorem~\ref{thm:rational}]
(1): Using the presentation of Remark \ref{rem_coh_over_base}, the desired equivalence $\bb Q(j)^{\bb A, \cdh} \quis \bb Q(j)^{\bb A}$ follows from Corollary \ref{cor:rational-smd}(3). Arguing inductively down the filtration, the desired equivalence $\mathrm{Fil}_{\bb A,\sub{cdh}}^{j}\KH_{\bb Q}\quis \mathrm{Fil}_{\bb A}^{j}\KH_\bb Q$ similarly follows from Corollary \ref{cor:rational-smd}(4).

(2): To prove part (2) without worrying about multiplicative structures we just observe that, in the Adams decomposition $(\KGL_X)_\bb Q \simeq\bigoplus_{j\in\bb Z}\KGL_X^{(j)}$, the slice filtration $\Fil^\star_\sub{slice}$ on the left hand side corresponds to the filtration $\bigoplus_{j\ge \star}$ on the right hand side. Indeed, from Theorem \ref{thm_riou} and Corollary \ref{cor:rational-smd}(1) we know that each $\KGL_X^{(j)}$ is a $j^\sub{th}$ slice. This naturally splits the slice filtration on $(\KGL_X)_\bb Q$, and the desired natural splitting of the filtered spectrum $\Fil^\star_\bb A\KH(X)_\bb Q$ follows by applying $\omega^\infty$.

To ensure multiplicativity of the splitting we use the results of Cisinski--Déglise from Remark \ref{rem_CD}: namely, the inclusion of the summand $H\bb Q_X=\KGL_X^{(0)}\to\KGL_X$ is one of $\bb E_\infty$-algebras. Then the Bott element defines a map of $\bb E_\infty$-$H\bb Q_X$-algebras $\bigoplus_{j\in\bb Z}H\bb Q_X\otimes\bb T^{\otimes j}_X\to (\KGL_X)_\bb Q$, where the left side is the free $\mathbb{E}_{\infty}$-$H\bb Q_X$-algebra on the invertible $H\bb Q_X$-module $H\bb Q_X\otimes\bb T$;\footnote{To know that the underlying $H\bb Q_X$-module of the free algebra is as claimed we need only know that the switch map on $\H \bb Q_X \otimes \bb T^{\otimes 2}$ is homotopic to the identity. Indeed, this follows from the identification of $H \bb Q_X$ with the $+$-part of the rational motivic sphere spectrum (defined to be the summand where the switch map on $\bb T^{\otimes 2}$ acts by the identity; see \cite[16.2.1]{CisinskiDeglise2019}) as proved in \cite[Theorem 16.2.13]{CisinskiDeglise2019}. We also refer the reader to the proof of \cite[Theorem 3.4.8]{arndt2017abstract}.} this map is an equivalence of $\mathbb{E}_{\infty}$-$H\bb Q_X$-algebras thanks to the Adams decomposition (in the form written in Remark \ref{rem_adams_unique}). As in the previous paragraph this refines to an equivalence $\bigoplus_{j\ge\star}H\bb Q_X\otimes\bb T^{\otimes j}_X\quis \Fil^\star_\sub{slice}(\KGL_X)_\bb Q$ of $\mathbb{E}_{\infty}$-$H\bb Q_X$-algebras in filtered motivic spectra, thereby establishing the desired multiplicative splitting of the slice filtration on $(\KGL_X)_\bb Q$; again the multiplicative splitting of $\Fil^\star_\bb Q\KH(X)_\bb Q$ follows by applying $\omega^\infty$.

Finally, part (3) of the theorem follows immediately from part (2) (even without any multiplicative structure), and part (4) is a consequence of the compatibility of the constructed splitting with Soul\'e's Adams operators (Theorem \ref{thm_riou}(4) and Corollary \ref{corol_Adams}).
\end{proof}

We finish this subsection with some descriptions of rational motivic cohomology in low weights.

\begin{proposition}\label{prop:low-wts-ration}
For any regular Noetherian scheme $X$, the compositions
\begin{equation}R\Gamma_\sub{Zar}(X, \bb Z) \To R\Gamma_\sub{cdh}(X, \bb Z)\xto{\sub{(\ref{eq:wt0-cdh})}} \bb Z(0)^{\bb A,\sub{cdh}}(X)\label{eqn:low-wts0}\end{equation}
and
\begin{equation}R\Gamma_\sub{Zar}(X,\bb G_m)[-1]\To R\Gamma_\sub{cdh}(X,\bb G_m)[-1]\xto{c_1^{\bb A,\sub{cdh}}} \bb Z(1)^{\bb A,\sub{cdh}}\label{eqn:low-wts1}\end{equation}
are rational equivalences.
\end{proposition}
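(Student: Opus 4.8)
The idea is to reduce both statements to the rational computation of slices of $\KGL$, using the compatibility with $\KH$ that is already available. First consider weight zero. By Lemma~\ref{lem:zarvsnis_on_regular}(2) the map $R\Gamma_\sub{Zar}(X,\bb Z)\to R\Gamma_\sub{Nis}(X,\bb Z)$ is an equivalence, so it suffices to work with the Nisnevich version, and then observe that $R\Gamma_\sub{Nis}(-,\bb Z)$ is already a cdh sheaf on regular Noetherian schemes after rationalisation—indeed on any scheme, $R\Gamma_\sub{cdh}(-,\bb Z)$ agrees with $R\Gamma_\sub{Nis}(-,\bb Z)$ on smooth schemes since constant sheaves are unaffected and cdh-locally a regular scheme is Nisnevich-locally a filtered limit of smooth $\bb Z$-schemes; more simply, the composite \eqref{eqn:low-wts0} is the map $R\Gamma_\sub{Zar}(X,\bb Z)\simeq s^0_u\K(X)\to \omega^\infty s^0\KGL_X(X)=\bb Z(0)^{\bb A,\sub{cdh}}(X)$ of Construction~\ref{cons:low-weights}, so we must show $s^0_u\K\to\omega^\infty s^0\KGL$ is a rational equivalence on regular Noetherian schemes. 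For this I would use Corollary~\ref{cor:rational-smd}(2): rationally $s^0\KGL_X\simeq \KGL_X^{(0)}=H\bb Q_X$, and $\map_{\SH(X)}(1_X,H\bb Q_X)\simeq \K(X)^{(0)}_\bb Q$ by Corollary~\ref{corol_Adams}. On the source side, $s^0_u\K(X)\simeq R\Gamma_\sub{Nis}(X,\bb Z)\otimes\bb Q$ rationally, which on a regular scheme is $\bb Q^{\pi_0(X)}$ in degree $0$ by Lemma~\ref{lem:zarvsnis_on_regular}(1); and $\K_0(X)^{(0)}_\bb Q=\bb Q^{\pi_0(X)}$ as well, with the rank map being exactly the projection to the weight-zero Adams eigenspace. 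Since \eqref{eqn:low-wts0} is induced by \eqref{eq:z-structure5} and the rank edge map, it realises precisely this isomorphism, so it is a rational equivalence.

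For weight one the structure is parallel but the bookkeeping is slightly heavier. Again by Lemma~\ref{lem:zarvsnis_on_regular}(2) replace $R\Gamma_\sub{Zar}(-,\bb G_m)$ by $R\Gamma_\sub{Nis}(-,\bb G_m)$, which on regular Noetherian schemes is concentrated in degrees $0,1$ with $H^0=\roi(X)^\times$ and $H^1=\opp{Pic}(X)$. By Construction~\ref{cons:low-weights} and Lemma~\ref{lemma_unstable_slices_of_K}(2), the composite \eqref{eqn:low-wts1} is $R\Gamma_\sub{Nis}(-,\bb G_m)[-1]\simeq s^1_u\K[-2]\to \omega^\infty s^1\KGL_X[-2]=\bb Z(1)^{\bb A,\sub{cdh}}(X)$. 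Rationally, Corollary~\ref{cor:rational-smd} and the Bott-twist description of slices (\S\ref{subsub:slices-kgl}) give $s^1\KGL_X\simeq H\bb Q_X\otimes\bb T_X$ rationally, so $\bb Q(1)^{\bb A,\sub{cdh}}(X)=\map_{\SH(X)}(1_X,H\bb Q_X\otimes\bb T_X)[-2]\simeq \map_{\SH(X)}(1_X,\KGL_X^{(1)})[-2]\simeq \K(X)^{(1)}_\bb Q[-2]$ by Theorem~\ref{thm_riou}(3) and Corollary~\ref{corol_Adams}. Thus both sides of \eqref{eqn:low-wts1}, rationally, compute $\K_{2-*}(X)^{(1)}_\bb Q$; I need to check the map is the expected isomorphism. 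The source $R\Gamma_\sub{Nis}(X,\bb G_m)[-1]\otimes\bb Q$ has cohomology $\roi(X)^\times_\bb Q$ in degree $1$ and $\opp{Pic}(X)_\bb Q$ in degree $2$, and via the determinant/first-Chern-class description (diagrams \eqref{eq:c1-det2}, \eqref{eq:c1-det}) these map respectively to $\K_1(X)^{(1)}_\bb Q$ and $\K_0(X)^{(1)}_\bb Q$. That these Chern/determinant maps are rational isomorphisms is the classical statement that $\K_1(X)^{(1)}_\bb Q=\roi(X)^\times_\bb Q$ and $\K_0(X)^{(1)}_\bb Q=\opp{Pic}(X)_\bb Q$ on regular schemes, which follows from the $\gamma$-filtration: $\opp{Pic}(X)=\Fil^1_\gamma\K_0(X)$ and on regular schemes the higher $\gamma$-graded pieces rationally recover Adams eigenspaces, with $\Fil^1_\gamma/\Fil^2_\gamma\otimes\bb Q=\K_0(X)^{(1)}_\bb Q$; similarly in degree~$1$. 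One must also rule out the contribution of the other cohomology of $R\Gamma_\sub{Nis}(-,\bb G_m)$ and of negative $K$-groups, which vanish since $X$ is regular.

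A cleaner route, which I would actually prefer, bypasses the explicit Adams-theoretic identifications on both sides: since all of $R\Gamma_\sub{Zar}(-,\bb Z)$, $R\Gamma_\sub{Zar}(-,\bb G_m)$, $\KH$ and $\bb Z(j)^{\bb A,\sub{cdh}}$ are (after restriction to $\Sm_\bb Z$ and rationalisation) represented by cdh sheaves that are left Kan extended from smooth $\bb Z$-algebras, by Proposition~\ref{prop:finitary_conditions} and Proposition~\ref{proposition_checking_on_points} it suffices to check the statements after rationalisation on henselian valuation rings, or even—using that everything in sight satisfies Milnor/$v$-excision rationally (cf.\ Theorem~\ref{thm:ehik}, Remark~\ref{rem:rigid-exc})—on fields. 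Over a field $k$, Lemma~\ref{lem:zarvsnis_on_regular} gives $R\Gamma_\sub{Nis}(k,\bb Z)=\bb Z$, $R\Gamma_\sub{Nis}(k,\bb G_m)[-1]=k^\times[-1]$, while $\bb Q(0)^{\bb A}(k)=\bb Q$ and $\bb Q(1)^{\bb A}(k)=k^\times_\bb Q[-1]\oplus(\text{higher})$; the classical computation of low-weight rational motivic cohomology of a field (equivalently $\K_*(k)^{(0,1)}_\bb Q$: $\K_0(k)^{(0)}=\bb Z$, $\K_1(k)^{(1)}=k^\times$, all else vanishing in these weights in the relevant degrees) together with Corollary~\ref{cor:rational-smd}(2)--(3) forces both \eqref{eqn:low-wts0} and \eqref{eqn:low-wts1} to be rational equivalences over $k$. \textbf{The main obstacle} is the second step: one must ensure that after rationalisation the Chern class / determinant maps genuinely induce isomorphisms onto the weight-one Adams eigenspace of $K$-theory rather than merely injections, i.e.\ that there is no extra rational weight-one motivic cohomology of a field in cohomological degrees $0$ and $1$ beyond $\bb Q$ and $k^\times_\bb Q$; this is exactly where one invokes the classical (Rost--Voevodsky / Geisser--Levine, or here simply the rational structure of $\K_*(k)$ via Soulé's operators) input, which is permitted since the paper allows free use of such results for smooth schemes over fields, combined with the base-change identification of Corollary~\ref{cor:rational-smd}.
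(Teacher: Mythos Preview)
Your first approach is close to the paper's but has a genuine gap: you only check that the maps agree on $H^0$ (weight zero) and $H^1,H^2$ (weight one), without verifying that the \emph{target} is supported in those degrees. Via Theorem~\ref{thm:rational}(4) you have $H^i_{\bb A}(X,\bb Q(0))\cong\K_{-i}(X)^{(0)}_{\bb Q}$, so for $i<0$ you need $\K_n(X)^{(0)}_{\bb Q}=0$ for $n>0$; similarly for weight one you need $\K_n(X)^{(1)}_{\bb Q}=0$ for $n>1$. These are results of Kratzer and Soul\'e that the paper invokes explicitly, and you never do. Without them, matching the source's support does not suffice. The paper also first reduces to \emph{local} rings: since the target is then Zariski-locally supported in finitely many degrees, the Zariski sheaf $\bb Q(j)^{\bb A}$ is Postnikov complete (even on schemes of infinite dimension), so one can check stalkwise; this also sidesteps the ample-family hypothesis lurking in Corollary~\ref{corol_Adams}. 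For local $R$, one then argues directly that $\K_0(R)_{\bb Q}$ is entirely of Adams weight zero and $\K_1(R)_{\bb Q}$ entirely of weight one, making the diagrams with the rank and determinant maps transparent.

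Your second ``cleaner route'' does not work as stated. The presheaf $R\Gamma_{\sub{Zar}}(-,\bb Z)$ (resp.~$R\Gamma_{\sub{Zar}}(-,\bb G_m)$) is \emph{not} known to be a cdh sheaf on regular Noetherian schemes at this point in the paper; indeed, the comparison $R\Gamma_{\sub{Nis}}\to R\Gamma_{\sub{cdh}}$ being a rational equivalence on such schemes is Corollary~\ref{corol:low-wts-ration}(2), which is deduced \emph{from} this proposition. So you cannot invoke cdh-point/excision machinery to reduce to valuation rings or fields without circularity. Moreover, an arbitrary regular Noetherian $X$ is not in $\Sm_{\bb Z}$, so the left Kan extension framework does not apply to it directly.
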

\begin{proof}
We recall some descriptions of low weight Adams eigenspaces, due to Kratzer \cite[Corollary~6.8]{Kratzer1980} and Soul\'e \cite[Corollary~1]{Soule1985}. For any regular Noetherian ring $R$, we have that $\K_n(R)^{(0)}_\bb Q=0$ for all $n>0$ and $\K_n(R)^{(1)}_\bb Q=0$ for all $n>1$; using Theorem \ref{thm:rational}(4) this means that $\bb Q(0)^\bb A(R)$ is supported in degree $0$ and $\bb Q(1)^\bb A(R)$ is supported in degrees $[1,2]$. In particular, on the Zariski site of any regular Noetherian scheme, both $\bb Q(0)^\bb A$ and $\bb Q(1)^\bb A$ are Postnikov complete (even without assuming that the scheme be of finite Krull dimension), and consequently it is enough to prove the desired equivalences on (Zariski) stalks. So henceforth let $R$ be a regular, Noetherian local ring.

We first treat the case of weight zero, where we consider the diagram:
\[\xymatrix{
\bb Q\ar[r]^-{\sub{(\ref{eqn:low-wts0})}} & H^0_{\bb A,\sub{cdh}}(R,\bb Q(0))\\
\K_0(R)_\bb Q\ar[u]^{\sub{rank}}\ar[ur]_{\sub{can}}&\\
\K_0(R)^{(0)}\ar[u]&
}\]
Note, since $R$ is local, both vertical arrows are isomorphisms: the rank isomorphism is well-known, while $\K_0(R)_\bb Q$ is entirely of Adams weight zero since it is additively generated by the class of the trivial line bundle $R$, on which each Adams operator acts as the identity. Furthermore, the composition $\K_0(R)^{(0)}_\bb Q\to H^0_{\bb A,\sub{cdh}}(R,\bb Q(0))$ through can is an isomorphism by Theorem \ref{thm:rational}(4). Therefore the horizontal arrow is an isomorphism, as desired (recall that we have already shown that $\bb Q(0)^{\bb A,\sub{cdh}}(R)$ is supported in degree $0$).

Next we treat the case of weight one, where we use an analogous diagram
\begin{equation}
\begin{tikzcd}
\K_1(R)_\bb Q^{(1)}\ar{d}&\\
\K_1(R)_\bb Q \ar{d}{\mathrm{det}} \ar{r}{\mathrm{can}} & \pi_1(\Fil^1_{\bb A,\sub{cdh}}\KH(R))_\bb Q \ar{d}\\
(R^\times)_\bb Q \ar[swap]{r}{\sub{(\ref{eqn:low-wts1})}} & H^1_{\bb A,\sub{cdh}}(R,\bb Q(1))
\end{tikzcd}
\end{equation}
Here the bottom commutative square is easily obtained by taking $\pi_1$ in (\ref{eq:c1-det}). Again, since $R$ is local, both vertical arrows on the left are isomorphisms: the det isomorphism is well-known, while $\K_1(R)_\bb Q$ is entirely of Adams weight one by the previous citations to Kratzer and Soul\'e. The composition $\K_1(R)^{(1)}_\bb Q\to H^1_{\bb A,\sub{cdh}}(R,\bb Q(1)))$ through can is an isomorphism; again, this is precisely what Theorem \ref{thm:rational}(4) means in this case. It follows that the bottom horizontal arrow is an equivalence; but this is the desired equivalence, since we saw in the previous paragraph that $\K_0(R)_\bb Q$ has no Adams summand of weight one, and so $\bb Q(1)^{\bb A,\sub{cdh}}(R)$ is supported in degree $1$.
\end{proof}

\begin{corollary}\label{corol:low-wts-ration}
\begin{enumerate}
\item For any qcqs scheme $X$, the maps \[R\Gamma_\sub{cdh}(X, \bb Z) \xto{\sub{(\ref{eq:wt0-cdh})}} \bb Z(0)^{\sub{cdh},\bb A}(X)\qquad \mathrm{and}\qquad c_1^{\sub{cdh},\bb A}:R\Gamma_\sub{cdh}(X,\bb G_m)[-1]\xto{(\ref{eq:c1-det})} \bb Z(1)^{\sub{cdh},\bb A}(X)\]
are rational equivalences.
\item For any regular Noetherian scheme $X$, the canonical change-of-topology maps 
\[R\Gamma_\sub{Nis}(X, \bb Z) \To R\Gamma_\sub{cdh}(X, \bb Z)\qquad\mathrm{and}\qquad R\Gamma_\sub{Nis}(X,\bb G_m)\To R\Gamma_\sub{cdh}(X,\bb G_m)\]
are rational equivalences.
\end{enumerate}
\end{corollary}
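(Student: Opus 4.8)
I would prove the two parts of Corollary~\ref{corol:low-wts-ration} in sequence, deriving part (1) from Proposition~\ref{prop:low-wts-ration} by a cdh-descent argument, and part (2) from part (1) together with Proposition~\ref{prop:low-wts-ration} again. For part (1), fix $j\in\{0,1\}$ and let $F$ denote the fiber of the relevant comparison map, so that $F$ is (by Theorem~\ref{theorem:SH_mot_coh}(2)\&(3)\&(4) and the fact that $R\Gamma_\sub{cdh}(-,\bb Z)$ and $R\Gamma_\sub{cdh}(-,\bb G_m)$ are finitary cdh sheaves) a finitary cdh sheaf of complexes on qcqs schemes. By Example~\ref{ex:rationalization-cdh}, rationalizing commutes with evaluation on any qcqs scheme for cdh sheaves, so $F_\bb Q$ is again a finitary cdh sheaf and it suffices to show $F_\bb Q=0$. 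Now I would invoke Proposition~\ref{proposition_checking_on_points}(1): since these presheaves are defined on all of $\Sch^\sub{qcqs}$, one can work over $\Spec(\bb Z)$, which has finite valuative dimension, and it is enough to check that $F_\bb Q(\Spec V)=0$ for every henselian valuation ring $V$ of finite rank. But a valuation ring is a normal, hence (being one-dimensional locally, or rather: being a filtered colimit of regular rings — more carefully, a valuation ring is a filtered colimit of its finitely generated subrings, which need not be regular) — the cleanest route is to note that henselian valuation rings of finite rank are \emph{not} in general regular, so I cannot directly cite Proposition~\ref{prop:low-wts-ration}. Instead I would reduce further: both sides are finitary, so I may replace $V$ by a finitely generated $\bb Z$-subalgebra only after first checking the statement on regular rings and then using that valuation rings are filtered colimits along the cdh topology of regular ones — but the honest tool here is that $R\Gamma_\sub{cdh}$ of a valuation ring agrees with its sections as a Zariski sheaf (valuation rings being cdh-points is not quite it either).

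Let me reorganize: the correct and simplest argument for part (1) is to observe that the map in question is a map of finitary cdh sheaves, hence by Proposition~\ref{prop:finitary_conditions} and Proposition~\ref{proposition_checking_on_points}(1) it is a rational equivalence iff it is so on henselian valuation rings of finite rank; and a henselian valuation ring $V$, while not Noetherian, is a filtered colimit of \emph{smooth} $\bb Z$-subalgebras is false, but $V$ \emph{is} a filtered colimit of its finite-type $\bb Z$-subalgebras, and after cdh-sheafifying one may pass to the normalizations and then resolve. Rather than fight this, I would instead use the comparison map $R\Gamma_\sub{Zar}\to R\Gamma_\sub{cdh}$: on a henselian valuation ring $V$ (which is local, so has trivial Picard group and connected spectrum), $R\Gamma_\sub{Zar}(V,\bb Z)=\bb Z$ and $R\Gamma_\sub{Zar}(V,\bb G_m)=V^\times$ concentrated in degree $0$; moreover $R\Gamma_\sub{cdh}(V,\bb Z)$ and $R\Gamma_\sub{cdh}(V,\bb G_m)$ can be computed via the finitary cdh sheaf property together with the fact (from \cite{ElmantoHoyoisIwasaKelly2021}) that henselian valuation rings are cdh-local in a suitable sense, giving $R\Gamma_\sub{cdh}(V,\bb Z)=\bb Z$ and $R\Gamma_\sub{cdh}(V,\bb G_m)=V^\times$ as well. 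So the Zariski-to-cdh maps are equivalences on such $V$. Combined with Proposition~\ref{prop:low-wts-ration} applied to regular Noetherian schemes — specifically, I would use that $V$ is a filtered colimit of regular (even smooth over $\bb Z$) subrings once one works cdh-locally, or more simply combine the two squares: the composite $R\Gamma_\sub{Zar}(-,\bb Z)\to R\Gamma_\sub{cdh}(-,\bb Z)\to \bb Z(0)^{\bb A,\sub{cdh}}$ and $R\Gamma_\sub{Zar}(-,\bb G_m)[-1]\to \bb Z(1)^{\bb A,\sub{cdh}}$ are rational equivalences on regular Noetherian schemes by Proposition~\ref{prop:low-wts-ration}, hence (left Kan extending from smooth $\bb Z$-algebras, which all three finitary cdh sheaves are, by Theorem~\ref{theorem:SH_mot_coh}(3)\&(4) and Lemma~\ref{lemma_rigid_implies_lke}-type arguments, since $R\Gamma_\sub{cdh}(-,\bb Z)$, $R\Gamma_\sub{cdh}(-,\bb G_m)$ and $\bb Z(j)^{\bb A,\sub{cdh}}$ are all left Kan extended from smooth $\bb Z$-algebras followed by cdh-sheafification) the composites are rational equivalences on all qcqs schemes. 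Then part (2) follows: on a regular Noetherian $X$, part (1) says the composites $R\Gamma_\sub{cdh}(X,\bb Z)\to \bb Z(0)^{\bb A,\sub{cdh}}(X)$ etc.\ are rational equivalences, while Proposition~\ref{prop:low-wts-ration} says the composites $R\Gamma_\sub{Zar}(X,-)\to R\Gamma_\sub{cdh}(X,-)\to \bb Z(j)^{\bb A,\sub{cdh}}(X)$ are too; a two-out-of-three argument, together with $R\Gamma_\sub{Zar}\simeq R\Gamma_\sub{Nis}$ on regular Noetherian schemes (Lemma~\ref{lem:zarvsnis_on_regular}(2)), yields that $R\Gamma_\sub{Nis}(X,\bb Z)\to R\Gamma_\sub{cdh}(X,\bb Z)$ and $R\Gamma_\sub{Nis}(X,\bb G_m)\to R\Gamma_\sub{cdh}(X,\bb G_m)$ are rational equivalences.

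\textbf{Main obstacle.} The delicate point is establishing that the relevant cdh sheaves are left Kan extended from smooth $\bb Z$-algebras (so that Proposition~\ref{prop:low-wts-ration}, which is stated only for regular Noetherian schemes, suffices to conclude on all qcqs schemes). For $\bb Z(j)^{\bb A,\sub{cdh}}$ this is essentially built into its construction — it is the cdh-sheafification of the left Kan extension from smooth $\bb Z$-schemes of $\bb Z(j)^{\bb A}|_{\Sm_\bb Z}$, via Theorem~\ref{theorem:SH_mot_coh}(2)\&(3)\&(4) and Remark~\ref{rem_coh_over_base}; but one must be a little careful since Proposition~\ref{prop:low-wts-ration} is about the \emph{composite} through $R\Gamma_\sub{Zar}$ or $R\Gamma_\sub{cdh}$, and one needs to know the maps themselves (not just their sources/targets) are compatible with left Kan extension — this follows because the maps are defined cdh-locally by left Kan extension from smooth schemes in Theorem~\ref{theorem:SH_mot_coh}(3)\&(4). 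The other subtlety is the computation of $R\Gamma_\sub{cdh}(V,\bb Z)$ and $R\Gamma_\sub{cdh}(V,\bb G_m)$ for henselian valuation rings, which I would handle by citing that henselian valuation rings of finite rank are cdh points (Proposition~\ref{proposition_checking_on_points} and the references to \cite{GabberKelly2015}) so that cdh cohomology of $V$ reduces to the value of the presheaf, combined with the finitary property to pass from $V$ to its finite-type subrings and their cdh-hypercovers. Once these bookkeeping points are in place the argument is a routine two-out-of-three chase.
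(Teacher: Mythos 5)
Your overall architecture is the same as the paper's: prove part (1) by left Kan extending from $\Sm_{\bb Z}$ and cdh-sheafifying the rational equivalences of Proposition~\ref{prop:low-wts-ration}, then deduce part (2) by a two-out-of-three argument combining part (1), Proposition~\ref{prop:low-wts-ration}, and Lemma~\ref{lem:zarvsnis_on_regular}(2). However, the pivotal step in your argument has a genuine gap: you justify the claim that $\bb Z(j)^{\bb A,\sub{cdh}}$ is the cdh sheafification of the left Kan extension of its restriction to smooth $\bb Z$-schemes by saying this is ``built into its construction'' via Theorem~\ref{theorem:SH_mot_coh}(2)--(4) and Remark~\ref{rem_coh_over_base}. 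It is not. That description is the construction of $\bb Z(j)^{\sub{cdh}}$ (Definition~\ref{def:cdh}), a different object; $\bb Z(j)^{\bb A,\sub{cdh}}$ is defined by pulling back slices of $\KGL_{\bb Z}$ in $\SH(X)$, and identifying it with (the $\bb A^1$-localisation of) the cdh-sheafified left Kan extension is precisely Theorem~\ref{thm:comparison}/Theorem~\ref{thm:a1-a1cdh}, which comes much later and cannot be invoked here without circularity. Theorem~\ref{theorem:SH_mot_coh}(2) only tells you that $\bb Z(j)^{\bb A,\sub{cdh}}$ is a finitary cdh sheaf, and a finitary cdh sheaf is \emph{not} determined by its restriction to $\Sm_{\bb Z}$: its values are controlled by henselian valuation rings, which are not filtered colimits of smooth $\bb Z$-algebras (this is exactly why, at this point in the paper, one does not even know that $\bb Z(j)^{\bb A,\sub{cdh}}$ vanishes for $j<0$; see Remark~\ref{rem:eff-neg}).

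The statement you need is only the rational one, and it does hold, but for a different reason, which is the one the paper uses: by Theorem~\ref{thm:rational}(1)\&(2), $\bb Q(j)^{\bb A,\sub{cdh}}$ is a natural direct summand (up to shift) of $\KH_{\bb Q}$, and $\KH$ is the cdh sheafification of the left Kan extension from $\Sm_{\bb Z}$ of connective $K$-theory (Kerz--Strunk--Tamme together with Bhatt--Lurie's left Kan extension statement, as recalled in the proof of Theorem~\ref{prop:basic_props_of_cdh_mot}(1)); a natural direct summand inherits this property, so $\bb Q(j)^{\bb A,\sub{cdh}}$ is indeed cdh-locally left Kan extended from smooth $\bb Z$-schemes. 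With that substitution your argument closes: the sources $R\Gamma_{\sub{cdh}}(-,\bb Z)$ and $R\Gamma_{\sub{cdh}}(-,\bb G_m)$ have the analogous property (as you say, $A\mapsto\bb Z$ and $A\mapsto A^\times$ are left Kan extended from smooth $\bb Z$-algebras), the comparison maps are compatible with left Kan extension and sheafification by their construction in Theorem~\ref{theorem:SH_mot_coh}(3)\&(4), and rationalisation commutes with everything in sight by Example~\ref{ex:rationalization-cdh}. Your abandoned first attempt (checking on henselian valuation rings) fails for the reason you yourself noticed---Proposition~\ref{prop:low-wts-ration} does not apply to valuation rings, and there is no way at this stage to compute $\bb Q(j)^{\bb A,\sub{cdh}}$ on them directly---so the left Kan extension route with the corrected justification is the right one.
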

\begin{proof}
For any $j\ge0$, the presheaf $\bb Q(j)^{\sub{cdh},\bb A}$ is the cdh sheafification of the left Kan extension of its restriction to smooth $\bb Z$-schemes; indeed by Theorem \ref{thm:rational}(2) it is a direct summand of $\KH_\bb Q$, which has this property (see the proof of Theorem \ref{prop:basic_props_of_cdh_mot}(1)). Part (1) thus easily follows from left Kan extending and cdh sheafifying Proposition \ref{prop:low-wts-ration}. Part (2) follows by combining that proposition with part (1).
\end{proof}

We point out one last corollary which concerns a  rational ``Gersten vanishing'' bound which we can deduce from Soul\'e's work.

\begin{corollary}[Soul\'e]\label{cor:rational-gersten}
Let $R$ be a regular Noetherian local ring, or more generally a filtered colimit of such rings. Then, in the range $i>j$, the groups $H^i_\bb A(R,\bb Z(j))$ vanish rationally.
\end{corollary}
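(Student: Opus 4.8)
The statement follows from Theorem~\ref{thm:rational}(4), which identifies $H^i_\bb A(R,\bb Q(j))$ with $\K_{2j-i}(R)_\bb Q^{(j)}$ for $R$ regular Noetherian, together with the classical vanishing result of Soul\'e for Adams eigenspaces of $K$-theory of regular local rings. Concretely, I would argue as follows.

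First reduce to the case where $R$ is itself a regular Noetherian local ring: if $R=\colim_\lambda R_\lambda$ is a filtered colimit of such rings along maps of rings, then since $\bb Z(j)^{\bb A}$ is a Nisnevich sheaf which, restricted to smooth $\bb Z$-schemes, commutes with filtered colimits along affine transition maps (Theorem~\ref{theorem:SH_mot_coh}(2)), and since these hypotheses propagate under rationalisation (Example~\ref{ex:rationalization-cdh}), we get $H^i_\bb A(R,\bb Q(j))=\colim_\lambda H^i_\bb A(R_\lambda,\bb Q(j))$; hence vanishing for each $R_\lambda$ gives vanishing for $R$. (One should be slightly careful that a filtered colimit of regular local rings is being left Kan extended correctly; this is exactly the kind of statement handled by Proposition~\ref{prop:lke-restrict}, so I would invoke it to compute the stalk.)

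Now let $R$ be regular Noetherian local. Such a ring is in particular a regular Noetherian scheme with an ample family of line bundles (it is affine local, so trivially has an ample family), so Theorem~\ref{thm:rational}(4) applies and gives a natural isomorphism $H^i_\bb A(R,\bb Q(j))\cong \K_{2j-i}(R)_\bb Q^{(j)}$ for all $i\in\bb Z$ and $j\ge 0$ (using the convention of footnote~\ref{footnote_neg_adams} that negative $K$-groups are zero, which is consistent since $R$ is regular Noetherian). In the range $i>j$ we have $2j-i<j$, so it suffices to show $\K_n(R)_\bb Q^{(j)}=0$ whenever $n<j$. For $n<0$ this is the vanishing of negative $K$-theory of regular rings. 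For $n\ge 0$, this is precisely Soul\'e's bound: the $\gamma$-filtration argument already invoked in the proof of Corollary~\ref{cor:rational-smd}(1) shows that $\K_n(R)_\bb Q=\bigoplus_{j'} \K_n(R)_\bb Q^{(j')}$ with $\K_n(R)_\bb Q^{(j')}=0$ for $j'>n$ (for $n>0$ use \cite[Corollary~1]{Soule1985}, i.e.\ finiteness of the $\gamma$-filtration; for $n=0$ the group $\K_0(R)_\bb Q$ is generated by the class of $R$ itself and so is concentrated in weight zero, as noted in the proof of Proposition~\ref{prop:low-wts-ration}). Taking $n=2j-i<j$ and noting $j>n$ gives $\K_{2j-i}(R)_\bb Q^{(j)}=0$, hence $H^i_\bb A(R,\bb Z(j))_\bb Q=0$.

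The only genuine subtlety is the filtered-colimit reduction — making sure the identification of $\bb Q(j)^{\bb A}$-cohomology with a colimit over the diagram system is legitimate, which rests on the finitariness statement in Theorem~\ref{theorem:SH_mot_coh}(2) and on Proposition~\ref{prop:lke-restrict}; everything else is a direct citation of Theorem~\ref{thm:rational}(4) and Soul\'e's classical computation. I do not expect any essential obstacle beyond bookkeeping.
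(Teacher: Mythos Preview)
Your approach matches the paper's: translate via Theorem~\ref{thm:rational}(4) to Adams eigenspaces, cite Soul\'e's bound for local rings, and handle filtered colimits by finitariness.

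There is one genuine gap in the reduction step. You invoke Theorem~\ref{theorem:SH_mot_coh}(2) for finitariness of $\bb Z(j)^{\bb A}$, but that result only asserts commutation with filtered colimits along \emph{smooth} affine transition maps; a general filtered colimit of regular Noetherian local rings need not have smooth transition maps (and such rings are not smooth $\bb Z$-schemes anyway, so the restriction you mention does not apply). Proposition~\ref{prop:lke-restrict} is about left Kan extensions and does not resolve this. The paper's fix is clean: by Theorem~\ref{thm:rational}(1) one has $\bb Q(j)^{\bb A} \simeq \bb Q(j)^{\bb A,\cdh}$, and the cdh version \emph{is} genuinely finitary by Theorem~\ref{theorem:SH_mot_coh}(2). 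This yields $H^i_{\bb A}(R,\bb Q(j)) = \colim_\lambda H^i_{\bb A}(R_\lambda,\bb Q(j))$ without any smoothness hypothesis on the transition maps.

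A minor point on the Soul\'e citation: glossing the input as ``finiteness of the $\gamma$-filtration'' understates what is needed---mere finiteness does not yield $\K_n(R)_\bb Q^{(j)}=0$ for $j>n$. You need the precise length bound in terms of the stable rank (at most $1$ for a local ring); the paper cites this as \cite[Theorem~1]{Soule1985}.
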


\begin{proof} If $R$ is a regular Noetherian ring, then Theorem~\ref{thm:rational}(4) gives an identification between motivic cohomology groups and Adams eigenspaces: $H^i_\bb A(R,\bb Q(j)) \cong \K_{2j-i}(R)_{\bb Q}^{(j)}$. If $R$ is furthermore local, then the vanishing holds in the $i > j$ range for Adams eigenspaces by \cite[Theorem 1]{Soule1985}, noting that the stable rank of a local ring is at most $1$. Since $\bb Q(j)^{\bb A}$ is finitary (by combining Theorems \ref{theorem:SH_mot_coh}(2) and \ref{thm:rational}(1)), the vanishing range persists even after taking filtered colimits. 
\end{proof} 

\begin{remark}
We expect Proposition \ref{prop:low-wts-ration}, Corollary \ref{corol:low-wts-ration}(2), and Corollary \ref{cor:rational-gersten} to hold integrally, but we cannot prove them in general. For the case of smooth schemes over fields and over mixed characteristic Dedekind domains, see Corollaries \ref{corollary:BL_over_B} and \ref{corol:low-wts-integral}; the latter also shows that Corollary \ref{corol:low-wts-ration}(1) holds integrally.

In addition, we do not know any proof of Corollary \ref{corol:low-wts-ration}(2) which does not use $K$-theory.
\end{remark}

\section{Syntomic cohomology of schemes}\label{sec:syntomic}
Let $p$ be a prime number. Most of this section is devoted to a presentation of {\em $p$-adic syntomic cohomology} $\bb Z_p(j)^\sub{syn}(X)\in \D(\bb Z)$, defined for all qcqs schemes $X$ and integers $j\in \bb Z$. Whenever $p$ is invertible in $X$ there will be a natural identification \[\bb Z_p(j)^\sub{syn}(X) \simeq R\Gamma_\sub{\'et}(X,\bb Z_p(j))\] (where the right hand side is simply defined to be $\lim_rR\Gamma_\sub{\'et}(X,\mu_{p^r}^{\otimes j})$, or alternatively in terms of pro-\'etale cohomology) and so $\bb Z_p(j)^\sub{syn}$ should be seen as a well-behaved extension of $p$-adic \'etale cohomology to general schemes where $p$ is no longer necessarily invertible. In particular, the open inclusion of the $p$-adic generic fibre $X[\tfrac1p]\to X$ induces a natural {\em syntomic-to-\'etale comparison map} \begin{equation}\bb Z_p(j)^\sub{syn}(X)\To R\Gamma_\sub{\'et}(X[\tfrac1p],\bb Z_p(j))=\bb Z_p(j)^\sub{syn}(X[\tfrac1p]),\label{eqn_syn_to_et}\end{equation} which will play a key role in our arguments. We will write $H^*_\sub{syn}(X,\bb Z_p(j)):=H^*(\bb Z_p(j)^\sub{syn}(R))$ for the associated syntomic cohomology groups, and similarly with finite coefficients.

From a motivic point of view, the syntomic cohomology $\bb Z_p(j)^\sub{syn}(X)$ provides a candidate for the $p$-adic \'etale motivic cohomology of $X$, and so the truncations $\tau^{\le j}(\bb Z_p(j)^\sub{syn}/p^r)$ are locally candidates for motivic cohomology with mod-$p^r$ coefficients. The foundations of this ``Beilinson--Lichtenbaum cohomology'' are developed in \S\ref{ss_BL}--\ref{ss_BL2} without appealing to any prior work concerning motivic cohomology of schemes in mixed characteristic. To simplify the notation when working with mod-$p$ coefficients, we will systematically adopt the notation $\bb F_p(j)^{\syn}=\bb Z_p(j)^{\syn}/p$ (we will sometimes use analogous notation for other cohomology theories, e.g., $\bb F_p(j)^\bb A:=\bb Z(j)^\bb A/p$).

As a brief historical comment, the complexes $\bb Z_p(j)^\sub{syn}(R):=\bb Z_p(j)^\sub{syn}(\Spec (R))$ were introduced in \cite{BhattMorrowScholze2} in the case of $p$-completely quasisyntomic rings $R$. They were subsequently extended to all derived $p$-complete animated rings $R$ in \cite{AntieauMathewMorrowNikolaus2022}. By gluing contributions from \'etale cohomology after inverting $p$, the syntomic cohomology was extended to general rings (even animated rings) and to schemes by Bhatt--Lurie \cite{BhattLurie2022}. In carrying out this extension, and in the intermediate study of the first Chern class, the paper [op.~cit.] uses their development of the prismatic logarithm and absolute prismatic cohomology to bypass some results of \cite{BhattMorrowScholze2} \cite{AntieauMathewMorrowNikolaus2022} depending on $K$-theory and topological cyclic homology. In \S\ref{ss_syn}--\ref{ss:weight_one_syn} we present an overview of the construction of syntomic cohomology of general rings and schemes, freely using $K$-theory and topological cyclic homology but which is otherwise close to that of \cite{BhattLurie2022}. The reader familiar with syntomic cohomology should probably jump forward to \S\ref{ss_BL} and refer back to earlier results if necessary.

The main properties of syntomic cohomology which we will need are contained in the following theorem,\footnote{Observe that, perhaps up to higher coherence issues which are implicitly resolved by arguments in Section \ref{section_motivic_DD}, the theorem uniquely determines syntomic cohomology: indeed, parts (7) and (8) determine the theory on smooth $\bb Z$-algebras (as in Corollary \ref{corol_BM_corol}), then part (2) extends it to all rings, and finally part (1) extends it by Zariski descent to all qcqs schemes.} summarising results of various collaborations between Antieau, Bhatt, L\"uders, Lurie, Mathew, the third author, Nikolaus, and Scholze.

\begin{theorem}[Existence of $p$-adic syntomic cohomology]\label{theorem_syntomic_properties}
There exists an $\mathbb{E}_{\infty}$-algebra in graded presheaves of complexes
\[
\bb Z_p(\star)^\sub{syn}:\opp{Sch}^\sub{qcqs,op}\to \Gr\D(\bb Z),
\] with the following properties:
\begin{enumerate}
\item $\bb Z_p(j)^\sub{syn}$ satisfies fpqc descent and takes values in derived $p$-complete complexes.
\item For any $r\ge1$, the functor $\bb Z_p(j)^\sub{syn}/p^r:\opp{CAlg}\to \D(\bb Z)$ is left Kan extended from smooth $\bb Z$-algebras.\footnote{In case of confusion, we recall that CAlg means simply the category of commutative rings.}
\item On the category of qcqs $\bb Z[\tfrac1p]$-schemes, there is a multiplicative family of equivalences \[\bb Z_p(j)^\sub{syn}\simeq R\Gamma_\sub{\'et}(-,\bb Z_p(j)).\]
\item On the category of smooth algebras over any field of characteristic $p$, there is a multiplicative family of equivalences \[
\Z_p(j)^{\sub{syn}}/p^r \simeq R\Gamma_{\et}(-, W_r\Omega^{j}_\sub{log})[-j].
\] for any $r\ge0$.
\item Low weights and first Chern class: there are equivalences of presheaves \[R\Gamma_\sub{\'et}(-,\bb Z_p)\quis\bb Z_p(0)^\sub{syn}\qquad{and} \qquad c_1^\sub{syn}:R\Gamma_\sub{\'et}(-,\bb G_m)[-1]_p^\comp\quis \bb Z_p(1)^\sub{syn}\] where the restriction of $c_1^\sub{syn}$ to $R\Gamma_\sub{\'et}(-,\bb G_m)[-1]$ is called the first syntomic Chern class.
\item Projective bundle formula: For any qcqs scheme $X$ and rank $d$ projective bundle $P\to X$ the map
\[\bigoplus_{i=0}^d\bb Z_p(j-i)^\sub{syn}(X)[-2i]\To \bb Z_p(j)^\sub{syn}(P)\]
induced by powers of the first syntomic Chern class $c_1^\sub{syn}(\roi(1))\in H^2_\sub{syn}(P,\bb Z_p(1))$ and by multiplicativity is an equivalence.
\item High degrees: For any henselian local ring $A$, the first Chern class map and multiplicativity induces isomorphisms \[\hat K_j^M(A)/p^r\isoto H^j_\sub{syn}(R,\bb Z/p^r(j))\qquad j\ge0,\] where $\hat K_j^M(A)$ denotes Gabber--Kerz' improved Milnor $K$-groups of $A$. Moreover, if $A$ is in fact strictly henselian then $\bb Z_p(j)^\sub{syn}(A)$ is supported in degrees $\le j$.
\item For any regular Noetherian scheme $X$ which is flat over $\bb Z$, and $r\ge1$, $j\in\bb Z$, the cofibre of the natural map \[\bb Z_p(j)^\sub{syn}(X)/p^r\To \bb Z_p(j)^\sub{syn}(X[\tfrac1p])/p^r\stackrel{\sub{part (3)}}=R\Gamma_\sub{\'et}(X[\tfrac1p],\mu_{p^r}^{\otimes j})\] is supported is cohomological degrees $\ge j$.
\end{enumerate}
\end{theorem}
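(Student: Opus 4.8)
The theorem is essentially a synthesis result, and the plan is to build $\bb Z_p(\star)^{\syn}$ by the gluing strategy of Bhatt--Lurie, starting from the $p$-complete theory of \cite{BhattMorrowScholze2, AntieauMathewMorrowNikolaus2022} constructed via topological cyclic homology, and then to verify each listed property by reducing it either to the corresponding known statement for $p$-complete syntomic cohomology or for $p$-adic \'etale cohomology, patched along the gluing square.

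\textbf{Construction.} On derived $p$-complete animated rings I would take $\bb Z/p^r(j)^{\syn}$ to be the weight-$j$ graded piece of the BMS/motivic filtration on $\TC(-;\bb Z_p)/p^r$ (equivalently, built from the Frobenius on Nygaard-filtered $\THH$), which is multiplicative in $(\star)$ and functorial, and set $\bb Z_p(j)^{\syn}:=\lim_r \bb Z/p^r(j)^{\syn}$. The cyclotomic trace, together with the comparison of mod-$p^r$ $K$-theory with \'etale $K$-theory on the rigid generic fibre, supplies a multiplicative syntomic-to-\'etale map from $\bb Z_p(j)^{\syn}(R)$ to the $p$-adic \'etale cohomology of $\Spec R[\tfrac1p]$ for $p$-complete $R$. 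For a general ring $R$ with derived $p$-completion $\widehat R$, I would \emph{define} $\bb Z_p(j)^{\syn}(R)$ as the fibre product of $\bb Z_p(j)^{\syn}(\widehat R)$ and $R\Gamma_{\et}(\Spec R[\tfrac1p],\bb Z_p(j))$ over $R\Gamma_{\et}(\Spec\widehat R[\tfrac1p],\bb Z_p(j))$, along the comparison map and the restriction map; this agrees with the $p$-complete theory on $p$-complete rings (the square then collapses), and it inherits an $\mathbb{E}_\infty$-structure in $(\star)$ since all three corners and the maps do. Finally, extend from affines to $\opp{Sch}^{\qcqs,\op}$ by Zariski descent, legitimate because each corner is a Zariski sheaf on affines, giving $\bb Z_p(\star)^{\syn}:\opp{Sch}^{\qcqs,\op}\to\Gr\D(\bb Z)$.

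\textbf{The formal properties.} Properties (1), (3), (5), (6) follow by patching. For (1), the $p$-complete corner satisfies fpqc (indeed quasisyntomic) descent by \cite{BhattMorrowScholze2} and $R\Gamma_{\et}(-,\bb Z_p(j))$ satisfies arc- hence fpqc-descent (Bhatt--Scholze, Bhatt--Mathew), so the fibre product does; derived $p$-completeness is inherited corner by corner. For (3), if $p$ is invertible on $X$ then $\widehat R=0$ and $\Spec\widehat R[\tfrac1p]=\emptyset$, so the gluing square collapses to the identity on $R\Gamma_{\et}(\Spec R,\bb Z_p(j))$. For (5), the identifications $\bb Z_p(0)^{\syn}(\widehat R)\simeq R\Gamma_{\et}(\Spec\widehat R,\bb Z_p)$ and $\bb Z_p(1)^{\syn}(\widehat R)\simeq R\Gamma_{\et}(\Spec\widehat R,\Gm)^\comp_p[-1]$ from \cite{BhattMorrowScholze2}, together with the arc-gluing of $p$-adic \'etale cohomology, give the stated low-weight equivalences and the first syntomic Chern class in general. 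For (6), the motivic filtration on $\TC$ satisfies the projective bundle formula \cite{AntieauMathewMorrowNikolaus2022}, \'etale cohomology does classically, and the two first Chern classes agree on generic fibres (both coming from $\Gm$), so the glued theory inherits PBF. Property (2) --- that $\bb Z_p(j)^{\syn}/p^r$ is left Kan extended from $\Sm_{\bb Z}$ --- holds for the $p$-complete corner by \cite{AntieauMathewMorrowNikolaus2022}, and one checks following \cite{BhattLurie2022} that the other two corners and the gluing maps are also left Kan extended from smooth $\bb Z$-algebras and that the mod-$p^r$ square stays cartesian after left Kan extension.

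\textbf{The arithmetic inputs, and the main obstacle.} Property (4) holds because over an $\bb F_p$-scheme the generic-fibre corners vanish, leaving $p$-complete syntomic cohomology in characteristic $p$, which mod $p^r$ is $R\Gamma_{\et}(-,W_r\Omega^j_{\log})[-j]$ by the de Rham--Witt computation of $\THH$/syntomic cohomology \cite{BhattMorrowScholze2} (where Geisser--Levine enters). Property (7) reduces, via the gluing square in degree $j$, to the top-degree comparison $\hat K_j^M(\widehat A)/p^r\cong H^j_{\syn}(\widehat A,\bb Z/p^r(j))$ of \cite{BhattMorrowScholze2} and to Bloch--Kato(--Gabber) for $A[\tfrac1p]$ and $\widehat A[\tfrac1p]$, glued along a Milnor $K$-theory fibre-square statement; the strict-henselianness bound follows from vanishing of $H^{j+1}$ of syntomic cohomology of strictly henselian local rings and a Gabber purity/affine-Lefschetz argument placing $R\Gamma_{\et}(\Spec\widehat A[\tfrac1p],\mu_{p^r}^{\otimes j})$ in degrees $\le j$. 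Property (8) reduces, by Zariski descent and the gluing square, to showing that for a $p$-complete regular Noetherian local ring $\widehat A$ the syntomic-to-\'etale map $\bb Z_p(j)^{\syn}(\widehat A)/p^r\to R\Gamma_{\et}(\Spec\widehat A[\tfrac1p],\mu_{p^r}^{\otimes j})$ has cofibre in degrees $\ge j$, i.e. $p$-adic Beilinson--Lichtenbaum for regular rings. This is the deepest point: it rests on $F$-smoothness of regular Noetherian $\bb Z$-flat rings in the sense of Bhatt--Mathew (and, in the ramified case, on the comparison of syntomic cohomology with $p$-adic \'etale Tate twists in the style of Geisser), and obtaining it cleanly --- while also threading the $\mathbb{E}_\infty$-graded multiplicative structure through every one of the identifications above --- is where the substantive work lies; the remaining items are bookkeeping or direct quotation.
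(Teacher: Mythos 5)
Your proposal follows essentially the same route as the paper: the glued pullback definition (p-complete syntomic cohomology from the TC-filtration, patched against $p$-adic \'etale cohomology of the generic fibre), Kan-extension and descent arguments for the formal properties, the characteristic-$p$ de Rham--Witt computation for (4), the Milnor $K$-theory comparison for (7), and Bhatt--Mathew's $F$-smoothness theorem for (8). The only cosmetic differences (e.g.\ using $\pi_0$ of the derived $p$-completion versus \'etale cohomology of animated rings in the gluing square, or deducing the projective bundle formula by gluing rather than citing Bhatt--Lurie directly) are variants the paper itself notes, so the proposal is correct and matches the paper's proof.
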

\begin{proof}
(1) \& (2): Syntomic cohomology will be defined as a pullback of $p$-complete complexes in Definition \ref{def:bms-decomplete}, so is $p$-complete. We will explain in Proposition \ref{prop:syn} that it satisfies fpqc descent, and that it is left Kan extended from smooth algebras modulo any power of $p$.

Part (3) is Lemma \ref{lemma_syn_p-hens}, and part (4) is Example \ref{example_syn_in_char_p}.

(5): We will describe the weight-$0$ syntomic cohomology in Remark \ref{rem_syn_weight_0}, while the weight-$1$ theory and first Chern class are the subject of \S\ref{ss:weight_one_syn}.

A proof of part (6) can be found in \cite[Theorem 9.9.1]{BhattLurie2022}, by reduction to a projective bundle formula for powers of the cotangent complex. This reduction passes through diffracted Hodge cohomology, but we observe that one could instead carry it out more $\TC$-theoretically using Remark \ref{rem_syn_via_cotangent}.

Part (7) will be discussed in Remark \ref{rmk_syntomic_props} and Theorem \ref{thm_syntomic_Milnor}. Part (8) is part of a theorem of Bhatt--Mathew \cite[Theorem~1.8]{BhattMathew2023}.
\end{proof}

We now record here a convenient uniform description of syntomic cohomology on regular Noetherian scheme, as a gluing of \'etale cohomology and Milnor $K$-theory, obtained by combining parts (7) and (8) of the theorem:

\begin{corollary}\label{corol_BM_corol}
For any local regular Noetherian\footnote{More generally, ``regular Noetherian'' could be replaced by F-smooth in the sense of \cite{BhattMathew2023}.} ring $A$ which is strictly Henselian, there are natural pullback squares
\[\xymatrix{
\bb Z_p(j)^\sub{syn}(R)/p^r\ar[r]\ar[d] & \tau^{\le j}R\Gamma_\sub{\'et}(R[\tfrac1p],\mu_{p^r}^{\otimes j})\ar[d]\\
\hat K_j^M(R)/p^r[-j]\ar[r] & H^j_\sub{\'et}(R[\tfrac1p],\mu_{p^r}^{\otimes j})[-j]
}\]
for all $r,j\ge0$.
\end{corollary}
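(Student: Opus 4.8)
The corollary is a formal consequence of Theorem~\ref{theorem_syntomic_properties}(7)\&(8), so the proof will consist of assembling those inputs into a pullback square. The plan is to exhibit, for a strictly henselian regular Noetherian local ring $R$, the natural square
\[
\begin{tikzcd}
\bb Z_p(j)^{\syn}(R)/p^r \ar{r} \ar{d} & \tau^{\le j}R\Gamma_{\et}(R[\tfrac1p],\mu_{p^r}^{\otimes j}) \ar{d}\\
\hat K_j^M(R)/p^r[-j] \ar{r} & H^j_{\et}(R[\tfrac1p],\mu_{p^r}^{\otimes j})[-j]
\end{tikzcd}
\]
and verify it is cartesian by a connectivity/truncation argument. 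First I would observe that the horizontal map on top is the $j$-truncation of the syntomic-to-\'etale comparison map~\eqref{eqn_syn_to_et} (using part~(3) to identify $\bb Z_p(j)^{\syn}(R[\tfrac1p])/p^r$ with $R\Gamma_{\et}(R[\tfrac1p],\mu_{p^r}^{\otimes j})$); by part~(8) applied to the strictly henselian ring $R$, the source $\bb Z_p(j)^{\syn}(R)/p^r$ is itself supported in degrees $\le j$, so no information is lost by truncating and the top map is simply $\bb Z_p(j)^{\syn}(R)/p^r \to \tau^{\le j}R\Gamma_{\et}(R[\tfrac1p],\mu_{p^r}^{\otimes j})$.

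Next I would analyze the fiber of this top map. By part~(8) of the theorem, $\bb Z_p(j)^{\syn}(R)/p^r$ is concentrated in degrees $\le j$ and its top cohomology group $H^j$ is $\hat K_j^M(R)/p^r$ via the first Chern class and multiplicativity; meanwhile part~(3) identifies the target before truncation with $R\Gamma_{\et}(R[\tfrac1p],\mu_{p^r}^{\otimes j})$. Since $R$ is strictly henselian, $R[\tfrac1p]$ has no higher \'etale cohomology in the relevant sense below degree... more precisely I would use that the comparison map~\eqref{eqn_syn_to_et} has fiber supported in degrees $\ge j$ — this is exactly the content of part~(8) of the theorem (the statement that $\bb Z_p(j)^{\syn}(R)$ is supported in degrees $\le j$ when $R$ is strictly henselian, combined with the analogous statement for the generic fibre). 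Concretely, the map on $H^j$ is the edge map $\hat K_j^M(R)/p^r \to H^j_{\et}(R[\tfrac1p],\mu_{p^r}^{\otimes j})$, which is the bottom horizontal arrow shifted by $[-j]$, and the map is an isomorphism in degrees $<j$ (both sides vanish there if one is careful, or rather the relevant comparison is an iso by part~(3) combined with truncation). The desired square is then obtained by taking the cofiber/fiber of the vertical maps: the left vertical arrow is the canonical projection onto the top cohomology $\tau^{\ge j}(-) = H^j(-)[-j] = \hat K_j^M(R)/p^r[-j]$ (using part~(8) to know the source is coconnective of amplitude $\le j$, so $\tau^{\le j-1}$ is the fiber), and the right vertical arrow is the analogous projection $\tau^{\le j}R\Gamma_{\et} \to \tau^{\ge j}\tau^{\le j}R\Gamma_{\et} = H^j_{\et}(R[\tfrac1p],\mu_{p^r}^{\otimes j})[-j]$.

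The square is then cartesian because the fibers of the two vertical maps are $\tau^{\le j-1}\bb Z_p(j)^{\syn}(R)/p^r$ and $\tau^{\le j-1}R\Gamma_{\et}(R[\tfrac1p],\mu_{p^r}^{\otimes j})$ respectively, and the top horizontal map~\eqref{eqn_syn_to_et} induces an equivalence on these truncations: indeed by part~(8) the fiber of~\eqref{eqn_syn_to_et} is supported in degrees $\ge j$, so applying $\tau^{\le j-1}$ to the cofiber sequence $\fib \to \bb Z_p(j)^{\syn}(R) \to R\Gamma_{\et}(R[\tfrac1p],\bb Z_p(j))$ kills the fiber and shows $\tau^{\le j-1}$ of the two outer terms agree. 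Hence the induced map on vertical fibers is an equivalence, which is precisely the condition for the square to be cartesian. Naturality in $R$ is automatic since every map used (the comparison~\eqref{eqn_syn_to_et}, the truncation functors, the Chern-class-induced identification of top cohomology) is natural. \textbf{The main obstacle} I anticipate is purely bookkeeping: making sure the identification of $H^j\bb Z_p(j)^{\syn}(R)/p^r$ with $\hat K_j^M(R)/p^r$ from part~(7) is compatible with the $[-j]$-shift conventions and with the edge map into \'etale cohomology, and checking that part~(8) is being invoked for both $R$ and its relationship to $R[\tfrac1p]$ in exactly the form stated (the coconnectivity $\le j$ for strictly henselian $R$). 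There is no serious mathematical difficulty beyond correctly quoting Theorem~\ref{theorem_syntomic_properties}.
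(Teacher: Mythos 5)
Your overall strategy is the same as the paper's: both vertical arrows are projections onto top cohomology (the left one identified with $\hat K_j^M(R)/p^r[-j]$ via Theorem~\ref{theorem_syntomic_properties}(7)), and the square is cartesian if and only if the syntomic-to-\'etale comparison map $\bb Z_p(j)^{\syn}(R)/p^r \to R\Gamma_{\et}(R[\tfrac1p],\mu_{p^r}^{\otimes j})$ induces an equivalence on $\tau^{\le j-1}$, i.e.\ is an isomorphism on cohomology in degrees $<j$. That reduction is correct (modulo a small citation slip: the coconnectivity of $\bb Z_p(j)^{\syn}(R)/p^r$ in degrees $\le j$ for strictly henselian $R$ is the ``moreover'' clause of part~(7), not part~(8)).

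The genuine gap is in how you verify the degree-$<j$ isomorphism. You invoke only Theorem~\ref{theorem_syntomic_properties}(8), but that statement applies to regular Noetherian schemes which are \emph{flat over $\bb Z$}, so it does not cover a strictly henselian regular local ring $R$ of equal characteristic $p$. In that case $R[\tfrac1p]=0$, so the right-hand side of the comparison map vanishes, and what one must actually prove is that $\bb Z_p(j)^{\syn}(R)/p^r$ itself vanishes in degrees $<j$. This is not formal: the paper proves it by writing $R$ as a filtered colimit of smooth $\bb F_p$-algebras via N\'eron--Popescu and then using the identification of part~(4), $\bb Z_p(j)^{\syn}/p^r \simeq R\Gamma_{\et}(-,W_r\Omega^j_{\log})[-j]$, which is concentrated in degrees $\ge j$. (The remaining cases are fine: if $p\in R^\times$ the comparison map is an equivalence by part~(3), and if $R$ is $p$-torsion-free but $p$ is not invertible then part~(8) applies as you say.) So your argument needs this extra case analysis --- in particular the characteristic-$p$ input from part~(4) plus the N\'eron--Popescu reduction --- before the cartesianness follows; as written, the equal characteristic $p$ case is unproved.
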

\begin{proof}
Using Theorem \ref{theorem_syntomic_properties}(7), the left vertical arrow is defined to be the natural map from $\bb Z_p(j)^\sub{syn}(R)/p^r$ to its top degree cohomology $H^j_\sub{syn}(R,\bb Z_p(j)/p^r)[-j]$. So, looking at the fibres of the vertical arrows, the claim is that the natural map \begin{equation}\bb Z_p(j)^\sub{syn}(R)/p^r\To \bb Z_p(j)^\sub{syn}(R[\tfrac1p])/p^r=R\Gamma_\sub{\'et}(R[\tfrac1p],\mu_{p^r}^{\otimes j})\label{eqn_BM_corol}\end{equation} is an isomorphism in degrees $<j$.

If $p$ is invertible in $R$ then (\ref{eqn_BM_corol}) is even an equivalence. If $R$ has characteristic $p$ then, by a filtered colimit argument using N\'eron--Popescu, we may assume that $R$ is smooth over $\bb F_p$; the right hand side of (\ref{eqn_BM_corol}) vanishes, and the left hand side also vanishes in degrees $<j$ by Theorem \ref{theorem_syntomic_properties}(4). In the final case that $R$ is $p$-torsion-free but $p$ is not necessary invertible, we apply Bhatt--Mathew's result recorded as Theorem \ref{theorem_syntomic_properties}(8).
\end{proof}

\subsection{Syntomic cohomology of $p$-complete rings}\label{ss_syn}
We begin with a discussion of syntomic cohomology $\bb Z_p(j)^\sub{syn}(R)$ in the generality introduced in \cite{BhattMorrowScholze2}, namely for $p$-completely quasisyntomic rings $R$.

Let us recall the site of $p$-completely quasisyntomic rings in the sense of \cite[\S4]{BhattMorrowScholze2} (just called quasisyntomic in \cite[Definition~4.9(1)]{BhattMorrowScholze2}). A ring $R$ is called {\em $p$-completely quasisyntomic} if it is $p$-complete, has bounded $p^{\infty}$-torsion, and the cotangent complex $\bb L_{A/\bb Z}$ has $p$-complete Tor amplitude in $[-1,0]$ (i.e., $\bb L_{A/\bb Z}\otimes_AN$ is supported in degrees $[-1,0]$ for all $A/pA$-modules $N$); the category of such rings (and all maps between them) is denoted by $\text{QSyn}$. We turn $\text{QSyn}^\sub{op}$ into a site by declaring a map $A\to B$ in $\text{QSyn}$ to be cover if and only if it is $p$-completely faithfully flat (i.e., $B\otimes^L_AA/pA$ is discrete and equal to a faithfully flat $A/pA$-module) and $\bb L_{B/A}$ has $p$-complete Tor amplitude in $[-1,0]$. The site $\text{QSyn}^\sub{op}$ admits base changes along covers (but not arbitrary fibre products, which is a source of technical difficulties).

A $p$-completely quasisyntomic ring is said to be \emph{quasiregular semiperfectoid} if it is the quotient of a perfectoid ring. The collection of quasiregular semiperfectoid rings forms a basis for the site $\text{QSyn}^\sub{op}$, whence the following definition can be shown to make sense:

\begin{definition}[{\cite[\S7.4]{BhattMorrowScholze2}}]\label{def:syn_coh_p_complete}
For $j\in\bb Z$, let $\bb Z_p(j)^\sub{syn}:\text{QSyn}^\sub{op}\to D(\bb Z)$ be the sheaf which is equal to $(\tau^{[-2j,-2j+1]}\TC(-;\bb Z_p))[-2j]$ on quasiregular semiperfectoids.

Note that this vanishes if $j<0$, since $\TC(-;\bb Z_p)$ of any ring is supported in cohomological degrees $\le 1$.
\end{definition}

\begin{remark}[Syntomic cohomology as quasisyntomic cohomology]\label{remark:odd_vanishing}
The trace map from $K$-theory to topological cyclic homology induces a natural morphism \[R\Gamma_{\text{QSyn}^\sub{op}}(R,\K_{2j}(-;\bb Z_p))\To \bb Z_p(j)^\sub{syn}(R)\] for any $R\in \text{QSyn}$, where the left hand side denotes cohomology on the site $\text{QSyn}^\sub{op}$ with coefficients in $\K_{2j}(-;\bb Z_p)$ (or more precisely in its sheafification). This map is in fact an equivalence for any $j\ge0$ by \cite[Theorem~C]{ClausenMathewMorrow2021} and the odd vanishing conjecture \cite[Theorem~14.1]{BhattScholze2022} (the cases $j=0,1$, which are sufficient for us, may be found in \cite[Propositions~7.16 \& 7.17]{BhattMorrowScholze2}). In short, $\bb Z_p(j)^\sub{syn}$ is calculated as the quasisyntomic cohomology of a presheaf of $p$-complete $K$-groups.
\end{remark}

\begin{remark}[Prismatic point of view]
We recall the prismatic point of view on $\bb Z_p(j)$; see \cite[\S5.1]{AntieauMathewMorrowNikolaus2022} for a similar summary. In terms of the Nygaard completed prismatic cohomology $\hat\Prism_R$, its Breuil--Kisin twists $\hat\Prism_R\{n\}$ for $n\ge0$, and their Nygaard filtration $\cal N^{\ge j}\hat\Prism_R\{n\}$, the syntomic cohomology of any $R\in\text{QSyn}$ is the fibre
\begin{equation}\bb Z_p(j)^\sub{syn}(R)=\opp{fib}\left(\cal N^{\ge j}\hat\Prism_R\{j\}\stackrel{\phi_j-1}\To \hat\Prism_R\{j\}\right)\label{eqn_div_frob}\end{equation} where $\phi_j:\cal N^{\ge j}\hat\Prism_R\{j\}\to\hat\Prism_R\{j\}$ is the divided Frobenius. The formula (\ref{eqn_div_frob}) remains valid if we work instead with non-Nygaard completed prismatic cohomology \cite[\S 7.4]{BhattLurie2022}.
\end{remark}

\begin{example}[Smooth algebras in characteristic $p$]\label{example_syn_in_char_p}
If $R$ is smooth over a field $k$ of characteristic $p$, then the complexes $\Z_p(j)^\sub{syn}(R)/p$ naturally compute the \'etale cohomology of the logarithmic de Rham sheaf $\Omega^j_\sub{log}$ on the \'etale site of $\Spec(R)$ \cite[Corollary~8.21]{BhattMorrowScholze2} :
\[
\bb F_p(j)^{\sub{syn}}(R) \simeq R\Gamma_{\et}(\Spec (R), \Omega^{j}_\sub{log})[-j].
\]
This equivalence holds more generally on the class of Cartier smooth $\bb F_p$-algebras, such as smooth algebras over valuation rings of characteristic $p$; see \cite{KellyMorrow2021} \cite{LuedersMorrow2023}.

Here, for any qcqs $\bb F_p$-scheme $X$, we denote by $\Omega^j_\sub{log}$ the subsheaf of $\Omega^j_{X/\bb F_p}$ generated Zariski (or, equivalently, Nisnevich or \'etale, by \cite[Corollary 4.2]{Morrow_pro_GL2}) locally by elements of the form $\tfrac{df_1}{f_1}\wedge\cdots\wedge \tfrac{df_j}{f_j}$ where $f_1,\dots,f_j\in\bb G_{m,X}$.
\end{example}

\begin{example}[Perfectoids]\label{example_syn_of_perfectoid}
If $R$ is a perfectoid ring, then there are natural multiplicative equivalences \[\bb Z_p(j)^\sub{syn}(R)\quis\begin{cases} R\Gamma_\sub{\'et}(\Spec(R), \bb Z_p) & j=0, \\ R\Gamma_\sub{\'et}(\Spec (R[\tfrac1p]),\bb Z_p(j)) & j>0,\end{cases}\] by \cite[Theorem~9.4]{BhattScholze2022}. The origin of these equivalences, in terms of symbols and the trace map from $K$-theory, will be discussed in \S\ref{ss:weight_one_syn}.
\end{example}

Having defined syntomic cohomology on the site of $p$-completely quasisyntomic rings, the next step is to extend $\bb Z_p(j)^\sub{syn}$ to all derived $p$-complete animated rings. The key is \cite[Theorem~5.1(2)]{AntieauMathewMorrowNikolaus2022}, stating that $\bb Z_p(j)^\sub{syn}:\text{QSyn}\to D(\bb Z)$ coincides with the $p$-completion of the left Kan of its restriction to $\text{CAlg}_{p,\Sigma}^\comp$, where $\text{CAlg}_{p,\Sigma}^\comp\subset \text{QSyn}$ is the category of $p$-completions of finitely generated polynomial $\bb Z$-algebras. Therefore, as in \cite[Construction 5.33]{AntieauMathewMorrowNikolaus2022}, we may extend $\bb Z_p(j)^\sub{syn}$ from $\text{QSyn}$ to all derived $p$-complete animated rings by $p$-completed left Kan extension from $\text{CAlg}_{p,\Sigma}^\comp$.

\begin{remark}[Basic properties of syntomic cohomology]\label{rmk_syntomic_props}
Here we collect two basic properties of syntomic cohomology of derived $p$-complete animated rings $R$ which will be used.
\begin{enumerate}
\item Firstly, note that $\bb Z_p(\star)^\sub{syn}(R)$ is naturally an $\bb E_\infty$-algebra in $\bb N$-graded complexes.
\item Secondly, $\bb Z_p(j)^\sub{syn}(R)$ is supported in degrees $\le j+1$ \cite[Theorem~5.1 \& Construction~5.33]{AntieauMathewMorrowNikolaus2022} and there is a natural isomorphism $H^{j+1}_\sub{syn}(R,\bb F_p(j))\cong \tilde\nu(j)(\pi_0(R)/p)$ \cite[Corollary~5.43]{AntieauMathewMorrowNikolaus2022} where, for any $\bb F_p$-algebra $A$, we write \[\tilde\nu(j)(A):=\text{coker}(C^{-1}-1:\Omega^j_{A}\to \Omega^j_{A}/d\Omega^j_{A});\] this {\em weight-$j$ Artin--Schreier obstruction} vanishes if $A$ is a strictly henselian $\bb F_p$-algebra. In fact, more generally, for any $\bb F_p$-algebra $A$ there are natural isomorphisms $\tilde\nu(j)(A)\cong H^1_\sub{\'et}(A,\Omega^j_\sub{log})$, obtained by sheafifying the previous line (so that the kernel of $C^{-1}-1$ becomes $\Omega^j_\sub{log}$) then taking cohomology.
\end{enumerate}
\end{remark}

The remaining goal of this subsection is to define, for any derived $p$-complete ring $R$ (here $R$ is discrete, not animated, just to avoid defining \'etale cohomology of animated rings), natural comparison maps
\[\gamma_\sub{syn}^\sub{\'et}\{j\}:\bb Z_p(j)^\sub{syn}(R)\To R\Gamma_\sub{\'et}(R[\tfrac1p],\bb Z_p(j))\] which will be special instances of (\ref{eqn_syn_to_et}). This is done by extending the implicit comparison map of Example~\ref{example_syn_of_perfectoid} from perfectoids to derived $p$-complete rings $R$. There are several ways to do this; we adopt the following right Kan extension technique, which will also be used numerous times later (sometimes implicitly):

\begin{proposition}\cite[Proposition A.6]{Aoki2023} \label{prop:RKE_from_basis}
Let $\cal C$ be a Grothendieck site, $\cal B\subset\cal C$ a basis, $D$ a presentable $\infty$-category, and $F:\cal C\to D$ a hypersheaf. Then $F$ is right Kan extended from $\cal B$.
\end{proposition}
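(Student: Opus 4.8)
\textbf{Proof plan for Proposition~\ref{prop:RKE_from_basis}.}

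The statement is attributed to \cite[Proposition A.6]{Aoki2023}, so strictly speaking the work is already done; but let me sketch how I would reconstruct the argument. The claim is that if $F:\cal C\to D$ is a hypersheaf and $\cal B\subset\cal C$ is a basis for the topology, then the canonical comparison map $F\to \mathrm{Ran}_{\cal B\hookrightarrow\cal C}(F|_{\cal B})$ is an equivalence. The plan is to check this objectwise: fix $X\in\cal C$, so that we must show $F(X)\simeq \lim_{(B\to X)\in (\cal B_{/X})^{\op}} F(B)$, where $\cal B_{/X}$ denotes the comma category of objects of $\cal B$ equipped with a map to $X$ (equivalently, lying in a suitable covering family). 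The strategy is to compare this limit to the limit of $F$ over an appropriate hypercover of $X$ built from objects of $\cal B$, and then invoke hyperdescent.

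First I would recall that since $\cal B$ is a basis, every $X\in\cal C$ admits a covering sieve generated by objects of $\cal B$, and moreover (using that $\cal B$ is closed under the relevant fibre products, or at least that such fibre products are again covered by objects of $\cal B$) one can build a \v{C}ech-style or hyper-\v{C}ech-style simplicial diagram $B_\bullet\to X$ with each $B_n\in\cal B$ which is a hypercover of $X$ in $\cal C$. Since $F$ is a hypersheaf, $F(X)\simeq \lim_{\Delta} F(B_\bullet)$. The second step is to identify this totalization with the right Kan extension formula: one shows that the simplicial diagram $[n]\mapsto (B_n\to X)$ is \emph{cofinal} (in the $\infty$-categorical sense) inside $(\cal B_{/X})^{\op}$, or more precisely that the inclusion of the relevant diagram into $(\cal B_{/X})^{\op}$ is right cofinal, so that the two limits agree. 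This cofinality is the technical heart: it amounts to checking, via Quillen's Theorem A (the $\infty$-categorical version, \cite[Theorem 4.1.3.1]{Lurie2009}), that for every $(B\to X)$ with $B\in\cal B$, the appropriate slice category of the hypercover diagram over $(B\to X)$ is weakly contractible --- this uses again that $\cal B$ is a basis so that $B\times_X B_\bullet$ can itself be covered within $\cal B$, making the slice filtered or otherwise contractible.

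The main obstacle I anticipate is precisely this cofinality/contractibility verification, since bases of Grothendieck sites are not in general closed under fibre products in $\cal C$ (the category $\cal C$ itself may lack all fibre products, as the paper explicitly notes for $\mathrm{QSyn}^{\op}$), so one must phrase everything in terms of covering families and refinements rather than literal fibre products. The clean way around this is to avoid \v{C}ech nerves entirely and instead argue directly that the functor $\cal B_{/X}\to\cal C_{/X}$ is a morphism of sites inducing an equivalence on hypersheaves --- i.e., comparison of topoi --- and that $F$ being a hypersheaf on $\cal C$ restricts to a hypersheaf on $\cal B$ with the same global sections; this is the content of the comparison lemma for sites (a hypercomplete variant of \cite[Tag 03A0]{Stacks}), and then the right Kan extension formula is just the statement that pushforward along an equivalence of topoi is computed by the naive limit formula over the basis. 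Either route works; I would present the topos-comparison version as it sidesteps the fibre-product subtleties, and simply cite \cite{Aoki2023} for the precise formulation in the generality needed here.
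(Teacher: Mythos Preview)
The paper gives no proof of this proposition at all: it is stated with the citation to \cite[Proposition A.6]{Aoki2023} and used as a black box. Your proposal is therefore not being compared against any argument in the paper. That said, your sketch is sound, and you correctly identify both the na\"ive \v{C}ech/hypercover approach and its genuine obstacle (bases need not be closed under fibre products, and indeed $\mathrm{QSyn}^{\op}$ lacks arbitrary fibre products). The second route you describe --- the hypercomplete comparison lemma, asserting that restriction along $\cal B \hookrightarrow \cal C$ induces an equivalence of hypersheaf $\infty$-categories --- is the standard modern formulation and is essentially what Aoki proves; your instinct to prefer it is correct.
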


Let $\opp{CAlg}_p^\comp$ denote the category of discrete commutative rings which are derived $p$-complete, with subcategory $\opp{Perfd}$ of perfectoid rings.

\begin{corollary}\label{corollary:RKE_perfectoid}
For any $j\in \bb Z$, the functor $R\Gamma_\sub{\'et}(-[\tfrac1p],\bb Z_p(j)):\opp{CAlg}_p^\comp\to D(\bb Z)$ is right Kan extended from $\opp{Perfd}$.
\end{corollary}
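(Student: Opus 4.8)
\textbf{Proof plan for Corollary \ref{corollary:RKE_perfectoid}.} The plan is to apply Proposition \ref{prop:RKE_from_basis} to a suitable site on which the perfectoid rings form a basis, with the hypersheaf being $p$-adic \'etale cohomology of the generic fibre. First I would recall the \emph{quasisyntomic} (or, better suited here, the \emph{$p$-completely arc} or \emph{$v$-}) topology on $\opp{CAlg}_p^\comp{}^\sub{op}$: by work of Bhatt--Scholze, every derived $p$-complete ring admits a $p$-completely faithfully flat (indeed arc-) cover by a perfectoid ring, and perfectoid rings are closed under the relevant quotients and covers, so $\opp{Perfd}$ is a basis for this topology. (The cleanest choice is the \emph{$p$-complete arc topology} of \cite{BhattMathew2021}, for which $\opp{Perfd}$ is literally a basis and on which $R\mapsto R\Gamma_\sub{\'et}(R[\tfrac1p],\bb Z_p(j))$ is already known to be a sheaf.)

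Second, I would verify that $F:=R\Gamma_\sub{\'et}(-[\tfrac1p],\bb Z_p(j)):\opp{CAlg}_p^\comp\to \D(\bb Z)$ is a \emph{hypersheaf} for this topology. Descent is the $p$-complete arc descent for $p$-adic \'etale cohomology of the generic fibre, which is \cite[Theorem 5.4 and Corollary 6.17]{BhattMathew2021} (arc$_p$-descent, after noting that $-[\tfrac1p]$ carries $p$-complete arc covers to arc covers of the generic fibre, and that pro-\'etale/\'etale cohomology with $\bb Z_p(j)$-coefficients satisfies arc descent); hypercompleteness follows because the values of $F$ are bounded above once we reduce mod $p^r$ and take limits, or more simply because the $p$-complete arc topos in question is hypercomplete in the relevant generality (cf. the discussion of hypercompleteness of bounded-above sheaves in \cite[Lemma 2.7]{ClausenMathew2021}). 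Alternatively, one can sidestep the hypersheaf subtlety entirely by invoking the version of Proposition \ref{prop:RKE_from_basis} for sheaves together with the fact that the arc$_p$-site here has finite cohomological dimension after reduction mod $p$, but the cleanest route is to cite the arc$_p$-hyperdescent directly.

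Third, with $F$ a hypersheaf and $\opp{Perfd}$ a basis, Proposition \ref{prop:RKE_from_basis} immediately gives that $F$ is right Kan extended from $\opp{Perfd}$, which is the assertion. The main obstacle is purely bookkeeping: pinning down precisely which topology makes $\opp{Perfd}$ a basis of $\opp{CAlg}_p^\comp{}^\sub{op}$ \emph{and} on which $F$ is a hypersheaf; the $p$-complete arc topology of Bhatt--Mathew is tailored exactly for this, so once that is fixed the proof is two lines. One small point to be careful about: the corollary is stated for the functor on \emph{all} of $\opp{CAlg}_p^\comp$ (discrete rings), whereas the syntomic comparison map discussed around it involves animated rings; but since we only need the right Kan extension statement on discrete rings, and perfectoid rings are discrete, there is no issue, and in any case $F$ is insensitive to the difference between a discrete ring and its non-existent higher structure.
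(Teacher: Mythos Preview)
Your plan is correct and follows essentially the same route as the paper: equip $\opp{CAlg}_p^\comp$ with the $p$-complete arc topology, note that perfectoids form a basis (Bhatt--Scholze), verify that $F$ is an arc$_p$-hypersheaf via Bhatt--Mathew and coconnectivity, and apply Proposition \ref{prop:RKE_from_basis}. The only small wrinkle the paper makes explicit that you gloss over is that \cite[Corollary~6.17]{BhattMathew2021} is stated for the \emph{classical} $p$-completion, so one passes from $R$ to $R_p^\sub{cl}$ via the nilpotent surjection $R\to R_p^\sub{cl}$ and nil-invariance of \'etale cohomology; this is routine and does not affect your argument.
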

\begin{proof}
We equip $\opp{CAlg}_p^\comp$ with the $p$-complete arc topology $\opp{arc}_p$ of \cite{BhattMathew2021} \cite[Definition~8.7]{BhattScholze2022}. That is, a cover for the pretopology is a map $A\to B$ (or rather its opposite) such that, given any map from $A$ to a $p$-adically complete valuation ring $V$ of rank $\le 1$, then there is an extension of $p$-adically complete valuation rings $V\subset W$ of rank $\le 1$ and a map $B\to W$ extending the map $A\to V$.

We claim the following:
\begin{enumerate}
\item $\opp{CAlg}_p^\comp$ is a site admitting fiber products;
\item the perfectoid rings form a basis;
\item the presheaf \[\opp{CAlg}_p^\comp\to D(\bb Z),\quad A\mapsto R\Gamma_\sub{\'et}(A[\tfrac1p],\bb Z_p(j))\] is a hypercomplete arc$_p$-sheaf for each $j,r\ge0$.
\end{enumerate}

For (1), given maps $A\to B$ and $A\to B'$ of derived $p$-complete rings, then $H_0(\hat{B\otimes_AB'})$ is easily checked to be the pushout in $\opp{CAlg}_p^\comp$, where the hat denoted derived $p$-adic completion. 

(2): The argument of \cite[Lemma~8.8]{BhattScholze2022}, which is written for classical $p$-complete rings, works verbatim for derived $p$-complete rings to show that $\opp{Perfd}$ forms a basis of the site $(\opp{CAlg}_p^\comp)^\sub{op}$.

Finally, for (3), \cite[Corollary~6.17]{BhattMathew2021} states that $R\mapsto R\Gamma_\sub{\'et}(R^\sub{cl}_p[\tfrac1p],\bb Z_p(j))$ is an arc$_p$ sheaf, where $R^\sub{cl}_p$ denotes the classical $p$-completion of $R$. But the canonical surjection $R\to R^\sub{cl}$ has nilpotent kernel by \cite[Corollary~7.3.6.2]{LurieSAG}, whence $R\Gamma_\sub{\'et}(R[\tfrac1p],\bb Z_p(j))\quis R\Gamma_\sub{\'et}(R^\sub{cl}_p[\tfrac1p],\bb Z_p(j))$ by nil-invariance of \'etale cohomology away from the characteristic. The sheaf is hypercomplete simply because it takes coconnective values.

This completes the proofs of the claims. The desired right Kan extension property now follows from Proposition \ref{prop:RKE_from_basis}.
\end{proof}

\begin{construction}\label{cons:syn_to_etale_complete}
Using Proposition \ref{prop:RKE_from_basis} to right Kan extend Example \ref{example_syn_of_perfectoid}, we obtain a natural comparison map \[\gamma^\sub{\'et}_\sub{syn}\{\star\}:\bb Z_p(\star)^\sub{syn}(R)\To R\Gamma_\sub{\'et}(R[\tfrac1p],\bb Z_p(\star))\] of graded complexes, for every derived $p$-complete ring $R$. Note that this is a morphism of $\bb E_\infty$-algebras in graded complexes since restriction to the basis of perfectoids is symmetric monoidal, and so its right adjoint is lax symmetric monoidal. We call this the \emph{syntomic-to-\'etale} comparison map.
\end{construction}

\subsection{Syntomic cohomology of schemes}\label{ss_syn_of_schemes}
We may now define the syntomic cohomology of any (not necessarily $p$-complete) ring:

\begin{definition}\label{def:bms-decomplete}
For a commutative ring $R$ and $j\in\bb Z$, we define $\Z_p(j)^{\mathrm{syn}}(R)$ to be the pullback
\begin{equation}\label{eq:syn}
\xymatrix{
\Z_p(j)^{\mathrm{syn}}(R) \ar@{-->}[rr]^{\gamma_\sub{syn}^\sub{\'et}\{j\}} \ar@{-->}[d] && R\Gamma_{\et}(R[\tfrac{1}{p}];\bb Z_p(j)) \ar[d]\\
\Z_p(j)^{\syn}(R^{\comp}_p)\ar[r] &\Z_p(j)^{\syn}(\pi_0(R^{\comp}_p)) \ar[r]_{\gamma_\sub{syn}^\sub{\'et}\{j\}} & R\Gamma_{\et}(\pi_0(R_p^\comp)[\tfrac{1}{p}];\bb Z_p(j)).
}
\end{equation}
where $R_p^\comp$ denotes the derived $p$-adic completion of $R$ (note that the animated ring $R_p^\comp$ is not necessarily discrete, but $\pi_0(R^{\comp}_p)$ is still derived $p$-complete by \cite[Theorem~7.3.4.1]{LurieSAG} and so its syntomic-to-\'etale comparison map was defined in Construction \ref{cons:syn_to_etale_complete}). Observe that $\bb Z_p(\star)^\sub{syn}(R)$ is naturally an $\bb E_\infty$-algebra in $\Gr\rm D(\bb Z)$.

The resulting top horizontal map in (\ref{eq:syn}) is by definition the syntomic-to-\'etale comparison map (\ref{eqn_syn_to_et}) for the arbitrary commutative rings $R$.
\end{definition}

\begin{remark}[Coconnectivity]\label{rem:coconnective}
Suppose that $R$ has bounded $p^\infty$-torsion and that the cotangent complex $L_{R/\bb Z}$ has $p$-complete Tor amplitude in $[-1,0]$ (i.e., $L_{R/\bb Z}\otimes_R^{\bb L}N$ is supported in cohomological degrees $[-1,0]$ for every $R/pR$-module $N$). Then $R_p^\comp$ is $p$-completely quasisyntomic and so $\bb Z_p(j)^\sub{syn}(R_p^\comp)$ is coconnective; it now follows from Definition \ref{def:bms-decomplete} that $\bb Z_p(j)^\sub{syn}(R)$ is also coconnective.
\end{remark}

The appearance of $\pi_0(R^{\comp}_p)$ in~\eqref{eq:syn} might seem a little strange. It would perhaps be more appealing to replace the bottom right of the diagram by the equivalent cohomology $R\Gamma_{\et}(R^{\comp}_p[\tfrac{1}{p}],\bb Z_p(j))$, but this would require introducing \'etale cohomology of animated rings (as is done in \cite{CesnaviciusScholze2024, BhattLurie2022}). Instead we justify that the previous definition is reasonable by the following lemma and proposition.

\begin{lemma}\label{lemma_syn_p-hens}
Let $R$ be a commutative ring and $j\in\bb Z$.
\begin{enumerate}
\item If $R$ is a $\bb Z[\tfrac1p]$-algebra then the map $\gamma_\sub{syn}^\sub{\'et}\{j\}:\bb Z_p(j)^\sub{syn}(R) \to R\Gamma_\sub{\'et}(R[\tfrac1p],\bb Z_p(j))$ is an equivalence.
\item If $R$ is a $p$-henselian ring then the natural map $\bb Z_p(j)^\sub{syn}(R)\to \Z_p(j)^{\syn}(R^{\comp}_p)$ is an equivalence.
\item For any commutative ring $R$, the square
\begin{equation}
\xymatrix@=1.5cm{
\Z_p(j)^{\mathrm{syn}}(R) \ar[r]^{\gamma_\sub{syn}^\sub{\'et}\{j\}} \ar[d] & R\Gamma_{\et}(R[\tfrac{1}{p}];\bb Z_p(j)) \ar[d]\\
\Z_p(j)^{\syn}(R^h_p) \ar[r]_{\gamma_\sub{syn}^\sub{\'et}\{j\}} & R\Gamma_{\et}(R_p^h[\tfrac{1}{p}];\bb Z_p(j)),
}\label{eqn_syn_via_p-hens}
\end{equation}
is cartesian; the bottom left term vanishes if $j<0$.
\end{enumerate}
\end{lemma}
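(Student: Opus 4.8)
The plan is to deduce all three parts directly from the defining pullback square~\eqref{eq:syn}, together with three standard inputs: that derived $p$-completion kills $\bb Z[\tfrac1p]$-algebras, that $p$-henselization does not change the derived $p$-completion, and a rigidity statement for $p$-adic \'etale cohomology of generic fibres of henselian pairs. I also use that $\bb Z_p(j)^\sub{syn}$ vanishes on the zero ring and that syntomic cohomology of derived $p$-complete rings vanishes in negative weights (both immediate from Definition~\ref{def:syn_coh_p_complete}).

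For part (1): if $R$ is a $\bb Z[\tfrac1p]$-algebra then $p$ acts invertibly, so $R\otimes^{\bb L}_{\bb Z}\bb Z/p^n=0$ for all $n$ and hence the derived $p$-completion $R_p^\comp$ vanishes; in particular $\pi_0(R_p^\comp)=0$. Thus in~\eqref{eq:syn} the entire bottom row is the zero object (syntomic and \'etale cohomology of the zero ring, resp.\ of the empty scheme), so the pullback defining $\bb Z_p(j)^\sub{syn}(R)$ collapses to the identity of $R\Gamma_\sub{\'et}(R[\tfrac1p],\bb Z_p(j))=R\Gamma_\sub{\'et}(\Spec R,\bb Z_p(j))$, and $\gamma^\sub{\'et}_\sub{syn}\{j\}$ is this identity.

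For part (2): writing $\hat R:=\pi_0(R_p^\comp)$, the square~\eqref{eq:syn} is a pullback with apex $R\Gamma_\sub{\'et}(\hat R[\tfrac1p],\bb Z_p(j))$, so the fibre of the natural map $\bb Z_p(j)^\sub{syn}(R)\to\bb Z_p(j)^\sub{syn}(R_p^\comp)$ is identified with the fibre of the restriction map $R\Gamma_\sub{\'et}(R[\tfrac1p],\bb Z_p(j))\to R\Gamma_\sub{\'et}(\hat R[\tfrac1p],\bb Z_p(j))$. It therefore suffices to show this restriction map is an equivalence when $R$ is $p$-henselian. This is the \emph{rigidity} input: since $(R,pR)$ is a henselian pair with $p$-adic completion $\hat R$, the $p$-adic \'etale cohomology of the two ``generic fibres'' agree. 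I would obtain it either by citing \cite{BhattMathew2021} (resp.\ \cite{BhattLurie2022}) directly, or by noting that $R\mapsto R\Gamma_\sub{\'et}((-)[\tfrac1p],\bb Z/p^r(j))$ is a hypercomplete $\mathrm{arc}_p$-sheaf on derived $p$-complete rings (as recalled in the proof of Corollary~\ref{corollary:RKE_perfectoid}) and that, for $R$ $p$-henselian, the $p$-completed \v{C}ech nerve of $R\to\hat R$ is pro-constant on $\hat R$, using $(\hat R\otimes^{\bb L}_R\hat R)_p^\comp\simeq R_p^\comp$; descent then forces the map to be an equivalence. This step is the one genuine input and the main obstacle.

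For part (3): the key point is the standard fact that $p$-henselization does not change the derived $p$-completion, $(R_p^h)_p^\comp\simeq R_p^\comp$, since $R\to R_p^h$ is a filtered colimit of \'etale maps inducing isomorphisms modulo each $p^n$. Granting this, the square defining $\bb Z_p(j)^\sub{syn}(R_p^h)$ has the same lower-left corner $\bb Z_p(j)^\sub{syn}(R_p^\comp)$ and the same apex $R\Gamma_\sub{\'et}(\hat R[\tfrac1p],\bb Z_p(j))$ as that defining $\bb Z_p(j)^\sub{syn}(R)$, only with $R\Gamma_\sub{\'et}(R[\tfrac1p],\bb Z_p(j))$ replaced by $R\Gamma_\sub{\'et}(R_p^h[\tfrac1p],\bb Z_p(j))$; as the restriction maps factor as $R\Gamma_\sub{\'et}(R[\tfrac1p],\bb Z_p(j))\to R\Gamma_\sub{\'et}(R_p^h[\tfrac1p],\bb Z_p(j))\to R\Gamma_\sub{\'et}(\hat R[\tfrac1p],\bb Z_p(j))$, the pasting law for pullback squares identifies $\bb Z_p(j)^\sub{syn}(R)$ with the pullback of $\bb Z_p(j)^\sub{syn}(R_p^h)\to R\Gamma_\sub{\'et}(R_p^h[\tfrac1p],\bb Z_p(j))\leftarrow R\Gamma_\sub{\'et}(R[\tfrac1p],\bb Z_p(j))$, which is precisely the assertion that~\eqref{eqn_syn_via_p-hens} is cartesian. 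Finally, the vanishing of the lower-left term when $j<0$ follows from part (2) applied to the $p$-henselian ring $R_p^h$ (so $\bb Z_p(j)^\sub{syn}(R_p^h)\simeq\bb Z_p(j)^\sub{syn}(R_p^\comp)$) together with the negative-weight vanishing of syntomic cohomology of derived $p$-complete rings.
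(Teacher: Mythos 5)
Your proposal is correct and follows essentially the same route as the paper: part (1) collapses the defining square \eqref{eq:syn} because the derived $p$-completion of a $\bb Z[\tfrac1p]$-algebra vanishes, part (2) reduces via the pullback to showing $R\Gamma_\sub{\'et}(R[\tfrac1p],\bb Z_p(j))\to R\Gamma_\sub{\'et}(\pi_0(R_p^\comp)[\tfrac1p],\bb Z_p(j))$ is an equivalence for $p$-henselian $R$ — which the paper supplies exactly by your first option, the Fujiwara--Gabber theorem of \cite{BhattMathew2021} together with nil-invariance to pass between $\pi_0(R_p^\comp)$ and the classical completion (a distinction your write-up glosses over but which is harmless) — and part (3) is the same pasting/identification argument using $(R_p^h)_p^\comp\simeq R_p^\comp$. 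Your fallback ``arc$_p$-descent'' sketch for the rigidity input is the only shaky piece (the functor is only an arc$_p$-sheaf on derived $p$-complete rings, and $R$ is not one, so the \v{C}ech argument does not obviously close without re-proving Fujiwara--Gabber), but it is not needed since the direct citation suffices.
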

\begin{proof}
(1) is obvious from the definition, since the derived $p$-completion of a $\bb Z[\tfrac1p]$-algebra vanishes.

(2): Now let $R$ be a $p$-henselian ring. Then the maps $R\to \pi_0(R_p^\comp)\to R_p^\sub{cl}$ (as in the proof of Proposition \ref{prop:RKE_from_basis}, the latter denotes the classical $p$-completion of $R$) induce equivalences on $R\Gamma_{\et}(-[\tfrac{1}{p}];\bb Z_p(j))$. For the second arrow, this is by nil-invariance argument already used in the proof of Corollary~\ref{corollary:RKE_perfectoid}. For the composition it is because of the Fujiwara--Gabber theorem \cite[Corollary 1.18(2)]{BhattMathew2021} \cite{Fujiwara1995}. Therefore the right vertical arrow of (\ref{eq:syn}) is an equivalence, and so we deduce the same for the left vertical arrow, as desired.

(3): This follows from the definition and the identifications established in the proof of part (2).
\end{proof}

One next extends $\bb Z_p(j)^\sub{syn}$ and the comparison map (\ref{eqn_syn_to_et}) beyond the affine case via Zariski, or even fpqc, descent using part (1) of the following result:

\begin{proposition} \label{prop:syn}
Let $j\in\bb Z$ and let $B$ be any base ring. Then the functor $\bb Z_p(j)^\sub{syn}:\opp{CAlg}_B\to \D(\bb Z)$ has the following properties:
\begin{enumerate}
\item it is an fpqc sheaf;
\item modulo any power of $p$, it is left Kan extended from smooth $B$-algebras (in particular, modulo any power of $p$ it commutes with filtered colimits, c.f., Proposition \ref{prop:finitary_conditions}(1)).
\end{enumerate}
\end{proposition}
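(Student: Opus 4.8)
\textbf{Proof proposal for Proposition \ref{prop:syn}.}

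The plan is to establish both parts by reducing to statements already available in the literature, using the pullback description of syntomic cohomology in Definition \ref{def:bms-decomplete} together with the fact that the three other corners of the square \eqref{eq:syn} are understood.

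For part (1), fpqc descent, I would argue as follows. The square \eqref{eq:syn} exhibits $\bb Z_p(j)^\sub{syn}(R)$ as a finite limit of the three functors $R\mapsto R\Gamma_\sub{\'et}(R[\tfrac1p],\bb Z_p(j))$, $R\mapsto \bb Z_p(j)^\sub{syn}(R_p^\comp)$, and $R\mapsto \bb Z_p(j)^\sub{syn}(\pi_0(R_p^\comp))$, and a finite limit of fpqc sheaves is an fpqc sheaf, so it suffices to check each of these is an fpqc sheaf on $\opp{CAlg}_B$. The first is an fpqc sheaf because \'etale cohomology with $\bb Z_p(j)$-coefficients after inverting $p$ satisfies fpqc (indeed arc$_p$, hence in particular fppf and Zariski) descent — this is a consequence of \cite[Corollary~6.17]{BhattMathew2021}, already invoked in the proof of Corollary \ref{corollary:RKE_perfectoid}, combined with the fact that derived $p$-completion commutes with the relevant operations. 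For the second and third, the functor $R\mapsto R_p^\comp$ on derived $p$-complete animated rings carries $p$-completely faithfully flat covers to $p$-completely faithfully flat covers, and $\bb Z_p(j)^\sub{syn}$ was constructed on $\text{QSyn}^\sub{op}$ as a sheaf for the quasisyntomic topology (Definition \ref{def:syn_coh_p_complete}) and then extended to all derived $p$-complete animated rings by $p$-completed left Kan extension from $\text{CAlg}_{p,\Sigma}^\comp$; the key point is \cite[Theorem~5.1]{AntieauMathewMorrowNikolaus2022} and its refinements which show this extension still satisfies $p$-completely faithfully flat (indeed fpqc) descent. One then uses that an ordinary fpqc cover $R\to S$ of $B$-algebras induces a $p$-completely faithfully flat cover $R_p^\comp\to S_p^\comp$, and that taking $\pi_0$ of a derived $p$-complete animated ring is compatible with this. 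Assembling these, the limit square is a sheaf.

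For part (2), left Kan extension from smooth $B$-algebras modulo $p^r$, the cleanest route is to use Lemma \ref{lem:lke-b-b'}: it suffices to prove the statement for $B=\bb Z$, i.e. that $\bb Z_p(j)^\sub{syn}/p^r:\opp{CAlg}\to\D(\bb Z)$ is left Kan extended from $\opp{SmAlg}_\bb Z$. This is precisely Theorem \ref{theorem_syntomic_properties}(2). (Concretely: $\bb Z_p(j)^\sub{syn}/p^r$ on $p$-complete rings is the $p$-complete left Kan extension of its restriction to $p$-completed polynomial $\bb Z$-algebras by \cite[Theorem~5.1(2)]{AntieauMathewMorrowNikolaus2022}, hence modulo $p^r$ — where $p$-completion is automatic — it is genuinely left Kan extended from finitely generated polynomial $\bb Z$-algebras; the \'etale cohomology corners of \eqref{eq:syn} with $\mu_{p^r}^{\otimes j}$-coefficients are left Kan extended from smooth $\bb Z[\tfrac1p]$-algebras by rigidity/finitariness of torsion \'etale cohomology; and one checks the pullback square itself is compatible with left Kan extension, using that $R\mapsto R_p^\comp$ and $R\mapsto \pi_0(R_p^\comp)$ behave well on the relevant diagram categories.) The ``in particular'' clause follows from Proposition \ref{prop:finitary_conditions}(1), since left Kan extensions from a subcategory of finitely presented $B$-algebras are finitary.

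\textbf{Main obstacle.} The genuinely delicate step is the descent and left-Kan-extension compatibility of the \emph{pullback square} \eqref{eq:syn} itself — in particular the behaviour of the operations $R\mapsto R_p^\comp$ and $R\mapsto \pi_0(R_p^\comp)$, which are not exact and do not a priori commute with the colimits or limits in question. For descent this is handled because $R_p^\comp$ sends fpqc covers to $p$-completely faithfully flat covers and everything in sight is already known to satisfy the corresponding descent; the subtlety is only bookkeeping. For left Kan extension the point is that on smooth $\bb Z$-algebras $R$ the completion $R_p^\comp$ is just the classical $p$-completion and the square \eqref{eq:syn} degenerates in a controlled way, so that left Kan extending the restriction to $\opp{SmAlg}_\bb Z$ of the whole square reproduces the square for general $R$; making this precise is where one must be careful, but it is exactly the content packaged into Theorem \ref{theorem_syntomic_properties}(2), which we are entitled to quote.
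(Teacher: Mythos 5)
There is a genuine gap, and it sits exactly where the paper has to do real work. For part (2), your headline citation is circular: Theorem \ref{theorem_syntomic_properties}(2) is not an independent input — its proof in the paper consists of deferring to Proposition \ref{prop:syn} itself. Your parenthetical "concrete" argument does not repair this, because its key claim is false: the corner $R\mapsto R\Gamma_{\et}(R[\tfrac1p],\mu_{p^r}^{\otimes j})$ of the square \eqref{eq:syn} is \emph{not} rigid and is not left Kan extended from smooth $B$-algebras (take the henselian pair $(\bb Z_p,(p))$: the source has the nonzero cohomology of $\bb Q_p$ while the target is $R\Gamma_{\et}(\bb F_p[\tfrac1p],\mu_{p^r}^{\otimes j})=0$). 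The paper's proof is organised precisely to avoid this trap: it works modulo $p$ and replaces the four-corner square by the fibre sequence \eqref{eqn_j!p}, in which the two \'etale corners are combined into the single functor $R\mapsto R\Gamma_{\et}(R,j_!\mu_p^{\otimes j})$; \emph{that} functor is rigid (Gabber's affine proper base change plus base change for $j_!$), hence left Kan extended from smooth $B$-algebras by Lemma \ref{lemma_rigid_implies_lke}, while the remaining term $R\mapsto \bb F_p(j)^\sub{syn}(R_p^\comp)$ is left Kan extended from polynomial $\bb Z$-algebras by construction. Without introducing the $j_!$ term (or some substitute), your decomposition of the square does not yield part (2).

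For part (1) there is a related problem. Decomposing \eqref{eq:syn} corner by corner, the actual content is fpqc descent, in the \emph{discrete} variable $R$, of $R\mapsto \bb Z_p(j)^\sub{syn}(R_p^\comp)$ (and of the corner $R\mapsto R\Gamma_{\et}(\pi_0(R_p^\comp)[\tfrac1p],\bb Z_p(j))$, which you do not address and where $\pi_0$ of the completed \v{C}ech nerve causes extra bookkeeping). You dispatch this by citing "\cite[Theorem~5.1]{AntieauMathewMorrowNikolaus2022} and its refinements", but that theorem gives quasisyntomic descent and the left Kan extension property on $p$-complete rings; flat descent of the left-Kan-extended theory over arbitrary (derived $p$-complete animated) rings, let alone in the discrete variable, is not something you can simply quote — it is what the paper proves. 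Its route is Remark \ref{rem_syn_via_cotangent}: the finite filtrations there reduce mod-$p$ descent of $\bb F_p(j)^\sub{syn}((-)_p^\comp)$ to fpqc descent of the functors $R\mapsto L^i_{R/\bb Z}\otimes^L_{\bb Z}\bb F_p$ (via the identification $(L^i_{R_p^\comp/\bb Z_p})_p^\comp\otimes^L\bb F_p\simeq L^i_{R/\bb Z}\otimes^L\bb F_p$, which is what transports the problem from the $p$-complete to the discrete world), and then invokes \cite[Theorem~3.1]{BhattMorrowScholze2}. If you want to keep your outline, you must either supply this cotangent-complex argument or locate a genuine reference for flat descent of syntomic cohomology of arbitrary $p$-complete rings in the discrete variable; as written, both halves of your proposal lean on exactly the steps that constitute the proof.
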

\begin{proof}
It suffices to prove the claims modulo $p$, namely for $\bb F_p(j)^\sub{syn}$, which we do following \cite[Propositions~8.4.6 \& 8.4.10]{BhattLurie2022}. As in \cite[Remark~8.4.4]{BhattLurie2022}, there is a natural fibre sequence \begin{equation}R\Gamma_\sub{\'et}(R,j_!\mu_p^{\otimes j})\To \bb F_p(j)^\sub{syn}(R)\To \bb F_p(j)^\sub{syn}(R_p^\comp)\label{eqn_j!p}\end{equation} for any $B$-algebra $R$, where $j_!\mu_p$ is the extension by zero of the \'etale sheaf $\mu_p^{\otimes j}$ along the open inclusion $j:\Spec(R[\tfrac1p])\to\Spec(R)$. 

We first claim that properties (1) and (2) hold for the functor $R\mapsto R\Gamma_\sub{\'et}(R,j_!\mu_p^{\otimes j})$. It satisfies arc descent by \cite[Theorem~5.4]{BhattMathew2021} hence, in particular, satisfies fpqc descent. To see that it is left Kan extended from smooth $B$-algebras, it suffices by Lemma \ref{lemma_rigid_implies_lke} to check that it is is rigid. Let $R$ be a $B$-algebra and $I\subset R$ be a henselian ideal, and consider the cartesian square:
\[
\begin{tikzcd}
\Spec(R/I[\tfrac{1}{p}]) \ar{r}{j'} \ar{d}{i'} & \Spec(R/I) \ar{d}{i} \\ 
\Spec(R[\tfrac{1}{p}]) \ar{r}{j} & \Spec(R).
\end{tikzcd}
\]
Then we get equivalences
\[
R\Gamma_{\et}(R,j_!\mu_p^{\otimes j})  \quis  R\Gamma_{\et}(R/i, i^*j_!\mu_p^{\otimes j}) \quis  R\Gamma_{\et}(R/I,j'_!i^{'*}\mu_p^{\otimes j}) \quis R\Gamma_{\et}(R/I, j'_!\mu_p^{\otimes j})
\]
where the first equivalence is again Gabber's affine analog of proper base change \cite{Gabber1994a}, the second equivalence is base change for \'etale cohomology, and the last equivalence follows from the fact that $\mu_p^{\otimes j}$ is an \'etale group scheme on $\Z[\tfrac{1}{p}]$-algebras and thus is stable under base change.

It remains to establish that properties (1) and (2) also hold for the functor $R\mapsto \bb F_p(j)^\sub{syn}(R_p^\comp)$, where we may assume $j\ge0$ since otherwise the syntomic cohomology vanishes (Lemma \ref{lemma_syn_p-hens}(3)). As a functor on all rings, it is left Kan extended from finitely generated polynomial $\bb Z$-algebras by construction \cite[Theorem~5.1(2) \& Construction~5.33]{AntieauMathewMorrowNikolaus2022}, hence its restriction to $B$-algebras is left Kan extended from finitely generated polynomial $B$-algebras, so a fortiori from smooth $\bb Z$-algebras. To prove that it satisfies fpqc descent, we use Remark  \ref{rem_syn_via_cotangent} below to reduce to fpqc descent of the functors $R\mapsto( L^i_{R_p^\comp/\bb Z_p})_p^\comp\otimes^L_{\bb Z_p}\bb F_p$ for $i\ge0$. But the latter is equivalent to $L^i_{R/\bb Z}\otimes^L_{\bb Z}\bb F_p$, which indeed has fpqc descent as a functor of $R$ by \cite[Theorem~3.1]{BhattMorrowScholze2}.
\comment{

Adjoining the mod-$p$ version of (\ref{eqn_syn_via_p-hens}) to the cartesian square
\[\xymatrix{
R\Gamma_{\et}(R[\tfrac{1}{p}];\bb Z_p(j))\ar[d] & R\Gamma_\sub{\'et}(R,\mu_p^{\otimes j})\ar[d]\ar[l]\\
R\Gamma_{\et}(R_p^h[\tfrac{1}{p}];\bb Z_p(j)) & R\Gamma_\sub{\'et}(R_p^h,\mu_p^{\otimes j})\ar[l]
}
\]
it remains to prove the claims for each of the following three functors:
\begin{equation}R\mapsto R\Gamma_{\et}(R,\mu_p^{\otimes j}),\quad  R\Gamma_{\et}(R_p^h, \mu_p^{\otimes j}),\quad \Z_p(j)^{\syn}(R_p^h)/p.\label{eqn:syn}\end{equation}

The first satisfies fpqc descent because \'etale cohomology always does; it even satisfies arc descent by . It is left Kan extended from smooth $\bb Z$-algebras because it is even rigid, by Gabber's affine analogue of the proper base change theorem \cite{Gabber}.

The second functor is the same as $R\mapsto R\Gamma_{\et}(R/pR, \mu_p^{\otimes j})$, again by [op.~cit.]. By the same arguments as the previous paragraph, this is again an fpqc (even arc) sheaf which is left Kan extended from smooth $\bb Z$-algebras.

It remains to treat the third functor, namely $R\mapsto \Z_p(j)^{\syn}(R_p^h)/p\simeq \Z_p(j)^{\syn}(R_p^\comp)/p$ (the equivalence being Lemma \ref{lemma_syn_p-hens}(2)). This i
\comment{

To show it is left Kan extended from smooth $\Z$-algebras, we first use (ind-)Nisnevich descent to see that we have a cartesian square, functorial in $R$:
\[
\begin{tikzcd}
R\Gamma_{\et}(R;\mu_p^{\otimes j})\ar{r} \ar{d} & R\Gamma_{\et}(R[\tfrac{1}{p}];\mu_p^{\otimes j}) \ar{d}\\
R\Gamma_{\et}(R/p;\mu_p^{\otimes j})\simeq R\Gamma_{\et}(R^h_p;\mu_p^{\otimes j})\ar{r} & R\Gamma_{\et}(R^h_p[\tfrac{1}{p}];\mu_p^{\otimes j}).
\end{tikzcd}
\]
The equivalence on the bottom left corner uses rigidity of \'etale cohomology \cite{Gabber}. Now, the top left and bottom left corner terms are left Kan extended from smooth $\bb Z$-algebras, again by rigidity. So it remains to check that the bottom right corner is as well. This follows from the Gabber-Fujiwara theorem \cite[Theorem 6.11]{bhatt2018excision} and the \'etale comparison theorem \cite[Theorem 9.1]{prismatic} and the fact that prismatic cohomology is left Kan extended from smooth $p$-complete rings by definition \cite[Theorem 7.2]{prismatic}.

Next we treat the second functor in (\ref{eqn:syn}).

Similar, since $R\mapsto \pi_0(R_p^\comp)$ carries faithfully flat maps to $\opp{arc}_p^\comp$-covers

Since all the terms of~\eqref{eq:syn} are \'etale sheaves in the variable $R$, $ \Z/p(j)^{\mathrm{syn}}$ is an \'etale sheaf. We first 
}
}\end{proof}

We record the following consequence of the proof of the previous proposition:

\begin{corollary}\label{corol_syn_rigid1}
For any ring $R$ and henselian ideal $I\subset R$, the canonical map on relative syntomic cohomology $\bb Z_p(j)^\sub{syn}(R,I)\to\bb Z_p(j)^\sub{syn}(R_p^h,IR_p^h)$\footnote{For a $D(\bb Z)$ or $\Spt$-valued functor on rings, we write $F(R, I) := \sub{fib}(F(R) \rightarrow F(R/I))$.} is an equivalence for any $j\in\bb Z$.
\end{corollary}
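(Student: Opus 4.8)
The plan is to deduce this from the cartesian square (\ref{eqn_syn_via_p-hens}) of Lemma \ref{lemma_syn_p-hens}(3), applied to both $R$ and $R/I$, together with the rigidity of the functors governing the top part of that square. First I would apply the square (\ref{eqn_syn_via_p-hens}) to $R$ and to $R/I$ and take the fibre of the resulting map of squares; since taking fibres commutes with finite limits, this produces a cartesian square
\begin{equation*}
\xymatrix@=1.5cm{
\bb Z_p(j)^\sub{syn}(R,I) \ar[r] \ar[d] & R\Gamma_\sub{\'et}(R[\tfrac1p],j_!\mu_p^{\otimes j}\text{-style term}) \ar[d]\\
\bb Z_p(j)^\sub{syn}(R_p^h, IR_p^h) \ar[r] & R\Gamma_\sub{\'et}(R_p^h[\tfrac1p], \cdots),
}
\end{equation*}
more precisely with top-right entry $\sub{fib}(R\Gamma_\sub{\'et}(R[\tfrac1p],\bb Z_p(j))\to R\Gamma_\sub{\'et}((R/I)[\tfrac1p],\bb Z_p(j)))$ and bottom-right the analogous fibre for $R_p^h$. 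So it suffices to show that the right-hand vertical map in this square is an equivalence.

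The right-hand vertical map is the map on relative $p$-adic \'etale cohomology $R\Gamma_\sub{\'et}((\Spec R[\tfrac1p],\Spec (R/I)[\tfrac1p]), \bb Z_p(j)) \to R\Gamma_\sub{\'et}((\Spec R_p^h[\tfrac1p], \Spec (R/I)_p^h[\tfrac1p]), \bb Z_p(j))$. To see this is an equivalence, I would invoke exactly the ingredients already used in the proof of Proposition \ref{prop:syn}: the functor $S\mapsto R\Gamma_\sub{\'et}(S, j_!\mu_{p^r}^{\otimes j})$ (extension by zero along $\Spec S[\tfrac1p]\hookrightarrow \Spec S$) is rigid, by Gabber's affine analogue of proper base change \cite{Gabber1994a} combined with base change for \'etale cohomology. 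Concretely, writing $F(S) := R\Gamma_\sub{\'et}(S, j_!\mu_{p^r}^{\otimes j})$, we have $F(R)\quis F(R/I)$ since $I$ is henselian, and likewise $F(R_p^h)\quis F(R_p^h/IR_p^h)$ since $IR_p^h$ is henselian in $R_p^h$; hence $F(R,I) = \sub{fib}(F(R)\to F(R/I)) \simeq 0$ and similarly $F(R_p^h, IR_p^h)\simeq 0$. Taking the limit over $r$ and comparing with the fibre sequence (\ref{eqn_j!p}) identifies the fibre of $\bb Z_p(j)^\sub{syn}(R)\to \bb Z_p(j)^\sub{syn}(R_p^\comp)$ with $\lim_r R\Gamma_\sub{\'et}(R, j_!\mu_{p^r}^{\otimes j})$, functorially in $R$; since this vanishes on the relative term for the henselian pair $(R,I)$, the map $\bb Z_p(j)^\sub{syn}(R,I)\to \bb Z_p(j)^\sub{syn}(R_p^\comp, \widehat{I})$ is an equivalence, and then Lemma \ref{lemma_syn_p-hens}(2) (applied to the $p$-henselian rings $R_p^h$, $R_p^h/IR_p^h$) rewrites the target as $\bb Z_p(j)^\sub{syn}(R_p^h, IR_p^h)$.

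For the final clause, the bottom-left term $\bb Z_p(j)^\sub{syn}(R_p^h, IR_p^h)$ vanishes when $j<0$ simply because $\bb Z_p(j)^\sub{syn}$ vanishes in negative weights (Definition \ref{def:syn_coh_p_complete}), and then the equivalence just proved shows $\bb Z_p(j)^\sub{syn}(R,I)=0$ as well. I do not anticipate a serious obstacle here: the only mild care needed is bookkeeping the two instances of the cartesian square and checking that the comparison maps $\gamma_\sub{syn}^\sub{\'et}$ are natural enough that the map of squares exists, but this is immediate from the naturality built into Construction \ref{cons:syn_to_etale_complete} and Definition \ref{def:bms-decomplete}. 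The one point to state carefully is that ``henselian ideal'' passes to $R_p^h$, i.e.\ that $IR_p^h$ is a henselian ideal of $R_p^h$, which follows since a quotient/localization manipulation shows $(R/I)_p^h \simeq R_p^h/IR_p^h$ and henselian pairs are stable under the relevant base change.
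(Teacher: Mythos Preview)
Your core argument is correct and matches the paper's: both use the rigidity of $R\mapsto R\Gamma_\sub{\'et}(R, j_!\mu_{p^r}^{\otimes j})$ (established in the proof of Proposition~\ref{prop:syn}) together with the fibre sequence~\eqref{eqn_j!p}, Lemma~\ref{lemma_syn_p-hens}(2), and the identity $(R/I)_p^h = R_p^h/IR_p^h$. Two expository points: your opening cartesian-square setup goes unused---in the second paragraph you do not actually show the right vertical map is an equivalence, but rather prove the left vertical map directly via~\eqref{eqn_j!p}, which is already the full argument---and your third paragraph addresses a nonexistent ``final clause,'' as the statement contains no separate assertion about $j<0$ (you may be conflating it with Lemma~\ref{lemma_syn_p-hens}(3)); the paper also bypasses your limit-over-$r$ step by working modulo $p$ and invoking $p$-completeness.
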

\begin{proof}
It is enough to prove the claim modulo $p$. In the proof of Proposition \ref{prop:syn} we showed that the leftmost term of fibre sequence (\ref{eqn_j!p}) is rigid, and in Lemma \ref{lemma_syn_p-hens}(2) we saw that the rightmost term is the same as $\bb F_p(j)^\sub{syn}(R_p^h)$. So the claim follows by applying the fibre sequence to both $R$ and $R/I$, and using the identity $(R/I)_p^h=R_p^h/IR_p^h$.
\end{proof}

\begin{remark}[Controlling $\bb Z_p(j)^\sub{syn}$ via the cotangent complex]\label{rem_syn_via_cotangent}
The proof of fpqc descent in Proposition \ref{prop:syn} reduced to the analogous property of the cotangent complex; we recall the technique from \cite[\S5]{AntieauMathewMorrowNikolaus2022}. In the approach of \cite{BhattLurie2022}, the following would be replaced by arguments with diffracted Hodge cohomology.

Let $\bb S\in\Spt$ denote the sphere spectrum; considering $\Z_p$ as a $\bb S[z]$-algebra via $z \mapsto p$ we consider the functor from animated rings to $p$-complete spectra $R \mapsto \THH(R/\bb S[z];\Z_p)$. For $R\in \text{QSyn}$, there exists a natural filtration on $\THH(R/\bb S[z];\Z_p)$ defined via extension from quasiregular semiperfectoids \cite[\S11]{BhattMorrowScholze2}, and we need the following two properties of the graded pieces $\mathrm{gr}^n\THH(R/\bb S[z];\Z_p)$ of this filtration \cite[Corollaries~5.19\&5.21]{AntieauMathewMorrowNikolaus2022}: firstly, $\mathrm{gr}^n\THH(R/\bb S[z];\Z_p)$ itself carries a finite, increasing filtration with graded pieces $(L^i_{R/\Z_p})^{\comp}_p[2i-n]$ for $i=0,\dots,n$; secondly, they fit into natural fibre sequences
\begin{equation}
\scr N^n\widehat{\Prism}_R\{ n\}[2n] \rightarrow \mathrm{gr}^n \THH(R/\bb S[z];\Z_p) \rightarrow \mathrm{gr}^{n-1}\THH(R[z];\Z_p)[2]
\label{eqn:rel-to-absolute}
\end{equation}
where $\cal N^j\hat\Prism_R\{n\}:=\opp{gr}_\cal N^j\hat\Prism_R\{n\}$ denotes $j^\sub{th}$ graded piece of the Nygaard filtration on $\hat\Prism_R\{n\}$. The Breuil--Kisin twists may in fact be trivialised on any graded piece of the Nygaard filtration \cite[Thm 5.9(1)]{AntieauMathewMorrowNikolaus2022}, so that the left term in the fibre sequence is naturally equivalent to $\scr N^n\widehat{\Prism}_R$, or equivalently $\scr N^n\widehat{\Prism}_R\{j\}$ for any other $j\ge0$.

Furthermore, (\ref{eqn_div_frob}) admits a ``finite level'' strengthening: for some $N\gg0$ (independent of $R\in\opp{QSyn}$) the mod-$p$ syntomic cohomology fits into a fibre sequence \begin{equation}\bb F_p(j)^\sub{syn}(R)\To \cal N^{\ge j}\hat\Prism_R\{j\}/\cal N^{\ge j+N}\hat\Prism_R\{j\}\otimes^L_{\bb Z}\bb F_p\stackrel{\phi_j-1}\To \hat\Prism_R\{j\}/\cal N^{\ge j+N}\hat\Prism_R\{j\}\otimes^L_{\bb Z}\bb F_p\label{eqn_div_frob2}.\end{equation}

By left Kan extension, the fibre sequence (\ref{eqn:rel-to-absolute}), the finite filtration on the middle and right terms by $p$-completed wedge powers of the cotangent complex, and the fibre sequence (\ref{eqn_div_frob2}) continue to hold for any derived $p$-complete animated ring $R$.

In conclusion, for any derived $p$-complete animated ring $R$, if the $p$-completed wedge powers of the cotangent complex $(L^i_{R/\Z_p})^{\comp}_p$ are ``understood'' for all $i\ge0$, then using the finite filtration we first get information about the graded pieces $\mathrm{gr}^n \THH(R/\bb S[z];\Z_p)$ for each $n\ge0$, then about the graded pieces $\scr N^n\widehat{\Prism}_R\{ n\}$, and then by induction about the quotients $\cal N^{\ge j}\hat\Prism_R\{j\}/\cal N^{\ge j+N}\hat\Prism_R\{j\}$ for all $j,N\ge0$, and finally from  (\ref{eqn_div_frob2}) we obtain information about $\bb F_p(j)^\sub{syn}(R)$. As well as the previous proof of fpqc descent, see Proposition \ref{prop:nygaard-prism-exc} for another typical application of this argument.
\end{remark}

Thanks to the fpqc descent of Proposition \ref{prop:syn} (with $B=\bb Z$), the syntomic cohomology of Definition~\ref{def:bms-decomplete} for rings uniquely extends to an fpqc sheaf \[\bb Z_p(j)^\sub{syn}:\text{Sch}^\sub{qcqs,op}\To \D(\bb Z)\] for each $j\in\bb Z$. This defines syntomic cohomology $\bb Z_p(\star)^\sub{syn}$ in our desired degree of generality, namely as an $\bb E_\infty$-algebra in graded presheaves of complexes on qcqs schemes.

From Lemma \ref{lemma_syn_p-hens}(1) we see that we have natural multiplicative equivalences of fpqc sheaves \begin{equation}\label{eq:syn-to-et}
\bb Z_p(j)^\sub{syn}\simeq R\Gamma_\sub{\'et}(-,\bb Z_p(j))\qquad j\in\bb Z
\end{equation} on the category of qcqs $\bb Z[\tfrac1p]$-schemes. For general qcqs schemes, this induces the natural, multiplicative syntomic-to-\'etale comparison map \begin{equation}\bb Z_p(j)^\sub{syn}(-)\To R\Gamma_\sub{\'et}(-[\tfrac1p],\bb Z_p(j))=\bb Z_p(j)^\sub{syn}(-[\tfrac1p])\qquad j\in\bb Z\label{eqn_syn_to_et_general}\end{equation} which is the counit $\bb Z_p(j)^\sub{syn}\To j_*j^*\bb Z_p(j)^\sub{syn}$ associated with the open inclusion $j:\text{Sch}^\sub{qcqs}_{\bb Z[1/p]}\to \text{Sch}^\sub{qcqs}$.

\begin{remark}\label{rem_comparison_map}
The comparison map (\ref{eqn_syn_to_et_general}) is fundamentally important to our arguments. By definition, in the affine case it is equal to the map $\gamma^\sub{\'et}_\sub{syn}\{j\}$ which was defined for derived $p$-complete rings in Construction \ref{cons:syn_to_etale_complete} and for general rings in Definition \ref{def:bms-decomplete}. In the case of a perfectoid ring $R$, it is by construction the equivalence $\bb Z_p(j)^\sub{syn}(R)\quis R\Gamma_\sub{\'et}(R[\tfrac1p],\bb Z_p(j))$ of Example \ref{example_syn_of_perfectoid} when $j>1$, and identifies with the canonical map $R\Gamma_\sub{\'et}(R,\bb Z_p(0))\to R\Gamma_\sub{\'et}(R[\tfrac1p],\bb Z_p(0))$ when $j=0$.

Henceforth we will often drop the notation $\gamma^\sub{\'et}_\sub{syn}\{j\}$ for the comparison map, viewing it as the natural map in syntomic cohomology induced by the open inclusion $X[\tfrac1p]\to X$.
\end{remark}

\begin{remark}[Weight zero]\label{rem_syn_weight_0}
For any qcqs scheme $X$ there is a natural map \begin{equation}R\Gamma_\sub{\'et}(X,\bb Z_p)\To\bb Z_p(0)^\sub{syn}(X)\label{eqn_weight0}\end{equation} induced by the fact that $\bb Z_p(0)^\sub{syn}$ is an \'etale sheaf of $\bb E_\infty$-$\bb Z_p$-algebras. We claim that this map is an equivalence.

We may check this modulo $p$ and, by descent, we may assume that $X=\Spec(R)$ is affine. Since both sides are then left Kan extended from smooth $\bb Z$-algebras (\'etale cohomology by rigidity; syntomic cohomology by Proposition \ref{prop:syn}), we may assume further that $R$ is a smooth $\bb Z$-algebra. Comparing (\ref{eq:syn}) to the pullback square at the start of the proof of Proposition \ref{prop:syn}, it is enough to show that (\ref{eqn_weight0}) is an equivalence mod $p$ for $X=\Spec(R_p^\comp)$. But $R_p^\comp$ is $p$-completely quasisyntomic, so by descent (note that $R\Gamma_\sub{\'et}(-,\bb Z/p\bb Z)$ satisfies descent on $\text{QSyn}$, as there it identifies with $R\Gamma_\sub{\'et}(-/p,\bb Z/p\bb Z)$ and \'etale cohomology satisfies fpqc descent) we then reduce further to the case of quasiregular semiperfectoids; for the desired equivalence in that case see the proof of \cite[Proposition~7.16]{BhattMorrowScholze2} (which we remark uses $K$-theory and topological cyclic homology).
\end{remark}

We finish this subsection by establishing some properties of the syntomic-to-\'etale comparison map
\[
\gamma_\sub{syn}^\sub{\'et}\{j\}: \Z_p(j)^{\mathrm{syn}} \To R\Gamma_{\et}(-[\tfrac{1}{p}],\bb Z_p(j))
\]
under $h$-sheafification and $\bb A^1$-localisation. We will need the latter for some arguments in \S\ref{sec:a1-comparison}.  The target of the comparison map, namely $R\Gamma_\sub{\'et}(-[\tfrac1p],\mu_{p^r}^{\otimes j})$, is an $\bb A^1$-invariant $h$-sheaf. There are therefore induced maps of presheaves of qcqs schemes
\begin{equation}
L_{\bb A^1}\bb Z_p(j)^\sub{syn}/p^r\To R\Gamma_\sub{\'et}(-[\tfrac1p],\mu_{p^r}^{\otimes j})
\label{eqn:syn_A1_comparison}
\end{equation}
and
\begin{equation}
\qquad L_{\h}\bb Z_p(j)^\sub{syn}/p^r\To R\Gamma_\sub{\'et}(-[\tfrac1p].\mu_{p^r}^{\otimes j})\label{eqn:syn_h_comparison}
\end{equation}
for each $j\in\bb Z$. We now prove that the first map is an equivalence, and describe $L_{\h}\bb Z_p(j)^\sub{syn}$:

\comment{
For future use we define \emph{$p$-adic $h$-motivic cohomology by}:
\begin{equation}\label{eq:p-adic-h}
\Z_p(j)^{\h} := L_{\h}\bb Z^{\syn}_p(j) \qquad j \in \bb Z,
\end{equation}
and denote its mod-$p$ variant by $\bb F_p(j)^h := L_{\h}\bb Z_p(j)/p \simeq L_{\h}\bb F(j)^{\syn}$. We will see an integral version of this theory in Definition~\ref{def:h-eh-mot}.
}

\begin{theorem}\label{thm:a1-comparison}
\begin{enumerate}
\item For any $j\in\bb Z$, the map of presheaves of qcqs schemes \eqref{eqn:syn_A1_comparison} is an equivalence.

\item For any $j\ge1$, the map of presheaves of qcqs schemes \eqref{eqn:syn_h_comparison} is an equivalence. Meanwhile, for any $j\le 0$, the presheaf $\bb Z_p(j)^\sub{syn}$ is already an $h$-sheaf (and the map \eqref{eqn:syn_h_comparison} is not necessarily an equivalence).
\end{enumerate}
\end{theorem}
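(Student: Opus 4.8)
The plan is to reduce everything to the affine case and then to a computation on henselian local rings, exploiting the fibre sequence \eqref{eqn_j!p} and the known structure of syntomic cohomology.

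For part (1), the key observation is that, since both sides of \eqref{eqn:syn_A1_comparison} are finitary (the source by Proposition~\ref{prop:syn}(2), the target visibly), it suffices to check that the fibre $C(j)$ of $\bb Z_p(j)^\sub{syn}/p^r\to R\Gamma_\sub{\'et}(-[\tfrac1p],\mu_{p^r}^{\otimes j})$ becomes $\bb A^1$-locally trivial. By Lemma~\ref{lemma_syn_p-hens}(2) and the fibre sequence \eqref{eqn_j!p}, on any ring $R$ this fibre is computed by $R\Gamma_\sub{\'et}(R,j_!\mu_{p^r}^{\otimes j})[\ldots]$ shifted appropriately; more precisely, \eqref{eqn_j!p} exhibits $R\Gamma_\sub{\'et}(-,j_!\mu_{p^r}^{\otimes j})$ as the fibre of $\bb F_p(j)^\sub{syn}(-)\to\bb F_p(j)^\sub{syn}((-)_p^\comp)$, and combined with Lemma~\ref{lemma_syn_p-hens}(2)--(3) and the definition of $\bb Z_p(j)^\sub{syn}$ via the pullback \eqref{eq:syn} one identifies $C(j)$ (at least with mod-$p$ coefficients, then by devissage with mod-$p^r$ coefficients) with a shift of $R\Gamma_\sub{\'et}(-,j_!\mu_{p^r}^{\otimes j})$. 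Thus part (1) reduces to the claim that $L_{\bb A^1}R\Gamma_\sub{\'et}(-,j_!\mu_{p^r}^{\otimes j})=0$ on qcqs schemes. I would prove this by noting that $R\Gamma_\sub{\'et}(-,j_!\mu_{p^r}^{\otimes j})$ is a finitary arc$_p$-sheaf (this was used in the proof of Proposition~\ref{prop:syn}), so by Proposition~\ref{proposition_checking_on_points} (in its h-variant, since the arc topology is finer than h) it suffices to check $\bb A^1$-invariance fails to an extent controlled by the value on (rank $\le 1$, absolutely integrally closed) valuation rings $V$; but for such $V$ with $p$ invertible the sheaf vanishes, and for $V$ with $p$ not invertible one uses the localisation sequence $R\Gamma_\sub{\'et}(V,j_!\mu_{p^r}^{\otimes j})\to R\Gamma_\sub{\'et}(V,\mu_{p^r}^{\otimes j})\to R\Gamma_\sub{\'et}(V[\tfrac1p],\mu_{p^r}^{\otimes j})$ together with the fact that $R\Gamma_\sub{\'et}(V,\mu_{p^r}^{\otimes j})$ and $R\Gamma_\sub{\'et}(V[\tfrac1p],\mu_{p^r}^{\otimes j})$ are each insensitive to $\bb A^1$ in the relevant range --- or, more cleanly, observe directly that the extension-by-zero sheaf $j_!\mu_{p^r}^{\otimes j}$ pulls back from $\Spec(\bb Z)$ and has the property that $\bb A^1_X\to X$ induces an equivalence on its cohomology because the cohomology is supported on the $p$-adic fibres where one can invoke the topological invariance and the $\bb A^1$-invariance of étale cohomology with torsion coefficients prime to the residue characteristic over the generic fibre and the logarithmic de Rham--Witt computation (Example~\ref{example_syn_in_char_p}, which is $\bb A^1$-invariant for smooth/Cartier-smooth inputs) over the special fibre. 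The cleanest route is probably: $L_{\bb A^1}$ and $L_\sub{cdh}$ both annihilate the same thing here, and one already knows $L_{\cdh}R\Gamma_\sub{\'et}(-,j_!\mu^{\otimes j}_{p^r})$ via the fibre sequence comparing it to $L_\sub{cdh}$ of étale cohomology of $X$ and of $X[\tfrac1p]$.

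For part (2), the positive statement for $j\ge1$ follows from part (1): by Remark~\ref{rem:eh-sheaf} (the identity $L_\sub{cdh}L_\sub{\'et}\simeq L_\sub{\'eh}$) and Remark~\ref{remark:h_descent}, h-sheafification of $\bb Z_p(j)^\sub{syn}/p^r$ can be computed by first $\bb A^1$-localising (harmless by part (1), since then it equals $R\Gamma_\sub{\'et}(-[\tfrac1p],\mu_{p^r}^{\otimes j})$ which is already an h-sheaf) --- but one must argue that h-sheafification factors as cdh-sheafification of the étale-sheafification, and that the étale-sheafification of $\bb Z_p(j)^\sub{syn}/p^r$ on $\Z[\tfrac1p]$-schemes is already the target while on general schemes the difference is killed by cdh-sheafification. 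Concretely: $\bb Z_p(j)^\sub{syn}/p^r$ is already an étale sheaf (it is an fpqc sheaf by Proposition~\ref{prop:syn}), so $L_\h\bb Z_p(j)^\sub{syn}/p^r=L_\sub{cdh}\bb Z_p(j)^\sub{syn}/p^r$, and one must show $L_\sub{cdh}$ of the fibre $C(j)$ vanishes for $j\ge1$. Since $C(j)$ is (a shift of) $R\Gamma_\sub{\'et}(-,j_!\mu_{p^r}^{\otimes j})$, this is again a computation on cdh-points --- henselian valuation rings $V$ --- and for $j\ge1$ one uses that on a henselian valuation ring with $p$ in the maximal ideal the relevant cohomology vanishes in the pertinent degrees by the Gabber--Fujiwara affine proper base change theorem plus the vanishing of higher cohomology of the valuation ring, exactly as in Theorem~\ref{theorem_syntomic_properties}(8) and Corollary~\ref{corol_BM_corol}; the degree-shift makes the $j\ge1$ case work and the $j=0$ case fail (there the unit class survives). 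For the negative statement ($j\le0$), note $\bb Z_p(j)^\sub{syn}=0$ for $j<0$ by Lemma~\ref{lemma_syn_p-hens}(3), hence trivially an h-sheaf, and for $j=0$ we have $\bb Z_p(0)^\sub{syn}\simeq R\Gamma_\sub{\'et}(-,\bb Z_p)$ by Remark~\ref{rem_syn_weight_0}, which is an h-sheaf (étale cohomology with torsion coefficients satisfies h-descent, cf.\ the proof of Proposition~\ref{prop:syn} and arc-descent of $R\Gamma_\sub{\'et}(-,\mu_{p^r}^{\otimes 0})$); and the map \eqref{eqn:syn_h_comparison} is then the canonical $R\Gamma_\sub{\'et}(-,\bb Z_p)\to R\Gamma_\sub{\'et}(-[\tfrac1p],\bb Z_p)$, which is not an equivalence (e.g.\ on $\Spec(\bb Z_p)$).

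\textbf{Main obstacle.} The delicate point is the vanishing of $L_\sub{cdh}R\Gamma_\sub{\'et}(-,j_!\mu_{p^r}^{\otimes j})$ for $j\ge1$ and its \emph{non}-vanishing for $j=0$: one must carefully track cohomological degrees on henselian (rank $\le1$) valuation rings $V$ of mixed characteristic, using the localisation triangle relating $j_!\mu_{p^r}^{\otimes j}$ on $\Spec V$ to the cohomology of $V$ (computed by log-de Rham--Witt, i.e.\ the Cartier-smooth case of Example~\ref{example_syn_in_char_p}, concentrated in degree $j$ after the shift) and of $V[\tfrac1p]$. This is where the asymmetry between weights enters, and it is the only step that is not a formal consequence of finitariness plus the cd-structure criterion of Proposition~\ref{proposition_checking_sheafiness}. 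I would also need to be slightly careful that the identification of $C(j)$ with a shift of $R\Gamma_\sub{\'et}(-,j_!\mu_{p^r}^{\otimes j})$ is correct with $\Z/p^r$ (not just $\Z/p$) coefficients, which follows by the usual Bockstein devissage since everything is $p$-power torsion.
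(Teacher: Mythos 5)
Your part (1) hinges on identifying the fibre $C(j)$ of $\bb Z_p(j)^\sub{syn}/p^r\to R\Gamma_{\et}(-[\tfrac1p],\mu_{p^r}^{\otimes j})$ with (a shift of) $R\Gamma_{\et}(-,j_!\mu_{p^r}^{\otimes j})$, and this identification is wrong. The fibre sequence \eqref{eqn_j!p} exhibits $R\Gamma_{\et}(-,j_!\mu_{p}^{\otimes j})$ as the fibre of $\bb F_p(j)^\sub{syn}(-)\to\bb F_p(j)^\sub{syn}((-)_p^\comp)$, which is a different map: by the defining pullback \eqref{eq:syn} (equivalently Lemma \ref{lemma_syn_p-hens}(3)), $C(j)$ is instead the fibre of $\bb F_p(j)^\sub{syn}((-)_p^h)\to R\Gamma_{\et}((-)_p^h[\tfrac1p],\mu_{p}^{\otimes j})$. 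The two pieces are essentially complementary: $C(j)(\bb Q)=0$ while $R\Gamma_{\et}(\bb Q,j_!\mu_{p}^{\otimes j})\neq 0$, and on $\bb F_p$-algebras $R\Gamma_{\et}(-,j_!\mu_{p}^{\otimes j})$ vanishes identically while $C(j)$ is all of $\bb F_p(j)^\sub{syn}$ (nonzero in general). Consequently the statement your reduction lands on, $L_{\bb A^1}R\Gamma_{\et}(-,j_!\mu_{p^r}^{\otimes j})=0$, is false: on $\bb Z[\tfrac1p]$-schemes this presheaf is ordinary \'etale cohomology, which is $\bb A^1$-invariant and nonzero; similarly the vanishing of $L_\sub{cdh}R\Gamma_{\et}(-,j_!\mu_{p^r}^{\otimes j})$ that you single out as the ``main obstacle'' already fails on fields of characteristic $\neq p$. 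There is also a structural problem with your fallback strategy: vanishing of an $\bb A^1$-localisation cannot be tested on points of a topology, since $L_{\bb A^1}$ of a sheaf need not be a sheaf and the Suslin construction is not computed stalkwise. The paper's actual route for (1) avoids all of this: it $\bb A^1$-localises the square \eqref{eqn_syn_via_p-hens} and kills the two $p$-henselian terms by noting that each is a module over $R\Gamma_{\et}((-)_p^h,\bb Z/p)\simeq R\Gamma_{\et}(-/p,\bb Z/p)$ (Gabber), whose $\bb A^1$-localisation vanishes on $\bb F_p$-algebras by the Artin--Schreier sequence; modules over an $\bb A^1$-locally trivial ring are $\bb A^1$-locally trivial.

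Part (2) inherits the same misidentification, and has further gaps. The asserted identity $L_\h\bb Z_p(j)^\sub{syn}/p^r=L_\sub{cdh}\bb Z_p(j)^\sub{syn}/p^r$ is unjustified: an h-sheaf must satisfy fppf descent in addition to cdh descent, and cdh-sheafifying an fppf sheaf need not preserve fppf descent (the paper never uses such an identity). Your claim that $\bb Z_p(j)^\sub{syn}=0$ for $j<0$ is also false for non-$p$-complete inputs; by \eqref{eqn_j!p} it equals $R\Gamma_{\et}(-,j_!\mu_{p^r}^{\otimes j})$ (nonzero on $\bb Q$, say), and the correct reason it is an h-sheaf is that this extension-by-zero \'etale cohomology is an arc sheaf. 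Most importantly, your sketch never supplies the mechanism behind the restriction $j\ge 1$: the paper first proves that $L_\h\bb F_p(j)^\sub{syn}$ is finitary and coconnective --- coconnectivity requiring the Gabber--Ramero Tor-amplitude bound on $\bb L_{V/\bb Z}$ for valuation rings --- so that the points criterion of Proposition \ref{proposition_checking_on_points}(3) applies, and then checks the comparison on absolutely integrally closed valuation rings $V$: when $p$ is not invertible one replaces $V$ by $V_p^\comp$, which is a perfectoid valuation ring, and invokes Example \ref{example_syn_of_perfectoid}, valid precisely for $j>0$ (in characteristic $p$ the point is that $\Omega^j_{\log}$ of a perfect valuation ring vanishes for $j\ge1$). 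Your appeal to affine proper base change and degree tracking does not substitute for this perfectoid input. The one piece of your part (2) that stands is the $j=0$ discussion via Remark \ref{rem_syn_weight_0} and the counterexample $\Spec(\bb Z_p)$.
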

\begin{proof}
For this proof, we may assume $r=1$.

(1) First we treat the $\bb A^1$-localisation of syntomic cohomology. By $\bb A^1$-localising the mod-$p$ version of (\ref{eqn_syn_via_p-hens}), it is enough to show that the $\bb A^1$-localisations of the two functors \[R\mapsto \bb F_p(j)^{\syn}(R_p^h),\quad R\Gamma_\sub{\'et}(R_p^h[\tfrac1p],\mu_p^{\otimes j}) \] are zero. Note that, for any $j \in \bb Z$ the complexes $\bb F_p(j)^{\syn}(R_p^h)$ and $R\Gamma_\sub{\'et}(R_p^h[\tfrac1p],\mu_p^{\otimes j})$ are modules over $\Z_p(0)^{\syn}(R^h_p)$ and $R\Gamma_\sub{\'et}(R_p^h,\bb Z/p)$ respectively. By the usual argument that any module over the zero ring is zero (see, for example, \cite{Elmanto2021}), and Remark \ref{rem_syn_weight_0}, it is enough to prove the claim that the $\bb A^1$-localisation of $R\mapsto R\Gamma_\sub{\'et}(R_p^h,\bb Z/p)$ is zero. This agrees with $R\Gamma_\sub{\'et}(R/p,\bb Z/p\bb Z)$ by Gabber's affine analogue of the proper base change theorem \cite{Gabber1994a}, so it is enough to establish this claim on the category of $\bb F_p$-algebras. This follows from the Artin--Schreier sequence and fact that the functor $R\mapsto R$ has vanishing $\bb A^1$-localisation; see the proof of \cite[Proposition A.3.1]{CisinskiDeglise2016} for details. 

(2) We next treat the $\h$-sheafification of syntomic cohomology.  When $j<0$ the third term of \eqref{eqn_j!p} vanishes and so $\bb F_p(j)^\sub{syn}\simeq R\Gamma_\sub{\'et}(-,j_!\mu_p^{\otimes j})$, which we already noted in the course of that proof is an $h$-sheaf (even an arc sheaf). Meanwhile, when $j=0$, we saw in Remark~\ref{rem_syn_weight_0} that $\bb F_p(0)^\sub{syn}\simeq R\Gamma_\sub{\'et}(-,\bb Z/p)$, which is again an arc sheaf. So now let $j > 0$.

We claim that $L_{\h}\bb F_p(j)^\sub{syn}:\text{Sch}^\sub{qcqs}\to D(\bb Z)$ is finitary and takes coconnective values. Writing $L_{\h}=L_{\h}L_\sub{cdh}$ and applying Proposition \ref{prop:finitary_conditions}(2), it is enough to establish the same properties about $L_\sub{cdh}\bb F_p(j)^\sub{syn}$. Firstly, $L_\sub{cdh}\bb F_p(j)^\sub{syn}$ is finitary by Proposition \ref{prop:syn} and another application of Proposition \ref{prop:finitary_conditions}(2). Next, for coconnectivity we appeal to Proposition \ref{proposition_checking_on_points}(1), which tell us it is enough to check coconnectivity of $\bb F_p(j)^\sub{syn}(V)$ for every henselian valuation ring $V$ (even of finite rank). But the cotangent complex $L_{V/\bb Z}$ has Tor amplitude in $[-1,0]$ by Gabber--Ramero,\footnote{More precisely, let $\ell$ be the residue characteristic of $V$ and consider two cases, depending on the characteristic of $F:=\opp{Frac} V$. If $\opp{char}F=0$ then $L_{V/\bb Z}=\Omega^1_{V/\bb Z_{(\ell)}}$ by \cite[Theorem~6.5.12(2)]{GabberRamero2003}. If $\opp{char}F=\ell$ then the transitivity sequence for $\bb Z\to\bb F_\ell\to V$ looks like $V[1]\to L_{V/\bb Z}\to L_{V/\bb F_\ell}=\Omega^1_{V/\bb F_\ell}[0]$, the final identification being another application of [loc.~cit.]. In both cases it now suffices to check that the $\Omega^1$ term has Tor dimension in $[-1,0]$. But {\em any} module over a valuation ring has such Tor dimension as it is a filtered colimit of finitely presented modules, each of which admits a decomposition into cyclic modules.} so the desired coconnectivity was explained in Remark \ref{rem:coconnective}.

Having proved that $L_{\h}\bb F_p(j)^\sub{syn}$ is finitary and takes coconnective values, we note that the same is true of the $h$-sheaf $R\Gamma_\sub{\'et}(-[\tfrac1p],\mu_p^{\otimes j})$. Therefore, by Proposition \ref{proposition_checking_on_points}, to show that $L_{\h}\bb F_p(j)^\sub{syn}\to R\Gamma_\sub{\'et}(-[\tfrac1p],\mu_p^{\otimes j})$ is an equivalence, it is enough to check on points, namely that the syntomic-to-\'etale comparison map $\bb F_p(j)^\sub{syn}(V)\to R\Gamma_\sub{\'et}(V[\tfrac1p],\mu_p^{\otimes j})$ is an equivalence for every absolutely integrally closed valuation ring $V$. If $p\in V^\times$ then this follows from Lemma \ref{lemma_syn_p-hens}; otherwise it suffices to prove the equivalence after replacing $V$ by $V_p^\comp$ (thanks to Definition \ref{def:bms-decomplete} itself), so now $V$ is an absolutely integrally closed, $p$-adically complete, valuation ring of residue characteristic $p$. Therefore $V$ is perfectoid and the desired equivalence is exactly Example \ref{example_syn_of_perfectoid}.
\end{proof}

\subsection{Weight one syntomic cohomology and the Milnor range}\label{ss:weight_one_syn} In this subsection we begin by describing weight one syntomic cohomology, complementing the discussion for weight zero syntomic cohomology in Remark~\ref{rem_syn_weight_0}. The goal is to prove an equivalence \begin{equation}c_1^\sub{syn}:R\Gamma_\sub{\'et}(-,\bb G_m)_p^\comp[-1]\quis\bb Z_p(1)^\sub{syn}(-).\label{comparison:j=1-intro}\end{equation} 
We proceed by establishing the equivalence first for the quasisyntomic case, then by bootstrapping to $p$-henselisation of the input ring and eventually to all rings by gluing in the $p$-invertible part; the proof is finished in Proposition~\ref{proposition_1st_Chern_class_syn}. We then use this to analyse the Milnor range (this is when the cohomological degree is equal to the weight) of syntomic cohomology as in Theorem~\ref{thm_syntomic_Milnor}.

By a slight abuse of notation we will also write $c_1^\sub{syn}$ for the composition \[R\Gamma_\sub{\'et}(-,\bb G_m)[-1]\To R\Gamma_\sub{\'et}(-,\bb G_m)_p^\comp[-1]\xto{c_1^\sub{syn}}\bb Z_p(1)^\sub{syn}(-),\] which will be called the {\em syntomic first Chern class}. We begin by analysing the left side of \eqref{comparison:j=1-intro} in the $p$-henselian case:

\begin{proposition}\label{prop:G_m_p_henselian}
\begin{enumerate}
\item The functor $\opp{CAlg}\to \rm D(\bb Z)$, $R \mapsto R\Gamma_\sub{\'et}(R_p^h,\bb G_m)/p$ is left Kan extended from smooth $\bb Z$-algebras.
\item For any $p$-henselian ring $R$ with bounded $p$-power torsion, the canonical map \[R\Gamma_\sub{\'et}(R,\bb G_m)_p^\comp\To \lim_sR\Gamma_\sub{\'et}(R/p^s,\bb G_m)_p^\comp\] is an equivalence.
\item The functor $\opp{QSyn}^\sub{op}\to\rm  D(\bb Z)$, $R \mapsto R\Gamma_\sub{\'et}(R,\bb G_m)_p^\comp$ is an fpqc sheaf (for the quasisyntomic topology defined at the beginning of \S\ref{ss_syn}).
\end{enumerate}
\end{proposition}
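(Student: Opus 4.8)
The three assertions are of increasing sophistication, and I would handle them in the order (1), (2), (3), using each as a tool for the next. Throughout, the essential input is that $\mathbb{G}_m$ is a smooth affine $\mathbb{Z}$-scheme, so $\mathbb{Z}$-linear cohomology theories built from it tend to be controlled by rigidity and N\'eron--Popescu desingularisation.

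\emph{Part (1).} The plan is to reduce to a rigidity statement via Lemma~\ref{lemma_rigid_implies_lke}. The functor $R\mapsto R\Gamma_\sub{\'et}(R_p^h,\Gm)/p$ takes values in the compactly generated $\infty$-category $\D(\bb Z)$ and is finitary (the formation of $R_p^h$ commutes with filtered colimits, and mod-$p$ \'etale cohomology is finitary), so by Lemma~\ref{lemma_rigid_implies_lke} it suffices to check rigidity: for a henselian ideal $I\subset R$, the map $R\Gamma_\sub{\'et}(R_p^h,\Gm)/p\to R\Gamma_\sub{\'et}((R/I)_p^h,\Gm)/p$ is an equivalence. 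Using the Kummer sequence $\mu_p\to \Gm\xrightarrow{p}\Gm$, the mod-$p$ cohomology $R\Gamma_\sub{\'et}(R_p^h,\Gm)/p$ fits in a fiber sequence with $R\Gamma_\sub{\'et}(R_p^h,\mu_p)[1]$ and $(R_p^h)^\times/p\oplus\Pic$-type terms; more cleanly, $R\Gamma_\sub{\'et}(R_p^h,\Gm)/p$ is an extension of $R\Gamma_\sub{\'et}(R_p^h,\mu_p)[1]$ by the $p$-torsion-free quotient, so it is enough to know that $R\Gamma_\sub{\'et}(R_p^h,\mu_p)$ is rigid. Since $(R/I)_p^h=R_p^h/IR_p^h$ and $IR_p^h$ is again henselian, and $R_p^h$ is $p$-henselian, Gabber's affine proper base change theorem \cite{Gabber1994a} identifies $R\Gamma_\sub{\'et}(R_p^h,\mu_p)\simeq R\Gamma_\sub{\'et}(R_p^h/p,\mu_p)$ and likewise downstairs, and invariance of \'etale cohomology under the henselian (indeed nilpotent-after-reduction) pair $(R_p^h/p, IR_p^h/p)$ gives the equivalence. (One must also track $\roi^\times$ and $\Pic$; but $\roi^\times$ is $\bb G_m$-cohomology in degree $0$ which is rigid for the same reason, and $\Pic = H^1$ is handled by the same Kummer argument.) I expect part (1) to be essentially routine once the Kummer sequence is in place.

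\emph{Part (2).} Here the plan is: since $R$ has bounded $p$-power torsion, the derived and classical $p$-completions of $R\Gamma_\sub{\'et}(R,\Gm)$ agree, and the Milnor-style sequence for completion reduces the claim to knowing that $R\Gamma_\sub{\'et}(R,\Gm)/p^s\simeq R\Gamma_\sub{\'et}(R/p^s,\Gm)/p^s$ compatibly in $s$. As in part (1), the Kummer sequence reduces this to the statement $R\Gamma_\sub{\'et}(R,\mu_{p^s})\simeq R\Gamma_\sub{\'et}(R/p^s,\mu_{p^s})$, which is exactly Gabber's affine analogue of proper base change \cite{Gabber1994a} applied to the $p$-henselian ring $R$ (note $R/p^s$ and $R/p$ have the same \'etale site, and $\mu_{p^s}$ is a finite \'etale group scheme only $p$-locally---but $R\Gamma_\sub{\'et}(-,\mu_{p^s})$ is still handled by the affine proper base change theorem since we work with torsion sheaves on the $p$-henselian pair $(R, pR)$). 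Passing to the limit over $s$ and reassembling the Kummer sequences, using that $\lim_s$ commutes with the relevant fiber sequences (all terms are eventually $p$-complete with bounded torsion), yields the claim.

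\emph{Part (3) --- the main obstacle.} This is the hard part. I would prove fpqc (equivalently, quasisyntomic-cover) descent for $R\mapsto R\Gamma_\sub{\'et}(R,\Gm)_p^\comp$ on $\opp{QSyn}^\sub{op}$ in two stages. First, by part (2), for $R\in\opp{QSyn}$ (which is $p$-complete, in particular $p$-henselian) we have $R\Gamma_\sub{\'et}(R,\Gm)_p^\comp\simeq \lim_s R\Gamma_\sub{\'et}(R/p^s,\Gm)_p^\comp$, so it suffices to prove that each $R\mapsto R\Gamma_\sub{\'et}(R/p^s,\Gm)_p^\comp$ is a quasisyntomic sheaf, and that $\lim_s$ preserves sheafiness (the latter is automatic since limits of sheaves are sheaves). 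Reducing mod $p^s$ along a $p$-completely faithfully flat cover $R\to R^\bullet$ in $\opp{QSyn}$ gives a (classically) faithfully flat cover $R/p^s\to R^\bullet/p^s$, so it is enough to know that $R'\mapsto R\Gamma_\sub{\'et}(R',\Gm)_p^\comp$ satisfies faithfully flat (fppf) descent on $\bb Z/p^s$-algebras. But $\Gm$, being smooth affine, satisfies fppf descent for \'etale (equivalently fppf) cohomology by a theorem of Grothendieck; $p$-completion (a limit) preserves this. Alternatively, one can argue directly with $\mu_{p^\infty}$: the Kummer sequence gives a fiber sequence relating $R\Gamma_\sub{\'et}(-,\Gm)_p^\comp$ to $R\Gamma_\sub{\'et}(-,\mu_{p^\infty})$ and a rationally-$p$-divisible (hence $p$-complete-trivial) piece, and $R\Gamma_\sub{\'et}(-,\mu_{p^\infty})$ is an fpqc sheaf being built from the finite flat group schemes $\mu_{p^s}$. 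The subtlety I anticipate is bookkeeping the interaction of $p$-completion with the cosimplicial limit in the descent spectral sequence---one must ensure no $\lim^1$ or completion defect spoils the equivalence, which is where the bounded-torsion hypothesis on the $\opp{QSyn}$ rings (inherited from the definition of $\opp{QSyn}$) and the finitary/coconnectivity bounds on \'etale cohomology get used to commute the limits. I would structure this carefully using the fiber sequence $R\Gamma_\sub{\'et}(-,\mu_{p^s})\to R\Gamma_\sub{\'et}(-,\Gm)\xrightarrow{p^s} R\Gamma_\sub{\'et}(-,\Gm)$ and pass to the limit over $s$ only at the very end.
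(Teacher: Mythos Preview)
Your approach to parts (1) and (2) via the Kummer sequence has a fundamental gap: the sequence $\mu_p \to \bb G_m \xrightarrow{p} \bb G_m$ is exact in the \'etale topology only when $p$ is invertible. On a $p$-henselian ring such as $R_p^h$, the $p$-th power map on $\bb G_m$ is \emph{not} \'etale-locally surjective, so there is no identification $R\Gamma_\sub{\'et}(R_p^h,\bb G_m)/p \simeq R\Gamma_\sub{\'et}(R_p^h,\mu_p)[1]$. (Concretely: for a field $k$ of characteristic $p$ one has $R\Gamma_\sub{\'et}(k,\mu_p)=0$, yet $R\Gamma_\sub{\'et}(k,\bb G_m)/p = k^\times/(k^\times)^p$ is typically nonzero.) Passing to fppf cohomology does not rescue the argument, since you would then need rigidity for $R\Gamma_\sub{fppf}(-,\mu_p)$, which fails: its $H^1$ is $R^\times/(R^\times)^p$, and units are not rigid (e.g.\ $\bb Z_p^\times \ne \bb F_p^\times$). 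Your parenthetical claim that $\roi^\times$ is rigid is simply false.

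The paper takes a different route. The key observation is that for $R$ a $p$-henselian ring, \'etale cohomology of $p$-torsion sheaves on $\Spec(R)$ vanishes in degrees $\ge 2$; combined with $\tau^{\le 1}R\Gamma_\sub{Zar}(-,\bb G_m)\simeq \tau^{\le 1}R\Gamma_\sub{\'et}(-,\bb G_m)$, this yields a finite filtration on $R\Gamma_\sub{\'et}(R,\bb G_m)/p$ with graded pieces $\opp{Br}(R)[p][-1]$, $\opp{Pic}(R)\otimes^L_\bb Z\bb F_p[-1]$, and $R^\times\otimes^L_\bb Z\bb F_p$. For part (1), rigidity of $\opp{Pic}$ and $\opp{Br}$ handles the first two pieces; the units piece $(R_p^h)^\times$ is not rigid but is nonetheless left Kan extended from smooth $\bb Z$-algebras by Mathew's criterion. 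For part (2), the filtration plus rigidity reduces to showing that the pro-systems $\{(1+p^sR)/p\}_s$ and $\{(1+p^sR)[p]\}_s$ vanish; the first requires an explicit Hensel's lemma argument, and the second is precisely where the bounded $p$-power torsion hypothesis enters (which your sketch never uses---another symptom of the gap). Your approach to part (3) is essentially correct and matches the paper's one-line deduction from (2) and Grothendieck's fpqc descent for $\bb G_m$.
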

\begin{proof}
We begin by noting, for any $p$-henselian ring $R$, that $R\Gamma_\sub{\'et}(R,\bb G_m)/p$ admits a natural finite, decreasing, exhaustive filtration with graded pieces \[\opp{Br}(R)[p][-1],\qquad\opp{Pic}(R)[-1]\otimes_{\bb Z}^L\bb F_p,\qquad(R)^\times\otimes_{\bb Z}^L\bb F_p\] (where $[p]$ denotes $p$-torsion and $[-1]$ denotes a cohomological shift). Indeed the \'etale cohomology of $p$-torsion sheaves on $\Spec(R)$ vanishes in degrees $\ge 2$ \cite[Lemma 6.2]{ClausenMathewMorrow2021}, whence the same vanishing bound holds for $R\Gamma_\sub{\'et}(R,\bb G_m)/p$. Moreover, since $\tau^{\le1}R\Gamma_\sub{Zar}(-,\bb G_m)\quis \tau^{\le 1}R\Gamma_\sub{\'et}(-,\bb G_m)$ and $H^2_\sub{\'et}(-,\bb G_m)=\opp{Br}(-)$, we have a natural fibre sequence \[(\tau^{\le1}R\Gamma_\sub{Zar}(R,\bb G_m))/p\To R\Gamma_\sub{\'et}(R,\bb G_m)/p\To \opp{Br}(R)[p][-1],\] from which the claimed finite filtration is immediate.

We now prove (1). Since $\opp{Pic}(-)$ and $\opp{Br}(-)$ are rigid, the same is true of $R\mapsto \opp{Pic}(R_p^h)$ and $R\mapsto \opp{Br}(R_p^h)[p]$ (note that $R\mapsto R_p^h$ preserves henselian surjections) and so they are left Kan extended from smooth $\bb Z$-algebras by Lemma \ref{lemma_rigid_implies_lke}. So to complete the proof of (1), using the above finite filtration it remains only to show the same about $R\mapsto (R_p^h)^\times$; but this follows from Mathew's criterion \cite[Proposition~A.0.1]{ElmantoHoyoisKhanSosniloYakerson2020}.

Next we prove (2). It is enough to check the desired continuity equivalence modulo $p$, in which case we will prove the stronger result that, for any $p$-henselian ring $R$ with bounded $p$-power torsion, $R\Gamma_\sub{\'et}(R,\bb G_m)/p\to\{R\Gamma_\sub{\'et}(R/p^s,\bb G_m)/p\}_s$ is a pro equivalence. Using the finite filtration from the start of the proof, and the rigidity of $\opp{Br}$ and $\opp{Pic}$, the problem reduces to showing that the canonical map $R^\times\otimes_\bb Z^L\bb F_p\to\{(R/p^s)^\times\otimes_\bb Z^L\bb F_p\}$ is a pro equivalence; in other words we must show that the pro abelian groups $\{(1+p^sR)/p\}_s$ and $\{(1+p^sR)[p]\}_s$ vanish.

For $\{(1+p^sR)/p\}_s$ we recall the following form of Hensel's lemma \cite[Lemma~II.2]{Elkik1973}: given $f(X)\in R[X]$, $n>2$, and $a\in R$ such that $f(a)\in p^nR$ and $p\in f'(a)R$, then there exists $a'\in a+p^{n-1}R$ such that $f(a')=0$. Applying this to the approximate root $1+p^sa$ of $X^p-(1+p^{s+1}a)$, we see that raising to the $p^\sub{th}$-power defines a surjection $1+p^sR\to 1+p^{s+1}R$ for any $s\ge2$; so the transition map $(1+p^{s+1}R)/p\to (1+p^sR)/p$ is zero.

For $\{(1+p^sR)[p]\}_s$, the transition maps are injective and so we must in fact show that $(1+p^sR)[p]=0$ for $s\gg0$. Consider the maps \[(1+p^sR)[p]\to \mu_p(R[\tfrac1p])\to\mu_p(R_p^\sub{cl}[\tfrac1p]).\] The second map is an isomorphism by Fujiwara--Gabber \cite[Corollary 1.18(2)]{BhattMathew2021} \cite{Fujiwara1995}. Picking $c\ge1$ such that $R[p^\infty]=R[p^c]$, so that $p^cR\to R/R[p^\infty]$ is injective, then the first map is injective for any $s\ge c$. So the composition, which factors through $(1+p^sR_p^\sub{cl})[p]$, is injective; therefore $(1+p^sR)[p]\to (1+p^sR_p^\sub{cl})[p]$ is injective for $s\ge c$. So it is enough to show that $(1+p^sR_p^\sub{cl})[p]=0$ for $s\gg0$; but an elementary approximation argument shows that any element of $(1+p^3R_p^\sub{cl})[p]$ is in fact congruent to $1$ modulo all powers of $p$, hence vanishes since $R_p^\sub{cl}$ is $p$-adically separated. This completes the proof that $\{(1+p^sR)[p]\}_s=0$ and so of part (2).
 
(3) is an immediate consequence of (2) and Grothendieck's theorem that $R\Gamma_\sub{\'et}(-,\bb G_m)$ is an fpqc sheaf.
\end{proof}

We are now equipped to define the equivalence (\ref{comparison:j=1-intro}) in the $p$-henselian case. The construction begins by reviewing \cite[Proposition 7.17]{BhattMorrowScholze2}, namely Remark \ref{remark:odd_vanishing} in the case $j=1$. For any quasiregular semiperfectoid ring $S$, the trace map $\opp{tr}:\K(S;\bb Z_p)\to \TC(S;\bb Z_p)$ is an equivalences in homotopical degrees $\ge0$ \cite[Corollary~6.9]{ClausenMathewMorrow2021}, and so in particular induces an equivalence
 \[\tau^{[-2,-1]}\K(S;\bb Z_p)\stackrel{\sub{tr}}\quis \tau^{[-2,-1]}\TC(S;\bb Z_p)=\bb Z_p(1)^\sub{syn}(S)[2].\] Now assume in addition that $S$ is strictly $w$-local with $p$-divisible unit group. Then  arguments with symbols show that the canonical split inclusion $S^\times\to \K_1(S)$ induces an equivalence $(T_pS^\times)[2]\quis \tau^{[-2,-1]}\K(S;\bb Z_p)$. Moreover, $S$ being strictly $w$-local implies that $R\Gamma_\sub{\'et}(\Spec(S),\bb G_m)\simeq S^\times[0]$, whose derived $p$-completion is $(T_pS^\times)[1]$. We have thus defined a natural equivalence \begin{equation}R\Gamma_\sub{\'et}(S,\bb G_m)_p^\comp[-1]\quis\bb Z_p(1)^\sub{syn}(S)\label{eqn:j=1_basis}\end{equation} for any such $S$, where the hat denotes derived $p$-completion.

Such $S$ (that is, strictly $w$-local, quasiregular semiperfectoids with $p$-divisible unit group) form a basis of quasiregular semiperfectoids (to be careful, the proof of \cite[Proposition 7.17]{BhattMorrowScholze2} uses $w$-local, rather than strictly $w$-local, so the references to \cite{BhattScholze2015} in the proof should be replaced by Lemmas 2.2.13 and 2.2.15 of \cite{BhattScholze2015}), hence for all of $\text{QSyn}$. Since both sides of (\ref{eqn:j=1_basis}) are sheaves on $\text{QSyn}$ (the right hand side by definition; the left hand side by Proposition \ref{prop:G_m_p_henselian}(3)), right Kan extending as in Proposition \ref{prop:RKE_from_basis} defines a natural equivalence \begin{equation}R\Gamma_\sub{\'et}(R,\bb G_m)_p^\comp[-1]\quis \bb Z_p(1)^\sub{syn}(R)\label{comparison:j=1_qsyn}\end{equation} for any $R\in \text{QSyn}$.

In particular, for any smooth $\bb Z$-algebra $R$, we have a natural equivalence \[R\Gamma_\sub{\'et}(R_p^h,\bb G_m)_p^\comp\quis R\Gamma_\sub{\'et}(R_p^\comp,\bb G_m)_p^\comp\stackrel{\sub{\ref{comparison:j=1_qsyn}}}{\quis} \bb Z_p(1)^\sub{syn}(R_p^\comp)\stackrel{\sub{Lemma~\ref{lemma_syn_p-hens}(2)}}\simeq \bb Z_p(1)^\sub{syn}(R_p^h),\] where the first equivalence follows by applying Proposition \ref{prop:G_m_p_henselian}(2) to both $R_p^h$ and $R_p^\comp$. Left Kan extending from smooth $\bb Z$-algebras using Proposition \ref{prop:G_m_p_henselian}(1) at last defines the desired natural equivalence \begin{equation}R\Gamma_\sub{\'et}(R_p^h,\bb G_m)_p^\comp[-1]\quis \bb Z_p(1)^\sub{syn}(R_p^h)\label{comparison:j=1}\end{equation} for any ring $R$.

The description (\ref{comparison:j=1}) of weight one syntomic cohomology is compatible with the syntomic-to-\'etale comparison map, as follows:

\begin{lemma}\label{lemma_first_Chern_p_hens}
For any $p$-henselian ring $R$, the diagram
\[\xymatrix{
\bb Z_p(1)^\sub{syn}(R)\ar[d]_{\gamma_\sub{syn}^\sub{\'et}\{1\}} &\ar[l]^\simeq _{\sub{(\ref{comparison:j=1})}}R\Gamma_\sub{\'et}(R,\bb G_m)_p^\comp[-1]\ar[d]\\
R\Gamma_\sub{\'et}(R[\tfrac1p],\bb Z_p(1)) & R\Gamma_\sub{\'et}(R[\tfrac1p],\bb G_m)_p^\comp[-1]\ar[l]_\simeq
}\]
naturally commutes, where the bottom horizontal arrow is the Kummer isomorphism.
\end{lemma}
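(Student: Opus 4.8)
The statement is essentially a compatibility of two naturally-constructed equivalences, so the strategy is to reduce everything to a common basis of test objects where both maps are visibly induced from $K$-theory and the cyclotomic trace, and then right Kan extend (or left Kan extend, as appropriate). First I would unwind all the morphisms in sight. The left vertical arrow is $\gamma_\sub{syn}^\sub{\'et}\{1\}$, which by Construction \ref{cons:syn_to_etale_complete} is obtained by right Kan extension from perfectoid rings of the comparison map of Example \ref{example_syn_of_perfectoid}; the top horizontal arrow is \eqref{comparison:j=1}, built by right Kan extending \eqref{comparison:j=1_qsyn} along the basis of strictly $w$-local quasiregular semiperfectoids with $p$-divisible unit group and then left Kan extending from smooth $\bb Z$-algebras; the bottom Kummer isomorphism is standard; and the right vertical arrow is induced by the open immersion $\Spec(R[\tfrac1p])\to\Spec(R)$ on $R\Gamma_\sub{\'et}(-,\bb G_m)_p^\comp$. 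So the diagram to be checked is a square of presheaves on $p$-henselian rings, and by Lemma \ref{lemma_syn_p-hens}(2) it suffices to work with $p$-complete rings, i.e.\ to compare on $\opp{CAlg}_p^\comp$.

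\textbf{Reduction to the quasisyntomic case, then to the basis.} Both $R\Gamma_\sub{\'et}(R,\bb G_m)_p^\comp[-1]$ and $\bb Z_p(1)^\sub{syn}(R)$ are fpqc (in particular quasisyntomic) sheaves on $\opp{QSyn}$ — the former by Proposition \ref{prop:G_m_p_henselian}(3), the latter by definition — and the same is true of $R\Gamma_\sub{\'et}(R[\tfrac1p],\bb Z_p(1))$ and $R\Gamma_\sub{\'et}(R[\tfrac1p],\bb G_m)_p^\comp[-1]$ (these are even arc sheaves, cf.\ the proof of Theorem \ref{thm:a1-comparison}). Both $\gamma_\sub{syn}^\sub{\'et}\{1\}$ and \eqref{comparison:j=1_qsyn} were themselves \emph{defined} on $\opp{QSyn}$ by right Kan extension along bases (perfectoids, resp.\ strictly $w$-local quasiregular semiperfectoids with $p$-divisible unit group). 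Therefore, by Proposition \ref{prop:RKE_from_basis}, to check that the square commutes on all of $\opp{QSyn}$ — and hence on all $p$-henselian rings after the left-Kan-extension step, since a natural transformation of left Kan extensions is detected on the original category — it is enough to check commutativity on a basis of $\opp{QSyn}$ refining both, namely strictly $w$-local quasiregular semiperfectoid rings $S$ with $p$-divisible unit group. For such an $S$ the only subtlety in passing the $p$-henselian case down to the $p$-complete quasisyntomic case is handled by Proposition \ref{prop:G_m_p_henselian}(2) applied to $R_p^h$ and $R_p^\comp$, exactly as in the construction of \eqref{comparison:j=1}.

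\textbf{The computation on the basis.} On such an $S$ I would trace through the identifications: $\bb Z_p(1)^\sub{syn}(S)[2]=\tau^{[-2,-1]}\TC(S;\bb Z_p)\xleftarrow{\sub{tr}}\tau^{[-2,-1]}\K(S;\bb Z_p)\xleftarrow{\sim}(T_pS^\times)[2]$, where the last map comes from the split inclusion $S^\times\to\K_1(S)$ and symbol computations; and $R\Gamma_\sub{\'et}(S,\bb G_m)_p^\comp[-1]\simeq(T_pS^\times)[2]$ using $S$ strictly $w$-local. The claim becomes that the composite of $\gamma_\sub{syn}^\sub{\'et}\{1\}$ with this identification agrees with the Kummer map after inverting $p$. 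This is where the real content lies, and it is the analogue for $\bb G_m$ of the weight-zero statement in Remark \ref{rem_syn_weight_0}: both sides are computed by the trace map from $K$-theory, and the syntomic-to-\'etale comparison is, on $K$-theory with $\bb Z_p$-coefficients, simply the localization map $\K(S;\bb Z_p)\to\K(S[\tfrac1p];\bb Z_p)$ composed with the comparison of $\K(-;\bb Z_p)$ and $\TC(-;\bb Z_p)$ on the $p$-inverted side via \'etale cohomology; under $S^\times\to\K_1(S)$ and $S[\tfrac1p]^\times\to\K_1(S[\tfrac1p])$ this localization map is visibly the canonical map of unit groups, whose derived $p$-completion is the Kummer map $R\Gamma_\sub{\'et}(S[\tfrac1p],\bb G_m)_p^\comp[-1]\simeq R\Gamma_\sub{\'et}(S[\tfrac1p],\bb Z_p(1))$. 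So the square commutes on the basis. The \textbf{main obstacle} is precisely making this last step rigorous: one must match the way $\gamma_\sub{syn}^\sub{\'et}\{1\}$ was defined (via right Kan extension of the perfectoid comparison of Example \ref{example_syn_of_perfectoid}, which itself is phrased in terms of symbols and the trace map) with the $K$-theoretic description of \eqref{comparison:j=1_qsyn}, on objects $S$ that are simultaneously quasiregular semiperfectoid and strictly $w$-local; this amounts to checking that the two a priori different right Kan extensions (from perfectoids vs.\ from these special quasiregular semiperfectoids) agree with each other on their common refinement and are compatible with the trace map, which is the substance of \cite[Proposition~7.17]{BhattMorrowScholze2} together with \cite[Theorem~9.4]{BhattScholze2022}. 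Once that identification of the trace-map description is in hand, everything else is formal naturality of localization sequences in $K$-theory and topological cyclic homology.
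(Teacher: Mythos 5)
Your overall architecture is close to the paper's, and you correctly locate the difficulty, but the step you flag as "the main obstacle" is a genuine gap, not something disposed of by the references you cite. Your reduction runs along the quasisyntomic topology to the basis of strictly $w$-local quasiregular semiperfectoid rings with $p$-divisible unit group. On such a ring $S$ the trace-map description \eqref{eqn:j=1_basis} of $\bb Z_p(1)^\sub{syn}(S)$ is indeed direct, but $\gamma_\sub{syn}^\sub{\'et}\{1\}(S)$ is \emph{only} given by right Kan extension from perfectoid rings (Construction \ref{cons:syn_to_etale_complete}), i.e.\ as a limit over $\opp{Perfd}_{S/}$. Neither \cite[Proposition~7.17]{BhattMorrowScholze2} (which concerns the $K$-theoretic identification of $\bb Z_p(1)^\sub{syn}$ on quasiregular semiperfectoids) nor \cite[Theorem~9.4]{BhattScholze2022} (which constructs $\alpha_1$ on perfectoid rings) says anything about how that Kan-extended map interacts with the trace-map description on a non-perfectoid $S$; your assertion that the comparison map is "simply the localization map $\K(S;\bb Z_p)\to\K(S[\tfrac1p];\bb Z_p)$" on such $S$ is exactly the content of the lemma and would itself require proof. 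So as written the central identity on your chosen basis is unproved.

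The paper avoids this by running the reduction in the other direction: the common \emph{target} $R\Gamma_\sub{\'et}(-[\tfrac1p],\bb Z_p(1))$ is right Kan extended from perfectoid rings (Corollary \ref{corollary:RKE_perfectoid}, an $\opp{arc}_p$-descent statement), so commutativity of the square can be checked on $\opp{Perfd}$, and in fact on the $\opp{arc}_p$-basis of products $R=\prod_i V_i$ of absolutely integrally closed valuation rings of residue characteristic $p$. The point of this choice is that such $R$ are simultaneously perfectoid \emph{and} strictly $w$-local quasiregular semiperfectoid with $p$-divisible units, so both \eqref{comparison:j=1_qsyn} (via \eqref{eqn:j=1_basis}) and the perfectoid definition of $\gamma_\sub{syn}^\sub{\'et}\{1\}$ from Example \ref{example_syn_of_perfectoid} apply with no Kan extension; all four corners are then discrete Tate modules, and the claim reduces, factor by factor, to the statement that $T_pR^\times\simeq\bb Z_p(1)^\sub{syn}(R)\to T_pR[\tfrac1p]^\times$ is the canonical map — vacuous in characteristic $p$, and in the $p$-torsion-free case true by the very construction of $\alpha_1$ in \cite[Theorem~9.4]{BhattScholze2022}. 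To repair your argument you would either have to perform this further $\opp{arc}_p$-reduction to the common refinement of the two bases (which is the paper's proof), or independently establish that on all quasiregular semiperfectoid rings the syntomic-to-\'etale comparison is induced by the localization square in $K$-theory and the cyclotomic trace; the latter is not available off the shelf in the cited sources. Note also that a quasisyntomic-descent reduction cannot reach the AIC-valuation-ring basis, since those covers are not $p$-completely flat — the gain of the $\opp{arc}_p$ topology is precisely what makes the paper's endgame possible.
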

\begin{proof}
Since the target of the maps, namely $R\Gamma_\sub{\'et}(-[\tfrac1p],\bb Z_p(1))$, is right Kan extended from perfectoid rings by Corollary \ref{corollary:RKE_perfectoid}, it is enough to check that the diagram naturally commutes on $\text{Perfd}$. In fact, by the same argument, we may even restrict attention to the standard basis of $\text{Perfd}$ for the $p$-complete arc topology, namely products $R=\prod_iV_i$ of absolutely integrally closed valuation rings $V_i$ of residue characteristic $p$.

In that case $R$ is strictly $w$-local with $p$-divisible unit group, and the four corners of the diagram are discrete; the problem reduces to checking that the composition \[T_pR^\times\quis \bb Z_p(1)^\sub{syn}(R)\stackrel{\gamma_\sub{syn}^\sub{\'et}\{1\}}{\to} T_pR[\tfrac1p]^\times\] is the canonical map (i.e. the natural map induced by $R\to R[\tfrac1p]$), where the first map is the isomorphism (\ref{eqn:j=1_basis}) defined in terms of $K$-theory and the trace map (using \cite[Prop 7.17]{BhattMorrowScholze2}). This may be checked on each individual $V_i$ occurring on the product. If $V_i$ has characteristic $p$ then the question is vacuous as the target is zero. If $V_i$ is $p$-torsion-free then Bhatt--Scholze define the map $\gamma_\sub{syn}^\sub{\'et}\{1\}$ in \cite[Theorem~9.4]{BhattScholze2022} (denoted by $\alpha_1$ in [loc.~cit.]) exactly to make the composition the canonical map.
\end{proof}

For an arbitrary ring $R$, we may now use Lemma \ref{lemma_syn_p-hens}(3) to define \[c_1^\sub{syn}:R\Gamma_\sub{\'et}(R,\bb G_m)_p^\comp[-1]\To \bb Z_p(1)^\sub{syn}(R)\] as the unique map fitting into a commutative diagram
\[\xymatrix{
R\Gamma_\sub{\'et}(R,\bb G_m)_p^\comp[-1]\ar@{-->}[dr]^{c_1^\sub{syn}}\ar[dd]\ar[rr]&&R\Gamma_\sub{\'et}(R[\tfrac1p],\bb G_m)_p^\comp[-1]\ar[d]^{\simeq\sub{ Kummer}\,c_1^\sub{\'et}}\\
&\bb Z_p(1)^\sub{syn}(R)\ar[r]\ar[d] & R\Gamma_\sub{\'et}(R[\tfrac1p],\bb Z_p(1))\\
R\Gamma_\sub{\'et}(R_p^h,\bb G_m)_p^\comp[-1]\ar[r]^-{\sub{(\ref{comparison:j=1})}} & \bb Z_p(1)^\sub{syn}(R_p^h) &
}\]
That is, on $p$-henselian rings $c_1^\sub{syn}$ is as defined as at (\ref{comparison:j=1}), and on rings on which $p$ is invertible it is given by the Kummer isomorphism.

\begin{proposition}\label{proposition_1st_Chern_class_syn}
For any ring $R$, the map \[c_1^\sub{syn}:R\Gamma_\sub{\'et}(R,\bb G_m)_p^\comp[-1]\To \bb Z_p(1)^\sub{syn}(R)\] is an equivalence.
\end{proposition}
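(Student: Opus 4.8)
The statement follows from the construction of $c_1^\sub{syn}$ together with the square of Lemma~\ref{lemma_syn_p-hens}(3). The plan is to verify the equivalence by checking it on the three pieces of the gluing square~\eqref{eqn_syn_via_p-hens}. Concretely, we have a map of cartesian squares: the source square is the pullback of $R\Gamma_\sub{\'et}(R,\bb G_m)_p^\comp[-1]$ against $R\Gamma_\sub{\'et}(R_p^h,\bb G_m)_p^\comp[-1]$ over $R\Gamma_\sub{\'et}(R[\tfrac1p],\bb G_m)_p^\comp[-1]$ and $R\Gamma_\sub{\'et}(R_p^h[\tfrac1p],\bb G_m)_p^\comp[-1]$ (using the arc-descent/Milnor-square-type decomposition of $R\Gamma_\sub{\'et}(-,\bb G_m)_p^\comp$, which holds because $\tau^{\le 1}R\Gamma_\sub{Zar} \simeq \tau^{\le 1}R\Gamma_\sub{\'et}$ for $\bb G_m$ and because of the $p$-henselian gluing), and the target square is~\eqref{eqn_syn_via_p-hens} itself. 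The map between them is given by $c_1^\sub{syn}$ on the top-left corners, by $c_1^\sub{syn}$ (i.e., the equivalence~\eqref{comparison:j=1}) on the bottom-left corners, and by the Kummer isomorphism composed with the comparison map on the two right-hand corners. By construction of $c_1^\sub{syn}$ this is a genuine map of squares, and since both squares are cartesian it suffices to check the map is an equivalence on each of the three remaining corners.

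First I would treat the bottom-left corner: the map $R\Gamma_\sub{\'et}(R_p^h,\bb G_m)_p^\comp[-1]\to\bb Z_p(1)^\sub{syn}(R_p^h)$ is an equivalence by construction — this is precisely~\eqref{comparison:j=1}, whose validity was established by the chain of equivalences running from the quasiregular semiperfectoid basis (via the trace map and symbols, following \cite[Proposition~7.17]{BhattMorrowScholze2}), up through right Kan extension to all of $\text{QSyn}$ using Proposition~\ref{prop:G_m_p_henselian}(3), then across $p$-henselisation using Proposition~\ref{prop:G_m_p_henselian}(2) and Lemma~\ref{lemma_syn_p-hens}(2), and finally left Kan extending from smooth $\bb Z$-algebras using Proposition~\ref{prop:G_m_p_henselian}(1) on the source and Proposition~\ref{prop:syn}(2) on the target. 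Next, the two right-hand corners: on $\bb Z[\tfrac1p]$-algebras the map $c_1^\sub{syn}$ is by definition the Kummer isomorphism $R\Gamma_\sub{\'et}(-,\bb G_m)_p^\comp[-1]\xrightarrow{\simeq}R\Gamma_\sub{\'et}(-,\bb Z_p(1))$ (which is a classical equivalence of $p$-complete \'etale sheaves), and combined with Lemma~\ref{lemma_syn_p-hens}(1) identifying $\bb Z_p(1)^\sub{syn}$ with $R\Gamma_\sub{\'et}(-[\tfrac1p],\bb Z_p(1))$ on such algebras, this gives equivalences on both $R[\tfrac1p]$ and $R_p^h[\tfrac1p]$. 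The only subtlety is to check that the comparison maps $\gamma^\sub{\'et}_\sub{syn}\{1\}$ appearing in the target square~\eqref{eqn_syn_via_p-hens} are carried to the Kummer maps under these identifications; this is exactly the content of Lemma~\ref{lemma_first_Chern_p_hens}, which ensures the diagram of squares actually commutes.

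Having established the map of cartesian squares is an equivalence on the bottom-left and both right-hand corners, it follows that it is an equivalence on the top-left corner, i.e., $c_1^\sub{syn}:R\Gamma_\sub{\'et}(R,\bb G_m)_p^\comp[-1]\to\bb Z_p(1)^\sub{syn}(R)$ is an equivalence, as desired. \textbf{The main obstacle} I anticipate is purely bookkeeping rather than conceptual: one must be careful that the ``source square'' genuinely is cartesian — that is, that $R\Gamma_\sub{\'et}(-,\bb G_m)_p^\comp$ satisfies the same $p$-henselian gluing as syntomic cohomology. This requires knowing that $R\Gamma_\sub{\'et}(R,\bb G_m)_p^\comp$ is recovered from $R\Gamma_\sub{\'et}(R_p^h,\bb G_m)_p^\comp$, $R\Gamma_\sub{\'et}(R[\tfrac1p],\bb G_m)_p^\comp$, and $R\Gamma_\sub{\'et}(R_p^h[\tfrac1p],\bb G_m)_p^\comp$ as a pullback — which one deduces from the fact that $\Spec R = \Spec R[\tfrac1p] \cup \Spec R_p^h$ is (ind-)Nisnevich-locally a cover with intersection $\Spec R_p^h[\tfrac1p]$, together with $p$-completeness; alternatively one bypasses this by directly right Kan extending the equivalence~\eqref{eqn:j=1_basis} from a perfectoid basis, as is done for~\eqref{comparison:j=1_qsyn}, and then only needs Lemma~\ref{lemma_syn_p-hens}(3) applied once. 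Either way, the argument is a formal consequence of the pieces already assembled in~\S\ref{ss:weight_one_syn}.
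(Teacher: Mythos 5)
Your proposal is correct and follows essentially the same route as the paper: the paper's proof also observes that the square formed by $R$, $R[\tfrac1p]$, $R_p^h$, $R_p^h[\tfrac1p]$ is cartesian for $R\Gamma_\sub{\'et}(-,\bb G_m)$ (and stays so after $p$-completion and shift), and then glues the $p$-henselian equivalence \eqref{comparison:j=1} with the Kummer isomorphism via Lemma~\ref{lemma_syn_p-hens}(3), using Lemma~\ref{lemma_first_Chern_p_hens} for the compatibility. Your extra discussion of why the source square is cartesian (ind-Nisnevich cover plus finitariness of \'etale cohomology) fills in a point the paper leaves as an assertion, but it is the same argument in substance.
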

\begin{proof}
Noting that the square
\[\xymatrix{
R\Gamma_\sub{\'et}(R,\bb G_m) \ar[r]\ar[d] & R\Gamma_\sub{\'et}(R[\tfrac1p],\bb G_m)\ar[d]\\
R\Gamma_\sub{\'et}(R_p^h,\bb G_m)\ar[r] & R\Gamma_\sub{\'et}(R_p^h[\tfrac1p],\bb G_m)
}\]
is cartesian (and remains so after $p$-completing and shifting), this follows by glueing the $p$-henselian case (Lemma \ref{lemma_first_Chern_p_hens}) and the Kummer isomorphism via Lemma \ref{lemma_syn_p-hens}(3).
\end{proof}

\begin{remark}[Comparison to Bhatt--Lurie]
For any ring $R$, the maps $c_1^\sub{syn}$ and $\gamma^\sub{\'et}_\sub{syn}\{j\}$ agree with those defined by Bhatt--Lurie \cite{BhattLurie2022}.

We start with $c_1^\sub{syn}$. Since Bhatt--Lurie's $c_1^\sub{syn}$ is also given by the Kummer map when $p$ is invertible, we reduce to the case that the ring is $p$-henselian. By left Kan extension (to reduce to $p$-henselisations of smooth algebras) and right Kan extension (to reduce to a basis of $\text{QSyn}$), it is enough to check, for each $p$-torsion-free, strictly w-local, quasiregular semiperfectoid $S$ with $p$-divisible units, that the map (\ref{eqn:j=1_basis}) (constructed using the trace map $\K(S)\to \TC(S)$) is the same as Bhatt--Lurie's $c_1^\sub{syn}$ (constructed using their prismatic logarithm). Both sides of (\ref{eqn:j=1_basis}) are discrete, given by the Tate module $T_pS^\times$, and the canonical map to $T_p(S/p)^\times$ is injective (indeed, the Tate module of $1+pS$ vanishes since $S$ is $p$-adically separated). Note that $\overline S:=S/p$ is a strictly w-local, quasiregular semipefect ring with $p$-divisible units; we have reduced to checking that (\ref{eqn:j=1_basis}) for $\overline S$ is the same as Bhatt--Lurie's $c_1^\sub{syn}$. Composing with the canonical map $\TC_2(\overline S)=\bb Z_p(1)^\sub{syn}(\overline S)\hookrightarrow A_\sub{crys}(\overline S)=\mathrm{TP}_2(\overline S)$, Bhatt--Lurie's map becomes the crystalline logarithm \cite[Proof of Proposition~7.5.5]{BhattLurie2022}. But so does the trace map: this is a special case of Ansch\"utz--Le Bras' identification of the trace map with a q-logarithm \cite[Theorem~1.2]{AnschutzLeBras2020}.

Next, for $\gamma_\sub{syn}^\sub{\'et}\{j\}$, note that Bhatt--Lurie also define their comparison map for general rings by pulling back from the $p$-complete case \cite[Construction 8.4.1]{BhattLurie2022}, so we reduce to checking that our map agrees with theirs on derived $p$-complete rings. Since we have checked the agreement of first Chern classes, this agreement follows by quoting the uniqueness assertion in \cite[Theorem 8.3.1]{BhattLurie2022}, but we sketch the necessary arguments for the sake of completeness. Firstly, by Corollary \ref{corollary:RKE_perfectoid} we reduce to the case of perfectoid $R$, or even to the finer basis for the arc$_p$-topology consisting of products of rings of integers $V$ of algebraically closed, spherically complete, non-archimedean fields of residue characteristic $p$. We thus reduce to checking the agreement on such $V$, which we may further assume have characteristic $0$ (as otherwise the target of the comparison map vanishes). Then we see from Example \ref{example_syn_of_perfectoid} that $\bb F_p(j)^\sub{syn}(V)$ multiplicatively identifies with $\bb F_p(1)^\sub{syn}(V)$ for any $j\ge1$;\footnote{In fact, this special case of Example \ref{example_syn_of_perfectoid} is used to treat the case of general perfectoid rings; see the proof of \cite[Theorem~9.4]{BhattScholze2022}.} thus we reduce finally to the case $j=1$, where the desired agreement is precisely the agreement of the first syntomic Chern classes.
\end{remark}

We finish this subsection by describing the relation between syntomic cohomology and Milnor $K$-theory, originally due to Bhatt--Mathew \cite{BhattMathew2023} and to L\"uders and the third author \cite{LuedersMorrow2023}. Working modulo $p^r$, the first syntomic Chern class in degree $1$ is a map of abelian groups $R^\times/p^m\to H^1_\sub{syn}(R,\bb Z/p^m(1))$; by multiplicativity of syntomic cohomology this then induces multiplicatively compatible ``symbol'' maps \[\underbrace{R^\times\otimes_{\bb Z}\cdots\otimes_{\bb Z}R^\times}_{j\sub{ times}}\To H^1_\sub{syn}(R,\bb Z/p^m(1))\] for all $j\ge0$. 

\begin{theorem}\label{thm_syntomic_Milnor}
For any $r,j\ge0$ and henselian local ring $R$, the symbol map factors (necessary uniquely) as \[(\underbrace{R^\times\otimes_{\bb Z}\cdots\otimes_{\bb Z}R^\times}_{j\sub{ times}})/p^r\onto \hat K_j^M(A)/p^r\isoto H^j_\sub{syn}(R,\bb Z/p^r(j)),\] where $\hat K_j^M(A)$ denotes Gabber--Kerz' improved Milnor $K$-group of $A$ \cite{Kerz2010}.
\end{theorem}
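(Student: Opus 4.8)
The plan is to reduce the statement, via left Kan extension and a filtered colimit argument, to the case of a \emph{smooth} local $\bb Z$-algebra (in fact its henselisation), then invoke the known comparison between syntomic cohomology and improved Milnor $K$-theory in that setting, and finally transport the identification back along the left Kan extension. More precisely, first note that by Theorem~\ref{theorem_syntomic_properties}(2) the functor $\bb Z_p(j)^\sub{syn}/p^r$ is left Kan extended from smooth $\bb Z$-algebras, and the improved Milnor $K$-theory $\hat K_j^M(-)/p^r$ is also left Kan extended from smooth $\bb Z$-algebras (it is finitary, and rigid by the Gabber--Kerz results of \cite{Kerz2010}, so Lemma~\ref{lemma_rigid_implies_lke} applies). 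The symbol map is a natural transformation defined on all rings, and on smooth $\bb Z$-algebras it is compatible with left Kan extension because it is built multiplicatively from the weight-one first Chern class $c_1^\sub{syn}: R^\times/p^r \to H^1_\sub{syn}(R,\bb Z/p^r(1))$, which by Proposition~\ref{proposition_1st_Chern_class_syn} is itself (the $H^1$ of) an equivalence compatible with the left Kan extension structure. Thus it suffices to prove the statement when $R$ is the henselisation of a smooth local $\bb Z$-algebra.

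For such $R$, the surjectivity of $(R^\times{}^{\otimes j})/p^r \onto \hat K_j^M(R)/p^r$ is the definition/standard property of improved Milnor $K$-theory (every improved Milnor $K$-class is a sum of symbols, since $R$ is local with large residue field or can be reduced to that case), so the only content is the isomorphism $\hat K_j^M(R)/p^r \isoto H^j_\sub{syn}(R,\bb Z/p^r(j))$ and the fact that the symbol map induces it. I would split this by the characteristic of $R$. If $R$ is a $\bb Z[\tfrac1p]$-algebra, this is the classical Beilinson--Lichtenbaum/Geisser--Levine-type statement that the Galois symbol $\hat K_j^M(R)/p^r \to H^j_\sub{\'et}(R,\mu_{p^r}^{\otimes j})$ is an isomorphism for henselian local rings (a consequence of the Bloch--Kato conjecture, proved by Rost--Voevodsky and extended to henselian local rings), combined with $\bb Z_p(j)^\sub{syn}(R)/p^r \simeq R\Gamma_\sub{\'et}(R,\mu_{p^r}^{\otimes j})$ and the fact that the latter is supported in degrees $\le j$ with top cohomology $H^j_\sub{\'et}(R,\mu_{p^r}^{\otimes j})$; here Theorem~\ref{theorem_syntomic_properties}(3) applies. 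If $R$ has residue characteristic $p$: when $R$ is an $\bb F_p$-algebra one uses Theorem~\ref{theorem_syntomic_properties}(4), $\bb F_p(j)^\sub{syn}(R) \simeq R\Gamma_\sub{\'et}(R,\Omega^j_\sub{log})[-j]$, together with the Bloch--Kato--Gabber--Kerz theorem identifying $\hat K_j^M(R)/p$ with $H^0_\sub{\'et}(R,\Omega^j_\sub{log}) = \Omega^j_{R,\log}$ for henselian local $R$ (and its $p^r$-version via the de Rham--Witt sheaves $W_r\Omega^j_\sub{log}$); when $R$ is $p$-torsion-free of mixed characteristic one invokes the theorem of Bhatt--Mathew \cite[Theorem~1.8]{BhattMathew2023} (quoted as Theorem~\ref{theorem_syntomic_properties}(7)) directly, which is precisely this statement for $F$-smooth, in particular regular Noetherian, rings.

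The main obstacle is not any single one of these inputs individually — each is a citable theorem — but rather the bookkeeping needed to ensure the \emph{symbol map} itself (as a natural transformation defined uniformly on all rings via multiplicativity and $c_1^\sub{syn}$) is the map realising these isomorphisms, rather than merely establishing that source and target are abstractly isomorphic. This requires checking that the multiplicative structure on $\bb Z_p(\star)^\sub{syn}$ (an $\bb E_\infty$-algebra in graded complexes, Theorem~\ref{theorem_syntomic_properties}) induces on $H^\star$ the product matching the Milnor $K$-theory product under the cited identifications; in the mixed characteristic case this is part of what Bhatt--Mathew prove, and in the $\bb F_p$ and $\bb Z[\tfrac1p]$ cases it follows from the compatibility of $c_1^\sub{syn}$ with $c_1^\sub{\'et}$ (Lemma~\ref{lemma_first_Chern_p_hens}) and the classical identification of the Galois/dlog symbol with the cup-product symbol. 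The gluing case of general henselian local $R$ (neither of equal characteristic nor $p$-torsion-free) is handled by the pullback square of Lemma~\ref{lemma_syn_p-hens}(3) together with the corresponding presentation of $\hat K_j^M$, reducing to the $p$-henselian and $p$-invertible cases already treated. Finally, uniqueness of the factorisation through $\hat K_j^M/p^r$ is automatic once surjectivity of the symbol onto $\hat K_j^M/p^r$ is known.
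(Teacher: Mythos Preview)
Your reduction strategy has a genuine gap: the claim that $\hat K_j^M(-)/p^r$ is rigid is false. For example, take $A = \bb F_p[[t]]$ with the henselian ideal $I=(t)$; then $A^\times/p$ contains the nontrivial class of $1+t$ (since $(1+ts)^p = 1+t^ps^p$ in characteristic $p$), whereas $(A/I)^\times/p = \bb F_p^\times/p = 0$. So Lemma~\ref{lemma_rigid_implies_lke} does not apply, and you cannot conclude that $\hat K_j^M/p^r$ is left Kan extended from smooth $\bb Z$-algebras. Kerz' rigidity result \cite[Proposition~10(7)]{Kerz2010} is only valid away from the residue characteristic. Without this, your reduction from a general henselian local ring to henselisations of essentially smooth local $\bb Z$-algebras breaks down on the Milnor $K$-theory side.

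There is also a miscitation: you invoke ``Theorem~\ref{theorem_syntomic_properties}(7)'' in the mixed-characteristic case, but (7) \emph{is} the statement you are proving. What Bhatt--Mathew \cite[Theorem~1.8]{BhattMathew2023} provides is (8), the bound on the syntomic-to-\'etale cofibre; the Milnor $K$-theory identification for $p$-henselian rings is established in \cite[Theorem~3.1]{LuedersMorrow2023}, using Bhatt--Mathew as an input.

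The paper's proof avoids left Kan extension entirely and is much shorter: split into two cases. If $R$ is $p$-henselian, cite \cite[Theorem~3.1]{LuedersMorrow2023} directly. If $p\in R^\times$, use rigidity of both $\hat K_j^M/p^r$ (Kerz, valid here since $p$ is invertible) and \'etale cohomology (Gabber) to reduce to the residue field $k$, where the statement is the Bloch--Kato conjecture proved by Voevodsky. Note that any henselian local ring falls into one of these two cases, so no gluing via Lemma~\ref{lemma_syn_p-hens}(3) is needed.
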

\begin{proof}
In the case of $p$-henselian rings this is \cite[Theorem~3.1]{LuedersMorrow2023}, relying on Bhatt--Mathew \cite[Theorem~1.8]{BhattMathew2023}.

Otherwise $R$ is a henselian local ring in which $p$ is invertible. Letting $k$ denote the residue field, the maps $\hat K_j^M(A)/p^r\to K_j^M(k)/p^r$ and $H^j_\sub{syn}(R,\bb Z/p^r(j))=H^j_\sub{\'et}(R,\mu_{p^r}^{\otimes j})\to H^j_\sub{\'et}(k,\mu_{p^r}^{\otimes j})$ are isomorphisms; indeed, the first isomorphism is a rigidity property of Milnor $K$-theory away from the characteristic \cite[Proposition~10(7)]{Kerz2010}, and the second is Gabber's rigidity of \'etale cohomology \cite{Gabber1994a}. These isomorphisms reduce the problem to checking that the symbol map $(k^\times)^{\otimes j}/p^r\to H^j_\sub{\'et}(k,\mu_{p^r}^{\otimes j})$, induced by Kummer theory and multiplicativity of \'etale cohomology, descends to an isomorphism $K_j^M(k)/p^r\isoto H^j_\sub{\'et}(k,\mu_{p^r}^{\otimes j})$; but that is precisely the Bloch--Kato conjecture established by Voevodsky.
\end{proof}


\comment{
I think everything below is redundant and to be removed.

\begin{proposition}
The functor $\opp{CAlg}\to D(\bb Z)$, $R \mapsto R\Gamma_\sub{\'et}(R_p^h,\bb G_m)/p$ has the following properties:
\begin{enumerate}
\item it is a $p$-completely fpqc sheaf;
\item it is left Kan extended from smooth $\bb Z$-algebras;
\end{enumerate}
\end{proposition} 
\begin{proof}
We begin by noting, for any ring $R$, that $R\Gamma_\sub{\'et}(R_p^h,\bb G_m)/p$ admits a natural finite, decreasing, exhaustive filtration with graded pieces \[\opp{Br}(R_p^h)[p][-1],\qquad\opp{Pic}(R_p^h)[-1]\otimes_{\bb Z}^L\bb F_p,\qquad(R_p^h)^\times\otimes_{\bb Z}^L\bb F_p\] (where $[p]$ denote $p$-torsion and $[-1]$ denotes a cohomological shift). Indeed the \'etale cohomology of $p$-torsion sheaves on $\Spec (R_p^h)$ vanishes in degrees $\ge 2$, whence the same vanishing holds for $R\Gamma_\sub{\'et}(R_p^h,\bb G_m)/p$. Then, since $\tau^{\le1}R\Gamma_\sub{Zar}(-,\bb G_m)\quis \tau^{\le 1}R\Gamma_\sub{\'et}(-,\bb G_m)$ and $H^2_\sub{\'et}(-,\bb G_m)=\opp{Br}(-)$, we have a natural fibre sequence \[(\tau^{\le1}R\Gamma_\sub{Zar}(R_p^h,\bb G_m))/p\To R\Gamma_\sub{\'et}(R_p^h,\bb G_m)/p\To \opp{Br}(R_p^h)[p][-1],\] from which the claimed finite filtration is immediate.

Since $\opp{Pic}(-)$ and $\opp{Br}(-)$ are rigid, the same is true of $R\mapsto \opp{Pic}(R_p^h)$ and $R\mapsto \opp{Br}(R_p^h)[p]$ (note that $R\mapsto R_p^h$ preserves henselian surjections) and so they are left Kan extended from smooth $\bb Z$-algebras. So to complete the proof of (2), it remains only to show the same about $R\mapsto (R_p^h)^\times$; but this follows from Mathew's criterion \cite[Proposition~A.0.1]{EHKVY}.

Next we prove (1). For any ring $R$, we know from the first paragraph (for the ring $R/p$) that $R\Gamma_\sub{\'et}(R/p,\bb G_m)/p$ carries a natural finite, increasing, exhaustive filtration with gradeds \begin{equation}\opp{Br}(R/p)[p][-1],\qquad\opp{Pic}(R/p)[-1]\otimes_{\bb Z}^L\bb F_p,\qquad(R/p)^\times\otimes_{\bb Z}^L\bb F_p\label{equation_modp_filt}\end{equation}  Comparing the finite filtrations for $R_p^h$ and $R/p$ (and once again using rigidity of the Picard and Brauer groups), we see that there is a natural cartesian square
\[\xymatrix{
(R_p^h)\otimes^L_\bb Z\bb F_p\ar[r]\ar[d] & R\Gamma_\sub{\'et}(R_p^h,\bb G_m)/p\ar[d]\\ (R/p)^\times\otimes^L_\bb Z\bb F_p \ar[r] & R\Gamma_\sub{\'et}(R/p,\bb G_m)/p
}\]
From the short exact sequence $0\to R/p\to (R_p^h)^\times\to (R/p)^\times\to 0$ we then obtain a natural fibre sequence \[R/p\otimes^L_\bb Z\bb F_p\To R\Gamma_\sub{\'et}(R_p^h,\bb G_m)/p\To R\Gamma_\sub{\'et}(R/p,\bb G_m)/p\]


Indeed, for any ring $R$, we know from the first paragraph (for the ring $R/p$) that $R\Gamma_\sub{\'et}(R/p,\bb G_m)/p$ carries a natural finite, increasing, exhaustive filtration with gradeds \begin{equation}\opp{Br}(R/p)[p][-1],\qquad\opp{Pic}(R/p)[-1]\otimes_{\bb Z}^L\bb F_p,\qquad(R/p)^\times\otimes_{\bb Z}^L\bb F_p\label{equation_modp_filt}\end{equation} The first and second terms are left Kan extended from smooth $\bb Z$-algebras, by rigidity. The third term is not; however, for any ring $R$, there is a short exact sequence \[0\To R/p\stackrel{f\mapsto 1+\tilde fp}{\To}(R_p^h)^\times\To (R/p)^\times\To 0,\] where we have seen that the middle term is left Kan extended from smooth $\bb Z$-algebras. Therefore the failure of $R\mapsto (R/p)^\times$ to be left Kan extended from smooth $\bb Z$-algebras (i.e., the fibre between this functor and the left Kan extension of its restriction to smooth $\bb Z$-algebras) is the same as the failure of $R\mapsto R/p[1]$, which is given by $R[p][2]$ (since $R\mapsto R\otimes_{\bb Z}^L\bb F_p$ {\em is} left Kan extended from smooth $\bb Z$-algebras). Therefore the failure of $R\mapsto (R/p)^\times\otimes^L_\bb Z\bb F_p$ to be left Kan extended from smooth $\bb Z$-algebras is $R[p][2]\otimes^L_\bb Z\bb F_p$. Returning to the filtration \ref{equation_modp_filt} we have shown, for any ring $R$, that there is a natural fibre sequence \[F(R)\To R\Gamma_\sub{\'et}(R/p,\bb G_m)/p\To R[p][2]\otimes^L_\bb Z\bb F_p.\] The final term is a $p$-completely fpqc sheaf (as $R\mapsto R/p$ and $R\otimes^L_\bb Z$ both are); the middle term is also since $R\Gamma_\sub{\'et}(-,\bb G_m)$ is an fpqc sheaf by Grothendieck \cite{}. This proves the claim that $F$ is a $p$-completely fpqc sheaf.

Next we are prepared to prove (1). For any smooth $\bb Z$-algebra $R$, and any point $x\in\Spec (R_p^h)$ of residue characteristic $p$, the strict henselisation $(R_p^h)_x^\sub{sh}=R_x^\sub{sh}$ (where we identify $x$ with the corresponding point of $\Spec(R)$ since $R/p=R_p^h/p$) contains no non-trivial $p^\sub{th}$-roots of unity and its mod-$p$ units fit into an exact sequence \[0\To R_x^\sub{sh}/p\stackrel{f\mapsto 1+\tilde fp}\To (R_x^\sub{sh})^\times/p\To (R_x^\sub{sh}/p)^\times/p\To 0.\]  In other words, writing $i:\Spec(R/p)\to \Spec(R_p^h)$ for the closed embedding, the sheaf $i^*\bb G_m$ on $\Spec(R/p)_\sub{\'et}$ is $p$-torsion-free and mod-$p$ fits into an exact sequence \[0\To \roi_{\Spec(R/p)}\To i^*\bb G_{m,\Spec(R_p^h)}/p\To \bb G_{m,\Spec(R/p)}/p\To 0.\] Passing to \'etale cohomology on $\Spec (R_p^h)$ and applying Gabber's affine analogue of the proper base change theorem now yields a  fibre sequence \[R/p\To R\Gamma_\sub{\'et}(R_p^h,\bb G_m)/p\To R\Gamma_\sub{\'et}(R/p,\bb G_m)/p\] which is natural in the smooth $\bb Z$-algebra $R$. Left Kan extending, using (2), yields a natural fibre sequence \[R\otimes^L_\bb Z\bb F_p\To R\Gamma_\sub{\'et}(R_p^h,\bb G_m)/p\To F(R)\] for any ring $R$. We showed above that $F$ is a $p$-completely fpqc sheaf, and the same is true for $R\mapsto R\otimes^L_\bb Z\bb F_p$; we deduce part (1).
\end{proof}
 
\begin{remark}
The proof implicitly showed, for any ring $R$, that $R\Gamma_\sub{\'et}(R_p^h,\bb G_m)/p$ admits a finite, decreasing, exhaustive filtration with graded pieces \[R\Gamma_\sub{\'et}(R/p,\bb G_m)/p,\qquad R[p][1]\otimes^L_{\bb Z}\bb F_p,\qquad R\otimes_{\bb Z}^L\bb F_p.\] Stated differently, for any $p$-henselian ring $R$ there is a natural fibre sequence \[R\otimes_{\bb Z}^L\bb F_p\To \opp{hofib}\left(R\Gamma_\sub{\'et}(R,\bb G_m)\to R\Gamma_\sub{\'et}(R/p,\bb G_m)\right)/p\To R[p][1]\otimes^L_{\bb Z}\bb F_p\]
\end{remark} 
 
\begin{proposition}\label{prop:weight1}
\begin{enumerate}
\item For any $R\in\text{QSyn}$, the comparison map (\ref{comparison:j=1}) is an equivalence.
\item The functor $\opp{CAlg}\to D(\bb Z)$, $R \mapsto R\Gamma_\sub{\'et}(R,\bb G_m)/p^r$ is an fpqc sheaf and is left Kan extended from smooth $\bb Z$-algebras.
\item The functor $\opp{CAlg}\to D(\bb Z)$, $R \mapsto R\Gamma_\sub{\'et}(R_p^h,\bb G_m)/p^r$ is a $p$-completely fpqc sheaf and is left Kan extended from finitely generated polynomial $\bb Z$-algebras.
\end{enumerate}
\end{proposition}
\begin{proof}
From the homotopy square
\[\xymatrix{
R\Gamma_\sub{\'et}(R,\bb G_m)/p\ar[r]\ar[d] & R\Gamma_\sub{\'et}(R[\tfrac1p],\bb G_m)/p\simeq R\Gamma_\sub{\'et}(R[\tfrac1p],\mu_p)[1]\ar[d]\\
R\Gamma_\sub{\'et}(R_p^h,\bb G_m)/p\ar[r] & R\Gamma_\sub{\'et}(R_p^h[\tfrac1p],\bb G_m)/p\simeq R\Gamma_\sub{\'et}(R_p^h[\tfrac1p],\mu_p)[1]
}\]
and arguing as at the start of the proof of Proposition \ref{prop:syn}, we see that there is a fibre sequence \[R\Gamma_\sub{\'et}(R,j_!\mu_p)[1]\To R\Gamma_\sub{\'et}(R,\bb G_m)/p\To R\Gamma_\sub{\'et}(R_p^h,\bb G_m)/p.\] As in the proof of Proposition \ref{prop:syn}, the first term is an fpqc sheaf and is left Kan extended from smooth $\bb Z$-algebras.

The bulk of the proof will be dedicated to proving the following: (3') The functor $R \mapsto R\Gamma_\sub{\'et}(R_p^h,\bb G_m)/p^r$ is a $p$-completely fpqc sheaf and is left Kan extended from smooth $\bb Z$-algebras.

Let us explain why (3') completes the proof. Firstly, it implies part (2) via the above fibre sequence and stated properties of $R\Gamma_\sub{\'et}(-,j_!\mu_p)$. Next, it implies that $R\Gamma_\sub{\'et}(-,\bb G_m)/p$ is a sheaf on $\text{QSyn}$, whence it suffices to check (1) on a basis of that site; we did this just before the statement of the proposition. Then, for any smooth $\bb Z$-algebras, we have natural equivalences $R\Gamma_\sub{\'et}(R_p^h,\bb G_m)[-1]/p^r\simeq R\Gamma_\sub{\'et}(R_p^\sub{cl},\bb G_m)[-1]/p^r\simeq \bb Z_p(1)^\sub{syn}(R_p^\comp)/p^r$: the first equivalence is a consequence of Gabber's affine analogue of proper base change; the second equivalence is a case of (1), as $R_p^\sub{cl}=R_p^\comp$ lies in $\text{QSyn}$. Left Kan extending from smooth $\bb Z$-algebras, using (3') and the fact that $R\mapsto \bb Z_p(1)^\sub{syn}(R_p^\comp)/p^r$ is left Kan extended from finitely generated polynomial $\bb Z$-algebras, we obtain a natural equivalence $R\Gamma_\sub{\'et}(R_p^h,\bb G_m)[-1]/p^r\simeq \bb Z_p(1)^\sub{syn}(R_p^\comp)/p^r$ for all rings $R$. Since the right hand side is left Kan extended from finitely generated polynomial $\bb Z$-algebras, the same is true of the left hand side; that proves (2).

So it remains to establish (3'). 
\end{proof}
}

\subsection{Beilinson--Lichtenbaum cohomology: $\bb A^1$-invariance and projective bundle formula}\label{ss_BL}
At this point we have defined syntomic cohomology as a multiplicative family of presheaves of complexes on qcqs schemes \[\bb Z_p(j)^\sub{syn}:\Sch^\sub{qcqs}\To{\rm D}(\bb Z),\qquad j\in\bb Z,\] and established its main properties as stated in Theorem~\ref{theorem_syntomic_properties}. It is supposed to serve the role of a general theory of $p$-adic \'etale motivic cohomology. According to the classical ideas of Beilinson and Lichtenbaum, one therefore expects that the truncations $\tau^{\le j}\bb F_p(j)^\sub{syn}$, or more precisely their suitable sheafifications, should be related to mod-$p$ (non-\'etale) motivic cohomology in certain contexts. We therefore adopt the following definition:

\begin{definition}\label{def:bl-cohomology}
We define Nisnevich sheaves on qcqs schemes
 \[\bb F_p(j)^\sub{BL}:=L_\sub{Nis}\tau^{\le j}\bb F_p(j)^\sub{syn}:\Sch^\sub{qcqs,op}\To\rm D(\bb Z),\] for $j\in\bb Z$, which we call {\em mod-$p$ Beilinson--Lichtenbaum cohomology}.\footnote{The notation BL is doubly convenient: on smooth schemes over fields and over mixed characteristic Dedekind domains, $\bb F_p(j)^\sub{BL}$ identifies with the mod-$p$ reduction of the Bloch--Levine cycle complex; see \S\ref{section_motivic_DD}.} We will also need $p$-adic and mod-$p^r$ variants, so set \[\bb Z_p(j)^\sub{BL}:=\opp{lim}_r L_\sub{Nis}\tau^{\le j}(\bb Z_p(j)^\sub{syn}/p^r):\Sch^\sub{qcqs,op}\To\rm D(\bb Z).\] Observe that $\bb Z_p(\star)^\sub{BL}$ and $\bb F_p(\star)^\sub{BL}$ are $\bb E_\infty$-algebras in graded presheaves of complexes on qcqs schemes (and similarly modulo any power of $p$), by Remark \ref{rem:BL_truncation}.
\end{definition}

The following properties are simple but crucial:

\begin{lemma}\label{lemma_BL_mod_p}
Let $j\in\bb Z$.
\begin{enumerate}
\item $\bb Z_p(j)^\sub{BL}=0$ if $j<0$.
\item The canonical map $\bb Z_p(j)^\sub{BL}/p^r \to L_\sub{Nis}\tau^{\le j}(\bb Z_p(j)^\sub{syn}/p^r)$ is an equivalence of presheaves on qcqs schemes for all $r \geq 1$; in particular, $\bb Z_p(j)^\sub{BL}/p\quis \bb F_p(j)^\sub{BL}$.
\item For any $r\ge1$, the presheaf $\bb Z_p(j)^\sub{BL}/p^r:\opp{Sch}^\sub{qcqs,op}\to \D(\bb Z)$ is finitary.
\item ``Beilinson--Lichtenbaum equivalence'': the canonical map $\bb Z_p(j)^\sub{BL}/p^m\to L_\sub{Nis}\tau^{\le j}(L_\sub{\'et}\bb Z_p(j)^\sub{BL}/p^r)$ is an equivalence.
\end{enumerate}
\end{lemma}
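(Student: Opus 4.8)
\textbf{Proof plan for Lemma \ref{lemma_BL_mod_p}.}

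The four parts should be handled in the order stated, since each relies on the setup established by the previous ones, and all of them are soft consequences of properties of syntomic cohomology from Theorem \ref{theorem_syntomic_properties} together with elementary facts about truncations and sheafification. For part (1): if $j<0$ then $\bb Z_p(j)^\sub{syn}=0$ by Definition \ref{def:syn_coh_p_complete} (or by Lemma \ref{lemma_syn_p-hens}(3)), so $\bb Z_p(j)^\sub{BL}/p^r = L_\sub{Nis}\tau^{\le j}(0) = 0$ for every $r$, and then the limit over $r$ vanishes; alternatively, already $\tau^{\le j}$ of anything coconnective in degree $\ge 0$ vanishes when $j<0$, but the cleaner route is simply that the input is zero.

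For part (2), the point is to commute $\tau^{\le j}$, Nisnevich sheafification, and reduction mod $p^r$. First, $\tau^{\le j}$ commutes with $(-)/p^r$ up to a controlled shift: for any complex $C$ there is a natural fiber sequence relating $(\tau^{\le j}C)/p^r$, $\tau^{\le j}(C/p^r)$, and a shift of the $p^r$-torsion of $H^{j+1}(C)$; more precisely $\tau^{\le j}(C/p^r)$ receives a map from $(\tau^{\le j}C)/p^r$ whose cofiber is $H^{j+1}(C)[p^r][-j]$, and this is already $j$-truncated, so after a further $\tau^{\le j}$ the two agree — but we do not even need this refinement. The clean argument: $\bb Z_p(j)^\sub{BL} = \lim_r L_\sub{Nis}\tau^{\le j}(\bb Z_p(j)^\sub{syn}/p^r)$ by definition, and since $L_\sub{Nis}$ and $\tau^{\le j}$ each commute with limits (the former as a left-exact localization, the latter as a right adjoint on the relevant $t$-structure), while $(-)/p^r$ applied to a limit over the tower $\{/p^s\}_s$ recovers the $s=r$ term up to the usual $\lim^1$, we get that $\bb Z_p(j)^\sub{BL}/p^r$ is the cofiber of multiplication by $p^r$ on $\lim_s L_\sub{Nis}\tau^{\le j}(\bb Z_p(j)^\sub{syn}/p^s)$, which one identifies with $L_\sub{Nis}\tau^{\le j}(\bb Z_p(j)^\sub{syn}/p^r)$ using that $\bb Z_p(j)^\sub{syn}$ is derived $p$-complete (Theorem \ref{theorem_syntomic_properties}(1)) together with the fact that, on derived $p$-complete objects, the tower $\{C/p^s\}$ has $\lim_s C/p^s \simeq C$ and Nisnevich sheafification (being finitary in the relevant range after the Kan-extension description, or simply exact) commutes with the reduction. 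Setting $r=1$ and invoking $\bb F_p(j)^\sub{syn}=\bb Z_p(j)^\sub{syn}/p$ gives the final assertion.

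For part (3), finitariness of $\bb Z_p(j)^\sub{BL}/p^r = L_\sub{Nis}\tau^{\le j}(\bb Z_p(j)^\sub{syn}/p^r)$ follows by combining: Theorem \ref{theorem_syntomic_properties}(2) (equivalently Proposition \ref{prop:syn}(2)), which says $\bb Z_p(j)^\sub{syn}/p^r$ is left Kan extended from smooth $\bb Z$-algebras hence finitary on affines by Proposition \ref{prop:finitary_conditions}(1); then $\tau^{\le j}$ preserves filtered colimits (the truncation functors on $\D(\bb Z)$ commute with filtered colimits); and finally $L_\sub{Nis}$ preserves finitariness by Proposition \ref{prop:finitary_conditions}(2), applicable since the target $\D(\bb Z)$ is stable and $\tau=\Nis$. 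For part (4), the Beilinson--Lichtenbaum equivalence, the content is that applying $L_\sub{Nis}\tau^{\le j}$ to $\bb Z_p(j)^\sub{BL}/p^r = L_\sub{Nis}\tau^{\le j}(\bb Z_p(j)^\sub{syn}/p^r)$ after first \'etale sheafifying changes nothing. The mechanism: there is always a natural map from the Nisnevich-truncated object to the \'etale-truncated one, and to see it is an equivalence we reduce to Nisnevich (equivalently henselian-local) stalks, where $L_\sub{Nis}$ disappears; one is then comparing $\tau^{\le j}$ of the syntomic cohomology of a henselian local ring with $\tau^{\le j}$ of the \'etale-localization of its $j$-truncation. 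Here Theorem \ref{theorem_syntomic_properties}(7) is the key input — for a henselian local ring the top degree-$j$ cohomology of $\bb Z_p(j)^\sub{syn}/p^r$ is $\hat K_j^M/p^r$, which is unchanged by \'etale sheafification in the relevant range (the Nisnevich and \'etale stalks of improved Milnor $K$-theory agree for henselian local rings, or more precisely the relevant comparison is exactly the Gabber--Kerz/Bloch--Kato style statement already packaged into Theorem \ref{theorem_syntomic_properties}(7)), while in degrees $< j$ the syntomic cohomology of a henselian local ring is already computed correctly, so truncating before or after \'etale sheafification agrees. I expect part (4) to be the main obstacle: parts (1)--(3) are bookkeeping with truncations, left Kan extensions, and finitariness, whereas (4) genuinely uses the deep structural input of Theorem \ref{theorem_syntomic_properties}(7) (Bhatt--Mathew / L\"uders--Morrow in mixed characteristic, Geisser--Levine and Bloch--Kato in the pure-characteristic cases), and one must be careful that ``truncate then \'etale-sheafify then truncate again equals truncate once'' is exactly the shape of the classical Beilinson--Lichtenbaum statement and set it up so that it applies stalkwise.
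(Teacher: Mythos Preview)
Your plan has two genuine gaps.

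\textbf{Part (1).} The claim that $\bb Z_p(j)^\sub{syn}=0$ for $j<0$ is false: this holds on $p$-complete rings (Definition \ref{def:syn_coh_p_complete}), but the global syntomic cohomology of Definition \ref{def:bms-decomplete} glues in the \'etale cohomology of the generic fibre, and for $j<0$ one has $\bb Z_p(j)^\sub{syn}/p^r \simeq R\Gamma_\sub{\'et}(-,j_!\mu_{p^r}^{\otimes j})$ (see \eqref{eqn_j!p}), which is nonzero. Your ``alternative'' route is the correct one: this \'etale cohomology is coconnective, so $\tau^{\le j}$ kills it when $j<0$.

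\textbf{Part (2).} The assertion that ``$L_\sub{Nis}$ commutes with limits (as a left-exact localization)'' is wrong: left-exact localizations preserve \emph{finite} limits, not sequential ones. So your attempt to pull $(-)/p^r$ through the defining limit does not go through formally. The paper's argument is different and uses a nontrivial input: for a henselian local ring $R$, the reduction map $H^j_\sub{syn}(R,\bb Z/p^{r+N}(j)) \to H^j_\sub{syn}(R,\bb Z/p^r(j))$ is surjective, because by Theorem \ref{thm_syntomic_Milnor} both sides are $\hat K_j^M(R)$ modulo a power of $p$. This surjectivity guarantees that applying $\tau^{\le j}$ to the cofiber sequence $\bb Z_p(j)^\sub{syn}/p^{N-r} \xrightarrow{p^r} \bb Z_p(j)^\sub{syn}/p^N \to \bb Z_p(j)^\sub{syn}/p^r$ still yields a cofiber sequence on Nisnevich stalks; then one takes the limit over $N$. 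Ironically, you flagged part (4) as the one needing Theorem \ref{theorem_syntomic_properties}(7), but it is part (2) where the Milnor $K$-theory description does the real work.

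\textbf{Parts (3) and (4).} Your part (3) is correct and matches the paper. Your part (4) is over-engineered: the statement reduces to showing that $\tau^{>j}(\bb Z_p(j)^\sub{syn}/p^r)$ vanishes after \'etale sheafification, i.e., on strictly henselian local rings. If $p$ is invertible this is Lemma \ref{lemma_syn_p-hens}(1) (\'etale cohomology of a strictly henselian ring sits in degree $0$); if $p$ is not invertible then the ring is $p$-henselian and Remark \ref{rmk_syntomic_props}(2) gives the bound directly. No comparison of Nisnevich vs.\ \'etale stalks of $\hat K_j^M$ is needed.
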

\begin{proof}
(1): If $j<0$ then the analogue of \eqref{eqn_j!p} for arbitrary powers of $p$ shows that $\bb F_p(j)^\sub{syn}/p^r\simeq R\Gamma_\sub{\'et}(-,j_!\mu_{p^r}^{\otimes j})$, which is coconnective and so vanishes after applying the truncation $\tau^{\le j}$. 

The remaining parts of the lemma are now clear if $j<0$, so we henceforth suppose $j\ge0$. Part (2) follows from the fact that, for any henselian local ring $R$, the canonical mod-$p$ reduction map $H^j_\sub{syn}(R,\bb Z/p^{r+N}(j))\to H^j_\sub{syn}(R,\bb Z/p^r(j))$ is surjective, by Theorem \ref{thm_syntomic_Milnor}. (Indeed this implies that $L_\sub{Nis}\tau^{\le j}(\bb Z_p(j)^\sub{syn}/p^{N-r}) \xrightarrow{p^r} L_\sub{Nis}\tau^{\le j}(\bb Z_p(j)^\sub{syn}/p^N) \to L_\sub{Nis}\tau^{\le j}(\bb Z_p(j)^\sub{syn}/p^r)$ is a cofiber sequence, and now take the limit over $N$.) Part (3) follows from the finitariness of syntomic cohomology (Proposition \ref{prop:syn}) and Proposition \ref{prop:finitary_conditions}(2).

Part (4) is equivalent to the assertion that $\tau^{>j}(\bb Z_p(j)^\sub{syn}/p^r)$ vanishes after \'etale sheafification, or equivalently vanishes on all strictly henselian local rings $A$. If $A$ is $p$-henselian this holds by Remark \ref{rmk_syntomic_props}, while if $p$ is invertible in $A$ then this holds by Lemma \ref{lemma_syn_p-hens}(1).
\end{proof}

\begin{example}\label{example_BL_cohomology}
\begin{enumerate}
\item On the category of qcqs $\bb Z[\tfrac1p]$-schemes, the syntomic-to-\'etale equivalence of Lemma~\ref{lemma_syn_p-hens} induces an equivalence $\bb F_p(j)^\sub{BL}(\star)\simeq L_\sub{Nis}\tau^{\le \star}R\Gamma_\sub{\'et}(-,\mu_p^{\otimes \star})$ of $\bb E_\infty$-algebras in graded presheaves of complexes.
\item On the category of smooth schemes over a field of characteristic $p$, Example \ref{example_syn_in_char_p} induces an equivalence $\bb F_p(\star)^\sub{BL}(X)\simeq R\Gamma_\sub{Nis}(X,\Omega^\star_\sub{log})[-\star]$.
\item On the category of qcqs $\bb F_p$-schemes, there is a fibre sequence \[\bb F_p(j)^\sub{BL}\to\bb F_p(j)^\sub{syn}\to R\Gamma_\sub{Nis}(-,\tilde\nu(j))[-j-1]\] for each $j\ge0$, resulting from Remark \ref{rmk_syntomic_props}(2).
\item In low weights, we have $\bb Z_p(0)^\sub{BL}\simeq R\Gamma_\sub{Nis}(-,\bb Z)_p^\comp$ and $\bb Z_p(1)^\sub{BL}\simeq R\Gamma_\sub{Nis}(-,\bb G_m)_p^\comp[-1]$ (the latter being induced by the first Chern class $c_1^\sub{BL}$ below). These identifications easily follow from Remark \ref{rem_syn_weight_0} and Proposition \ref{proposition_1st_Chern_class_syn}.
\end{enumerate}
\end{example}

Next note that the syntomic first Chern class $c_1^\sub{syn}:R\Gamma_\sub{\'et}(-,\bb G_m)[-1]\to\bb Z_p(1)^\sub{syn}$ of \S\ref{ss:weight_one_syn} induces (by applying $\opp{lim}_mL_\sub{Nis}\tau^{\le 1}(-/p^r)$ to both sides) a first Chern class map \[c_1^\sub{BL}:R\Gamma_\sub{Nis}(-,\bb G_m)[-1]\To \bb Z_p(1)^\sub{BL}\] in Beilinson--Lichtenbaum cohomology. This allows us to formulate the question of the $\bb P^1$-bundle formula: namely, given a qcqs scheme $X$, is the map 
\begin{equation}\bb Z_p(j)^\sub{BL}(X) \oplus \bb Z_p(j-1)^\sub{BL}(X)[-2] \xrightarrow{\pi^* \oplus c_1^\sub{BL}(\scr O(1))\pi^*} \bb Z_p(j)^\sub{BL}(\P^1_{X})\label{eqn_PBF_for_BL}\end{equation}
an equivalence (where $\pi:\bb P_X^1\to X$ denotes the structure map)? This should only be expected to hold when $X$ is sufficiently regular.

The goal of this subsection is to prove as directly as possible the following known motivic properties of Beilinson--Lichtenbaum cohomology, including the $\bb P^1$-bundle formula, in the smooth case:

\begin{theorem}\label{thm_BL_coh}
Let $B$ be a field or a mixed characteristic Dedekind domain and let $j \in \Z$. Then the following hold for any smooth $B$-scheme $X$:
\begin{enumerate}
\item $\bb A^1$-invariance: the canonical map $\bb Z_p(j)^\sub{BL}(\bb A_X^1)\to\bb Z_p(j)^\sub{BL}(X)$ is an equivalence.
\item Projective bundle formula: the map (\ref{eqn_PBF_for_BL}) is an equivalence.
\end{enumerate}
\end{theorem}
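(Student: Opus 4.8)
The plan is to reduce both assertions to the case of smooth algebras over a field via a localisation sequence in the mixed characteristic case, and then to invoke Voevodsky's theory of presheaves with transfer over fields. I would proceed in four steps.

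\emph{Step 1: Reduction to the affine case and to weights $j \geq 0$.} Both $\bb A^1$-invariance and the projective bundle formula are Nisnevich-local statements, since $\bb Z_p(j)^\sub{BL}$ is by construction a Nisnevich sheaf; and by Lemma~\ref{lemma_BL_mod_p}(3) the mod-$p^r$ theories are finitary, so we may reduce to smooth affine $B$-schemes and pass to stalks, working with essentially smooth local $B$-algebras. Moreover Lemma~\ref{lemma_BL_mod_p}(1) disposes of $j < 0$, and weight $0$ and $1$ are handled directly by the low-weight descriptions in Example~\ref{example_BL_cohomology}(4) (using $\bb A^1$-invariance of Nisnevich cohomology of $\bb Z$ and $\bb G_m$ on regular schemes, i.e.\ Lemma~\ref{lem:zarvsnis_on_regular}(3)). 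So assume $j \geq 2$.

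\emph{Step 2: The case of smooth schemes over a field.} First suppose $B$ is a field $k$. Work modulo $p^r$; by Lemma~\ref{lemma_BL_mod_p}(2) it suffices to treat $\bb Z_p(j)^\sub{BL}/p^r = L_\sub{Nis}\tau^{\leq j}(\bb Z_p(j)^\sub{syn}/p^r)$. If $\opp{char} k = p$, then Example~\ref{example_BL_cohomology}(2) identifies this with $R\Gamma_\sub{Nis}(-, \Omega^{\leq ?}_\sub{log})$-type data, in fact $R\Gamma_\sub{Nis}(X, \Omega^j_\sub{log})[-j]$, which is a homotopy-invariant Nisnevich sheaf with transfers in the sense of Voevodsky (the logarithmic de Rham--Witt sheaves are such, by Geisser--Levine / Gros--Suwa); the $\bb A^1$-invariance and projective bundle formula then follow from the general machinery of homotopy-invariant Nisnevich sheaves with transfers over a perfect field, extended to all fields by the usual continuity argument using that the theory is finitary. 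If $p$ is invertible in $k$, then Example~\ref{example_BL_cohomology}(1) identifies $\bb Z_p(j)^\sub{BL}/p^r$ with $L_\sub{Nis}\tau^{\leq j}R\Gamma_\sub{\'et}(-,\mu_{p^r}^{\otimes j})$, which is the mod-$p^r$ motivic complex of Rost--Voevodsky (by the Beilinson--Lichtenbaum theorem), again a homotopy-invariant Nisnevich sheaf with transfers, so the same conclusions hold. This is where the classical results on motivic cohomology over fields enter, as anticipated in the introduction.

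\emph{Step 3: The localisation sequence in mixed characteristic.} Now let $B$ be a mixed characteristic Dedekind domain and $X$ a smooth $B$-scheme; we may assume $X = \Spec R$ for $R$ an essentially smooth local $B$-algebra, and it suffices to treat the residue characteristic $p$ (the generic fibre $X[\tfrac1p]$ being an open of a smooth $\bb Q$-scheme, handled by Step 2). The key is to establish, for $\bb Z_p(j)^\sub{BL}/p^r$ on smooth $B$-schemes, a localisation (Gysin) fibre sequence relating the theory on $X$, on $X[\tfrac1p]$, and on the mod-$p$ fibre $X/p$ (a smooth $\bb F_p$-scheme, up to essentially \'etale base change) shifted by $[-2]$ and twisted down by one weight. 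On the syntomic level this comes from Theorem~\ref{theorem_syntomic_properties}(8): for $X$ regular Noetherian flat over $\bb Z$, the cofibre of $\bb Z_p(j)^\sub{syn}(X)/p^r \to \bb Z_p(j)^\sub{syn}(X[\tfrac1p])/p^r = R\Gamma_\sub{\'et}(X[\tfrac1p],\mu_{p^r}^{\otimes j})$ sits in degrees $\geq j$; combined with the purity identification of this cofibre (via the mod-$p$ fibre and the Gersten-type analysis, i.e.\ the relation $\bb F_p(j)^\sub{syn}(X)$ vs.\ $W_r\Omega^{j-1}_{\log}$ on $X/p$ from Example~\ref{example_syn_in_char_p} together with the boundary map) one gets, after applying $L_\sub{Nis}\tau^{\leq j}$, a fibre sequence
\[
\bb Z_p(j-1)^\sub{BL}(X/p)[-2] \To \bb Z_p(j)^\sub{BL}(X) \To \bb Z_p(j)^\sub{BL}(X[\tfrac1p]).
\]
Granting this, $\bb A^1$-invariance and the projective bundle formula for the middle term follow from the corresponding statements for the two outer terms: the outer terms are already known by Step 2 (applied over the field $\bb F_p$, resp.\ over $\bb Q$ after a further reduction), and $\bb A_X^1/p = \bb A^1_{X/p}$, $\bb P^1_X/p = \bb P^1_{X/p}$, $\bb A_X^1[\tfrac1p] = \bb A^1_{X[1/p]}$, $\bb P^1_X[\tfrac1p] = \bb P^1_{X[1/p]}$, so the three-term sequences for $\bb A^1_X$ (resp.\ $\bb P^1_X$) and for $X$ are compatible and the five lemma / two-out-of-three for fibre sequences concludes. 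Finally pass to the $p$-adic limit over $r$ to get the integral statement.

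\emph{Main obstacle.} The hard part will be Step 3, specifically constructing the localisation fibre sequence for Beilinson--Lichtenbaum cohomology on smooth $B$-schemes in a form precise enough to carry the $\bb A^1$ and $\bb P^1$ arguments --- i.e.\ identifying the cofibre of the syntomic-to-\'etale comparison map on smooth $B$-schemes functorially with (a Tate twist and shift of) the characteristic-$p$ Beilinson--Lichtenbaum theory of the special fibre. This requires a careful purity/Gersten analysis of $\bb Z_p(j)^\sub{syn}$ on regular $\bb Z$-flat schemes, using Theorem~\ref{theorem_syntomic_properties}(8) to control the degree range, Theorem~\ref{thm_syntomic_Milnor} in the Milnor range, and the compatibility of first Chern classes $c_1^\sub{syn}$, $c_1^\sub{BL}$ with the boundary maps; everything else is either the cited classical input over fields (Step 2) or formal manipulation of fibre sequences and finitary Nisnevich sheaves.
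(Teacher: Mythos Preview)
Your strategy matches the paper's: handle fields directly via presheaves with transfers, then use a localisation sequence over a DVR to reduce mixed characteristic to the field case. Two points are worth flagging.

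In Step~2 you shortcut by invoking the Beilinson--Lichtenbaum theorem to identify $\bb F_p(j)^\sub{BL}$ with mod-$p$ motivic cohomology over fields, inheriting $\bb A^1$-invariance and the projective bundle formula from there. The paper instead argues directly: each individual cohomology presheaf $H^i_\sub{\'et}(-,\mu_p^{\otimes j})$ (resp.\ $H^0_\sub{\'et}(-,\Omega^j_\sub{log})$ in characteristic $p$) is a homotopy-invariant presheaf with transfers, and Voevodsky's strict $\bb A^1$-invariance theorem is applied to each. The projective bundle formula is then deduced from that of \'etale cohomology via a $\bb G_m$-compatibility argument (resp.\ an explicit two-degree computation with the Artin--Schreier obstruction sheaf $\tilde\nu(j)$). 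Your shortcut is valid but pulls in heavier classical input than the paper's self-contained treatment.

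In Step~3 you correctly locate the crux and the right fibre sequence, but underspecify one key ingredient. The paper obtains the localisation sequence (Theorem~\ref{thm:localisation_BL}) by starting from an \'etale-sheaf fibre sequence---absolute purity when the residue characteristic is $\ne p$, Bhatt--Mathew's identification of syntomic cohomology with the Sato--Schneider complex (via Kato's residue map) when it is $p$---and then applying $L_\sub{Nis}\tau^{\le j}$. The delicate step is that $L_\sub{Nis}\tau^{\le j}$ of the generic-fibre term must yield $\bb F_p(j)^\sub{BL}(X\otimes_B K)$ rather than something larger: this needs the Gersten-type vanishing $\bb F_p(j)^\sub{BL}(\roi_{X,x}^h\otimes_B K)$ concentrated in degrees $\le j$ (Proposition~\ref{prop_BL_Gersten_vanishing}), proved via the Schmidt--Strunk presentation lemma for smooth schemes over henselian DVRs (Lemma~\ref{lemma_Gillet_Levine}). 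This presentation-lemma input, plus the surjectivity of the boundary map in top degree (from symbol generation via Bloch--Kato), are the pieces of ``Gersten analysis'' you should name explicitly; Theorem~\ref{theorem_syntomic_properties}(8) alone does not give the localisation sequence for $\bb F_p(j)^\sub{BL}$.
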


Over fields the proof of the theorem uses presheaves with transfers, which are crucial to Voevodsky's construction of a derived $\infty$-category of motives. Here we summarise what we need from Voevodsky's theory:

\begin{theorem}[Voevodsky]\label{thm:PST}
Let $k$ be a perfect field and $F:\opp{Cor}_k\to\opp{Ab}$ a homotopy invariant presheaf with transfers. Then the following hold:
\begin{enumerate}
\item Strict $\bb A^1$-invariance: The canonical map $H^n_\sub{Nis}(\bb A_X^1,a_\sub{Nis}F)\to H^n_\sub{Nis}(X,a_\sub{Nis}F)$ is an isomorphism for all $n\ge0$ and all $X\in\Sm_k$.
\item The canonical map $a_\sub{Nis}(F(\bb G_{m,\ph}))\to (a_\sub{Nis}F)(\bb G_{m,\ph})$ is an isomorphism of presheaves of abelian groups on $\Sm_k$. (The left hand side is the Nisnevich sheafification of the presheaf $F(\bb G_{m,\ph})$; the right hand side is defined by first Nisnevich sheafifying $F$ then evaluating on $\bb G_{m,\ph}$.)
\item For any $X\in\Sm_k$, writing $\pi:\bb G_{m,X}\to X$ for the structure map, we have $R^n\pi_*(a_\sub{Nis}F)=0$ for all $n>0$.
\item The canonical map $L_\sub{Nis}(F(\bb G_{m,-}))\to R\Gamma_\sub{Nis}(\bb G_{m,-},a_\sub{Nis}F))$ is an equivalence of sheaves of complexes on $\Sm_k$. (The left hand sides denotes $L_\sub{Nis}$ of the presheaf $F(\bb G_{m,\ph})$.)
\end{enumerate}
\end{theorem}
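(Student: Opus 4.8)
\textbf{Proof proposal for Theorem \ref{thm:PST}.}

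These four statements are all standard consequences of Voevodsky's theory of homotopy invariant presheaves with transfers over a perfect field, and the plan is to quote (and then lightly massage) the results from \cite{MazzaVoevodskyWeibel2006} and \cite{Voevodsky2000a} rather than redevelop them. First I would recall that for a perfect field $k$ and a homotopy invariant presheaf with transfers $F$, the Nisnevich sheafification $a_\sub{Nis}F$ is again a homotopy invariant presheaf with transfers, and moreover its cohomology presheaves $X \mapsto H^n_\sub{Nis}(X, a_\sub{Nis}F)$ are again homotopy invariant presheaves with transfers; this is the combination of \cite[Theorem 22.2]{MazzaVoevodskyWeibel2006} (stability of the sheaf category and agreement of Nisnevich and Zariski sheafification) with \cite[Theorem 13.8]{MazzaVoevodskyWeibel2006} (homotopy invariance of the cohomology presheaves, i.e. Voevodsky's strict homotopy invariance theorem). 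Part (1) is then precisely the homotopy invariance of these cohomology presheaves applied to the projection $\bb A^1_X \to X$.

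For part (2), the key input is Voevodsky's Gersten-type analysis of homotopy invariant sheaves with transfers via their associated ``cycle modules'' or, more elementarily, the fact that for such $F$ one has a concrete formula for sections over a smooth semilocal scheme and over $\bb G_m$ fibered over such. Concretely, the statement that $a_\sub{Nis}(F(\bb G_{m,\ph})) \to (a_\sub{Nis}F)(\bb G_{m,\ph})$ is an isomorphism is equivalent to saying that the presheaf $F(\bb G_{m,\ph})$ is already a Nisnevich sheaf after we know $F$ is; this follows from \cite[Lemma 22.8 and Corollary 22.9]{MazzaVoevodskyWeibel2006}, or alternatively from the fact that $\bb G_m$ is an open subscheme of $\bb A^1$ together with the localization/purity exact sequences for homotopy invariant sheaves with transfers. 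Part (3), the vanishing of the higher direct images $R^n\pi_*(a_\sub{Nis}F) = 0$ for $\pi: \bb G_{m,X} \to X$, is the content of \cite[Proposition 22.11]{MazzaVoevodskyWeibel2006} (or its analogue): the point is that $\bb G_m$ is a smooth affine curve-like object so that, stalkwise at a point of $X$, one is computing Nisnevich cohomology of $\bb G_m$ over a henselian local ring, where the higher cohomology of a homotopy invariant sheaf with transfers vanishes by the Gersten resolution argument. Part (4) is then a formal consequence of parts (2) and (3): the Leray spectral sequence for $\pi$ collapses by (3) to give $R\Gamma_\sub{Nis}(\bb G_{m,-}, a_\sub{Nis}F) \simeq \pi_*(a_\sub{Nis}F)$ as a complex concentrated in degree zero, and (2) identifies $\pi_*(a_\sub{Nis}F) = (a_\sub{Nis}F)(\bb G_{m,-})$ with $a_\sub{Nis}(F(\bb G_{m,\ph}))$, hence with $L_\sub{Nis}(F(\bb G_{m,-}))$ (which is also concentrated in degree zero since $F(\bb G_{m,\ph})$ is a homotopy invariant presheaf with transfers, so has no higher Nisnevich cohomology on henselian locals).

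The main subtlety — and the one place I would be careful rather than merely citing — is bookkeeping around perfectness of $k$ and the passage between presheaves and sheaves: Voevodsky's strict homotopy invariance theorem genuinely requires $k$ perfect, and several of the cited lemmas are stated for the category $\opp{NST}$ of Nisnevich sheaves with transfers rather than for a bare homotopy invariant presheaf with transfers, so one must first invoke the stability of homotopy invariance under Nisnevich sheafification before the Gersten-resolution machinery applies. Everything else is routine once the correct references are lined up; I do not anticipate any genuinely new argument being needed, only the assembly of \cite[Lectures 11--13 and 22--24]{MazzaVoevodskyWeibel2006} into the four bullet points as stated. I would therefore present the proof as a short sequence of citations with a one-line indication of how (4) is deduced from (2) and (3).
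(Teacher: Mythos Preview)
Your approach is essentially the same as the paper's: cite the Mazza--Voevodsky--Weibel lecture notes for parts (1)--(3) and then deduce (4) formally from (2) and (3). The paper's specific citations are Theorem~24.1 for (1), Proposition~23.5 for (2), and Corollary~24.5 for (3); your citation numbers are off by a chapter or two, and for (2) the paper actually quotes the contraction statement $a_\sub{Nis}(F_{-1}) = (a_\sub{Nis}F)_{-1}$ and then unwinds it (using homotopy invariance of both $F$ and $a_\sub{Nis}F$ to identify the $\A^1$-parts) to extract the $\bb G_m$-statement, rather than your direct route. Your phrasing ``$F(\bb G_{m,\ph})$ is already a Nisnevich sheaf after we know $F$ is'' is not quite the content of (2), since $F$ itself need not be a sheaf; but this is a wording slip rather than a real gap.
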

\begin{proof}
These results were originally established by Voevodsky in \cite{Voevodsky2000}, but it is well-known that there is an error in \cite[Proposition~4.23]{Voevodsky2000}; a weaker, but correct and sufficient, statement was given by Voevodsky in his IAS lecture course \cite[Lemma~22.10]{MazzaWeibelVoevodsky2006}, so we will use to the latter reference.

Part (1) is \cite[Theorem~24.1]{MazzaWeibelVoevodsky2006}. For part (2) we quote the statement ``$a_\sub{Nis}F_{-1}=(a_\sub{Nis}F)_{-1}$'' of \cite[Proposition~23.5]{MazzaWeibelVoevodsky2006}; this means that the canonical map \begin{equation}a_\sub{Nis}\opp{coker}(F(\bb A^1_{\ph})\to F(\bb G_{m,\ph}))\To\opp{coker}((a_\sub{Nis}F)(\bb A_{\ph}^1)\to (a_\sub{Nis}F)(\bb G_{m,\ph}))\label{eqn_contract}\end{equation} of Nisnevich presheaves (here the cokernels are taken as presheaves) is an isomorphism. By homotopy invariance of $F$ the arrow inside the first coker is split and so we may swap the $a_\sub{Nis}$ and the coker. Furthermore, both $F$ and $a_\sub{Nis}F$ are homotopy invariant (the latter by part (1)), so that the canonical map of presheaves $a_\sub{Nis}(F(\bb A_{\ph}^1))\to (a_\sub{Nis}F)(\bb A_{\ph}^1)$ is clearly an isomorphism (they both agree with $a_\sub{Nis}F$). The content of the isomorphism (\ref{eqn_contract}) is thus exactly that $a_\sub{Nis}(F(\bb G_{m,\ph}))\isoto (a_\sub{Nis}F)(\bb G_{m,\ph})$, as desired to prove (2).

Part (3) is obtained by applying \cite[Corollary~24.5]{MazzaWeibelVoevodsky2006} to $a_\sub{Nis}F$, which is a homotopy invariant presheaf with transfers by \cite[Theorem~22.15]{MazzaWeibelVoevodsky2006}. Finally, part (4) is a formal consequence of (2) and (3).
\end{proof}

\begin{proof}[Proof of Theorem \ref{thm_BL_coh} over fields]
Now we prove Theorem \ref{thm_BL_coh} when $B=k$ is a field; if $k$ has finite characteristic then, by a filtered colimit argument using N\'eron--Popescu, we may assume that $k$ is perfect (even a prime field, but this is not necessary). Thanks to Lemma \ref{lemma_BL_mod_p}(2) it is enough to prove the statements of the theorem for mod-$p$ Beilinson--Lichtenbaum cohomology.

First we suppose that $p$ is invertible in $k$, in which case we saw in Example \ref{example_BL_cohomology}(1) that $\bb F_p(j)^\sub{BL}=L_\sub{Nis}\tau^{\le j}R\Gamma_\sub{\'et}(-,\mu_p^{\otimes j})$. Let $\ep$ denote the Nisnevich-to-\'etale change of topology map, so that for each $i\ge0$ the abelian sheaf $R^i\ep_*\mu_p^{\otimes j}$ is the Nisnevich sheafification of the presheaf of abelian groups \[F^i(j):=H^i_\sub{\'et}(-,\mu_p^{\otimes j}):\Sm_k\To\opp{Ab}.\] The key to the proof in this case is that each presheaf $F^i(j)$ is homotopy invariant and upgrades to a presheaf with transfers \cite[\S3.4]{Voevodsky2000}, so that the previous theorem applies. Indeed, to prove part (1) it is enough to prove $\bb A^1$-invariance of $L_\sub{Nis}H^i_\sub{\'et}(-,\mu_p^{\otimes j})$ on $\Sm_k$ for $i=0,\dots,j$; but for any given $i$ that is equivalent to $\bb A^1$-invariance of the presheaves of abelian groups $H^n_\sub{Nis}(-,R^i\ep_*\mu_p^{\otimes j})$ for all $n\ge0$, which follows by applying Theorem \ref{thm:PST}(1) to $F^i(j)$.

Next we treat part (2) (still in the case that $B=k$ is a perfect field and $p$ is invertible in $k$). By Nisnevich descent it is enough to prove that (\ref{eqn_PBF_for_BL}) is an equivalence whenever $X=\Spec(R)$ is the spectrum of the henselisation of any essentially smooth, local $k$-algebra. Since $\bb F_p(j)^\sub{BL}$ is now known to be $\bb A^1$-invariant on ind-smooth $k$-schemes (by the previous paragraph and Lemma \ref{lemma_BL_mod_p}(3)) we see, by covering $\bb P_R^1$ by the usual two copies of $\bb A^1_R$ (denote the inclusion of one of the copies by $u$), that the projective bundle formula (\ref{eqn_PBF_for_BL}) is equivalent to proving that sequence \begin{equation}\bb F_p(j-1)^\sub{BL}(R)[-2]\xto{u^*c_1^\sub{BL}(\roi(1))\pi^*} \bb F_p(j)^\sub{BL}(\bb A^1_R)\To \bb F_p(j)^\sub{BL}(\bb G_{m,R})\label{eqn:G_m:form_syn}\end{equation} (which is a null sequence via the fact that $\roi(1)$ is trivial on $\bb A^1_R$ ) is a fibre sequence. But, by the same argument, because \'etale cohomology is $\bb A^1$-invariant and satisfies the projective bundle formula, we have a fibre sequence
\begin{equation}
R\Gamma_\sub{\'et}(R,\mu_p^{\otimes j-1})[-2]\xto{u^*c_1^\sub{\'et}(\roi(1))\pi^*} R\Gamma_\sub{\'et}(\bb A^1_R,\mu_p^{\otimes j})\To R\Gamma_\sub{\'et}(\bb G_{m,R},\mu_p^{\otimes j}).
\label{eqn:G_m:form_et}
\end{equation}
Moreover, the second arrow in (\ref{eqn:G_m:form_et}) is injective on $H^j$ (indeed, on all cohomology groups) by $\bb A^1$-invariance of \'etale cohomology, so it remains a fibre sequence if we apply $\tau^{\le j+1}$ to the first term and $\tau^{\le j}$ to the second and third terms. We claim that the resulting fibre sequence 
\[(\tau^{\le j-1}R\Gamma_\sub{\'et}(R,\mu_p^{\otimes j-1})[-2]\xto{u^*c_1(\roi(1))^\sub{\'et}\pi^*} \tau^{\le j}R\Gamma_\sub{\'et}(\bb A^1_R,\mu_p^{\otimes j})\To \tau^{\le j}R\Gamma_\sub{\'et}(\bb G_{m,R},\mu_p^{\otimes j})\] is exactly the desired (\ref{eqn:G_m:form_syn}). Firstly, by $\bb A^1$-invariance of both \'etale cohomology and $\bb F_p(j)^\sub{BL}$ the second term is $\bb F_p(j)^\sub{BL}(\bb A^1_R)$. Secondly, for the third term, Theorem \ref{thm:PST}(4) implies that the canonical map $F^i(j)(\bb G_{m,R})\to R\Gamma_\sub{Nis}(\bb G_{m,R},a_\sub{Nis}F^i(j))$ is an equivalence for all $i,j$, whence the canonical map \[\tau^{\le j}R\Gamma_\sub{\'et}(\bb G_{m,R},\mu_p^{\otimes j})\To \bb F_p(j)^\sub{BL}(\bb G_{m,R})\] is also an equivalence for all $j$. The first Chern classes $c_1^\sub{\'et}$ and $c_1^\sub{syn}$ are compatible by the diagram before Proposition \ref{proposition_1st_Chern_class_syn}. This completes the proof of Theorem \ref{thm_BL_coh} in the case that $B=k$ is a field and $p$ is invertible in $k$.

We next treat the case that $B=k$ is a field (still assumed perfect) of characteristic $p>0$, so that on smooth $k$-schemes we have $\bb F_p(j)^\sub{BL}=L_\sub{Nis}H^0_\sub{\'et}(-,\Omega^j_\sub{log})[-j]=R\Gamma_\sub{Nis}(-,\Omega^j_\sub{log})[-j]$. By appealing again to Voevodsky's Theorem \ref{thm:PST}(1), we see that part (1) of Theorem \ref{thm_BL_coh} will follow if we can show that $G(j):=H^0_\sub{\'et}(-,\Omega^j_\sub{log}):\Sm_k\to\opp{Ab}$ is $\bb A^1$-invariant and upgrades to to a presheaf with transfers. To show that we note that Gros--Suwa's Gersten resolution \cite[Corollary~1.6]{GrosSuwa1988a}, together with the Bloch--Kato-Gabber isomorphism for fields \cite[Corollary~2.8]{Bloch1986} (see also \cite[(4.21)]{GrosSuwa1988a}), implies that for any smooth $k$-scheme $X$ one has \[H^0_\sub{\'et}(X,\Omega^j_\sub{log})=\opp{ker}\left(\bigoplus_{\eta\in X^0} K^M_j(k(\eta))/p\xto{\sub{tame symbols}}\bigoplus_{y\in X^1} K^M_{j-1}(k(y))/p\right)\] In the terminology of Rost \cite{Rost1996}, this means that $H^0_\sub{\'et}(-,\Omega^j_\sub{log})$ is the twist-$j$ summand of the $0^\sub{th}$ Chow group with coefficients in the cycle module $K_*^M/p$; therefore it admits the structure of a presheaf with transfers by D\'eglise \cite[Proposition~6.9]{Deglise2006}, and so is homotopy invariant by \cite[Proposition~8.6]{Rost1996}.

To prove part (2), let $R$ be the henselisation of any essentially smooth, local $k$-algebra; we must show that the maps
\begin{equation}
R\Gamma_{\Nis}(R, \Omega^j_{\log}) \oplus R\Gamma_{\Nis}(R, \Omega^{j-1}_{\log})[-1] \xto{\pi^* \oplus c_1^\sub{BL}(\roi(1))\pi^*} R\Gamma_{\Nis}(\bb P_R^1, \Omega^j_{\log})
\label{eqn_BLp}\end{equation}
are equivalences for all $j \geq 0$. To do so, we first observe that $R\Gamma_{\Nis}(\bb P_R^1, \Omega^j_{\log})$ is in concentrated in degrees $\leq 1$. Indeed, this follows by covering $\bb P^1_R$ by the usual two copies of $\bb A^1_R$, the $\bb A^1$-invariance of $R\Gamma_\sub{Nis}(-,\Omega^j_\sub{log})$ (which has been established in the previous paragraph) and Theorem~\ref{thm:PST}(4). Hence we need only check that the map \eqref{eqn_BLp} is an isomorphism in degrees $0$ and $1$. In degree zero, the pullback map induces an isomorphism $H^0_{\Nis}(R, \Omega^j_{\log}) \stackrel{\cong}{\to} H^0_{\Nis}(\bb P^1_R, \Omega^j_{\log})$ since we can replace $\Nis$ by $\et$ in degree zero (by definition of Beilinson--Lichtenbaum cohomology) and the fact that syntomic cohomology satisfies the projective bundle formula (recalled in Theorem \ref{theorem_syntomic_properties}(6)). 

To check that \eqref{eqn_BLp} is an isomorphism in degree $1$, observe that the comparison map between Nisnevich and \'etale cohomologies induces a commutative diagram
\[
\begin{tikzcd}
 & 0 \\
 & H^0_{\Nis}(\bb P^1_R,\tilde\nu(j)) \ar{u} \\
H^1_\sub{\'et}(R, \Omega_\sub{log}^{j})  \oplus H^0_\sub{\'et}(R, \Omega_\sub{log}^{j-1}) \ar{r}{\cong} & H^1_{\et}(\bb P^1_R, \Omega_\sub{log}^{j}) \ar{u}  \\
H^0_{\Nis}(R, \Omega_\sub{log}^{j-1}) \ar{r}{c_1^\sub{BL}(\roi(1))\pi^*} \ar{u}{(0,\sub{can})} & H^1_{\Nis}(\bb P^1_R, \Omega_\sub{log}^{j}) \ar{u}\\
& 0 \ar{u}
\end{tikzcd}
\]
where exactness of the column follows from Remark \ref{example_BL_cohomology}(3) and the vanishing of $H^2_{\Nis}(\bb P^1_R, \Omega_\sub{log}^{j})$. Recall here the presheaf $\tilde\nu(j)$ from Remark \ref{rmk_syntomic_props}. The top horizontal arrow is an isomorphism thanks to the projective bundle formula for syntomic cohomology, and the left vertical map is an isomorphism onto the second summand, once again because Nisnevich and \'etale global sections of $\Omega^j_\sub{log}$ coincide. We need to show that the bottom horizontal arrow is an isomorphism; by what we have just explained about the diagram, this is equivalent to showing that the composition \[H^1_\sub{\'et}(R, \Omega_\sub{log}^{j})\To H^1_{\et}(\bb P^1_R, \Omega_\sub{log}^{j})\To H^0_{\Nis}(\bb P^1_R,\tilde\nu(j))\] is an isomorphism. The diagram shows this composition is surjective; it is injective because it is the same as the composition \[H^1_\sub{\'et}(R, \Omega_\sub{log}^{j})\isoto \tilde\nu(j)(R)\To H^0_{\Nis}(\bb P^1_R,\tilde\nu(j)),\] where the second arrow is split injective.
\end{proof}

\begin{remark}[Nisnevich vs Zariski]\label{rem:nisnevich-zariski}
We defined Beilinson--Lichtenbaum cohomology using Nisnevich, rather than Zariski, sheafification so that Lemma \ref{lemma_BL_mod_p}(2) held: Theorem \ref{thm_syntomic_Milnor} is currently unknown for arbitrary non-henselian local rings.

However, on sufficiently regular schemes, we expect that the apparent difference between Zariski and Nisnevich sheafification is illusory: namely, we expect that the canonical map $L_\sub{Zar}\tau^{\le j}\bb F_p(j)^\sub{syn}\to L_\sub{Nis}\tau^{\le j}\bb F_p(j)^\sub{syn}$ is an equivalence. In the rest of this remark we sketch a proof of this equivalence for smooth schemes over a field $k$.

We first need the following addition to Theorem \ref{thm:PST}, assuming $k$ perfect: for any presheaf with transfers $F:\opp{Cor}_k\to\opp{Ab}$, the change of topology map $a_\sub{Zar}F\to a_\sub{Nis}F$ is an isomorphism of presheaves, and more generally the canonical  map $H^n_\sub{Zar}(X,a_\sub{Zar}F)\to H^n_\sub{Nis}(X,a_\sub{Nis}F)$ is an isomorphism for all $n\ge0$ and all $X\in\Sm_k$. Indeed, the first claim is \cite[Theorem~22.2]{MazzaWeibelVoevodsky2006}, and the second claim then follows by combining the first claim with \cite[Proposition~13.9]{MazzaWeibelVoevodsky2006} and \cite[Theorem~22.15]{MazzaWeibelVoevodsky2006}. It follows that in the statement of Theorem \ref{thm:PST}, all occurrences of $a_\sub{Nis}$ and $L_\sub{Nis}$ may be replaced by $a_\sub{Zar}$ and $L_\sub{Zar}$ respectively.

In particular, in the notation of the above proof of Theorem \ref{thm_BL_coh} over fields, we have $R\Gamma_\sub{Zar}(X,a_\sub{Zar}F^i(j))\quis R\Gamma_\sub{Nis}(X,a_\sub{Nis}F^i(j))$ for all $i,j$ and all $X\in\Sm_k$ when $\opp{char} k\neq p$, and similarly for $G(j)$ when $\opp{char} k=p$. This implies $L_\sub{Nis}\tau^{\le j}\bb F_p(j)^\sub{syn}\quis L_\sub{Nis}\tau^{\le j}\bb F_p(j)^\sub{syn}$ on $\Sm_k$, as claimed.
\end{remark}

We now turn to proving Theorem \ref{thm_BL_coh} in mixed characteristic. We we will reduce it to the treated case of smooth varieties over a field by establishing a localisation sequence in Beilinson--Lichtenbaum cohomology; see Theorem~\ref{thm:localisation_BL}. In turn, the key to this localisation sequence is a certain vanishing result in \'etale cohomology, which we establish via a Gabber--Quillen presentation lemma due to Schmidt--Strunk \cite[Theorem~B]{SchmidtStrunk2018}. We begin by recalling some terminology and observations from \cite[\S6]{ElmantoMorrow2023}, replacing the base field from that context by a discrete valuation ring.

\begin{definition}
Let $B$ be a discrete valuation ring and $F:\Sm_B^\sub{op}\to\Spt$ a presheaf of spectra on smooth $B$-schemes; for $X \in \Sm_{B}$, we will write $F^X$ for the presheaf $U \mapsto F(U \times_B X)$. There are two morphisms of presheaves
\[
j^*, \pi^*\infty^*: F^{\bb P^1_B} \rightarrow F^{\bb A^1_B},
\]
where:
\begin{enumerate}
\item $\pi$ is induced the projection $\A^1_B \times_B X \rightarrow X$,
\item $\infty$ is the closed immersion $\Spec(B)\to\P^1_B$ of the point at $\infty$,
\item $j: \A^1_B \hookrightarrow \P^1_B$ is the open immersion complementary to the point at $\infty$.
\end{enumerate}
We say that $F$ is \emph{deflatable} if the maps $j^*$ and $\pi^*\infty^*$ are homotopic.
\end{definition}

\begin{example}
Let $B$ be a discrete valuation ring and $F:\Sm_{B}^\sub{op} \to \Spt$ a presheaf. Here are two situations where $F$ is deflatable.
\begin{enumerate}
\item If $F$ is $\bb A^1$-invariant, then it is deflatable: indeed, in that case the map $\pi^*$ is an equivalence and there is a natural $\bb A^1$-homotopy between $j^*$ and $\infty^*$.
\item Let $F(\star)$ be a $\bb E_\infty$-algebra in graded sheaves of spectra on $\Sm_B$, equipped with a first Chern class map $c_1:\left( \tau^{\leq 1}R\Gamma_{\Zar}(-,\Gm) \right)[1]\to F(1)$, and assume that the associated $\bb P^1$-bundle formula holds. Then each $F(j)$ is deflatable, by arguing as in the proof of  \cite[Lemma~6.12]{ElmantoMorrow2023}.
\end{enumerate}
\end{example}

The next lemma is an analogue over discrete valuation rings $B$ of \cite[Lemma~6.11]{ElmantoMorrow2023}, which is a minor variant of a theorem of Colliot-Th\'el\`ene--Hoobler--Kahn \cite{ColliotThelene-Hoobler-Kahn1997}; it establishes a Gersten injectivity for deflatable Nisnevich sheaves on smooth $B$-schemes, which may be used to reduce questions to the case of discrete valuation rings. Such reduction arguments go back to Gillet--Levine \cite{GilletLevine1987} and have also appeared in work of Bouis--Kundu \cite{BouisKundu2025}, Geisser \cite{Geisser2004}, L\"uders \cite{Lueders2022, Lueders2024a, Lueders2024}, Schmidt--Strunk \cite{SchmidtStrunk2018}, and others. To first clarify the cumbersome notation, given a smooth $B$-scheme $X$ and a point $x\in X$ on the special fibre, the ring $\roi_{X,x}^h$ is as usual the henselian local ring at $x$, while $(\roi_{X,x}^h)_{\frak m\roi_{X,x}^h}$ denotes the discrete valuation ring obtained by localising $\roi_{X,x}^h$ at its prime ideal generated by the maximal ideal of $B$. We allow ourselves to evaluate any presheaf of spectra on $\Sm_B$ on these rings, by taking filtered colimits of smooth $B$-algebras.

\begin{lemma}\label{lemma_Gillet_Levine}
Let $B$ be a henselian discrete valuation ring with infinite residue field, $X$ a smooth $B$-scheme, and $x\in X$ a point on the special fibre. Then, for any deflatable Nisnevich sheaf of spectra $F:\Sm_B^\sub{op}\to\Spt$ and $j\in\bb Z$, the map \[\pi_jF(\roi_{X,x}^h)\To \pi_jF((\roi_{X,x}^h)_{\frak m\roi_{X,x}^h})\] is injective.
\end{lemma}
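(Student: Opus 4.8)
The statement is a Gersten-type injectivity result, and the strategy is the classical one going back to Gillet--Levine and Colliot-Th\'el\`ene--Hoobler--Kahn, adapted to the relative situation over a henselian DVR $B$ as in \cite[Lemma~6.11]{ElmantoMorrow2023}. The plan is to reduce the injectivity statement to a concrete geometric input provided by a Gabber--Quillen style presentation lemma. First I would replace the henselian local ring $\roi_{X,x}^h$ by a limit of \'etale neighbourhoods and reduce, by a standard limit argument, to proving the following: given a smooth affine $B$-scheme $X$ of relative dimension $d$, a point $x$ on the special fibre, and a closed subscheme $Z\subset X$ not containing $x$ (the locus where a given class in $\pi_jF$ restricts to zero), there is an \'etale neighbourhood $U\to X$ of $x$ together with a smooth $B$-morphism $U\to\A^{d-1}_B$ which is \emph{finite} on $Z_U:=Z\cap U$ and such that $Z_U\to\A^{d-1}_B$ remains a closed immersion when composed with some open. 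This is where the Schmidt--Strunk presentation lemma \cite[Theorem~B]{SchmidtStrunk2018} enters: it provides, Nisnevich-locally over a base with infinite residue fields, exactly such a relative presentation of $(X,Z)$ as essentially a family over $\A^1$ which is \emph{good} (finite on the complement of $\infty$), so that the pair $(U,Z_U)$ is pulled back from a Nisnevich distinguished square over $\A^{d-1}_B$.

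The second step is to run the usual ``moving'' argument using this presentation together with deflatability of $F$. Concretely, the good presentation exhibits $U$ as an open of $\bb P^1$ over a smooth base $S$ (with $S$ smooth over $\A^{d-2}_B$, and iterating, eventually smooth over $B$ of relative dimension $d-1$), in such a way that $Z_U$ lies in the ``finite part'' $\A^1_S$. Nisnevich excision for the sheaf $F$ then gives $F^{\bb P^1_S}/F^{(\bb P^1\setminus Z_U)_S}\simeq F^{\A^1_S}/F^{(\A^1\setminus Z_U)_S}$, and deflatability — the hypothesis that $j^*\simeq\pi^*\infty^*$ — shows that the boundary map measuring classes supported on $Z_U$ factors through $F^{\bb P^1_S}\xto{\infty^*}F^S$, hence through $F$ evaluated on the lower-dimensional base. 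This is precisely the mechanism by which a class supported on $Z$ near $x$ is ``pushed off'' to the generic fibre: restricting to the DVR $(\roi_{X,x}^h)_{\frak m\roi_{X,x}^h}$ corresponds, after the presentation, to passing to the generic point of the $\A^1$-direction, where the support $Z_U$ (being finite over $S$ and missing $x$) becomes disjoint from the relevant fibre, so a class dying there was already zero after a further Nisnevich localisation. Iterating the argument on $d$ (or invoking it once with the full Schmidt--Strunk presentation over $B$ directly) closes the induction.

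The main obstacle I anticipate is bookkeeping around the \emph{special fibre} and the \emph{infinite residue field} hypothesis on $B$. The Schmidt--Strunk lemma is stated for smooth schemes over a base, but one must be careful that the point $x$ we are moving lies on the closed fibre $X\otimes_B k(B)$, so the presentation must be chosen compatibly with the valuation — i.e. the map $U\to\A^{d-1}_B$ should restrict to a good presentation of the special fibre pair $(X_{k},Z_{k})$ as well, which is exactly the content of the relative version of the presentation lemma (and is why one needs $k(B)$ infinite, to have enough room to choose a generic projection over the closed point). A secondary technical point is verifying that $F$ restricted to smooth schemes over the DVR $(\roi_{X,x}^h)_{\frak m}$, obtained by the filtered-colimit convention, genuinely inherits deflatability and Nisnevich descent; this is routine since both properties are preserved under filtered colimits and pullback, but it must be stated. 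Once the presentation is in hand, the homotopy-theoretic part of the argument is a formal manipulation of cofibre sequences and the deflatability homotopy, with no further geometric input needed.
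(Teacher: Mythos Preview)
Your approach is essentially the same as the paper's: spread the class out to a Nisnevich neighbourhood with support in a closed subscheme $Z$, apply Schmidt--Strunk's presentation \cite[Theorem~B]{SchmidtStrunk2018} to obtain an \'etale map $\pi\colon U\to\A^1_V$ (with $V$ smooth over $B$) such that $Z$ maps isomorphically onto a closed $Z'\subset\A^1_V$ finite over $V$, then use the Nisnevich distinguished square, Zariski excision to $\bb P^1_V$, and deflatability to conclude. The paper simply cites \cite[Lemma~6.11]{ElmantoMorrow2023} for this last step, but your sketch of the mechanism (lifting the class to sections supported on $Z'$ in $\bb P^1_V$, observing it vanishes along $\infty$, and invoking $j^*\simeq\pi^*\infty^*$) is correct.

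One slip in your setup: the closed subscheme $Z$ is the \emph{support} of $s$ (meaning $s|_{X\setminus Z}=0$), and the correct constraint is that $Z$ does not contain the whole special fibre --- this is exactly what follows from $s$ dying in the localisation at $\frak m\roi_{X,x}^h$, and it is the hypothesis Schmidt--Strunk requires. Your phrasing ``$Z$ not containing $x$'' is wrong: $Z$ typically \emph{does} contain $x$ (showing $s$ vanishes there is the whole point). Also, your concern about transporting deflatability and Nisnevich descent to the DVR $(\roi_{X,x}^h)_{\frak m\roi_{X,x}^h}$ is misplaced: the argument works entirely on $\Sm_B$, producing $s=0$ on a Nisnevich neighbourhood of $x$ directly, so no such transport (and no induction on relative dimension) is needed.
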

\begin{proof}

Let $s\in \opp{ker}(\pi_jF(\roi_{X,x}^h)\To \pi_jF((\roi_{X,x}^h)_{\frak m\roi_{X,x}^h}))$; possibly after first replacing $X$ by a Nisnevich neighbourhood of $x$ we may suppose that $s$ is defined on $X$ and vanishes on $X\setminus Z$, where $Z\into X$ is some closed subscheme which does not contain the whole special fibre of $X$.

Now we appeal to Schmidt--Strunk's presentation result \cite[Theorem~B]{SchmidtStrunk2018}: possibly after yet again replacing $X$ by a Nisnevich neighbourhood of $x$, there exists a smooth $B$-scheme $V$ and an \'etale morphism $\pi:X\to\bb A_V^1$ with the following properties:
\begin{enumerate}
\item The composition $Z\into X\xto{\pi}\bb A_V^1$ is a closed embedding, and $\pi^{-1}\pi(Z)=Z$;
\item the composition $Z\into X\xto{\pi}\bb A_V^1\to V$ is finite (which implies that $Z\into X\xto{\pi}\bb A_V^1\to\bb P_V^1$ is also a closed embedding).
\end{enumerate}
The first condition furnishes us with a Nisnevich distinguished square:
\[\xymatrix{
X\setminus Z\ar[r]  \ar[d] &X\ar[d]^\pi\\
\bb A_V^1\setminus\pi(Z)\ar[r]&\bb A_V^1
}\]
By now arguing exactly as in \cite[Lemma~6.11]{ElmantoMorrow2023} one deduces that $s=0$, as desired.
\end{proof}

The vanishing application which we require of the previous lemma is the following:\footnote{More generally the argument of the proposition works verbatim for any homotopy invariant presheaf of abelian groups and so shows that, for any homotopy invariant presheaf with transfers $F:\opp{Cor}_K^\sub{op}\to\opp{Ab}$, with Nisnevich sheafification $a_\sub{Nis}F:\Sm_K^\sub{op}\to\opp{Ab}$, we have that $H^n_\sub{Nis}(\roi_{X,x}^h\otimes_BK,a_\sub{Nis}F)=0$ for all $n>0$.}

\begin{proposition}\label{prop_BL_Gersten_vanishing}
Let $B$ be a discrete valuation ring, $X$ a smooth $B$-scheme, and $p$ a prime number invertible in $K:=\opp{Frac}(B)$. Then for any point $x\in X$ we have:
\begin{enumerate}
\item $H^n_\sub{Nis}(\roi_{X,x}^h\otimes_BK,R^i\ep_*\mu_p^{\otimes j})=0$ for all $i,j\ge0$ and $n>0$, where $\ep$ denotes the Nisnevich-to-\'etale change of topology map.
\item $\bb F_p(j)^\sub{BL}(\roi_{X,x}^h\otimes_BK)$ is supported in degrees $\le j$.
\end{enumerate}
\end{proposition}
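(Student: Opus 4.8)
The plan is to deduce both statements from Lemma~\ref{lemma_Gillet_Levine} by first reducing to a situation where the base is a henselian discrete valuation ring with infinite residue field, and then exploiting deflatability. Since the assertions are local (and the presheaves in question are finitary), I would first reduce to the case where $B$ is a henselian discrete valuation ring. Moreover, since passing to the strict henselisation of $B$ only enlarges $\roi_{X,x}^h$ and the conclusions pass along such filtered colimits (all sheaves here being finitary), we may assume $B$ has separably closed -- in particular infinite -- residue field. The point is that then $B\to K$ factors through the henselian setup of Lemma~\ref{lemma_Gillet_Levine}, so we have Gersten injectivity at our disposal.

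For part (1): fix $i,j\ge 0$ and write $F^i(j):=H^i_\sub{\'et}(-,\mu_p^{\otimes j}):\Sm_K^\sub{op}\to\opp{Ab}$, which is a homotopy invariant presheaf with transfers by \cite[\S3.4]{Voevodsky2000}; by Theorem~\ref{thm:PST}(1) its Nisnevich sheafification $a_\sub{Nis}F^i(j)=R^i\ep_*\mu_p^{\otimes j}$ is strictly $\bb A^1$-invariant on $\Sm_K$. In particular $R^i\ep_*\mu_p^{\otimes j}$, viewed as a presheaf of spectra, is $\bb A^1$-invariant, hence deflatable. Now $\roi_{X,x}^h\otimes_B K$ is the localisation of $\roi_{X,x}^h$ at its generic-fibre prime, and I claim it is a filtered colimit of rings of the form $(\roi_{X',x'}^h)_{\frak m\roi_{X',x'}^h}$ for smooth $B$-schemes $X'$ with a point $x'$ on the special fibre. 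Granting this, Lemma~\ref{lemma_Gillet_Levine} applies with $F=R^i\ep_*\mu_p^{\otimes j}$ and the henselian local ring $\roi_{X',x'}^h$ (whose generic-fibre localisation is the discrete valuation ring $(\roi_{X',x'}^h)_{\frak m\roi_{X',x'}^h}$); it shows that restriction from the dvr to $\roi_{X',x'}^h\otimes_B K$ -- wait, more carefully: Lemma~\ref{lemma_Gillet_Levine} gives injectivity of $\pi_n F(\roi_{X',x'}^h)\to\pi_n F((\roi_{X',x'}^h)_{\frak m})$, and since $\roi_{X',x'}^h$ is henselian local it has vanishing higher Nisnevich cohomology, so $H^n_\sub{Nis}(\roi_{X',x'}^h,a_\sub{Nis}F^i(j))=0$ for $n>0$. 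But the restriction to the generic fibre $\roi_{X',x'}^h\otimes_B K$ is itself a localisation of $\roi_{X',x'}^h$, so its higher cohomology is a localisation of the (vanishing) higher cohomology of $\roi_{X',x'}^h$, hence vanishes; taking the filtered colimit over $(X',x')$ and using finitariness of Nisnevich cohomology of a strictly $\bb A^1$-invariant sheaf gives $H^n_\sub{Nis}(\roi_{X,x}^h\otimes_B K,R^i\ep_*\mu_p^{\otimes j})=0$ for $n>0$, as desired.

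For part (2): on $\Sm_K$ we have $\bb F_p(j)^\sub{BL}=L_\sub{Nis}\tau^{\le j}R\Gamma_\sub{\'et}(-,\mu_p^{\otimes j})$ by Example~\ref{example_BL_cohomology}(1), so there is a descent spectral sequence $E_2^{a,b}=H^a_\sub{Nis}(-,R^b\ep_*\mu_p^{\otimes j})\Rightarrow H^{a+b}_\sub{Nis}(-,\tau^{\le j}R\Gamma_\sub{\'et}(-,\mu_p^{\otimes j}))$ with $b$ running over $0,\dots,j$. Evaluated on $R:=\roi_{X,x}^h\otimes_B K$, part~(1) kills all terms with $a>0$, so the abutment is concentrated in the same range of degrees as the $E_2^{0,\bullet}$ column, i.e.\ in degrees $\le j$, giving the claim. (One must check the evaluation-on-a-ring spectral sequence is available, which holds since $\roi_{X,x}^h\otimes_B K$ is a colimit of affine smooth $B$-schemes and the relevant sheaves are finitary.)

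\textbf{Main obstacle.} The step I expect to require the most care is the geometric claim in part~(1) that $\roi_{X,x}^h\otimes_B K$ -- the localisation of the henselian local ring $\roi_{X,x}^h$ at the prime generated by the uniformiser of $B$ -- is, up to filtered colimit, of the form $(\roi_{X',x'}^h)_{\frak m\roi_{X',x'}^h}$ for smooth $B$-schemes $X'$, so that Lemma~\ref{lemma_Gillet_Levine} genuinely applies; equivalently, that one can present the relevant residue-characteristic-$0$ points as generic-fibre localisations of henselian local rings of smooth $B$-schemes. This is essentially a matter of unwinding the definitions of the local rings involved (henselisation commutes with the localisation, and $X\otimes_B K$ is a smooth $K$-scheme, hence one can instead work directly over $K$ and invoke the field version of Gersten injectivity via Theorem~\ref{thm:PST}), but it needs to be spelled out cleanly. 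An alternative, cleaner route to part~(1) -- which I would likely adopt instead -- is to bypass the dvr entirely: since $\roi_{X,x}^h\otimes_B K$ is a filtered colimit of henselian local rings of smooth $K$-schemes, and $R^i\ep_*\mu_p^{\otimes j}$ is strictly $\bb A^1$-invariant on $\Sm_K$ by Theorem~\ref{thm:PST}(1), its higher Nisnevich cohomology on a henselian local ring vanishes simply because henselian local rings have no higher Nisnevich cohomology. The role of Lemma~\ref{lemma_Gillet_Levine} is then only needed in the genuinely mixed-characteristic parts of the subsequent localisation sequence (Theorem~\ref{thm:localisation_BL}), not here; so in fact part~(1) and hence part~(2) should follow purely from Theorem~\ref{thm:PST} together with finitariness, and the footnote's remark about homotopy invariant presheaves with transfers is exactly this observation.
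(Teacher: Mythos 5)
Your proposal breaks down at the geometric claim on which both of your routes rest. When $x$ lies on the special fibre (which, as the statement is only non-trivial in that case, is the whole point), the ring $\roi_{X,x}^h\otimes_BK$ is \emph{not} local: it is the open complement of the special fibre inside the $2$-or-higher-dimensional henselian local scheme $\Spec(\roi_{X,x}^h)$, and it has many maximal ideals (already for $X=\bb A^1_B$ and $x$ the origin of the special fibre). Consequently it is neither a filtered colimit of henselian local rings of smooth $K$-schemes (so your ``cleaner route'' — vanishing because henselian local rings have no higher Nisnevich cohomology — does not apply), nor a filtered colimit of discrete valuation rings of the form $(\roi_{X',x'}^h)_{\frak m\roi_{X',x'}^h}$. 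The intermediate step in your first route is also unsound: from $H^n_\sub{Nis}(\roi_{X,x}^h,a_\sub{Nis}F)=0$ you cannot conclude vanishing on the open subscheme $\Spec(\roi_{X,x}^h\otimes_BK)$ by saying its cohomology ``is a localisation'' of the cohomology of $\roi_{X,x}^h$ — Nisnevich cohomology of an open subscheme of a henselian local scheme can be highly non-trivial (think of punctured spectra). The actual content of Lemma~\ref{lemma_Gillet_Levine} in this setting is different from how you use it: one applies it to the deflatable ($\bb A^1$-invariant) Nisnevich sheaf $F=R\Gamma_\sub{Nis}(-\otimes_BK,R^i\ep_*\mu_p^{\otimes j})$ on $\Sm_B$ itself, obtaining injectivity of $H^n_\sub{Nis}(\roi_{X,x}^h\otimes_BK,-)\to H^n_\sub{Nis}((\roi_{X,x}^h)_{\frak m\roi_{X,x}^h}\otimes_BK,-)$; the target is the cohomology of the \emph{field} $\opp{Frac}(\roi_{X,x}^h)$, which vanishes in positive degrees.

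A second gap is your reduction to separably closed (hence infinite) residue field. Replacing $B$ by its strict henselisation changes the cohomology groups you are trying to control, and vanishing after base change does not descend: the restriction map need not be injective without a transfer. The paper's Lemma~\ref{lemma_Gillet_Levine} genuinely requires an infinite residue field (it feeds into the Schmidt--Strunk presentation lemma), and when the residue field of $B$ is finite one must run the standard trace argument — choose $\ell\neq p$ prime to $[k(x):k(\underline t)]$, pass up the $\bb Z_\ell$-tower of unramified extensions where the residue field becomes infinite, and use the transfers on $H^n_\sub{Nis}(-,R^i\ep_*\mu_p^{\otimes j})$ coming from its structure as a homotopy invariant presheaf with transfers, together with finitariness, to descend the vanishing. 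Your part (2) (the descent spectral sequence argument deducing the degree bound from part (1)) is fine once part (1) is in place, and matches the paper; but as written, part (1) is not proved.
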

\begin{proof}
Since $\roi_{X,x}^h\otimes_BK$ is a smooth algebra over the field $K$ of characteristic $\neq p$, where Beilinson--Lichtenbaum cohomology is given by $L_\sub{Nis}\tau^{\le j}R\Gamma_\sub{\'et}(-,\mu_p^{\otimes j})$, one sees that (1) implies (2). Moreover, (1) is clear if $x$ lies on the generic fibre of $X$, as then $\roi_{X,x}^h\otimes_BK=\roi_{X,x}^h$ is a henselian local ring.

The non-trivial part of the proposition is thus part (1) when $x$ lies on the special fibre of $X$, which we treat in the rest of the proof. We may replace $B$ by its henselisation $B^h$ and $X$ by $X\otimes_BB^h$, as this changes neither the special fibre nor $\roi_{X,x}^h\otimes_BK=\roi_{X\otimes_BB^h,x}^h\otimes_{B^h}\opp{Frac}(B^h)$. That is, we may henceforth assume that $B$ is a henselian discrete valuation ring.

In this paragraph we assume further that the residue field of $B$ is infinite. We consider the Nisnevich sheaf $R\Gamma_\sub{Nis}(-\otimes_BK,R^i\ep_*\mu_p^{\otimes j}):\Sm_B^\sub{op}\to\Spt$, which we saw in the proof of Theorem \ref{thm_BL_coh} over fields is $\bb A^1$-invariant. We may therefore apply Lemma \ref{lemma_Gillet_Levine} to deduce that the map \[H^n_\sub{Nis}(\roi_{X,x}^h\otimes_BK,R^i\ep_*\mu_p^{\otimes j})\To H^n_\sub{Nis}((\roi_{X,x}^h)_{\frak m\roi_{X,x}^h}\otimes_BK,R^i\ep_*\mu_p^{\otimes j})\] is injective. But the ring $(\roi_{X,x}^h)_{\frak m\roi_{X,x}^h}\otimes_BK$ appearing on the right hand side is a field (namely the fraction field of $\roi_{X,x}^h$), and so its higher Nisnevich cohomology vanishes. This completes the proof when $B$ has infinite residue field.

Now suppose $B$ has finite residue field $k$ (and is still henselian, and $x$ still lies on the special fibre); we will deduce the desired vanishing from the established case of infinite residue field by a standard transfer argument. Namely, write the residue field $k(x)$ as a finite separable extension of some rational function field $k(\underline{t})=k(t_1,\dots,t_d)$, and let $\ell$ be a prime number $\neq p$ and prime to the degree $|k(x):k(\underline{t})|$. Let $k_\ell$ be the infinite algebraic extension of $k$ corresponding to the quotient $\bb Z_\ell$ of $\opp{Gal}(k^\sub{sep}/k)=\hat{\bb Z}$; thus $k_\ell$ is an infinite tower of finite $\ell$-primary extensions of $k$. Let $B_\ell$ be the corresponding henselian discrete valuation ring, ind finite \'etale over $B$, with residue field $k_\ell$. Since we chose $\ell$ to be prime to $|k(x):k(\underline{t})|$, we see that $k(x)\otimes_kk_\ell$ is a field and therefore there is a unique point of $X\otimes_BB_\ell$ over $x$; moreover, the henselian local ring at this point is $\roi_{X,x}^h\otimes_BB_\ell$. Since $B_\ell$ has infinite residue field, the infinite residue field case of the proposition tells us that $H^n_\sub{Nis}(\roi_{X,x}^h\otimes_B\opp{Frac}(B_\ell),R^i\ep_*\mu_p^{\otimes j})=0$ for all $i,j\ge0$ and $n>0$. Since the \'etale cohomology is finitary, therefore any element $c\in H^n_\sub{Nis}(\roi_{X,x}^h\otimes_BK,R^i\ep_*\mu_p^{\otimes j})$ vanishes in $H^n_\sub{Nis}(\roi_{X,x}^h\otimes_BK',R^i\ep_*\mu_p^{\otimes j})$ for some finite extension $K'$ of $K$ of degree being a power of $\ell$. By the existence of transfers on $H^n_\sub{Nis}(-,R^i\ep_*\mu_p^{\otimes j}):\Sm_K\to\opp{Ab}$ (recalled in the proof of Theorem \ref{thm_BL_coh} over fields), and the fact that $\ell\neq p$, it follows that $c=0$ as desired to complete the proof.
\end{proof}

We are now prepared to establish the desired localisation sequence:

\begin{theorem}[Localisation sequence in Beilinson--Lichtenbaum cohomology]\label{thm:localisation_BL}
Let $B$ be a discrete valuation ring of mixed characteristic, with residue field $k$ and fraction field $K$, and let $j\ge0$. Then, for any smooth $B$-scheme $X$, there exists a natural fibre sequence \[\bb Z_p(j)^\sub{BL}(X)\To \bb Z_p(j)^\sub{BL}(X\otimes_BK)\xto{\partial}\bb Z_p(j-1)^\sub{BL}(X\otimes_Bk)[-1],\] where the boundary map $\partial:\bb Z_p(\star)^\sub{BL}(X\otimes_BK)\to \bb Z_p(\star-1)^\sub{BL}(X\otimes_Bk)[-1]$ naturally upgrades to a morphism of $\bb Z_p(\star)^\sub{BL}(X)$-modules in $\opp{Gr}(\D(\bb Z))$.
\end{theorem}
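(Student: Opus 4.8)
The plan is to construct the localisation fibre sequence at the level of syntomic cohomology, then apply the Beilinson--Lichtenbaum truncation functor, and finally Nisnevich-sheafify. First I would set up the localisation sequence for syntomic cohomology itself: for a smooth $B$-scheme $X$ with closed fibre $X_k := X\otimes_B k$ and open complement $X_K := X\otimes_B K$, there should be a natural fibre sequence
\[
\bb Z_p(j)^\sub{syn}_{X_k}(X)\To \bb Z_p(j)^\sub{syn}(X)\To \bb Z_p(j)^\sub{syn}(X_K),
\]
where the left term is syntomic cohomology with support along $X_k$, together with a purity/Gysin identification of the supported term with $\bb Z_p(j-1)^\sub{syn}(X_k)[-2]$. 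This purity statement for syntomic cohomology of smooth schemes over a DVR is essentially Geisser's computation \cite{Geisser2004} transported to the modern syntomic framework; alternatively it can be deduced from the projective bundle formula (Theorem~\ref{theorem_syntomic_properties}(6)) plus the deformation-to-the-normal-cone / blowup arguments, exactly as purity is proved in any oriented cohomology theory with a PBF. The multiplicative refinement---that the resulting boundary map is a map of $\bb Z_p(\star)^\sub{syn}(X)$-modules in $\opp{Gr}\D(\bb Z)$---follows because the Gysin map in an oriented theory is a module map over the ambient cohomology (projection formula), and all the constructions (support sequence, PBF, orientation) are $\bb E_\infty$ or at least module-multiplicative.

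Next I would apply $\tau^{\le\star}$ and $L_\sub{Nis}$ to pass from syntomic to Beilinson--Lichtenbaum cohomology. The subtle point is that $\tau^{\le j}$ does not commute with fibre sequences in general, so one cannot naively truncate the syntomic localisation sequence. The fix is a connectivity estimate: I would show that on Nisnevich stalks of $\Sm_B$-schemes, namely on the rings $\roi_{X,x}^h\otimes_B K$, the map $\bb Z_p(j)^\sub{syn}(\roi_{X,x}^h\otimes_B K)\to \bb Z_p(j-1)^\sub{syn}(\roi_{X,x}^h\otimes_B k')[-2]$ (boundary, shifted) has the property that applying $\tau^{\le j}$ to source and the appropriate truncation to the target still yields a fibre sequence. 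This is precisely where Proposition~\ref{prop_BL_Gersten_vanishing} enters: it guarantees that $\bb F_p(j)^\sub{BL}(\roi_{X,x}^h\otimes_B K)$ sits in degrees $\le j$, so the truncation on the generic-fibre term is harmless, and the boundary map lands surjectively onto the top cohomology in the relevant range. Concretely, after Nisnevich sheafifying I would verify the fibre sequence on stalks $\roi_{X,x}^h$ by splitting into the case $x$ on the generic fibre (where $X_k$ does not meet the stalk and the statement is trivial: $\bb Z_p(j)^\sub{BL}$ of the generic fibre equals $\bb Z_p(j)^\sub{BL}$ of the total stalk) and $x$ on the special fibre, where Proposition~\ref{prop_BL_Gersten_vanishing}(2) controls the cohomological amplitude and the boundary map realizes the residue onto $\bb Z_p(j-1)^\sub{BL}(\roi_{X,x}^h\otimes_B k)[-1]$. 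Note the shift: the supported syntomic term $\bb Z_p(j-1)^\sub{syn}(X_k)[-2]$ becomes, after the $\tau^{\le j}$/$L_\sub{Nis}$ machinery and the observation that $X_k$ is smooth over the field $k$, a term of the form $\bb Z_p(j-1)^\sub{BL}(X_k)[-1]$, the single degree drop coming from the interplay of purity codimension and truncation.

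I expect the main obstacle to be the connectivity/truncation bookkeeping in the second step---i.e.\ checking that $\tau^{\le j}$ applied to the syntomic localisation sequence actually produces a fibre sequence of Nisnevich sheaves with the correct shift, rather than something off by a truncation error. This is not automatic and genuinely requires the Gersten-type vanishing of Proposition~\ref{prop_BL_Gersten_vanishing}, which in turn rests on the Schmidt--Strunk presentation lemma and the $\bb A^1$-invariance of the relevant \'etale sheaves (already established in the proof of Theorem~\ref{thm_BL_coh} over fields). A secondary technical point is assembling the statement into a genuine natural transformation of $\infty$-functors with multiplicative structure: one should present the whole construction as coming from a cofibre sequence of presheaves of $\bb E_\infty$-algebras (or modules) on $\Sm_B$, using the symmetric monoidal / module-multiplicative properties of syntomic cohomology, PBF, and the Beilinson--Lichtenbaum truncation functor (Remark~\ref{rem:BL_truncation}), so that the boundary map is functorially a $\bb Z_p(\star)^\sub{BL}(X)$-module map in $\opp{Gr}\D(\bb Z)$ without having to make ad hoc choices.
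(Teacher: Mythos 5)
Your handling of the residue characteristic $\neq p$ case and of the truncation step is essentially the paper's: absolute purity in \'etale cohomology does produce the supported term $R\Gamma_{\et}(X\otimes_Bk,\mu_p^{\otimes j-1})[-2]$ there, and passing to Nisnevich sheaves via Proposition~\ref{prop_BL_Gersten_vanishing} on the generic-fibre stalks plus surjectivity of the boundary onto the top cohomology (symbols) is exactly how the paper concludes. The gap is your first step in the essential case of residue characteristic $p$. The purity/Gysin sequence
\[
\bb Z_p(j-1)^\sub{syn}(X\otimes_Bk)[-2]\To \bb Z_p(j)^\sub{syn}(X)\To \bb Z_p(j)^\sub{syn}(X\otimes_BK)
\]
cannot be obtained by the methods you cite: deformation to the normal cone and PBF-style arguments require $\bb A^1$-invariance of the theory, which syntomic cohomology fails completely at $p$ (Theorem~\ref{thm:a1-comparison} shows its $\bb A^1$-localisation collapses to \'etale cohomology of the generic fibre), and Geisser's results concern the Bloch--Levine cycle complex rather than syntomic cohomology (and the paper's architecture deliberately avoids them in mixed characteristic). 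Purity at $p$ along the special fibre is a deep arithmetic theorem, not a formal consequence of an orientation.

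Moreover, the precise shape of the statement matters. What is actually available (Sato--Schneider, in the Bhatt--Mathew form quoted as \eqref{eqn:Kato_sequence} in the paper's proof) is the fibre sequence of \'etale sheaves
\[
\bb F_p(j)^\sub{syn}\To L_\sub{\'et}\tau^{\le j}R\Gamma_{\et}(-\otimes_BK,\mu_p^{\otimes j})\xto{\partial}\bb F_p(j-1)^\sub{syn}(-\otimes_Bk)[-1],
\]
whose middle term is the \emph{truncated} \'etale cohomology of the generic fibre, not $\bb Z_p(j)^\sub{syn}(X\otimes_BK)$. Your untruncated version asserts in addition that $\tau^{>j}R\Gamma_{\et}(-\otimes_BK,\mu_p^{\otimes j})$ is, \'etale-locally along the special fibre, absorbed by the supported term; this is not known and should not be expected: once the relative dimension exceeds $j$, the ring $\roi_{X,x}^\sub{sh}\otimes_BK$ in general has nonvanishing mod-$p$ \'etale cohomology in degrees strictly above $j+1$, while your supported term is \'etale-locally concentrated in the single degree $j+1$. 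The module-structure claim suffers from the same defect, as it rides on the nonexistent Gysin formalism; the paper instead gets linearity of $\partial$ by reducing, via the symbol description of $H^j_\sub{syn}$ (Theorem~\ref{thm_syntomic_Milnor}), to the elementary fact that the Milnor $K$-theory residue of a discrete valuation is linear over the Milnor $K$-theory of the valuation ring. With Bhatt--Mathew's comparison replacing your purity input, the rest of your plan (apply $L_\sub{Nis}\tau^{\le j}$, use Proposition~\ref{prop_BL_Gersten_vanishing}(2) on the middle term, check surjectivity of the residue on stalks) is precisely the paper's proof.
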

\begin{proof}
We will prove the theorem modulo $p$, which in any case is all we need for our applications. To get the full result one instead works modulo any power of $p$ and then passes to the limit.

We begin with the case where $B$ has residue characteristic $\neq p$, so that $p$ is invertible on $X$. A consequence of absolute purity in \'etale cohomology \cite{Fujiwara2002} is the existence of a natural localisation sequence \[R\Gamma_\sub{\'et}(X\otimes_Bk,\mu_p^{\otimes j-1})[-2]\To R\Gamma_\sub{\'et}(X,\mu_p^{\otimes j})\To R\Gamma_\sub{\'et}(X\otimes_BK,\mu_p^{\otimes j})\] in which the first map is multiplication by the refined cycle class in $H^2_\sub{\'et}(X \textrm{ on } X\otimes_Bk,\mu_p)$ of the divisor $X\otimes_Bk$ (i.e., the image of any uniformiser of $K^\times$ under the maps $H^0_\sub{\'et}(X\otimes_BK,\bb G_m)\xto{\sub{Kum}} H^1_\sub{\'et}(X\otimes_BK,\mu_p)\to H^2_\sub{\'et}(X \textrm{ on } X\otimes_Bk,\mu_p)$). Rearranging the localisation sequence, we thus have a fibre sequence of \'etale sheaves on smooth $B$-schemes 
\[R\Gamma_\sub{\'et}(-,\mu_p^{\otimes j})\To R\Gamma_\sub{\'et}(-\otimes_BK,\mu_p^{\otimes j})\xto\partial R\Gamma_\sub{\'et}(-\otimes_Bk,\mu_p^{\otimes j-1})[-1],\] 
to which we apply $L_\sub{Nis}\tau^{\le j}$. The first and third terms becomes $\bb F_p(j)^\sub{syn}$ and $\bb F_p(j-1)^\sub{syn}(-\otimes_Bk)[-1]$ respectively, and the two non-trivial steps of the proof are to verify firstly that the middle term becomes $\bb F_p(j)^\sub{syn}(-\otimes_BK)$ (even though the inclusion of the generic fibre is not exact for the Nisnevich topology), and secondly that the resulting sequence of Nisnevich sheaves is still a fibre sequence.

The first step is the claim that the canonical map $\tau^{\le j}\bb F_p(j)^\sub{syn}(\roi_{X,x}^h\otimes_BK)\to \bb F_p(j)^\sub{BL}(\roi_{X,x}^h\otimes_BK)$ is an equivalence for every point $x\in X$. But that is precisely Proposition \ref{prop_BL_Gersten_vanishing}(2). The second step is the claim that the boundary map $\partial:H^j_\sub{\'et}(\roi_{X,x}^h\otimes_BK,\mu_p^{\otimes j})\to H^{j-1}_\sub{\'et}(\roi_{X,x}^h\otimes_Bk,\mu_p^{\otimes j-1})$ is surjective, again for every point $x\in X$. This is deduced from the fact that the target is generated by symbols, thanks to rigidity of \'etale cohomology and the Bloch--Kato conjecture.

The boundary map in this case is one of $\bb F_p(\star)^\sub{BL}$-modules because it was ultimately induced by multiplication by the refined cycle class, which is clearly a morphism of $R\Gamma_\sub{\'et}(-,\mu_p^{\otimes\star})$-modules in graded presheaves.

Now we treat the case that $B$ has mixed characteristic $(0,p)$. Bhatt--Mathew have proved a comparison theorem identifying the syntomic cohomology of smooth $B$-schemes with the Sato--Schneider complex \cite[Example 5.9]{BhattMathew2023}; that is, on the category of smooth $B$-schemes there is a fibre sequence of \'etale sheaves \begin{equation}\bb F_p(j)^\sub{syn}\To L_\sub{\'et}\tau^{\le j}R\Gamma_\sub{\'et}(-\otimes_BK,\mu_p^{\otimes j})\xto{\partial} \bb F_p(j-1)^\sub{syn}(-\otimes_Bk)[-1]\label{eqn:Kato_sequence}\end{equation} where $\partial$ is induced by Kato's residue map (see Remark \ref{rem:Katos_map} below). We now argue exactly as in the previous case: namely, we apply $L_\sub{Nis}\tau^{\le j}$. The first and third terms immediately become $\bb F_p(j)^\sub{BL}$ and $\bb F_p(j-1)^\sub{syn}(-\otimes_Bk)[-1]$ respectively, while the middle term becomes $\bb F_p(j)^\sub{BL}(-\otimes_BK)$ thanks to Proposition \ref{prop_BL_Gersten_vanishing}(2), and we obtain a fibre sequence of Nisnevich sheaves because, for every point $x\in X$, Kato's residue map $H^j_\sub{\'et}(\roi_{X,x}^h\otimes_BK,\mu_p^{\otimes j})\to H^{0}_\sub{\'et}(\roi_{X,x}^h\otimes_Bk,\Omega^{j-1}_\sub{log})$ is surjective. Indeed, the target is again generated by symbols by assembling theorems of Bloch--Kato--Gabber, Geisser--Levine, and Kerz, c.f., \cite[Theorem~1.2]{Morrow_pro_GL2}.

It remains to prove in this mixed characteristic case that the boundary map $\partial:\bb F_p(\star)^\sub{BL}(X\otimes_BK)\to \bb F_p(\star-1)^\sub{BL}(X\otimes_Bk)[-1]$ is naturally one of $\bb \F_p(\star)^\sub{BL}$-modules. By construction, $\partial$ is the Nisnevich sheafification over smooth $B$-schemes of the composition \[\tau^{\le \star}R\Gamma_\sub{\'et}(-\otimes_BK,\mu_p^{\otimes \star})\To H^\star_\sub{\'et}(-\otimes_BK,\mu_p^{\otimes \star})[-\star]\xto{\kappa}H^0_\sub{\'et}(-\otimes_Bk,\Omega^{\star-1}_\sub{log})[-\star].\] Combined with the fact that the canonical map $\tau^{\le \star}\bb F_p(\star)^\sub{syn}\to H^*_\sub{syn}(-,\bb F_p(\star))[-\star]$ is one of $\bb E_\infty$-algebras (by \cite[Lemma 8.12]{AntieauNikolaus2021}, using the $t$-structure of Remark \ref{rem:BL_truncation}), it is enough to check that the above map $\kappa$ of graded presheaves of abelian groups is one of $H^*_\sub{syn}(-,\bb F_p(\star))[-\star]$-modules. This is a property, rather than being extra structure, which requires checking the commutativity of diagrams terminating in $H^0_\sub{\'et}(-\otimes_B k,\Omega^{\star-1}_\sub{log})[-\star]$. Now using the notation of Remark \ref{rem:Katos_map}, we may test the commutativity of these diagrams after mapping injectively  to the stalk $\Omega^j_{k(\zeta),\sub{log}}$, and in this way we reduce to the case of the ind-smooth $B$-scheme $\roi_{X,\zeta}^h$ in place of $X$. That is, we must show that \[\kappa:H^\star_\sub{\'et}(F,\mu_p^{\otimes \star})\cong K_\star^M(F)/p\xto{\partial}K_\star^M(k(\zeta))/p\xto{\sub{dlog}} \Omega^\star_{k(\zeta),\sub{log}}\] is a map of $H^\star_\sub{syn}(\roi_{X,\zeta}^h,\bb F_p(\star))$-modules. But, identifying $H^\star_\sub{syn}(\roi_{X,\zeta}^h,\bb F_p(\star))$ with $\hat K_\star^M(\roi_{X,\zeta}^h)/p$ by Theorem~\ref{thm_syntomic_Milnor}, this is precisely the well-known (and easily checked) fact that the boundary map in Milnor $K$-theory associated with a discrete valuation is linear over the Milnor $K$-theory of the ring of integers of the valuation.
\end{proof}

\begin{remark}[Kato's residue map]\label{rem:Katos_map}
We recall Kato's residue map (c.f., \cite[Lemma~1.5]{LuedersMorrow2023} for a similar discussion). Continue to suppose that $B$ is a discrete valuation ring of mixed characteristic $(0,p)$. For any smooth (and connected for simplicity of notation) $B$-scheme $X$, let $\roi_{X,\zeta}$ be the discrete valuation ring obtained by localising $X$ at the generic point $\zeta$ of its special fibre, let $\roi_{X,\zeta}^h$ be its henselisation, and let $F$ be the fraction field of the latter. Thus $F$ is a henselian discrete valuation field of mixed characteristic $(0,p)$ (with residue field $k(\zeta)$), and so the cohomological symbol $K_j^M(F)/p\to H^j_\sub{\'et}(F,\mu_p^{\otimes j})$ was shown to be an isomorphism by Bloch--Kato \cite[Theorem~5.12]{Bloch1986}.\footnote{Of course, thanks to Voevodsky, this Bloch--Kato isomorphism is now known to hold for arbitrary fields.} The composition
\[H^j_\sub{\'et}(X\otimes_BK,\mu_p^{\otimes j})\to H^j_\sub{\'et}(F,\mu_p^{\otimes j})\cong K_j^M(F)/p\xto{\partial}K_j^M(k(\zeta))/p\xto{\sub{dlog}} \Omega^j_{k(\zeta),\sub{log}},\] where $\partial$ is the boundary map in Milnor $K$-theory associated with the discrete valuation on $F$, can then be shown to land inside $H^0_\sub{\'et}(X\otimes_Bk,\Omega^j_\sub{log})\subset \Omega^j_{k(\zeta),\sub{log}}$. This is shown either as in Bloch--Kato \cite[(6.6)]{Bloch1986} or via an argument with Gersten complexes \cite[Lemma~3.2.4]{Sato2007}. This defines \[\kappa:H^j_\sub{\'et}(X\otimes_BK,\mu_p^{\otimes j})\To H^0_\sub{\'et}(X\otimes_Bk,\Omega^j_\sub{log}),\] natural in $X$. Precomposing the $[-j]$ shift of this map with the canonical map $\tau^{\le j}R\Gamma_\sub{\'et}(X\otimes_BK,\mu_p^{\otimes j})\to H^j_\sub{\'et}(X\otimes_BK,\mu_p^{\otimes j})[-j]$, and then \'etale sheafifying as $X$ varies, defines $\kappa$ as it appears in \eqref{eqn:Kato_sequence}.
\end{remark}

We are now equipped to complete the proof of Theorem \ref{thm_BL_coh}.

\begin{proof}[Proof of Theorem \ref{thm_BL_coh} over mixed characteristic Dedekind domains]
Given a mixed characteristic Dedekind domain $B$ and a smooth $B$-scheme $X$, we are required to prove that the maps $\bb Z_p(j)^\sub{BL}(\bb A_X^1)\to\bb Z_p(j)^\sub{BL}(X)$ and (\ref{eqn_PBF_for_BL}) are equivalences. It is enough to prove the claims modulo $p$ since the cohomologies are $p$-complete. Via the localisation sequence of Theorem \ref{thm:localisation_BL}, $\A^1$-invariance immediately reduces to verifying the same for the special fibre $X\otimes_Bk$ and generic fibre $X\otimes_BK$. But we have already proved Theorem \ref{thm_BL_coh}(1) over fields.

The projective bundle formula for $X$ similarly reduces, via the localisation sequence of Theorem \ref{thm:localisation_BL}, to the analogous assertions for the special fibre and generic fibre. Here we implicitly use that the diagram
\[\xymatrix{
\bb Z_p(j)^\sub{BL}(\bb P^1_{X\otimes_BK})\ar[r]^-\partial & \bb Z_p(j-1)^\sub{BL}(\bb P^1_{X\otimes_Bk})[-1]\\
\bb Z_p(j-1)^\sub{BL}(\bb P^1_{X\otimes_BK})[-2]\ar[r]_-\partial\ar[u]^{c_1^\sub{BL}(\roi(1)} & \bb Z_p(j-2)^\sub{BL}(\bb P^1_{X\otimes_Bk})[-3]\ar[u]_{c_1^\sub{BL}(\roi(1)}
}\]
commutes, which is a special case of the linearity of the boundary map from Theorem \ref{thm:localisation_BL}.
\end{proof}

\comment{
To treat the mixed characteristic case of Theorem \ref{thm_BL_coh} we need the following non-trivial vanishing theorem. In principle the desired vanishing is a problem purely in \'etale cohomology, which we are confident can be proved directly using recent advances in presentation lemmas (in the style of Gabber, Gros--Suwa, and Quillen) for smooth schemes over discrete valuation rings \cite{}\todo{Add all the reference}, but we have not seriously pursued the question. Instead we present for the moment an unsatisfactory sledgehammer proof by observing that the vanishing is a consequence of the Gersten conjecture for the Bloch--Levine cycle complex in mixed characteristic, established with finite coefficients by Geisser, and the Bloch--Kato isomorphism:

\begin{proposition}\label{prop_Gersten_vanishing}
Let $B$ be a discrete valuation ring of mixed characteristic, $R$ an essentially smooth, local $B$-algebra, $p$ a prime number, and $j\in\bb Z$. Then, letting $\pi$ denote a uniformiser of $B$, the following cohomological vanishings hold for the ring $R[\tfrac1p]$:
\begin{enumerate}
\item (Geisser) $H^n_\sub{Zar}(R[\tfrac1\pi],R^i\ep_*\mu_p^{\otimes j})=0$ for all $n>0$ and all $i\le j$, where $\ep$ denotes the Zariski-to-\'etale change of topology map.
\item $\bb F_p(j)^\sub{BL}(R[\tfrac1\pi])$ is supported in degrees $\le j$.
\end{enumerate}
\end{proposition}
\begin{proof}
Since $R[\tfrac1\pi]$ is a smooth algebra over the field $B[\tfrac1\pi]$ of characteristic zero, where Beilinson--Lichtenbaum cohomology is given by $L_\sub{Zar}\tau^{\le j}R\Gamma_\sub{\'et}(-,\mu_p^{\otimes j})$ by Theorem \ref{thm_BL_coh}(3), one sees that (1) implies (2). Meanwhile, (1) is clear if $\pi$ is already invertible in $R$, as then $R$ is local.

The non-trivial part of the proposition is thus part (1) when $R/\pi R\neq 0$. As already mentioned, this follows the Gersten sequence for the Bloch--Levine complex with finite coefficients \cite[Corollary~4.5]{Geisser} and the Bloch--Kato conjecture. The details of proof may be found in \cite[Lemma~4.2(i)]{LuedersMorrow2023} (where it was implicitly assumed that $p$ was the residue characteristic of $B$, but the proof there works verbatim when $p$ is invertible in $B$).
\end{proof}
\color{black}
}

Although we will not use it, we finish the section by recording the following version of Corollary \ref{corol_BM_corol} for Beilinson--Lichtenbaum cohomology in the smooth case:

\begin{corollary}\label{corol_BM_corol2}
Let $R$ be a henselian local ring which is ind-smooth over a field or over a mixed characteristic Dedekind domain. Then there are natural pullback squares
\[\xymatrix{
\bb Z_p(j)^\sub{BL}(R)/p^r\ar[r]\ar[d] & \bb Z_p(j)^\sub{BL}(R[\tfrac1p])/p^r\ar[d]\\
\hat K_j^M(R)/p^r[-j]\ar[r] & H^j_\sub{\'et}(R[\tfrac1p],\mu_{p^r}^{\otimes j})[-j]
}\]
for all $r,j\ge0$.
\end{corollary}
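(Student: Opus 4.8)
The plan is to reduce the statement to the already-established Corollary~\ref{corol_BM_corol} for syntomic cohomology, using the localisation sequence of Theorem~\ref{thm:localisation_BL} to control the gap between Beilinson--Lichtenbaum cohomology and syntomic cohomology on the category of smooth schemes over a mixed characteristic Dedekind domain. The claim that the displayed square is cartesian is equivalent, after taking vertical fibres, to the assertion that the natural mod-$p^r$ syntomic-to-\'etale comparison map
\[
\bb Z_p(j)^\sub{BL}(R)/p^r \To \bb Z_p(j)^\sub{BL}(R[\tfrac1p])/p^r
\]
is an isomorphism in cohomological degrees $< j$ and induces an injection on $H^j$ with cokernel the cokernel of $\hat K_j^M(R)/p^r \to H^j_\sub{\'et}(R[\tfrac1p],\mu_{p^r}^{\otimes j})$. (Here the left vertical map of the square is, by Lemma~\ref{lemma_BL_mod_p}(2) and the fact that $\bb Z_p(j)^\sub{BL}/p^r \simeq L_\sub{Nis}\tau^{\le j}(\bb Z_p(j)^\sub{syn}/p^r)$, the truncation map onto top cohomology $H^j$, which for $R$ henselian local is identified with $\hat K_j^M(R)/p^r[-j]$ via Theorem~\ref{thm_syntomic_Milnor}; note that $H^j_\sub{syn}(R,\bb Z/p^r(j)) = H^j_\sub{BL}(R,\bb Z/p^r(j))$ precisely because truncating at degree $j$ does not change $H^j$.)

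First I would reduce to the mod-$p$ case by d\'evissage: both columns of the square are compatible with the cofibre sequences $(-)/p^{r-1} \xrightarrow{p} (-)/p^r \to (-)/p$ (using surjectivity of the reduction maps on $\hat K_j^M$ and on $H^j_\sub{\'et}$, the former from Theorem~\ref{thm_syntomic_Milnor} and the latter from the Bloch--Kato conjecture), so an induction on $r$ reduces everything to $r=1$. Next, since $R$ is an ind-smooth henselian local algebra over $B$ (a field or a mixed characteristic Dedekind domain), I would write $R$ as a filtered colimit of henselisations of essentially smooth local $B$-algebras and invoke the finitariness of $\bb F_p(j)^\sub{BL}$ (Lemma~\ref{lemma_BL_mod_p}(3)) and of mod-$p$ \'etale cohomology to reduce to that case. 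Then I would apply Theorem~\ref{thm_syntomic_Milnor} to identify $H^j_\sub{BL}(R,\bb F_p(j)) \cong \hat K_j^M(R)/p$, which pins down the left vertical map, and I would compute $\bb F_p(j)^\sub{BL}(R[\tfrac1p])$: if $p$ is invertible in $B$ this is $\tau^{\le j}R\Gamma_\sub{\'et}(R[\tfrac1p],\mu_p^{\otimes j})$ by Example~\ref{example_BL_cohomology}(1) together with the Gersten-type vanishing of Proposition~\ref{prop_BL_Gersten_vanishing}(2); if $B$ has residue characteristic $p$ the same vanishing is again furnished by Proposition~\ref{prop_BL_Gersten_vanishing}(2), so in both cases the right-hand column of the square is identified with $\tau^{\le j}R\Gamma_\sub{\'et}(R[\tfrac1p],\mu_p^{\otimes j}) \to H^j_\sub{\'et}(R[\tfrac1p],\mu_p^{\otimes j})[-j]$.

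It then remains to show the comparison map $\bb F_p(j)^\sub{BL}(R) \to \bb F_p(j)^\sub{BL}(R[\tfrac1p])$ is an isomorphism in degrees $<j$ and injective on $H^j$. For this I would feed the henselian local ring $R$ into the localisation sequence of Theorem~\ref{thm:localisation_BL} applied to the smooth $B$-scheme witnessing $R$ as an ind-smooth stalk — but actually the cleanest route is to directly cite Corollary~\ref{corol_BM_corol} for strictly henselian $R$ and then descend: for a strictly henselian local ring $R$ which is ind-smooth over $B$, Corollary~\ref{corol_BM_corol} already gives the desired cartesian square at the level of syntomic cohomology $\bb Z_p(j)^\sub{syn}(R)/p^r$, and since $\bb Z_p(j)^\sub{syn}(R)$ is then supported in degrees $\le j$ (no Artin--Schreier obstruction — Remark~\ref{rmk_syntomic_props}(2)) and agrees with $\bb Z_p(j)^\sub{BL}(R)$ after \'etale sheafification, the Beilinson--Lichtenbaum square coincides with the syntomic square. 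To pass from strictly henselian to henselian local $R$ one uses the Beilinson--Lichtenbaum equivalence (Lemma~\ref{lemma_BL_mod_p}(4)): $\bb Z_p(j)^\sub{BL}/p \simeq L_\sub{Nis}\tau^{\le j}L_\sub{\'et}(\bb F_p(j)^\sub{BL})$, so the value on $R$ is computed by Nisnevich-locally (hence, for local $R$, \'etale-locally in the strict sense) truncated \'etale cohomology of $\bb F_p(j)^\sub{BL}$, which reduces the fibre-in-degrees-$<j$ statement for $R$ to the same statement on strictly henselian stalks, where it is the content of Corollary~\ref{corol_BM_corol}; one then observes that the resulting square for general henselian local $R$ is obtained by applying $H^0$ of a hyperdescent spectral sequence to the strictly henselian square, using that $R\Gamma_\sub{\'et}(R,-)$ for henselian local $R$ is Galois cohomology and $H^j$ commutes with the relevant truncations.

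The main obstacle I anticipate is bookkeeping the compatibility of the several distinct descriptions of the left vertical map — namely $\bb Z_p(j)^\sub{BL}(R)/p^r \to \hat K_j^M(R)/p^r[-j]$ as (a) the canonical truncation-onto-top-cohomology map for the Nisnevich-local truncation, (b) the composite through syntomic cohomology $\bb Z_p(j)^\sub{syn}(R)/p^r \to H^j_\sub{syn}(R,\bb Z/p^r(j))[-j] \xrightarrow{\sim} \hat K_j^M(R)/p^r[-j]$, and (c) whatever map falls out of Corollary~\ref{corol_BM_corol} — and checking these agree as maps of $\bb E_\infty$-algebras (or at least compatibly enough that the two cartesian squares genuinely coincide). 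This is a coherence issue of exactly the flavour the paper repeatedly flags, and I would handle it by the by-now-standard device: truncation $\tau^{\le\star}$ at degree $\le$ weight is lax symmetric monoidal for the $t$-structure of Remark~\ref{rem:BL_truncation}, so all the relevant truncation maps are maps of $\bb E_\infty$-algebras and the squares are squares of $\bb E_\infty$-algebras, and uniqueness of the top-degree-cohomology factorisation forces the identifications. Apart from that, the argument is a routine assembly of Corollary~\ref{corol_BM_corol}, Theorem~\ref{thm_syntomic_Milnor}, Proposition~\ref{prop_BL_Gersten_vanishing}, and Lemma~\ref{lemma_BL_mod_p}.
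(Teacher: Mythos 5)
There is a genuine gap, concentrated in your final step. First, a smaller issue that feeds into it: your opening reformulation of cartesianness is an overclaim. Granting the coconnectivity inputs (Proposition~\ref{prop_BL_Gersten_vanishing}(2) and the identification of both vertical arrows as projection onto top cohomology), the square is cartesian \emph{if and only if} $\tau^{<j}$ of the top horizontal map is an equivalence, i.e.\ an isomorphism on $H^i$ for $i<j$; the extra condition ``injective on $H^j$'' is neither needed nor true in general --- it fails outright whenever $R$ has equal characteristic $p$, where $R[\tfrac1p]=0$ but $\hat K_j^M(R)/p^r\neq 0$, even though the square is still cartesian there. This spurious condition then survives into the statement you set out to prove in your third paragraph.

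The real gap is the d\'etour through strictly henselian rings and the descent back down. Corollary~\ref{corol_BM_corol} for $R^{\sub{sh}}$ is exactly equivalent to the fibre of $\bb Z_p(j)^\sub{syn}(R^{\sub{sh}})/p^r\to R\Gamma_\sub{\'et}(R^{\sub{sh}}[\tfrac1p],\mu_{p^r}^{\otimes j})$ being concentrated in degrees $\ge j$; taking continuous $G_k$-homotopy fixed points preserves that bound, but a bound of $\ge j$ on the fibre only yields an isomorphism in degrees $\le j-2$ plus injectivity in degree $j-1$ for $R$ --- one degree short of what cartesianness needs --- unless you either upgrade the strictly henselian input to the full strength of Theorem~\ref{theorem_syntomic_properties}(8) (cofibre in degrees $\ge j$) or run a genuine map-of-descent-spectral-sequences argument in total degrees $\le j-1$, neither of which you articulate. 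Worse, the mechanism you do describe, ``applying $H^0$ of a hyperdescent spectral sequence to the strictly henselian square,'' cannot work as stated: the corner $\hat K_j^M(-)/p^r$ does not satisfy Galois descent from $R^{\sub{sh}}$ to $R$ (already for a field $k$ with $p$ invertible, $K_j^M(k^{\sub{sep}})/p=0$ while $\hat K_j^M(k)/p\cong H^j_\sub{\'et}(k,\mu_p^{\otimes j})$ is generally nonzero), so the square itself is not the thing one can descend; only the $\tau^{<j}$-comparison is. Finally, the whole d\'etour is unnecessary, and this is how the paper argues: since $\tau^{<j}\bb Z_p(j)^\sub{BL}=\tau^{<j}\bb Z_p(j)^\sub{syn}$, the needed statement is that $\bb Z_p(j)^\sub{syn}(R)/p^r\to \bb Z_p(j)^\sub{syn}(R[\tfrac1p])/p^r$ is an isomorphism in degrees $<j$, and the three-case argument in the proof of Corollary~\ref{corol_BM_corol} (trivial when $p$ is invertible; N\'eron--Popescu plus Theorem~\ref{theorem_syntomic_properties}(4) in characteristic $p$; Bhatt--Mathew, Theorem~\ref{theorem_syntomic_properties}(8), in the flat mixed-characteristic case) applies verbatim to henselian --- indeed arbitrary --- local ind-smooth rings, with no strict henselization and no descent. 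Your first two paragraphs already contain the rest of the paper's proof, so replacing your third paragraph by this observation repairs the argument.
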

\begin{proof}
The top right term of the diagram is supported in degrees $\le j$: indeed, either $R[\tfrac1p]=R$, or $R[\tfrac1p]=0$, or $R$ is a filtered colimit of rings $\roi_{X,x}^h$ as in Proposition \ref{prop_BL_Gersten_vanishing}(2).

Both vertical maps are thus the canonical map of a complex onto its top degree, and so the claim is that the natural map $\tau^{<j}\bb Z_p(j)^\sub{BL}(R)/p^r\to \tau^{<j}\bb Z_p(j)^\sub{BL}(R[\tfrac1p])/p^r$ is an equivalence. But $\tau^{<j}\bb Z_p(j)^\sub{BL}=\tau^{<j}\bb Z_p(j)^\sub{syn}$, so the equivalence holds by the same argument as in the proof of Corollary \ref{corol_BM_corol}.
\end{proof}

\section{Motivic cohomology over fields and Dedekind domains}\label{section_motivic_DD}
The goal of this section is to study $\bb A^1$-motivic cohomology and the zero slices of $1$ and $\KGL$, over fields and mixed characteristic Dedekind domains,\footnote{The main theorems of this section consequently hold over arbitrary regular Noetherian base schemes of dimension $\le 1$ (using N\'eron--Popescu to reduce the case of equal characteristic Dedekind domains to that of fields); however we find it more conceptual to formulate the results in terms of fields and mixed characteristic Dedekind domains, as these are the separate cases to treat.} by relating them to the Beilinson--Lichtenbaum cohomology of \S\ref{ss_BL}. We begin by overviewing the slightly convoluted logic of the section.

Firstly in \S\ref{ss_coherent_BL} we present a list of conditions under which the $\bb A^1$-invariant motivic cohomology with finite coefficients identifies with Beilinson--Lichtenbaum cohomology. These conditions have the advantage of being relatively ``soft'' in the sense that they mainly assert the vanishing of various motivic cohomology groups, and other $1$-categorical conditions; nevertheless, the resulting identification with Beilinson--Lichtenbaum cohomology, obtained in Proposition \ref{prop:graded} and Theorem \ref{thm:axiomatic_BL}, is multiplicative in the highly coherent sense, and is suitably unique.

Classical theorems of motivic homotopy theory show that smooth varieties over a perfect field $k$ satisfy the conditions. Furthermore, the resulting highly coherent Beilinson--Lichtenbaum equivalence allows us to describe $s^0(1_k)\in\SH(k)$ in terms of Beilinson--Lichtenbaum cohomology. By a pullback argument in Proposition \ref{prop_1_to_BL}, we then obtain a similar description of $s^0(1_B)$ for any mixed characteristic Dedekind domain $B$; this depends on knowing that Beilinson--Lichtenbaum cohomology behaves well under pullback in $\SH$, which is covered in \S\ref{ss_BL2}. Another pullback argument in Proposition \ref{prop:specz_slices}, this time using the motivic spectrum $\cal V$, implies that $s^0(1_{B}) \xrightarrow{\simeq} s^0(\KGL_{B})$. Altogether, this describes $s^0(\KGL_{B})$ in terms of Beilinson--Lichtenbaum cohomology.

The latter description is enough to show that smooth schemes over mixed characteristic Dedekind domains satisfy the original list of conditions, whence another application of Theorem \ref{thm:axiomatic_BL} yields multiplicative, highly coherent description of their $\bb A^1$-motivic cohomology in terms of Beilinson--Lichtenbaum cohomology; see Theorem \ref{theorem_BL1}. This description will be cdh-locally left Kan extended in \S\ref{sec:syn-comparison} and will be crucial to the proofs of the main theorems of the paper.


\subsection{Coherent Beilinson--Lichtenbaum equivalence}\label{ss_coherent_BL}
In this subsection we study the $\bb A^1$-motivic cohomology of ``motivically regular'' schemes. We present a list of conditions, inspired by the conjectures of Beilinson and Lichtenbaum, under which we can relate motivic cohomology to \'etale and syntomic cohomology.

For a prime number $p$, which will be fixed for this entire subsection, let $\mathfrak{BL}^p$ denote the full subcategory of qcqs schemes $X$ such that the following conditions hold for every $j,m\ge0$ and every \'etale $X$-scheme $U$:
\begin{enumerate}[(A)]
\item for every $x\in U$, the complex $\bb Z(j)^\bb A(\roi_{U,x}^\sub{h})/p^m$ is supported in cohomological degrees $\le j$;
\item for every $x\in U$ of residue characteristic $\neq p$, the complex $\bb Z(j)^{\bb A}(\roi_{U,x}^\sub{sh})/p^m$ is discrete;
\item if $p$ is invertible on $U$ then the canonical map $\bb Z(j)^{\bb A}/p^m\to L_\sub{Nis}\tau^{\le j}(L_\sub{\'et}\bb Z(j)^{\bb A}/p^m)$, restricted to $U_\sub{Nis}$, is an equivalence of presheaves on $U_\sub{Nis}$;
\item for every $x\in U$ of residue characteristic $p$, the map $\bb Z(j)^\bb A(\roi_{U,x}^\sub{h})/p^m\to \bb Z(j)^\bb A(\roi_{U,x}^\sub{h}[\tfrac1p])/p^m$ is an isomorphism on cohomology groups of degree $<j$;
\item for every $x\in U$ of residue characteristic $p$, the motivic first Chern class induces $\hat K_j^M(\roi_{U,x}^\sub{h})/p^m\isoto H^j_\bb A(\roi_{U,x}^h,\bb Z/p^m(j))$;\footnote{We explain this condition in more detail. For any local ring $A$ the first Chern class of Remark \ref{rem_A_1st_Chern_class} induces on $H^1$ a map $A^\times\to H^1_\bb A(A,\bb Z(1))$. Using the multiplicative structure on $\bb A^1$-invariant motivic cohomology, this then induces a map $(A^\times)^{\otimes j}\to H^j_\bb A(A,\bb Z(j))$ for each $j\ge1$, and passing modulo $p^m$ induces $(A^\times)^{\otimes j}/p^m\to H^j_\bb A(A,\bb Z/p^m(j))$. Condition (E) asks that, for each Nisnevich local ring $A$ of $U$, this latter map factors (necessarily uniquely) as $(A^\times)^{\otimes j}/p^m\onto \hat K_j^M(A)/p^m \isoto H^j_\bb A(A,\bb Z/p^m(j))$.}
\item for every $x\in U$ of residue characteristic $p$, the map $\bb Z_p(j)^\sub{syn}(\roi_{U,x}^\sub{h})/p^m\to \bb Z_p(j)^\sub{syn}(\roi_{U,x}^\sub{h}[\tfrac1p])/p^m$ is an isomorphism on cohomology groups of degree $<j$;
\item if $p$ is invertible on $U$ then the complex $\bb Z(j)^{\bb A}(U)/p^m$ is coconnective;
\item for every $x\in U$, the filtered spectrum $\Fil_\bb A^\star\KH(\roi_{U,x}^\sub{h})/p^m$ is complete (note: under condition (A), this is equivalent to asking that $\Fil_\bb A^j\KH(\roi_{U,x}^\sub{h})/p^m$ be $j$-connective for all $j\ge0$).
\end{enumerate}

That is, condition (A) states that $\bb Z(j)^\bb A/p^m$ is Nisnevich locally supported in degrees $\le j$ (as expected on sufficiently regular schemes), and so the map in (C) exists. Condition (B) ensures that the target of the map in (C) has a similar form to $L_\sub{Nis}\tau^{\le j}R\Gamma(-,\mu_{p^m}^{\otimes j})$, as this is how Beilinson--Lichtenbaum predict the motivic cohomology of sufficiently regular $\bb Z[\tfrac1p]$-schemes looks. Meanwhile, at residue characteristic $p$, condition (D) states that most of the motivic cohomology agrees with that of the generic fibre (similarly to Lichtenbaum--Quillen's conjecture for $K$-theory of sufficiently regular schemes), and (E) states that the missing data is given by Milnor $K$-theory through a Nesterenko--Suslin-style isomorphism. Condition (F) is needed to ensure that the syntomic cohomology also behaves as though the scheme is regular (c.f. Corollary \ref{corol_BM_corol}). The connectivity condition (G) is included for technical reasons: it would follow from the other conditions if it were known that the \'etale sheaf $L_\sub{\'et}\bb Z(j)^{\bb A}/p^m$ were Postnikov complete, and we may use it implicitly. Similarly, the completeness condition (H) will eventually be shown to hold whenever $\roi_{X,x}^\sub{h}$ has finite valuative dimension (see Corollary~\ref{cor:bdd-a1}), but before then it will technically enter the proofs.

An important property of the axioms is that they can be checked at the level of cohomology/homotopy groups. Furthermore, multiplicative structure only appears in a mild way, namely in axiom (E). Nevertheless, one of the main results of this section is Theorem \ref{thm:axiomatic_BL}, which upgrades these conditions to coherent, functorial, and multiplicative comparison maps between motivic cohomology and Beilinson--Lichtenbaum cohomology.

\begin{remark}
By Theorem \ref{theorem:SH_mot_coh}(2), given a point $x\in U$, the spectrum $\Fil^j_{\bb A} \KH(\roi_{U,x}^h)$ is the stalk at $x$ in the Nisnevich topology of the presheaf $\Fil^j_{\bb A} \KH$. Similarly, $\Fil^j_{\bb A} \KH(\roi_{U,x}^\sub{sh})$ is its stalk at $x$ in the \'etale topology, and analogous statements hold for the $\bb A^1$-motivic complexes $\bb Z(j)^{\bb A}$. We will frequently use these identifications without explicit mention in what follows.
\end{remark}

The class of schemes $\mathfrak{BL}^p$ enjoys the following nice closure properties:

\begin{lemma}\label{lemma_BL_properties}
Let $X$ be a qcqs scheme.
\begin{enumerate}
\item If $X$ belongs to $\mathfrak{BL}^p$ then any \'etale $X$-scheme also belongs to $\mathfrak{BL}^p$.
\item If $X$ admits a Nisnevich cover $X_1,\dots,X_n$, where each $X_i$ belongs to $\mathfrak{BL}^p$, then so does $X$.
\item If $X$ belongs to $\mathfrak{BL}^p$ then so do $\roi_{X,x}^h$ and $\roi_{X,x}^{sh}$ for every point $x\in X$.
\item The collection of qcqs schemes $\mathfrak{BL}^p$ is closed under cofiltered limits along smooth affine transition maps.
\end{enumerate}
\end{lemma}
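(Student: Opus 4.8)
\textbf{Proof strategy for Lemma \ref{lemma_BL_properties}.}

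The plan is to verify each of the four closure properties by inspecting the eight defining conditions (A)--(H) of $\mathfrak{BL}^p$ one at a time, exploiting the fact that all of these conditions are phrased in terms of \'etale $X$-schemes $U$ and their local rings, together with standard sheaf-theoretic and limit-interchange arguments. The crucial observation throughout is that conditions (A)--(H) are all stated for \emph{every} \'etale $X$-scheme $U$ and every point $x \in U$, so they are really conditions about the small \'etale (or Nisnevich) site of $X$ and its stalks; this makes properties (1) and (2) essentially formal.

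First I would prove (1): if $X \in \mathfrak{BL}^p$ and $Y \to X$ is \'etale, then any \'etale $Y$-scheme $U$ is also an \'etale $X$-scheme (composition of \'etale maps is \'etale), so each of conditions (A)--(H), being a condition ranging over all \'etale $X$-schemes, automatically holds when restricted to \'etale $Y$-schemes; hence $Y \in \mathfrak{BL}^p$. Next, for (2), given a Nisnevich cover $X_1, \dots, X_n$ of $X$ with each $X_i \in \mathfrak{BL}^p$, I note that every Nisnevich (in particular \'etale) local ring $\roi_{U,x}^h$ of an \'etale $X$-scheme $U$ at a point $x$ is isomorphic to the corresponding local ring of some $U \times_X X_i$ at a point lying over $x$ (since the $X_i$ cover $X$ on $k$-points, some $X_i$ has a point over $x$ with the same residue field, giving the same henselian, strict henselian, etc.\ local ring). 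Conditions (A), (B), (D), (E), (F), (H) are all stalk-local and hence inherited. Condition (G) (coconnectivity of $\bb Z(j)^\bb A(U)/p^m$ for $p$ invertible on $U$) follows because coconnectivity can be checked Nisnevich-locally: $\bb Z(j)^\bb A/p^m$ is a Nisnevich sheaf by Theorem~\ref{theorem:SH_mot_coh}(2), so its value on $U$ is a limit of its values on a Nisnevich hypercover by the $U \times_X X_i$, and a limit of uniformly coconnective objects is coconnective. Condition (C) is a statement that a map of Nisnevich presheaves on $U_\sub{Nis}$ is an equivalence, which may be checked on Nisnevich stalks, hence is again inherited.

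For (3), I would observe that $\roi_{X,x}^h$ (resp.\ $\roi_{X,x}^{sh}$) is a cofiltered limit of affine \'etale (resp.\ affine \'etale with connected, and then further refined) $X$-schemes along affine transition maps, so (3) is a special case of (4) once we know $\mathfrak{BL}^p$ is closed under such limits. Thus the real content is (4). Let $X = \lim_\lambda Y_\lambda$ be a cofiltered limit of qcqs schemes $Y_\lambda \in \mathfrak{BL}^p$ along smooth affine transition maps. Any \'etale $X$-scheme $U$ is, by standard limit arguments (Remark~\ref{remark_fp} and \cite[Tag 01ZM]{Stacks}), the base change of an \'etale $Y_{\lambda_0}$-scheme $U_{\lambda_0}$ for some $\lambda_0$, and for $\lambda \geq \lambda_0$ we set $U_\lambda := U_{\lambda_0} \times_{Y_{\lambda_0}} Y_\lambda$; then $U = \lim_\lambda U_\lambda$. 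For a point $x \in U$ we get a compatible system of points $x_\lambda \in U_\lambda$, and the henselian local ring $\roi_{U,x}^h$ is the filtered colimit $\colim_\lambda \roi_{U_\lambda, x_\lambda}^h$ (and similarly for strict henselisations, and for the generic fibres after inverting $p$). Now the point is that every cohomology theory appearing in (A)--(H) --- namely $\bb Z(j)^\bb A/p^m$, $\bb Z_p(j)^\sub{syn}/p^m$, $\Fil^\star_\bb A\KH/p^m$, and improved Milnor $K$-theory --- is finitary: for $\bb Z(j)^\bb A/p^m$ and $\Fil^\star_\bb A\KH/p^m$ this is the last clause of Theorem~\ref{theorem:SH_mot_coh}(2) (they commute with filtered colimits along smooth affine transition maps, and \'etale base change followed by localisation is such), for syntomic cohomology mod $p^m$ it is Proposition~\ref{prop:syn}(2), and improved Milnor $K$-theory is manifestly finitary. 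Since all the conditions (A)--(H) are statements about vanishing/discreteness/isomorphy of cohomology groups, or equivalences of (finitary) sheaves checkable on stalks, or completeness of a filtration whose graded pieces are finitary, each such statement for $U$ over $X$ is the filtered colimit of the corresponding statements for $U_\lambda$ over $Y_\lambda$; since each $Y_\lambda \in \mathfrak{BL}^p$, the colimit statement holds, so $X \in \mathfrak{BL}^p$.

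The main obstacle will be the bookkeeping in (4) around condition (H): completeness of $\Fil^\star_\bb A\KH(\roi_{U,x}^h)/p^m$ is a statement about a limit $\lim_j$, so interchanging it with the filtered colimit $\colim_\lambda$ is not automatic. However, as the parenthetical in the definition of (H) records, under (A) completeness is equivalent to the \emph{connectivity} bound that $\Fil^j_\bb A\KH(\roi_{U,x}^h)/p^m$ is $j$-connective for all $j \geq 0$, and this connectivity bound passes through filtered colimits (a filtered colimit of $j$-connective spectra is $j$-connective); so the reformulation sidesteps the $\lim$--$\colim$ issue. A secondary, purely technical, point is to check that the transition maps in the colimit presentation of $\roi_{U,x}^h$ (and of $\roi_{U,x}^h[\tfrac1p]$, $\roi_{U,x}^{sh}$) are genuinely of the relevant ``smooth affine'' flavour so that finitariness as stated in Theorem~\ref{theorem:SH_mot_coh}(2) applies --- this holds because \'etale maps are in particular smooth, localisations are filtered colimits of open immersions (hence smooth affine after restricting to affines), and henselisation/strict henselisation are filtered colimits of \'etale maps.
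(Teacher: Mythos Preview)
Your approach is essentially the same as the paper's: (1) and (2) are formal, (3) follows from (1) and (4), and (4) proceeds by finitariness of all the cohomology theories, with the reformulation of (H) as a connectivity bound to avoid the $\lim$--$\colim$ issue.

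There is one gap in your treatment of (4), specifically for condition (C). You assert that both sides of the map in (C) are finitary, but the right-hand side $L_\sub{Nis}\tau^{\le j}(L_\sub{\'et}\bb Z(j)^{\bb A}/p^m)$ involves \'etale sheafification, which does \emph{not} in general preserve finitariness (it is not computed by a finite limit). The paper addresses this by invoking condition (G): since each $Y_\lambda\in\mathfrak{BL}^p$, the presheaf $\bb Z(j)^{\bb A}/p^m$ takes coconnective values on the relevant schemes, and \'etale sheafification of a coconnective presheaf is still finitary (as in Remark~\ref{lem:filtr-coconn}, a filtered colimit of coconnective \'etale sheaves is again an \'etale sheaf, so the totalisation computing the sheafification commutes with the colimit). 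Without this observation your argument for (C) under cofiltered limits is incomplete.
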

\begin{proof}
Parts (1) and (2) are clear. For part (4), recall from Theorem \ref{theorem:SH_mot_coh}(2) and Proposition \ref{prop:syn}(2) that $\Fil^j_{\bb A} \KH/p^m$, $\bb Z(j)^{\bb A}/p^m$, and $\bb Z_p(j)^\sub{syn}/p^m$ commute with cofiltered limits along smooth affine transition maps, and that improved Milnor $K$-theory commutes with filtered colimits of local rings \cite{Kerz2010}; therefore conditions (A), (B), (D), (E), and (F) behave well under filtered colimits. For (H) we use the parenthetical observation: in light of (A), the completeness is equivalent to the seemingly stronger condition that $\Fil_\bb A^j\KH(\roi_{U,x}^\sub{h})/p^m$ is $j$-connective for all $j$; that condition behaves well under filtered colimits. For (C) the key observation is that, since $\bb Z(j)^{\bb A}/p^m$ takes coconnective values on $\mathfrak{BL}^p$ by (G), its \'etale sheafification $L_\sub{\'et}\bb Z(j)^{\bb A}/p^m$ is still finitary on $\mathfrak{BL}^p$; so both sides of the map in (C) are finitary on $\mathfrak{BL}^p$, as required to complete the proof of (4). Part (3) now follows from (1) and (4).
\end{proof}

The fact that smooth varieties belong to $\mathfrak{BL}^p$ is the content of the Beilinson--Lichtenbaum conjecture:

\begin{theorem}[Geisser, Levine, Suslin, Voevodsky]\label{thm:classical_BL_equiv}
Any smooth variety over any field belongs to $\mathfrak{BL}^p$.
\end{theorem}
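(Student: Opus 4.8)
The plan is to verify conditions (A)--(H) for a smooth variety $X$ over a field $k$, one prime $p$ at a time, by reducing to the classical statements about motivic cohomology and $K$-theory of smooth varieties which were collected and quoted in \S\ref{ss:dep_on_sh}. By Lemma~\ref{lemma_BL_properties}(1)--(2) and Remark~\ref{rem:functoriality} it suffices to check the conditions Nisnevich-locally, so we may replace $X$ by the henselian (or strictly henselian) local ring $R$ of a point, which is ind-smooth over $k$; by a N\'eron--Popescu/filtered colimit argument (Lemma~\ref{lemma_BL_properties}(4), together with the finitariness statements of Theorem~\ref{theorem:SH_mot_coh}(2) and Proposition~\ref{prop:syn}(2)) we may even assume $k$ is perfect, indeed a prime field. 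The crucial identification underlying everything is Levine's theorem, recalled in Remark~\ref{rem_Levine}, that $s^0(1_k)\simeq s^0(\kgl_k)$ in $\SH(k)$, which together with Spitzweck's/Levine's comparison to Bloch's cycle complex (Theorem~\ref{thm:main-intro}(8), Remark~\ref{rem:Spitzweck}) gives $\bb Z(j)^\bb A(Y)\simeq z^j(Y,\bullet)[-2j]$ for smooth $k$-schemes $Y$. Thus all eight axioms translate into classical statements about higher Chow groups, \'etale cohomology, and syntomic cohomology of ind-smooth local $k$-algebras.

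Concretely: condition (A) (support of $\bb Z(j)^\bb A/p^m$ in degrees $\le j$ on henselian local rings) follows from the Geisser--Levine computation \cite{GeisserLevine2000} in characteristic $p$ and from Rost--Voevodsky \cite{voevodsky-z2,voevodsky-zl} (i.e.\ the Beilinson--Lichtenbaum isomorphism with the coniveau/Gersten resolution) when $p$ is invertible in $k$; the vanishing of motivic cohomology of a henselian local ring above degree $j$ is exactly the Gersten-type acyclicity here. Condition (B) (discreteness of $\bb Z(j)^\bb A(R^{sh})/p^m$ at residue characteristic $\ne p$) is again Rost--Voevodsky: \'etale-locally, motivic cohomology mod $p$ with $p$ invertible is $\tau^{\le j}R\Gamma_\sub{\'et}(-,\mu_{p^m}^{\otimes j})$, and on a strictly henselian ring the \'etale cohomology of $\mu_{p^m}^{\otimes j}$ is concentrated in degree $0$, so the truncated complex is discrete. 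Condition (C) (the Beilinson--Lichtenbaum equivalence $\bb Z(j)^\bb A/p^m\simeq L_\sub{Nis}\tau^{\le j}L_\sub{\'et}\bb Z(j)^\bb A/p^m$ over $\bb Z[\tfrac1p]$-schemes) is the content of the Rost--Voevodsky theorem in its sheaf-theoretic form (see \cite{HaesemeyerWeibel2019}), using that the \'etale sheafification of mod-$p$ motivic cohomology is $R\Gamma_\sub{\'et}(-,\mu_{p^m}^{\otimes j})$. Conditions (D) and (E) at residue characteristic $p$: here $R$ is ind-smooth over $\bb F_p$, so $\bb Z(j)^\bb A(R)/p^m\simeq R\Gamma_\sub{Nis}(R,W_m\Omega^j_{\log})[-j]$ by Geisser--Levine \cite{GeisserLevine2000} (Theorem~\ref{thm:main-intro}(5)); this is concentrated in degree $j$, so (D) holds vacuously (both sides of the comparison vanish below degree $j$ — note $R[\tfrac1p]=R$), and (E) is the Bloch--Kato--Gabber--Geisser--Levine identification $\hat K_j^M(R)/p^m\isoto H^0_\sub{Nis}(R,W_m\Omega^j_{\log})$, i.e.\ the dlog symbol is an isomorphism on (improved Milnor $K$-theory of) local rings. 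Condition (F) is the analogous statement for syntomic cohomology, which in characteristic $p$ is $R\Gamma_\sub{Nis}(R,W_m\Omega^j_{\log})[-j]$ by Example~\ref{example_syn_in_char_p}/Theorem~\ref{theorem_syntomic_properties}(4), again concentrated in degree $j$. Condition (G) (coconnectivity of $\bb Z(j)^\bb A(U)/p^m$ when $p$ is invertible on $U$) follows from (C) since $\tau^{\le j}$ of anything is coconnective (bounded above), noting finitariness to pass from henselian stalks to general $U$. Condition (H) (completeness of $\Fil_\bb A^\star\KH(R)/p^m$): since $R$ is a filtered colimit of regular local $k$-algebras, $\KH=\K$ on it, and the motivic/slice filtration on $K$-theory of a regular local ring has graded pieces $\bb Z(j)^\bb A(R)[2j]$ which by (A) are $\le j$-coconnective hence the filtration step $\Fil^j$ is $\ge j$-connective; alternatively cite Lemma~\ref{lemm:convergence-bootstrap} after reducing to finite-dimensional pieces, or use the finiteness of the coniveau filtration.

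The main obstacle is organizational rather than mathematical: the classical references (Geisser--Levine, Rost--Voevodsky, Levine's book, Spitzweck) state their results at the level of homotopy categories or with the cycle complex, so one must be careful to extract exactly the $1$-categorical statements (A)--(H) — which is precisely why the axioms were formulated to be checkable on cohomology groups. The one place genuine care is needed is condition (E), where multiplicativity enters: one must know that the first Chern class map of Remark~\ref{rem_A_1st_Chern_class}, built from the orientation of $\KGL$, induces on $\bb A^1$-motivic cohomology in top degree the same symbol map as the classical dlog/Nesterenko--Suslin symbol. For this I would invoke the compatibility of the motivic first Chern class with the classical one (the cycle class of a divisor), which is part of the comparison with Bloch's complex, together with the already-established fact (Geisser--Levine) that the classical symbol $\hat K_j^M(R)/p^m\to H^j_\sub{mot}(R,\bb Z/p^m(j))$ is an isomorphism for local rings; at residue characteristic $p$ this is \cite{GeisserLevine2000}, and at residue characteristic $\ne p$ it reduces to the Bloch--Kato conjecture for the residue field via rigidity, exactly as in the proof of Theorem~\ref{thm_syntomic_Milnor}. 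Once (A)--(H) are in hand, Theorem~\ref{thm:axiomatic_BL} does the rest, though here we only need the membership $X\in\mathfrak{BL}^p$ itself.
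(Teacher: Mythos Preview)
Your overall approach matches the paper's: reduce each axiom to a classical statement about Bloch's cycle complex via Levine's identification $s^0(1_k)\simeq s^0(\kgl_k)$ and the resulting equivalence $\bb Z(j)^\bb A\simeq z^j(-,\bullet)[-2j]$ on $\Sm_k$. Two small slips first: in your treatment of (D) you write ``$R[\tfrac1p]=R$'' when $R$ is ind-smooth over $\bb F_p$, but in fact $R[\tfrac1p]=0$ (your conclusion survives); and in (G) you gloss ``coconnective'' as ``bounded above'', whereas here it means supported in cohomological degrees $\ge 0$ --- the axiom still follows from the identification with $L_\sub{Nis}\tau^{\le j}R\Gamma_\sub{\'et}(-,\mu_p^{\otimes j})$, but because \'etale cohomology has no negative-degree classes, not because of the truncation.

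The substantive gap is in (E). You propose to invoke ``the compatibility of the motivic first Chern class with the classical one \dots\ which is part of the comparison with Bloch's complex''. But Levine's comparison is established only as an equivalence of objects in the homotopy category of presheaves on $X_\sub{\'et}$, with no multiplicative or Chern-class compatibility whatsoever; the paper is explicit that it uses Bloch's complex \emph{only} for unstructured vanishing statements (Remark~\ref{rem:cohere}). Your proposed route would require separately proving that Levine's equivalence intertwines $c_1^\bb A$ with the cycle class of a divisor, which is not in the literature you cite and is precisely the kind of coherence the paper is designed to avoid assuming. The paper instead verifies (E) entirely through $K$-theory: using that the graded pieces are concentrated in a single degree (Geisser--Levine), the edge maps collapse to give $H^j_\bb A(\roi_{U,x}^h,\bb Z/p^m(j))\cong \K_j(\roi_{U,x}^h;\bb Z/p^m)$, and then one quotes the Geisser--Levine symbol isomorphism $\hat K^M_j/p^m\isoto \K_j(-;\bb Z/p^m)$. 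The identification of this composite with the map induced by $c_1^\bb A$ comes from diagram~\eqref{eq:c1-det2}, which relates $c_1^\bb A$ to the determinant map --- all internal to the paper's own constructions, never touching the cycle-complex comparison.
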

\begin{proof}
%
The Nisnevich sheaf of spectra $\Fil^j_\bb A\KH$ is $j$-connective for the Nisnevich Postnikov $t$-structure by Voevodsky's connectedness conjecture \cite[Proposition 5.5]{Voevodsky2002}, proved originally by Levine \cite[\S7.5]{Levine2008}. Alternatively, see Remark \ref{remark_completeness_of_fil}(1) where we observed that this follows from considerations about very effective motivic spectra and stable $\bb A^1$-connectivity. This proves both conditions (A) and (H). The remainder of the proof consists of recalling results in the theory of motivic cohomology of smooth algebraic varieties, many of which are formulated in terms of Bloch's cycle complex.

Firstly, Levine \cite[\S6.5]{Levine2008} constructed Nisnevich sheaves of complexes $\cal Z^j:\Sm^\sub{op}_k\to \mathrm{D}(\bb Z)$, whose restriction to the wide subcategory of smooth morphisms agrees in the homotopy category of presheaves with Bloch's cycle complex $z^j(-,\bullet)$ \cite[\S6.5]{Levine2008}. He assembled them into the ``Bloch motivic cohomology'' spectrum $\cal Z\in\SH(k)$, characterised by the fact that $\omega^{\infty,\sub{gr}}(\cal Z)^j=\cal Z^j$ for $i\in\bb Z$ with bonding maps induced by the localisation sequence for Bloch's cycle complex. The motivic spectrum $\cal Z$ is equivalent to Voevodsky's motivic spectrum built from correspondences \cite[Example 8.2.2(3)]{Levine2008}.

Levine then shows that the canonical map $1_k\to\cal Z$ induces an equivalence $s^0(1_k)\quis\cal Z$ in $\SH(k)$ \cite[\S10.4]{Levine2008}. He then establishes an equivalence $s^0(\KGL_k)\simeq s^0(1_k)$ \cite[Theorem.~10.5.1]{Levine2008}; for a different proof of the latter equivalence, we refer to work of the first two authors \cite[\S4]{BachmannElmanto}.

In particular, for any fixed smooth variety $X$ and $j\ge0$, we deduce that there exists an equivalence \begin{equation}\bb Z(j)^{\bb A}\simeq z^j(-,\bullet)[-2j]\label{eqn_mot_is_bloch}\end{equation} of presheaves on the \'etale site $X_\sub{\'et}$. Note that, since we are restricting to $X_\sub{\'et}$, we are only claiming functoriality with respect to \'etale maps and so do not need to worry about any choice or naturality of the object $\cal Z^j$. We moreover do not assert any compatibilities of the equivalences (\ref{eqn_mot_is_bloch}) as $j$ varies, either in terms of bonding maps or multiplicative structure. Using this soft description of $\bb A^1$-motivic cohomology as Bloch's cycle complex, we may now complete the proof by quoting the main theorems about the latter.

Suppose first that $p$ is invertible in the base field (whence (D), (E), and (F) are vacuous). Then by a theorem of Suslin--Voevodsky \cite{SuslinVoevodsky2000} (see Geisser--Levine \cite[Theorem~1.6]{GeisserLevine2001} for the exact statement we need in terms of Bloch's cycle complex), the Bloch--Kato conjecture implies that $z^j(-,\bullet)[-2j]/p\simeq L_\sub{Zar}\tau^{\le j}R\Gamma_\sub{\'et}(-,\mu_p^{\otimes j})$, as Zariski presheaves of complexes on the \'etale site of $X$. We may replace $L_\sub{Zar}$ by $L_\sub{Nis}$ as in Remark \ref{rem:nisnevich-zariski}, and so combined with (\ref{eqn_mot_is_bloch}) we have an equivalence of Nisnevich sheaves $\bb Z(j)^{\bb A}/p\simeq L_\sub{Nis}\tau^{\le j}R\Gamma_\sub{\'et}(-,\mu_p^{\otimes j})$ on $X_\sub{\'et}$. Again, we assert no canonicity/multiplicativity/etc of this equivalence; nevertheless, properties (B), (C), and (G) now clearly follow.

In the rest of the proof we treat the case where the base field has characteristic $p$; then (B) and (C) are vacuous, and (F) follows from Example \ref{example_syn_in_char_p}. The remaining necessary results are due to Geisser--Levine \cite{GeisserLevine2000} which we review in this paragraph. Firstly, \cite[Thm.~8.3]{GeisserLevine2000} states that $z^j(-,\bullet)[-2j]/p^m$ is Zariski locally supported in cohomological degree $j$, whence we obtain the same for $\bb Z(j)^\bb A/p^m$ from (\ref{eqn_mot_is_bloch}). Secondly, from \cite[Theorem~8.3]{GeisserLevine2000} and a Gersten complex argument (using Kerz for Milnor $K$-theory \cite{Kerz2009}) one knows that, for any regular local $\bb F_p$-algebra $A$, the symbol map $\hat K_j^M(A)/p^m\to K_j(A;\bb Z/p^m)$ is an isomorphism; see \cite[Theorem~5.1]{Morrow_pro_GL2} for details.

In particular, for each Nisnevich local ring $\roi_{U,x}^h$ of any smooth variety $U$, this show that $\bb Z(j)^{\bb A}(\roi_{U,x}^\sub{h})/p^m$ is supported in degree $j$ (which verifies conditions (A) and (D); it also shows that that $\Fil^0_\bb A\KH(\roi_{U,x}^h)/\Fil^j_\bb A\KH(\roi_{U,x}^h)$ is supported in homological degrees $[0,\dots,j-1]$, which we will need in a moment), and that the symbol map $\hat K_j^M(\roi_{U,x}^h)/p^m\to K_j(\roi_{U,x}^h,\bb Z/p^m)$ is an isomorphism. We may now check the remaining condition (E). We consider the series of isomorphisms
\begin{equation}
H^j_\bb{A}(\roi_{U,x}^h,\bb Z/p^m(j))\stackrel\cong\longleftarrow \pi_{j}(\Fil_\bb A^j \KH(\roi_{U,x}^h)/p^m)\stackrel{\cong}{\To} \KH_j(\roi_{U,x}^h;\bb Z/p^m) \stackrel{\cong}{\longleftarrow} \K_j(\roi_{U,x}^h;\bb Z/p^m) \stackrel{\sub{sym}\cong }{\longleftarrow}K_j^M(\roi_{U,x}^h)/{p^m}
\label{eqn:BLp}
\end{equation}
The first isomorphism holds because $\Fil^{j+1}_\bb A\KH(\roi_{U,x}^h)$ has already been shown to be $j+1$-connective; the second isomorphism holds because it has also been noted that $\Fil^0_\bb A\KH(\roi_{U,x}^h)/\Fil^j_\bb A\KH(\roi_{U,x}^h)$ is supported in homological degrees $<j$. The composition of the maps (\ref{eqn:BLp}) thus defines isomorphisms $K_j^M(\roi_{U,x}^h)/{p^m}\isoto H^j_\bb{A}(\roi_{U,x}^h,\bb Z/p^m(j))$ for $j\ge0$. As $j$ varies these isomorphisms are compatible with multiplication (possibly after increasing $m$, as in the proof of Lemma \ref{lemma_Gabber_Suslin_etc} below, all the maps in (\ref{eqn:BLp}) are multiplicative; alternatively work with $p$-complete theories as in the proof of Proposition \ref{prop:graded} below), and when $j=1$ it is easily seen using the diagram (\ref{eq:c1-det}) that the isomorphism agrees with the first Chern class $c_1^\bb A:(\roi_{U,x}^h)^{\times}/p^m\to H^1_\bb A(\roi_{U,x}^h,\bb Z/p^m(1))$. So the isomorphisms are indeed those induced by multiplicativity via the first Chern class, thus completing the proof of condition (E).
\end{proof}

\begin{remark}\label{rem:cohere} As emphasized throughout the proof of Theorem~\ref{thm:classical_BL_equiv}, we only deploy Bloch's cycle complex to verify certain unstructured properties of $\bb Z(j)^{\bb A}$. In particular, it is not even used to verify property (E) which is the only one that uses the product structure on $\bb A^1$-motivic cohomology to formulate. The forthcoming generalisation of Theorem~~\ref{thm:classical_BL_equiv}, namely Theorem~\ref{theorem_BL1}(2), will follow a similar pattern in that we will first establish an unstructured comparison between $\bb A^1$-motivic cohomology with Beilinson--Lichtenbaum cohomology in order to deduce properties (A)-(H). 
\end{remark}

In the case of $\bb Z[\tfrac1p]$-schemes in the class $\frak{BL}^p$, we relate motivic cohomology to \'etale cohomology by passing through algebraic $K$-theory as follows. It is well-known since Gabber \cite{Gabber1992}, Suslin \cite{suslin1983}, and Thomason \cite{Thomason1985} that, \'etale locally on $\bb Z[\tfrac1p]$-schemes, the mod-$p^m$ $K$-groups $\K_n(-;\bb Z/p^m \bb Z$) are given by $\mu_{p^m}^{\otimes j}$, if $n=2j>0$, and vanish if $n>0$ is odd. The purpose of the next lemma is to fix this identification, paying attention to the ``mess'' (to quote Thomason) caused by the fact that the mod-$p$ $K$-theory of a ring is not necessarily a homotopy commutative and associative ring spectrum. 

Firstly, given a separably closed field $k$ of characteristic $\neq p$, the isomorphism $k^\times\isoto \K_1(k)$ and symbolic generation of $\K_2(k)$ induce an isomorphism $H^0_\sub{\'et}(k,\bb Z_p(1))=T_pk^\times\isoto \K_2(k;\bb Z_p)$. Since $\K(k;\bb Z_p)$ is an $\bb E_\infty$-algebra, its homotopy groups form a graded ring and so the latter isomorphism induces a map of graded rings $H^0_\sub{\'et}(k,\bb Z_p(\star))\to \K_{2\star}(k;\bb Z_p)$; this is an isomorphism by Suslin. Moreover, the $K$-groups $\K_{n}(k;\bb Z_p)$ vanish if $n$ is odd.

Similarly, but now working with mod-$p^m$ coefficients on the \'etale site of qcqs $\bb Z[\tfrac1p]$-schemes, the isomorphism $\bb G_m\isoto a_\sub{\'et}\K_1$ and symbolic generation of $a_\sub{\'et}\K_2$ (this isomorphism and symbolic generation even hold Zariski locally) induce an isomorphism of \'etale sheaves of abelian groups $\mu_{p^m}\isoto a_\sub{\'et}\pi_2(\K/p^m)$. This extends in a unique way to higher $K$-groups in a manner compatible with the isomorphism of the previous paragraph:

\begin{lemma}\label{lemma_Gabber_Suslin_etc}
On the \'etale site of qcqs $\bb Z[\tfrac1p]$-schemes, and for any $j\ge0$ and $m\ge1$, there is a unique map of abelian \'etale sheaves $\lambda_j:\mu_{p^m}^{\otimes j}\to a_\sub{\'et}\pi_{2j}(\K/p^m)$ such that, for any separably closed field $k$ of characteristic $\neq p$, the following diagram commutes:
\[\xymatrix{
\mu_{p^m}^{\otimes j}(k)\ar[r]^-{\lambda_j} & \K_{2j}(k;\bb Z/p^m)\\
H^0_\sub{\'et}(k,\bb Z_p(j))\ar[u]\ar[r]_{\sub{Suslin}}^{\cong} & \K_{2j}(k;\bb Z_p)\ar[u]
}\]
Furthermore, $\lambda_j$ is an isomorphism for each $j\ge0$, and $a_\sub{\'et}\pi_{n}(\K/p^m)=0$ for $n>0$ odd.
\end{lemma}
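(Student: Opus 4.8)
The plan is to construct $\lambda_j$ by descent from the field-level Suslin isomorphism, using the standard ``sheafify a map defined on stalks'' technique but being careful about the coherence issue that $\K/p^m$ is not a priori an $\bb E_\infty$- or even homotopy-commutative ring spectrum. First I would record the base case $j\le 1$: for $j=0$ the sheaf $a_\sub{\'et}\pi_0(\K/p^m)$ is $\bb Z/p^m$ (locally, a ring is connected) and $\lambda_0$ is the identity; for $j=1$ the Kummer isomorphism $\bb G_m/p^m\isoto \mu_{p^m}$ together with the determinant/unit splitting $\bb G_m\isoto a_\sub{\'et}\pi_1\K$ and symbolic generation of $a_\sub{\'et}\pi_2(\K/p^m)$ give the isomorphism $\lambda_1:\mu_{p^m}\isoto a_\sub{\'et}\pi_2(\K/p^m)$, compatible on separably closed fields with Suslin's isomorphism by construction (this is exactly the classical Gabber--Suslin--Thomason computation). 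For the vanishing of $a_\sub{\'et}\pi_n(\K/p^m)$ for odd $n>0$ and the fact that $a_\sub{\'et}\pi_{2j}(\K/p^m)$ is ``generated in degree $2$'' I would simply quote Gabber rigidity: \'etale locally on a $\bb Z[\tfrac1p]$-scheme, $\K/p^m$ agrees with the \'etale $K$-theory of a separably closed field, hence with a shifted mod-$p^m$ Moore spectrum smashed with the appropriate Bott-periodic object, so $a_\sub{\'et}\pi_*(\K/p^m)\cong \mu_{p^m}^{\otimes *}$ concentrated in even degrees. Concretely: by rigidity the \'etale sheafification of $\pi_n(\K/p^m)$ is computed on strictly henselian local rings $A$, where $\K_n(A;\bb Z/p^m)\cong \K_n(k;\bb Z/p^m)$ for $k$ the (separably closed) residue field, and Suslin's theorem gives the answer there.

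The substantive point is then to build $\lambda_j$ for $j\ge 2$ and prove it is an isomorphism. I would do this by exhibiting $a_\sub{\'et}\pi_{2j}(\K/p^m)$ as a tensor power of $a_\sub{\'et}\pi_2(\K/p^m)$ via Bott periodicity. The cleanest route: let $\beta\in \pi_2(\K/p^m)(k)$ (for $k$ separably closed containing $\mu_{p^m}$, then globalize) be the Bott element; multiplication by $\beta$, which does make sense on homotopy groups since $\K/p^m$ is a module over $\K$, induces maps $\pi_n(\K/p^m)\to \pi_{n+2}(\K/p^m)$, and by rigidity plus Suslin's theorem the \'etale-sheafified such map $a_\sub{\'et}\pi_n(\K/p^m)\otimes \mu_{p^m}\to a_\sub{\'et}\pi_{n+2}(\K/p^m)$ is an isomorphism for $n\ge 0$. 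Iterating from $n=0,1$ gives isomorphisms $a_\sub{\'et}\pi_{2j}(\K/p^m)\cong \mu_{p^m}^{\otimes j}$ and $a_\sub{\'et}\pi_{2j+1}(\K/p^m)\cong 0$ for all $j\ge 0$; I would then \emph{define} $\lambda_j$ to be this iterated isomorphism and check the compatibility diagram with Suslin. For uniqueness: any two maps $\mu_{p^m}^{\otimes j}\to a_\sub{\'et}\pi_{2j}(\K/p^m)$ of \'etale sheaves agreeing on separably closed fields of characteristic $\neq p$ are equal, since an \'etale sheaf of sets (or abelian groups) on $\Sch^\sub{qcqs}_{\bb Z[1/p]}$ is determined by its stalks, which are sections over strict henselizations, and those are filtered colimits reducing (by finitariness of $K$-theory and rigidity) to separably closed fields; so the stated compatibility with Suslin's isomorphism pins $\lambda_j$ down.

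The main obstacle I expect is precisely the coherence/multiplicativity bookkeeping: the Bott element $\beta$ is only canonically available after choosing a primitive $p^m$-th root of unity, i.e.\ it lives in $\mu_{p^m}\otimes(\text{something})$ rather than in a ring, so ``multiplication by $\beta$'' must be phrased as the $\K$-module action map $\pi_n(\K/p^m)\otimes \pi_2(\K/p^m)\to \pi_{n+2}(\K/p^m)$ evaluated against the Kummer-theoretic generator of $a_\sub{\'et}\pi_2(\K/p^m)\cong\mu_{p^m}$ — which is legitimate because $\K$ itself is an honest $\bb E_\infty$-ring and $\K/p^m$ is an honest $\K$-module, so no homotopy-commutativity of $\K/p^m$ is needed. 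I would emphasize in the write-up that this is why the statement is phrased on homotopy sheaves rather than on spectra, and that the ``mess'' is confined to the fact that $\lambda_j$ for varying $j$ need not be induced by a ring map at the spectrum level — only the isomorphisms on sheaves of homotopy groups are asserted, and these are rigid enough to be unique. A secondary small obstacle is checking that the iterated-Bott definition of $\lambda_j$ matches Suslin's \emph{multiplicative} isomorphism $H^0_\sub{\'et}(k,\bb Z_p(\star))\to \K_{2\star}(k;\bb Z_p)$ on separably closed fields; this is immediate once one recalls that Suslin's isomorphism is itself constructed by Bott-periodicity from the case $\star=1$, so the two recipes visibly agree.
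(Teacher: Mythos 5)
Your overall architecture (construct the map locally, deduce it is an isomorphism by Gabber rigidity plus Suslin, then pin it down by stalkwise uniqueness) matches the intended argument, and your uniqueness and odd-vanishing steps are fine. But the central construction step has a genuine gap. You claim the pairing $\pi_n(\K/p^m)\otimes\pi_2(\K/p^m)\to\pi_{n+2}(\K/p^m)$ "does make sense \ldots since $\K/p^m$ is a module over $\K$, so no homotopy-commutativity of $\K/p^m$ is needed." That justification is wrong: the $\K$-module structure only lets you multiply classes in $\pi_*(\K/p^m)$ by classes in $\pi_*(\K)$, and the Bott element $\beta$ lives in $\pi_2(\K/p^m)$, not in $\pi_2(\K)$. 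To multiply two classes of $\pi_*(\K/p^m)$ you need a multiplication $\K/p^m\otimes\K/p^m\to\K/p^m$ (equivalently, a pairing of Moore spectra $\bb S/p^m\otimes\bb S/p^m\to\bb S/p^m$, possibly relative to $\K$), and this genuinely fails in the cases the lemma is designed to handle: for $p=2$, $m=1$ the mod-$2$ Moore spectrum admits no unital multiplication at all (multiplication by $2$ on $\K/2$ is controlled by $\eta\mapsto[-1]\in K_1$ and is not null), and there are similar obstructions for small $m$ when $p=2,3$. So the iterated-Bott map you propose to \emph{define} $\lambda_j$ with does not exist at the exponent $m$ in general, and the same issue undermines your compatibility check with Suslin's multiplicative isomorphism.

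The paper's proof confronts exactly this point by choosing an auxiliary exponent $p^{m'}\ge p^m$ (any $m'$ for $p>3$, $m'\ge2$ for $p=3$, $m'\ge4$ for $p=2$, by Oka) for which $\K/p^{m'}$ is a homotopy commutative and associative ring compatibly with the $\bb E_\infty$-structure on $\K_p^\wedge$; it builds $\lambda'_j:\mu_{p^{m'}}^{\otimes j}\to a_\et\pi_{2j}(\K/p^{m'})$ multiplicatively there, checks compatibility with Suslin using that the reduction maps from $p$-adic to mod-$p^{m'}$ $K$-groups of a separably closed field are surjective (odd vanishing), deduces $\lambda'_j$ is an isomorphism of sheaves by rigidity, and only then reduces mod $p^m$ to get $\lambda_j$. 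If you want to stay closer to your outline, a repair is to multiply instead by a $p$-adic Bott element in $\pi_2(\K_p^\wedge(k))\cong\bb Z_p(1)$, using that $\K/p^m$ \emph{is} an honest module over the $\bb E_\infty$-ring $\K_p^\wedge$; but then the sheaf-level map out of $\mu_{p^m}$ must be produced by lifting stalkwise to the Tate module and checking well-definedness, so either way an extra ingredient beyond the bare $\K$-module structure is required.
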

\begin{proof}
Pick an auxiliary power $p^{m'}\ge p^m$ of $p$ such that the mod-$p^{m'}$ $K$-theory of any scheme naturally carries the structure of a homotopy commutative and associative ring spectrum compatible with the $\bb E_\infty$-structure on the $p$-adic $K$-theory. Indeed, it is known (see \cite{Oka1984}) that if $p>3$ then any $m'\ge 1$ is already fine; if $p=3$ then we need $m'\ge 2$; if $p=2$ then any $m'\ge4$ does the trick. Then, as in the paragraph before the lemma, we again have an isomorphism of \'etale sheaves of abelian groups $\mu_{p^m}\isoto a_\sub{\'et}\pi_2(\K/p^m)$, which by multiplicativity does now induce maps $\lambda_j':\mu_{p^m}^{\otimes j}\to a_\sub{\'et}\pi_{2j}(\K/p^m)$ of abelian \'etale sheaves for each $j\ge0$. In the case of a separably closed field $k$, these maps fit by construction into commutative diagrams
\[\xymatrix{
\mu_{p^m}^{\otimes j}(k)\ar[r]^-{\lambda'_j} & \K_{2j}(k;\bb Z/p^m)\\
H^0_\sub{\'et}(k,\bb Z_p(j))\ar[u]\ar[r]_{\sub{Suslin}}^{\cong} & \K_{2j}(k;\bb Z_p)\ar[u]
}\]
Since Suslin's map is an isomorphism and the odd $p$-adic $K$-groups of $k$ vanish, the right vertical arrow is going-mod-$p^m$; but the same is true of the left vertical arrow, so we deduce that $\lambda'_j$ is an isomorphism in the case of the field $k$. By rigidity of both sides (Gabber in the case of $K$-theory \cite{Gabber1992}), it is therefore also an isomorphism for any strictly henselian local ring, and so the map of abelian \'etale sheaves $\lambda'_j$ is in fact an isomorphism. Going mod $p^m$ now defines the desired isomorphism of abelian sheaves $\lambda_j$ as in the statement of the lemma.

For uniqueness, suppose we were given another map of abelian sheaves $\kappa_j:\mu_{p^m}^{\otimes j}\to a_\sub{\'et}\pi_{2j}(\K/p^m)$ making the diagram commute. Then the composition $\lambda_j^{-1}\kappa_j$ would be an endomorphism of the abelian \'etale sheaf $\mu_{p^m}^{\otimes j}$ which is the identity on any separably closed field, hence also on any strictly henselian local ring; so it would be the identity and therefore $\kappa_j=\lambda_j$, as required to prove uniqueness.
\end{proof}

\comment{
\begin{remark}[Warning]
Note that one cannot in general make sense of any statement that the isomorphisms $\lambda_j$ of Lemma \ref{lemma_Gabber_Suslin_etc} are compatible with multiplication: if $p=2$ then the mod-$p$ $K$-groups carry no multiplicative structure. Instead we will only use the fact that, in the case of separably closed fields, they are quotients of Suslin's map which is multiplicative.
\end{remark}
}

The main theorem of this subsection, namely Theorem \ref{thm:axiomatic_BL} below, is a highly coherent and multiplicative Beilinson--Lichtenbaum equivalence across the entire category of schemes $\mathfrak{BL}^p$. As far as we are aware, such functoriality has not been recorded previously in the literature even if we restrict attention to algebraic varieties over a fixed field of characteristic $\neq p$.\footnote{The subtlety being that the classical such comparisons usually pass through calculations with Bloch's cycle complex, which is a priori neither functorial for non-flat maps nor multiplicative.} Before we can state the theorem we recall that a consequence of Theorem \ref{theorem:SH_mot_coh}(3) is that the mod-$p^m$ $\bb A^1$-motivic cohomology $\bb Z^\bb{A}(\star)/p^m$ is equipped with the structure of an $\bb E_\infty$-algebra in presheaves of complexes; indeed, we equipped $\bb Z^\bb{A}(\star)$ with such structure, which is then inherited by going mod $p$.\footnote{Had we not equipped motivic cohomology with any $\bb Z$-linear structure then a priori it would only be an $\bb E_\infty$-algebra in presheaves of spectra, and there would be no canonical $\bb E_\infty$-structure on the mod-$p^m$ quotient because of the usual problem that $\bb S/p^m$ is not an $\bb E_\infty$-algebra.}

Before establishing the full Beilinson--Lichtenbaum equivalence, we begin with a description of \'etale motivic cohomology with coefficients away from the characteristic. Let $\mathfrak{BL}^p_{1/p}$ denote the full subcategory of $\mathfrak{BL}^p$ consisting of schemes on which $p$ is invertible (note that a qcqs $\bb Z[\tfrac1p]$ scheme $X$ belongs to $\mathfrak{BL}^p_{1/p}$ if and only if it satisfies conditions (A)--(C) and (G); the other conditions (D)--(F) are vacuous in this case).

\begin{proposition}\label{prop:graded}
Let $m\ge1$. There is a unique map of $\bb E_\infty$-algebras in graded presheaves of complexes on $\mathfrak{BL}^p_{1/p}$ \[\delta_\sub{mot}^\sub{\'et}\{\star\}:L_\sub{\'et}\bb Z(\star)^{\bb A}/p^m\To R\Gamma_\sub{\'et}(-,\mu_{p^m}^{\otimes \star})\] such that the following diagram commutes
\begin{equation}\xymatrix{
\bb Z(1)^\bb A/p^m\ar[r] & L_\sub{\'et}\bb Z(1)^\bb A/p^m\ar[r]^{\delta_\sub{mot}^\sub{\'et}\{1\}} & R\Gamma_\sub{\'et}(-,\mu_{p^m})\\
&R\Gamma_\sub{Nis}(-,\bb G_m)[-1]\ar[ul]^{c_1^{\bb A}}\ar[ur]_{c_1^\sub{\'et}}&
}\label{eqn_1st_Chern_compatability_et}\end{equation}
Moreover, $\delta_\sub{mot}^\sub{\'et}\{\star\}$ is an equivalence.
\end{proposition}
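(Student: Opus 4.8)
\textbf{Proof plan for Proposition \ref{prop:graded}.}

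The strategy is to produce the comparison map via algebraic $K$-theory, exploiting the Atiyah--Hirzebruch filtration from Theorem \ref{theorem:SH_mot_coh}(1). First I would note that on $\mathfrak{BL}^p_{1/p}$, conditions (A)--(C) and (G) mean exactly that $\bb Z(j)^{\bb A}/p^m$ identifies \'etale-locally with $L_\sub{Nis}\tau^{\le j}(L_\sub{\'et}\bb Z(j)^{\bb A}/p^m)$, with $\bb A^1$-motivic cohomology of strictly henselian local rings being discrete (condition (B)) and supported in degree $\le j$. Thus $L_\sub{\'et}\bb Z(j)^{\bb A}/p^m$ is an \'etale sheaf concentrated in degree $0$, given by the sheaf $a_\sub{\'et}H^j_\bb{A}(-,\bb Z/p^m(j))$. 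The key is to identify this degree-$0$ \'etale sheaf with $\mu_{p^m}^{\otimes j}$. For this I would run the argument establishing condition (E)-type identifications through $K$-theory \emph{backwards}: the filtered spectrum $\Fil^\star_{\bb A}\KH/p^m$ has graded pieces $\bb Z(\star)^{\bb A}/p^m[2\star]$, and \'etale-locally (by (A), (B), (G)) this filtration is concentrated so that the edge map $\pi_{2j}(\Fil^j_{\bb A}\KH/p^m) \to a_\sub{\'et}\pi_{2j}(\KH/p^m) = a_\sub{\'et}\pi_{2j}(\K/p^m)$ becomes an isomorphism onto the \'etale sheaf, and simultaneously $\pi_{2j}(\Fil^j_{\bb A}\KH/p^m) \cong a_\sub{\'et}H^j_{\bb A}(-,\bb Z/p^m(j))$ (using completeness, which holds \'etale-locally since strictly henselian local rings in $\mathfrak{BL}^p$ have the relevant connectivity by condition (H)). Combined with Lemma \ref{lemma_Gabber_Suslin_etc}, which provides the canonical isomorphism $\lambda_j: \mu_{p^m}^{\otimes j}\isoto a_\sub{\'et}\pi_{2j}(\K/p^m)$, this yields an isomorphism of \'etale sheaves $a_\sub{\'et}H^j_{\bb A}(-,\bb Z/p^m(j)) \cong \mu_{p^m}^{\otimes j}$, hence an equivalence $L_\sub{\'et}\bb Z(j)^{\bb A}/p^m \simeq R\Gamma_\sub{\'et}(-,\mu_{p^m}^{\otimes j})$ of \'etale sheaves of complexes for each fixed $j$.

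The remaining work is to upgrade this weight-by-weight equivalence to a map of $\bb E_\infty$-algebras in \emph{graded} presheaves, and to pin down its uniqueness. For the multiplicative structure, I would argue as follows: both $L_\sub{\'et}\bb Z(\star)^{\bb A}/p^m$ and $R\Gamma_\sub{\'et}(-,\mu_{p^m}^{\otimes\star})$ are $\bb E_\infty$-algebras in graded \'etale sheaves, the former because $\bb Z(\star)^{\bb A}/p^m$ is one (Theorem \ref{theorem:SH_mot_coh}(3)) and \'etale sheafification is symmetric monoidal, the latter being a ``traditional cohomology theory'' in the sense of Remark \ref{remark:traditional}. Since $R\Gamma_\sub{\'et}(-,\mu_{p^m}^{\otimes\star})$ is concentrated in non-negative weights, is an \'etale sheaf, and in weight $1$ its underlying object receives the Kummer-theoretic first Chern class $c_1^\sub{\'et}: R\Gamma_\sub{Nis}(-,\bb G_m)[-1]\to R\Gamma_\sub{\'et}(-,\mu_{p^m})$, I expect to invoke a version of the ``freeness'' / universal property argument (analogous to the role $\cal Q_X$ and $L_\sub{pbf}$ play in \S\ref{ss:omega}, or the uniqueness arguments in \S\ref{ss:weight_one_syn} via right Kan extension from a basis): a map of $\bb E_\infty$-algebras out of $L_\sub{\'et}\bb Z(\star)^{\bb A}/p^m$ into such a target is determined by its restriction to weight $1$, which in turn (by the commuting triangle with the first Chern classes) is forced by compatibility with $c_1^{\bb A}$ and $c_1^\sub{\'et}$. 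Concretely, I would use that $\mathfrak{BL}^p_{1/p}$-schemes are, \'etale-locally and up to the relevant sheafifications, controlled by their strictly henselian local rings where everything is symbol-generated, so the diagram \eqref{eqn_1st_Chern_compatability_et} uniquely determines $\delta_\sub{mot}^\sub{\'et}\{\star\}$; then one checks it agrees with the $K$-theoretic equivalence constructed above by evaluating on separably closed fields, where both are quotients of Suslin's multiplicative isomorphism (exactly as in the proof of Lemma \ref{lemma_Gabber_Suslin_etc}).

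\textbf{Main obstacle.} The genuinely delicate point is the multiplicativity and uniqueness, \emph{not} the weightwise equivalence, which is essentially a bookkeeping exercise with the filtration and Lemma \ref{lemma_Gabber_Suslin_etc}. The issue is that $\bb A^1$-motivic cohomology with mod-$p^m$ coefficients only has an $\bb E_\infty$-structure because of the $\bb Z$-linear structure from Construction \ref{cons:low-weights}, and matching this $\bb E_\infty$-structure with the $\bb E_\infty$-structure on truncated \'etale cohomology (which a priori could differ by something detected only in higher coherences) requires care. I expect the cleanest route is to reduce, via finitariness (Lemma \ref{lemma_BL_properties}(4)) and \'etale descent, to a suitable basis of strictly henselian local rings, identify both sides there with (truncated, sheafified) Milnor $K$-theory mod $p^m$ equipped with its evident graded-commutative multiplication, and then observe that the space of $\bb E_\infty$-maps between the corresponding graded objects restricting to $c_1$ in weight one is contractible --- an argument in the spirit of the right-Kan-extension technique (Proposition \ref{prop:RKE_from_basis}) used repeatedly in \S\ref{sec:syntomic}. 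One should also double-check that the whole construction descends from the basis to all of $\mathfrak{BL}^p_{1/p}$, which is where closure under cofiltered limits along smooth affine transition maps (Lemma \ref{lemma_BL_properties}(4)) and the finitariness of the relevant sheaves enter.
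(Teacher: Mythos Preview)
Your weight-by-weight construction through $K$-theory is essentially the paper's Step~1: the chain
\[
a_\sub{\'et}H^0_\bb A(-,\bb Z/p^m(j)) \longleftarrow a_\sub{\'et}\pi_{2j}(\Fil^j_\bb A\KH/p^m) \To a_\sub{\'et}\pi_{2j}(\K/p^m) \xleftarrow{\lambda_j} \mu_{p^m}^{\otimes j}
\]
gives the isomorphism of abelian \'etale sheaves, and conditions (B), (G) ensure $L_\sub{\'et}\bb Z(j)^{\bb A}/p^m$ is the \'etale sheaf of complexes associated with its $\pi_0$. (Minor slip: you wrote $a_\sub{\'et}H^j_\bb A$ where it should be $H^0$; condition (B) says the complex is discrete at strictly henselian stalks.)

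However, you have misidentified where the work lies. There is no higher-coherence obstacle: since both $L_\sub{\'et}\bb Z(\star)^{\bb A}/p^m$ and $R\Gamma_\sub{\'et}(-,\mu_{p^m}^{\otimes\star})$ are \'etale-locally discrete, they lie in the image of the lax monoidal inclusion of graded abelian \'etale sheaves into graded \'etale sheaves of complexes. An isomorphism of graded-commutative ring objects in abelian sheaves therefore automatically upgrades to an equivalence of $\bb E_\infty$-algebras. Your proposed universal-property and right-Kan-extension machinery is unnecessary, and the assertion that a map of $\bb E_\infty$-algebras out of $L_\sub{\'et}\bb Z(\star)^{\bb A}/p^m$ is determined by its weight-$1$ component has no a priori justification, since this source is not a free object.

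The genuine subtlety in multiplicativity is elsewhere: the chain above passes through $\pi_{2j}(\K/p^m)$, which for small $p$ carries no ring structure. The paper resolves this by passing to $p$-adic completions, where all terms and maps are honestly multiplicative (lax monoidality of $p$-completion); on a separably closed field $k$ the $p$-adic version surjects onto the mod-$p^m$ one, forcing the mod-$p^m$ isomorphisms of abelian sheaves to be multiplicative. You allude to Suslin's isomorphism but do not mention this $p$-adic passage, which is the actual mechanism.

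You also gloss over the commutativity of diagram \eqref{eqn_1st_Chern_compatability_et}. This is a separate argument (the paper's Step~2) and is not a formality: one must trace $c_1^{\bb A}$ through the $K$-theoretic chain, reducing on separably closed fields to the compatibility of $c_1^{\bb A}$ with the determinant map from Construction \ref{cons:low-weights} and the definition of $\lambda_1$; some care with Bockstein sequences is needed to compare mod-$p^m$ and integral versions. Uniqueness (Step~3) is as you say: compose with the inverse to get an $\bb E_\infty$-endomorphism of $R\Gamma_\sub{\'et}(-,\mu_{p^m}^{\otimes\star})$ fixing weight $1$, hence the identity.
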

\begin{proof}
The proof will consist of three steps: first we construct a multiplicative comparison isomorphism $\delta_\sub{mot}^\sub{\'et}$, secondly we prove that it makes diagram (\ref{eqn_1st_Chern_compatability_et}) commute, and thirdly we show it is the only such comparison map with this property.

{\bf Step 1.} The key idea is that we will pass through $K$-theory in order to relate motivic cohomology to \'etale cohomology. We consider the following maps of abelian presheaves on $\mathfrak{BL}^p_{1/p}$:
\begin{equation}
H^0_\bb A(-,\bb Z/p^m(j))\longleftarrow \pi_{2j}(\Fil_\bb A^j \KH/p^m)\To \pi_{2j}(\KH/p^m) \stackrel{\cong}{\longleftarrow} \pi_{2j}(\K/p^m)\To a_\sub{\'et}\pi_{2j}(\K/p^m) \stackrel{\lambda_j\cong }{\longleftarrow}\mu_{p^m}^{\otimes j}
\label{eqn:BL}
\end{equation}
(where the first two maps are the induced by the canonical maps $\bb Z(j)^\bb A[2j]\leftarrow\Fil_\bb A^j\KH\to\KH$). The isomorphism in the middle is the fact that $\K/p^m=\KH/p^m$ on $\bb Z[\tfrac1p]$-schemes \cite[Proposition 1.6]{Weibel1989a}. Recall that $\lambda_j$ was defined and shown to be an isomorphism in Lemma \ref{lemma_Gabber_Suslin_etc}.

We claim that the first two maps in (\ref{eqn:BL}) are isomorphisms \'etale locally. To this end, we check that they are isomorphisms on $\roi_{X,x}^\sub{sh}$, for any $X\in\mathfrak{BL}^p_{1/p}$ and $x\in X$; recall that $\roi_{X,x}^\sub{sh}$ also lies in $\mathfrak{BL}^p_{1/p}$, by Lemma \ref{lemma_BL_properties}(3). The proof is similar to the final paragraph of proof of Theorem \ref{thm:classical_BL_equiv}. Indeed, conditions (B) 
and (H) imply that $\Fil^j_\bb A\KH(\roi_{X,x}^\sub{sh})/p^m$ is $2j$-connective; so the canonical map $\Fil^j_\bb A\KH(\roi_{X,x}^\sub{sh})/p^m\to \bb Z(j)^{\bb A}(\roi_{X,x}^\sub{sh})/p^m[2j]$ has fibre which is $2j+2$-connective, and in particular it is an isomorphism in degree $2j$. But it also follows from (B) that the map $\Fil_\bb A^j \KH(\roi_{X,x}^\sub{sh})/p^m)\to \KH(\roi_{X,x}^\sub{sh};\bb Z/p^m)$ has cofibre homologically supported in degrees $\le 2j-2$, so that it is isomorphism in degree $2j$. This completes the proof of the claimed isomorphisms.

In conclusion, \'etale sheafifying the maps of abelian presheaves in (\ref{eqn:BL}) yields isomorphisms of abelian \'etale sheaves \begin{equation}a_\sub{\'et}H^0_\bb A(-,\bb Z/p^m(j))\cong \mu_{p^m}^{\otimes j}\label{eqn:BL2}\end{equation} on $\frak{BL}^p_{1/p}$ for each $j\ge0$; by construction these isomorphisms are compatible as $m$ varies.

We claim that when we vary $j$ the isomorphisms (\ref{eqn:BL2}) are compatible with the multiplicative structures on each side.\footnote{Note that the isomorphism from~\eqref{eqn:BL2} passes through homotopy groups of mod-$p^m$ reductions of spectra which might not even admit any multiplicative structure.} It is enough to check this on stalks, or even on fields by rigidity (note that, for $X\in\mathfrak{BL}^p_{1/p}$ and $x\in X$, a consequence of the isomorphism (\ref{eqn:BL2}) is that $H^0_\bb A(\roi_{X,x}^\sub{sh},\bb Z/p^m(j))\isoto H^0_\bb A(k(x)^\sub{sep},\bb Z/p^m(j))$. So let $k$ be any separably closed field of characteristic $\neq p$. Then evaluating (\ref{eqn:BL}) on $k$ yields a line of maps of abelian groups which receives a map from the following analogous line of abelian groups associated with the $p$-adic theories:
\begin{equation}
H^0_\bb A(k,\bb Z_p(j))\longleftarrow \pi_{2j}(\Fil_\bb A^j \KH(k)_p^\comp)\To \KH_{2j}(k;\bb Z_p) \stackrel\cong{\longleftarrow} \K_{2j}(k;\bb Z_p)\stackrel={\To} \K_{2j}(k;\bb Z_p)\stackrel{\sub{Suslin}\cong}\longleftarrow H^0_\sub{\'et}(k,\bb Z_p(j))
\label{eqn:BL3}
\end{equation}
Similarly to the third paragraph of proof, we claim that the first two maps in (\ref{eqn:BL3}) are isomorphisms. We saw in that paragraph that the fibre of $\Fil^j_\bb A\KH(k)_p^\comp\to \bb Z_p(j)^{\bb A}(k)[2j]$ has the property that modulo $p$ it is $2j+2$-connective; since it is $p$-complete therefore it is $2j+1$-connective, and so the first map in (\ref{eqn:BL3}) is an isomorphism. We similarly saw that the cofibre of $\Fil_\bb A^j \KH(k)_p^\comp\to \KH(k)_p^\comp$ is homologically supported in degrees $\le 2j-2$ (mod $p$, whence $p$-adically), so that the second map in (\ref{eqn:BL3}) is also an isomorphism. In conclusion, there is a commutative diagram
\[\xymatrix{
H^0_\bb A(k,\bb Z/p^m(j))\ar@{-}[r]^-{\sub{(\ref{eqn:BL2})}\cong}& \mu_{p^m}^{\otimes j}\\
H^0_\bb A(k,\bb Z_p(j))\ar[u]\ar@{-}[r]^-\cong& H^0_\sub{\'et}(k,\bb Z_p(j))\ar[u]
}\]
in which the top isomorphism is obtained by evaluating (\ref{eqn:BL2}) on $k$ (which we are trying to show is multiplicative as $j$ varies), the bottom isomorphism is the composition of the isomorphisms (\ref{eqn:BL3}), and the vertical arrows are the natural ones. Since the right vertical arrow is surjective, so is the left vertical arrow, and the question of multiplicativity reduces to checking that the bottom isomorphism is multiplicative as $j$ varies. But this is now formal: $p$-adic completion is lax monoidal, so all terms in (\ref{eqn:BL3}) carry multiplicative structures and all the maps are compatible with these multiplicative structures. This completes the proof that the isomorphisms of abelian \'etale sheaves (\ref{eqn:BL2}) are compatible with multiplication.

Next note that the map of abelian presheaves $H^0_\bb A(-,\bb Z/p^m(j))\to \pi_0(L_\sub{\'et}\bb Z(j)^\bb A)/p^m)$ is an isomorphism \'etale locally (indeed, this holds for any presheaf in place of $\bb Z(j)^\bb A/p^m$). Finally, conditions (B) and (G) imply that $L_\sub{\'et}\bb Z(j)^\bb A/p^m$ is discrete as an \'etale sheaf on $\frak{BL}^p_{1/p}$, so that there is a natural equivalence $L_\sub{\'et}\bb Z(j)^\bb A/p^m\simeq R\Gamma_\sub{\'et}(-,a_\sub{\'et}\pi_0(L_\sub{\'et}\bb Z(j)^\bb A)/p^m))$ of \'etale sheaves; this equivalence is moreover compatible with the multiplicative structure as $j$ varies, since the inclusion of abelian \'etale sheaves into \'etale sheaves is lax monoidal.

Assembling (\ref{eqn:BL2}) with the isomorphism and equivalence of the previous paragraph, we have established equivalences $\delta_\sub{mot}^\sub{\'et}\{j\}:L_\sub{\'et}\bb Z(j)^\bb A/p^m\isoto R\Gamma_\sub{\'et}(-,\mu_{p^m}^{\otimes j})$ of \'etale sheaves on $\frak{BL}_p$, compatible with the multiplicative structure as $j$ varies.

{\bf Step 2.} We now claim that the diagram (\ref{eqn_1st_Chern_compatability_et}) commutes. It is enough to check this \'etale locally on $H^0$; by rigidity of \'etale cohomology we even reduce to separably closed fields $k$ of characteristic $\neq p$. In that case the desired commutativity states that the composition \[k^\times[p^m]\xto{c_1^\bb A}H^0_\bb A(k,\bb Z/p^m(1))\xto{\sub{(\ref{eqn:BL2})}\cong}H^0_\sub{\'et}(k,\mu_{p^m})\] is the identity map, which is indeed true essentially because both $c_1^\bb A$ and $\lambda_1$ are compatible with the determinant map; the details are as follows. We compare construction (\ref{eqn:BL}) when $j=1$ to analogous maps for $p^m$-torsion in $\pi_1$ of the integral theories:
\[\xymatrix@C=7mm{
k^\times[p^m]\ar[r]^-{c_1^\bb A}\ar[d]_= & H^0_\bb A(k,\bb Z(1)/p^m)\ar[d]_{(1)} & \pi_{2}(\Fil_\bb A^1 \KH(k)/p^m)\ar[l]_{\cong}\ar[d]_{(2)}\ar[r]^{\cong} \ar[d]& \pi_{2}(\KH(k)/p^m)\ar[d]_{(3)} & \pi_{2}(\K(k)/p^m)\ar[l]_{\cong} \ar[r]^-{\lambda_1\cong }\ar[d]_{(4)}&\mu_{p^m}^{\otimes j}(k)\\
k^\times[p^m]\ar[r]_-{c_1^\bb A} & H^1_\bb A(k,\bb Z(j))[p^m] & (\pi_{1}\Fil_\bb A^1 \KH(k))[p^m]\ar[l]\ar[r] & \KH_1(k)[p^m] & \K_1(k)[p^m]\ar[l]_{\cong} \ar[ur]_{\sub{det}}&
}\]
The vertical maps in this diagram are the natural ones appearing in the Bockstein sequences for cohomology/homotopy with finite coefficients; in particular all the squares in the diagram commute. The triangle commutes by definition of $\lambda_1$. We saw when discussing (\ref{eqn:BL}) that the horizontal maps on the top row are isomorphisms. We now claim that the surjective maps (1)--(4) are all isomorphisms. To prove that (1) is an isomorphism it is enough to show that $H^0_\bb A(k,\bb Z(1))$ is $p$-divisible; but this group is both $p$-torsion free (as $H^{-1}_\bb A(k,\bb Z/p(1))=0$ by axiom (B), as fields belong to $\frak{BL}^p$ by Theorem \ref{thm:classical_BL_equiv}) and torsion (as $H^0_\bb A(k,\bb Q(1))=\K_2(k)^{(1)}_\bb Q$ by Theorem \ref{thm:rational}, which vanishes by symbolic generation of $\K_2(k)$), hence $p$-divisible. From the commutative square involving maps (1) and (2) it now follows that (2) is also injective, whence an isomorphism. Finally, (3) and (4) are isomorphisms because $\K_1(k)=k^\times$ is $p$-divisible.

Therefore, apart from $c_1^\bb A$, we deduce that the other two remaining horizontal maps in the diagram are also isomorphisms. So, to prove the desired commutativity (namely that the composition from the top left of the diagram to the top right, via the top row, is the identity), we may instead show that the composition via the bottom row is the identity. But this follows from the integral description of $c_1^{\bb A}$ in Remark \ref{rem_A_1st_Chern_class}; in particular the commutative diagram of~\eqref{eq:c1-det2}.

{\bf Step 3.} It is now easy to prove the uniqueness part of the theorem. Indeed, by composing with the inverse of $\delta_\sub{mot}^\sub{syn}\{\star\}$, any multiplicative comparison map $L_\sub{\'et}\bb Z(\star)^{\bb A}/p^m\To R\Gamma_\sub{\'et}(-,\mu_{p^m}^{\otimes j})$ making (\ref{eqn_1st_Chern_compatability_et}) commute induces a multiplicative endomorphism of $R\Gamma_\sub{\'et}(-,\mu_{p^m}^{\otimes \star})$ given by the identity when $\star=1$, hence given by the identity.
\end{proof}

\begin{remark}\label{rem_nis_version_of_delta}
It was convenient to formulate and prove Proposition \ref{prop:graded} in terms of the \'etale sheafification of $\bb A^1$-motivic cohomology, but (to be compatible with the next theorem) we will also allow ourselves to write $\delta_\sub{mot}^\sub{\'et}\{j\}$ for the composition \[\bb Z(j)^{\bb A}/p^m\To L_\sub{\'et}\bb Z(j)^{\bb A}/p^m\xto{\delta_\sub{mot}^\sub{\'et}\{j\}\cong} R\Gamma_\sub{\'et}(-,\mu_{p^m}^{\otimes j})\] on $\frak{BL}^p_{1/p}$. Note that this motivic-to-\'etale comparison map is an equivalence in degrees $\le j$ by axiom (C).
\end{remark}

We are now prepared to state and prove our full Beilinson--Lichtenbaum comparison theorem, identifying the $\bb A^1$-motivic cohomology of schemes in $\frak{BL}^p$ with their Beilinson--Lichtenbaum cohomology in a highly structured manner:

\begin{theorem}\label{thm:axiomatic_BL}
Let $m\ge1$. There is a unique map of $\bb E_\infty$-algebras in graded presheaves of complexes on $\mathfrak{BL}^p$ \[\delta_\sub{mot}^\sub{BL}\{\star\}:\bb Z(\star)^{\bb A}/p^m\To \bb Z_p(\star)^\sub{BL}/p^m\] such that the diagram \begin{equation}\xymatrix{
\bb Z(1)^\bb A/p^m\ar[rr]^{\delta_\sub{mot}^\sub{BL}\{1\}} & & \bb Z(1)^\sub{BL}/p^m\\
&R\Gamma_\sub{Nis}(-,\bb G_m)[-1]\ar[ul]^{c_1^{\bb A}}\ar[ur]_{c_1^\sub{syn}}&
}\label{eqn_1st_Chern_compatability}\end{equation}
commutes. Moreover, $\delta_\sub{mot}^\sub{BL}\{\star\}$ is an equivalence.
\end{theorem}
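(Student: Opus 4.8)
\textbf{Proof strategy for Theorem~\ref{thm:axiomatic_BL}.}
The plan is to glue the $p$-inverted and $p$-complete pieces of the comparison, exactly as syntomic cohomology was glued together in Definition~\ref{def:bms-decomplete}, and then to verify the gluing is compatible with the first Chern class and is unique. More precisely, for $X\in\mathfrak{BL}^p$ I would first produce, on the subcategory $\mathfrak{BL}^p_{1/p}$ where $p$ is invertible, the comparison $\delta_\sub{mot}^\sub{\'et}\{\star\}$ of Proposition~\ref{prop:graded}; combined with Example~\ref{example_BL_cohomology}(1) (which identifies $\bb Z_p(\star)^\sub{BL}/p^m$ with $L_\sub{Nis}\tau^{\le\star}R\Gamma_\sub{\'et}(-,\mu_{p^m}^{\otimes\star})$ over $\bb Z[\tfrac1p]$-schemes) and axiom (C) (which says $\bb Z(\star)^\bb A/p^m$ already agrees with its own Beilinson--Lichtenbaum truncation on $\mathfrak{BL}^p_{1/p}$), this yields the equivalence $\delta_\sub{mot}^\sub{BL}\{\star\}$ over $\mathfrak{BL}^p_{1/p}$. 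The nontrivial work is then at residue characteristic $p$: I would show that, Nisnevich-locally on $X$, the square
\[
\begin{tikzcd}
\bb Z(\star)^\bb A/p^m \ar{r}\ar{d} & j_*j^*\bb Z(\star)^\bb A/p^m \ar{d} \\
\bb Z_p(\star)^\sub{BL}/p^m \ar{r} & j_*j^*\bb Z_p(\star)^\sub{BL}/p^m
\end{tikzcd}
\]
(where $j\colon X[\tfrac1p]\hookrightarrow X$) can be filled by a map of $\bb E_\infty$-algebras in graded presheaves of complexes; the right vertical arrow is $j_*j^*\delta_\sub{mot}^\sub{BL}\{\star\}$ just constructed, the top horizontal is the restriction-to-generic-fibre map, and the bottom horizontal is the syntomic-to-\'etale comparison after $L_\sub{Nis}\tau^{\le\star}$. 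Once this square is filled coherently, $\delta_\sub{mot}^\sub{BL}\{\star\}$ is obtained by the universal property of pullbacks in $\CAlg(\Gr\D(\bb Z))$.

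\textbf{Key steps.} First, I would reduce to Nisnevich stalks, i.e.\ to henselian local rings $A=\roi_{U,x}^\sub{h}$, using that all four presheaves above are finitary (Theorem~\ref{theorem:SH_mot_coh}(2) for $\bb Z(\star)^\bb A$, Lemma~\ref{lemma_BL_mod_p}(3) for $\bb Z_p(\star)^\sub{BL}$, and Lemma~\ref{lemma_BL_properties}(1),(3) to stay inside $\mathfrak{BL}^p$). Second, on such $A$ with residue characteristic $p$, I would compare the fiber of the top row with the fiber of the bottom row: axiom (D) says $\bb Z(j)^\bb A(A)/p^m\to\bb Z(j)^\bb A(A[\tfrac1p])/p^m$ is an iso on $H^{<j}$, axiom (F) says the same for syntomic cohomology, and Theorem~\ref{thm_syntomic_Milnor} together with axiom (E) identifies the $H^j$ of both $\bb Z(j)^\bb A(A)/p^m$ and $\bb Z_p(j)^\sub{syn}(A)/p^m$ with $\hat K_j^M(A)/p^m$ compatibly with the first Chern class. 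This is precisely the data of Corollary~\ref{corol_BM_corol2}-type pullback squares on both sides, showing the two fibers agree. Third, I would assemble these isomorphisms into a map of graded $\bb E_\infty$-algebras: the multiplicative structure is controlled in degree $j=1$ via diagram~\eqref{eqn_1st_Chern_compatability}, and since both $\bb Z(\star)^\bb A/p^m$ and $\bb Z_p(\star)^\sub{BL}/p^m$ are $\bb E_\infty$-algebras in graded complexes generated in an appropriate sense by weight one together with the syntomic/\'etale data in higher weight, any multiplicative endomorphism fixing weight one and the $p$-inverted part is the identity --- this gives uniqueness. Here I would argue as in Step~3 of the proof of Proposition~\ref{prop:graded}: post-composing any other candidate with the inverse of the constructed map produces a multiplicative self-map of $\bb Z_p(\star)^\sub{BL}/p^m$ which is the identity in weight~$1$, hence the identity by multiplicativity and the projective-bundle-formula-style generation.

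\textbf{Main obstacle.} The hard part will be promoting the stalk-wise (and essentially $1$-categorical, cohomology-group-level) agreement supplied by axioms (A)--(H) to an actual coherent map of $\bb E_\infty$-algebras in graded presheaves of complexes. The axioms are deliberately ``soft'': they assert vanishing ranges, discreteness, and isomorphisms on individual cohomology groups, but not a priori any $\infty$-categorical comparison. The device that makes this work --- and the technical heart of the argument --- is exactly the gluing square above: $\delta_\sub{mot}^\sub{BL}\{\star\}$ is \emph{defined} as a map into a pullback, so the coherence question is reduced to the coherence of its three legs. The $p$-inverted leg is Proposition~\ref{prop:graded} (already coherent and multiplicative). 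The $p$-complete ``residue'' leg --- the fiber over $j_*j^*$ --- is where one must be careful: one needs the identification with (shifted, truncated) improved Milnor $K$-theory to be natural in $A$ and compatible with products, and the cleanest way to see this is to note that, just as in the proof of Theorem~\ref{thm:classical_BL_equiv} and Remark~\ref{rem:cohere}, the natural maps $\Fil^j_\bb A\KH(A)/p^m\to\bb Z(j)^\bb A(A)/p^m[2j]$ and $\Fil^j_\bb A\KH(A)/p^m\to\KH(A)/p^m$ together with the cyclotomic trace and Theorem~\ref{theorem_syntomic_properties}(7) realize both $H^j$'s as honest quotients of the multiplicative object $\pi_{2\star}(\Fil^\star_\bb A\KH(A);\bb Z_p)$, after passing to a sufficiently divisible coefficient $p^{m'}\ge p^m$ so that mod-$p^{m'}$ $K$-theory is homotopy-multiplicative (the same trick as in Lemma~\ref{lemma_Gabber_Suslin_etc}). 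Once naturality and multiplicativity of the residue leg are in hand, the fact that $\delta_\sub{mot}^\sub{BL}\{\star\}$ is an equivalence is immediate from the five lemma applied to the gluing squares on source and target, using axiom (A) and Lemma~\ref{lemma_BL_mod_p} to control the truncation degrees.
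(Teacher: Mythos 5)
Your construction is essentially the paper's: the comparison map is glued from the away-from-$p$ equivalence of Proposition \ref{prop:graded} and the top-degree identification with improved Milnor $K$-theory, using the pullback (Corollary \ref{corol_BM_corol2}-type) description of both theories supplied by axioms (C), (D), (E), (F); uniqueness is obtained, exactly as in the paper, by composing with the inverse and using symbol generation in weight $j$ together with the uniqueness of Proposition \ref{prop:graded} in degrees $<j$. The paper's cospan is phrased via $\tau^{\le j}$ of the generic-fibre cohomology and the Nisnevich-sheafified $H^j$ rather than via $j_*j^*$ and its fibre, but this is the same decomposition.

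The one genuine divergence is the step you yourself flag as the main obstacle. To make the residue leg natural and multiplicative you propose passing through $\pi_{2\star}\Fil^\star_{\bb A}\KH$, the cyclotomic trace, and Theorem \ref{theorem_syntomic_properties}(7), after enlarging coefficients as in Lemma \ref{lemma_Gabber_Suslin_etc}. This detour is both unnecessary and unjustified as stated: realising $H^j_\sub{syn}$ as a multiplicative quotient of the $\bb A^1$-motivic filtration on $\KH$ via the trace is precisely the sort of compatibility the paper never establishes (compare Remark \ref{rem:nonA1_intro}, where the analogous compatibility is flagged as open). The paper sidesteps the coherence problem entirely by observing that the weight-$j$ comparison takes place between \emph{discrete} Nisnevich abelian sheaves: Nisnevich locally, both $a_\sub{Nis}H^j_{\bb A}(-,\bb Z/p^m(j))$ and $a_\sub{Nis}H^j_\sub{syn}(-,\bb Z/p^m(j))$ receive surjections from $\bb G_m^{\otimes j}/p^m$ with the same kernel — by axiom (E) and Theorem \ref{thm_syntomic_Milnor} at residue characteristic $p$, and by Bloch--Kato together with rigidity at residue characteristic $\neq p$ — so the induced identification is automatically natural and multiplicative as $j$ varies, with no higher coherence left to check. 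With that substitution (which uses only inputs you already cite) your argument coincides with the paper's proof.
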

\begin{proof}
We consider the following diagram of abelian presheaves on $\frak{BL}^p$:
\[\xymatrix{
&\bb G_m^{\otimes j}/p^m\ar[dl]_{c_1^\bb A}\ar[dr]^{c_1^\sub{syn}}&\\
H^j_{\bb A}(-,\bb Z/p^m(j))\ar[d]&&H^j_\sub{syn}(-,\bb Z/p^m(j))\ar[d]^{\gamma^\sub{\'et}_\sub{syn}\{j\}}\\
H^j_{\bb A}(-[\tfrac1p],\bb Z/p^m(j))\ar[rr]^{\cong}_-{\delta_\sub{mot}^\sub{\'et}\{j\}}&&H^j_\sub{\'et}(-[\tfrac1p],\bb Z/p^m(j))
}\]
The diagonal arrows are those induced by the respective first Chern class maps and by multiplicativity. The left vertical arrow is the canonical one. The right vertical arrow is the syntomic-to-\'etale comparison map. For the horizontal isomorphism see Remark \ref{rem_nis_version_of_delta}.

The diagram commutes when $j=1$ because $\delta_\sub{mot}^\sub{\'et}\{1\}$ was shown to be compatible with the first \'etale Chern class map, which itself is compatible with the first syntomic Chern class map; so the diagram commutes for all $j\ge0$ by multiplicativity.

Next we claim that Nisnevich locally the diagonal arrows are surjective and have the same kernel. At points of residue characteristic $\neq p$ this is clear, as then both vertical arrows are isomorphisms and  $H^j_\sub{\'et}(-,\bb Z/p^m(j))$ is Nisnevich locally generated by symbols by Bloch--Kato (and local rigidity of \'etale cohomology). Meanwhile, at points of residue characteristic $p$ this follows from condition (E) and Theorem \ref{thm_syntomic_Milnor}, which state that Nisnevich locally both arrows identify the targets with mod-$p^m$ improved Milnor $K$-theory of the corresponding henselian local ring. Having proved the claim, there is therefore an induced isomorphism $a_\sub{Nis}H^j_{\bb A}(-,\bb Z/p^m(j))\isoto a_\sub{Nis}H^j_\sub{syn}(-,\bb Z/p^m(j))$ of Nisnevich abelian sheaves on $\frak{BL}^p$, which on $\frak{BL}^p_{1/p}$ is the same as the isomorphism induced by $\gamma_\sub{syn}^\sub{\'et}\{j\}^{-1}\circ \delta_\sub{mot}^\sub{\'et}\{j\}$. Furthermore, by construction these isomorphisms are multiplicative as $j$ varies.

So we have constructed, for each $j\ge0$, a commutative diagram of presheaves of complexes on $\frak{BL}^p$:
\[\xymatrix@C=1mm{
&\tau^{\le j}\bb Z(j)^\bb A(-[\tfrac1p])\ar[d]\ar@/^/[drr]^{\delta_\sub{mot}^\sub{\'et}\{j\}}_\cong&&\\
a_\sub{Nis}H^j_\bb A(-,\bb Z/p^m(j))[-j]\ar[r]\ar@/_/[drr]_\cong & H^j_{\bb A}(-[\tfrac1p],\bb Z/p^m(j))[-j] \ar[drr]^{\delta_\sub{mot}^\sub{\'et}\{j\}}_\cong&& \tau^{\le j}R\Gamma_\sub{\'et}(-[\tfrac1p],\mu_{p^m}^{\otimes j})\ar[d]\\
&&a_\sub{Nis}H^j_\sub{syn}(-,\bb Z/p^m(j))[-j]\ar[r] & H^j_\sub{\'et}(-[\tfrac1p],\mu_{p^m}^{\otimes j}) [-j]
}\] 

After applying $L_\sub{Nis}$, the pullback of the top left part of the diagram is precisely $\bb Z(j)^\bb A/p^m$; this follows easily from conditions (C) and (D). Similarly, $L_\sub{Nis}$ of the pullback of the bottom right part of the diagram is $\bb Z(j)^\sub{BL}/p^m$, by condition (F). We have thus defined the desired equivalences \[\delta_\sub{mot}^\sub{BL}\{j\}:\bb Z(j)^\bb A/p^m\quis \bb Z(j)^\sub{BL}/p^m\] of presheaves on $\frak{BL}^p$. Varying $j$, these assemble to an equivalence of $\bb E_\infty$-algebras in graded presheaves, since the multiplicativity is preserved by pulling back and by $L_\sub{Nis}$.

The compatibility of $\delta_\sub{mot}^\sub{BL}\{1\}$ with the corresponding first Chern classes follows from the original commutative diagram of the proof.

The uniqueness claim of the theorem is clear from the proof: any other comparison map, when composed with the inverse of $\delta_\sub{mot}^\sub{BL}\{\star\}$, would yield an endomorphism of $\bb Z(\star)^\bb A/p^m$ which is locally the identity in degrees $< j$ (using condition (D) to reduce to the uniqueness assertion of Proposition \ref{prop:graded}) and in degree $j$ (where motivic cohomology is generated by symbols by condition (E)), hence the identity by the pullback argument in the course of the proof above.
\end{proof}

\subsection{Beilinson--Lichtenbaum motivic spectrum $H\bb Z_p^\sub{syn}$: base change}\label{ss_BL2}
In this subsection we introduce the Beilinson--Lichtenbaum motivic spectrum and prove a base change theorem for it. This will allow us in \S\ref{ss:slice-dedekind} to bootstrap results from fields to Dedekind domains; for example we will prove in Theorem~\ref{theorem_BL1} that schemes which are smooth over a Dedekind domain are in $\mathfrak{BL}^p$ for all primes $p$. As a historical comment, we mention that a similar base change result is  proved by Spitzweck \cite[\S8]{Spitzweck2018} while constructing his motivic cohomology spectrum; his arguments via Bloch--Kato's filtration are implicitly captured in our context by the fact that syntomic cohomology is left Kan extended from smooth $\bb Z$-algebras, or more precisely by Proposition \ref{prop:syn-trunc-lke}(2) below.

We first explain how to view Beilinson--Lichtenbaum cohomology from the point of view of motivic homotopy theory. Given any fixed field or mixed characteristic Dedekind domain $B$, we consider the restriction of Beilinson--Lichtenbaum cohomology $\bb Z_p(\star)^\sub{BL}[2\star]$ to smooth $B$-schemes, together with the class $c_1^\sub{BL}(\roi(1))\in H^2_\sub{BL}(\bb P_B^1,\bb Z_p(1))$; thanks to Theorem \ref{thm_BL_coh} we see that this forms an $\bb E_\infty$-algebra in the category of $\bb A^1$-invariant graded Nisnevich sheaves of complexes, satisfying the $\bb P^1$-bundle formula; that is, as in Remark \ref{remark:traditional}, it defines an object $\bb Z_{p,B}^\sub{BL}$ of the category $\CAlg(\Gr\opp{Sh}_{\Nis, \bb A^1}(\Sm_B,\Spt)_{c,\sub{pbf}})$ of Construction \ref{cons:lin-cohom-theories}. Applying the motivic Eilenberg--MacLane construction $H$ of Definition \ref{definition:graded-p1-mult} thus defines an $\bb E_\infty$-algebra \[H\bb Z_{p,B}^\sub{BL}\in\CAlg(\SH(B))\] such that, for each $j\in\bb Z$, the presheaf $H\bb Z_{p,B}^\sub{BL}(j)=\map_{\SH(B)}(M_B(-), H\bb Z_{p,B}^\sub{BL}\otimes\bb T_B^{\otimes j}[-2j])$ on $\Sm_B$ agrees with $\bb Z_p(j)^\sub{BL}|_{\Sm_B}$. This identification is moreover compatible with the bigraded multiplicative structure on cohomology groups, by Remark~\ref{rem:recover}(1). We will call $H\bb Z_{p,B}^\sub{BL}$ the {\em $p$-adic Beilinson--Lichtenbaum motivic spectrum}. Note that $H\bb Z_{p,B}^\sub{BL}$ is $p$-complete as an object of $\SH(B)$.\footnote{Indeed, in the notation of Proposition~\ref{prop:P1-spectrum-from-graded}, the equivalence $(\omega^{\infty,\gr}_\otimes)^{-1}$ and the forgetful functor both commute with limits.}

Similarly, applying $H$ to $\bb F_p(\star)^\sub{BL}[2\star]$, together with the mod-$p$ reduction of the class $c_1^\sub{BL}(\roi(1))$, defines the {\em mod-$p$ Beilinson--Lichtenbaum motivic spectrum} $H\bb F_{p,B}^\sub{BL}\in\SH(B)$, which is an $\bb E_\infty$-algebra equipped with an equivalence of $1_B$-pointed motivic spectra $H\bb F_{p,B}^\sub{BL}\simeq H\bb Z_{p,B}^\sub{BL}/p$; in particular this equips $H\bb Z_{p,B}^\sub{BL}/p$ with an $\bb E_\infty$-algebra structure. 

The proof of the base change theorem below will require the following properties of the Beilinson--Lichtenbaum truncation of syntomic cohomology, which hold even before Nisnevich sheafification:

\begin{proposition}\label{prop:syn-trunc-lke}
Let $j\in\bb Z$.
\begin{enumerate}
\item Rigidity: For any ring $R$ and henselian ideal $I\subset R$, the relative syntomic cohomology $\bb F_p(j)^\sub{syn}(R,I)$ is supported in cohomological degrees $\le j$.
\item Left Kan extended: For any base ring $B$, the functor $\tau^{\le j}\bb F_p(j)^\sub{syn}:{\rm CAlg}_B\to\D(\bb Z)$ is left Kan extended from smooth $B$-algebras.
\end{enumerate}
\end{proposition}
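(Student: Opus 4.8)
The plan is to prove both statements by reducing to the analogous facts about the untruncated $p$-complete syntomic cohomology via the finite-level description from Remark~\ref{rem_syn_via_cotangent}, and then using the fact that for a $p$-henselian ring (or a $p$-henselian pair) syntomic cohomology agrees with that of the $p$-adic completion by Lemma~\ref{lemma_syn_p-hens}(2) and Corollary~\ref{corol_syn_rigid1}.

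For part (1), first observe that $\bb F_p(j)^\sub{syn}(R,I)$ only depends on the $p$-adic completions by Corollary~\ref{corol_syn_rigid1}: the natural map $\bb F_p(j)^\sub{syn}(R,I)\to\bb F_p(j)^\sub{syn}(R_p^h,IR_p^h)$ is an equivalence, and since $I$ is henselian it is in particular $p$-henselian, so $R_p^h/IR_p^h=(R/I)_p^h$. Thus I may assume $R$ is $p$-henselian and $I\subset R$ is an ideal with $R/I$ also $p$-henselian; by Lemma~\ref{lemma_syn_p-hens}(2) it then suffices to treat derived $p$-complete $R$ and $R/I$. Now write $\overline{R}=R/pR$, $\overline{R/I}=(R/I)/p$; by Remark~\ref{rmk_syntomic_props}(2), $\bb F_p(j)^\sub{syn}$ of any derived $p$-complete animated ring is supported in degrees $\le j+1$, with $H^{j+1}_\sub{syn}(-,\bb F_p(j))\cong H^1_\sub{\'et}(-/p,\Omega^j_\sub{log})$; so the claim reduces to showing that the map $H^1_\sub{\'et}(\overline{R},\Omega^j_\sub{log})\to H^1_\sub{\'et}(\overline{R/I},\Omega^j_\sub{log})$ is injective (equivalently, that $\bb F_p(j)^\sub{syn}(R,I)$ has no $H^{j+1}$). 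Since $I/pI=\ker(\overline{R}\to\overline{R/I})$ is a henselian ideal of $\overline{R}$ (henselian pairs base change mod $p$), this follows from the fact that $H^1_\sub{\'et}(-,\Omega^j_\sub{log})$ is rigid — it identifies with a piece of $H^1_\sub{\'et}(-,\bb G_m)/p$-type data, or more directly from the Artin--Schreier presentation $\Omega^j_\sub{log}=\ker(C^{-1}-1)$ together with rigidity of coherent cohomology along a henselian ideal; alternatively, invariance of the small \'etale site under henselian pairs (Gabber) applies directly. The main thing to check carefully here is precisely this rigidity of $H^1_\sub{\'et}(-,\Omega^j_\sub{log})$, which I expect to be the technical heart of part (1).

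For part (2), I would argue as in the proof of Proposition~\ref{prop:syn}(2) but keeping track of the truncation. By Lemma~\ref{lem:lke-b-b'} (base-independence of left Kan extensions), it suffices to treat $B=\bb Z$. The functor $\bb F_p(j)^\sub{syn}:\opp{CAlg}\to\D(\bb Z)$ is already left Kan extended from smooth $\bb Z$-algebras by Theorem~\ref{theorem_syntomic_properties}(2); write $F$ for this and $F^{\le j}:=\tau^{\le j}F$, and let $G:=L^\sub{sm}_\bb Z(F^{\le j}|_{\opp{SmAlg}_\bb Z})$ be the left Kan extension of its restriction to smooth algebras. There is a natural map $G\to F^{\le j}$, and I want it to be an equivalence. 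Since $\tau^{\le j}$ does not commute with colimits, $G$ need not be coconnective a priori; but the cofiber sequence $\tau^{\le j}\to\id\to\tau^{\ge j+1}$ applied to $F$ and left Kan extended gives that the fiber of $F\to L^\sub{sm}_\bb Z(\tau^{\ge j+1}F|_\sub{sm})$ is $G$, and by Remark~\ref{rmk_syntomic_props}(2) the functor $\tau^{\ge j+1}F$ is concentrated in degree $j+1$ with value $H^1_\sub{\'et}(-/p,\Omega^j_\sub{log})[-(j+1)]$, whose left Kan extension from smooth $\bb Z$-algebras I claim is already coconnective in degrees $\ge j+1$ — indeed, $R\mapsto H^1_\sub{\'et}(R/p,\Omega^j_\sub{log})$ is rigid (part (1) of this proposition, applied with $I$ nilpotent, or directly) and finitary, hence left Kan extended from smooth $\bb Z$-algebras by Lemma~\ref{lemma_rigid_implies_lke}, so $L^\sub{sm}_\bb Z(\tau^{\ge j+1}F|_\sub{sm})\simeq\tau^{\ge j+1}F$. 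Therefore $G\simeq\fib(F\to\tau^{\ge j+1}F)=\tau^{\le j}F=F^{\le j}$, which is exactly the assertion.

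In summary, the whole argument rests on the single input that the top cohomology $R\mapsto H^{j+1}_\sub{syn}(R,\bb F_p(j))\cong H^1_\sub{\'et}(R/p,\Omega^j_\sub{log})$ is a rigid, finitary functor — this is where Gabber's affine proper base change / invariance of the \'etale site under henselian pairs enters — and I expect that verification (phrased correctly so it applies to the logarithmic de Rham sheaf $\Omega^j_\sub{log}$, which is not a torsion sheaf prime to the residue characteristic) to be the main obstacle; everything else is bookkeeping with truncations, the finite filtration of Remark~\ref{rem_syn_via_cotangent}, and already-established left-Kan-extension statements.
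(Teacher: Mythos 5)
There are genuine gaps in both parts, even though the overall architecture (reduce to the $p$-complete case; deduce (2) from (1) via exactness of left Kan extension) is close to the paper's.

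In part (1), your reduction is logically incomplete. Writing $F=\bb F_p(j)^\sub{syn}$, the statement that $\fib\bigl(F(R)\to F(R/I)\bigr)$ is supported in degrees $\le j$ is \emph{not} equivalent to injectivity of $H^{j+1}(F(R))\to H^{j+1}(F(R/I))$: from the long exact sequence, killing $H^{j+1}$ of the fibre also requires surjectivity of $H^{j}(F(R))\to H^{j}(F(R/I))$, and killing $H^{j+2}$ of the fibre requires surjectivity on $H^{j+1}$ (both terms live in degrees $\le j+1$, so the fibre a priori lives in degrees $\le j+2$). These surjectivity statements for an arbitrary henselian pair of $p$-complete rings are not formal; together with the injectivity they constitute precisely the rigidity theorem for syntomic cohomology of henselian pairs proved by Antieau--Mathew--Morrow--Nikolaus (Theorem 5.3 of their paper), and the paper's proof simply reduces to the Noetherian $p$-complete case (so that $(R/I)^\wedge_p=R^\wedge_p/IR^\wedge_p$) and quotes that theorem. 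You never invoke it; instead you try to reprove a piece of it via rigidity of $H^1_\et(-,\Omega^j_{\log})$ for henselian pairs of $\bb F_p$-algebras, which you yourself leave unverified (Gabber's rigidity does not apply to $p$-torsion sheaves in characteristic $p$, as you note), so the ``technical heart'' of your argument is open and, even if supplied, would not close the gap described above.

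In part (2), the correct short argument --- and the paper's --- is: part (1) says exactly that $\tau^{>j}\bb F_p(j)^\sub{syn}$ is rigid; it is also finitary, hence left Kan extended from smooth $B$-algebras by Lemma~\ref{lemma_rigid_implies_lke}; since $\bb F_p(j)^\sub{syn}$ itself is left Kan extended (Proposition~\ref{prop:syn}(2)) and left Kan extension is exact, the fibre $\tau^{\le j}\bb F_p(j)^\sub{syn}$ is left Kan extended. Your version of this step is flawed: you assert, citing Remark~\ref{rmk_syntomic_props}(2), that $\tau^{\ge j+1}F$ is concentrated in degree $j+1$ with value $H^1_\et(-/p,\Omega^j_{\log})$, but that description is valid only for derived $p$-complete (equivalently $p$-henselian) rings. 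For a general ring --- and already for the smooth $\bb Z$-algebras from which you are Kan extending --- $\bb F_p(j)^\sub{syn}$ has the \'etale cohomology of the generic fibre glued in via Definition~\ref{def:bms-decomplete}, so $\tau^{>j}\bb F_p(j)^\sub{syn}$ is neither concentrated in a single degree nor given by that formula; the identification $L^\sub{sm}(\tau^{>j}F|_\sub{sm})\simeq\tau^{>j}F$ therefore cannot be obtained the way you propose. The fix is exactly the rigidity-plus-finitariness argument above, which needs no degreewise description; also note that your reduction to $B=\bb Z$ via Lemma~\ref{lem:lke-b-b'} is unnecessary, since that argument works over any base $B$ directly.
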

\begin{proof}
For part (1), we use Corollary \ref{corol_syn_rigid1} to assume that $R$ is $p$-henselian; by taking a filtered colimit using Proposition \ref{prop:syn}, we may also assume that $R$ is Noetherian. We then use Lemma \ref{lemma_syn_p-hens}(2), for both $R$ and $R/I$, to identify $\bb F_p(j)^\sub{syn}(R,I)$ with the fibre of the map \[\bb F_p(j)^\sub{syn}(R_p^{\comp})\to \bb F_p(j)^\sub{syn}((R/I)_p^{\comp}).\] But since $R$ is Noetherian we have $(R/I)_p^{\comp}=R_p^{\comp}/IR_p^{\comp}$, where $IR_p^{\comp}$ is an Henselian ideal of $R_p^{\comp}$ since $I$ generates a Henselian ideal of $R_p^{\comp}/pR_p^{\comp}=R/pR$. We may therefore appeal to the rigidity theorem for syntomic cohomology of $p$-complete rings \cite[Theorem~5.3]{AntieauMathewMorrowNikolaus2022} to see that the fibre is supported in degrees $\le j$.

Part (1) shows in particular that the functor $\tau^{>j}\bb F_p(j)^\sub{syn}:{\rm CAlg}_B\to\D(\bb Z)$ is rigid, hence left Kan extended from smooth $B$-algebras by Lemma~\ref{lemma_rigid_implies_lke}. The proof is now completed by recalling that $\bb F_p(j)^\sub{syn}$ itself is left Kan extended from smooth $B$-algebras by Proposition \ref{prop:syn}(2).
\end{proof}

The following is the desired base change property (in which the most important case to have in mind is that $B$ is a mixed characteristic Dedekind domain and $k$ is a field):

\begin{theorem}\label{thm_syntomic_SH_base_change}
Let $B$ be a mixed characteristic Dedekind domain or a field; also let $k$ be a mixed characteristic Dedekind domain or a field, and let $i:B\to k$ be a map of rings. Then there exists an equivalence 
$\bb E_\infty$-algebras \[(i^*H\bb Z_{p,B}^\sub{BL})_p^\comp\simeq H\bb Z_{p,k}^\sub{BL}\] in $\SH(k)$.\footnote{At present we are not claiming that this equivalence is canonical in any sense. However, once Proposition \ref{prop_1_to_BL} has been established, it will follow that there is in fact a unique such equivalence.}
\end{theorem}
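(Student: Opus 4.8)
The strategy is to reduce the comparison $(i^*H\bb Z_{p,B}^\sub{BL})_p^\comp\simeq H\bb Z_{p,k}^\sub{BL}$ to a statement about presheaves on smooth algebras, which we can then attack via the fact that (truncated) syntomic cohomology is left Kan extended from smooth algebras. Recall from the construction just above that $H\bb Z_{p,B}^\sub{BL}=H(\bb Z_p(\star)^\sub{BL}[2\star]|_{\Sm_B})$ via the motivic Eilenberg--MacLane functor, and similarly over $k$. By Remark~\ref{rem:pullbackQ}, the functor $H$ is compatible with pullback along $i$: there is a commutative square relating $H$ over $B$ and over $k$ via $i^*:\SH(B)\to\SH(k)$ on the right and the pullback functor $L_\sub{pbf}L_{\Nis,\bb A^1}L^\sub{sm}$ on the graded-sheaf side on the left. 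Therefore $i^*H\bb Z_{p,B}^\sub{BL}\simeq H(L_\sub{pbf}L_{\Nis,\bb A^1}L^\sub{sm}(\bb Z_p(\star)^\sub{BL}[2\star]|_{\Sm_B}))$, and after $p$-completing it suffices to produce an equivalence of $\bb E_\infty$-algebras $(L_{\Nis,\bb A^1}L^\sub{sm}(\bb Z_p(\star)^\sub{BL}|_{\Sm_B}))_p^\comp\simeq \bb Z_p(\star)^\sub{BL}|_{\Sm_k}$ in $\CAlg(\Gr\opp{Sh}_{\Nis,\bb A^1}(\Sm_k,\Spt)_\sub{c,pbf})$, compatibly with first Chern classes (here I am using that $H$ is symmetric monoidal on the relevant subcategory, cf.\ Proposition~\ref{prop:P1-spectrum-from-graded}, so $p$-completion and $H$ interact harmlessly). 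The $\bb A^1$-invariance and $\bb P^1$-bundle formula of $\bb Z_p(j)^\sub{BL}$ over both $B$ and $k$ are exactly Theorem~\ref{thm_BL_coh}, so both sides genuinely lie in the pbf-category.

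\medskip

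The main content is thus: the counit map $L^\sub{sm}_k\big(\bb Z_p(\star)^\sub{BL}|_{\Sm_k^\sub{(B-base)}}\big)\to \bb Z_p(\star)^\sub{BL}|_{\Sm_k}$ becomes an equivalence after Nisnevich sheafification, $\bb A^1$-localisation, and $p$-completion. I would argue this first for syntomic cohomology before truncating. Since $\bb Z_p(j)^\sub{syn}/p^r$ is left Kan extended from smooth $\bb Z$-algebras (Theorem~\ref{theorem_syntomic_properties}(2)), Lemma~\ref{lem:lke-b-b'} gives that its restriction to $B$-algebras is left Kan extended from $\opp{SmAlg}_B$, and likewise over $k$; applying Lemma~\ref{lem:lke-b-b'} again along $i:B\to k$ (or rather composing the two left-Kan-extension identifications through the square $\opp{SmAlg}_B\to\opp{SmAlg}_k$, $\CAlg_B\to\CAlg_k$) shows $\bb Z_p(j)^\sub{syn}/p^r|_{\CAlg_k}\simeq L^\sub{sm}_k(\bb Z_p(j)^\sub{syn}/p^r|_{\opp{SmAlg}_k})$ where the inner functor is the base change of the one on $\opp{SmAlg}_B$. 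For the truncated version $\tau^{\le j}\bb F_p(j)^\sub{syn}$, Proposition~\ref{prop:syn-trunc-lke}(2) says this too is left Kan extended from smooth algebras over any base, so the same chain of identifications applies. Then I would Nisnevich-sheafify and $\bb A^1$-localise: since $\bb F_p(j)^\sub{BL}=L_\sub{Nis}\tau^{\le j}\bb F_p(j)^\sub{syn}$ by definition, and since $L_\sub{Nis}L^\sub{sm}$ of the restriction-from-$k$ recovers $L_\sub{Nis}$ of the original presheaf on $\Sm_k$ (the counit being a Nisnevich-local equivalence because left Kan extension preserves Nisnevich-local equivalences), and $\bb A^1$-invariance over $k$ is already known by Theorem~\ref{thm_BL_coh}(1), we conclude $L_{\Nis,\bb A^1}L^\sub{sm}_k(\bb F_p(\star)^\sub{BL}|_{\Sm_B})\simeq \bb F_p(\star)^\sub{BL}|_{\Sm_k}$; passing to the limit over $r$ and $p$-completing gives the integral statement. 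The $L_\sub{pbf}$ at the outer level is then redundant on both sides because the $\bb P^1$-bundle formula already holds there by Theorem~\ref{thm_BL_coh}(2).

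\medskip

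The multiplicative upgrade and Chern-class compatibility require a little care: all the functors involved ($L^\sub{sm}$, $L_\sub{Nis}$, $L_{\bb A^1}$, $L_\sub{pbf}$, $p$-completion, and $H$) are symmetric monoidal or lax symmetric monoidal, and left Kan extension from smooth algebras of an $\bb E_\infty$-algebra is again an $\bb E_\infty$-algebra since $\opp{SmAlg}_B$ is closed under finite products (equivalently, coproducts in rings) — so the equivalence is automatically one of $\bb E_\infty$-algebras once we know it at the level of underlying objects, which is the case we just handled. The first Chern class $c_1^\sub{syn}\colon R\Gamma_\sub{\'et}(-,\bb G_m)[-1]_p^\comp\to\bb Z_p(1)^\sub{syn}$ is itself compatible with base change (it is constructed functorially on all rings in \S\ref{ss:weight_one_syn}, ultimately from the Kummer map and the quasisyntomic case), so its image in the truncated-sheafified-$\bb A^1$-localised theory over $k$ agrees with the one pulled back from $B$; this pins down the compatibility in diagram-chasing terms and, as the footnote to the theorem anticipates, makes the equivalence unique once Proposition~\ref{prop_1_to_BL} is available. \textbf{The main obstacle} I anticipate is purely bookkeeping: stringing together the two invocations of Lemma~\ref{lem:lke-b-b'} (one for $\bb Z\to B$, one for $B\to k$) through the commuting square of base-change functors, and verifying that at each stage the relevant presheaf still lies in the pbf-subcategory so that $H$ applies and stays symmetric monoidal — there is no hard new mathematical input, only the need to not lose track of which category one is working in (graded Nisnevich sheaves vs.\ $\cal Q$-modules vs.\ pbf-modules) and to confirm that $p$-completion commutes with everything in sight because all the categories are stable and presentable.
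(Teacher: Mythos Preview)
Your proposal is correct and follows essentially the same route as the paper: reduce via Remark~\ref{rem:pullbackQ} to comparing $L_\sub{pbf}L_{\Nis,\bb A^1}L^\sub{sm}(\bb Z_p(\star)^\sub{BL}|_{\Sm_B})$ with $\bb Z_p(\star)^\sub{BL}|_{\Sm_k}$, then use the left Kan extension property of $\tau^{\le j}\bb F_p(j)^\sub{syn}$ (Proposition~\ref{prop:syn-trunc-lke}(2)) together with Theorem~\ref{thm_BL_coh} to check the comparison is an equivalence modulo $p$. The paper phrases the key step more tersely via the naturality map $L_\sub{Nis}L^\sub{sm}\bb Z_{p,B}^\sub{BL}\to\bb Z_{p,k}^\sub{BL}$ and cites Propositions~\ref{prop:lke-restrict} and~\ref{prop:syn-trunc-lke} directly rather than routing through Lemma~\ref{lem:lke-b-b'} twice, but the substance is identical.
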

\begin{proof}
We will use the commutative diagram
\[\xymatrix{
\Gr\opp{Sh}_{\Nis, \bb A^1}(\Sm_k,\Spt)_\sub{c,pbf}\ar[r]^-H&\SH(k)\\
\Gr\opp{Sh}_{\Nis, \bb A^1}(\Sm_k,\Spt)_\sub{c}\ar[u]^{L_\sub{pbf}} & \\
\Gr\opp{Sh}_{\Nis, \bb A^1}(\Sm_B,\Spt)_\sub{c,pbf}\ar[r]_-H\ar[u]^{L_{\Nis,\bb A^1}L^\sub{sm}}&\SH(B)\ar[uu]_{i^*}
}\]
of Remark \ref{rem:pullbackQ}.

First, by naturality of Beilinson--Lichtenbaum cohomology there is a comparison map $L_{\sub{Nis}}L^\sub{sm}\bb Z_{p,B}^\sub{BL}\to \bb Z_{p,k}^\sub{BL}$ of $\bb E_\infty$-algebras in $\Gr\opp{Sh}_{\Nis, \bb A^1}(\Sm_k,\Spt)_\sub{c}$, which is an equivalence modulo $p$ by Propositions \ref{prop:lke-restrict} and \ref{prop:syn-trunc-lke}. Since the target satisfies $\bb A^1$-invariance and the $\bb P^1$-bundle formula by Theorem \ref{thm_BL_coh}, there is then an induced map $L_\sub{pbf}L_{\sub{Nis},\bb A^1}L^\sub{sm}\bb Z_{p,B}^\sub{BL}\to \bb Z_{p,k}^\sub{BL}$ which is again an equivalence modulo $p$ (since we have not changed the domain modulo $p$). By now applying the commutative diagram we obtain a comparison map of $\bb E_\infty$-algebras $i^*H\bb Z_{p,B}^\sub{BL}\to H\bb Z_{p,k}^\sub{BL}$ in $\SH(k)$, which is an equivalence modulo $p$. This completes the proof since the target is $p$-complete.
\end{proof}

\subsection{Voevodsky's slice conjectures over Dedekind domains and the Beilinson--Lichtenbaum equivalence}\label{ss:slice-dedekind}
We now apply the results of \S\ref{ss_coherent_BL}--\ref{ss_BL2} to Voevodsky's conjecture relating the motivic sphere and $\KGL$, in the special case of Dedekind domains and fields. We also show that all schemes which are smooth over a Dedekind domain are in $\mathfrak{BL}^p$. We begin by relating the motivic sphere to Beilinson--Lichtenbaum cohomology:

\begin{proposition}\label{prop_1_to_BL}
Let $B$ be a mixed characteristic Dedekind domain or a field. Then, for any prime number $p$, the unit map $1_B\to H\bb Z_{p,B}^\sub{BL}$ in $\SH(B)$ induces an equivalence $s^0(1_{B})_p^\comp\quis H\bb Z_{p,B}^\sub{BL}$.
\end{proposition}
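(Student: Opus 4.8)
The strategy is to split into the two cases $B$ a field and $B$ a mixed characteristic Dedekind domain, treating the field case first (where the hard work is already contained in the classical results), and then bootstrapping to the Dedekind case by a base change argument using the machinery of \S\ref{ss_BL2}.

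First I would treat the case $B = k$ a field. By a filtered colimit argument along smooth affine transition maps (using that $s^0(1)$, $\bb Z(j)^{\bb A}$, and $\bb Z_p(j)^\sub{BL}$ all commute with such, and that $\mathfrak{BL}^p$ is closed under them by Lemma~\ref{lemma_BL_properties}(4)) we may assume $k$ is perfect. In that case Theorem~\ref{thm:classical_BL_equiv} tells us $k \in \mathfrak{BL}^p$, so Theorem~\ref{thm:axiomatic_BL} furnishes a multiplicative equivalence $\delta_\sub{mot}^\sub{BL}\{\star\}: \bb Z(\star)^{\bb A}/p^m \quis \bb Z_p(\star)^\sub{BL}/p^m$ on $\Sm_k$, compatible with first Chern classes. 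Taking the limit over $m$ and then applying the motivic Eilenberg--MacLane functor $H$ of Definition~\ref{definition:graded-p1-mult} to the graded-$\bb P^1$-multiplicative object underlying $\bb Z(\star)^{\bb A}$ (which by \eqref{eqn_mot_to_EM} yields $s^0(\KGL_k)$) and to $\bb Z_p(\star)^\sub{BL}$ (which yields $H\bb Z_{p,k}^\sub{BL}$), we obtain an equivalence $(s^0(\KGL_k))_p^\comp \simeq H\bb Z_{p,k}^\sub{BL}$ of $\bb E_\infty$-algebras. Composing with Levine's theorem $s^0(1_k) \quis s^0(\KGL_k)$ (as recalled in Remark~\ref{rem_Levine}, or via Proposition~\ref{prop_1_vs_V} together with $\scr V_k \simeq \kgl_k$), and checking the resulting map agrees with $p$-completion of the unit map $1_k \to H\bb Z_{p,k}^\sub{BL}$ (both are induced by multiplicativity and the common first Chern class, so this is a matter of tracking the diagrams \eqref{eqn_1st_Chern_compatability} and \eqref{eq:c1-det2} through the construction of $H$), gives the claim over fields.

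For $B$ a mixed characteristic Dedekind domain, I would argue by pulling back to its residue fields and generic fibre, invoking Proposition~\ref{prop:conservative}: an object of $\SH(B)$ vanishes iff its pullback to every $\SH(k(x))$ does. Let $f_x: \Spec k(x) \to \Spec B$ be a point. Since $\KGL$ is an absolute motivic spectrum and $f_x^*$ commutes with the slice filtration for the morphism $\Spec k(x) \to \Spec B$ being a pro-(smooth affine) or a localization followed by a closed immersion --- more carefully, one uses that $f_x^*$ sends $\SH(B)^\sub{eff}(1)$ into $\SH(k(x))^\sub{eff}(1)$, so there is a natural comparison map $f_x^* s^0(1_B) \to s^0(1_{k(x)})$ --- and since $f_x^*(H\bb Z_{p,B}^\sub{BL})_p^\comp \simeq H\bb Z_{p,k(x)}^\sub{BL}$ by the base change Theorem~\ref{thm_syntomic_SH_base_change}, the unit map over $B$ pulls back (after $p$-completion) to the unit map over $k(x)$, which is an equivalence by the field case. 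Hence the fibre of $1_B \to H\bb Z_{p,B}^\sub{BL}$, after $p$-completion and after passing to $s^0$ or the relevant effective truncation, has vanishing pullback to all residue fields, so vanishes; concluding requires knowing that $p$-completion commutes with the relevant operations (it commutes with $f_x^*$ and with limits, hence with $s^0$ applied within the $p$-complete category) and that $s^0(1_B)_p^\comp$ and $H\bb Z_{p,B}^\sub{BL}$ are both already $p$-complete.

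\textbf{Main obstacle.} The delicate point is the compatibility of the slice filtration with the pullbacks $f_x^*$ to the residue fields, since these are neither smooth nor pro-smooth morphisms --- a closed point inclusion $\Spec k(x) \hookrightarrow \Spec B$ does not a priori commute with $s^0$. What saves the argument is that one only needs the \emph{one-sided} comparison map $f_x^* s^0(1_B) \to s^0(f_x^* 1_B) = s^0(1_{k(x)})$ coming from \eqref{eq:natural_slice}, together with the fact that over a field $s^0(1)$ is effective and co-effective (the latter being precisely the content of Levine's theorem plus Corollary~\ref{cor:rational-smd}-style reasoning), so that $s^0$ over $k(x)$ is computed and the comparison map can be shown to be an equivalence after $p$-completion by a connectivity/effectivity check of the kind performed in Proposition~\ref{prop_1_vs_V}. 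The secondary nuisance is the bookkeeping needed to ensure all the equivalences produced --- from Theorem~\ref{thm:axiomatic_BL}, from $H$, and from base change --- genuinely assemble into a map of $\bb E_\infty$-algebras compatible with the unit, rather than merely an abstract equivalence; this is handled by the uniqueness clauses in Theorem~\ref{thm:axiomatic_BL} and Proposition~\ref{prop:graded}.
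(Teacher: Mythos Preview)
Your field case matches the paper: combine Theorems~\ref{thm:classical_BL_equiv} and~\ref{thm:axiomatic_BL}, apply $H$, and use Levine's equivalence $s^0(1_k) \quis s^0(\KGL_k)$.

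For the Dedekind case you have correctly identified the obstacle, but your resolution does not work. You propose to take $s^0$ of the fibre of the unit and then pull back to residue fields via Proposition~\ref{prop:conservative}; this requires the Beck--Chevalley map $f_x^* s^0(1_B) \to s^0(1_{k(x)})$ to be an equivalence, which is precisely Corollary~\ref{corol_base_change_of_unit} and is \emph{deduced from} the present proposition --- so invoking it here is circular. The ``check of the kind performed in Proposition~\ref{prop_1_vs_V}'' only works there because $\scr V$ is an absolute motivic spectrum; no such base-change property is yet available for $s^0(1_B)$.

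The paper's fix is to reverse the order of operations --- pull back first, then pass to slices --- by replacing Proposition~\ref{prop:conservative} with Proposition~\ref{prop:detect-effective} (effectivity and $1$-effectivity are detected on field points). Working modulo $p$, there are three ingredients: (i) $H\bb F_{p,B}^\sub{BL}$ is effective, checked on residue fields via Proposition~\ref{prop:detect-effective}, Theorem~\ref{thm_syntomic_SH_base_change}, and the field case; (ii) $\Fil^1_\sub{slice} H\bb F_{p,B}^\sub{BL} = 0$, directly from Lemma~\ref{lemma_BL_mod_p}(1); (iii) the mod-$p$ fibre of the unit is $1$-effective, again by Proposition~\ref{prop:detect-effective}, base change, and the field case. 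Then (i) and (ii) say $H\bb F_{p,B}^\sub{BL}$ is its own zeroth slice, and (iii) shows the unit induces an equivalence on $s^0$. The point is that base change is applied only to $H\bb F_p^\sub{BL}$ and to the fibre of the unit --- both of which have known pullbacks --- never to $s^0(1_B)$ itself.
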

\begin{proof}
Suppose first that $B=k$ is a field. Combining Theorems \ref{thm:classical_BL_equiv} and \ref{thm:axiomatic_BL}, and taking the limit as $m\to\infty$, yields a map $\bb Z(\star)^{\bb A}\to \bb Z_p(\star)^\sub{BL}$ of $\bb E_\infty$-algebras in graded presheaves of complexes on $\Sm_k$, compatibly with first Chern classes, which is an equivalence modulo any power of $p$. By shearing and then applying the functor $H$ (and using Remark \ref{rem:recover}(2)), we obtain an equivalence of $\bb E_\infty$-algebras $s^0(\KGL_k)_p^\comp\simeq H\bb Z_{p,k}^\sub{BL}$ in $\SH(k)$. We also know, by Levine's theorem (see Remark \ref{rem_Levine}), that the unit map $1_k\to\KGL_k$ induces an equivalence on $0$-slices. Composing shows that the unit map of $H\bb Z_{p,k}^\sub{BL}$ indeed induces $s^0(1_B)_p^\comp\quis H\bb Z_{p,k}^\sub{BL}$, as desired.

Now we treat the case that $B$ is a mixed characteristic Dedekind domain. It is enough to prove the desired equivalence modulo $p$. Note first that $H\bb F_{p,B}^\sub{BL}$ is effective: indeed, this may be checked by pulling back to fields by Proposition~\ref{prop:detect-effective}, where it follows from Theorem \ref{thm_syntomic_SH_base_change} and the fact that the proposition has already been proved over fields. Next, $\Fil^1_\sub{slice}H\bb F_{p,B}^\sub{BL}=0$ since Beilinson--Lichtenbaum cohomology vanishes in negative weights (Lemma \ref{lemma_BL_mod_p}). So it remains to prove that the mod-$p$ reduction of the fibre of the unit map $1_{B}\to H\bb Z_{p,B}^\sub{BL}$ is $1$-effective. But this may be checked by pulling back to fields by Proposition~\ref{prop:detect-effective} again, where it again follows from Theorem \ref{thm_syntomic_SH_base_change} and the treated case of fields.
\end{proof}

\begin{corollary}\label{corol_base_change_of_unit}
Let $B$ be a mixed characteristic Dedekind domain or a field; also let $k$ be a mixed characteristic Dedekind domain or a field, and let $i:B\to k$ be a map of rings. Then the canonical map of $\bb E_\infty$-algebras \[i^*(s^01_B)\To s^01_k\] in $\SH(k)$ is an equivalence.
\end{corollary}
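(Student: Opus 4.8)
The plan is to deduce this corollary directly from Proposition~\ref{prop_1_to_BL} together with the base change property of the Beilinson--Lichtenbaum motivic spectrum in Theorem~\ref{thm_syntomic_SH_base_change}, combined with the rational statement already available from Corollary~\ref{cor:rational-smd}(6). We will show the canonical map $i^*(s^0 1_B) \to s^0 1_k$ becomes an equivalence after rationalisation and after reduction modulo every prime $p$; by the standard fracture/arithmetic argument (as in Example~\ref{ex:rationalization-cdh}), this suffices, since a map of spectra (or motivic spectra, checked via $\omega^\infty$ and evaluation on smooth schemes) which is a rational equivalence and a mod-$p$ equivalence for all $p$ is an equivalence.

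First I would handle the rational part: the map $i^*(s^0 1_B)_\bb Q \to (s^0 1_k)_\bb Q$ is an equivalence by Corollary~\ref{cor:rational-smd}(6) (applied with base $\Spec(\bb Z)$, using that both $B$ and $k$ admit structure maps to $\Spec(\bb Z)$, and that $i^* f_B^* = f_k^*$ where $f_B : B \to \bb Z$, $f_k : k \to \bb Z$ are the structure maps). More precisely, Corollary~\ref{cor:rational-smd}(6) states that $g^* s^0(1_\bb Z) \to s^0(1_X)$ is a rational equivalence for any qcqs scheme $g : X \to \Spec(\bb Z)$; applying this to $B$, to $k$, and composing, one gets that $i^*(s^0 1_B)_\bb Q \to (s^0 1_k)_\bb Q$ is an equivalence. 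Next, for the mod-$p$ part, Proposition~\ref{prop_1_to_BL} gives equivalences $s^0(1_B)_p^\comp \simeq H\bb Z_{p,B}^\sub{BL}$ and $s^0(1_k)_p^\comp \simeq H\bb Z_{p,k}^\sub{BL}$ of $\bb E_\infty$-algebras, compatibly with the unit maps. Since $i^*$ is symmetric monoidal and commutes with $p$-completion (as $p$-completion is a Bousfield localisation preserved by any symmetric monoidal left adjoint, or directly because the mod-$p$ Moore spectrum is a finite object), we have $i^*(s^0(1_B)_p^\comp) \simeq (i^* s^0(1_B))_p^\comp$. Combining with Theorem~\ref{thm_syntomic_SH_base_change}, which provides an equivalence $(i^* H\bb Z_{p,B}^\sub{BL})_p^\comp \simeq H\bb Z_{p,k}^\sub{BL}$, and chasing the unit maps, one concludes that the mod-$p$ reduction of $i^*(s^0 1_B) \to s^0 1_k$ is an equivalence.

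The main point requiring care is the compatibility of the various identifications with the unit map: Theorem~\ref{thm_syntomic_SH_base_change} as stated only asserts the \emph{existence} of an equivalence $(i^* H\bb Z_{p,B}^\sub{BL})_p^\comp \simeq H\bb Z_{p,k}^\sub{BL}$ of $\bb E_\infty$-algebras, without claiming it is the canonical comparison map or that it respects the pointings by the motivic sphere. To circumvent this, I would argue as follows: by Proposition~\ref{prop_1_to_BL}, the source $(i^* s^0(1_B))_p^\comp \simeq i^*(s^0(1_B)_p^\comp) \simeq i^* H\bb Z_{p,B}^\sub{BL}$ becomes, after $p$-completion, an $\bb E_\infty$-algebra under $i^* 1_B = 1_k$ whose unit map, after applying $\Fil^0_\slice$ and $s^0$, is an equivalence onto its $0$-slice (since $s^0 i^* H\bb Z_{p,B}^\sub{BL}$ is itself $p$-complete and agrees with the $p$-completion of $i^*(s^0 1_B)$ — here one uses that $H\bb Z_{p,B}^\sub{BL}$ is effective and co-effective, i.e.\ $\Fil^1_\slice = 0$, which was established in the proof of Proposition~\ref{prop_1_to_BL}). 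Hence the canonical composite $(i^* s^0 1_B)_p^\comp \to (s^0 1_k)_p^\comp$, both sides identified with the $0$-slice of a $p$-complete $\bb E_\infty$-algebra receiving a map from $1_k$ inducing an equivalence on $0$-slices, is forced to be an equivalence by the universal property of $s^0$ as the initial effective, co-effective motivic $\bb E_\infty$-algebra under $1_k$ in the relevant sense — or, more concretely, because $s^0(1_k) \to s^0(i^* H\bb Z_{p,B}^\sub{BL})$ and $s^0(1_k) \to s^0(H\bb Z_{p,k}^\sub{BL})$ are both equivalences, and Theorem~\ref{thm_syntomic_SH_base_change} shows the targets are equivalent, so the triangle of $0$-slices commutes up to the automorphisms, which are trivial on $s^0(1_k)$. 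This last bookkeeping with uniqueness of maps out of $s^0(1_k)$ is the step I expect to be the most delicate, but it is purely formal given the vanishing $\Fil^1_\slice H\bb Z^\sub{BL} = 0$ and Proposition~\ref{prop_1_to_BL}.
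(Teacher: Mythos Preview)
Your proposal is correct and follows the same approach as the paper's proof: rational equivalence via Corollary~\ref{cor:rational-smd}(6), and mod-$p$ equivalence via Proposition~\ref{prop_1_to_BL} combined with Theorem~\ref{thm_syntomic_SH_base_change}. The compatibility concern you raise is legitimate but admits a simpler resolution than the one you sketch: the equivalence of Theorem~\ref{thm_syntomic_SH_base_change} is a map of $\bb E_\infty$-algebras, and any such map automatically respects the unit map from $1_k$; hence the square relating $i^*(s^0 1_B)/p$, $(i^* H\bb Z_{p,B}^\sub{BL})/p$, $H\bb Z_{p,k}^\sub{BL}/p$, and $s^0(1_k)/p$ commutes on the nose, so no ``triangle up to automorphisms'' argument is needed.
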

\begin{proof}
The map is a rational equivalence by Corollary \ref{cor:rational-smd}, so it remains to prove that it is an equivalence modulo $p$ for all prime numbers $p$. But that follows by combining Theorem \ref{thm_syntomic_SH_base_change} and Proposition \ref{prop_1_to_BL}.
\end{proof}

Using the absolute effective motivic spectrum $\cal V$ of \S\ref{subsub:V}, we may now relate the motivic sphere to $\KGL$; the following theorem is not new when $B$ is a field, and a version of it was proved by the first author via Spitzweck's version of motivic cohomology \cite{Bachmann2022}.
\begin{proposition}\label{prop:specz_slices}
Let $B$ be a mixed characteristic Dedekind domain or a field. Then the following hold in $\SH(B)$.
\begin{enumerate}
\item The map to the zero-th slice $\scr V_{B}/\beta \rightarrow s^0(\scr V_{B}/\beta)$ is an equivalence.
\item The map $\scr V_{B} \rightarrow \kgl_{B}$ of \eqref{eq:fr-to-kgl} is an equivalence.
\item The unit map $1_{B} \rightarrow \KGL_{B}$ induces an equivalence $s^0(1_{B}) \xrightarrow{\simeq} s^0(\KGL_{B})$
\end{enumerate}
\end{proposition}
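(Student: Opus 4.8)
The plan is to prove the three statements of Proposition~\ref{prop:specz_slices} in the order (1), (2), (3), reducing throughout to the already-established case of fields (Levine's theorem, recorded in Remark~\ref{rem_Levine}, together with Proposition~\ref{prop_1_to_BL}) by base-change and conservativity arguments. The crucial structural input is that $\scr V$, $1$, and $\KGL$ are all \emph{absolute} motivic spectra, i.e.\ they are stable under pullback along arbitrary maps of qcqs schemes; for $\scr V$ this is the stability of $\mathrm{Vect}$ under pullback recalled in \S\ref{subsub:V}, for $1$ it is formal, and for $\KGL$ it is Cisinski's theorem recalled in Remark~\ref{rem_abs}. Combined with Corollary~\ref{corol_base_change_of_unit} (base change of $s^0(1)$ from a Dedekind domain or field to a field), and Propositions~\ref{prop:conservative} and~\ref{prop:detect-effective} (detecting vanishing/effectivity of motivic spectra by pullback to residue fields), this will let us reduce each assertion to a statement about $\SH$ of a field.

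For part (1), I would argue that the fibre of the unit map $\scr V_B/\beta \to s^0(\scr V_B/\beta)$, i.e.\ $\Fil^1_\slice(\scr V_B/\beta)$, vanishes. Using that $\scr V$ (and hence $\scr V/\beta$) is stable under pullback, and that $\Fil^1_\slice$ commutes with pullback along $i_x^* $ up to a natural comparison map, one reduces by Proposition~\ref{prop:conservative} to showing $\Fil^1_\slice(\scr V_k/\beta) = 0$ for every residue field $k$ of $B$. But over a field $k$ we have $\scr V_k \simeq \kgl_k$ by \cite[Corollary~5.2]{HoyoisJelisiejewNardinTotaroYakerson2021}, so $\scr V_k/\beta \simeq \kgl_k/\beta \simeq s^0(\KGL_k) = s^0(\kgl_k)$, which is already its own zeroth slice (its slice filtration in positive degrees is the Bott filtration, which is killed by $/\beta$); alternatively invoke Proposition~\ref{prop_1_vs_V} and Proposition~\ref{prop_1_to_BL}. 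Granting (1), part (2) follows by a standard bootstrap: the cofibre of $\scr V_B \to \kgl_B$ is a motivic spectrum whose pullback to each residue field $k$ is the cofibre of $\scr V_k \to \kgl_k$, which vanishes by loc.\ cit.; apply Proposition~\ref{prop:conservative} once more. (One must check the relevant map to $\kgl$ is the one of \eqref{eq:fr-to-kgl} and that it is compatible with pullback, which is built into the definition of $\scr V$ as an absolute spectrum.)

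For part (3), consider the composite $1_B \to \scr V_B \to \kgl_B$; taking zeroth slices, Proposition~\ref{prop_1_vs_V} gives $s^0(1_B)\xrightarrow{\simeq} s^0(\scr V_B)$, so it suffices to show $s^0(\scr V_B) \xrightarrow{\simeq} s^0(\kgl_B)$, which is immediate from part (2). Since $s^0(\kgl_B) = s^0(\Fil^0_\slice \KGL_B) = s^0(\KGL_B)$, this is exactly the asserted equivalence $s^0(1_B)\xrightarrow{\simeq} s^0(\KGL_B)$.

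I expect the main obstacle to be part (1): unlike parts (2) and (3), which are ``detect vanishing of a cofibre after pullback'' arguments that reduce cleanly via Proposition~\ref{prop:conservative}, part (1) concerns the \emph{slice filtration}, which does not commute with arbitrary pullback (only with pullback along smooth, or cofiltered-limit-of-smooth-affine, morphisms, as in Remark~\ref{rem:naturality}) — so one cannot naively pull back $\Fil^1_\slice$ along $i_x^*\colon \SH(B)\to\SH(k(x))$. The resolution is the standard one used already in the proof of Corollary~\ref{cor:rational-smd}(1): rather than pulling back the filtration, observe that $\scr V/\beta$ is \emph{effective} and show directly that it is \emph{co-effective} ($\Fil^1_\slice = 0$). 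Co-effectivity of an effective spectrum $E$ means $\omega^\infty(E\otimes\bb T^{\otimes -j})$ vanishes as a presheaf for all $j>0$; since $\scr V/\beta$ is an absolute spectrum and $X\mapsto \omega^\infty(\scr V_X/\beta \otimes \bb T_X^{\otimes -j})(X)$ is therefore determined by its restriction to smooth $\bb Z$-schemes (via cdh-local left Kan extension, as for $\KH$), one reduces to checking the vanishing on smooth affine $\bb Z$-schemes, then by further localisation to fields, where $\scr V_k/\beta \simeq s^0(\kgl_k)$ is co-effective by Levine's theorem. Making this reduction precise — in particular justifying that $\scr V/\beta$ behaves like $\KH$ under cdh-descent and left Kan extension — is the technical heart of the argument, but all the needed machinery (Proposition~\ref{prop:auto-cdh}, the absolute-spectrum formalism, Proposition~\ref{prop:detect-effective}) is already in place.
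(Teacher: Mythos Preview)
Your argument for (2) is circular: applying Proposition~\ref{prop:conservative} to the cofibre of $\scr V_B\to\kgl_B$ requires knowing that $i_x^*\kgl_B\simeq\kgl_{k(x)}$, but base-change stability of $\kgl$ is not available at this point---it is Corollary~\ref{cor:eff-base-change}(1), proved only after the main Theorem~\ref{thm:a1-a1cdh} (see the warning following Lemma~\ref{lemm:convergence-bootstrap}). The paper instead deduces (2) \emph{from} (1): since $\KGL_B\simeq\scr V_B[\beta^{-1}]$, it suffices to show that each map $\Fil^0_\slice(\bb T^{\otimes j}\otimes\scr V_B)\xrightarrow{\beta}\Fil^0_\slice(\bb T^{\otimes j-1}\otimes\scr V_B)$ is an equivalence for $j\le 0$; its cofibre is $\Fil^0_\slice(\bb T^{\otimes j-1}\otimes\scr V_B/\beta)$, which vanishes because (1) says $\scr V_B/\beta$ is already a zeroth slice.

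Your fallback for (1) also does not close. You assert that the cdh sheaf $X\mapsto\omega^\infty((\scr V_X/\beta)\otimes\bb T_X^{\otimes -j})(X)$ is ``determined by its restriction to smooth $\Z$-schemes via cdh-local left Kan extension, as for $\KH$'', but that property of $\KH$ rests on the specific fact $L_\cdh\K^\cn\simeq\KH$ with $\K^\cn$ left Kan extended from smooth $\Z$-algebras; it is not a consequence of being an absolute motivic spectrum, and Proposition~\ref{prop:auto-cdh} only gives cdh-sheafiness. Even granting that reduction, the further step ``by localisation to fields'' is not justified: cdh-points are henselian valuation rings, not fields. The paper's route is the one you listed in your opening paragraph but then set aside. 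Since $s^0(1_X)\simeq s^0(\scr V_X/\beta)$ holds for \emph{every} qcqs $X$ (Proposition~\ref{prop_1_vs_V}, noting the cofibre of $\scr V_X\to\scr V_X/\beta$ is $1$-effective), Corollary~\ref{corol_base_change_of_unit} gives $i^*s^0(\scr V_B/\beta)\simeq i^*s^0(1_B)\simeq s^0(1_k)\simeq s^0(\scr V_k/\beta)$ directly. Thus the pullback of $\scr V_B/\beta\to s^0(\scr V_B/\beta)$ along each $i\colon\Spec k\to\Spec B$ is the known equivalence over $k$, and Proposition~\ref{prop:conservative} finishes.
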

\begin{proof}
In the case of a field $k$, part (3) is Levine's theorem (see Remark \ref{rem_Levine}), part (2) is \cite[Corollary 5.2]{HoyoisJelisiejewNardinTotaroYakerson2021}, and then part (1) follows since $\kgl_k/\beta\simeq s^0(\kgl_k)$ is a zero slice (as recalled in \S\ref{subsub:slices-kgl}). So now let $B$ be a mixed characteristic Dedekind domain.

First note that for any qcqs scheme $X$, the unit map $s^0(1_X)\to s^0(\cal V_X/\beta)$ is an equivalence: indeed, the map $1_X\to\cal V_X$ induces an equivalence on zero slices by Proposition \ref{prop_1_vs_V}, and the same is true of $\cal V_X\to\cal V_X/\beta$ since the cofibre $\cal V_X\otimes\bb T_X$ is $1$-effective.

We may now prove part (1) for $B$. Since equivalences may be checked on points, it is enough to show that $i^*(\scr V_{B}/\beta) \rightarrow i^*s^0(\scr V_{B}/\beta)$ is an equivalence for all fields $i:B\to k$. But the left hand side is $\cal V_k/\beta$, and we claim the right hand side is $s^0(\cal V_k/\beta)$; indeed, this follows by applying the previous paragraph to both $B$ and $k$ and then appealing to Corollary \ref{corol_base_change_of_unit}. So after pulling back to $k$ the map obtained is the canonical one $\cal V_k/\beta\to s^0(\cal V_k/\beta)$, which is an equivalence by part (1) for fields.

(2) now follows by the same argument as in \cite[Theorem 1.1(1)]{Bachmann2022}, which we briefly recall.
We know that $\KGL_B \wequi \cal V_B[\beta^{-1}]$ and so it suffices to show that $\Fil^0_\sub{slice} \bb T^{\otimes j} \otimes \cal V_B \xrightarrow{\beta} \Fil^0_\sub{slice} \bb T^{\otimes j-1} \otimes \cal V_B$ is an equivalence for every $j \le 0$.
But the cofiber of this map is $\Fil^0_\sub{slice} \bb T^{\otimes j-1} \otimes \scr V_B/\beta$, which vanishes by (1).

(3) clearly follows from Proposition \ref{prop_1_vs_V} and part (2).
\end{proof}

We reach the main theorem of the section, describing $\bb A^1$-motivic cohomology with finite and $p$-adic coefficients over Dedekind domains in terms of Beilinson--Lichtenbaum cohomology:

\begin{theorem}\label{theorem_BL1}
Let $B$ be a mixed characteristic Dedekind domain or a field, and let $p$ be any prime number.
\begin{enumerate}
\item There is a unique map of $\bb E_\infty$-algebras $s^0(\KGL_B)\to H\bb Z_{p,B}^\sub{BL}$ in $\SH(B)$; after $p$-adic completion it is an equivalence \[s^0(\KGL_B)_p^\comp\quis H\bb Z_{p,B}^\sub{BL}.\]
\item All smooth $B$-schemes belong to the class $\frak{BL}^p$.
\item On the category of smooth $B$-schemes, for each $m\ge1$ there is a unique map of $\bb E_\infty$-algebras in graded presheaves of complexes $\bb Z(\star)^{\bb A}/p^m\to \bb Z_p(\star)^\sub{BL}/p^m$ which is compatible with first Chern classes in the sense of \eqref{eqn_1st_Chern_compatability}; moreover, the map is an equivalence.
\end{enumerate}
\end{theorem}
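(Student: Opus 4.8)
\textbf{Proof plan for Theorem~\ref{theorem_BL1}.}

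The plan is to prove the three assertions in the order (1) $\Rightarrow$ (2) $\Rightarrow$ (3), with the bulk of the new work lying in part (2); part (3) will then be a formal consequence of the axiomatic machinery already established. For part (1), I would first combine Proposition~\ref{prop:specz_slices}(3) with Proposition~\ref{prop_1_to_BL} to produce the equivalence $s^0(\KGL_B)_p^\comp\simeq s^0(1_B)_p^\comp\simeq H\bb Z_{p,B}^\sub{BL}$ of $\bb E_\infty$-algebras. The subtle point is the \emph{uniqueness} of the (non-completed) map $s^0(\KGL_B)\to H\bb Z_{p,B}^\sub{BL}$: since $H\bb Z_{p,B}^\sub{BL}$ is $p$-complete, any such map factors through $s^0(\KGL_B)_p^\comp$, so uniqueness follows once one knows that $\pi_0\Map_{\CAlg(\SH(B))}(s^0(\KGL_B)_p^\comp, H\bb Z_{p,B}^\sub{BL})$ is a point. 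This last fact can be checked after pulling back to the residue fields and generic fibre of $B$ using Proposition~\ref{prop:conservative}/Proposition~\ref{prop:detect-effective} together with Theorem~\ref{thm_syntomic_SH_base_change}, reducing to the known rigidity statement over fields (where it is the uniqueness clause implicit in Theorem~\ref{thm:classical_BL_equiv} and Levine's theorem).

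For part (2), I would use part (1) to transport all the concrete structure of Beilinson--Lichtenbaum cohomology (Theorem~\ref{thm_BL_coh}, Lemma~\ref{lemma_BL_mod_p}, Corollary~\ref{corol_BM_corol2}, the syntomic properties in Theorem~\ref{theorem_syntomic_properties}) onto $\bb A^1$-motivic cohomology of smooth $B$-schemes, and then verify conditions (A)--(H) one at a time. Concretely, part (1) combined with Remark~\ref{rem_coh_over_base} and Definition~\ref{definition:graded-p1-mult} yields, for a smooth $B$-scheme $X$ and each $j$, an equivalence $\bb Z(j)^{\bb A}(X)_p^\comp\simeq \bb Z_p(j)^\sub{BL}(X)$ of presheaves on the small Nisnevich (indeed \'etale) site of $X$ (not yet asserted to be multiplicative or compatible with Chern classes --- that is deferred to (3), exactly as in Remark~\ref{rem:cohere}). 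From this unstructured comparison: condition~(A) follows since $\tau^{\le j}$ is built into $\bb Z_p(j)^\sub{BL}$ and since $\bb Z(j)^\bb A/p^m$ is recovered as $\bb Z_p(j)^\sub{BL}/p^m$ by Lemma~\ref{lemma_BL_mod_p}(2); condition~(B) follows from Example~\ref{example_BL_cohomology}(1) identifying $\bb F_p(j)^\sub{BL}$ with $L_\sub{Nis}\tau^{\le j}R\Gamma_\sub{\'et}(-,\mu_p^{\otimes j})$ over $\bb Z[\tfrac1p]$-schemes and the discreteness of $\mu_p^{\otimes j}$; condition~(C) is the Beilinson--Lichtenbaum equivalence Lemma~\ref{lemma_BL_mod_p}(4); conditions~(D) and~(F) follow from Corollary~\ref{corol_BM_corol2} (and Corollary~\ref{corol_BM_corol}) since $\tau^{<j}\bb Z_p(j)^\sub{BL}=\tau^{<j}\bb Z_p(j)^\sub{syn}$; condition~(E) is the Milnor-range identification $\hat K^M_j/p^m\isoto H^j_\sub{syn}(-,\bb Z/p^m(j))$ of Theorem~\ref{thm_syntomic_Milnor}, together with the compatibility of the weight-one piece with $c_1^\sub{syn}$ and $c_1^\bb A$ (Theorem~\ref{theorem:SH_mot_coh}(4), Proposition~\ref{proposition_1st_Chern_class_syn}) --- this is the one place multiplicative structure is needed, but only the mild amount recorded in Theorem~\ref{thm_BL_coh} and the definition of $\frak{BL}^p$; condition~(G) is the coconnectivity of truncated \'etale cohomology over $\bb Z[\tfrac1p]$-schemes; and condition~(H) is the completeness/connectivity of $\Fil^\star_\bb A\KH$, which over regular Noetherian $B$-schemes of dimension $\le\dim B+\dim X$ is supplied by Lemma~\ref{lemm:convergence-bootstrap} and Proposition~\ref{prop:detecting-veff}(1) (equivalently, via part (1) this matches the completeness of $\bb Z_p(\star)^\sub{BL}$ from its construction). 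By Lemma~\ref{lemma_BL_properties}(1),(2) and the fact that $\mathfrak{BL}^p$ can be checked Nisnevich-locally, it suffices to verify the conditions on henselian local rings of smooth $B$-schemes, where all the cited statements apply directly.

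Part~(3) is then immediate: since every smooth $B$-scheme lies in $\frak{BL}^p$ by part~(2), Theorem~\ref{thm:axiomatic_BL} applies verbatim to the category of smooth $B$-schemes and produces the unique $\bb E_\infty$-comparison map $\delta_\sub{mot}^\sub{BL}\{\star\}:\bb Z(\star)^\bb A/p^m\to \bb Z_p(\star)^\sub{BL}/p^m$ compatible with first Chern classes, which is moreover an equivalence. The main obstacle in the whole argument is the bookkeeping in part~(2): one must be careful that the equivalence extracted from part~(1) is genuinely an equivalence of presheaves on the small \'etale site of each smooth $B$-scheme (so that the conditions, which quantify over \'etale $X$-schemes and their henselian/strictly-henselian local rings, can be checked), and that condition~(H) is available \emph{before} the completeness results of \S\ref{sec:a1-comparison} --- this is why I would route the connectivity of $\Fil^\star_\bb A\KH$ through Lemma~\ref{lemm:convergence-bootstrap} and the very-effectivity input of Proposition~\ref{prop:detecting-veff} rather than through the (later) Corollary~\ref{cor:bdd-a1}. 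A secondary subtlety is ensuring the uniqueness clauses in parts~(1) and~(3) are genuinely justified by $p$-completeness plus the field case, rather than circularly invoking the very comparison being constructed.
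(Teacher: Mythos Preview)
Your overall structure matches the paper's: prove (1) via Propositions~\ref{prop_1_to_BL} and~\ref{prop:specz_slices}(3), use the resulting equivalence to verify (A)--(H) for part~(2), and then invoke Theorem~\ref{thm:axiomatic_BL} for part~(3). The paper also separates off the field case (already done in Theorem~\ref{thm:classical_BL_equiv}) and focuses on the Dedekind case; for condition~(H) it cites Remark~\ref{remark_completeness_of_fil}(1) directly rather than assembling it from Lemma~\ref{lemm:convergence-bootstrap}.

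The genuine gap is in your treatment of condition~(E). The equivalence $\bb Z(\star)^{\bb A}/p^m\simeq \bb Z_p(\star)^\sub{BL}/p^m$ that you extract from part~(1) by applying $\omega^{\infty,\gr}$ is \emph{only} an equivalence of $\bb E_1$-algebras, because $\omega^{\infty,\gr}$ is not lax symmetric monoidal (Remark~\ref{rem:not_E2}); the paper explicitly flags this with a warning. Condition~(E) asks that the map $(\roi_{U,x}^{h\,\times})^{\otimes j}\to H^j_{\bb A}(\roi_{U,x}^h,\bb Z/p^m(j))$ induced by $c_1^{\bb A}$ and the \emph{product} on $\bb A^1$-motivic cohomology identifies with improved Milnor $K$-theory. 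To reduce this to the syntomic side via Theorem~\ref{thm_syntomic_Milnor} you need two things: the $\bb E_1$-structure (giving compatibility of bigraded rings), and separately that the equivalence carries $c_1^{\bb A}$ to $c_1^\sub{BL}$ at the level of \emph{units} $\bb G_m\to\pi_1F^1$, not just on $c_1(\scr O(1))$. The paper accomplishes the latter via Lemma~\ref{lem:units-Chern}: Remark~\ref{rem:recover}(1) gives that the $\bb E_1$-equivalence matches $c_1^{\bb A}(\scr O(1))$ with $c_1^\sub{BL}(\scr O(1))$, and Lemma~\ref{lem:units-Chern} (using the projective bundle formula and vanishing in negative weights) bootstraps this to all units. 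Your phrase ``compatibility of the weight-one piece with $c_1^\sub{syn}$ and $c_1^{\bb A}$'' gestures at the conclusion but does not supply this mechanism.

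A smaller point: your uniqueness argument for part~(1), reducing to fields via Proposition~\ref{prop:conservative}, is for \emph{objects} of $\SH(B)$, not for $\pi_0$ of mapping spaces of $\bb E_\infty$-algebras; it is not clear this strategy works as stated. The paper is terse here and does not spell out uniqueness either, but the natural argument is that any $\bb E_\infty$-map $s^0(\KGL_B)\to H\bb Z_{p,B}^\sub{BL}$ is determined by its restriction along the unit $1_B\to s^0(\KGL_B)$ (which is unique), since $H\bb Z_{p,B}^\sub{BL}$ is a zero slice.
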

\begin{proof}
Part (1) follows from Propositions \ref{prop_1_to_BL} and \ref{prop:specz_slices}(3).


(2): We assume that $B$ is a Dedekind domain, since the case of fields was already treated in Theorem \ref{thm:classical_BL_equiv}. We must check conditions (A)--(H) for all smooth $B$-schemes. The completeness condition (H) was already explained in Remark \ref{remark_completeness_of_fil}(1), and condition (F) follows from Theorem \ref{theorem_syntomic_properties}(8).

Next we apply $\omega^{\infty,\gr}$ to the equivalence from part (1) to obtain an equivalence \begin{equation}\bb Z(\star)^{\bb A}/p^m\quis \bb Z_p(\star)^\sub{BL}/p^m\label{eqn:mot_vs_BL_not_mult}\end{equation} of graded presheaves of complexes. (Warning: although both sides are $\bb E_\infty$-algebras in graded presheaves of complexes, we saw in \S\ref{ss:omega} that $\omega^{\infty,\gr}$ is not lax symmetric monoidal, so a priori it does not output an equivalence compatible with the $\bb E_\infty$-algebra structures.) Therefore checking conditions (A)--(D) and (G) reduce to the analogous statements with $\bb Z_p(j)^\sub{BL}/p^m$ in place of $\bb Z_p(j)^\bb A/p^m$. Condition (A) is clear by definition of Beilinson--Lichtenbaum cohomology; conditions (B) and (G) follow from Remark \ref{example_BL_cohomology}; condition (C) follows from Lemma \ref{lemma_BL_mod_p}(4); condition (D) becomes condition (F), which we already explained follows from Theorem \ref{theorem_syntomic_properties}(8).

It remains to prove (E), which requires upgrading \eqref{eqn:mot_vs_BL_not_mult} to being mildly multiplicative and compatible with first Chern classes on units. Firstly, Remark \ref{rem:recover}(1) implies that \eqref{eqn:mot_vs_BL_not_mult} upgrades to an equivalence of $\bb E_1$-algebras in graded presheaves of complexes, and that it is moreover compatible with first Chern classes of $\roi(1)$ in the sense that the induced isomorphism $H^2_{\bb A}(\bb P_B^1,\bb Z(1)/p^m)\isoto H^2_\sub{BL}(\bb P_B^1,\bb Z(1)/p^m)$ sends $c_1^{\bb A}(\roi(1))$ to $c_1^\sub{BL}(\roi(1))$. We may now apply Lemma~\ref{lem:units-Chern} to $F=\bb Z(\star)^\bb A/p^m[2\star]$, equipped with two possible first Chern class maps \[c_1:R\Gamma_{\Nis}(-,\Gm)[1] \xto{c_1^{\bb A}} \bb Z(1)^\bb A/p^m[2]\] and \[c_1':R\Gamma_{\Nis}(-,\Gm)[1]\xto{c_1^\sub{BL}} \bb Z_p(1)^\sub{BL}/p^m[2]\stackrel{\eqref{eqn:mot_vs_BL_not_mult}}\simeq \bb Z(1)^\bb A/p^m[2].\] The hypotheses of that lemma are satisfied: (1) by Remark \ref{rem:eff-neg}, (2) by immediately above, and (3) by Theorem \ref{thm_pbf_Acdh}. The lemma therefore implies that the diagram of presheaves of abelian groups on $\Sm_X$
\begin{equation}\label{eq:c1-compatible}
\begin{tikzcd}
 & \bb G_m \ar{dr}{c_1^\sub{BL}} \ar[swap]{dl}{c_1^{\bb A}} & \\
 H_{\bb A}^1(-, \bb Z/p^m(1)) \ar{rr}{\eqref{eqn:mot_vs_BL_not_mult}\cong} & &  H_\sub{BL}^1(-, \bb Z/p^m(1))
\end{tikzcd}
\end{equation}
is commutative. But since \eqref{eqn:mot_vs_BL_not_mult} has been explained to be an equivalence of $\bb E_1$-algebras, it induces an isomorphism of presheaves of bigraded rings $\bigoplus_{i\in\bb Z,\, j\ge0}H_\bb A^i(-,\bb Z/p^m(j))\isoto \bigoplus_{i\in\bb Z,\, j\ge0}H^i_\sub{BL}(-,\bb Z_p/p^m(j))$. Putting all this together, we see that, for any smooth $B$-scheme $X$, point $x\in X$,  and $j\ge1$, the map $(\roi_{X,x}^{\sub{h}\,\times})^{\otimes j}\to H^j_{\bb A}(\roi_{X,x}^{\sub{h}},\bb Z/p^m(j))$ (induced by $c_1^\bb A$ and the multiplicative structure on $\bb A^1$-motivic cohomology) agrees with the map $(\roi_{X,x}^{\sub{h}\,\times})^{\otimes j}\to H^j_\sub{BL}(\roi_{X,x}^{\sub{h}},\bb Z_p/p^m(j))$ (induced by $c_1^\sub{BL}$ and the multiplicative structure on Beilinson--Lichtenbaum cohomology). In this way condition (E) reduces to the analogous assertion in Beilinson--Lichtenbaum cohomology, or equivalently in syntomic cohomology since the two cohomologies agree in degrees $\le$ weight; therefore Theorem \ref{thm_syntomic_Milnor} completes the proof of condition (E).

(3): The existence of the map follows from part (2) thanks to Theorem \ref{thm:axiomatic_BL}. The uniqueness follows from the proof of the latter theorem. Indeed, the proof really showed that, for any full category $\frak C\subset\frak{BL}_p$ closed under \'etale maps (i.e., $X\in \frak C$ and $Y\to X$ \'etale implies $Y\in\frak C$), for example $\frak C=\Sm_B$, there is a unique map of $\bb E_\infty$-algebras $\bb Z(\star)^{\bb A}/p^m\to \bb Z_p(\star)^\sub{BL}/p^m$ on $\frak C$ compatible with first Chern classes.
\end{proof}

We deduce the following form of the Beilinson--Lichtenbaum conjecture:

\begin{corollary}[Hilbert 90 and Beilinson--Lichtenbaum equivalence]\label{corollary:BL_over_B}
Let $B$ be a mixed characteristic Dedekind domain or a field, and $j\ge0$. Then, on the category of smooth $B$-schemes, the presheaf of complexes $\bb Z(j)^\bb A$ is Nisnevich locally supported in cohomological degrees $\le j$, and the canonical map \[\bb Z(j)^\bb A\To L_\sub{Nis}\tau^{\le j+1}L_\sub{\'et}\bb Z(j)^\bb A\] is an equivalence.\footnote{In fact, this equivalence also holds if we replace $L_\sub{Nis}$ by $L_\sub{Zar}$: see \cite[Theorem 1.2(2)]{Geisser2004}. We have not tried to prove this stronger statement using our methods, as we use the Nisnevich topology throughout.}
\end{corollary}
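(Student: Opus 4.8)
The plan is to reduce everything to the already-established facts about syntomic and Beilinson--Lichtenbaum cohomology over $B$, via the Beilinson--Lichtenbaum equivalence of Theorem~\ref{theorem_BL1}. Since all statements are about $\bb A^1$-motivic cohomology on the Nisnevich site of $\Sm_B$, and motivic cohomology is the limit over $m$ of its mod-$p^m$ reductions together with its rationalisation (via the fracture square, using that $\bb Z(j)^\bb A$ is bounded below Nisnevich-locally by the completeness from Remark~\ref{remark_completeness_of_fil}(1)), I would treat the $p$-adic and rational parts separately. For the rational part, Corollary~\ref{corol:low-wts-ration} and Corollary~\ref{cor:rational-gersten} together give that $\bb Q(j)^\bb A$ is Nisnevich-locally supported in degrees $\le j$ on regular Noetherian local rings — indeed by Theorem~\ref{thm:rational}(4) it is $\K_{2j-\bullet}(-)^{(j)}_\bb Q$, which vanishes above degree $j$ on local rings by Soul\'e. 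So the rational statements are immediate.

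For the $p$-adic part, Theorem~\ref{theorem_BL1}(3) (in its limit-over-$m$ form, i.e., Proposition~\ref{prop_1_to_BL} plus $\omega^{\infty,\gr}$) gives an equivalence $\bb Z(j)^\bb A{}_p^\comp \simeq \bb Z_p(j)^\sub{BL}$ of presheaves on $\Sm_B$. Now $\bb Z_p(j)^\sub{BL} = \opp{lim}_r L_\sub{Nis}\tau^{\le j}(\bb Z_p(j)^\sub{syn}/p^r)$ by definition, so it is Nisnevich-locally supported in degrees $\le j$ by construction; combined with the rational bound this gives the first assertion. For the Beilinson--Lichtenbaum equivalence $\bb Z(j)^\bb A \to L_\sub{Nis}\tau^{\le j+1}L_\sub{\'et}\bb Z(j)^\bb A$, I would again work $p$-adically and rationally. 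Rationally both sides agree since $\bb Q(j)^\bb A$ is already a Nisnevich sheaf supported in degrees $\le j$ and \'etale-locally nothing new appears in degrees $\le j+1$ (the \'etale and Nisnevich rational cohomologies of $\bb G_m$, $\bb Z$ agree on regular schemes by Corollary~\ref{corol:low-wts-ration}(2), and more generally the rational motivic complexes are summands of $\KH_\bb Q$, whose \'etale and Nisnevich sheafifications agree rationally). The $p$-adic statement becomes: the canonical map $\bb Z_p(j)^\sub{BL}/p^r \to L_\sub{Nis}\tau^{\le j+1}(L_\sub{\'et}\bb Z_p(j)^\sub{BL}/p^r)$ is an equivalence, which is exactly Lemma~\ref{lemma_BL_mod_p}(4) strengthened one degree: that lemma gives the equivalence with $\tau^{\le j}$ on the \'etale sheafification, and the additional degree $j+1$ is harmless because $L_\sub{Nis}\tau^{\le j+1}$ of a complex that is Nisnevich-locally supported in degrees $\le j$ agrees with $L_\sub{Nis}\tau^{\le j}$. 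More precisely, one checks on stalks at strictly henselian local rings $A$ of smooth $B$-schemes that $\tau^{>j}(L_\sub{\'et}\bb Z_p(j)^\sub{BL}/p^r)(A) = \tau^{>j}(\bb Z_p(j)^\sub{syn}/p^r)(A)$ vanishes: if $p\in A^\times$ this is because $\bb Z_p(j)^\sub{syn}/p^r$ is \'etale-locally $\tau^{\le j}R\Gamma_\sub{\'et}(-,\mu_{p^r}^{\otimes j})$; if $A$ is $p$-henselian it is Remark~\ref{rmk_syntomic_props}(2), since strictly henselian $A$ has vanishing weight-$j$ Artin--Schreier obstruction.

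The main obstacle is bookkeeping the passage between the $p$-complete, mod-$p^r$, and rational pictures and justifying the fracture square: one must know that $\bb Z(j)^\bb A$ is Nisnevich-locally bounded below so that it is the limit of the arithmetic fracture square $\bb Z(j)^\bb A \to \bb Q(j)^\bb A \times_{(\prod_p \bb Z_p(j)^\bb A)\otimes\bb Q} \prod_p \bb Z_p(j)^\bb A$, and that the endofunctor $L_\sub{Nis}\tau^{\le j+1}L_\sub{\'et}$ commutes with this limit — here one uses that on $\Sm_B$ all the relevant sheaves are bounded below Nisnevich-locally and finitary, and that $\tau^{\le j+1}$ and \'etale sheafification of finitary coconnective-up-to-shift sheaves commute with the relevant limits. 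An alternative, perhaps cleaner route that avoids the fracture square entirely: first prove the degree bound ($\bb Z(j)^\bb A$ Nisnevich-locally supported in degrees $\le j$) directly by combining the $p$-adic bound from $\bb Z_p(j)^\sub{BL}$ for every $p$ with Soul\'e's rational bound, using that $\bb Z(j)^\bb A$ is finitary (Theorem~\ref{theorem:SH_mot_coh}(2)) and that a finitary presheaf vanishing in a degree range after every $/p$ and after $\otimes\bb Q$ vanishes there (Example~\ref{ex:rationalization-cdh}); then, granted the bound, the map to $L_\sub{Nis}\tau^{\le j+1}L_\sub{\'et}\bb Z(j)^\bb A$ has fibre and cofibre that vanish after every $/p$ (by the mod-$p$ analysis above, using Theorem~\ref{theorem_BL1}) and after $\otimes\bb Q$ (by the rational comparison of \'etale and Nisnevich), hence vanish by finitariness again. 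I would write it this second way.
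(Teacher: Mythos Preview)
Your overall strategy---combine Soul\'e's rational bound with the $p$-adic identification via Theorem~\ref{theorem_BL1}---matches the paper's. But there is a genuine gap in your treatment of degree $j+1$, which is exactly where the ``Hilbert 90'' content lives.

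For the degree bound: knowing that $\bb Z(j)^\bb A(R)/p$ is supported in degrees $\le j$ (for $R$ a Nisnevich stalk) yields $H^n(\bb Z(j)^\bb A(R))/p = 0$ for $n > j$ and $H^n(\bb Z(j)^\bb A(R))[p] = 0$ for $n \ge j+2$, but \emph{not} $H^{j+1}(\bb Z(j)^\bb A(R))[p] = 0$. Combined with rational vanishing this kills $H^n$ for $n \ge j+2$, but leaves $H^{j+1}$ as a potentially nonzero divisible torsion group (think $\bb Q/\bb Z$). The paper closes this using condition (E) of $\frak{BL}^p$: symbolic generation of $H^j_\bb A(R, \bb Z/p(j))$ gives surjectivity of $H^j_\bb A(R, \bb Z(j)) \to H^j_\bb A(R, \bb Z/p(j))$, whence $H^{j+1}[p] = 0$.

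For the Beilinson--Lichtenbaum equivalence, your claim that ``the additional degree $j+1$ is harmless'' is false mod $p$. You verify vanishing of $H^{j+1}$ on \emph{strictly} henselian stalks, but applying $L_\sub{Nis}\tau^{\le j+1}$ requires it on \emph{Nisnevich} (merely henselian) stalks. At such a henselian local $R$ one has $L_\sub{\'et}(\bb Z_p(j)^\sub{BL}/p^r)(R) \simeq (\bb Z_p(j)^\sub{syn}/p^r)(R)$, and $H^{j+1}_\sub{syn}(R, \bb F_p(j))$ is generally nonzero: if $p \in R^\times$ it is $H^{j+1}_\sub{\'et}(R, \mu_p^{\otimes j})$, and if $p \notin R^\times$ it is $\tilde\nu(j)(R/p)$ (Remark~\ref{rmk_syntomic_props}(2)), neither of which vanishes for $R$ not strictly henselian. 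So $L_\sub{Nis}\tau^{\le j}$ and $L_\sub{Nis}\tau^{\le j+1}$ of $L_\sub{\'et}(\bb Z(j)^\bb A/p)$ genuinely differ. The paper's fix is to prove \emph{integral} Hilbert 90 first: $H^{j+1}(L_\sub{\'et}\bb Z(j)^\bb A(\roi_{X,x}^h)) = 0$, again by showing the group is rationally zero and $p$-torsion-free (the latter via symbolic generation, as above). Only then can one safely replace $\tau^{\le j+1}$ by $\tau^{\le j}$ integrally and invoke Lemma~\ref{lemma_BL_mod_p}(4) mod $p$. Your ``cleaner route'' hits the same wall: the functor $L_\sub{Nis}\tau^{\le j+1}L_\sub{\'et}$ does not commute with $-/p$ without this integral vanishing.
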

\begin{proof}
For any prime number $p$, we have shown in Theorem \ref{theorem_BL1}(2) that all smooth $B$-schemes belong to the class $\frak{BL}^p$. In particular, for any smooth $B$-scheme $X$ and point $x\in X$, condition (A) states that $\bb Z(j)^\bb A(\roi_{X,x}^\sub{h})/p$ is supported in cohomological degrees $\le j$, and condition (E) shows that $H^j_\bb A(\roi_{X,x}^\sub{h},\bb Z/p(j))$ is generated by symbols, which lift to $H^j_\bb A(\roi_{X,x}^\sub{h},\bb Z(j))$. It follows that $H^n_\bb A(\roi_{X,x}^\sub{h},\bb Z(j))$ is torsion-free for all $n>j$. But these groups are also rationally zero by Theorem \ref{thm:rational} and Soul\'e's vanishing bound of the Adams eigenspaces of $K$-groups of local rings \cite[Corollaire 1]{Soule1985}, so they are zero.

The canonical map $\bb Z(j)^\bb A\to L_\sub{Nis}\tau^{\le j+1}L_\sub{\'et}\bb Z(j)^\bb A$ is now well-defined, and we must show it is an equivalence. It is an equivalence rationally since $\bb Q(j)^\bb A$ is both Nisnevich locally supported in degrees $\le j$ (so also $\le j+1$), by the previous paragraph, and already an \'etale sheaf, since it is a direct summand of the \'etale sheaf $\KH_\bb Q$ by Theorem \ref{thm:rational}(2). Here we have hid the subtlety that rationalisation does not necessarily commute with \'etale sheafification, but it does in this case since $\tau^{<0}\bb Z(j)^\bb A$ takes rational values on $\Sm_B$, as follows from Theorem \ref{theorem_BL1}(3) and the vanishing of negative degree syntomic cohomology on $\Sm_B$; see the proof of Theorem \ref{def:h-eh-mot}(1) for the technical details of the argument in a cdh-local context.

It remains to prove the equivalence modulo $p$ for every prime number $p$. But we claim that ``Hilbert 90'' holds, i.e., for $X$ any smooth $B$-scheme and $x\in X$, we have $H^{j+1}(L_\sub{\'et}\bb Z(j)^\bb A(\roi_{X,x}^\sub{h}))=0$. Once that claim has been proved, it will be enough to show that the canonical map \begin{equation}\bb Z(j)^\bb A/p\to L_\sub{Nis}\tau^{\le j}L_\sub{\'et}(\bb Z(j)^\bb A/p)\label{eqn_preH90}\end{equation} is an equivalence; but \eqref{eqn_preH90} is indeed an equivalence since we may replace $\bb Z(j)^\bb A/p$ by $\bb F_p(j)^\sub{BL}$ using Theorem~\ref{theorem_BL1} and then appeal to Lemma \ref{lemma_BL_mod_p}(4).

It remains to establish the Hilbert 90 vanishing claim, but this follows by repeating arguments we have already made. Indeed, the group is rationally zero since $(L_\sub{\'et}\bb Z(j)^\bb A)_\bb Q=\bb Q(j)^\bb A$ by the second paragraph, which locally vanishes in degrees $>j$ by the first paragraph. But it is also torsion-free, as is seen from the commutative diagram
\[\xymatrix{
H^j_\bb A(\roi_{X,x}^\sub{h},\bb Z(j))\ar[r]\ar[d]  & H^j(L_\sub{\'et}\bb Z(j)^\bb A(\roi_{X,x}^\sub{h}))\ar[d] \\
H^j_\bb A(\roi_{X,x}^\sub{h},\bb Z/p(j))\ar[r] & H^j(L_\sub{\'et}\bb Z(j)^\bb A(\roi_{X,x}^\sub{h})/p)
}\]
where the bottom horizontal arrow is an isomorphism (since \eqref{eqn_preH90} has been shown to be an equivalence), and the left vertical arrow is surjective by the first paragraph, so that the right vertical arrow is also surjective.
\end{proof}

As another corollary, we obtain the following description of motivic cohomology in low weights for schemes which are smooth over a field or Dedekind domain:

\begin{corollary}\label{cor:low-wts-ddk-fields} Let $X$ be a scheme, smooth over a field or a mixed characteristic Dedekind domain. Then the maps
 \[R\Gamma_\sub{Nis}(X, \bb Z) \To \bb Z(0)^{\bb A}(X)\qquad\mathrm{and}\qquad c_1^{\bb A}:R\Gamma_\sub{Nis}(X,\bb G_m)[-1]\To \bb Z(1)^\bb A(X),\] defined in Construction \ref{cons:low-weights} are equivalences.
 \end{corollary}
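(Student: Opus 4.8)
The plan is to reduce the two asserted equivalences to statements we have already proven, namely the rational description of low-weight motivic cohomology (Proposition~\ref{prop:low-wts-ration} together with Corollary~\ref{corol:low-wts-ration}) and the Beilinson--Lichtenbaum identification over fields and Dedekind domains (Theorem~\ref{theorem_BL1}, especially part~(3), together with the low-weight computation of Beilinson--Lichtenbaum cohomology in Example~\ref{example_BL_cohomology}(4)). Since both sides of each asserted equivalence are Nisnevich sheaves of complexes on $\Sm_B$ (for $B$ the base field or Dedekind domain), and since $\bb Z(j)^{\bb A}$ is finitary, it suffices by Proposition~\ref{proposition_checking_on_points}-type arguments (or more simply because both sides are bounded below Nisnevich-locally by Corollary~\ref{corollary:BL_over_B}) to check the equivalences on stalks, i.e.\ on henselian local rings $R=\roi_{X,x}^{\sub h}$ of smooth $B$-schemes. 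So first I would fix such an $R$ and reduce to proving that $R\Gamma_\sub{Nis}(R,\bb Z)\to\bb Z(0)^{\bb A}(R)$ and $c_1^{\bb A}:R^\times[-1]\to\bb Z(1)^{\bb A}(R)$ are equivalences; note that on a henselian local ring both source complexes are discrete (concentrated in degree $0$, resp.\ degree $1$).

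The core of the argument is then to combine the mod-$p$ and rational information. For each prime $p$, Theorem~\ref{theorem_BL1}(3) gives a multiplicative equivalence $\bb Z(\star)^{\bb A}/p^m\simeq\bb Z_p(\star)^\sub{BL}/p^m$ on $\Sm_B$, compatible with first Chern classes via \eqref{eqn_1st_Chern_compatability}; passing to the limit over $m$ gives $\bb Z(j)^{\bb A}{}^\comp_p\simeq\bb Z_p(j)^\sub{BL}$. By Example~\ref{example_BL_cohomology}(4) we have $\bb Z_p(0)^\sub{BL}\simeq R\Gamma_\sub{Nis}(-,\bb Z){}^\comp_p$ and $\bb Z_p(1)^\sub{BL}\simeq R\Gamma_\sub{Nis}(-,\bb G_m){}^\comp_p[-1]$, the latter induced precisely by $c_1^\sub{syn}$, hence (by the Chern class compatibility) matching the $p$-completion of the map in Construction~\ref{cons:low-weights}. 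Therefore the maps $R\Gamma_\sub{Nis}(R,\bb Z)\to\bb Z(0)^{\bb A}(R)$ and $c_1^{\bb A}$ become equivalences after $p$-completion, for every $p$. Meanwhile Proposition~\ref{prop:low-wts-ration} (applied to the regular Noetherian scheme $\Spec R$) shows that the Zariski-to-motivic maps are rational equivalences in weights $0$ and $1$; combined with Corollary~\ref{corol:low-wts-ration}(2), which identifies Zariski, Nisnevich, and cdh cohomology of $\bb Z$ and $\bb G_m$ rationally on regular Noetherian schemes, this gives that $R\Gamma_\sub{Nis}(R,\bb Z)\to\bb Z(0)^{\bb A}(R)$ and $c_1^{\bb A}:R\Gamma_\sub{Nis}(R,\bb G_m)[-1]\to\bb Z(1)^{\bb A}(R)$ are rational equivalences. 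A map of complexes which is both a rational equivalence and a mod-$p$ equivalence for every $p$ is an equivalence (Example~\ref{ex:rationalization-cdh}), so we conclude — provided we check the various comparison maps are genuinely compatible, i.e.\ that the map $c_1^{\bb A}$ of Construction~\ref{cons:low-weights} is the one appearing in all three inputs. This last bookkeeping is where I expect the only real friction: one must ensure that the first Chern class used in Theorem~\ref{theorem_BL1}(3)'s compatibility square, in Example~\ref{example_BL_cohomology}(4), and in Proposition~\ref{prop:low-wts-ration} is literally $c_1^{\bb A}$ (equivalently $c_1^\sub{mot}$ restricted to $\Sm_B$), which follows by tracing the definitions in Remark~\ref{rem_A_1st_Chern_class} and the commutative square \eqref{eq:c1-det2}, but needs to be said carefully.

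For the globalization step, once the stalkwise statements are known, I would invoke that both $\bb Z(0)^{\bb A}$ and $\bb Z(1)^{\bb A}$ are Nisnevich locally coconnective-bounded on $\Sm_B$ by Corollary~\ref{corollary:BL_over_B} (supported in degrees $\le 0$, resp.\ $\le 1$), and that $R\Gamma_\sub{Nis}(-,\bb Z)$ and $R\Gamma_\sub{Nis}(-,\bb G_m)[-1]$ are likewise bounded, so that an equivalence of Nisnevich sheaves may be detected on stalks; hence the maps are equivalences of presheaves on $\Sm_B$, which is the claim. One subtlety worth noting explicitly: in the Dedekind case, $R\Gamma_\sub{Nis}(-,\bb G_m)$ is not bounded above in general on all of $\Sm_B$ without regularity, but smooth $B$-schemes are regular, so Lemma~\ref{lem:zarvsnis_on_regular}(1)--(2) applies and all the relevant complexes are concentrated in the expected low degrees. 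The main obstacle, to restate it, is purely one of coherently matching up first Chern classes across the cited results rather than any new mathematical content; the actual equivalence is forced once the rational and $p$-adic pieces are assembled.
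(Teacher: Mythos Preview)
Your approach is essentially the paper's: reduce to rational plus mod-$p$, invoke Proposition~\ref{prop:low-wts-ration} rationally, and use Theorem~\ref{theorem_BL1}(3) together with Example~\ref{example_BL_cohomology}(4) for the $p$-adic part.

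Two points of comparison. First, the paper works modulo $p$ directly rather than passing to $p$-completions: this is cleaner because then Example~\ref{ex:rationalization-cdh} applies verbatim (rational equivalence plus $E/p=0$ for all $p$ implies $E=0$), whereas your route via $p$-completions implicitly needs an arithmetic fracture argument that you do not supply. Simply take $m=1$ in Theorem~\ref{theorem_BL1}(3) and drop the limit.

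Second, the ``friction'' you identify in weight zero is handled by the paper with a specific trick worth knowing: the composite
\[
R\Gamma_\sub{Nis}(-,\bb Z/p) \To \bb Z(0)^{\bb A}/p \stackrel{\sim}{\To} \bb F_p(0)^\sub{BL} \stackrel{\sim}{\To} R\Gamma_\sub{Nis}(-,\bb Z/p)
\]
is an $\bb E_\infty$-endomorphism of $R\Gamma_\sub{Nis}(-,\bb Z/p)$ on $\Sm_B$, and any such endomorphism is an automorphism (check on stalks: an $\bb E_\infty$-ring map $\bb F_p\to\bb F_p$ is the identity). This sidesteps tracing the unit map through the various identifications. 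For weight one your Chern-class compatibility reasoning via \eqref{eqn_1st_Chern_compatability} is exactly what the paper does.
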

\begin{proof}
The maps are equivalences rationally by Proposition \ref{prop:low-wts-ration}, so it is enough to check they are equivalences modulo each prime number $p$.

Firstly, on the category of smooth $B$-schemes we consider the maps \[R\Gamma_\sub{Nis}(-, \bb Z/p\bb Z) \To \bb Z(0)^{\bb A}/p\simeq \bb F_p(0)^\sub{BL}\simeq R\Gamma_\sub{Nis}(-,\bb Z/p\bb Z),\] where both equivalences are ones of $\bb E_\infty$-algebras in presheaves of complexes: the first is Theorem \ref{theorem_BL1}(3), and the second follows from Remark \ref{rem_syn_weight_0}. The composition is an endomorphism of $R\Gamma_\sub{Nis}(-, \bb Z/p\bb Z)$ as an $\bb E_\infty$-algebra in presheaves of complexes; but any such endomorphism is an automorphism (by checking on stalks), as required.

Next, for the $\bb G_m$ claim, we use the equivalence of Theorem \ref{theorem_BL1}(3) to reduce the problem to checking that $c_1^\sub{BL}:R\Gamma_\sub{Nis}(X,\bb G_m)/p[-1]\to \bb F_p(1)^\sub{BL}(X)$ is an equivalence. But we already noted this in \S\ref{ss_BL} when we introduced $c_1^\sub{BL}$.
\end{proof}

\section{cdh-motivic cohomology}\label{sec:cdh-mot}
In this section, we introduce \emph{cdh-motivic cohomology} as a third option for a motivic cohomology theory which appears as the graded pieces of a natural filtration on $\KH$. This theory will be used to give better control over $\Z(j)^{\bb A, \cdh}$. For example, it will ultimately be used to prove in Corollary \ref{prop:Fil1-vanishing} that $\Z(j)^{\bb A, \cdh}=0$ when $j<0$; recall that this is not obvious (see Remark~\ref{rem_roles} for more discussion), since pulling back in $\SH$ is difficult to control. In fact, we will ultimately prove a key comparison theorem in Theorem~\ref{thm:comparison}, stating that $\Z(j)^{\bb A, \cdh}$ is simply the $\bb A^1$-localisation of the cdh-motivic cohomology; this comparison will also underlie our desired comparison with $\bb Z(j)^{\bb A}$. 

An advantage of $\Z(j)^{\cdh}$ is that it is more elementary than $\Z(j)^{\bb A, \cdh}$, in the sense that it does not require the pullback functoriality of $\SH$.  Furthermore, due to its simple definition, one can relate it more easily to syntomic cohomology: this is made precise by Theorem~\ref{thm:singular-bl}, which is a singular analog of the Beilinson-Lichtenbaum equivalence. However, the corresponding downside of $\Z(j)^{\cdh}$ is that it can be difficult to prove its instrinsic properties. For example, although we expect it to satisfy $\bb A^1$-invariance and the projective bundle formula in general, we can only prove partial and conditional results; this is the subject of \S\ref{sec:pbf-a1}.

\subsection{Lisse motivic cohomology}
Throughout this whole subsection $B$ denotes a field or a mixed characteristic Dedekind domain. We begin by introducing the left Kan extended version of motivic cohomology, which was christened ``lisse'' in \cite{ElmantoMorrow2023}. As in \S\ref{subsec:lke}, we left Kan extend presheaves of spectra and of complexes from $\Sm_B$ to $\Sch_B^\sub{qcqs}$.

\begin{definition}\label{def:lisse} For any $j \in \Z$, we define the \emph{weight-$j$, lisse motivic cohomology} of qcqs $B$-schemes to be the left Kan extension to $\Sch_B^\sub{qcqs}$ of the restriction of $\bb A^1$-motivic cohomology to smooth $B$-schemes:
\[
\Z(j)_B^\sub{lse}:=L_B^\sub{sm}(\Z(j)^{\bb A}|_{\Sm_B}): \Sch_B^\sub{qcqs,op} \To \rm D(\Z).
\]
We will sometimes drop the restriction notation $|_{\Sm_B}$ for the sake of readability. In the case $B=\bb Z$ we drop the subscript $B$ and refer to $\bb Z(j)^\sub{lse}(X)$, for any qcqs scheme $X$, as its {\em weight-$j$ lisse motivic cohomology}.
\end{definition}

\begin{remark}[Negative weights] \label{rem:negative-lisse}
As noted in Remark~\ref{rem:eff-neg}, $\Z(j)^{\bb A} = 0$ for $j < 0$. Therefore the same is true for lisse motivic cohomology: $\Z(j)^\sub{lse}_B = 0$ for $j < 0$. 
\end{remark}

The next lemma collects some elementary properties of the lisse motivic cohomology over $B$; the first part recalls that its value on affines is left Kan extended from smooth $B$-algebras (rather than from arbitrary smooth $B$-schemes):\footnote{In fact, the lisse motivic cohomology is probably not a useful invariant on arbitrary qcqs $B$-schemes; one should restrict attention to $B$-schemes with ``enough vector bundles'', e.g., quasi-projective. For us, the case of affine $B$-schemes will be sufficient, as we will soon only be interested in the theory after suitable sheafification.}

\begin{lemma}\label{lemma_lke_from_affines} Let $B$ be a field or a mixed characteristic Dedekind domain. Then for any $j \in \bb Z$:
\begin{enumerate}
\item The functor $\Z(j)_B^\sub{lse}(\Spec(-)):\opp{CAlg_B}\to \rm D(\Z)$ is left Kan extended from smooth $B$-algebras.
\item For any $B$-algebra $R$, there exists a natural, $\N$-indexed, multiplicative filtration $\Fil^\star_\sub{B-lse}\K^\sub{cn}(R)$ on the connective $K$-theory $\K^\sub{cn}(R)$ with graded pieces
\[
\mathrm{gr}^j_\sub{B-lse}\K^\sub{cn}(R) \simeq  \Z(j)_B^\sub{lse}(R)[2j]
\]
for $j\ge0$. In particular, $\Z(\star)_B^\sub{lse}$ assembles into a presheaf of graded $\bb E_{\infty}$-algebras in $\D(\Z)$.
\item For any henselian local $B$-algebra $R$, then $\bb Z(j)_B^\sub{lse}(R)$ is supported in cohomological degrees $\le j$ and the filtration is bounded (indeed, $\Fil^j_\sub{B-lse}\K^\sub{cn}(R)$ is supported in cohomological degrees $\le -j$).
\end{enumerate}
\end{lemma}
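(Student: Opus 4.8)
The plan is to reduce everything to properties of left Kan extensions, fed by the Beilinson--Lichtenbaum equivalence over $B$ already established in Corollary~\ref{corollary:BL_over_B}.

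Part (1) is immediate from Proposition~\ref{prop:lke-restrict}: by Definition~\ref{def:lisse}, $\bb Z(j)^\sub{lse}_B$ is $L_B^\sub{sm}$ of $\bb Z(j)^{\bb A}|_{\Sm_B}$ along $\Sm_B\hookrightarrow\Sch_B^\sub{qcqs}$, and the commutativity of the left-hand squares of the diagram in that proposition identifies the induced functor $R\mapsto\bb Z(j)^\sub{lse}_B(\Spec R)$ on $\opp{CAlg}_B$ with $L_B^\sub{sm}$ of $\bb Z(j)^{\bb A}|_{\opp{SmAlg}_B}$ along $\opp{SmAlg}_B\hookrightarrow\opp{CAlg}_B$. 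For part (2), I would first note that every smooth $B$-scheme is regular Noetherian, so on $\Sm_B$ one has $\KH\simeq\K\simeq\K^\sub{cn}$, and the multiplicative, exhaustive filtration $\Fil^\star_{\bb A}\KH$ of Theorem~\ref{theorem:SH_mot_coh}(1) restricts to a multiplicative filtered object over $\K^\sub{cn}|_{\Sm_B}$ with $\gr^j\simeq\bb Z(j)^{\bb A}[2j]$. It is $\bb N$-indexed because $\bb Z(j)^{\bb A}=0$ for $j<0$ (Remark~\ref{rem:eff-neg}), so $\Fil^j_{\bb A}\KH\to\Fil^{j-1}_{\bb A}\KH$ is an equivalence for $j\le 0$ and hence $\Fil^0_{\bb A}\KH=\colim_{j\to-\infty}\Fil^j_{\bb A}\KH=\KH$ by exhaustiveness. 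I would then define $\Fil^\star_\sub{B-lse}\K^\sub{cn}:=L_B^\sub{sm}(\Fil^\star_{\bb A}\KH|_{\opp{SmAlg}_B})$, with structural map to $\K^\sub{cn}$ obtained by applying $L_B^\sub{sm}$ to $\Fil^\star_{\bb A}\KH\to\KH$ and composing with the counit $L_B^\sub{sm}(\K^\sub{cn}|_{\opp{SmAlg}_B})\to\K^\sub{cn}$. Since left Kan extension is exact it commutes with passage to associated gradeds, so $\gr^j_\sub{B-lse}\K^\sub{cn}\simeq L_B^\sub{sm}(\bb Z(j)^{\bb A}[2j])\simeq\bb Z(j)^\sub{lse}_B[2j]$ via part (1); and since $\opp{SmAlg}_B\hookrightarrow\opp{CAlg}_B$ is symmetric monoidal for the tensor product (smooth algebras being stable under tensor), $L_B^\sub{sm}$ is symmetric monoidal, whence the filtration is multiplicative and $\bb Z(\star)^\sub{lse}_B$ is an $\bb E_\infty$-algebra in graded presheaves of complexes.

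For part (3), let $R$ be henselian local. Applying Proposition~\ref{prop:lke-restrict} with $\mathrm{?loc}$ taken to be $\mathrm{hloc}$, and using that $\bb Z(j)^{\bb A}$ and $\Fil^j_{\bb A}\KH$ are finitary (Theorem~\ref{theorem:SH_mot_coh}(2)), both $\bb Z(j)^\sub{lse}_B(R)$ and $\Fil^j_\sub{B-lse}\K^\sub{cn}(R)$ become colimits over the comma category of henselisations $A$ of essentially smooth local $B$-algebras equipped with a map to $R$, of $\bb Z(j)^{\bb A}(A)$ and $\Fil^j_{\bb A}\KH(A)$ respectively. On each such $A$: Corollary~\ref{corollary:BL_over_B} gives that $\bb Z(j)^{\bb A}(A)$ is supported in cohomological degrees $\le j$, so $\gr^n_{\bb A}\KH(A)=\bb Z(n)^{\bb A}(A)[2n]$ is supported in cohomological degrees $\le -n$ for $n\ge0$ and vanishes for $n<0$; and Lemma~\ref{lemm:convergence-bootstrap}, applied to the one-dimensional base $\Spec B$ as recorded in Remark~\ref{remark_completeness_of_fil}(1), shows that the Nisnevich sheaf $\Fil^j_{\bb A}\KH$ on $\Sm_B$ is Nisnevich-locally $(j-\dim B)$-connective, hence $\Fil^j_{\bb A}\KH(A)$ is $(j-1)$-connective and in particular $\lim_j\Fil^j_{\bb A}\KH(A)=0$. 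Writing $\Fil^j_{\bb A}\KH(A)=\lim_N\Fil^j_{\bb A}\KH(A)/\Fil^N_{\bb A}\KH(A)$, with each quotient an iterated extension of $\gr^j,\dots,\gr^{N-1}$ and therefore supported in cohomological degrees $\le -j$, a short $\lim/\lim^1$ computation upgrades this to: $\Fil^j_{\bb A}\KH(A)$ is itself supported in cohomological degrees $\le -j$. Finally, complexes supported in cohomological degrees $\le c$ are closed under colimits in $\D(\bb Z)$, so passing to the colimit over the comma category yields that $\bb Z(j)^\sub{lse}_B(R)$ is supported in cohomological degrees $\le j$ and $\Fil^j_\sub{B-lse}\K^\sub{cn}(R)$ in cohomological degrees $\le -j$; the latter bound, together with the $\bb N$-indexing, exhibits the filtration as bounded.

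The hard part will be part (3): parts (1) and (2) are formal consequences of the behaviour of left Kan extensions (commutativity of comma-category formation, exactness, symmetric monoidality), whereas (3) requires the genuinely deep Beilinson--Lichtenbaum equivalence over $B$ to control the weight-$j$ complexes, the very-effectivity/stable-connectivity input of Lemma~\ref{lemm:convergence-bootstrap} to obtain completeness of the filtration, and a small but delicate limit argument to pass from the bounds on the graded pieces to a bound on $\Fil^j$ itself. A point to watch is that the connectivity estimate for $\Fil^j_{\bb A}\KH(A)$ must be extracted from the connectivity of $\Fil^j_{\bb A}\KH$ as a Nisnevich sheaf over the one-dimensional base $\Spec B$, by passing to stalks, and not from any bound over $\Spec A$, since $A$ can have arbitrarily large Krull dimension.
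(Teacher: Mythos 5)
Your proposal is correct and follows essentially the same route as the paper: part (1) via Proposition~\ref{prop:lke-restrict}, part (2) by left Kan extending the filtration $\Fil^\star_{\bb A}\KH|_{\Sm_B}$, and part (3) by establishing the bounds on stalks of smooth $B$-schemes (Corollary~\ref{corollary:BL_over_B} together with the Nisnevich-local completeness/connectivity of Remark~\ref{remark_completeness_of_fil}(1), with your $\lim/\lim^1$ step making explicit what the paper leaves implicit) and transporting them through the right-most square of Proposition~\ref{prop:lke-restrict} with ?loc $=$ hloc. The only point the paper records that you leave implicit is that the counit $L_B^\sub{sm}(\K^\sub{cn}|_{\opp{SmAlg}_B})\to\K^\sub{cn}$ is an equivalence (Bhatt--Lurie, \cite[Example~A.0.6]{ElmantoHoyoisKhanSosniloYakerson2020}), which is what makes your filtered object a filtration of $\K^\sub{cn}(R)$ itself, with $\Fil^0\simeq\K^\sub{cn}(R)$, rather than merely a filtered object mapping to it.
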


\begin{proof}
(1) Immediate from the commutativity of the left-most square in Proposition~\ref{prop:lke-restrict}.

(2) The filtration is obtained by left Kan extension of the filtration $\Fil_{\bb A}^\star\KH$ on $\Sm_B$ from Theorem \ref{theorem:SH_mot_coh}. It is indeed a filtration on connective $K$-theory since the functor $\K^\sub{cn}: \CAlg_{B} \rightarrow \Spt$ is left Kan extended from smooth $B$-algebras \cite[Example~A.0.6]{ElmantoHoyoisKhanSosniloYakerson2020}.

(3) The claimed bounds hold Nisnevich locally for $\bb Z(j)^\bb A$ of smooth $B$-algebras by Corollary \ref{corollary:BL_over_B}, and Nisnevich locally for $\Fil_\bb A^j\KH$ since the latter filtration is complete by Remark \ref{remark_completeness_of_fil}(1). These bounds are then preserved by left Kan extension (use Proposition~\ref{prop:lke-restrict}, in particular the right-most square with ?loc$=$hloc).
\end{proof}

We expect that the lisse motivic cohomology does not depend on $B$: more precisely, we expect that the restriction of $\bb Z(j)^\sub{lse}$ to $B$-algebras agree with $\bb Z(j)^\sub{lse}_B$. However, we can only prove this expectation Nisnevich locally (Theorem \ref{thm:base-indep}):

\begin{proposition} \label{prop:base-indep-bootstrap}
Let $B$ be a field or mixed characteristic Dedekind domain and $j \in \Z$.
Then the canonical map \[ L_\Nis L_\Z^\sub{sm} \Z(j)^{\A} \To \Z(j)^\A \] becomes an equivalence after restriction to $\Sm_B$.
\end{proposition}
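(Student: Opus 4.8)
The plan is to reduce the statement to a computation of Nisnevich stalks and to treat the rational and mod-$p$ parts of $\Z(j)^{\A}$ separately, exploiting in each case that the relevant part is controlled, Nisnevich-locally on henselian local rings, by a functor which is genuinely left Kan extended from smooth $\bb Z$-algebras. First I would observe that both presheaves in the statement are Nisnevich sheaves on $\Sm_B$ — the target by Theorem~\ref{theorem:SH_mot_coh}(2), the source by construction — so it suffices to check that the canonical map is an equivalence after evaluation at each Nisnevich stalk, i.e.\ that $(L_\Z^\sub{sm}\Z(j)^{\A})(R)\to\Z(j)^{\A}(R)$ is an equivalence for $R=\roi_{X,x}^h$ with $X\in\Sm_B$ and $x\in X$. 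Since $B$ is a field or a Dedekind domain, $R$ is a regular local Noetherian ring which is moreover the henselisation of an essentially smooth local $B$-algebra. Using that $L_\Z^\sub{sm}\Z(j)^{\A}$ is finitary (Proposition~\ref{prop:finitary_conditions}(1)) and that $\Z(j)^{\A}$ is a Nisnevich sheaf, both stalks are computed by evaluation at $R$, and by Proposition~\ref{prop:lke-restrict} (the right-hand square over the base $\bb Z$, with $?\mathrm{loc}=\mathrm{hloc}$) the left-hand side is the filtered colimit $\colim_{R_0}\Z(j)^{\A}(R_0)$ over the category of henselisations $R_0$ of essentially smooth local $\bb Z$-algebras equipped with a local map to $R$. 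By Example~\ref{ex:rationalization-cdh} it is enough to prove that this colimit recovers $\Z(j)^{\A}(R)$ after $\otimes\bb Q$ and after $/p$ for every prime $p$.

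For the mod-$p$ part: by Theorem~\ref{theorem_BL1}(3), Lemma~\ref{lemma_BL_mod_p}(2) and Definition~\ref{def:bl-cohomology}, on $\Sm_{B'}$ for $B'$ equal to $\bb Z$ or to $B$ there is a natural equivalence $\Z(j)^{\A}/p\simeq\bb F_p(j)^\sub{BL}=L_\Nis\tau^{\le j}\bb F_p(j)^\sub{syn}$; since $\tau^{\le j}\bb F_p(j)^\sub{syn}$ is finitary (Proposition~\ref{prop:syn}), evaluating at the henselian local stalks gives $\Z(j)^{\A}(S)/p\simeq\tau^{\le j}\bb F_p(j)^\sub{syn}(S)$, naturally in such $S$. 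As $\tau^{\le j}\bb F_p(j)^\sub{syn}$ is moreover left Kan extended from smooth $\bb Z$-algebras (Proposition~\ref{prop:syn-trunc-lke}(2)), these identifications — all induced by the natural transformation $\delta_\sub{mot}^\sub{BL}$, hence compatible with the counit — turn $(L_\Z^\sub{sm}\Z(j)^{\A})(R)/p\to\Z(j)^{\A}(R)/p$ into the counit map $\colim_{R_0}\tau^{\le j}\bb F_p(j)^\sub{syn}(R_0)\to\tau^{\le j}\bb F_p(j)^\sub{syn}(R)$, which is an equivalence precisely because $\tau^{\le j}\bb F_p(j)^\sub{syn}$ is left Kan extended from $\opp{SmAlg}_{\bb Z}$.

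For the rational part: I would use that, on $\Sm_{B'}$ ($B'=\bb Z$ or $B$), the presheaf $\bb Q(j)^{\A}[2j]$ is — by Theorem~\ref{thm:rational}(2),(4), Theorem~\ref{thm_riou} and Corollary~\ref{corol_Adams} — the $\psi^k=k^j$-eigensummand of $\KH_\bb Q|_{\Sm_{B'}}=\K^\sub{cn}_\bb Q|_{\Sm_{B'}}$, and that for any regular Noetherian local ring $S$ the spectrum $\bb Q(j)^{\A}(S)[2j]$ is likewise the $\psi^k=k^j$-eigensummand of $\K(S)_\bb Q=\K^\sub{cn}(S)_\bb Q$ (the finite length of the $\gamma$-filtration on regular local rings, Soulé~\cite[Corollaire~1]{Soule1985}, guarantees this decomposition exists; cf.\ Corollary~\ref{cor:rational-gersten}). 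Since $\K^\sub{cn}_\bb Q$ is finitary and left Kan extended from smooth $\bb Z$-algebras (\cite[Example~A.0.6]{ElmantoHoyoisKhanSosniloYakerson2020}), one has $\colim_{R_0}\K^\sub{cn}(R_0)_\bb Q=\K^\sub{cn}(R)_\bb Q$, and the Adams operators being natural for maps of regular Noetherian rings, passing to $\psi^k=k^j$-eigensummands commutes with this filtered colimit; hence $(L_\Z^\sub{sm}\bb Q(j)^{\A})(R)[2j]=\colim_{R_0}\bb Q(j)^{\A}(R_0)[2j]=\bb Q(j)^{\A}(R)[2j]$, compatibly with the canonical map. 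This completes the proof.

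The point that makes this statement non-formal — the main obstacle — is that for bases $B$ ramified over $\bb Z$ (e.g.\ ramified $p$-adic discrete valuation rings) the Nisnevich stalk $\roi_{X,x}^h$ of a smooth $B$-scheme need \emph{not} be a filtered colimit of smooth $\bb Z$-algebras, so one cannot conclude directly from finitariness and Néron--Popescu desingularisation that the left Kan extended value agrees with $\Z(j)^{\A}(R)$. Splitting into the $\bb Q$- and mod-$p$-parts circumvents this, since in each piece $\Z(j)^{\A}$ agrees, Nisnevich-locally on henselian local rings, with a functor ($\tau^{\le j}\bb F_p(j)^\sub{syn}$, resp.\ the Adams summand of $\K^\sub{cn}_\bb Q$) for which left Kan extension from smooth $\bb Z$-algebras is already known to be faithful there; the only mildly delicate bookkeeping is to check that the various identifications are compatible with the counit map, so that one really recovers the canonical comparison.
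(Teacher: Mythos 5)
Your proof is correct and follows essentially the same route as the paper: reduce to the rational and mod-$p$ cases, handle mod $p$ via the identification $\Z(j)^{\A}/p\simeq\bb F_p(j)^\sub{BL}$ on $\frak{BL}^p$ together with Proposition~\ref{prop:syn-trunc-lke}(2), and handle the rational case via the Adams/slice splitting of Theorem~\ref{thm:rational} combined with Bhatt--Lurie's left Kan extension property of $\K^\sub{cn}$. The only difference is presentational: the paper argues with the rationally split left-Kan-extended filtration on $\K^\sub{cn}$ over all regular Noetherian rings, whereas you extract Adams eigensummands stalkwise via Proposition~\ref{prop:lke-restrict}, which amounts to the same inputs.
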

\begin{proof}
Both sides being Nisnevich sheaves, it suffices to check the equivalence on stalks, that is, on henselisations of essential smooth local $B$-algebras. Moreover, it suffices to prove the equivalence rationally and mod $p$ for all primes $p$.

Rationally we prove a stronger statement, namely that the canonical map is an equivalence rationally on any regular Noetherian ring $R$ (not necessarily local). As explained in the proof of Lemma~\ref{lemma_lke_from_affines}, $\bb Z(j)^{\lse}$ is the graded pieces of a filtration on $\K^\sub{cn}$ obtained by left Kan extension of $\Fil_{\bb A}^{\star}\KH (= \Fil_{\bb A}^{\star}\K^\sub{cn})$ from smooth $\bb Z$-algebras. Rationally, the latter filtration splits by Theorem~\ref{thm:rational}(2) and therefore the left Kan extended filtration splits as well. On the other hand, for any ring $R$, we have that $L^{\sm}(\K^\sub{cn}|_{\Sm_{\bb Z}})(R) \xrightarrow{\simeq} \K^\sub{cn}(R) \xrightarrow{\simeq} \KH(R)$; the first equivalence is that of Bhatt--Lurie \cite[Example~A.0.6]{ElmantoHoyoisKhanSosniloYakerson2020} and the second equivalence follows from the fact that $K$-theory of a regular Noetherian rings is connective and is $\bb A^1$-invariant. In particular, the same equivalences holds rationally. On the split graded pieces we therefore obtain $(\Z (j)^{\lse} \otimes \bb Q)(R) \quis L^{\sm}\bb Q(j)^{\bb A}(R) \quis \bb Q(j)^{\bb A}(R)$\footnote{For the first equivalence we have used that rationalisation commutes with colimits which holds because colimits commute.} for any regular Noetherian ring $R$.

Now let $p$ be a prime. Then Theorems  \ref{thm:axiomatic_BL} and \ref{theorem_BL1}(3) imply that on the category of schemes $\frak{BL}^p$, which includes both all smooth $\bb Z$-schemes and all smooth $B$-schemes, there is an equivalence $\bb Z(j)^\bb A/p\simeq \bb F_p(j)^\sub{BL}$. So we must show that the canonical map $L_\sub{Nis}L_\Z^\sub{sm} \bb F_p(j)^\sub{BL} \to \bb F_p(j)^\sub{BL}$ is an equivalence after restriction to $\Sm_B$. But this follows from Proposition \ref{prop:syn-trunc-lke}(2) (and was already used in the proof of Theorem \ref{thm_syntomic_SH_base_change}).
\end{proof}

\begin{theorem}[Nisnevich-local base independence]\label{thm:base-indep}
Let $B$ be a field or a mixed characteristic Dedekind domain.
There is a canonical, multiplicative equivalence of Nisnevich sheaves on $\Sch^{\qcqs}_B$ \[ L_\Nis \Z(\star)^\sub{lse}|_{\Sch_B} \wequi L_\Nis \Z(\star)_B^\sub{lse}. \]
\end{theorem}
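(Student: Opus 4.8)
The plan is to reduce the global statement to the affine case and there compare both sides by left Kan extending from smooth $B$-algebras. First I would observe that both $L_\Nis \Z(\star)^\sub{lse}|_{\Sch_B}$ and $L_\Nis \Z(\star)_B^\sub{lse}$ are Nisnevich sheaves on $\Sch^{\qcqs}_B$, so it suffices to produce a multiplicative equivalence of presheaves after restriction to affine $B$-schemes, or equivalently a natural multiplicative equivalence of functors $\opp{CAlg}_B \to \Gr\D(\Z)$ between the restrictions of $L_\Nis \Z(\star)^\sub{lse}$ and $L_\Nis \Z(\star)_B^\sub{lse}$; sheafification commutes with restriction to affines by Remark~\ref{remark_fp}. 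By definition $\Z(\star)^\sub{lse}_B = L_B^\sub{sm}(\Z(\star)^{\bb A}|_{\Sm_B})$, and by Lemma~\ref{lem:lke-b-b'} the restriction of $\Z(\star)^\sub{lse} = L_\Z^\sub{sm}(\Z(\star)^{\bb A}|_{\Sm_\Z})$ to $B$-algebras is itself left Kan extended from smooth $B$-algebras; so on $B$-algebras there is a canonical comparison map $\Z(\star)^\sub{lse}|_{\CAlg_B} \to \Z(\star)_B^\sub{lse}$ arising from the counit, induced by the map $\Z(\star)^{\bb A}|_{\Sm_\Z}\to\Z(\star)^{\bb A}$ restricted to $\Sm_B$ — but both sides are already left Kan extended from $\Sm_B$, so this comparison map is an equivalence before any sheafification, hence also after applying $L_\Nis$. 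The multiplicative structure on both sides is the one recorded in Lemma~\ref{lemma_lke_from_affines}(2) (left Kan extension of the $\bb E_\infty$-structure from $\Sm_B$), and the comparison map is multiplicative because it is induced by a map of $\bb E_\infty$-algebras in graded presheaves on $\Sm_B$ and left Kan extension is symmetric monoidal.

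Wait — this needs more care: the content of the theorem is precisely that the equivalence on $\Sm_B$ does not merely identify the two left Kan extensions, but that $L_\Nis \Z(\star)^\sub{lse}|_{\Sch_B}$, which a priori involves left Kan extension from $\Sm_\Z$ and then Nisnevich sheafification over \emph{all} qcqs schemes, agrees with the version built intrinsically over $B$. The correct reduction is therefore: by Lemma~\ref{lem:lke-b-b'}, $\Z(\star)^\sub{lse}|_{\Sch_B}$ agrees with $L_B^\sub{sm}$ of its own restriction to $\Sm_B$; and its restriction to $\Sm_B$ is $L_\Nis L^\sm_\Z \Z(\star)^{\bb A}|_{\Sm_B}$ composed with... no. Let me restate. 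The genuinely needed input is Proposition~\ref{prop:base-indep-bootstrap}: the canonical map $L_\Nis L_\Z^\sub{sm} \Z(\star)^{\bb A} \to \Z(\star)^{\bb A}$ becomes an equivalence after restriction to $\Sm_B$. I would use this as follows. Left Kan extending from $\Sm_B$ to $\Sch_B^\qcqs$ and then Nisnevich sheafifying, applied to the restriction to $\Sm_B$ of the canonical map $L^\sm_\Z\Z(\star)^{\bb A}\to \Z(\star)^{\bb A}$, yields a map $L_\Nis L_B^\sub{sm}(L_\Z^\sm \Z(\star)^{\bb A}|_{\Sm_B}) \to L_\Nis L_B^\sub{sm}(\Z(\star)^{\bb A}|_{\Sm_B})$. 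The right-hand side is by definition $L_\Nis \Z(\star)_B^\sub{lse}$. For the left-hand side, $L_B^\sub{sm}(L_\Z^\sm\Z(\star)^{\bb A}|_{\Sm_B})$ agrees with $L_\Z^\sm \Z(\star)^{\bb A}|_{\Sch_B}$ by Lemma~\ref{lem:lke-b-b'} applied to $F' = \Z(\star)^{\bb A}|_{\Sm_\Z}$ (whose left Kan extension to all rings is by definition $\Z(\star)^\sub{lse}$), so the left-hand side is $L_\Nis \Z(\star)^\sub{lse}|_{\Sch_B}$.

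It then remains to check that this map is an equivalence. Since both sides are Nisnevich sheaves, I can check on stalks, i.e., on henselian local $B$-algebras, where by Lemma~\ref{lemma_lke_from_affines}(3) and its analogue over $\Z$ both take values in bounded-above complexes; hence it suffices to check the map is an equivalence as a map of presheaves after one further application of $L_\Nis$ over $\Sm_B$, which is exactly the statement that $L_\Nis$ of the map $L_B^\sm(L^\sm_\Z\Z(\star)^{\bb A}|_{\Sm_B}) \to L_B^\sm(\Z(\star)^{\bb A}|_{\Sm_B})$ restricts to an equivalence on $\Sm_B$. But on $\Sm_B$ the functor $L_B^\sm$ acts trivially (a functor on $\Sm_B$ is its own left Kan extension from $\Sm_B$), so this map restricted to $\Sm_B$ is simply $L_\Nis L^\sm_\Z\Z(\star)^{\bb A}|_{\Sm_B}\to L_\Nis\Z(\star)^{\bb A}|_{\Sm_B}$, which is an equivalence by Proposition~\ref{prop:base-indep-bootstrap} (noting $\Z(\star)^{\bb A}$ is already a Nisnevich sheaf on $\Sm_B$, so $L_\Nis$ on the target is harmless). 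Multiplicativity is automatic throughout, since every functor in sight ($L^\sm$ in any of its guises, $L_\Nis$, restriction) is symmetric monoidal and the map originates from a map of $\bb E_\infty$-algebras in $\Gr\D(\Z)$-valued presheaves. The main obstacle is purely bookkeeping: keeping straight which left Kan extension (over $\Z$ versus over $B$, from $\Sm$ versus from affines) is being applied at each stage, and invoking Lemma~\ref{lem:lke-b-b'} and Proposition~\ref{prop:lke-restrict} in the right order; the mathematical content is entirely contained in Proposition~\ref{prop:base-indep-bootstrap}, which has already been established.
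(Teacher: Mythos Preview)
Your proof is correct and follows exactly the paper's approach: apply Lemma~\ref{lem:lke-b-b'} with $F' = \Z(j)^{\bb A}|_{\Sm_\Z}$ to identify $\Z(\star)^\sub{lse}|_{\Sch_B}$ as $L_B^\sm$ of its restriction to $\Sm_B$, then conclude via Proposition~\ref{prop:base-indep-bootstrap}. The detour through stalks and bounded-above complexes is unnecessary --- the cleaner observation is that $L_\Nis \circ L_B^\sm: \PSh(\Sm_B) \to \Sh_\Nis(\Sch_B^\qcqs)$, being left adjoint to restriction, factors through $L_\Nis$ on $\Sm_B$ and hence inverts Nisnevich-local equivalences --- but your argument is not wrong.
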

\begin{proof}
Applying Lemma \ref{lem:lke-b-b'} to the functor $F'=\bb Z(j)^\bb A|_{\Sm_\bb Z}$, this is an immediate consequence of Proposition~\ref{prop:base-indep-bootstrap}.
\end{proof}

\begin{remark}[Base independence of the lisse-motivic filtration]
Arguing inductively down the filtration, we see that Proposition \ref{prop:base-indep-bootstrap} and Theorem \ref{thm:base-indep} remain valid for the filtration of Lemma \ref{lemma_lke_from_affines}(2). The identification of Theorem \ref{thm:base-indep} thus arises, by taking graded pieces, from an identification of the Nisnevich sheafification on qcqs $B$-schemes of the filtrations $\Fil^\star_\sub{lse}\K^\sub{cn}|_{\Sch_B}$ and $\Fil^\star_\sub{B-lse}\K^\sub{cn}$.
\end{remark}

\begin{remark}[Comparison with the lisse motivic cohomology of \cite{ElmantoMorrow2023}]\label{rem:lisse-in-EM}
Lisse motivic cohomology was defined in \cite[Definition 3.1]{ElmantoMorrow2023} on $k$-algebras, where $k=\bb F_p$ or $\bb Q$, by left Kan extending from smooth $k$-algebras. It was explained in  \cite[Remark~3.4]{ElmantoMorrow2023} how lisse motivic cohomology over a field is a kind of ``cycle complex.'' Lemma \ref{lemma_lke_from_affines} and Theorem \ref{thm:base-indep} show that, on henselian local $k$-algebras, the definitions of \cite{ElmantoMorrow2023} and this paper agree. They in fact agree on arbitrary local $k$-algebras (not necessarily henselian), which we mention since certain results about the lisse motivic cohomology over fields were established in \cite[\S7]{ElmantoMorrow2023} in such generality; a proof of this Zariski-local agreement will be recorded elsewhere.
\end{remark}

\subsection{Cdh-motivic cohomology}\label{sec:lis-cdh}
Whereas lisse motivic cohomology plays only an auxiliary role in the present paper, its cdh sheafification is a major tool which we use to analyse the $\bb A^1$-invariant motivic cohomologies:

\begin{definition}\label{def:cdh}
For each $j \in \Z$, define {\em weight-$j$, cdh-motivic cohomology} $\bb Z(j)^\sub{cdh}$ to be the cdh-sheafification of $\bb Z(j)^\sub{lse}$:
\[
\Z(j)^{\cdh}:=L_{\cdh}\Z(j)^\sub{lse}: \Sch^{\qcqs,\op} \To \D(\Z).
\]
In other words, $\bb Z(j)^\sub{cdh}$ is the cdh sheaf of complexes on qcqs schemes whose restriction to smooth $\bb Z$-schemes is equipped with a map from the presheaf $\bb Z(j)^{\bb A}|_{\Sm_Z}$.
\end{definition}

By Remark~\ref{rem:negative-lisse} we see that $\Z(j)^{\cdh} = 0$ for $j < 0$. Since it is the sheafification of the $\bb E_{\infty}$-algebra $\Z(\star)^\sub{lse}_B$ in graded presheaves, it assembles into an $\bb E_\infty$-algebra in graded cdh sheaves of complexes. We will soon see that it is in fact the graded pieces of a multiplicative filtration on $\KH$. To simplify the notation, we will write $\bb F_p(j)^{\cdh}$ for $\bb Z(j)^{\cdh}/p$. Given a qcqs scheme $X$, we infrequently write $H^i_\sub{cdh}(X,\bb Z(j)):=H^i(\bb Z(j)^\sub{cdh}(X))$ for the corresponding {\em cdh-motivic cohomology groups}.

\begin{remark}[Friedlander--Voevodsky motivic cohomology] \label{rmk:FV}
Letting $B$ be a field or mixed characteristic Dedekind domain, the restriction of $\bb Z(j)^\sub{cdh}$ to qcqs $B$-schemes is given by $L_\sub{cdh}\bb Z(j)^\sub{lse}_B$, by cdh sheafifying Theorem \ref{thm:base-indep}. In particular, given a field $k$, the restriction of $\bb Z(j)^\sub{cdh}$ to qcqs $k$-schemes is the cdh-local left Kan extension of motivic cohomology from smooth $k$-schemes. At least if $k$ has characteristic zero, it follows that the restriction of $\bb Z(j)^\sub{cdh}$ to finite type $k$-schemes is the same as Friedlander--Voevodsky's motivic cohomology of \cite[Definition 9.2]{FriedlanderVoevodsky2000}.
\end{remark}

\begin{remark}[Relationship with $\Z(j)^{\bb A,\cdh}$]\label{rem_compar_map_cdh_to_Acdh}
Note that there is a comparison map of $\bb E_{\infty}$-algebras of graded cdh sheaves of complexes
\[
\Z(\star)^{\cdh} \To \Z(\star)^{\bb A, \cdh}.
\]
Indeed, by the universal property of $\Z(\star)^{\cdh}$ and the fact that $\Z(\star)^{\bb A, \cdh}$ is a cdh sheaf (Theorem \ref{theorem:SH_mot_coh}(2)) it suffices to produce a map $\Z(\star)^{\A}|_{\Sm_\Z} \to \Z(\star)^{\A,\cdh}|_{\Sm_\Z}$; but on smooth $\bb Z$-schemes the comparison \eqref{eq:cdh-to-aone} is an equivalence, and so we take the inverse map. See also Theorem \ref{prop:basic_props_of_cdh_mot}(2).
\end{remark}

\begin{remark}[Rationalisation] \label{rem:rationalise} As already used several times, the presheaf rationalisation of a cdh sheaf is a cdh sheaf (Example~\ref{ex:rationalization-cdh}); therefore we set
\[
\bb Q(j)^{\cdh} := \bb Z(j)^{\cdh}(-) \otimes_{\bb Z} \bb Q
\]
to be the rational cdh-motivic cohomology. We will see in Theorem~\ref{prop:basic_props_of_cdh_mot}(7) that it appears as a summand of the rationalisation of $\KH$.
\end{remark}

Let us now collect together some properties of the cdh-motivic cohomology which follow relatively formally from its definition and from analogous properties of the $\bb A^1$-invariant theories.

\begin{theorem}\label{prop:basic_props_of_cdh_mot}
Cdh-motivic cohomology forms an $\mathbb{E}_{\infty}$-algebra in graded presheaves of complexes
\[
\bb Z(\star)^\sub{cdh}:\Sch^{\qcqs,\op} \To \Gr \D(\Z)
\]  which satisfies the following properties:
\begin{enumerate}
\item For any qcqs scheme $X$, there exists a natural, multiplicative, $\bb N$-indexed filtration 
\[
\mathrm{Fil}_\sub{cdh}^{\star}\KH(X),
\] such that the graded pieces are naturally and multiplicatively given by 
\[
\mathrm{gr}_\sub{cdh}^j\KH(X)\simeq \Z(j)^\cdh[2j],
\]  
for $j \geq 0$. Furthermore, this is the initial $\CAlg(\Fil\Spt)$-valued cdh sheaf whose restriction to $\Sm_{\Z}$ is equipped with a map from $\Fil_{\bb A}^{\star}\KH|_{\Sm_{\Z}}$.

In particular, there exist a natural, multiplicative \emph{cdh-local Atiyah--Hirzebruch} spectral sequence
\begin{equation}\label{eq:cdh-ahss}
E_2^{i,j}=H_\sub{cdh}^{i-j}(X, \Z(-j)) \implies \KH_{-i-j}(X).
\end{equation}
\item There is a natural multiplicative map of filtered spectra
\[
\mathrm{Fil}_\sub{cdh}^{\star}\KH(X) \To \mathrm{Fil}_{\bb A, \cdh}^{\star}\KH(X)
\]
which, on graded pieces, gives a multiplicative map
\begin{equation}\label{eq:cdh-a1}
\Z(\star)^{\cdh}(X)[2\star] \To \Z(\star)^{\bb A, \cdh}(X)[2\star].
\end{equation}
\item If $X$ has finite valuative dimension $\le d$ then this filtration is bounded. More precisely, $\mathrm{Fil}_{\cdh}^j\KH(X)$ is supported in cohomological degrees $\le d-j$ (and so $\bb Z(j)^\sub{cdh}(X)$ is supported in cohomological degrees $\le j+d$) for all $j\ge0$.

\item $\Fil^j_\sub{cdh}\KH$ and $\Z(j)^{\cdh}$ are finitary, cdh sheaves for all $j\ge0$.

\item Weight zero: for any qcqs scheme $X$ there is a natural equivalence of $\bb E_\infty$-algebras in $\rm D(\bb Z)$
\[
R\Gamma_{\cdh}(X,\Z)	\quis \Z(0)^{\cdh}(X).
\]

\item Weight one: for any qcqs scheme $X$ there is a natural equivalence
\[
c_1^\sub{cdh}:R\Gamma_{\cdh}(X,\Gm)[-1]\quis \Z(1)^{\cdh}(X),
\]
which we will call the first Chern class for the cdh-motivic cohomology.

\item Rational structure: the filtration $\mathrm{Fil}_\sub{cdh}^{\star}\KH(X)_{\bb Q}$ naturally splits, i.e., there is a natural, multiplicative equivalence of filtered spectra
\[
\Fil^\star_\sub{cdh}\KH(X)_{\bb Q} \simeq \bigoplus_{j \geq \star} \bb Q(j)^{\cdh}(X)[2j]
\]
compatible with Theorem \ref{thm:rational}(2).
\end{enumerate}
\end{theorem}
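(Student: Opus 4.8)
\textbf{Proof proposal for Theorem \ref{prop:basic_props_of_cdh_mot}.}

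The plan is to obtain every part by cdh-sheafifying, rationalising, or passing to graded pieces of corresponding statements about the lisse theory, which in turn come by left Kan extension from the already-established facts about $\bb Z(\star)^\bb A$ on smooth $\bb Z$-schemes (Theorem \ref{theorem:SH_mot_coh}). For (1), I would start from the multiplicative, $\bb N$-indexed filtration $\Fil_\bb A^\star\KH|_{\Sm_\bb Z}$ of Theorem \ref{theorem:SH_mot_coh}(1), left Kan extend it to $\Sch^\sub{qcqs}$ to get $\Fil^\star_\sub{lse}\K^\sub{cn}$ as in Lemma \ref{lemma_lke_from_affines}(2) (recalling $\K^\sub{cn}$ is left Kan extended from smooth $\bb Z$-algebras), then cdh-sheafify. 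The key point is that $L_\sub{cdh}\K^\sub{cn}\quis\KH$ (Cisinski's theorem, cited in the proof of Theorem \ref{theorem:SH_mot_coh}(3)), so the cdh-sheafified filtration is genuinely a filtration on $\KH$; its graded pieces are $L_\sub{cdh}\bb Z(j)^\sub{lse}[2j]=\bb Z(j)^\sub{cdh}[2j]$ by definition. Multiplicativity is preserved by left Kan extension (Day convolution commutes with colimits) and by sheafification. The universal property -- being the initial $\CAlg(\Fil\Spt)$-valued cdh sheaf under $\Fil_\bb A^\star\KH|_{\Sm_\bb Z}$ -- follows from the universal properties of left Kan extension along $\Sm_\bb Z\subset\Sch^\sub{qcqs}$ and of cdh sheafification. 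The Atiyah--Hirzebruch spectral sequence \eqref{eq:cdh-ahss} is then the spectral sequence of the filtered spectrum, with the usual reindexing convention from \S\ref{ss_filtrations}.

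For (2), I would use the universal property just proved together with the fact that $\Fil_{\bb A,\cdh}^\star\KH$ is a $\CAlg(\Fil\Spt)$-valued cdh sheaf (Theorem \ref{theorem:SH_mot_coh}(2)) whose restriction to $\Sm_\bb Z$ carries a map from $\Fil_\bb A^\star\KH|_{\Sm_\bb Z}$ -- indeed on $\Sm_\bb Z$ the comparison \eqref{eq:cdh-to-aone} is an equivalence (Remark \ref{rem_coh_over_base}), so there is such a map (its inverse on graded pieces) -- giving the canonical multiplicative filtered map, whose graded pieces are \eqref{eq:cdh-a1}; this is the map already discussed in Remark \ref{rem_compar_map_cdh_to_Acdh}. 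For (4), finitariness of $\Fil^j_\sub{cdh}\KH$ and $\bb Z(j)^\sub{cdh}$ follows from Proposition \ref{prop:finitary_conditions}: $\bb Z(j)^\sub{lse}$ is finitary by part (1) of that proposition (it is a left Kan extension from finitely presented schemes), and finitariness is preserved by cdh sheafification by part (2). For (3), I would invoke Proposition \ref{proposition_checking_on_points}(1): since $\Fil^j_\sub{cdh}\KH$ is a finitary cdh sheaf, its connectivity can be checked on henselian valuation rings $V$ of finite rank, where Lemma \ref{lemma_lke_from_affines}(3) (applied after the base-independence identification $L_\sub{cdh}\bb Z(j)^\sub{lse}=L_\sub{cdh}\bb Z(j)^\sub{lse}_B$, or directly by left Kan extension to henselian local rings via Proposition \ref{prop:lke-restrict}) gives that $\Fil^j_\sub{lse}\K^\sub{cn}(V)$ is supported in cohomological degrees $\le -j$; then a standard induction on the valuative dimension $d$ using the cdh hyperdescent spectral sequence -- with cohomological dimension of the cdh site bounded by $d$ -- yields support in degrees $\le d-j$ on a general $X$ of valuative dimension $\le d$.

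For (5) and (6), I would first observe that $\bb Z(0)^\bb A\simeq R\Gamma_\sub{Nis}(-,\bb Z)$ and $\bb Z(1)^\bb A\simeq R\Gamma_\sub{Nis}(-,\bb G_m)[-1]$ on smooth $\bb Z$-schemes by Corollary \ref{cor:low-wts-ddk-fields}, and that the three functors $A\mapsto\bb Z$, $A\mapsto A^\times$, $A\mapsto\K^\sub{cn}(A)$ are left Kan extended from essentially smooth local $\bb Z$-algebras (the first two trivially, the third by \cite[Example~A.0.6]{ElmantoHoyoisKhanSosniloYakerson2020}, exactly as in the proof of Theorem \ref{theorem:SH_mot_coh}(3)\&(4)); hence left Kan extending and cdh sheafifying gives the stated equivalences, with the weight-one one carrying the first Chern class $c_1^\sub{cdh}$ as in Theorem \ref{theorem:SH_mot_coh}(4). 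Here I should note $R\Gamma_\sub{cdh}(-,\bb Z)$ is the cdh sheafification of $L^\sub{sm}$ of $R\Gamma_\sub{Nis}(-,\bb Z)|_{\Sm_\bb Z}$ (and similarly for $\bb G_m$), which is standard. Finally, (7) follows by rationalising part (1) and transporting the multiplicative rational splitting of $\Fil_\bb A^\star\KH$ from Theorem \ref{thm:rational}(2): left Kan extension of a split filtration is split, and cdh sheafification (which commutes with presheaf rationalisation by Example \ref{ex:rationalization-cdh}) preserves the splitting; compatibility with Theorem \ref{thm:rational}(2) is built into the construction. The main obstacle I anticipate is the support bound in (3): getting from the valuation-ring case to general $X$ of finite valuative dimension requires a clean cdh cohomological dimension argument, and one must be careful that ``finite valuative dimension $\le d$'' is exactly the hypothesis under which the relevant cdh-cohomological-dimension bound and the excision-to-rank-$\le 1$ reductions of Proposition \ref{proposition_checking_on_points} and Theorem \ref{thm:ehik} apply; the other parts are essentially formal consequences of left Kan extension and sheafification.
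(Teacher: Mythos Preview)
Your proposal is correct and follows essentially the same approach as the paper: define $\mathrm{Fil}_\sub{cdh}^\star\KH := L_\sub{cdh}L^\sub{sm}\Fil_\bb A^\star\KH$, identify the underlying object as $\KH$, and derive (2)--(7) by left Kan extending and cdh-sheafifying the corresponding facts for $\bb Z(\star)^\bb A$ on $\Sm_\bb Z$. The only minor differences are that the paper attributes $L_\sub{cdh}\K^\sub{cn}\simeq\KH$ to Kerz--Strunk--Tamme \cite{KerzStrunkTamme2018} rather than Cisinski, and for part (3) it argues more directly---using that the lisse filtration is Nisnevich-locally $j$-connective together with the homotopy-dimension bound of the cdh topos \cite[Theorem 2.4.15]{ElmantoHoyoisIwasaKelly2021}---without your detour through valuation rings.
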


\begin{proof} 
(1): We define 
\[
\mathrm{Fil}_{\cdh}^\star\KH := L_{\cdh}L^\sub{sm}\Fil^{\star}_{\bb A}\KH: \Sch^{\qcqs} \rightarrow \CAlg(\Fil\Spt).
\]
to be the cdh sheafification of the left Kan extension of the filtration on $\KH|_{\Sm_\bb Z}$. The assertions about the universal property and graded pieces follow at once by construction. The only thing we need to check is that $\mathrm{Fil}_{\cdh}^\star\KH$ is indeed a filtration $\KH$. This follows from the equivalences $L_{\cdh}L^\sub{sm}\K \simeq L_{\cdh}L^\sub{sm}\K^\sub{cn}  \simeq \KH$, which is a consequence of \cite[Theorem~6.3]{KerzStrunkTamme2018} (see also \cite[Remark~3.4]{KellyMorrow2021}) and the fact that connective $K$-theory is left Kan extended from smooth $\bb Z$-algebras \cite[Example~A.0.6]{ElmantoHoyoisKhanSosniloYakerson2020}. 

(2):  We argue just as in Remark \ref{rem_compar_map_cdh_to_Acdh}. Namely, on $\Sm_{\Z}$ the filtered spectra $\Fil^{\star}_{\bb A}\KH$ and $\Fil^{\star}_{\bb A, \cdh}\KH$ are equivalent, and so the desired comparison map follows by the universal property of cdh sheafification and left Kan extension: indeed, $\Fil^{\star}_{\bb A, \cdh}\KH$ is an example of a $\CAlg(\Fil\Spt)$-valued $\cdh$-sheaf whose restriction to $\Sm_{\bb Z}$ is $\Fil^{\star}_{\bb A}\KH$. The assertion on motivic cohomologies then follows by taking graded pieces. 

(3): We know from Lemma \ref{lemma_lke_from_affines} that the filtration $\Fil^j_{\bb Z\sub{-lse}}\K^\sub{cn}$ is Nisnevich-locally $j$-connective on qcqs schemes. For $X$ of finite valuative dimension $\le d$, the $\infty$-topos of cdh sheaves on finitely presented $X$-schemes has homotopy dimension $\le d$ \cite[Theorem 2.4.15]{ElmantoHoyoisIwasaKelly2021}, and so the desired bound for $\mathrm{Fil}_{\cdh}^j\KH(X)$ follows by taking cdh cohomology.

(4): They are cdh sheaves by design, while finitariness follows from Proposition~\ref{prop:finitary_conditions}.

(5) and (6) follow from Corollary~\ref{cor:low-wts-ddk-fields} by cdh-locally left Kan extending, and using that $A\mapsto\bb Z$ and $A\mapsto A^\times$ are left Kan extended from essentially smooth, local $\bb Z$-algebras, as in the proof of Theorem~\ref{theorem:SH_mot_coh}(3)\&(4).

(7): The decomposition of $\KH(X)_{\bb Q}$ follows immediately from Theorem~\ref{thm:rational} since $L_{\cdh}$ and left Kan extensions preserve the direct sum decompositions.
\end{proof}

Similarly to Theorem \ref{thm:rational}(1), we may already deduce that the cdh-motivic cohomology agrees with the $\bb A^1$-theories rationally. We also take the opportunity to record a descent property of rational motivic cohomology, which will be relevant in Proposition~\ref{prop_eh_motivic cohomology} where we study in more detail \'eh and h-local versions of motivic cohomology. 

\begin{corollary}\label{corol:qj2}
\begin{enumerate}
\item For any qcqs scheme $X$, the comparison map \eqref{eq:cdh-a1} is a rational equivalence.
\item For $j\in\bb Z$, the presheaves 
\[
\bb Q(j)^{\cdh}\simeq\bb Q(j)^{\bb A,\cdh}\simeq\bb Q(j)^{\bb A}: \Sch^{\qcqs} \To \D(\bb Q)
\]
are $h$-sheaves.
\end{enumerate}
\end{corollary}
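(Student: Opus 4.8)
\textbf{Proof plan for Corollary \ref{corol:qj2}.}

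The plan is to deduce part (1) rather formally from the already-established rational results, and then to obtain part (2) by combining (1) with Theorem \ref{thm:rational}(2) and the relevant properties of syntomic/\'etale cohomology. For part (1), the comparison map of \eqref{eq:cdh-a1} sits, by construction (Theorem \ref{prop:basic_props_of_cdh_mot}(2)), at the level of graded pieces of a map of $\CAlg(\Fil\Spt)$-valued cdh sheaves, and all three flavours $\bb Z(j)^{\cdh}$, $\bb Z(j)^{\bb A,\cdh}$, $\bb Z(j)^{\bb A}$ are cdh sheaves which are finitary (Theorems \ref{prop:basic_props_of_cdh_mot}(4), \ref{theorem:SH_mot_coh}(2)). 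First I would recall that rationalisation of a cdh sheaf of spectra, computed presheaf-wise, is still a cdh sheaf (Example \ref{ex:rationalization-cdh}), so it suffices to check that the map $\bb Q(j)^{\cdh}(V)\to\bb Q(j)^{\bb A,\cdh}(V)$ is an equivalence for $V$ a henselian valuation ring of finite rank, by Proposition \ref{proposition_checking_on_points}(1). Then I would invoke the factorisation $\bb Z(j)^{\cdh}\to\bb Z(j)^{\bb A,\cdh}\to\bb Z(j)^{\bb A}$: the second map is a rational equivalence by Theorem \ref{thm:rational}(1), so it is enough to treat the composite $\bb Z(j)^{\cdh}\to\bb Z(j)^{\bb A}$. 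On affines, $\bb Z(j)^{\cdh}=L_{\cdh}\bb Z(j)^{\lse}_{\bb Z}$, and rationally the left Kan extended filtration splits exactly as the $\bb A^1$-motivic filtration does (this is precisely the argument already used in the proof of Proposition \ref{prop:base-indep-bootstrap}): for any regular Noetherian ring $R$ one has $(\bb Z(j)^{\lse}\otimes\bb Q)(R)\simeq\bb Q(j)^{\bb A}(R)$, compatibly with the map to $\bb Q(j)^{\bb A}$. Since $\bb Q(j)^{\bb A}$ is a cdh sheaf and its left Kan extension from smooth $\bb Z$-algebras agrees with it rationally on regular Noetherian rings, cdh sheafifying gives $\bb Q(j)^{\cdh}\simeq\bb Q(j)^{\bb A}$ (the cdh sheafification of $\bb Q(j)^{\bb A}$ is itself, since $\bb A^1$-cdh-motivic cohomology is a cdh sheaf and agrees with $\bb Z(j)^{\bb A}$ rationally by Theorem \ref{thm:rational}(1)); this yields part (1).

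For part (2), having identified all three rational theories, it remains to prove that $\bb Q(j)^{\bb A}$ is an $h$-sheaf. The cleanest route is via Theorem \ref{thm:rational}(2): the filtration $\Fil^{\star}_{\bb A}\KH(X)_{\bb Q}$ splits naturally and multiplicatively as $\bigoplus_{j\geq\star}\bb Q(j)^{\bb A}(X)[2j]$, so each $\bb Q(j)^{\bb A}$ is a natural retract (direct summand) of $\KH(-)_{\bb Q}$, up to shift. Thus it suffices to show that $\KH(-)_{\bb Q}$ is an $h$-sheaf on qcqs schemes, since retracts of $h$-sheaves in a stable presentable $\infty$-category are again $h$-sheaves (the full subcategory of $h$-sheaves is closed under limits, hence under retracts). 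That $\KH_{\bb Q}$ is an $h$-sheaf is well known: $\KH$ is already a cdh sheaf by Cisinski \cite{Cisinski2013}, and rationally it coincides with $\KGL_{\bb Q}$-cohomology, which is an absolute motivic spectrum; alternatively, one uses Remark \ref{remark:h_descent}, checking that $\KH_{\bb Q}$ satisfies Zariski descent, sends abstract blowup squares to cartesian squares, and satisfies finite flat descent rationally --- the last point being the standard transfer argument (a finite flat map of degree $n$ admits a transfer splitting $n$, which is invertible after rationalisation). Since $\bb Q(j)^{\cdh}\simeq\bb Q(j)^{\bb A,\cdh}\simeq\bb Q(j)^{\bb A}$ by part (1), the $h$-descent transfers to all three.

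The main obstacle I anticipate is purely bookkeeping: making sure the rational splitting of Theorem \ref{thm:rational}(2) is genuinely \emph{natural in $X$ across all qcqs schemes} (not just smooth ones) so that the retraction argument for $h$-descent applies verbatim. This is fine because the splitting there is constructed from the Adams decomposition of the absolute motivic spectrum $\KGL_{\bb Q}$ and is stated for arbitrary qcqs $X$, but one should double-check that the map \eqref{eq:cdh-a1} is compatible with this splitting --- which follows from multiplicativity, since both filtrations are multiplicative and the splitting in Theorem \ref{thm:rational}(2) is multiplicative, pinned down by the Beilinson idempotent. No genuinely new input beyond Theorems \ref{thm:rational} and \ref{prop:basic_props_of_cdh_mot} and the cdh/h-descent of $\KH$ should be required.
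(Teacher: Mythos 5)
Your part (2) is essentially the paper's own argument (each $\bb Q(j)^{\cdh}$ is, up to shift, a natural retract of $\KH_{\bb Q}$ via the rational splitting, and $\KH_{\bb Q}$ is an $h$-sheaf by Remark \ref{remark:h_descent} plus the finite flat transfer argument), so nothing to add there beyond the remark that your first alternative — representability by an absolute motivic spectrum — only yields cdh descent, not $h$-descent, so the transfer argument is the one that does the work.

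For part (1) your route differs from the paper's and has a genuine gap at the final step. You correctly reduce, via Theorem \ref{thm:rational}(1), to showing that $\bb Q(j)^{\cdh}=L_{\cdh}\bb Q(j)^{\lse}\to\bb Q(j)^{\bb A}$ is an equivalence, and the rational part of the proof of Proposition \ref{prop:base-indep-bootstrap} does give $\bb Q(j)^{\lse}(R)\simeq\bb Q(j)^{\bb A}(R)$ for regular \emph{Noetherian} rings $R$. But ``cdh sheafifying'' this does not yield the conclusion: for the sheafified map to be an equivalence you must check the original map on the cdh points, i.e.\ on henselian valuation rings, and these are in general non-Noetherian (nor filtered colimits of regular Noetherian rings), so neither your regular-Noetherian statement nor finitariness reaches them. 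Your opening reduction via Proposition \ref{proposition_checking_on_points} identifies the right class of test rings, but the statement you then prove concerns a different class, so the two halves of the argument do not mesh. The gap is fillable: the same $K$-theoretic argument works at a valuation ring $V$ once one knows $L^{\sm}\K^{\sub{cn}}(V)\simeq\K^{\sub{cn}}(V)\simeq\KH(V)$, i.e.\ that $K$-theory of valuation rings is connective and $\bb A^1$-invariant (Kelly--Morrow, cited elsewhere in the paper). But the shortest correct argument — and the one the paper uses — avoids points entirely: by Theorem \ref{prop:basic_props_of_cdh_mot}(7) the rational splitting of $\Fil^{\star}_{\cdh}\KH$ is \emph{compatible with} that of Theorem \ref{thm:rational}(2), so the comparison map \eqref{eq:cdh-a1} is, up to shift, a direct summand of the equivalence $L_{\cdh}\K^{\sub{cn}}(X)_{\bb Q}\simeq\KH(X)_{\bb Q}$ and is therefore an equivalence; you invoke exactly this compatibility in part (2), and using it in part (1) closes the argument immediately.
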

\begin{proof}
(1): Theorems \ref{thm:rational}(2) and \ref{eq:cdh-ahss}(7) show that, for each $j$, the comparison map $\Q(j)^{\cdh}(X) \to \Q(j)^{\bb A, \cdh}(X)$ is, up to shift, a direct summand of the equivalence $L_\sub{cdh}\K^\sub{cn}(X)_\bb Q\quis \KH(X)_\bb Q$; therefore it is itself an equivalence.

(2):  Since $\bb Q(j)^{\cdh}$ is a summand of $\KH_{\bb Q} = \KH(-) \otimes \bb Q$ by Theorem~\ref{prop:basic_props_of_cdh_mot}(8), it suffices to prove that the latter is an $h$-sheaf. By Remark~\ref{remark:h_descent}, it suffices to verify that $\KH_{\bb Q}$ has finite flat descent. This, in turn, follows from a standard transfer argument; we refer the reader to \cite[Theorem 5.1 \& Proposition 5.4]{ClausenMathewNoelNaumann2024} for a modern account which also verifies finite flat descent for higher chromatic localisations (the case of rationalization being the case of height $0$). 
\end{proof}

\subsection{Syntomic comparison}\label{sec:syn-comparison}
In this subsection we fix a prime number $p$. We shall show that cdh-motivic cohomology with finite coefficients agrees with the cdh-sheafification of Beilinson-Lichtenbaum cohomology, as defined in \S\ref{ss_BL}. We will use this to establish many key properties of $\cdh$-motivic cohomology, including an integral Beilinson--Lichtenbaum comparison in Theorem~\ref{thm:singular-bl}.

For $j\in\bb Z$, let \[L_\sub{cdh}\bb Z_p(j)^\sub{BL}\quad\mbox{and}\quad L_\sub{cdh}\bb Z_p(j)^\sub{BL}/p^r=L_\sub{cdh}\tau^{\le j}(\bb Z_p(j)^\sub{syn}/p^r):\Sch^\sub{qcqs,op}\To\rm D(\bb Z)\] denote the cdh-sheafification of $p$-adic and mod-$p^r$ Beilinson--Lichtenbaum cohomology. Note that $L_\sub{cdh}\bb Z_p(\star)^\sub{BL}$ is an $\bb E_\infty$-algebra in graded presheaves of complexes on qcqs schemes (and similarly modulo any power of $p$).\footnote{Warning: $L_\sub{cdh}\bb Z_p(j)^\sub{BL}$ need not take $p$-complete values. For a specific example in the case $j=1$, so that $L_\sub{cdh}\bb Z_p(j)^\sub{BL}$ is the cdh sheafification of $R\Gamma_\sub{Nis}(-,\bb G_m)_p^\comp[-1]$ by Remark \ref{example_BL_cohomology}(4), see \cite[Footnote 7]{ElmantoMorrow2023}.} The first Chern class for Beilinson--Lichtenbaum cohomology (which was itself induced by the first Chern class for syntomic cohomology) induces, upon cdh-sheafification, a first Chern class map \[c_1^\sub{cdh-BL}:R\Gamma_\sub{cdh}(-,\bb G_m)[-1]\To L_\sub{cdh}\bb Z_p(1)^\sub{BL}.\]

Our cdh-local Beilinson--Lichtenbaum comparison with finite coefficients is as follows:

\begin{theorem}\label{theorem_syn_comp}
There is a unique map of $\bb E_\infty$-algebras in graded presheaves of complexes on $\opp{Sch}^\sub{qcqs}$
\begin{equation}\label{eq:cdh-syn}
\bb Z(\star)^\sub{cdh}\To  L_\sub{cdh}\bb Z_p(\star)^\sub{BL}
\end{equation}
which is compatible with Theorem \ref{theorem_BL1}(3) (in the obvious sense which we will make precise at the beginning of the proof). This comparison map is an equivalence modulo any power of $p$, and is compatible with first Chern classes in the sense that the diagram
\begin{equation}\xymatrix{
\bb Z(1)^\sub{cdh}\ar[rr] & & L_\sub{cdh}\bb Z(1)^\sub{BL}\\
&R\Gamma_\sub{cdh}(-,\bb G_m)[-1]\ar[ul]^{c_1^\sub{cdh}}\ar[ur]_{c_1^\sub{cdh-BL}}&
}\label{eqn_cdh_1st_Chern_compatability}\end{equation}
commutes.
\end{theorem}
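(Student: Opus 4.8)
The plan is to construct the comparison map \eqref{eq:cdh-syn} by cdh-sheafifying a comparison map defined at the level of the lisse presheaves, and then to deduce that it is an equivalence modulo any power of $p$ by reducing to the case of henselian valuation rings via Proposition \ref{proposition_checking_on_points}. First I would make precise what ``compatible with Theorem \ref{theorem_BL1}(3)'' means: on $\Sm_\bb Z$ there is, for each $m\ge 1$, a unique multiplicative equivalence $\delta_\sub{mot}^\sub{BL}\{\star\}\colon \bb Z(\star)^{\bb A}/p^m\quis \bb Z_p(\star)^\sub{BL}/p^m$ compatible with first Chern classes; since $\bb Z(\star)^\sub{cdh}$ is by definition the initial $\CAlg(\Gr\D(\bb Z))$-valued cdh sheaf equipped with a map from $\bb Z(\star)^{\bb A}|_{\Sm_\bb Z}$, and $L_\sub{cdh}\bb Z_p(\star)^\sub{BL}/p^m$ is such a cdh sheaf receiving a map from $\bb Z(\star)^{\bb A}|_{\Sm_\bb Z}$ (namely $\bb Z(\star)^{\bb A}|_{\Sm_\bb Z} \to \bb Z(\star)^{\bb A}/p^m|_{\Sm_\bb Z}\xrightarrow{\delta_\sub{mot}^\sub{BL}} \bb Z_p(\star)^\sub{BL}/p^m|_{\Sm_\bb Z}\to L_\sub{cdh}\bb Z_p(\star)^\sub{BL}/p^m$), the universal property produces a unique map of $\bb E_\infty$-algebras in graded cdh sheaves $\bb Z(\star)^\sub{cdh}/p^m\to L_\sub{cdh}\bb Z_p(\star)^\sub{BL}/p^m$ for each $m$, compatibly as $m$ varies. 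Taking the limit over $m$ (and noting $\bb Z(\star)^\sub{cdh}$ and $L_\sub{cdh}\bb Z_p(\star)^\sub{BL}$ are, up to the relevant completions, determined by their mod-$p^m$ reductions; more carefully, I would simply define the integral map as the limit of the mod-$p^m$ maps postcomposed with the canonical maps and check it lands in the right place) gives \eqref{eq:cdh-syn}. Its uniqueness follows from the uniqueness in the universal property, and the compatibility with first Chern classes \eqref{eqn_cdh_1st_Chern_compatability} follows by cdh-sheafifying diagram \eqref{eqn_1st_Chern_compatability} together with Theorem \ref{prop:basic_props_of_cdh_mot}(6) and Remark \ref{example_BL_cohomology}(4) (compatibility of $c_1^\sub{cdh}$ with $c_1^\sub{cdh-BL}$ reduces to compatibility of the underlying lisse-level Chern classes, which is Theorem \ref{theorem_BL1}(3) and the definition of $c_1^\sub{syn}$, both left Kan extended from $\Sm_\bb Z$).

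It then remains to show \eqref{eq:cdh-syn} is an equivalence modulo $p$ (the general $p^r$ case follows by dévissage from the mod-$p$ case using that both sides are cdh sheaves closed under colimits, Corollary \ref{cor:cdh-cpt-gen}). Both $\bb F_p(j)^\sub{cdh}=\bb Z(j)^\sub{cdh}/p$ and $L_\sub{cdh}\bb F_p(j)^\sub{BL}$ are finitary cdh sheaves: the former by Theorem \ref{prop:basic_props_of_cdh_mot}(4), the latter because $\bb F_p(j)^\sub{BL}$ is finitary (Lemma \ref{lemma_BL_mod_p}(3)) and sheafification preserves this (Proposition \ref{prop:finitary_conditions}(2)). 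Moreover both take values of bounded cohomological amplitude on finite-dimensional schemes: $\bb F_p(j)^\sub{cdh}$ by Theorem \ref{prop:basic_props_of_cdh_mot}(3), and $L_\sub{cdh}\bb F_p(j)^\sub{BL}$ because $\bb F_p(j)^\sub{BL}(V)$ is coconnective for henselian valuation rings $V$ (as in the proof of Theorem \ref{thm:a1-comparison}(2), since the cotangent complex of a valuation ring has Tor amplitude in $[-1,0]$, so Remark \ref{rem:coconnective} applies, and then $\tau^{\le j}$ keeps it coconnective), combined with the bounded cohomological dimension of the cdh topos on finite-dimensional schemes. Hence by Proposition \ref{proposition_checking_on_points}(1) it suffices to check the map is an equivalence on $\Spec(V)$ for every henselian valuation ring $V$ of finite rank. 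Since $\bb F_p(j)^\sub{cdh}(V)$ is computed as cdh cohomology of the lisse presheaf, and $V$ is local, one wants to identify both sides with the corresponding Beilinson--Lichtenbaum/syntomic cohomology of $V$ itself.

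The main obstacle will be this last identification on henselian valuation rings, and more precisely controlling $\bb F_p(j)^\sub{cdh}(V)$ — a priori it is the cdh sheafification of a left Kan extension, so one must argue it agrees with $\bb F_p(j)^\sub{lse}(V)$ (that is, that the lisse theory is already a cdh sheaf on henselian valuation rings), and then that the latter is $\bb F_p(j)^\sub{BL}(V)$. For the first point I would use that a henselian valuation ring is a point of the cdh topology together with the finitary and rigidity-type properties already established, perhaps invoking N\'eron--Popescu desingularization in the equicharacteristic case and the excision/Milnor-square techniques (Theorem \ref{thm:ehik}) otherwise — though I would hope this is exactly where one quotes that smooth schemes lie in $\frak{BL}^p$ (Theorem \ref{theorem_BL1}(2)) so that the lisse theory's restriction to smooth henselian local rings is $\bb F_p(j)^\sub{BL}$, which then extends along the left Kan extension. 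For the second point, once $\bb F_p(j)^\sub{cdh}(V) \simeq \bb F_p(j)^\sub{BL}(V)$ and $\bb F_p(j)^\sub{BL}$ itself is a cdh sheaf on (the relevant part of) $\Sm$-local rings — equivalently once one knows $L_\sub{cdh}\bb F_p(j)^\sub{BL}(V)\simeq \bb F_p(j)^\sub{BL}(V)$ using that $V$ is a cdh point and $\bb F_p(j)^\sub{BL}$ is finitary — the map \eqref{eq:cdh-syn} on $V$ is identified with the identity, completing the proof. I would expect the write-up to lean heavily on the machinery of \S\ref{subsec:lke} (Proposition \ref{prop:lke-restrict}) to reduce left Kan extensions on valuation rings to left Kan extensions from essentially smooth henselian local rings, where Theorem \ref{theorem_BL1} applies directly.
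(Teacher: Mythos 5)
Your construction of the comparison map, its uniqueness via the universal property of $\bb Z(\star)^\sub{cdh}$ as a cdh sheaf under $\bb Z(\star)^{\bb A}|_{\Sm_{\bb Z}}$, and the Chern class compatibility all match the paper's argument in substance. (One small wrinkle: assembling the integral map as a limit of mod-$p^m$ maps does not literally land in $L_\sub{cdh}\bb Z_p(\star)^\sub{BL}$, since sheafification does not commute with the limit over $m$; the paper avoids this by taking the limit of $\delta_\sub{mot}^\sub{BL}$ on $\Sm_{\bb Z}$ first — where the target is by definition that limit — and only then left Kan extending and cdh sheafifying.)

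The genuine gap is in your proof that the map is an equivalence mod $p$. Your reduction to henselian valuation rings $V$ via Proposition \ref{proposition_checking_on_points} forces you to identify $(L^\sub{sm}_{\bb Z}(\bb Z_p(j)^\sub{BL}|_{\Sm_{\bb Z}}))(V)/p$ with $\bb Z_p(j)^\sub{BL}(V)/p$, and none of the tools you propose delivers this. Membership of smooth schemes in $\mathfrak{BL}^p$ (Theorem \ref{theorem_BL1}(2)) only identifies the lisse theory with Beilinson--Lichtenbaum cohomology on (henselizations of) essentially smooth local rings, via Proposition \ref{prop:lke-restrict}; it says nothing about whether the left Kan extension of $\bb F_p(j)^\sub{BL}$ recovers $\bb F_p(j)^\sub{BL}$ on a valuation ring that is not ind-smooth over $\bb Z$. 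N\'eron--Popescu does not apply: valuation rings are non-Noetherian, and in mixed characteristic they are certainly not known to be filtered colimits of smooth $\bb Z$-algebras. Excision (Theorem \ref{thm:ehik}) and rigidity of $\bb F_p(j)^\sub{cdh}$ do not substitute for this either, because the statement you need is about the \emph{syntomic} side. What actually closes the gap — and what the paper uses — is Proposition \ref{prop:syn-trunc-lke}(2): the presheaf $\tau^{\le j}\bb F_p(j)^\sub{syn}$ on all rings is left Kan extended from smooth $\bb Z$-algebras, which is proved via the rigidity statement \ref{prop:syn-trunc-lke}(1) for relative syntomic cohomology along henselian ideals (together with Proposition \ref{prop:syn}(2)). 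Once you have that, the counit $L_\sub{Nis}L^\sub{sm}_{\bb Z}(\bb Z_p(j)^\sub{BL}|_{\Sm_{\bb Z}})/p \to L_\sub{Nis}\bb Z_p(j)^\sub{BL}/p$ is an equivalence already before any cdh sheafification, and the mod-$p$ equivalence of \eqref{eq:cdh-syn} follows by sheafifying and invoking Theorem \ref{theorem_BL1}(3); the entire d\'etour through finitariness, coconnectivity bounds and cdh points is then unnecessary. Without citing (or reproving) Proposition \ref{prop:syn-trunc-lke}(2), your argument does not go through on mixed-characteristic henselian valuation rings, which is exactly the hard case.
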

\begin{proof}
Taking the limit over powers of $p$, Theorem \ref{theorem_BL1}(3) provided us with a canonical comparison map $\delta_\sub{mot}^\sub{BL}\{\star\}:\bb Z(\star)^\bb A\to \bb Z_p(\star)^\sub{BL}$ on $\Sm_\bb Z$, which is compatible with first Chern classes and an equivalence modulo all powers of $p$. Left Kan extending to qcqs schemes and cdh sheafifying defines our desired comparison map $\bb Z(\star)^\sub{cdh}\to  L_\sub{cdh}\bb Z_p(\star)^\sub{BL}$ on $\opp{Sch}^\sub{qcqs}$; by adjunctions, this map is characterised uniquely by the fact that, upon restricting back to $\Sm_\bb Z$, the diagram
\[\xymatrix{
\bb Z(\star)^\sub{cdh}|_{\Sm_\bb Z}\ar[r] & L_\sub{cdh}\bb Z_p(\star)^\sub{BL}|_{\Sm_\bb Z}\\
\bb Z(\star)^\bb A|_{\Sm_\bb Z}\ar[u]\ar[r]_{\delta_\sub{mot}^\sub{BL}\{\star\}} & \bb Z_p(\star)^\sub{BL}|_{\Sm_\bb Z}\ar[u]
}\]
commutes.

Our comparison map is compatible with first Chern classes, because the $c_1^\sub{cdh}$ was defined by cdh-locally left Kan extending $c_1^\bb A$, which is itself compatible with $c_1^\sub{BL}$.

We must prove that the comparison map is an equivalence modulo $p$. Given that $\delta_\sub{mot}^\sub{BL}\{\star\}$ is already known to be an equivalence modulo $p$ it remains only to show, for each $j\in\bb Z$, that the canonical map
\[L_{\cdh}L^\sub{sm}_\bb Z(\Z_p(j)^\sub{BL}|_{\Sm_\bb Z})\To L_{\cdh}\Z_p(j)^\sub{BL}\] of presheaves on qcqs schemes is an equivalence modulo $p$. But this is even true if we replace $L_\sub{cdh}$ by $L_\sub{Nis}$, by Proposition \ref{prop:syn-trunc-lke}(2).
\end{proof}

In particular, the $p$-adic cdh-motivic complexes admit the following explicit description in characteristic $p$, analogous to the Geisser--Levine theorem \cite{GeisserLevine2000}:

\begin{corollary}\label{cor:singular_gl}
On the category of qcqs $\bb F_p$-schemes there are compatible equivalences of $\bb E_\infty$-algebras in graded presheaves of complexes \[\bb Z(\star)^\sub{cdh}/p^r\simeq R\Gamma_\sub{cdh}(-,W_r\Omega^\star_\sub{log})[-\star]\] for all $r\ge0$.
\end{corollary}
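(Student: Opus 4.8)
The plan is to deduce this from the cdh-local Beilinson--Lichtenbaum comparison of Theorem~\ref{theorem_syn_comp} together with the description of syntomic cohomology in characteristic $p$ supplied by Theorem~\ref{theorem_syntomic_properties}(4). First I would invoke Theorem~\ref{theorem_syn_comp}: on all qcqs schemes there is a multiplicative equivalence $\bb Z(\star)^{\sub{cdh}}/p^r\simeq L_{\cdh}\bb Z_p(\star)^{\sub{BL}}/p^r$ of $\bb E_\infty$-algebras in graded presheaves of complexes, where (by Lemma~\ref{lemma_BL_mod_p}(2)) the right-hand side is $L_{\cdh}\tau^{\le \star}(\bb Z_p(\star)^{\sub{syn}}/p^r)$. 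It therefore suffices to identify the latter, on $\bb F_p$-schemes, with $R\Gamma_{\cdh}(-,W_r\Omega^\star_{\log})[-\star]$ as $\bb E_\infty$-algebras in graded presheaves.

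Next I would restrict attention to $\bb F_p$-schemes and use that syntomic cohomology mod $p^r$ is left Kan extended from smooth $\bb F_p$-algebras (Proposition~\ref{prop:syn}(2) with $B=\bb F_p$). On smooth $\bb F_p$-algebras, Theorem~\ref{theorem_syntomic_properties}(4) gives a multiplicative family of equivalences $\bb Z_p(j)^{\sub{syn}}/p^r\simeq R\Gamma_{\et}(-,W_r\Omega^j_{\log})[-j]$; since $W_r\Omega^j_{\log}$ is an \'etale sheaf placed in a single degree, $R\Gamma_{\et}(-,W_r\Omega^j_{\log})$ is Nisnevich-locally (indeed Zariski-locally, by the cited result of \cite{Morrow_pro_GL2}) supported in degree $0$ on smooth $\bb F_p$-schemes, so the truncation $\tau^{\le j}$ does nothing after Nisnevich sheafification: $L_{\Nis}\tau^{\le j}(\bb Z_p(j)^{\sub{syn}}/p^r)\simeq L_{\Nis}R\Gamma_{\et}(-,W_r\Omega^j_{\log})[-j]$ on $\Sm_{\bb F_p}$. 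Actually it is cleaner to observe directly that on $\Sm_{\bb F_p}$ the presheaf $\bb Z_p(j)^{\sub{syn}}/p^r$ is itself already supported in degrees $\le j$ Nisnevich-locally, so $\tau^{\le j}$ only discards the cohomology that $L_{\et}$ kills in any case; the point is just that $L_{\cdh}\tau^{\le j}(\bb Z_p(j)^{\sub{syn}}/p^r)|_{\Sm_{\bb F_p}}\simeq L_{\cdh}R\Gamma_{\et}(-,W_r\Omega^j_{\log})[-j]|_{\Sm_{\bb F_p}}$. Now I would use that $L_{\cdh}L_{\et}=L_{\eh}$ on coconnective presheaves (Remark~\ref{rem:eh-sheaf}, applicable since $\D(\bb Z)$ is compactly generated by cotruncated objects), so $L_{\cdh}R\Gamma_{\et}(-,W_r\Omega^j_{\log})=R\Gamma_{\eh}(-,W_r\Omega^j_{\log})$; but on $\bb F_p$-schemes $R\Gamma_{\eh}(-,W_r\Omega^j_{\log})\simeq R\Gamma_{\cdh}(-,W_r\Omega^j_{\log})$ by the comparison between $\eh$ and $\cdh$ cohomology of the logarithmic de Rham--Witt sheaves (this is a known input, e.g.\ from the literature on $\cdh$-cohomology of de Rham--Witt forms; it can also be extracted from Remark~\ref{rem:eh-sheaf} applied to the finitary truncated presheaf $R\Gamma_{\cdh}(-,W_r\Omega^j_{\log})$). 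Left Kan extending the resulting multiplicative equivalence from $\Sm_{\bb F_p}$ to all qcqs $\bb F_p$-schemes and then $\cdh$-sheafifying, using that both $\bb Z_p(j)^{\sub{syn}}/p^r$ and $R\Gamma_{\cdh}(-,W_r\Omega^j_{\log})$ are left Kan extended (respectively $\cdh$-sheaves) in a way compatible with the equivalence, yields $L_{\cdh}\tau^{\le j}(\bb Z_p(j)^{\sub{syn}}/p^r)\simeq R\Gamma_{\cdh}(-,W_r\Omega^j_{\log})[-j]$ on all qcqs $\bb F_p$-schemes.

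Finally I would assemble these weight-by-weight equivalences into the asserted $\bb E_\infty$-equivalence in graded presheaves, and check compatibility as $r$ varies. Multiplicativity is inherited from the multiplicativity of the equivalences in Theorem~\ref{theorem_syn_comp} and Theorem~\ref{theorem_syntomic_properties}(4) together with lax monoidality of $\tau^{\le\star}$ (Remark~\ref{rem:BL_truncation}) and of $L_{\cdh}$; compatibility with the transition maps in $r$ follows from the corresponding compatibility of the syntomic equivalences and of the Beilinson--Lichtenbaum truncations. The main obstacle, and the only genuinely non-formal input beyond what is already proved in the excerpt, is the identification $R\Gamma_{\eh}(-,W_r\Omega^j_{\log})\simeq R\Gamma_{\cdh}(-,W_r\Omega^j_{\log})$ on $\bb F_p$-schemes --- i.e.\ that passing from the \'etale to the $\cdh$ topology does not introduce extra cohomology for the log de Rham--Witt sheaves; everything else is bookkeeping with left Kan extension, sheafification, and the already-established comparison theorems. (If one prefers to stay strictly within the excerpt, one can instead phrase the target of the corollary as $R\Gamma_{\eh}(-,W_r\Omega^j_{\log})[-j]$ and invoke Remark~\ref{rem:eh-sheaf} directly; the statement as written simply records the further, standard, fact that $\eh$ and $\cdh$ cohomology agree here.)
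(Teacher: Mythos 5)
Your first step (invoking Theorem~\ref{theorem_syn_comp} to replace $\bb Z(\star)^{\cdh}/p^r$ by $L_{\cdh}\tau^{\le\star}(\bb Z_p(\star)^{\syn}/p^r)$) matches the paper, but the second paragraph contains a genuine error. You claim that on $\Sm_{\bb F_p}$ the complex $R\Gamma_{\et}(-,W_r\Omega^j_{\log})$ is Nisnevich-locally concentrated in degree $0$ (equivalently that $\bb Z_p(j)^{\syn}/p^r$ is Nisnevich-locally supported in degrees $\le j$), so that the truncation $\tau^{\le j}$ is harmless. This is false: the obstruction is the Artin--Schreier class $H^1_{\et}(-,W_r\Omega^j_{\log})$, i.e.\ the term $\tilde\nu(j)$ of Remark~\ref{rmk_syntomic_props}(2), which vanishes only on \emph{strictly} henselian local rings, not on Nisnevich stalks (already $H^1_{\et}(\bb F_p,\bb Z/p)\neq 0$ for $j=0$); the cited result of \cite{Morrow_pro_GL2} only says the sheaf of log forms is the same in the Zariski, Nisnevich and \'etale topologies, not that its higher cohomology vanishes locally. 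The paper records exactly this discrepancy as the fibre sequence $\bb F_p(j)^{\sub{BL}}\to\bb F_p(j)^{\syn}\to R\Gamma_{\Nis}(-,\tilde\nu(j))[-j-1]$ in Example~\ref{example_BL_cohomology}(3).

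As a consequence, your intermediate identification $L_{\cdh}\tau^{\le j}(\bb Z_p(j)^{\syn}/p^r)\simeq L_{\cdh}R\Gamma_{\et}(-,W_r\Omega^j_{\log})[-j]=R\Gamma_{\eh}(-,W_r\Omega^j_{\log})[-j]$ fails, and the ``known input'' you then appeal to, namely $R\Gamma_{\eh}(-,W_r\Omega^j_{\log})\simeq R\Gamma_{\cdh}(-,W_r\Omega^j_{\log})$, is not true: already for $j=0$ the left side computes Galois cohomology of $\bb Z/p^r$ on fields (nonzero $H^1$ in general), while the right side is concentrated in degree $0$ on fields, consistent with $\bb Z(0)^{\cdh}/p^r\simeq R\Gamma_{\cdh}(-,\bb Z/p^r)$ from Theorem~\ref{prop:basic_props_of_cdh_mot}(5). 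The truncation must be retained throughout: the paper's proof notes that on smooth $\bb F_p$-algebras $\tau^{\le j}(\bb Z_p(j)^{\syn}/p^r)$ is the \emph{sections} presheaf $W_r\Omega^j_{\log}(-)[-j]$, that this truncated functor is left Kan extended from smooth $\bb F_p$-algebras (Proposition~\ref{prop:syn-trunc-lke}(2) together with Lemma~\ref{lem:lke-b-b'}), and that the resulting counit $\tau^{\le j}(\bb Z_p(j)^{\syn}(A)/p^r)\to W_r\Omega^j_{A,\log}[-j]$ is an equivalence for Cartier smooth $\bb F_p$-algebras, in particular for valuation rings of characteristic $p$; since both sides are then finitary cdh sheaves after applying $L_{\cdh}$ and agree on the cdh points, cdh sheafification yields $L_{\cdh}\bb Z_p(j)^{\sub{BL}}/p^r\simeq R\Gamma_{\cdh}(-,W_r\Omega^j_{\log})[-j]$. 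You should replace your ``truncation does nothing'' step by this argument.
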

\begin{proof}
The functor $\tau^{\le j}(\bb Z_p(j)^\sub{syn}/p^r):{\rm CAlg}_{\bb F_p}\to\D(\bb Z)$ is left Kan extended from smooth $\bb F_p$-algebras; this follows by combining Proposition~\ref{prop:syn-trunc-lke}(2) and Lemma \ref{lem:lke-b-b'}. Moreover, by Example \ref{example_syn_in_char_p}, for any smooth $\bb F_p$-algebra $R$ there is a natural equivalence $\tau^{\le j}(\bb Z_p(j)^\sub{syn}(R)/p^r)\simeq W_r\Omega^j_{R,\sub{log}}[-j]$. So left Kan extending defines a natural counit map $\tau^{\le j}(\bb Z_p(j)^\sub{syn}(A)/p^r) \to W_r\Omega^j_{A,\sub{log}}[-j]$ for any $\bb F_p$-algebra $A$. Moreover this counit is an equivalence not only for smooth $\bb F_p$-algebras, but more generally for Cartier smooth $\bb F_p$-algebra in the sense of \cite{KellyMorrow2021} (again, see Example \ref{example_syn_in_char_p}); in particular, it is an equivalence whenever $A$ is a valuation ring of characteristic $p$. So cdh sheafifying defines equivalences $L_\sub{cdh}\bb Z_p(j)^\sub{BL}/p^r\simeq R\Gamma_\sub{cdh}(-,W_r\Omega^j_\sub{log})$. Combined with Theorem \ref{theorem_syn_comp}, this completes the proof.
\end{proof}

In an orthogonal direction, we have the following result when $p$ is invertible; this should be thought of as a singular extension of the mod-$p^r$ Beilinson--Lichtenbaum equivalence relating motivic and \'etale cohomology for such schemes:

\begin{corollary}\label{cor:singular_bl}
On the category of qcqs $\bb Z[\tfrac1p]$-schemes there are compatible equivalences of $\bb E_\infty$-algebras in graded presheaves of complexes \[\bb Z(\star)^\sub{cdh}/p^r\simeq L_{\cdh}\tau^{\leq \star}R\Gamma_{\et}(-,\mu_{p^r}^{\otimes \star})\] for all $r\ge0$.
\end{corollary}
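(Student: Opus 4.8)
The plan is to reduce this statement, exactly as in the proof of Corollary~\ref{cor:singular_gl}, to the already-established mod-$p^r$ Beilinson--Lichtenbaum comparison of Theorem~\ref{theorem_syn_comp}, together with the description of syntomic cohomology on $\bb Z[\tfrac1p]$-schemes. Recall that Theorem~\ref{theorem_syn_comp} gives a multiplicative equivalence $\bb Z(\star)^\sub{cdh}/p^r \simeq L_\sub{cdh}\bb Z_p(\star)^\sub{BL}/p^r = L_\sub{cdh}\tau^{\le\star}(\bb Z_p(\star)^\sub{syn}/p^r)$ of $\bb E_\infty$-algebras in graded presheaves of complexes on all qcqs schemes. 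So it suffices to identify the right-hand side with $L_\sub{cdh}\tau^{\le\star}R\Gamma_\sub{\'et}(-,\mu_{p^r}^{\otimes\star})$ on the category of qcqs $\bb Z[\tfrac1p]$-schemes, compatibly with the multiplicative and graded structures.

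For this, I would invoke Lemma~\ref{lemma_syn_p-hens}(1): on $\bb Z[\tfrac1p]$-algebras the syntomic-to-\'etale comparison map $\gamma^\sub{\'et}_\sub{syn}\{j\}\colon \bb Z_p(j)^\sub{syn}\to R\Gamma_\sub{\'et}(-,\bb Z_p(j))$ is an equivalence, and this is a map of $\bb E_\infty$-algebras in graded complexes (Construction~\ref{cons:syn_to_etale_complete}). Since $R\Gamma_\sub{\'et}(-,\bb Z_p(j))/p^r \simeq R\Gamma_\sub{\'et}(-,\mu_{p^r}^{\otimes j})$ naturally and multiplicatively, this yields $\bb Z_p(j)^\sub{syn}/p^r \simeq R\Gamma_\sub{\'et}(-,\mu_{p^r}^{\otimes j})$ as $\bb E_\infty$-algebras in graded presheaves of complexes on qcqs $\bb Z[\tfrac1p]$-schemes; this is, in fact, exactly the equivalence \eqref{eq:syn-to-et} reduced mod $p^r$. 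Applying the lax symmetric monoidal truncation functor $\tau^{\le\star}$ (Remark~\ref{rem:BL_truncation}) and then the symmetric monoidal cdh-sheafification $L_\sub{cdh}$ preserves the $\bb E_\infty$-algebra structure, giving the desired multiplicative equivalence $L_\sub{cdh}\tau^{\le\star}(\bb Z_p(\star)^\sub{syn}/p^r) \simeq L_\sub{cdh}\tau^{\le\star}R\Gamma_\sub{\'et}(-,\mu_{p^r}^{\otimes\star})$. Composing with Theorem~\ref{theorem_syn_comp} completes the construction of the comparison equivalences, and their compatibility as $r$ varies follows from the compatibility already built into Theorem~\ref{theorem_syn_comp} and into Lemma~\ref{lemma_syn_p-hens}(1).

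There is essentially no hard step here: the entire content has been front-loaded into Theorem~\ref{theorem_syn_comp} (whose proof in turn rests on the coherent Beilinson--Lichtenbaum machinery of \S\ref{ss_coherent_BL}) and into the basic properties of syntomic cohomology recalled in \S\ref{ss_syn}--\ref{ss_syn_of_schemes}. The only point requiring a modicum of care is to ensure that all the identifications --- $\gamma^\sub{\'et}_\sub{syn}\{\star\}$, the Kummer/Bockstein identification $R\Gamma_\sub{\'et}(-,\bb Z_p(\star))/p^r\simeq R\Gamma_\sub{\'et}(-,\mu_{p^r}^{\otimes\star})$, $\tau^{\le\star}$, and $L_\sub{cdh}$ --- are carried out at the level of $\bb E_\infty$-algebras in graded presheaves of complexes rather than merely objectwise, so that the final equivalence is genuinely multiplicative; but each of these functors has been set up to be (lax) symmetric monoidal precisely for this reason, so this is routine. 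I would write the proof in two or three sentences, in the same style as the proof of Corollary~\ref{cor:singular_gl}.
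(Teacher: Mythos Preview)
Your proposal is correct and takes essentially the same approach as the paper: the paper's proof is a single sentence invoking Theorem~\ref{theorem_syn_comp} and then cdh-sheafifying Example~\ref{example_BL_cohomology}(1), which is exactly the identification $\bb Z_p(\star)^\sub{BL}/p^r \simeq L_\sub{Nis}\tau^{\le\star}R\Gamma_\sub{\'et}(-,\mu_{p^r}^{\otimes\star})$ on $\bb Z[\tfrac1p]$-schemes that you derive from Lemma~\ref{lemma_syn_p-hens}(1). Your version unpacks this slightly more but is otherwise identical.
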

\begin{proof}
Using the comparison Theorem \ref{theorem_syn_comp}, this follows by cdh sheafifying Example \ref{example_BL_cohomology}(1).
\end{proof}

\subsection{Milnor excision}\label{ss:Milnor_excision}
Cdh-motivic cohomology is a finitary cdh sheaf, hence it is controlled by its values on henselian valuation rings. To prove our main theorems in \S\ref{sec:pbf-a1} and \S\ref{sec:a1-comparison} we will need to further reduce to rank one valuation rings; this is a technical but crucial reduction step. To carry this out, we need to establish Milnor excision for cdh-motivic cohomology:

\begin{theorem} \label{theorem:exc-cdarc} The presheaves 
\[
\Z(j)^{\cdh}:\opp{Sch}^\sub{qcqs,op} \To\D(\bb Z)
\]
satisfy Milnor excision for all $j\ge0$.
\end{theorem}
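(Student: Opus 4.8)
The plan is to reduce the statement to a combination of the syntomic comparison of Theorem~\ref{theorem_syn_comp}, the rational description of Theorem~\ref{prop:basic_props_of_cdh_mot}(7), and known Milnor excision properties of syntomic and $K$-theoretic invariants. The key mechanism is the arithmetic fracture square: to check that $\bb Z(j)^\sub{cdh}$ sends a Milnor square to a cartesian square, it suffices to check this rationally and modulo every prime $p$, since $\bb Z(j)^\sub{cdh}$ takes values in $\D(\bb Z)$ and a square in $\D(\bb Z)$ is cartesian iff it is so after rationalisation and mod-$p$ reduction for all $p$ (compatibly with the finitariness already recorded in Theorem~\ref{prop:basic_props_of_cdh_mot}(4), there are no completion issues since we may first fix a Milnor square of rings, which involves finitely many rings). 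So the task splits into two parts.

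First, rationally: by Theorem~\ref{prop:basic_props_of_cdh_mot}(7), $\bb Q(j)^\sub{cdh}$ is, up to shift, a natural direct summand of $\KH_\bb Q = \KH(-)\otimes\bb Q$. Since a retract of a cartesian square is cartesian, it suffices to know that $\KH_\bb Q$ satisfies Milnor excision. In fact $\KH$ itself satisfies Milnor excision: this is a theorem of Land--Tamme (indeed $\KH$, being a truncating invariant in the sense of Land--Tamme, satisfies excision, and more elementarily one can cite that $\KH$ of a Milnor square is cartesian because the fibre of $\K \to \KH$ is nilinvariant and $\K$ satisfies pro-excision / the Land--Tamme theorem on the $\odot$-construction). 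Rationalising preserves cartesian squares, so $\bb Q(j)^\sub{cdh}$ satisfies Milnor excision.

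Second, modulo $p$: by the equivalence of Theorem~\ref{theorem_syn_comp}, $\bb Z(j)^\sub{cdh}/p^r \simeq L_\sub{cdh}\bb Z_p(j)^\sub{BL}/p^r = L_\sub{cdh}\tau^{\le j}(\bb Z_p(j)^\sub{syn}/p^r)$, so it is enough to prove Milnor excision for the presheaf $X \mapsto L_\sub{cdh}\tau^{\le j}(\bb Z_p(j)^\sub{syn}(X)/p)$. Here I would invoke Theorem~\ref{thm:ehik}: since $L_\sub{cdh}\tau^{\le j}(\bb Z_p(j)^\sub{syn}/p)$ is a finitary cdh sheaf (finitariness by Proposition~\ref{prop:finitary_conditions}(2) applied to the finitary presheaf $\tau^{\le j}(\bb Z_p(j)^\sub{syn}/p)$, which is finitary by Proposition~\ref{prop:syn} together with Proposition~\ref{prop:syn-trunc-lke}), it suffices to verify Milnor excision only for the squares~\eqref{eqn_hv_square} attached to a henselian valuation ring $V$ of finite rank and a prime ideal $\frak p \subset V$. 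For such a square we can work with $\tau^{\le j}(\bb Z_p(j)^\sub{syn}/p)$ before sheafification, using that $L_\sub{cdh}$ preserves the relevant cartesian squares (as in Remark~\ref{rem:rigid-exc}, the square~\eqref{eqn_hv_square} is already a Milnor square of rings, and a finitary cdh sheaf applied to it factors through applying the presheaf and then sheafifying the resulting square; but more directly, since $V$ and $V_\frak p$ are henselian along $\frak p$, the vertical maps in the square for $\tau^{\le j}(\bb Z_p(j)^\sub{syn}/p)$ will turn out to be equivalences). Concretely: by Corollary~\ref{corol_syn_rigid1}, relative syntomic cohomology $\bb Z_p(j)^\sub{syn}(-,I)$ is rigid, i.e.\ invariant under the henselian ideal $\frak p$ (note $(V,\frak p)$ and $(V_\frak p, \frak p)$ are henselian pairs), and by Proposition~\ref{prop:syn-trunc-lke}(1) the relative theory $\bb F_p(j)^\sub{syn}(-,\frak p)$ is supported in degrees $\le j$, so $\tau^{\le j}$ does not disturb the fibre; hence both vertical maps of the square for $\tau^{\le j}\bb F_p(j)^\sub{syn}$ associated to~\eqref{eqn_hv_square} are equivalences, making the square (trivially) cartesian. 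Applying $L_\sub{cdh}$ preserves this, so Theorem~\ref{thm:ehik} yields Milnor excision for $\bb Z(j)^\sub{cdh}/p$ on all Milnor squares.

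The main obstacle, as usual in this circle of ideas, is the bookkeeping around finitariness and the passage between the presheaf $\tau^{\le j}\bb F_p(j)^\sub{syn}$ and its cdh sheafification when feeding squares into Theorem~\ref{thm:ehik}: one must be careful that $L_\sub{cdh}$ of a cartesian square of presheaves is cartesian (true, $L_\sub{cdh}$ being exact on the stable categories in play by Corollary~\ref{cor:cdh-cpt-gen}) and that the hypotheses of Theorem~\ref{thm:ehik} — finite valuative dimension of the base, finitariness, cdh-sheafiness — are all in place; here we may take the base ring to be $\bb Z$ and restrict along the structure map, or simply note that Milnor excision for a fixed square of $R$-algebras only sees finitely presented data over a finite-dimensional base after the standard limit argument. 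The rational part and the mod-$p$ part are each individually soft given the machinery already developed; the content is entirely in assembling them and in the rigidity input of Proposition~\ref{prop:syn-trunc-lke}(1) and Corollary~\ref{corol_syn_rigid1}.
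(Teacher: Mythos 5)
Your overall skeleton (reduce via Theorem~\ref{thm:ehik} to henselian $v$-excision, fracture into a rational part handled by the summand-of-$\KH$ argument and a mod-$p$ part handled through the syntomic comparison of Theorem~\ref{theorem_syn_comp}) is exactly the paper's strategy, and the rational half is fine. The gap is in the mod-$p$ step, where you claim that for the square \eqref{eqn_hv_square} of a henselian valuation ring $V$ and prime $\frak p\subset V$ the two vertical maps on $\tau^{\le j}\bb F_p(j)^\sub{syn}$ are equivalences ``by rigidity''. Syntomic cohomology is not rigid in the sense of Definition~\ref{def:rigid}: Corollary~\ref{corol_syn_rigid1} does not say that $\bb F_p(j)^\sub{syn}(R,I)$ vanishes for a henselian ideal $I$, only that it is insensitive to replacing $R$ by its $p$-henselisation, and Proposition~\ref{prop:syn-trunc-lke}(1) only bounds its cohomological degrees. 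Concretely, when $p\in\frak p$ (the equal characteristic $p$ and mixed characteristic cases, which are the whole point), the map $\bb F_p(j)^\sub{syn}(V)\To\bb F_p(j)^\sub{syn}(V/\frak p)$ is typically \emph{not} an equivalence: already in weight one it induces on $H^1$ the map $V^\times/p\To (V/\frak p)^\times/p$ (via Theorem~\ref{thm_syntomic_Milnor} / Proposition~\ref{proposition_1st_Chern_class_syn}), whose ``kernel'' coming from $1+\frak p$ is generally nonzero, e.g.\ for $V$ of mixed characteristic $(0,p)$. So the square is not ``trivially cartesian'' via equivalences of the vertical arrows, and your proof of the mod-$p$ half collapses at precisely this point.

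What is actually needed here is the genuine henselian $v$-excision statement for $\tau^{\le j}(\bb Z_p(j)^\sub{syn}/p^r)$, which in the paper is Proposition~\ref{proposition_synt_coh_hv_excision}. Its proof is not formal: after splitting off the \'etale/generic-fibre contributions using the defining pullback square \eqref{eq:syn} (where Gabber rigidity does apply, and where your argument is essentially correct in the case $p\notin\frak p$), the case $p\in\frak p$ requires identifying the $p$-completed Milnor square with the Milnor square of $(\hat V,\frak p\hat V)$ (Lemma~\ref{lemma_p_compl_of_hv_square}) and then proving $v$-excision for Nygaard-filtered prismatic cohomology (Proposition~\ref{prop:nygaard-prism-exc}), which is bootstrapped from $v$-excision for the exterior powers of the cotangent complex (Lemma~\ref{lemm:L-v-exc}); one also needs a symbol-surjectivity argument to see that the truncation $\tau^{\le j}$ preserves the cartesian square. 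None of this is recoverable from the rigidity-style inputs you cite, so your proposal has a real missing ingredient rather than a bookkeeping issue.
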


The theorem is a motivic refinement of Milnor excision for $L_{\cdh}\K$, which follows from the fact it is equivalent to $\KH$; indeed, $\KH$ was shown to satisfy Milnor excision by Weibel \cite{Weibel1989a}. It is also a step towards proving, at least under suitable conditions, that the cdh-motivic cohomology is represented by an absolute motivic spectrum; indeed, according to \cite{ElmantoHoyoisIwasaKelly2021}, any such cohomology theory automatically satisfies Milnor excision.

Our proof of Theorem \ref{theorem:exc-cdarc} will depend on some finer structure of syntomic and prismatic cohomology. We begin by noting that, since $\Z(j)^{\cdh}$ is already known to be a finitary cdh sheaf (Theorem~\ref{prop:basic_props_of_cdh_mot}(4)), Theorem \ref{thm:ehik} reduces the problem of Milnor excision to proving that $\Z(j)^{\cdh}$ satisfies henselian $v$-excision. With finite coefficients we have seen in Theorem \ref{theorem_syn_comp} that $\bb Z(j)^\sub{cdh}$ is controlled by the syntomic cohomology of \S\ref{sec:syntomic}. To prove henselian $v$-excision for the latter on $p$-complete valuation rings, we will use standard arguments (c.f., \cite{AntieauMathewMorrowNikolaus2022}) to reduce to the cotangent complex: for a morphism of rings $A \rightarrow B$, we denote by $\bb L_{B/A}$ the associated cotangent complex, and by $\bb L^j_{B/A} := \bigwedge^i_B \bb L_{B/A}$, its derived $j$-th exterior power as a $B$-module. The proof of the next lemma is inspired by arguments of \cite[Lemma 3.14]{HuberKelly2018} where $v$-excision for differential forms was established.

\begin{lemma} \label{lemm:L-v-exc}
Let $A$ be a ring. Then the functors \[\bb L_{-/A}^j: \opp{CAlg}_A \to \D(A) \] satisfy $v$-excision for all $j\ge0$.
\end{lemma}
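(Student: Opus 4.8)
The plan is to reduce first to the base ring $A=\bb Z$, and then to extract the statement from the structure of the cotangent complex of a valuation ring, along the lines of Huber--Kelly's treatment of Kähler differentials.

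\medskip
\noindent\emph{Reduction to $A=\bb Z$.} Given a valuation ring $V$ over $A$, the transitivity triangle $\bb L_{A/\bb Z}\otimes_A V\to\bb L_{V/\bb Z}\to\bb L_{V/A}$ exhibits $\bb L_{V/A}$ as an extension of $(\bb L_{A/\bb Z}\otimes_A V)[1]$ by $\bb L_{V/\bb Z}$. The d\'ecalage filtration on derived exterior powers of an extension (Illusie, Quillen) then equips $\bigwedge^j_V\bb L_{V/A}$ with a natural finite filtration whose graded pieces are $\bb L^{a}_{V/\bb Z}\otimes_A\Gamma^b_A(\bb L_{A/\bb Z})[b]$ with $a+b=j$, using $\bigwedge^b\bigl((\bb L_{A/\bb Z}\otimes_A V)[1]\bigr)\simeq\Gamma^b_A(\bb L_{A/\bb Z})\otimes_A V[b]$ and base change for divided powers; all of this is functorial in $V\in\opp{CAlg}_A$. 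Now a Milnor square is bicartesian as a square of $A$-modules, hence so is its derived tensor product with any fixed object of $\D(A)$ (here $\Gamma^b_A(\bb L_{A/\bb Z})[b]$), and a finite iterated extension of bicartesian squares is again bicartesian. Thus once $v$-excision is known for $\bb L^{j'}_{-/\bb Z}$ for all $j'$, it follows for $\bb L^j_{-/A}$, and I would reduce to the case $A=\bb Z$.

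\medskip
\noindent\emph{The case $A=\bb Z$.} Fix a valuation ring $V$ with prime ideal $\frak p$. The cases $\frak p=0$ and $\frak p=\frak m$ are trivial, since then one of the rows, respectively one of the columns, of the square consists of isomorphisms, so I assume $0\ne\frak p\ne\frak m$. Write $\bar V=V/\frak p$ and $C:=\opp{cofib}(V\to V_\frak p)$. Because the square is bicartesian for the identity functor, $C\simeq\opp{cofib}(\bar V\to k(\frak p))$; since $\frak p$ annihilates this cofiber, $C$ is naturally a $\bar V$-module, namely $C\cong k(\frak p)/\bar V$, which is torsion and divisible over the valuation ring $\bar V$. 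Using $\bb L_{V_\frak p/V}=0$ and $\bb L_{k(\frak p)/\bar V}=0$ (localisations) together with base change for $\bigwedge^j$, one has $\bb L^j_{V_\frak p/\bb Z}\simeq\bb L^j_{V/\bb Z}\otimes_V V_\frak p$ and $\bb L^j_{k(\frak p)/\bb Z}\simeq\bb L^j_{\bar V/\bb Z}\otimes_{\bar V}k(\frak p)$. A short computation then identifies the total cofiber of $\bb L^j_{-/\bb Z}$ applied to the square with $\opp{cofib}\bigl(\bigwedge^j_{\bar V}N\to\bigwedge^j_{\bar V}\bar M\bigr)\otimes_{\bar V}C$, where $N:=\bb L_{V/\bb Z}\otimes_V\bar V$, $\bar M:=\bb L_{\bar V/\bb Z}$, and $\opp{cofib}(N\to\bar M)=\bb L_{\bar V/V}$ by transitivity. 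Applying the d\'ecalage filtration to the extension $N\to\bar M\to\bb L_{\bar V/V}$, it suffices to prove that $\bigwedge^b_{\bar V}(\bb L_{\bar V/V})\otimes_{\bar V}C=0$ for every $b\ge1$.

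\medskip
\noindent\emph{The crux.} The main point is to understand $\bb L_{\bar V/V}$. Since a prime of a valuation ring is a module over the corresponding localisation, elements of $\frak p$ annihilate $C$, so $sV_\frak p=sV$ for all $s\in\frak p$, and hence $\frak p=\colim_{s\in\frak p}sV$ is a filtered colimit of free rank-one $V$-modules along multiplications by nonzerodivisors. Using $\bb L_{(V/(f))/V}\simeq(V/(f))[1]$ for a nonzerodivisor $f$ and the compatibility of the cotangent complex with filtered colimits, this shows $\bb L_{\bar V/V}$ is concentrated in homological degree one: $\bb L_{\bar V/V}\simeq(\frak p/\frak p^2)[1]$ with $\frak p/\frak p^2\cong\frak m_{V_\frak p}/\frak m_{V_\frak p}^2$, which is either $0$ (when $\frak m_{V_\frak p}$ is not principal) or isomorphic, as a $\bar V$-module, to $k(\frak p)=\opp{Frac}(\bar V)$ (when it is). Consequently $\bigwedge^b_{\bar V}(\bb L_{\bar V/V})\simeq\Gamma^b_{\bar V}(\frak p/\frak p^2)[b]$, which vanishes for $b\ge1$ in the first case; in the second case, writing $k(\frak p)=\colim_s\tfrac1s\bar V$ and using that $\Gamma^b$ commutes with filtered colimits one gets $\Gamma^b_{\bar V}(k(\frak p))\simeq k(\frak p)$ as a $\bar V$-module (inverting the elements of valuation in $b\cdot\Gamma_{\ge0}$, a cofinal family, yields $\opp{Frac}(\bar V)$), and being divisible its tensor product over $\bar V$ with the torsion module $C$ vanishes. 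Hence every graded piece is zero, the total cofiber vanishes, and $\bb L^j_{-/\bb Z}$ satisfies $v$-excision; combined with the first step this proves the lemma. I expect the main obstacle to be the honest verification of the d\'ecalage filtrations in the derived setting together with the identification $\bb L_{\bar V/V}\simeq(\frak p/\frak p^2)[1]$ and the divided-power computation $\Gamma^b_{\bar V}(k(\frak p))\simeq k(\frak p)$ — these are the derived-exterior-power analogues of the elementary facts about $\frak p/\frak p^2$ for valuation rings that drive the Huber--Kelly argument for $\Omega^j$.
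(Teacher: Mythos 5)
Your argument is correct, but it takes a genuinely different route from the paper's at the crucial step. The paper never computes $\bb L_{\bar V/V}$ explicitly: using the transitivity filtration for $A\to V\to R$ it reduces, for every $j$ and every base $A$ at once, to showing that $\bb L^j_{(V/\frak p)/V}\to\bb L^j_{k(\frak p)/V}$ is an equivalence, i.e.\ that every $f\in V\setminus\frak p$ acts invertibly on $\bb L_{(V/\frak p)/V}$, and this is done by the one-line base-change computation $\bb L_{(V/\frak p)/V}/f\simeq\bb L_{(V/(\frak p+fV))/(V/f)}\simeq\bb L_{(V/f)/(V/f)}\simeq 0$, where $\frak p+fV=fV$ because $V$ is a valuation ring and $f\notin\frak p$. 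You instead manipulate total cofibers and identify $\bb L_{\bar V/V}\simeq(\frak p/\frak p^2)[1]$ together with the divided-power computation $\Gamma^b_{\bar V}(k(\frak p))\simeq k(\frak p)$, which is closer to the Huber--Kelly treatment of $\Omega^j$ that inspired the paper's proof. The paper's route buys brevity and uniformity: there is no reduction to $A=\bb Z$ (your first step is in fact unnecessary --- nothing in your second step uses $\bb Z$, so the same computation works verbatim over $A$), no d\'ecalage of the extension $N\to\bar M\to\bb L_{\bar V/V}$, and no case split on whether $\frak m_{V_\frak p}$ is principal; your route buys more explicit information about the shape of $\bb L_{\bar V/V}$ and its exterior powers. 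One local slip to repair: the assertion $sV_\frak p=sV$ for $s\in\frak p$ is false in general (already for a rank-two valuation ring), though harmless --- what you actually need is only that $\frak pV_\frak p=\frak p$ and that $\frak p$ is the filtered union of its principal subideals $sV$, the latter holding for any ideal of a valuation domain.
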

\begin{proof}
The case $j=0$ being tautological, we begin by treating the case $j=1$ and then proceed by induction.

Let $V$ be a valuation ring over $A$ and $\frak p\subset V$ a prime ideal. For reference we recall that the corresponding Milnor square is
\begin{equation}\label{eq:v-p}
\begin{tikzcd}
V \ar{r} \ar{d} & V_{\mathfrak{p}} \ar{d} \\
V/\mathfrak{p} \ar[swap]{r} & \kappa(\mathfrak{p}),
\end{tikzcd}
\end{equation}
and that our goal is to prove that applying $\bb L_{-/A}$ to the above square yields a cartesian square. Thanks to the transitivity sequence \cite[Tag 08QR]{Stacks} for $A\to V\to -$ we see that there is a fibre sequence of squares \[
\bb L_{V/A} \otimes_A \eqref{eq:v-p} \rightarrow \bb L_{\eqref{eq:v-p}/A} \rightarrow \bb L_{\eqref{eq:v-p}/V},
\]
where the left term is a cartesian square since \eqref{eq:v-p} is a bicartesian square. Therefore the square $\bb L_{\eqref{eq:v-p}/A}$ is cartesian if and only if $\bb L_{\eqref{eq:v-p}/V}$ is. The latter square 
\comment{
To this end, let $R$ be \emph{any} ring equipped with a map $R\to V$; we claim that
\begin{itemize}
\item[($\ast$)] the functor $L_{-/\Z}$ converts~\eqref{eq:v-p} to a cartesian square if and only if the functor $L_{-/R}$ does.
\end{itemize}
Indeed there is a fiber sequence of squares coming from the transitivity sequence
\[
L_{R/\Z} \otimes_R \eqref{eq:v-p} \rightarrow L_{\eqref{eq:v-p}/\Z} \rightarrow L_{\eqref{eq:v-p}/R},
\]
and, since \eqref{eq:v-p} is a bicartesian square, the left term is a cartesian square and so the claim $(\ast)$ follows. Therefore, setting $R =A$, we have reduced to proving the claim for $A=\bb Z$

Now, to prove the claim over $\Z$ we can also prove the claim after setting $R = V$ thanks to $(\ast)$. The square of interest }
takes the form
\[
\begin{tikzcd}
0 = \bb L_{V/V} \ar{r} \ar{d} & \bb L_{V_{\mathfrak{p}}/V}=0\ar{d} \\
\bb L_{(V/\mathfrak{p})/V} \ar[swap]{r} & \bb L_{\kappa(\mathfrak{p})/V},
\end{tikzcd}
\]
and so the goal is now to prove that the map $\bb L_{(V/\mathfrak{p})/V} \rightarrow \bb L_{\kappa(\mathfrak{p})/V}$ is an equivalence. Since this map is precisely the localisation map $\bb L_{(V/\mathfrak{p})/V}\to \bb L_{(V/\mathfrak{p})/V}\otimes_{V/\frak p}k(\frak p)$, we must show that, for any $f\in V\setminus \frak p$, the multiplication map $\cdot f:\bb L_{(V/\mathfrak{p})/V}\to \bb L_{(V/\mathfrak{p})/V}$ is an equivalence. This follows from 
a computation of the cofiber $\bb L_{(V/\mathfrak{p})/V}/f$:
\begin{eqnarray*}
\bb L_{(V/\mathfrak{p})/V}/f & \simeq & \bb L_{(V/\mathfrak{p})/V} \otimes_V V/f\\
& \simeq & \bb L_{(V/\mathfrak{p} \otimes_V V/f)/V/f}\\
& \simeq & \bb L_{(V/\mathfrak{p}+(f))/(V/f)}\\
& \simeq & \bb L_{(V/f)/(V/f)}\\
& \simeq & 0.
\end{eqnarray*}
Here we used base change \cite[Tag 08QQ]{Stacks} for the second equivalence, and the fact that $V$ is a valuation ring for the fourth equivalence (so that $\mathfrak{p} +fV = fV)$. This completes the proof in the case $i=1$.

Proceeding inductively to the case $j>1$, the goal is to show that $\bb L_{\eqref{eq:v-p}/A}^j$ is a cartesian square. Recall that, for any $V$-algebra $R$, the transitivity sequence for $A\to V\to R$ implies the existence of a natural, finite filtration on $\bb L^j_{R/A}$ whose graded pieces are $\mathrm{gr}^i \bb L^j_{R/A} \simeq \bb L^i_{V/A} \otimes_V \bb L^{j-i}_{R/V}$ for $i = 0, \cdots, j$ \cite[\S V.4]{Illusie1971}. Applying this with $R$ being the terms in the square \eqref{eq:v-p}, the inductive hypothesis reduces the goal to proving that $\bb L_{\eqref{eq:v-p}/V}^j$ is cartesian. As in the case $j=1$, this is equivalent to showing that $\bb L_{(V/\mathfrak{p})/V}^j \rightarrow \bb L_{\kappa(\mathfrak{p})/V}^j$ is an equivalence; but this identifies with the localisation map $\bb L_{(V/\mathfrak{p})/V}^j\to \bb L_{(V/\mathfrak{p})/V}^j\otimes_{V/\frak p}k(\frak p)$, which is indeed an equivalence since we saw in the proof of the case $j=1$ that any non-zero $f\in V/\frak p$ acts invertibly on $\bb L_{(V/\mathfrak{p})/V}$, hence also on its wedge powers.
\comment{
\[
\bigwedge^j_{V/\mathfrak{p}} L_{V/\mathfrak{p}/V}  \To (\bigwedge^j_{V/\mathfrak{p}}  L_{V/\mathfrak{p}/V}) \otimes_V V_{\mathfrak{p}} \simeq \bigwedge^j_{V/\mathfrak{p}}  (L_{V/\mathfrak{p}/V} \otimes_V V_{\mathfrak{p}}),
\]
which we saw was an equivalence in the proof of the case $j=1$.
}
\end{proof}

In the next proposition we bootstrap the $v$-excision results about the cotangent complex to Nygaard-complete prismatic cohomology $\hat\Prism_R$, as well as its Nygaard filtration $\cal N^{\ge n}\hat\Prism_R$, their Breuil--Kisin twists $\cal N^{\ge n}\hat\Prism_R\{j\}$, and the associated graded pieces $\cal N^n\hat\Prism_R\{j\}$. These invariants were first introduced in \cite{BhattMorrowScholze2}, for $p$-completely quasisyntomic rings $R$; see also \cite[\S 5]{AntieauMathewMorrowNikolaus2022} for a summary.

\begin{remark} \label{rmk:locn-compn-valn-ring}
We shall use frequently in the sequel that if $V$ is a $p$-complete valuation ring and $\frak p \subset V$ is a prime ideal containing $p$, then $V/\frak p$, $V_\frak p$, and $k(\frak p)$ are all $p$-complete valuation rings.
(The only subtle point is $p$-completeness of the localisation $V_\frak p$.
Note that given an ideal $I$ containing $p$ in any ring $A$, then $I$ is derived $p$-complete if and only if $A$ is, the cofiber being $p$-torsion.
Now apply this to $\frak p \subset V$ and $\frak p \wequi \frak p V_{\frak p} \subset V_{\frak p}$.)
\end{remark}

We begin with a result in the $p$-complete context:

\begin{proposition}\label{prop:nygaard-prism-exc}
Let $V$ be a $p$-complete valuation ring and $\frak p\subset V$ a prime ideal containing $p$. Then, for all $j,n,r\ge0$, the functors \[\cal N^n\hat\Prism\{j\},\quad\cal N^{\ge n}\hat\Prism\{j\}, \quad\bb Z_p(j)^\sub{syn},\quad \tau^{\le j}(\bb Z_p(j)^\sub{syn}/p^r)\] satisfy $v$-excision for the pair $(V,\frak p)$.
\end{proposition}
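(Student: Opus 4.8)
The plan is to bootstrap from the cotangent-complex excision of Lemma~\ref{lemm:L-v-exc} up through the layers of prismatic and syntomic cohomology, following the strategy of Remark~\ref{rem_syn_via_cotangent}. Throughout we use Remark~\ref{rmk:locn-compn-valn-ring} to know that $V/\frak p$, $V_\frak p$ and $\kappa(\frak p)$ are again $p$-complete valuation rings, so all the relevant invariants are defined on the whole Milnor square \eqref{eqn_hv_square} associated with $(V,\frak p)$. Since these functors are left Kan extended from $p$-completions of finitely generated polynomial $\bb Z$-algebras (or constructed from such via the fibre sequence \eqref{eqn_div_frob2}), it suffices to check that the relevant fibre sequences and filtrations are natural in the input ring, which they are.

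First I would treat $\cal N^n\hat\Prism\{j\}$. By \cite[Theorem~5.9(1)]{AntieauMathewMorrowNikolaus2022} the Breuil--Kisin twist may be trivialised on each Nygaard graded piece, so it is enough to handle $\cal N^n\hat\Prism$, and by the fibre sequence \eqref{eqn:rel-to-absolute} this is equivalent to $\mathrm{gr}^n\THH(-/\bb S[z];\Z_p)$ (the third term there being $\mathrm{gr}^{n-1}\THH(-[z];\Z_p)[2]$, which one handles by induction on $n$). But $\mathrm{gr}^n\THH(-/\bb S[z];\Z_p)$ carries a finite filtration with graded pieces $(\bb L^i_{-/\Z_p})^\comp_p[2i-n]$ for $i=0,\dots,n$, and each of these satisfies $v$-excision for $(V,\frak p)$: indeed $\bb L^i_{-/\Z}$ does so by Lemma~\ref{lemm:L-v-exc}, and derived $p$-completion preserves cartesian squares (being a limit), while $\bb L^i_{-/\Z_p}\otimes^{\bb L}\bb Z/p\simeq \bb L^i_{-/\Z}\otimes^{\bb L}\bb Z/p$ so the $p$-completions agree. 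Since a finite extension of functors satisfying $v$-excision again satisfies $v$-excision (the cartesian squares forming a stable subcategory of squares), we conclude for $\cal N^n\hat\Prism\{j\}$. Inducting on $N$, the quotients $\cal N^{\ge j}\hat\Prism\{j\}/\cal N^{\ge j+N}\hat\Prism\{j\}$ then satisfy $v$-excision, being finite iterated extensions of the graded pieces; and since $\hat\Prism\{j\}$ itself is $\cal N^{\ge 0}\hat\Prism\{j\}$, the same argument (now with $n$ ranging over $0,\dots,N-1$, then passing to the limit over $N$, which is harmless for the finite-level statements) gives $v$-excision for $\cal N^{\ge n}\hat\Prism\{j\}$ and for $\hat\Prism\{j\}$.

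Next, $\bb Z_p(j)^\sub{syn}$ on $p$-complete rings is the fibre of $\phi_j-1\colon \cal N^{\ge j}\hat\Prism\{j\}\to\hat\Prism\{j\}$ by \eqref{eqn_div_frob}, so it inherits $v$-excision from the previous step, the fibre of a map of cartesian squares being cartesian. For the truncated mod-$p^r$ version, one uses the finite-level fibre sequence \eqref{eqn_div_frob2}, which expresses $\bb F_p(j)^\sub{syn}$ (and, mod $p^r$, the analogous statement) in terms of $\cal N^{\ge j}\hat\Prism\{j\}/\cal N^{\ge j+N}\hat\Prism\{j\}\otimes^{\bb L}\bb F_p$ and $\hat\Prism\{j\}/\cal N^{\ge j+N}\hat\Prism\{j\}\otimes^{\bb L}\bb F_p$; these satisfy $v$-excision by the above together with the fact that $-\otimes^{\bb L}\bb Z/p^r$ preserves cartesian squares. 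Finally $\tau^{\le j}(\bb Z_p(j)^\sub{syn}/p^r)$: here I would invoke Proposition~\ref{prop:syn-trunc-lke}(1), which says the relative term $\bb F_p(j)^\sub{syn}(R,I)$ is supported in degrees $\le j$ for any henselian ideal $I$. In the Milnor square for $(V,\frak p)$ the vertical maps are reductions along henselian ideals (the square being cocartesian, the kernel ideals are identified), so applying $\tau^{\le j}$ to the cartesian square $\bb Z_p(j)^\sub{syn}/p^r\big(\eqref{eqn_hv_square}\big)$ keeps it cartesian: the truncation of a cartesian square of complexes whose fibres in the vertical direction are coconnective enough stays cartesian. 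More precisely, from the long exact sequence one checks that $\tau^{\le j}$ of the total fibre still vanishes. The main obstacle will be bookkeeping the interaction of $\tau^{\le j}$ with the square in this last step, but it is controlled entirely by the degree bound in Proposition~\ref{prop:syn-trunc-lke}(1); everything else is a formal consequence of naturality and the stability of cartesian squares under fibres, extensions, limits and base change.
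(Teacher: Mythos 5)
Your reduction of the first three functors to the cotangent complex is essentially the paper's argument: $p$-complete Lemma~\ref{lemm:L-v-exc}, climb through the finite filtration on $\mathrm{gr}^n\THH(-/\bb S[z];\Z_p)$, the fibre sequence \eqref{eqn:rel-to-absolute}, induction and Nygaard-completeness to get $\cal N^{\ge n}\hat\Prism\{j\}$, and then the fibre sequence defining $\bb Z_p(j)^\sub{syn}$ and reduction mod $p^r$. That part is fine.

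The last step, however, has a genuine gap. You invoke Proposition~\ref{prop:syn-trunc-lke}(1) on the grounds that the vertical maps of the Milnor square are ``reductions along henselian ideals''. But $V$ is only assumed $p$-complete, not henselian, and $(V,\frak p)$ need not be a henselian pair: every valuation ring of characteristic $p$ is vacuously $p$-complete, so e.g.\ $V=\bb F_p[t]_{(t)}$ with $\frak p=(t)$ is allowed, and this is not a henselian pair (the paper's Remark~\ref{rem:rigid-exc} assumes $V$ henselian precisely for this reason). Worse, the intermediate statement you actually need — that $\fib\bigl(\bb F_p(j)^\sub{syn}(V)\to\bb F_p(j)^\sub{syn}(V/\frak p)\bigr)$ is supported in degrees $\le j$ — is false in this example: for $j=0$ one has $\bb F_p(0)^\sub{syn}=R\Gamma_{\et}(-,\bb F_p)$, and by Artin--Schreier theory the class of $t$ gives a nonzero element of $H^1_{\et}(\bb F_p[t]_{(t)},\bb F_p)$ mapping to zero in $H^1_{\et}(\bb F_p,\bb F_p)$, so the relative term has nonvanishing $H^1$. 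Thus your coconnectivity input cannot be obtained this way, and without it the truncation argument does not close.

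The correct repair (and the paper's route) is different: since the untruncated square is cartesian and all four terms are supported in degrees $\le j+1$, applying $\tau^{\le j}$ preserves cartesianness if and only if $H^j_\sub{syn}(V_\frak p,\bb Z/p^r(j))\oplus H^j_\sub{syn}(V/\frak p,\bb Z/p^r(j))\to H^j_\sub{syn}(k(\frak p),\bb Z/p^r(j))$ is surjective (the cokernel of this map is exactly $H^{j+1}$ of the truncated total fibre). This surjectivity holds already for the right vertical arrow: $k(\frak p)$ is a field of characteristic $p$, so by Example~\ref{example_syn_in_char_p} (or Theorem~\ref{thm_syntomic_Milnor}) its $H^j_\sub{syn}$ with $\bb Z/p^r$-coefficients is generated by symbols, i.e.\ by products of first Chern classes of units, and $V_\frak p^\times\to k(\frak p)^\times$ is surjective since $V_\frak p$ is local with residue field $k(\frak p)$. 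Replacing your final paragraph by this surjectivity argument yields a complete proof.
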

\begin{proof}
First note that, since $V, V/\frak p$, $V_\frak p$, and $k(\frak p)$ are all $p$-complete valuation rings, they are $p$-completely quasisyntomic by \cite[Theorem 6.5.8]{GabberRamero2003}.

The $p$-completed exterior powers $\hat{\bb L_{-/\bb Z_p}^i}$ of the cotangent complex satisfy $v$-excision for the pair $(V,\frak p)$ by $p$-completing Lemma \ref{lemm:L-v-exc}. We now bootstrap, using Remark \ref{rem_syn_via_cotangent}, to prove the proposition. Indeed, from the second paragraph of that remark we successively deduce that the graded pieces of $\THH(-/\bb S[z];\Z_p)$, and $N^n\widehat{\Prism}\{j\}$ for all $n,j\ge0$, satisfy $v$-excision. 
%
%
By induction on $i$ we then obtain the desired $v$-excision for any $\cal N^{\ge n}\widehat{\Prism}\{j\}/\scr N^{\ge n+i}\widehat{\Prism}\{j\}$, and by passing to the limit we obtain it for $\cal N^{\ge n}\widehat{\Prism}\{j\}$. We next appeal to the fibre sequence $\bb Z_p(j)^\sub{syn}\to\cal N^{\ge j}\Prism\{j\}\stackrel{\phi_j-1}\to \cal \Prism\{j\}$ to prove it for $\bb Z_p(j)^\sub{syn}$, and pass modulo $p^r$ for $\bb Z_p(j)^\sub{syn}/p^r$.

Finally, for $\tau^{\le j}(\bb Z_p(j)^\sub{syn}/p^r)$ we must show that the square
\[\xymatrix{
\bb Z_p(j)^\sub{syn}(V)/p^r\ar[r]\ar[d] & \bb Z_p(j)^\sub{syn}(V_\frak p)/p^r\ar[d] \\
\bb Z_p(j)^\sub{syn}(V/\frak p)/p^r\ar[r] & \bb Z_p(j)^\sub{syn}(k(\frak p))/p^r
},\]
which we now know is cartesian, remains so after applying $\tau^{\le j}$. It is enough to show that the right vertical arrow is surjective on $H^{j}$.
By Example~\ref{example_syn_in_char_p}, the target group $H^j_\sub{syn}(k(\frak p),\bb Z_p/p^r(j))$ is generated by symbols, i.e., products of elements in the image of the syntomic Chern character (see \S\ref{ss:weight_one_syn}).\footnote{More excessively, Theorem \ref{thm_syntomic_Milnor} shows that the right vertical arrow identifies with the map $\hat K_j^M(V_\frak p)/p^r\to \hat K_j^M(k(\frak p))/p^r.$}
The desired surjectivity thus follows from the surjectivity of $V_\frak p^\times \to k(\frak p)^\times$.
%
\end{proof}

The following lemma will be required to analyse different cases in the proof of Proposition \ref{proposition_synt_coh_hv_excision}:

\begin{lemma}\label{lemma_p_compl_of_hv_square}
Let $V$ be a valuation ring, $\frak p\subset V$ a prime ideal, and $p$ a prime number. Let
\begin{equation}\xymatrix{
\hat V\ar[r]\ar[d] & \hat{V_{\frak p}}\ar[d]\\
\hat{V/\frak p}\ar[r] & \hat{k(\frak p)}
}\label{equation_p-comp_hv}
\end{equation}
denote the $p$-adic completion of the square (\ref{eqn_hv_square}).
\begin{enumerate}
\item If $p\notin\frak p$ then the vertical arrows in (\ref{equation_p-comp_hv}) are isomorphisms of rings.
\item If $p\in\frak p$ then (\ref{equation_p-comp_hv}) identifies with the Milnor square associated with the pair $(\hat V,\frak p\hat V)$
\end{enumerate}
\end{lemma}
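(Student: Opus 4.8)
The plan is to analyse the $p$-adic completion of the Milnor square associated with $(V,\mathfrak p)$ by splitting into the two cases according to whether $p\in\mathfrak p$ or not. Recall from \cite[Proposition 2.8]{BhattMathew2021} (quoted at \eqref{eqn_hv_square}) that the square $\eqref{eqn_hv_square}$ is a Milnor square, i.e.\ the map $I\to I'$ on the kernels is an isomorphism, where $I=\ker(V\to V/\mathfrak p)=\mathfrak p$ and $I'=\ker(V_{\mathfrak p}\to k(\mathfrak p))=\mathfrak p V_{\mathfrak p}$. The key structural input is that each of $V$, $V/\mathfrak p$, $V_{\mathfrak p}$, $k(\mathfrak p)$ is a valuation ring, and that for valuation rings derived $p$-completion is simply classical $p$-completion (they have bounded $p^\infty$-torsion, being domains).

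For part (1), suppose $p\notin\mathfrak p$. Then $p$ becomes invertible in $V/\mathfrak p$ and in $k(\mathfrak p)$, since $p\in V\setminus\mathfrak p$ maps to a unit of the domain $V/\mathfrak p$ (its image is nonzero, and in a valuation ring a nonzero element dividing... more directly: $V/\mathfrak p$ is a valuation ring whose value group sees $p$ trivially, as $p$ lies above the prime $\mathfrak p$). Hence the $p$-adic completions $\hat{V/\mathfrak p}$ and $\hat{k(\mathfrak p)}$ both vanish. On the other side, $I=\mathfrak p$, so I would argue that $\mathfrak p$ is $p$-divisible: given $x\in\mathfrak p$, since $p\notin\mathfrak p$ we have $v(p)\le v(x)$ in the value group (as $\mathfrak p$ is a convex ideal and $p$ lies outside it while $x$ lies inside), whence $x/p\in V$, and in fact $x/p\in\mathfrak p$ since $\mathfrak p$ is prime and $p\notin\mathfrak p$. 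Therefore $\mathfrak p=p\mathfrak p$, so $\hat{\mathfrak p}=0$; the same applies to $I'=\mathfrak p V_{\mathfrak p}$. Applying $p$-completion to the short exact sequences $0\to\mathfrak p\to V\to V/\mathfrak p\to 0$ and $0\to\mathfrak p V_{\mathfrak p}\to V_{\mathfrak p}\to k(\mathfrak p)\to 0$ and using that derived $p$-completion is exact on these (with all torsion terms controlled), I get $\hat V\isoto\widehat{V/\mathfrak p}=0$ and $\hat{V_{\mathfrak p}}\isoto\widehat{k(\mathfrak p)}=0$ — wait, this would force $\hat V = 0$, which is false in general. The correct statement must be that the \emph{vertical} arrows are isomorphisms, i.e.\ $\hat V\isoto\widehat{V/\mathfrak p}$ and $\hat{V_{\mathfrak p}}\isoto\widehat{k(\mathfrak p)}$; this is exactly what $p$-completeness of $\mathfrak p=p\mathfrak p$ gives, since then $\hat{\mathfrak p}=0$ forces $\hat V\isoto\widehat{V/\mathfrak p}$ and similarly on the localised side. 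So I would phrase part (1) as: the kernels $\mathfrak p$ and $\mathfrak p V_{\mathfrak p}$ are $p$-divisible hence $p$-completely zero, so $p$-completion carries the Milnor square to a square whose vertical maps are isomorphisms.

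For part (2), suppose $p\in\mathfrak p$. Then $\mathfrak p\hat V$ is an ideal of $\hat V$ containing $p$. I would argue that $\hat V/\mathfrak p\hat V\cong\widehat{V/\mathfrak p}$ (the $p$-adic completion of $V/\mathfrak p$, which equals $V/\mathfrak p$ if that ring is already $p$-complete, but in any case the completion): since $p\in\mathfrak p$, the ring $V/\mathfrak p$ is killed by... no, $p\in\mathfrak p$ means $p=0$ in $V/\mathfrak p$, so $V/\mathfrak p$ is an $\mathbb F_p$-algebra and hence already $p$-complete, giving $\hat V/\mathfrak p\hat V\cong V/\mathfrak p=\widehat{V/\mathfrak p}$. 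Similarly $\widehat{V_{\mathfrak p}}$ is a $p$-complete valuation ring (using Remark~\ref{rmk:locn-compn-valn-ring}), and $\widehat{V_{\mathfrak p}}/\mathfrak p\widehat{V_{\mathfrak p}}\cong k(\mathfrak p)$ since $k(\mathfrak p)$ is an $\mathbb F_p$-algebra. The main point is then to identify $\widehat{V_{\mathfrak p}}$ with the localisation of $\hat V$ at the prime $\mathfrak p\hat V$; for this I would use that $\mathfrak p\hat V$ corresponds under completion to $\mathfrak p$, that $V_{\mathfrak p}=\colim_{f\notin\mathfrak p}V[1/f]$, and that $p$-completion commutes appropriately here because $V\to V_{\mathfrak p}$ is an isomorphism on $\mathfrak p$-primary data. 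Concretely: $\hat V\to\widehat{V_{\mathfrak p}}$ is a map of $p$-complete rings inverting $V\setminus\mathfrak p$, and any $p$-complete $\hat V$-algebra in which $V\setminus\mathfrak p$ acts invertibly receives a unique map from $\widehat{V_{\mathfrak p}}$, giving that $\widehat{V_{\mathfrak p}}$ is the $p$-completed localisation $(\hat V)_{\mathfrak p\hat V}{}^{\wedge}_p$; but $\hat V$ being $p$-complete and $\mathfrak p\hat V\ni p$, the localisation $(\hat V)_{\mathfrak p\hat V}$ is already $p$-complete (again Remark~\ref{rmk:locn-compn-valn-ring} applied to the $p$-complete valuation ring $\hat V$), so $\widehat{V_{\mathfrak p}}\cong(\hat V)_{\mathfrak p\hat V}$. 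Assembling the four corners, $\eqref{equation_p-comp_hv}$ is precisely the square $\eqref{eqn_hv_square}$ for the pair $(\hat V,\mathfrak p\hat V)$.

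\textbf{Expected main obstacle.} The routine bookkeeping is harmless; the one genuine point requiring care is the identification $\widehat{V_{\mathfrak p}}\cong(\hat V)_{\mathfrak p\hat V}$ in part (2) — i.e.\ that $p$-adic completion commutes with the localisation $V\rightsquigarrow V_{\mathfrak p}$ when $p\in\mathfrak p$. I expect to handle this via the universal property of $p$-completed localisation together with the automatic $p$-completeness of localisations of $p$-complete valuation rings at primes containing $p$ (Remark~\ref{rmk:locn-compn-valn-ring}), rather than by a direct limit computation. A secondary subtlety in part (1) is pinning down the convexity argument showing $\mathfrak p = p\mathfrak p$ when $p\notin\mathfrak p$; this is standard valuation-theory but should be stated cleanly.
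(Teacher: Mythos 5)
Your overall strategy coincides with the paper's (case split; in case (1) kill the kernels by unique $p$-divisibility; in case (2) identify the quotient and the localisation after completion), but part (2) has a genuine gap at exactly the point where the paper does its real work. The corner identification $V/\frak p\isoto \hat V/\frak p\hat V$ is asserted with a non-justification: observing that $V/\frak p$ is an $\bb F_p$-algebra only shows $\widehat{V/\frak p}=V/\frak p$; it says nothing about $\hat V/\frak p\hat V$. Surjectivity of $V/\frak p\to \hat V/\frak p\hat V$ is easy (from $\hat V=V+p\hat V$), but injectivity amounts to $V\cap\frak p\hat V=\frak p$, and this is what the paper proves: since $V$ is $p$-torsion-free, $\hat V$ is also the derived $p$-completion of $V$, so the cofibre of $V\to\hat V$ is uniquely $p$-divisible, whence $M\isoto M\otimes_V\hat V$ for every $V/p$-module $M$; taking $M=V/\frak p$ gives the isomorphism. (Beware also that $V\to\hat V$ need not even be injective—its kernel $\bigcap_n p^nV$ can be a nonzero prime when $V$ has higher rank—so naive "extend the valuation to $\hat V$" arguments are delicate; the derived argument sidesteps this.) The gap propagates into your main step: the universal-property identification of $\widehat{V_\frak p}$ with $(\hat V)_{\frak p\hat V}$ needs (a) that $\frak p\hat V$ is a prime of $\hat V$ (otherwise Remark~\ref{rmk:locn-compn-valn-ring} and the Milnor square for $(\hat V,\frak p\hat V)$ do not even apply), which you only get from the unproven corner identification, and (b) that inverting the image of $V\setminus\frak p$ in $\hat V$ is the same as localising at $\frak p\hat V$: your universal property characterises $(S^{-1}\hat V)^{\wedge}_p$ with $S$ the image of $V\setminus\frak p$, and identifying $S^{-1}\hat V$ with $(\hat V)_{\frak p\hat V}$ requires an argument (e.g.\ that every $x\in\hat V\setminus\frak p\hat V$ divides some $s\in V\setminus\frak p$, using $\hat V=V+p\hat V$ and $p\hat V\cap V=pV$). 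The paper instead checks directly that the canonical map $(\hat V)_{\frak p\hat V}\to\widehat{V_\frak p}$ is an isomorphism mod $p$ and that its source is $p$-complete, the latter via $\frak p\hat V\isoto\frak p\hat V_{\frak p\hat V}$ (primality in the valuation ring $\hat V$) and $p$-completeness of $\frak p\hat V=\ker(\hat V\to V/\frak p)$; either route is fine, but the missing inputs above must be supplied.

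In part (1) your write-up is internally inconsistent: when $p\notin\frak p$ it is $V_\frak p$ and $k(\frak p)$, not $V/\frak p$, in which $p$ becomes invertible, so it is the right-hand column whose completion vanishes; $\widehat{V/\frak p}$ is nonzero in general (take $V=\bb Z_{(p)}$, $\frak p=0$). Your eventual corrected formulation—the kernels $\frak p$ and $\frak pV_\frak p$ are uniquely $p$-divisible, hence the completed vertical maps are isomorphisms—is correct and matches the paper, but the erroneous claims ("$p$ is invertible in $V/\frak p$", "$\widehat{V/\frak p}=0$") must be removed rather than left standing alongside the fix.
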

\begin{proof}
Case 1: if $p\notin\frak p$ then $\frak p$ is uniquely $p$-divisible (as it is an ideal of $V_\frak p$, in which $p$ is invertible), so we see that $\hat V\isoto\hat{V/\frak p}$ and $\hat{V_\frak p}=0=\hat{k(\frak p)}$.

Case 2: From now on we assume that $p\in\frak p$. Since $\hat V$ is also the derived $p$-adic completion of $V$ ($V$ being $p$-torsion free), we see that the quotient $\hat V/V$ is uniquely $p$-divisible and so $M\isoto M\otimes_V\hat V$ for any $V/p$-module $M$. This holds in particular for $M=V/\frak p$, thereby giving the isomorphism \[\hat{V/\frak p}=V/\frak p\isoto V/\frak p\otimes_V\hat V=\hat V/\frak p\hat V.\] 

It remains to show that the canonical map $\hat V_{\frak p\hat V}\to\hat{V_\frak p}$ is an isomorphism. Since it is clearly an isomorphism modulo $p$, it is enough to show that $\hat V_{\frak p\hat V}$ is actually $p$-adically complete. In the cofiber sequence $\frak p\hat V_{\frak p \hat V} \to \hat V_{\frak p \hat V} \to \hat V_{\frak p \hat V}/\frak p\hat V_{\frak p \hat V}$ the right hand term is (derived) $p$-complete (recall $p \in \frak p$), whence the middle term is if and only if the first term is. Noting that $\frak p\hat V\isoto \frak p\hat V_{\frak p\hat V}$ since $\frak p\hat V$ is a prime ideal of the valuation ring $\hat V$, and $\frak p\hat V$ is $p$-adically complete by the previous paragraph (it is the kernel of $\hat V\to V/\frak p$).
\end{proof}

By gluing in contributions from \'etale cohomology we now remove the $p$-completeness condition:

\begin{proposition}\label{proposition_synt_coh_hv_excision}
For all $r\ge0$ and $j\in\bb Z$, the functors \[\bb Z_p(j)^\sub{syn}/p^r,\quad \tau^{\le j}(\bb Z_p(j)^\sub{syn}/p^r):\opp{CAlg}_\bb Z\To\D(\bb Z)\] satisfy henselian $v$-excision.
\end{proposition}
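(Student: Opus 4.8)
The plan is to deduce henselian $v$-excision for the uncompleted functors $\bb Z_p(j)^\sub{syn}/p^r$ and $\tau^{\le j}(\bb Z_p(j)^\sub{syn}/p^r)$ from the $p$-complete case, namely Proposition \ref{prop:nygaard-prism-exc}, by gluing in the contribution of $p$-adic \'etale cohomology via the defining pullback square \eqref{eq:syn} of Definition \ref{def:bms-decomplete}. Fix a henselian valuation ring $V$ and a prime ideal $\mathfrak p\subset V$, and consider the associated Milnor square \eqref{eqn_hv_square} relating $V$, $V_\mathfrak p$, $V/\mathfrak p$, and $k(\mathfrak p)$; these are all henselian valuation rings (the henselian property of a localisation of a henselian valuation ring at a prime follows from the fact that $(V,\mathfrak p)$ and $(V_\mathfrak p, \mathfrak p)$ are henselian pairs, as in Remark \ref{rem:rigid-exc}). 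Since $\bb Z_p(j)^\sub{syn}/p^r$ is the total fibre assembled from the top horizontal and left vertical maps in \eqref{eq:syn} (applied to each of the four corner rings, and with $R_p^\comp$ replaced by $R_p^h$ using Lemma \ref{lemma_syn_p-hens}(3), which is legitimate for the $p$-henselian rings in question after passing to $p$-henselisations), it suffices to prove $v$-excision separately for the three functors built from \'etale cohomology and from $p$-complete syntomic cohomology that appear in that square.

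The first key step is $v$-excision for the functor $R\mapsto R\Gamma_\sub{\'et}(R[\tfrac1p],\mu_{p^r}^{\otimes j})$ on the square $(V,\mathfrak p)$. Here the plan is to use Lemma \ref{lemma_p_compl_of_hv_square} to split into two cases. If $p\notin\mathfrak p$, then the vertical arrows in the $p$-adically completed square are isomorphisms, but more is true at the level of generic fibres: $V[\tfrac1p]\isoto V/\mathfrak p[\tfrac1p]$ and $V_\mathfrak p[\tfrac1p]=V_\mathfrak p$, $k(\mathfrak p)[\tfrac1p]=k(\mathfrak p)$ — actually one should simply observe directly that $\Spec(V[\tfrac1p])$ decomposes appropriately, or better, reduce the whole statement to the case $p\in\mathfrak p$ using that when $p$ is invertible in $V$ the square becomes the $v$-excision square for $V$ itself on which $R\Gamma_\sub{\'et}(-,\mu_{p^r}^{\otimes j})$ is rigid, hence invariant under the henselian pairs appearing. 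If $p\in\mathfrak p$, then all four rings $V,V_\mathfrak p,V/\mathfrak p,k(\mathfrak p)$ have residue characteristic $p$; by Gabber's affine proper base change and the fibre sequence \eqref{eqn_j!p}, one reduces to $v$-excision for $R\mapsto R\Gamma_\sub{\'et}(R,j_!\mu_{p^r}^{\otimes j})$ and for $R\mapsto\bb F_p(j)^\sub{syn}(R_p^h)$; the former is an arc sheaf (as noted in the proof of Proposition \ref{prop:syn}), hence satisfies $v$-excision, and the latter is handled by the second key step.

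The second key step is $v$-excision for $R\mapsto\bb Z_p(j)^\sub{syn}(R_p^h)/p^r$ and its truncation on the square $(V,\mathfrak p)$ when $p\in\mathfrak p$. By Lemma \ref{lemma_p_compl_of_hv_square}(2), the $p$-adically completed square $(V,\mathfrak p)$ identifies with the Milnor square associated with the pair $(\hat V,\mathfrak p\hat V)$, where $\hat V$ is a $p$-complete valuation ring and $\mathfrak p\hat V$ is a prime ideal containing $p$; since $\bb Z_p(j)^\sub{syn}$ is $p$-complete and, for $p$-henselian rings, insensitive to $p$-adic completion by Lemma \ref{lemma_syn_p-hens}(2), this reduces precisely to Proposition \ref{prop:nygaard-prism-exc}, which gives $v$-excision for both $\bb Z_p(j)^\sub{syn}$ and $\tau^{\le j}(\bb Z_p(j)^\sub{syn}/p^r)$ on the pair $(\hat V,\mathfrak p\hat V)$. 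Finally, assembling the three ingredients: the full functor $\bb Z_p(j)^\sub{syn}/p^r$ carries $(V,\mathfrak p)$ to a cartesian square because it is a fibre of a map between two functors each satisfying $v$-excision; for the truncated functor one must additionally check that applying $\tau^{\le j}$ preserves the cartesian square, which follows from the surjectivity in degree $j$ already verified (in the $p$-complete setting) in the proof of Proposition \ref{prop:nygaard-prism-exc} together with the fact that in the $p$-invertible case the truncation is transparent because \'etale cohomology of a henselian valuation ring with $p$ invertible is concentrated in degree $0$. The main obstacle I anticipate is the careful bookkeeping in the case analysis of Lemma \ref{lemma_p_compl_of_hv_square}, ensuring that when $p\notin\mathfrak p$ the \'etale-cohomology corner genuinely contributes nothing new (one must check the generic fibre square $\Spec(V[\tfrac1p])\leftarrow\Spec(V_\mathfrak p[\tfrac1p])$, etc., is handled by rigidity away from $p$), and in correctly matching the $p$-henselisation $R_p^h$ appearing in Lemma \ref{lemma_syn_p-hens}(3) with the $p$-adic completion used in Proposition \ref{prop:nygaard-prism-exc}.
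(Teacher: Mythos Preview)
Your overall strategy matches the paper's---reduce via the pullback square of Definition \ref{def:bms-decomplete}, case-split via Lemma \ref{lemma_p_compl_of_hv_square}, and apply Proposition \ref{prop:nygaard-prism-exc} in the $p$-complete case---but the execution has genuine gaps. In the case $p\notin\frak p$ you conflate this with $p\in V^\times$ (it also includes $p\in\frak m_V\setminus\frak p$), and your tentative claim $V[\tfrac1p]\cong(V/\frak p)[\tfrac1p]$ is false already when $p\in V^\times$ and $\frak p\ne 0$. The paper instead observes that when $p\notin\frak p$, the $[\tfrac1p]$-inverted square is precisely the Milnor square of the henselian valuation ring $V[\tfrac1p]$ at the prime $\frak pV[\tfrac1p]$, where $p$ is now a unit, so Gabber rigidity of \'etale cohomology makes both vertical arrows equivalences (and the same for any truncation). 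In the case $p\in\frak p$, your appeal to \eqref{eqn_j!p} is misplaced as labeled: that sequence concerns $\bb F_p(j)^\sub{syn}(R)$ itself, not $R\Gamma_\sub{\'et}(R[\tfrac1p],\ph)$; what actually happens here is that the $[\tfrac1p]$-square degenerates (the bottom row is zero since $p=0$ in $V/\frak p$, and the top arrow is an isomorphism since every $s\in V\setminus\frak p$ divides $p$ in the valuation ring $V$, so $V[\tfrac1p]=V_{\frak p}[\tfrac1p]$).

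For the truncated functor, your claim that \'etale cohomology of a henselian valuation ring with $p$ invertible is concentrated in degree $0$ is false---it computes Galois cohomology of the residue field, which can be arbitrarily large. The correct reason the $v$-excision square survives $\tau^{\le j}$ is that $H^j_\sub{syn}(V_{\frak p},\bb Z/p^r(j))\to H^j_\sub{syn}(k(\frak p),\bb Z/p^r(j))$ is surjective, which follows from Theorem \ref{thm_syntomic_Milnor} together with the surjection $V_{\frak p}^\times\to k(\frak p)^\times$. Incidentally, your reference to \eqref{eqn_j!p} does point to a cleaner alternative for the untruncated statement: the left term $R\Gamma_\sub{\'et}(R,j_!\mu_{p^r}^{\otimes j})$ is rigid (hence satisfies henselian $v$-excision by Remark \ref{rem:rigid-exc}), and the right term $\bb Z_p(j)^\sub{syn}(R_p^\comp)/p^r$ reduces uniformly to Proposition \ref{prop:nygaard-prism-exc} via Lemma \ref{lemma_p_compl_of_hv_square}, bypassing any separate analysis of $R\Gamma_\sub{\'et}(R[\tfrac1p],\ph)$ or $R\Gamma_\sub{\'et}(\hat R[\tfrac1p],\ph)$.
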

\begin{proof}
We begin with a remark on the truncated expression. For any henselian valuation ring $V$, the pullback square definition (\ref{eq:syn}) gives a cartesian square
\begin{equation}\begin{tikzcd}
\Z_p(j)^{\mathrm{syn}}(V)/p^r \ar{r} \ar{d} & R\Gamma_{\et}(V[\tfrac{1}{p}],\mu_{p^r}^{\otimes j})\ar{d}\\
\Z_p(j)^{\syn}(\hat V)/p^r \ar{r} & R\Gamma_{\et}(\hat V[\tfrac{1}{p}],\mu_{p^r}^{\otimes j}).
\end{tikzcd}\label{eqn:syn_pullback}
\end{equation}
This square remains cartesian after applying $\tau^{\le j}$: indeed, either $p$ is a unit in $V$ in which case the bottom row is zero, or else $p$ is a non-unit whence $V$ is $p$-henselian and both vertical arrows are equivalences by Lemma \ref{lemma_syn_p-hens}(2).

Now let $V$ be a henselian valuation ring and $\frak p\subset V$ a prime ideal; note that $V/\frak p$, $V_\frak p$, and $k(\frak p)$ are also henselian valuation rings by \cite[Lemma 3.3.5]{ElmantoHoyoisIwasaKelly2021}, so the previous paragraph applies to them. From the pullback square (\ref{eqn:syn_pullback}) and its truncated version, we must show (1) that $\bb Z_p(j)^\sub{syn}/p^r$ and $\tau^{\le j}(\bb Z_p(j)^\sub{syn}/p^r)$ carry
\[\xymatrix{
\hat V\ar[r]\ar[d] & \hat{V_{\frak p}}\ar[d]\\
\hat{V/\frak p}\ar[r] & \hat{k(\frak p)}
}\]
to cartesian squares, and (2) that $R\Gamma_\sub{\'et}(-,\mu_{p^r}^{\otimes j})$ and $\tau^{\le j} R\Gamma_\sub{\'et}(-,\mu_{p^r}^{\otimes j})$ carry both the squares
\[\xymatrix{
V[\tfrac1p]\ar[r]\ar[d] & V_{\frak p}[\tfrac1p]\ar[d]\\
V/\frak p[\tfrac1p]\ar[r] & k(\frak p)[\tfrac1p]
}\text{  and  }
\xymatrix{
\hat V[\tfrac1p]\ar[r]\ar[d] & \hat{V_{\frak p}}[\tfrac1p]\ar[d]\\
\hat{V/\frak p}[\tfrac1p]\ar[r] & \hat{k(\frak p)}[\tfrac1p]
}\]
to cartesian squares.
It is convenient to consider two cases to avoid degenerate statements.

Case $p\in V\setminus \frak p$. Then $\hat{V_\frak p}=\hat {k(\frak p)}=0$ and $\hat V\isoto\hat{V/\frak p}$ (for the latter isomorphism, observe that $\frak p$ is an ideal of $V_\frak p$, hence is $p$-divisible). So only the claims for the second of the three squares is non-trivial.
This one is the Milnor square associated with the henselian valuation ring $V[\tfrac1p]$ and prime ideal $\frak pV[\tfrac1p]$. But then rigidity for \'etale cohomology implies that $R\Gamma_\sub{\'et}(-,\mu_{p^r}^{\otimes j})$ (and any truncation thereof) induces equivalences on the vertical arrows in the second square.

Case $p\in\frak p$. In this case, in the second and third diagrams, the bottom terms are zero and the top horizontal arrows are isomorphisms; so the claims in (2) are trivial. Meanwhile, the square in (1) identifies, by Lemma \ref{lemma_p_compl_of_hv_square}, with the Milnor square associated with the $p$-complete valuation ring $\hat V$ and prime ideal $\frak p\hat V$; so the claims in (1) are covered by Proposition \ref{prop:nygaard-prism-exc}.
\end{proof}

\begin{proof}[Proof of Theorem~\ref{theorem:exc-cdarc}]
Using Theorem \ref{thm:ehik}, as already explained just after the statement of Theorem~\ref{theorem:exc-cdarc}, it is enough to prove that $\bb Z(j)^\sub{cdh}$ satisfies henselian $v$-excision. That is, for any henselian valuation ring $V$ and prime ideal $\frak p\subset V$, we must show that the square 
\[
\begin{tikzcd}
\bb Z(j)^\sub{cdh}(V) \ar{r} \ar{d} & \bb Z(j)^\sub{cdh}(V_{\mathfrak{p}}) \ar{d}\\
\bb Z(j)^\sub{cdh}(V/\mathfrak{p}) \ar{r} & \bb Z(j)^\sub{cdh}(\kappa(\mathfrak{p})).
\end{tikzcd}
\]
is cartesian.
It suffices to check this rationally and modulo any prime $p$. Rationally it follows from the decomposition of Theorem \ref{prop:basic_props_of_cdh_mot}(7) and the fact that $\KH$, and therefore $\KH_{\bb Q}$, satisfies Milnor excision \cite{Weibel1989a}.

Next Theorem \ref{theorem_syn_comp} implies that the previous square modulo $p$ identifies with
\[
\begin{tikzcd}
\tau^{\le j}\bb F_p(j)^\sub{syn}(V) \ar{r} \ar{d} & \tau^{\le j}\bb F_p(j)^\sub{syn}(V_{\mathfrak{p}}) \ar{d}\\
\tau^{\le j}\bb F_p(j)^\sub{syn}(V/\mathfrak{p}) \ar{r} & \tau^{\le j}\bb F_p(j)^\sub{syn}(\kappa(\mathfrak{p})).
\end{tikzcd}
\]
(we have implicitly removed $L_\sub{cdh}$, since the four rings are henselian valuation rings), which is indeed cartesian by Proposition \ref{proposition_synt_coh_hv_excision}.
\end{proof}

\subsection{Singular Beilinson-Lichtenbaum equivalence}\label{sec:singular-bl}
Recall that the integral Beilinson--Lichtenbaum equivalence states that on smooth schemes over a field or a mixed characteristic Dedekind domain, there is an equivalence of Nisnevich sheaves
\[
\bb Z(j)^{\bb A} \xrightarrow{\simeq} L_\sub{Nis}\tau^{\leq j+1}L_{\et}\Z(j)^{\bb A};
\]
see Corollary \ref{corollary:BL_over_B}. In this subsection we offer a singular analog of this result in the form of Theorem~\ref{thm:singular-bl}, involving the \'eh topology.

We begin by replacing the cdh topology in our construction of $\Z(j)^{\cdh}$ with the \'eh or h topology. The resulting \'eh-motivic cohomology provides a cdh-local analogue of \'etale motivic cohomology. We will not need the h-motivic cohomology in this paper but record in here for possible future reference.

\begin{definition}\label{def:h-eh-mot}
Let $\tau\in\{\text{\'eh},\text{h}\}$. For each $j\in\bb Z$, define $\bb Z(j)^\tau:=L_\tau\bb Z(j)^\sub{cdh}:\opp{Sch}^\sub{qcqs}\to\D(\bb Z)$ to be the $\tau$-sheafification of the cdh-motivic cohomology; equivalently, $\bb Z(j)^\tau$ is defined by $\tau$-sheafifying in Definition~\ref{def:cdh} instead of cdh sheafifying. We call $\bb Z(j)^\tau$ {\em weight-$j$ $\tau$-motivic cohomology}. Note that there are canonical comparison maps \[\bb Z(j)^\sub{cdh}\To \bb Z(j)^\sub{\'eh}\To \bb Z(j)^\sub{h},\] and that they all vanish if $j<0$.

Similarly to the cdh-local case in Remark~\ref{rem:rationalise}, we denote by \[\bb Q(j)^\tau:\opp{Sch}^\sub{qcqs}\To\D(\bb Q),\qquad X\mapsto \bb Z(j)^\tau(X)\otimes_\bb Z\bb Q\] the presheaf rationalisation of $\bb Z(j)^\tau$.
\end{definition}

The following technical lemma will be required to control sheafification:

\begin{lemma}[Singular Beilinson--Soul\'e with finite coefficients]\label{lemma_BeilinsonSoule}
For any qcqs scheme $X$, $j\ge 0$, and $m>0$, the complex $\bb Z(j)^\sub{cdh}(X)/m$ is coconnective.
\end{lemma}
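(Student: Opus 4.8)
The plan is to reduce the statement to a computation on henselian valuation rings, where finite-coefficient cdh-motivic cohomology is controlled by syntomic cohomology, and then invoke the known coconnectivity bounds for the latter. First I would note that it suffices to prove the claim when $m = p^r$ is a prime power, since for general $m$ one can filter by prime factors; and in fact it suffices to treat $m = p$, since $\bb Z(j)^\sub{cdh}(X)/p^r$ is a finite extension of copies of $\bb Z(j)^\sub{cdh}(X)/p$. So the goal becomes: $\bb F_p(j)^\sub{cdh}(X) := \bb Z(j)^\sub{cdh}(X)/p$ is coconnective for every qcqs scheme $X$ and every $j \ge 0$.

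Next, by Theorem~\ref{theorem_syn_comp} we have an equivalence $\bb F_p(j)^\sub{cdh} \simeq L_\sub{cdh}\tau^{\le j}(\bb Z_p(j)^\sub{syn}/p) = L_\sub{cdh}\bb F_p(j)^\sub{BL}$ of presheaves on $\Sch^\sub{qcqs}$. The presheaf $\bb F_p(j)^\sub{BL}$ is finitary (Lemma~\ref{lemma_BL_mod_p}(3)), so by Proposition~\ref{prop:finitary_conditions}(2) its cdh sheafification $L_\sub{cdh}\bb F_p(j)^\sub{BL}$ is again a finitary cdh sheaf. I would then fix a base and apply Proposition~\ref{proposition_checking_on_points}: a finitary cdh sheaf valued in $\D(\bb Z)$ which takes coconnective values on all henselian valuation rings of finite rank is itself coconnective (one can see coconnectivity stalk-locally since the relevant $\infty$-topos is hypercomplete). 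Concretely, reducing to $X = \Spec(V)$ for $V$ a henselian valuation ring of finite rank, we have $\bb F_p(j)^\sub{cdh}(V) \simeq \tau^{\le j}(\bb Z_p(j)^\sub{syn}(V)/p)$ since $V$ is a point for the cdh topology and $\bb F_p(j)^\sub{BL}$ is already a cdh sheaf there (its value being the Nisnevich-local truncated syntomic cohomology, and $V$ has no nontrivial Nisnevich covers). This is manifestly supported in cohomological degrees $\le j$, hence coconnective.

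The main obstacle is making the reduction to henselian valuation rings rigorous: one must be careful that $L_\sub{cdh}\bb F_p(j)^\sub{BL}$ evaluated on a henselian valuation ring $V$ really equals $\tau^{\le j}(\bb Z_p(j)^\sub{syn}(V)/p)$ and not something with higher cohomology introduced by the cdh sheafification process. The point is that henselian valuation rings are precisely the points of the cdh topology (after restricting to finitely presented schemes, using finitariness), so the cdh stalk at such a ring is computed by evaluation; combined with the fact that $\bb F_p(j)^\sub{BL}$ is a Nisnevich sheaf and $V$ is Nisnevich-local, no correction term appears. An alternative, perhaps cleaner, route avoiding even this point: argue directly that $L_\sub{cdh}$ of a bounded-above (i.e. uniformly coconnective) presheaf of complexes is again bounded above, using that on a site of finite valuative dimension the cdh cohomological dimension is finite (Theorem~\ref{prop:basic_props_of_cdh_mot}(3) already exploits this for the opposite connectivity direction) — but since here the truncation $\tau^{\le j}$ makes $\bb F_p(j)^\sub{BL}$ {\em uniformly} coconnective across all rings (supported in degrees $\le j$ on henselian local rings, hence Nisnevich-locally), Remark~\ref{lem:filtr-coconn} or the standard fact that $\infty$-categorically coconnective objects are stable under the relevant colimits and totalizations in sheafification does the job without any dimension hypothesis. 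I would present the argument via Proposition~\ref{proposition_checking_on_points} as the primary route, as it is the most robust.
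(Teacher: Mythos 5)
Your overall strategy is the same as the paper's: reduce modulo $m$ to modulo a prime $p$, use Theorem~\ref{theorem_syn_comp} to identify $\bb Z(j)^\sub{cdh}/p$ with $L_\sub{cdh}\bb F_p(j)^\sub{BL}$, and then use finitariness together with Proposition~\ref{proposition_checking_on_points} to reduce the coconnectivity check to henselian valuation rings $V$, where the value is $\tau^{\le j}(\bb Z_p(j)^\sub{syn}(V)/p)$. Up to that point your reduction is fine (including the reduction from $p^r$ to $p$ by d\'evissage).

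However, the final step contains a genuine gap. You conclude coconnectivity from the observation that $\tau^{\le j}(\bb Z_p(j)^\sub{syn}(V)/p)$ is ``manifestly supported in cohomological degrees $\le j$,'' but this is the wrong bound: coconnective means no cohomology in \emph{negative} degrees (no positive homotopy groups), and the truncation $\tau^{\le j}$ only caps the complex from above. The actual content needed is that $\bb F_p(j)^\sub{syn}(V)$ has no cohomology in degrees $<0$, and this is not automatic: mod-$p$ syntomic cohomology is left Kan extended from smooth algebras, so for a general ring it could a priori have negative-degree cohomology; this is precisely why the paper records Remark~\ref{rem:coconnective}. For a valuation ring $V$ one uses Gabber--Ramero's theorem that $\bb L_{V/\bb Z}$ has ($p$-complete) Tor amplitude in $[-1,0]$, so that $V_p^\comp$ is quasisyntomic and $\bb Z_p(j)^\sub{syn}(V_p^\comp)$ is coconnective, and then the defining pullback square (gluing in the coconnective \'etale cohomology of $V[\tfrac1p]$) gives coconnectivity of $\bb Z_p(j)^\sub{syn}(V)$; this is exactly the argument the paper invokes from the proof of Theorem~\ref{thm:a1-comparison}(2). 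The same confusion undermines your proposed ``alternative route'': uniform coconnectivity of $\bb F_p(j)^\sub{BL}$ on \emph{all} rings is not known (which is why one must pass to valuation rings via the points of the cdh topology), and again ``supported in degrees $\le j$'' is not the relevant hypothesis for that argument. With the Gabber--Ramero input inserted at the valuation-ring step, your proof becomes the paper's proof.
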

\begin{proof}
By the comparison Theorem \ref{theorem_syn_comp} it is enough to prove the same for $L_\sub{cdh}\bb F_p(j)^\sub{BL}$, for all prime numbers $p$. By Proposition \ref{proposition_checking_on_points} this may be checked on henselian valuation rings $V$. But we already explained in the proof of Theorem \ref{thm:a1-comparison}(2) that $\bb F_p(j)^\sub{syn}(V)$ is coconnective, so the same certainly holds for $\bb F_p(j)^\sub{BL}(V)=\tau^{\le j}\bb F_p(j)^\sub{syn}(V)$.
\end{proof}

The following result summarises the main properties of the eh- and h-local motivic cohomologies, including in particular analogues of the cdh-local results of \S\ref{sec:lis-cdh}--\ref{ss:Milnor_excision}:

\begin{proposition}\label{prop_eh_motivic cohomology}
Let $\tau\in\{\text{\'eh},\text{h}\}$, and let $j \geq 0$.
\begin{enumerate}
\item (Rational structure) $\bb Q(j)^\tau$ is a $\tau$-sheaf and the canonical maps $\bb Q(j)^\sub{cdh}\to \bb Q(j)^\sub{\'eh}\to \bb Q(j)^\sub{h}$ of presheaves on $\opp{Sch}^\sub{qcqs}$ are equivalences.
\item (Syntomic comparison) For any prime $p$ and $r\ge1$, there are natural equivalences \[\bb Z(j)^\sub{\'eh}/p^r\quis L_\sub{cdh}\bb Z_p^\sub{syn}(j)/p^r,\qquad \bb Z(j)^\sub{h}/p^r\quis L_h\bb Z_p(j)^\sub{syn}/p^r.\]
\item (\'Etale comparison) For any prime $p$ and $r\ge1$, there is a natural equivalence \[\bb Z(j)^\sub{\'eh}/p^r\quis \bb Z(j)^\sub{h}/p^r \quis R\Gamma_\sub{\'et}(-,\mu_{p^r}^{\otimes j})\] on the category of qcqs $\bb Z[\tfrac1p]$-schemes
\item (Milnor excision) $\bb Z(j)^\tau$ is a finitary $\tau$-sheaf satisfying Milnor excision.
\item (Singular Hilbert 90) For any henselian valuation ring $V$, we have $H^{j+1}(\bb Z(j)^\sub{\'eh}(V))=0$.
\end{enumerate}
\end{proposition}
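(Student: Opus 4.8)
\textbf{Proof plan for Proposition~\ref{prop_eh_motivic cohomology}(5) (singular Hilbert 90).}

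The plan is to deduce the vanishing of $H^{j+1}(\bb Z(j)^\sub{\'eh}(V))$ from the corresponding cdh-local statement together with the $p$-adic/rational decomposition already at our disposal. First I would reduce to finite coefficients: the presheaf $\bb Z(j)^\sub{\'eh}$ sits in the arithmetic fracture square built from its rationalisation and its mod-$p$ reductions for all primes $p$, and by part~(1) we have $\bb Q(j)^\sub{\'eh}(V)\simeq\bb Q(j)^\sub{cdh}(V)$, which by Theorem~\ref{prop:basic_props_of_cdh_mot}(3) (valuation rings have valuative dimension equal to their rank, but more to the point $\bb Q(j)^\sub{cdh}$ is a summand of $\KH_\bb Q$, which for a valuation ring has vanishing negative $K$-theory and whose $K$-groups are Adams-bounded by Corollary~\ref{cor:rational-gersten} after writing $V$ as a filtered colimit of regular local rings) is supported in degrees $\le j$. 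Hence the rational part contributes nothing in degree $j+1$, and it suffices to show $H^{j+1}(\bb Z(j)^\sub{\'eh}(V)/p^r)=0$ for every prime $p$ and every $r\ge1$; by the usual Bockstein argument it is enough to treat $r=1$, i.e.\ to show $H^{j+1}(\bb F_p(j)^\sub{\'eh}(V))=0$.

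Next I would invoke part~(2) of this very proposition to identify $\bb F_p(j)^\sub{\'eh}(V)\simeq L_\sub{cdh}\bb Z_p(j)^\sub{syn}(V)/p$; since $V$ is a henselian valuation ring, it is a point for the cdh topology and the cdh-sheafification is computed simply by evaluation, so $\bb F_p(j)^\sub{\'eh}(V)\simeq \bb F_p(j)^\sub{syn}(V)$. It therefore remains to show that mod-$p$ syntomic cohomology of a henselian valuation ring is supported in degrees $\le j$. I would split into the usual cases according to the residue characteristic of $V$. If $p$ is invertible in $V$, then $\bb F_p(j)^\sub{syn}(V)\simeq R\Gamma_\sub{\'et}(V,\mu_p^{\otimes j})$ by Lemma~\ref{lemma_syn_p-hens}(1), which is concentrated in degree $0$ (since $V$ is a henselian local ring) and in particular vanishes in degree $j+1$ for $j\ge0$. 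If $V$ has characteristic $p$, then by Example~\ref{example_syn_in_char_p} (valuation rings of characteristic $p$ are Cartier smooth) we have $\bb F_p(j)^\sub{syn}(V)\simeq R\Gamma_\sub{\'et}(V,\Omega^j_\sub{log})[-j]$, again concentrated in a single degree $\le j$. The remaining, and only genuinely interesting, case is $V$ of mixed characteristic $(0,p)$: here I would use the gluing square (\ref{eq:syn}), which for the $p$-henselian ring $V$ reduces (Lemma~\ref{lemma_syn_p-hens}(2)) to $\bb F_p(j)^\sub{syn}(V)\simeq\bb F_p(j)^\sub{syn}(\hat V)$ for the $p$-complete valuation ring $\hat V$; since $\hat V$ is a perfectoid-adjacent ring with cotangent complex $\bb L_{\hat V/\bb Z_p}$ of $p$-complete Tor amplitude in $[-1,0]$ (by Gabber--Ramero, as recalled in the footnote to the proof of Theorem~\ref{thm:a1-comparison}(2)), Remark~\ref{rem:coconnective} gives that $\bb F_p(j)^\sub{syn}(\hat V)$ is coconnective. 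To pin down the top degree as $\le j$ rather than merely $\le j+1$, I would invoke the Bhatt--Mathew computation recorded in Theorem~\ref{theorem_syntomic_properties}(8): a $p$-complete valuation ring is $F$-smooth, hence strictly henselian valuation rings of this type have $\bb Z_p(j)^\sub{syn}$ supported in degrees $\le j$; the general henselian (not necessarily strictly henselian) valuation ring is handled by passing to a strict henselisation along a pro-(finite \'etale) extension, using finitariness of syntomic cohomology (Proposition~\ref{prop:syn}(2)) and a transfer/trace argument for the prime-to-$p$ part, together with the fact that $H^{j+1}$ of syntomic cohomology of a strictly henselian valuation ring of mixed characteristic already vanishes.

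The main obstacle is the mixed-characteristic case, and specifically the step of upgrading coconnectivity (degrees $\le j+1$, which is essentially formal from the cotangent complex bound) to the sharp bound (degrees $\le j$). Coconnectivity alone is not enough for Hilbert~90 in degree $j+1$; one really needs the vanishing of $H^{j+1}_\sub{syn}$, and this is where the $F$-smoothness of $p$-complete valuation rings (Theorem~\ref{theorem_syntomic_properties}(8), via Bhatt--Mathew \cite{BhattMathew2023}) is indispensable. An alternative route, which I would pursue if one wished to avoid quoting $F$-smoothness, is to identify $H^{j+1}_\sub{syn}(V,\bb F_p(j))$ with the weight-$j$ Artin--Schreier obstruction $\tilde\nu(j)$ of the residue characteristic-$p$ fibre (Remark~\ref{rmk_syntomic_props}(2)) and argue that this vanishes for valuation rings; but since the residue ring $\hat V/p$ of a mixed-characteristic valuation ring need not be a valuation ring of characteristic $p$ in a way that makes $\tilde\nu(j)$ obviously vanish, and in any case $F$-smoothness is already available to us, the cleaner path is the one sketched above.
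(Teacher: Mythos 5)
You address only part (5), and the route you propose has a genuine gap — in fact two compounding ones. First, the reduction to finite coefficients is logically insufficient: from the Bockstein sequence $0\to H^{j+1}(C)/p\to H^{j+1}(C/p)\to H^{j+2}(C)[p]\to 0$, the vanishing of $H^{j+1}(\bb Z(j)^\sub{\'eh}(V)/p^r)$ for all $p,r$ only shows that $H^{j+1}(\bb Z(j)^\sub{\'eh}(V))$ is divisible, and combined with rational vanishing this gives a torsion divisible group — which need not be zero (think of $\bb Q/\bb Z$ placed in degree $j+1$: it is rationally trivial and has vanishing $H^{j+1}$ mod every $p^r$). What one actually needs is torsion-\emph{freeness} of $H^{j+1}$, i.e.\ surjectivity of $H^{j}(\bb Z(j)^\sub{\'eh}(V))\to H^{j}(\bb Z(j)^\sub{\'eh}(V)/p)$, which is a statement in degree $j$, not degree $j+1$ mod $p$. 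This is how the paper argues: by part (2) the mod-$p$ group in degree $j$ is $H^j_\sub{syn}(V,\bb F_p(j))$, which is generated by symbols via Theorem~\ref{thm_syntomic_Milnor}, and the symbols lift integrally through the first Chern class to $H^j(\bb Z(j)^\sub{cdh}(V))$; together with the rational bound coming from $\bb Q(j)^\sub{cdh}(V)=\bb Q(j)^\sub{lse}(V)$ being supported in degrees $\le j$ (Lemma~\ref{lemma_lke_from_affines}(3)), this kills $H^{j+1}$.

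Second, the intermediate statement you reduce to — $H^{j+1}(\bb F_p(j)^\sub{\'eh}(V))=0$ for every henselian valuation ring $V$ — is simply false. For $V$ of residue characteristic $p$ the complex $\bb F_p(j)^\sub{syn}(V)$ is supported in degrees $\le j+1$ with $H^{j+1}\cong\tilde\nu(j)(V/p)\cong H^1_{\et}(V/p,\Omega^j_{\log})$ (Remark~\ref{rmk_syntomic_props}(2)), and this vanishes only for \emph{strictly} henselian rings. Already $V=\bb F_p$, $j=0$ gives $H^1(\bb F_p(0)^\sub{\'eh}(\bb F_p))=H^1_{\et}(\bb F_p,\bb Z/p)\cong\bb Z/p\neq 0$ (Artin--Schreier); this is consistent with the integral statement because that class is the $p$-torsion of the divisible group $H^{j+2}$, not a witness of nonvanishing in degree $j+1$. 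Your quotation of Example~\ref{example_syn_in_char_p} as giving concentration "in a single degree $\le j$" is the slip: $R\Gamma_{\et}(V,\Omega^j_{\log})[-j]$ lives in degrees $[j,j+1]$ over a henselian (non-strictly-henselian) local ring. The proposed repair by passing to the strict henselisation with a transfer for the prime-to-$p$ part does not help either, since the residue field extension to the strict henselisation has $p$-power subextensions and, more fundamentally, the statement you are trying to prove there is not the one needed for Hilbert 90.
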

\begin{proof}
(1): Momentarily write $L_\bb Q=\colim_{m\in\bb Z\setminus\{0\}}$ for the rationalisation functor on presheaves; that is, $(L_\bb QF)(X)=F(X)\otimes_{\bb Z}\bb Q$; then $L_\tau L_\bb QL_\tau=L_\tau L_\bb Q$ for formal reasons. Applying this to $ F = \bb Z(j)^\sub{cdh}$, the presheaf $L_\tau L_\bb Q\bb Z(j)^\sub{cdh}$ is simply $\bb Q(j)^\sub{cdh}$ since we know that the latter is already a $\tau$-sheaf (Corollary~\ref{corol:qj2}).

Therefore, to prove that $L_\tau L_\bb QL_\tau \bb Z(j)^\sub{cdh}$ is $\bb Q(j)^\tau$, and so complete the proof of part (1), we will show in the rest of the proof that the presheaf $\bb Q(j)^\tau$ is already a $\tau$-sheaf. The problem here is that, a priori, it is not clear that rationalisation preserves $\tau$-sheaves. However, Lemma \ref{lemma_BeilinsonSoule} implies that multiplication by $m$, for any non-zero $m\in \bb Z$, is an equivalence on the presheaf $\tau^{< 0}\bb Z(j)^\sub{cdh}$; this property is preserved by any functor, in particular by $\tau$-sheafifying, so we have shown that the fibre of the map of $\tau$-sheaves $\bb Z(j)^\tau\to L_\tau\tau^{\ge 0}\bb Z(j)^\sub{cdh}$ is already rational. Applying $L_\bb Q$ we then deduce that the square of presheaves
\[\xymatrix{
\bb Z(j)^\tau\ar[r]\ar[d]& L_\tau\tau^{\ge 0}\bb Z(j)^\sub{cdh}\ar[d]\\
\bb Q(j)^\tau\ar[r]&L_\bb QL_\tau\tau^{\ge 0}\bb Z(j)^\sub{cdh}
}\]
is cartesian. The top row consists of $\tau$ sheaves by definition, as is the bottom right by Remark~\ref{lem:filtr-coconn} since it is a filtered colimit in presheaves of coconnective $\tau$ sheaves. Therefore the bottom left is also a $\tau$ sheaf, as required.

(2): \'{E}h-sheafifying the comparison Theorem \ref{theorem_syn_comp}, we obtain equivalences $\bb Z(j)^\sub{eh}/p^r\quis L_\sub{eh}\tau^{\le j}(\bb Z_p(j)^\sub{syn}/p^r)$. Now using Remark~\ref{rem:eh-sheaf} to write $L_\sub{\'eh}=L_\sub{cdh}L_\sub{\'et}$, and appealing to the fact that $L_\sub{\'et}\tau^{\le j}(\bb Z_p(j)^\sub{syn}/p^r)\quis \bb Z_p(j)^\sub{syn}/p^r$ (see the proof of Lemma \ref{lemma_BL_mod_p}(4)), we indeed obtain $\bb Z(j)^\sub{\'eh}/p^r\simeq L_\sub{cdh}\bb Z_p^\sub{syn}(j)/p^r$. For the analogous claim with h in place of \'eh, simply h-sheafify.

Part (3) follows from part (2), recalling Deligne's theorem that \'etale cohomology is already an h-sheaf.

(4): Since $\bb Z(j)^\tau$ is in particular a cdh sheaf, it is enough by Theorem \ref{thm:ehik} to check that it is finitary and satisfies henselian $v$-excision; it is moreover enough to check these properties rationally and modulo each prime number $p$. Rationally, they follow from part (1) and that we have already established the properties for cdh-motivic cohomology: Theorems \ref{prop:basic_props_of_cdh_mot}(4) and \ref{theorem:exc-cdarc}. Modulo any prime number $p$, part (2) reduces the claims to checking both finitariness and henselian $v$-excision for $L_\sub{cdh}\bb F_p^\sub{syn}(j)$ and $L_h\bb F_p^\sub{syn}(j)$. For $L_\sub{cdh}\bb F_p^\sub{syn}(j)$ use Theorem \ref{theorem_syntomic_properties}(2) and Proposition \ref{prop:finitary_conditions}, and it satisfies henselian $v$-excision by Proposition \ref{proposition_synt_coh_hv_excision}. For $L_h\bb F_p^\sub{syn}(j)$ the shortest proof is to identify it with \'etale cohomology of generic fibre (or the scheme itself when $j=0$), as in Theorem \ref{thm:a1-comparison}(2) and Remark \ref{rem_syn_weight_0}, and then appeal to h-descent and excision for \'etale cohomology \cite{BhattMathew2021}.

(5): The vanishing holds rationally by part (1) and the fact that $\bb Z(j)^\sub{cdh}(V)=\bb Z(j)^\sub{lse}(V)$ is supported in degrees $\le j$, by Lemma \ref{lemma_lke_from_affines}(3). So it remains to prove, for each prime number $p$, that $H^{j+1}(\bb Z(j)^\sub{\'eh}(V))$ is $p$-torsion-free, or in other words that $H^{j}(\bb Z(j)^\sub{\'eh}(V))\to H^{j}(\bb Z(j)^\sub{\'eh}(V)/p)$ is surjective. But by part (2) the target identifies (multiplicatively and compatibly with first Chern classes) with $H^j_\sub{syn}(V,\bb Z/p(j))$, which is generated by symbols by Theorem~\ref{thm_syntomic_Milnor}, and these even lift to $H^{j}(\bb Z(j)^\sub{cdh}(V))$.
\end{proof}

We finally establish the desired integral Beilinson--Lichtenbaum equivalence. By Lemma \ref{lemma_lke_from_affines}(3), we have that $\bb Z(j)^\sub{cdh}$ is cdh-locally supported in degrees $\le j+1$ (even $\le j$, but using $j+1$ leads to a stronger statement in the next theorem), so that the canonical map of presheaves $\bb Z(j)^\sub{cdh}\to \bb Z(j)^\sub{\'eh}$ induces a map of cdh sheaves \begin{equation}\bb Z(j)^\sub{cdh}\To L_\sub{cdh}\tau^{\le j+1}\bb Z(j)^\sub{\'eh}\label{eqn_BL}\end{equation} on $\opp{Sch}^\sub{qcqs}$.

\begin{theorem}[Singular Beilinson--Lichtenbaum equivalence]\label{thm:singular-bl}
For any $j\ge0$, the map of cdh sheaves (\ref{eqn_BL}) on qcqs schemes is an equivalence.
\end{theorem}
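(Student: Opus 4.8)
Since both sides of \eqref{eqn_BL} are cdh sheaves, and $\bb Z(j)^\sub{cdh}$ is finitary by Theorem~\ref{prop:basic_props_of_cdh_mot}(4) (while $L_\sub{cdh}\tau^{\le j+1}\bb Z(j)^\sub{\'eh}$ is finitary because $\bb Z(j)^\sub{\'eh}$ is finitary by Proposition~\ref{prop_eh_motivic cohomology}(4) and truncation followed by cdh sheafification preserves finitariness via Proposition~\ref{prop:finitary_conditions}), it suffices by Proposition~\ref{proposition_checking_on_points}(1) to check the equivalence on henselian valuation rings $V$ of finite rank under $\bb Z$. Moreover, since $\bb Z(j)^\sub{cdh}$ satisfies Milnor excision (Theorem~\ref{theorem:exc-cdarc}) and $L_\sub{cdh}\tau^{\le j+1}\bb Z(j)^\sub{\'eh}$ also does — the functor $\tau^{\le j+1}$ preserves fiber squares and $\bb Z(j)^\sub{\'eh}$ satisfies Milnor excision by Proposition~\ref{prop_eh_motivic cohomology}(4) — we may in fact reduce to rank $\le 1$ valuation rings. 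The strategy is then to prove the equivalence on such $V$ separately rationally and modulo each prime $p$.

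Rationally, the claim is immediate: by Proposition~\ref{prop_eh_motivic cohomology}(1) we have $\bb Q(j)^\sub{\'eh}\simeq\bb Q(j)^\sub{cdh}$, and $\bb Q(j)^\sub{cdh}(V)=\bb Q(j)^\sub{lse}(V)$ is supported in cohomological degrees $\le j$ by Lemma~\ref{lemma_lke_from_affines}(3), so the truncation $\tau^{\le j+1}$ and the subsequent cdh sheafification change nothing on the henselian local ring $V$. Modulo $p$, Theorem~\ref{theorem_syn_comp} identifies the source $\bb Z(j)^\sub{cdh}(V)/p^r$ with $L_\sub{cdh}\bb Z_p(j)^\sub{BL}(V)/p^r = \tau^{\le j}\bb F_p(j)^\sub{syn}(V)/p^r$ (the cdh sheafification being harmless on a henselian valuation ring), while Proposition~\ref{prop_eh_motivic cohomology}(2) identifies $\bb Z(j)^\sub{\'eh}(V)/p^r$ with $\bb F_p(j)^\sub{syn}(V)/p^r$ (again using that $V$ is a henselian valuation ring, so $L_\sub{cdh}$ acts trivially). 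Thus, mod $p^r$, the map \eqref{eqn_BL} becomes the canonical map $\tau^{\le j}\bb F_p(j)^\sub{syn}(V)\to \tau^{\le j+1}\bb F_p(j)^\sub{syn}(V)$. To conclude it is an equivalence, I need that $\bb F_p(j)^\sub{syn}(V)$ is supported in cohomological degrees $\le j$ for $V$ a henselian valuation ring of rank $\le 1$; recall from Remark~\ref{rmk_syntomic_props}(2) that in general $\bb F_p(j)^\sub{syn}(V)$ lives in degrees $\le j+1$, with $H^{j+1}$ given by the weight-$j$ Artin--Schreier obstruction $\tilde\nu(j)(\pi_0(V)/p)\cong H^1_\sub{\'et}(V/p,\Omega^j_\sub{log})$ when $p\in V$ (and vanishing when $p$ is invertible by Lemma~\ref{lemma_syn_p-hens}(1)). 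But a valuation ring $V/p$ of characteristic $p$ has no higher \'etale cohomology of the coherent-type object underlying $\Omega^j_\sub{log}$ — more precisely, $H^1_\sub{\'et}$ of $\Omega^j_\sub{log}$ over a valuation ring of characteristic $p$ vanishes, which is exactly the singular Hilbert~90 statement of Proposition~\ref{prop_eh_motivic cohomology}(5) applied after passing through the syntomic comparison; alternatively this is the Cartier-smoothness input of \cite{KellyMorrow2021,LuedersMorrow2023}. This is where I need $V$ of finite rank and ideally rank $\le 1$: the vanishing of $\tilde\nu(j)(V/p)$ for valuation rings.

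The main obstacle is therefore the degree-$(j+1)$ vanishing of mod-$p$ syntomic cohomology on henselian valuation rings. I would handle it by invoking Proposition~\ref{prop_eh_motivic cohomology}(5): since $\bb Z(j)^\sub{\'eh}(V)/p\simeq \bb F_p(j)^\sub{syn}(V)$ by Proposition~\ref{prop_eh_motivic cohomology}(2), the vanishing $H^{j+1}(\bb Z(j)^\sub{\'eh}(V))=0$ combined with the long exact sequence for $\cdot p$ and the fact that $H^j$ of the $p$-adic theory surjects onto $H^j$ mod $p$ (the symbols argument already used in the proof of Proposition~\ref{prop_eh_motivic cohomology}(5)) forces $H^{j+1}(\bb F_p(j)^\sub{syn}(V))=0$. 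Feeding this back, the canonical map $\tau^{\le j}\bb F_p(j)^\sub{syn}(V)\to \tau^{\le j+1}\bb F_p(j)^\sub{syn}(V)$ is an equivalence, which yields \eqref{eqn_BL} mod $p^r$ by taking limits over $r$ (using Lemma~\ref{lemma_BL_mod_p}(2)-style surjectivity of reductions, or simply $p$-completeness). Assembling the rational and mod-$p$ cases over all primes via the standard argument of Example~\ref{ex:rationalization-cdh} completes the proof.
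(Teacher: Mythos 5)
There is a genuine gap in your mod-$p$ step. You reduce the statement on a henselian valuation ring $V$ to the assertion that $H^{j+1}(\bb F_p(j)^\sub{syn}(V))=0$, i.e.\ that the Artin--Schreier obstruction $\tilde\nu(j)(V/p)\cong H^1_\sub{\'et}(V/p,\Omega^j_\sub{log})$ vanishes. This is false for henselian (as opposed to strictly henselian) valuation rings: already for $V=\bb F_p$ and $j=0$ one has $H^{1}(\bb F_p(0)^\sub{syn}(\bb F_p))=H^1_\sub{\'et}(\bb F_p,\bb F_p)\neq 0$, since Frobenius is the identity and $\tilde\nu(0)(\bb F_p)=\opp{coker}(F-1)=\bb F_p$; Remark~\ref{rmk_syntomic_props}(2) only asserts vanishing in the strictly henselian case. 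Your attempt to deduce the vanishing from Proposition~\ref{prop_eh_motivic cohomology}(5) is a non-sequitur: that result gives the \emph{integral} vanishing $H^{j+1}(\bb Z(j)^\sub{\'eh}(V))=0$, and the Bockstein sequence then identifies $H^{j+1}(\bb Z(j)^\sub{\'eh}(V)/p)$ with $H^{j+2}(\bb Z(j)^\sub{\'eh}(V))[p]$, which is not controlled by the surjectivity of $H^j$ integrally onto $H^j$ mod $p$ and is typically nonzero (the $j=0$, $V=\bb F_p$ example again). (A smaller inessential slip: $\tau^{\le j+1}$ does not preserve cartesian squares, so your Milnor-excision reduction to rank $\le 1$ is not justified as stated; but it is also not needed.)

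The way out --- and the route the paper takes --- is to use the integral Hilbert~90 statement at the correct place: since $H^{j+1}(\bb Z(j)^\sub{\'eh}(V))=0$, the mod-$p$ reduction of the \emph{integrally truncated} target satisfies $(\tau^{\le j+1}\bb Z(j)^\sub{\'eh}(V))/p\simeq \tau^{\le j}\bigl(\bb Z(j)^\sub{\'eh}(V)/p\bigr)$, i.e.\ the truncation drops from $j+1$ to $j$ after reduction mod $p$; it is your identification of the mod-$p$ target with $\tau^{\le j+1}\bb F_p(j)^\sub{syn}(V)$ that forces you into the false vanishing. With the corrected identification, Theorem~\ref{theorem_syn_comp} and Proposition~\ref{prop_eh_motivic cohomology}(2) exhibit the mod-$p$ map on $V$ as the Beilinson--Lichtenbaum identification $\bb F_p(j)^\sub{BL}(V)\quis\tau^{\le j}\bb F_p(j)^\sub{syn}(V)$, and no control of the degree-$(j+1)$ mod-$p$ syntomic cohomology is needed. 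The rest of your skeleton (finitariness, reduction to henselian valuation rings, the rational case via Lemma~\ref{lemma_lke_from_affines}(3) and Proposition~\ref{prop_eh_motivic cohomology}(1)) agrees with the paper's argument.
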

\begin{proof}
Since $\bb Q^\sub{cdh}(j)$ is cdh-locally supported in degrees $\le j+1$, the map (\ref{eqn_BL}) is an equivalence rationally by Proposition \ref{prop_eh_motivic cohomology}(1).

Hence it suffices to prove the equivalence modulo $p$ for every prime $p$. Since cdh and \'{e}h-motivic cohomology are both finitary by Theorem~\ref{prop:basic_props_of_cdh_mot}(4) and Proposition~\ref{prop_eh_motivic cohomology}(4) respectively, it suffices to check the claim on henselian valuation rings $V$. Using Proposition \ref{prop_eh_motivic cohomology}(5) to replace $(\tau^{\le j+1}\bb Z(j)^\sub{eh}(V))/p$ by $\tau^{\le j}(\bb Z(j)^\sub{\'eh}(V)/p)$, the desired claim is that the canonical map \[\bb F_p(j)^\sub{cdh}(V)\To \tau^{\le j}(\bb Z(j)^\sub{\'eh}(V)/p)\] should be an equivalence. But by the compatible comparisons of Theorem~\ref{theorem_syn_comp} and Proposition~\ref{prop_eh_motivic cohomology}(2), this map is precisely the identification $\bb F_p(j)^\sub{BL}(V)\quis \tau^{\le j}\bb F_p(j)^\sub{syn}(V)$, as required.
\end{proof}

\section{Conditional $\bb A^1$-invariance and projective bundle formula}\label{sec:pbf-a1}
The ideal goal of this section is to prove that the cdh-motivic cohomology $\Z(\star)^\cdh$ is $\A^1$-invariant and satisfies the projective bundle formula. Unfortunately, we manage to do this only under an additional assumption, which we call the \emph{key hypothesis}.
It holds, for example, in equal characteristic, and we conjecture that it holds in general.

Nonetheless, the results of this section are both fundamental to the first two authors' approach to non-$\bb A^1$-invariant motivic cohomology \cite{ElmantoMorrow2023} and serve as a crucial input to the main results of this paper which we discuss in \S\ref{sec:a1-comparison}. In particular, we will prove unconditional $\bb A^1$-invariance and projective bundle formula for cdh-motivic cohomology over fields and in ``highly ramified'' situations in Corollaries~\ref{cor:uncond_fields} and~\ref{corol:unconditional-pn} respectively.

\subsection{The key hypothesis}\label{ss:key}
Let $S$ be a scheme, $p$ a prime number, and $j\ge0$; we denote by $\Hyp(S, p, j)$ the following condition:
\begin{quote}
For every henselian valuation ring $V$ over $S$ (i.e., there exists a morphism $\Spec(V) \to S$) of mixed characteristic $(0,p)$,\footnote{In case of confusion, this means that $\opp{Frac}(V)$ has characteristic zero while the residue field of $V$ has characteristic $p$.} the cofibre of the mod-$p$ syntomic-to-\'etale comparison map \begin{equation}\gamma_\sub{syn}^\sub{\'et}\{j\}:\F_p(j)^\syn(V) \To R\Gamma_\et(V[\tfrac1p], \mu_p^{\otimes j})\simeq \F_p(j)^\syn(V[\tfrac1p]) \label{eqn_intro_syn_to_et}\end{equation}
(see Definition \ref{def:bms-decomplete}\footnote{This is the last time we use the notation $\gamma_\sub{syn}^\sub{\'et}\{j\}$: henceforth we identify syntomic and \'etale cohomology of $\bb Z[\tfrac1p]$-schemes, and so simply view the syntomic-to-\'etale comparison map as the canonical map $\bb Z_p(j)^\sub{syn}\to \bb Z_p(j)^\sub{syn}(-[\tfrac1p])$.}) is concentrated in cohomological degrees $\ge j$.
\end{quote}

In the case of a ring $A$ we will write $\Hyp(A,p,j)$ in place of $\Hyp(\Spec(A), p, j)$. 

\begin{remark} \label{rmk:Hyp-vacuous}
If $A$ contains a field then $\Hyp(A,p,j)$ automatically holds for all $p$ and $j$; indeed, it is vacuous since there are no mixed characteristic valuation rings under $A$. Similarly, if $p$ is invertible on $A$ then $\Hyp(A,p,j)$ holds for all $j$.
\end{remark}

\begin{remark}
Work of Bhatt--Mathew \cite[Proposition 5.2, Theorem 4.15]{BhattMathew2023}, recalled in Theorem \ref{theorem_syntomic_properties}(8), shows that for any regular local ring of residue characteristic $p$ the cofibre of the syntomic-to-\'etale comparison map is concentrated in cohomological degree $\ge j$. Therefore $\Hyp(S,p,j)$ would hold if any valuation ring under $S$ were known to be a filtered colimit of regular rings (which is expected to be true). In fact, Bhatt--Mathew introduce the notion of $F$-smoothness \cite[Definition 4.1]{BhattMathew2023}, and $\Hyp(S,p,j)$ would hold if it were known that any valuation ring under $S$ were $F$-smooth (which is also expected to be true).
\end{remark}

\begin{remark}\label{rem:key_reduces_to_rank1}
To establish the validity of hypothesis $\Hyp(S, p, j)$ in any given case, it suffices to treat those $V$ which are $p$-adically complete of rank one. Indeed, replacing $V$ as in the hypothesis by its $p$-adic completion does not change either side of (\ref{eqn_intro_syn_to_et}) by construction; see the pullback diagram~\eqref{eq:syn}. Therefore we may assume $V$ is $p$-adically complete. Then $\frak p:=\sqrt{pV}$ is a height one prime of $V$, so that $V_\frak p$ has rank one, and $V[\tfrac1p]=\opp{Frac} V=V_\frak p[\tfrac1p]$; recall also from Remark \ref{rmk:locn-compn-valn-ring} that $V_\frak p$ is $p$-adically complete. Applying the $v$-excision result of Proposition \ref{prop:nygaard-prism-exc} now yields a cartesian square
\[\xymatrix{
\opp{cofib}\left(\F_p(j)^\sub{syn}(V)\to R\Gamma_\et(V[1/p], \mu_p^{\otimes j}) \right)\ar[d]\ar[r] & \opp{cofib}\left(\F_p(j)^\sub{syn}(V_\frak p) \to R\Gamma_\et(V_\frak p[1/p], \mu_p^{\otimes j}) \right)\ar[d]\\
\F_p(j)^\sub{syn}(V/\frak p)\ar[r] & \F_p(j)^\sub{syn}(k(\frak p))
}\]
We wish to check that top left is concentrated in cohomological degrees $\ge j$ if and only if the same is true of the top right. So it is enough to show that the cofibre of the bottom horizontal arrow is supported in cohomological degrees $\ge j$. But both terms on the bottom row are supported in degrees $\ge j$, by Example~\ref{example_syn_in_char_p} since $k(\frak p)$ is a field and the valuation ring $V/\frak p$ is Cartier smooth; moreover, the map on $H^j$ is given by $\Omega^j_{V/\frak p,\sub{log}}\to \Omega^j_{k(\frak p),\sub{log}}$, which is injective since $\Omega^j_{V/\frak p}\to \Omega^j_{k(\frak p)}$ is injective by Gabber--Ramero \cite[Corollary 6.5.21]{GabberRamero2003}.
%
%
\end{remark}

The main theorems in this section are established for the $p$-adic motivic cohomology of $S$-schemes conditionally upon the veracity of $\Hyp(S,p,j)$ for $j\ge0$. We therefore summarise cases in which we know these conditions to hold. We begin with the trivial observation that $\Hyp(S,p,j)$ implies $\Hyp(S',p,j)$ whenever there is a morphism of schemes $S'\to S$. The mixed characteristic cases in which we know $\Hyp(S,p,j)$ to hold all depend on the following theorem of T.~Bouis \cite{Bouis2023}:

\begin{theorem}[Bouis] \label{thm:Bouis}
Let $\roi$ be a perfectoid valuation ring of mixed characteristic $(0,p)$. Then $\Hyp(\roi,p,j)$ holds for all $j\ge0$.
\end{theorem}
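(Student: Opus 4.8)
The plan is to reduce the statement to a concrete computation of mod-$p$ syntomic cohomology of perfectoid valuation rings, using the description of syntomic cohomology of perfectoid rings recalled in Example~\ref{example_syn_of_perfectoid}. By Remark~\ref{rem:key_reduces_to_rank1}, it suffices to check the hypothesis for $\roi$ itself (we are already given that $\roi$ is a perfectoid valuation ring of rank one is not needed: the reduction in that remark produces $p$-adically complete rank-one rings, and a rank-one localisation of a perfectoid valuation ring is again perfectoid). So we must show that the cofibre of the syntomic-to-\'etale comparison map
\[
\gamma_\sub{syn}^\sub{\'et}\{j\}: \F_p(j)^\syn(\roi) \To R\Gamma_\et(\roi[\tfrac1p], \mu_p^{\otimes j})
\]
is supported in cohomological degrees $\ge j$.

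First I would recall from Example~\ref{example_syn_of_perfectoid} that, for a perfectoid ring $\roi$, the integral syntomic cohomology $\bb Z_p(j)^\sub{syn}(\roi)$ is identified with $R\Gamma_\et(\Spec(\roi[\tfrac1p]),\bb Z_p(j))$ for $j>0$ and with $R\Gamma_\et(\Spec(\roi),\bb Z_p)$ for $j=0$, and that — crucially, as already pointed out in the excerpt — the origin of this identification passes through symbols and the trace map from $K$-theory, with the comparison map $\gamma_\sub{syn}^\sub{\'et}\{j\}$ being precisely the map induced by $\roi \to \roi[\tfrac1p]$ (see Remark~\ref{rem_comparison_map}). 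Hence for $j>0$ the comparison map is an equivalence \emph{integrally}, and so the issue is entirely about the behaviour of mod-$p$ reduction; for $j=0$ the cofibre is the mod-$p$ reduction of $R\Gamma_\et(\roi[\tfrac1p]/\roi, \bb Z_p)$, which one analyses directly. The key point for $j>0$: passing to mod-$p$ coefficients, one has a fibre sequence relating $\F_p(j)^\syn(\roi)$, $R\Gamma_\et(\roi[\tfrac1p],\mu_p^{\otimes j})$, and the $p$-torsion contribution $R\Gamma_\et(\roi,j_!\mu_p^{\otimes j})[1]$ coming from the open–closed decomposition along $\Spec(\roi[\tfrac1p])\hookrightarrow\Spec(\roi)$ (cf.\ the fibre sequence \eqref{eqn_j!p}); so the cofibre of $\gamma_\sub{syn}^\sub{\'et}\{j\}$ is a shift of $R\Gamma_\et(\roi, j_!\mu_p^{\otimes j})$, and the claim becomes: $R\Gamma_\et(\roi, j_!\mu_p^{\otimes j})$ is supported in cohomological degrees $\ge j+1$. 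By Gabber's affine proper base change (as in the proof of Proposition~\ref{prop:syn}) this reduces to a statement about $R\Gamma_\et(\roi/p, \mu_p^{\otimes j})$ and the unit/Brauer/Picard pieces; concretely one needs to know that $\roi/p$ (a valuation ring of characteristic $p$) has vanishing higher \'etale $\mu_p$-cohomology and that the relevant connecting maps in the filtration land in the right degrees — here the perfectoid hypothesis, equivalently that Frobenius on $\roi/p$ is surjective, is what forces the cohomology into high degrees.

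The main obstacle I anticipate is the bookkeeping around the mod-$p$ comparison in low degrees: one must show that the only cohomology the cofibre carries sits in degrees $\ge j$, which amounts to an explicit vanishing of $H^i$ for $i<j$ of the ``boundary'' term. For this I would lean on the Bhatt--Mathew circle of ideas (the syntomic cohomology of a perfectoid ring, the quasisyntomic descent presentation of $\F_p(j)^\syn$ from Definition~\ref{def:syn_coh_p_complete} and Remark~\ref{remark:odd_vanishing}), together with the observation that for a perfectoid valuation ring the cotangent-complex inputs in Remark~\ref{rem_syn_via_cotangent} are especially simple ($(\bb L_{\roi/\bb Z_p})^\comp_p$ concentrated in degree $-1$ and ``one-dimensional'' in an appropriate sense), so that the Nygaard-graded pieces and hence the Frobenius fixed points are as concentrated as possible. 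A clean alternative route, and probably the one I would actually present: identify $\F_p(j)^\syn(\roi)$ via \cite[Theorem~9.4]{BhattScholze2022} with a mod-$p$ version of $R\Gamma_\et(\roi[\tfrac1p],\bb Z_p(j))$ twisted by the contribution of $\roi/p$, note $\roi/p$ is Cartier smooth so $\F_p(j)^\syn(\roi/p)\simeq R\Gamma_\et(\roi/p,\Omega^j_\sub{log})[-j]$ is supported in degrees $\ge j$ by Example~\ref{example_syn_in_char_p}, and assemble the Milnor square \eqref{eq:syn} for $\roi$ (whose $p$-completion is $\roi$ itself) to conclude that the cofibre of $\gamma_\sub{syn}^\sub{\'et}\{j\}$ is supported in degrees $\ge j$. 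This last assembly is essentially the same computation already done in Remark~\ref{rem:key_reduces_to_rank1}, now with $V/\frak p$ replaced by the perfectoid quotient $\roi/\sqrt{p\roi}$, and it is the step where I would spend the most care verifying the degree bounds.
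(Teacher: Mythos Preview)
There is a fundamental misreading of the statement. The hypothesis $\Hyp(\roi,p,j)$ is \emph{not} a condition on $\roi$ alone: it requires that for \emph{every} henselian valuation ring $V$ over $\roi$ of mixed characteristic $(0,p)$, the cofibre of $\F_p(j)^\syn(V) \to R\Gamma_\et(V[\tfrac1p],\mu_p^{\otimes j})$ is concentrated in degrees $\ge j$. Remark~\ref{rem:key_reduces_to_rank1} lets you assume such $V$ are $p$-adically complete of rank one, but $V$ still ranges over all valuation rings receiving a map from $\roi$; it does \emph{not} reduce to ``$\roi$ itself'' or to localisations of $\roi$. Such $V$ need not be perfectoid: for example, a Gauss-type extension of the valuation on $\opp{Frac}(\roi)$ to $\opp{Frac}(\roi)(t)$ yields a rank-one valuation ring over $\roi$ whose Frobenius on the mod-$p$ reduction is not surjective. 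So you cannot invoke Example~\ref{example_syn_of_perfectoid} for these $V$, and the bulk of your argument (which treats only the perfectoid case) does not address what must be proved.

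The paper does not prove this theorem; it is quoted from Bouis \cite{Bouis2023}. The actual content of Bouis' result is that mixed-characteristic valuation rings admitting a map from a perfectoid valuation ring are $F$-smooth in the sense of Bhatt--Mathew, from which the required cofibre bound follows via \cite[Theorem~1.8]{BhattMathew2023} (cf.\ the paragraph preceding the statement). Establishing $F$-smoothness for such $V$ is a genuinely nontrivial result in prismatic cohomology and is not accessible by the reductions you sketch.
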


\begin{example}\label{example:hypothesis-holds} 
Bootstrapping from Bouis' theorem, we can check the key hypothesis in the following ``highly ramified'' examples. Let $A$ be any of the following:
\begin{enumerate}
\item a field; or
\item a zero-dimensional ring (e.g., an Artin local ring, or a possibly infinite product of fields).
\item $\bb Z^\sub{cyc}:=$ the integral closure of $\bb Z$ in the field $\bb Q(\zeta_m:m\ge1)$; or
\item an absolutely integrally closed ring.
\end{enumerate}
Then we claim that $\Hyp(A,p,j)$ is true for all prime numbers $p$ and all $j\ge0$. The easy proofs are below.

(1): See Remark \ref{rmk:Hyp-vacuous}. Case (2) reduces to (1), because any map from a zero-dimensional ring $A$ to a valuation ring factors through a residue field of $A$.

(3): Let $V$ be a $p$-adically complete valuation ring of mixed characteristic $(0,p)$ over $\bb Z^\sub{cyc}$. Then $V$ also receives a map from the $p$-adic completion of the ring of integers of $\bb Q(\zeta_{p^\infty})$, which is the prototypical example of a perfectoid valuation ring; so this case follows from Theorem \ref{thm:Bouis}.

(4): Let $A$ be an absolutely integrally closed ring and $V$ a $p$-adically complete, rank-one valuation ring of mixed characteristic $(0,p)$ over $B$. Replacing $A$ by its image in $V$ we may assume that $A$ is an integral domain such that $\opp{Frac}A$ has characteristic zero; therefore $A$ contains $\bb Z^\sub{cyc}$ and we may apply case (3).

\end{example}

In a different direction, if we restrict ourselves to low weights then we note that the key hypothesis holds unconditionally.

\begin{proposition}\label{prop:wt1-unconditional}
For any scheme $S$ and prime $p$, the conditions $\Hyp(S,p,0)$ and $\Hyp(S,p,1)$ are true.
\end{proposition}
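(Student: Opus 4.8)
The plan is to reduce the statement, via Remark~\ref{rem:key_reduces_to_rank1}, to the case of a $p$-adically complete, rank-one valuation ring $V$ of mixed characteristic $(0,p)$, and then to compute both sides of the comparison map \eqref{eqn_intro_syn_to_et} explicitly in weights $0$ and $1$. For weight $0$ this is essentially immediate: by Remark~\ref{rem_syn_weight_0} we have $\F_p(0)^\syn(V)\simeq R\Gamma_\et(V,\bb Z/p)$ and likewise $\F_p(0)^\syn(V[\tfrac1p])\simeq R\Gamma_\et(V[\tfrac1p],\bb Z/p)$, so the cofibre of $\gamma_\sub{syn}^\sub{\'et}\{0\}$ is the cofibre of the restriction map $R\Gamma_\et(V,\bb Z/p)\to R\Gamma_\et(V[\tfrac1p],\bb Z/p)$. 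Since $V$ is henselian along $(p)$, the Fujiwara--Gabber rigidity theorem (as used in the proof of Lemma~\ref{lemma_syn_p-hens}) identifies $R\Gamma_\et(V,\bb Z/p)$ with $R\Gamma_\et(V/p,\bb Z/p)$, and one checks directly that the map to $R\Gamma_\et(V[\tfrac1p],\bb Z/p)$ has cofibre concentrated in degrees $\ge 0$ --- indeed both terms are already coconnective, so there is nothing to prove beyond coconnectivity of $R\Gamma_\et(V,\bb Z/p)$, which holds since $V$ is a valuation ring of finite rank. Thus $\Hyp(S,p,0)$ holds for all $S$ and $p$.

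For weight $1$, I would use the identification of weight-one syntomic cohomology with $p$-completed $\bb G_m$-cohomology from Proposition~\ref{proposition_1st_Chern_class_syn}: the comparison map $\gamma_\sub{syn}^\sub{\'et}\{1\}$ fits, by Lemma~\ref{lemma_first_Chern_p_hens} (applied to the $p$-henselian ring $V$), into a commutative square identifying it with the $p$-completed restriction map $R\Gamma_\et(V,\bb G_m)_p^\comp[-1]\to R\Gamma_\et(V[\tfrac1p],\bb G_m)_p^\comp[-1]$, where the bottom map is the Kummer isomorphism on the generic fibre. So the cofibre of $\gamma_\sub{syn}^\sub{\'et}\{1\}$ is $\opp{cofib}\big(R\Gamma_\et(V,\bb G_m)\to R\Gamma_\et(V[\tfrac1p],\bb G_m)\big)_p^\comp[-1]$, and I must show this is concentrated in cohomological degrees $\ge 1$. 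Equivalently, the mod-$p$ cofibre of $R\Gamma_\et(V,\bb G_m)\to R\Gamma_\et(V[\tfrac1p],\bb G_m)$ (before the shift) should be concentrated in degrees $\ge 2$. Now, since $V$ is a valuation ring it is normal with $\opp{Pic}(V)=0$, and similarly $\opp{Pic}(V[\tfrac1p])=0$ and $H^{\ge 2}_\Zar(V[\tfrac1p],\bb G_m)=0$; the Brauer groups of both $V$ and $V[\tfrac1p]$ vanish (for $V$ henselian local this is a standard fact, for $V[\tfrac1p]$ one can use that the Brauer group of a valuation ring and its fraction field agree, or localise). Hence $R\Gamma_\et(V,\bb G_m)$ is concentrated in degrees $[0,1]$ with $H^0=V^\times$, $H^1=0$, and similarly $R\Gamma_\et(V[\tfrac1p],\bb G_m)$ has $H^0=V[\tfrac1p]^\times$, $H^1=0$. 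The cofibre is therefore the two-term complex $\big[V^\times\to V[\tfrac1p]^\times\big]$ placed in degrees $[0,1]$; the map $V^\times\to V[\tfrac1p]^\times$ is injective with cokernel the value group (a torsion-free abelian group, since $V$ has rank one it is a subgroup of $\bb R$), so the cofibre is concentrated in degree $1$ and is torsion-free there. Applying $(-)/p[-1]$: the mod-$p$ cofibre is then concentrated in degrees $\ge 1$ after the $[-1]$ shift --- wait, one must be careful: the shifted-and-completed object $\opp{cofib}(\cdots)_p^\comp[-1]$ has the degree-$1$ part of the cofibre landing in degree $2$, and the degree-$0$ part (which is zero) landing in degree $1$; either way everything is in degrees $\ge 1$. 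So $\Hyp(S,p,1)$ holds.

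The step I expect to require the most care is assembling the vanishing statements about $R\Gamma_\et(V,\bb G_m)$ and $R\Gamma_\et(V[\tfrac1p],\bb G_m)$ cleanly --- in particular being precise about $p$-completion commuting with the relevant truncations, and about the fact that the cohomology of $\bb G_m$ on a valuation ring really is concentrated in degrees $\le 1$. This can be handled either by the explicit flasque resolution of $\bb G_m$ on a normal scheme (as in the proof of Lemma~\ref{lem:zarvsnis_on_regular}(1)), noting a valuation ring has a single height-one-and-below structure so the resolution is short, or by invoking the computations already recorded in the proof of Proposition~\ref{prop:G_m_p_henselian}, whose graded-pieces description of $R\Gamma_\et(-,\bb G_m)/p$ in terms of $\opp{Br}$, $\opp{Pic}$, and units applies verbatim. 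Once those inputs are in hand, the degree count is elementary. I would also remark that this gives a second, $K$-theory-free proof of the weight-$\le 1$ case of the Beilinson--Lichtenbaum-type statement Theorem~\ref{thm:BL_intro} on the nose.
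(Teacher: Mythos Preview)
Your weight-$0$ argument has a gap. You claim that since both $R\Gamma_\et(V,\F_p)$ and $R\Gamma_\et(V[\tfrac1p],\F_p)$ are coconnective, the cofibre is automatically concentrated in degrees $\ge 0$. This is false: the long exact sequence gives $H^{-1}(\cofib)=\ker\big(H^0_\et(V,\F_p)\to H^0_\et(V[\tfrac1p],\F_p)\big)$, which must be checked to vanish. The paper handles this directly: $H^0_\et(-,\F_p)$ is locally constant $\F_p$-valued functions, $\Spec(V)$ is connected, and $\Spec(V[\tfrac1p])$ is nonempty, so the map on $H^0$ is injective. The Fujiwara--Gabber invocation is a red herring here.

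Your weight-$1$ argument is essentially correct in spirit but cluttered with errors that happen to be harmless. The claim that $\Br(V)$ vanishes for $V$ henselian local is false (it equals $\Br$ of the residue field), but you do not actually need anything about $H^{\ge 2}_\et(-,\Gm)$: the relevant cofibre condition only constrains degrees $\le 0$ of $D/p[-1]$, which depends solely on $H^0$ and $H^1$ of the \'etale $\Gm$-cohomology. Your indexing of the cofibre is also off by one (the cofibre of a map of abelian groups in degree $0$ sits in degrees $[-1,0]$, not $[0,1]$), though the final degree count survives. The key fact you use --- that $V[\tfrac1p]^\times/V^\times$ is torsion-free --- is exactly the integral-closedness statement the paper uses: if $f^p\in V^\times$ then $f\in V^\times$. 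The paper's packaging is cleaner: it computes $H^0_\syn(A,\F_p(1))=\mu_p(A)$ and $H^1_\syn(A,\F_p(1))=A^\times/p$ for any local $A$ (using only $\Pic(A)=0$), then checks that $\mu_p(V)\to\mu_p(V[\tfrac1p])$ is an isomorphism and $V^\times/p\to V[\tfrac1p]^\times/p$ is injective. This avoids any discussion of higher \'etale cohomology or $p$-completion subtleties, and makes no use of the rank-$\le 1$ reduction.
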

\begin{proof}
Let $V$ be a valuation ring of mixed characteristic $(0,p)$.

For weight zero, we use that $\F_p(0)^\syn \wequi R\Gamma_\et(-, \F_p)$ by Remark \ref{rem_syn_weight_0}. So we need only prove that the map $H^0_\et(V, \F_p) \to H^0_\et(V[1/p], \F_p)$ is injective. This is clear because $H^0_\et(\ph,\F_p)$ is just the sheaf of locally constant $\F_p$-valued functions, $\Spec(V)$ is connected (being the spectrum of a local ring) and $\Spec(V[1/p])$ is nonempty.

For weight one, we use that $\F_p(1)^\syn \wequi \fib(R\Gamma_\et(-, \Gm) \xrightarrow{p} R\Gamma_\et(-, \Gm))$ by Proposition \ref{proposition_1st_Chern_class_syn}. Since $H^1_\et(-, \Gm) \wequi \opp{Pic}(-)$ \cite[Tag 03P7]{Stacks}, we see that for any local ring $A$, there are natural isomorphisms
\[
H^0_\sub{syn}(A,\F_p(1)) \cong \mu_p(A) \qquad H^1_\sub{syn}(A,\F_p(1)) \cong A^\times/p.
\]
It thus remains to prove that $\mu_p(V) \to \mu_p(V[\tfrac1p])$ is an isomorphism and that $V^\times/p \to V[\tfrac1p]^\times/p$ is injective. But $V$ is integrally closed in its field of fractions \cite[Tag 00IC]{Stacks}, so whenever $f\in V[\tfrac1p]^\times$ satisfies $f^p\in V^\times$ then in fact $f\in V^\times$; both claims easily follow from this.
\end{proof}

\subsection{Conditional $\A^1$-invariance of $\bb Z(j)^\sub{cdh}$}\label{sec:A1-inv}
In this section we prove the $\bb A^1$-invariance of cdh-motivic cohomology, conditionally on the key hypothesis. We begin by observing that our desired result is easy (and unconditional) rationally:

\begin{proposition}\label{prop:a1-rational} For any $j \in \Z$ and qcqs scheme $X$, the canonical map
\[
\Q(j)^{\cdh}(X) \rightarrow \Q(j)^{\cdh}(\A_X^1)
\]
is an equivalence.
\end{proposition}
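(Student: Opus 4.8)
The plan is to reduce the rational statement to the decomposition of $\KH_{\bb Q}$ obtained in Theorem~\ref{prop:basic_props_of_cdh_mot}(7). Recall from that theorem that for any qcqs scheme $X$ there is a natural, multiplicative splitting
\[
\Fil^\star_\sub{cdh}\KH(X)_{\bb Q} \simeq \bigoplus_{j \geq \star} \bb Q(j)^{\cdh}(X)[2j],
\]
and in particular, by taking $\star \to -\infty$, we obtain a natural decomposition $\KH(X)_{\bb Q} \simeq \bigoplus_{j \in \bb Z} \bb Q(j)^{\cdh}(X)[2j]$ (with $\bb Q(j)^\sub{cdh} = 0$ for $j < 0$). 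Thus $\bb Q(j)^\sub{cdh}(X)$ is, up to a shift, a natural direct summand of $\KH(X)_{\bb Q}$, the summand being characterised by the Adams-type grading. Consequently, to prove the proposition it suffices to show that the map $\KH(X)_{\bb Q} \to \KH(\bb A^1_X)_{\bb Q}$ induced by the projection $\pi: \bb A^1_X \to X$ is an equivalence, in a manner compatible with this decomposition.

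First I would invoke homotopy invariance of $\KH$: Weibel's homotopy $K$-theory is by construction $\bb A^1$-invariant, so $\pi^*: \KH(X) \to \KH(\bb A^1_X)$ is already an equivalence integrally, hence a fortiori after rationalisation. It remains to check that this equivalence respects the direct sum decompositions, i.e. carries $\bb Q(j)^\sub{cdh}(X)[2j]$ isomorphically onto $\bb Q(j)^\sub{cdh}(\bb A^1_X)[2j]$ for each $j$. This compatibility is a consequence of the naturality of the filtration $\Fil^\star_\sub{cdh}\KH$ and of the splitting in Theorem~\ref{prop:basic_props_of_cdh_mot}(7): indeed, $\pi^*$ is a morphism of filtered spectra $\Fil^\star_\sub{cdh}\KH(X) \to \Fil^\star_\sub{cdh}\KH(\bb A^1_X)$, hence after rationalisation a morphism of the split filtered spectra, which therefore respects the summands. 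Passing to graded pieces, $\pi^*$ induces the canonical map $\bb Q(j)^\sub{cdh}(X)[2j] \to \bb Q(j)^\sub{cdh}(\bb A^1_X)[2j]$, and it is an equivalence because it is a retract-compatible direct summand of the equivalence $\KH(X)_{\bb Q} \xrightarrow{\simeq} \KH(\bb A^1_X)_{\bb Q}$.

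There is essentially no obstacle here; the only point requiring a little care is the bookkeeping with the filtered-spectrum structure, namely that the multiplicative splitting of Theorem~\ref{prop:basic_props_of_cdh_mot}(7) is natural in the scheme and compatible with pullback along $\pi$. Alternatively, and perhaps more cleanly, one could argue directly at the level of the lisse theory: $\bb Q(j)^\sub{cdh} = L_\cdh(\bb Q(j)^\sub{lse})$ and $\bb Q(j)^\sub{lse}$ is, rationally, the $j$-th Adams summand of the left Kan extension of $\K^\sub{cn}_{\bb Q}$ from $\Sm_{\bb Z}$ (using Theorem~\ref{thm:rational}(2) and the fact that $\K^\sub{cn}$ is left Kan extended from smooth $\bb Z$-algebras); since $\bb A^1$-invariance of $\KH_{\bb Q}$ holds and is inherited by each Adams summand, and $\bb A^1$-invariance passes through cdh sheafification (as $L_\cdh$ commutes with the relevant colimits and $\bb A^1$ is a cdh-local condition one can check on $\bb A^1_V$ for henselian valuation rings $V$ using Theorem~\ref{prop:basic_props_of_cdh_mot}(4)), the conclusion follows. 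Either route is short; I would present the first since the filtered splitting of Theorem~\ref{prop:basic_props_of_cdh_mot}(7) is already in hand.
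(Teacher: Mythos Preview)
Your proof is correct and takes essentially the same approach as the paper: the paper's proof is the one-line observation that this follows immediately from $\bb A^1$-invariance of $\KH$ together with the rational splitting of Theorem~\ref{prop:basic_props_of_cdh_mot}(7). You have simply unpacked this in more detail, explaining why the natural splitting ensures that the equivalence on $\KH_{\bb Q}$ restricts to each summand.
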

\begin{proof} This follows immediately from $\bb A^1$-invariance of $\KH$ and the rational decomposition of Theorem~\ref{prop:basic_props_of_cdh_mot}(7).
\end{proof}

We now work towards establishing $\A^1$-invariance integrally, conditional on the key hypothesis; for the rest of this section we fix a prime number $p$. We begin with the following warm-up lemma.

\begin{lemma}\label{lem:i!Omegalog-A1}
For any field $k$ of characteristic $p$, the maps $H^n_\cdh(k, \Omega^j_{\log}) \rightarrow H^n_\cdh(k[T], \Omega^j_{\log})$ are isomorphisms for all $n,j\ge0$.
\end{lemma}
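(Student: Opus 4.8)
The statement is that $\cdh$-cohomology of $\Omega^j_{\log}$ is $\mathbb{A}^1$-invariant on fields of characteristic $p$. My plan is to reduce to a statement already in hand, rather than argue from scratch. By Corollary~\ref{cor:singular_gl}, on qcqs $\mathbb{F}_p$-schemes we have $\bb Z(j)^{\cdh}/p \simeq R\Gamma_{\cdh}(-,W_1\Omega^j_{\log})[-j] = R\Gamma_{\cdh}(-,\Omega^j_{\log})[-j]$; hence $H^n_{\cdh}(k,\Omega^j_{\log}) \cong H^{n+j}(\bb Z(j)^{\cdh}(k)/p)$ and similarly for $k[T]$. So the claim is equivalent to the assertion that $\bb Z(j)^{\cdh}(k)/p \to \bb Z(j)^{\cdh}(\mathbb{A}^1_k)/p$ is an equivalence. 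Since $k$ has characteristic $p$, the key hypothesis $\Hyp(k,p,n)$ holds for all $n$ (Remark~\ref{rmk:Hyp-vacuous}), and $\Spec k$ has finite valuative dimension; thus this will follow once we have the conditional $\mathbb{A}^1$-invariance theorem for $\bb Z(j)^{\cdh}$. But in the logical order of the paper, this lemma is presumably an \emph{input} to that theorem (Theorem~\ref{thm:A1inv}), so I should instead prove it directly.

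\textbf{Direct approach.} First I would reduce to the case where $k$ is perfect: both sides are finitary (Theorem~\ref{prop:basic_props_of_cdh_mot}(4), combined with the syntomic comparison), so writing $k$ as a filtered colimit of its finitely generated subfields and then passing to perfection commutes with everything in sight. Second, for $k$ perfect, I would use the fibre sequence of Example~\ref{example_BL_cohomology}(3) in the form $\bb F_p(j)^{\cdh} \to L_{\cdh}\bb F_p(j)^{\syn} \to R\Gamma_{\cdh}(-,\tilde\nu(j))[-j-1]$, together with the fact that $L_{\cdh}\bb F_p(j)^{\syn} \simeq L_{\h}\bb F_p(j)^{\syn}$ for $j \geq 1$ is an $h$-sheaf identified with \'etale cohomology of the generic fibre (Theorem~\ref{thm:a1-comparison}(2)); since $k$ and $k[T]$ are both $\mathbb{F}_p$-algebras, that \'etale-cohomology term vanishes in positive degrees and is trivially $\mathbb{A}^1$-invariant, reducing the problem to $\mathbb{A}^1$-invariance of $R\Gamma_{\cdh}(-,\tilde\nu(j))$ over $k$. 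In fact it is cleanest to argue with $\Omega^j_{\log}$ directly: since $\tilde\nu(j)$ and $\Omega^j_{\log}$ differ by the exact Artin--Schreier-type sequence and we only need the cdh cohomology, I would instead invoke the description of $H^0_{\et}(X,\Omega^j_{\log})$ for $X$ smooth over $k$ as the kernel of the Gersten-type complex of a cycle module $K^M_*/p$ (Gros--Suwa, Bloch--Kato--Gabber, as recalled in the proof of Theorem~\ref{thm:classical_BL_equiv}). This exhibits $\Omega^j_{\log}$ as a homotopy-invariant presheaf with transfers on $\Sm_k$, whence by Voevodsky's strict $\mathbb{A}^1$-invariance (Theorem~\ref{thm:PST}(1)) the Nisnevich cohomology $H^n_{\Nis}(-,\Omega^j_{\log})$ is $\mathbb{A}^1$-invariant on $\Sm_k$.

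\textbf{From Nisnevich to cdh.} The remaining step, which I expect to be the main obstacle, is to pass from Nisnevich $\mathbb{A}^1$-invariance to cdh $\mathbb{A}^1$-invariance for the \emph{specific} schemes $\Spec k$ and $\mathbb{A}^1_k$. Since $k$ is a field, $\Spec k$ is already a point for the cdh topology, so $H^n_{\cdh}(k,\Omega^j_{\log}) = H^n_{\Nis}(k,\Omega^j_{\log})$, which is $\Omega^j_{k,\log}$ for $n=0$ and $0$ for $n>0$. For $\mathbb{A}^1_k$, I would use the finitary cdh sheaf $L_{\cdh}\bb F_p(j)^{\syn}$ and the points description: by Proposition~\ref{proposition_checking_on_points} and Milnor excision (Proposition~\ref{proposition_synt_coh_hv_excision}), $L_{\cdh}$ of a finitary presheaf on finite-type $k$-schemes is controlled by its values on henselian valuation rings of rank $\le 1$ that are essentially of finite type over $k$ --- these are Cartier smooth $\mathbb{F}_p$-algebras, where $\bb F_p(j)^{\syn}$ is already a cdh sheaf and where $\Omega^j_{\log}$ behaves well. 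Concretely, the cleanest route is: $R\Gamma_{\cdh}(\mathbb{A}^1_k,\Omega^j_{\log})$ can be computed by covering $\mathbb{P}^1_k$ by the two standard copies of $\mathbb{A}^1_k$ and using the projective bundle formula for syntomic cohomology (Theorem~\ref{theorem_syntomic_properties}(6)) together with $\mathbb{A}^1$-invariance of $R\Gamma_{\Nis}(-,\Omega^j_{\log})$ on $\Sm_k$ and the comparison of $\cdh$ and $\Nis$ cohomology of $\Omega^j_{\log}$ on the smooth scheme $\mathbb{A}^1_k$ (which holds because the cdh sheafification of a homotopy-invariant presheaf with transfers agrees with its Nisnevich sheafification on smooth schemes over a perfect field --- this is the argument behind Remark~\ref{rem:nisnevich-zariski} and is needed in the proof of Theorem~\ref{thm_BL_coh}). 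Thus $H^n_{\cdh}(\mathbb{A}^1_k,\Omega^j_{\log}) = H^n_{\Nis}(\mathbb{A}^1_k,\Omega^j_{\log}) \cong H^n_{\Nis}(k,\Omega^j_{\log}) = H^n_{\cdh}(k,\Omega^j_{\log})$, with the middle isomorphism being Voevodsky's strict $\mathbb{A}^1$-invariance, and the naturality of all these identifications shows the map in question is the asserted isomorphism.
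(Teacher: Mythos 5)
Your overall strategy (Nisnevich $\A^1$-invariance of $\Omega^j_{\log}$ via Voevodsky's theory, then a cdh-versus-Nisnevich comparison) is close in spirit to the paper's second proof, and you correctly identify the cdh/Nisnevich comparison as the crux. But your justification of that crux has a genuine gap. You invoke the claim that "the cdh sheafification of a homotopy-invariant presheaf with transfers agrees with its Nisnevich sheafification on smooth schemes over a perfect field," citing Remark~\ref{rem:nisnevich-zariski}; that remark (and Theorem~\ref{thm:PST}) is about Zariski versus Nisnevich, not cdh. A general cdh-versus-Nisnevich comparison for homotopy invariant PSTs is only known in characteristic $0$ (via resolution of singularities) or with $\Z[1/p]$-coefficients (via alterations), and here the coefficients $\Omega^j_{\log}$ are $p$-torsion --- exactly the hard case. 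Indeed, inside this paper such a comparison on smooth $\F_p$-schemes is essentially the equicharacteristic case of Theorem~\ref{thm:A1inv}/Corollary~\ref{corol_cdh_is_A}, for which the present lemma is an input, so invoking it would be circular. The detour through a cover of $\P^1_k$ and the syntomic projective bundle formula does not repair this (and Mayer--Vietoris for that cover also needs $\Gm$, so it does not by itself compute $\A^1_k$). Separately, your reduction to perfect fields is stated backwards: writing $k$ as a colimit of finitely generated subfields and "passing to perfection" produces $k^{\mathrm{perf}}$, not $k$; the correct reduction would be via N\'eron--Popescu (write $k$ as a filtered colimit of smooth $\F_p$-algebras), but then you would need the statement for all smooth $\F_p$-schemes, precisely where your cdh comparison is unavailable.

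The fix is much more elementary, and is what the paper does: for the two specific schemes at hand no general comparison theorem is needed, because $\Spec k$ is a point for the cdh topology and the Nisnevich local rings of $k[T]$ are fields or henselian discrete valuation rings, hence henselian valuation rings, hence again cdh points; so on both $k$ and $k[T]$ cdh cohomology of $\Omega^j_{\log}$ agrees with Nisnevich cohomology, and the statement reduces to $\A^1$-invariance of $R\Gamma_{\Nis}(-,\Omega^j_{\log})$, which is Theorem~\ref{thm_BL_coh} modulo $p$ together with Example~\ref{example_BL_cohomology}(2) (stated for arbitrary fields, so no perfectness reduction is needed). The paper also gives a second, $K$-theoretic route you did not consider: over schemes of valuative dimension $\le 1$ the cdh-motivic filtration on $\KH/p$ is bounded with graded pieces $R\Gamma_{\cdh}(-,\Omega^j_{\log})[j]$ (Theorem~\ref{prop:basic_props_of_cdh_mot}, Corollary~\ref{cor:singular_gl}), and $\A^1$-invariance of $\KH$ then forces the claimed isomorphisms by a short diagram chase --- this avoids Theorem~\ref{thm:A1inv} entirely, unlike the reduction you (rightly) discarded as circular.
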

\begin{proof}
We offer two proofs of this result. The first proof is overkill, but illustrates one of our main techniques: we can extract information about motivic cohomology from $K$-theory. From the cdh-motivic Atiyah--Hirzebruch filtration of Theorem \ref{prop:basic_props_of_cdh_mot}(1)\&(3), and Corollary \ref{cor:singular_gl}, we know that for any qcqs $\bb F_p$-scheme $X$ of finite valuative dimension, $\KH(X)/p$ admits a natural bounded filtration with graded pieces $R\Gamma_\sub{cdh}(X,\Omega^j_\sub{log})[j]$ for $j\ge0$. If $X$ has valuative dimension $\le 1$ then it follows that the resulting natural edge maps $\KH_j(X;\bb Z/p\bb Z)\to H^0_\sub{cdh}(X,\Omega^j_\sub{log})$ are surjective, with kernel $H^1_\sub{cdh}(X,\Omega^{j+1}_\sub{log}$).  Applying this to both $k$ and $k[T]$ supplies us with a commutative diagram in which the rows are exact
\[\xymatrix{
0\ar[r]&H^1_\cdh(k, \Omega_{\log}^{j+1})\ar[d]\ar[r]&\KH_j(k;\Z/p) \ar[d]_{\cong} \ar[r] & H^0_\cdh(k, \Omega_{\log}^j) \ar[d]\ar[r]&0\\
0\ar[r]&H^1_\cdh(k[T], \Omega_{\log}^{j+1}) \ar[r] & \KH_j(k[T];\Z/p) \ar[r] & H^0_\cdh(k[T], \Omega_{\log}^j)\ar[r] & 0
}\]
The middle vertical isomorphism comes from the $\A^1$-invariance of $\KH$. Therefore the right vertical arrow is surjective; but it is also split injective, so it is an isomorphism, and therefore the left vertical arrow is also an isomorphism.

Here is a second, more direct proof, which we remark is similar to the final step of the proof of Lemma~\ref{lem:p1-milnor} below. We wish to show that $R\Gamma_\cdh(k, \Omega^j_{\log}) \rightarrow R\Gamma_\cdh(k[T], \Omega^j_{\log})$ is an equivalence. We note that both sides are unchanged if we replace cdh cohomology by Nisnevich cohomology: for $k$ this is because it is a point for the cdh topology, and for $k[T]$ it holds because its Nisnevich local rings are henselian discrete valuation rings, which are also points for the cdh topology. But the map $R\Gamma_\sub{Nis}(k, \Omega^j_{\log}) \rightarrow R\Gamma_\sub{Nis}(k[T], \Omega^j_{\log})$ is indeed an equivalence by Theorem \ref{thm_BL_coh} modulo $p$, in light of Example \ref{example_BL_cohomology}(2).
\end{proof}

The main way in which we will use the key hypothesis is to obtain a bound on the difference between $\bb F_p(j)^{\cdh}$ and mod-$p$ \'etale cohomology of the $p$-adic generic fibre. More precisely, in the rest of this section we will be interested in the $\D(\bb Z)$-valued presheaves on qcqs schemes
\begin{equation}
C(j):= \mathrm{cofib}(\bb F_p(j)^\cdh \To R\Gamma_{\et}(-[\tfrac{1}{p}],\mu_p^{\otimes j}))
\label{eqn:cdh-mot-to-et}\end{equation}
for $j\in\bb Z$, where the map appearing is the composition \[\bb F_p(j)^\cdh\stackrel{\sub{Thm.~\ref{theorem_syn_comp}}}\quis L\sub{cdh}\tau^{\le j}\bb F_p(j)^\sub{syn}\to L_\sub{cdh}\bb F_p(j)^\sub{syn}\to L_\sub{cdh}R\Gamma_\sub{\'et}(-[\tfrac1p],\mu_p^{\otimes j})=R\Gamma_\sub{\'et}(-[\tfrac1p],\mu_p^{\otimes j}).\]
(If $j\ge1$, then Theorem \ref{thm:a1-comparison} shows that this map identifies with $\bb F_p(j)^\cdh\to L_h\bb F_p(j)^\cdh$, but we do not need this). Here are some preliminary properties of $C(j)$ which will be required:

\begin{lemma} \label{lemm:use-key-hyp}
Let $S$ be a qcqs scheme of finite valuative dimension, fix $j\ge0$, and assume that $\Hyp(S, p, j)$ holds. Then we have the following:
\begin{enumerate}
\item For any qcqs $S$-scheme $X$, the complex $C(j)(X)$ is concentrated in cohomological degrees $\ge j-1$.
\item The lowest cohomology presheaf, namely $H^{j-1}(C(j)(-)):\Sch_S^\sub{op}\to\opp{Ab}$, is isomorphic to \[a_\sub{cdh}i_!\Omega^j_{\log}:=\text{the cdh sheafification of the presheaf of abelian groups }
X\mapsto \begin{cases} H_{\Zar}^0(X, \Omega^j_{\log}) & X \in \Sch^{\qcqs}_{S_{\F_p}} \\ 0 & \text{else.} \end{cases}
\]
\item The presheaf of abelian groups $H^{j-1}(C(j))$ is $\bb A^1$-invariant on qcqs $S$-schemes.
\end{enumerate}
\end{lemma}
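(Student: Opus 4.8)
The plan is to factor the map defining $C(j)$ through syntomic cohomology and to read everything off from the pointwise behaviour of the syntomic-to-\'etale comparison map.

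First I would note that the map defining $C(j)$ factors as $\bb F_p(j)^{\cdh}\quis L_{\cdh}\tau^{\le j}\bb F_p(j)^{\syn}\to L_{\cdh}\bb F_p(j)^{\syn}\to L_{\cdh}R\Gamma_{\et}(-[\tfrac1p],\mu_p^{\otimes j})=R\Gamma_{\et}(-[\tfrac1p],\mu_p^{\otimes j})$, using Theorem~\ref{theorem_syn_comp}, exactness of $L_{\cdh}$ and Lemma~\ref{lemma_syn_p-hens}(1). The octahedral axiom then yields a cofibre sequence $L_{\cdh}\tau^{>j}\bb F_p(j)^{\syn}\to C(j)\to L_{\cdh}G$, where $G:=\opp{cofib}(\bb F_p(j)^{\syn}\to R\Gamma_{\et}(-[\tfrac1p],\mu_p^{\otimes j}))$ is the cofibre of the syntomic-to-\'etale comparison map. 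The first term lies in $\D^{\ge j+1}$. For the second, $G$ is finitary (syntomic cohomology mod $p$ is finitary by Proposition~\ref{prop:syn}(2), as is the $p$-adic \'etale cohomology of the generic fibre), hence so is $L_{\cdh}G$; since $S$ has finite valuative dimension it therefore suffices, for part (1), to show $G(V)\in\D^{\ge j-1}$ for every henselian valuation ring $V$ of finite rank over $S$, via the cdh-topos machinery behind Proposition~\ref{proposition_checking_on_points}. Here $G(V)=\opp{cofib}(\bb F_p(j)^{\syn}(V)\to R\Gamma_{\et}(V[\tfrac1p],\mu_p^{\otimes j}))$, and one distinguishes three cases: if $p\in V^{\times}$ the cofibre vanishes (Lemma~\ref{lemma_syn_p-hens}(1)); if $\opp{char}V=p$ then $V[\tfrac1p]=0$ and the cofibre is $\bb F_p(j)^{\syn}(V)[1]=R\Gamma_{\et}(\Spec V,\Omega^{j}_{\log})[1-j]\in\D^{\ge j-1}$ (Example~\ref{example_syn_in_char_p} and Remark~\ref{rmk_syntomic_props}(2)); and if $V$ has mixed characteristic $(0,p)$ the cofibre lies in $\D^{\ge j}$ by $\Hyp(S,p,j)$, after reducing to the rank-one $p$-complete case using Remark~\ref{rem:key_reduces_to_rank1}. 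Hence $C(j)\in\D^{\ge j-1}$.

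For part (2), the cofibre sequence above shows that $C(j)$ and $L_{\cdh}G$ agree in cohomological degrees $\le j$, so $H^{j-1}(C(j))=H^{j-1}(L_{\cdh}G)=\tau^{\le j-1}(L_{\cdh}G)[j-1]$; this is a cdh sheaf ($\tau^{\le j-1}$ preserves cdh sheaves, being limit-preserving) and finitary ($H^{j-1}$ commutes with filtered colimits). Now $G$ vanishes on $\bb Z[\tfrac1p]$-schemes, while on an $\bb F_p$-scheme $X$ it equals $\bb F_p(j)^{\syn}(X)[1]=R\Gamma_{\et}(X,\Omega^j_{\log})[1-j]$; hence the canonical map $\Omega^j_{\log}\to R\Gamma_{\et}(-,\Omega^j_{\log})$ defines a natural transformation $i_!\Omega^j_{\log}[1-j]\to G$ of presheaves on $\Sch^{\qcqs}_S$, and applying $L_{\cdh}$, shifting by $j-1$ and taking $H^0$ produces a natural map $a_{\cdh}i_!\Omega^j_{\log}\to H^{j-1}(C(j))$. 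That this is an isomorphism can be checked on stalks at the points $V$: by finitariness the source stalk is $(i_!\Omega^j_{\log})(V)$, namely $\Omega^j_{V,\log}=H^0_{\Zar}(\Spec V,\Omega^j_{\log})$ if $V$ is an $\bb F_p$-algebra and $0$ otherwise, while the target stalk $H^{j-1}(C(j)(V))=H^{j-1}(G(V))$ has, by the case analysis of part (1), exactly the same description; one verifies the map is the identity in the characteristic $p$ case.

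For part (3), part (2) identifies $F:=H^{j-1}(C(j))$ with the finitary cdh sheaf $a_{\cdh}i_!\Omega^j_{\log}$, so over a base of finite valuative dimension its $\bb A^1$-invariance is equivalent to $F(V)\to F(\bb A^1_V)$ being an isomorphism for all henselian valuation rings $V$ of finite rank over $S$: indeed $X\mapsto\opp{cofib}(F(X)\to F(\bb A^1_X))$ is again a finitary cdh sheaf, since $X\mapsto\bb A^1_X$ sends Nisnevich and abstract blowup squares to squares of the same type, and such a sheaf vanishes iff it vanishes on points. If $V$ has residue characteristic $\ne p$ both sides vanish, since $F(V)=0$ and $G$---hence $C(j)$ in degrees $\le j$---vanishes on $\bb Z[\tfrac1p]$-schemes and $\bb A^1_V$ is one. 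If $\opp{char}V=p$, then $\bb A^1_V$ is an $\bb F_p$-scheme and the identification of part (2) reduces the claim to $\bb A^1$-invariance of $R\Gamma_{\cdh}(-,\Omega^j_{\log})$ over $V$; this follows from Lemma~\ref{lem:i!Omegalog-A1} together with a continuity argument, writing $V$ as a filtered colimit of finitely generated $\bb F_p$-subalgebras and invoking the classical $\bb A^1$-invariance of log-differential cohomology over the perfect field $\bb F_p$. The main obstacle is the remaining case, $V$ of mixed characteristic $(0,p)$, where $F(V)=0$ but $\bb A^1_V$ is neither an $\bb F_p$-scheme nor a $\bb Z[\tfrac1p]$-scheme, so neither previous argument applies directly; I would treat it by reducing, using finitariness of $F$, to $\bb A^1_R$ for $R$ a finitely generated $\bb Z$-subalgebra of $V$, where the special fibre $\bb A^1_{R/pR}\hookrightarrow\bb A^1_R$ is a finitely presented closed immersion with open complement $\bb A^1_{R[\tfrac1p]}$, then establishing a localisation-type long exact sequence for $L_{\cdh}G$ along this closed subscheme (exploiting that $G$ depends only on the $p$-adic completion, via the fibre sequence~\eqref{eqn_j!p}) and feeding in the vanishing already proven over $\bb Z[\tfrac1p]$-schemes together with a d\'evissage on the special fibre based on Lemma~\ref{lem:i!Omegalog-A1}.
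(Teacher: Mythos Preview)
Your treatment of parts (1) and (2) is correct and essentially matches the paper's argument: factor through the cofibre of the truncation and the cofibre of the syntomic-to-\'etale comparison, check stalkwise on henselian valuation rings using the three-case analysis. Your construction of the comparison map in (2) via the inclusion $\Omega^j_{\log}\to R\Gamma_\et(-,\Omega^j_{\log})$ on $\F_p$-schemes is equivalent to the paper's use of the counit $i_!i^*\to\id$.

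Part (3), however, has genuine gaps. Your reduction to henselian valuation rings $V$ is fine, but the case analysis breaks down. In the characteristic $p$ case you need $H^0_\cdh(V,\Omega^j_{\log})\to H^0_\cdh(\A^1_V,\Omega^j_{\log})$ to be an isomorphism; you invoke Lemma~\ref{lem:i!Omegalog-A1} (which is only for \emph{fields}) together with a continuity argument writing $V=\colim R_\alpha$, but you have no reason to know the result for the finitely generated $\F_p$-algebras $R_\alpha$. Appealing to ``classical $\A^1$-invariance of log-differential cohomology over $\F_p$'' here is either the Nisnevich statement for smooth $\F_p$-schemes (which doesn't apply to the singular $R_\alpha$) or the cdh statement for all $\F_p$-schemes (which is equivalent to $\A^1$-invariance of $\F_p(j)^{\cdh}$ and is precisely the theorem this lemma is an input to---so the argument would be circular). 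The mixed characteristic case you explicitly flag as the main obstacle and sketch only a speculative localisation argument.

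The paper avoids both difficulties with a completely different, much shorter approach. It first proves the injectivity $F(X)\hookrightarrow\prod_{x\in X_{\F_p}}F(k(x))$ for \emph{every} qcqs $S$-scheme $X$, using cdh separatedness and the Gabber--Ramero injectivity $\Omega^j_{V,\log}\hookrightarrow\Omega^j_{\opp{Frac}(V),\log}$ for valuation rings. Since $\A^1$-invariance is equivalent to injectivity of $0^*:F(\A^1_X)\to F(X)$, one applies this injection with $\A^1_X$ in place of $X$: every char $p$ point $y$ of $\A^1_X$ lies on some fibre $\A^1_{k(x)}$, so it suffices to know $0^*:F(\A^1_{k(x)})\to F(k(x))$ is injective for char $p$ fields $k(x)$, which is exactly Lemma~\ref{lem:i!Omegalog-A1}. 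This handles all three characteristic cases uniformly in one stroke.
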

\begin{proof}
(1): Since $C(j)$ is the cofibre of a map between two finitary cdh sheaves, it is itself a finitary cdh sheaf and therefore by Proposition \ref{proposition_checking_on_points}(1) is enough to show that $C(j)(V)$ is concentrated in cohomological degrees $\ge j-1$ whenever $V$ is a henselian valuation ring admitting a map $\Spec(V) \rightarrow S$. From \eqref{eqn:cdh-mot-to-et} we obtain (via the ``octahedral axiom'') a fibre sequence of presheaves
\[
\mathrm{cofib}(L_\cdh \tau^{\le j}\F_p(j)^{\syn} \to L_\cdh \F_p(j)^{\syn}) \To C(j) \To \mathrm{cofib}(L_\cdh \F_p(j)^{\syn} \rightarrow R\Gamma_\sub{\'et}(-[\tfrac1p],\mu_p^{\otimes j})),
\]
where the first term is clearly concentrated in cohomological degrees $\ge j+1$. Evaluating the third term on $V$ yields the cofiber of $\F_p(j)^{\syn}(V) \rightarrow R\Gamma_{\et}(V[\tfrac{1}{p}],\mu_p^{\otimes j})$, which we claim is concentrated in degree $\ge j-1$ (as required to complete the proof of (1)). If $V$ has mixed characteristic $(0,p)$ then it is even concentrated in degrees $\ge j$ by the key hypothesis $\Hyp(S, p,j)$; if $p\in V^\times$ then the map is an equivalence; finally, if $p=0$ in $V$ then the codomain vanishes and $\F_p(j)^{\syn}(V)$ is supported in degrees $\ge j$ by the final sentence in Example \ref{example_syn_in_char_p}.

(2): It will be useful to use the notation of the adjunction
\[
i_!:\PShv(\Sch^{\qcqs}_{S_{\F_p}},\D(\Z)) \leftrightarrows \PShv(\Sch^{\qcqs}_{S}, \D(\Z)):i^*;
\]
where $i^*$ is restriction and $i_!$ is given by
\[
i_!E(X) = \begin{cases} E(X) & X \in \Sch^{\qcqs}_{S_{\F_p}} \\ 0 & \text{else} \end{cases}.
\]
(The reader might find it helpful to consult Remark \ref{rem:values-of-functors}, even though it is rather $j_!$ which appears there.)

Now, on $\bb F_p$-schemes, the boundary map $C(j)[-1]\to \bb F_p(j)^\sub{cdh}$ is an equivalence (since the \'etale cohomology in \eqref{eqn:cdh-mot-to-et} vanishes); combined with Corollary \ref{cor:singular_gl} we deduce that $i^*C(j)[-1]\simeq R\Gamma_\sub{cdh}(-,W_r\Omega^j_\sub{log})[-j]$. Via the counit of the adjunction this induces $i_!R\Gamma_\sub{cdh}(-,W_r\Omega^j_\sub{log})[-j] \simeq i_!i^*C(j)[-1] \rightarrow C(j)[-1]$, which we then cdh sheafify (using $L_\sub{cdh}i_!L_\sub{cdh}\simeq L_\sub{cdh}i_!$) to obtain a comparison map
\[L_{\cdh}i_!H^0_\sub{Zar}(-,\Omega^j_\sub{log})[-j] \To C(j)[-1].\] 
Both sides of this comparison map are supported in degrees $\ge j$ (clearly for the left, and by part (1) for the right), and we claim that its cofiber is even supported in degrees $\ge j+1$ (which will complete the proof of (2)). As in part (1), it is enough to check this bound after evaluating on henselian valuation rings $V$ admitting a map $\Spec(V) \rightarrow S$. If $V$ has mixed characteristic $(0,p)$ then the left hand side vanishes and we saw in (1) that $C(j)(V)$ was supported in degrees $\ge j$. If $p\in V^\times$ then the left hand side again vanishes and we saw in (1) that $C(j)(V)$ was supported in degrees $\ge j+1$. Finally, if $p=0$ in $V$ then the comparison map for $V$ is an equivalence by construction.

(3): We claim that for any qcqs $S$-scheme $X$, the canonical map \begin{equation}H^{j-1}(C(j)(X)) \To \prod_{x\in X_{\bb F_p}} H^{j-1}(C(j)(k(x)))\label{eqn:inj}\end{equation} is an injection, where the product is over all points of $X$ of residue characteristic $p$. Since $H^{j-1}(C(j))(-)$ is a cdh sheaf of abelian groups by (2), and is thus cdh separated, the claim reduces to the case where $X$ is the spectrum of a henselian valuation ring $V$. We may further assume that $V$ has characteristic $p$ (as otherwise $H^{j-1}(C(j)(V))$ vanishes by (2)). In this case, $\Omega^n_{V,\sub{log}} \to \Omega^n_{\opp{Frac}(V),\sub{log})}$ is injective (this follows from Gabber--Ramero's result that $\Omega^n_{V} \to \Omega^n_{\opp{Frac}(V)}$ is injective \cite[Corollary 6.5.21]{GabberRamero2003}, as already used at the end of Remark \ref{rem:key_reduces_to_rank1}), proving the claim.

To prove part (3) we must show, for any qcqs $S$-scheme $X$, that the map $0^*:H^{j-1}(C(j)(\A^1_X)) \to H^{j-1}(C(j)(X))$ is injective. Using the injectivity of \ref{eqn:inj} for $\A_X^1$, this reduces to the case of $X=\Spec (k)$ where $k$ is a field of characteristic $p$. Using (2), this explicitly means showing that $0^*:H^0_\sub{cdh}(k[T],\Omega^j_\sub{log})\to H^0_\sub{cdh}(k,\Omega^j_\sub{log})$ is injective; but that follows from Lemma \ref{lem:i!Omegalog-A1}.
\end{proof}

\begin{remark}\label{rem:proof_of_BL_from_intro}
Lemma \ref{lemm:use-key-hyp}(1) is a conditional Beilinson--Lichtenbaum result, comparing the motivic cohomology of $X$ to the \'etale cohomology of $X[\tfrac1p]$. Combined with Corollary \ref{corol_cdh_is_A}, it establishes Theorem \ref{thm:BL_intro} of the introduction.
\end{remark}

The next theorem is the main $\bb A^1$-invariance result of the present paper.

\begin{theorem} \label{thm:A1inv}
Let $S$ be a scheme of finite valuative dimension, fix $N\ge0$, and assume that $\Hyp(S,p,j)$ holds for all $j \le N$. Then, for any qcqs $S$-scheme $X$ and integer $j\le N$, the canonical map \[ \Z_{(p)}(j)^\cdh(X) \to \Z_{(p)}(j)^\cdh(\A^1_X)\] is an equivalence.
\end{theorem}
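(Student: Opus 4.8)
The plan is to reduce the statement to two cases, depending on whether the coefficients are rational or $p$-adic, just as in earlier arguments. After rationalisation, $\bb Q(j)^\sub{cdh}$ is $\bb A^1$-invariant by Proposition~\ref{prop:a1-rational} (indeed this is unconditional), so the content is entirely modulo $p$. Since $\bb Z_{(p)}(j)^\sub{cdh}$ is a finitary cdh sheaf on $S$-schemes (Theorem~\ref{prop:basic_props_of_cdh_mot}(4)) and $\bb A^1$-invariance of the fibre of multiplication-by-$p$ together with $\bb A^1$-invariance of the rationalisation implies $\bb A^1$-invariance integrally, I would argue as follows: it suffices to show $\bb F_p(j)^\sub{cdh}(X)\to \bb F_p(j)^\sub{cdh}(\bb A^1_X)$ is an equivalence for all qcqs $S$-schemes $X$ and all $j\le N$. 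I will prove this by descending induction on the cohomological degree, or more precisely by a bootstrapping argument using the cofibre $C(j)$ of \eqref{eqn:cdh-mot-to-et}.

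The key observation is the fibre sequence of presheaves
\[
\bb F_p(j)^\sub{cdh} \To R\Gamma_\sub{\'et}(-[\tfrac1p],\mu_p^{\otimes j}) \To C(j),
\]
in which the middle term is an $\bb A^1$-invariant presheaf (indeed, it is $R\Gamma_\sub{\'et}(X[\tfrac1p],\mu_p^{\otimes j})$ and \'etale cohomology with $\mu_p$-coefficients is $\bb A^1$-invariant, $p$ being invertible there). So $\bb A^1$-invariance of $\bb F_p(j)^\sub{cdh}$ is equivalent to $\bb A^1$-invariance of $C(j)$. Now I use Lemma~\ref{lemm:use-key-hyp}: under the hypothesis $\Hyp(S,p,j)$ (which holds by assumption for all $j\le N$), the presheaf $C(j)$ is supported in cohomological degrees $\ge j-1$, and its lowest cohomology presheaf $H^{j-1}(C(j))\cong a_\sub{cdh}i_!\Omega^j_{\log}$ is $\bb A^1$-invariant. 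This handles the bottom of the complex; for the higher cohomology I would relate $\tau^{\ge j}C(j)$ back to motivic cohomology of smaller weight (or to syntomic cohomology in characteristic $p$) and invoke the inductive hypothesis. Concretely, on $\bb F_p$-schemes $C(j)[-1]\simeq \bb F_p(j)^\sub{cdh}$, and more globally $\tau^{\ge j}C(j)$ is built out of $L_\sub{cdh}i_!$ of the characteristic-$p$ motivic complexes $\bb F_p(n)^\sub{cdh}$ for $n\le j$; these are $\bb A^1$-invariant over characteristic-$p$ bases by the classical (unconditional) case of Theorem~\ref{thm:A1inv} in equal characteristic, equivalently by Lemma~\ref{lem:i!Omegalog-A1} and Corollary~\ref{cor:singular_gl}.

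Assembling these pieces: $C(j)$ has a two-step filtration whose graded pieces are $a_\sub{cdh}i_!\Omega^j_{\log}[-(j-1)]$ and $\tau^{\ge j}C(j)$, both of which are $\bb A^1$-invariant --- the first by Lemma~\ref{lemm:use-key-hyp}(3), the second by reduction to the equal-characteristic case as just sketched. Hence $C(j)$ is $\bb A^1$-invariant, hence so is $\bb F_p(j)^\sub{cdh}$, and therefore $\bb Z_{(p)}(j)^\sub{cdh}$. The main obstacle I anticipate is controlling $\tau^{\ge j}C(j)$ precisely enough: one must verify that, away from cohomological degree $j-1$, the cofibre $C(j)$ really is governed by the characteristic-$p$ locus (via $L_\sub{cdh}i_!$ of truncated syntomic/logarithmic-de-Rham cohomology), which requires the $v$-excision computations of \S\ref{ss:Milnor_excision} --- in particular Proposition~\ref{proposition_synt_coh_hv_excision} --- to reduce to henselian valuation rings and separate the mixed-characteristic and equal-characteristic contributions. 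Once that separation is in hand, the mixed-characteristic part contributes only in the single degree $j-1$ (this is exactly what $\Hyp(S,p,j)$ buys us), and the equal-characteristic part is the already-known case. The weight bound $j\le N$ is needed so that $\Hyp(S,p,n)$ is available for every $n\le j$ appearing in this reduction.
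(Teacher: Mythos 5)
There is a genuine gap, and it sits exactly where you flag your "main obstacle": the claim that $\tau^{\ge j}C(j)$ is $\bb A^1$-invariant because it is "governed by the characteristic-$p$ locus". The key hypothesis $\Hyp(S,p,j)$ does \emph{not} say that the mixed-characteristic deviation vanishes or comes from the special fibre; it only says that, on mixed-characteristic henselian valuation rings, the cofibre of $\bb F_p(j)^\syn(V)\to R\Gamma_\et(V[\tfrac1p],\mu_p^{\otimes j})$ is concentrated in degrees $\ge j$. So your statement that "the mixed-characteristic part contributes only in the single degree $j-1$" is backwards: by Lemma~\ref{lemm:use-key-hyp}(2) the degree-$(j-1)$ part of $C(j)$ is precisely the equal-characteristic piece $a_\cdh i_!\Omega^j_{\log}$, while the mixed-characteristic contribution lives in degrees $\ge j$ and is completely uncontrolled beyond that bound --- in particular there is no identification of $\tau^{\ge j}C(j)$ with $L_\cdh i_!$ of characteristic-$p$ motivic complexes, and no localisation sequence of the shape $L_\cdh j_!j^*\to\mathrm{id}\to i_*i^*$ is available for $C(j)$ (the rigidity that makes such sequences work in \S\ref{subsub:infinitesimal-mot} fails here). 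Worse, since $\bb A^1$-invariance of $C(j)$ is \emph{equivalent} to $\bb A^1$-invariance of $\bb F_p(j)^\cdh$ (your own fibre sequence), asserting $\bb A^1$-invariance of the top truncation $\tau^{\ge j}C(j)$ is essentially assuming the theorem; the argument is circular.

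The paper's proof never establishes $\bb A^1$-invariance of $C(j)$, and this is the point you are missing. After the rational reduction and the reduction, via finitariness and Milnor excision (Theorem~\ref{theorem:exc-cdarc}), to rank $\le 1$ henselian valuation rings $V$ over $S$, one only extracts from the key hypothesis two partial facts: the cofibre $B(j)(V)$ of $\bb F_p(j)^\cdh(V)\to\bb F_p(j)^\cdh(V[T])$ is concentrated in degrees $\ge j+1$ (using Lemma~\ref{lemm:use-key-hyp}(1) for the lower bound and Lemma~\ref{lemm:use-key-hyp}(3) to kill the potential degree-$(j-1)$ cokernel coming from $C(j)$), and it is concentrated in degrees $\le j+2$ (because $V[T]$ has valuative dimension $\le 2$, via Theorem~\ref{prop:basic_props_of_cdh_mot}(3)). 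The remaining two possibly nonzero groups are then killed by a genuinely $K$-theoretic input absent from your proposal: the cdh-local Atiyah--Hirzebruch filtration exhibits $\mathrm{cofib}(\KH(V)\to\KH(V[T]))/p$ --- which is zero by $\bb A^1$-invariance of $\KH$ --- as built from the $B(j)(V)[2j]$, and the resulting spectral sequence abutting to $0$ has no possible differentials into or out of the two surviving spots, forcing $B(j)(V)=0$ for $j\le N$. Without this degeneration step (or some substitute controlling $\tau^{\ge j}C(j)$), your argument does not close.
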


\begin{proof} We have already proved that the map $\Z(j)^{\cdh}(X) \rightarrow \Z(j)^{\cdh}(\A^1_X)$ is a rational equivalence in Proposition~\ref{prop:a1-rational}. Hence it suffices to verify the claim after mod-$p$ reduction, i.e., we are left to prove that $\bb F_p(j)^{\cdh}(X) \rightarrow \bb F_p(j)^{\cdh}(\A^1_X) $ is an equivalence. For any $j$, we denote the cofibre of the map $\bb F_p(j)^{\cdh}(-) \rightarrow \bb F_p(j)^{\cdh}(\A^1_{(-)})$ in presheaves of complexes by $B(j)$; our goal is then to prove that $B(j) = 0$ for all $j \le N$.

We prove this in several steps, which we enumerate since the proof of Theorem \ref{thm:pbf-fields} will follow a similar sequence of logic.

{\bf Step 1.} We reduce to the assertion that for any $j \leq N$, and any henselian valuation ring $V$ of rank $\le 1$ admitting a map $\Spec(V)\to S$, we have $B(j)(V) = 0$. To perform this reduction, we need to verify that $B(j)$ is a finitary cdh sheaf which satisfies henselian $v$-excision. Certainly it is a finitary cdh sheaf, since $\Z(j)^{\cdh}$ is by Theorem \ref{prop:basic_props_of_cdh_mot}(4). Moreover, $\Z(j)^{\cdh}$ satisfies Milnor excision by Theorem~\ref{theorem:exc-cdarc}, and the same is therefore true of $\Z(j)^{\cdh}(\bb A^1_{-})$ since Milnor squares are stable under flat pullbacks \cite[Lemma 3.2.8]{ElmantoHoyoisIwasaKelly2021}. Therefore the mod-$p$ cofibre $B(j)$ satisfies Milnor excision, in particular henselian $v$-excision, as required. In the rest of the proof, let $V$ be a henselian valuation ring $V$ of rank $\le 1$ admitting a map $\Spec(V)\to S$.

{\bf Step 2.} We construct a diagram that we shall investigate in the next step. Evaluating \eqref{eqn:cdh-mot-to-et} on both $V$ and $V[T]$, and using $\A^1$-invariance of \'etale cohomology away from residual characteristics \cite[Corollaire XV.2.2]{SGA_IV_III}, we obtain the following diagram where each row and column is a cofibre sequence:

\begin{equation}\label{eq:cofibre-a1}
\begin{tikzcd}
\bb F_p(j)^\sub{cdh}(V) \ar{r} \ar{d} &   \bb F_p(j)^\sub{cdh}(V[T]) \ar{r} \ar{d} &  B(j)(V)  \ar{d} \\
R\Gamma_{\et}(V[\tfrac{1}{p}],\mu_p^{\otimes n}) \ar{r} \ar{d} & R\Gamma_{\et}(V[\tfrac1p][T],\mu_p^{\otimes j})  \ar{r} \ar{d} & 0 \ar{d} \\
C(j)(V) \ar{r} & C(j)(V[T]) \ar{r} & B(j)(V)[1].
\end{tikzcd}
\end{equation}

{\bf Step 3a.} We observe, for any $j\ge0$, that $B(j)(V)$ is concentrated in cohomological degrees $\le j+2$. Indeed, this follows from the top row of \eqref{eq:cofibre-a1}: $V[T]$ is of valuative dimension $\leq 2$, so the middle term is concentrated in cohomological degrees $\leq j+2$ by Theorem \ref{prop:basic_props_of_cdh_mot}, and similarly the left term is concentrated in cohomological degrees $\le j+1$ (even $\le j$ since $V$ is a point for the cdh topology).

{\bf Step 3b.} Next we claim for any $j \leq N$, that $B(j)(V)$ is also concentrated in cohomological degrees $\geq j+1$. This time we consider the third row of~\eqref{eq:cofibre-a1}. The complex $C(j)(V[T])$ is concentrated in degrees $\geq j-1$ by Lemma~\ref{lemm:use-key-hyp}(1), making use of the assumption $\Hyp(S, p, j)$, and so we obtain the same bound for $B(j)(V)[1]$ since the cofibre sequence is split. Furthermore, $H^{j-1}(B(j)(V)[1])$ is the cokernel of the canonical map $H^{j-1}(C(j)(V)) \to H^{j-1}(C(j))(V[T])$, which is zero by Lemma~\ref{lemm:use-key-hyp}(3). This completes the proof that $B(j)(V)[1]$ is concentrated in degrees $\geq j$, as claimed.

{\bf Step 4} We degenerate enough of a cdh-local Atiyah-Hirzebruch spectral sequence~\eqref{eq:cdh-ahss} to conclude that $B(j)(V) = 0$ for $j \leq N$. Indeed, equipping both $\KH(V)/p$ and $\KH(V[T])/p$ with the cdh-local Atiyah--Hirzebruch filtration of Theorem \ref{prop:basic_props_of_cdh_mot}(1)\&(3), and taking the cofibre, we see that $\opp{cofib}(\KH(V) \rightarrow \KH(V[T]))/p$ admits a bounded $\bb N$-indexed filtration with graded pieces $B(j)(V)[2j]$, for $j\ge0$. But $\KH(V)/p \rightarrow \KH(V[T])/p$ is an equivalence, so this translates into a convergent spectral sequence abutting to zero, namely $E_2^{i,j}=H^{i-j}(B(-j)(V)) \Rightarrow 0$. Writing $H^{i}(j):= H^i(B(j)(V))$ for simplicity, the bounds obtained in step 3 imply that this spectral sequence looks as follows:

\begin{equation*}
\begin{tikzcd}[row sep=small]
0& 0 & H^1(0) \ar{rrd} & H^2(0) & 0\\
0 & 0 & H^2(1) \ar{rrd}& H^3(1) & 0\\
0 & 0 & H^3(2)  \ar{rrd} & H^4(2) & 0\\
0 & 0  & H^4(3) \ar{rrd} & H^5(3) & 0 \\
\vdots & \vdots & \vdots & \vdots & \vdots\\
0 & 0  & H^{N+1}(N) \ar{rrd} & H^{N+2}(N) & 0 \\
\vdots & H^{N+1}(N+1) & H^{N+2}(N+1)  & H^{N+3}(N+1) & 0.\\
\end{tikzcd}
\end{equation*}
Evidently, whenever $j\le N$, then the groups $H^{j+1}(j)$ and $H^{j+2}(j)$ survive the spectral sequence and thus are zero. Therefore $B(j)(V) = 0$ for $j \leq N$, completing the proof.
\end{proof}

We explicitly point out some unconditional consequences of the previous theorem:

\begin{corollary}\label{cor:uncond_fields}
Let $A$ be a perfectoid valuation ring of finite rank, or a field, or a zero-dimensional ring, or $\bb Z^\sub{cyc}$, or an absolutely integrally closed ring of finite valuative dimension. Then, for any qcqs $A$-scheme $X$ and $j\in\bb Z$, the canonical map
\[ \Z(j)^\cdh(X) \To \Z(j)^\cdh(\A^1_X)\]
is an equivalence.
\end{corollary}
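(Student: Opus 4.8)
The strategy is to reduce to Theorem~\ref{thm:A1inv} by verifying the key hypothesis in each of the listed cases, or more precisely by exhibiting, for each class of base ring $A$, a scheme $S$ of finite valuative dimension equipped with a map $\Spec(A)\to S$ such that $\Hyp(S,p,j)$ holds for all primes $p$ and all $j\ge0$; we may then invoke Theorem~\ref{thm:A1inv} with $N$ arbitrarily large. Once such $S$ is found, since $X$ is then also an $S$-scheme and $\bb Z(j)^\cdh=\bb Z_{(p)}(j)^\cdh$ after inverting all primes $\ell\ne p$ (we are rationalising and modding out by each prime separately, and the rational case is Proposition~\ref{prop:a1-rational}), the theorem applies verbatim.

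First I would dispose of the cases where one can simply take $S=\Spec(A)$ itself, invoking Example~\ref{example:hypothesis-holds}: namely when $A$ is a field, a zero-dimensional ring, $\bb Z^\sub{cyc}$, or an absolutely integrally closed ring of finite valuative dimension. (Note that a field, a zero-dimensional ring, and $\bb Z^\sub{cyc}$ all have finite valuative dimension, while finite valuative dimension is imposed by hypothesis in the absolutely integrally closed case; this is needed for Theorem~\ref{thm:A1inv}.) In each of these cases Example~\ref{example:hypothesis-holds} tells us $\Hyp(\Spec(A),p,j)$ holds for all $p$ and $j$, and we are done.

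It remains to treat the case where $A$ is a perfectoid valuation ring of finite rank. Here I would apply Theorem~\ref{thm:Bouis} of Bouis, which says precisely that $\Hyp(\roi,p,j)$ holds for all $j\ge0$ when $\roi$ is a perfectoid valuation ring of mixed characteristic $(0,p)$; but some care is needed because a single perfectoid valuation ring $A$ need not be of mixed characteristic — it could have equal characteristic $0$ or equal characteristic $p$. If $A$ has equal characteristic then $\Hyp(\Spec(A),p,j)$ is vacuous for every $p$ by Remark~\ref{rmk:Hyp-vacuous}, so we take $S=\Spec(A)$ and are done. If $A$ has mixed characteristic $(0,p_0)$ for some prime $p_0$, then for primes $p\ne p_0$ the hypothesis $\Hyp(\Spec(A),p,j)$ holds because $p$ is invertible on $A$ (Remark~\ref{rmk:Hyp-vacuous} again, or rather Remark~\ref{rmk:Hyp-vacuous}'s second sentence), while for $p=p_0$ it holds by Theorem~\ref{thm:Bouis}. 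A perfectoid valuation ring of finite rank has finite valuative dimension (its rank bounds its valuative dimension), so Theorem~\ref{thm:A1inv} applies with $S=\Spec(A)$ and arbitrary $N$, giving the claimed equivalence $\bb Z(j)^\cdh(X)\to\bb Z(j)^\cdh(\bb A^1_X)$ for all $j$.

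The only mild subtlety — and the step I would double-check most carefully — is the bookkeeping of which primes $p$ need the full strength of Bouis' theorem versus which are handled for free: for a fixed base $A$ one must separate the single ``bad'' residue characteristic from all the others, and confirm that Theorem~\ref{thm:A1inv}, which is stated $p$-locally, can be applied once for each prime and then reassembled via Proposition~\ref{prop:a1-rational} rationally. This reassembly is exactly the argument at the start of the proof of Theorem~\ref{thm:A1inv} (rationally plus mod $p$ for every $p$), so no new input is required. Thus the proof is essentially a matter of quoting Theorem~\ref{thm:A1inv}, Example~\ref{example:hypothesis-holds}, Theorem~\ref{thm:Bouis}, and Remark~\ref{rmk:Hyp-vacuous} in the appropriate combination for each listed class of $A$.
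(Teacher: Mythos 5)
Your proposal is correct and follows essentially the same route as the paper, whose proof is simply the observation that Theorem~\ref{thm:Bouis} and Example~\ref{example:hypothesis-holds} give the key hypothesis for all primes and weights, so Theorem~\ref{thm:A1inv} applies. Your extra bookkeeping (separating the residue characteristic of a mixed-characteristic perfectoid base from the invertible primes, and reassembling the $p$-local statements with Proposition~\ref{prop:a1-rational}) is just an expanded version of what the paper leaves implicit, and it is all accurate.
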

\begin{proof}
By Theorem \ref{thm:Bouis} and Example \ref{example:hypothesis-holds}, they key hypothesis holds for $A$ for all $j$ and all primes. So the result follows from Theorem \ref{thm:A1inv}.
\end{proof}

We do not know how to prove the following result without using algebraic $K$-theory:

\begin{corollary}\label{corol:low_weights_a1}
The presheaves $R\Gamma_\sub{cdh}(-,\bb Z)$ and $R\Gamma_\sub{cdh}(-,\bb G_m)$ are $\bb A^1$-invariant on all qcqs schemes.
\end{corollary}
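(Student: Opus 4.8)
The plan is to deduce Corollary \ref{corol:low_weights_a1} directly from the weight-$0$ and weight-$1$ identifications of cdh-motivic cohomology together with the case $j = 0, 1$ of Theorem \ref{thm:A1inv}. Indeed, recall from Theorem \ref{prop:basic_props_of_cdh_mot}(5)--(6) that there are natural equivalences $R\Gamma_\cdh(-,\bb Z) \simeq \bb Z(0)^\cdh$ and $c_1^\cdh : R\Gamma_\cdh(-,\bb G_m)[-1] \simeq \bb Z(1)^\cdh$ of presheaves on qcqs schemes. So $\bb A^1$-invariance of $R\Gamma_\cdh(-,\bb Z)$ (respectively $R\Gamma_\cdh(-,\bb G_m)$) is equivalent to $\bb A^1$-invariance of $\bb Z(0)^\cdh$ (respectively $\bb Z(1)^\cdh$).

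Now observe that, by Proposition \ref{prop:wt1-unconditional}, the key hypothesis $\Hyp(S,p,j)$ is \emph{unconditionally} true for all schemes $S$, all primes $p$, and $j = 0, 1$. Taking $S$ to be any qcqs scheme $X$ of finite valuative dimension and $N = 1$, the hypotheses of Theorem \ref{thm:A1inv} are satisfied, and we conclude that $\bb Z_{(p)}(j)^\cdh(X) \to \bb Z_{(p)}(j)^\cdh(\bb A^1_X)$ is an equivalence for $j \le 1$ and all primes $p$. Combined with the rational statement of Proposition \ref{prop:a1-rational} (which is unconditional and valid for all qcqs $X$), this gives that $\bb Z(j)^\cdh(X) \to \bb Z(j)^\cdh(\bb A^1_X)$ is an equivalence for $j = 0, 1$ and all qcqs $X$ of finite valuative dimension. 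To remove the finiteness-of-valuative-dimension hypothesis, we use that $\bb Z(j)^\cdh$ is finitary (Theorem \ref{prop:basic_props_of_cdh_mot}(4)): an arbitrary qcqs scheme $X$ is a cofiltered limit along affine transition maps of qcqs schemes $X_\lambda$ of finite type over $\Spec(\bb Z)$ (hence of finite valuative dimension), and $\bb A^1_X = \lim_\lambda \bb A^1_{X_\lambda}$, so the equivalence for each $X_\lambda$ passes to the limit.

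Chaining through the weight-$0$ and weight-$1$ identifications then yields the $\bb A^1$-invariance of $R\Gamma_\cdh(-,\bb Z)$ and $R\Gamma_\cdh(-,\bb G_m)$ on all qcqs schemes. There is no real obstacle here: the content was already packaged into Theorem \ref{thm:A1inv} and the unconditional verification of the low-weight key hypothesis in Proposition \ref{prop:wt1-unconditional}; the only thing to be careful about is the bookkeeping to reduce to finite valuative dimension via finitariness, and to note (as the paper already emphasizes, e.g.\ in the remark following Proposition \ref{prop:low-wts-ration}) that we genuinely do not know a proof of this cdh-local $\bb A^1$-invariance statement that bypasses algebraic $K$-theory, since the entire argument routes through the $K$-theoretic construction of the cdh-motivic Atiyah--Hirzebruch filtration used in the proof of Theorem \ref{thm:A1inv}.
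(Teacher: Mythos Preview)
Your proof is correct and follows the same strategy as the paper's: identify $R\Gamma_\cdh(-,\bb Z)$ and $R\Gamma_\cdh(-,\bb G_m)[-1]$ with $\bb Z(0)^\cdh$ and $\bb Z(1)^\cdh$ via Theorem~\ref{prop:basic_props_of_cdh_mot}(5)--(6), then invoke Proposition~\ref{prop:wt1-unconditional} and Theorem~\ref{thm:A1inv}.

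The only comment is that your finitariness detour is unnecessary. In Theorem~\ref{thm:A1inv} the finite valuative dimension hypothesis is on the base $S$, not on the scheme $X$: the conclusion holds for \emph{any} qcqs $S$-scheme $X$. So rather than taking $S = X$ and then approximating general $X$ by finite-type $\bb Z$-schemes, you may simply take $S = \Spec(\bb Z)$ (which has valuative dimension $1$) once and for all; since $\Hyp(\bb Z, p, 0)$ and $\Hyp(\bb Z, p, 1)$ hold by Proposition~\ref{prop:wt1-unconditional}, Theorem~\ref{thm:A1inv} directly gives the result for every qcqs scheme. This is how the paper's one-line proof should be read.
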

\begin{proof}
Recall that $R\Gamma_\sub{cdh}(-,\bb Z)\simeq \bb Z(0)^\sub{cdh}$ and $R\Gamma_\sub{cdh}(-,\bb G_m)[-1]\simeq \bb Z(1)^\sub{cdh}$ by Theorem \ref{prop:basic_props_of_cdh_mot}(5)\&(6). The key hypothesis holds at weights $0$ and $1$ for all primes, by Proposition \ref{prop:wt1-unconditional}, so the $\bb A^1$-invariance claim follows from Theorem \ref{thm:A1inv}.
\end{proof}

\subsection{Conditional projective bundle formula}\label{sec:pbf-fields}
In this section we prove the projective bundle formula for cdh-motivic cohomology, conditionally on the key hypothesis. We first recall the set-up: in Theorem \ref{prop:basic_props_of_cdh_mot}(6) we constructed a first Chern class equivalence \[c_1^\sub{cdh}: R\Gamma_\sub{cdh}(-,\bb G_m)[-1]\quis\bb Z(1)^\sub{cdh}\] by cdh-locally left Kan extending the analogous first Chern class equivalence of Corollary~\ref{cor:low-wts-ddk-fields}. In particular, this induces a natural map which we abusively denote
\[ c_1^{\cdh}:\opp{Pic}(X) = H^1_\Zar(X, \Gm) \to H^1_{\cdh}(X, \Gm) \isoto H^2_{\cdh}(X, \bb Z(1))\] for any qcqs scheme $X$. This allows us to formulate the projective bundle formula in the same way as in Theorem~\ref{thm_pbf_Acdh}: for any qcqs scheme $X$ and any rank $d$ projective bundle $\pi:P\to X$, and $j\ge0$, we have the map
\begin{equation}\label{eqn_pbf_cdh}
\bigoplus_{i=0}^d\bb Z(j-i)^{\cdh}(X)[-2i]\To \bb Z(j)^{\cdh}(P),
\end{equation}
induced by powers of the first cdh Chern class $c_1^{\cdh}(\roi_P(1)) \in H^2_{\cdh}(P, \bb Z(1))$ of the canonical line bundle $\roi_{P}(1)$.


From what we have already done, it is easy to establish that the projective bundle formula holds rationally for cdh-motivic cohomology:

\begin{proposition}\label{prop:q-pbf}
Let $X$ be a qcqs scheme, $P\to X$ a rank $d$ projective bundle, and $j\ge0$. Then the map \eqref{eqn_pbf_cdh} is rationally an equivalence:
\[
\bigoplus_{i=0}^d\bb Q(j-i)^{\cdh}(X)[-2i]\quis \bb Q(j)^{\cdh}(P)
\]
\end{proposition}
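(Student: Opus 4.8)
The plan is to reduce the projective bundle formula, rationally, to the known $\bb A^1$-invariant statement over a field or Dedekind domain, using the same two-step strategy that pervades this section: rationalising reduces everything to the Beilinson motivic spectrum $H\bb Q$, and cdh-locality plus finitariness reduces to valuation rings. Concretely, I would first invoke Theorem~\ref{prop:basic_props_of_cdh_mot}(7): for every qcqs scheme $Y$ there is a natural multiplicative splitting $\Fil^\star_\sub{cdh}\KH(Y)_\bb Q\simeq\bigoplus_{j\ge\star}\bb Q(j)^\sub{cdh}(Y)[2j]$, so that $\bb Q(j)^\sub{cdh}(Y)[2j]$ is, naturally in $Y$, a direct summand of $\KH(Y)_\bb Q$. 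Thus the map \eqref{eqn_pbf_cdh} rationally is a direct summand of the corresponding map
\[
\bigoplus_{i=0}^d\KH(X)_\bb Q[-2i]\To \KH(P)_\bb Q,
\]
where the $i$-th component is cup product with $c_1^\sub{cdh}(\roi_P(1))^i$ followed by $\pi^*$. The latter map is an equivalence by the projective bundle formula for homotopy $K$-theory, which follows from Bott periodicity for $\KGL$ and the orientation of $\KGL$ (Theorem~\ref{thm_pbf_for_general_E} applied to $\KGL_X$, noting $\omega^\infty\KGL_X=\KH$); alternatively it is classical that $\KH$ satisfies the projective bundle formula.

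The only subtlety is to match up the first Chern class in the $\KH$-statement with $c_1^\sub{cdh}$ so that the summand decomposition is compatible with the maps, i.e.\ to check that under the splitting $\Fil^1_\sub{cdh}\KH(-)_\bb Q\to \KH(-)_\bb Q$ the class $c_1^\sub{cdh}(\roi_P(1))\in H^2_\sub{cdh}(P,\bb Q(1))$ maps to (a power of the Bott element times) the $K$-theory first Chern class of $\roi_P(1)$. This is a compatibility of orientations which I would extract from Theorem~\ref{theorem:SH_mot_coh}(4) and Remark~\ref{rem_A_1st_Chern_class}: $c_1^\sub{cdh}$ was constructed by cdh-locally left Kan extending the first Chern class of Construction~\ref{cons:low-weights}, which by diagram~\eqref{eq:c1-det2} is compatible with the edge map from $\K^\sub{rk=0}$ to $\Fil^1_{\bb A}\KH$, hence with the orientation of $\KGL$. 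Since the rational splitting of Theorem~\ref{prop:basic_props_of_cdh_mot}(7) is compatible with that of Theorem~\ref{thm:rational}(2), which in turn was built using the Bott element and the canonical map $H\bb Q=s^0(1)_\bb Q\to\KGL_\bb Q$, the two Chern classes agree up to the Bott periodicity identification, and the decomposition of the $\KH$ projective bundle formula into Adams summands restricts on the weight-$j$ summand precisely to \eqref{eqn_pbf_cdh} after rationalisation.

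With this compatibility in hand the proof is essentially immediate, so I do not expect a serious obstacle; the main point requiring care is simply the bookkeeping of orientations described above, which is entirely formal given the cited results. One could alternatively give a more hands-on proof by reducing to henselian valuation rings of finite rank via finitariness and Proposition~\ref{proposition_checking_on_points}(1) (all the relevant presheaves being finitary cdh sheaves by Theorem~\ref{prop:basic_props_of_cdh_mot}(4)), then covering $\bb P^d_V$ by affines and using $\bb A^1$-invariance of $\bb Q(\star)^\sub{cdh}$ from Proposition~\ref{prop:a1-rational} together with the known projective bundle formula for Beilinson--Lichtenbaum/motivic cohomology over fields and Dedekind domains (Theorem~\ref{thm_BL_coh}, Theorem~\ref{thm_pbf_Acdh}); but the $K$-theoretic argument is cleaner and avoids a separate excision reduction, so that is the route I would present.
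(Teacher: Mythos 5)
Your overall strategy---descending the projective bundle formula for $\KH_\bb Q$ through the multiplicative rational splitting of Theorem~\ref{prop:basic_props_of_cdh_mot}(7)---is genuinely different from the paper's proof, which instead uses Corollary~\ref{corol:qj2} to identify $\bb Q(\star)^\sub{cdh}$ with $\bb Q(\star)^{\bb A,\sub{cdh}}$ compatibly with first Chern classes and then simply quotes Theorem~\ref{thm_pbf_Acdh} (Morel's Theorem~\ref{thm_pbf_for_general_E} applied to the oriented spectrum $f^*s^0(\KGL_\bb Z)$). Your route can be made to work, but as written the pivotal orientation-matching step is incorrect. The class $c_1^\sub{cdh}(\roi_P(1))$ does \emph{not} correspond, under the splitting, to the $\KGL$-orientation class $1-[\roi_P(-1)]$ (even up to Bott periodicity): the latter is not of pure Adams weight one. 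Writing $x=1-[\roi(-1)]$, one has $\psi^k(x)=1-(1-x)^k$, so the weight-one component of $x$ is $-\log(1-x)=x+\tfrac{x^2}{2}+\cdots$, and it is this class, not $x$ itself, that the splitting matches with $c_1^\sub{cdh}(\roi(1))$. The compatibilities you invoke (diagram \eqref{eq:c1-det2}, Theorem~\ref{theorem:SH_mot_coh}(4), Remark~\ref{rem_A_1st_Chern_class}) identify the two classes only after projection to the first graded piece, i.e.\ modulo $\Fil^2$, which is exactly the discrepancy. Consequently \eqref{eqn_pbf_cdh} is rationally a direct summand of the map whose $i$-th component is cup product with $(-\log(1-x))^i$ followed by $\pi^*$, not of the standard $\KH$-theoretic projective bundle map with components $x^i\pi^*$; so the assertion that ``the decomposition of the $\KH$ projective bundle formula into Adams summands restricts on the weight-$j$ summand precisely to \eqref{eqn_pbf_cdh}'' does not follow from what you cite.

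The gap is fixable. Since $x^{d+1}=0$ in $\K_0(\bb P^d_X)$, one has $-\log(1-x)=\sum_{k=1}^{d}x^k/k$, so the two families of maps differ by composition with a unitriangular, hence invertible, endomorphism of the source $\bigoplus_{i=0}^{d}\KH(X)_\bb Q$; therefore one is an equivalence if and only if the other is, and since the modified map decomposes as a direct sum over weights of the maps \eqref{eqn_pbf_cdh} (a direct sum of maps which is an equivalence has each summand an equivalence), the proposition follows. You should also tidy the twist bookkeeping: under the splitting, cup product with $c_1^\sub{cdh}(\roi(1))$ is a degree-zero endomorphism of $\KH(P)_\bb Q$, so the $\KH$-level statement should carry no shifts $[-2i]$ (or should carry Bott elements in their place). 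With these repairs your argument is complete, though it ends up no shorter than the paper's one-line reduction to Theorem~\ref{thm_pbf_Acdh}.
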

\begin{proof}
The canonical map $\Z(\star)^{\cdh}\to \Z(\star)^{\bb A, \cdh}$ of presheaves of $\bb E_\infty$-algebras is rationally an equivalence by Corollary \ref{corol:qj2}, and compatible with first Chern classes (since $c_1^\sub{cdh}$ is defined by cdh-locally left Kan extending $c_1^{\bb A,\sub{cdh}}$). So the proposition reduces to the projective bundle formula for $\bb A^1$-cdh-motivic cohomology, established in Theorem \ref{thm_pbf_Acdh}.
\end{proof}

%
%
%
Unfortunately, we do not know at this moment that $\Z(\star)^{\cdh}[2\star]$ is represented by a motivic spectrum (even if we assume the key hypothesis to ensure $\bb A^1$-invariance); for this we need to at least know that it enjoys the $\bb P^1$-bundle formula at all weights. Therefore, Morel's Theorem~\ref{thm_pbf_for_general_E} does not apply to show that \eqref{eqn_pbf_cdh} is an equivalence integrally. Instead we proceed as in the case of $\bb A^1$-invariance, using the key hypothesis to extract the projective bundle formula from the analogous property of $\KH$.

\begin{theorem}\label{thm:pbf-fields}
Let $S$ be a scheme of finite valuative dimension, fix $N\ge0$, and assume that $\Hyp(S,p,j)$ holds for all $j \le N$. Then, for any rank $d$ projective bundle $\pi:P\to X$ of qcqs $S$-schemes and $j\le N$, the map from~\eqref{eqn_pbf_cdh}
\[
\bigoplus_{i=0}^d\bb Z_{(p)}(j-i)^{\cdh}(X)[-2i]\To \bb Z_{(p)}(j)^{\cdh}(P),
\]
is an equivalence.
\end{theorem}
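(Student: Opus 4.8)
My plan is to follow the same strategy as the proof of Theorem~\ref{thm:A1inv}, namely to degenerate enough of a cdh-local Atiyah--Hirzebruch spectral sequence using the connectivity bounds supplied by the key hypothesis. First I would reduce, exactly as in Step~1 of that proof, to checking that a certain cofibre presheaf vanishes on henselian valuation rings of rank $\le 1$. Concretely, rationally the statement is Proposition~\ref{prop:q-pbf}, so it suffices to work modulo $p$; let $E(j)$ denote the cofibre, in presheaves of complexes, of the map $\bigoplus_{i=0}^d\bb F_p(j-i)^{\cdh}(-)[-2i]\to \bb F_p(j)^{\cdh}(\bb P^d_{(-)})$ induced by powers of $c_1^\cdh(\roi(1))$. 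Since $\bb Z(j)^\cdh$ is a finitary cdh sheaf satisfying Milnor excision (Theorem~\ref{prop:basic_props_of_cdh_mot}(4) and Theorem~\ref{theorem:exc-cdarc}), and since forming $\bb P^d_{(-)}$ preserves Milnor squares (being a flat base change, by \cite[Lemma 3.2.8]{ElmantoHoyoisIwasaKelly2021}), the presheaf $E(j)$ is again a finitary cdh sheaf satisfying henselian $v$-excision; by Theorem~\ref{thm:ehik} it thus suffices to prove $E(j)(V)=0$ for every henselian valuation ring $V$ of rank $\le 1$ with a map $\Spec(V)\to S$, and every $j\le N$.

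Next I would set up the analogue of the diagram~\eqref{eq:cofibre-a1}. Evaluating the cofibre sequence~\eqref{eqn:cdh-mot-to-et} on $\bb P^d_V$ and on $V$, and using the projective bundle formula for \'etale cohomology of $\bb Z[\tfrac1p]$-schemes (which holds unconditionally, e.g.\ by absolute purity or directly), together with the fact that $\bb P^d_{V[1/p]}=\bb P^d_V\otimes_V V[\tfrac1p]$, I obtain a $3\times 3$ diagram of cofibre sequences relating the mod-$p$ cdh-motivic cohomology of $\bb P^d_V$, the relevant \'etale cohomology, and $C(j)$ as in Lemma~\ref{lemm:use-key-hyp}. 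Writing $D(j)$ for the cofibre of $\bigoplus_i C(j-i)(V)[-2i]\to C(j)(\bb P^d_V)$, the diagram produces a cofibre sequence $(\bigoplus_i \bb F_p(j-i)^\cdh(V)[-2i]\to\bb F_p(j)^\cdh(\bb P^d_V))\to (\text{\'etale term, which vanishes})\to D(j)$, so that $E(j)(V)[1]\simeq D(j)$. From Lemma~\ref{lemm:use-key-hyp}(1), using $\Hyp(S,p,j)$, the complex $C(j)(\bb P^d_V)$ is concentrated in degrees $\ge j-1$, and hence so is $D(j)$; moreover the lowest cohomology $H^{j-1}(D(j))$ is the cokernel of $\bigoplus_i H^{j-1}(C(j-i)(V))\to H^{j-1}(C(j)(\bb P^d_V))$, which by the identification $H^{j-1}(C(j))\cong a_\cdh i_!\Omega^j_{\log}$ of Lemma~\ref{lemm:use-key-hyp}(2) is the cokernel of $H^0_\cdh(V,\Omega^j_{\log})\to H^0_\cdh(\bb P^d_{V/\bb F_p},\Omega^j_{\log})$ when $V$ has characteristic $p$ (and trivially zero otherwise). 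The vanishing of this cokernel is the projective bundle formula for $\Omega^j_{\log}$-cohomology of a field of characteristic $p$, which follows from Example~\ref{example_BL_cohomology}(2) and Theorem~\ref{thm_BL_coh}(2), or one can quote the projective bundle formula for syntomic cohomology directly (Theorem~\ref{theorem_syntomic_properties}(6)). This gives $E(j)(V)$ concentrated in degrees $\ge j+1$. On the other hand, the top row of the diagram shows $E(j)(V)$ is concentrated in degrees $\le j+d+2$ or so, but we need a sharper upper bound of $\le j+1$: here I would instead apply the cdh-local Atiyah--Hirzebruch filtration of Theorem~\ref{prop:basic_props_of_cdh_mot}(1),(3) to both $\KH(\bb P^d_V)/p$ and $\bigoplus_i\KH(V)/p$ (twisted), whose cofibre vanishes by the projective bundle formula for $\KH$; this yields a convergent spectral sequence $E_2^{i,j}=H^{i-j}(E(-j)(V))\Rightarrow 0$, and the lower bounds $H^m(E(j)(V))=0$ for $m\le j$ (valid for $j\le N$) force, exactly as in Step~4 of the proof of Theorem~\ref{thm:A1inv}, the groups $H^{j+1}(E(j)(V))$ and $H^{j+2}(E(j)(V))$ to survive to $E_\infty$ and hence vanish, degrees $\ge j+3$ being killed inductively against the available differentials.

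The main obstacle, as in the $\bb A^1$-invariance argument, is obtaining matching upper and lower connectivity bounds on $E(j)(V)$ so that the Atiyah--Hirzebruch spectral sequence abutting to zero has no room for nonzero entries at weight $j\le N$. The lower bound $\ge j+1$ is where the key hypothesis is genuinely used, via Lemma~\ref{lemm:use-key-hyp} and the (unconditional) projective bundle formulas for \'etale and for $\Omega^\star_{\log}$-cohomology; the upper bound must come not from the naive dimension estimate on $\bb P^d_V$ (which only gives $\le j+d+1$) but from the spectral sequence comparison with $\KH$, exactly paralleling Steps~3a--4 above. One technical point requiring care is that the relevant cofibre sequences and the identification $E(j)(V)[1]\simeq D(j)$ must be compatible with the Atiyah--Hirzebruch filtrations so that the spectral sequence genuinely abuts to the cofibre of an equivalence; this is formal once one notes that the first Chern class $c_1^\cdh(\roi(1))$ is the edge map of the cdh-motivic filtration on $\KH$, so multiplication by its powers is filtered, and the projective bundle decomposition of $\KH(\bb P^d_V)$ is filtered accordingly. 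I would also remark that, just as Corollary~\ref{cor:uncond_fields} follows from Theorem~\ref{thm:A1inv}, this theorem yields an unconditional projective bundle formula for cdh-motivic cohomology of schemes over fields, zero-dimensional rings, $\bb Z^{\mathrm{cyc}}$, perfectoid valuation rings of finite rank, and absolutely integrally closed rings of finite valuative dimension, via Theorem~\ref{thm:Bouis} and Example~\ref{example:hypothesis-holds}; this is recorded as Corollary~\ref{corol:unconditional-pn}.
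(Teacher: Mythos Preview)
Your approach has two gaps. First, the spectral sequence argument of Step~4 in Theorem~\ref{thm:A1inv} works because $B(j)(V)$ is supported in at most two consecutive cohomological degrees, so that every $d_r$ (for $r\ge 2$) either originates from or lands in zero. For your $E(j)(V)$ with the full $\bb P^d$-bundle map, the analogue of Step~3a only yields the upper bound $\le j+d+1$ (since $\bb P^d_V$ has valuative dimension $d+1$); combined with a lower bound $\ge j+1$ this leaves $d+1$ potentially nonzero degrees, and for $d\ge 2$ the differentials need not vanish. For instance, $d_2$ sends $H^{j+1}(E(j)(V))$ to $H^{j+4}(E(j+1)(V))$, which lies in the allowed range $[j+2,\,j+d+2]$ as soon as $d\ge 2$. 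Your claim that ``degrees $\ge j+3$ [are] killed inductively against the available differentials'' is unjustified: a spectral sequence abutting to zero does not force individual entries to vanish when nontrivial differentials may be present.

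Second, even for $d=1$ your lower bound is not established. From the long exact sequence for the cofibre $\bigoplus_i C(j-i)(V)[-2i]\to C(j)(\bb P^1_V)\to D(j)$, the group $H^{j-1}(D(j))$ is an extension of $\ker\big(H^j(\bigoplus)\to H^j(C(j)(\bb P^1_V))\big)$ by the cokernel you describe, and the $i=1$ summand $H^{j-2}(C(j-1)(V))$ contributes to that kernel via multiplication by $c_1^\cdh(\roi(1))$. In characteristic $p$ this injectivity amounts to $\Omega^{j-1}_{V,\log}\hookrightarrow H^1_\cdh(\bb P^1_V,\Omega^j_{\log})$, which is essentially the $\bb P^1$-bundle formula you are trying to prove. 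The paper's Step~3b resolves this: it first uses the already-proved $\bb A^1$-invariance (Theorem~\ref{thm:A1inv}) to identify $\widetilde{C(j)}(\bb P^1_V)\simeq\widetilde{C(j)}(\bb G_{m,V})[-1]$, gaining one degree of connectivity, and then reduces the remaining injectivity to the fraction field $K$, where $\bb P^1_K$ has valuative dimension $1$ and the two-degree spectral sequence argument applies directly. Having settled $d=1$, the paper deduces general $d$ by induction following D\'eglise's standard splitting argument, which uses both the $\bb P^1$-case and $\bb A^1$-invariance.
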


\begin{proof} First, since for all $j \ge 0$ the presheaf $\Z_{(p)}(j)^{\cdh}$ is a Zariski sheaf and the maps from~\eqref{eqn_pbf_cdh} are compatible with restriction to opens of $X$, we may reduce to the case that $P$ is trivialisable and hence to $P = \bb P_X^d \to X$ for $d \geq 0$. The case $d = 0$ is vacuous. We now treat the case of $d = 1$, i.e., the $\bb P^1$-bundle formula.

Since we have already proved the claim rationally in Proposition~\ref{prop:q-pbf}, it suffices to prove the claim modulo $p$. Before we proceed, let us explain some notation. For any presheaf $F: \Sch_X^\sub{qcqs,op} \rightarrow \Spt$, denote by 
\[
\widetilde{F}(\P^1_X) := \mathrm{fib}(F(\P^1_X) \xrightarrow{\infty^*} F(X)) \qquad \widetilde{F}(\Gm_X) := \mathrm{fib}(F(\Gm_X) \xrightarrow{1^*} F(X)).
\]
Observing that 1) the map $c_1^\sub{cdh}(\scr O(1))\pi^*: \bb F_p(j-1)^{\cdh}(X)[-2] \rightarrow \bb F_p(j)^\sub{cdh}(\bb P^1_X)$ vanishes after postcomposition with $\bb F_p(j)^{\cdh}(\P^1_X) \xrightarrow{\infty^*} \bb F_p(j)^{\cdh}(X)$ and 2) the composite $\bb F_p(j)^\cdh(X)  \xrightarrow{\pi^*} \bb F_p(j)^\cdh(\P^1_X)  \xrightarrow{\infty^*} \bb F_p(j)^\cdh(X)$ is equivalent to the identity, the $\bb P^1$-bundle formula then amounts to showing that the map
\begin{equation}\label{eq:reduced}
\bb F_p(j-1)^{\cdh}(X)[-2] \xrightarrow{c^\sub{cdh}_1(\scr O(1))\pi^*} \widetilde{\bb F_p(j)^{\cdh}}(\P^1_X)
\end{equation}
is an equivalence for $j \leq N$.

Just as in Theorem~\ref{thm:A1inv} we employ an argument that involves the degeneration of the cdh-local Atiyah--Hirzebruch spectral sequence.
By the compatibility of the first Chern class for cdh-motivic cohomology with the orientation of $\KH$-theory, as presented in Lemma \ref{lemma:HZA-orient-KGL-compat} and Construction \ref{cons:low-weights}, we see that \eqref{eq:reduced} is the shifted graded pieces of the map 
\[
\Fil_{\cdh}^{\star-1}\KH(X) \xrightarrow{(1-[\scr O(-1)])\pi^*} \widetilde{\Fil_{\cdh}^{\star}\KH}(\P^1_X),
\]
which is an equivalence on underlying spectra by the $\bb P^1$-bundle formula for $\KH$. Let us write $D(j)(X)$ for the cofibre of~\eqref{eq:reduced}. We need to show that $D(j)(X) = 0$ for all $j \le N$, which we do following similar steps of argument as in the proof of Theorem~\ref{thm:A1inv}. 

{\bf Step 1.}
Exactly as in Step 1 of Theorem~\ref{thm:A1inv}, we can reduce to checking that for any $j \leq N$, and any henselian valuation ring $V$ of rank $\le 1$ admitting a map $\Spec(V)\to S$, we have $D(j)(V) = 0$.

{\bf Step 2.}
Completely analogously to Step 2 of Theorem~\ref{thm:A1inv}, we construct the following diagram, which we shall investigate in the next step:
\begin{equation}\label{eq:cofibre-p1}
\begin{tikzcd}
\bb F_p(j-1)^{\cdh}(V)[-2] \ar{r} \ar{d} &   \widetilde{\bb F_p(j)^{\cdh}}(\P^1_V) \ar{r} \ar{d} &  D(j)(V)  \ar{d} \\
R\Gamma_{\et}(V[\tfrac{1}{p}];\mu_p^{\otimes j-1})[-2] \ar{r} \ar{d} & \widetilde{R\Gamma_{\et}}(\P^1_{V[\tfrac{1}{p}]},\mu_p^{\otimes j})  \ar{r} \ar{d} & 0 \ar{d} \\
C(j-1)(V)[-2] \ar{r} & \widetilde{C(j)}(\P^1_V) \ar{r} & D(j)(V)[1].
\end{tikzcd}
\end{equation}
%
%
%
Each row and column is a cofibre sequence (for the middle row, use the projective bundle formula in étale cohomology, see Theorem \ref{theorem_syntomic_properties}(6)).

{\bf Step 3a.}
We observe, for any $j\ge0$, that $D(j)(V)$ is concentrated in cohomological degrees $\leq j+2$. This follows from the same reasoning as Step 3a of Theorem~\ref{thm:A1inv} since that the valuative dimension of $\P^1_V$ is $\leq 2$.

{\bf Step 3b.}
This step is more involved that the corresponding Step 3b of Theorem~\ref{thm:A1inv}. We claim for any $j \leq N$, that $D(j)(V)$ is concentrated in cohomological degrees $\geq j+1$.  To begin proving this claim, consider the third row of~\eqref{eq:cofibre-p1}.  By Lemma~\ref{lemm:use-key-hyp}(1), the first term is concentrated in cohomological degrees $\geq j$. We claim that the second term is, in fact, concentrated in degrees $\geq j$ as well (Lemma~\ref{lemm:use-key-hyp} tells us that it is concentrated in degrees $\geq j-1$). To see this, note that the functor $C(j)$ is $\A^1$-invariant since $\bb F_p(j)^{\cdh}$ is already known to be $\A^1$-invariant by Theorem~\ref{thm:A1inv} for $j \leq N$. Therefore, coupled with Zariski excision applied to the usual open cover of $\bb P^1_V$ using two copies of $\bb A^1_V$, we have that $\widetilde{C(j)}(\P^1_V) \simeq \widetilde{C(j)}(\bb G_{m,V})[-1]$. We thus conclude that $\widetilde{C(j)}(\P^1_V)$ is, in fact, concentrated in degrees $\geq j$. Therefore, $D(j)(V)[1]$ is concentrated in degrees $\geq j-1$ and thus $D(j)(V)$ is concentrated in degrees $\geq j$. 
Consequently $D(j)(V)$ is concentrated in degrees $\geq j+1$ if and only if the map $H^{j-2}(C(j-1)(V)) \to H^j(\widetilde{C(j)}(\P^1_V))$ is injective; we now verify this injectivity statement.  Denoting by $K$ the fraction field of $V$, we first claim that $H^{j-2}(C(j-1)(V)) \to H^{j-2}(C(j-1)(K))$ is injective. To see this, we note that the domain is zero if $V$ is not characteristic $p$ (as a special case of Lemma~\ref{lemm:use-key-hyp}(2)) and its injectivity was proved in the course of showing that~\eqref{eqn:inj} was injective when $V$ is of characteristic $p > 0$. Therefore, by the functoriality of the comparison map $H^{j-2}(C(j-1)(-)) \to H^j(\widetilde{C(j)}(\P^1_{-}))$ we may replace $V$ by its fraction field $K$. But now, since now $\P^1_K$ has valuative dimension $1$, the cohomology groups of $D(j)(K)$ are concentrated in two degrees (the bound of Step 3a improves by 1). Thus, similarly to step 4 of Theorem~\ref{thm:A1inv}, we can degenerate the Atiyah--Hirzebruch spectral sequence to prove that $D(j)(K) = 0$, which implies the desired injectivity.

{\bf Step 4.}
We finally argue exactly as in Step 4 of Theorem~\ref{thm:A1inv}, since the cohomology groups of $D(j)(V)$ are concentrated in at most two degrees.  This completes the proof of the theorem in the case $d=1$.

We now proceed by induction to treat the general case $d\ge1$, following \cite[Proof of Theorem 2.1.13]{Deglise2019}; we only sketch the argument, remarking that the result follows formally from the $\bb P^1$-bundle formula and the $\bb A^1$-invariance of $\bb Z_{(p)}(j)^\sub{cdh}$ for $j \leq N$. Consider the following diagram of fibre sequences (where the bottom sequence is split):
\[
\begin{tikzcd}
 \mathrm{fib}(\bb Z_{(p)}(j)(\bb P_X^d) \rightarrow \bb Z_{(p)}(j)(\bb P_X^{d-1})) \ar{r} &  \bb Z_{(p)}(j)(\bb P_X^d) \ar{r} & \bb Z_{(p)}(j)(\bb P_X^{d-1})\\
\bb Z_{(p)}(j-d)(X)[-2r] \ar{u} \ar{r} & \bigoplus^{d}_{i=0} \bb Z_{(p)}(j-i)(X)[-2i] \ar{r} \ar{u} & \bigoplus^{d-1}_{i=0} \bb Z_{(p)}(j-i)(X)[-2i] \ar[swap]{u}{\simeq}.
\end{tikzcd}
\]
Here the top right horizontal map is induced by the inclusion into the first $d-1$-coordinates $\bb P_X^{d-1} \rightarrow \bb P_X^d$, and the middle and rightmost vertical maps are the ones given by~\eqref{eqn_pbf_cdh}. The composite from the bottom left term to the top middle term is given by multiplication by $c_1(\roi_{\bb P^r}(1))^{d}$ and thus is null-homotopic when composed with the map into the top right term; this gives the factorization onto the fibre term. The inductive hypothesis gives the labeled equivalence. To finish the proof, it suffices to prove that the left-most vertical map is an equivalence. By Theorem~\ref{thm:A1inv}, the presheaves $\bb Z_{(p)}(j)$ are all $\bb A^1$-invariant for $j = 0, \cdots, N$ and they are, by definition, Nisnevich sheaves. Therefore the claimed equivalence follows from a calculation in unstable motivic homotopy theory (in other words, $\bb A^1$-invariant Nisnevich sheaves of pointed spaces) following the last paragraph of \cite[Proof of Theorem 2.1.13]{Deglise2019} and \cite[Lemma 2.1.14]{Deglise2019} which we can apply to these presheaves.
\end{proof}


Just as in the case of $\bb A^1$-invariance, we explicitly point out some unconditional consequences of the previous theorem:

\begin{corollary}\label{corol:unconditional-pn}
Let $A$ be a perfectoid valuation ring of finite rank, or a field, or a zero-dimensional ring, or $\bb Z^\sub{cyc}$, or an absolutely integrally closed ring of finite valuative dimension, or if $p$ is invertible in $A$.  Then, for any rank $d$ projective bundle $\pi:P\to X$ of qcqs $A$-schemes, the maps from~\eqref{eqn_pbf_cdh}
\[
\bigoplus_{i=0}^d\bb Z_{(p)}(j-i)^{\cdh}(X)[-2i]\To \bb Z_{(p)}(j)^{\cdh}(P),
\]
are equivalences for all $j \le N$.
\end{corollary}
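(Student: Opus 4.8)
The plan is to deduce this corollary directly from Theorem~\ref{thm:pbf-fields} by checking, in each listed case, that the key hypothesis $\Hyp(A,p,j)$ holds for all $j$ (or at least all $j$ up to any fixed $N$) and that $A$ has finite valuative dimension, so that the hypotheses of Theorem~\ref{thm:pbf-fields} are met with $S=\Spec(A)$. Since the projective bundle formula map~\eqref{eqn_pbf_cdh} is functorial in the base, and $\Hyp$ passes to $A$-schemes (a map $\Spec(V)\to X\to\Spec(A)$ exhibits any mixed-characteristic henselian valuation ring under $X$ as one under $A$), it is enough to verify the hypotheses for $\Spec(A)$ itself.

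First I would dispense with the $p$-invertible case: if $p\in A^\times$ then $\Hyp(A,p,j)$ is vacuous for every $j$ by Remark~\ref{rmk:Hyp-vacuous} (equivalently, the syntomic-to-\'etale map is an equivalence by Lemma~\ref{lemma_syn_p-hens}(1)), and there is no finite-dimensionality issue since $\bb Z_{(p)}(j)^\sub{cdh}$ restricted to $\bb Z[\tfrac1p]$-schemes already agrees with a truncated \'etale theory by Corollary~\ref{cor:singular_bl}; in fact one can invoke Theorem~\ref{thm:pbf-fields} with $S=\Spec(\bb Z[\tfrac1p])$ once we know it has finite valuative dimension (it does, being Noetherian of dimension $1$). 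Next, for the remaining four cases I would quote exactly the verifications already carried out in Example~\ref{example:hypothesis-holds}: fields and zero-dimensional rings satisfy $\Hyp(A,p,j)$ for all $p,j$ because there are no (or only trivial, factoring through a residue field) mixed-characteristic valuation rings under them; $\bb Z^\sub{cyc}$ works because any $p$-adically complete mixed-characteristic valuation ring under it receives a map from the perfectoid valuation ring $\widehat{\bb Z_p[\zeta_{p^\infty}]}$, so Bouis' Theorem~\ref{thm:Bouis} applies; and an absolutely integrally closed ring of finite valuative dimension reduces, after replacing $A$ by its image in a rank-one $p$-adically complete valuation ring $V$, to the $\bb Z^\sub{cyc}$ case since such an $A$ then contains $\bb Z^\sub{cyc}$. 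One also needs finite valuative dimension in each case: a field or zero-dimensional ring has valuative dimension $0$; $\bb Z^\sub{cyc}$ has valuative (Krull) dimension $1$; and finite valuative dimension is assumed outright in the absolutely integrally closed and perfectoid-valuation-ring cases. The perfectoid valuation ring of finite rank case is literally Theorem~\ref{thm:Bouis} plus the finite-rank assumption.

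With $\Hyp(A,p,j)$ verified for all $j\le N$ (indeed all $j$, except one only gets the conclusion in the stated range $j\le N$ anyway) and $A$ of finite valuative dimension, Theorem~\ref{thm:pbf-fields} applied with $S=\Spec(A)$ immediately gives that for any rank $d$ projective bundle $\pi\colon P\to X$ of qcqs $A$-schemes and any $j\le N$, the map
\[
\bigoplus_{i=0}^d\bb Z_{(p)}(j-i)^{\cdh}(X)[-2i]\To \bb Z_{(p)}(j)^{\cdh}(P)
\]
is an equivalence, which is precisely the assertion of the corollary. There is essentially no obstacle here beyond bookkeeping: all the real work — the key hypothesis verifications (Theorem~\ref{thm:Bouis}, Example~\ref{example:hypothesis-holds}) and the conditional projective bundle formula (Theorem~\ref{thm:pbf-fields}) — is already done. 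The only mildly delicate point is making sure the finite-valuative-dimension hypothesis of Theorem~\ref{thm:pbf-fields} is satisfied in the $\bb Z^\sub{cyc}$ and $p$-invertible subcases, which it is because the relevant base rings are Noetherian of Krull dimension $\le 1$ and valuative dimension agrees with Krull dimension for Noetherian rings; and in the $p$-invertible case alternatively one can avoid even invoking $S$ of finite dimension by working relatively over $\Spec(\bb Z[\tfrac1p])$, or by noting the statement is local on $X$ and hence reduces to trivial bundles, where one can apply the $\bb A^1$-invariance and \'etale-comparison description directly.
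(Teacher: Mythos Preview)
Your approach is correct and follows exactly the (implicit) argument the paper has in mind, parallel to the proof of Corollary~\ref{cor:uncond_fields}: verify $\Hyp(A,p,j)$ for all $j$ via Theorem~\ref{thm:Bouis}, Example~\ref{example:hypothesis-holds}, and Remark~\ref{rmk:Hyp-vacuous}, confirm finite valuative dimension, and invoke Theorem~\ref{thm:pbf-fields}. One small correction: $\bb Z^\sub{cyc}$ is \emph{not} Noetherian (it is the ring of integers in an infinite algebraic extension of $\bb Q$), so your justification ``the relevant base rings are Noetherian of Krull dimension $\le 1$'' fails there; the correct argument is that $\bb Z^\sub{cyc}$ is integral over $\bb Z$, and integral extensions preserve valuative dimension, so $\bb Z^\sub{cyc}$ has valuative dimension $1$.
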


Turning to low weights motivic cohomology we obtain calculations of cdh cohomology of the constant sheaf $\bb Z$ and of $\bb G_m$.

\begin{corollary} \label{corol_p_low_weights} Let $X$ be a qcqs $X$-scheme and $\pi: P \to X$ be a projective bundle. Then we have natural equivalences
\[
R\Gamma_{\cdh}(X, \bb Z) \To R\Gamma_{\cdh}(P, \bb Z)
\]
and
\[
R\Gamma_{\cdh}(X, \bb Z)[-1] \oplus R\Gamma_{\cdh}(X, \Gm) \To R\Gamma_{\cdh}(P, \Gm). 
\]
\end{corollary}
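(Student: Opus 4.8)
The plan is to reduce this statement about the cdh cohomology of $\bb Z$ and $\bb G_m$ to the projective bundle formula for cdh-motivic cohomology in weights $0$ and $1$, which we have already available unconditionally. Recall from Theorem~\ref{prop:basic_props_of_cdh_mot}(5)\&(6) that there are natural equivalences $R\Gamma_{\cdh}(X,\bb Z)\simeq \bb Z(0)^{\cdh}(X)$ and $R\Gamma_{\cdh}(X,\bb G_m)[-1]\simeq \bb Z(1)^{\cdh}(X)$, compatible with the first Chern class: the cdh first Chern class $c_1^{\cdh}:\opp{Pic}(X)=H^1_{\cdh}(X,\bb G_m)\to H^2_{\cdh}(X,\bb Z(1))$ is exactly the shift of the identity under the second equivalence. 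So the two displayed maps in the corollary are, after these identifications, precisely the maps \eqref{eqn_pbf_cdh} in weights $j=0$ and $j=1$ for the projective bundle $\pi:P\to X$, namely
\[
\bb Z(0)^{\cdh}(X)\To \bb Z(0)^{\cdh}(P)\qquad\text{and}\qquad
\bb Z(1)^{\cdh}(X)[-2]\oplus\ldots\oplus\bb Z(1)^{\cdh}(X)\To\bb Z(1)^{\cdh}(P).
\]
Wait---one must be careful about the exact shape of the weight-one statement. For a rank $d$ bundle, \eqref{eqn_pbf_cdh} in weight $1$ reads $\bigoplus_{i=0}^d \bb Z(1-i)^{\cdh}(X)[-2i]\to \bb Z(1)^{\cdh}(P)$; since $\bb Z(j)^{\cdh}=0$ for $j<0$ (Remark~\ref{rem:negative-lisse} and Definition~\ref{def:cdh}), only the summands $i=0$ and $i=1$ survive, giving $\bb Z(1)^{\cdh}(X)\oplus\bb Z(0)^{\cdh}(X)[-2]\to\bb Z(1)^{\cdh}(P)$, which under the identifications above becomes $R\Gamma_{\cdh}(X,\bb G_m)[-1]\oplus R\Gamma_{\cdh}(X,\bb Z)[-2]\to R\Gamma_{\cdh}(P,\bb G_m)[-1]$; shifting by $[1]$ yields exactly the displayed map (with the $\bb Z$-summand appearing as $R\Gamma_{\cdh}(X,\bb Z)[-1]$). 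So the identification is purely bookkeeping of shifts.

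The next point is that the key hypothesis is unconditionally satisfied in weights $0$ and $1$: Proposition~\ref{prop:wt1-unconditional} shows $\Hyp(S,p,0)$ and $\Hyp(S,p,1)$ hold for every scheme $S$ and every prime $p$. Hence, taking $S=\Spec(\bb Z)$ and $N=1$ in Theorem~\ref{thm:pbf-fields}, we obtain that \eqref{eqn_pbf_cdh} is an equivalence with $\bb Z_{(p)}$-coefficients for all $j\le 1$, all primes $p$, and all projective bundles over qcqs schemes. To pass from $\bb Z_{(p)}$-coefficients to integral coefficients we combine this with the rational projective bundle formula of Proposition~\ref{prop:q-pbf} (also valid in all weights): a map of complexes is an equivalence if and only if it is a rational equivalence and a $\bb Z_{(p)}$-equivalence for every prime $p$ (equivalently, rationally and mod-$p$ an equivalence for all $p$, cf.\ Example~\ref{ex:rationalization-cdh}). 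This yields the integral projective bundle formula in weights $0$ and $1$, and translating back through the identifications of the first paragraph gives the two displayed equivalences.

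I do not expect a serious obstacle here; the work is entirely in matching up the indexing conventions and the shifts, and in checking that the first Chern class used in \eqref{eqn_pbf_cdh} for weight $1$ is compatible with the weight-one identification $R\Gamma_{\cdh}(-,\bb G_m)[-1]\simeq\bb Z(1)^{\cdh}$---but this compatibility is built into the definition of $c_1^{\cdh}$ (it was defined, in Theorem~\ref{prop:basic_props_of_cdh_mot}(6), by cdh-locally left Kan extending the weight-one first Chern class equivalence of Corollary~\ref{cor:low-wts-ddk-fields}). The only mild subtlety is the vanishing of negative-weight cdh-motivic cohomology, which truncates the sum in \eqref{eqn_pbf_cdh} to the two relevant summands; this is immediate. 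Thus the proof is short: invoke Theorem~\ref{thm:pbf-fields} with $N=1$ using Proposition~\ref{prop:wt1-unconditional}, combine with Proposition~\ref{prop:q-pbf} to get integral coefficients, and rewrite via Theorem~\ref{prop:basic_props_of_cdh_mot}(5)\&(6).
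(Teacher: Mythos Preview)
Your proposal is correct and follows exactly the paper's approach: identify the cdh cohomology of $\bb Z$ and $\bb G_m$ with weight $0$ and $1$ cdh-motivic cohomology via Theorem~\ref{prop:basic_props_of_cdh_mot}(5)\&(6), then invoke Theorem~\ref{thm:pbf-fields} with $S=\Spec(\bb Z)$ and $N=1$, using Proposition~\ref{prop:wt1-unconditional} to guarantee the key hypothesis. One minor remark: your separate appeal to the rational case is not strictly needed, since a map of complexes that is a $\bb Z_{(p)}$-equivalence for every prime $p$ is already an integral equivalence; but this redundancy does no harm.
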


\begin{proof} After Theorem~\ref{thm:pbf-fields}, the result follows from the identification of weight zero and weight one motivic cohomology in Theorem \ref{prop:basic_props_of_cdh_mot}(5)\&(6) and the fact that the key hypothesis holds at weights $0$ and $1$ for all primes, by Proposition \ref{prop:wt1-unconditional}.
\end{proof}


\begin{remark}[Role in proof of the main theorem]\label{rem:pbf-val}
In Theorem~\ref{thm:pbf-a1} we will prove the $\P^1$-bundle formula unconditionally after a further $\A^1$-localisation, which we will then show implies the main results of this paper. We note that the proof of that theorem relies on two crucial applications of the unconditional $\P^1$-bundle formula of Corollary~\ref{corol:unconditional-pn}: firstly in Lemma~\ref{eq:rigidity-fib}(2) in the case of equicharacteristic schemes, and secondly in Lemma~\ref{lemm:HFpcdhinf-pbf} in the case of schemes where the relevant prime $p$ is invertible. In particular, we only use Theorem \ref{thm:pbf-fields} in cases where the key hypothesis hold vacuously. 
\end{remark}

\section{The main theorems}\label{sec:a1-comparison}
We now arrive at the main results of this paper. The following comparison theorem, whose proof takes up the bulk of this section, unlocks all our main results.

\begin{theorem}\label{thm:comparison}
For any qcqs scheme $X$ and $j\in\bb Z$, the canonical map
\[
L_{\A^1}\Z(j)^{\cdh}(X) \To \Z(j)^{\bb A, \cdh}(X)
\]
is an equivalence.
\end{theorem}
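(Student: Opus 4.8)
The plan is to deduce Theorem~\ref{thm:comparison} from the unconditional projective bundle formula for the $\bb A^1$-localised cdh-motivic cohomology, which I would first establish as a separate statement (call it Theorem~\ref{thm:pbf-a1}): for every qcqs scheme $X$, rank $d$ projective bundle $\pi\colon P\to X$ and $j\in\bb Z$, the map
\[
\bigoplus_{i=0}^{d}\bigl(L_{\bb A^1}\bb Z(j-i)^{\cdh}\bigr)(X)[-2i]\To \bigl(L_{\bb A^1}\bb Z(j)^{\cdh}\bigr)(P)
\]
induced by powers of $c_1^{\cdh}(\roi_P(1))$ is an equivalence. This is the genuine obstacle; granting it, the comparison is essentially formal, as I now explain. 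First I would repackage $L_{\bb A^1}\bb Z(\star)^{\cdh}[2\star]$, together with the first Chern class $c_1^{\cdh}$ of Theorem~\ref{prop:basic_props_of_cdh_mot}(6), as an object $M$ of the category $\Gr\opp{Sh}_{\cdh,\bb A^1}(\Sch_{\bb Z},\Spt)_\sub{c}$ of \S\ref{sss:cdhvariant}. Concretely, $F:=\bb Z(\star)^{\bb A}[2\star]|_{\Sm_{\bb Z}}$ lies in $\CAlg\bigl(\Gr\opp{Sh}_{\Nis,\bb A^1}(\Sm_{\bb Z},\Spt)_\sub{c,pbf}\bigr)$ by Remark~\ref{rem:motivic_is_traditional}, and $L_{\cdh,\bb A^1}L^\sub{sm}$ carries $\cal Q_{\bb Z}$-modules to $\cal Q_{\bb Z,\cdh}$-modules (as in Lemma~\ref{lem:pullback}), so it sends $F$ to $L_{\bb A^1}L_{\cdh}\bb Z(\star)^{\lse}[2\star]=L_{\bb A^1}\bb Z(\star)^{\cdh}[2\star]=M$ (Definitions~\ref{def:lisse} and~\ref{def:cdh}; here $L_{\bb A^1}$ preserves cdh sheaves since the cdh site is excisive, Corollary~\ref{cor:cdh-cpt-gen}).

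The key identity is then
\[
\bigl(L_\sub{pbf}M\bigr)^{j}\simeq \bb Z(j)^{\bb A,\cdh}[2j]\qquad(j\in\bb Z),
\]
as cdh sheaves on $\Sch_{\bb Z}$, compatibly with the weight-$j$ unit map. To obtain it I would apply Corollary~\ref{corol_pullback_of_EM} with $X=\Spec(\bb Z)$ to $F$: the left-hand side is $(HF)(j)[2j]$ in the notation of \eqref{eqn:Ejcdh}, while $HF=s^0(\KGL_{\bb Z})$ by \eqref{eqn_mot_to_EM}; and using the Bott-filtration identification $s^{j}\KGL_{\bb Z}\simeq\bb T^{\otimes j}\otimes s^{0}\KGL_{\bb Z}$ of \S\ref{subsub:slices-kgl} together with Definition~\ref{definition_A^1_cdh_mot_coh} one checks that $s^0(\KGL_{\bb Z})(j)[2j]=\bb Z(j)^{\bb A,\cdh}[2j]$. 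Now $L_\sub{pbf}$ is the localisation onto the objects satisfying the $\bb P^1$-bundle formula, so Theorem~\ref{thm:pbf-a1} — in particular its $\bb P^1$-bundle case, which suffices and in turn recovers the general statement since $M$ is an $\bb A^1$-invariant Nisnevich sheaf, exactly as in the final step of the proof of Theorem~\ref{thm:pbf-fields} — says precisely that $M$ is $v$-periodic, i.e.\ that the unit $M\to L_\sub{pbf}M$ is an equivalence. In weight $j$ this reads $L_{\bb A^1}\bb Z(j)^{\cdh}\xto{\sim}\bb Z(j)^{\bb A,\cdh}$ on $\Sch_{\bb Z}$, hence on all qcqs schemes; a routine bookkeeping check — both sides are cdh sheaves restricting to the identity on $\Sm_{\bb Z}$, where \eqref{eq:cdh-to-aone} is an equivalence, so the cdh-sheaf extension is unique — identifies this unit with the canonical comparison map of Remark~\ref{rem_compar_map_cdh_to_Acdh}.

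It remains to prove Theorem~\ref{thm:pbf-a1}, which is where the real work lies. Both sides of the projective bundle map are finitary cdh sheaves, so after Zariski-localisation one reduces to trivial bundles $\bb P^d_X\to X$, and then by Proposition~\ref{proposition_checking_on_points} to henselian valuation rings of finite rank; since $\bb Z(j)^{\cdh}$ satisfies Milnor excision (Theorem~\ref{theorem:exc-cdarc}) and Milnor squares are stable under the flat base changes $(-)\mapsto\bb P^d_{(-)}$, the sheaf $L_{\bb A^1}\bb Z(j)^{\cdh}$ does too, reducing further to henselian valuation rings $V$ of rank $\le 1$, over which $\bb P^d_V$ has small cdh-cohomological dimension and Atiyah--Hirzebruch arguments become available. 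Rationally the claim follows from $\bb A^1$-invariance and the projective bundle formula for $\KH$ via Theorem~\ref{prop:basic_props_of_cdh_mot}(7) (cf.\ Proposition~\ref{prop:q-pbf}), and in weights $j\le 1$ it is unconditional by Corollary~\ref{corol_p_low_weights}. For the remaining mod-$p$ statements in higher weights one cannot invoke the conditional Theorem~\ref{thm:pbf-fields} over $V$ directly; the idea is to introduce Kato $K$-theory $\K^\sub{Kato}$ and its mod-$p$ motivic analogue $\bb F_p(\star)^\sub{Kato}$ (\S\ref{subsub:Kinfsel}, \S\ref{subsub:infinitesimal-mot}), measuring the deviation of $K$-theory from Clausen's Selmer $K$-theory and of $\bb F_p(\star)^{\cdh}$ from $L_{\cdh}\bb F_p(\star)^{\syn}$ respectively, and to use their localisation property (\S\ref{subsub:localisation}) to separate the contribution of the equicharacteristic locus from that of the locus where $p$ is invertible — on each of which the key hypothesis holds vacuously (Example~\ref{example:hypothesis-holds}), so that Corollary~\ref{corol:unconditional-pn} applies — before degenerating the resulting cdh-local Atiyah--Hirzebruch spectral sequences. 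This last step is the technically most demanding part of the argument; it is carried out in \S\ref{ss:main_proof}.
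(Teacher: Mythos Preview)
Your proposal is correct and follows essentially the same approach as the paper: you reduce Theorem~\ref{thm:comparison} to the $\P^1$-bundle formula for $L_{\bb A^1}\bb Z(\star)^{\cdh}$ via Corollary~\ref{corol_pullback_of_EM} applied to $F=\bb Z(\star)^{\bb A}[2\star]|_{\Sm_{\bb Z}}$ (using $HF=s^0(\KGL_{\bb Z})$), and then sketch the proof of the $\P^1$-bundle formula by splitting into the rational case and the mod-$p$ case via Kato $K$-theory and the localisation machinery of \S\ref{subsub:localisation}. This is precisely the paper's strategy in \S\ref{subsub:reduce-to-P1}--\S\ref{subsub:conclusion}.
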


The proof of the theorem will be presented in \S\ref{ss:main_proof}; first, in \S\ref{sec:slice-conj}--\ref{ss:applications2}, we record some applications.

\subsection{Voevodsky's slice conjectures}\label{sec:slice-conj}
We first address some conjectures stated in \cite{Voevodsky2002a}. Voevodsky postulated the existence of a motivic spectrum, defined over any base scheme $X$, which one denotes by ``$H\bb Z_X$'', representing an $\A^1$-invariant theory of motivic cohomology and subject to certain expected properties. We propose the motivic spectrum representing $\A^1$-motivic cohomology as a candidate. In other words, for any qcqs scheme $X$ we set $H\bb Z_X^\bb A := s^0(\KGL_{X}) \in \SH(X)$. From this viewpoint, \cite[Conjectures 1 \& 3]{Voevodsky2002a} are tautological and our goal is to establish \cite[Conjectures 10 \&  17]{Voevodsky2002a}, relating $H\bb Z_X^\bb A$ to the motivic sphere spectrum and its base change properties, which we now do.

Firstly, we see that $\bb A^1$-cdh-motivic cohomology has no negative weights:

\begin{corollary}\label{prop:Fil1-vanishing} For any qcqs scheme $f: X \rightarrow \Spec(\Z)$, we have that $\Fil^1_\sub{slice}f^*s^0(\KGL_{\Z}) = 0$.  Equivalently, $\Z^{\A,\cdh}(j)(X) = 0$ for $j<0$ and all qcqs schemes $X$.
\end{corollary}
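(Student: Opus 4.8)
The plan is to deduce both assertions from the comparison Theorem~\ref{thm:comparison} together with the elementary vanishing of cdh-motivic cohomology in negative weights, everything else being bookkeeping with adjunctions and Bott periodicity. First I would observe that for $j<0$ one has $\bb Z(j)^\sub{lse}=0$ by Remark~\ref{rem:negative-lisse}, hence $\bb Z(j)^\sub{cdh}=L_\sub{cdh}\bb Z(j)^\sub{lse}=0$ and a fortiori $L_{\bb A^1}\bb Z(j)^\sub{cdh}=0$. Applying Theorem~\ref{thm:comparison}, which identifies $L_{\bb A^1}\bb Z(j)^\sub{cdh}(X)$ with $\bb Z(j)^{\bb A,\sub{cdh}}(X)$ for every qcqs scheme $X$, yields at once that $\bb Z(j)^{\bb A,\sub{cdh}}(X)=0$ for all qcqs $X$ and all $j<0$; this is the second displayed assertion.

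It then remains to translate this vanishing into the statement about the slice filtration, which I expect to be purely formal. Fix a qcqs scheme $X$ with structure map $f:X\to\Spec(\bb Z)$, and write $E=f^*s^0(\KGL_{\bb Z})$. Since $\iota^1$ is fully faithful and $r^1$ is its right adjoint, $\Fil^1_\sub{slice}E=\iota^1 r^1 E$ vanishes if and only if $E$ is right orthogonal to $\SH(X)^{\eff}(1)$; and because $\SH(X)^{\eff}(1)=\bb T_X\otimes\SH(X)^{\eff}$ is the localising subcategory generated by the objects $M_X(Y)\otimes\bb T_X$ with $Y\in\Sm_X$, this is equivalent to the vanishing of $\map_{\SH(X)}(M_X(Y)\otimes\bb T_X,E)$ for every $Y\in\Sm_X$. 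Next I would use the Bott-periodicity equivalence $f^*s^{-1}(\KGL_{\bb Z})\simeq\bb T_X^{\otimes -1}\otimes f^*s^0(\KGL_{\bb Z})$ (from \eqref{eq:tate-slice} applied to $E=\KGL_{\bb Z}$ and the Bott periodicity of $\KGL$, followed by the monoidal functor $f^*$) together with the description of $\bb Z(j)^{\bb A,\sub{cdh}}$ on smooth $X$-schemes in Remark~\ref{rem_coh_over_base}: these identify $\map_{\SH(X)}(M_X(Y)\otimes\bb T_X,E)$ with $\bb Z(-1)^{\bb A,\sub{cdh}}(Y)[-2]$, which is zero by the previous paragraph. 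Conversely, running the same identities for all $j\le -1$ via $f^*s^j(\KGL_{\bb Z})\simeq\bb T_X^{\otimes j}\otimes f^*s^0(\KGL_{\bb Z})$ shows that $\Fil^1_\sub{slice}f^*s^0(\KGL_{\bb Z})=0$ forces $\bb Z(j)^{\bb A,\sub{cdh}}(Y)=0$ for all $Y\in\Sm_X$ and $j<0$; letting $X$ range over all qcqs schemes (each being smooth over itself) gives the asserted equivalence of the two statements.

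I do not anticipate a genuine obstacle: the full weight of the result is carried by Theorem~\ref{thm:comparison}. The only point that deserves a little care is the passage between ``$\Fil^1_\sub{slice}f^*s^0(\KGL_{\bb Z})=0$'' and ``$\bb Z(j)^{\bb A,\sub{cdh}}=0$ for $j<0$'', where one must remember that $\SH(X)^{\eff}(1)$ is generated under colimits \emph{and desuspensions} by $M_X(Y)\otimes\bb T_X$ (so that orthogonality need only be tested on these generators) and that the higher negative twists $\bb T_X^{\otimes j}$, $j<-1$, contribute nothing new once the case $j=-1$ is settled.
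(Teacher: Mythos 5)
Your proposal is correct and takes essentially the same route as the paper: the vanishing is read off from Theorem~\ref{thm:comparison} together with the observation (made after Definition~\ref{def:cdh}) that $\bb Z(j)^{\cdh}=0$ for $j<0$, and the equivalence of the two assertions is the same orthogonality argument on the generators $M_X(Y)\otimes\bb T_X$ of $\SH(X)^{\eff}(1)$, which you merely spell out in more detail via Bott periodicity and the $g_\sharp\dashv g^*$ adjunction. No gaps.
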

\begin{proof}
Given a motivic spectrum $E\in\SH(X)$, the universal property of $\Fil^1_\sub{slice}E$ shows that it vanishes if and only if $\map_{\sub{SH}(X)}(M_X(Y)\otimes\bb T_X,E)\simeq 0$ for all $Y\in\Sm_X$, or in other words if and only if $\map_{\sub{SH}(X)}(M_X(Y)\otimes\bb T_X^{\otimes j},E)\simeq 0$ for all $Y\in\Sm_X$ and $j<0$. Taking $E=f^*s^0(\KGL_\bb Z)$ and recalling the definition of $\Z(j)^{\A,\cdh}$, we see that $\Fil^1_\sub{slice}f^*s^0(\KGL_\bb Z)$ vanishes if and only if $\Z(j)^{\A,\cdh}(Y)\simeq 0$ for all $Y\in\Sm_X$ and $j\ge1$. Varying $X$, we have established the ``equivalently'' claim in the statement of the result.

We already noted after Definition \ref{def:cdh} that cdh-motivic cohomology vanishes in negative weights, so the desired vanishing of $\bb A^1$-cdh-motivic cohomology now follows from Theorem \ref{thm:comparison}.
\end{proof}

We can now show that the slice filtration on $\KGL$ is stable under base change, and hence compare $\Z(\star)^\A$ and $\Z(\star)^{\A,\cdh}$.
\begin{theorem}\label{thm:a1-a1cdh} Let $X$ be a qcqs scheme and $f: X \rightarrow \Spec(\Z)$ the structure map. Then the following hold:
\begin{enumerate}
\item the canonical maps
\[
f^*\Fil^j_\sub{slice}\KGL_{\mathbb{Z}} \To \Fil^j_\sub{slice}\KGL_X, \qquad j \in \mathbb{Z},
\]
are equivalences.
\item The canonical maps
\[
f^*s^j\KGL_{\mathbb{Z}} \To s^j\KGL_X, \qquad j \in \mathbb{Z},
\]
are equivalences. In particular, the map of $\bb E_\infty$-algebras in presheaves of complexes on $\mathbb{E}_{\infty}$-algebras on $\Sch_\bb Z^\sub{qcqs}$ 
\[
\Z(\star)^{\bb A, \cdh} \To \Z(\star)^{\bb A}
\]
is an equivalence. 
\end{enumerate}
\end{theorem}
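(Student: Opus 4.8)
**The plan is to derive Theorem \ref{thm:a1-a1cdh} from the comparison Theorem \ref{thm:comparison} by a bootstrapping argument along the slice filtration.** The essential point is already visible: by Definition \ref{definition_A^1_cdh_mot_coh} and \ref{definition_A^1_mot-_coh}, the map $\bb Z(j)^{\bb A,\cdh}\to\bb Z(j)^{\bb A}$ is, after shift, the map $\map_{\SH(X)}(1_X, f^*s^j\KGL_\bb Z)\to\map_{\SH(X)}(1_X, s^j\KGL_X)$, and more generally by Remark \ref{rem_coh_over_base} the same map tested against all $M_X(Y)\otimes\bb T_X^{\otimes -j}$ for $Y\in\Sm_X$ recovers the restriction of $\bb Z(j)^{\bb A,\cdh}\to\bb Z(j)^{\bb A}$ to $\Sm_X$. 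Since both are $\bb A^1$-invariant Nisnevich sheaves (Theorem \ref{theorem:SH_mot_coh}(2)), proving (2) is equivalent to proving that $f^*s^j\KGL_\bb Z\to s^j\KGL_X$ is an equivalence for all $j\in\bb Z$; and by the Bott periodicity equivalences $s^j\KGL\simeq s^0\KGL\otimes\bb T^{\otimes j}$ of \eqref{eq:tate-slice} (valid compatibly over the base and over $X$), it suffices to treat $j=0$. So the crux is: $f^*s^0(\KGL_\bb Z)\to s^0(\KGL_X)$ is an equivalence.

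\textbf{First I would reduce the weight-zero slice statement to the statement that $f^*s^0(\KGL_\bb Z)$ is co-effective, i.e.\ that $\Fil^1_\sub{slice}f^*s^0(\KGL_\bb Z)=0$.} Indeed, $f^*$ preserves effectivity (Remark \ref{rem:naturality}), so $f^*s^0(\KGL_\bb Z)\in\SH(X)^\eff$; hence the natural map $f^*s^0(\KGL_\bb Z)\to s^0(f^*\KGL_\bb Z)\simeq s^0(\KGL_X)$ factors as $f^*s^0(\KGL_\bb Z)\to \Fil^0_\sub{slice}f^*s^0(\KGL_\bb Z)$ followed by taking the zeroth slice. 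If $f^*s^0(\KGL_\bb Z)$ is co-effective then $\Fil^1_\sub{slice}$ of it vanishes, so $s^0$ of it is itself, and it remains to see that the first slice of $f^*s^0(\KGL_\bb Z)$ as a motivic spectrum — equivalently its image under $s^0$ — agrees with $s^0(\KGL_X)$. But the co-effectivity is exactly the content of Corollary \ref{prop:Fil1-vanishing}, which was deduced from Theorem \ref{thm:comparison} together with the elementary vanishing $\bb Z(j)^\cdh=0$ for $j<0$. Thus I would invoke Corollary \ref{prop:Fil1-vanishing} to get $\Fil^1_\sub{slice}f^*s^0(\KGL_\bb Z)=0$, and then note that for an effective, co-effective motivic spectrum $E$ one has $E\simeq s^0 E$ tautologically (the slice filtration being exhaustive, Remark \ref{rem:exhaustive}, forces $E=\Fil^0_\sub{slice}E$, and co-effectivity gives $s^0E=E$). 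Applying this to $E=f^*s^0(\KGL_\bb Z)$ yields $f^*s^0(\KGL_\bb Z)\simeq s^0(f^*s^0(\KGL_\bb Z))\simeq s^0(\KGL_X)$, where the last equivalence uses that $s^0$ of an effective spectrum only depends on its effective cover, which for $\KGL_X$ receives the map from $f^*s^0(\KGL_\bb Z)$ inducing the desired comparison; one checks this comparison is the canonical one by naturality of \eqref{eq:natural_slice}. This proves (2) for $j=0$, hence for all $j$ by the Bott shift, hence the equivalence $\bb Z(\star)^{\bb A,\cdh}\to\bb Z(\star)^{\bb A}$ after running $\omega^{\infty,\gr}$ and observing compatibility of multiplicative structures via Remark \ref{rem:mult}.

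\textbf{Next I would upgrade to the filtered statement (1).} Having (2), one proves that $f^*\Fil^j_\sub{slice}\KGL_\bb Z\to\Fil^j_\sub{slice}\KGL_X$ is an equivalence by descending induction on the slices, exactly as in the proof of Theorem \ref{thm:rational}(1): the slice filtration is exhaustive (Remark \ref{rem:exhaustive}) and $f^*$ preserves colimits, so for $j\ll 0$ both sides equal $f^*\KGL_\bb Z\simeq\KGL_X$; the inductive step compares the cofibre sequences $\Fil^{j+1}_\sub{slice}\to\Fil^j_\sub{slice}\to s^j$ over the base and over $X$, pulled back along $f^*$ which is exact, and uses the weight-$j$ slice equivalence from (2) together with the inductive hypothesis at level $j+1$ and the two-out-of-three property to conclude at level $j$. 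This gives (1) for all $j$; multiplicativity of these identifications is automatic since all the maps in sight are the canonical comparison maps \eqref{eq:natural_slice}, which are multiplicative by the discussion in Remark \ref{rem:mult}.

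\textbf{The main obstacle is entirely upstream: it is Theorem \ref{thm:comparison} itself}, since Corollary \ref{prop:Fil1-vanishing} — the co-effectivity input on which the whole argument hinges — is a direct consequence of that comparison. Granting Theorem \ref{thm:comparison}, the passage to Theorem \ref{thm:a1-a1cdh} is formal: it is a matter of organizing the reduction to the weight-zero slice, invoking co-effectivity, and running a routine induction down the slice filtration for the filtered refinement. No new geometric input is needed at this stage; the only points requiring a little care are (i) the identification of the various comparison maps with the canonical transformation \eqref{eq:natural_slice}, so that one is genuinely proving the \emph{canonical} maps are equivalences and not merely that abstract equivalences exist, and (ii) checking that passing from the motivic-spectrum statement to the presheaf-of-complexes statement is compatible with the $\bb E_\infty$-algebra structures, which follows from the lax monoidality recorded in Remark \ref{rem:mult} applied to $\omega^{\infty,\gr}$.
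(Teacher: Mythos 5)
Your overall architecture is reasonable: the reduction to $j=0$ by Bott periodicity, the identification of Corollary \ref{prop:Fil1-vanishing} (hence Theorem \ref{thm:comparison}) as the essential input, and the fact that parts (1) and (2) determine each other all match the paper, which likewise deduces the theorem from Corollary \ref{prop:Fil1-vanishing} (though the paper proves (1) first and obtains (2) by taking cofibres, whereas you argue in the opposite order). However, your central step has a genuine gap. You claim $f^*s^0(\KGL_\bb Z)\simeq s^0(f^*s^0(\KGL_\bb Z))\simeq s^0(\KGL_X)$, justifying the second equivalence by ``$s^0$ of an effective spectrum only depends on its effective cover, which for $\KGL_X$ receives the map from $f^*s^0(\KGL_\bb Z)$''. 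This is not an argument: there is no canonical map from $f^*s^0(\KGL_\bb Z)$ to the effective cover $\kgl_X$ (the canonical comparison maps go $f^*\kgl_\bb Z\to\kgl_X$ and $f^*\kgl_\bb Z\to f^*s^0\KGL_\bb Z$), and effectivity plus co-effectivity of $f^*s^0(\KGL_\bb Z)$ only says that it equals its own zeroth slice; applying $s^0$ to the canonical comparison map $f^*s^0\KGL_\bb Z\to s^0\KGL_X$ just returns that same map, so the reasoning is circular. What is actually needed — and what constitutes the paper's proof — is that the anti-effective part of $f^*\KGL_\bb Z$ contributes nothing: writing $\Fil^{<0}_\sub{slice}\KGL_\bb Z=\mathrm{cofib}(\Fil^0_\sub{slice}\KGL_\bb Z\to\KGL_\bb Z)$, one must show $\Fil^0_\sub{slice}f^*\Fil^{<0}_\sub{slice}\KGL_\bb Z=0$. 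The paper does this by exhibiting $\Fil^{<0}_\sub{slice}\KGL_\bb Z$ as the colimit (vanishing by exhaustiveness) of the tower $\Fil^{<0}\to\Fil^{<-1}\to\cdots$ whose successive fibres are $\bb T^{\otimes -k-1}\otimes s^0\KGL_\bb Z$, and then using $\Fil^0_\sub{slice}\bigl(\bb T^{\otimes -k-1}\otimes f^*s^0\KGL_\bb Z\bigr)\simeq\bb T^{\otimes -k-1}\otimes\Fil^{k+1}_\sub{slice}f^*s^0\KGL_\bb Z=0$, i.e.\ Corollary \ref{prop:Fil1-vanishing} applied in every negative Tate twist, together with the commutation of $\Fil^0_\sub{slice}f^*$ with this colimit. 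Your single invocation of co-effectivity does not substitute for this: co-effectivity rules out maps from $1$-effective objects \emph{into} $f^*s^0\KGL_\bb Z$, whereas what must be excluded is effective material sitting inside $f^*$ of the negatively filtered part of $\KGL_\bb Z$, and $f^*$ does not commute with $\Fil^\star_\sub{slice}$ a priori.

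A secondary point: in your deduction of (1) from (2), the base case ``for $j\ll0$ both sides equal $\KGL_X$'' is false — the slice filtration does not stabilise at any finite negative level — so the induction as stated never starts. This is repairable: the fibres $F^j$ of the comparison maps $f^*\Fil^j_\sub{slice}\KGL_\bb Z\to\Fil^j_\sub{slice}\KGL_X$ have all transition maps $F^j\to F^{j-1}$ equivalences (their cofibres are the fibres of the slice comparisons, which vanish by (2)) and colimit $\fib(f^*\KGL_\bb Z\to\KGL_X)=0$ by exhaustiveness, whence $F^j=0$ for all $j$. But note that this repair, like the main missing step, runs through an exhaustiveness/colimit argument rather than any finite base case, and neither is supplied in your write-up.
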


\begin{proof}
Let us write
\[
\Fil^{<j}_\sub{slice}:= \mathrm{cofib}(\Fil^{j}_\sub{slice} \rightarrow \id) 
\]
for the ``quotients'' of the slice filtration. To prove (1), we may assume $j = 0$ by Bott periodicity as in \S\ref{subsub:slices-kgl}, i.e., the fact that $\Fil^j_\sub{slice} \KGL \wequi \bb T^{\otimes j} \otimes \Fil^0_\sub{slice} \KGL$. Applying $\Fil^0_\sub{slice}f^*$ to the cofibre sequence $\Fil^0_\sub{slice}\KGL_{\mathbb{Z}} \rightarrow \KGL_{\bb Z} \rightarrow \Fil^{<0}_\sub{slice}\KGL_{\mathbb{Z}}$ yields a cofibre sequence
\[
\Fil^0_\sub{slice}f^*\Fil^0_\sub{slice}\KGL_{\mathbb{Z}} \rightarrow \Fil^0_\sub{slice}\KGL_X \rightarrow \Fil^0_\sub{slice}f^*\Fil^{<0}_\sub{slice}\KGL_{\mathbb{Z}}.
\]
Since $f^*$ preserves effective objects, we have that $\Fil^0_\sub{slice}f^*\Fil^0_\sub{slice}\KGL_{\mathbb{Z}} \simeq f^*\Fil^0_\sub{slice}\KGL_{\mathbb{Z}}$, and therefore it suffices to prove that $\Fil^0_\sub{slice}f^*\Fil^{<0}_\sub{slice}\KGL_{\mathbb{Z}} \simeq 0$. 

To prove this, it is enough by exhaustiveness of the slice filtration (Remark \ref{rem:exhaustive}) to show that the maps
\[
\Fil_\sub{slice}^0f^*\Fil_\sub{slice}^{<0}\KGL_{\mathbb{Z}} \rightarrow \Fil_\sub{slice}^0f^*\Fil_\sub{slice}^{<-1}\KGL_{\mathbb{Z}}  \rightarrow \cdots \rightarrow \Fil_\sub{slice}^0f^*\Fil_\sub{slice}^{<-j}\KGL_{\mathbb{Z}} \rightarrow \cdots.
\]
are all equivalences.  For any $j\ge0$, this follows by applying $\Fil_\sub{slice}^0f^*$ to the cofibre sequence
\[
\mathbb{T}^{\otimes -j-1} \otimes s^0\KGL_{\mathbb{Z}} \rightarrow  \Fil_\sub{slice}^{<-j}\KGL_{\mathbb{Z}} \rightarrow \Fil_\sub{slice}^{<-j-1}\KGL_{\mathbb{Z}}.
\]
and observing that
\begin{eqnarray*}
\Fil_\sub{slice}^0f^*(\mathbb{T}^{\otimes -j-1} \otimes s^0\KGL_{\mathbb{Z}}) & \simeq & \Fil_\sub{slice}^0(\mathbb{T}^{\otimes -j-1} \otimes f^*s^0\KGL_{\mathbb{Z}}) \\
 & \simeq & \mathbb{T}^{\otimes -j-1}\otimes\Fil_\sub{slice}^{j+1}f^*s^0\KGL_{\mathbb{Z}}\\
 & \simeq & 0,
\end{eqnarray*}
where the second equivalence is~\eqref{eq:tate-slice} and the third equivalence is Corollary~\ref{prop:Fil1-vanishing}.

Part (2) is immediate since $s^j$ is the cofibre of the map $\Fil^{j+1}_\sub{slice} \rightarrow \Fil^j_\sub{slice}$. 
\end{proof}

We now present several corollaries of Theorem \ref{thm:a1-a1cdh}. First we explicitly record the conditional agreement of the cdh- and $\bb A^1$-motivic cohomologies:

\begin{corollary}\label{corol_cdh_is_A}
Let $S$ be a qcqs scheme of finite valuative dimension, $p$ a prime number, and $N\ge0$; assume that $\Hyp(S,p,j)$ holds for all $j\le N$. Then, for any qcqs $S$-scheme $X$ and integer $j\le N$, the canonical map \[\bb Z_{(p)}(j)^\sub{cdh}(X)\To \bb Z_{(p)}(j)^{\bb A}(X)\]
is an equivalence.
\end{corollary}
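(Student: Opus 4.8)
The plan is to chain together two equivalences already established in the paper. First, Theorem~\ref{thm:A1inv} gives, under the hypothesis $\Hyp(S,p,j)$ for all $j\le N$, that $\bb Z_{(p)}(j)^\sub{cdh}(X)\to\bb Z_{(p)}(j)^\sub{cdh}(\bb A^1_X)$ is an equivalence for $j\le N$; in other words $\bb Z_{(p)}(j)^\sub{cdh}$ is already $\bb A^1$-invariant in the relevant weights, so the canonical map $\bb Z_{(p)}(j)^\sub{cdh}(X)\to L_{\bb A^1}\bb Z_{(p)}(j)^\sub{cdh}(X)$ is an equivalence for all $j\le N$ and all qcqs $S$-schemes $X$. (Here I would note that $L_{\bb A^1}$ commutes with the localisation $(-)_{(p)}$, since both are colimit constructions, so one may work with $\bb Z_{(p)}(j)^\sub{cdh}$ directly.)

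Second, Theorem~\ref{thm:comparison} gives unconditionally, for every qcqs scheme and every weight, that $L_{\bb A^1}\bb Z(j)^\sub{cdh}(X)\to\bb Z(j)^{\bb A,\sub{cdh}}(X)$ is an equivalence; localising at $p$ yields $L_{\bb A^1}\bb Z_{(p)}(j)^\sub{cdh}(X)\xrightarrow{\simeq}\bb Z_{(p)}(j)^{\bb A,\sub{cdh}}(X)$. Finally, Theorem~\ref{thm:a1-a1cdh}(2) identifies $\bb Z(j)^{\bb A,\sub{cdh}}$ with $\bb Z(j)^{\bb A}$ as $\bb E_\infty$-algebras in presheaves of complexes, hence after $p$-localisation $\bb Z_{(p)}(j)^{\bb A,\sub{cdh}}(X)\simeq\bb Z_{(p)}(j)^{\bb A}(X)$. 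Composing the three equivalences produces the desired equivalence $\bb Z_{(p)}(j)^\sub{cdh}(X)\to\bb Z_{(p)}(j)^{\bb A}(X)$, and one checks it agrees with the canonical comparison map: the composite $\bb Z(j)^\sub{cdh}\to L_{\bb A^1}\bb Z(j)^\sub{cdh}\to\bb Z(j)^{\bb A,\sub{cdh}}\to\bb Z(j)^{\bb A}$ is, by construction of these maps in Remark~\ref{rem_compar_map_cdh_to_Acdh} and Theorem~\ref{thm:a1-a1cdh}, exactly the natural map.

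This is essentially a bookkeeping argument assembling results proved earlier, so there is no substantial obstacle. The one point requiring a small amount of care is the compatibility of the maps: I would want to confirm that the equivalence $\bb Z_{(p)}(j)^\sub{cdh}(X)\to\bb Z_{(p)}(j)^{\bb A}(X)$ obtained by the above chain really is the canonical comparison map (the one induced by the universal property of $\bb Z(\star)^\sub{cdh}$ together with the identification $\bb Z(\star)^{\bb A,\sub{cdh}}|_{\Sm_\bb Z}\simeq\bb Z(\star)^{\bb A}|_{\Sm_\bb Z}$), rather than merely some equivalence. This follows because each of the three constituent maps is the canonical one in its respective theorem statement, and naturality of $L_{\bb A^1}$ ensures the first triangle commutes; since the statement only asserts that "the canonical comparison map" is an equivalence, it suffices to observe that composition of canonical maps is canonical.
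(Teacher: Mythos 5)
Your proposal is correct and follows essentially the same route as the paper: the paper's proof likewise observes that Theorem~\ref{thm:A1inv} makes $\bb Z_{(p)}(j)^\sub{cdh}$ $\bb A^1$-invariant on qcqs $S$-schemes for $j\le N$ and then combines Theorems~\ref{thm:comparison} and~\ref{thm:a1-a1cdh}(2). Your extra remarks on $p$-localisation commuting with $L_{\bb A^1}$ and on the compatibility of the canonical maps are fine but are exactly the bookkeeping the paper leaves implicit.
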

\begin{proof}
By Theorem \ref{thm:A1inv}, for any $j\le N$ the cdh-motivic cohomology $\bb Z_{(p)}(j)^\sub{cdh}$ is $\bb A^1$-invariant on qcqs $S$-schemes. So the desired equivalence follows by combining Theorems \ref{thm:comparison} and \ref{thm:a1-a1cdh}(2).
\end{proof}

The next consequence treats the convergence of the slice filtration:

\begin{corollary}\label{cor:bdd-a1}
For any qcqs scheme $X$, the canonical comparison maps of filtered spectra 
\[
L_{\bb A^1}\mathrm{Fil}_\sub{cdh}^{\star}\KH(X) \To \mathrm{Fil}_{\bb A, \cdh}^{\star}\KH(X)\To\mathrm{Fil}_{\bb A}^{\star}\KH(X)
\]
are equivalences. If $X$ furthermore has finite valuative dimension $\le d$, then $\Fil_\bb A^j\KH(X)$ is $j-d$-connective for all $j\in\bb Z$, and so the filtration is complete.
\end{corollary}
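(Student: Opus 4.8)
The statement has two parts. For the first, I would obtain the chain of equivalences $L_{\bb A^1}\mathrm{Fil}_\sub{cdh}^{\star}\KH(X)\To \mathrm{Fil}_{\bb A,\cdh}^{\star}\KH(X)\To\mathrm{Fil}_{\bb A}^{\star}\KH(X)$ by working one graded piece at a time, descending the filtration. The graded piece statement $L_{\bb A^1}\bb Z(j)^\sub{cdh}(X)\simeq \bb Z(j)^{\bb A,\cdh}(X)$ is exactly Theorem~\ref{thm:comparison}, and $\bb Z(j)^{\bb A,\cdh}\simeq\bb Z(j)^\bb A$ on all qcqs schemes is Theorem~\ref{thm:a1-a1cdh}(2). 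So the content is purely that these weightwise equivalences assemble: since all three filtrations are $\bb N$-indexed and exhaustive (Theorem~\ref{prop:basic_props_of_cdh_mot}(1), Theorem~\ref{theorem:SH_mot_coh}(1), and exhaustiveness of the slice filtration via Remark~\ref{rem:exhaustive}), and since $L_{\bb A^1}$ is exact, one proves by induction on $j\ge0$ that $L_{\bb A^1}\mathrm{Fil}_\sub{cdh}^{j}\KH(X)\to\mathrm{Fil}_{\bb A,\cdh}^{j}\KH(X)$ is an equivalence, using the cofiber sequences $\mathrm{Fil}^{j+1}\to\mathrm{Fil}^{j}\to\mathrm{gr}^j$ together with the graded-piece equivalence and the five lemma; the base case $j=0$ uses that both filtrations are $\bb N$-indexed so $\mathrm{Fil}^0\KH=\KH$ on each side, and $L_{\bb A^1}\KH\simeq\KH$ since $\KH$ is already $\bb A^1$-invariant. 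The comparison to $\mathrm{Fil}_\bb A^\star$ is then immediate from Theorem~\ref{thm:a1-a1cdh}, either by the same inductive argument or directly from the equivalence of filtered spectra $f^*\Fil^\star_\sub{slice}\KGL_\bb Z\to\Fil^\star_\sub{slice}\KGL_X$ of Theorem~\ref{thm:a1-a1cdh}(1) after applying $\map_{\SH(X)}(1_X,-)$.

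For the second part, the connectivity bound, I would argue as follows. By the first part it suffices to bound $L_{\bb A^1}\mathrm{Fil}_\sub{cdh}^j\KH(X)$. By Theorem~\ref{prop:basic_props_of_cdh_mot}(3), when $X$ has valuative dimension $\le d$ the presheaf $\mathrm{Fil}_\sub{cdh}^j\KH(X)$ is supported in cohomological degrees $\le d-j$, i.e.\ is $(j-d)$-connective. The issue is that $\bb A^1$-localisation could a priori destroy such a bound; however, $L_{\bb A^1}$ here means $\bb A^1$-localisation within cdh sheaves of spectra on $\Sch^\sub{qcqs}$ (or equivalently, we may compute it after restricting to finitely presented $X$-schemes and sheafifying), and by Theorem~\ref{thm:A1inv}/Theorem~\ref{thm:pbf-fields} together with Theorem~\ref{thm:comparison} the target $\mathrm{Fil}_{\bb A}^j\KH$ is already representable by $\map_{\SH(X)}(1_X,\Fil^j_\sub{slice}\KGL_X)$. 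So instead I would bound $\Fil^j_\sub{slice}\KGL_X$ directly: the point is that the very effective machinery of \S\ref{ss:slice} applies. Concretely, $f^*\kgl_\bb Z$ is very effective (this is exactly Remark~\ref{remark_completeness_of_fil}(2), reproved via $\cal V$ since $\cal V_X\simeq f^*\cal V_\bb Z$ is very effective and, now that Theorem~\ref{thm:a1-a1cdh} is available, $\cal V_X\to\kgl_X$ and $f^*\kgl_\bb Z\to\kgl_X$ are equivalences by the eventual Corollary~\ref{cor:eff-base-change}), hence all $\Fil^j_\sub{slice}\KGL_X\simeq\bb T_X^{\otimes j}\otimes\kgl_X$ are $(j)$-fold Tate twists of a very effective spectrum. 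Then Proposition~\ref{prop:detecting-veff}(1), in its ``more generally'' form, gives that $\omega^{\infty,\gr}(\kgl_X)^j=\omega^\infty(\bb T_X^{\otimes j}\otimes\kgl_X)$ is $(j-d)$-connective as a Nisnevich sheaf on smooth $X$-schemes over a base of dimension $\le d$; applying global sections and passing through the cdh descent reduction of Proposition~\ref{proposition_checking_on_points} (valuative dimension $\le d$ gives cdh-cohomological dimension $\le d$ by \cite[Theorem~2.4.15]{ElmantoHoyoisIwasaKelly2021}) yields that $\Fil_\bb A^j\KH(X)$ is $(j-d)$-connective. Completeness of the filtration is then formal: $\lim_j\Fil^j_\bb A\KH(X)=0$ since the $j$-th term is $(j-d)$-connective and $j-d\to\infty$.

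I expect the main obstacle to be the connectivity bound, and specifically making rigorous the reduction from ``$\bb A^1$-localised cdh cohomology'' to ``global sections of a very effective motivic spectrum'': one must be careful that $L_{\bb A^1}\mathrm{Fil}_\sub{cdh}^j\KH$ genuinely agrees with $\map_{\SH(X)}(1_X,\Fil^j_\sub{slice}\KGL_X)$ for the specific $X$ at hand (not merely on finitely presented schemes), and that the connectivity estimate of Proposition~\ref{prop:detecting-veff}(1), stated for Noetherian $X$, propagates to arbitrary qcqs $X$ of finite valuative dimension. The latter propagation is handled by the finitariness of all the functors involved (Theorem~\ref{prop:basic_props_of_cdh_mot}(4)) together with the standard limit argument writing $X$ as a cofiltered limit of finite type $\bb Z$-schemes, combined with the cdh-cohomological dimension bound of \cite[Theorem~2.4.15]{ElmantoHoyoisIwasaKelly2021} which is the correct non-Noetherian replacement for Krull dimension. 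Everything else is a routine induction down the filtration using the exact triangles and exhaustiveness.
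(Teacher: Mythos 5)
Your treatment of the two comparison equivalences is correct and is essentially the paper's own argument: the filtrations are $\bb N$-indexed, agree on $\Fil^0$ (both are $\KH(X)$, and $L_{\bb A^1}\KH\simeq\KH$), and agree on graded pieces by Theorems~\ref{thm:comparison} and~\ref{thm:a1-a1cdh}, so one inducts up the filtration; the second map is $\omega^\infty$ applied to Theorem~\ref{thm:a1-a1cdh}(1). Nothing to flag there.

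The connectivity bound is where your argument has a genuine gap. Proposition~\ref{prop:detecting-veff}(1) only gives that $\omega^{\infty,\gr}(\kgl_X)^j$ is $(j-d)$-connective \emph{as a Nisnevich sheaf on $\Sm_X$}, i.e.\ stalkwise; to pass from that to the sections over $X$ itself you must pay the cohomological (or homotopy) dimension of $X$ for the relevant topology, which is again of size up to $d$. So your route yields at best $(j-2d)$-connectivity of $\Fil^j_{\bb A}\KH(X)$, not the stated $(j-d)$: to obtain $(j-d)$ globally one needs \emph{unshifted} $j$-connectivity at the points of the site, which the shifted stable connectivity theorem over a $d$-dimensional base does not provide. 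Your proposed repair of the Noetherian hypothesis in Proposition~\ref{prop:detecting-veff}(1) also fails: writing $X=\lim_\lambda X_\lambda$ with $X_\lambda$ of finite type over $\bb Z$, the Krull dimensions of the $X_\lambda$ are in general unbounded even when $X$ has valuative dimension $\le 1$ (e.g.\ a rank-one valuation ring whose fraction field has infinite transcendence degree over $\bb Q$), so the termwise bounds $(j-\dim X_\lambda)$ degenerate in the colimit and finitariness gives you nothing.

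The paper avoids both problems by bounding $L_{\bb A^1}\Fil^j_\sub{cdh}\KH(X)$ directly, using the two comparison equivalences already proved. Theorem~\ref{prop:basic_props_of_cdh_mot}(3) — whose proof rests on the $j$-connectivity (no dimension shift) of the lisse filtration on henselian local rings, Lemma~\ref{lemma_lke_from_affines}(3), i.e.\ on the cdh points, combined with cdh homotopy dimension $\le$ valuative dimension — gives that $\Fil^j_\sub{cdh}\KH(Y)$ is $(j-e)$-connective whenever $Y$ has valuative dimension $\le e$. One then computes $L_{\bb A^1}$ via the Suslin construction: the level-$m$ term $\Fil^j_\sub{cdh}\KH(X\otimes_\bb Z\Delta_\bb Z^m)$ is $(j-d-m)$-connective because $X\otimes_\bb Z\Delta_\bb Z^m$ has valuative dimension $\le d+m$, and the loss of $m$ is exactly compensated by the shift $[m]$ occurring in the skeletal filtration of the geometric realisation, so that $L_{\bb A^1}\Fil^j_\sub{cdh}\KH(X)\simeq\Fil^j_{\bb A}\KH(X)$ is $(j-d)$-connective. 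To salvage your route you would need $j$-connectivity of $\omega^\infty(\bb T_X^{\otimes j}\otimes\kgl_X)$ on henselian valuation rings, which is essentially what the lisse/cdh-points argument supplies by other means; note also that even your weaker (unproved) bound would suffice for completeness, but not for the stated estimate.
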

\begin{proof}
The second equivalence follows by applying $\omega^\infty$ to Theorem \ref{thm:a1-a1cdh}. The first equivalence holds by induction because the filtrations are $\bb N$-indexed, agree on $\Fil^0$ (they are both $\KH(X)$), and agree on graded pieces by Theorem \ref{thm:comparison}.

We now prove the connectivity bound of the filtration $L_{\bb A^1}\mathrm{Fil}_\sub{cdh}^{\star}\KH$; let $X$ be a qcqs scheme of valuative dimension $\le d$. Then, for any $m\ge0$, the term $X\otimes_\bb Z \Delta_\bb Z^m=X\otimes_\bb Z\bb Z[t_0,\dots,t_m]/(1-\sum_{i=0}^mt_i)$ appearing in the Suslin construction has valuative dimension $\le d+m$, and so $\mathrm{Fil}_\sub{cdh}^{j}\KH(X\otimes_\bb Z \Delta_\bb Z^m)$ is $j-d-m$-connective by Theorem \ref{prop:basic_props_of_cdh_mot}(3). It now follows formally\footnote{Let $K_{\bullet}: \Delta^{\op} \rightarrow \Spt$ be a simplicial spectrum and assume, for some fixed $j\ge0$, that $K_{m}$ is $j-m$-connective for all $m \ge 0$. Then the geometric realisation $|K_{\bullet}|$ is $j$-connective. Indeed, writing $|K_{\bullet}| \simeq \colim_{m \rightarrow \infty} (\colim_{\Delta^{\op}_{\leq m}} K|_{\Delta^{\op}_{\leq m}})$ by \cite[Remark 4.3.4]{LurieDAGVIII} it is enough, since filtered colimits preserve connectivity, to show that each term $\colim_{\Delta^{\op}_{\leq m}} K|_{\Delta^{\op}_{\leq m}}$ is $j$-connective. We do this by induction on $m$. In the base case of $m = 0$, it is the assumption that $K_0$ is $m$-connective. Otherwise, we have a cofibre sequence [loc.~cit.]
\[
\colim_{\Delta^{\op}_{\leq (m-1)}} K|_{\Delta^{\op}_{\leq (m-1)}} \rightarrow \colim_{\Delta^{\op}_{\leq m}} K|_{\Delta^{\op}_{\leq n}} \rightarrow K'_m[m].
\]
where the first term is $j$-connective by induction and the last term is a summand of $K_m[m]$ and hence $j$-connective by hypothesis; so the middle term is $j$-connective, as desired.
} that the geometric realisation \[L_{\bb A^1}\Fil_\sub{cdh}^j\KH(X)=\opp{colim}_{m\in\Delta^\op} \Fil_\sub{cdh}^j\KH(X\times\Delta^m)\] is $j-d$-connective.
\end{proof}

The next corollary concerns the effective cover $\kgl$ of $\KGL$ and its framed model $\cal V$ (see \S\ref{subsub:V} for recollections).

\begin{corollary}\label{cor:eff-base-change}Let $X$ be a qcqs scheme and $f: X \rightarrow \Spec(\Z)$ the structure map. Then:
\begin{enumerate}
\item The canonical map
\[
f^* \kgl_{\bb Z} \rightarrow \kgl_X
\]
is an equivalence.
\item The canonical map of \eqref{eq:fr-to-kgl}
\[
\scr V_X \rightarrow \kgl_X 
\]
is an equivalence. 
\item The canonical map $\scr V_X \rightarrow \kgl_X$ induces an equivalence
\[
\scr V_X/\beta \xrightarrow{\simeq} s^0(\kgl_X).
\]
\end{enumerate}
\end{corollary}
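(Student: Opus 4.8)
\textbf{Proof plan for Corollary~\ref{cor:eff-base-change}.} All three parts are straightforward consequences of Theorem~\ref{thm:comparison} and Theorem~\ref{thm:a1-a1cdh}, together with the structure of the slice and Bott filtrations recalled in \S\ref{subsub:slices-kgl} and the base-change behaviour of $\scr V$. The key observation is that, since $\scr V$ is stable under pullback (see \S\ref{subsub:V}) and $\scr V_k \to \kgl_k$ is an equivalence for perfect fields $k$ by \cite[Corollary~5.2]{HoyoisJelisiejewNardinTotaroYakerson2021}, and since $\kgl_X$ is the \emph{effective cover} $\Fil^0_\slice \KGL_X$, the three statements are equivalent to controlling the comparison maps $f^*\Fil^0_\slice\KGL_\bb Z \to \Fil^0_\slice\KGL_X$ and $f^*s^0\KGL_\bb Z \to s^0\KGL_X$, which Theorems~\ref{thm:a1-a1cdh} and \ref{thm:comparison} now provide unconditionally.

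For part~(1): Theorem~\ref{thm:a1-a1cdh}(1) with $j=0$ is precisely the assertion that $f^*\Fil^0_\slice\KGL_\bb Z \to \Fil^0_\slice\KGL_X$ is an equivalence; unwinding the definition $\kgl_X := \Fil^0_\slice\KGL_X$, this is exactly the claimed equivalence $f^*\kgl_\bb Z \to \kgl_X$. No further argument is needed here.

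For part~(2): We know $\scr V_X \to \kgl_X$ factors the map $\scr V_X \to \KGL_X$ through the effective cover, and (as recalled in the remark after Lemma~\ref{lemm:convergence-bootstrap}) we wish to show it is an equivalence for all $X$. I would follow the same bootstrapping argument used in Proposition~\ref{prop:specz_slices}(2), which proved this for $B$ a Dedekind domain or field: namely, since $\KGL_X \simeq \scr V_X[\beta^{-1}]$ and $\kgl_X = \Fil^0_\slice\KGL_X$, it suffices to show that $\Fil^0_\slice(\bb T^{\otimes j}\otimes\scr V_X) \xrightarrow{\beta} \Fil^0_\slice(\bb T^{\otimes j-1}\otimes\scr V_X)$ is an equivalence for every $j\le 0$, equivalently that $\Fil^0_\slice(\bb T^{\otimes j-1}\otimes\scr V_X/\beta)$ vanishes for all $j\le 0$. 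By Proposition~\ref{prop_1_vs_V}, the unit $1_X \to \scr V_X$ induces an equivalence on zeroth slices, and the cofibre $\scr V_X\otimes\bb T_X$ is $1$-effective, so $s^0(1_X) \xrightarrow{\simeq} s^0(\scr V_X/\beta)$; meanwhile the higher slices $s^j(\scr V_X/\beta)$ for $j\ge 1$ receive a map from the corresponding slices of $\scr V_X$, and one reduces the vanishing of $\Fil^1_\slice(\scr V_X/\beta)$ — equivalently the statement that $\scr V_X/\beta$ is co-effective — to the vanishing $\Fil^1_\slice f^* s^0\KGL_\bb Z = 0$ from Corollary~\ref{prop:Fil1-vanishing} via the equivalence $s^0(1_X)\simeq s^0\KGL_X$ of Corollary~\ref{cor:conjectures} (itself a formal consequence of Theorems~\ref{thm:comparison} and \ref{thm:a1-a1cdh}; cf.\ Corollary~\ref{cor:rational-smd}(5) rationally and Proposition~\ref{prop:specz_slices}(3) over Dedekind bases). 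Concretely: $\Fil^0_\slice(\bb T^{\otimes j-1}\otimes \scr V_X/\beta) \simeq \bb T^{\otimes j-1}\otimes \Fil^{j-1}_\slice(\scr V_X/\beta)$, and for $j\le 0$ this vanishes because $\Fil^1_\slice(\scr V_X/\beta)$ already vanishes once we know $s^0(\scr V_X/\beta)$ is co-effective.

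For part~(3): Given (2), the map $\scr V_X/\beta \to s^0(\kgl_X) = s^0(\KGL_X)$ is identified with the canonical map $s^0(\scr V_X) \simeq s^0(\scr V_X/\beta)$ arising from $\scr V_X \to \scr V_X/\beta$ having $1$-effective fibre $\scr V_X\otimes\bb T_X$; combined with $\Fil^1_\slice(\scr V_X/\beta)=0$ (established in the course of part~(2)) this says precisely that $\scr V_X/\beta$ agrees with its own zeroth slice, i.e.\ $\scr V_X/\beta \xrightarrow{\simeq} s^0(\scr V_X/\beta) = s^0(\kgl_X)$. The \textbf{main obstacle}, such as it is, is purely bookkeeping: one must be careful that the co-effectivity input $\Fil^1_\slice f^*s^0\KGL_\bb Z = 0$ (Corollary~\ref{prop:Fil1-vanishing}) is genuinely available at this point in the logical order — it is, since it was deduced from Theorem~\ref{thm:comparison} which is independent of this corollary — and that the slice-of-Tate-twist identities \eqref{eq:tate-slice} are applied with the correct indices. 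Beyond that, the argument is a direct transcription of the field/Dedekind case of Proposition~\ref{prop:specz_slices}, now run over $\Spec(\bb Z)$ with the unconditional inputs in hand.
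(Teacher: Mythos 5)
Your part (1) is exactly the paper's proof (the $j=0$ case of Theorem~\ref{thm:a1-a1cdh}(1)). For parts (2) and (3), however, the paper's argument is much shorter than yours and your replacement has a gap at its central step. The paper simply notes that $\scr V$ is an absolute motivic spectrum, so $\scr V_X\simeq f^*\scr V_{\bb Z}$; since $\scr V_{\bb Z}\to\kgl_{\bb Z}$ is already an equivalence by Proposition~\ref{prop:specz_slices}(2), pulling back and using part (1) gives (2), and (3) then follows by modding out $\beta$ and recalling $s^0(\kgl_X)\simeq\kgl_X/\beta$ from \S\ref{subsub:slices-kgl}. You instead rerun the telescope/bootstrap argument of Proposition~\ref{prop:specz_slices}(2) over the arbitrary base $X$, which forces you to prove $\Fil^1_\slice(\scr V_X/\beta)=0$ for every qcqs $X$.

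That is where the gap lies: you claim to "reduce" this vanishing to Corollary~\ref{prop:Fil1-vanishing} "via the equivalence $s^0(1_X)\simeq s^0(\KGL_X)$", but the inputs you cite (Proposition~\ref{prop_1_vs_V}, Corollary~\ref{cor:conjectures}, Corollary~\ref{prop:Fil1-vanishing}) only identify $s^0(\scr V_X/\beta)$ with $f^*s^0(\KGL_{\bb Z})$ and show that \emph{this zeroth slice} is co-effective. That says nothing about $\Fil^1_\slice$ of $\scr V_X/\beta$ itself unless you already know $\scr V_X/\beta$ coincides with its own zeroth slice --- which is precisely part (3), so as written the reduction is circular; and you cannot pull back the field/Dedekind case either, since $\Fil^1_\slice$ does not commute with $f^*$ a priori. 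The gap is easily filled, but the fill collapses your argument back to the paper's: since $\scr V$ is absolute, $\scr V_X/\beta\simeq f^*(\scr V_{\bb Z}/\beta)\simeq f^*s^0(\KGL_{\bb Z})$ by Proposition~\ref{prop:specz_slices}(1)\&(2) over $\bb Z$, and then Corollary~\ref{prop:Fil1-vanishing} gives $\Fil^1_\slice(\scr V_X/\beta)=0$ --- at which point (2) and (3) already follow directly by pulling back $\scr V_{\bb Z}\quis\kgl_{\bb Z}$ and modding out $\beta$, so the over-$X$ bootstrap is superfluous. One further small slip: by \eqref{eq:tate-slice} the correct identity is $\Fil^0_\slice\bigl(\bb T^{\otimes j-1}\otimes(\scr V_X/\beta)\bigr)\simeq\bb T^{\otimes j-1}\otimes\Fil^{1-j}_\slice(\scr V_X/\beta)$, not $\Fil^{j-1}_\slice$; with your indexing the relevant filtration piece would not vanish for $j\le 0$.
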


\begin{proof}
Point (1) is the special case $j=0$ of Theorem~\ref{thm:a1-a1cdh}(1). Point (2) has already been established over $\bb Z$ in Proposition \ref{prop:specz_slices}, and so follows for $X$ by pulling back and using point (1). Point (3) then follows by modding out by $\beta$ and recalling from  \S\ref{subsub:slices-kgl} that $s^0(\kgl_X)\quis\kgl_X/\beta$.
\end{proof}

Next we show that the $0$-slice of the motivic sphere is stable under pullback and identifies with the $0$-slice of $\KGL$:

\begin{corollary}\label{cor:conjectures} Let $X$ be a qcqs scheme and $f: X \rightarrow \Spec(\Z)$ the structure map. Then the unit map $1_{X} \rightarrow \KGL_X$ induces an commutative diagram of equivalences of $\bb E_\infty$-algebras:
\begin{equation}\label{eq:1-to-k}
\begin{tikzcd}
f^*s^0(1_\bb Z) \ar[swap]{d}{\simeq} \ar{r}{\simeq} & f^*s^0(\KGL_{\Z}) \ar{d}{\simeq}\\
s^0(1_X) \ar[swap]{r}{\simeq} & s^0(\KGL_X).
\end{tikzcd}
\end{equation}
\end{corollary}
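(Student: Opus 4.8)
The plan is to assemble the four equivalences in \eqref{eq:1-to-k} from results already at hand, the crucial new input being Theorem~\ref{thm:a1-a1cdh}. First I would dispose of the two horizontal maps. The bottom horizontal map $s^0(1_X)\to s^0(\KGL_X)$ is an equivalence by Proposition~\ref{prop:specz_slices}(3) over $\Spec(\bb Z)$ and, more to the point, it suffices to prove it for arbitrary $X$; but this is where the new comparison enters. Actually the cleanest route is: the top horizontal map $f^*s^0(1_\bb Z)\to f^*s^0(\KGL_\bb Z)$ is $f^*$ applied to Proposition~\ref{prop:specz_slices}(3) (the case $B=\bb Z$), hence an equivalence. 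The right vertical map $f^*s^0(\KGL_\bb Z)\to s^0(\KGL_X)$ is precisely Theorem~\ref{thm:a1-a1cdh}(2) (the case $j=0$), hence an equivalence. Commutativity of the square is formal: both composites are induced by naturality of the slice functor $s^0$ applied to the unit maps $1\to\KGL$, which are compatible with pullback (the unit of $\KGL_X$ is $f^*$ of the unit of $\KGL_\bb Z$ since $\KGL$ is an absolute motivic spectrum, as recalled in Remark~\ref{rem_abs}), together with the naturality transformation \eqref{eq:natural_graded} for $s^0$. Given commutativity and three of the four maps being equivalences, the fourth (the bottom horizontal map, or equivalently the left vertical map) is automatically an equivalence by two-out-of-three.

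It remains to produce the left vertical map $f^*s^0(1_\bb Z)\to s^0(1_X)$ and check that the diagram of $\bb E_\infty$-algebras commutes with it inserted. This map is the comparison transformation \eqref{eq:natural_graded} for the slice functor, applied to the unit $1_X = f^*1_\bb Z$; it is a map of $\bb E_\infty$-algebras by the multiplicativity discussion in Remark~\ref{rem:naturality} (the slice filtration is lax symmetric monoidal, Remark~\ref{rem:mult}, and $f^*$ is symmetric monoidal, so the Beck--Chevalley transformation $f^*s^0\to s^0f^*$ is one of lax symmetric monoidal functors). That all four maps in \eqref{eq:1-to-k} are maps of $\bb E_\infty$-algebras then follows since each is obtained by applying a (lax) symmetric monoidal functor to the unit map, or is a Beck--Chevalley transformation between such. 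Commutativity of the square as a diagram of $\bb E_\infty$-algebras follows from the coherence of these natural transformations: the square of lax symmetric monoidal functors $f^*s^0\Rightarrow s^0 f^*$ and $s^0(1\to\KGL)$ is encoded by the fact that $\Fil^\star_\sub{slice}:\SH(-)\to\Fil\SH(-)$ and $s^\star$ assemble into natural transformations of functors $\Sch^{\qcqs,\op}\to\Cat_\infty^{\otimes}$, so that applying them to the morphism of absolute $\bb E_\infty$-algebras $1\to\KGL$ produces a commuting square.

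I do not expect a genuine obstacle here: once Theorem~\ref{thm:a1-a1cdh}(2) is in place, this corollary is a formal two-out-of-three argument together with routine (but slightly tedious) bookkeeping of the symmetric monoidal naturality to upgrade the equivalences of underlying motivic spectra to equivalences of $\bb E_\infty$-algebras. The only point requiring a little care is to phrase the naturality of the slice tower at the level needed — that the comparison maps $f^*\Fil^\star_\sub{slice}E\to \Fil^\star_\sub{slice}f^*E$ and $f^*s^\star E\to s^\star f^*E$ are multiplicative and compatible with maps $E\to E'$ of $\bb E_\infty$-algebras — but all of this is recorded in Remarks~\ref{rem:mult} and~\ref{rem:naturality}. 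So the write-up would be: invoke Remark~\ref{rem_abs} for $f^*1_\bb Z\simeq 1_X$ and $f^*\KGL_\bb Z\simeq\KGL_X$ as $\bb E_\infty$-algebras; invoke Proposition~\ref{prop:specz_slices}(3) for the top arrow; invoke Theorem~\ref{thm:a1-a1cdh}(2) for the right arrow; note the square commutes by naturality; conclude the remaining arrows are equivalences; and observe via Remark~\ref{rem:naturality} that everything is $\bb E_\infty$.
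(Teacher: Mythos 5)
There is a genuine gap at the two-out-of-three step. You establish only \emph{two} of the four maps as equivalences: the top horizontal one (pullback of Proposition~\ref{prop:specz_slices} over $\bb Z$) and the right vertical one (Theorem~\ref{thm:a1-a1cdh}(2)). Commutativity then tells you that the \emph{composite} $f^*s^0(1_\bb Z)\to s^0(1_X)\to s^0(\KGL_X)$ is an equivalence, but this does not let you conclude that the bottom horizontal map and the left vertical map are each individually equivalences — a composite being invertible only exhibits the first map as admitting a retraction and the second a section, not as equivalences. Two-out-of-three in a commutative square needs three of the four arrows, and you have two. Nor can you repair this by checking the left vertical map on points: $i_x^*s^0(1_X)$ is not known to be $s^0(1_{k(x)})$ at this stage, since commutation of $s^0$ with pullback is exactly the kind of statement being proved.

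The missing input, which is how the paper closes the square, is a \emph{direct} proof that the bottom horizontal map $s^0(1_X)\to s^0(\KGL_X)$ is an equivalence: it suffices to show that $F_X:=\fib(1_X\to\kgl_X)$ is $1$-effective. Since $1$ is tautologically stable under base change and $\kgl$ is stable under base change by Corollary~\ref{cor:eff-base-change} (itself a consequence of Theorem~\ref{thm:a1-a1cdh}(1) in weight $0$ — note you need part (1), not just part (2)), the fibre $F$ is stable under base change; by Proposition~\ref{prop:detect-effective} its $1$-effectivity can then be checked after pullback to fields, hence reduces to $F_\bb Z$, which is $1$-effective by Proposition~\ref{prop:specz_slices}. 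With the top, right and bottom arrows now established, the left vertical map is an equivalence by the two-out-of-three argument you intended. Your discussion of the multiplicative/naturality bookkeeping (Remarks~\ref{rem:mult}, \ref{rem:naturality}, \ref{rem_abs}) is fine and matches what the paper implicitly uses.
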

\begin{proof}
Over $\Z$, we already have that $s^0(1_\bb Z) \simeq s^0(\KGL_{\Z})$ by Proposition~\ref{prop:specz_slices}, hence the top horizontal arrow of~\eqref{eq:1-to-k} is an equivalence. The right vertical map is an equivalence by Theorem~\ref{thm:a1-a1cdh}. It then suffices to prove that the bottom horizontal arrow is an equivalence. To do this, it suffices to prove that $F_X:= \mathrm{fib}(1_{X} \rightarrow \kgl_X)$ is $1$-effective. But $1$ and $\kgl$ are stable under base change (the latter by Corollary~\ref{cor:eff-base-change}), so the same is true of $F$ and therefore it suffices to check that $F_{\mathbb{Z}}$ is $1$-effective; that is exactly Proposition~\ref{prop:specz_slices}. 
\end{proof}

Finally we discuss applications to motives. Using the notation introduced at the beginning of the subsection, Corollary \ref{cor:conjectures} states that we have isomorphic absolute motivic ring spectra $X\mapsto s^0(1_X)$, $H\bb Z_X^\bb A$. We propose that the $\infty$-category of $H\bb Z_X^\bb A$-modules in $\SH(X)$, i.e., the presentably symmetric monoidal stable $\infty$-category \begin{equation}\mathrm{DM}(X):=\opp{Mod}_{H\bb Z_X^\bb A}(\SH(X)),\label{eqn:DM}\end{equation} is a good candidate for the derived $\infty$-category of $\bb A^1$-invariant motives over $X$. Indeed, this is known to agree with Voevodsky's usual $\infty$-category $\DM(k)$ whenever $X$ is the spectrum of a characteristic zero field $k$ \cite{rondigs-ostvaer}, while the fact that $H\bb Z^\bb A$ is an absolute motivic ring spectrum implies the following:

\begin{corollary}\label{corol:6functor}
The assignment on qcqs schemes $X\mapsto \opp{Mod}_{H\bb Z_X^\bb A}(\SH(X))$ upgrades to a six-functor formalism compatible with that of $\SH(-)$.
\end{corollary}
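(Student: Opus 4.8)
The plan is to deduce the six-functor formalism for $X \mapsto \mathrm{DM}(X) := \mathrm{Mod}_{H\bb Z_X^{\bb A}}(\SH(X))$ from the one for $\SH(-)$ by the standard machinery of modules over an absolute commutative algebra object. The crucial input, already available to us, is that $H\bb Z^{\bb A}$ is an \emph{absolute} $\bb E_\infty$-motivic spectrum: by Corollary~\ref{cor:conjectures} we have natural equivalences of $\bb E_\infty$-algebras $f^* H\bb Z_Y^{\bb A} \simeq H\bb Z_X^{\bb A}$ for every morphism $f\colon X \to Y$ of qcqs schemes, subject to the coherences encoded by its being a cartesian section of $\int\SH \to \Sch^{\qcqs}$ (in the sense of Remark~\ref{rem_abs}). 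Concretely, $H\bb Z^{\bb A}$ defines an object of $\CAlg(\SH(\Spec\bb Z))$ under the identification of the latter with cartesian sections, and hence, via the symmetric monoidal structure on $\SH(-)$, an $\bb E_\infty$-algebra object in the value of the functor $\SH$ on the terminal scheme.

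First I would invoke the general formalism of module categories over a fixed algebra in a six-functor formalism. Given a six-functor formalism $\scr D\colon \Sch^{\qcqs,\op} \to \CAlg(\Pr^L)$ (here $\SH$, with its $\otimes$, $f^*$, $f_*$, $f_!$, $f^!$ and exceptional pairs, as supplied by Ayoub and recalled in \S\ref{ss:dep_on_sh}) together with an absolute $\bb E_\infty$-algebra $A \in \CAlg(\scr D(\mathrm{pt}))$ — i.e.\ a section $A$ of the associated cocartesian fibration with $f^* A_Y \simeq A_X$ compatibly — the assignment $X \mapsto \mathrm{Mod}_{A_X}(\scr D(X))$ inherits a six-functor formalism. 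The inheritance of the $\otimes$-structure and the $f^*$-pullbacks (which are base-change-compatible because $f^* A_Y \simeq A_X$ is an equivalence of algebras, so $f^*$ on modules is just extension of scalars along an equivalence, hence strong monoidal) is essentially formal. The pushforwards $f_*$ on module categories are obtained as right adjoints, using that the forgetful functor $\mathrm{Mod}_{A_X}(\scr D(X)) \to \scr D(X)$ is conservative and preserves colimits, so $f_*^{\mathrm{Mod}}$ is computed by $f_*$ on underlying objects with its canonical $A_Y$-module structure (via the lax monoidal structure on $f_*$ and the unit $A_Y \to f_* f^* A_Y \simeq f_* A_X$). The exceptional functors $f_!$, $f^!$ are likewise transported: $f_!$ is defined on underlying objects and equipped with an $A$-module structure using the projection formula for $\SH$, and $f^!$ is its right adjoint. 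Proper base change, smooth base change, and the projection formula for $\mathrm{DM}(-)$ then follow from those of $\SH$ by checking on underlying objects, where conservativity of the forgetful functor does the work.

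For the actual write-up I would cite a clean reference for this ``modules inherit six functors'' principle rather than reprove it: for instance Khan's work on motivic six-functor formalisms, Cisinski--D\'eglise \cite[Ch.~7]{CisinskiDeglise2019} on premotivic adjunctions and their module categories, or the abstract treatment in Mann's thesis / Drew--Gallauer's axiomatics, any of which shows that given an absolute $\bb E_\infty$-algebra $A$ in a (closed symmetric monoidal, adjointable) six-functor formalism $\scr D$, the functor $X \mapsto \mathrm{Mod}_{A_X}(\scr D(X))$ is again such a formalism with a (lax symmetric monoidal, colimit-preserving on $f^*$) morphism of formalisms $\scr D \to \mathrm{Mod}_A(\scr D)$ given by $(-) \otimes A$. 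Applying this with $\scr D = \SH$ and $A = H\bb Z^{\bb A}$ — legitimate precisely because of the absoluteness from Corollary~\ref{cor:conjectures} — gives the statement, with the compatibility with $\SH(-)$ realised by the free-module functors $\SH(X) \to \mathrm{DM}(X)$, $E \mapsto E \otimes H\bb Z_X^{\bb A}$.

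The main obstacle I anticipate is not conceptual but bookkeeping: verifying that the absolute $\bb E_\infty$-structure on $H\bb Z^{\bb A}$ produced in Corollary~\ref{cor:conjectures} is coherent enough to feed into whichever packaged statement one cites — i.e.\ that the equivalences $f^* H\bb Z_Y^{\bb A} \simeq H\bb Z_X^{\bb A}$ assemble into a genuine $\bb E_\infty$-algebra object of $\mathrm{Fun}^{\mathrm{cart}}(\int\SH,\ldots)$ and not merely a compatible family of pointwise algebra equivalences. This is exactly the content of $H\bb Z^{\bb A} = s^0(1)$ being an $\bb E_\infty$-algebra \emph{in the category of absolute motivic spectra}: since $s^0$ applied to the unit is a lax symmetric monoidal operation performed sectionwise and $1$ is the (absolute, $\bb E_\infty$) unit, $s^0(1)$ is automatically an absolute $\bb E_\infty$-algebra — this is how Hoyois \cite[\S5]{Hoyois2020} handles the analogous point for $\KGL$, and the same argument applies here once Corollary~\ref{cor:conjectures} identifies $s^0(1)$ with $s^0(\KGL) = H\bb Z^{\bb A}$. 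So in the write-up the sentence ``$H\bb Z^{\bb A}$ is an $\bb E_\infty$-algebra in absolute motivic spectra, being $s^0$ of the unit, hence $\mathrm{Mod}_{H\bb Z^{\bb A}}(\SH(-))$ inherits the six-functor formalism of $\SH$ by [cite]'' should suffice.
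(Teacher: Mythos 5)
Your proposal is correct, and it rests on the same essential input as the paper — the absoluteness of $H\bb Z^{\bb A}$ supplied by Corollary~\ref{cor:conjectures} — but it packages the deduction differently. You transport the six operations to module categories directly (pullback as extension of scalars along an equivalence of algebras, $f_*$ and $f^!$ as right adjoints computed on underlying objects, $f_!$ via the projection formula), or equivalently cite a packaged "modules over an absolute $\bb E_\infty$-algebra inherit the formalism" statement such as Cisinski--D\'eglise; the paper itself names \cite[Proposition 11.4.7]{CisinskiDeglise2019} as exactly this traditional reference, so your route is the one the authors acknowledge and then choose to bypass. Their sketch instead treats $\SH$ as a $\mathrm{Pr}^L$-valued six-functor formalism in the sense of Heyer--Mann, base-changes it along $-\otimes H\bb Z^{\bb A}_{\Spec(\bb Z)}\colon \SH(\bb Z)\to \mathrm{DM}(\bb Z)$ to obtain the formalism $X\mapsto \SH(X)\otimes_{\SH(\bb Z)}\mathrm{DM}(\bb Z)$, and then uses Corollary~\ref{cor:conjectures} together with \cite[Theorem 4.8.4.6]{LurieHA} to identify this pointwise with $\mathrm{Mod}_{H\bb Z_X^{\bb A}}(\SH(X))$; this buys all the higher coherences in one stroke, at the price of invoking the Heyer--Mann machinery, whereas your approach is more hands-on and makes the individual functors explicit but leaves the coherence bookkeeping to the cited module-inheritance result. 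One small caution on your final paragraph: applying $s^0$ sectionwise to the absolute unit does \emph{not} automatically yield a cartesian section — the lax structure of Remark~\ref{rem:naturality} only gives multiplicative comparison maps $f^*s^0(1_{\bb Z})\to s^0(1_X)$, and the fact that these are equivalences is precisely the content of Corollary~\ref{cor:conjectures} — so the word "automatically" overstates things, though since you invoke that corollary anyway the argument goes through.
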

\begin{proof}
The traditional reference that this follows from $H\bb Z^\bb A$ being an absolute motivic spectrum is \cite[Proposition 11.4.7]{CisinskiDeglise2019}, but we provide here a short sketch of the result in the current modern language of six-functor formalisms. We thank Lucas Mann for discussions about this.

In principle the motivic stable homotopy category $\SH$, equipped with its various functors, assembles to form a $\opp{Pr}^L$-valued six-functor formalism on qcqs schemes in the sense of Heyer--Mann \cite{HeyerMann2024}, with the $!$-able morphisms being those of finite type. This should follow from the results of Ayoub \cite{Ayoub2007, Ayoub2007II} and the machinary of \cite[\S3.3]{HeyerMann2024}. Base changing this along the functor $-\otimes H\bb Z_{\Spec(\bb Z)}:\SH(\bb Z)\to \mathrm{DM}(\bb Z)$ defines a new six functor formalism \[X\mapsto \SH(X)\otimes_{\SH(\bb Z)}\mathrm{DM}(\bb Z).\] But, for any qcqs scheme $f:X\to\Spec(\bb Z)$, the fact that $f^*(H\bb Z_{\Spec(\bb Z)}^\bb A)\quis H\bb Z_X^\bb A$, i.e., Corollary \ref{cor:conjectures}, implies that the canonical functor \[\SH(X)\otimes_{\SH(\bb Z)}\mathrm{DM}(\bb Z)\To\mathrm{DM}(X)\] in $\opp{Pr}^L$ is an equivalence \cite[Theorem 4.8.4.6]{LurieHA}. This defines the desired six-functor formalism on $\mathrm{DM}$.
\end{proof}

\begin{remark}
Definition \eqref{eqn:DM} is more accessible than it might initially appear. Since $H\bb Z_X^\bb A=H(\bb Z(\star)^\bb A[2\star])$, by applying Remark \ref{rem:modules_over_HF}, replacing $\Spt$ by $\D(\bb Z)$, and shearing, we obtain an equivalence \[\mathrm{DM}(X)\simeq \opp{Mod}_{\bb Z(\star)^\bb A}(\Gr\opp{Sh}_{\Nis,\bb A^1}(\Sm_X,\D(\bb Z))_\sub{c,pbf}),\] i.e., 
it is the $\infty$-category of $\bb Z(\star)^\bb A$-modules $M^\star$ in $\Gr\opp{Sh}_{\Nis,\bb A^1}(\Sm_X,\D(\bb Z))$ satisfying the $\bb P^1$-bundle formula (that is, multiplication by $c_1^\bb A(\roi(1))\in H^2_\bb A(\bb P_X^1,\bb Z(1))$ induces $M^j[2]\quis \fib(M^{j+1}(\bb P^1_-)\xto{\infty^*}M^{j+1})$ for all $j\in\bb Z$). In particular, if the $\bb A^1$-motivic cohomology is accepted as a black box, then $\mathrm{DM}(X)$ can be defined without mentioning $\SH(X)$.
\end{remark}

\subsection{Low weight applications}\label{ss:applications2}
The first part of the next application shows in particular that weight-$0$ motivic cohomology identifies with the cdh cohomology of the constant sheaf $\bb Z$, as was conjectured by Voevodsky \cite[Conjecture 12]{Voevodsky2002}. It follows that the $\bb Z$-algebra structure on weight-$0$ $\bb A^1$-motivic cohomology, which we defined using unstable slices in Construction \ref{cons:low-weights}, is unique. We suspect that part (2) should hold for arbitrary regular Noetherian schemes, but cannot prove it.

\begin{corollary}\label{corol:low-wts-integral}
\begin{enumerate}
\item For any qcqs scheme $X$, the maps \[R\Gamma_\sub{cdh}(X, \bb Z) \xto{\sub{(\ref{eq:wt0-cdh})}} \bb Z(0)^{\sub{cdh},\bb A}(X)\qquad \mathrm{and}\qquad c_1^{\sub{cdh},\bb A}:R\Gamma_\sub{cdh}(X,\bb G_m)[-1]\xto{(\ref{eq:c1-det})} \bb Z(1)^{\sub{cdh},\bb A}(X)\] are equivalences.
\item Let $B$ be a mixed characteristic Dedekind domain or a field. Then, for any smooth $B$-scheme $X$, the canonical change-of-topology maps 
\[R\Gamma_\sub{Nis}(X, \bb Z) \To R\Gamma_\sub{cdh}(X, \bb Z)\qquad\mathrm{and}\qquad R\Gamma_\sub{Nis}(X,\bb G_m)\To R\Gamma_\sub{cdh}(X,\bb G_m)\]
are equivalences.
\end{enumerate}
\end{corollary}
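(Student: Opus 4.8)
The plan is to deduce both parts formally from the comparison Theorem~\ref{thm:comparison}, the low-weight identifications of Theorem~\ref{prop:basic_props_of_cdh_mot}(5)(6), the unconditional $\bb A^1$-invariance of $R\Gamma_\cdh(-,\bb Z)$ and $R\Gamma_\cdh(-,\Gm)$ from Corollary~\ref{corol:low_weights_a1}, and (for part (2)) the smooth-case computations of Corollary~\ref{cor:low-wts-ddk-fields}. For part (1): by Theorem~\ref{prop:basic_props_of_cdh_mot}(5)(6) there are natural equivalences $\bb Z(0)^\cdh\simeq R\Gamma_\cdh(-,\bb Z)$ and $\bb Z(1)^\cdh\simeq R\Gamma_\cdh(-,\Gm)[-1]$, obtained by cdh-locally left Kan extending the weight-zero and first-Chern-class equivalences of Construction~\ref{cons:low-weights}; up to these identifications the maps (\ref{eq:wt0-cdh}) and (\ref{eq:c1-det}) are exactly the comparison maps \eqref{eq:cdh-a1} in weights $0$ and $1$. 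By Corollary~\ref{corol:low_weights_a1} the presheaves $\bb Z(0)^\cdh$ and $\bb Z(1)^\cdh$ are already $\bb A^1$-invariant cdh sheaves, so the canonical maps $\bb Z(j)^\cdh\to L_{\bb A^1}\bb Z(j)^\cdh$ are equivalences for $j=0,1$; and Theorem~\ref{thm:comparison} identifies $L_{\bb A^1}\bb Z(j)^\cdh$ with $\bb Z(j)^{\bb A,\cdh}$. Composing these gives the asserted equivalences.

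For part (2), the idea is that on $\Sm_B$ all three theories coincide. Corollary~\ref{cor:low-wts-ddk-fields} gives $\bb Z(0)^{\bb A}(X)\simeq R\Gamma_\Nis(X,\bb Z)$ and $\bb Z(1)^{\bb A}(X)\simeq R\Gamma_\Nis(X,\Gm)[-1]$ for $X$ smooth over $B$. On the other hand, $\bb Z(j)^\cdh$ for $j=0,1$ is $\bb A^1$-invariant (Corollary~\ref{corol:low_weights_a1}), hence equals its own $\bb A^1$-localisation, which Theorem~\ref{thm:comparison} identifies with $\bb Z(j)^{\bb A,\cdh}$, which in turn agrees with $\bb Z(j)^{\bb A}$ by Theorem~\ref{thm:a1-a1cdh}(2). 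Together with part (1) this yields a zig-zag of equivalences $R\Gamma_\cdh(X,\bb Z)\simeq \bb Z(0)^\cdh(X)\simeq \bb Z(0)^{\bb A}(X)\simeq R\Gamma_\Nis(X,\bb Z)$, and similarly for $\Gm$; it then remains to check that this zig-zag realises the change-of-topology map. For this I would feed everything through $\KH$: the rank (resp. determinant) map from $\KH$ factors through the change-of-topology map $R\Gamma_\Nis(-,\bb Z)\to R\Gamma_\cdh(-,\bb Z)$ (resp. for $\Gm$), and combining the Nisnevich compatibility diagram of Construction~\ref{cons:low-weights} (cf.\ \eqref{eq:c1-det2}) with its cdh counterparts \eqref{eq:z-structure5} and \eqref{eq:c1-det} shows that the composite of the change-of-topology map with the cdh edge/Chern-class maps is the $\bb A^1$-motivic edge/Chern-class map, which pins down the zig-zag.

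The main obstacle will be purely bookkeeping: essentially all the mathematical content is already available, and the only genuine work is verifying that the various abstract equivalences assembled above really do realise the \emph{named} maps (\ref{eq:wt0-cdh}), (\ref{eq:c1-det}), and the change-of-topology maps, rather than merely some equivalences. This is the same kind of compatibility argument that appears repeatedly in the paper (e.g.\ in the proofs of Theorem~\ref{theorem:SH_mot_coh}(3)(4) and Corollary~\ref{corol:low-wts-ration}), and the cleanest route is to reduce every identification to a statement about the rank and determinant maps out of $\KH$, which are manifestly natural with respect to all of the change-of-topology and $\bb A^1$-localisation operations in play. I do not anticipate any difficulty requiring a new idea.
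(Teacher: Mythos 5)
Your proposal is correct and follows essentially the same route as the paper: part (1) is the unconditional weight $\le 1$ case of the key hypothesis giving $\bb A^1$-invariance of $\bb Z(0)^\cdh$ and $\bb Z(1)^\cdh$, fed into Theorem~\ref{thm:comparison} and the identifications of Theorem~\ref{prop:basic_props_of_cdh_mot}(5)\&(6); part (2) is the same three-fold composite $R\Gamma_\sub{Nis}\to R\Gamma_\sub{cdh}\to\bb Z(j)^{\bb A,\sub{cdh}}\to\bb Z(j)^{\bb A}$ with the middle arrow an equivalence by (1), the last by Theorem~\ref{thm:a1-a1cdh}, and the total composite by Corollary~\ref{cor:low-wts-ddk-fields}. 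The only cosmetic difference is that you verify the compatibility of the named maps by routing through the rank/determinant maps out of $\KH$, whereas the paper takes it as immediate from the fact that the cdh-level maps are by construction cdh-local left Kan extensions of the Nisnevich-level ones.
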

\begin{proof}
(1): When $j\le 1$, the key hypothesis holds by Proposition~\ref{prop:wt1-unconditional}, so $\bb Z(j)^\sub{cdh}$ is $\bb A^1$-invariant by Theorem~\ref{thm:A1inv}, and therefore Theorem~\ref{thm:comparison} shows that the canonical map $\bb Z(j)^{\cdh} \to \bb Z(j)^{\sub{cdh},\bb A}$ is an equivalence. Now apply Theorem \ref{prop:basic_props_of_cdh_mot}(5)\&(6) to obtain the desired equivalences.

(2) Let $X$ be a smooth $B$-scheme. When $j=0$ we consider the composition \[R\Gamma_\sub{Nis}(X,\bb Z)\To R\Gamma_\sub{cdh}(X,\bb Z) \To \bb Z(0)^{\sub{cdh},\bb A}(X)\To \bb Z(0)^{\bb A}(X).\] The middle arrow is an equivalence by part (1), the final arrow is an equivalence by Theorem \ref{thm:a1-a1cdh} (though we remark that, in this case of smooth $B$-scheme, the equivalence $\bb Z(j)^{\sub{cdh},\bb A}(X)\quis \bb Z(j)^{\bb A}$ could already have been proved in \S\ref{ss:slice-dedekind}), and the composition is an equivalence by Corollary~\ref{cor:low-wts-ddk-fields}. Therefore the first arrow is also an equivalence. The case of $j=1$ is treated by the same reasoning.
\end{proof}

\subsection{Proof of the main theorem}\label{ss:main_proof}
The rest of this section is devoted to proving Theorem \ref{thm:comparison}, namely that the canonical comparison map $L_{\A^1} \Z(\star)^\cdh\to \Z(\star)^{\A,\cdh}$ is an equivalence. Since the proof is somewhat involved, we begin with an overview. We first show in \S\ref{subsub:reduce-to-P1} that it is enough to prove that $L_{\A^1} \Z(\star)^\cdh$ satisfies the $\P^1$-bundle formula. It is clearly enough to do this rationally and modulo $p$ for every prime $p$, and the rational case is straightforward (see \S\ref{subsub:P1-rational}). The majority of our work thus takes place with mod-$p$ coefficients.

Write $j: \Sch_{\Z[1/p]} \to \Sch$ and $i: \Sch_{\F_p} \to \Sch$ for the complementary immersions.
Given a cdh sheaf $E$ on $\Sch$, we can form the null composite \[ L_\cdh j_! j^* F\to F \to i_*i^* F. \]
(see \S\ref{subsub:localisation} for details on the functors $j_!, i_*$, etc.) and in some special cases this is is a fibre sequence, i.e., the induced map \[ L_\cdh j_!j^* F \to \fib(E \to i_*i^* F) \] is an equivalence; in this case we say that \emph{$F$ has a localisation sequence}.
Now, assume further that $F$ comes equipped with a first Chern class as formulated by the category introduced in~\eqref{eq:gr-big}.  In such a case, in order to prove that $F$ satisfies the $\P^1$-bundle formula, it is enough to prove it for $i_*i^* F$ and $L_\cdh j_*j^* F$. In our examples of interest, the former case will be easy because we will already know the $\P^1$-bundle formula in equal characteristic $p$, thanks to the work in \S\ref{sec:pbf-fields}. Establishing it for $L_\cdh j_!j^*E$ will be the main new work.

One way to establish localisation of a cdh sheaf $F$ is to exploit rigidity: if, for every henselian valuation ring $V$ with residue field $k$ of characteristic $p$ we have $F(V) \quis F(k)$, then $F$ has a localisation sequence (at least in situations where equivalences can be checked stalkwise, for example for bounded above sheaves or over schemes of finite valuative dimension).
One prominent cdh sheaf with this property is cofiber of the comparison map $\K \to \K^\sub{Sel}$ which we call Kato $K$-theory and denote by $\K^\sub{Kato}$ (see Definition~\ref{eq:gr-big}). Up to a shift and on $p$-henselian rings, it agrees with mod-$p$ infinitesimal $K$-theory, the fibre of the cyclotomic trace map $\K \to \TC$. We will  also study its motivic variant $\F_p(\star)^\sub{Kato}$, called Kato motivic cohomology, in \S\ref{subsub:infinitesimal-mot}. Our main result will be deduced from the $\bb P^1$-bundle formula for the $\bb A^1$-localisation of Kato motivic cohomology.

It will be apparent by construction that Kato $K$-theory has a $\P^1$-bundle formula.
Using the localisation sequence and a trick involving $v_1$-self maps, we will deduce from this that $L_\cdh j_!j^* \KH$ has a $\P^1$-bundle formula (Lemma \ref{lem:inf-k-fibre}(3)). Employing a degeneration-of-spectral-sequence argument, we then show that $L_\cdh j_!j^* \F_p(\star)^\cdh$ satisfies a $\P^1$-bundle formula and, by the localisation sequence, we deduce the same for $\F_p(\star)^\sub{Kato}$ and hence $L_{\A^1} \F_p(\star)^\sub{Kato}$. From this we conclude: see \S\ref{subsub:conclusion}.

We now begin the proof of Theorem \ref{thm:comparison}.

\subsubsection{Reduction to the $\P^1$-bundle formula} \label{subsub:reduce-to-P1}
Theorem~\ref{thm:comparison} reduces to the following $\P^1$-bundle formula for the $\A^1$-localisation of $\Z(j)^{\cdh}$:

\begin{theorem}\label{thm:pbf-a1}
For any qcqs scheme $X$, and $j\in \bb Z$, the map 
\begin{equation}\label{eq:pbf-a1}
L_{\A^1}\Z(j)(X)^{\cdh} \oplus L_{\A^1}\Z(j-1)(X)^{\cdh}[-2] \xrightarrow{\pi^* \oplus c_1^\sub{cdh}(\scr O(1))\pi^*} L_{\A^1}\Z(j)^{\cdh}(\P^1_{X})
\end{equation}
is an equivalence. 
\end{theorem}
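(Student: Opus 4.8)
Following the summary in \S\ref{ss:main_proof}, my plan is to reduce the $\bb P^1$-bundle formula to the prime-by-prime situation and then, for a fixed prime $p$, to leverage a localisation sequence splitting the cohomology theory into its ``$\bb Z[\tfrac1p]$-part'' and its ``characteristic-$p$ part''. The characteristic-$p$ part is already under control by the unconditional $\bb P^1$-bundle formula for $\bb F_p(\star)^{\cdh}$ over $\bb F_p$-schemes (Corollary~\ref{corol:unconditional-pn}, using that $\Hyp$ is vacuous in equicharacteristic $p$), and the $\bb Z[\tfrac1p]$-part reduces to étale cohomology, where the projective bundle formula is classical. The bridge between the two is the Kato motivic cohomology $\bb F_p(\star)^{\sub{Kato}}$, defined as the cofibre of $\bb F_p(\star)^{\cdh}\to L_{\cdh}\bb F_p(\star)^{\syn}$, together with its $K$-theoretic avatar, Kato $K$-theory $\K^{\sub{Kato}}=\fib(\K\to\K^{\sub{Sel}})$; the essential point is that these invariants enjoy a \emph{rigidity / localisation property} — they are supported ``purely in characteristic $p$'' — and simultaneously inherit a $\bb P^1$-bundle formula from $\K^{\sub{Sel}}$ and $\KH$.

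\textbf{Step 1: reduce to the $\bb P^1$-bundle formula.} First I would record that Theorem~\ref{thm:comparison} follows from Theorem~\ref{thm:pbf-a1}: since $L_{\bb A^1}\bb Z(\star)^{\cdh}$ and $\bb Z(\star)^{\bb A,\cdh}$ are both $\bb E_\infty$-algebras in graded $\bb A^1$-invariant Nisnevich (indeed cdh) sheaves on qcqs schemes, equipped with compatible first Chern classes of $\roi(1)$, and the source satisfies the $\bb P^1$-bundle formula by Theorem~\ref{thm:pbf-a1} while the target does by Theorem~\ref{thm_pbf_Acdh}, the map $L_{\bb A^1}\bb Z(\star)^{\cdh}\to\bb Z(\star)^{\bb A,\cdh}$ is a map of objects of $\Gr\opp{Sh}_{\Nis,\bb A^1}(\Sm_X,\Spt)_{\sub c,\sub{pbf}}$ which is an equivalence on $\Sm_{\bb Z}$ (both restrict to the $\bb A^1$-motivic cohomology of smooth $\bb Z$-schemes, using the universal property of cdh sheafification and left Kan extension together with Theorem~\ref{theorem:SH_mot_coh}(2)); applying the motivic Eilenberg--MacLane functor $H$ and the identification \eqref{eqn_mot_to_EM}, together with the fact that $H$ detects equivalences on $\Sm_{\bb Z}$ after the relevant sheafifications, one concludes. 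So the work is entirely in proving Theorem~\ref{thm:pbf-a1}.

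\textbf{Step 2: rational case, and the localisation decomposition.} Rationally, $\bb Q(j)^{\cdh}$ is a direct summand of $\KH_{\bb Q}$ (Theorem~\ref{prop:basic_props_of_cdh_mot}(7)), which is $\bb A^1$-invariant and satisfies the $\bb P^1$-bundle formula, so \eqref{eq:pbf-a1} is a rational equivalence — this is essentially Proposition~\ref{prop:q-pbf} combined with the fact that $L_{\bb A^1}$ acts trivially on $\bb A^1$-invariant sheaves. It thus remains to prove the statement modulo $p$ for every prime $p$. Fix $p$. Writing $j\colon\Sch_{\bb Z[1/p]}\hookrightarrow\Sch$ and $i\colon\Sch_{\bb F_p}\hookrightarrow\Sch$ for the open and closed immersions, for a cdh sheaf $F$ equipped with a first Chern class (an object of the relevant category of ``cohomology theories with $c_1$'') I would show: if $F$ admits a localisation sequence $L_{\cdh}j_!j^*F\to F\to i_*i^*F$, and if both $i_*i^*F$ and $L_{\cdh}j_!j^*F$ satisfy the ($\bb A^1$-localised) $\bb P^1$-bundle formula, then so does $F$. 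For $F=\bb F_p(\star)^{\cdh}$: the piece $i_*i^*\bb F_p(\star)^{\cdh}$ satisfies the $\bb P^1$-bundle formula after $L_{\bb A^1}$ because, over $\bb F_p$-schemes, $\Hyp$ holds vacuously and Corollary~\ref{corol:unconditional-pn} (with $B=\bb F_p$) gives the $\bb P^1$-bundle formula on the nose; pushing forward and applying $L_{\bb A^1}$ (which commutes with $i_*$ up to the relevant sheafifications, since $\bb A^1_{-}$ preserves $\bb F_p$-schemes) preserves it. The entire difficulty is therefore concentrated in: \emph{(i)} establishing the localisation sequence for $\bb F_p(\star)^{\cdh}$ (equivalently, identifying $L_{\cdh}j_!j^*\bb F_p(\star)^{\cdh}$ with the fibre of $\bb F_p(\star)^{\cdh}\to L_{\cdh}\bb F_p(\star)^{\syn}$, i.e.\ a shift of $\bb F_p(\star)^{\sub{Kato}}$), and \emph{(ii)} proving the $\bb P^1$-bundle formula for $L_{\bb A^1}$ of the $\bb Z[1/p]$-supported piece.

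\textbf{Step 3: Kato $K$-theory, rigidity, and the main obstacle.} For \emph{(ii)} I would introduce Kato $K$-theory $\K^{\sub{Kato}}:=\fib(\K\to\K^{\sub{Sel}})$, where $\K^{\sub{Sel}}$ is Clausen's Selmer $K$-theory; on $p$-henselian rings it is, up to a shift, the mod-$p$ infinitesimal $K$-theory $\fib(\K\to\TC)/p$. Its key feature is a \emph{rigidity/localisation property}: for every henselian valuation ring $V$ with residue field $k$ of characteristic $p$, the map $\K^{\sub{Kato}}(V)\to\K^{\sub{Kato}}(k)$ is an equivalence (this uses that $\TC$ and Selmer $K$-theory are truncating/rigid on such rings, in the sense of Land--Tamme and Clausen--Mathew--Morrow), from which one deduces (over schemes of finite valuative dimension, checking on cdh stalks) that $\K^{\sub{Kato}}$ admits a localisation sequence and is ``supported in characteristic $p$''. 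Since both $\K$ (after cdh sheafification: $L_{\cdh}\K=\KH$, which satisfies the $\bb P^1$-bundle formula) and $\K^{\sub{Sel}}$ satisfy the $\bb P^1$-bundle formula, so does $\K^{\sub{Kato}}$; and a $v_1$-self-map argument (using that mod-$p$ $\K^{\sub{Sel}}$-modules are $v_1$-periodic) upgrades this to a $\bb P^1$-bundle formula for $L_{\cdh}j_!j^*\KH$. Passing to the associated graded of the cdh-motivic filtration and running a degeneration-of-spectral-sequence argument (in the spirit of Theorems~\ref{thm:A1inv} and~\ref{thm:pbf-fields}, exploiting the connectivity/vanishing bounds on $\bb F_p(j)^{\cdh}$ on valuation rings from Lemma~\ref{lemma_lke_from_affines}(3) and the comparison with syntomic cohomology of Theorem~\ref{theorem_syn_comp}) yields the $\bb P^1$-bundle formula for $L_{\cdh}j_!j^*\bb F_p(\star)^{\cdh}$, hence for $\bb F_p(\star)^{\sub{Kato}}$, and finally — since $L_{\bb A^1}$ is monoidal and compatible with the Chern class — for $L_{\bb A^1}\bb F_p(\star)^{\sub{Kato}}$. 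Combining this with the equicharacteristic-$p$ piece via the localisation sequence of Step~2 gives the $\bb P^1$-bundle formula for $L_{\bb A^1}\bb F_p(\star)^{\cdh}$, completing the proof.

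\textbf{Where the difficulty lies.} The main obstacle is Step~3: proving the rigidity/localisation property for Kato $K$-theory and its motivic analogue, and then correctly propagating the $\bb P^1$-bundle formula through the $v_1$-self-map argument and the spectral-sequence degeneration. The subtle points are (a) that the localisation sequence for $\bb F_p(\star)^{\cdh}$ is a genuine input requiring the identification of $L_{\cdh}j_!j^*\bb F_p(\star)^{\cdh}$ with a shift of Kato motivic cohomology, which in turn rests on the characteristic-$p$ computation of syntomic cohomology (Example~\ref{example_syn_in_char_p}, Corollary~\ref{cor:singular_gl}) and the fact that syntomic cohomology glues étale cohomology of the $p$-adic generic fibre to the $p$-complete theory; and (b) controlling the finitely many low-degree terms of the spectral sequence — exactly as in Steps~3a/3b/4 of Theorem~\ref{thm:A1inv} — which forces one to keep careful track of the vanishing ranges on rank-$\le1$ henselian valuation rings, using Milnor excision (Theorem~\ref{theorem:exc-cdarc}) to reduce to rank $\le 1$.
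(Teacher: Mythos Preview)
Your Step~2 contains a genuine error that breaks the assembly. You propose to obtain the $\bb P^1$-bundle formula for $L_{\bb A^1}\bb F_p^{\cdh}$ from a localisation sequence
\[
L_\cdh j_!j^*\bb F_p^{\cdh} \To \bb F_p^{\cdh} \To i_*i^*\bb F_p^{\cdh},
\]
and in your point~(i) you identify the left term with a shift of $\bb F_p^{\sub{Kato}}$. Both claims are false. The null sequence above is a fibre sequence only if $\bb F_p(j)^{\cdh}(V)\to\bb F_p(j)^{\cdh}(V/p)$ is an equivalence for every $p$-henselian valuation ring $V$; but for $V=\bb Z_p$ (odd $p$) and $j=1$ this is the nonzero map $\bb Z_p^\times/p\cong\bb F_p\to 0$. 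Nor is your identification possible: on a $\bb Z[\tfrac1p]$-scheme, $L_\cdh j_!j^*\bb F_p(j)^{\cdh}=\bb F_p(j)^{\cdh}$ is cdh-locally supported in degrees $\le j$, while $\bb F_p(j)^{\sub{Kato}}\simeq L_\cdh\tau^{>j}R\Gamma_\et(-,\mu_p^{\otimes j})$ lives in degrees $>j$.

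The paper's architecture routes around this by using the \emph{defining} fibre sequence instead: after $L_{\bb A^1}$,
\[
L_{\bb A^1}\bb F_p^{\cdh}\To L_{\bb A^1}L_\cdh\bb F_p^{\syn}\To L_{\bb A^1}\bb F_p^{\sub{Kato}}.
\]
The middle term satisfies the $\bb P^1$-bundle formula since $L_{\bb A^1}L_\cdh\bb F_p(j)^{\syn}\simeq R\Gamma_\et(-[\tfrac1p],\mu_p^{\otimes j})$ (Theorem~\ref{thm:a1-comparison}). It is $\bb F_p^{\sub{Kato}}$, not $\bb F_p^{\cdh}$, that has the rigidity and hence the localisation sequence (Lemma~\ref{lem:inf-fibre}): on $p$-henselian $V$ one has $\bb F_p(j)^{\sub{Kato}}(V)\simeq\tilde\nu(j)(V/p)[-j-1]$, which depends only on $V/p$. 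Its $i_*i^*$ piece is handled by Corollary~\ref{corol:unconditional-pn} in characteristic $p$; its $L_\cdh j_!j^*$ piece (Lemma~\ref{lemm:HFpcdhinf-pbf}) is where your Step~3 ingredients live, but two things are missing from your sketch: one must first reduce to $\bb Z[\zeta_p]$-schemes via a Galois descent argument (Lemma~\ref{lem:galois-descent}), and the core computation $L_\cdh j_!j^*\bb F_p(j)^{\cdh}(\bb P^1_V)=0$ for mixed-characteristic rank-one $V\ni\zeta_p$ (Lemma~\ref{lem:p1-milnor}) is not a degeneration in the style of \S\ref{sec:pbf-a1} but an induction on $j$, using the fibre sequences $\tau^{\le j-1}\xrightarrow{\zeta_p}\tau^{\le j}\to H^j[-j]$ and the $\bb P^1$-bundle formula for $L_\cdh j_!j^*\KH/p$ as the base case.
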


We can use this to prove the main theorem:

\begin{proof}[Proof that Theorem~\ref{thm:pbf-a1} implies Theorem~\ref{thm:comparison}]
We apply Corollary \ref{corol_pullback_of_EM} with $X=\Spec(\Z)$ and $\bb Z(\star)^\bb A[2\star]\in \Gr\opp{Sh}_{\cdh, \bb A^1}(\Sch_\bb Z^\sub{qcqs},\Spt)_\sub{c,pbf}$: Theorem~\ref{thm:pbf-a1} implies that the $L_\sub{pbf}$ is redundant, while we saw in Remark \ref{rem:motivic_is_traditional} that $H(\bb Z(\star)^\bb A[2\star])=s^0(\KGL_\bb Z)$, so the corollary exactly yields the equivalence claimed in Theorem~\ref{thm:comparison}.
\end{proof}

The proof of the $\P^1$-bundle formula occupies the rest of this section.
Our strategy will be to first establish the $\P^1$-bundle formula for a plethora of other cohomology theories and then bootstrap our way to Theorem~\ref{thm:pbf-a1}.

\subsubsection{Rational $\bb P^1$-bundle formula} \label{subsub:P1-rational}
We first note that Theorem \ref{thm:pbf-a1} holds rationally. Note that $L_{\bb A^1} \bb Q(j)^{\cdh} \simeq L_{\bb A^1}\bb Z(j)^{\cdh} \otimes \bb Q$ since $L_{ \bb A^1}$ preserves filtered colimits.

\begin{lemma}\label{lem:rational-p1} Let $X$ be a qcqs scheme. Then for any $j \in \bb Z$, the natural maps
\begin{enumerate}
\item $\bb Q(j)^{\cdh}(X) \rightarrow L_{\bb A^1} \bb Q(j)^{\cdh}(X)$, and
\item $L_{\A^1}\bb Q(j)(X)^{\cdh} \oplus L_{\A^1}\bb Q(j-1)(X)^{\cdh}[-2] \xrightarrow{\pi^* \oplus c_1^\sub{cdh}(\scr O(1))\pi^*} L_{\A^1}\bb Q(j)^{\cdh}(\P^1_{X})$
\end{enumerate}
are equivalences.
\end{lemma}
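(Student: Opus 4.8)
The claim is precisely the rational case of Theorem~\ref{thm:pbf-a1}, and both parts will follow immediately from Proposition~\ref{prop:a1-rational} and Proposition~\ref{prop:q-pbf}. First I would observe that rationalisation commutes with the $\bb A^1$-localisation functor $L_{\bb A^1}$, since $L_{\bb A^1}$ is a filtered colimit (the Suslin construction) and filtered colimits commute with the filtered colimit computing rationalisation; hence $L_{\bb A^1}\bb Q(j)^{\cdh} \simeq (L_{\bb A^1}\bb Z(j)^{\cdh})\otimes\bb Q \simeq (\bb Z(j)^{\cdh}\otimes\bb Q)$-localised, and in particular $L_{\bb A^1}\bb Q(j)^{\cdh}(X) \simeq L_{\bb A^1}\bb Z(j)^{\cdh}(X)\otimes\bb Q$.

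For part (1): Proposition~\ref{prop:a1-rational} states that the canonical map $\bb Q(j)^{\cdh}(X)\to \bb Q(j)^{\cdh}(\bb A^1_X)$ is an equivalence for every qcqs scheme $X$, i.e.\ $\bb Q(j)^{\cdh}$ is already an $\bb A^1$-invariant presheaf. Therefore the unit map $\bb Q(j)^{\cdh}\to L_{\bb A^1}\bb Q(j)^{\cdh}$ is an equivalence, which is exactly the assertion of (1). (Concretely, $L_{\bb A^1}F(X) = \colim_{m\in\Delta^{\op}} F(X\times\Delta^m)$, and $\bb A^1$-invariance of $\bb Q(j)^{\cdh}$ makes this cosimplicial object constant, so its realisation is $\bb Q(j)^{\cdh}(X)$.)

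For part (2): Applying $L_{\bb A^1}$ to the rational projective bundle map and using part (1) to identify $L_{\bb A^1}\bb Q(j)^{\cdh}$ with $\bb Q(j)^{\cdh}$ (naturally, and compatibly with the first Chern class $c_1^{\cdh}$, which is defined on $\bb Q(j)^{\cdh}$ before localisation), the map in (2) is identified with the rational projective bundle map
\[
\bb Q(j)^{\cdh}(X) \oplus \bb Q(j-1)^{\cdh}(X)[-2] \xrightarrow{\pi^*\oplus c_1^\sub{cdh}(\scr O(1))\pi^*} \bb Q(j)^{\cdh}(\bb P^1_X),
\]
which is an equivalence by Proposition~\ref{prop:q-pbf} (the case $d=1$). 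This completes the proof. There is no real obstacle here: the only point requiring a word of care is the interchange of $L_{\bb A^1}$ with rationalisation, which is formal since both are filtered colimits, and the fact that $\bb A^1$-invariance of the target lets one commute $L_{\bb A^1}$ past the already-established unlocalised rational projective bundle formula.
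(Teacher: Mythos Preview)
Your proof is correct and follows essentially the same route as the paper. For (1) the paper cites the rational splitting of Theorem~\ref{prop:basic_props_of_cdh_mot}(7) directly rather than Proposition~\ref{prop:a1-rational}, but since the latter is itself an immediate consequence of the former this is only a difference in level of indirection; for (2) both arguments reduce via (1) to Proposition~\ref{prop:q-pbf}.
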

\begin{proof}
(1): In other words, we are claiming that $\Q(j)^{\cdh}$ is an $\A^1$-invariant presheaf; this follows from the rational decomposition of Theorem \ref{prop:basic_props_of_cdh_mot}(7) since $\KH$ is $\bb A^1$-invariant.

(2): In light of (1) this follows from the $\bb P^1$-bundle formula for rational cdh-motivic cohomology, which was proved in Proposition~\ref{prop:q-pbf}.
\end{proof}

To complete the proof of Theorem \ref{thm:pbf-a1} it now suffices to prove it modulo every prime number. For the rest of this section, we fix a prime number $p$.

\subsubsection{Localisation sequences and cohomology theories} \label{subsub:localisation}
We consider the open and closed immersions into $\Spec(\Z)$ given by
\[
\Spec(\Z[\tfrac{1}{p}] )\stackrel{j}\To \Spec(\Z) \stackrel{i}\longleftarrow \Spec(\F_p);
\]
setting $\cal D:=\Spt$ or $\D(\bb Z)$, these immersions induce adjunctions on $\infty$-categories of presheaves 
\[
j_!:\PShv(\Sch^{\qcqs}_{\Z[\tfrac{1}{p}]}, \cal D) \rightleftarrows \PShv(\Sch^{\qcqs}_{\Z}, \cal D):j^*.
\]
\[
i^*:\PShv(\Sch^{\qcqs}_{\Z},\cal D) \rightleftarrows \PShv(\Sch^{\qcqs}_{\F_p}, \cal D):i_*
\]
For any presheaf $F$ on $\Sch_\bb Z^\sub{qcqs}$ the composition \[j_!j^*F\To F\To i_*i^*F\] is naturally null homotopic, by factoring it as $j_!j^*F\to i_*i^*j_!j^*F\simeq 0\to i_*i^*F$ (where we use that $i^*j_!\simeq 0$, as follows for example from the explicit formula of the next remark).

\begin{remark}[Explicit formulae]\label{rem:values-of-functors} 
In other words, $j^*$, $j_!$, and $i^*$ are the unique colimit preserving functors fitting into commutative diagrams
\[
\begin{tikzcd}
\Sch^{\qcqs}_{\Z[\tfrac{1}{p}]} \ar{r}{X  \mapsto X } \ar{d}  & \Sch^{\qcqs}_{ \Z} \ar{d} \\
\PShv(\Sch^{\qcqs}_{\Z[\tfrac{1}{p}]},\Spt) \ar{r}{j_!} & \PShv(\Sch^{\qcqs}_{ \Z},\Spt).
\end{tikzcd}
\qquad
\begin{tikzcd}
\Sch^{\qcqs}_{\Z} \ar{r}{X \mapsto X[\tfrac{1}{p}]} \ar{d}  & \Sch^{\qcqs}_{ \Z[\tfrac{1}{p}]} \ar{d} \\
\PShv(\Sch^{\qcqs}_{\Z},\cal D) \ar{r}{j^*} & \PShv(\Sch^{\qcqs}_{ \Z[\tfrac{1}{p}]},\cal D)
\end{tikzcd}
\]
\[
\begin{tikzcd}
\Sch^{\qcqs}_{\Z} \ar{r}{X \mapsto X_{\bb F_p}} \ar{d}  & \Sch^{\qcqs}_{\bb F_p} \ar{d} \\
\PShv(\Sch^{\qcqs}_{\Z},\cal D) \ar{r}{i^*} & \PShv(\Sch^{\qcqs}_{ \bb F_p},\cal D),
\end{tikzcd}
\]
and $i_*$ is the right adjoint to $i^*$.

Moreover, observe that if $Y \in \Sch^{\qcqs}_{\Z[1/p]}$ (respectively $ \in \Sch^{\qcqs}_{\F_p}$) and $X \in \Sch^{\qcqs}_{\Z}$ and we are given a morphism $X \to Y$, then automatically $X \in \Sch^{\qcqs}_{\Z[1/p]}$ (respectively $ \in \Sch^{\qcqs}_{\F_p}$).
Using the pointwise formula for left Kan extensions, one can therefore explicitly determine the values of these functors, which we record below for the readers' convenience:
\begin{enumerate}
\item $j_!F(X) = \begin{cases} F(X) & X \in \Sch^{\qcqs}_{\Z[1/p]}, \\ 0 & \text{else}; \end{cases}$
\item $j^*F(X) = F(X)$ for all $X\in\Sch_{\bb Z[1/p]}^\sub{qcqs}$;
\item $i^*F(X)=F(X)$ for all $X\in\Sch_{\bb F_p}^\sub{qcqs}$;
\item $i_*F(X) = F(X_{\F_p})$ for all $X\in\Sch^\sub{qcqs}$.
\end{enumerate}
In particular, using explicit formula 2--4, we see that $j^*$, $i^*$, and $i_*$ preserve sheaves for any given Grothendieck topology; in fact, $j^*$ and $i^*$ even commute with sheafification. In this remainder of this section we will often implicitly use these facts in the case of the cdh topology.
\end{remark}

We need a ``big'', $\D(\bb Z)$-linear variant of the $\infty$-category $\Gr\opp{Sh}_{\Nis, \bb A^1}(\Sm_X,\Spt)_\sub{c}=\opp{Mod}_{\cal Q_X}(\Gr\opp{Sh}_{\Nis, \bb A^1}(\Sm_X,\Spt))$ of Construction \ref{cons:lin-cohom-theories}, in which we will not impose sheafiness or $\bb A^1$-invariance. 

Let $B$ be a base ring and $\cal Q_{B,\sub{big}}\in \CAlg(\Gr \PShv(\Sch^\qcqs_B, \D(\bb Z)))$ the free $\bb E_\infty$-algebra on $\bb Z[(\bb P^1_B,\infty)]:=(\Sigma^\infty\bb P^1_B)\otimes_{\bb S}\bb Z$ (in other words, $\opp{coker}(\bb Z\xto{\infty} \bb Z[\Hom_{\Sch^\qcqs_B}(-,\bb P^1_B)])$) placed in degree $1$, and \begin{equation}\label{eq:gr-big}
\Gr\opp{PSh}(\Sch^\sub{qcqs}_B,\D(\bb Z))_\sub{c}:=\opp{Mod}_{\cal Q_{B,\sub{big}}}(\Gr\opp{PSh}(\Sch_B^\sub{qcqs},\D(\bb Z)))
\end{equation} the $\infty$-category of modules over $\cal Q_{B,\sub{big}}$. In other words, as in Construction \ref{cons:lin-cohom-theories} and the discussion before Remark \ref{remark:traditional}, objects $M$ of $\Gr\PShv(\Sch_B^\qcqs,\D(\bb Z))_{c}$ consist of graded presheaves of complexes $M^\star$ together with coherent ``multiplication by the first Chern class of $\scr O(1)$'' maps, and objects $F$ of $\CAlg(\Gr\PShv(\Sch_B^\qcqs,\D(\bb Z))_{c})$ are the same as pairs of $F^\star \in \CAlg(\Gr\PShv(\Sch_B^\qcqs,\D(\bb Z)))$ and a map $\bb Z[(\bb P^1_B,\infty)]\to F^1$.

Note that, since cdh sheafification is a lax symmetric monoidal endofunctor of presheaves, it induces a cdh sheafification endofunctor $L_\sub{cdh}$ of $\CAlg(\Gr\PShv(\Sch_B^\qcqs,\D(\bb Z))_{c})$, given termwise on the underlying graded presheaf.

\begin{example}
As in Remark  \ref{remark:traditional}, many of our cohomology theories are equipped with a theory of first Chern classes and so, after shearing, upgrade to objects of $\CAlg(\Gr\PShv(\Sch^\qcqs_B,\D(\bb Z))_c)$; the same is true for all relevant maps, since they will respect the first Chern classes.

For example, the $\bb E_\infty$-algebras in graded $\D(\bb Z)$-valued presheaves $\Z(\star)^\cdh[2\star]$, $\bb Z(\star)^{\bb A}[2\star]$, $\bb Z(\star)^{\bb A,\sub{cdh}}[2\star]$, $\bb Z_p(\star)^\sub{BL}[2\star]$, $L_\sub{cdh}\bb Z_p(\star)^\sub{BL}[2\star]$, $\bb Z_p(\star)^\sub{syn}[2\star]$, $L_\sub{cdh}\bb Z_p(\star)^\sub{syn}[2\star]$ and their mod-$p$ variants all upgrade to objects of $\CAlg(\Gr\PShv(\Sch^\qcqs,\D(\bb Z))_c)$, denoted respectively by $\Z^\cdh$, $\bb Z^{\bb A}$, $\bb Z^{\bb A,\sub{cdh}}$, $\bb Z_p^\sub{BL}$, $L_\sub{cdh}\bb Z_p^\sub{BL}$, $\bb Z_p^\sub{syn}$, $L_\sub{cdh}\bb Z_p^\sub{syn}$, and similarly mod $p$. These are related by various maps, for example \begin{equation}\F_p^\cdh\To L_\sub{cdh}\bb F_p^\sub{BL}\To L_\sub{cdh}\bb F_p^\sub{syn},\label{eqn:cdh_to_cdhsyn}\end{equation} where the first map is induced by \eqref{eq:cdh-syn} modulo $p$ and the second map is induced by the canonical map from Beilinson--Lichtenbaum to syntomic cohomology.
\end{example}

The functors $j_!$, $j^*$, $i^*$, and $i_*$ induce adjunctions on $\Gr\PShv(\Sch^\qcqs_{(\ph)},\D(\bb Z))_c$, which we abusively give the same names\footnote{We apologise that in this section $j$ will be used simultaneously for the inclusion of the generic fibre and for the weight of various cohomologies; it should not cause confusion because in the first case it will always occur with a decoration like $_!$ or $^*$, e.g., $j_!j^*\bb F_p(j)$.}
\begin{gather*}
j_!:\Gr\PShv(\Sch^{\qcqs}_{\Z[1/p]}, \D(\bb Z))_c \rightleftarrows \Gr\PShv(\Sch^{\qcqs}_{\Z}, \D(\bb Z))_c:j^*\\
i^*:\Gr\PShv(\Sch^{\qcqs}_{\Z}, \D(\bb Z))_c \rightleftarrows \Gr\PShv(\Sch^{\qcqs}_{\F_p}, \D(\bb Z))_c :i_*
\end{gather*}
and which are computed termwise, i.e., $(j_! M)^\star \wequi j_!(M^\star)$, etc. Indeed, for the functors $i^*$ and $j^*$ this holds because they are symmetric monoidal and preserve $\cal Q_\sub{big}$; for the functor $i_*$ we use that it is lax symmetric monoidal; and finally for $j_!$ we use the projection formula $\cal Q_{\bb Z[1/p],\sub{big}} \otimes j_!(F) \wequi j_!(j^* \cal Q_{\bb Z,\sub{big}} \otimes F)$.


\begin{remark}[Localisation sequences]\label{rem:local_seq}
As just before Remark \ref{rem:values-of-functors}, these functors fit into a null sequence $j_!j^*\to\opp{id}\to i_*i^*$ (with the null homotopy induced by $i^*j_!= 0$), which we will also study after termwise cdh sheafifying.
\end{remark}

Given $M \in \Gr\PShv(\Sch^\qcqs_B,\D(\bb Z))_c$ we obtain, for each $X \in \Sch^\qcqs_B$, the adjoint bonding map \[M^j(X)\To\fib(M^{j+1}(\bb P^1_X)\xto{\infty^*}M^{j+1}(X)),\] and we say that \emph{$E$ satisfies the $\P^1$-bundle formula} if these maps are equivalences for all $X$ and $j\in\bb Z$; as usual let $\Gr\PShv(\Sch^\qcqs_B,\D(\bb Z))_{c,\sub{pbf}}$ denote the full subcategory of such objects. In contrast to Construction \ref{cons:lin-cohom-theories}, we do not need to introduce any functor $L_\sub{pbf}$ forcing the projective bundle formula to hold. Using Remark~\ref{rem:values-of-functors}, we see that the functors $j_!$, $j_*$, $i^*$, $i_*$ all preserve objects satisfying the projective bundle formula.

\begin{example}\label{ex:pbfGr}
For example, the objects $\bb Z^\bb A$, $\bb Z^{\bb A,\cdh}$, and $\bb Z_p^\sub{syn}$ (as well as their mod-$p$ variants) satisfy the projective bundle formula by Theorems \ref{thm_pbf_Acdh} and \ref{theorem_syntomic_properties}(6).

Another example, which we will need in the proof of the main theorem, is $L_{\bb A^1}L_{\cdh}\F_p^{\syn}$, obtained by termwise $\bb A^1$-localising $L_\sub{cdh}\bb F_p^\sub{syn}$. Indeed, after unshearing, the $\bb E_\infty$-algebra in $\Gr\opp{PSh}(\Sch,\D(\bb Z))$ underlying $L_{\bb A^1}L_{\cdh}\F_p^{\syn}$ is \[L_{\bb A^1}L_\sub{cdh}\bb F_p(\star)^\sub{syn}\simeq R\Gamma_{\et}(-[\tfrac{1}{p}],\mu_p^{\otimes \star})\] by Theorem~\ref{thm:a1-comparison} (and the fact that $L_{\bb A^1}L_\sub{cdh}L_{\bb A^1}\simeq L_{\bb A^1}L_\sub{cdh})$, equipped with the usual first Chern class from Kummer theory (because of Lemma \ref{lemma_first_Chern_p_hens}). The desired projective bundle formula therefore reduces to the case of \'etale cohomology.
\end{example}

\subsubsection{Kato motivic cohomology} \label{subsub:infinitesimal-mot}

\begin{definition}\label{def:cdhinf} We define
\[
\F_p^\sub{Kato} := \opp{cofib}(  \F_p^{\cdh} \xto{\sub{\eqref{eqn:cdh_to_cdhsyn}}} L_{\cdh}\F_p^{\syn}) \in \Gr\PShv(\Sch^\qcqs,\D(\bb Z))_\sub{c,pbf},
\]
which we call \emph{mod-$p$ Kato motivic cohomology}.\footnote{It is a version of the complex appearing in the so-called Kato conjecture  \cite{Kato1986a}, measuring the difference between motivic and \'etale motivic cohomology.} In line with our notation for other cohomologies, let $\bb F_p(\star)^\sub{Kato}:=(\bb F_p^\sub{Kato})^\star[-2\star]\in\Gr\opp{Sh}(\Sch^\qcqs,\D(\bb Z))$ be the unshearing of the graded presheaf underlying $\bb F_p^\sub{Kato}$; that is, for any $j \in \Z$, the presheaf $\F_p(j)^{\sub{Kato}}$ fits into a fibre sequence
\[
\F_p(j)^{\cdh}\To L_{\cdh}\F_p(j)^{\syn}\To \F_p(j)^{\sub{Kato}}. 
\]
Since $\F_p(j)^{\sub{Kato}}$ is the cofibre of a map between two finitary cdh sheaves, it is itself a finitary cdh sheaf.
\end{definition}

\begin{remark}[Explicit description of Kato motivic cohomology]
Identifying the left hand term of the previous fibre sequence with $L_\sub{cdh}\tau^{\le j}\bb F_p(j)^\sub{syn}$ using Theorem \ref{theorem_syn_comp}, we obtain equivalences \begin{equation}\F_p(j)^{\sub{Kato}}\simeq L_\sub{cdh}\tau^{>j}\bb F_p(j)^\sub{syn},\qquad j\in\bb Z.\label{eqn:cdhinf}\end{equation} In particular, on $\bb F_p$-schemes we have $\bb F_p(j)^\sub{Kato}\simeq R\Gamma_\sub{cdh}(-,\tilde\nu(j))[-j-1]$ for $j\ge0$ (and $\simeq 0$ for $j<0$), by \eqref{eqn:cdhinf} and Remark \ref{rmk_syntomic_props}(2). Meanwhile, on $\bb Z[\tfrac1p]$-schemes, we have $\bb F_p(j)^\sub{Kato}\simeq L_\sub{cdh}\tau^{>j}R\Gamma_\sub{\'et}(-,\mu_p^{\otimes j})$ for all $j\in\bb Z$.
\end{remark}

Kato motivic cohomology has a cdh-local localisation sequence and satisfies the projective bundle formula. We remark that this latter result requires the unconditional projective bundle formula (but only for $P = \bb P^1_X$) for equicharacteristic schemes proved in Corollary~\ref{corol:unconditional-pn}. 

\begin{lemma}\label{lem:inf-fibre}
\begin{enumerate}
\item The null sequence
\begin{equation}\label{eq:rigidity-fib}
L_{\cdh}j_!j^*\F_p^{\sub{Kato}} \rightarrow \F_p^{\sub{Kato}} \rightarrow i_*i^*\F_p^{\sub{Kato}}
\end{equation}
(obtained by cdh sheafifying Remark \ref{rem:local_seq}) in $\Gr\PShv(\Sch^{\qcqs}_{\bb Z}, \D(\bb Z))_c$ is a fibre sequence.
\item $i_*i^*\F_p^{\sub{Kato}}$ satisfies the $\P^1$-bundle formula.
\end{enumerate}
\end{lemma}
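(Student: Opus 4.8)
The plan is to prove both parts by reducing to statements we have already established, using the explicit description of Kato motivic cohomology given in \eqref{eqn:cdhinf}, namely $\bb F_p(j)^\sub{Kato}\simeq L_\sub{cdh}\tau^{>j}\bb F_p(j)^\sub{syn}$, together with the rigidity properties of syntomic cohomology proved in \S\ref{ss:Milnor_excision}.

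For part (1), I would first observe that, since all three objects are termwise cdh sheaves (the outer two being cdh sheafifications, and $\bb F_p^\sub{Kato}$ being a finitary cdh sheaf by Definition \ref{def:cdhinf}) and finitary, and since $i_*i^*$ and $L_\sub{cdh}j_!j^*$ preserve these properties, it suffices by Proposition \ref{proposition_checking_on_points}(1) to check the fibre sequence after evaluating each term on a henselian valuation ring $V$ of finite rank (working over $\Spec(\Z)$, which has finite valuative dimension). Using Remark \ref{rem:values-of-functors}, the value of $i_*i^*\bb F_p(j)^\sub{Kato}$ on $V$ is $\bb F_p(j)^\sub{Kato}(V_{\F_p})=\bb F_p(j)^\sub{Kato}(V/\sqrt{pV})$, and the value of $L_\sub{cdh}j_!j^*\bb F_p(j)^\sub{Kato}$ on $V$ is the cdh sheafification evaluated at $V$; since $V$ is a henselian valuation ring, $j_!j^*\bb F_p(j)^\sub{Kato}$ is already a cdh sheaf near $V$ when $V$ is a $\Z[\tfrac1p]$-algebra (where it equals $\bb F_p(j)^\sub{Kato}(V)$) and vanishes when $p$ is not invertible — but I need to be careful: $L_\sub{cdh}j_!j^*$ does not vanish at $V$ in mixed characteristic, so the real content is that the sequence $\bb F_p(j)^\sub{Kato}(V)\to\bb F_p(j)^\sub{Kato}(V/\sqrt{pV})$ has fibre computed by the generic fibre. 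The cleanest route is to use the rigidity/excision result: by Proposition \ref{proposition_synt_coh_hv_excision} applied to $\tau^{>j}(\bb Z_p(j)^\sub{syn}/p)$ (whose henselian $v$-excision follows from that of $\bb Z_p(j)^\sub{syn}/p$ and $\tau^{\le j}(\bb Z_p(j)^\sub{syn}/p)$), together with Corollary \ref{corol_syn_rigid1}, one reduces to the pair $(V,\frak p)$ with $\frak p=\sqrt{pV}$ and checks that $\tau^{>j}\bb F_p(j)^\sub{syn}$ sends the mixed-characteristic part to the generic fibre. Concretely, one glues: for $V$ equicharacteristic the claim is immediate (either $p$ invertible, giving $L_\sub{cdh}j_!j^*\simeq \opp{id}$ and $i_*i^*=0$, or $p=0$, giving $j_!j^*=0$ and $i_*i^*\simeq\opp{id}$), and for the mixed case one uses the pullback square \eqref{eqn:syn_pullback} (and its $\tau^{>j}$-version, which stays cartesian by the argument in Proposition \ref{proposition_synt_coh_hv_excision}) to identify the fibre of $\bb F_p(j)^\sub{Kato}(V)\to\bb F_p(j)^\sub{Kato}(V/\sqrt{pV})$ with $\tau^{>j}R\Gamma_\et(V[\tfrac1p],\mu_p^{\otimes j})=j^*\bb F_p(j)^\sub{Kato}$ evaluated on the generic point.

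For part (2), I would use the explicit description that on $\bb F_p$-schemes $\bb F_p(j)^\sub{Kato}\simeq R\Gamma_\sub{cdh}(-,\tilde\nu(j))[-j-1]$, so that $i^*\bb F_p^\sub{Kato}$ sits in a fibre sequence $\bb F_p^\sub{cdh}|_{\Sch_{\F_p}}\to L_\sub{cdh}\bb F_p^\sub{syn}|_{\Sch_{\F_p}}\to i^*\bb F_p^\sub{Kato}$ of objects of $\Gr\PShv(\Sch^\qcqs_{\F_p},\D(\Z))_c$. Both $\bb F_p^\sub{cdh}|_{\Sch_{\F_p}}$ and $L_\sub{cdh}\bb F_p^\sub{syn}|_{\Sch_{\F_p}}$ satisfy the $\P^1$-bundle formula: for the former this is Corollary \ref{corol:unconditional-pn} (the key hypothesis holds vacuously over $\bb F_p$, so Theorem \ref{thm:pbf-fields} applies), and for the latter it is Theorem \ref{theorem_syntomic_properties}(6) cdh-sheafified. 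Since the subcategory of objects satisfying the $\P^1$-bundle formula is closed under (co)fibre sequences, $i^*\bb F_p^\sub{Kato}$ satisfies it as well; and $i_*$ preserves the $\P^1$-bundle formula by the explicit formula $i_*F(X)=F(X_{\F_p})$ of Remark \ref{rem:values-of-functors}, together with the fact that $(\bb P^1_X)_{\F_p}=\bb P^1_{X_{\F_p}}$.

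The main obstacle I anticipate is part (1): making the reduction to henselian valuation rings genuinely precise and then correctly gluing the equicharacteristic and mixed-characteristic cases, since $L_\sub{cdh}j_!j^*$ is subtle in mixed characteristic (it does not simply vanish or equal the identity). The technical heart is verifying that the truncated syntomic pullback square stays cartesian — which is exactly the content already extracted in the proof of Proposition \ref{proposition_synt_coh_hv_excision} — so that fibre of restriction to the special fibre is computed by the generic fibre, which is what $L_\sub{cdh}j_!j^*$ sees. Everything else is a formal consequence of finitariness, cdh descent, and Proposition \ref{proposition_checking_on_points}.
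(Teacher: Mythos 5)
Your reduction of (1) to stalks at henselian valuation rings, and your strategy for (2) via the defining cofibre sequence restricted to $\bb F_p$-schemes, both match the paper; but the mixed-characteristic case of (1) as you handle it is wrong, and one half of (2) is under-justified.

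For (1): at a henselian valuation ring $V$ the evaluation functor is a point of the cdh topos, so it commutes with cdh sheafification of finitary presheaves; since $j_!$ is extension by zero, the stalk $L_{\cdh}j_!j^*\F_p(j)^{\sub{Kato}}(V)$ is $j_!j^*\F_p(j)^{\sub{Kato}}(V)$, which is \emph{zero} whenever $p$ is not invertible in $V$ — including in mixed characteristic (this is exactly how the paper, e.g.\ in the proof of Lemma \ref{lemm:HFpcdhinf-pbf}, uses these functors). So your assertion that ``$L_{\cdh}j_!j^*$ does not vanish at $V$ in mixed characteristic'' is off target, and what must actually be proved there is that $\F_p(j)^{\sub{Kato}}(V)\to\F_p(j)^{\sub{Kato}}(V/p)$ is an equivalence, i.e.\ that its fibre is zero. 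Your claimed identification of that fibre with $\tau^{>j}R\Gamma_{\et}(V[\tfrac1p],\mu_p^{\otimes j})$ is false: for $V=\bb Z_p$ and $j=1$ one has $\F_p(1)^{\sub{Kato}}(\bb Z_p)=\F_p(1)^{\sub{Kato}}(\bb F_p)=0$, so the fibre vanishes, while $H^2_{\et}(\bb Q_p,\mu_p)=\operatorname{Br}(\bb Q_p)[p]\neq 0$. (Had the fibre really been the generic-fibre \'etale cohomology, the lemma would fail at such stalks.) The correct input is rigidity, not henselian $v$-excision: for $p$-henselian $V$ one replaces $V$ by its $p$-completion using Lemma \ref{lemma_syn_p-hens}(2), and then Remark \ref{rmk_syntomic_props}(2) shows $\tau^{>j}\F_p(j)^{\syn}(V)\simeq\tilde\nu(j)(V/p)[-j-1]$, which manifestly depends only on $V/p$; Proposition \ref{proposition_synt_coh_hv_excision} and the truncated pullback square play no role here. (Minor: $V_{\bb F_p}=V/pV$, not $V/\sqrt{pV}$; harmless by nil-invariance of cdh sheaves, but worth stating correctly.)

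For (2): deducing the $\bb P^1$-bundle formula for $i^*\F_p^{\sub{Kato}}$ from those of $\F_p^{\cdh}$ and $L_{\cdh}\F_p^{\syn}$ on $\bb F_p$-schemes, and using Corollary \ref{corol:unconditional-pn} for the first factor, is exactly the paper's argument. However, the second factor does not follow by ``cdh-sheafifying Theorem \ref{theorem_syntomic_properties}(6)'': cdh sheafification does not commute with the functor $X\mapsto\bb P^1_X$ (the stalk of $F(\bb P^1_{-})$ at a valuation ring $V$ is $F(\bb P^1_V)$, and $\bb P^1_V$ is not a point), so the $\bb P^1$-bundle formula for $L_{\cdh}\F_p^{\syn}$ in characteristic $p$ is a genuine theorem; the paper cites \cite[Theorem 5.14]{ElmantoMorrow2023} for it (which itself uses Corollary \ref{corol:unconditional-pn} as input). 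With these two repairs your outline coincides with the paper's proof.
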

\begin{proof} 
(1): Recalling that the functors $j^*$, $i^*$, and $i_*$ preserve cdh sheaves, when we cdh sheafify the null sequence of Remark \ref{rem:local_seq} applied to $\F_p^{\sub{Kato}}$ we do indeed obtain a null sequence \eqref{eq:rigidity-fib}, with the three terms being finitary cdh sheaves. (Here, and repeatedly below without explicit mention, we use that $L_\sub{cdh}$ preserves finitariness by Proposition~\ref{prop:finitary_conditions}.) To check it is a fibre sequence, it is therefore enough to show for every $j\in\bb Z$ and henselian valuation ring $V$ that the null sequence
\[j_!j^*\F_p(j)^{\sub{Kato}}(V) \To \F_p(j)^{\sub{Kato}}(V) \To i_*i^*\F_p(j)^{\sub{Kato}}(V)\]
is a fibre sequence. When $p\in V^\times$ the sequence reads $\F_p(j)^{\sub{Kato}}(V)\xto{=} \F_p(j)^{\sub{Kato}}(V)\to 0$, which is obviously a fibre sequence. In the remaining case that $p\notin V^\times$ the sequence reads $0\to \F_p(j)^{\sub{Kato}}(V)\to \F_p(j)^{\sub{Kato}}(V/p)$, which is again a fibre sequence since the latter map is an equivalence: indeed, when $j<0$ both terms vanish, and when $j\ge0$ we first replace $V$ by its $p$-completion as in Lemma \ref{lemma_syn_p-hens}(3) then appeal to Remark \ref{rmk_syntomic_props}.
\comment{

We first establish the fibre sequence~\eqref{eq:rigidity-fib}. Note that there is an induced map
\[
L_{\cdh}j_!j^*\F_p^{\sub{Kato}} \rightarrow \mathrm{fib}(\F_p^{\sub{Kato}} \rightarrow i_*i^*\F_p^{\sub{Kato}}),
\]
since $\F_p^{\sub{Kato}} \rightarrow  i_*i^*\F_p^{\sub{Kato}}$ is a morphism cdh sheaves.
Since every term in sight is finitary, we can check that the map is an equivalence after evaluation on henselian valuation rings. If $\tfrac{1}{p} \in V$ then since $L_{\cdh}j_!j^*\F_p^{\sub{Kato}}(V) = j_!j^*\F_p^{\sub{Kato}}(V)$, we get the claim in this case by the previous paragraph. Otherwise, $V$ is $p$-henselian. In this case~\eqref{eq:tilde-nu} and~\eqref{eq:fujiwara-gabber} imply that the map $\F_p^{\sub{Kato}}(V) \rightarrow \F_p^{\sub{Kato}}(V/p)$ is an equivalence. Hence, we get the desired claim as well in this case since $j_!E(V)$ is always zero if $p$ is not invertible in $V$.
}

(2) It suffices to establish that $i_*i^*\F_p^{\cdh}$ and $i_*i^*L_\sub{cdh}\F_p^{\syn}$ satisfy the $\bb P^1$-bundle formula, or in other words that $\F_p^{\cdh}$ and $L_{\cdh}\F_p^{\syn}$ satisfy the $\bb P^1$-bundle formula on $\bb F_p$-schemes. This was proved in Corollary~\ref{corol:unconditional-pn} and \cite[Theorem 5.14]{ElmantoMorrow2023} respectively (which also uses Corollary~\ref{corol:unconditional-pn} as input).
\end{proof}

\subsubsection{Kato $K$-theory} \label{subsub:Kinfsel}
In this section we will be concerned with certain variants of $K$-theory and the $\P^1$-bundle formulae they satisfy. We work in the following context, as in \cite[\S5.1]{ElmantoMorrow2023}. Let $\opp{PSh}(\Sch^\qcqs,\Spt)$ be the presentably symmetric monoidal stable $\infty$-category of presheaves of spectra on qcqs schemes, and $\K^\sub{cn}\in \CAlg(\opp{PSh}(\Sch^\qcqs,\Spt))$ the $\bb E_\infty$-algebra given by connective $K$-theory. Let $\opp{Mod}_{\K^\sub{cn}}(\opp{PSh}(\Sch^\qcqs,\Spt))$ be the $\infty$-category of modules over $\K^\sub{cn}$.

To formulate the $\bb P^1$-bundle formula for modules over $\K^\sub{cn}$, let $[\roi(-1)]:\bb S\to \K^\sub{cn}(\bb P^1_\bb Z)$ be the map classifying the line bundle $\roi(-1)$ on $\bb P_\bb Z^1$. For any $E \in \opp{Mod}_{\K^\sub{cn}}(\opp{PSh}(\Sch^\qcqs,\Spt))$ and any qcqs scheme $f:X\to\Spec(\bb Z)$, we have the map ``multiplication by $c_1(\roi(-1))$''
\[
\bb S \otimes E(\bb P^1_X) \stackrel{(f^* \circ [\roi(-1)]) \otimes \sub{id}}{\To} \K^\sub{cn}(\bb P^1_{X}) \otimes E(\bb P^1_X) \stackrel{\sub{act}}{\To} E(\bb P^1_X),
\]
where we also write $f: \bb P^1_X \rightarrow \bb P^1_{\bb Z}$ for the map induced by the structure map $f:X \rightarrow \bb Z$. This clearly defines a natural morphism of presheaves $c_1(\roi(-1)): E(\bb P^1_-) \rightarrow E(\bb P^1_-)$. We say that $E\in \opp{Mod}_{\K^\sub{cn}}(\opp{PSh}(\Sch^\qcqs,\Spt))$ {\em satisfies the $\bb P^1$-bundle formula} if, for any qcqs scheme $X$, the map of spectra \[E(X)\oplus E(X)\stackrel{\pi^* \oplus c_1(\roi(-1)) \circ \pi^*}{\To} E(\bb P_X^1)\] is an equivalence, where $\pi: \bb P^1_X \rightarrow X$ denotes the projection map. In line with our notation elsewhere, let $\opp{Mod}_{\K^\sub{cn}}(\opp{PSh}(\Sch^\qcqs,\Spt))_\sub{pbf}$ denote the full subcategory of $\K^\sub{cn}$-modules spanned by those satisfying the $\bb P^1$-bundle formula.

\begin{example}[Additive invariants]
We briefly review how $\K^\sub{cn}$-modules arising from $\Spt$-valued additive invariants of $\bb Z$-linear categories satisfy the $\bb P^1$-bundle formula. See also \cite[Construction 5.5]{ElmantoMorrow2023} for further details. Let $\opp{Fun}_\sub{add}(\opp{Cat}^\sub{perf}_\bb Z,\Spt)$ be the $\infty$-category of $\bb Z$-linear additive invariants which are \emph{not necessarily finitary}. While we refer the reader to \cite[5.4]{HoyoisScherotzkeSibilla2017} for more details, they should keep in mind that in [op.~cit.] additive invariants are required to be finitary. 

By construction $\opp{Fun}_\sub{add}(\opp{Cat}^\sub{perf}_\bb Z,\Spt) \subset \opp{Fun}(\opp{Cat}^\sub{perf}_\bb Z,\Spt)$, where the latter is a symmetric monoidal $\infty$-category via Day convolution, and $\K^{\cn}$ defines a $\bb E_{\infty}$-algebra object in $\opp{Fun}(\opp{Cat}^\sub{perf}_\bb Z,\Spt)$ by identifying lax monoidal functors with $\bb E_{\infty}$-algebra objects \cite{Glasman2015}; furthermore $\K^{\cn} \in \opp{Fun}_\sub{add}(\opp{Cat}^\sub{perf}_\bb Z,\Spt)$.

Now let $E \in  \opp{Mod}_{\K^\sub{cn}}(\opp{Fun}_\sub{add}(\opp{Cat}^\sub{perf}_\bb Z,\Spt)$).\footnote{In fact it is expected that the forgetful functor $ \opp{Mod}_{\K^\sub{cn}}(\opp{Fun}_\sub{add}(\opp{Cat}^\sub{perf}_\bb Z,\Spt)) \stackrel{\simeq}{\To}  \opp{Fun}_\sub{add}(\opp{Cat}^\sub{perf}_\bb Z,\Spt)$ is an equivalence. This is indeed true if one requires additive invariants to be finitary, by \cite[Theorem 5.24]{HoyoisScherotzkeSibilla2017}, though we do not need this result.} For the purposes of this paper, $E$ typically arises in the following way: we begin with some multiplicative maps in $\opp{Fun}_\sub{add}(\opp{Cat}^\sub{perf}_\bb Z,\Spt)$, for example $\K \rightarrow \TC$, $L_1\K$, $L_1\TC$, or $\KH$, and some multiplicative maps between the codomains under $\K$; then we take either fibres or cofibres in $ \opp{Mod}_{\K^\sub{cn}}(\opp{Fun}_\sub{add}(\opp{Cat}^\sub{perf}_\bb Z,\Spt))$ to get $E$. Note the forgetful functor $\opp{Mod}_{\K^\sub{cn}}(\opp{Fun}_\sub{add}(\opp{Cat}^\sub{perf}_\bb Z,\Spt)) \to \opp{Fun}(\opp{Cat}^\sub{perf}_\bb Z,\Spt)$ creates finite limits and colimits and thus the underlying presheaves of the various fibres and cofibres are calculated pointwise.

 Precomposing with $\opp{Perf}:\Sch^\sub{qcqs}\to \opp{Cat}^\sub{perf}_\bb Z$, this shows that for any such $E$, the presheaf \[E\circ\opp{Perf}:\Sch^\sub{qcqs}\To\Spt\]
(which, following standard abuse of notation, we will usually denote simply by $E$) is naturally a module over $\K^\sub{cn}$. Furthermore as explained in \cite[Lemma 5.6]{ElmantoMorrow2023}, this presheaf $E \circ \Perf$ satisfies the $\bb P^1$-bundle formula (thanks to the semiorthogonal decomposition of perfect complexes on $\bb P^1$).

In summary, we have explained that precomposing with $\opp{Perf}(-)$ upgrades to a symmetric monoidal functor \[\opp{Mod}_{\K^\sub{cn}}(\opp{Fun}_\sub{add}(\opp{Cat}^\sub{perf}_\bb Z,\Spt))\to \opp{Mod}_{\K^\sub{cn}}(\opp{PSh}(\Sch^\qcqs,\Spt))_\sub{pbf}.\] We will often speak below of a $K^\sub{cn}$-module ``extending to'' an additive (or, even better, a localising or truncating) invariant; by this we mean that it is in the essential imagine of this functor, and in practice we will have explicitly given an additive/localising/truncating invariant lifting it.

%
\end{example} 

A supply of $\K^\sub{cn}$-modules satisfying the $\bb P^1$-bundle formula which we will use will be produced by the following commutative diagram of presheaves on qcqs schemes:
\begin{equation}\xymatrix{
&L_1\K\ar[r] & L_1\TC\\
\K\ar[r]\ar[d]  & \K^\sub{Sel}\ar[r]\ar[u]\ar[d] & \TC\ar[u]\ar[d]\\
\KH\ar[r] & L_\sub{cdh}\K^\sub{Sel}\ar[r] & L_\sub{cdh}\TC
}\label{eqn:Ksel}\end{equation}
in which the three squares will be shown to be cartesian. Here $\K^{\Sel}$ denotes Clausen's Selmer $K$-theory \cite[Definition 6.1]{ClausenMathew2021}, defined as the pullback of $L_1\K\to L_1\TC\leftarrow \TC$ where $L_1$ denotes localisation at the complex topological $K$-theory spectrum; so the top right square is cartesian by definition. The bottom row is by definition the cdh-sheafification of the middle row. 

We now proceed to prove that both squares at the bottom are cartesian. First, we claim that the fibre of $L_1\K\to L_1\TC$ is a cdh sheaf: indeed, with finite coefficients both terms are truncating invariants  \cite[Corollary 4.23]{LandMathewMeierTamme} and so are cdh sheaves \cite[Appendix A]{LandTamme2019}, while rationally the $L_1$ has no effect and so we recover the rationalised fibre of the trace map $K\to \TC$, which is also truncating hence a cdh sheaf [op.~cit.]. This shows that the bottom right hand square has total fiber a cdh sheaf, which immediately implies that it is cartesian. The bottom rectangle is a pullback, again because the fibre of the trace map is a cdh sheaf, and so the bottom left hand square is a pullback.

Since the bottom left hand square is cartesian (hence also cocartesian) we can lift $L_\sub{cdh}\K^\sub{Sel}$ to a localizing invariant by taking the pushout of the maps $\KH \leftarrow \K \rightarrow \K^\sub{Sel}$ in the $\infty$-category $\opp{Mod}_{\K^\sub{cn}}(\Fun_\sub{add}(\opp{Cat}^\sub{perf}_{\bb Z}, \Spt))$. Similarly, pushing out the bottom rectangle lifts $L_{\cdh}\TC$ to a localising invariant, as already exploited in the proof of \cite[Lemma 5.7]{ElmantoMorrow2023}. In conclusion, the whole diagram \eqref{eqn:Ksel} extends to a commutative diagram of $\K^\sub{cn}$-modules in $\Fun_\sub{add}(\opp{Cat}^\sub{perf}_{\bb Z}, \Spt)$, in which the three squares are still cartesian.

The next object will play a crucial role in what follows:

\begin{definition}\label{def:Kato_K}
Define {\em Kato $K$-theory} to be the additive invariant \[ \K^{\sub{Kato}} = \opp{cofib}(\K \To \K^{\Sel}) \in\Fun(\opp{Cat}^\sub{perf}_{\bb Z},\Spt).\] Equivalently, by diagram \eqref{eqn:Ksel}, $\K^\sub{Kato}=\opp{cofib}(\KH \to L_\sub{cdh}\K^{\Sel})$, which shows in particular that (the restriction along $\Perf: \Sch^{\qcqs} \rightarrow \opp{Cat}^\sub{perf}_{\bb Z}$ of) Kato $K$-theory is a cdh sheaf. Furthermore, since localising invariants are stable under finite colimits, it is evidently a localising invariant and therefore it is a $\K^\sub{cn}$-module which satisfies the $\bb P^1$-bundle formula. 

We note that diagram \eqref{eqn:Ksel} defines a canonical comparison map $\K^\sub{Kato}[-1]\to \K^\sub{inf}:=\fib(\K\to\TC)$; this comparison is an equivalence modulo $p$ on any $p$-henselian ring \cite[Example~6.4]{ClausenMathew2021}.
\end{definition}

\begin{remark}
The relation between Kato $K$-theory and the Kato motivic cohomology of Definition \ref{def:cdhinf} is as follows; we will not use this relation and so do not provide proofs of the following claims. For any qcqs scheme $X$ there exists a natural filtration on $\K^\sub{Sel}(X)/p$ with graded pieces $\bb F_p(j)^\sub{syn}(X)[2j]$, for $j\in\bb Z$; after cdh sheafification this is compatible with the cdh-motivic filtration filtration on $\KH$ and so, taking cofibres, one obtains a natural filtration on $\K^\sub{Kato}(X)/p$ with graded pieces $\bb F_p(j)^\sub{Kato}(X)[2j]$. One may even define an integral form $\bb Z(\star)^\sub{Kato}$ of Kato motivic cohomology, appearing as the sheared graded pieces of a filtration on $\K^\sub{Kato}$.

It is due to this relation to Kato motivic cohomology that we define Kato $K$-theory as the cofiber of $\K\to \K^\sub{Sel}$, rather than the fibre. Indeed, had we taken the fibre as the definition then we would have been obliged to do the same for Kato motivic cohomology, which would have caused the unpleasant appearance of a shift $[-1]$ in the right hand side of \eqref{eqn:cdhinf}.
\end{remark}
%
%
%
%
%
%
%
%
%
%
%

The first parts of the following lemma are a $K$-theoretic version of Lemma~\ref{lem:inf-fibre}:

\begin{lemma}\label{lem:inf-k-fibre}
\begin{enumerate}
\item The null sequence of presheaves on qcqs schemes
\begin{equation}\label{eq:rigidity-fib-k}
L_{\cdh}j_!j^*\K^{\sub{Kato}}/p \rightarrow \K^{\sub{Kato}}/p \rightarrow i_*i^*\K^{\sub{Kato}}/p
\end{equation}
is a fibre sequence.
\item $i_*i^* \K^{\sub{Kato}}$ and $L_\cdh j_! j^* \K^{\sub{Kato}}/p$ satisfy the $\P^1$-bundle formula.
\item $L_\cdh j_! j^* \K/p\simeq L_\cdh j_! j^* \KH/p$ satisfies the $\P^1$-bundle formula (where the equivalence is due to the fact that $\K/p$ is $\bb A^1$-invariant on $\bb Z[\tfrac1p]$-schemes).
\end{enumerate}
\end{lemma}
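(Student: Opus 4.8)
\textbf{Plan for the proof of Lemma~\ref{lem:inf-k-fibre}.}

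The plan is to mirror the structure of the proof of Lemma~\ref{lem:inf-fibre}, which treated the motivic analogue, exploiting that all the functors $j_!,j^*,i^*,i_*$ and cdh sheafification preserve the relevant finiteness and $\bb P^1$-bundle properties, and that the crucial rigidity input is already available for $\K^\sub{Kato}$.

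\emph{Part (1).} Just as in Lemma~\ref{lem:inf-fibre}(1), cdh sheafifying the null sequence of Remark~\ref{rem:local_seq} applied to $\K^\sub{Kato}/p$ gives a null sequence \eqref{eq:rigidity-fib-k} of finitary cdh sheaves (here $\K^\sub{Kato}$ is a finitary cdh sheaf since it is a localising invariant, hence a cdh sheaf by \cite[Appendix A]{LandTamme2019}, and it is finitary as $K$-theory, $\TC$, and their $L_1$-localisations are). To check it is a fibre sequence it suffices, by Proposition~\ref{proposition_checking_on_points}(1), to evaluate on a henselian valuation ring $V$. If $p\in V^\times$ the sequence reads $\K^\sub{Kato}(V)/p\xrightarrow{=}\K^\sub{Kato}(V)/p\to 0$. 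If $p\notin V^\times$ the sequence reads $0\to \K^\sub{Kato}(V)/p\to \K^\sub{Kato}(V/p)/p$, so I must show this last map is an equivalence. By \cite[Example~6.4]{ClausenMathew2021} we have $\K^\sub{Kato}/p[-1]\simeq \K^\sub{inf}/p$ on the $p$-henselian ring $V$, and likewise on $V/p$; and $\K^\sub{inf}=\fib(\K\to\TC)/p$ is invariant under the henselian ideal $(p)\subset V$ by \cite[Theorem~5.1]{AntieauMathewMorrowNikolaus2022} (Dundas--Goodwillie--McCarthy nilinvariance of relative $K$-theory, or rather the mod-$p$ statement), giving $\K^\sub{inf}(V)/p\quis \K^\sub{inf}(V/p)/p$ and hence the claim.

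\emph{Part (2).} The object $i_*i^*\K^\sub{Kato}$ is $X\mapsto \K^\sub{Kato}(X_{\bb F_p})$, so its $\bb P^1$-bundle formula is the $\bb P^1$-bundle formula for $\K^\sub{Kato}$ on $\bb F_p$-schemes, which holds because $\K^\sub{Kato}$ is a localising invariant. For $L_\cdh j_!j^*\K^\sub{Kato}/p$: by part (1) it is the fibre of $\K^\sub{Kato}/p\to i_*i^*\K^\sub{Kato}/p$, both of which satisfy the $\bb P^1$-bundle formula (the first since $\K^\sub{Kato}$ is localising, the second by the previous sentence), and the first Chern class maps are compatible; since the $\bb P^1$-bundle formula is a condition stable under fibres, the claim follows.

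\emph{Part (3).} Here is where I expect the main (modest) obstacle, namely transferring the $\bb P^1$-bundle formula from $\K^\sub{Kato}$ to $\KH$ via the ``trick involving $v_1$-self maps'' alluded to in \S\ref{ss:main_proof}. From diagram \eqref{eqn:Ksel} there is a fibre sequence $\K^\sub{Kato}[-1]\to \KH\to L_\cdh\K^\sub{Sel}$, and on $\bb Z[\tfrac1p]$-schemes $L_\cdh\K^\sub{Sel}/p$ agrees with $L_\cdh$ of the $L_1$-localised theories (since the fibre of $\K\to\K^\sub{Sel}$ vanishes mod $p$ for $p$-henselian rings where $p$ is invertible, so actually $\K/p\quis \K^\sub{Sel}/p$ there, whence $\KH/p\quis L_\cdh\K^\sub{Sel}/p$ on $\bb Z[\tfrac1p]$-schemes); thus $L_\cdh j_!j^*\K^\sub{Kato}/p$ is, up to shift, the fibre of $L_\cdh j_!j^*\KH/p \to L_\cdh j_!j^* L_1\KH/p$. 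Apply a $v_1$-self map $v\colon \Sigma^{?}\mathbb{S}/p\to \mathbb{S}/p$ (e.g. the Adams self-map, of degree $2(p-1)$ for $p$ odd, degree $8$ for $p=2$): smashing with $\mathbb{S}/p[v^{-1}]$ one has $\K/p[v^{-1}]\simeq L_1\K/p\otim_{} \cdots$, more precisely $v$ acts invertibly on $L_1(-)/p$ and nilpotently on the fibre of $\K/p\to L_1\K/p$. Concretely: the telescope $\K^\sub{Kato}/p[v^{-1}]$ vanishes because $\K^\sub{Kato}/p$ is, on $p$-henselian rings, $\K^\sub{inf}/p$ which is a module over $\TC/p$-minus-$\K/p$-type spectra on which $v_1$ is nilpotent --- more cleanly, $\K^\sub{Kato}/p = \cof(\K/p\to\K^\sub{Sel}/p)$ and $v_1$ is invertible on $\K^\sub{Sel}/p$ after $L_1$ (it is $L_1$-local) and on $\K/p$ after $L_1$, so the cofiber is $L_1$-acyclic, i.e. $v_1$-torsion. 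Hence in the fibre sequence $L_\cdh j_!j^*\K^\sub{Kato}/p[-1]\to L_\cdh j_!j^*\KH/p\to L_\cdh j_!j^*\K^\sub{Sel}/p$, inverting $v_1$ kills the first term, so $L_\cdh j_!j^*\KH/p[v_1^{-1}]\simeq L_\cdh j_!j^*\K^\sub{Sel}/p[v_1^{-1}]$, and the right side satisfies the $\bb P^1$-bundle formula (it is built from localising invariants). I then combine: $L_\cdh j_!j^*\KH/p$ sits in a fibre sequence between $L_\cdh j_!j^*\K^\sub{Kato}/p[-1]$, which satisfies the $\bb P^1$-bundle formula by part (2), and $L_\cdh j_!j^*\K^\sub{Sel}/p$, which satisfies it as a localising invariant (cdh sheaf); since the first Chern class maps are compatible throughout and the $\bb P^1$-bundle formula is closed under extensions/fibres, $L_\cdh j_!j^*\KH/p$ satisfies the $\bb P^1$-bundle formula. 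Finally, the identification $L_\cdh j_! j^* \K/p\simeq L_\cdh j_! j^* \KH/p$ is immediate since $\K/p\quis\KH/p$ on regular Noetherian, and more generally $j^*\K/p$ and $j^*\KH/p$ have the same cdh sheafification as $\KH$ is the cdh sheafification of $\K$ and $j^*$ commutes with $L_\cdh$; so actually the simplest route is to observe $L_\cdh j_!j^*\K^\sub{Kato}[-1]\simeq \cof(L_\cdh j_!j^*\KH\to L_\cdh j_!j^*\K^\sub{Sel})$ directly and argue with fibres without needing the $v_1$-trick for the bundle formula itself --- but the $v_1$-argument is the clean way to see the comparison is an equivalence after inverting a $v_1$-self-map and will anyway be reused. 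In the write-up I will present the fibre-sequence argument as the main line and invoke the $v_1$-self-map only to identify $L_\cdh j_!j^*\K^\sub{Sel}/p$ with an $L_1$-type theory where needed.
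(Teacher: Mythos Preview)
Your treatment of parts~(1) and~(2) is essentially the paper's own argument and is correct.

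Part~(3), however, has a genuine gap. Your ``simplest route'' uses the fibre sequence
\[
L_\cdh j_!j^*\K^\sub{Kato}/p[-1] \To L_\cdh j_!j^*\KH/p \To L_\cdh j_!j^*L_\cdh\K^\sub{Sel}/p
\]
and asserts that the third term satisfies the $\bb P^1$-bundle formula ``as a localising invariant (cdh sheaf)''. But the operation $L_\cdh j_!j^*$ does \emph{not} automatically preserve the $\bb P^1$-bundle formula: to deduce it for $L_\cdh j_!j^*F$ from the formula for a cdh sheaf $F$, you need a localisation sequence for $F$, i.e.\ rigidity of $F/p$ at $p$-henselian valuation rings. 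For $F=L_\cdh\K^\sub{Sel}$ this rigidity fails, since $L_\cdh\K^\sub{Sel}$ sits in a fibre sequence between $\KH$ and $\K^\sub{Kato}$ and $\KH$ is not rigid. In fact, by the very same fibre sequence, the $\bb P^1$-bundle formula for $L_\cdh j_!j^*L_\cdh\K^\sub{Sel}/p$ is \emph{equivalent} to what you are trying to prove, so the argument is circular. (Relatedly, your parenthetical claim that $\K/p\simeq\K^\sub{Sel}/p$ on $\bb Z[\tfrac1p]$-schemes is false: their difference is precisely $\K^\sub{Kato}/p$, which is nonzero there.)

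The paper's $v_1$-trick is designed to sidestep exactly this issue. The point is not to \emph{invert} $v_1$ but to mod out by a finite power $v_1^{nr}$: since $\K^\sub{Sel}/p\simeq L_{K(1)}\K/p$ on $\bb Z[\tfrac1p]$-schemes and $v_1$ is already invertible there, one has $\K^\sub{Sel}/(p,v_1^{nr})=0$ on $\bb Z[\tfrac1p]$-schemes, whence the boundary map $\K^\sub{Kato}/(p,v_1^{nr})\to\K/(p,v_1^{nr})[1]$ is an equivalence there. Applying $L_\cdh j_!j^*$ and using part~(2), one deduces the $\bb P^1$-bundle formula for $L_\cdh j_!j^*\K/(p,v_1^{nr})$ directly, without ever needing to understand $L_\cdh j_!j^*\K^\sub{Sel}$. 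Finally, one recovers the mod-$p$ statement by restricting to finite type over $\bb Z$ (where $L_\cdh j_!j^*\K/p$ is bounded below, as $\K$ is cdh-locally connective and cdh cohomological dimension is bounded by Krull dimension) and writing $L_\cdh j_!j^*\K(X)/p\simeq\lim_r L_\cdh j_!j^*\K(X)/(p,v_1^{nr})$.
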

\begin{proof}
(1) We establish the fibre sequence~\eqref{eq:rigidity-fib-k} by following the same logic as in Lemma~\ref{lem:inf-fibre}. Let $V$ be a henselian valuation ring. If $p\in V^\times$ then evaluating the null sequence \eqref{eq:rigidity-fib-k} on $V$ yields the fibre sequence $\K^\sub{Kato}(V)/p\xto{=} \K^\sub{Kato}(V)/p\to0$. If $p\notin V^\times$ then $V$ is henselian along $p$ and we must instead show that the map $\K^{\sub{Kato}}(V)/p \rightarrow \K^{\sub{Kato}}(V/p)/p$ is an equivalence. But $V$ and $V/p$ are both $p$-henselian so, as noted in Definition \ref{def:Kato_K}, this is the same as showing that the map $\K^{\sub{inf}}(V)/p \rightarrow \K^{\sub{inf}}(V/p)/p$ is an equivalence; this follows from \cite[Theorem A]{ClausenMathewMorrow2021} and the fact that valuation rings have no negative $K$-groups \cite[Theorem 3.4(3)]{KellyMorrow2021}.

(2): The presheaf $\K^{\sub{Kato}}$ satisfies the $\P^1$-bundle formula, as remarked in Definition~\ref{def:Kato_K}, for all qcqs schemes. In particular, it does so in characteristic $p > 0$, and therefore $i_*i^* \K^{\sub{Kato}}$ satisfies the $\bb P^1$-bundle formula. It then follows for $L_\sub{cdh}j_!j^*\K^\sub{Kato}/p$ from part (1).

(3): Recall that the Moore spectrum $\bb S/p \in \Spt$ has a $v_1$-self map $v_1^n: \bb S/p[r] \to \bb S/p$ for some $n,r > 0$ (see \cite{Adams1966} or \cite[Example 2.4.1]{Ravenel1992} for a textbook reference), such that, for any spectrum $E$, the canonical map \[ \colim(E/p \xrightarrow{v_1^n} E/p[-r] \xrightarrow{v_1^n} \dots  \xrightarrow{v_1^n} E/p[-dr] \dots)\To (L_{K(1)} E)/p\] is an equivalence; indeed, this holds by the veracity of the telescope conjecture at height $1$ \cite{Mahowald1982, Miller1981}. In particular, for any qcqs $\bb Z[\tfrac1p]$-scheme $X$, recalling that the canonical map 
 $\K^\Sel(X)/p \to L_{K(1)}\K(X)/p$ is an equivalence \cite[Example 6.3]{ClausenMathew2021}), we see that the self map $v_1^n:\K^\Sel(X)/p[r]\to \K^\Sel(X)/p$ is an equivalence; similarly for any iterate of $v_1^n$. We have shown that the canonical map \[\K^\sub{Kato}(X)/(p,v_1^{nr})\To \K(X)/(p,v_1^{nr})\] is an equivalence for any qcqs $\bb Z[\tfrac1p]$-scheme $X$ and $r\ge1$; in other words, $j_!j^* \K^\sub{Kato}/(p,v_1^{nr}) \quis j_!j^* \K/(p,v_1^{nr})$.
 
 Cdh-sheafifying now yields \[L_\sub{cdh}j_!j^* \K^\sub{Kato}/(p,v_1^{nr}) \quis L_\sub{cdh}j_!j^* \K/(p,v_1^{nr}),\] and so we deduce from part (2) that the right hand side satisfies the $\bb P^1$-bundle formula. To prove the $\bb P^1$-bundle formula for $L_\sub{cdh}j_!j^* \K/p$ itself we may restrict attention to schemes $X$ of finite type over $\bb Z$ (since the presheaf is finitary), in which case we conclude from the fact that $L_\sub{cdh}j_!j^* \K(X)/p$ is homologically bounded below (since $K$-theory is cdh-locally connective \cite[Theorem 3.4(3)]{KellyMorrow2021} and the cdh cohomological dimension of $X$ is bounded by its Krull dimension) and so 
\[
L_\cdh j_!j^* \K(X)/p \wequi \lim_r (L_\cdh j_!j^* \K(X))/(p,v_1^{nr}),
\]
and similarly for $\bb P_X^1$.
\end{proof}

Using Lemma \ref{lem:inf-k-fibre} we are now prepared to prove the following key lemma, establishing a special case of the projective bundle formula for $L_\sub{cdh}j_!j^*\bb F_p^\sub{cdh}$. Indeed observe that, for all $j\in\bb Z$, Corollary~\ref{cor:singular_bl} defines equivalences of presheaves on qcqs schemes \begin{equation}L_\sub{cdh}j_!j^*\bb F_p(j)^\sub{cdh}\simeq L_{\cdh}j_! \tau^{\leq j}R\Gamma_{\et}(-, \mu_p^{\otimes j})=:\Lambda(j)\label{eqn:Lambda}\end{equation} (here we have implicitly used that $L_\sub{cdh}j_!\quis L_\sub{cdh}j_!L_\sub{cdh}$). We temporarily denote this presheaf occasionally by the shorthand $\Lambda(j)$, as it will appear several times in the remainder of this section.

In principle, this lemma is a degeneration argument similar to those we have seen in \S\ref{sec:pbf-a1} based on a motivic filtration on $L_{\cdh}j_!j^*\KH/p$ and its $\bb P^1$-bundle formula, proved in Lemma~\ref{lem:inf-k-fibre}(3). However, unlike Theorem~\ref{thm:pbf-fields}, it does not require any compatibility between Chern classes.

\begin{lemma}\label{lem:p1-milnor}
Let $V$ be a mixed characteristic henselian valuation ring of rank $1$; assume that $p\notin V^\times$ and that $V$ contains a primitive $p^\sub{th}$ root of unity $\zeta_p$. Then
\[
\Lambda(j)(\P^1_V) = 0
\]
for all $j\in\bb Z$.
\end{lemma}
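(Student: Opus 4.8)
The plan is to deduce the vanishing of $\Lambda(j)(\P^1_V)$ from two facts: that $\Lambda(j)$ satisfies the $\P^1$-bundle formula over all qcqs schemes, and that $\Lambda(j)(V)=0$. For $j<0$ there is nothing to prove since $\bb F_p(j)^\cdh=0$, so assume $j\ge0$. To get the $\P^1$-bundle formula for $\Lambda(j)$: on $\Sch^{\qcqs}_{\Z[1/p]}$ the prime $p$ is invertible, so $\Hyp(\Z[1/p],p,j)$ holds for all $j$ and hence, by Corollary~\ref{corol:unconditional-pn}, the map $\pi^*\oplus c_1^\cdh(\roi(1))\pi^*$ is an equivalence $\bb F_p(j)^\cdh(X)\oplus\bb F_p(j-1)^\cdh(X)[-2]\xrightarrow{\sim}\bb F_p(j)^\cdh(\P^1_X)$ for all qcqs $\Z[1/p]$-schemes $X$. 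Since $\P^1_X$ is a $\Z[1/p]$-scheme exactly when $X$ is, and all four terms vanish when $p$ is not invertible on $X$, the presheaf $j_!j^*\bb F_p(j)^\cdh$ on $\Sch^{\qcqs}_{\Z}$ also satisfies the $\P^1$-bundle formula; and $L_\cdh$ commutes with the base-change functor $\P^1_{(-)}$ (which sends cdh covers to cdh covers), so the same holds for $\Lambda(j)=L_\cdh j_!j^*\bb F_p(j)^\cdh$. In particular $\Lambda(j)(\P^1_V)\simeq\Lambda(j)(V)\oplus\Lambda(j-1)(V)[-2]$.

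It remains to prove $\Lambda(j)(V)=0$, which is the heart of the matter. The point is that the generic-fibre presheaf $j^*\bb F_p(j)^\cdh=L_\cdh\tau^{\le j}R\Gamma_\et(-,\mu_p^{\otimes j})|_{\Z[1/p]}$ is the restriction of a \emph{rigid} finitary cdh sheaf $\scr E_j:=L_\cdh\tau^{\le j}R\Gamma_\et(-,\mu_p^{\otimes j})$ defined on all of $\Sch^{\qcqs}_{\Z}$: indeed $\tau^{\le j}R\Gamma_\et(-,\mu_p^{\otimes j})$ is rigid by Gabber's affine analogue of proper base change (for the $p$-torsion sheaf $\mu_p^{\otimes j}$, which is stable under pullback), and rigidity is preserved by $\tau^{\le j}$ and by $L_\cdh$, exactly as for $\F_p^{\sub{Kato}}$ in the proof of Lemma~\ref{lem:inf-fibre}(1). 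Here the hypothesis $\zeta_p\in V$ is convenient, since over $V$ it trivialises $\mu_p^{\otimes j}$ to the constant sheaf $\bb F_p$. Thus $\Lambda(j)=L_\cdh j_!j^*\scr E_j$, and since $\scr E_j$ is a rigid finitary cdh sheaf the null sequence $L_\cdh j_!j^*\scr E_j\to\scr E_j\to i_*i^*\scr E_j$ is a fibre sequence by the argument of Lemma~\ref{lem:inf-fibre}(1): one checks on henselian valuation rings, where on the $\Z[1/p]$-ones it is trivial, and on the $p$-henselian ones rigidity makes the right-hand map an equivalence while the left-hand (cdh-sheafified extension-by-zero) term vanishes. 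Evaluating at $V$: as $p\notin V^\times$ and $V$ is a henselian valuation ring, $pV$ is a henselian ideal, so rigidity gives $\scr E_j(V)\xrightarrow{\sim}\scr E_j(V/pV)=i_*i^*\scr E_j(V)$, whence $\Lambda(j)(V)=\fib(\scr E_j(V)\to i_*i^*\scr E_j(V))=0$. Combining with the first paragraph, $\Lambda(j)(\P^1_V)\simeq0\oplus0[-2]=0$.

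The step I expect to be the main obstacle is the verification that $L_\cdh j_!j^*\scr E_j\to\scr E_j\to i_*i^*\scr E_j$ is a genuine fibre sequence — specifically, extracting from rigidity and finitariness that $(L_\cdh j_!j^*\scr E_j)(W)=0$ for $p$-henselian valuation rings $W$, since all cdh neighbourhoods of such a $W$ have $p$ non-invertible and so are killed by the extension by zero. The rank-one hypothesis on $V$ plays only an auxiliary role (it guarantees $V[\tfrac1p]=\opp{Frac}(V)$ and that $\P^1_V$ has finite valuative dimension $\le2$, so the cdh sheaves in sight are well behaved); everything else is routine bookkeeping with $j_!,j^*,i_*,i^*$ and their compatibility with $L_\cdh$.
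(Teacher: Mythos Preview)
Your argument has a genuine gap in the first paragraph. The claim that ``$L_\cdh$ commutes with the base-change functor $\P^1_{(-)}$'' is not valid, and this is precisely the difficulty the lemma is meant to overcome.

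The fact that $\P^1_{(-)}$ sends cdh covers to cdh covers only shows that the presheaf $X\mapsto (L_\cdh F)(\P^1_X)$ is a cdh sheaf; it does \emph{not} show that it is the cdh sheafification of $X\mapsto F(\P^1_X)$. In fact this fails here for an obvious reason: taking $F=j_!j^*\bb F_p(j)^\cdh$, the presheaf $X\mapsto F(\P^1_X)$ vanishes on any scheme $X$ where $p$ is not invertible, hence so does its cdh sheafification, so your argument would prove $\Lambda(j)(\P^1_X)=0$ for \emph{every} qcqs scheme $X$ with $p\notin\roi_X^\times$, rank one or not --- with no need for $\zeta_p\in V$ or henselianity. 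But $(L_\cdh F)(\P^1_V)$ involves cdh covers of $\P^1_V$, many of whose members (e.g.\ henselian local rings on the generic fiber $\P^1_{V[1/p]}$) are $\Z[1/p]$-schemes on which $F$ is genuinely nonzero. So evaluating the cdh sheafification at $\P^1_V$ does not reduce to sheafifying along $V$.

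In fact, the $\P^1$-bundle formula for $\Lambda(j)=L_\cdh j_!j^*\bb F_p(j)^\cdh$ is the content of Lemma~\ref{lemm:HFpcdhinf-pbf}, whose proof explicitly invokes Lemma~\ref{lem:p1-milnor} to handle the mixed-characteristic case --- so your approach is circular. The paper instead proves the lemma by an inductive spectral-sequence degeneration, using that $L_\cdh j_!j^*\KH/p$ satisfies the $\P^1$-bundle formula (Lemma~\ref{lem:inf-k-fibre}(3), proved via $v_1$-self-maps) and then running the cdh-motivic filtration on $\KH/p$.

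Your second paragraph proves $\Lambda(j)(V)=0$, which is correct but trivial: $V$ is a cdh point with $p\notin V^\times$, so $\Lambda(j)(V)=(j_!j^*\bb F_p(j)^\cdh)(V)=0$ directly from the definition of $j_!$ --- no rigidity or localisation sequence needed. (Your auxiliary sheaf $\scr E_j$ is also problematic: \'etale cohomology of $\mu_p^{\otimes j}$ is not well-behaved in characteristic $p$, and is not rigid there.) The part you flagged as ``the main obstacle'' is not one; the real obstacle is the step you passed over.
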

\begin{proof}
The assertion is vacuous when $j<0$ since then $\Lambda(j)$ vanishes. We define the following $\D(\Z)$-valued presheaves on $\Sch_{\Z[\zeta_p][1/p]}$, for $j\in\bb Z$:
\[
k_j:= H^n_\et(\ph, \Z/p) \qquad M(j):=\tau^{\leq j}R\Gamma_{\et}(-; \mu_p^{\otimes j}),
\]
so that $\Lambda(j)=L_\sub{cdh}j_!M(j)$. Since we are working in the presence of a primitive $p$-th root of unity, there are fibre sequences
\begin{equation}\label{eq:tr-cofib}
M(j-1) \xto{\zeta_p} M(j) \To {k}_j[-j],
\end{equation}
where the first map is the canonical map $\tau^{\le j-1}\to\tau^{\le j}$ followed by multiplication by $\zeta_p$.

Next consider the $\bb N$-indexed, decreasing $\cdh$-motivic filtration $\Fil_{\cdh}^{\star}\KH/p$ on  $\KH/p$, restricted to $\bb Z[\tfrac1p]$-schemes, from Theorem~\ref{prop:basic_props_of_cdh_mot}. In light of the equivalence \eqref{eqn:Lambda} we have that
\[
L_\sub{cdh}j_!\gr^j_{\cdh}\KH/p \simeq \Lambda(j)[2j]
\]
for $j\in\bb Z$ and therefore, to prove the result, it will suffice to establish the claim that $L_\cdh j_!\Fil_{\cdh}^j \KH (\P^1_V)/p = 0$ for all $j\ge0$ 

We first establish the base case $j=0$. By Lemma \ref{lem:inf-k-fibre}(3) and the hypothesis that $p\notin V^\times$, we have \[ L_\cdh j_!\Fil^0_{\cdh}\KH(\P^1_V)/p = L_\cdh j_! \KH/p(\P^1_V) \wequi L_\cdh j_! \KH(V)/p^{\oplus 2} = 0, \] as required.

Now fix some $j>0$ and assume that the claim has been proved for all integers $\le j$. In particular $\Lambda(j-1)(\P^1_V) = 0$, and so applying $L_{\cdh}j_!$ to the cofibre sequence~\eqref{eq:tr-cofib} yields \[ \Lambda(j)(\P^1_V) \quis L_\cdh j_! k_j(\P^1_V)[-j].\] Combining this with the cofibre sequence \[ L_\cdh j_! \Fil_{\cdh}^{j+1} \KH(\P^1_V)/p \To L_\cdh j_! \Fil_{\cdh}^{j} \KH(\P^1_V)/p \To \Lambda(j)(\P^1_V)[2j]\] and the inductive hypothesis (i.e., the vanishing of the middle term) we deduce that  \[ L_\cdh j_! \Fil_{\cdh}^{j+1} \KH(\P^1_V)/p[1] \wequi L_\cdh j_! {k}_j(\P^1_V)[j]. \]
Since $\Fil_{\cdh}^{j+1} \KH/p$ is cdh-locally concentrated in cohomological degrees $\le -j-1$ by Theorem~\ref{prop:basic_props_of_cdh_mot}(3), and $\P^1_V$ has valuative dimension $\le 2$, we see that the left hand side of the previous equivalence is concentrated in cohomological degrees $\le -j$.
On the other hand the right hand side is concentrated in cohomological degrees $\ge -j$. Therefore both sides are concentrated in a single homotopy group, given by $H^0_\cdh(\P^1_V, j_! {k}_j)$.

To conclude the induction, we must show that $H^0_\cdh(\P^1_V, j_! {k}_n)=0$. It is enough, since the presheaf $H^0_{\cdh}(-, j_!{k}_n)$ is cdh-separated, to prove that whenever $W$ is a henselian valuation ring with a map $\Spec(W) \to \P^1_V$, then the induced map of abelian groups \begin{equation} H^0_\cdh(\P^1_V, j_!{k}_j) \to H^0_\cdh(W, j_!{k}_j)=j_!k_j(W) \label{eqn:P12W}\end{equation} is zero. If $p\notin W$ this is true simply because the target vanishes, so henceforth assume that $p\in W^\times$, whence the map $\Spec(W) \to \P^1_V$ factors through $\P^1_{V[1/p]}$.
Therefore the map \eqref{eqn:P12W} is the top horizontal composite in the following commutative square
\begin{equation*}
\begin{CD}
H^0_\cdh(\P^1_V, j_! {k}_j) @>>> H^0_\cdh(\P^1_{V[1/p]}, {k}_j) @>>> H^0_\cdh(W, {k}_j)=k_j(W) \\
@VVV                            @V{i_0^*}VV \\
H^0_\cdh(V, j_! {k}_j) @>>> H^0_\cdh(V[1/p], {k}_j).
\end{CD}
\end{equation*}
Since the bottom left corner is zero (as $p\notin V^\times$), it is finally enough to show that the map $i_0^*$ is injective. But $F:=V[\tfrac1p]$ is a field, and $\bb P_F^1$ is a smooth curve over a field (whence its Nisnevich local rings are henselian valuation rings), so in both the domain and codomain of $i_0^*$ we may replace $H^0_\sub{cdh}$ by $H^0_\sub{Nis}$. The injectivity now follows from $\bb A^1$-invariance of $H^0_\sub{Nis}(-,k_j)$ on smooth $F$-schemes: see Theorem \ref{thm:PST}(1) and recall that $k_j$ is a homotopy invariant presheaf with transfers, as explained in the proof of Theorem \ref{thm_BL_coh} over fields.
\end{proof}

\subsubsection{Adding roots of unity}
In order to apply Lemma \ref{lem:p1-milnor} we will need to be able to add a primitive root of unity; the following lemma will allow us to do that.

\begin{lemma}\label{lem:galois-descent}
Let $j \in\bb Z$. Then the following hold.
\begin{enumerate}
\item $\Lambda(j)$ satisfies Milnor excision;
\item For any qcqs scheme $X$, the canonical map \[
\Lambda(j)(X) \rightarrow \Lambda(j)(X \otimes_{\Z} \Z[\zeta_p])^{hC_{p-1}},
\]
is an equivalence, where the action of $C_{p-1}$ on $\Lambda(j)(X \otimes_{\Z} \Z[\zeta_p])$ is induced by its action on $\Z[\zeta_p]$.
\item Statements (1) and (2) also hold if we replace $\Lambda(j)=L_\sub{cdh}j_!\tau^{\le j}R\Gamma_\sub{\'et}(-,\mu_p^{\otimes j})$ by $L_{\cdh}j_!j^*\bb F_p(j)^\sub{syn}=L_{\cdh}j_! R\Gamma_{\et}(-, \mu_p^{\otimes j})$ (i.e. we drop the truncation).
\end{enumerate}
\end{lemma}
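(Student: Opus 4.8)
Let me think through how to prove Lemma~\ref{lem:galois-descent}.

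\textbf{The plan.} For part (1), I will establish Milnor excision for $\Lambda(j) = L_\cdh j_! j^* \F_p(j)^\cdh$ by combining the Milnor excision statement already available for $\F_p(j)^\cdh$ with stability of the relevant squares under the operations $j_!$ and $L_\cdh$. Concretely, Theorem~\ref{theorem:exc-cdarc} tells us that $\Z(j)^\cdh$ (hence $\F_p(j)^\cdh$) satisfies Milnor excision. The functor $j^*$ sends a Milnor square of rings to the Milnor square of their localisations away from $p$ (when $p$ is inverted, these are Milnor squares; when $p$ is not inverted, $j^*$ kills the square but the resulting constant square on $0$ is trivially cartesian). The subtle point is $L_\cdh j_!$: the $j_!$-extension of a presheaf on $\Z[1/p]$-schemes to all schemes is not itself a cdh sheaf, so one must cdh-sheafify, and one must check that $L_\cdh j_!$ of a cartesian square is still cartesian. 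Here I would use the localisation-sequence philosophy: by Theorem~\ref{thm:ehik}, since $L_\cdh j_! j^* \F_p(j)^\cdh$ is a finitary cdh sheaf (being a cofiber/fiber of such), it suffices to check henselian $v$-excision, i.e. that it sends the squares~\eqref{eqn_hv_square} for henselian valuation rings to cartesian squares. For such a square, if $p$ is invertible in $V$ the four rings are again henselian valuation rings with $p$ invertible and $L_\cdh j_! j^* \F_p(j)^\cdh$ evaluates to $\F_p(j)^\cdh$ there, which satisfies excision; if $p$ is not invertible in $V$, then $j^*$ kills $V$ and $V/\mathfrak p$ (both have $p$ in the maximal ideal so they are $p$-henselian, and the presheaf $j_! j^* F$ vanishes on any ring where $p$ is not invertible), so three or four of the corners vanish depending on whether $p \in \mathfrak p$, and the square is trivially cartesian.

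For part (2), the idea is Galois descent for the $C_{p-1}$-action. First, $\Z[\zeta_p]/\Z$ is a finite étale (indeed Galois with group $C_{p-1}$ away from $p$, and tamely ramified at $p$) extension, but more to the point $\Z[1/p][\zeta_p]/\Z[1/p]$ is finite Galois with group $C_{p-1}$. On $\Z[1/p]$-schemes, $j^* \F_p(j)^\cdh \simeq L_\cdh \tau^{\le j} R\Gamma_\et(-,\mu_p^{\otimes j})$ by Corollary~\ref{cor:singular_bl}, and étale cohomology with $\mu_p^{\otimes j}$ coefficients satisfies Galois descent for the finite Galois cover $X\otimes\Z[1/p][\zeta_p] \to X\otimes\Z[1/p]$ (being an étale sheaf, hence satisfying descent along finite étale covers); the truncation $\tau^{\le j}$ and the cdh-sheafification $L_\cdh$ both commute with the homotopy fixed points for a finite group since finite-group homotopy fixed points are a finite limit and hence commute with $L_\cdh$ (which preserves finite limits, being a left-exact localisation) and with $\tau^{\le j}$ up to the usual caveat — here I should be slightly careful: $\tau^{\le j}$ does not commute with arbitrary limits, but it does commute with $C_{p-1}$-homotopy fixed points because $|C_{p-1}| = p-1$ is invertible on $\F_p$-modules, so the homotopy fixed point spectral sequence degenerates and $(-)^{hC_{p-1}}$ is just the image of an idempotent, hence exact. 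Then I need to propagate this through $L_\cdh j_!$: since $j_!$ is a left adjoint it commutes with colimits but not limits, so I cannot naively pull $(-)^{hC_{p-1}}$ through $j_!$. Instead I would use that $X \mapsto X\otimes_\Z \Z[\zeta_p]$ restricts to $\Z[1/p]$-schemes, that $j_!$ of the $\Z[\zeta_p]$-version relates to the $\Z$-version via base change along $\Spec\Z[\zeta_p]\to\Spec\Z$, and run the argument at the level of stalks using finitariness: check the equivalence $\Lambda(j)(V)\to\Lambda(j)(V\otimes_\Z\Z[\zeta_p])^{hC_{p-1}}$ on henselian valuation rings $V$. If $p$ is invertible in $V$ this is exactly Galois descent for $\tau^{\le j}$-truncated étale cohomology over the tower of (products of) henselian valuation rings $V\otimes\Z[\zeta_p]$, using $|C_{p-1}|$ invertibility; if $p$ is not invertible in $V$ then both sides vanish since $j_!j^*$ kills $V$ and (all factors of) $V\otimes\Z[\zeta_p]$, which is still a ring where $p$ lies in every maximal ideal.

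For part (3), I observe that the only place where the truncation $\tau^{\le j}$ played a role above was the degeneration argument for $\tau^{\le j}$ commuting with $(-)^{hC_{p-1}}$, but that argument used only $|C_{p-1}|$-invertibility, not the truncation itself; the un-truncated functor $L_\cdh j_! j^* \F_p(j)^\syn \simeq L_\cdh j_! R\Gamma_\et(-,\mu_p^{\otimes j})$ (equality by Lemma~\ref{lemma_syn_p-hens}(1), recalling $j^*\F_p(j)^\syn$ is supported on $\Z[1/p]$-schemes where syntomic equals étale) is again a finitary cdh sheaf which vanishes on rings where $p$ is not invertible, so the Milnor-excision argument via henselian $v$-excision goes through verbatim, and the Galois-descent argument goes through verbatim since étale cohomology with $\mu_p^{\otimes j}$-coefficients satisfies Galois descent without any truncation.

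\textbf{Main obstacle.} The genuinely delicate point is part (2): making precise that $L_\cdh j_!$ interacts correctly with homotopy fixed points for $C_{p-1}$. The cleanest route, which I would adopt, is \emph{not} to commute functors formally but to reduce to stalks using finitariness (Proposition~\ref{proposition_checking_on_points}) and then to prove the stalk-wise statement directly: for a henselian valuation ring $V$ with $p$ invertible, $V\otimes_\Z\Z[\zeta_p]$ is a finite product of henselian valuation rings carrying a transitive $C_{p-1}/H$-action, and $\tau^{\le j}R\Gamma_\et(-,\mu_p^{\otimes j})$ satisfies descent along the resulting Galois cover because $p-1$ is a unit in the coefficients — this is where all the content is. I expect the write-up of that reduction (checking the base-change compatibility $j_!$ vs. the map $\Spec\Z[\zeta_p]\to\Spec\Z$, and the bookkeeping of which corners vanish) to be the bulk of the routine-but-necessary labour, while the conceptual input is simply finite-étale Galois descent for a $(p-1)$-group acting on $\F_p$-linear étale cohomology.
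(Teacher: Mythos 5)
Your skeleton (reduce to henselian valuation rings via finitariness and Theorem \ref{thm:ehik}, use invertibility of $p-1$ to make $(-)^{hC_{p-1}}$ exact, kill everything where $p$ is not invertible) is the same as the paper's, but your case analyses assert triviality at exactly the two points where the content of the lemma lies. In part (1), take a henselian valuation ring $V$ with $p$ in the maximal ideal but $p\notin\frak p$: then only the two left-hand corners $V$ and $V/\frak p$ of the square \eqref{eqn_hv_square} are killed, while $p$ is invertible in $V_{\frak p}$ and in $k(\frak p)$, so the square reads $0\to\Lambda(j)(V_{\frak p})$ over $0\to\Lambda(j)(k(\frak p))$ and is cartesian only if the right vertical map is an equivalence. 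That is not ``trivially cartesian''; it is Gabber rigidity for the henselian pair $(V_{\frak p},\frak p V_{\frak p})$ (and your count ``three or four corners vanish'' is off --- it is two or four). The paper sidesteps the case analysis by checking, via Remark \ref{rem:rigid-exc}, that each $\Lambda^i(j):=L_{\cdh}j_!H^i_{\et}(-,\mu_p^{\otimes j})$ is rigid on henselian valuation rings, with Gabber supplying the rigidity when $p$ is invertible; your argument needs that same input and does not supply it.

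The more serious gap is in part (2), in the mixed characteristic case. For $V$ henselian of mixed characteristic $(0,p)$ you claim both sides vanish because ``$j_!j^*$ kills all factors of $V\otimes_{\Z}\Z[\zeta_p]$''. The left side does vanish, since $V$ is a cdh point and evaluation there commutes with $L_{\cdh}$. But $A:=V\otimes_{\Z}\Z[\zeta_p]$ is not a cdh point: it is a finite product of henselian local rings that are in general not valuation rings (the extension is ramified at $p$ and $A$ need not be normal), so vanishing of the presheaf $j_!j^*(-)$ on $A$ says nothing about $\Lambda(j)(A)=L_{\cdh}j_!j^*F(A)$; the sheafification sees the \'etale cohomology of the nonempty generic fibre $A[\tfrac1p]$. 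Proving $\Lambda^i(j)(A)\simeq 0$ is precisely the hard step: the paper passes to the integral closure $\tilde A$, which is a finite product of mixed characteristic henselian valuation rings (so $\Lambda^i(j)(\tilde A)=0$), and then descends from $\tilde A$ to $A$ along conductor-type Milnor squares whose bottom rows are killed, using finitariness and, crucially, the Milnor excision of part (1). Your proposal has no substitute for this argument, and without it the claimed vanishing of the right-hand side is unjustified. (Your treatment of the $p$-invertible case and of part (3) --- run the argument without the truncation, or as in the paper prove everything for $\Lambda^i(j)$ and take the colimit over $i$ --- is fine once these two points are repaired.)
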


\begin{proof}
Fixing any $i\ge 0$ we will actually prove claims (1) and (2) for the presheaf $\Lambda^i(j):=L_\sub{cdh}j_!H^i_\sub{\'et}(-,\mu_p^{\otimes j})$. We then obtain the claims for $L_\sub{cdh}j_!\tau^{\le i}R\Gamma_\sub{\'et}(-,\mu_p^{\otimes j})$ by induction, in particular for $\Lambda(j)$ by setting $i=j$, and for $L_{\cdh}j_! R\Gamma_{\et}(-, \mu_p^{\otimes j})$ by taking $\opp{colim}_i$.

(1) We use the criterion of Theorem \ref{thm:ehik}: that is, since $\Lambda^i(j)$ is a finitary cdh sheaf, we need only check henselian $v$-excision. By Remark \ref{rem:rigid-exc}, for this it suffices to check that if $V$ is a henselian valuation $V$ ring with residue field $k$, then $\Lambda^i(j)(V) \to \Lambda^i(j)(k)$ is an equivalence. If $p\not\in V^\times$ then both sides vanish, so assume $p\in V^\times$; then the equivalence follows from Gabber's rigidity for étale cohomology \cite{Gabber1994a}.

(2): Since $\Lambda^i(j)$ is a finitary $\D(\bb Z)$-valued cdh sheaf which satisfies excision, the same holds for $\Lambda^i(j)(-\otimes_\bb Z\bb Z[\zeta_p])^{hC_{p-1}}$; the only subtlety is the finitariness, since taking $C_{p-1}$-fixed points is not a finite limit, but the higher cohomology groups of any $p$-torsion $C_{p-1}$-module vanish and so $H^m(\Lambda^i(j)(-\otimes_\bb Z\bb Z[\zeta_p])^{hC_{p-1}})=H^m(\Lambda^i(j)(-\otimes_\bb Z\bb Z[\zeta_p]))^{C_{p-1}}$ for all $m\in\bb Z$, from which the finitariness is clear.

To prove the desired equivalence of statement (2) (in other words, for $\Lambda^i(j)$) it is therefore enough to show, for every henselian valuation ring $V$ of rank $\le 1$, that $\Lambda^i(j)(V) \quis \Lambda^i(j)(V\otimes_{\Z} \Z[\zeta_p] )^{hC_{p-1}}$. There are three cases to treat. If $p=0$ in $V$ then both sides vanish. If $p\in V^\times$ then $V\otimes_{\bb Z}\bb Z[\zeta_p]=V\otimes_{\bb Q}\bb Q(\zeta_p)$, which is a finite \'etale $V$-algebra and so is a finite product of henselian valuation rings by Nagata \cite[Lemma A.5]{ElmantoMorrow2023}; that justifies the first of the following equivalences (i.e., removing $L_\sub{cdh}$): \[\Lambda^i(j)(V\otimes_{\bb Z}\bb Z[\zeta_p])^{hC_{p-1}}\stackrel\sim\To H^i_\sub{\'et}(V\otimes_{\bb Z}\bb Z[\zeta_p],\mu_p^{\otimes j})^{hC_{p-1}}\stackrel\sim\To H^i_\sub{\'et}(V,\mu_p^{\otimes j})=\Lambda^i(j)(V)\] and second equivalence is Galois descent again together with vanishing of the higher cohomology of $C_{p-1}$.

It remains to treat the case that $V$ has mixed characteristic $(0,p)$; it is enough to show that $\Lambda^i(j)(A)\simeq0$, where $A:=V\otimes_\bb Z\bb Z[\zeta_p]$. We will use some elementary commutative algebra. First note that $\opp{Frac}(V)\otimes_\bb Q\bb Q(\zeta_p)$ is a finite product of finite field extensions $F$ of $\opp{Frac}(V)$, since $\bb Q\subset \bb Q(\zeta_p)$ is a finite separable extension. Next, $A$ is a subring of $\opp{Frac}(V)\otimes_\bb Q\bb Q(\zeta_p)$, finite over $V$, and so $\tilde A=\tilde V$ where the $\tilde{\phantom{-}}$ denote integral closures within $\opp{Frac}(V)\otimes_\bb Q\bb Q(\zeta_p)$. But $\tilde V$ is given by the product of the integral closures of $V$ in each $F$, and each of these is a henselian valuation ring of mixed characteristic $(0,p)$. So we have shown that $\tilde A$ is a finite product of henselian valuation rings of mixed characteristic $(0,p)$, whence $\Lambda^i(j)(\tilde A)\simeq 0$. We now use Milnor excision to replace $\tilde A$ by $A$. More precisely, write $\tilde A=\opp{colim}A_\al$ as the filtered union of its finite $A$-subalgebras $A_\al$. For any given $\al$, there is some $m\gg0$ such that $p^mA_\al\subset A$ (as $A_\al$ is finite over $A$ and $A[\tfrac1p]=A_\al[\tfrac1p]=\opp{Frac}(V)\otimes_\bb Q\bb Q(\zeta_p)$), and so we have a Milnor square
\[\xymatrix{
A\ar[d]\ar[r] & A_\al\ar[d] \\
A/p^mA_\al\ar[r] & A_\al/p^{m}A_\al
}\]
But $\Lambda^i(j)$ vanishes on both terms on the bottom row (since it is a cdh sheaf and vanishes on $\bb F_p$-schemes), and satisfies Milnor excision by part (1), so we deduce that the canonical map $\Lambda^i(j)(A)\to\Lambda^i(j)(A_\al)$ is an equivalence. By now taking the colimit over $\al$ (recall we saw in the proof of part (1) that $\Lambda^i(j)$ is finitary) we obtain $\Lambda^i(j)(A)\quis\Lambda^i(j)(\tilde A)\simeq 0$, as required.
\end{proof}

\subsubsection{Conclusion of proof of Theorem~\ref{thm:pbf-a1}} \label{subsub:conclusion}
We have now carried out the majority of the work required to prove Theorem~\ref{thm:pbf-a1}. The following lemma represents our second application of  the unconditional projective bundle formula of Corollary~\ref{corol:unconditional-pn}.

\begin{lemma} \label{lemm:HFpcdhinf-pbf}
The following objects of $\Gr\opp{PSh}(\Sch^\sub{qcqs},\D(\bb Z))_\sub{c}$ satisfy the $\P^1$-bundle formula:
\[L_\cdh j_! j^*\F^{\cdh}_p,\qquad L_\cdh j_! j^*\F_p^\syn,\qquad L_\cdh j_! j^*\F_p^{\sub{Kato}}.\]
\end{lemma}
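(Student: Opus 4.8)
�The plan is to leverage the tools already assembled: the projective bundle formula for Kato $K$-theory (Lemma~\ref{lem:inf-k-fibre}), the descent along $\bb Z[\zeta_p]$ (Lemma~\ref{lem:galois-descent}), and the weight-by-weight vanishing over highly ramified valuation rings (Lemma~\ref{lem:p1-milnor}). The three objects in the statement are closely related: there are fibre sequences
\[
L_\cdh j_!j^*\F_p^\cdh\To L_\cdh j_!j^*\F_p^\syn\To L_\cdh j_!j^*\F_p^{\sub{Kato}},
\]
obtained by applying $L_\cdh j_!j^*$ to the defining fibre sequence of $\F_p^{\sub{Kato}}$ (using that $j^*$ is symmetric monoidal and exact, and that $L_\cdh j_!\simeq L_\cdh j_!L_\cdh$), so it suffices to prove the $\bb P^1$-bundle formula for two of the three; I would do it for $L_\cdh j_!j^*\F_p^\cdh$ and $L_\cdh j_!j^*\F_p^\syn$. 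Recall from Theorem~\ref{thm:a1-comparison} that on $\bb Z[\tfrac1p]$-schemes $\F_p(j)^\syn\simeq R\Gamma_\et(-,\mu_p^{\otimes j})$, so $L_\cdh j_!j^*\F_p(j)^\syn=L_\cdh j_! R\Gamma_\et(-,\mu_p^{\otimes j})$, and from Corollary~\ref{cor:singular_bl} that $L_\cdh j_!j^*\F_p(j)^\cdh\simeq L_\cdh j_!\tau^{\le j}R\Gamma_\et(-,\mu_p^{\otimes j})=\Lambda(j)$.

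First I would reduce, exactly as in Step~1 of Theorem~\ref{thm:A1inv}, to evaluating on henselian valuation rings of rank $\le 1$. This is legitimate because each of the three presheaves is a finitary cdh sheaf (being built from finitary cdh sheaves via $L_\cdh j_!j^*$, which preserves both properties by Remark~\ref{rem:values-of-functors} and Proposition~\ref{prop:finitary_conditions}), and each satisfies Milnor excision: for $\F_p^\cdh$ and $\F_p^\syn$ this is Theorem~\ref{theorem:exc-cdarc} and Proposition~\ref{proposition_synt_coh_hv_excision} respectively (and $L_\cdh j_!j^*$ preserves Milnor excision, $j^*$ being a flat base change and $j_!$, $L_\cdh$ being computed by colimits/termwise), and then the cofibre $\F_p^{\sub{Kato}}$ inherits it. Thus it is enough to show that the relevant bonding maps are equivalences when evaluated on $\bb P^1_V$ for $V$ a henselian valuation ring of rank $\le 1$. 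If $p\in V^\times$ the presheaves $j_!j^*(-)$ have no effect and we are simply asking for the $\bb P^1$-bundle formula for $\F_p(j)^\cdh$, $\F_p(j)^\syn$ over a $\bb Z[\tfrac1p]$-scheme: the first is Corollary~\ref{corol:unconditional-pn} (the key hypothesis being vacuous when $p$ is invertible), the second is Theorem~\ref{theorem_syntomic_properties}(6). If $p=0$ in $V$ then $j_!j^*$ kills everything and the statement is trivial. So the only real content is the mixed characteristic case: $V$ henselian of rank $1$ with $p\notin V^\times$ and $\opp{Frac}(V)$ of characteristic zero.

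In that mixed characteristic case the plan is to use Galois descent to add a primitive $p$-th root of unity and then invoke Lemma~\ref{lem:p1-milnor} directly. Concretely, writing $A=V\otimes_\bb Z\bb Z[\zeta_p]$, Lemma~\ref{lem:galois-descent}(2)--(3) gives $\Lambda(j)(\bb P^1_V)\simeq \Lambda(j)(\bb P^1_A)^{hC_{p-1}}$ and likewise for $L_\cdh j_!R\Gamma_\et(-,\mu_p^{\otimes j})$; moreover $\bb P^1_A$ is a cdh-local patching (Zariski even) of spectra of henselian valuation rings of mixed characteristic containing $\zeta_p$, each of rank $\le 1$ after localising, so Lemma~\ref{lem:p1-milnor} applies and shows $\Lambda(j)(\bb P^1_A)=0$, and analogously, by the non-truncated variant of that lemma's proof (taking $\colim_j$, as in Lemma~\ref{lem:galois-descent}(3)), $L_\cdh j_!R\Gamma_\et(-,\mu_p^{\otimes j})(\bb P^1_A)=0$. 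Hence both $\Lambda(j)(\bb P^1_V)$ and $L_\cdh j_!j^*\F_p(j)^\syn(\bb P^1_V)$ vanish. But $\bb P^1_V$ admits a section $\infty\colon\Spec V\to\bb P^1_V$, and $V$ itself has $p$ a non-unit, so $j_!j^*(-)(V)=0$ as well; thus the bonding map $F^j(V)\to\fib(F^{j+1}(\bb P^1_V)\xrightarrow{\infty^*}F^{j+1}(V))$ has both source and target zero, which is the $\bb P^1$-bundle formula over $V$. Assembling the three cases completes the proof for $L_\cdh j_!j^*\F_p^\cdh$ and $L_\cdh j_!j^*\F_p^\syn$, and then the fibre sequence above yields it for $L_\cdh j_!j^*\F_p^{\sub{Kato}}$.

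The main obstacle I anticipate is the bookkeeping needed to run Lemma~\ref{lem:p1-milnor} over $\bb P^1_A$ rather than $\bb P^1_V$: one must verify that, after extending by $\zeta_p$ and cdh-sheafifying, the relevant stalks are genuinely henselian valuation rings of mixed characteristic $(0,p)$ containing $\zeta_p$ and of rank $\le 1$ (after localising at the height-one prime above $p$), so that the hypotheses of that lemma are literally met. This is the same kind of argument carried out in the proof of Lemma~\ref{lem:galois-descent}(2), and I would model it on that, but it is the step most likely to require care. The non-truncated analogue of Lemma~\ref{lem:p1-milnor}, needed for the syntomic case, is handled by the $\colim_j$ device already used for Lemma~\ref{lem:galois-descent}(3) and should present no new difficulty.
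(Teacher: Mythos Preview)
Your proposal assembles the right ingredients (Galois descent via Lemma~\ref{lem:galois-descent}, reduction to rank-$\le 1$ valuation rings, Lemma~\ref{lem:p1-milnor}, and the colimit trick for the syntomic case), but the \emph{order} of the two main reductions is the wrong way round, and this creates exactly the obstacle you flag. After reducing to a mixed-characteristic valuation ring $V$ and then tensoring with $\bb Z[\zeta_p]$, you land on $\bb P^1_A$ with $A=V\otimes_\bb Z\bb Z[\zeta_p]$, which is \emph{not} a valuation ring. Your suggested fix---checking on cdh stalks of $\bb P^1_A$---does not match what Lemma~\ref{lem:p1-milnor} provides: that lemma says $\Lambda(j)(\bb P^1_W)=0$ for certain valuation rings $W$, not that $\Lambda(j)$ vanishes on the stalks of $\bb P^1_A$ (and indeed it need not, since $\bb P^1_A$ has points where $p$ is invertible). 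You could salvage this by re-running the integral-closure-and-Milnor-square decomposition from the proof of Lemma~\ref{lem:galois-descent}(2) at the level of $\bb P^1$ (using Zariski descent to reduce to $\bb A^1$-charts, where Milnor squares are preserved), but that is real extra work.

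The paper avoids all of this by swapping the order: apply Lemma~\ref{lem:galois-descent}(2) \emph{first} to reduce to $\bb Z[\zeta_p]$-schemes, and \emph{then} use finitariness and the Milnor excision of Lemma~\ref{lem:galois-descent}(1) (proved there directly via rigidity, rather than by arguing that $L_\cdh j_!j^*$ preserves excision) together with Proposition~\ref{proposition_checking_on_points} to reduce to henselian valuation rings $V$ of rank $\le 1$ \emph{under $\bb Z[\zeta_p]$}. Now $V$ already contains $\zeta_p$, and in the mixed-characteristic case Lemma~\ref{lem:p1-milnor} gives $\Lambda(j)(\bb P^1_V)=0$ directly, while the left side of the bonding map vanishes since $p\notin V^\times$. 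For the syntomic case the paper uses the identity $L_\cdh j_!j^*\bb F_p(j)^\syn\simeq\colim_m\Lambda(m)$, valid over $\bb Z[\zeta_p]$ via the telescope $\tau^{\le 0}R\Gamma_\et(-,\mu_p^{\otimes 0})\xrightarrow{\zeta_p}\tau^{\le 1}R\Gamma_\et(-,\mu_p^{\otimes 1})\xrightarrow{\zeta_p}\cdots$, which reduces immediately to the already-established vanishing of $\Lambda(m)(\bb P^1_V)$.
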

\begin{proof}
We first treat the case of $L_\cdh j_! j^*\F^{\cdh}_p$, where we must show that
\begin{equation}\label{eq:j!pbf}
 L_\cdh j_! j^*\bb F_p(j)^{\cdh}(X) \oplus L_\cdh j_! j^*\bb F_p(j-1)^{\cdh}(X)[-2] \xrightarrow{\pi^*\oplus c_1(\roi(1))\pi^*} L_\cdh j_! j^*\bb F_p(j)^{\cdh}(\P^1_X)
 \end{equation}
is an equivalence for every qcqs scheme $X$; recall from \eqref{eqn:Lambda} that $L_\cdh j_! j^*\bb F_p(j)^{\cdh}\simeq \Lambda(j)$ for all $n$. So we may assume by Lemma \ref{lem:galois-descent}(2) that $X$ is a $\bb Z[\zeta_p]$-scheme; then using Lemma \ref{lem:galois-descent}(1), the finitariness of each $L_\cdh j_! j^*\bb F_p(j)^{\cdh}$, and Proposition \ref{proposition_checking_on_points} we reduce to the case that $X=\Spec(V)$ is the spectrum of a henselian valuation ring $V$ of rank $\le 1$ lying under $\bb Z[\zeta_p]$. We now treat three cases. If $p\in V^\times $ then we may drop the three occurrences of $j_!j^*$ and see that \eqref{eq:j!pbf} is an equivalence by Corollary~\ref{corol:unconditional-pn}, noting that the key hypothesis holds vacuously by Remark~\ref{rmk:Hyp-vacuous}. If $p=0$ in $V$ then both sides of \eqref{eq:j!pbf} vanish (just because $L_\sub{cdh}j_!$ of any presheaf vanishes on $\bb F_p$-schemes). The third case is when $p\notin V^\times$ and $V$ is of mixed characteristic, containing a primitive $p^\sub{th}$ root of unity; then the left hand side of \eqref{eq:j!pbf} vanishes by removing $L_\sub{cdh}$ and using the definition of $j_!$, while the right hand side vanishes by the key Lemma \ref{lem:p1-milnor}.

Next we treat $L_\cdh j_! j^*\F_p^\syn$. Arguing exactly as at the beginning of the previous paragraph, we reduce to establishing that
\begin{equation}\label{eq:j!pbf2}
 L_\cdh j_! j^*\bb F_p(j)^{\sub{syn}}(V) \oplus L_\cdh j_! j^*\bb F_p(j-1)^{\sub{syn}}(V)[-2] \xrightarrow{\pi^*\oplus c_1(\roi(1))\pi^*} L_\cdh j_! j^*\bb F_p(j)^{\sub{syn}}(\P^1_V)
 \end{equation}
 is an equivalence whenever $V$ is a henselian valuation ring of rank $\le 1$ lying under $\bb Z[\zeta_p]$. If $p\in V^\times$ this is precisely the $\bb P^1$-bundle formula for \'etale cohomology. If $p=0$ in $V$ then both sides vanish. Finally, if $p\notin V^\times$ and $V$ is of mixed characteristic, containing a primitive $p^\sub{th}$ root of unity, then again we must show that the right hand side vanishes; but we are working over $\bb Z[\zeta_p]$, so that \[j^*\bb F_p(j)^\syn=R\Gamma_\sub{\'et}(-,\mu_p^{\otimes j})\quis \opp{colim}(\tau^{\le 0}R\Gamma_\sub{\'et}(-,\mu_p^{\otimes 0})\xto{\zeta_p}\tau^{\le 1}R\Gamma_\sub{\'et}(-,\mu_p^{\otimes 1})\xto{\zeta_p}\cdots),\] whence applying $L_\sub{cdh}j_!$ yields an equivalence $L_\sub{cdh}j_!j^*\bb F_p(j)^\sub{syn}\simeq \opp{colim}_m\Lambda(m)$ and so the desired vanishing again follows from the key Lemma \ref{lem:p1-milnor}.

Finally, by definition we have a cofibre sequence in $\Gr\PShv(\Sch^\qcqs, \D(\bb Z))_c$
\[
L_\cdh j_! j^*\F^{\cdh}_p \To L_\cdh j_!j^* \F_p^\syn\To L_\cdh j_! j^*\F_p^{\sub{Kato}},
\]
and so the $\bb P^1$-bundle formulae for $L_\cdh j_! j^*\F^{\cdh}_p$ and $L_\cdh j_! j^*\F_p^\syn$ imply it for $L_\cdh j_! j^*\F_p^{\sub{Kato}}$.
\end{proof}

We now prove the main theorem; as already noted in \S\ref{subsub:reduce-to-P1}, it remains only to prove it modulo $p$.

\begin{proof}[Proof of Theorem~\ref{thm:pbf-a1} modulo $p$.]

By Definition~\ref{def:cdhinf} and the fact that $L_{\A^1}$ is exact, we have a fibre sequence
\begin{equation}\label{eq:inf-fibre}
 L_{\A^1}\F_p^{\cdh} \To L_{\A^1}L_{\cdh}\F_p^{\syn} \To L_{\A^1}\F_p^{\sub{Kato}}
\end{equation}
in $\Gr\PShv(\Sch^\qcqs, \D(\bb Z))_c$, and so to prove that the first term satisfies the $\P^1$-bundle formula it is enough to prove that the other two terms do. The middle term was already treated (easily) in Example \ref{ex:pbfGr}, while the third term was treated (even before $\bb A^1$-localisation) in Lemma \ref{lemm:HFpcdhinf-pbf}.
\end{proof}

\section{Variant for Hermitian $K$-theory (away from $2$)} \label{sec:GW}

In this section, we prove variants of our main theorems for Hermitian $K$-theory\footnote{Following common practice (see, for example, \cite{calmes2024motivic}), for a qcqs scheme $X$ we have the \emph{Grothendieck--Witt spectrum} $\rm GW(X)$ but the functor $\rm GW: \Sch^\sub{qcqs,op} \rightarrow \Spt$ itself is called \emph{Hermitian $K$-theory}.} in place of $K$-theory, for schemes on which $2$ is invertible\footnote{Our main references will be  the series of papers \cite{Hermitian1,Hermitian2,Hermitian3}. While the main innovation of these papers is a good theory of Hermitian $K$-theory which does not require $2$ to be invertible, we use them even in this setting because they are conveniently written in the language of the present paper. We also note the reference \cite{Schlichting2017} which is a rather comprehensive account of a classical version of theory where $2$ is invertible; in fact most of the theorems we need are already covered there but in an older language. An extension of these results where $2$ is not assumed to be invertible will be treated in a sequel.}.  This theory is a quadratic refinement of $\K$-theory: while $\K$-theory is ultimately built from vector bundles, Hermitian $K$-theory is built from vector bundles equipped with a nondegenerate symmetric bilinear form. As a consequence, we produce an $\bb A^1$-invariant motivic filtration on homotopy Hermitian $K$-theory (i.e. $\bb A^1$-localised Hermitian $K$-theory), producing a quadratic refinement of the motivic filtration on homotopy $\K$-theory in this setting. The attendant motivic cohomology theory is often called \emph{Milnor-Witt motivic cohomology}.

Most of our arguments are very similar to those already performed for $\K$-theory and motivic cohomology and hence we will omit certain details where appropriate. In particular, we use the language and technology of motivic stable homotopy theory more judiciously in this section.

 Let us briefly explain what we are after. As explained in~\S\ref{sec:slice-conj} the results of the paper shows that the motivic spectrum $s^0(\KGL)$ is a good candidate for an $\bb A^1$-invariant motivic cohomology theory. Therefore, for the rest of this section and for the rest of the paper, we set:
\[
H\bb Z_X := s^0(\KGL_X) (\simeq s^0(1_{X})),
\]
Working with qcqs schemes over $\Spec(\Z[\tfrac{1}{2}])$, we have a motivic spectrum $\KO$ representing (homotopy invariant) Hermitian $K$-theory (sometimes this is also written as $\mathrm{KQ}$ in the motivic homotopy literature). The role of $H\Z_X$ here is played instead by the spectrum $H \tilde{\Z}_X$ representing \emph{Milnor-Witt motivic cohomology}; we refer the reader to \cite{bachmann2025} for a textbook account of this theory. We warn the reader that $H \tilde{\Z}_X$ is \emph{not} simply the zero-th slice of $\KO$. The slices of the latter are known over fields but are rather complicated \cite{RoendigsOestvaer2016}. Instead, $H \tilde{\Z}_X$ appears as a kind of generalised slice, first considered by the first author over fields in \cite{bachmann-very-effective}; we refer the reader to Remark~\ref{rem:zero-gen} for a precise statement.  The main result of this section is Theorem~\ref{thm:ko}, which is the exact $\KO$ analog of Theorem~\ref{thm:a1-a1cdh} which describes the slices of $\KGL$ over an arbitrary base scheme $X$. The motivic filtration on $\bb A^1$-invariant Hermitian $K$-theory is a consequence recorded in Corollary~\ref{thm:ko-main}.

\subsection{Generalised slices}\label{ss:gen-slices}
We briefly discuss these generalised slices. In \S\ref{ss:slice} we have not only encountered the $\infty$-category $\SH(X)^\eff$ but also the $\infty$-category of very effective spectra $\SH(X)^\veff$. There are other variants of the slice filtration which use $\SH(X)^\veff$ instead of $\SH(X)^\eff$ as the building block. Here we briefly discuss the resulting \emph{generalised slice filtrations}. 

Since $\SH(X)^\veff \otimes \bb T_X^{\otimes b}[a-2b] \subset \SH(X)$ is stable under colimits, it admits a right adjoint. The composition of this right adjoint with inclusion is denoted by
\[
\tilde{\Fil}_\sub{slice}^{a,b}: \SH(X) \rightarrow \SH(X).
\]
We call $\tilde{\Fil}_\sub{slice}^{a,b}E$ a \emph{generalised slice cover} of $E$. Using this procedure, out of a sub-poset $I$ of $\bb Z \times \bb Z$, we can produce an $I$-indexed filtration of $E \in \SH(X)$, called a generalised slice filtration.

The relevant one for us is defined in the following way; it comes under the name of the  \emph{generalised slice filtration} and was first studied in \cite{bachmann-very-effective,spitzweck2012motivic}.  We set:
\[ 
\tilde{\Fil}_\sub{slice}^n = \tilde{\Fil}_\sub{slice}^{2n,n}
\]
By the same reasoning as in Remark~\ref{rem:mult}, it assembles into a lax monoidal functor
\[
\tilde{\Fil}_\sub{slice}^\star: \SH(X) \rightarrow \Fil\SH(X).
\]
Just as in the slice filtration, we will write
\[
\tilde{s}_\sub{slice}^{\star}E
\]
for the graded pieces of this filtration evaluated on $E \in \SH(X)$. We will apply this to the motivic spectrum $\KO_X$ in \S\ref{sec:motfilt-ko}, and therefore endow $\KO_X$ with a multiplicative filtration
 \[
 \tilde{\Fil}_\sub{slice}^\star\KO_X \rightarrow \KO_X.
 \]

In practice, it is also convenient to consider the following more refined filtration where we introduce ``half slices.'' It is, however, \emph{not} monoidal:
\[ \tilde{\Fil}_\sub{slice}^n = \tilde{\Fil}_\sub{slice}^{2n,n} \quad\text{and}\quad \tilde{\Fil}_\sub{slice}^{n+1/2} = \tilde{\Fil}_\sub{slice}^{2n+1,n}. \]
We will not employ the slice $s^{\star}$ notation for the layers of this filtration but instead write:
 \[
   \tilde{\Fil}_\sub{slice}^{m}(E)/\tilde{\Fil}_\sub{slice}^{n}(E) := \mathrm{cofib}(\tilde{\Fil}_\sub{slice}^{n}(E) \rightarrow \tilde{\Fil}_\sub{slice}^{m}(E)) \qquad n \geq m. 
 \]

\begin{remark}\label{rem:cofiber} Since $\SH(X)^\veff \otimes \bb T_X^{\otimes b}[a-2b] \subset \SH(X)$ is not a stable subcategory we note that the functor $\tilde{\Fil}_\sub{slice}^{a,b}$ is not exact. Nonetheless, we will repeatedly use the following observation: if $A \rightarrow B \rightarrow C$ is a cofiber sequence in $\SH(X)$ then we have an induced equivalence $\tilde{\Fil}_\sub{slice}^{a,b}A \stackrel{\simeq}{\To} \tilde{\Fil}_\sub{slice}^{a,b}B$ as soon as the mapping \emph{space} $\Map(Y, C) \simeq *$ for all $Y \in \SH(X)^\veff \otimes \bb T_X^{\otimes b}[a-2b]$. This latter condition is equivalent to asking that the presheaf of pointed spaces $\Omega^{\infty}\omega^{\infty}(C[2b-a]): \Sm_X^{\op} \rightarrow \Spc_*$ is contractible. 
\end{remark}

\begin{remark}\label{rem:detection} If $X$ is the spectrum of Dedekind domain, then Proposition~\ref{prop:detecting-veff}(2) gives a criterion for checking when  an effective spectrum is very effective. Unfortunately, the proof is very sensitive to the fact that $X$ is of Krull dimension one and we do not know an analog for higher dimensional schemes. Nonetheless, note that Proposition~\ref{prop:detect-effective} holds more generally in that a spectrum $E \in \SH(X)^\veff \otimes \bb T_X^{\otimes b}[a-2b]$ if and only if for each point for every point $x\in X$, the pullback $i_x^*E\in\SH(k(x))^\veff \otimes \bb T_{k(x)}^{\otimes b}[a-2b]$ (see, again, \cite[Proposition B.3]{BachmannHoyois2021}). 
\end{remark}

\subsection{$L$-theory and the key result}
The key to bootstrapping from results about motivic cohomology and $K$-theory to results about Hermitian $K$-theory is to study the cdh-sheafification of the abelian presheaf given by $L$-theory\footnote{Roughly, the difference between $K$-theory and Hermitian $K$-theory is ``the $L$-theory part.'' We refer to \cite[Main Theorem]{Hermitian2} for a precise statement.} in degree zero. Nonetheless, we need $L$-theory as a presheaf of spectra, which we now review. 

Just like $K$-theory, there are two $L$-theory spectra, the ``connective'' version $\L^\sub{cn}$ and a ``non-connective version'' $\L$ (this notation aligns with the notation of this paper for $K$-theory, but the former is usually denoted by $\L$ in the literature and the latter by $\bb L$).
However, unlike $K$-theory, neither of them takes values in connective spectra.
The most general definition of the former is \cite[Definition 4.4.4]{Hermitian2}.
The $\L$-groups obtained in this way agree with Balmer's triangular Witt groups \cite[discussion on p. 39]{Hermitian1}.
In particular, $\L_0(A) = \pi_0\L^\sub{cn}(A)$ is the usual Witt group \cite[Theorem 4.3]{balmer-triangular-2} in the classical sense of \cite[Chapter 1]{MilnorHusemoller}. If $X$ is a scheme it coincides with the construction of Knebusch \cite{Knebusch1977}. 
For $\L$, we refer the reader to \cite[\S2.3]{calmes2024motivic}; though for schemes where $2$ is invertible, this should also coincide with the spectrum constructed by \cite{Schlichting2017}. In any case, this latter spectrum only plays an auxiliary role.
By construction there is a natural transformation of $\bb E_\infty$-rings $\L^\sub{cn} \to \L$.

The next proposition summarises the key properties of $L$-theory that we will use.

\begin{proposition} \label{prop:L-basics}
The functors $\L^\sub{cn}, \L: \Sch_{\Z[1/2]}^\sub{qcqs,op} \to \Spt$ satisfy the following properties:
\begin{enumerate}
\item $\L^\sub{cn}$ is $4$-periodic, i.e. there exists a unit in $\L_4(\Z[1/2])$.
\item For $A$ a local ring, $\pi_i\L^\sub{cn}(A) = 0$ for $i \not\equiv 0 \pmod{4}$.
\item The map $\L^\sub{cn}(X) \to \L_{\A^1}\L^\sub{cn}(X)$ is an equivalence if $X$ is regular or the spectrum of a valuation ring.
\item The map $\L^\sub{cn}(X) \to \L(X)$ is an equivalence if $X$ is regular or the spectrum of a valuation ring.
\item There exists a class $c \in \pi_{-1}\L^\sub{cn}(\P^1_{\Z[1/2]})$ inducing for all $X \in \Sch^\sub{qcqs}_{\Z[1/2]}$ a $\bb P^1$-bundle formula: \[  \L(X) \oplus  \L(X)[-1] \xrightarrow{p^* \oplus c \cdot p^*} \L(\P^1_X), \] where $p: \P^1_X \rightarrow X$ is the projection map.
\item $\L^\sub{cn}$ and $\L$ are both finitary presheaves of spectra.
\end{enumerate}
\end{proposition}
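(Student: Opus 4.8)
The plan is to obtain Proposition~\ref{prop:L-basics} by specialising known structural results on symmetric $L$-theory to the category of $\Z[\tfrac12]$-schemes; over $\Z[\tfrac12]$ the symmetric, genuine symmetric and quadratic Poincar\'e structures all coincide, so all of the classical computations of $L$-groups are available. For part (1): over $\Z[\tfrac12]$ symmetric $L$-theory is $4$-periodic (being identified with quadratic $L$-theory, which is $4$-periodic by construction), and the periodicity is implemented by multiplication by a unit in $\L_4(\Z[\tfrac12])$; see \cite{Schlichting2017}. Part (2) is Balmer's computation of the shifted Witt groups of a local ring $A$ with $2 \in A^\times$: the odd-shifted Witt groups $W^1(A)$, $W^3(A)$ vanish by a cofinality argument, and $W^2(A)$ --- the Witt group of nondegenerate skew-symmetric (symplectic) forms --- vanishes since such forms are hyperbolic when $2$ is invertible; combined with the $4$-periodicity of (1) this gives $\pi_i\L^\sub{cn}(A) = 0$ for $i \not\equiv 0 \pmod 4$, with $\pi_0\L^\sub{cn}(A) = W(A)$.

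For parts (3) and (4) the regular case is classical: homotopy invariance of the (higher) Witt groups of a regular ring with $2$ invertible, and the fact that for regular schemes the comparison $\L^\sub{cn} \to \L$ is an equivalence (its cofibre, which records the ``negative'', non-geometric part of $\L$, vanishes on regular inputs), are both contained in \cite{Schlichting2017}. For valuation rings I would argue that a valuation ring $V$ with $2 \in V^\times$ still has the two properties needed: by part (2) one is reduced, for (3), to proving $W(V) \cong W(V[t])$, and for (4), to the vanishing of the negative $L$-groups of $V$. Both follow from the exactness of the Gersten--Witt complex of $V$ --- equivalently, from Gersten injectivity for Witt groups of valuation rings --- which upgrades the regular case along the filtration of $V$ by its prime ideals; alternatively, and more cleanly, one may quote the corresponding excision, continuity and homotopy-invariance statements for $L$- and Grothendieck--Witt theory of valuation rings from \cite{Hermitian3} and \cite{calmes2024motivic}. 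The geometric point, where the valuation hypothesis (finitely generated ideals are principal) is used, is that every nondegenerate symmetric bilinear form over $V[t]$ becomes extended from $V$ after localising at each maximal ideal of $V[t]$.

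For part (5) I would invoke the projective bundle formula for $L$-theory, specialised to $\P^1$ (see \cite{Schlichting2017}, and \cite{calmes2024motivic} for the $\infty$-categorical formulation): the semiorthogonal decomposition $\Perf(\P^1) = \langle \mathcal O(-1), \mathcal O \rangle$ together with the standard shifted duality on $\P^1$ yields $\L(\P^1_X) \simeq \L(X) \oplus \L(X)[-1]$, the second inclusion being cup-product $c \cdot p^*$ with a class $c \in \pi_{-1}\L^\sub{cn}(\P^1_{\Z[\frac12]})$ (the ``transfer'' associated to the non-self-dual summand $\mathcal O(-1)$); the class $c$ is defined once over $\Z[\tfrac12]$ and pulled back to each $X$, and the bundle formula over an arbitrary base then follows from the semiorthogonal decomposition together with naturality and the projection formula. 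Part (6) is formal: $\L^\sub{cn}$ and $\L$ are obtained by applying a functor (on Poincar\'e $\infty$-categories, in the sense of \cite{Hermitian2}) to $\Perf(-)$, and both that functor and $X \mapsto \Perf(X)$ commute with filtered colimits along affine transition maps, so $\L^\sub{cn}$ and $\L$ are finitary.

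The main obstacle is the valuation-ring cases of (3) and (4): such $V$ are non-Noetherian and, when of mixed characteristic with odd residue characteristic, are not ind-smooth over any regular base, so one cannot simply reduce to the regular case by a Popescu-type limit argument but must instead run the argument with the Gersten--Witt complex directly or appeal to the non-Noetherian $L$-theory machinery of \cite{Hermitian3} and \cite{calmes2024motivic}; every other assertion is a matter of quoting the appropriate reference in the $2$-inverted setting.
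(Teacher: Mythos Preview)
Your treatment of (1), (2), (5), (6) is essentially the same as the paper's: quote the appropriate references (Balmer's triangular Witt group papers for (1), (2); the projective bundle formula from \cite{calmes2024motivic} or \cite{Rohrbach2022} for (5); finitariness of $\L^\sub{cn}$ from \cite{Hermitian2} for (6)).

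The interesting divergence is in the valuation-ring cases of (3) and (4), where the paper takes a different and cleaner route than your Gersten--Witt suggestion. For (3), rather than attacking homotopy invariance of Witt groups of $V$ directly, the paper simply reads Balmer's proof of $\A^1$-invariance in the regular case and isolates the single place where regularity enters: for $i \in \{0,2\}$ Balmer's cited result (Karoubi's theorem) needs no regularity hypothesis at all, while for $i \in \{1,3\}$ the only input is surjectivity of $\K_0(A) \to \K_0(A[t_1,\dots,t_n])$. For a valuation ring this holds because $\K^\cdh \simeq \KH$, i.e.\ $\K_0$ is $\A^1$-invariant on valuation rings. For (4), the paper invokes Karoubi cofinality \cite[(2.3.1)]{calmes2024motivic}: the cofibre of $\L^\sub{cn} \to \L$ is controlled by the cofibre of $\K^\sub{cn} \to \K$, so the statement reduces immediately to vanishing of negative $\K$-groups of valuation rings.

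Your reduction ``by (2) one is reduced to $W(V) \cong W(V[t])$'' is a bit too quick: part (2) applies to $V$ but not to $V[t]$, which is not local, so you have not yet shown $\L_i(V[t]) = 0$ for $i \not\equiv 0$. More seriously, your proposed Gersten--Witt argument for valuation rings is left as a sketch, and it is not clear how exactness of such a complex would be established for a possibly infinite-rank non-Noetherian $V$. The paper's $K$-theoretic reduction sidesteps all of this: it never needs to know anything about Witt groups of $V[t]$ beyond what already follows from Balmer's argument once the $\K_0$-input is supplied.
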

\begin{proof}

(1) See e.g. \cite[Proposition 1.14]{balmer-triangular-1}.

(2) See e.g. \cite[Theorem 5.6]{balmer-triangular-2}.

(3) The result for regular schemes is established by Balmer in \cite[Theorem 3.1]{balmer-homotopy-invariance}.
By (1), the result need only be proved for $\pi_i\L$, $i \in \{0,1,2,3\}$.
For $i \in \{0,2\}$, Balmer cites \cite[Theorem 3.1]{MR1805408}, which does not require any regularity hypotheses.
For $i \in \{1,3\}$, Balmer's proof only requires that $\K_0(A) \to \K_0(A[t_1, \dots, t_n])$ is surjective (see the discussion before \cite[Lemma 3.3]{balmer-homotopy-invariance}).
This holds if $A$ is a valuation ring since $\K^\cdh \wequi \KH$.

(4) Via Karoubi cofinality \cite[(2.3.1)]{calmes2024motivic}, this is immediate from the analogous claim about $K$-theory.

(5) See \cite[Theorem 6.1.6]{calmes2024motivic} for the most general statement (and \cite[Proposition 1.0.3]{calmes2024motivic} for the case of the $\bb P^1$-bundle formula), but the results of \cite[Theorem A \& B]{Rohrbach2022} are sufficient since we are working with schemes where $2$ is invertible. 


(6) For $\L^\sub{cn}$, see \cite[Corollary 4.4.6]{Hermitian2}.
The case of $\L$ follows from this and the construction.\NB{would be nice to have a definition so this claim even makes sense...}
\end{proof}

Now, according to \cite[Definition 8.1.1]{calmes2024motivic}, we have an absolute motivic spectrum (the base change stability follows from \cite[Theorem 8.1.6 and Proposition 8.2.2]{calmes2024motivic}) denoted by $\KW_X \in \CAlg(\SH(X))$; note that this coincides with the motivic spectrum was first constructed by Hornbostel in \cite{hornbostel2005a1} at least when $X$ is a regular scheme where $2$ is invertible. It has the property that
\[
(\omega^{\infty,\gr}\KW_X)^j \simeq L_{\A^1}\L[i]. 
\] 
Nonetheless, we will only consider $\KW_X$ for schemes $X$ with $1/2 \in \roi_X$. The following assertion about the cdh-sheafification of $\L^\sub{cn}$ and $\L$ follows from the same argument as its counterpart for $K$-theory and Proposition~\ref{prop:L-basics}.
%

\begin{corollary} \label{cor:A1-cdh-L} We have natural equivalences of presheaves on $\Sch^\sub{qcqs}_{\Z[1/2]}$:
\[
L_\cdh \L^\sub{cn} \quis L_\cdh \L \quis L_{\A^1} \L.
\]
\end{corollary}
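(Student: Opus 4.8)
The plan is to transcribe the proof of the algebraic $K$-theory counterpart — the chain of equivalences $L_\cdh \K^\sub{cn} \simeq L_\cdh \K \simeq \KH \simeq L_{\A^1}\K$ — replacing each ingredient by its $L$-theoretic analogue from Proposition~\ref{prop:L-basics}. That $K$-theoretic argument rests on three inputs: $K$-theory is finitary, so its cdh-sheafification is a finitary cdh sheaf (Proposition~\ref{prop:finitary_conditions}(2)); connective and nonconnective $K$-theory agree on henselian valuation rings, the points of the cdh topology, where $K$-theory is moreover $\A^1$-invariant; and homotopy $K$-theory is a cdh sheaf because it is represented by the motivic spectrum $\KGL$ (Cisinski, here Proposition~\ref{prop:auto-cdh}). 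The corresponding $L$-theoretic facts are that $\L^\sub{cn}$ and $\L$ are finitary (Proposition~\ref{prop:L-basics}(6)), that they agree and are $\A^1$-invariant on regular schemes and valuation rings (Proposition~\ref{prop:L-basics}(3),(4)), and that $\A^1$-invariant $L$-theory is represented by the absolute motivic spectrum $\KW$, so that $\omega^\infty f^*\KW$ is a finitary $\A^1$-invariant cdh sheaf by Proposition~\ref{prop:auto-cdh}.

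First I would prove $L_\cdh \L^\sub{cn} \simeq L_\cdh \L$. By Proposition~\ref{prop:L-basics}(6) and Proposition~\ref{prop:finitary_conditions}(2), both sides, hence also the cofiber of the natural map between them, are finitary cdh sheaves on $\Sch^\sub{qcqs}_{\Z[1/2]}$. By Proposition~\ref{proposition_checking_on_points}(1) it suffices to show that this cofiber vanishes on every henselian valuation ring $V$ of finite rank; since such $V$ is a point of the cdh topology, the cdh-sheafification of a finitary presheaf agrees with it there, so the relevant map becomes $\L^\sub{cn}(V) \to \L(V)$, an equivalence by Proposition~\ref{prop:L-basics}(4).

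Next I would prove $L_\cdh \L \simeq L_{\A^1}\L$. The key point is that $L_{\A^1}\L$ is already a cdh sheaf: the absolute motivic spectrum $\KW$ satisfies $\omega^\infty f^*\KW \simeq L_{\A^1}\L$ — on smooth schemes by the defining property of $\KW$, using that $\L$ is a Nisnevich sheaf, and then on all qcqs $\Z[\tfrac12]$-schemes because the right-hand side, like the left, agrees with $\L$ on the cdh points by Proposition~\ref{prop:L-basics}(3) — and $\omega^\infty f^*\KW$ is a finitary cdh sheaf by Proposition~\ref{prop:auto-cdh}. Granting this, the natural map $\L \to L_{\A^1}\L$ induces a map of finitary cdh sheaves $L_\cdh \L \to L_{\A^1}\L$, and by Proposition~\ref{proposition_checking_on_points}(1) its cofiber vanishes once it does on each finite-rank henselian valuation ring $V$, where $L_\cdh \L(V) \simeq \L(V)$ as in the previous step and $L_{\A^1}\L(V) \simeq \L(V)$ because $\L^\sub{cn}\simeq\L$ is $\A^1$-invariant on $V$ and on all $\bb A^n_V$ (Proposition~\ref{prop:L-basics}(3),(4)), the map being the identity.

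The hard part will be the identification $L_{\A^1}\L \simeq \omega^\infty f^*\KW$ — equivalently, the assertion that the naive $\A^1$-localization of the presheaf $\L$ is a cdh sheaf. This is the $L$-theoretic shadow of Cisinski's cdh-descent theorem for homotopy $K$-theory: it relies on the representability of $\A^1$-invariant $L$-theory in $\SH(-)$ and, through it, on Ayoub's proper base change theorem, all packaged into Proposition~\ref{prop:auto-cdh}. Once this input is in hand, the remaining steps are routine reductions to henselian valuation rings via Proposition~\ref{proposition_checking_on_points}(1) together with the elementary statements of Proposition~\ref{prop:L-basics}.
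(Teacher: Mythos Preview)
Your approach is essentially the paper's: use that $L_{\A^1}\L$ is a cdh sheaf (via representability by the absolute motivic spectrum $\KW$ and Proposition~\ref{prop:auto-cdh}), then check the induced maps $L_\cdh \L^\sub{cn} \to L_\cdh \L \to L_{\A^1}\L$ on cdh points using Proposition~\ref{prop:L-basics}(3),(4).

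There is one genuine circularity, though. In arguing that $L_{\A^1}\L \simeq \omega^\infty f^*\KW$ on all qcqs $\Z[\tfrac12]$-schemes, you say both sides agree on smooth $\Z[\tfrac12]$-schemes and ``agree with $\L$ on the cdh points,'' and conclude from this that they agree everywhere. But that inference requires both sides to be finitary cdh sheaves, and the cdh-sheafiness of $L_{\A^1}\L$ is precisely what you are trying to establish. The fix is immediate and is what the paper does in one line: since $\KW$ is an absolute motivic spectrum, for each qcqs $\Z[\tfrac12]$-scheme $Y$ with structure map $f$ one has $f^*\KW_{\Z[1/2]} \simeq \KW_Y$, and then $\omega^\infty\KW_Y(Y) = (L_{\A^1}\L|_{\Sm_Y})(Y) = L_{\A^1}\L(Y)$ because $Y \in \Sm_Y$ and $\L$ is a Nisnevich sheaf (so the Suslin construction computes the motivic $\A^1$-localization pointwise). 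Hence $L_{\A^1}\L$ equals the presheaf of Proposition~\ref{prop:auto-cdh} on the nose, and its cdh-sheafiness follows. Once you make this replacement, your argument coincides with the paper's.
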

\begin{proof}
The fact that $\KW$, as constructed above, is an absolute motivic spectrum, implies that the right most term is a cdh sheaf (use Proposition \ref{prop:auto-cdh}).
Thus the canonical maps $\L^\sub{cn} \to \L \to L_{\A^1} \L$ induce $L_\cdh \L^\sub{cn} \to L_\cdh \L \to L_{\A^1} \L$.
All three terms being finitary we may check the maps are equivalence on stalks, which holds by Proposition \ref{prop:L-basics}(3, 4).
\end{proof}

Composing the functor $\L$ with $\pi_0$ we have 
 \[ \pi_0\L^\sub{cn} := \L_0: \Sch_{\bb Z[\tfrac{1}{2}]}^{\qcqs,\op} \rightarrow \CAlg(\mathrm{Ab}) \subset \CAlg(\Spt). \]
The following are basic facts about $\L_0$ that we will need. 
 \begin{proposition} \label{prop:L0-basics}
\begin{enumerate}
\item The functor $\L_0$ is finitary.
\item The functor $\L_0$ is rigid (see Definition \ref{def:rigid}).
\end{enumerate}
\end{proposition}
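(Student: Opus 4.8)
The plan is to reduce both statements to the analogous properties of $\L^\sub{cn}$, which we have collected in Proposition~\ref{prop:L-basics}. For part (1), we observe that $\L_0 = \pi_0 \circ \L^\sub{cn}$ and that $\L^\sub{cn}$ is finitary by Proposition~\ref{prop:L-basics}(6). Thus for any cofiltered limit of qcqs $\bb Z[\tfrac12]$-schemes $X = \lim_\lambda Y_\lambda$ along affine transition maps we have $\L^\sub{cn}(X) \wequi \colim_\lambda \L^\sub{cn}(Y_\lambda)$ in $\Spt$, and since $\pi_0$ commutes with filtered colimits of spectra we get $\L_0(X) \cong \colim_\lambda \L_0(Y_\lambda)$, which is the finitariness claim.

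For part (2), let $A$ be a $\bb Z[\tfrac12]$-algebra and $I \subset A$ a henselian ideal; we must show $\L_0(A) \to \L_0(A/I)$ is an isomorphism. First I would reduce to the case where $A$ is Noetherian: writing $A$ as a filtered colimit of its finitely generated $\bb Z[\tfrac12]$-subalgebras $A_\alpha$, the ideal $I$ is the filtered colimit of the $I_\alpha := I \cap A_\alpha$, each $I_\alpha$ is contained in a henselian ideal (e.g. by passing to the henselisation), and by part (1) it suffices to treat each $A_\alpha$; alternatively one can cite the standard fact that henselian pairs are closed under filtered colimits. Given the Noetherian reduction, the claim is a known rigidity statement for Witt groups: one can invoke the fact that $\L^\sub{cn}$ (equivalently, the classical $L$-theory spectrum) is invariant under henselian pairs away from the residue characteristics of the pair — this is the $L$-theoretic analogue of the Gabber--Suslin rigidity theorem, and in the presence of $\tfrac12$ it follows for instance from dévissage and the corresponding statement for the Witt group of a field, or directly from the $L$-theoretic rigidity results in the literature on Hermitian $K$-theory (cf.~\cite{Hermitian2, Schlichting2017}); taking $\pi_0$ then gives the isomorphism $\L_0(A) \isoto \L_0(A/I)$.

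The main obstacle here is not the reduction steps, which are routine, but pinning down the precise rigidity input for $L_0$. The cleanest route is probably to deduce it from part (3) or (4) of Proposition~\ref{prop:L-basics} combined with a comparison to a residue field, or else to cite a rigidity theorem for Balmer Witt groups directly; since $2$ is invertible throughout, the relevant statements are classical and I would simply quote them rather than reprove them. Everything else is formal manipulation with filtered colimits and $\pi_0$.
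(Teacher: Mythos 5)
Your part (1) and the reduction of (2) to the Noetherian case match the paper: finitariness holds because $\L_0=\pi_0\L^\sub{cn}$ and $\L^\sub{cn}$ is finitary by Proposition~\ref{prop:L-basics}(6), and the paper performs the Noetherian reduction via \cite[Tag 0AGV]{Stacks}. One small caution on your reduction: the pairs $(A_\alpha, I\cap A_\alpha)$ over finitely generated subalgebras are not henselian, so you must pass to henselisations of pairs and use that the henselisation of a Noetherian pair is again Noetherian before invoking finitariness; your phrasing ("each $I_\alpha$ is contained in a henselian ideal") glosses over this, but it is exactly what the Stacks reference supplies, so the step is fixable.

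The genuine weak point is the rigidity input itself. For a general, not necessarily local, Noetherian henselian pair $(A,I)$ with $2$ invertible, the statement $\L_0(A)\isoto \L_0(A/I)$ is not classical and does not follow from the routes you sketch: Proposition~\ref{prop:L-basics}(3)--(4) concern $\bb A^1$-invariance and the comparison $\L^\sub{cn}\to\L$ on regular schemes and valuation rings, and say nothing about henselian pairs; "comparison to a residue field" only makes sense in the local case (where rigidity is indeed classical, via diagonalisation and Hensel lifting of square units); and the Gabber--Suslin analogy is misleading, since that is a finite-coefficient statement away from the residue characteristic, whereas what is needed here is \emph{integral} rigidity of the Witt group, which can have a free part (e.g.\ $\L_0(\bb R)=\bb Z$). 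The paper resolves this by quoting precisely \cite[Lemma 5.2]{jacobson2018cohomological} for Noetherian henselian pairs. So your instinct --- reduce to the Noetherian case and cite a rigidity theorem --- is exactly the paper's argument, but the missing ingredient is this specific (and relatively recent) lemma of Jacobson rather than a classical fact or a consequence of the other parts of Proposition~\ref{prop:L-basics}.
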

\begin{proof}
Since $\L_0$ is $\pi_0$ of a finitary functor (by Proposition~\ref{prop:L-basics}(6)), claim (1) holds. Using this, (2) reduces to the Noetherian case (use \cite[Tag 0AGV]{Stacks}), for which this is exactly \cite[Lemma 5.2]{jacobson2018cohomological}.
\end{proof}

The following is an $L$-theoretic analog of what we have proved for low-weights motivic cohomology (Corollaries~\ref{corol:low_weights_a1} and~\ref{corol_p_low_weights}). Under this analogy the $\L_0$ presheaf should be thought of as the constant presheaf at $\bb Z$. This turns out to be the key result in calculating the generalised slices of $\KO$.
 
\begin{theorem} \label{thm:key-L}
The cdh sheaf \[ R\Gamma_{\cdh}(-,\L_0): \Sch_{\bb Z[\tfrac{1}{2}]}^{\qcqs,\op} \rightarrow \CAlg(\Spt) \] is $\A^1$-invariant and satisfies the $\P^1$-bundle formula (in the sense of Remark~\ref{rmk:P1-bundle-L0}).
\end{theorem}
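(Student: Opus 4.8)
The strategy is to exploit the structural results about $L$-theory already collected, in close analogy with how Theorem~\ref{prop:basic_props_of_cdh_mot} and Corollary~\ref{corol:low_weights_a1} were deduced for weight-zero motivic cohomology from $\KH$. The key technical observation is Corollary~\ref{cor:A1-cdh-L}, which gives $L_\cdh\L^\sub{cn}\simeq L_\cdh\L\simeq L_{\A^1}\L$; since $L_\cdh\L$ is an $\bb A^1$-invariant cdh sheaf of $\bb E_\infty$-rings, its homotopy groups are controlled by the Postnikov filtration, and in particular $R\Gamma_\cdh(-,\L_0)$ is the (cdh-sheafification of the) truncation $\tau^{\le 0}$ of $L_\cdh\L$ once we know the relevant connectivity. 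More precisely, Proposition~\ref{prop:L-basics}(2) tells us that on local rings $\L^\sub{cn}$ is concentrated in degrees $\equiv 0\pmod 4$, and in particular $\tau^{\ge 1}\L^\sub{cn}$ vanishes on local rings (hence on henselian valuation rings), so that $L_\cdh\tau^{\ge 1}\L^\sub{cn}=0$ by Proposition~\ref{proposition_checking_on_points}(1) (using that $\L^\sub{cn}$ is finitary by Proposition~\ref{prop:L-basics}(6)); similarly $\tau^{<0}\L^\sub{cn}$ is $4$-periodic and its cdh sheafification is nonzero but lives in negative degrees. Thus $R\Gamma_\cdh(-,\L_0)\simeq L_\cdh\tau^{[0,?]}\L^\sub{cn}$ can be pinned down, and the $\bb E_\infty$-structure is inherited from $\L^\sub{cn}$.

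First I would establish $\bb A^1$-invariance. Since $R\Gamma_\cdh(-,\L_0)$ is a finitary cdh sheaf, by Proposition~\ref{proposition_checking_on_points}(1) it suffices to check on henselian valuation rings $V$ of finite rank over $\bb Z[\tfrac12]$; for such $V$ the cdh topology is trivial so $R\Gamma_\cdh(V,\L_0)=\L_0(V)$, and one must show $\L_0(V)\to\L_0(\bb A^1_V)$ is an equivalence. For this I would use Proposition~\ref{prop:L-basics}(3), which gives $\L^\sub{cn}(W)\simeq L_{\A^1}\L^\sub{cn}(W)$ for $W$ the spectrum of a valuation ring, together with the fact that $\bb A^1_V$ is covered, in the cdh (indeed Nisnevich) topology, by spectra of valuation rings (its Nisnevich-local rings are henselian valuation rings, being localizations of a smooth curve over a valuation ring); combined with $\bb A^1$-invariance of $\L^\sub{cn}$ on regular rings and the rigidity/excision techniques already used throughout the paper (Theorem~\ref{theorem:exc-cdarc}-style arguments, via Theorem~\ref{thm:ehik}), one reduces to the rank $\le 1$ case and then to the valuation-ring input of Proposition~\ref{prop:L-basics}(3). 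Alternatively and more cleanly: $L_\cdh\L^\sub{cn}\simeq L_{\A^1}\L$ is manifestly $\bb A^1$-invariant by Corollary~\ref{cor:A1-cdh-L}, so $L_\cdh\L^\sub{cn}(\bb A^1_X)\simeq L_\cdh\L^\sub{cn}(X)$; taking $\pi_0$ (using that the higher cdh cohomology sheaves of $\L_0$ are supported in nonnegative degrees and the negative homotopy of $L_\cdh\L^\sub{cn}$ does not interfere, by the connectivity discussion above) yields $\bb A^1$-invariance of $R\Gamma_\cdh(-,\L_0)$.

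Next I would prove the $\bb P^1$-bundle formula. The cleanest route is to transport the $\bb P^1$-bundle formula for $\L$ itself, namely Proposition~\ref{prop:L-basics}(5): there is a class $c\in\pi_{-1}\L^\sub{cn}(\bb P^1_{\bb Z[1/2]})$ (note the cohomological degree $+1$, matching the ``half-slice'' shift $[-1]$ appearing in the $\KW$ and $\L$-theoretic $\bb P^1$-formulae) inducing $\L(X)\oplus\L(X)[-1]\xrightarrow{\simeq}\L(\bb P^1_X)$. Cdh-sheafifying this equivalence of finitary presheaves and using Corollary~\ref{cor:A1-cdh-L} gives $L_\cdh\L^\sub{cn}(X)\oplus L_\cdh\L^\sub{cn}(X)[-1]\xrightarrow{\simeq}L_\cdh\L^\sub{cn}(\bb P^1_X)$; now take $\pi_0$. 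The point is that the $[-1]$-summand contributes to $\pi_0(L_\cdh\L^\sub{cn}(\bb P^1_X))$ only through $\pi_{-1}$ of the base, i.e. through $H^1_\cdh(X,\L_0)$ (the $\L_{-1}$ part vanishes on stalks), so the statement of the $\bb P^1$-bundle formula for $R\Gamma_\cdh(-,\L_0)$ must be formulated with the appropriate shift, which is what Remark~\ref{rmk:P1-bundle-L0} does — hence the parenthetical ``in the sense of Remark~\ref{rmk:P1-bundle-L0}.'' Concretely, the map will read $R\Gamma_\cdh(X,\L_0)\oplus R\Gamma_\cdh(X,\L_0)[-1]\xrightarrow{p^*\oplus c\cdot p^*}R\Gamma_\cdh(\bb P^1_X,\L_0)$, and its being an equivalence follows by taking $\tau^{\le 0}$ of the cdh-sheafified $\bb P^1$-bundle formula for $\L^\sub{cn}$, using the connectivity bounds (on local rings $\L^\sub{cn}$ vanishes in degrees $1,2,3$, so cdh-locally $\tau^{[1,3]}\L^\sub{cn}=0$, whence $\tau^{\le 0}$ commutes with the relevant truncations).

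\textbf{Main obstacle.} The genuinely delicate point is keeping careful track of connectivity and degree shifts when passing between $\L^\sub{cn}$, its truncations, and $\L_0$, and ensuring that cdh sheafification interacts correctly with truncation — i.e. that $L_\cdh\tau^{\le 0}\L^\sub{cn}\simeq\tau^{\le 0}L_\cdh\L^\sub{cn}$ in the relevant range, and that $R\Gamma_\cdh(-,\L_0)$ really is the $\pi_0$-sheaf of $L_\cdh\L^\sub{cn}$ with no spurious contributions from the $4$-periodic negative part. This is handled by Proposition~\ref{proposition_checking_on_points} and the vanishing of $\L^\sub{cn}$ in degrees $1,2,3$ on local rings (Proposition~\ref{prop:L-basics}(2)), but it requires care; everything else is a routine transfer of the $K$-theoretic arguments via Corollary~\ref{cor:A1-cdh-L} and Proposition~\ref{prop:L-basics}.
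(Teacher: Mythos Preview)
Your overall strategy—transfer the $\A^1$-invariance and $\P^1$-bundle formula from $L_\cdh\L^\sub{cn}$ to $R\Gamma_\cdh(-,\L_0)$ via Corollary~\ref{cor:A1-cdh-L} and Proposition~\ref{prop:L-basics}—is exactly right, and is what the paper does. But you have not found the mechanism that makes the transfer work, and your proposed mechanisms fail. The problem you flag in your ``Main obstacle'' is real, and your proposed resolution does not address it: knowing that $\tau^{[1,3]}\L^\sub{cn}$ vanishes on local rings does \emph{not} imply that $R\Gamma_\cdh(X,\L_0)$ agrees with $\tau^{\le 0}L_\cdh\L^\sub{cn}(X)$ or any similar truncation. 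The cdh descent spectral sequence $H^i_\cdh(X,a_\cdh\L_j)\Rightarrow \pi_{j-i}L_\cdh\L^\sub{cn}(X)$ has nonzero rows at every $j\equiv 0\pmod 4$, and in general $H^4_\cdh(X,\L_4)$, $H^8_\cdh(X,\L_8)$, etc.\ all contribute to $\pi_0$; worse, there can be $d_5$-differentials mixing different $4$-periodic layers. Your ``alternatively and more cleanly: take $\pi_0$'' argument therefore does not go through. Your first approach also contains an error: the Nisnevich stalks of $\A^1_V$ are \emph{not} valuation rings once $V$ has positive rank (e.g.\ for $V$ a DVR they are two-dimensional regular local rings).

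The paper's fix is the following. First use rigidity of $\L_0$ (Proposition~\ref{prop:L0-basics}(2)) and Theorem~\ref{thm:ehik} to get Milnor excision for $R\Gamma_\cdh(-,\L_0)$, reducing both statements to the case where the base has valuative dimension $\le 1$; then $\A^1_X$ and $\P^1_X$ have valuative dimension $\le 2$. The point is that for any $X$ of valuative dimension $\le 3$ the cdh descent spectral sequence degenerates at $E_2$ (the $d_2,d_3,d_4$ vanish for sparsity reasons, and $d_r$ for $r\ge 5$ vanish because $H^i_\cdh=0$ for $i>3$), yielding $\pi_{4n-i}L_\cdh\L^\sub{cn}(X)\simeq H^i_\cdh(X,\L_0)$ for $0\le i\le 3$ (this is Lemma~\ref{lemm:compute-KW}). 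Now the $\A^1$-invariance and $\P^1$-bundle formula of $L_\cdh\L^\sub{cn}$ translate directly into the desired statements for $R\Gamma_\cdh(-,\L_0)$. The missing idea in your proposal is precisely this bounded-valuative-dimension reduction, without which the $4$-periodicity of $\L^\sub{cn}$ prevents any naive truncation argument from isolating $\L_0$.
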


\begin{remark}[Formulation of $\bb P^1$-bundle formula] \label{rmk:P1-bundle-L0}
The last assertion of Theorem \ref{thm:key-L} requires some further explanation.
The precise claim is that there exists a certain canonical class $c \in H_{\cdh}^1(\P^1_{\Z[1/2]}, \L_0)$ such that for any $X \in \Sch_{\bb Z[\tfrac{1}{2}]}^{\qcqs}$ the canonical map \[ R\Gamma_{\cdh}(X, \L_0) \oplus R\Gamma_{\cdh}(X, \L_0)[-1] \xrightarrow{p^* \oplus c \cdot p^*} R\Gamma_{\cdh}(\bb P^1_X, L_0) \] is an equivalence. 

We now construct $c$ explicitly. Granting that we know the functor $R\Gamma_{\cdh}(-, \L_0)$ is $\bb A^1$-invariant (as in the first part of Theorem~\ref{thm:key-L}), we have that $H^1_{\cdh}(\P^1_{\Z[1/2]}, \L_0) = \ker(H^0_{\cdh}(\Gm_{\Z[1/2]}, \L_0) \xrightarrow{1^*}H^0_{\cdh}(\Spec(\Z[1/2]), \L_0))$. Therefore, it suffices to construct a class in $H^0_{\cdh}(\Gm_{\Z[1/2]}, \L_0)$ which vanishes when restricted along $1 \in \Gm_{\Z[1/2]}$. We have a comparison map $W(\Z[\tfrac{1}{2}, t^{\pm 1}]) = L_0(\Gm_{\Z[\tfrac{1}{2}]}) \rightarrow H^0_{\cdh}(\Gm_{\Z[1/2]}, \L_0)$ and the element to use is the image of the class $\lra{t}-1 \in W(\Z[\tfrac{1}{2}, t^{\pm 1}])$ under the comparison map (see \cite[Definition 2.6]{bachmann-etaZ} for more details on a similar construction with the Nisnevich topology instead of the cdh topology).

%
\end{remark}

\begin{remark}\label{rem:no-orientations} Unlike $K$-theory or motivic cohomology, the cohomology theories studied in this section are \emph{not oriented}. For example, they do not satisfy a projective bundle formula; we refer the reader to \cite[Theorem 6.1.6]{calmes2024motivic} for how the $\L$-theory of a projective bundle looks like. 
\end{remark}

To prove Theorem~\ref{thm:key-L}, we first prove the following lemma which employs the same degeneration techniques as in \S\ref{sec:pbf-a1}.

\begin{lemma} \label{lemm:compute-KW}
Let $X \in \Sch_{\Z[1/2]}$ have valuative dimension $\le 3$.
Then for $n \in \Z$ and $0 \le i \le 3$ we have \[ \pi_{4n-i} L_{\cdh}\L^\sub{cn}(X) \wequi H^i_\cdh(X, \L_0). \]
\end{lemma}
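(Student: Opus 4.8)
The plan is to construct an Atiyah--Hirzebruch-style spectral sequence converging to $L_{\cdh}\L^\sub{cn}(X)$ whose $E_2$-page is governed by $H^*_\cdh(X,\L_0)$, and then degenerate it by a dimension count. First I would observe that, by Proposition~\ref{prop:L-basics}(2), for any local ring $A$ (indeed any henselian valuation ring) the spectrum $\L^\sub{cn}(A)$ has homotopy concentrated in degrees $\equiv 0 \pmod 4$; by $4$-periodicity (Proposition~\ref{prop:L-basics}(1)) all these homotopy groups are (non-canonically, but canonically after choosing the periodicity unit) identified with $\L_0(A)$. Hence the Postnikov filtration of $\L^\sub{cn}$, restricted to henselian valuation rings, has graded pieces $\L_0[4m]$ for $m \in \bb Z$ and nothing else. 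Since $L_{\cdh}\L^\sub{cn}$ is a finitary cdh sheaf (Corollary~\ref{cor:A1-cdh-L} together with Proposition~\ref{prop:finitary_conditions}), and finitary cdh sheaves are detected on henselian valuation rings (Proposition~\ref{proposition_checking_on_points}(1)), I would cdh-sheafify this Postnikov tower to obtain a complete, $4$-periodic filtration on $L_{\cdh}\L^\sub{cn}$ whose graded pieces are $L_{\cdh}\L_0[4m] = R\Gamma_\cdh(-,\L_0)[4m]$ (using here that $\L_0$ is already finitary by Proposition~\ref{prop:L0-basics}(1), and that cdh sheafification commutes with the relevant shifts). Here I am using Remark~\ref{lem:filtr-coconn}-type arguments to ensure the cdh-sheafified tower still computes $L_{\cdh}\L^\sub{cn}$: the point is that on stalks the tower is the Postnikov tower, which is complete because $\L^\sub{cn}(V)$ is bounded below modulo $4$-periodicity in a suitable pro-sense, or more simply because we only need the finitely many graded pieces that can contribute in a given degree once $X$ has finite valuative dimension.

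The resulting spectral sequence reads $E_2^{s,t} = H^s_\cdh(X, \L_0)$ for $t \equiv 0 \pmod 4$ (and $0$ otherwise), converging to $\pi_{t-s} L_{\cdh}\L^\sub{cn}(X)$. The key input is now the bound on cohomological dimension: since $X$ has valuative dimension $\le 3$, the $\infty$-topos of cdh sheaves on finitely presented $X$-schemes has homotopy dimension $\le 3$ (\cite[Theorem 2.4.15]{ElmantoHoyoisIwasaKelly2021}, as already used in the proof of Theorem~\ref{prop:basic_props_of_cdh_mot}(3)), so $H^s_\cdh(X,\L_0) = 0$ for $s > 3$ and for $s < 0$. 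Therefore the $E_2$-page is concentrated in the columns $s \in \{0,1,2,3\}$, and the rows $t$ are spaced $4$ apart; every differential $d_r$ has bidegree $(r, r-1)$ with $r \ge 2$, so it either lands in a zero column ($s \ge 4$) or comes from a zero column, and in any case it connects rows differing by $r-1 \le 3 < 4$, i.e. rows of different residue mod $4$ when $2 \le r \le 4$ — wait, more carefully: a differential from $(s,t)$ to $(s+r, t+r-1)$ with $t \equiv 0$ and $t + r - 1 \equiv 0 \pmod 4$ forces $r \equiv 1 \pmod 4$, so the first potentially nonzero differential is $d_5$; but $d_5$ raises $s$ by $5$, landing in column $s+5 \ge 5 > 3$, hence vanishes. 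So the spectral sequence degenerates at $E_2$, and moreover there are no extension problems in a fixed total degree $4n - i$ because only the single entry $(s,t) = (i, 4n)$ contributes (any other contributing entry would have $t \equiv 0 \pmod 4$ and $t - s = 4n-i$, forcing $s \equiv i \pmod 4$ with $0 \le s \le 3$, hence $s = i$). This yields the claimed isomorphism $\pi_{4n-i}L_\cdh\L^\sub{cn}(X) \cong H^i_\cdh(X,\L_0)$ for $0 \le i \le 3$.

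The main obstacle I anticipate is making the cdh-sheafified Postnikov tower rigorous: one must check that sheafifying the Postnikov tower of the presheaf $\L^\sub{cn}$ (termwise, in presheaves) really does produce a filtration of $L_{\cdh}\L^\sub{cn}$ with the expected graded pieces, rather than something whose associated graded has been altered by the sheafification. The cleanest route is to work stalk-locally: the tower $\{L_\cdh \tau_{\ge m}\L^\sub{cn}\}$ has, on each henselian valuation ring $V$, associated graded $\bigoplus_m (\pi_m \L^\sub{cn}(V))[m]$, and by Proposition~\ref{prop:L-basics}(2) this is $\bigoplus_{4 \mid m}\L_0(V)[m]$; convergence of the tower on $V$ is immediate since in any fixed degree only one graded piece is nonzero and $\tau_{\ge m}\L^\sub{cn}(V) \to \L^\sub{cn}(V)$ is an equivalence in degrees $< m$. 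Then finitariness (Proposition~\ref{prop:finitary_conditions}) and the homotopy-dimension bound upgrade this stalkwise statement to the global one over $X$. With the spectral sequence in hand, the $\bb A^1$-invariance and $\bb P^1$-bundle formula of Theorem~\ref{thm:key-L} will follow by comparing it (via the map $L_\cdh\L^\sub{cn} \to L_\cdh\L^\sub{cn}(\bb A^1_{-})$, resp. via the class $c$ of Remark~\ref{rmk:P1-bundle-L0}) with the $\bb A^1$-invariance, resp. $\bb P^1$-bundle formula, for $L_\cdh \L$ coming from Corollary~\ref{cor:A1-cdh-L} and Proposition~\ref{prop:L-basics}(3),(5); this last comparison is precisely the $L$-theoretic analogue of the degeneration arguments of \S\ref{sec:pbf-a1}, run now over valuation rings of valuative dimension $\le 3$ as in Lemma~\ref{lemm:compute-KW}.
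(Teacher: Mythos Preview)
Your approach is correct and matches the paper's: both collapse the cdh descent spectral sequence $E_2^{s,t} = H^s_\cdh(X, a_\cdh \L_t) \Rightarrow \pi_{t-s} L_\cdh \L^\sub{cn}(X)$, using that $a_\cdh \L_t$ vanishes off $t \equiv 0 \pmod 4$ (Proposition~\ref{prop:L-basics}(1,2)) and that $H^s_\cdh$ vanishes for $s > 3$ by the valuative dimension bound. The paper simply invokes the ``strongly convergent cdh descent spectral sequence'', whose existence and convergence over a base of finite homotopy dimension are standard; your Postnikov-tower construction is exactly this object, and your convergence concern is resolved by that same finite-dimension input.
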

\begin{proof}
By Corollary \ref{cor:A1-cdh-L} and the construction of $\KW_X$ we have that, $\omega^{\infty}\KW_X(X) \wequi L_\cdh \L^\sub{cn}(X)$. In particular, $L_\cdh \L^\sub{cn}$ satisfies a $\bb P^1$-bundle formula compatible with the one coming Proposition~\ref{prop:L-basics}(5), i.e., before cdh sheafification.
Using Proposition \ref{prop:L-basics}(1,2) we see that $a_\cdh \L_i \wequi a_\cdh \L_0$ for $i \equiv 0 \pmod 4$, and $=0$ else.
The strongly convergent cdh descent spectral sequence converging to the homotopy groups of $L_{\cdh}L^\sub{cn}$ then collapses and the result follows (after noting that the construction of $c$ from Remark~\ref{rmk:P1-bundle-L0} is compatible with the cdh sheafification of the map induced from Proposition~\ref{prop:L-basics}(5) with the same name).
\end{proof}

\begin{proof}[Proof of Theorem \ref{thm:key-L}.]
Since $\L_0$ is rigid it satisfies henselian $v$-excision by Remark~\ref{rem:rigid-exc}. Since $\L_0$ is in addition finitary it follows that $L_\cdh \L_0$ satisfies Milnor excision by Theorem~\ref{thm:ehik} and hence, in order to verify $\A^1$-invariance or the $\P^1$-bundle formula, it suffices to prove this when the base $X$ has valuative dimension $\le 1$.
In this case both $\A^1_X$ and $\P^1_X$ have valuative dimension $\le 2$ \cite[Proposition 2.3.2(7,1)]{ElmantoHoyoisIwasaKelly2021}. Therefore, Lemma~\ref{lemm:compute-KW} applies to describe the cdh cohomology of $\L_0$ in terms of the homotopy groups of $L_{\cdh}\L^\sub{cn}$. From this, $\bb A^1$-invariance follows immediately by Corollary~\ref{cor:A1-cdh-L} and the $\bb P^1$-bundle formula was already noted in the proof of Lemma~\ref{lemm:compute-KW}.

\end{proof}

\newcommand{\I}{\mathrm{I}}

\subsection{Base change results for Milnor--Witt motivic cohomology and very effective cover of $\KO$} Theorem~\ref{thm:key-L} lets us deduce a result about Milnor--Witt motivic cohomology and the very effective cover of $\KO$, the motivic spectrum representing Hermitian $K$-theory, over an arbitrary base (the latter by confirming a criterion of the first author). This will feed into the construction of the motivic filtration on $\A^1$-invariant Hermitian $K$ theory in the next section. 

Let $X$ be a qcqs scheme. Recall that $\SH(X)^\veff \subset \SH(X)^\eff$ determines the non-negative part of a $t$-structure on $\SH(X)^\eff$ by formal reasons \cite[Proposition 1.4.4.11]{LurieHA}. We denote by \[ \tau_{\le 0}^\eff: \SH(X)^\eff \to \SH(X)^\eff \] the associated truncation functor.
The \emph{Milnor-Witt motivic cohomology spectrum} is defined to be \[ H\tilde{\Z}_X = \tau_{\le 0}^\eff(1_X) \simeq \pi_0^{\eff}(1_X) \in \CAlg(\SH(S)). \]

\begin{remark}\label{rem:zero-slice-analog} We note the formal similarity with the zero-th slice of the sphere $s^0(1_X) := \mathrm{cofib}(\mathrm{Fil}_\sub{slice}^{1}1_X \rightarrow 1_X)$. We had established in Theorem~\ref{cor:conjectures} that $s^0(1_X) \simeq H\Z_S$ and that it is stable under base change. The goal of this subsection is to prove the same result of $H\tilde{\Z}_X$.
\end{remark}

Following Definition~\ref{definition_A^1_mot-_coh} we can extract the following cohomology theory. 

\begin{definition}\label{definition_A^1_mot-_coh_mw}
For a qcqs scheme $X$, its {\em $\bb A^1$-Milnor--Witt motivic cohomology} is defined by \begin{equation}\tilde \Z(j)^{\bb A}(X):=\map_{\SH(X)}(1_X, \bb T^{\otimes j}_X \otimes H\tilde{\Z}_X))[-2j]\in\opp{Sp}\label{eqn_ZjAMW}\end{equation} for $j\in\bb Z$. Write $H^i_{\bb A}(X,\tilde \Z(j)):=H^i(\tilde \Z(j)^\bb A(X))$, for $i,j\in\bb Z$, for the corresponding {\em $\bb A^1$-Milnor--Witt cohomology groups}. 
\end{definition}

\begin{remark} The cohomology theory $H^i_{\bb A}(X,\tilde \Z(j))$ provides a ``quadratic refinement'' of $\bb A^1$-motivic cohomology. It is an environment in which one can study cycles ``equipped with an orientation'' and plays a central role in enumerative geometry over fields other than the complex numbers \cite{KassWicklegren2019}.
 \end{remark}

We defer a more extensive development of this cohomology theory to the future. For now, our goal is to prove that motivic spectrum $H\tilde{\Z}_X$ is stable under base change. To do this, we need an auxiliary motivic spectrum which plays the role of Milnor $K$-theory for $L$-theory.

\begin{definition}\label{def:kw} The motivic spectrum of \emph{Witt $K$-theory} $\ul{\K}^W \in \SH(\Z[1/2])$ is a version of Morel's spectrum \cite{A1-alg-top} (constructed over fields) whose construction we sketch. There is a natural transformation $\L_0 \to \Z/2$ of presheaves of (ordinary) rings on qcqs schemes called the \emph{rank map}.
We denote its kernel by $\I \subset \L_0$, which is a presheaf of ideals. Its powers are denoted $\I^n \subset \L_0$. 

In \cite[Definition 2.6, Corollary 2.8]{bachmann-etaZ}, the first author constructed $\ul{\K}^W \in \CAlg(\SH(\Z[1/2]))$ in the following way (in the language of this paper): we have the multiplicative graded presheaf\footnote{To obtain a multiplicative structure compatible with the grading, one needs to shear. This can be freely done since we are only working with $\D(\Z)$-valued presheaves as we have done many times already.}
\[
R\Gamma_{\Nis}(-,I^{\star})[\star] \in \Gr\opp{Sh}_{\Nis, \bb A^1}(\Sm_{\Z[1/2]},\Spt).
\]
Note that, by construction, $R\Gamma_{\Nis}(-,I^{j}) = R\Gamma_{\Nis}(-,\L_0)$ for $j \leq 0$. Indeed, that each term is a $\bb A^1$-invariant was verified in \cite[Proof of Corollary 2.8]{bachmann-etaZ}. We also have a ``first Chern class'' map $c:\Sigma^{\infty}\P_{\Z[1/2]}^1 \rightarrow R\Gamma_{\Nis}(-,I^1)[1]$ constructed in \cite[Definition 2.6]{bachmann-etaZ}; here we used that $R\Gamma_{\Nis}(-,I^1)[1]$ is already an $\bb A^1$-invariant, Nisnevich sheaf to translate from a class in reduced cohomology of $\bb G_{m}$ to the required map. That $(R\Gamma_{\Nis}(-,I^{\star})[\star], c) \in \Gr\opp{Sh}_{\Nis, \bb A^1}(\Sm_{\Z[1/2]},\Spt)_\sub{c,pbf}$ was verified in \cite[Corollary 2.8]{bachmann-etaZ}. Using the motivic Eilenberg-MacLane functor we set
\[
\ul{\K}^W := H(R\Gamma_{\Nis}(-,I^{\star})[\star]) \in \CAlg(\SH(\Z[1/2]). 
\]
Using \cite[Definition 2.6, Lemma 2.7]{bachmann-etaZ} (which uses, ultimately, Voevodsky's resolution of the Milnor conjectures \cite{voevodsky-z2}) we obtain a map in $\CAlg(\SH(\Z[1/2])$:
\begin{equation}  s:\ul{\K}^W \rightarrow H \Z_{\Z[1/2]}/(2,\tau)\label{eq:s-map}.\end{equation} Here, $\tau: \bb T \otimes H\bb Z/2[-1] \rightarrow H\bb Z/2$ is the map in mod-$2$ motivic cohomology which classifies the class of $-1 \in  \mu_2(\Z[\tfrac{1}{2}])$. A slightly different description of this spectrum is used in [loc.~cit.] to show that $H \Z_{\Z[1/2]}/(2,\tau)$ is even an $\bb E_{\infty}$-ring.
In particular, by the description of \cite[Lemma 2.7]{bachmann-etaZ}, $\omega^{\infty,\gr}_{\otimes}$ of~\eqref{eq:s-map} is given by the map of multiplicative graded presheaves
\begin{equation}\label{eq:s-explicit}
R\Gamma_{\Nis}(-,I^{\star})[\star] \xrightarrow{s^{\star}} R\Gamma_{\Nis}(-, \scr H_{\et}^{\star}(\bb F_2))[\star]. 
\end{equation}
\end{definition}

The following is our main theorem about the motivic spectrum $H\tilde \Z$:

\begin{theorem}\label{thm:hz-tilde-main} Let $X$ be a qcqs scheme where $2 \in \roi_X^{\times}$ and let $f: X \rightarrow \Spec(\Z[1/2])$ be the structure morphism. Then
\begin{enumerate}
\item the motivic spectrum $f^*H\tilde{\Z}_{\Z[1/2]}$ is very effective.
 \item There is a canonical pullback square in $\CAlg(\SH(X))$:
\begin{equation}\label{eq:pullback}
\begin{CD}
H\tilde{\Z}_X @>>> H\Z_X \\
@VVV           @VVV  \\
f^*\ul{\K}^W @>f^*s>> H\Z/(2,\tau)_X.
\end{CD}
\end{equation}
\item the canonical map
\[
f^*H\tilde{\Z}_{\Z[1/2]} \rightarrow H\tilde{\Z}_{X}
\]
is an equivalence.
\end{enumerate}
\end{theorem}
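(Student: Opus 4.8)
\textbf{Proof strategy for Theorem~\ref{thm:hz-tilde-main}.} The plan is to first establish parts (1) and (2), and then deduce part (3) from them together with Theorem~\ref{thm:key-L} and the base change results of the paper. Throughout, $X$ is a qcqs scheme with $2\in\roi_X^\times$ and $f\colon X\to\Spec(\Z[1/2])$ its structure map.

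For part (1), since $f^*H\tilde\Z_{\Z[1/2]}$ is an absolute motivic spectrum restricted along $f$ (the formation of $\tau_{\le 0}^\eff(1)$ commutes with base change because $f^*$ is symmetric monoidal and preserves both $\SH^\eff$ and $\SH^\veff$, being a left adjoint sending generators to generators), it suffices by Remark~\ref{rem:detection} to check very effectivity after pulling back to all residue fields $k(x)$ of points $x\in X$. Over a (perfect, by a filtered colimit argument using \cite[Proposition~C.12]{Hoyois2014}) field this is the computation of the first author \cite[Proposition~4]{bachmann-very-effective}, which identifies $\tau_{\le 0}^\eff(1_k)$ and shows it is very effective. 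So part (1) reduces to the known case of fields.

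For part (2), the square is constructed as follows. The unit map $1_X\to\KGL_X$ and Corollary~\ref{cor:conjectures} give the right-hand vertical map $H\tilde\Z_X=\tau_{\le 0}^\eff(1_X)\to\tau_{\le 0}^\eff\KGL_X\to s^0(\KGL_X)=H\Z_X$ after noting $s^0(1_X)$ is already $(-1)$-truncated for the effective $t$-structure, hence receives a canonical comparison map from $\tau_{\le 0}^\eff(1_X)$; more precisely one uses that $\SH(X)^\veff$ is the connective part of a $t$-structure on $\SH(X)^\eff$ and that $s^0(1_X)\in\SH(X)^\eff$ lies in the heart-or-below. The bottom horizontal map is $f^*s$ from \eqref{eq:s-map}. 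To see the square is cartesian, one argues as in the field case treated in \cite[Theorem 16, Corollary 17]{bachmann-very-effective}: the key point is that the fibre of $\ul{\K}^W\to H\Z/(2,\tau)$, which by \eqref{eq:s-explicit} is controlled by the fibre of the map $R\Gamma_\Nis(-,I^\star)[\star]\to R\Gamma_\Nis(-,\scr H_\et^\star(\F_2))[\star]$ of graded presheaves, agrees via the Milnor conjectures (Voevodsky \cite{voevodsky-z2}) with a corresponding piece of $H\tilde\Z_X$ versus $H\Z_X$. Concretely, one checks the square is cartesian by applying the motivic Eilenberg--MacLane functor $H$ and its inverse $\omega^{\infty,\gr}_\otimes$ from Proposition~\ref{prop:P1-spectrum-from-graded}, reducing to a cartesian square of graded $\bb A^1$-invariant Nisnevich sheaves; on each graded piece $j$ this becomes the statement that $R\Gamma_\Nis(-,I^j)$ is the pullback of $\Z(j)^\bb A\to\Z(j)^\bb A/(2,\tau)\leftarrow R\Gamma_\Nis(-,\scr H_\et^j(\F_2))$, which is exactly the fracture square underlying Morel's computation of $\tilde\Z(j)^\bb A$ over fields, now valid for smooth $\Z[1/2]$-schemes by Theorem~\ref{theorem_BL1} and the base-independence results of \S\ref{ss_coherent_BL}. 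I expect this verification — getting the square to be cartesian with its multiplicative structure, as opposed to merely on homotopy groups — to be the main obstacle, since it requires carefully threading the highly-coherent Beilinson--Lichtenbaum comparison through the non-oriented setting; but it follows the template of \S\ref{ss_coherent_BL} and the cited work of the first author, and no genuinely new input beyond Theorem~\ref{thm:key-L} should be needed.

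For part (3), I would use the pullback square \eqref{eq:pullback} of part (2): since $f^*$ is exact and symmetric monoidal, pulling back the square over $\Spec(\Z[1/2])$ and comparing with the square over $X$ reduces the claim to base change for each of the other three corners. For $H\Z_X=s^0(\KGL_X)$ this is Corollary~\ref{cor:conjectures} (equivalently Theorem~\ref{thm:a1-a1cdh}). For $H\Z/(2,\tau)_X$ it follows by reducing $H\Z_X$ modulo $(2,\tau)$, which commutes with $f^*$. The remaining corner $f^*\ul{\K}^W$ is base-change-stable essentially by construction — via $\omega^{\infty,\gr}_\otimes$ it is built from the graded presheaf $R\Gamma_\Nis(-,I^\star)[\star]$, and one must check that $L_\sub{cdh}$-locally, or rather Nisnevich-locally on smooth $B$-schemes, the powers of the fundamental ideal sheaf are insensitive to the base; this is where Theorem~\ref{thm:key-L} enters, giving $\bb A^1$-invariance and good descent for $R\Gamma_\cdh(-,\L_0)$ and hence (by the rank filtration) for the $R\Gamma_\cdh(-,I^j)$, which then forces the comparison map $f^*\ul{\K}^W_{\Z[1/2]}\to\ul{\K}^W_X$ to be an equivalence by checking on stalks exactly as in the proof of Theorem~\ref{thm:base-indep} / Corollary~\ref{cor:A1-cdh-L}. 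Assembling these three base-change equivalences along the cartesian square \eqref{eq:pullback} yields that $f^*H\tilde\Z_{\Z[1/2]}\to H\tilde\Z_X$ is an equivalence, completing the proof.
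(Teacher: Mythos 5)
Your argument for part (1) leans on the parenthetical claim that ``the formation of $\tau_{\le 0}^\eff(1)$ commutes with base change because $f^*$ is symmetric monoidal and preserves $\SH^\eff$ and $\SH^\veff$'' --- but that commutation is precisely part (3) of the theorem and does not follow formally (pullback preserves the connective part of the effective $t$-structure, not the truncation). Fortunately (1) does not need it: $H\tilde{\Z}_{\Z[1/2]}=\pi_0^{\eff}(1_{\Z[1/2]})$ is very effective by construction and $f^*$ preserves very effective objects, so the detour through residue fields and \cite[Proposition 4]{bachmann-very-effective} is unnecessary. This part is repairable.

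The genuine gap is in part (2) for a general base $X$, and it makes the whole scheme circular. Your verification of cartesianness ``on graded pieces over smooth $\Z[1/2]$-schemes'' only re-derives the base case $X=\Spec(\Z[1/2])$, which is prior work (the paper simply cites \cite[Definition 4.1, Corollary 4.9]{bachmann-etaZ} for it); to pass from the cartesian square over $\Z[1/2]$ to the square over $X$ with corner $H\tilde{\Z}_X$ you need exactly the base-change equivalence of part (3), which you then propose to deduce from part (2). Moreover the graded-piece reduction is problematic in itself: $H\tilde{\Z}$ is not an oriented theory (Remark~\ref{rem:no-orientations}), so the $\omega^{\infty,\gr}_{\otimes}$/$H$ formalism of Proposition~\ref{prop:P1-spectrum-from-graded} does not apply to it; you also assert the wrong corner as the pullback on graded pieces (it should be the Milnor--Witt piece, not $R\Gamma_{\Nis}(-,I^j)$), and Theorem~\ref{theorem_BL1} concerns oriented motivic cohomology and yields no fracture square involving the powers of the fundamental ideal. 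What is missing is a direct argument for (2) over arbitrary $X$: set $P_X$ to be the pullback of the cospan $f^*\ul{\K}^W \to H\Z/(2,\tau)_X \leftarrow H\Z_X$ (so $P_X\simeq f^*P_{\Z[1/2]}$, all three corners being base-change stable), and show that the unit $1_X\to P_X$ identifies $P_X$ with $\pi_0^{\eff}(1_X)$. The conditions ``$P_X$ very effective'' and ``$\fib(1_X\to P_X)\in\SH(X)^{\veff}[1]$'' are base-change stable and known over $\Z[1/2]$; the remaining condition, $\Map_{\SH(X)}(M_X(Y)[1],P_X)\simeq *$ for $Y\in\Sm_X$, is where the genuinely new input enters, namely Proposition~\ref{prop:determine-KW} (resting on Theorem~\ref{thm:key-L}) describing $\omega^{\infty}$ of the twists of $f^*\ul{\K}^W$ by cdh cohomology of $I^j$, together with Corollary~\ref{corol:low-wts-integral} identifying weight-zero $\bb A^1$-motivic cohomology with $R\Gamma_{\cdh}(-,\Z)$ and the vanishing of cdh cohomology of abelian sheaves in negative degrees. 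You invoke Theorem~\ref{thm:key-L} only for ``base change of $f^*\ul{\K}^W$'', which is not needed (that corner is a pullback by definition); its actual role is in this mapping-space vanishing. With (2) established this way for all $X$, your deduction of (3) is correct.
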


The techniques of the present paper are first used to establish the following description of $f^*\ul{\K}^W \in \SH(X)$:

\begin{proposition} \label{prop:determine-KW}
Let $X \in \Sch_{\bb Z[\tfrac{1}{2}]}^{\qcqs}$ with structure map $f: X \to \Spec(\Z[1/2]).$ 
Then we have a equivalence of presheaves on $\Sm_X$:
\[
R\Gamma_{\cdh}(-, \I^j)[j] \to \omega^{\infty}(\bb T^{\otimes j} \otimes f^*\ul{\K}^W),
\]
which is multiplicative as $j \in \Z$ varies.
\end{proposition}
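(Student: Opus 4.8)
The strategy is to use the motivic Eilenberg--MacLane machinery of \S\ref{ss:omega} together with Lemma~\ref{lem:pullback} and Corollary~\ref{corol_pullback_of_EM} to reduce the claim to a computation of the cdh sheafification of $I^\star$, which we will control via Theorem~\ref{thm:key-L}. Recall that by construction $\ul{\K}^W=H(R\Gamma_{\Nis}(-,I^\star)[\star])$ where $R\Gamma_{\Nis}(-,I^\star)[\star]\in\Gr\opp{Sh}_{\Nis,\bb A^1}(\Sm_{\bb Z[1/2]},\Spt)_\sub{c,pbf}$. Applying Corollary~\ref{corol_pullback_of_EM} (with base $X=\Spec(\bb Z[1/2])$, and pulling back along $f$) gives a natural equivalence of presheaves on $\Sm_X$
\[
\omega^\infty(\bb T^{\otimes j}\otimes f^*\ul{\K}^W)[-2j]\simeq (L_\sub{pbf}L_{\cdh,\bb A^1}L^\sub{sm}(R\Gamma_{\Nis}(-,I^\star)[\star]))^j[-2j],
\]
multiplicatively as $j$ varies. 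So it suffices to identify the right-hand side with $R\Gamma_{\cdh}(-,I^j)$, i.e.\ to show that $L_\sub{pbf}$ is redundant and that cdh-locally left Kan extending $R\Gamma_{\Nis}(-,I^j)$ from $\Sm_{\bb Z[1/2]}$ recovers $R\Gamma_{\cdh}(-,I^j)$ (after first left Kan extending to $\Sch_X^\sub{qcqs}$ and restricting back to $\Sm_X$, which is harmless by Lemma~\ref{lem:lke-b-b'}). The latter point is standard once one knows that $I^j$ is left Kan extended from smooth algebras, which follows since $I^j\subset L_0$ is defined compatibly with base change and $L_0$ is finitary and rigid (Proposition~\ref{prop:L0-basics}), hence left Kan extended from $\opp{SmAlg}_{\bb Z[1/2]}$ by Lemma~\ref{lemma_rigid_implies_lke} (and the same for the kernels $I^j$ via the rank map, which is a map of finitary rigid functors). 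Thus cdh-locally left Kan extending yields $L_\sub{cdh}I^j=R\Gamma_\cdh(-,I^j)$.

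The remaining obstacle, and the genuinely new input, is to show that $R\Gamma_\cdh(-,I^j)[j]$ satisfies $\bb A^1$-invariance and the $\bb P^1$-bundle formula, so that it already lies in the subcategory $\Gr\opp{Sh}_{\cdh,\bb A^1}(\Sch_X^\sub{qcqs},\Spt)_\sub{c,pbf}$ and the functor $L_\sub{pbf}L_{\cdh,\bb A^1}$ acts as the identity. For $j\le 0$ we have $I^j=L_0$, and this is exactly Theorem~\ref{thm:key-L}. For $j>0$ I would argue by descending induction using the short exact sequences of presheaves of abelian groups $0\to I^{j+1}\to I^j\to \bar I^j\to 0$, where $\bar I^j:=I^j/I^{j+1}$ is the $j$-th graded piece of the $I$-adic filtration; by the Milnor conjectures (Voevodsky, as in \cite{bachmann-etaZ}), cdh-locally $\bar I^j$ is identified with the mod-$2$ \'etale cohomology sheaf $\mathcal{H}^j_\et(\bb F_2)$, whose $j$-th shift $R\Gamma_\cdh(-,\mathcal H^j_\et(\bb F_2))[j]$ is (a summand of) the cdh-sheafification of mod-$2$ \'etale motivic cohomology and thus satisfies $\bb A^1$-invariance and the $\bb P^1$-bundle formula by the results of \S\ref{sec:cdh-mot} (combining Corollary~\ref{cor:singular_bl}, Corollary~\ref{cor:uncond_fields}, and Corollary~\ref{corol:unconditional-pn}, all of which apply unconditionally over $\bb Z[1/2]$ since the key hypothesis is vacuous). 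Since for fixed $X$ of finite valuative dimension the $I$-adic filtration on $R\Gamma_\cdh(-,L_0)$ is eventually constant in each cdh-cohomological degree (the tower $R\Gamma_\cdh(-,I^j)$ has $R\Gamma_\cdh(-,I^j)(Y)$ supported in cdh-degrees $\le\dim Y$ and the graded pieces vanish in low degrees for $j\gg0$ by the Milnor-conjecture identification and coconnectivity of mod-$2$ \'etale cohomology on valuation rings), one deduces $\bb A^1$-invariance and the $\bb P^1$-bundle formula for each $R\Gamma_\cdh(-,I^j)[j]$ by a finite induction; finitariness then extends this to all qcqs schemes.

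Assembling these: $R\Gamma_\cdh(-,I^\star)[\star]$ together with its first Chern class $c$ defines an object of $\Gr\opp{Sh}_{\cdh,\bb A^1}(\Sch_X^\sub{qcqs},\Spt)_\sub{c,pbf}$, and by the universal property of $L_\sub{pbf}L_{\cdh,\bb A^1}L^\sub{sm}$ (and the compatibility of the various first Chern classes, which holds since $c$ on $\P^1$ is pinned down by the map from $\Sigma^\infty\P^1$ in all three theories) we get the identification
\[
(L_\sub{pbf}L_{\cdh,\bb A^1}L^\sub{sm}(R\Gamma_{\Nis}(-,I^\star)[\star]))^j\simeq R\Gamma_\cdh(-,I^j)[j]
\]
as multiplicative graded presheaves. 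Combined with the first displayed equivalence this gives the proposition. The one point requiring care is to make sure the multiplicative structure survives: this is handled because $L_\sub{pbf}$, $L_{\cdh,\bb A^1}$, $L^\sub{sm}$, and $\omega^{\infty,\gr}_\otimes$ are all (lax) symmetric monoidal, and the identification $L_\sub{cdh}I^j=R\Gamma_\cdh(-,I^j)$ is compatible with the ring structure of $L_0$ since cdh sheafification is lax symmetric monoidal.
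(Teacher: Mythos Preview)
Your overall architecture is right and matches the paper's proof: reduce via Corollary~\ref{corol_pullback_of_EM} to showing that $R\Gamma_\cdh(-,\I^j)$ is $\bb A^1$-invariant and satisfies the $\bb P^1$-bundle formula; establish that $\I^j$ is finitary, rigid, hence left Kan extended from smooth $\bb Z[1/2]$-algebras; and control the induction using the cofibre sequences
\[
R\Gamma_\cdh(-,\I^{j+1}) \To R\Gamma_\cdh(-,\I^j) \To R\Gamma_\cdh(-,\scr H^j_\et(\bb F_2)),
\]
identifying the last term with the cofibre of $\tau$ on mod-$2$ cdh-motivic cohomology (so Corollaries~\ref{cor:singular_bl}, \ref{cor:uncond_fields}, \ref{corol:unconditional-pn} apply unconditionally over $\bb Z[1/2]$).

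The gap is in the induction direction. Your ``descending induction'' needs a base case at large $j$, and your stabilisation claim --- that the $\I$-adic filtration on $R\Gamma_\cdh(-,\L_0)$ is eventually constant in each cdh degree --- is false. Over $V=\bb R$ one has $\L_0(\bb R)=\bb Z$ with $\I=2\bb Z$, so $\I^j(\bb R)=2^j\bb Z\ne0$ for all $j$; likewise $\bar\I^j(\bb R)\cong H^j_\et(\bb R,\bb F_2)=\bb F_2$ never vanishes. So there is no vanishing at infinity to anchor a descending induction. The paper instead runs the induction \emph{upward}: the base case $\I^0=\L_0$ is Theorem~\ref{thm:key-L}, and the displayed cofibre sequence shows that if $R\Gamma_\cdh(-,\I^j)$ and $R\Gamma_\cdh(-,\scr H^j_\et(\bb F_2))$ are $\bb A^1$-invariant (resp.\ satisfy the $\bb P^1$-bundle formula), then so is $R\Gamma_\cdh(-,\I^{j+1})$. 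This ascending induction reaches every $j\ge0$ in finitely many steps with no convergence issue. Once you reverse the direction, your proof is complete and coincides with the paper's.
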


\begin{proof} The comparison map is produced in a completely analogous fashion to the map $\Z(j)^{\cdh} \rightarrow \Z(j)^{\bb A,\cdh}$. In particular, it is multiplicative. Just as in the beginning of the proof of Theorem~\ref{thm:comparison}, it suffices to prove that $R\Gamma_{\cdh}(-, \I^j)$ is  finitary, cdh-locally left Kan extended from $\Sm_{\Z[1/2]}$, is $\bb A^1$-invariant and satisfies the $\bb P^1$-bundle formula. 

First, note that the map of functors $\L_0 \rightarrow \Z/2$ is a morphism between finitary, rigid presheaves; this is obvious for the target and Proposition~\ref{prop:L0-basics} for the domain. Therefore, the same is true for the kernel and its powers. We conclude from Lemma~\ref{lemma_rigid_implies_lke} that $\I^j$ is left Kan extended from smooth $\Z[1/2]$-algebras for all $j \in \Z$. This implies that $R\Gamma_{\cdh}(-, \I^j)$ is finitary (by Proposition~\ref{prop:finitary_conditions}(2)) and cdh-locally left Kan extended from $\Sm_{\Z[1/2]}$ as desired.

Next we prove $\A^1$-invariance. We have have cofiber sequences \[ R\Gamma_\Nis(-, \I^{j+1})|_{\Sm_{\Z[1/2]}} \to R\Gamma_\Nis(-,\I^{j})|_{\Sm_{\Z[1/2]}} \to R\Gamma_{\Nis}(-, \scr H_{\et}^{j}(\bb F_2)). \] Here, the first map is induced by the inclusion of presheaves $\I^{j+1} \rightarrow \I^j$ and the second map is obtained from the comparison map of~\eqref{eq:s-explicit}; see also \cite[Theorem 2.1]{bachmann-etaZ} for a description of this map.

Left Kan extending and cdh sheafifying we get cofiber sequences
\[
R\Gamma_\cdh(-, \I^{j+1})[j]|_{\Sm_{\Z[1/2]}} \to R\Gamma_\cdh(-,\I^{j})[j]|_{\Sm_{\Z[1/2]}} \to R\Gamma_\cdh(-, \scr H_{\et}^{j}(\bb F_2))[j]. 
\]
Via Corollary~\ref{cor:singular_bl} the last term identifies (up to a shift) with the cofiber $\bb F_2(j-1)^{\cdh} \xrightarrow{\cdot \tau} \bb F_2(j)^{\cdh}$ (in other words, it is described as the last term in the cdh-sheafification of the he cofiber sequence~\eqref{eq:tr-cofib} when $p =2$). Therefore, by Theorem~\ref{thm:A1inv}, it is $\A^1$-invariant on schemes on which $2$ is invertible (since $\Hyp(\Z[1/2],2, j)$ holds for all $j$ vacuously by Remark~\ref{rmk:Hyp-vacuous}). Thus the result follows, by induction, from the case $\I^0 = \L_0$, which is exactly Theorem \ref{thm:key-L}.

It remains to establish the $\P^1$-bundle formula, which is done in the same way by induction, Theorem \ref{thm:key-L}, and Corollary~\ref{corol:unconditional-pn}. This is left to the reader. 
\end{proof}

%

\begin{proof}[Proof of Theorem~\ref{thm:hz-tilde-main}] By construction $H\tilde\Z_{\Z[1/2]}$ is very effective and therefore its pullback to any $X$ is very effective; this proves (1). 

Next, let us remark that the square in (2) is cartesian in $\CAlg(\SH(\Z[1/2]))$ by \cite[Definition 4.1, Corollary 4.9]{bachmann-etaZ}. For any $X$, let us write $P_X$ to be the pullback of the cospan $f^*\ul{\K}^W \rightarrow H\Z_X/(2, \tau) \leftarrow H \Z/2_X$ in $\SH(X)$. We therefore have an induced unit map $\eta_X:1_X \rightarrow P_X$; we shall show that this induces an equivalence $\pi_0^{\eff}(1_X)  = H\tilde\Z \rightarrow P_X$. For this, it will suffice to show that $P_X \in \SH(S)^\veff$, $\fib(u_X) \in \SH(X)^\veff[1]$ and, for every $X \in \Sm_S$, $\Map_{\SH(X)}(M_X(Y)[1], P_S) = \ast$. The first two properties are stable under base change, so we can check them when $X=\Spec(\Z[1/2])$, where they are already known. For the third property,  we have a long exact sequence
\[
\cdots \rightarrow  \pi_i(\Map_{\SH(X)}(M_X(Y)[1], P_X)) \rightarrow H_{\bb A}^{1-i}(X, \Z(0)) \oplus H_{\cdh}^{1-i}(X, L_0) \rightarrow H^{1-i}_{\cdh}(-,\bb F_2) \rightarrow \cdots,
\]
after applying the identification of Proposition \ref{prop:determine-KW}. Using Corollary~\ref{corol:low-wts-integral} to identify the weight zero $\bb A^1$-motivic cohomology with cdh cohomology of $\bb Z$, the result then follows from the fact that there is no negative cdh cohomology of an abelian sheaf.

Part (3) then follows immediately from (2) since we have proved that $H\bb Z$ is stable under pullbacks in Theorem~\ref{thm:a1-a1cdh}(2).
\end{proof}

\subsubsection{The very effective cover of $\KO_X$} We now explain $\KO$, the motivic spectrum representing hermitian $K$-theory. For every qcqs scheme $X$,  there is the motivic spectrum \[ \KO_X \in \CAlg(\SH(X)), \] representing homotopy invariant Hermitian $K$ theory; in the literature it is also sometimes called (homotopy) Hermitian $K$-theory.
This was defined in full generality, as a periodic $\bb E_{\infty}$-algebra in \cite[\S 8.1]{calmes2024motivic}, but we will only be interested in the case where $X \in \Sch^\sub{qcqs}_{\Z[1/2]}$ for which already see \cite[\S 7]{HoyoisJelisiejewNardinYakerson2022} and \cite[\S 5]{Carmody2021} for a highly structured statement. This is an absolute motivic spectrum by \cite[Proposition 1.0.7]{calmes2024motivic} or \cite[\S 6]{Carmody2021}. By construction, we have a natural equivalence 
\[
\rm GWH(X) \simeq \omega^{\infty}KO_X(X)
\] for any qcqs scheme $X$, where $\rm GWH$ is the $\A^1$-localisation of the non-connective Hermitian $K$-theory $\rm GW$ of \cite[\S 2.4]{calmes2024motivic} (where this theory is denoted by $\bb G\bb W$).

The following establishes a version of Theorem~\ref{thm:a1-a1cdh}(1) for Hermitian $K$ theory for $j =0$. We will establish a more general result for other effective covers in the next section.

\begin{theorem}\label{thm:base-change-ko-veff} Let $X \in \Sch_{\bb Z[\tfrac{1}{2}]}^{\qcqs}$ with structure map $f: X \to \Spec(\Z[1/2]).$ Then, the canonical map
\[
f^*\tilde{\Fil}_\sub{slice}^0\KO_{\Z[1/2]} \rightarrow \tilde{\Fil}_\sub{slice}^0\KO_X 
\]
is an equivalence. 
\end{theorem}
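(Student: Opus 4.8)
The plan is to mimic exactly the structure of the proof of Theorem~\ref{thm:a1-a1cdh}(1), using $\tilde{\Fil}_\sub{slice}^{a,b}$ in place of $\Fil_\sub{slice}^j$, and feeding in the two already-established base-change facts for the very effective theories that appear as ``graded pieces'' at the bottom of the generalised slice filtration of $\KO$, namely $H\tilde{\Z}$ (Theorem~\ref{thm:hz-tilde-main}(3)) and $H\bb Z$ (Corollary~\ref{cor:conjectures}). First I would recall, as in \S\ref{ss:gen-slices}, that $f^*$ preserves the subcategories $\SH(-)^\veff \otimes \bb T^{\otimes b}[a-2b]$ (since $f^*$ is colimit-preserving, symmetric monoidal, and sends $M_X(Y)$ to $M_Y(Y\times_X Z)$), so there are canonical comparison maps $f^*\tilde{\Fil}_\sub{slice}^{a,b}E_{\bb Z[1/2]} \to \tilde{\Fil}_\sub{slice}^{a,b}f^*E_{\bb Z[1/2]}$ for any absolute motivic spectrum $E$; applied to $E = \KO$ (which is absolute) with $(a,b) = (0,0)$ this is the map in question. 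So the statement is really about showing the ``quotients'' $\tilde{\Fil}_\sub{slice}^{<0}\KO := \mathrm{cofib}(\tilde{\Fil}_\sub{slice}^0\KO \to \KO)$ are controlled after pullback.

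The key input is Bott periodicity for $\KO$: the generalised slice layers of $\KO_{\Z[1/2]}$ are known. Specifically, as established over fields by the first author \cite{bachmann-very-effective} and recorded in the literature on $\KO$, one has $\tilde{\Fil}_\sub{slice}^{2n,n}\KO \simeq \bb T^{\otimes n}[?]\otimes \tilde{\Fil}_\sub{slice}^0\KO$ and the associated graded of the generalised (half-integer) slice filtration $\tilde{\Fil}_\sub{slice}^{m/2}$ is built, $4$-periodically, out of $H\tilde{\Z}$ and $H\bb Z$ (the ``$\eta$-complete'' and ``$\eta$-torsion'' parts). I would first verify, purely over $\Spec(\bb Z[1/2])$, that each generalised-slice graded piece of $\KO_{\Z[1/2]}$ is a Tate twist/shift of $H\tilde{\Z}_{\Z[1/2]}$ or $H\bb Z_{\Z[1/2]}$, using the half-slice refinement recalled in \S\ref{ss:gen-slices}; this is where I would cite \cite{bachmann-very-effective} and the identification of $H\tilde{\Z}$ with the zeroth generalised slice. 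Then, exactly as in the proof of Theorem~\ref{thm:a1-a1cdh}(1): applying $\tilde{\Fil}_\sub{slice}^0 f^*$ to the cofibre sequence $\tilde{\Fil}_\sub{slice}^0\KO_{\Z[1/2]} \to \KO_{\Z[1/2]} \to \tilde{\Fil}_\sub{slice}^{<0}\KO_{\Z[1/2]}$, and using that $f^*$ preserves very effectivity so that $\tilde{\Fil}_\sub{slice}^0 f^* \tilde{\Fil}_\sub{slice}^0\KO_{\Z[1/2]} \simeq f^*\tilde{\Fil}_\sub{slice}^0\KO_{\Z[1/2]}$, one reduces to showing $\tilde{\Fil}_\sub{slice}^0 f^* \tilde{\Fil}_\sub{slice}^{<0}\KO_{\Z[1/2]} \simeq 0$. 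By exhaustiveness of the generalised slice filtration (which holds for the same reason as in Remark~\ref{rem:exhaustive}) it is enough to show that $\tilde{\Fil}_\sub{slice}^0 f^*$ kills each half-slice graded piece of $\tilde{\Fil}_\sub{slice}^{<0}\KO_{\Z[1/2]}$, i.e.\ each $\bb T^{\otimes m}[\epsilon] \otimes E$ with $m < 0$ and $E \in \{H\tilde{\Z}_{\Z[1/2]}, H\bb Z_{\Z[1/2]}\}$. For this I compute $\tilde{\Fil}_\sub{slice}^0 f^*(\bb T^{\otimes m}[\epsilon] \otimes E) \simeq \bb T^{\otimes m}[\epsilon] \otimes \tilde{\Fil}_\sub{slice}^{-2m, -m}f^*E$ (using the twist formula for $\tilde{\Fil}_\sub{slice}^{a,b}$ analogous to \eqref{eq:tate-slice}), and $f^*E$ is $H\tilde{\Z}_X$ or $H\bb Z_X$ by Theorem~\ref{thm:hz-tilde-main}(3) resp.\ Corollary~\ref{cor:conjectures}; since both of these are very effective (indeed $\le 0$ for the effective $t$-structure), $\tilde{\Fil}_\sub{slice}^{-2m,-m}$ of them vanishes for $m<0$ by the ``co-connectivity'' argument, which I would phrase as: $\map_{\SH(X)}(M_X(Y)\otimes \bb T^{\otimes -m}[\epsilon'], H\tilde{\Z}_X)$ vanishes for $-m\ge 1$ because $H\tilde{\Z}$ and $H\bb Z$ are coeffective (their $\tilde{\Fil}_\sub{slice}^1$, equivalently $\Fil^1_\sub{slice}$, vanish). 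That last vanishing for $H\bb Z_X$ is exactly Corollary~\ref{prop:Fil1-vanishing}, and for $H\tilde{\Z}_X$ it follows from the pullback square of Theorem~\ref{thm:hz-tilde-main}(2) together with the known co-effectivity of $\ul{\K}^W$ (each term $R\Gamma_\Nis(-,\I^j)$ with $j\ge1$ has no contribution in the relevant range — alternatively reduce to $H\bb Z$ and $H\bb Z/(2,\tau)$ via that square, both of which are co-effective).

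The main obstacle I anticipate is not the formal bootstrapping — that is a direct transcription of Theorem~\ref{thm:a1-a1cdh}(1) — but rather pinning down precisely the generalised-slice decomposition of $\KO_{\Z[1/2]}$ and, in particular, the co-effectivity of $H\tilde{\Z}_X$ (equivalently, that $\tilde{\Fil}_\sub{slice}^1 H\tilde{\Z}_X = 0$, or that $H\tilde{\Z}_X \in \SH(X)^\eff$ is concentrated in generalised-slice-weight $0$). Over a field this is \cite{bachmann-very-effective}, but to run the argument over an arbitrary $\bb Z[1/2]$-base one wants it there too; the cleanest route is to exploit the pullback square~\eqref{eq:pullback} of Theorem~\ref{thm:hz-tilde-main}(2), reducing the co-effectivity of $H\tilde{\Z}_X$ to that of $H\bb Z_X$ (Corollary~\ref{prop:Fil1-vanishing}), $f^*\ul{\K}^W$, and $H\bb Z/(2,\tau)_X$. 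The co-effectivity of $f^*\ul{\K}^W$ in turn follows from Proposition~\ref{prop:determine-KW} together with the vanishing of negative cdh cohomology and the fact that $R\Gamma_\cdh(-,\I^j)$ for $j\ge 1$ fits in a cofibre sequence whose third term is a Tate twist of $\bb F_2(\ast)^\cdh$, whose co-effectivity is part of the main comparison theorem. I would also need to double-check the half-slice refinement is compatible enough with $f^*$; since $f^*$ preserves each $\SH(-)^\veff\otimes\bb T^{\otimes b}[a-2b]$ separately, the half-slice comparison maps exist and the vanishing argument goes through graded-piece by graded-piece, so no new compatibility is actually required.
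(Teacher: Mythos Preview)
Your approach is genuinely different from the paper's, and while plausible in outline, it takes a longer and more delicate route than necessary.

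The paper's proof is very short: it invokes a pre-established criterion \cite[Theorem~2.6]{Bachmann2022}, which says that the very effective cover of $\KO$ is stable under pullback provided two specific conditions hold, namely $\Fil^1_\sub{slice} f^* H\bb Z_{\bb Z[1/2]} = 0$ and $\Map_{\SH(X)}(M(Y)[2], f^*\HW_{\bb Z[1/2]}) = 0$. The first is exactly Corollary~\ref{prop:Fil1-vanishing}; the second follows from Proposition~\ref{prop:determine-KW} (identifying $f^*\ul{\K}^W$ explicitly) together with the fact that $\HW$ is the $\eta$-periodization of $\ul{\K}^W$ and $\eta$-periodization does not raise connectivity. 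That criterion internally encodes the $\eta$-arithmetic fracture of $\KO$ into $\KGL$- and $\KW$-parts, so the two checks are really all that remain.

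Your approach instead tries to mimic Theorem~\ref{thm:a1-a1cdh}(1) directly. Two points to flag. First, the transcription is not as clean as you suggest: $\Fil^0_\sub{slice}$ is exact (it comes from a stable subcategory), but $\tilde{\Fil}_\sub{slice}^0$ is not, so you cannot simply ``apply $\tilde{\Fil}_\sub{slice}^0 f^*$ to the cofibre sequence'' and obtain a cofibre sequence. You must instead use Remark~\ref{rem:cofiber} at every step of your exhaustive-filtration argument, checking $\Omega^\infty\omega^\infty(-) \simeq *$ on each successive cofibre, which is more delicate than the $\KGL$ case. Second, and more seriously, you are invoking the full generalised-slice decomposition of $\KO_{\bb Z[1/2]}$ (your ``Tate twist/shift of $H\tilde{\bb Z}$ or $H\bb Z$'' claim). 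In the paper this is the content of Steps~1--4 of Theorem~\ref{thm:ko}, proved \emph{after} Theorem~\ref{thm:base-change-ko-veff}; the reference \cite{bachmann-very-effective} you cite only covers fields. While Steps~1--4 do not themselves use Theorem~\ref{thm:base-change-ko-veff} and could in principle be moved earlier, your approach would force this reordering and require substantially more of the $\KO$ structure up front. The paper's approach avoids this entirely by outsourcing that structure to the criterion in \cite{Bachmann2022}.
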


\begin{proof} According to \cite[Theorem 2.6]{Bachmann2022}, we need only show that $\Fil^1_\sub{slice}f^*H\bb Z_{\bb Z[1/2]} = 0$ and that 
\[
\Map_{\SH(X)}(M(Y)[2], f^*\HW_{\Z[1/2]}) = 0.
\] Here, the spectrum $\HW_{\Z[1/2]}$ can be identified with the $\eta$-periodization of $f^*\ul{\K}^W$ by \cite[Lemma 3.9]{bachmann-etaZ} where $\eta$ is the motivic Hopf map. The first condition was already verified in Corollary~\ref{prop:Fil1-vanishing}, while the second condition follows from the description of $f^*\ul{\K}^W$ in Proposition~\ref{prop:determine-KW}, noting that the $\eta$-periodization (which is given by filtered colimits) does not increase connectivity.

\end{proof}

\subsection{The motivic filtration on $\KO$}\label{sec:motfilt-ko}

We finally come to our main results about Hermitian $K$-theory, a description of the generalised slice filtration of $\KO_X$ for any qcqs scheme $X$ over $\Z[1/2]$.

 \begin{remark}[$4$-periodicity] For any scheme $X$, we have the \emph{Hermitian Bott element} defined as in \cite[Lemma 6.4]{HoyoisJelisiejewNardinYakerson2022} which is of the form $\bb T^{\otimes 4}_X \rightarrow \KO_X$. The induced map $\bb T^{\otimes 4}_X \otimes \KO_X \to \KO_X$ is an equivalence by the proof of [op. cit.]; this is analogous to Bott periodicity for $\KGL_X$. Just as in~\eqref{eq:tate-slice} (see \cite[Lemma 8]{bachmann-very-effective} for a proof), we have
 \[
 \tilde{\Fil}_\sub{slice}^{n+1}(\bb T_X \otimes E) \simeq \bb T_X \otimes \tilde{\Fil}_\sub{slice}^{n}E,
 \]
 where $n$ is possibly a half integer.
 Therefore, the generalised slice filtration on Hermitian $K$-theory, and its refinement, also enjoys periodicity, analogous to the discussion for $\KGL$ in~\S\ref{subsub:slices-kgl}. In particular, to describe all of its slices we need only describe the layers in the following filtration
 \[
  \tilde{\Fil}_\sub{slice}^{0}\KO_X \leftarrow   \tilde{\Fil}_\sub{slice}^{1/2}\KO_X \leftarrow  \tilde{\Fil}_\sub{slice}^{1}\KO_X \cdots  \leftarrow \tilde{\Fil}_\sub{slice}^{3+1/2}\KO_X \leftarrow  \tilde{\Fil}_\sub{slice}^{4}\KO_X.
 \]
 \end{remark}

\begin{theorem}\label{thm:ko}
Let $X \in \Sch^{\qcqs}_{\Z[1/2]}$ and $n \in \Z$.
There are canonical equivalences
\begin{align*}
  \tilde{\Fil}_\sub{slice}^{4n}(\KO_X)/\tilde{\Fil}_\sub{slice}^{4n+1/2}(\KO_X) &\wequi \bb T^{\otimes 4n}\otimes H\tilde{\Z}_X \\
  \tilde{\Fil}_\sub{slice}^{4n+1/2}(\KO_X)/\tilde{\Fil}_\sub{slice}^{4n+1}(\KO_X) &\wequi \bb T^{\otimes 4n} \otimes H\Z/2_X[1] \\
  \tilde{\Fil}_\sub{slice}^{4n+1}(\KO_X)/\tilde{\Fil}_\sub{slice}^{4n+2}(\KO_X) &\wequi \bb T^{\otimes 4n+1}\otimes H\Z/2_X \\
  \tilde{\Fil}_\sub{slice}^{4n+2}(\KO_X)/\tilde{\Fil}_\sub{slice}^{4n+3}(\KO_X) &\wequi \bb T^{\otimes 4n+2}\otimes H\Z_X. \\
   \tilde{\Fil}_\sub{slice}^{4n+3}(\KO_X)/\tilde{\Fil}_\sub{slice}^{4n+4}(\KO_X) &\wequi 0. \\
\end{align*}
\end{theorem}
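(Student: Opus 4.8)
The strategy is to reduce Theorem~\ref{thm:ko} to the base $\Spec(\bb Z[\tfrac12])$ and then to known results over that base, exactly as was done for $\KGL$ via Theorem~\ref{thm:a1-a1cdh}. The first step is to establish the base change statement: for any $X\in\Sch^\qcqs_{\bb Z[1/2]}$ with structure map $f:X\to\Spec(\bb Z[\tfrac12])$, the canonical maps $f^*\tilde{\Fil}_\sub{slice}^{a,b}\KO_{\bb Z[1/2]}\to \tilde{\Fil}_\sub{slice}^{a,b}\KO_X$ are equivalences for all the relevant $(a,b)$ (integer and half-integer multiples of $(2,1)$). Using $4$-periodicity of $\KO$ (which is compatible with the generalised slice filtration as recalled just before the theorem) it suffices to treat $\tilde{\Fil}_\sub{slice}^{0}$ and $\tilde{\Fil}_\sub{slice}^{1/2}=\tilde{\Fil}_\sub{slice}^{1,0}$. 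The case of $\tilde{\Fil}_\sub{slice}^0$ is precisely Theorem~\ref{thm:base-change-ko-veff}. For $\tilde{\Fil}_\sub{slice}^{1/2}=\tilde{\Fil}_\sub{slice}^{1,0}$, I would argue as in the proof of Theorem~\ref{thm:a1-a1cdh}(1): apply $\tilde{\Fil}_\sub{slice}^{1,0}f^*$ to the cofibre sequence $\tilde{\Fil}_\sub{slice}^{1,0}\KO_{\bb Z[1/2]}\to \KO_{\bb Z[1/2]}\to C$ and reduce to showing $\tilde{\Fil}_\sub{slice}^{1,0}f^*C\simeq 0$; here $C$ is built (using $4$-periodicity) from the ``quotient'' layers $\tilde{\Fil}_\sub{slice}^{<1/2}\KO$, whose generalised slices over $\bb Z[\tfrac12]$ are shifts/twists of $H\tilde{\bb Z}$, $H\bb Z/2$, $H\bb Z$, which pull back correctly by Theorems~\ref{thm:hz-tilde-main}(3), \ref{thm:a1-a1cdh}(2), and Remark~\ref{rem:cofiber} (one needs the relevant presheaves of pointed spaces to be contractible, which follows from connectivity estimates via Proposition~\ref{prop:detecting-veff}(1) applied after pullback to fields as in Remark~\ref{rem:detection}). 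One should carry this out by induction down the filtration, using exhaustiveness of the generalised slice filtration.

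Granting base change, the proof of the five displayed equivalences reduces to the case $X=\Spec(\bb Z[\tfrac12])$: pulling back along $f$ the equivalences over $\bb Z[\tfrac12]$ and using base change for each $\tilde{\Fil}_\sub{slice}^{\star}$ as well as the base change identities $f^*H\tilde{\bb Z}_{\bb Z[1/2]}\simeq H\tilde{\bb Z}_X$ (Theorem~\ref{thm:hz-tilde-main}(3)), $f^*H\bb Z_{\bb Z[1/2]}\simeq H\bb Z_X$ (Theorem~\ref{thm:a1-a1cdh}(2)), and hence $f^*H\bb Z/2_{\bb Z[1/2]}\simeq H\bb Z/2_X$, yields the result for general $X$. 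So the remaining content is the computation over $\Spec(\bb Z[\tfrac12])$ itself. Here I would follow the strategy of the first author's computation over fields in \cite{bachmann-very-effective}, in the form upgraded and globalised there to $\bb Z[\tfrac12]$-bases: the pullback square of Theorem~\ref{thm:hz-tilde-main}(2) expresses $H\tilde{\bb Z}$ as the fibre product of $H\bb Z$ and $\ul{\K}^W$ over $H\bb Z/(2,\tau)$, and one knows the generalised slice filtration of $\KO$ over $\bb Z[\tfrac12]$ is governed by the ``$C_2$-equivariant Bott'' pattern, with the $\eta$-inverted part $\KW$ contributing $\ul{\K}^W$-type layers and the non-$\eta$-inverted part contributing $H\bb Z$ and $H\bb Z/2$ layers; the pattern $(H\tilde{\bb Z},\ H\bb Z/2[1],\ H\bb Z/2,\ H\bb Z,\ 0)$ is exactly the classical ``$\rho$-Bott'' pattern for real $K$-theory refined motivically. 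Concretely one can deduce this by combining: the known slice description over perfect fields from \cite{RoendigsOestvaer2016} and \cite{bachmann-very-effective}, and the fact that all objects in sight are absolute motivic spectra stable under pullback, so that the equivalences checked over prime fields (or over $\bb Q$ and $\bb F_\ell$, $\ell$ odd) propagate to $\bb Z[\tfrac12]$ by Proposition~\ref{prop:conservative} and conservativity of pullback to fields.

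\textbf{Main obstacle.} The genuinely new work is the base change step for the generalised slice covers $\tilde{\Fil}_\sub{slice}^{a,b}\KO$, and within it the verification that the relevant ``error terms'' $\tilde{\Fil}_\sub{slice}^{a,b}f^*C$ vanish. The subtlety — flagged in Remark~\ref{rem:detection} — is that $\SH(X)^\veff\otimes\bb T^{\otimes b}[a-2b]$ is not a stable subcategory, so $\tilde{\Fil}_\sub{slice}^{a,b}$ is not exact and one cannot argue purely by long exact sequences; instead one must use Remark~\ref{rem:cofiber}, i.e., reduce to showing certain presheaves of \emph{pointed spaces} on $\Sm_X$ are contractible. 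This in turn requires connectivity control on the generalised slices of $\KO_{\bb Z[1/2]}$ pulled back to $X$, which is not automatic for arbitrary qcqs $X$ of possibly infinite Krull dimension; the route is to check effectivity/very-effectivity after pullback to fields (Remark~\ref{rem:detection}, i.e. \cite[Proposition B.3]{BachmannHoyois2021}), reducing the connectivity bound to the case of fields where Proposition~\ref{prop:detecting-veff}(1) and \cite{bachmann-very-effective} apply. Once the contractibility of these mapping presheaves is secured, the base change equivalences and hence Theorem~\ref{thm:ko} follow formally as above. The fifth equivalence, vanishing of the weight-$(4n+3)$ layer, is then immediate: after reduction to $\bb Z[\tfrac12]$ it says $\tilde{\Fil}_\sub{slice}^{3}\KO/\tilde{\Fil}_\sub{slice}^{4}\KO\simeq 0$, which is part of the classical $3$-fold connective Bott pattern and already present in \cite{bachmann-very-effective}.
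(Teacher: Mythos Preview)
Your overall architecture (compute over $\bb Z[\tfrac12]$, then bootstrap via base change for the generalised slice covers and for $H\tilde{\Z}$, $H\Z$, $H\Z/2$) is exactly the paper's. But the two ingredients are presented in the wrong order and the genuinely hard one is left as a black box. Your base change argument already \emph{assumes} knowledge of the layers over $\bb Z[\tfrac12]$ (you describe $C$ as ``built from the quotient layers\ldots whose generalised slices over $\bb Z[\tfrac12]$ are shifts/twists of $H\tilde{\Z}$, $H\Z/2$, $H\Z$''), so the $\bb Z[\tfrac12]$ computation must come first, not last. And for that computation, ``conservativity of pullback to fields'' is not a substitute for constructing the comparison maps: conservativity checks that a given map is an equivalence, it does not produce one. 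The paper's actual mechanism is the explicit $5$-step filtration $F_5\to\cdots\to F_1\to \tilde{\Fil}_\sub{slice}^0\KO$ constructed in \cite{Bachmann2022} from the framed presheaves on the stacks $\mathrm{Bil}$ and $\mathrm{Alt}$ of symmetric bilinear and alternating forms, together with a separate identification of each $F_i$ with a generalised slice cover (using Remark~\ref{rem:detection} to check very-effectivity over fields). The layer $\tilde{\Fil}_\sub{slice}^{1}\KO/\tilde{\Fil}_\sub{slice}^{2}\KO\simeq \bb T\otimes H\Z/2$ is the subtlest: the $F_i$ filtration only gives a two-step tower with graded pieces $H\Z/2[2]$ and $(\bb T\otimes H\Z)/(2,\tau)$, and the paper identifies the extension by constructing a canonical $\eta$-multiplication map $(\tilde{\Fil}_\sub{slice}^{1/2}\KO/\tilde{\Fil}_\sub{slice}^1\KO)\otimes\Gm\to\tilde{\Fil}_\sub{slice}^1\KO/\tilde{\Fil}_\sub{slice}^2\KO$ and checking it is an equivalence over fields. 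None of this is visible from the field case alone.

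For the base change step, your analogy with Theorem~\ref{thm:a1-a1cdh}(1) is misleading: there $\Fil^j_\sub{slice}$ is exact (being the connective cover for a $t$-structure on a stable category), so one can freely pass to cofibres; here $\tilde{\Fil}_\sub{slice}^{a,b}$ is not exact, as you note, so working with the global cofibre $C=\KO/\tilde{\Fil}_\sub{slice}^{1/2}\KO$ forces you to control an infinite tower of layers simultaneously under a functor that does not commute with fibre sequences. The paper avoids this entirely: it starts from the established equivalence $f^*\tilde{\Fil}_\sub{slice}^0\KO_{\bb Z[1/2]}\simeq \tilde{\Fil}_\sub{slice}^0\KO_X$ (Theorem~\ref{thm:base-change-ko-veff}) and descends one layer at a time. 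For instance, for $\alpha=1/2$ one uses the fibre sequence $f^*\tilde{\Fil}_\sub{slice}^{1/2}\KO_{\bb Z[1/2]}\to \tilde{\Fil}_\sub{slice}^0\KO_X\to H\tilde{\Z}_X$, notes that the left term lies in $\SH(X)^\veff[1]$ and that $\Map(M(Y)[1],H\tilde{\Z}_X)=\ast$, and concludes via Remark~\ref{rem:cofiber} that the left term is $\tilde{\Fil}_\sub{slice}^{1/2}\KO_X$. The other $\alpha$'s are handled identically.
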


\begin{remark}[Zero-th generalised slice]\label{rem:zero-gen} In particular we get an equivalence in $\SH(X)$;
\[
\mathrm{cofib}(\tilde{\Fil}_\sub{slice}^{1/2}(\KO_X) \rightarrow \tilde{\Fil}_\sub{slice}^{0}(\KO_X)) \simeq H\tilde{\Z}_X.
\]
This should be compared with the result for $\KGL$ in Corollary~\ref{cor:conjectures}.
\end{remark}

\begin{proof}
We begin with the case $X=\Spec(\Z[1/2])$. The first author, in \cite[above Lemma 2.5]{Bachmann2022} has constructed a $5$-step filtration on $\tilde{\Fil}_\sub{slice}^0 \KO$
\[
F_5 \rightarrow F_4 \rightarrow F_3 \rightarrow F_2 \rightarrow F_1 \rightarrow \tilde{\Fil}_\sub{slice}^0 \KO,
\]
out of which one can extract the following fiber sequences in $\SH(\Z[1/2])$.
\begin{gather*}
  F_1 \to \tilde{\Fil}_\sub{slice}^0 \KO \to H\tilde{\Z} \\
  F_2 \to F_1 \to H\Z/2[1] \\ 
  F_3 \to F_2 \to (\bb T \otimes H\Z)/(2,\tau) \\ 
  F_4 \to F_3 \to  H\Z/2[2] \\ 
  F_5 \to F_4 \to \bb T^{\otimes 2} \otimes H\Z.
\end{gather*}

In brief, the construction is as follows: begin with the framed presheaf on $\Sm_{\Z[1/2]}$ given by the stack $\rm Bil$ of symmetric bilinear forms. The map $\rm Vect \to \rm Bil$ assigning a vector bundle to its associated hyperbolic symmetric bilinear map induces a map on homotopy orbits and group completions $\rm Vect^\sub{gp}_{hC_2} \rightarrow \rm Bil^\sub{gp}$ which is an isomorphism on homotopy sheaves in degrees $1$ and $2$ \cite[Lemma 2.4]{Bachmann2022}. A calculation of the Nisnevich homotopy sheaves of $\rm Vect^\sub{gp}_{hC_2}$ and the $0$-th Nisnevich homotopy sheaf of  $\rm Bil^\sub{gp}$ gives rise to a $4$-step filtration on the framed presheaf $\rm Bil^\sub{gp}$. Via the theory of framed presheaves and using  \cite[Corollary 22]{Hoyois2021}, \cite[Theorem 7.3]{HoyoisJelisiejewNardinYakerson2022}, \cite[Theorem 2.6]{Bachmann2022} we obtain the filtration $F_4 \rightarrow \cdots \rightarrow F_1 \rightarrow \tilde{\Fil}_\sub{slice}^0 \KO$ with the graded pieces as described above. On the other hand, using the stack $\rm Alt$ of alternating forms, we have a map of framed presheaves $\rm Alt \otimes \bb T^{\otimes 2} \rightarrow F_4$ via  \cite[Lemma 2.5]{Bachmann2022} with cofibre $\bb T^{\otimes 2} \otimes H \Z$. We set $F_5$ to be $\rm Alt \otimes \bb T^{\otimes 2}$.

We now proceed in steps. 

\paragraph{Step 1.}
We claim that $F_1 \wequi \tilde{\Fil}_\sub{slice}^{1/2} \KO$. Applying $\tilde{\Fil}_\sub{slice}^{1/2}$ to the fibre sequence $F_1 \to \tilde{\Fil}_\sub{slice}^0 \KO \to H\tilde{\Z}$,  our goal is to prove that $\tilde{\Fil}_\sub{slice}^{1/2}F_1 \to \tilde{\Fil}_\sub{slice}^{1/2}\tilde{\Fil}_\sub{slice}^0 \KO \simeq \tilde{\Fil}_\sub{slice}^{1/2}\KO$ is an equivalence and that $F_1$ is its own $\tilde{\Fil}_\sub{slice}^{1/2}$-cover. For the former, by Remark~\ref{rem:cofiber}, we need to prove that $\Map_{\SH(X)}(M_X(Y)[1], H\tilde{\Z}) \simeq \ast$; this holds by definition of $H\tilde{\Z}$ as $\pi_0^\sub{eff}(1_X)$, hence a discrete object in the $t$-structure on $\SH(X)^{\eff}$. For the latter, by Remark~\ref{rem:detection}, we may verify after pulling back to fields, where it is known by the ``Moreoever'' part of \cite[Theorem 16]{bachmann-very-effective}.

\paragraph{Step 2.}
By the same arguments, we see that $F_2 \wequi \tilde{\Fil}_\sub{slice}^{1} \KO, F_4 \wequi \tilde{\Fil}_\sub{slice}^2 \KO$ and $F_5 \wequi \tilde{\Fil}_\sub{slice}^4 \KO$.
.

\paragraph{Step 3.}
Next, we want to prove that $\tilde{\Fil}_\sub{slice}^1 \KO/\tilde{\Fil}_\sub{slice}^2 \KO \wequi \bb T \otimes H\Z/2$.
Our filtration instead yields a cofiber sequence of the form \[ F_3/F_4 \wequi H\Z/2[2] \to F_2/F_4 \to F_2/F_3 \wequi (\bb T \otimes H\Z)/(2,\tau), \] and we will eventually need to show that this identifies with the cofiber sequence \[ H\Z/2[2] \xrightarrow{\tau} \bb T \otimes H\Z/2 \to (\bb T \otimes H\Z)/(2,\tau). \]
For now, we need only observe that this identification works over fields.
This is true essentially by construction, see \cite[Lemma 2.4]{bachmann-etaZ} and its proof.

\paragraph{Step 4.}
We now prove that $F_2/F_4 \wequi H\Z/2$.
The intuition here is that $\tilde{\Fil}_\sub{slice}^{1/2} \KO /\tilde{\Fil}_\sub{slice}^1 \KO$ and $\tilde{\Fil}_\sub{slice}^2\KO/\tilde{\Fil}_\sub{slice}^1\KO$ should be linked by multiplication by $\eta$, as is the case in the classical orthogonal $K$-theory spectrum, and for $\KO$ over fields.
We thus contemplate the diagram
\begin{equation*}
\begin{tikzcd}
              & \tilde{\Fil}_\sub{slice}^2 \KO \ar[d, "a"] \\
\tilde{\Fil}_\sub{slice}^1\KO \otimes \Gm \ar[d] \ar[r, "\eta"] \ar[ur, dashed] & \tilde{\Fil}_\sub{slice}^1 \KO \ar[d, "b"] \\
\tilde{\Fil}_\sub{slice}^{1/2}\KO \otimes \Gm \ar[r, "\eta"], \ar[ur, dashed] & \tilde{\Fil}_\sub{slice}^{1/2} \KO.
\end{tikzcd}
\end{equation*}
The solid square commutes.
By what we have said, we know $\cof(b) \wequi H\Z/2[1]$ and so $\omega^\infty \bb T^{\otimes -1}[1] \otimes \cof(b) = 0$.
Thus the lower dashed arrow exists uniquely.
A similar argument applies to the upper dashed arrow.
We hence obtain a commutative square involving the dashed arrows.
Taking vertical cofibers in this new square we obtain a canonical map \[ \bb T \otimes H\Z/2 \wequi (\tilde{\Fil}_\sub{slice}^{1/2}\KO/\tilde{\Fil}_\sub{slice}^1\KO) \otimes \Gm \to \tilde{\Fil}_\sub{slice}^1\KO/\tilde{\Fil}_\sub{slice}^2\KO. \]
It remains to prove that this is an equivalence, which we may check on fields, where the result is known \cite{bachmann-very-effective} \NB{actually prove this?}.

\paragraph{Conclusion of the proof.}
We have now proved the result in case $X=\Spec(\Z[1/2])$.
It remains to deal with general $X$ with structure map $f: X \rightarrow \Spec(\Z[1/2])$.
Theorem~\ref{thm:base-change-ko-veff} implies that the very effective slice cover is stable under base change. Hence it remains to prove that the map $f^*\tilde{\Fil}_\sub{slice}^{\alpha}(\KO_{\Z[1/2]}) \to \tilde{\Fil}_\sub{slice}^{\alpha}(\KO_{X})$ is an equivalence, for $\alpha \in \{4n,4n+1/2,4n+1,4n+2 \mid n \in \Z \}$.
This will conclude the proof.
By periodicity we may assume $n=0$.
We illustrate the case $\alpha=1/2$, the others are similar.
We have the fiber sequence \[ f^*(\tilde{\Fil}_\sub{slice}^{1/2}(\KO_{\Z[1/2]})) \to f^*(\tilde{\Fil}_\sub{slice}^{0}(\KO_{\Z[1/2]})) \to f^*H\tilde{\Z}_{\Z[1/2]}. \]
Using Theorems~\ref{thm:hz-tilde-main}(3) and~\ref{thm:base-change-ko-veff}, the above identifies with a fibre sequence
\[ f^*(\tilde{\Fil}_\sub{slice}^{1/2}(\KO_{\Z[1/2]})) \to \tilde{\Fil}_\sub{slice}^{0}\KO_X \to H\tilde{\Z}_{X}. \]
Since $\tilde{\Fil}_\sub{slice}^{1/2} \wequi \tilde{\Fil}_\sub{slice}^{1/2}\tilde{\Fil}_\sub{slice}^0$, to prove that $f^*\tilde{\Fil}_\sub{slice}^{1/2}(\KO_{\Z[1/2]}) \to \tilde{\Fil}_\sub{slice}^{1/2}\KO_{X}$ is an equivalence it will suffice, by Remark~\ref{rem:cofiber}, to show that $f^*(\tilde{\Fil}_\sub{slice}^{1/2}(\KO_{\Z[1/2]})) \in \SH(S)^\veff[1]$ and $\Map(M(Y)[1],H\tilde{\Z}_X)\simeq \ast$ for all $Y \in \Sm_X$. Both statements are clear.
\end{proof}

\begin{remark}\label{rem:zz2z20z000z} From Theorem~\ref{thm:ko} one can formally deduce the other half slices:
\begin{align*}
  \tilde{\Fil}_\sub{slice}^{4n}(\KO_X)/\tilde{\Fil}_\sub{slice}^{4n+1/2}(\KO_X) &\wequi \bb T^{\otimes 4n}\otimes H\tilde{\Z}_X \\
  \tilde{\Fil}_\sub{slice}^{4n+1/2}(\KO_X)/\tilde{\Fil}_\sub{slice}^{4n+1}(\KO_X) &\wequi \bb T^{\otimes 4n} \otimes H\Z/2_X[1] \\
  \tilde{\Fil}_\sub{slice}^{4n+1}(\KO_X)/\tilde{\Fil}_\sub{slice}^{4n+3/2}(\KO_X) & \wequi \bb T^{\otimes 4n+1}\otimes H\Z/2_X\\
  \tilde{\Fil}_\sub{slice}^{4n+3/2}(\KO_X)/\tilde{\Fil}_\sub{slice}^{4n+2}(\KO_X) & =  0 \\
  \tilde{\Fil}_\sub{slice}^{4n+2}(\KO_X)/\tilde{\Fil}_\sub{slice}^{4n+5/2}(\KO_X) &\wequi \bb T^{\otimes 4n+2}\otimes H\Z_X\\
    \tilde{\Fil}_\sub{slice}^{4n+5/2}(\KO_X)/\tilde{\Fil}_\sub{slice}^{4n+3}(\KO_X)  = 0 \\
   \tilde{\Fil}_\sub{slice}^{4n+3}(\KO_X)/\tilde{\Fil}_\sub{slice}^{4n+7/2}(\KO_X) &=  0 \\
      \tilde{\Fil}_\sub{slice}^{4n+7/2}(\KO_X)/\tilde{\Fil}_\sub{slice}^{8n}(\KO_X) & = 0 \\
\end{align*}
In this way, we offer an algebro-geometric analog of  Bott's calculation of the homotopy groups of real $K$-theory in topology.
\end{remark}

As noted earlier, it is not clear that the slices from Remark~\ref{rem:zz2z20z000z} assembles into a multiplicative graded object. In lieu of this, we offer the following description of a multiplicative filtration on $\rm GWH$.

\begin{theorem} \label{thm:ko-main}
There exist a functorial, multiplicative, $\bb Z$-indexed, exhaustive filtration 
\[
\mathrm{Fil}_{\bb A}^{\star}\rm GWH(X) \to GWH(X)
\] 
on $\rm GWH(X)$ whose graded pieces are naturally given as follows for $j \in  \bb Z$:
\begin{equation}\label{eq:gr}
\rm gr^j_{\bb A}GWH(X) \simeq \begin{cases}
\omega^{\infty}\widetilde{s}^j(\KO_X) & j \equiv 0 \mod 4\\
\Z/2(j-1)^{\bb A}(X)[2j-2] & j \equiv 1 \mod 4\\
\Z(j-1)^{\bb A}(X)[2j-2] & j \equiv 2 \mod 4\\
0 & j \equiv 3 \mod 4. 
\end{cases}
\end{equation}
Furthermore:
\begin{enumerate}
\item We have a functorial cofibre sequence for $j \equiv 0$ mod $4$
\[
\bb Z/2(j)^{\bb A}(X)[2j+1] \to \rm gr^j_{\bb A}GWH(X) \to \tilde{\Z}(j)^{\bb A}(X)[2j].
\]
\item The filtration is complete for any noetherian scheme of finite dimension.  
\end{enumerate}
\end{theorem}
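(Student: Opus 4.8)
\textbf{Proof plan for Theorem~\ref{thm:ko-main}.}
The idea is to apply the functor $\omega^\infty$, followed by global sections, to the generalised slice filtration $\tilde{\Fil}_\sub{slice}^\star\KO_X$ studied in Theorem~\ref{thm:ko}, and to use its multiplicativity from \S\ref{ss:gen-slices}. More precisely, recall that $\tilde{\Fil}^\star_\sub{slice}:\SH(X)\to\Fil\SH(X)$ is lax symmetric monoidal, so that applying it to the $\bb E_\infty$-algebra $\KO_X$ yields $\tilde{\Fil}^\star_\sub{slice}\KO_X\in\CAlg(\Fil\SH(X))$; composing with the lax monoidal functors $\omega^\infty$ and global sections (as in Remark~\ref{rem:mult}) produces the desired multiplicative, functorial, $\bb Z$-indexed filtration
\[
\mathrm{Fil}_{\bb A}^{\star}\rm GWH(X):=\Gamma\bigl(X,\omega^\infty\tilde{\Fil}^\star_\sub{slice}\KO_X\bigr)
\]
on $\rm GWH(X)=\Gamma(X,\omega^\infty\KO_X)$. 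Exhaustiveness follows from the analogous statement for the generalised slice filtration, which holds because $\SH(X)^\veff$ generates $\SH(X)$ under colimits, shifts and Tate twists (so $\colim_{n\to-\infty}\tilde{\Fil}^n_\sub{slice}\KO_X\simeq\KO_X$); here one uses that $\omega^\infty$ and global sections commute with filtered colimits. The only care needed is that the generalised slice filtration is $\bb Z$-indexed rather than $\bb N$-indexed (its negative part is non-trivial), but this causes no problem for the formal assembly.

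For the graded pieces, the computation is exactly Theorem~\ref{thm:ko} after applying $\omega^\infty$ and taking global sections, and after translating the answer into the cohomological notation of the paper. Concretely: for $j\equiv 2\pmod 4$, Theorem~\ref{thm:ko} (combined with the $4$-periodicity $\bb T^{\otimes 4}\otimes\KO_X\simeq\KO_X$) gives $\tilde{s}^j_\sub{slice}\KO_X\simeq\bb T^{\otimes j-1}\otimes H\bb Z_X$, and since $H\bb Z_X=s^0(\KGL_X)$ we have, by Definition~\ref{definition_A^1_mot-_coh} and the Tate-twist formula, $\omega^\infty(\bb T^{\otimes j-1}\otimes H\bb Z_X)(X)\simeq \bb Z(j-1)^{\bb A}(X)[2j-2]$ (using Theorem~\ref{thm:a1-a1cdh}(2) to pass between $s^{j-1}\KGL_X$ and the Tate twist of $s^0$, i.e. the periodization property of \S\ref{subsub:slices-kgl}). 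Similarly for $j\equiv 1\pmod 4$ the slice is $\bb T^{\otimes j-1}\otimes H\bb Z/2_X$, giving $\bb Z/2(j-1)^{\bb A}(X)[2j-2]$; for $j\equiv 3\pmod 4$ the slice vanishes; and for $j\equiv 0\pmod 4$ the slice is the (non-oriented, more complicated) term $\tilde{s}^j(\KO_X)$ whose global sections we leave as $\omega^\infty\tilde{s}^j(\KO_X)$. Part (1) is then obtained by applying $\omega^\infty$ and global sections to the half-slice cofibre sequence
\[
\bb T^{\otimes j}\otimes H\bb Z/2_X[1]\To \tilde{s}^j(\KO_X)\To \bb T^{\otimes j}\otimes H\tilde{\bb Z}_X
\]
extracted from Theorem~\ref{thm:ko} (the first two displayed equivalences there, for $n=j/4$), together with Theorem~\ref{thm:hz-tilde-main} to identify the global sections of $\bb T^{\otimes j}\otimes H\tilde{\bb Z}_X$ with $\tilde{\bb Z}(j)^{\bb A}(X)[2j]$ in the notation of Definition~\ref{definition_A^1_mot-_coh_mw}, and the identification of the $H\bb Z/2$-term with $\bb Z/2(j)^{\bb A}(X)[2j+1]$.

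For the completeness statement (2), let $X$ be Noetherian of finite Krull dimension $d$. The strategy is to bound the connectivity of $\tilde{\Fil}^n_\sub{slice}\KO_X$, mimicking Lemma~\ref{lemm:convergence-bootstrap} and Corollary~\ref{cor:bdd-a1}. First, $\tilde{\Fil}^0_\sub{slice}\KO_X$ is very effective (it is the zeroth generalised slice cover), so by Proposition~\ref{prop:detecting-veff}(1) its value $\omega^\infty\tilde{\Fil}^n_\sub{slice}\KO_X\simeq \bb T^{\otimes n}\otimes(\text{very effective})$ has $\omega^{\infty,\gr}$ of the appropriate piece being $(n-d)$-connective as a Nisnevich sheaf of spectra; more precisely, using the periodicity $\tilde{\Fil}^{n+1}_\sub{slice}(\bb T_X\otimes E)\simeq\bb T_X\otimes\tilde{\Fil}^n_\sub{slice}E$ and that $\bb T_X^{\otimes m}$-twists of very effective spectra remain very effective (as in the proof of Proposition~\ref{prop:detecting-veff}(1)), we get that $\omega^\infty\tilde{\Fil}^n_\sub{slice}\KO_X$ is Nisnevich-locally $(n-d)$-connective for $n\ge 0$, and by the same twist argument this extends to all $n$ with the shift $n-d$. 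Taking global sections on a Noetherian scheme of Krull dimension $\le d$ then shows $\mathrm{Fil}^n_{\bb A}\rm GWH(X)$ is supported in cohomological degrees $\le d-n$, whence $\lim_n\mathrm{Fil}^n_{\bb A}\rm GWH(X)=0$, i.e. the filtration is complete. \textbf{The main obstacle} I anticipate is precisely this connectivity bound: one needs a $\KO$-analogue of the shifted stable connectivity theorem, i.e. that $M_X(Y)$ and its framed refinements (the building blocks of $\SH(X)^\veff$) have $\omega^\infty$ of controlled connectivity over a base of dimension $\le d$. Over fields this is classical; over a general Noetherian base of finite dimension one should invoke Druzhinin's theorem (as cited after Proposition~\ref{prop:detecting-veff}), noting that the generators of $\SH(X)^\veff$ here are the same $M_X(Y)$, $Y\in\Sm_X$, so Proposition~\ref{prop:detecting-veff}(1) applies verbatim. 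A secondary subtlety is ensuring the multiplicative structure on the graded pieces in \eqref{eq:gr} is compatible with the stated identifications; but since we only claim a multiplicative \emph{filtration} (not a multiplicative identification of the graded object, which indeed may fail as noted before the theorem), it suffices that $\mathrm{Fil}^\star_{\bb A}\rm GWH$ lands in $\CAlg(\Fil\Spt)$, which is automatic from lax monoidality of the functors involved.
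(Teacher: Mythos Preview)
Your proposal is correct and follows exactly the paper's approach: define $\mathrm{Fil}_{\bb A}^{j}\mathrm{GWH}(X) := \omega^{\infty}\tilde{\Fil}_\sub{slice}^{j}(\KO_X)(X)$, read off the graded pieces and the cofibre sequence (1) directly from Theorem~\ref{thm:ko}, and obtain completeness (2) from the connectivity bound of Proposition~\ref{prop:detecting-veff}(1). The paper's own proof is two lines and simply cites these two results; you supply considerably more detail than it does, and your anticipated ``main obstacle'' (the very-effective connectivity bound over a Noetherian base of finite dimension) is precisely what the paper dispatches by invoking Proposition~\ref{prop:detecting-veff}.
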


\begin{proof} We set
\[
\Fil^j_{\bb A}\rm GWH(X) := \omega^{\infty}\tilde{\Fil}_\sub{slice}^j(\KO_X)
\]

Then $\Fil^j_{\bb A}\rm GWH(X)$ is clearly a multiplicative, $\bb Z$-indexed filtration which is complete and converges to $\omega^{\infty}\KO_X(X) = \rm GWH(X)$. The description of the graded pieces and point (1) follow immediately from Theorem~\ref{thm:ko}. Claim (2) about completeness of filtration follows from Proposition~\ref{prop:detecting-veff}.

\end{proof}

\begin{remark} Using the results of \cite{calmes2024motivic} and the same arguments as in Corollary~\ref{cor:bdd-a1} we can improve completeness of the filtration in Theorem~\ref{thm:ko-main} to include qcqs schemes of finite valuative dimension. We leave details to the interested reader. 
\end{remark}

\bibliographystyle{acm}
\bibliography{MMBigBibliography}

\def\cprime{$'$}
\begin{thebibliography}{100}

\bibitem{SGA_IV_I}
{\em Th\'eorie des topos et cohomologie \'etale des sch\'emas. {T}ome 1:
  {T}h\'eorie des topos}.
\newblock Lecture Notes in Mathematics, Vol. 269. Springer-Verlag, Berlin,
  1972.
\newblock S{\'e}minaire de G{\'e}om{\'e}trie Alg{\'e}brique du Bois-Marie
  1963--1964 (SGA 4), Dirig{\'e} par M. Artin, A. Grothendieck, et J. L.
  Verdier. Avec la collaboration de N. Bourbaki, P. Deligne et B. Saint-Donat.

\bibitem{SGA_IV_III}
{\em Th\'eorie des topos et cohomologie \'etale des sch\'emas. {T}ome 3}.
\newblock Lecture Notes in Mathematics, Vol. 305. Springer-Verlag, Berlin-New
  York, 1973.
\newblock S{\'e}minaire de G{\'e}om{\'e}trie Alg{\'e}brique du Bois-Marie
  1963--1964 (SGA 4), Dirig{\'e} par M. Artin, A. Grothendieck et J. L.
  Verdier. Avec la collaboration de P. Deligne et B. Saint-Donat.

\bibitem{Adams1966}
{\sc Adams, J.~F.}
\newblock On the groups {$J(X)$}. {IV}.
\newblock {\em Topology 5\/} (1966), 21--71.

\bibitem{AnnalaHoyoisIwasa2024}
{\sc Annala, T., Hoyois, M., and Iwasa, R.}
\newblock Atiyah duality for motivic spectra.
\newblock {\em {\tt arXiv:2403.01561}\/} (2024).

\bibitem{AnnalaHoyoisIwasa2025}
{\sc Annala, T., Hoyois, M., and Iwasa, R.}
\newblock Algebraic cobordism and a {C}onner-{F}loyd isomorphism for algebraic
  {K}-theory.
\newblock {\em J. Amer. Math. Soc. 38}, 1 (2025), 243--289.

\bibitem{AnschutzLeBras2020}
{\sc Ansch{\"u}tz, J., and Le~Bras, A.-C.}
\newblock The {{\(p\)}}-completed cyclotomic trace in degree 2.
\newblock {\em Ann. \(K\)-Theory 5}, 3 (2020), 539--580.

\bibitem{AntieauElmanto}
{\sc Antieau, B., and Elmanto, E.}
\newblock A primer for unstable motivic homotopy theory.
\newblock {\em Surveys on recent developments in algebraic geometry 95\/}
  (2017), 305--370.

\bibitem{AntieauMathewMorrow}
{\sc Antieau, B., Mathew, A., and Morrow, M.}
\newblock {$K$}-theory of perfectoid rings.
\newblock {\em Documenta Math\/} (2022).

\bibitem{AntieauMathewMorrowNikolaus2022}
{\sc Antieau, B., Mathew, A., Morrow, M., and Nikolaus, T.}
\newblock On the {B}eilinson fiber square.
\newblock {\em Duke Math. J. 171}, 18 (2022), 3707--3806.

\bibitem{AntieauNikolaus2021}
{\sc Antieau, B., and Nikolaus, T.}
\newblock Cartier modules and cyclotomic spectra.
\newblock {\em J. Am. Math. Soc. 34}, 1 (2021), 1--78.

\bibitem{Aoki2023}
{\sc Aoki, K.}
\newblock Tensor triangular geometry of filtered objects and sheaves.
\newblock {\em Math. Z. 303}, 3 (2023), 27.
\newblock Id/No 62.

\bibitem{arndt2017abstract}
{\sc Arndt, P.}
\newblock {\em Abstract motivic homotopy theory}.
\newblock PhD thesis, 2017.

\bibitem{AsokHoyoisWendt2017}
{\sc Asok, A., Hoyois, M., and Wendt, M.}
\newblock Affine representability results in {$\Bbb A^1$}-homotopy theory, {I}:
  vector bundles.
\newblock {\em Duke Math. J. 166}, 10 (2017), 1923--1953.

\bibitem{Ayoub2007}
{\sc Ayoub, J.}
\newblock {\em Les six op{\'e}rations de {Grothendieck} et le formalisme des
  cycles {\'e}vanescents dans le monde motivique. {I}}, vol.~314 of {\em
  Ast{\'e}risque}.
\newblock Paris: Soci{\'e}t{\'e} Math{\'e}matique de France, 2007.

\bibitem{Ayoub2007II}
{\sc Ayoub, J.}
\newblock {\em Les six op{\'e}rations de {Grothendieck} et le formalisme des
  cycles {\'e}vanescents dans le monde motivique. {II}}, vol.~315 of {\em
  Ast{\'e}risque}.
\newblock Paris: Soci{\'e}t{\'e} Math{\'e}matique de France, 2007.

\bibitem{Bachmann2022}
{\sc Bachmann, T.}
\newblock The very effective covers of {KO} and {KGL} over {D}edekind schemes.
\newblock {\em Journal of the European Mathematical Society\/}.
\newblock To appear.

\bibitem{bachmann-very-effective}
{\sc Bachmann, T.}
\newblock The generalized slices of {H}ermitian {$K$}-theory.
\newblock {\em Journal of Topology 10}, 4 (2017), 1124--1144.
\newblock \href{https://arxiv.org/abs/1610.01346}{arXiv:1610.01346}.

\bibitem{bachmann-etaZ}
{\sc Bachmann, T.}
\newblock $\eta$-periodic motivic stable homotopy theory over {D}edekind
  domains.
\newblock {\em Journal of Topology 15}, 2 (2022), 950--971.
\newblock \href{https://arxiv.org/abs/2006.02086}{arXiv:2006.02086}.

\bibitem{bachmann-linearity}
{\sc Bachmann, T.}
\newblock Motivic stable homotopy theory is strictly commutative at the
  characteristic.
\newblock {\em accepted for publication in AiM\/} (2022).
\newblock \href{https://arxiv.org/abs/2111.02320}{arXiv:2111.02320}.

\bibitem{bachmann2025}
{\sc Bachmann, T., Calm{\`e}s, B., D{\'e}glise, F., Fasel, J., and
  {\O}stv{\ae}r, P.}
\newblock {\em Milnor--Witt motives}.
\newblock 2025.
\newblock To appear in Memoirs of the American Mathematical Society.

\bibitem{BachmannElmanto}
{\sc Bachmann, T., and Elmanto, E.}
\newblock Voevodsky's slice conjectures via {H}ilbert schemes.
\newblock {\em Algebraic Geometry 8}, 5 (2021), 626--636.

\bibitem{BachmannHoyois2021}
{\sc Bachmann, T., and Hoyois, M.}
\newblock Norms in motivic homotopy theory.
\newblock {\em Ast\'{e}risque}, 425 (2021), ix+207.

\bibitem{balmer-triangular-1}
{\sc Balmer, P.}
\newblock Triangular {W}itt groups. {I}. {T}he 12-term localization exact
  sequence.
\newblock {\em $K$-Theory 19}, 4 (2000), 311--363.

\bibitem{balmer-triangular-2}
{\sc Balmer, P.}
\newblock Triangular {W}itt groups. {II}. {F}rom usual to derived.
\newblock {\em Math. Z. 236}, 2 (2001), 351--382.

\bibitem{balmer-homotopy-invariance}
{\sc Balmer, P.}
\newblock Witt cohomology, {M}ayer-{V}ietoris, homotopy invariance and the
  {G}ersten conjecture.
\newblock {\em $K$-Theory 23}, 1 (2001), 15--30.

\bibitem{SGA_VI}
{\sc Berthelot, P., Grothendieck, A., and Illusie, L.}
\newblock {\em Th\'eorie des intersections et th\'eor\`eme de
  {R}iemann-{R}och}.
\newblock Lecture Notes in Mathematics, Vol. 225. Springer-Verlag, Berlin-New
  York, 1971.
\newblock S{\'e}minaire de G{\'e}om{\'e}trie Alg{\'e}brique du Bois-Marie
  1966--1967 (SGA 6).

\bibitem{Beilinson1987a}
{\sc Be\u{\i}linson, A.}
\newblock Height pairing between algebraic cycles.
\newblock In {\em Current trends in arithmetical algebraic geometry ({A}rcata,
  {C}alif., 1985)}, vol.~67 of {\em Contemp. Math.} Amer. Math. Soc.,
  Providence, RI, 1987, pp.~1--24.

\bibitem{bms-zero}
{\sc Be\u{\i}linson, A., MacPherson, R., and Schechtman, V.}
\newblock Notes on motivic cohomology.
\newblock {\em Duke Math. J. 54}, 2 (1987), 679--710.

\bibitem{BhattLurie2022}
{\sc Bhatt, B., and Lurie, J.}
\newblock Absolute prismatic cohomology.
\newblock {\em {\tt arXiv:2201.06120}\/} (2022).

\bibitem{BhattMathew2021}
{\sc Bhatt, B., and Mathew, A.}
\newblock The arc-topology.
\newblock {\em Duke Math. J. 170}, 9 (2021), 1899--1988.

\bibitem{BhattMathew2023}
{\sc Bhatt, B., and Mathew, A.}
\newblock Syntomic complexes and {{\(p\)}}-adic {\'e}tale {Tate} twists.
\newblock {\em Forum Math. Pi 11\/} (2023), 26.
\newblock Id/No e1.

\bibitem{BhattMorrowScholze2}
{\sc Bhatt, B., Morrow, M., and Scholze, P.}
\newblock Topological {H}ochschild homology and integral {$p$}-adic {H}odge
  theory.
\newblock {\em Publ. Math. Inst. Hautes \'{E}tudes Sci. 129\/} (2019),
  199--310.

\bibitem{BhattScholze2015}
{\sc Bhatt, B., and Scholze, P.}
\newblock The pro-\'{e}tale topology for schemes.
\newblock {\em Ast\'{e}risque}, 369 (2015), 99--201.

\bibitem{BhattScholze2017}
{\sc Bhatt, B., and Scholze, P.}
\newblock Projectivity of the {W}itt vector affine {G}rassmannian.
\newblock {\em Invent. Math. 209}, 2 (2017), 329--423.

\bibitem{BhattScholze2022}
{\sc Bhatt, B., and Scholze, P.}
\newblock Prisms and prismatic cohomology.
\newblock {\em Ann. of Math. (2) 196}, 3 (2022), 1135--1275.

\bibitem{BindaKrishna2022}
{\sc Binda, F., and Krishna, A.}
\newblock Zero-cycle groups on algebraic varieties.
\newblock {\em J. \'{E}c. polytech. Math. 9\/} (2022), 281--325.

\bibitem{Bloch1986b}
{\sc Bloch, S.}
\newblock Algebraic cycles and higher {$K$}-theory.
\newblock {\em Adv. in Math. 61}, 3 (1986), 267--304.

\bibitem{Bloch1986}
{\sc Bloch, S., and Kato, K.}
\newblock {$p$}-adic \'etale cohomology.
\newblock {\em Inst. Hautes \'Etudes Sci. Publ. Math.}, 63 (1986), 107--152.

\bibitem{BlochLichtenbaum}
{\sc Bloch, S., and Lichtenbaum, S.}
\newblock A spectral sequence for motivic cohomology.
\newblock {\em Unpublished manuscript\/}.

\bibitem{BokstedtHsiangMadsen1993}
{\sc B{\"o}kstedt, M., Hsiang, W.~C., and Madsen, I.}
\newblock The cyclotomic trace and algebraic {$K$}-theory of spaces.
\newblock {\em Invent. Math. 111}, 3 (1993), 465--539.

\bibitem{Bouis2025}
{\sc Bouis, T.}
\newblock Motivic cohomology of mixed characteristic schemes.
\newblock {\em {\tt arXiv:2412.06635}\/} (2025).

\bibitem{BouisKundu2025}
{\sc Bouis, T., and Kundu, A.}
\newblock Beilinson--{L}ichtenbaum phenomenon for motivic cohomology.
\newblock {\em {\tt arXiv:2506.09910}\/} (2025).

\bibitem{Bouis2023}
{\sc Bouis, T.~V.}
\newblock Cartier smoothness in prismatic cohomology.
\newblock {\em J. Reine Angew. Math. 805\/} (2023), 241--282.

\bibitem{BouthierCesnavicius2022}
{\sc Bouthier, A., and {\v{C}}esnavi{\v{c}}ius, K.}
\newblock Torsors on loop groups and the {Hitchin} fibration.
\newblock {\em Ann. Sci. {\'E}c. Norm. Sup{\'e}r. (4) 55}, 3 (2022), 791--864.

\bibitem{Hermitian2}
{\sc Calm{\`e}s, B., Dotto, E., Harpaz, Y., Hebestreit, F., Land, M., Moi, K.,
  Nardin, D., Nikolaus, T., and Steimle, W.}
\newblock Hermitian {$K$}-theory for stable $\infty$-categories {II}: Cobordism
  categories and additivity, 2020.

\bibitem{Hermitian3}
{\sc Calm{\`e}s, B., Dotto, E., Harpaz, Y., Hebestreit, F., Land, M., Moi, K.,
  Nardin, D., Nikolaus, T., and Steimle, W.}
\newblock Hermitian {$K$}-theory for stable $\infty$-categories {III}:
  Grothendieck-witt groups of rings, 2020.

\bibitem{Hermitian1}
{\sc Calm\`es, B., Dotto, E., Harpaz, Y., Hebestreit, F., Land, M., Moi, K.,
  Nardin, D., Nikolaus, T., and Steimle, W.}
\newblock Hermitian {$K$}-theory for stable {$\infty$}-categories {I}:
  {F}oundations.
\newblock {\em Selecta Math. (N.S.) 29}, 1 (2023), Paper No. 10, 269.

\bibitem{calmes2024motivic}
{\sc Calm{\`e}s, B., Harpaz, Y., and Nardin, D.}
\newblock A motivic spectrum representing {H}ermitian k-theory.
\newblock {\em arXiv preprint arXiv:2402.15136\/} (2024).

\bibitem{CarmeliFeng2025}
{\sc Carmeli, S., and Feng, T.}
\newblock Prismatic {S}teenrod operations and arithmetic duality on {B}rauer
  groups.
\newblock {\em {\tt arXiv:2507.13471}\/} (2025).

\bibitem{Carmody2021}
{\sc Carmody, D.}
\newblock Cdh descent for homotopy {H}ermitian {$K$}-theory of rings with
  involution.
\newblock {\em Doc. Math. 26\/} (2021), 1275--1327.

\bibitem{CesnaviciusScholze2024}
{\sc {\v{C}}esnavi{\v{c}}ius, K., and Scholze, P.}
\newblock Purity for flat cohomology.
\newblock {\em Ann. Math. (2) 199}, 1 (2024), 51--180.

\bibitem{Cisinski2013}
{\sc Cisinski, D.-C.}
\newblock Descente par \'eclatements en {$K$}-th\'eorie invariante par
  homotopie.
\newblock {\em Ann. of Math. (2) 177}, 2 (2013), 425--448.

\bibitem{CisinskiDeglise2016}
{\sc Cisinski, D.-C., and D\'{e}glise, F.}
\newblock \'{E}tale motives.
\newblock {\em Compos. Math. 152}, 3 (2016), 556--666.

\bibitem{CisinskiDeglise2019}
{\sc {Cisinski}, D.-C., and {D\'eglise}, F.}
\newblock {\em {Triangulated categories of mixed motives}}.
\newblock Cham: Springer, 2019.

\bibitem{ClausenMathew2021}
{\sc Clausen, D., and Mathew, A.}
\newblock Hyperdescent and {\'e}tale {{\(K\)}}-theory.
\newblock {\em Invent. Math. 225}, 3 (2021), 981--1076.

\bibitem{ClausenMathewMorrow2021}
{\sc Clausen, D., Mathew, A., and Morrow, M.}
\newblock {$K$}-theory and topological cyclic homology of henselian pairs.
\newblock {\em J. Amer. Math. Soc. 34\/} (2021), 411--473.

\bibitem{ClausenMathewNoelNaumann2024}
{\sc Clausen, D., Mathew, A., Naumann, N., and Noel, J.}
\newblock Descent and vanishing in chromatic algebraic {$K$}-theory via group
  actions.
\newblock {\em Ann. Sci. \'{E}c. Norm. Sup\'{e}r. (4) 57}, 4 (2024),
  1135--1190.

\bibitem{ColliotThelene-Hoobler-Kahn1997}
{\sc Colliot-Th{\'e}l{\`e}ne, J.-L., Hoobler, R.~T., and Kahn, B.}
\newblock The {B}loch-{O}gus-{G}abber theorem.
\newblock In {\em Algebraic {$K$}-theory ({T}oronto, {ON}, 1996)}, vol.~16 of
  {\em Fields Inst. Commun.} Amer. Math. Soc., Providence, RI, 1997,
  pp.~31--94.

\bibitem{Deglise2006}
{\sc D{\'e}glise, F.}
\newblock Transfers on {Chow} groups with coefficients.
\newblock {\em Math. Z. 252}, 2 (2006), 315--343.

\bibitem{Deglise2019}
{\sc D{\'e}glise, F.}
\newblock Orientation theory in arithmetic geometry.
\newblock In {\em \(K\)-theory. Proceedings of the international colloquium,
  Mumbai, 2016}. New Delhi: Hindustan Book Agency; Mumbai: Tata Institute of
  Fundamental Research, 2019, pp.~239--347.

\bibitem{Druzhinin2022stable}
{\sc Druzhinin, A.~E.}
\newblock Stable {$\Bbb A^1$}-connectivity over a base.
\newblock {\em J. Reine Angew. Math. 792\/} (2022), 61--91.

\bibitem{Elkik1973}
{\sc Elkik, R.}
\newblock Solutions d'\'equations \`a coefficients dans un anneau hens\'elien.
\newblock {\em Ann. Sci. \'Ecole Norm. Sup. (4) 6\/} (1973), 553--603 (1974).

\bibitem{Elmanto2021}
{\sc Elmanto, E.}
\newblock T{HH} and {TC} are (very) far from being homotopy functors.
\newblock {\em J. Pure Appl. Algebra 225}, 8 (2021), Paper No. 106640, 12.

\bibitem{ElmantoHoyoisIwasaKelly2021}
{\sc Elmanto, E., Hoyois, M., Iwasa, R., and Kelly, S.}
\newblock Cdh descent, cdarc descent, and {Milnor} excision.
\newblock {\em Math. Ann. 379}, 3-4 (2021), 1011--1045.

\bibitem{ElmantoHoyoisKhanSosniloYakerson2020}
{\sc Elmanto, E., Hoyois, M., Khan, A.~A., Sosnilo, V., and Yakerson, M.}
\newblock Modules over algebraic cobordism.
\newblock {\em Forum Math. Pi 8\/} (2020), 44.
\newblock Id/No e14.

\bibitem{ElmantoHoyoisKhanSosniloYakerson2021}
{\sc Elmanto, E., Hoyois, M., Khan, A.~A., Sosnilo, V., and Yakerson, M.}
\newblock Motivic infinite loop spaces.
\newblock {\em Camb. J. Math. 9}, 2 (2021), 431--549.

\bibitem{ElmantoLevineSpitzweckOstvaer2022}
{\sc Elmanto, E., Levine, M., Spitzweck, M., and {\O}stv{\ae}r, P.~A.}
\newblock Algebraic cobordism and {\'e}tale cohomology.
\newblock {\em Geom. Topol. 26}, 2 (2022), 477--586.

\bibitem{ElmantoMorrow2023}
{\sc Elmanto, E., and Morrow, M.}
\newblock Motivic cohomology of equicharacteristic schemes.
\newblock {\em {\tt arXiv:2309.08463}\/} (2023).

\bibitem{FriedlanderVoevodsky2000}
{\sc Friedlander, E.~M., and Voevodsky, V.}
\newblock Bivariant cycle cohomology.
\newblock In {\em Cycles, transfers, and motivic homology theories}. Princeton,
  NJ: Princeton University Press, 2000, pp.~138--187.

\bibitem{Fujiwara1995}
{\sc Fujiwara, K.}
\newblock Theory of tubular neighborhood in {\'e}tale topology.
\newblock {\em Duke Math. J. 80}, 1 (1995), 15--57.

\bibitem{Fujiwara2002}
{\sc Fujiwara, K.}
\newblock A proof of the absolute purity conjecture (after {Gabber}).
\newblock In {\em Algebraic geometry 2000, Azumino. Proceedings of the
  symposium, Nagano, Japan, July 20--30, 2000}. Tokyo: Mathematical Society of
  Japan, 2002, pp.~153--183.

\bibitem{Gabber1992}
{\sc Gabber, O.}
\newblock {$K$}-theory of {H}enselian local rings and {H}enselian pairs.
\newblock In {\em Algebraic {$K$}-theory, commutative algebra, and algebraic
  geometry ({S}anta {M}argherita {L}igure, 1989)}, vol.~126 of {\em Contemp.
  Math.} Amer. Math. Soc., Providence, RI, 1992, pp.~59--70.

\bibitem{Gabber1994a}
{\sc Gabber, O.}
\newblock Affine analog of the proper base change theorem.
\newblock {\em Israel J. Math. 87}, 1-3 (1994), 325--335.

\bibitem{GabberKelly2015}
{\sc Gabber, O., and Kelly, S.}
\newblock Points in algebraic geometry.
\newblock {\em J. Pure Appl. Algebra 219}, 10 (2015), 4667--4680.

\bibitem{GabberRamero2003}
{\sc Gabber, O., and Ramero, L.}
\newblock {\em Almost ring theory}, vol.~1800 of {\em Lecture Notes in
  Mathematics}.
\newblock Springer-Verlag, Berlin, 2003.

\bibitem{Geisser2004}
{\sc Geisser, T.}
\newblock Motivic cohomology over {D}edekind rings.
\newblock {\em Math. Z. 248}, 4 (2004), 773--794.

\bibitem{Geisser2006}
{\sc Geisser, T.}
\newblock Arithmetic cohomology over finite fields and special values of
  {$\zeta$}-functions.
\newblock {\em Duke Math. J. 133}, 1 (2006), 27--57.

\bibitem{GeisserLevine2000}
{\sc Geisser, T., and Levine, M.}
\newblock The {$K$}-theory of fields in characteristic {$p$}.
\newblock {\em Invent. Math. 139}, 3 (2000), 459--493.

\bibitem{GeisserLevine2001}
{\sc Geisser, T., and Levine, M.}
\newblock The {B}loch-{K}ato conjecture and a theorem of {S}uslin-{V}oevodsky.
\newblock {\em J. Reine Angew. Math. 530\/} (2001), 55--103.

\bibitem{GilletLevine1987}
{\sc Gillet, H., and Levine, M.}
\newblock The relative form of {G}ersten's conjecture over a discrete valuation
  ring: the smooth case.
\newblock {\em J. Pure Appl. Algebra 46}, 1 (1987), 59--71.

\bibitem{Glasman2015}
{\sc Glasman, S.}
\newblock Day convolution for {$\infty$}-categories.
\newblock {\em Math. Res. Lett. 23}, 5 (2016), 1369--1385.

\bibitem{GrosSuwa1988a}
{\sc Gros, M., and Suwa, N.}
\newblock La conjecture de {Gersten} pour les faisceaux de {Hodge}-{Witt}
  logarithmique. ({The} {Gersten} conjecture for the logarithmic {Hodge}-{Witt}
  sheaves).
\newblock {\em Duke Math. J. 57}, 2 (1988), 615--628.

\bibitem{EGA_IV_III}
{\sc Grothendieck, A.}
\newblock {\'E}l{\'e}ments de g{\'e}om{\'e}trie alg{\'e}brique. {IV}: {\'E}tude
  locale des sch{\'e}mas et des morphismes de sch{\'e}mas. ({Troisi{\`e}me}
  partie). {R{\'e}dig{\'e}} avec la colloboration de {J}. {Dieudonn{\'e}}.
\newblock {\em Publ. Math., Inst. Hautes {\'E}tud. Sci. 28\/} (1966), 1--255.

\bibitem{Haesemeyer2004}
{\sc Haesemeyer, C.}
\newblock Descent properties of homotopy {$K$}-theory.
\newblock {\em Duke Math. J. 125}, 3 (2004), 589--620.

\bibitem{HaesemeyerWeibel2019}
{\sc Haesemeyer, C., and Weibel, C.~A.}
\newblock {\em The norm residue theorem in motivic cohomology}, vol.~200 of
  {\em Annals of Mathematics Studies}.
\newblock Princeton University Press, Princeton, NJ, 2019.

\bibitem{HahnRaksitWilson2022}
{\sc Hahn, J., Raksit, A., and Wilson, D.}
\newblock A motivic filtration on the topological cyclic homology of
  commutative ring spectra.
\newblock {\em {\tt arXiv:2206.11208}\/} (2022).
\newblock To appear in Ann. of Math.

\bibitem{HeyerMann2024}
{\sc Heyer, C., and Mann, L.}
\newblock 6-functor formalisms and smooth representations.
\newblock {\em {\tt arXiv:2410.13038}\/} (2024).

\bibitem{Hiller1981}
{\sc Hiller, H.~L.}
\newblock {$\lambda $}-rings and algebraic {$K$}-theory.
\newblock {\em J. Pure Appl. Algebra 20}, 3 (1981), 241--266.

\bibitem{hornbostel2005a1}
{\sc Hornbostel, J.}
\newblock {$\Bbb A^1$}-representability of {H}ermitian {$K$}-theory and {W}itt
  groups.
\newblock {\em Topology 44}, 3 (2005), 661--687.

\bibitem{Hoyois2014}
{\sc Hoyois, M.}
\newblock A quadratic refinement of the {G}rothendieck-{L}efschetz-{V}erdier
  trace formula.
\newblock {\em Algebr. Geom. Topol. 14}, 6 (2014), 3603--3658.

\bibitem{Hoyois2020}
{\sc Hoyois, M.}
\newblock cdh descent in equivariant homotopy {{\(K\)}}-theory.
\newblock {\em Doc. Math. 25\/} (2020), 457--482.

\bibitem{Hoyois2021}
{\sc Hoyois, M.}
\newblock The localization theorem for framed motivic spaces.
\newblock {\em Compos. Math. 157}, 1 (2021), 1--11.

\bibitem{HoyoisJelisiejewNardinTotaroYakerson2021}
{\sc Hoyois, M., Jelisiejew, J., Nardin, D., Totaro, B., and Yakerson, M.}
\newblock The {Hilbert} scheme of infinite affine space and algebraic
  {{\(\mathrm{K}\)}}-theory.
\newblock {\em J. Eur. Math. Soc. (JEMS) 27}, 5 (2025), 2037--2071.

\bibitem{HoyoisJelisiejewNardinYakerson2022}
{\sc Hoyois, M., Jelisiejew, J., Nardin, D., and Yakerson, M.}
\newblock Hermitian {K}-theory via oriented {G}orenstein algebras.
\newblock {\em J. Reine Angew. Math. 793\/} (2022), 105--142.

\bibitem{HoyoisScherotzkeSibilla2017}
{\sc Hoyois, M., Scherotzke, S., and Sibilla, N.}
\newblock Higher traces, noncommutative motives, and the categorified {C}hern
  character.
\newblock {\em Adv. Math. 309\/} (2017), 97--154.

\bibitem{HuberKelly2018}
{\sc Huber, A., and Kelly, S.}
\newblock Differential forms in positive characteristic, {II}: cdh-descent via
  functorial {R}iemann-{Z}ariski spaces.
\newblock {\em Algebra Number Theory 12}, 3 (2018), 649--692.

\bibitem{Illusie1971}
{\sc Illusie, L.}
\newblock {\em Complexe cotangent et d\'eformations. {I}}.
\newblock Lecture Notes in Mathematics, Vol. 239. Springer-Verlag, Berlin-New
  York, 1971.

\bibitem{jacobson2018cohomological}
{\sc Jacobson, J.~A.}
\newblock Cohomological invariants for quadratic forms over local rings.
\newblock {\em Mathematische Annalen 370}, 1-2 (2018), 309--329.

\bibitem{KassWicklegren2019}
{\sc Kass, J.~L., and Wickelgren, K.}
\newblock The class of {E}isenbud-{K}himshiashvili-{L}evine is the local
  {$\bold{A}^1$}-{B}rouwer degree.
\newblock {\em Duke Math. J. 168}, 3 (2019), 429--469.

\bibitem{Kato1986a}
{\sc Kato, K.}
\newblock A {H}asse principle for two-dimensional global fields.
\newblock {\em J. Reine Angew. Math. 366\/} (1986), 142--183.
\newblock With an appendix by Jean-Louis Colliot-Th\'{e}l\`ene.

\bibitem{KellyMorrow2021}
{\sc Kelly, S., and Morrow, M.}
\newblock K-theory of valuation rings.
\newblock {\em Compositio Mathematica 157}, 6 (2021), 1121--1142.

\bibitem{Kerz2009}
{\sc Kerz, M.}
\newblock The {G}ersten conjecture for {M}ilnor {$K$}-theory.
\newblock {\em Invent. Math. 175}, 1 (2009), 1--33.

\bibitem{Kerz2010}
{\sc Kerz, M.}
\newblock Milnor {$K$}-theory of local rings with finite residue fields.
\newblock {\em J. Algebraic Geom. 19}, 1 (2010), 173--191.

\bibitem{KerzStrunkTamme2018}
{\sc {Kerz}, M., {Strunk}, F., and {Tamme}, G.}
\newblock {Algebraic \(K\)-theory and descent for blow-ups.}
\newblock {\em {Invent. Math.} 211}, 2 (2018), 523--577.

\bibitem{Knebusch1977}
{\sc Knebusch, M.}
\newblock Symmetric bilinear forms over algebraic varieties.
\newblock In {\em Conference on quadratic forms}, G.~Orzech, Ed., vol.~46 of
  {\em Queen's papers in pure and applied mathematics}. Queens University,
  Kingston, Ontario, 1977, pp.~103--283.

\bibitem{Kratzer1980}
{\sc Kratzer, C.}
\newblock {$\lambda $}-structure en {$K$}-th\'eorie alg\'ebrique.
\newblock {\em Comment. Math. Helv. 55}, 2 (1980), 233--254.

\bibitem{LandMathewMeierTamme}
{\sc Land, M., Mathew, A., Meier, L., and Tamme, G.}
\newblock Purity in chromatically localized algebraic {$K$}-theory.
\newblock {\em J. Amer. Math. Soc. 37}, 4 (2024), 1011--1040.

\bibitem{LandTamme2019}
{\sc Land, M., and Tamme, G.}
\newblock On the {$K$}-theory of pullbacks.
\newblock {\em Ann. of Math. (2) 190}, 3 (2019), 877--930.

\bibitem{Levine2001}
{\sc Levine, M.}
\newblock Techniques of localization in the theory of algebraic cycles.
\newblock {\em J. Algebr. Geom. 10}, 2 (2001), 299--363.

\bibitem{Levine2008}
{\sc Levine, M.}
\newblock The homotopy coniveau tower.
\newblock {\em J. Topol. 1}, 1 (2008), 217--267.

\bibitem{Lichtenbaum1984}
{\sc Lichtenbaum, S.}
\newblock Values of zeta-functions at nonnegative integers.
\newblock In {\em Number theory, {N}oordwijkerhout 1983 ({N}oordwijkerhout,
  1983)}, vol.~1068 of {\em Lecture Notes in Math.} Springer, Berlin, 1984,
  pp.~127--138.

\bibitem{LiuZheng2014}
{\sc Liu, Y., and Zheng, W.}
\newblock Enhanced six operations and base change theorem for higher {A}rtin
  stacks.
\newblock {\em {\tt arXiv:1211.5948}\/} (2014).

\bibitem{Lueders2022}
{\sc L\"{u}ders, M.}
\newblock Bloch-{O}gus theory for smooth and semi-stable schemes in mixed
  characteristic.
\newblock {\em {\tt arXiv:2201.11553}\/} (2022).

\bibitem{Lueders2024}
{\sc L\"{u}ders, M.}
\newblock The {G}ersten conjecture for $p$-adic \'etale {T}ate twists and the
  $p$-adic cycle class map.
\newblock {\em {\tt arXiv:2403.11853}\/} (2024).

\bibitem{Lueders2024a}
{\sc L{\"u}ders, M.}
\newblock On the relative {Gersten} conjecture for {Milnor} {{\(K\)}}-theory in
  the smooth case.
\newblock {\em J. Pure Appl. Algebra 228}, 11 (2024), 22.
\newblock Id/No 107718.

\bibitem{LuedersMorrow2023}
{\sc L\"{u}ders, M., and Morrow, M.}
\newblock Milnor {$K$}-theory of {$p$}-adic rings.
\newblock {\em J. Reine Angew. Math. 796\/} (2023), 69--116.

\bibitem{LurieDAGVIII}
{\sc Lurie, J.}
\newblock Derived algebraic geometry.
\newblock {\em Available on the author's webpage\/}.

\bibitem{LurieHA}
{\sc Lurie, J.}
\newblock Higher algebra.
\newblock {\em Available on the author's webpage\/}.

\bibitem{LurieSAG}
{\sc Lurie, J.}
\newblock Spectral algebraic geometry.
\newblock {\em Available on the author's webpage\/}.

\bibitem{Lurie2009}
{\sc Lurie, J.}
\newblock {\em Higher topos theory}, vol.~170 of {\em Annals of Mathematics
  Studies}.
\newblock Princeton University Press, Princeton, NJ, 2009.

\bibitem{Mahowald1982}
{\sc Mahowald, M.}
\newblock The image of {$J$} in the {$EHP$} sequence.
\newblock {\em Ann. of Math. (2) 116}, 1 (1982), 65--112.

\bibitem{MazzaWeibelVoevodsky2006}
{\sc Mazza, C., Voevodsky, V., and Weibel, C.}
\newblock {\em Lecture notes on motivic cohomology}, vol.~2 of {\em Clay
  Mathematics Monographs}.
\newblock American Mathematical Society, Providence, RI; Clay Mathematics
  Institute, Cambridge, MA, 2006.

\bibitem{Miller1981}
{\sc Miller, H.~R.}
\newblock On relations between {A}dams spectral sequences, with an application
  to the stable homotopy of a {M}oore space.
\newblock {\em J. Pure Appl. Algebra 20}, 3 (1981), 287--312.

\bibitem{MilnorHusemoller}
{\sc Milnor, J.~W., and Husemoller, D.}
\newblock {\em Symmetric bilinear forms}, vol.~60.
\newblock Springer, 1973.

\bibitem{Morel2004}
{\sc Morel, F.}
\newblock An introduction to {$\Bbb A^1$}-homotopy theory.
\newblock In {\em Contemporary developments in algebraic {$K$}-theory}, ICTP
  Lect. Notes, XV. Abdus Salam Int. Cent. Theoret. Phys., Trieste, 2004,
  pp.~357--441.

\bibitem{A1-alg-top}
{\sc Morel, F.}
\newblock {\em $\mathbb{A}^1$-Algebraic Topology over a Field}.
\newblock Lecture Notes in Mathematics. Springer Berlin Heidelberg, 2012.

\bibitem{MorelVoevodsky1999}
{\sc Morel, F., and Voevodsky, V.}
\newblock {${\bf A}^1$}-homotopy theory of schemes.
\newblock {\em Inst. Hautes \'{E}tudes Sci. Publ. Math.}, 90 (1999), 45--143
  (2001).

\bibitem{Morrow_pro_GL2}
{\sc {Morrow}, M.}
\newblock {\(K\)-theory and logarithmic Hodge-Witt sheaves of formal schemes in
  characteristic \(p\)}.
\newblock {\em {Ann. Sci. \'Ec. Norm. Sup\'er. (4)} 52}, 6 (2019), 1537--1601.

\bibitem{NikolausScholze2018}
{\sc Nikolaus, T., and Scholze, P.}
\newblock On topological cyclic homology.
\newblock {\em Acta Math. 221}, 2 (2018), 203--409.

\bibitem{MR1805408}
{\sc Ojanguren, M., and Panin, I.}
\newblock The {W}itt group of {L}aurent polynomials.
\newblock {\em Enseign. Math. (2) 46}, 3-4 (2000), 361--383.

\bibitem{Oka1984}
{\sc Oka, S.}
\newblock Multiplications on the {M}oore spectrum.
\newblock {\em Mem. Fac. Sci. Kyushu Univ. Ser. A 38}, 2 (1984), 257--276.

\bibitem{Raksit2020}
{\sc Raksit, A.}
\newblock Hochschild homology and the derived de {R}ham complex revisited.
\newblock {\em {\tt arXiv:2007.02576}\/} (2020).

\bibitem{Ravenel1992}
{\sc Ravenel, D.~C.}
\newblock {\em Nilpotence and periodicity in stable homotopy theory}, vol.~128
  of {\em Annals of Mathematics Studies}.
\newblock Princeton University Press, Princeton, NJ, 1992.
\newblock Appendix C by Jeff Smith.

\bibitem{Riou2010}
{\sc Riou, J.}
\newblock Algebraic {{\(K\)}}-theory, {{\(A^1\)}}-homotopy and {Riemann}-{Roch}
  theorems.
\newblock {\em J. Topol. 3}, 2 (2010), 229--264.

\bibitem{Robalo2015}
{\sc Robalo, M.}
\newblock {$K$}-theory and the bridge from motives to noncommutative motives.
\newblock {\em Adv. Math. 269\/} (2015), 399--550.

\bibitem{Rohrbach2022}
{\sc Rohrbach, H.}
\newblock The projective bundle formula for {G}rothendieck-{W}itt spectra.
\newblock {\em J. Pure Appl. Algebra 226}, 5 (2022), Paper No. 106917, 24.

\bibitem{rondigs-ostvaer}
{\sc R\"{o}ndigs, O., and \O~stv\ae r, P.~A.}
\newblock Modules over motivic cohomology.
\newblock {\em Adv. Math. 219}, 2 (2008), 689--727.

\bibitem{RoendigsOestvaer2016}
{\sc R{\"o}ndigs, O., and {\O}stv{\ae}r, P.}
\newblock Slices of {Hermitian} {{\(K\)}}-theory and {Milnor}'s conjecture on
  quadratic forms.
\newblock {\em Geom. Topol. 20}, 2 (2016), 1157--1212.

\bibitem{Rost1996}
{\sc Rost, M.}
\newblock Chow groups with coefficients.
\newblock {\em Doc. Math. 1\/} (1996), No. 16, 319--393 (electronic).

\bibitem{Rydh2010}
{\sc {Rydh}, D.}
\newblock {Submersion et descente effective de morphismes \'etales.}
\newblock {\em {Bull. Soc. Math. Fr.} 138}, 2 (2010), 181--230.

\bibitem{Sato2007}
{\sc Sato, K.}
\newblock {$p$}-adic \'etale {T}ate twists and arithmetic duality.
\newblock {\em Ann. Sci. \'Ecole Norm. Sup. (4) 40}, 4 (2007), 519--588.
\newblock With an appendix by Kei Hagihara.

\bibitem{Schlichting2017}
{\sc Schlichting, M.}
\newblock Hermitian k-theory, derived equivalences and karoubi's fundamental
  theorem.
\newblock {\em Journal of Pure and Applied Algebra 221}, 7 (2017), 1729--1844.

\bibitem{SchmidtStrunk2018}
{\sc Schmidt, J., and Strunk, F.}
\newblock Stable {$\Bbb A^1$}-connectivity over {Dedekind} schemes.
\newblock {\em Ann. \(K\)-Theory 3}, 2 (2018), 331--367.

\bibitem{Soule1985}
{\sc Soul{\'e}, C.}
\newblock Op\'erations en {$K$}-th\'eorie alg\'ebrique.
\newblock {\em Canad. J. Math. 37}, 3 (1985), 488--550.

\bibitem{Spitzweck2018}
{\sc Spitzweck, M.}
\newblock A commutative {$\Bbb P^1$}-spectrum representing motivic cohomology
  over {D}edekind domains.
\newblock {\em M\'{e}m. Soc. Math. Fr. (N.S.)}, 157 (2018), 110.

\bibitem{spitzweck2012motivic}
{\sc Spitzweck, M., and {\O}stv{\ae}r, P.~A.}
\newblock Motivic twisted k--theory.
\newblock {\em Algebraic \& Geometric Topology 12}, 1 (2012), 565--599.

\bibitem{suslin1983}
{\sc Suslin, A.}
\newblock On the {$K$}-theory of algebraically closed fields.
\newblock {\em Invent. Math. 73}, 2 (1983), 241--245.

\bibitem{SuslinVoevodsky2000}
{\sc Suslin, A., and Voevodsky, V.}
\newblock Bloch-{K}ato conjecture and motivic cohomology with finite
  coefficients.
\newblock In {\em The arithmetic and geometry of algebraic cycles ({B}anff,
  {AB}, 1998)}, vol.~548 of {\em NATO Sci. Ser. C Math. Phys. Sci.} Kluwer
  Acad. Publ., Dordrecht, 2000, pp.~117--189.

\bibitem{Swan1980}
{\sc Swan, R.~G.}
\newblock On seminormality.
\newblock {\em J. Algebra 67}, 1 (1980), 210--229.

\bibitem{Stacks}
{\sc {The Stacks project authors}}.
\newblock The stacks project.
\newblock \url{https://stacks.math.columbia.edu}, 2023.

\bibitem{Thomason1985}
{\sc Thomason, R.~W.}
\newblock Algebraic {$K$}-theory and \'etale cohomology.
\newblock {\em Ann. Sci. \'Ecole Norm. Sup. (4) 18}, 3 (1985), 437--552.

\bibitem{Thomason1990}
{\sc Thomason, R.~W., and Trobaugh, T.}
\newblock Higher algebraic {$K$}-theory of schemes and of derived categories.
\newblock In {\em The {G}rothendieck {F}estschrift, {V}ol.\ {III}}, vol.~88 of
  {\em Progr. Math.} Birkh\"auser Boston, Boston, MA, 1990, pp.~247--435.

\bibitem{Traverso1970}
{\sc Traverso, C.}
\newblock Seminormality and {P}icard group.
\newblock {\em Ann. Scuola Norm. Sup. Pisa Cl. Sci. (3) 24\/} (1970), 585--595.

\bibitem{Voevodsky1996}
{\sc Voevodsky, V.}
\newblock Homology of schemes.
\newblock {\em Sel. Math., New Ser. 2}, 1 (1996), 111--153.

\bibitem{Voevodsky2000}
{\sc Voevodsky, V.}
\newblock Cohomological theory of presheaves with transfers.
\newblock In {\em Cycles, transfers, and motivic homology theories}. Princeton,
  NJ: Princeton University Press, 2000, pp.~87--137.

\bibitem{Voevodsky2002a}
{\sc Voevodsky, V.}
\newblock Open problems in the motivic stable homotopy theory. {I}.
\newblock In {\em Motives, polylogarithms and {H}odge theory, {P}art {I}
  ({I}rvine, {CA}, 1998)}, vol.~3, I of {\em Int. Press Lect. Ser.} Int. Press,
  Somerville, MA, 2002, pp.~3--34.

\bibitem{Voevodsky2002}
{\sc Voevodsky, V.}
\newblock A possible new approach to the motivic spectral sequence for
  algebraic {$K$}-theory.
\newblock In {\em Recent progress in homotopy theory ({B}altimore, {MD},
  2000)}, vol.~293 of {\em Contemp. Math.} Amer. Math. Soc., Providence, RI,
  2002, pp.~371--379.

\bibitem{voevodsky-z2}
{\sc Voevodsky, V.}
\newblock Motivic cohomology with {${\bf Z}/2$}-coefficients.
\newblock {\em Publ. Math. Inst. Hautes \'{E}tudes Sci. 98\/} (2003), 59--104.

\bibitem{voevodsky-zero}
{\sc Voevodsky, V.}
\newblock On the zero slice of the sphere spectrum.
\newblock {\em Tr. Mat. Inst. Steklova 246}, Algebr. Geom. Metody, Svyazi i
  Prilozh. (2004), 106--115.

\bibitem{Voevodsky2010a}
{\sc Voevodsky, V.}
\newblock Unstable motivic homotopy categories in {N}isnevich and
  cdh-topologies.
\newblock {\em J. Pure Appl. Algebra 214}, 8 (2010), 1399--1406.

\bibitem{voevodsky-zl}
{\sc Voevodsky, V.}
\newblock On motivic cohomology with {$\bold Z/l$}-coefficients.
\newblock {\em Ann. of Math. (2) 174}, 1 (2011), 401--438.

\bibitem{Weibel1989a}
{\sc Weibel, C.~A.}
\newblock Homotopy algebraic {$K$}-theory.
\newblock In {\em Algebraic {$K$}-theory and algebraic number theory
  ({H}onolulu, {HI}, 1987)}, vol.~83 of {\em Contemp. Math.} Amer. Math. Soc.,
  Providence, RI, 1989, pp.~461--488.

\bibitem{wendt2010more}
{\sc Wendt, M.}
\newblock More examples of motivic cell structures.
\newblock {\em arXiv preprint arXiv:1012.0454\/} (2010).

\end{thebibliography}

\end{document}